\apptocmd{\sloppy}{\hbadness 10000\relax}{}{}
\theoremstyle{definition}
\newtheorem{defi}{Definition}[section]
\theoremstyle{plain}
\newtheorem{prop}[defi]{Proposition}
\theoremstyle{plain}
\newtheorem{thm}[defi]{Theorem}
\theoremstyle{plain}
\newtheorem{cor}[defi]{Corollary}
\theoremstyle{plain}
\newtheorem{lem}[defi]{Lemma}
\theoremstyle{plain}
\newtheorem{conj}[defi]{Conjecture}
\theoremstyle{definition}
\newtheorem{rem}[defi]{Remark}
\theoremstyle{definition}
\newtheorem{ex}[defi]{Example}
\theoremstyle{plain}
\newtheorem*{quest}{Question}
\newenvironment{proofsketch}{
	\proof}{\endproof}
\DeclareMathOperator{\Ima}{Im}
\DeclareMathOperator{\Hom}{Hom}
\DeclareMathOperator{\supp}{supp}
\newcommand{\bigslant}[2]{{\raisebox{.2em}{$#1$}\left/\raisebox{-.2em}{$#2$}\right.}}
\author{Leonhard Katzlinger}
\title{Topological Full Groups}
\date{}
\begin{document}
	
	\hypersetup{pageanchor=false}
	\maketitle
	\hypersetup{pageanchor=true}
	\clearpage
	
	\pagenumbering{roman}
	\begingroup\setlength{\parskip}{0.7pt plus2pt minus2pt}

	\pagestyle{empty}
	
	\newpage
	
	\pdfbookmark[section]{\contentsname}{toc}
	\tableofcontents

	\endgroup
	 
	\newpage

	\section*{Abstract}\addcontentsline{toc}{chapter}{Abstract} 
	
	 Topological full groups originated from the theory of topological dynamical systems and have been having considerable impact on group theory in recent years. This text represents an introduction/survey on topological full groups. After development of the theoretical and historical background, it gives an account of their significance in topological dynamics and discusses their group theoretical aspects.
	 
	\begingroup
	\let\clearpage\relax
	\chapter*{Introduction}\addcontentsline{toc}{chapter}{Introduction}
		
		\section*{Historical remarks}
			\addcontentsline{toc}{section}{Historical remarks}
			
			Initially (measured) full groups grew out of von Neumann algebra theory and measured dynamics. Henry Dye defined and studied full groups in the papers \cite{dye59} and \cite{dye63}. Full groups offer a way to classify automorphism groups of measure algebras. In particular, every measure preserving action of a countable group $G$ on a Lebesgue measure space $(X,\lambda)$ gives rise to a group $[G]$ consisting of all non-singular,\footnote{Preimages of null-sets are null-sets.\\} measurable transformations $\gamma \colon X \to X$ such that $\gamma(x) \in Gx$ for $\lambda$-almost-every $x \in X$. Dye introduced the notion of approximate finitness of such an automorphism group $G$ i.e. every finite subset of elements in $G$ can be approximated by elements in the full group $[G]$, and showed that for ergodic actions of approximately finite groups by measure preserving transformations on $(X,\lambda)$ the associated full groups are complete isomorphism invariants for the arising von Neumann crossed products (\cite{dye59}, Theorem~5). Furthermore, he demonstrated that every isomorphism between the full groups of ergodic actions of countable groups by measure preserving transformations on $(X,\lambda)$ is induced by an orbit equivalence between the systems (\cite{dye63}, Theorem~2).
			
			Similar to the introduction of von Neumann algebras predating the consideration of general C*-algebras, the study of the interplay between measured dynamics and the theory of von Neumann algebras, which goes back to the work of John von Neumann and Francis J. Murray, predates the study of the interplay between topological dynamics and C*-algebra theory.
			Dye's ideas motivated efforts to find analogues in the setting of topological dynamics. One of these efforts is represented by the ample groups of Wolfgang Krieger defined in \cite{kri80}, the definition of which recalls Dye's full groups. Some years later Thierry Giordano, Ian F. Putnam and Christian F. Skau obtained a complete classification of minimal topological dynamical $\mathbb{Z}$-systems over a Cantor space up to orbit equivalence (resp. strong orbit equivalence) in terms of their C*-algebra crossed products and the associated dimension groups \cite{gps95}. Eli Glasner and Benjamin Weiss observed that some of the classification results of \cite{gps95} and weaker versions thereof can be shown \emph{without} relying on C*-algebra theory by using a transferred version of Dye's full group and a smaller, countable cousin of it -- the \emph{topological}\footnote{By no means ``topological" refers to a topology on this group.\\}\emph{ full group $\mathfrak{T}(\varphi)$}.		
				
			This group had already appeared implicitely 6 years earlier in \cite{put89} in terms of unitary normalizers of crossed product C*-algebras. The results of \cite{gw95} foreshadowed that of \cite{gps99}: Full groups (resp. topological full groups) of minimal topological dynamical $\mathbb{Z}$-systems over a Cantor space are complete isomorphism invariants for orbit equivalence (resp. flip conjugacy). Later on the concept of full groups proved to be fruitful in other settings e.g. minimal topological $\mathbb{Z}$-systems over locally compact Cantor spaces in \cite{mat02} or Borel actions of polish groups on polish spaces in \cite{mr07}.
			
			In \cite{mat06} Hiroki Matui defined topological full groups of countable, \'etale equivalence relations on Cantor spaces. Those have the structure of a principal, \'etale Cantor groupoid as such encompassing free actions of discrete groups on Cantor spaces. The scope was even more generalized in \cite{mat12} where a notion of topological full groups of vastly more general \'etale groupoids was introduced. In this much broader context topological full groups locate within the interplay between the theories of C*-algebras, \'etale groupoids and inverse semigroups -- the study of which had been initiated by Jean Renault in \cite{ren80} and developed further i.a. by Alan L.T. Patterson in \cite{pat99}, Ruy Exel in \cite{exe08} and Mark V. Lawson in \cite{law10}.
			
		\section*{The interest of group theory}
			\addcontentsline{toc}{section}{The interest of group theory}

			The first structural results on topological full groups were given in \cite{gps99} by topological dynamical methods: The approximation of minimal Cantor systems by periodic systems in terms of Kakutani-Rokhlin partitions, a fundamental tool in the classification of such systems, induces a factorization of the associated topological full groups as a product of a pair of direct limits of  sequences of finite symmetric groups each of which acts on atoms of Kakutani-Rokhlin partitions by permutation. These factors correspond precisely to the ample groups that have been introduced by Krieger. Furthermore, the topological full group of a minimal Cantor system admits a non-trivial homomorphism to $\mathbb{Z}$ called the index map, which describes the global transfer on orbits. Matui proved that the derived subgroup of the topological full group of a minimal Cantor system is simple and in the case of minimal subshifts even finitely generated in \cite{mat06}. This paper marks the first of a series of works (\cite{mat12}, \cite{mat13}, \cite{mat15}, \cite{mat16}) by Matui dealing with the subject, as well as the awakening of geometric group theorist's interest.
			
			In the first version of \cite{gm14} Rostislav Grigorchuk and Kostya Medynets conjectured topological full groups of minimal $\mathbb{Z}$-subshifts to be amenable, which was subsequently verified by Kate Juschenko and Nicolas Monod in \cite{jm13}. This produces an uncountably infinite number of non-isomorphic examples of infinite, finitely generated, simple, amenable groups, examples of which had hitherto been unknown. This triggered a number of publications dealing with the subject:
			In \cite{mb14a} and \cite{mb14b} Nicolás Matte Bon gave examples of topological full groups of minimal subshifts satisfying the Liouville property, thus providing first examples of infinite, finitely generated, simple groups with that property, and demonstrated that Grigorchuk's groups of intermediate growth admit embeddings into topological full groups of Cantor minimal systems. Volodymyr Nekrashevych dealt with topological full groups in \cite{nek17} and \cite{nek18}, where he generalized Matui's definition of topological full groups, introduced a pair of subgroups consisting of somewhat ``elementary" elements and used these to give first examples of finitely generated, simple groups with intermediate growth. It is moreover worth mentioning that topological full groups associated to groupoids arising from one-sided Markov shifts cast new light on Higman-Thompson groups. We recognize that the study of topological full groups has been a rewarding endeavour and is still ongoing.

		\section*{On the structure}
			\addcontentsline{toc}{section}{On the structure}		
			
			We finish the front matter with a short synopsis of what is to come. In the first two chapters we acquaint ourselves with the territory we are moving in. This is largely provided by the setting of \'etale groupoids. We start however in Chapter~\ref{chap: 1} with an introduction to fundamental concepts of topological dynamical systems over Cantor spaces, which is the surrounding in which topological full groups originate:
			\begin{enumerate}
				\item[] Section~\ref{sec: basic definitions} lists basic definitions from topological and measured dynamics, gives basic results and examples of topological dynamical systems over Cantor spaces and recalls the fundamental features of crossed product C*-algebras associated with such systems.
				
				\item[] Section~\ref{sec: the toolbox of minimal Cantor systems} introduces concepts of the theory of Cantor $\mathbb{Z}$-systems. Keywords are Bratteli-Vershik systems and Kakutani-Rokhlin partitions.
			\end{enumerate}
			
			In Chapter~\ref{chap: 2} we dwell on \'etale groupoids, inverse semigroups and the interrelations between those types of objects:
			\begin{enumerate}
				\item[] Section~\ref{sec: basics of groupoids} contains basic definitions and examples of (topological) groupoids.
				
				\item[] Section~\ref{sec: basics of inverse semigroups} does the same for inverse semigroups.
				
				\item[] Section~\ref{sec: analysis of topological groupoids} introduces locally compact groupoids as setting for (functional) analysis on topological groupoids and recalls the definition of groupoid C*-algebras.
				
				\item[] Section~\ref{sec: etale groupoids} comprises the definition and characterization  of \'etale groupoids and the subclass of ample groupoids, moreover it gives a suitable version of homology of \'etale groupoids. 
				
				\item[] Section~\ref{sec: interweaving inverse semigroups and etale groupoids} recounts a general version of non-commutative Stone duality, which is a duality between the categories of \'etale groupoids and inverse semigroups, and the historical developments leading up to it.
				
				\item[] Section~\ref{sec: resuming cantor dynamics} introduces \'etale Cantor groupoids as the setting of geheralized Cantor dynamics and the important subclasses of expansive groupoids, AF groupoids, almost finite groupoids, purely infinite groupoids and groupoids associated to one-sided Markov shifts.
			\end{enumerate}
			
			Chapter~\ref{chap: 3} represents a survey on topological full groups and finishes with a conceptual result on the irreducibility of certain Koopman representations:
			\begin{enumerate}
				\item[] Section~\ref{sec: basic structure and significance in cantor dynamics} starts with the definition of topological full groups (of Cantor systems and more general of \'etale Cantor groupoids), it recalls the description of topological full groups of minimal Cantor systems in terms of Kakutani-Rokhlin partitions, introduces important subgroups and outlines the significance of these groups in Cantor dynamics.
				
				\item[] Section~\ref{sec: results by matui} gives account of Matui's work on topological full group in that it sketches amongst other things his proof of topological full groups being complete isomorphism invariants for \'etale Cantor groupoids, his proof of simplicity of the commutator subgroup of the topological full groups of a minimal Cantor system and his proof of exponential growth.
				
				\item[] Section~\ref{sec: on a pair of subgroups} deals with the alternating full group introduced by Nekrashevych, in particular, with his proof of simplicity and finite generation.
				
				\item[] Section~\ref{sec: topological full groups in light of non-commutative stone duality} accounts for the fact of topological full groups of \'etale groupoids being groups of units of specific inverse $\wedge$-monoids and as such fall within the scope of non-commutative Stone duality. To this end it represents a dictionary of how principles from the groupoid world, and in very rough terms the isomorphism theorem of Matui, translate to the inverse semigroup setting.
				
				\item[] Section~\ref{sec: Significance for geometric group theory} represents a compilation of facts on topological full groups with regard to different notions from geometric group theory: Local embeddability into finite groups, amenability, growth of groups, etc.
				
				\item[] Section~\ref{sec: the koopman representation of topological full groups} shows how Artem Dudko's notion of measure contracting actions produces a short proof of irreducibility of the Koopman representation associated with the natural action of the topological full group of a minimal, purely infinite, \'etale Cantor groupoid on the space of units.
			\end{enumerate}
			
			The \hyperref[appendices]{Appendix} consists of two parts:
			\begin{enumerate}
				\item[] Appendix~\ref{app: some terms of geometric group theory} lists basic terms of geometric group theory e.g. growth of groups, graphs associated with groups, amenability, Higman-Thompson groups etc.
				
				\item[] Appendix~\ref{app: c*-algebras} is a collection of base vocabulary on C*-algebras, in particular it contains short accounts of Hilbert bundles, von-Neumann algebras and a non-formal adumbration of operator theoretic K-groups.
			\end{enumerate}
			
		\section*{On this text}
			\addcontentsline{toc}{section}{On this text}
			
			As it is the conversion of the author's Master thesis at the University of Vienna, this text represents an undergraduate's attempt to give an introduction on the topic of topological full groups with a focus on their impact in group theory. It was written with the goal in mind of being approachable, hence it includes lots of basic definitions, a big appendix and a dense system of references.
			
			For any crimes against style, grammar or spelling or more severely any misquotations or misrepresentation I offer my earnest apologies, I am thankful for any suggestions of improvement. Feel free to contact me by mail (a01168587@unet.univie.ac.at).
			
			I owe a debt of gratitude especially to Prof. Goulnara Arzhantseva and Martin Finn-Sell, but my thankfulness extends to everyone who supported me in any way during the last years.
			\endgroup
			
	\clearpage\pagenumbering{arabic}
	
	\chapter{Basic terms of dynamics and Cantor systems}\label{chap: 1}
		
		\section{Basic definitions}\label{sec: basic definitions}
			
			 Dynamical systems theory branches out into the areas -- amongst others -- of topological dynamics and ergodic theory. In Subsection~\ref{subs: Basic terms of dynamics} we recall basic definitions from both contexts with an emphasis on the topological setting. In Subsection~\ref{subs: mincasys} we turn to the specific case of Cantor systems. Subsection~\ref{subs: crossprod} comprises basic notions of crossed product C*-algebras.
			
			\subsection{Basic terms of dynamics}\label{subs: Basic terms of dynamics}
			
			For a thourough introduction to topological dynamical systems the reader is refered to \cite{gla03} or \cite{dev93}, E.1.
			
			\begin{defi}\phantomsection\label{defi: topdynsys}
				\begin{enumerate}[(i)]
					\item Let $G$ be a group and let $X$ be a set. An \emph{action of $G$ on $X$} is a map $\alpha \colon G \times X \to X$ such that:
					\begin{enumerate}
						\item $\alpha(1,x)=x$ for all $x \in X$.
						
						\item $\alpha(g,\alpha(h,x))=\alpha(gh,x)$ for all $g,h \in G$ and $x \in X$.
					\end{enumerate}
					If the map $\alpha \colon G \times X \to X$ is continuous, the action is called \emph{continuous}. \emph{As from now we omit $\alpha$ from the notation assuming it to be implicit and write $g \cdot x$ instead of $\alpha(g,x)$.}
				\end{enumerate}
				
				Let a group $G$ act on a set $X$.
				\begin{enumerate}[(i),resume]
					\item Let $x \in X$. The set $Gx:=\{g \cdot x|g \in G\}$ is called the \emph{orbit of $x$}.
					
					\item Let $S$ be a subset of $X$. The group $G_S:=\{g \in G|gS \subseteq S\}$ is called the \emph{stabilizer subgroup of $S$ in $G$}.
					
					\item The action is called \emph{faithful} or \emph{effective} if for every $g,h \in G$ with $g \neq h$ there exists an $x \in X$ such that $gx \neq hx$.
					
					\item The action is called \emph{free} if $gx=x$ for some $x \in X$ and $g \in G$ implies $g = 1$.
					
					\item A continuous action of a group $G$ on a topological space $X$ is called \emph{essentially free}, if for every $g \in G\setminus \{1\}$ the set of fixed points $\{x \in X| gx=x \}$ has empty interior.
					
					\item A \emph{topological dynamical system} or \emph{topological transformation group} is a tuple $(X,G)$ where $G$ is a discrete group, $X$ is a compact Hausdorff space and $G$ acts continuously on $X$.\footnote{Discreetness implies that the continuity of the action is equivalent to $G$ acting by homeomorphisms. Note, that in the literature the definition varies in constraints on $G$ and $X$.\\} We will use \emph{topological $G$-system over $X$} as a synonymic term to put emphasis on the involved topological space and group.
				\end{enumerate}
				Let $(X,G)$ be a topological dynamical system.
				\begin{enumerate}[(i),resume]
					\item A pair $(Y,G)$ is called a \emph{subsystem of $(X,G)$}, if $Y$ is a closed, non-empty, $G$-invariant subset of $X$. Restricting the $G$-action on $X$ to $Y$ makes $(Y,G)$ a topological $G$-system. 
					
					\item The system $(X, G)$ is said to be \emph{minimal}, if it contains no proper subsystem. If $(X,G)$ contains a unique, proper, minimal subsystem, it is said to be \emph{essentially minimal}.
				\end{enumerate}
			\end{defi}
		
			\begin{ex}\label{ex: shift}
				Let $X$ be a compact, metric space and let $G$ be a discrete, countable topological group. The natural continuous action of $G$ on $X^G=\{f\colon G \to X \}$ given by $g \cdot f(h) = f(g\cdot h)$ for all $g,h \in G$ is called a \emph{$G$-shift} or \emph{topological $X$-Bernoulli system}. A closed subsystem of a $G$-shift is called \emph{$G$-subshift}.
			\end{ex}
		
			Minimal systems are fundamental ``building blocks" of topological dynamical systems. Closures of orbits induce subsystems, thus the following is immediate:
			\begin{prop}
				Let $(X,G)$ be a topological dynamical system. The following are equivalent:
				\begin{enumerate}[(i)]
					\item The system $(X,G)$ is minimal.
					
					\item The orbit $Gx$ is dense in $X$ for every $x \in X$.
				\end{enumerate}
			\end{prop}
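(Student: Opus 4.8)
The plan is to exploit the single observation that orbit closures are subsystems, which reduces both implications to nearly trivial set-theoretic arguments. First I would record the key fact that for every $x \in X$ the orbit closure $\overline{Gx}$ is a subsystem of $(X,G)$: it is closed by construction, non-empty since it contains $x$, and $G$-invariant because each $g \in G$ acts as a homeomorphism (a consequence of $G$ being discrete, as noted after Definition~\ref{defi: topdynsys}). Indeed, since a homeomorphism commutes with closure and the orbit $Gx$ is itself $G$-invariant, one gets $g \cdot \overline{Gx} = \overline{g \cdot Gx} = \overline{Gx}$ for every $g \in G$.

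For the implication $(i) \Rightarrow (ii)$ I would fix an arbitrary $x \in X$ and apply minimality to the subsystem $\overline{Gx}$: since $(X,G)$ admits no proper subsystem, $\overline{Gx}$ must coincide with $X$, which is precisely the statement that $Gx$ is dense in $X$. As $x$ was arbitrary, every orbit is dense.

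For the converse $(ii) \Rightarrow (i)$ I would argue that every subsystem already equals $X$. Taking an arbitrary subsystem $Y$ and picking any $y \in Y$, the $G$-invariance of $Y$ gives $Gy \subseteq Y$, and closedness then yields $\overline{Gy} \subseteq Y$; but by hypothesis $\overline{Gy} = X$, forcing $Y = X$. Hence $(X,G)$ possesses no proper subsystem and is therefore minimal.

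I do not anticipate any genuine obstacle: the content is essentially a direct translation between the defining property of minimality and the density condition. The only point requiring a moment's care is the verification that each $g$ commutes with the closure operation, which rests entirely on $g$ being a homeomorphism; once that is in place, both directions follow immediately from the fact that orbit closures furnish a ready supply of subsystems.
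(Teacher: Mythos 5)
Your argument is correct and is exactly the route the paper intends: the paper states the proposition as immediate from the observation that closures of orbits induce subsystems, which is precisely the key fact you isolate and then use in both directions. No gaps.
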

			
			Compactness and an application of Zorn's Lemma imply a basic result by Garrett Birkhoff:
			
			\begin{prop}\label{prop: ex.minsubs}
				Every topological dynamical system contains a minimal subsystem.
			\end{prop}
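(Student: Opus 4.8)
The plan is to apply Zorn's Lemma to the collection of candidate subsystems, ordered so that a minimal subsystem corresponds to a maximal element. First I would let $\mathcal{F}$ denote the family of all non-empty, closed, $G$-invariant subsets of $X$, partially ordered by reverse inclusion, i.e. $Y_1 \preceq Y_2$ if and only if $Y_1 \supseteq Y_2$. This family is non-empty since $X \in \mathcal{F}$. A minimal subsystem is precisely a maximal element of $(\mathcal{F}, \preceq)$: if $M \in \mathcal{F}$ admits no strictly smaller member of $\mathcal{F}$ with respect to ordinary inclusion, then $M$ contains no proper subsystem, and hence $(M,G)$ is minimal by definition.

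Next I would verify the chain condition. Given a chain $\mathcal{C} = \{Y_i\}_{i \in I} \subseteq \mathcal{F}$ totally ordered by inclusion, I would set $Y := \bigcap_{i \in I} Y_i$ and check that $Y \in \mathcal{F}$ serves as an upper bound for $\mathcal{C}$ relative to $\preceq$. Being an intersection of closed sets, $Y$ is closed, and being an intersection of $G$-invariant sets, $Y$ is itself $G$-invariant; these two checks are immediate and require no dynamics beyond the definitions. What remains is non-emptiness.

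The main point — and the only place where compactness genuinely enters — is establishing $Y \neq \emptyset$. Here I would argue via the finite intersection property: since $\mathcal{C}$ is a chain, any finite subfamily of $\{Y_i\}_{i \in I}$ possesses a smallest member with respect to inclusion, which is non-empty by assumption, so every finite intersection is non-empty. As $X$ is compact and each $Y_i$ is closed, the finite intersection property forces $\bigcap_{i \in I} Y_i \neq \emptyset$. Thus $Y \in \mathcal{F}$, and Zorn's Lemma produces a maximal element $M$ of $(\mathcal{F}, \preceq)$, which is exactly the sought minimal subsystem. I expect the non-emptiness of the intersection to be the crux of the argument; the closedness and invariance are formal, and the dictionary between ``minimal subsystem'' and ``maximal in the reversed order'' is purely bookkeeping.
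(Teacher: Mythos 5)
Your proof is correct and is precisely the standard argument the paper alludes to when it says the result follows from ``compactness and an application of Zorn's Lemma'': chains of non-empty closed invariant subsets have non-empty intersection by the finite intersection property, so Zorn's Lemma applied to reverse inclusion yields a minimal subsystem. The paper gives no further detail, and your write-up correctly identifies the non-emptiness of the chain intersection as the only substantive step.
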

			
			For topological $\mathbb{Z}$-systems we make adjustments to the notations  chosen in Definition~\ref{defi: topdynsys}: A topological $\mathbb{Z}$-system $(X,\mathbb{Z})$ is signified by the pair $(X,\varphi)$, where $\varphi$ is the homeomorphism of $X$ corresponding to the generator $1 \in \mathbb{Z}$. We deviate in the notation of orbits by writing $\operatorname{Orb}_\varphi (x)$ instead of $Gx$.
			
			\begin{defi}
				Let $(X,\varphi)$ be a topological $\mathbb{Z}$-system.
				\begin{enumerate}[(i)]
					\item For every $x \in X$ define the \emph{forward orbit of $x$} as the set $\operatorname{Orb}_\varphi^+(x) :=\{\varphi^n|n>0 \}$ and the \emph{backward orbit of $x$} as the set $\operatorname{Orb}_\varphi^-(x):=\{\varphi^n|n \leq 0 \}$.
					
					\item The system $(X,\varphi)$ is called \emph{periodic} if every orbit is finite and it is called \emph{aperiodic} if every orbit is infinite.
				\end{enumerate}	
			\end{defi}
			
			\begin{rem}\label{rem: orbdens}
				For a minimal $\mathbb{Z}$-systems $(X,\varphi)$ forward- and backward orbits are dense in $X$, because their respective sets of accumulation points are closed invariant subsets. By definition minimal $\mathbb{Z}$-systems are always examples of free actions.
			\end{rem}
			
			\begin{defi}
				Let $(X_1, G)$ and $(X_2, G)$ be topological $G$-systems.
				\begin{enumerate}[(i)]
					\item A \emph{homomorphism of topological $G$-systems} $F \colon (X_1, G) \to (X_2, G)$ consists of a continuous map $F \colon X_1 \to X_2$ that intertwines the respective $G$-actions i.e. $F(g\cdot x) = g\cdot F(x)$ for all $x \in X$ and $g \in G$.
				\end{enumerate}
				
				Let $F \colon (X_1, G) \to (X_2, G)$ be a homomorphism of topological $G$-systems.
				\begin{enumerate}[(i),resume]
					\item If the associated map $F \colon X_1 \to X_2$ is surjective, it is called \emph{factor map}. In this case the system $(X_2, G)$ is said to be a \emph{factor} of $(X_1, G)$ or $(X_1, G)$ is said to be an \emph{extension} of $(X_2, G)$.
					
					\item It is called an \emph{isomorphism of topological $G$-systems} if the associated map $F \colon X_1 \to X_2$ is a homeomorphism. In this case the systems $(X_1, G)$ and $(X_2, G)$ are said to be \emph{conjugate}.
				\end{enumerate}
			\end{defi}
			
			The classification of systems up to conjugacy is often intractable and one turns to the following weaker notions: 
			
			\begin{defi}[\cite{gps95}, Definition~1.1 \& 1.2 \& 1.3]
				Let $(X_1, \varphi_1)$ and $(X_2, \varphi_2)$ be topological $\mathbb{Z}$-systems.
				\begin{enumerate}[(i)]
					\item The systems $(X_1, \varphi_1)$ and $(X_2, \varphi_2)$ are said to be \emph{orbit equivalent}, if there exists an \emph{orbit map} $F \colon X_1 \to X_2$, i.e. a homeomorphism $F \colon X_1 \to X_2$ such that $F(\operatorname{Orb}_{\varphi_1}(x)) = \operatorname{Orb}_{\varphi_2}(F(x))$ for all $x \in X_1$.
					
					\item The systems $(X_1, \varphi_1)$ and $(X_2, \varphi_2)$ are said to be \emph{flip conjugate}, if $(X_1, \varphi_1)$ is conjugate to either $(X_2, \varphi_2)$ or $(X_2, \varphi_2^{-1})$.
					
					\item Let $(X_1, \varphi_1)$ and $(X_2, \varphi_2)$ be minimal and let $F \colon X_1 \to X_2$ be an orbit map. Then there exist unique\footnote{This is why minimality is required.\\} $n(x),m(x) \in \mathbb{Z}$ such that $F \circ \varphi_1 (x) = \varphi_2^{n(x)} \circ F(x)$ and $F \circ \varphi_1^{m(x)} (x)=\varphi_2 \circ F (x)$ for all $x \in X_1$. The functions $n,m \colon X_1 \to \mathbb{Z}$ are called the \emph{orbit cocycles associated with $F$}.
					
					\item Let $(X_1, \varphi_1)$ and $(X_2, \varphi_2)$ be minimal. They are said to be \emph{strongly orbit equivalent}, if there exists an orbit map $F \colon X_1 \to X_2$ such that the orbit cocyles associated with $F$ each have at most one point of discontinuity.
				\end{enumerate}
			\end{defi}
			
			All above relations between topological $\mathbb{Z}$-systems are equivalence relations satisfying:
			\begin{equation*}
			\text{conjugacy} \Rightarrow \text{flip conjugacy} \Rightarrow \text{orbit equivalence} \Leftarrow \text{strong orbit equivalence}
			\end{equation*}
			
			We finish this subsection with some basic terms from measured dynamics:
			
			\begin{defi}[\cite{bm00}, Definition~1.1]
				Let $(X,\mu)$ be a measure space such that $\mu$ is a $\sigma$-finite measure and let a locally compact, second countable group $G$ act on $X$ such that for every $g \in G$ the map $x \mapsto g\cdot x$ is measurable. 
				\begin{enumerate}[(i)]
					\item For every $g \in G$ the \emph{pushforward measure} $g_{*}\mu$ is defined by						$g_{*}\mu(A):=\mu(g^{-1}A)$ for every measurable subset $A\subseteq X$.
					
					\item The measure $\mu$ is called \emph{quasi-invariant under the action of $G$} if for all $g \in G$ the measures $\mu$ and $g_{*}\mu$ are mutually absolutely continuous i.e. $\mu(A)=0$ \emph{if and only if} $g_{*}\mu(A)=0$ for every measurable subset $A \subseteq X$. Alternatively one speaks of a \emph{measure class preserving action of $G$ on $(X,\mu)$}.
					
					\item The measure $\mu$ is called \emph{invariant under the action of $G$} if for all $g \in G$ the measures $\mu$ and $g_{*}\mu$ coincide. Alternatively one speaks of a \emph{measure preserving action of $G$ on $(X,\mu)$}.
					
					\item The action is called \emph{ergodic with respect to $\mu$} if every $G$-invariant measurable subset $A \subseteq X$ is either null or conull.
				\end{enumerate}
			\end{defi}
			
			Every measure preserving action of a group $G$ on a probability space $(X,\mu)$ induces a unitary representation\footnote{See Definition~\ref{defi: unitaryrepresentations}.\\} of $G$ on $\mathrm{L}^2(X,\mu)$ by $(\tilde{\kappa}(g)f)(x):=f(g^{-1}x)$ called the \emph{Koopman representation}. For the quasi-invariant setting it is defined as:
			
			\begin{defi}[\cite{dud18}]\label{defi: koopman representation}
				Let $(X,\mu)$ be a measure space such that $\mu$ is a $\sigma$-finite measure. Let $G$ be a locally compact, second countable group acting measure-class-preserving on $(X,\mu)$. The \emph{Koopman representation} $\kappa$ of $G$ on $\mathrm{L}^2(X,\mu)$ is given by:
				\begin{equation*}
					(\kappa(g)f)(x):=\sqrt{\frac{\mathrm{d}g_{*}\mu}{\mathrm{d}\mu}(x)} f(g^{-1}x)
				\end{equation*}
			\end{defi}
			
			\begin{ex}
				Let $G$ be locally compact, second countable group. Then the regular representation\footnote{See Appendix~\ref{app: group c-star-algebras}.\\} of $G$ is just the Koopman representation in the case of $G$ acting on itself by left multiplication.
			\end{ex}
				
			The spectral properties of the Koopman representation allow to characterize ergodicity or mixing properties of the underlying measured dynamical system (see e.g. \cite{gla03}, Chapter~3.2).
				
		\subsection{Minimal Cantor systems}\label{subs: mincasys}
		
		The Cantor space is a paragon of an exciting object from mathematics -- be it for its distinctive topological properties, its historical significance for the development of point-set topology and descriptive set theory or its habit to appear in a variety of contexts and forms in dynamics. 
		
		\begin{defi}[\cite{kec95}, §3.A]
			The topological space $\mathcal{C}:=\{0,1\}^\mathbb{N}$ obtained as $\mathbb{N}$-fold topological product of the discrete space $\{0,1\}$ is called the \emph{Cantor space}. The subsets $C[(y_1, \dots ,y_n)]:=\big\{ \{x_i\}_{i \in \mathbb{N}} \in \mathcal{C} : x_i=y_i, \forall i \in 1, \dots, n  \big\}$ where $(y_1, \dots ,y_n) \in \{0,1\}^n$ for some $n \in \mathbb{N}$ are called \emph{cylinder sets}.
		\end{defi}
		
		The cylinder sets constitute a countable basis of clopens\footnote{Every cylinder set is the complement of a finite union of cylinder sets.\\} of the topology on $\mathcal{C}$ i.e. the space $\mathcal{C}$ is zero-dimensional and second countable.	
		
		\begin{thm}[\cite{kec95}, Theorem 7.4]\label{thm: cantor charac}
			The Cantor space $\mathcal{C}$ is uniquely characterized up to homeomorphisms by being a non-empty, compact, perfect, metrizable and zero-dimensional topological space.
		\end{thm}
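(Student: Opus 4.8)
The plan is to prove the two halves of the characterization separately: first that $\mathcal{C}$ itself enjoys the five listed properties, and then — the substantial direction — that any space $X$ possessing those properties must be homeomorphic to $\mathcal{C}$.

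For the easy direction I would simply verify the properties of $\mathcal{C}=\{0,1\}^{\mathbb{N}}$ one at a time. Non-emptiness and compactness (by Tychonoff) are immediate; the metric $d(x,y)=2^{-\min\{n:x_n\neq y_n\}}$ (with $d(x,x)=0$) induces the product topology, giving metrizability; the cylinder sets are clopen and form a basis, so $\mathcal{C}$ is zero-dimensional; and since every cylinder set contains at least two points, no point of $\mathcal{C}$ is isolated, so $\mathcal{C}$ is perfect.

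For the hard direction, fix a space $X$ with the five properties together with a compatible metric $d$, and build a \emph{Cantor scheme}: a family $(C_s)$ of non-empty clopen subsets indexed by finite binary strings such that $C_\varnothing = X$, each $C_s$ is the disjoint union of its two children $C_{s0}$ and $C_{s1}$, and the diameters tend to $0$ along every branch. Two standing facts drive the construction: in a compact, zero-dimensional, metrizable space, for every $\varepsilon>0$ there is a finite clopen partition of mesh $<\varepsilon$ (cover by small clopen neighbourhoods, extract a finite subcover by compactness, then disjointify); and perfectness means there are no isolated points, so every non-empty clopen subset contains at least two points and therefore, by zero-dimensionality, decomposes into two non-empty clopen pieces. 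Once the scheme is built, the map $f\colon\{0,1\}^{\mathbb{N}}\to X$ sending $(x_n)_n$ to the unique point of $\bigcap_n C_{x_1\cdots x_n}$ — a singleton because these sets are non-empty, nested, compact, and of diameter tending to $0$ — is the desired homeomorphism. It is a bijection (surjective since the level-$n$ sets partition $X$, injective since distinct branches eventually lie in disjoint pieces) and continuous (agreement on a long initial segment forces the images into a set of small diameter), hence a homeomorphism, since a continuous bijection from a compact space to a Hausdorff space is automatically one.

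The main obstacle is the bookkeeping in the construction of the scheme: I must reconcile the rigidly binary branching of $\{0,1\}^{\mathbb{N}}$ with the requirement that the mesh tend to $0$. A finite clopen partition of a piece $C_s$ into small-diameter sets will in general have more than two members, whereas perfectness only guarantees that I can always split a piece into \emph{two} non-empty clopen parts; the delicate point is to interleave these two operations — splitting to keep the tree binary and free of dead ends, and refining to shrink diameters — so that along every branch the diameters genuinely converge to $0$ and no branch terminates prematurely. Once this combinatorial arrangement is in place, the verification that $f$ is a homeomorphism is routine.
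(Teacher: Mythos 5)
The paper does not prove this statement; it is quoted directly from \cite{kec95}, Theorem~7.4, and your argument is precisely the standard Cantor-scheme proof given there. Your outline is correct, including your identification of the one delicate point (interleaving binary splittings with mesh refinement so that diameters shrink along every branch while the tree stays binary with no empty nodes), which is resolved exactly as you describe.
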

		
		We call a topological space ``a" Cantor space if it is homeomorphic to ``the" Cantor space.

		\begin{ex}\phantomsection\label{ex: CS}
			\begin{enumerate}[(i)] 
				
				\item Open compact subsets of Cantor spaces and countably infinite products of Cantor spaces are Cantor spaces. A countably infinite product of any finite discrete space is a Cantor space.
				
				\item Let $d \in \mathbb{N}_{\geq 2}$. Let $T$ be an infinite labelled directed $d$-ary rooted tree with root $r$ i.e. an infinite tree together with a distinguished vertex point $r$ called the \emph{root}, such that every edge is oriented away from $r$ and every vertex has exactly $d$ children. The space $\partial T$ of directed infinite paths in $T$ starting at $r$ endowed with the topology arising by taking classes of paths which are identical on finitely many first vertices as a basis is called the \emph{boundary of $T$}. It is a Cantor space.
				
				\item Let $p$ be a prime number. The $p$-adic integers $\mathbb{Z}_p$ with their conventional topology are a Cantor space.
			\end{enumerate}
		\end{ex}
		
		\begin{rem}\phantomsection\label{rem: cant}
			\begin{enumerate}[(i)]
				\item Cantor spaces are by definition Stone spaces i.e. compact, Hausdorff, totally disconnected topological spaces, and Cantor spaces are Polish spaces i.e. separable and completely metrizable.
				
				\item Under Stone duality the characterization of the Cantor space up to homeomorphism from Theorem~\ref{thm: cantor charac} amounts to the fact that its Boolean algebra of clopen subsets is the up to Boolean isomorphism unique countable, atomless Boolean algebra.
			\end{enumerate}
		\end{rem}
		
		For the sake of abbrevation we define:
		\begin{defi}
			\begin{enumerate}[(i)]
				\item A \emph{Cantor system} is a topological $\mathbb{Z}$-system over a Cantor space.
				
				\item Let $(X,\varphi)$ be a Cantor system. Denote by $M_\varphi$ the set of all $\varphi$-invariant probability measures on $X$.
			\end{enumerate}
		\end{defi}
		
		In the following we recall some fundamental examples of Cantor systems:
		\begin{ex}\phantomsection\label{ex: shift+odo}
			\begin{enumerate}[(i)]
				\item Let $\mathcal{A}$ be a finite alphabet. The $\mathbb{Z}$-shift on $\mathcal{A}^\mathbb{Z}$ is a Cantor system called the \emph{full $\mathbb{Z}$-shift over $\mathcal{A}$}. This system is in general not minimal and the closed subsystems are called \emph{$\mathbb{Z}$-subshifts over $\mathcal{A}$}.
				
				\item Let $\bm{\mathrm{a}}=(a_n)_{n \in \mathbb{N}}$ be a strictly increasing sequence of integers with $a_n \geq 2$ and $a_n$ divides $a_{n+1}$ for all $n \in \mathbb{N}$. This data gives rise to an inverse system of surjective homomorphisms of finite cyclic groups $\rho_n \colon \mathbb{Z}/{a_{n+1}\mathbb{Z}} \to \mathbb{Z}/{a_n\mathbb{Z} }$ by $\rho_n(z)=z\mod a_n$. By endowing every group $\mathbb{Z}/{a_n\mathbb{Z}}$ with the discrete topology, the corresponding inverse limit
				\begin{equation*}
				\begin{gathered}
				G_{\bm{\mathrm{a}}}:= \varprojlim_{n} \mathbb{Z}/{a_n\mathbb{Z}}  =\\
				=\Bigl\{\bm{\mathrm{z}}\in \prod_{n} \mathbb{Z}/{a_n\mathbb{Z}}\big| z_{n+1} \equiv z_n \mod a_n \text{ for all } n \in \mathbb{N} \Bigr\}
				\end{gathered}
				\end{equation*}
				is a compact, metric, zero-dimensional, abelian group called \emph{odometer of type $\bm{\mathrm{a}}$} or \emph{adding machine} which is a Cantor space. Every odometer $G_{\bm{\mathrm{a}}}$ contains the element $\bm{1}=(1,1,\dots)$ inducing a minimal Cantor system $(G_{\bm{\mathrm{a}}},\varphi)$ by $\varphi \colon \bm{\mathrm{z}} \mapsto \bm{\mathrm{z}}+\bm{1}$. A survey on such systems can be found in \cite{dow05}.
			\end{enumerate}
		\end{ex}
		
		We call a Cantor system an \emph{odometer} if it is conjugate to an odometer of type $\bm{\mathrm{a}}$ for some suitable $\bm{\mathrm{a}}$, and call it a \emph{subshift} if it is conjugate to a subsystem of some $\mathbb{Z}$-shift.
		
		\begin{rem}\label{rem: symbolic dynamics}
			Any topological dynamical system $(X,G)$ with a finite clopen partition $\{P_i\}_{i \in I}$ such that the $G$-translates of atoms separate points is conjugate to a $G$-shift in $\mathcal{A}^G$ for some finite alphabet $\mathcal{A}$ -- modelled from the $G$-action on the $G$-translates of atoms. This standard construction gives rise to the field of \emph{symbolic dynamics}.
		\end{rem}
		
		\begin{defi}[\cite{mb14a}, p.1639]\label{defi: complexity}
			Let $\mathcal{A}$ be a finite alphabet and let $(X,\varphi)$ be a $\mathbb{Z}$-subshift over $\mathcal{A}$. The \emph{complexity $\rho$ of $(X,\varphi)$} is the function $\rho \colon \mathbb{N} \to \mathbb{N}$ where $\rho(n)$ is defined as the number of words of length $n$ over $\mathcal{A}$ that appear as subwords in elements of $X$.
		\end{defi}
		
		\begin{rem}(\cite{gla03}, Proposition~14.4.3 \& Corollary~14.7)\phantomsection\label{rem: complex+entro}
			\begin{enumerate}[(i)]
				\item For a subshift $(X,\varphi)$ its \emph{topological entropy}\footnote{For a formal definition of topological entropy see \cite{gla03}, § 14.1.\\} $h_{top}(X,\varphi)$ computes via its complexity $\rho$ in that
				\begin{equation*}
				h_{top}(X,\varphi)=\lim\limits_{n\to \infty} \frac{1}{n} \log \rho(n).
				\end{equation*}
				
				\item Topological entropy is invariant under flip-conjugacy i.e. every topological dynamical $\mathbb{Z}$-system satisfies $h_{top}(X,\varphi)=h_{top}(X,\varphi^{-1})$.
			\end{enumerate}
		\end{rem}
		
		\begin{ex}\label{ex: sturm+toeplitz}
			In the following examples $\varphi$ always denotes the shift operator.
			\begin{enumerate}[(i)]
				\item (\cite{lot02}, Chapter~2) Let $\alpha \in \mathbb{R}_{ >0}$ be irrational. Let the $R_\alpha \colon [0,1) \to [0,1)$ the rotation given by $R_\alpha \colon x \mapsto x + \alpha \mod 1$ and let $\mathcal{A}:=\{a,b\}$. Define a map $\Phi \colon [0,1) \to \{a,b\}$ by
				\begin{equation*}
				\Phi(x):=
				\begin{cases}
				a, & x \in [0,\alpha)\\
				b, & \text{ else}
				\end{cases}
				\end{equation*}
				Furthermore define:
				\begin{equation*}
				\Sigma_\alpha:=\overline{\{ \{\Phi(R_\alpha^i(x))\}_{i \in \mathbb{Z}} | x \in [0,1)  \}} \subset \mathcal{A}^{\mathbb{Z}}
				\end{equation*}
				Then $(\Sigma_\alpha,\varphi)$ is a minimal Cantor system with complexity $\rho(n)=n+1$. Such systems are called \emph{Sturmian shifts}. A look at the spectrum of the associated Koopman operator shows that for irrational $\alpha,\beta \in (0,\frac{1}{2})$ with $\alpha \neq \beta$ the Sturmian shifts $(\Sigma_\alpha,\varphi)$ and $(\Sigma_\beta,\varphi)$ are not flip conjugate.
				
				\item (\cite{dow05}, §7) Let $\mathcal{A}:=\{a_1,\dots,a_k \}$ be a finite alphabet. A sequence $\mathbf{s}= \{s_i\}_{i \in \mathbb{Z}} \in \mathcal{A}^{\mathbb{Z}}$ is called a \emph{Toeplitz sequence} if for every $i \in \mathbb{Z}$ there exists an $p \in \mathbb{N}$ such that $s_i=s_{i+pk}$ for all $k \in \mathbb{Z}$. For such an $\mathbf{s}$ define $T_\mathbf{s}:=\overline{\operatorname{Orb}_\varphi (\mathbf{s})} \subset \mathcal{A}^{\mathbb{Z}}$. If $\mathbf{s}$ is not periodic, the arising subshift $(T_\mathbf{s},\varphi)$ is a minimal Cantor system called a \emph{Toeplitz shift}. For every $\alpha \in \mathbb{R}_{>0}$ there exists a Toeplitz shift $(T_\mathbf{s},\varphi)$ such that the entropy satisfies $h(T_\mathbf{s},\varphi)=\alpha$.
			\end{enumerate}
		\end{ex}
		
		Odometers are in some sense opposites to minimal shifts in that every homomorphism from an odometer to a subshift has finite image. A minimal Cantor system is an odometer \emph{if and only if} it is the inverse limit of a sequence of periodic systems. In particular one has the following:
		
		\begin{lem}[\cite{mat13}, Lemma~2.2]\label{lem: notod.}
			Let $(X,\varphi)$ be a minimal Cantor system. If it is not an odometer, there exists a homomorphism from $(X,\varphi)$ to the full $2$-shift $(\{0,1\}^\mathbb{Z},\sigma)$ with infinite image.
		\end{lem}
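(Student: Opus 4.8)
The plan is to build the required homomorphism from a single well-chosen clopen set. First I would recall the standard symbolic construction: to any clopen $U \subseteq X$ associate the map $F_U \colon X \to \{0,1\}^{\mathbb{Z}}$ defined by $F_U(x)_n := \mathbf{1}_U(\varphi^n(x))$, so that the $n$-th coordinate records whether the $n$-th iterate of $x$ lies in $U$. Since $U$ is clopen, $F_U$ is continuous, and a direct computation gives $F_U(\varphi(x))_n = \mathbf{1}_U(\varphi^{n+1}(x)) = \sigma(F_U(x))_n$, so that $F_U$ is a homomorphism of $\mathbb{Z}$-systems into the full $2$-shift. Thus the lemma reduces to producing one clopen $U$ for which $F_U(X)$ is infinite.

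The key observation is a clean dichotomy describing when $F_U(X)$ is finite. Being the continuous image of the minimal system $(X,\varphi)$, the subshift $F_U(X)$ is itself minimal, and a finite minimal subshift is a single periodic orbit. I would then show that $F_U(X)$ is finite \emph{if and only if} $U$ is $\varphi^p$-invariant (that is, $\varphi^p(U)=U$) for some $p \geq 1$: if every point of $F_U(X)$ has period dividing $p$, then evaluating the coordinate identity at index $0$ yields $\mathbf{1}_U \circ \varphi^p = \mathbf{1}_U$, hence $\varphi^{-p}(U)=U$; conversely $\varphi^p(U)=U$ forces every $F_U(x)$ to be $p$-periodic, so the image lies in the finite set of sequences of period dividing $p$.

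It therefore suffices to find a clopen $U$ that is not $\varphi^p$-invariant for any $p$, and I would obtain this by contraposition. Suppose instead that every clopen set is $\varphi^p$-invariant for some power $p$. The family of such clopens is a $\varphi$-invariant Boolean subalgebra of the algebra of all clopens, and it is the increasing union of the subalgebras $\mathcal{B}_m := \{U \text{ clopen} : \varphi^m(U)=U\}$. For each $m$ the subalgebra $\mathcal{B}_m$ is $\varphi$-invariant and $\varphi^m$ acts trivially on it, so via Stone duality it defines a factor $(X_m,\varphi_m)$ of $(X,\varphi)$ with $\varphi_m^m = \mathrm{id}$; being a factor of a minimal system it is minimal, and a minimal system all of whose orbits are finite consists of a single finite orbit, so $(X_m,\varphi_m)$ is a finite cyclic system. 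Under the standing assumption $\bigcup_m \mathcal{B}_m$ is the entire clopen algebra, so the canonical equivariant map $X \to \varprojlim_m X_m$ is a continuous bijection, hence an isomorphism, exhibiting $(X,\varphi)$ as an inverse limit of periodic systems; by the characterization of odometers recalled above this makes $(X,\varphi)$ an odometer, contradicting the hypothesis. Hence the required clopen exists.

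The routine parts are the continuity and equivariance of $F_U$ and the periodicity dichotomy. The main obstacle, and the step requiring the most care, is the final contrapositive: identifying the periodic clopens with finite cyclic factors and checking — through minimality together with $\varphi^m$ acting trivially on $\mathcal{B}_m$ — that these factors are genuinely \emph{finite}, and then that the Stone-dual inverse limit of them recovers $X$ exactly, so that $X$ is forced to be an odometer.
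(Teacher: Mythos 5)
Your proof is correct, and it starts from the same place as the paper: the factor maps are identical (the paper's $\pi_n(x)_k=\mathbf{1}_{\varphi^{-k}(C_n)}(x)$ is exactly your $F_U(x)_k=\mathbf{1}_U(\varphi^k(x))$), and both arguments proceed by contraposition, showing that if every clopen-indexed factor into the $2$-shift is finite then $(X,\varphi)$ is an odometer. Where you diverge is in how that implication is executed. The paper works by hand: it inductively builds maps $\tilde{\pi}_n\colon X\to\mathbb{Z}/a_n\mathbb{Z}$ through which the $\pi_n$ factor, choosing bijections $b_n$ compatible with the shift, and concludes that $x\mapsto(\tilde{\pi}_n(x))_n$ is an equivariant continuous injection onto the odometer $G_{\bm{\mathrm{a}}}$. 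You instead isolate a clean intermediate dichotomy --- $F_U(X)$ is finite \emph{if and only if} $\varphi^p(U)=U$ for some $p\geq 1$ --- and then package the periodic clopens into the directed family of Boolean subalgebras $\mathcal{B}_m$, whose Stone duals are finite cyclic factors assembling into an inverse limit that recovers $X$ by compactness. This is a more structural and arguably more transparent route; the dichotomy in particular is a useful lemma the paper never states explicitly. The one caveat is that your last step invokes the characterization ``odometer iff inverse limit of periodic systems,'' which the paper records just before the lemma but does not prove --- and the content of the paper's proof is essentially a proof of the relevant direction of that characterization (in particular the consistent choice of base points for identifying each finite cyclic factor with $\mathbb{Z}/k_m\mathbb{Z}$, which your appeal to the characterization silently absorbs). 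Since your inverse limit of cyclic groups with the $+1$ maps is already literally $G_{\bm{\mathrm{a}}}$ for a cofinal divisibility chain once base points are fixed along the orbit of a single point, this is a legitimate shortcut rather than a gap, but it is worth being aware that the work you defer is precisely the work the paper's proof carries out.
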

		\begin{proof}
			Let $\{C_n\}_{n \in \mathbb{N}}$ be the set of all clopen sets in $X$ -- it is countable since $X$ is a Cantor space -- and define a family of maps $\{\pi_n\colon X \to \{0,1 \}^\mathbb{Z} \}_{n \in \mathbb{N}}$ by $(\pi_n(x)_k)_{k \in \mathbb{Z}}:=\mathbf{1}_{\varphi^{-k}(C_n)}(x)$. Every such map is continuous and $\pi_n \circ \varphi = \sigma \circ \pi_n$ for all $n \in \mathbb{N}$. The conclusion follows, if finiteness of $|\pi_n(X)|$ for all $n \in \mathbb{N}$ implies that $(X,\varphi)$ is conjugate to an odometer $G_{\bm{\mathrm{a}}}$ for some sufficient sequence $\bm{\mathrm{a}}=(a_n)_{n\in \mathbb{N}}$. Hence we need to find a sufficient sequence $\bm{\mathrm{a}}=(a_n)_{n\in \mathbb{N}}$ with associated inverse system  $\rho_n \colon \mathbb{Z}/{a_{n+1}\mathbb{Z}} \to \mathbb{Z}/{a_n\mathbb{Z} }$ defined by $\rho_n(z)=z\mod a_n$ and a family of continuous maps $\{\tilde{\pi}_n \colon X \to  \mathbb{Z}/{a_n\mathbb{Z} }\}_{n \in \mathbb{N}}$ such that $\tilde{\pi}_n \circ \varphi (x)=\tilde{\pi}_n+1$ for all $x \in X$, $\tilde{\pi}_{n+1}=\rho_n \circ \tilde{\pi}_n$ and $\pi_n$ factors through $\tilde{\pi}_n$ for all $n \in \mathbb{N}$. To this end assume $|\pi_n(X)|$ is finite for all $n \in \mathbb{N}$. Let $a_1=|\pi_1|$. Then there exists a bijection $b_1 \colon \pi_1(X) \to \mathbb{Z}/{a_1\mathbb{Z}}$ such that $b_1 \circ \sigma (z) = 1+b_1(z)$ for $z \in \pi_1(X)$. It is easy to show $\tilde{\pi}_1:=b_1 \circ \pi_1$ is sufficient. Assume $\tilde{\pi}_n$ has already been defined, then $\pi_{n+1} \times \tilde{\pi}_n$ is a continuous map. Define $a_{n+1}:=|(\pi_{n+1} \times \tilde{\pi}_n)(X)|$. Then there exists a bijection $b_{n+1} \colon (\pi_{n+1} \times \tilde{\pi}_n)(X) \to \mathbb{Z}/{a_{n+1}\mathbb{Z}}$ such that $\tilde{\pi}_{n+1}:=b_{n+1} \circ (\pi_{n+1} \times \tilde{\pi}_n)$ is sufficient. The map $f\colon X \to G_{\bm{\mathrm{a}}}$ defined by $f(x):=(\tilde{\pi}_n)_{n \in\mathbb{N}}$ for $x \in X$ is continuous and injective -- since the functions $\{\pi_n\}_{n \in \mathbb{N}}$ separate points in $X$ -- and thus $(X,\varphi)$ is conjugate to the odometer of type $\bm{\mathrm{a}}$.
		\end{proof}
		
		Minimal Cantor systems play an exceptional role for minimal $\mathbb{Z}$-systems over compact, metrizable spaces steming from the exceptional role\footnote{The phrase ``exceptional role" is chosen, since we can not speak of a universal property in the precise meaning of category theory.\\} of the Cantor space $\mathcal{C}$ in the category of compact metric spaces:
		\begin{thm}[\cite{kec95}, 4.18]\label{thm: cant}
			Every non-empty, compact, metrizable topological space is a continuous image of $\mathcal{C}$.
		\end{thm}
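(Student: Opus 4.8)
The plan is to realise $X$ as the image of a continuous map out of $\mathcal{C}$ built from a shrinking sequence of finite closed covers of $X$ — a Cantor scheme. Fix a compatible metric $d$ on $X$ with $\operatorname{diam}(X) \le 1$ (any compatible metric may be truncated to satisfy this); such a metric exists because $X$ is metrizable.

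First I would construct, by induction on $n$, a family of nonempty closed subsets $X_s \subseteq X$ indexed by finite strings $s=(s_1,\dots,s_n)$ with $s_i \in \{1,\dots,k_i\}$, satisfying $X_\emptyset = X$, $X_s = \bigcup_{i=1}^{k_{n+1}} X_{s{}^\frown i}$ for every $s$ of length $n$, and $\operatorname{diam}(X_{s{}^\frown i}) \le 2^{-(n+1)}$. The inductive step uses compactness: a nonempty compact set $X_s$ can be covered by finitely many nonempty closed balls of radius $2^{-(n+2)}$, each of diameter $\le 2^{-(n+1)}$. To make the branching uniform, so that the index set is a genuine product, I would take $k_{n+1}$ to be the maximum, over the finitely many strings $s$ of length $n$, of the number of balls needed, padding shorter lists by repeating a set; I would also ensure $k_{n+1}\ge 2$ by padding with repeats if necessary.

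Next, set $Z:=\prod_{n\ge 1}\{1,\dots,k_n\}$. Since each factor is a finite discrete space with at least two points and the product is countably infinite, $Z$ is nonempty, compact, metrizable, perfect and zero-dimensional, hence a Cantor space by Theorem~\ref{thm: cantor charac}; fix a homeomorphism $h\colon \mathcal{C}\to Z$. Writing $z|n=(z_1,\dots,z_n)$, for $z=(z_n)\in Z$ the sets $X_{z|n}$ form a decreasing sequence of nonempty closed sets with diameters tending to $0$, so $\bigcap_n X_{z|n}$ is a single point; define $\Phi(z)$ to be that point. Continuity of $\Phi$ is immediate from the diameter bound: if $z,z'$ agree on their first $n$ coordinates then $\Phi(z),\Phi(z')\in X_{z|n}$, so $d(\Phi(z),\Phi(z'))\le 2^{-n}$. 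Surjectivity follows because the children cover the parent at every level: given $y\in X$ one chooses $z_{n+1}$ at each stage so that $y\in X_{(z_1,\dots,z_{n+1})}$, whence $y\in\bigcap_n X_{z|n}=\{\Phi(z)\}$. The composition $\Phi\circ h\colon \mathcal{C}\to X$ is then the desired continuous surjection.

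The only genuinely delicate point is the bookkeeping in the first step: arranging a uniform, at-least-binary branching number $k_{n+1}$ across all strings of a given length while keeping every $X_s$ nonempty and of controlled diameter. Everything else — nestedness forcing a singleton intersection, the diameter bound giving continuity, and the covering property giving surjectivity — is routine. An alternative route, which I would regard as less self-contained, is to map $\mathcal{C}\cong\mathcal{C}^{\mathbb N}$ (Example~\ref{ex: CS}) continuously onto the Hilbert cube $[0,1]^{\mathbb N}$ via the binary-expansion surjection $\mathcal{C}\to[0,1]$, embed $X$ into $[0,1]^{\mathbb N}$ by Urysohn's theorem, and then use that a nonempty closed subset of $\mathcal{C}$ is a retract of $\mathcal{C}$; but proving that last fact essentially reproduces the scheme argument above.
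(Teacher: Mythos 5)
The paper does not prove this statement at all; it is imported verbatim from \cite{kec95} (Theorem~4.18), so there is no internal proof to compare against. Your argument is correct and self-contained: the Cantor scheme $\{X_s\}$ with uniform branching $k_n\ge 2$, the identification of $Z=\prod_n\{1,\dots,k_n\}$ with $\mathcal{C}$ via Brouwer's characterization (Theorem~\ref{thm: cantor charac}), the singleton intersection from nested nonempty compact sets of vanishing diameter, continuity from the diameter bound, and surjectivity from the covering property at each level all go through. The one point worth tightening is in the inductive step: to keep the scheme nested you should take $X_{s^\frown i}:=X_s\cap \bar{B}(x_i,2^{-(n+2)})$ for centres $x_i\in X_s$, rather than the balls themselves, since otherwise $\bigcup_i X_{s^\frown i}$ could strictly contain $X_s$; this is forced anyway by your requirement $X_s=\bigcup_i X_{s^\frown i}$ and is a cosmetic fix. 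Your padding device for uniform branching is fine (repeated children are harmless). For what it is worth, Kechris's own proof takes the second route you sketch — embed $X$ in the Hilbert cube, surject $\mathcal{C}\cong\mathcal{C}^{\mathbb{N}}$ onto $[0,1]^{\mathbb{N}}$ by binary expansions, and retract $\mathcal{C}$ onto the closed preimage of $X$ — which is shorter if the retract lemma is already available but, as you note, proving that lemma amounts to running the same scheme construction; your direct version avoids the detour entirely.
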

		
		This implies for minimal Cantor systems:
		\begin{thm}[\cite{gps95}, p.~55]
			Let $(X,\psi)$ be a minimal topological $\mathbb{Z}$-system, such that $X$ is metrizable (and compact). Then $(X,\psi)$ is the factor of a minimal Cantor system. 
		\end{thm}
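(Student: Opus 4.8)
The plan is to realize $(X,\psi)$ as a factor of a shift-type system built over a Cantor space, and then to cut down to a minimal subsystem. By Theorem~\ref{thm: cant} there is a continuous surjection $f \colon \mathcal{C} \to X$. The idea is to lift the dynamics of $\psi$ along $f$ to the Cantor space $\mathcal{C}^{\mathbb{Z}}$, which is itself a Cantor space by Example~\ref{ex: CS}, and to work inside the subspace of orbits that are coherent with $f$.

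Concretely, I would form the subspace
\begin{equation*}
\tilde{X} := \bigl\{ (c_n)_{n \in \mathbb{Z}} \in \mathcal{C}^{\mathbb{Z}} : \psi(f(c_n)) = f(c_{n+1}) \text{ for all } n \in \mathbb{Z} \bigr\}
\end{equation*}
and equip it with the restriction of the shift $\sigma\bigl((c_n)_n\bigr) := (c_{n+1})_n$, a homeomorphism of $\tilde{X}$. Since $X$ is Hausdorff and $f$ is continuous, each coherence condition cuts out a closed set, so $\tilde{X}$ is closed in $\mathcal{C}^{\mathbb{Z}}$, hence compact, metrizable and zero-dimensional. Define $F \colon \tilde{X} \to X$ by $F\bigl((c_n)_n\bigr) := f(c_0)$; it is continuous and satisfies $F \circ \sigma = \psi \circ F$ by the defining relation. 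To see that $F$ is surjective (and $\tilde{X}$ non-empty), I would use that $\psi$ is a homeomorphism: given $x \in X$, the bi-infinite sequence $x_n := \psi^n(x)$ can be lifted pointwise through the surjection $f$ to a coherent sequence $(c_n)_n \in \tilde{X}$ with $F\bigl((c_n)_n\bigr)=x$. Thus $F$ is a factor map from $(\tilde{X},\sigma)$ onto $(X,\psi)$.

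Next I would invoke Proposition~\ref{prop: ex.minsubs} to choose a minimal subsystem $(Y,\sigma) \subseteq (\tilde{X},\sigma)$. Its image $F(Y)$ is a non-empty, closed, $\psi$-invariant subset of $X$, so minimality of $(X,\psi)$ forces $F(Y)=X$; hence $F|_Y$ remains a factor map. It then remains only to verify that $Y$ is a Cantor space. As a closed subset of $\mathcal{C}^{\mathbb{Z}}$ it is non-empty, compact, metrizable and zero-dimensional, so by Theorem~\ref{thm: cantor charac} the sole missing property is perfectness. Here I would use that $X$ is infinite — minimal $\mathbb{Z}$-systems are free by Remark~\ref{rem: orbdens}, and a free $\mathbb{Z}$-action cannot live on a finite set — so $Y$, surjecting onto $X$, is infinite as well.

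The main obstacle is precisely this last step: ruling out isolated points in $Y$. The key observation is that an infinite minimal system is automatically perfect, for if $y_0 \in Y$ were isolated then minimality would force the orbit of $y_0$ to meet $\{y_0\}$ from every starting point, making $Y$ equal to a single orbit of isolated points; being discrete and compact it would then be finite, contradicting infinitude. Hence $Y$ is perfect and therefore a Cantor space, and $(Y,\sigma)$ is the desired minimal Cantor system factoring onto $(X,\psi)$ via $F|_Y$. The only other point requiring genuine care is the surjectivity of $F$, which is where the invertibility of $\psi$ enters.
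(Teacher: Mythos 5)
Your construction is the same as the paper's: a continuous surjection $f\colon \mathcal{C}\to X$, the closed shift-invariant set of $f$-coherent bi-infinite sequences in $\mathcal{C}^{\mathbb{Z}}$, a minimal subsystem of it, and the factor map $f\circ p_0$. You additionally supply the verifications the paper leaves implicit — surjectivity of the restricted factor map via minimality of $(X,\psi)$, and perfectness of the minimal subsystem (using that $X$ is infinite) so that it really is a Cantor space — which is a welcome tightening but not a different argument.
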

		\begin{proof}
			By Theorem~\ref{thm: cant} for every non-empy, compact, metrizeable space $X$ there exists a continuous surjection from $\mathcal{C}$ onto $X$. Let $\pi$ be such a surjection. The product space $\mathcal{C}^\mathbb{Z}$ is a Cantor space by Example~\ref{ex: CS} (ii). The subset $C:=\{(c_i)_{i \in \mathbb{Z}}:c_i \in \mathcal{C}, \pi(c_{i+1})=\psi(\pi(c_i)) \}$  is closed, as it is an intersection of cylinder sets, hence compact. It is invariant under the natural $\mathbb{Z}$-shift $\sigma$ on $\mathcal{C}^\mathbb{Z}$. By Proposition~\ref{prop: ex.minsubs}, the system $(C, \sigma|_C)$ must contain a minimal subsystem $(M,\sigma|_M)$. Let $p_0|_M$ denote the restriction of the projection $p_0$ on the $0$'th component in $\mathcal{C}^\mathbb{Z}$ to $M$. Then $(M,\sigma)$ is a minimal Cantor system and $\pi \circ p_0|_M$ is continuous, surjective and $(\pi \circ p_0|_M) \circ \sigma_M = \psi \circ (\pi \circ p_0|_M)$ holds. 
		\end{proof}		
		
		\subsection{Some preliminaries of crossed products}\label{subs: crossprod}
		
		In dynamics often the study of group actions on spaces is replaced by the study of automorphism groups of algebras.
		Crossed product C*-algebras are  operator algebras that arise from C*-dynamical systems and, in particular, from topological dynamical systems, thus providing a rich source of C*-algebras.\footnote{See Appendix~\ref{app: c*-algebras} for basic terms of C*-algebras.\\} 
		
		\begin{defi}[\cite{dav96}, p.~216]
			A \emph{C*-dynamical system} is a triple $(\mathfrak{A}, G, \alpha)$ comprising a C*-algebra $\mathfrak{A}$, a locally compact group $G$ and a continuous group-homomorphism $\alpha \colon G \to \mathrm{Aut}(\mathfrak{A})$ i.e. a continuous action of $G$ on $\mathfrak{A}$ by automorphisms.
		\end{defi} 
		
		\begin{ex}
			Let $(X,G)$ be a topological dynamical system given by an action $\alpha \colon G \to \mathrm{Homeo}(X)$. Then $\alpha$ gives rise to a C*-dynamical system $(C(X), G, \tilde{\alpha})$ by the induced map $\tilde{\alpha} \colon G \to \mathrm{Aut}(C(X))$ given by $\big(\tilde{\alpha}(g)(f)\big)(x):= f(\alpha(g^{-1})x)$.
		\end{ex}
		
		\begin{defi}[\cite{dav96}, p.~216]
			Let $(\mathfrak{A}, G, \alpha)$ be a C*-dynamical system. A \emph{covariant representation of $(\mathfrak{A}, G, \alpha)$} is a pair $(\pi,u)$ for which there exists a Hilbert space $\mathcal{H}$ such that $\pi$ is a $*$-algebra-representation of $\mathfrak{A}$ on $\mathcal{H}$ and $u$ is a unitary representation of $G$ on $\mathcal{H}$ satisfying $\pi(\alpha(g)A)=u(g)\pi(A)u(g)^*$ for all $g \in G, A \in \mathfrak{A}$.
		\end{defi}
		
		Let $(\mathfrak{A}, G, \alpha)$ be a C*-dynamical system. The space of functions $C_c(G,\mathfrak{A})$ becomes a $*$-algebra by defining\footnote{$\Delta$ denotes the modular function of $G$, see Subsection~\ref{app: group c-star-algebras}.\\}:
		\begin{equation*}
		\begin{gathered}
		f^{*}(g)=\Delta(g^{-1}) \alpha(g)(f(g^{-1})^*)\\
		(f_1 \ast f_2)(g):=\int\limits_{h \in G} f_1(h)\alpha(h)\big(f_2(h^{-1}g)\big)\;\mathrm{d}\mu_G(h)
		\end{gathered}
		\end{equation*}
		
		Define  $\mathrm{L}^p(G, \mathfrak{A})$ to be the completion of $C_c(G,\mathfrak{A})$ with respect to the $p$-norm given by
		\begin{equation*}
		\|f\|_p:=\big(\int\limits_{h \in G}\|f(g)\|^p\;\mathrm{d}\mu_G(h)\big)^{1/p}
		\end{equation*}
		
		Any covariant representation $(\pi,u)$ of the system $(\mathfrak{A}, G, \alpha)$ induces a representation of $C_c(G,\mathfrak{A})$ by integration:
		\begin{equation*}
		\pi\rtimes u(f):=\int\limits_{g \in G} \pi\big(f(g)\big) u(g) \mathrm{d}\mu_G(g)
		\end{equation*}

		\begin{defi}[\cite{dav96}, p.~217]
			Let $(\mathfrak{A}, G, \alpha)$ be a C*-dynamical system.The completion of $C_c(G,\mathfrak{A})$ with respect to the norm
			\begin{equation*}
			\|f\|_{C^*}:=\sup \{\|\pi \rtimes u(f)\|: (\pi,u) \text{ is a covariant representation of }(\mathfrak{A}, G, \alpha) \}
			\end{equation*}
			is a C*-algebra, called the \emph{(maximal) crossed product C*-algebra of $(\mathfrak{A}, G, \alpha)$} denoted by $\mathfrak{A} \rtimes_\alpha G$. In the case of topological dynamical system $(X,G)$, we write $C(X) \rtimes G$ and $C(X) \rtimes_{\varphi} \mathbb{Z}$ in case of a $\mathbb{Z}$-system $(X,\varphi)$ for the associated crossed product C*-algebras.\footnote{The definition of reduced crossed product C*-algebras has been left out. For minimal Cantor systems the reduced and the maximal crossed product coincide.\\}

		\end{defi}
		
		\begin{rem}
			 Every locally compact group $G$ gives rise to a \emph{degenerate} C*-dynamical system $(\mathbb{C},G,\operatorname{triv})$ and every C*-algebra $\mathfrak{A}$ a \emph{degenerate} C*-dynamical system $(\mathfrak{A},\{e\},\operatorname{triv})$ where $\operatorname{triv}$ denotes the trivial action. Group C*-algebras\footnote{See Appendix~\ref{app: group c-star-algebras}.\\} are crossed product C*-algebras for a degenerate C*-dynamical system in that $C^*(G)= \mathbb{C} \rtimes_{\operatorname{triv}} G$.
		\end{rem}
		
		\begin{rem}[\cite{dav96}, VIII.2]\label{rem: crossedprod}
			In the case of a topological $\mathbb{Z}$-system $(X,\varphi)$ the crossed product C*-algebra $C(X)\rtimes_\varphi \mathbb{Z}$ is the universal C*-algebra generated by $C(X)$ and a unitary $u$ such that $ufu^*= f\circ \varphi^{-1}$. The set of ``Laurent polynomials" 
			\begin{equation*}
			\{\sum_{k=-n}^{n} f_k u^k|n \in \mathbb{N}, f_k \in C(X) \}
			\end{equation*}
			is a dense subalgebra of $C(X)\rtimes_\varphi \mathbb{Z}$. There is an action of the dual group $\hat{\mathbb{Z}}=\mathbb{T}$ by automorphisms on $C(X)\rtimes_\varphi \mathbb{Z}$: Every $\lambda \in \mathbb{T}$ induces an automorphism $\rho_\lambda$ of $C(X)\rtimes_\varphi \mathbb{Z}$ satisfying $\rho_\lambda |_{C(X)}=\operatorname{id}$ and $\rho_\lambda \colon u^k \mapsto \lambda^k u^k$ for $k \in \mathbb{Z}$.\footnote{The map $\lambda u \colon z \to \lambda^z u^z$ defines a unitary representation of $\mathbb{Z}$. The pair $(\operatorname{id},\lambda u)$ is a covariant representation thus inducing a canonical $*$-algebra representation of $C(X)\rtimes_\varphi \mathbb{Z}$ on itself.\\} This action of $\mathbb{T}$, also denoted by $\hat{\varphi}$, induces a conditional expectation $E \colon C(X)\rtimes_\varphi \mathbb{Z} \to C(X)$ by
			\begin{equation*}
			E(a):=\int_{\lambda\in\mathbb{T}} \rho_\lambda(a)\;\mathrm{d}\mu_{\mathbb{T}}
			\end{equation*}
			
			If the system is essentially free the above constitutes $(C(X),C(X)\rtimes_\varphi \mathbb{Z})$ as a Cartan pair.\footnote{See Definition~\ref{defi: cartan pair}.\\}
				The conditional expectation $E$ correponds to the computation of the zero'th Fourier coefficient of a function on $\mathbb{T}$ in that on the dense subalgebra of ``Laurent polynomials" we have
				\begin{equation*}
				\begin{gathered}
				E(\sum_{k=-n}^{n} f_k u^k)=\int_{\lambda\in\mathbb{T}} \rho_\lambda(\sum_{k=-n}^{n} f_k u^k)\;\mathrm{d}\mu_{\mathbb{T}}=\\
				\sum_{k=-n}^{n} \int_{0}^{1}  \rho_{\exp(2\pi i t)}(f_ku^k)\;\mathrm{d}t=
				\sum_{k=-n}^{n} f_k \int_{0}^{1}  \exp(2\pi i k t)  (u^k)\;\mathrm{d}t\\
				\sum_{k=-n}^{n} f_k \int_{0}^{1}  \exp(2\pi i k t)  u^k\;\mathrm{d}t=f_0
				\end{gathered}
				\end{equation*}
				Analogously, for every $k \in \mathbb{Z}$ the maps $E_k \colon C(X)\rtimes_\varphi \mathbb{Z} \to C(X)$ given by $a \mapsto E(au^{-k})$ compute the ``k'th Fourier coefficient".
		\end{rem}
		
		For a C*-dynamical $\mathbb{Z}$-system $(\mathfrak{A},\mathbb{Z},\alpha)$ the cyclical 6-term exact sequence of $K$-groups\footnote{See Appendix~\ref{app: K-theo}.\\} gives rise to the Pimsner-Voiculescu sequence:\footnote{The cylic 6-term exact sequence is applied to a Toeplitz extension of $\mathfrak{A} \otimes \mathbb{K}$ by $\mathfrak{A} \rtimes_\alpha \mathbb{Z}$. The resulting sequence can be shown to correspond to the Pimsner-Voiculescu sequence via diagram chasing.\\}
		\begin{equation*}
		\begin{tikzcd}
		K_0(\mathfrak{A}) \arrow{r}{\operatorname{id}-\alpha_{*}^{-1}} & K_0(\mathfrak{A}) \arrow{r}{\iota_{*}} & K_0(\mathfrak{A} \rtimes_\alpha \mathbb{Z}) \arrow{d}{\delta_0}\\
		K_1(\mathfrak{A} \rtimes_\alpha \mathbb{Z}) \arrow{u}{\delta_1} & K_1(\mathfrak{A}) \arrow{l}{\iota_{*}} & K_1(\mathfrak{A})  \arrow{l}{\operatorname{id}-\alpha_{*}^{-1}}
		\end{tikzcd}
		\end{equation*}
		
		Here $\iota$ denotes the canonical embedding $\mathfrak{A} \hookrightarrow \mathfrak{A} \rtimes_\alpha \mathbb{Z}$. For the crossed product C*-algebra $C(X)\rtimes_\varphi \mathbb{Z}$ associated with a Cantor system $(X,\varphi)$, it holds that $K_0(C(X)) = C(X,\mathbb{Z})$ and $K_1(C(X))=0$ (Example~\ref{ex: afc*}) and above sequence becomes
		\begin{equation*}
		0 \longrightarrow K_1(C(X)\rtimes_\varphi \mathbb{Z}) \overset{\delta_1}{\longrightarrow} C(X,\mathbb{Z}) \overset{\operatorname{id}-\varphi_{*}}{\longrightarrow} C(X,\mathbb{Z}) \overset{\iota_{*}}{\longrightarrow} K_0(C(X)\rtimes_\varphi \mathbb{Z}) \longrightarrow 0
		\end{equation*}
		which produces isomorphisms
		\begin{equation*}
			K_0(C(X)\rtimes_\varphi \mathbb{Z}) \cong C(X,\mathbb{Z})/ \Ima(\operatorname{id}-\varphi_{*}) \quad \text{and} \quad  K_1(C(X)\rtimes_\varphi \mathbb{Z}) \cong \mathbb{Z}.
		\end{equation*}

		\section{A tool box of minimal Cantor systems}\label{sec: the toolbox of minimal Cantor systems}
		
		Subsection~\ref{subs: brat} brushes the theory of AF C*-algebras and Bratteli diagrams. Subsection~\ref{subs: kak.ro} provides a description of Kakutani-Rokhlin partitions. Nested sequences of such partitions allow to approximate minimal Cantor systems, showing these are conjugate to Bratteli-Vershik systems (see Subsection~\ref{subs: ord.brat}). Subsection~\ref{subs: dimgroups} gives the definition of dimension groups. In Subsection~\ref{subs: amplegroups} cites Krieger's work on ample groups. These notions are important to the classification of minimal Cantor systems developed in \cite{put89}, \cite{hps92}, \cite{gps95} and \cite{gps99} (see Subsection~\ref{subs: classific}) and the works of Matui on topological full groups (see Subsection~\ref{subs: simplicity and finite generation}).
		
		\subsection{AF C*-algebras and Bratteli diagrams}\label{subs: brat}
		
		AF C*-algebras are limits of sequences of finite dimensional C*-algebras. The special case of uniformly hyperfinite algebras i.e. limits of sequences of simple finite dimensional C*-algebras, had already been classified by James G. Glimm in \cite{gli60} and Jaques Dixmier in \cite{dix67} in terms of their dimension range i.e. in terms of their $K_0$-groups. Ola Bratteli related AF C*-algebras with certain diagrams:
		
		\begin{defi}[\cite{hps92}, Definition 2.1 \& 2.2]\phantomsection\label{defi: Bratteli diagram}
			~\begin{enumerate}[(i)]
				\item A \emph{Bratteli diagram} $\Gamma=(V,E)$ consists of the following data:
				\begin{enumerate}
					\item a vertex set $V$, which is a disjoint union $\bigsqcup_{n \in \mathbb{N}} V_n$ of finite, non-empty sets $V_n$ with $V_1$ a singleton
					
					\item a set of edges $E$, which is a disjoint union $\bigsqcup_{n \in \mathbb{N}} E_n$ of finite, non-empty sets $E_n$
					
					\item a pair of maps $r,s \colon E \to V$ called \emph{range resp. source map}, such that $r(E_n) \subseteq V_{n+1}$, $s(E_n) \subseteq V_n$, $s^{-1}(v) \neq \emptyset$ for all $v \in V$ and $r^{-1}(v) \neq \emptyset$ for all $v \in V \setminus V_1$.\footnote{The representation as an arrow diagram follows by interpreting an edge $e$ as an arrow from $s(e)$ to $r(e)$.\\}
				\end{enumerate}
				
				\item An \emph{isomorphism of Bratteli diagrams} consists of a bijection between the vertex sets and a bijection between the sets of edges intertwining the range and source maps such that the grading on these sets is preserved.
			\end{enumerate}
			Let $\Gamma=(V,E)$ be a Bratteli diagram.
			\begin{enumerate}[(i),resume]
				\item Let $v \in V_i$ and $v' \in V_j$ with $i<j$. A \emph{path from $v$ to $v'$} is an edge tuple $\{(e_i,e_{i+1},\dots,e_{j-1})\}$ such that $e_k \in E_k$ for $i \leq k \leq j-1$, $s(e_i)=v$, $r(e_{j-1})=v'$ and $r(e_k)=s(e_{k+1})$ for $i \leq k \leq j-2$. The set of all paths from $v$ to $v'$ is denoted by $\mathcal{P}_{v,v'}$
				An \emph{infinite path} $P$ in $\Gamma$ is a sequence $\{e_n\}_{n \in \mathbb{N}}$ of edges $e_n \in E_n$ such that $r(e_n)=s(e_{n+1})$ for all $n \in \mathbb{N}$. The set of all infinite paths in $\Gamma$ is denoted $\mathcal{P}_\Gamma$.
				
				\item Let $\{m_n \}_{n \in \mathbb{N}}$ be a sequence with $m_n \in \mathbb{N}$ and $m_n < m_{n+1}$ for all $n \in \mathbb{N}$ and $m_1=0$. Define a Bratteli diagram $\tilde{\Gamma}=(\tilde{V},\tilde{E})$ by setting $\tilde{V}_n:=V_{m_n}$ and $\tilde{E}_n$ to be the set of paths from vertices in $V_{m_n}$ to vertices in $V_{m_{n+1}}$ and by defining range and source of such an edge by		
				\begin{equation*}
				\begin{gathered}
				r((e_{m_{n}},e_{m_{n}+1},\dots,e_{m_{n+1}-1})):=r(e_{m_{n+1}-1})\\
				s((e_{m_{n}},e_{m_{n}+1},\dots,e_{m_{n+1}-1})):=s(e_{m_{n}}).
				\end{gathered}
				\end{equation*}
				The Bratteli diagram $\tilde{\Gamma}$ is called a \emph{contraction} or a \emph{telescoping of $\Gamma$}. Two Bratteli diagrams are \emph{equivalent} if one can be obtained from the other via a finite chain of telescopings and isomorphisms.
			\end{enumerate}
		\end{defi}
		
		\begin{rem}
			\emph{Microscoping}, a notion dual to telescoping, iteratively inserts new layers of vertices between consecutive vertex sets $V_n$ and $V_{n+1}$ by splitting each edge between $V_n$ and $V_{n+1}$ by a new vertex.
		\end{rem}
		
		Ola Bratteli introduced Bratteli diagrams to study AF C*-algebras in \cite{bra72}:
		\begin{defi}[\cite{bra72}, Definition 1.1 \& §1.3]
			A C*-algebra is said to be \emph{approximately finite-dimensional} (abbreviated as AF), if it is the direct limit of a sequence of finite dimensional C*-algebras.\footnote{Finite dimensional C*-algebras are finite direct sums of full matrix algebras over $\mathbb{C}$.\\}
		\end{defi}
		
		\begin{rem}
			By continuity of the $K_1$-functor and $K_1(\mathbb{C})=0$ the $K_1$-group of AF C*-algebras always vanishes. In particular the crossed product C*-algebra $C(X)\rtimes_\varphi \mathbb{Z}$ of a Cantor system $(X,\varphi)$ is \emph{never} AF, however, every minimal Cantor system $C(X)\rtimes_\varphi \mathbb{Z}$ admits unital embeddings into AF C*algebras (see Theorem~\ref{thm: cantor-AF}).
		\end{rem}
		
		Every AF C*-algebra admits a class of Bratteli diagrams depending on the directed systems of embeddings of finite dimensional C*-subalgebras. Conversely, every Bratteli diagram $\Gamma=(V,E)$ induces a unique AF C*-algebra $C^*(V,E)$: 
		Denote for every $v \in V$ by $M_v$ the full matrix algebra $M_{|\mathcal{P}_{v_0,v}|}(\mathbb{C})$. The  AF C*-algebra $C^*(V,E)$ is defined as the direct limit of the sequence
		\begin{equation*}
		\mathbb{C} \overset{i_0}{\hookrightarrow} \bigoplus_{v \in V_1} M_v
		\overset{i_1}{\hookrightarrow} \bigoplus_{v \in V_2} M_v \overset{i_2}{\hookrightarrow} \dots
		\end{equation*}
		where $i_k \colon \bigoplus_{v \in V_k} M_v \overset{i_k}{\hookrightarrow} \bigoplus_{v \in V_{k+1}} M_v$ is given on a summand $M_v$ as a sum of diagonal maps $M_v \to M_{v'}$ over $V_{k+1}$ with multiplicity corresponding to the number of connecting edges between $v$ and $v'$. Information on properties of $C^*(V,E)$ like the ideal structure can be read in the underlying Bratteli diagram. Theorem 2.7 of \cite{bra72} essentially implies the following:
		\begin{thm}\label{thm: af-brat}
			The just descibed procedures induce a 1-1 correspondence between isomorphism classes of AF C*-algebras and equivalence classes of Bratteli diagrams.
		\end{thm}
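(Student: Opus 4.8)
The plan is to check that each of the two constructions descends to the indicated equivalence classes and that the resulting maps are mutually inverse. For the assignment sending an AF C*-algebra to a Bratteli diagram, I would begin with the structure theory of finite-dimensional C*-algebras: each term $A_n$ of a defining sequence $A = \varinjlim A_n$ is a finite direct sum $\bigoplus_{v \in V_n} M_{d_v}(\mathbb{C})$, so its simple summands label the vertex set $V_n$. A unital $*$-homomorphism between two finite-dimensional C*-algebras is, up to unitary equivalence, completely determined by its matrix of \emph{partial multiplicities}, recording how many copies of the $v$-th summand of $A_n$ land in the $v'$-th summand of $A_{n+1}$; declaring this number to be the count of edges from $v$ to $v'$ produces a Bratteli diagram $\Gamma$ in the sense of Definition~\ref{defi: Bratteli diagram}. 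The two sources of ambiguity are exactly accounted for by the equivalence relation on diagrams: choosing a different system of matrix units changes $\Gamma$ only by an isomorphism, while replacing $A = \varinjlim A_n$ by the subsequence $\varinjlim A_{n_k}$ --- which has the same limit --- changes $\Gamma$ by a telescoping. Hence the equivalence class of $\Gamma$ is a genuine invariant of the isomorphism class of $A$.

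Conversely, the assignment $\Gamma \mapsto C^*(V,E)$ is visibly compatible with the equivalence relation: an isomorphism of diagrams induces an isomorphism of the associated directed systems, and a telescoping of $\Gamma$ merely replaces the directed system defining $C^*(V,E)$ by a cofinal subsystem, whose direct limit is canonically the same C*-algebra. Composing the two constructions in the order diagram $\to$ algebra $\to$ diagram returns the original diagram up to telescoping, directly from the definition of $C^*(V,E)$, since the canonical inclusions $\bigoplus_{v \in V_k} M_v \hookrightarrow \bigoplus_{v \in V_{k+1}} M_v$ have partial multiplicities equal to the edge counts of $\Gamma$. It remains only to establish the reverse composite, namely that isomorphic AF C*-algebras have equivalent Bratteli diagrams.

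This last point is the crux and I expect it to be the main obstacle. Given an isomorphism $\Phi \colon A \to B$ between AF algebras with defining sequences $\varinjlim A_n$ and $\varinjlim B_m$, the strategy is an \emph{approximate intertwining}, building a commuting ladder of honest inclusions
\[
A_{n_1} \hookrightarrow B_{m_1} \hookrightarrow A_{n_2} \hookrightarrow B_{m_2} \hookrightarrow \cdots
\]
(after suitable unitary conjugation) whose two legs telescope the diagram of $A$ and the diagram of $B$ to a common diagram. The key technical input, and the step I expect to be hardest, is a perturbation lemma: a finite-dimensional C*-subalgebra $F \subseteq B$ can, up to conjugation by a unitary close to the identity, be moved inside some $B_m$. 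This rests on the \emph{stability of the matrix-unit relations} --- a finite system of matrix units for $F$ is approximated by elements of the dense subalgebra $\bigcup_m B_m$, and the approximate matrix units so obtained can be corrected to exact ones lying in a single $B_m$ --- which is the concrete manifestation of the semiprojectivity of finite-dimensional C*-algebras. Granting this lemma, one feeds $\Phi(A_{n_1})$ into some $B_{m_1}$, then $\Phi^{-1}(B_{m_1})$ into some $A_{n_2}$, and so on, at each stage absorbing the accumulated unitary perturbations and invoking that a near-inclusion of one finite-dimensional algebra into another, having the same multiplicity data as an exact inclusion, is unitarily equivalent to it. The two telescopings extracted from this ladder then exhibit the Bratteli diagrams of $A$ and $B$ as equivalent, completing the bijection.
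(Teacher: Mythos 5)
Your proposal is correct and follows exactly the route the paper relies on: the text gives no proof of its own but defers to Theorem~2.7 of Bratteli's paper, whose argument is precisely your combination of (a) the multiplicity-matrix description of unital embeddings of finite-dimensional C*-algebras, well defined up to isomorphism and telescoping of the diagram, and (b) the approximate-intertwining ladder built from the perturbation lemma that a finite-dimensional subalgebra can be unitarily conjugated (by a unitary near the identity) into a building block, via stability of the matrix-unit relations. You have correctly identified that last perturbation step as the technical crux; nothing essential is missing.
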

		
		This is a manifestation of a much more general idea where C*-algebras are modelled on combinatorial structure.\footnote{A bit more on this in Subsection~\ref{subs: some historical remarks}.\\} \c{S}erban Str\u{a}til\u{a} and Dan-Virgil Voiculescu showed that AF C*-algebras can be diagonalized, which offers means to study their ideal structure. This was one of the key motivations for \cite{ren80} by Jean Renault (-- see Chapter~\ref{chap: 2}):
		
		\begin{thm}[\cite{sv75}, p.~17]\label{thm: AF}
			For every AF C*-algebra $\mathfrak{A}$ there exist a maximal abelian subalgebra $\mathfrak{C} \subseteq \mathfrak{A}$, a conditional expectation $P \colon \mathfrak{A} \to \mathfrak{C}$ and a locally finite unitary subgroup $U \subseteq U(\mathfrak{A})$ such that:
			\begin{enumerate}[(i)]
				\item $u^*Cu=C$ for all $u \in U$
				
				\item $P(u^*Au)=u^*P(A)u$ for all $A \in \mathfrak{A}$ and $u \in U$
				
				\item The C*-algebra $\mathfrak{A}$ is the closed linear span of the set $\{uC| u \in U, C \in \mathfrak{C}\}$.
				
				\item Let $X$ be the spectrum of $\mathfrak{C}$ i.e. $\mathfrak{C} = C(X)$, and let $\Gamma_U$ be the group of homeomorphisms of $X$ identified\footnote{By Gelfand duality -- see Appendix~\ref{app: basic terms of c*-algebras}.\\} with the group of $*$-automorphisms of $C$ given by the set $\{\gamma_u \colon c \mapsto u^*cu| u \in U \}$. The C*-algebra $\mathfrak{A}$ is $*$-isomorphic to a quotient of $ C(X)\rtimes \Gamma_U$.
			\end{enumerate}
		\end{thm}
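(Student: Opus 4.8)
The plan is to reduce the statement to the combinatorial model furnished by Theorem~\ref{thm: af-brat} and to read off the three pieces of data directly from the system of matrix units indexed by finite paths of a Bratteli diagram. Fix $\Gamma=(V,E)$ with $C^*(V,E)\cong\mathfrak{A}$, so that $\mathfrak{A}=\overline{\bigcup_n \mathfrak{A}_n}$ with $\mathfrak{A}_n=\bigoplus_{v\in V_n} M_v$ and $M_v=M_{|\mathcal{P}_{v_0,v}|}(\mathbb{C})$, where $v_0$ is the unique vertex of $V_1$. For $v\in V_n$ and finite paths $p,q\in\mathcal{P}_{v_0,v}$ write $e_{p,q}$ for the corresponding matrix unit in the summand $M_v$. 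To level $n$ I attach: the diagonal subalgebra $\mathfrak{C}_n:=\overline{\operatorname{span}}\{e_{p,p}\}$, which is maximal abelian in $\mathfrak{A}_n$; the diagonal projection $P_n\colon\mathfrak{A}_n\to\mathfrak{C}_n$ with $e_{p,q}\mapsto\delta_{p,q}e_{p,p}$; and the finite group $U_n\cong\prod_{v\in V_n}S_{|\mathcal{P}_{v_0,v}|}$ of permutation unitaries $u_\pi:=\sum_p e_{\pi(p),p}$, where $\pi$ runs over range-preserving permutations of $\bigsqcup_v \mathcal{P}_{v_0,v}$.

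The decisive point, and the step I expect to require the most care, is coherence under the canonical embeddings $\mathfrak{A}_n\hookrightarrow\mathfrak{A}_{n+1}$. On matrix units these are the diagonal maps $e_{p,q}\mapsto\sum_e e_{pe,qe}$, the sum running over edges $e$ extending the common range of $p$ and $q$. Since $p=q$ forces $pe=qe$ and $p\neq q$ forces $pe\neq qe$, diagonal units go to sums of diagonal units and off-diagonal units to sums of off-diagonal ones; hence $\mathfrak{C}_n\subseteq\mathfrak{C}_{n+1}$, $P_{n+1}|_{\mathfrak{A}_n}=P_n$, and $u_\pi\mapsto u_{\tilde\pi}$ with $\tilde\pi(pe)=\pi(p)e$ embeds $U_n\hookrightarrow U_{n+1}$. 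Passing to limits I set $\mathfrak{C}:=\overline{\bigcup_n\mathfrak{C}_n}$, extend $P:=\lim_n P_n$ by continuity to a faithful conditional expectation $\mathfrak{A}\to\mathfrak{C}$, and put $U:=\bigcup_n U_n$, which is locally finite as an increasing union of finite groups. The spectrum $X$ of $\mathfrak{C}$ is the inverse limit of the finite path sets $\bigsqcup_{v\in V_n}\mathcal{P}_{v_0,v}$, restriction dualizing path-prefixing, i.e. the infinite path space $\mathcal{P}_\Gamma$.

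Properties (i)--(iii) then follow from finite-level computations with matrix units. For (i), conjugation permutes diagonal units, $u_\pi^*e_{p,p}u_\pi=e_{\pi^{-1}(p),\pi^{-1}(p)}$, so $u^*\mathfrak{C}u=\mathfrak{C}$. For (ii), a range-preserving $\pi$ satisfies $p=q\iff\pi(p)=\pi(q)$, so conjugation by $u_\pi$ commutes with the diagonal projection, giving $P(u^*Au)=u^*P(A)u$ first on each $\mathfrak{A}_n$ and then on $\mathfrak{A}$. For (iii), choosing $\pi$ with $\pi(q)=p$ yields $u_\pi e_{q,q}=e_{p,q}$, so every matrix unit lies in $\operatorname{span}\{uc:u\in U,\ c\in\mathfrak{C}\}$, whence this set is dense. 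Maximality of $\mathfrak{C}$ is the one assertion needing more than a one-line identity: if $a$ commutes with $\mathfrak{C}$ then $a-P(a)\in\ker P$ still commutes with $\mathfrak{C}$, and the finite-level fact that no nonzero combination of off-diagonal units commutes with the diagonal forces $a=P(a)\in\mathfrak{C}$.

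Finally, for (iv) I would note that $u_\pi\mapsto\gamma_{u_\pi}\in\mathrm{Homeo}(X)$ is injective, because distinct range-preserving permutations move some finite path, hence some cylinder set, differently; thus $u\mapsto\gamma_u$ identifies $U$ with $\Gamma_U$ and provides a genuine unitary representation of $\Gamma_U$ inside $U(\mathfrak{A})$ with no cocycle to correct. Together with the inclusion $C(X)=\mathfrak{C}\hookrightarrow\mathfrak{A}$ this is a covariant representation of $(C(X),\Gamma_U)$, since the relation $u^*cu=\gamma_u(c)$ is precisely the covariance condition. The universal property of the maximal crossed product then yields a $*$-homomorphism $C(X)\rtimes\Gamma_U\to\mathfrak{A}$ whose image contains $\mathfrak{C}$ and every $u\in U$; by (iii) and closedness of the image it is surjective, exhibiting $\mathfrak{A}$ as a quotient of $C(X)\rtimes\Gamma_U$. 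It is in general only a quotient because the transformation groupoid $X\rtimes\Gamma_U$ surjects onto, but does not coincide with, the tail-equivalence (AF) groupoid that actually models $\mathfrak{A}$.
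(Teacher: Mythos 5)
The paper offers no proof of this statement --- it is quoted directly from Str\u{a}til\u{a}--Voiculescu --- so there is nothing internal to compare against; your argument is the standard construction of the canonical diagonal of an AF algebra from a system of matrix units indexed by finite paths, and it is essentially correct: the coherence of $\mathfrak{C}_n$, $P_n$, $U_n$ under the diagonal embeddings, the verification of (i)--(iii) on matrix units, and the passage to the crossed product via the universal property in (iv) are all sound.

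Three places deserve slightly more care. First, maximality of $\mathfrak{C}$: you apply ``no nonzero combination of off-diagonal units commutes with the diagonal'' to a general $a\in\mathfrak{A}$, which lies in no $\mathfrak{A}_n$. The standard completion is: if $a$ commutes with $\mathfrak{C}$ then $e_{p,p}ae_{q,q}=e_{p,p}e_{q,q}a=0$ for distinct level-$n$ paths $p\neq q$, so $a=\sum_p e_{p,p}ae_{p,p}$ for every $n$; since this compression is contractive and carries $\mathfrak{A}_n$ onto $\mathfrak{C}_n$, approximating $a$ by $a_n\in\mathfrak{A}_n$ gives $a=\lim_n P_n(a_n)\in\mathfrak{C}$. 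Second, injectivity of $u\mapsto\gamma_u$ is cleanest as a corollary of that maximality: $\gamma_{u_1}=\gamma_{u_2}$ forces $u_1u_2^{*}\in\mathfrak{C}'\cap\mathfrak{A}=\mathfrak{C}$, and the only diagonal permutation unitary is the identity; your ``moves some cylinder set'' phrasing only compares two permutations already placed at a common level, which the masa argument handles automatically. Third, your reliance on Theorem~\ref{thm: af-brat} with $V_1$ a singleton and unital connecting maps silently restricts to unital AF algebras, whereas the statement is for arbitrary AF algebras; the same matrix-unit argument goes through in general, but $U$ must then be taken inside the unitization. None of these is a structural defect --- each is repaired by a standard one-line argument.
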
  
		
		The group $\Gamma_U$ is actually a first glimpse into the realms of topological full groups an thus of Chapter~\ref{chap: 3}.
		
		\subsection{Kakutani-Rokhlin partitions}\label{subs: kak.ro}
		
		Kakutani-Rokhlin partitions are a notion transferred from the measured dynamics setting: The \emph{Rokhlin-Kakutani lemma} is a result in ergodic theory proven independently by Vladimir A. Rokhlin and Shizuo Kakutani and was used by Dye to approximate measured dynamical systems. We conform in our choice of notation with \cite{bk00} and \cite{gm14}.
		
		\begin{defi}[\cite{gm14}, Definition~3.1]
			Let $(X,\varphi)$ be a minimal Cantor system.
			\begin{enumerate}[(i)]
				\item Let $A$ be a non-empty clopen subset of $X$. The \emph{first return function of $A$} is the continuous function $t_{\varphi,A} \colon A \to \mathbb{N}$ defined by $t_{\varphi,A}(x):=\min\{n \in \mathbb{N}| \varphi^n(x)\in A \}$.\footnote{The function $t_{\varphi,A}$ is well-defined as forward-orbits are dense in minimal systems by Remark~\ref{rem: orbdens}. Furthermore, it is continuous, since $t_{\varphi,A}^{-1}(n)=\varphi^{-n}(A \cap \varphi^n(A))\setminus \bigcup_{i=1}^{n-1} t_{\varphi,A}^{-1}(i)$.\\}
				
				\item Let $A$ be a clopen subset of $X$ and $n \in \mathbb{N}$ such that the sets $A, \varphi(A), \dots \varphi^{n-1} (A)$ are mutually disjoint. The collection $\{A, \varphi(A), \dots \varphi^{n-1}(A) \}$ is called a \emph{tower of height $n$}. The set $A$ is called \emph{base of the tower}, the set $\varphi^{n-1}(A)$ its \emph{roof}.
				
				\item A disjoint clopen partition $\mathcal{A}$ of $X$ of the form $\mathcal{A}=\{\varphi^k (A_i)|0 \leq k \leq h_i-1, i \in [1,n] \}$ for some $n \in \mathbb{N}$ is called \emph{Kakutani-Rohklin partition}. Every atom of $\mathcal{A}$ corresponds to a pair $(k,i)$ with $0 \leq k \leq h_i-1, i \in \{1,\dots,n\}$. We set $D_{k,i}=\varphi^k (A_i)$ and we define $U(\mathcal{A})$ to be the set of all such pairs $(k,i)$. Denote by $D(i):= \bigsqcup_{0 \leq k \leq h_i-1} D_{k,i}$ the $i$-th tower and denote the minimal height of a tower in $\mathcal{A}$ by $h_{\mathcal{A}}:=\min \{h_i|i \in \{1,\dots,n\} \}$.
				
				\item Let $\mathcal{A}$ be a Kakutani-Rohklin partition of $X$. Then $B(\mathcal{A}):=\bigsqcup_i A_i=\bigsqcup_i D_{0,i}$ is called the \emph{base of $\mathcal{A}$} and $R(\mathcal{A}):=\bigsqcup_i \varphi^{h_i - 1} (A_i)=\bigsqcup_i D_{h_i-1,i}$ is called the \emph{roof of $\mathcal{A}$}.
			\end{enumerate}
		\end{defi}
		
		\begin{rem}\label{rem: kakropa}
			Such partitions exist: Let $(X,\varphi)$ be a minimal Cantor system and let $A$ be a proper non-empty clopen subset of $X$. By continuity of the first return function, there exists a finite family $\{n_i\}$ of positive integers and a finite clopen partition $A=\bigsqcup_i A_i$ such that $t_{\varphi,A}(A_i)=n_i$. Since $\varphi^k(A_i) \cap \varphi^l(A_j)=\emptyset$ for $0 \leq k \leq n_i-1$, $0 \leq l \leq n_j-1$ and $i \neq j$, The partition $\mathcal{A}=\{\varphi^k (A_i)|0 \leq k \leq n_i-1, i \in \{1,\dots,n\} \}$ is a Kakutani-Rohklin partition of $X$.
		\end{rem}

		Let $\mathcal{A}=\{D_{k,i}|0 \leq k \leq h_i-1, i \in \{1,\dots,n\} \}$ be a Kakutani-Rohklin partition of a minimal Cantor system $(X,\varphi)$. It partitions $X$ into $n$ disjoint towers, where the number $h_i$ signifies the height of the $i$-th tower.
		
		\begin{defi}[\cite{gps95}, Definition 1.5]
			Let $(X,\varphi)$ be a minimal Cantor system and let $A$ be a non-empty clopen subset of $X$.
			\begin{enumerate}[(i)]
				\item The homeomorphism $\varphi_A \in \operatorname{Homeo}(X)$ defined by
				\begin{equation*}
				\varphi_A(x):=
				\begin{cases}
				\varphi^{t_{\varphi,A}(x)}(x), & x \in A\\
				x, & \text{ else}
				\end{cases}
				\end{equation*}
				where $t_A$ is the first return function on $A$, is called the \emph{induced transformation of $\varphi$ on $A$}.
				
				\item The restriction of the induced transformation $\varphi_A$ to $A$ is a minimal homeomorphism of $A$ by the minimality of $\varphi$ and the minimal Cantor system $(A,\varphi_A|_A)$ is called the \emph{induced} or \emph{derivative Cantor system of $(X,\varphi)$ over $A$}.
			\end{enumerate}
		\end{defi}	
		
		As their measure theoretic counterparts Kakutani-Rokhlin partitions are used to approximate dynamical systems. The definition is essentially implicit in Theorem 4.2 of \cite{hps92}.
		
		\begin{defi}
			Let $(X,\varphi)$ be a minimal Cantor system.
			\begin{enumerate}[(i)]
				\item Let $\mathcal{A}=\{D_{k,i}|0 \leq k \leq h_i-1, i \in \{1,\dots,n\} \}$ be a Kakutani-Rokhlin partition. Let $\mathcal{P}$ be a finite partition of $X$. Let $P \in \mathcal{P}$ and let $U_{P,\mathcal{A}} \subseteq U(\mathcal{A})$ be the set off all pairs $(k,i)$ such that $P \cap D_{k,i} \neq \emptyset$ and $P \triangle D_{k,i} \neq \emptyset$. Then $\{ \varphi^j(D_{0,i} \cap \varphi^{-k}(P))|0 \leq j \leq k, (k,i) \in S_{P,\mathcal{A}}, P \in \mathcal{P} \}$ is a Kakutani-Rokhlin partition of $X$ called the \emph{refinement of $\mathcal{A}$ by $\mathcal{P}$}.
				
				\item Let $Y \subseteq X$ such that $Y$ is either clopen or a singleton. A sequence $\{\mathcal{A}_n\}_{n \in \mathbb{N}}=\{D_{k,i}^n|0 \leq k \leq h^n_i-1, i \in \{1,\dots,i_n\} \}$ of Kakutani-Rokhlin partitions is called a \emph{nested sequence of Kakutani-Rokhlin partitions around $Y$} if it satisfies the following properties:
				\begin{enumerate}
					\item $\bigcup_{n} \{\mathcal{A}_n\}$ generates the topology of $X$.
					
					\item $\mathcal{A}_{n+1}$ refines $\mathcal{A}_n$.
					
					\item $B(\mathcal{A}_{n+1}) \subset B(\mathcal{A}_{n})$.
					
					\item $\bigcap_n B(\mathcal{A}_n) = Y$. 
				\end{enumerate}
				
				\item Let $x \in X$. A nested sequence $\{\mathcal{A}_n\}_{n \in \mathbb{N}}$ of Kakutani-Rokhlin partitions around $x$ is said to have \emph{property (H)} if the following holds:
				\begin{enumerate}[(H)]
					\item $h_{\mathcal{A}_n} \to \infty$ for $n \to \infty$.
				\end{enumerate}
			\end{enumerate}
		\end{defi}
		
		\begin{rem}
			Let $Y$ be a subset of $X$ that is either clopen or a singleton and let $\{B_n\}_{n \in \mathbb{N}}$ be a descending filtration of clopen sets $B_n$ with $\bigcap_n B_n = Y$ and $B_1=X$. Applying the construction from Remark~\ref{rem: kakropa} to each set $B_n$ and taking sufficient refinements, generates a nested sequence of Kakutani-Rohklin partitions $\{\mathcal{A}_n\}_{n \in \mathbb{N}}=\{D_{k,i}^n|0 \leq k \leq h^n_i-1, i \in \{1,\dots,i_n\} \}$ around $Y$ . This construction also works for the more general systems considered in \cite{hps92} and by restriction to subsequences, hereafter, property (H) can always be assumed.
		\end{rem}
		
		\begin{ex}\label{ex: kakutani-rokhlin partition of an odometer}
			Let $G_{\bm{\mathrm{a}}}$ be the odometer of type $\bm{\mathrm{a}}$ (see Example~\ref{ex: shift+odo}(ii)) for a sufficient sequence $\bm{\mathrm{a}}=(a_n)_{n \in \mathbb{N}}$. Then
			\begin{equation*}
			\begin{gathered}
			\{\mathcal{A}_n\}_{n \in \mathbb{N}}:= \{D_k^n\}_{n \in \mathbb{N},0\leq k \leq a_n-1}:=\\
			=\{(z_i)_{i \in \mathbb{N}}+k \cdot \bm{1}|(z_i)_{i \in \mathbb{N}} \in G_{\bm{\mathrm{a}}},z_1=\dots=z_n=0 \}_{n \in \mathbb{N},0\leq k \leq a_n-1}=\\
			\{(z_i)_{i \in \mathbb{N}} \in G_{\bm{\mathrm{a}}}|z_n=k \}_{n \in \mathbb{N},0\leq k \leq a_n-1}
			\end{gathered}
			\end{equation*}
			is a nested sequence of Kakutani-Rokhlin partitions\footnote{By definition the atoms $D_k^n$ form a basis of clopens for the topology on $X=G_{\bm{\mathrm{a}}}$.\\} around $(0,0,\dots)$ with property (H) in which every partition $\mathcal{A}_n$ consists of one tower of height $a_n$.
		\end{ex}
		
			\subsection{Ordered Bratteli diagrams and Bratteli-Vershik systems}	\label{subs: ord.brat}
			
			Motivated by work of Anatoly M. Vershik in the measured dynamics context, Cantor systems are described in \cite{hps92} in terms of Bratteli diagrams. When endowed with an order structure Bratteli diagrams are inherently dynamical objects:
			\begin{defi}[\cite{hps92}, Definition 2.3. \& 2.6.]
				Let $\Gamma=(V,E)$ be a Bratteli diagram. 
				\begin{enumerate}[(i)]
					\item If $\Gamma$ is endowed with a partial order $\leq$ on $E$ such that $e,\tilde{e} \in E$ are comparable \emph{if and only if} $r(e)=r(\tilde{e})$, it is called an \emph{ordered Bratelli diagram}. This partial order induces a total order on $r^{-1}(r(e))$ for all $e \in E$.
					
					\item If $\Gamma$ is ordered, an infinite path is called \emph{minimal (resp. maximal)}, if $e_n$ is minimal (resp. maximal) in $r^{-1}(r(e_n))$ for all $n \in \mathbb{N}$ and such paths necessarily exist.
					
					\item Let $\Gamma$ be ordered. $\Gamma$ is said to be \emph{essentially simple}, if it has unique minimal and maximal paths $P_{\min},P_{\max}$ and it is \emph{simple} if in addition $\Gamma$ has a telescoping $\tilde{\Gamma}$ such that for all $n \in \mathbb{N}$ and $v \in \tilde{V}_n,v' \in \tilde{V}_{n+1}$ the set $\{e_n \in \tilde{E}_n| s(e_n)=v,r(e_n)=v' \}$ is non-empty.
				\end{enumerate}
			\end{defi}
			
			Let $\Gamma$ be an essentially simple ordered Bratteli diagram. The space of infinite paths $\mathcal{P}_\Gamma$ carries a topology given by cylinder sets, resulting in a compact, totally disconnected, metrizable space. If $\Gamma$ is simple, $\mathcal{P}_\Gamma$ is either finite or a Cantor space. We need to define a homeomorphism $\varphi_\Gamma \colon \mathcal{P}_\Gamma \to \mathcal{P}_\Gamma$. To this end we set $\varphi_\Gamma(P_{\max})=P_{\min}$. Let $P=\{e_n\}_{n \in \mathbb{N}} \in \mathcal{P}_\Gamma$ with $P\neq P_{\max}$. Then $P$ contains edges $e_i$ not maximal in $r^{-1}(r(e_i))$. Let $e_j$ be the first such edge appearing in $P$. Then there exists an edge $f_j$ that is a successor of $e_j$ in $r^{-1}(r(e_j))$  with respect to the induced total ordering. There exists a unique path $(f_0,\dots,f_{j-1})$ from $v_0$ to $s(f_j)$ such that every contained edge $f_k$ is minimal in $r^{-1}(r(f_k))$. Define $\varphi_\Gamma(P):=(f_0,\dots,f_j,e_{j+1}, \dots)$. The obtained map $\varphi_\Gamma$ is indeed a homeomorphism of $\mathcal{P}_\Gamma$ called the \emph{Vershik map}, named after Vershik, whose slightly different version -- the \emph{adic transformations} in \cite{ver81} -- inspired this construction. The associated topological dynamical system $(\mathcal{P}_\Gamma,\varphi_\Gamma)$ is called \emph{Bratteli-Vershik dynamical system}. Fixing $P_{\max}$, makes it a canonical pointed system.					
			
			\begin{thm}[\cite{hps92}, Theorem 4.7.]\label{thm: hps}
				Conjugacy classes of pointed essentially minimal $\mathbb{Z}$-systems over compact, totally disconnected, metrizable spaces are in 1-1 correspondence with equivalence classes of essentially simple, ordered Bratteli diagrams.
			\end{thm}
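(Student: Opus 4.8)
The plan is to exhibit the claimed bijection by constructing maps in both directions and verifying that they are mutually inverse. In one direction the assignment is already in hand: to an essentially simple ordered Bratteli diagram $\Gamma$ we associate the pointed Bratteli--Vershik system $(\mathcal{P}_\Gamma, \varphi_\Gamma, P_{\max})$ constructed above. First I would check that this target is genuinely a pointed essentially minimal $\mathbb{Z}$-system over a compact, totally disconnected, metrizable space. Compactness, metrizability and total disconnectedness of $\mathcal{P}_\Gamma$ follow from its description via cylinder sets; essential minimality follows from the essential simplicity of $\Gamma$, since the forward orbit of every path accumulates on the unique minimal path, and the telescoping condition in the definition of simplicity forces the minimal subsystem to be unique. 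I would also record that the assignment descends to equivalence classes: an isomorphism of ordered Bratteli diagrams visibly induces a conjugacy of pointed systems, and a telescoping merely regroups consecutive edges into single edges, which alters neither the path space nor the Vershik dynamics.

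For the reverse direction I would start from a pointed essentially minimal system $(X,\psi,y)$ with $y$ in the unique minimal subsystem and manufacture an ordered Bratteli diagram from a nested sequence of Kakutani--Rokhlin partitions around $y$. Using Remark~\ref{rem: kakropa} together with the refinement procedure, choose a nested sequence $\{\mathcal{A}_n\}_{n\in\mathbb{N}}$ with $\bigcap_n B(\mathcal{A}_n)=\{y\}$ whose atoms generate the topology of $X$. The combinatorial data of the diagram are then read off as follows: the towers of $\mathcal{A}_n$ form the vertex set $V_{n+1}$; an edge from a tower $T$ of $\mathcal{A}_n$ to a tower $T'$ of $\mathcal{A}_{n+1}$ is recorded each time a floor of $T'$ lies inside $T$ as $T'$ climbs from its base to its roof; and the order on the edges into a fixed vertex $T'$ is precisely the order in which $T'$ traverses the towers of $\mathcal{A}_n$, read from the base of $T'$ upward. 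The point $y$, lying in every base $B(\mathcal{A}_n)$, corresponds to the minimal infinite path, so that its image under the wrap-around matches the distinguished path $P_{\max}$.

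With both constructions in place I would exhibit the conjugacy $X \to \mathcal{P}_\Gamma$ sending a point $x$ to the infinite path recording, at level $n$, the floor of the tower of $\mathcal{A}_n$ that contains $x$. Since the atoms of $\{\mathcal{A}_n\}$ generate the topology and separate points, this map is a continuous bijection between compact Hausdorff spaces and hence a homeomorphism. The crucial verification is that it intertwines $\psi$ with $\varphi_\Gamma$: for a point lying in no roof, applying $\psi$ simply advances it one floor within its tower at every level, which is exactly the effect of replacing the first non-maximal edge of the corresponding path by its successor and resetting the lower edges to be minimal; for the distinguished roof point $y':=\psi^{-1}(y)$, the map $\psi$ carries it into the base, reproducing the defining rule $\varphi_\Gamma(P_{\max})=P_{\min}$.

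The step I expect to be the main obstacle is precisely this intertwining at the roof-to-base transition, together with the well-definedness on equivalence classes. Matching $\psi$ with $\varphi_\Gamma$ on the bulk is a bookkeeping exercise, but one must check that the orders chosen at every level are globally compatible, so that a point sitting in the roof of $\mathcal{A}_n$ but the base of $\mathcal{A}_{n+1}$ behaves coherently; this is where property (H) and the refinement and nestedness conditions on $\{\mathcal{A}_n\}$ are indispensable. Finally, to see that the class of the resulting diagram is independent of the chosen partition sequence, I would show that passing to a subsequence $\{\mathcal{A}_{n_k}\}$ telescopes the diagram and that any two nested sequences around $y$ admit a common refinement, so that the associated diagrams become equivalent after finitely many telescopings. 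Combined with the fact that the two assignments are mutually inverse up to these moves, this yields the asserted $1$-$1$ correspondence.
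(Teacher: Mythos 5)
Your proposal follows essentially the same route as the paper's sketch (and the original argument of Herman--Putnam--Skau): the forward direction is the Bratteli--Vershik construction, and the reverse direction reads the diagram off a nested sequence of Kakutani--Rokhlin partitions around the distinguished point, with towers as vertices, passages as ordered edges, uniqueness of the extremal paths coming from $\bigcap_n B(\mathcal{A}_n)=\{y\}$, and independence of choices handled by telescoping. The plan, including the identification of the conjugacy and the roof-to-base matching with $\varphi_\Gamma(P_{\max})=P_{\min}$, is correct.
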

			
			The tool to establish the other direction are nested sequences of Kakutani-Rokhlin partitions (see Section 4 of \cite{hps92}):
			Let $(X,\varphi)$ be a minimal Cantor system with distinguished point $x$. There exists a nested sequence $\{\mathcal{A}_n\}_{n \in \mathbb{N}}$ of Kakutani-Rokhlin partitions around $x$ arising from a sufficient descending filtration $\{B_n\}_{n \in \mathbb{N}}$ of clopen sets. The set of vertices in $V_n$ is the set of towers of the Kakutani-Rokhlin partition $\mathcal{A}_n$. Since $B_0=X$, the set $V_0$ consists of a single vertex. By definition $\mathcal{A}_{n+1}$ is a refinement of $\mathcal{A}_n$, following the orbits of points from the base of a tower $D^{n+1}(i)$ in $V_{n+1}$ in positive direction up to the top of $D^{n+1}(i)$ corresponds to following the positive orbits of these points through a sequence of towers in $V_n$. For every passing of a tower $D^n(j)$ in $V_n$, we draw a directed edge from $D^n(j)$ to $D^{n+1}(i)$. The order in which towers are passed induces a linear order on these edges.
			Property (d) in the definition of nested sequences implies that minimal and maximal infinite paths of the resulting ordered Bratteli diagram are unique. Furthermore, the diagram is essentially simple. Choosing another nested sequence of Kakutani-Rokhlin partition yields an equivalent Bratteli diagram, since restricting to subsequences corresponds to telescoping. The Bratteli-Vershik system of the associated ordered Bratteli diagram is conjugate to the initial dynamical system, eventually implying Theorem~\ref{thm: hps}.
			
			\begin{rem}\label{rem: simpkakro}
				Let $(X,\varphi)$ be a minimal Cantor system with a nested sequence of Kakutani-Rokhlin partitions $\{\mathcal{A}_n\}_{n \in \mathbb{N}}=\{D_{k,i}^n|0 \leq k \leq h^n_i-1, i \in \{1,\dots,i_n\} \}$ with property (H). Then for every tower $D$ in some $\mathcal{A}_n$ there exists an $l>n$ such that for every $k \geq n$ every tower in $\mathcal{A}_k$ runs through $D$. This implies that the associated Bratteli diagram is simple.
			\end{rem}

		\subsection{Dimension groups}\label{subs: dimgroups}
		
		Dimension functions are real valued, non-negative functions on the positive elements in a C*-algebra (thus in particular on the projections), that preserve the order structure, that are additive on orthogonal elements and invariant under the equivalence of projections. These functions are in close correspondence with the C*-algebras traces and can in some sense be seen as measuring the supports of positive elements -- see \cite{bla06}, II.6.8. Dimension functions and their ranges had been used in the works Glimm and Dixmier, inspired by which George A. Elliott gave a classification of AF C*-algebras in terms of dimension groups -- these are a particular kind of abelian ordered group in which the dimension ranges embed and which have subsequently become identified as operator K-groups. Elliott's classification in turn was a building block in the classification of essentially minimal pointed Cantor systems (see Subsection~\ref{subs: ord.brat}). Bratteli diagrams as well as minimal Cantor systems give rise to dimension groups. These turn out to be K-groups\footnote{See appendix~\ref{app: K-theo}.\\} in disguise and have been fundamental in the classification of minimal Cantor systems:
		\begin{defi}[\cite{bla06}, §V.2.4]\phantomsection\label{defi: ord.g}
			\begin{enumerate}[(i)]
				\item An \emph{ordered group} $(G,G^+)$ is a pair consisting of an abelian group $G$ and a \emph{positive cone} $G^+$ i.e. a subset of $G$ such that:
				\begin{enumerate}
					\item $G^+ +G^+ \subseteq G^+$
					
					\item $G^+ - G^+ =G$
					
					\item $G^+ \cap (-G^+)=\{0 \}$. 
				\end{enumerate}
				
				The positive cone $G^+$ induces an $G$-invariant partial ordering $\leq$ on $G$ by setting $y \leq x$ for $x,y \in G$ if $x-y \in G^+$.
				
				\item Let $(G_1,G_1^+)$ and $(G_2,G_2^+)$ be a pair of ordered groups. A \emph{positive homomorphism  $\varphi \colon (G_1,G_1^+) \to (G_2,G_2^+)$} is a group-homomorphism $\varphi \colon G_1 \to G_2$ with $\varphi(G_1^+) \subseteq G_2^+$. Ordered groups with positive homomorphisms as morphisms form a category.
			\end{enumerate}
			
			Let $(G,G^+)$ be an ordered group.
			\begin{enumerate}[(i),resume]
				\item  An element $u \in G^+$ is called an \emph{order unit} if for every $x \in G$ there exists an $n \in \mathbb{Z},n>0$ such that $x \leq n \cdot u$ i.e. the order ideal generated by $u$ is all of $G$. 
				
				\item The ordered group $(G,G^+)$ is said to be \emph{simple} if it has no proper order ideals i.e. every non-zero positive element is an order unit. 
				
				\item It is called \emph{unperforated}, if for every $g \in G$ and positive integer $n$, $n\cdot g \in G^+$ implies $g \in G^+$.
				
				\item It is said to have the \emph{Riesz interpolation property}, if for every $g_1,g_2,h_1,h_2 \in G$ with $g_i \leq h_j$ for $j,i \in \{1,2\}$, there exists an $l \in G$ with $g_i \leq l \leq h_j$ for $j,i \in \{1,2\}$.
				
				\item A \emph{dimension group} is an countable, unperforated, ordered group that has the Riesz interpolation property.
				
				\item Let $u$ be a fixed order unit of $(G,G^+)$. The triple $(G,G^+,u)$ is called a \emph{scaled ordered group}. An \emph{isomorphism of scaled ordered groups} is a positive isomorphism that preserves the order unit.
				
				\item Let $(G,G^+,u)$ be a scaled ordered group. A \emph{state on $(G,G^+,u)$} is a positive homorphism $f\colon (G,G^+) \to (\mathbb{R},\mathbb{R}_{\geq 0})$ with $f(u)=1$. Denote by $\mathcal{S}(G,G^+,u)$ the set of all states on $(G,G^+,u)$.
				
				\item Let $(G,G^+,u)$ be a simple scaled ordered dimension group. An element $g \in G$ is called \emph{infinitesimal} if $s(g)=0$ for all $s \in \mathcal{S}(G,G^+,u)$. The set of infinitesimals is denoted by $\operatorname{Inf}(G)$.
			\end{enumerate}
		\end{defi}
		
		\begin{rem}
			\begin{enumerate}[(i)]
				\item For all $n \in \mathbb{N}$ the group $\mathbb{Z}^n$ the standard order is defined by $(\mathbb{Z}^n)^+:=\{(z_1,\dots,z_n)\in \mathbb{Z}^n|z_i \geq 0 \text{ for all }1 \leq i \leq n \}$ for which the element $(1,\dots,1)$ is an order unit. George A. Elliott introduced dimension groups in \cite{ell76} as direct limits of sequences of free abelian groups of finite rank with standard order structure and positive group homomorphisms as maps. The \emph{Effros-Handelman-Shen theorem} (\cite{ehs80}, Theorem 2.2.) characterizes such ordered groups as the class of groups from Definition~\ref{defi: ord.g}(vi).
				
				\item The order structure of a simple dimension group $(G,G^+)$ with order unit $u$ is determined by the set of states, since $G^+=\{0\} \cup \{g \in G| f(g)>0 \text{ for all } f \in \mathcal{S}(G,G^+,u) \}$.
				
				\item We omit the notation of (scaled) order groups as tuples (resp. triples) when the order structure is clear.
			\end{enumerate}
		\end{rem}
		
		Let $\Gamma=(V,E)$ be a Bratteli diagram. The limit of
		\begin{equation*}
		\mathbb{Z}\overset{\iota_0}{\hookrightarrow}\mathbb{Z}^{|V_1|}\overset{\iota_1}{\hookrightarrow}\mathbb{Z}^{|V_2|}\overset{\iota_2}{\hookrightarrow}\dots
		\end{equation*}
		where $\iota_k \colon \mathbb{Z}^{|V_k|} \to \mathbb{Z}^{|V_{k+1}|}$ is given by $\iota_k (v):=\sum_{e \in s^{-1}(v)} r(e)$ for $v \in V_k$ defines a dimension group denoted by $K_0(V,E)$. Since $\mathbb{Z}^{|V_k|} \cong K_0(\bigoplus_{v \in V_k} M_v)$, we have $K_0(V,E) \cong K_0(C^*(V,E))$ by continuity of the functor $K_0$.
		
		Later Elliott would go on to conjecture a classification of a wide class of C*-algebras up to isomorphism by considering both K-groups with some additional information. This program, which was highly influential in C*-algebra theory, was in part motivated by his result on AF C*-algebras established in \cite{ell76}:
		\begin{thm}
			The scaled ordered $K_0$-groups of AF C*-algebras are complete isomorphism invariants.
		\end{thm}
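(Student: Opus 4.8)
The plan is to prove the two directions separately, the forward one being purely formal and the converse carrying the content. Functoriality of $K_0$ immediately gives that any $*$-isomorphism $\mathfrak{A}\cong\mathfrak{B}$ induces an isomorphism of scaled ordered groups $\big(K_0(\mathfrak{A}),K_0(\mathfrak{A})^+,[1_{\mathfrak{A}}]\big)\cong\big(K_0(\mathfrak{B}),K_0(\mathfrak{B})^+,[1_{\mathfrak{B}}]\big)$, since a $*$-homomorphism sends projections to projections, preserves orthogonality and Murray--von Neumann equivalence, and sends the unit to the unit. For the converse I would use Elliott's intertwining argument. Write $\mathfrak{A}=\varinjlim A_n$ and $\mathfrak{B}=\varinjlim B_n$ as direct limits of finite-dimensional C*-algebras with connecting $*$-homomorphisms. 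By continuity of the functor $K_0$ (already invoked in the excerpt) one has $K_0(\mathfrak{A})=\varinjlim K_0(A_n)$ and $K_0(\mathfrak{B})=\varinjlim K_0(B_n)$ as scaled ordered groups, with each $K_0(A_n)$ a free abelian group of finite rank carrying the standard order.

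The technical heart is a pair of lemmas about finite-dimensional C*-algebras $F,G$, both of which reduce to the classification of $*$-homomorphisms between finite direct sums of matrix algebras by their multiplicity matrices. \emph{Existence}: every positive group homomorphism $K_0(F)\to K_0(G)$ respecting the order unit is induced by a (unital) $*$-homomorphism $F\to G$; this is because a prescribed multiplicity matrix with nonnegative entries and the correct column sums is realized by an explicit block-diagonal embedding. \emph{Uniqueness}: two $*$-homomorphisms $F\to G$ induce the same map on $K_0$ \emph{if and only if} they are unitarily equivalent, since $K_0$ records exactly the multiplicity matrix and two embeddings with equal multiplicities differ only by a choice of basis, i.e.\ by conjugation by a unitary of $G$.

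With these in hand I would assemble the intertwining. Given a scaled ordered isomorphism $\alpha\colon K_0(\mathfrak{A})\to K_0(\mathfrak{B})$ with inverse $\beta$, finite generation of each $K_0(A_n)$ forces the composite $K_0(A_n)\to K_0(\mathfrak{A})\xrightarrow{\alpha}K_0(\mathfrak{B})=\varinjlim K_0(B_m)$ to factor through some $K_0(B_{m(n)})$; after increasing $m(n)$ the factoring map can be taken positive and order-unit preserving, so by the existence lemma it lifts to a $*$-homomorphism $\phi_n\colon A_n\to B_{m(n)}$. Applying the same to $\beta$ yields lifts $\psi_m\colon B_m\to A_{k(m)}$, and interleaving the two families along passing subsequences produces a zig-zag between the inductive systems whose every triangle commutes at the level of $K_0$. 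Passing to the direct limits of the two resulting staircases yields $*$-homomorphisms $\Phi\colon\mathfrak{A}\to\mathfrak{B}$ and $\Psi\colon\mathfrak{B}\to\mathfrak{A}$, and by construction $K_0(\Phi)=\alpha$; one checks $\Psi\circ\Phi$ and $\Phi\circ\Psi$ are the identities on the dense union of the $A_n$ (resp.\ $B_m$), hence $\mathfrak{A}\cong\mathfrak{B}$.

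The main obstacle is the bookkeeping in this last step: the lifted maps are only canonical up to the unitary equivalence supplied by the uniqueness lemma, so the zig-zag commutes on the nose only on $K_0$ and merely up to an inner automorphism as genuine $*$-homomorphisms. The crucial point to make the argument work is that such a unitary discrepancy at stage $n$ can always be absorbed by conjugating the next lifted connecting map by the same unitary; proceeding inductively, one corrects each newly constructed map before building the following one, so that the diagram commutes exactly up to unitary equivalence at each finite stage and the accumulated discrepancies are controlled. This is exactly what guarantees that the limit maps $\Phi$ and $\Psi$ are well defined and mutually inverse, and it is where the uniqueness lemma is indispensable.
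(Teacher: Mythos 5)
The paper states this theorem without proof, citing it as Elliott's result from \cite{ell76}, so there is no in-text argument to compare against. Your proposal is precisely the standard Elliott approximate-intertwining proof of that result: functoriality of $K_0$ for the easy direction, the existence and uniqueness lemmas for $*$-homomorphisms between finite-dimensional C*-algebras (both read off from multiplicity matrices), and a zig-zag between the two inductive systems in which the unitary discrepancies supplied by the uniqueness lemma are absorbed inductively into the next lifted map so that the staircase commutes exactly and the limit $*$-homomorphisms are mutually inverse. The outline is correct, and you have correctly isolated the one genuinely delicate point (the inductive unitary correction); the only small thing left implicit is that the inverse $\beta=\alpha^{-1}$ is itself positive, which holds because an isomorphism of scaled ordered groups is by definition an isomorphism in the category of ordered groups.
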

		
		\begin{ex}\label{ex: afc*}
			Let $\Gamma=(V,E)$ be the Bratteli diagram given by an infinite complete lexicographically labelled directed binary rooted tree. Then $\bigoplus_{v \in V_k} M_v = \bigoplus_{2^k} \mathbb{C}$ for all $k \in \mathbb{N}$, which is isomorphic to the C*-algebra of complex-valued functions on the boundary constant on the cylinder sets $C[\{ 0,1\}^k]$. The limit is the C*-algebra $C(X)$ of complex-valued continuous functions on the Cantor space given by the infinite paths. We have $K_0(C(X))=C(X,\mathbb{Z})$ and $K_1(C(X))=0$ by continuity and the positive cone of $C(X,\mathbb{Z})$ is just the subset of non-negative functions.
			
		\end{ex}
		
		\begin{rem}(\cite{dav96}, Example~III.2.5)
			Commutative AF C*-algebras are precisely the commutative C*-algebras with totally disconnected spectrum.
		\end{rem}
		
		Cantor systems admit dimension groups as follows:
		\begin{thm}[\cite{gps95}, p.~59]
			Let $(X,\varphi)$ be a Cantor system. Define 
			\begin{equation*}
			K^0(X,\varphi):=C(X,\mathbb{Z})/\{f-f\circ \varphi^{-1}| f \in C(X,\mathbb{Z})\}
			\end{equation*}
			
			Then $(K^0(X,\varphi),K^0(X,\varphi)^+,\mathbf{1}_X)$ is a scaled ordered dimension group with positive cone $K^0(X,\varphi)^+:= \{[f]| f \geq 0, f \in C(X,\mathbb{Z})\}$. This makes  $K^0(X,\varphi)/\operatorname{Inf}(K^0(X,\varphi))$ a simple scaled ordered dimension group.
		\end{thm}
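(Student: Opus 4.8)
The plan is to transport the whole question into the combinatorics of a Bratteli diagram. I take $(X,\varphi)$ to be minimal, the setting in which the assertion is intended and in which the simplicity conclusion can hold (for instance $K^0(X,\mathrm{id})=C(X,\mathbb{Z})$ already has proper order ideals). By Theorem~\ref{thm: hps} the system is then conjugate to a Bratteli--Vershik system $(\mathcal{P}_\Gamma,\varphi_\Gamma)$ for an essentially simple ordered Bratteli diagram $\Gamma=(V,E)$, and Remark~\ref{rem: simpkakro} upgrades this to genuine simplicity of $\Gamma$. The key step is to exhibit a scaled-ordered-group isomorphism $K^0(X,\varphi)\cong K_0(V,E)$ onto the dimension group of the diagram; every property in the statement is then read off the right-hand side.

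To construct it I would use the nested Kakutani--Rokhlin partitions $\{\mathcal{A}_n\}_{n\in\mathbb{N}}$ (with property (H)) underlying the model. Their atoms generate the topology, so $C(X,\mathbb{Z})=\varinjlim_n C_n$ where $C_n\cong\mathbb{Z}^{|U(\mathcal{A}_n)|}$ consists of the $\mathcal{A}_n$-measurable functions. The crucial identity is $\mathbf{1}_A-\mathbf{1}_{\varphi(A)}=\mathbf{1}_A-\mathbf{1}_A\circ\varphi^{-1}\in B$, which shows that inside a tower all the indicators $\mathbf{1}_{D_{k,i}^n}$ are cohomologous; hence modulo the coboundary subgroup $B$ a function enters only through the tower-sums $\sum_k g|_{D_{k,i}^n}$. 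These tower-sum maps $C_n\to\mathbb{Z}^{|V_n|}$ are compatible with the refinements $\mathcal{A}_n\rightsquigarrow\mathcal{A}_{n+1}$ and the Bratteli connecting maps $\iota_n$, the edge multiplicities counting how often a level-$(n+1)$ tower runs through a level-$n$ tower. Matching the positive cones and the order unit is then immediate: the classes $[\mathbf{1}_{D_{0,i}^n}]$ generate $K^0(X,\varphi)^+$ and correspond to the standard generators of $(\mathbb{Z}^{|V_n|})^+$, while $[\mathbf{1}_X]=\sum_i h_i^n\,[\mathbf{1}_{D_{0,i}^n}]$ corresponds to the order unit $(h_i^n)_i$.

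The hard part is to check that these maps have kernel exactly $B$, i.e. that a function with vanishing tower-sums is a coboundary. Telescoping the values up each tower produces a candidate primitive, but the contributions at the roof $R(\mathcal{A}_n)$ and at the $\varphi$-preimages of the base $B(\mathcal{A}_n)$ fail to cancel at any fixed level; one must push these discrepancies to finer partitions and let property (H) absorb them in the direct limit. This is precisely the analysis carried out in Section~4 of \cite{hps92}, which I would invoke. With the isomorphism $K^0(X,\varphi)\cong K_0(V,E)$ secured, the Effros--Handelman--Shen picture of $K_0(V,E)$ as a direct limit of simplicial groups $\mathbb{Z}^{|V_n|}$ under positive maps shows it is countable, unperforated and has the Riesz interpolation property, hence is a dimension group with order unit $[\mathbf{1}_X]$; moreover simplicity of $\Gamma$ makes it a simple ordered group, since any nonzero positive element becomes strictly positive once telescoped through the all-to-all connections and is therefore an order unit.

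It remains to treat the infinitesimal quotient. I would identify the states of $(K^0(X,\varphi),K^0(X,\varphi)^+,\mathbf{1}_X)$ with the invariant measures $M_\varphi$ through $s_\mu([f])=\int f\,\mathrm{d}\mu$, which is well defined because coboundaries have zero integral and is onto by Riesz representation applied to a state regarded as a positive functional on $C(X,\mathbb{R})$. Thus $\operatorname{Inf}(K^0(X,\varphi))=\{[f]:\int f\,\mathrm{d}\mu=0\ \text{for all}\ \mu\in M_\varphi\}$. Minimality gives every $\mu$ full support, so a nonzero positive class integrates strictly positively against every $\mu$; in particular $\operatorname{Inf}$ meets the positive cone only in $0$, the quotient cone stays proper, and the states now separate the points of $K^0(X,\varphi)/\operatorname{Inf}(K^0(X,\varphi))$. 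Being the infinitesimal quotient of the simple dimension group above, it inherits countability, unperforation and the Riesz interpolation property, and is therefore a simple scaled ordered dimension group, as claimed.
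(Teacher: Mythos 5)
The paper states this theorem without proof, as a citation of \cite{gps95}; your argument reconstructs the standard route of \cite{hps92} and \cite{gps95} through Kakutani--Rokhlin partitions and the Bratteli--Vershik model, which is precisely the machinery the surrounding subsections develop, so the approach is the expected one and is essentially correct. Your opening caveat is well taken and worth keeping: minimality must be assumed for the simplicity of $K^0(X,\varphi)/\operatorname{Inf}(K^0(X,\varphi))$ (for $\varphi=\operatorname{id}$ the infinitesimal subgroup is trivial while $C(X,\mathbb{Z})$ has proper order ideals), and indeed the paper's Definition of $\operatorname{Inf}$ presupposes a simple scaled ordered group, so ``Cantor system'' here should be read as ``minimal Cantor system''. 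Two points deserve tightening. First, you misplace the difficulty in the kernel computation: if $g$ is $\mathcal{A}_n$-measurable with all tower-sums zero, then the primitive defined by $h|_{D_{k,i}}:=\sum_{j=0}^{k}g|_{D_{j,i}}$ satisfies $h-h\circ\varphi^{-1}=g$ exactly at level $n$, because $h$ vanishes on the roof $R(\mathcal{A}_n)$ precisely when the tower-sums do; no passage to finer partitions is needed for this direction. What genuinely requires refining partitions is the converse (well-definedness), namely that a coboundary $f-f\circ\varphi^{-1}$ has vanishing tower-sums once $n$ is large enough that both $f$ and $f\circ\varphi^{-1}$ are $\mathcal{A}_n$-measurable. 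Second, the closing claim that the quotient ``inherits'' the Riesz interpolation property is not automatic --- quotients of interpolation groups need not interpolate --- and needs either the standard fact that the infinitesimal quotient of a simple dimension group is again a simple dimension group, or the concrete description of the quotient given in Corollary~\ref{cor: simple scaled ordered dim group of minimal cantor system} together with a direct verification. Finally, note that the paper's own machinery supports an alternative C*-algebraic route you could have taken: $K^0(X,\varphi)\cong K_0(C(X)\rtimes_\varphi\mathbb{Z})$ via Pimsner--Voiculescu, and Theorem~\ref{thm: cantor-AF} identifies this as an ordered group with the $K_0$-group of the AF algebra $\mathfrak{A}_{\{x\}}$, whence the dimension-group axioms follow from Elliott's theory.
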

		
		This group of coinvariants $K^0(X,\varphi)$ is a K-group in disguise, as noted in Subsection~\ref{subs: crossprod} we have $K^0(X,\varphi) \cong K_0(C(X)\rtimes_\varphi \mathbb{Z})$. The states of $K^0(X,\varphi)$ have the following description:
		\begin{thm}[\cite{put89}, Corollary~5.7]\label{thm: tracstat}
			Let $(X,\varphi)$ be a Cantor system. Every $\mu \in M_\varphi$ induces a state $s(\mu)$ of the scaled ordered group $(K^0(X,\varphi),K^0(X,\varphi)^+,\mathbf{1}_X)$ defined by
			\begin{equation*}
			s(\mu) \colon f \mapsto \int_{X} f\;\mathrm{d}\mu
			\end{equation*}
			for $f \in C(X,\mathbb{Z})$ and a tracial state of $C(X)\rtimes_\varphi \mathbb{Z}$ by $s(\mu) \circ E$, where $E$ is the conditional expectation defined in Remark~\ref{rem: crossedprod}(iii), producing a 1-1-1 correspondence between:
			\begin{enumerate}[(i)]
				\item $\varphi$-invariant probability measures on $X$
				
				\item the set of states $\mathcal{S}(K^0(X,\varphi),K^0(X,\varphi)^+,\mathbf{1}_X)$
				
				\item tracial states of $C(X)\rtimes_\varphi \mathbb{Z}$
			\end{enumerate}
		\end{thm}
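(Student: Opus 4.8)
The plan is to prove the two bijections $M_\varphi \leftrightarrow \mathcal{S}(K^0(X,\varphi),K^0(X,\varphi)^+,\mathbf{1}_X)$ and $M_\varphi \leftrightarrow \{\text{tracial states of }C(X)\rtimes_\varphi\mathbb{Z}\}$ separately; composing them then yields the full $1$-$1$-$1$ correspondence. For the correspondence between (i) and (ii), fix $\mu \in M_\varphi$ and consider $f\mapsto \int_X f\,\mathrm{d}\mu$. This descends to the quotient $K^0(X,\varphi)$ because a coboundary $f-f\circ\varphi^{-1}$ integrates to zero precisely by $\varphi$-invariance; positivity of the integral and $\int_X \mathbf{1}_X\,\mathrm{d}\mu=1$ make $s(\mu)$ a state. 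Injectivity is immediate since $s(\mu)(\mathbf{1}_U)=\mu(U)$ on clopen $U$ already determines $\mu$ on the Borel $\sigma$-algebra. For surjectivity I would, given a state $s$, set $\mu(U):=s([\mathbf{1}_U])$ on clopen sets; this is a finitely additive content, and on a Stone space finite additivity forces countable additivity (a clopen set is compact, so any disjoint clopen cover of it is finite), so Carathéodory extension produces a Borel probability measure. That $s$ factors through $K^0(X,\varphi)$ translates into $\mu(\varphi(U))=\mu(U)$, i.e. $\mu\in M_\varphi$, and by construction $s(\mu)=s$.

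Next I would build the bijection with tracial states. Extending $s(\mu)$ to the state $f\mapsto\int_X f\,\mathrm{d}\mu$ on $C(X)$ and precomposing with the conditional expectation $E$ of Remark~\ref{rem: crossedprod} gives a state $\tau_\mu:=s(\mu)\circ E$. To see $\tau_\mu$ is a trace it suffices, by continuity, to test the trace identity on the dense $*$-subalgebra of Laurent polynomials; since $E$ extracts the zeroth Fourier coefficient this reduces to monomials, where $u^m g u^{-m}=g\circ\varphi^{-m}$ gives $fu^m\cdot gu^n=f\,(g\circ\varphi^{-m})\,u^{m+n}$, so both $\tau_\mu(fu^m\cdot gu^n)$ and $\tau_\mu(gu^n\cdot fu^m)$ vanish unless $n=-m$, in which case $\int_X f\,(g\circ\varphi^{-m})\,\mathrm{d}\mu=\int_X (f\circ\varphi^m)\,g\,\mathrm{d}\mu$ after the substitution supplied by $\varphi$-invariance. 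Conversely, restricting any tracial state $\tau$ to $C(X)$ yields, via Riesz representation, a probability measure $\mu_\tau$, and $\tau(ufu^*)=\tau(f)$ combined with $ufu^*=f\circ\varphi^{-1}$ forces $\mu_\tau\in M_\varphi$. Since $\tau_\mu$ plainly restricts to $\mu$, the only thing left for these two assignments to be mutually inverse is the identity $\tau=\tau\circ E$.

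The hard part is exactly this last identity, equivalently $\tau(fu^k)=0$ for every $k\neq 0$ and $f\in C(X)$; this is where the dynamics genuinely enters, and where some form of freeness is indispensable (for $\varphi=\mathrm{id}$ the statement fails outright). Since minimal Cantor systems are free (Remark~\ref{rem: orbdens}), $\varphi^k$ has no fixed points for $k\neq 0$, so by compactness and the Hausdorff property $X$ admits a finite clopen partition $\{U_i\}$ with $U_i\cap\varphi^k(U_i)=\emptyset$ for each $i$. Writing $f=\sum_i \mathbf{1}_{U_i}f$ and using $u^k\mathbf{1}_{U_i}=\mathbf{1}_{\varphi^k(U_i)}u^k$, the trace property gives $\tau(\mathbf{1}_{U_i}fu^k)=\tau(\mathbf{1}_{U_i}fu^k\mathbf{1}_{U_i})=\tau(\mathbf{1}_{U_i}f\,\mathbf{1}_{\varphi^k(U_i)}u^k)=0$, since $\supp(\mathbf{1}_{U_i}f\,\mathbf{1}_{\varphi^k(U_i)})\subseteq U_i\cap\varphi^k(U_i)=\emptyset$. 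Summing over $i$ yields $\tau(fu^k)=0$, hence $\tau=\tau\circ E$, so $\mu\mapsto\tau_\mu$ and $\tau\mapsto\mu_\tau$ are mutually inverse bijections between (i) and (iii). Chaining this with the bijection between (i) and (ii) established above produces the asserted $1$-$1$-$1$ correspondence, and the explicit formulas $s(\mu)(f)=\int_X f\,\mathrm{d}\mu$ and $\tau=s(\mu)\circ E$ record the maps exactly as stated.
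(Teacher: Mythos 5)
Your proof is correct and complete; since the text only cites Putnam's Corollary~5.7 and supplies no argument of its own, there is nothing to diverge from, and what you give is the standard proof: the (i)$\leftrightarrow$(ii) bijection via integration against indicators of clopens together with the Stone-space observation that finite additivity on the clopen algebra is automatically countable additivity, and the (i)$\leftrightarrow$(iii) bijection via the conditional expectation $E$, with the crux being that every tracial state factors through $E$. Your argument for that crux --- covering $X$ by a finite clopen partition $\{U_i\}$ with $U_i\cap\varphi^k(U_i)=\emptyset$ and using the trace identity to insert $\mathbf{1}_{U_i}$ on both sides of $\mathbf{1}_{U_i}fu^k$ --- is exactly right. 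One remark worth keeping: you correctly observe that this step needs freeness of $\varphi^k$ for $k\neq 0$, which you obtain from minimality via Remark~\ref{rem: orbdens}; the theorem as printed says only ``Cantor system,'' but as your $\varphi=\operatorname{id}$ example shows, the statement is false at that level of generality, and the minimality hypothesis (present throughout Putnam's paper and in the surrounding section here) is genuinely required. Flagging that explicitly, as you do, is a point in the proof's favour rather than a gap.
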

		
		This implies:
		
		\begin{cor}\label{cor: simple scaled ordered dim group of minimal cantor system}
			There is an isomorphism of simple scaled ordered dimension group between $K^0(X,\varphi)/\operatorname{Inf}(K^0(X,\varphi))$ and $C(X,\mathbb{Z})/\{f \in C(X,\mathbb{Z})| \int_{X}f\; \mathrm{d}\mu = 0 \text{ for all }\mu \in M_\varphi \}$.
		\end{cor}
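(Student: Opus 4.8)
The plan is to realise both sides as quotients of the single group $C(X,\mathbb{Z})$ and then to invoke the third isomorphism theorem, the only genuinely dynamical input being the identification of the infinitesimals via Theorem~\ref{thm: tracstat}. Write $\pi \colon C(X,\mathbb{Z}) \to K^0(X,\varphi)$ for the canonical quotient map; its kernel is the coboundary group $B:=\{f-f\circ\varphi^{-1}\mid f\in C(X,\mathbb{Z})\}$. Set
\begin{equation*}
N:=\Big\{f\in C(X,\mathbb{Z})\ \Big|\ \int_X f\,\mathrm{d}\mu=0\ \text{ for all }\mu\in M_\varphi\Big\}.
\end{equation*}

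First I would identify $\operatorname{Inf}(K^0(X,\varphi))$. By Theorem~\ref{thm: tracstat} every state of the scaled ordered group $(K^0(X,\varphi),K^0(X,\varphi)^+,\mathbf{1}_X)$ is of the form $s(\mu)$ for a unique $\mu\in M_\varphi$, with $s(\mu)([f])=\int_X f\,\mathrm{d}\mu$. Hence a class $[f]$ is infinitesimal, i.e. killed by every state, precisely when $\int_X f\,\mathrm{d}\mu=0$ for all $\mu\in M_\varphi$, that is precisely when $f\in N$. This shows $\pi^{-1}(\operatorname{Inf}(K^0(X,\varphi)))=N$; in particular $B\subseteq N$, as one also sees directly from the $\varphi$-invariance of each $\mu$.

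Next I would produce the isomorphism. Since $\operatorname{Inf}(K^0(X,\varphi))$ is a subgroup of $K^0(X,\varphi)=C(X,\mathbb{Z})/B$ whose full preimage under $\pi$ is $N$, the third isomorphism theorem yields an isomorphism of abelian groups
\begin{equation*}
\Psi\colon K^0(X,\varphi)/\operatorname{Inf}(K^0(X,\varphi))\ \overset{\sim}{\longrightarrow}\ C(X,\mathbb{Z})/N,\qquad [f]+\operatorname{Inf}(K^0(X,\varphi))\ \longmapsto\ f+N .
\end{equation*}

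It remains to match the scaled ordered structures. Endow $C(X,\mathbb{Z})/N$ with order unit $1+N$ and with positive cone the image of the non-negative functions. By definition the positive cone of $K^0(X,\varphi)/\operatorname{Inf}(K^0(X,\varphi))$ is the image of $K^0(X,\varphi)^+=\{[f]\mid f\ge 0\}$, so $\Psi$ carries it bijectively onto $\{f+N\mid f\ge 0\}$, and it sends the order unit $[\mathbf{1}_X]+\operatorname{Inf}(K^0(X,\varphi))$ to $1+N$; thus $\Psi$ and $\Psi^{-1}$ are both positive and unit-preserving. Because the source is a simple scaled ordered dimension group by the preceding theorem, this transported structure makes $C(X,\mathbb{Z})/N$ one as well, and $\Psi$ is the desired isomorphism. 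The whole argument is short once Theorem~\ref{thm: tracstat} is in hand; the single substantive step -- and the place where minimality really enters, through the supply of invariant measures -- is the parametrisation of the states, and hence of $\operatorname{Inf}(K^0(X,\varphi))$, by $M_\varphi$.
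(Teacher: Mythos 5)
Your proof is correct and follows essentially the same route the paper intends: the corollary is stated as an immediate consequence of Theorem~\ref{thm: tracstat}, whose parametrisation of the states by $M_\varphi$ identifies $\operatorname{Inf}(K^0(X,\varphi))$ with $N/B$, after which the third isomorphism theorem and the transported order structure give the claim exactly as you write. The paper leaves these steps implicit; your write-up just makes them explicit.
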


		\subsection{Ample groups}\label{subs: amplegroups}
		
		Wolfgang Krieger introduced in \cite{kri80} \emph{ample groups} -- of which the definition reminds of Dye's full group, and gave -- in the wake of Elliot's work -- a classification in terms of dimension. 
		
		\begin{defi}[\cite{kri80}, p.~88]\label{defi: ample group}
			Let $X$ be a Cantor space. Let $\mathcal{A}$ be a subalgebra of the Boolean algebra $\mathcal{B}_X$ of clopen sets of $X$ and let $G$ be a countable group of homeomorphisms of $X$ that leaves $\mathcal{A}$ invariant.
			\begin{enumerate}[(i)]
				\item Denote by $[G,\mathcal{A}]$ the group of homeomorphisms $h \in \operatorname{Homeo}(X)$ such that there exists a finite clopen partition $\{X_i\}_{i \in I}$ of $X$ with $X_i \in \mathcal{A}$ for all $i \in I$ and a finite collection $\{g_i\}_{i \in I}$ of elements in $G$ with
				\begin{equation*}
				X=\bigsqcup_{i \in i} X_i = \bigsqcup_{i \in i} g_i(X_i),
				\end{equation*}
				such that $h|_{X_i}=g_i|_{X_i}$ for all $i \in I$.
				
				\item The pair $(G,\mathcal{A})$ is called a \emph{unit system} if $G$ is a locally finite\footnote{A group is called \emph{locally finite} if every finitely generated subgroup is finite. Equivalently it is a group that is a direct limit of a directed system of finite groups.\\} group such that the set of fixed points is in $\mathcal{A}$ for all $g \in G$, the map $g \to g|_\mathfrak{A}$ is an isomorphism of groups and $G=[G,\mathcal{A}]$. A unit system is called \emph{finite} if $\mathcal{A}$ is finite.
				
				\item Let $(\mathcal{A},G)$ and $(\mathcal{B},H)$ be unit systems. The unit system $(\mathcal{A},G)$ is said to be \emph{finer} than $(\mathcal{B},H)$ if $\mathcal{B}\subseteq \mathcal{A}$ and $H \leq G$.
				
				\item A countable, locally finite group $G$ of homeomorphisms of $X$ is called \emph{ample} if $(\mathcal{B}_X,G)$ is a unit system i.e. if the set of fixed points is clopen for all $g \in G$ and for any finite clopen partition $\{X_i\}_{i \in I}$ of $X$ and finite collection $\{g_i\}_{i \in I}$ of elements in $G$ with
				\begin{equation*}
				X=\bigsqcup_{i \in i} X_i = \bigsqcup_{i \in i} g_i(X_i),
				\end{equation*}
				the element $g \in \operatorname{Homeo}(X)$ defined by $g|_{X_i}=g_i|_{X_i}$ is in $G$.
				
				\item The topological dynamical system $(X,G)$ given by the action of an ample group $G$ of homeomorphisms on $X$ is called an \emph{AF-system}.
			\end{enumerate}
		\end{defi}
		
		Every unit system $(\mathcal{A},G)$ can be represented as limit $(\bigcup_{n \in \mathbb{N}} \mathcal{A}_n, \bigcup_{n \in \mathbb{N}} G_n)$ of a refining sequence $\{(\mathcal{A}_n,G_n)\}_{n \in \mathbb{N}}$ of finite unit systems (\cite{kri80}, Lemma~2.1). For every ample group $G$ the associated refining sequence of unit systems $\{(\mathcal{A}_n,G_n)\}_{n \in \mathbb{N}}$ induces an action of $G$ on a Bratteli diagram $(V,E)$ where the level sets $V_n$ are given by the $G_n$-orbits of atoms in $\mathcal{A}_n$ and the edge structure arises from the relations between these orbits. Under Theorem~\ref{thm: af-brat} this Bratteli diagram corresponds to the crossed product C*-algebra $C(X) \rtimes G$ associated to the AF-system $(X,G)$ with associated dimension group $K^0(X,G):=K_0(C(X) \rtimes G)$.
		Drawing from Elliot's classification of AF C*-algebras, Krieger obtained the following:
		
		\begin{thm}[\cite{kri80}, Corollary~3.6]\label{thm: krieg}
			Let $G_1,G_2$ be ample groups of homeomorphisms of a Cantor space $X$. Then $G_1$ and $G_2$ are spatially isomorphic\footnote{See Definition~\ref{defi: spatial homeomorphism + local subgroups}.\\} \emph{if and only if} the dimension groups $K^0(X,G_1)$ and $K^0(X,G_2)$ are isomorphic as scaled ordered groups.
		\end{thm}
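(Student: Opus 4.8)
The plan is to exploit the passage already built up in the excerpt between an ample group $G$, its refining sequence of finite unit systems, the associated Bratteli diagram and the AF crossed product $C(X)\rtimes G$. Throughout write $\mathfrak{A}_i:=C(X)\rtimes G_i$; this is an AF C*-algebra because $(X,G_i)$ is an AF-system, and by the construction recalled just before the statement we have $K^0(X,G_i)=K_0(\mathfrak{A}_i)$ as scaled ordered groups, with the canonical MASA $C(X)\subseteq\mathfrak{A}_i$ serving as the diagonal subalgebra $\mathfrak{C}$ of Theorem~\ref{thm: AF} and the ample group itself recovered as the normalizing group $\Gamma_U$ there.

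The forward implication I would treat as pure functoriality. A spatial isomorphism of $G_1$ and $G_2$ is, by Definition~\ref{defi: spatial homeomorphism + local subgroups}, a homeomorphism $\psi\colon X\to X$ with $\psi G_1\psi^{-1}=G_2$. Such a $\psi$ yields a $*$-isomorphism $C(X)\to C(X)$, $f\mapsto f\circ\psi^{-1}$, intertwining the two actions, hence an extension to a $*$-isomorphism $\mathfrak{A}_1\to\mathfrak{A}_2$ that carries the diagonal $C(X)$ onto the diagonal $C(X)$ and fixes the canonical order unit $\mathbf{1}_X$. Applying the functor $K_0$ then gives the desired isomorphism $K^0(X,G_1)\cong K^0(X,G_2)$ of scaled ordered groups; this direction is routine.

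The content lies in the converse, where one must manufacture a \emph{topological} map out of an \emph{algebraic} invariant. Given a scaled ordered isomorphism $K^0(X,G_1)\cong K^0(X,G_2)$, Elliott's classification of AF C*-algebras (the unnumbered theorem above) already yields a $*$-isomorphism $\Phi\colon\mathfrak{A}_1\to\mathfrak{A}_2$, but $\Phi$ need not see the spatial data. I would therefore pass through the Bratteli-diagram picture: the refining sequence of finite unit systems attached to $G_i$ determines a Bratteli diagram $\Gamma_i$ whose path space recovers $X$ and on which $G_i$ acts by exactly the level-wise atom-permutations recorded in the finite groups $G_{i,n}$. Via Theorem~\ref{thm: af-brat} together with the identification $K_0(V,E)\cong K_0(C^*(V,E))$, the scaled ordered isomorphism of dimension groups corresponds to an equivalence of $\Gamma_1$ and $\Gamma_2$ by telescopings and diagram isomorphisms, and such an equivalence is manifestly spatial: it induces a homeomorphism of path spaces conjugating the level-wise permutation actions, i.e.\ a $\psi\colon X\to X$ with $\psi G_1\psi^{-1}=G_2$.

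The main obstacle is precisely this upgrade, because $K_0$ a priori forgets the diagonal and AF diagonals are not unique in general. The feature that rescues the argument is the \emph{fullness} built into Definition~\ref{defi: ample group}(iv): an ample group contains every homeomorphism patched together from finitely many of its elements across a clopen partition, so $G_i$ is the maximal group compatible with the AF/Bratteli structure and is canonically reconstructed from that structure rather than chosen inside it. Consequently the Bratteli equivalence class carries no residual diagonal ambiguity, and one can arrange $\Phi$ to respect the tower/orbit labelling; its restriction to $C(X)$ is then dual to the sought homeomorphism $\psi$, whose compatibility with the normalizing groups $\Gamma_U=G_i$ of Theorem~\ref{thm: AF} forces $\psi G_1\psi^{-1}=G_2$. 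Verifying that the Elliott isomorphism can be taken to preserve this labelling — equivalently, that the diagram equivalence can be chosen to respect the tower data — is the technical heart of the proof and is where Krieger's finer analysis of unit systems enters.
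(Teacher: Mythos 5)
The paper offers no proof of this statement: it is quoted from Krieger with only the remark that it is obtained ``drawing from Elliott's classification of AF C*-algebras'', together with the construction of a Bratteli diagram from the refining sequence of finite unit systems. Your proposal fleshes out exactly that intended route, and its architecture is sound: the forward direction is indeed pure functoriality, and the converse correctly reduces to the chain (scaled ordered dimension group) $\Rightarrow$ (equivalence class of Bratteli diagrams, via Theorem~\ref{thm: af-brat} plus Elliott) $\Rightarrow$ (homeomorphism of path spaces conjugating the canonical ample groups).

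One correction to your final paragraph, where you locate the ``technical heart''. Two distinct facts are needed there, and you attribute the first to the wrong mechanism. (a) That the Elliott isomorphism can be arranged to carry diagonal onto diagonal does \emph{not} follow from fullness of the ample groups; it follows because the scaled ordered $K_0$-group determines the Bratteli diagram up to telescoping and isomorphism, and the diagram determines the pair (algebra, diagonal) \emph{simultaneously} --- equivalently, one runs the Elliott intertwining argument at the level of the inductive systems of finite unit systems (a commuting ladder obtained after telescoping), which is what Krieger actually does; no a posteriori correction of a diagonal-forgetting $*$-isomorphism is ever needed. (b) Fullness, i.e.\ condition (iv) of Definition~\ref{defi: ample group}, is needed for the complementary step: it guarantees that $G_i$ is recovered from its diagram as the canonical group of tail-preserving permutations of the path space, so that the homeomorphism of path spaces conjugates $G_1$ onto all of $G_2$ and not merely onto a proper locally finite subgroup with the same orbit structure. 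With these two roles separated the argument closes; as written, the justification of the diagonal-preservation step is misplaced even though the conclusion is correct.
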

		
		\subsection{Classification of Cantor systems}\label{subs: classific}
		
		In \cite{put89} Putnam used a specific subalgebra of $C(X)\rtimes_\varphi \mathbb{Z}$ associated to a minimal Cantor system to obtain information on the K-theory of $C(X)\rtimes_\varphi \mathbb{Z}$.
		
		\begin{lem}[\cite{put89}, Lemma~3.1]\label{lem: findim}
			Let $(X,\varphi)$ be a minimal Cantor system, let $Y$ be a clopen subset of $X$ and let $\mathcal{P}=\{P_i\}_{i \in I}$ be a finite clopen partition of $X$. Then the C*-subalgebra $\mathfrak{A}_{Y,\mathcal{P}}$ of $C(X)\rtimes_\varphi \mathbb{Z}$ generated by $C(\mathcal{P}):= \langle \{\chi_{P_i}\}_{i \in I} \rangle$ and $u\chi_{X\setminus Y}$ is finite dimensional.
		\end{lem}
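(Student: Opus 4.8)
The plan is to exploit the clopenness of $Y$ together with minimality to show that the partial isometry $v := u\chi_{X\setminus Y}$ is \emph{nilpotent}, and then to read off finite dimensionality from a bounded ``normal form'' for the elements of $\mathfrak A_{Y,\mathcal P}$. Write $Z := X\setminus Y$ and assume, as we may, that $Y$ is non-empty (for $Y=X$ the algebra is just $C(\mathcal P)$). From $ufu^{*}=f\circ\varphi^{-1}$ one gets $\chi_Z u = u\chi_{\varphi^{-1}(Z)}$, whence an easy induction yields $v^{n}=u^{n}\chi_{W_n}$ with $W_n:=\bigcap_{j=0}^{n-1}\varphi^{-j}(Z)$. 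The set $W_n$ consists of those $x$ whose first $n$ forward iterates avoid $Y$; since $Y$ is a non-empty open set and forward orbits are dense by Remark~\ref{rem: orbdens}, we have $\bigcap_n W_n=\emptyset$, and compactness of $X$ together with the $W_n$ being clopen and decreasing forces $W_N=\emptyset$ for some $N\in\mathbb N$ (equivalently, the first-return time to $Y$ is bounded by $N$). Hence $v^{N}=u^{N}\chi_{W_N}=0$.

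First I would record the relations that let me push coefficients past the partial isometry: for $f\in C(X)$ one checks directly that $fv=v\,(f\circ\varphi)$ and $vf=\chi_{\varphi(Z)}(f\circ\varphi^{-1})\,v$, and dually for $v^{*}$. Next, since $\chi_Z=v^{*}v\in\mathfrak A_{Y,\mathcal P}$, I may enlarge $\mathcal P$ to $\mathcal P\vee\{Y,Z\}$ without changing the generated algebra, and introduce the finite clopen partition $\mathcal Q:=\bigvee_{j=-(N-1)}^{N-1}\varphi^{j}(\mathcal P)$ with its finite-dimensional abelian algebra $\mathfrak C:=C(\mathcal Q)\supseteq C(\mathcal P)$. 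The goal is then to prove that
\[
S:=\operatorname{span}\bigl\{\,v^{a}f(v^{*})^{b}\;:\;0\le a,b\le N-1,\ f\in\mathfrak C\,\bigr\}
\]
is a $*$-subalgebra of $C(X)\rtimes_\varphi\mathbb Z$. It is visibly finite-dimensional, self-adjoint, and contains the generators $\chi_{P_i}$ (take $a=b=0$) and $v$ (take $a=1$, $b=0$, $f=1$); once it is shown to be multiplicatively closed it is automatically norm-closed, so $\mathfrak A_{Y,\mathcal P}=S$ is finite-dimensional.

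The heart of the argument, which I expect to be the main obstacle, is verifying closure of $S$ under multiplication. Given a product $v^{a}f(v^{*})^{b}\cdot v^{c}g(v^{*})^{d}$ I would first collapse the inner factor $(v^{*})^{b}v^{c}$: using $(v^{*})^{m}v^{m}=\chi_{W_m}$ it equals $\chi_{W_b}v^{c-b}$ when $c\ge b$ and $(v^{*})^{b-c}\chi_{W_c}$ when $b>c$, so the surviving powers of $v$ and $v^{*}$ never grow, and any recombined exponent reaching $N$ annihilates the whole word because $v^{N}=0$. The remaining coefficient is then transported into a single block $v^{a'}(\cdot)(v^{*})^{d'}$ by the commutation relations above; each passage across one copy of $v^{\pm1}$ composes a function with $\varphi^{\pm1}$ and multiplies by one of the projections $\chi_{W_m},\chi_{\varphi(Z)}$. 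The key point is that at most $N-1$ such passages can occur before a factor $v^{N}=0$ would be forced, so only translates $f\circ\varphi^{j}$ with $|j|\le N-1$ ever appear; these lie in $\mathfrak C$ by construction, while the flanking partial isometries and the projections $\chi_{W_m}$ annihilate exactly the ``out-of-window'' contributions that a naive shift would produce. Carefully organizing this finite bookkeeping — the precise choice of $\mathfrak C$ and the verification that every product lands back in $S$ — is the technical core; the conceptual engine throughout is the nilpotency of $v$, which is what ultimately bounds the $\varphi$-translates and so confines the whole algebra to finitely many dimensions.
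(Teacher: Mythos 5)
Your reduction to the nilpotency of $v=u\chi_{X\setminus Y}$ is correct and is the right engine: $v^{N}=0$ is precisely the boundedness of the first return time to the non-empty clopen set $Y$, which is the same fact underlying the paper's proof via a Kakutani--Rokhlin partition with base $Y$ (there the finite-dimensional algebra is exhibited directly through matrix units between tower levels, with $N=\max_i h_i$, and your $v$ is the sum of the subdiagonal matrix units). The commutation relations, the identity $(v^{*})^{m}v^{m}=\chi_{W_m}$, and the remark that a finite-dimensional, self-adjoint, multiplicatively closed span is automatically norm-closed are all fine. (Two small points: the lemma genuinely needs $Y\neq\emptyset$ --- for $Y=\emptyset$ the algebra contains $u$ and all $\chi_{\varphi^{n}(P_i)}$ and is typically infinite-dimensional --- and your parenthetical only addresses the different degenerate case $Y=X$.)

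The gap is that $S$ is \emph{not} closed under multiplication with $\mathfrak{C}=C(\mathcal{Q})$, $\mathcal{Q}=\bigvee_{|j|\le N-1}\varphi^{j}(\mathcal{P}\vee\{Y,Z\})$. Elements of $\mathfrak{C}$ are already $\varphi$-translates of $C(\mathcal{P})$ by up to $N-1$ steps, and transporting them across even a single copy of $v$ translates them once more, out of the window; the projections $\chi_{W_m}$ do not annihilate what falls outside. Concretely, $f:=\chi_{\varphi^{-(N-1)}(P)}$ and $v$ both lie in $S$, but $fv=v(f\circ\varphi)=u\,\chi_{Z\cap\varphi^{-N}(P)}$, whereas every degree-one element of $S$ has the form $v^{a}g(v^{*})^{a-1}=u\cdot\chi_{\varphi^{a-1}(W_a)}\,(g\circ\varphi^{-(a-1)})$ with coefficient measurable with respect to $\bigvee_{a=1}^{N-1}\varphi^{a-1}(\mathcal{Q})$, which resolves $\mathcal{P}$ only at translates $j\ge-(N-1)$; in general $Z\cap\varphi^{-N}(P)$ is not of this form, so $fv\notin S$. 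Enlarging the window reproduces the defect at the new boundary: your count of ``at most $N-1$ passages'' bounds the translation applied to a coefficient of $S$, not the total translation of the original generators. The repair is to bound the \emph{words} in the original generators rather than to force a normal form: the span of all finite products of $v$, $v^{*}$ and the $\chi_{P_i}$ is automatically a $*$-algebra; every such word equals $u^{m}\chi_{A}$; the relation $\chi_Z u=u\chi_{\varphi^{-1}(Z)}$ shows that moving between orbit positions $q$ and $q+1$ in either direction requires $\varphi^{q}(x)\in X\setminus Y$, so a nonzero word of degree $m\ge 0$ has $A\subseteq W_m$ (hence $|m|\le N-1$), and since every $N$ consecutive orbit points meet $Y$, the positions visited are confined to a window of width at most $2N-1$, whence $A$ is a union of atoms of $\mathcal{Q}$. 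There are then only finitely many words, and the generated $*$-algebra is their finite-dimensional (hence closed) span; incidentally this shows $\mathfrak{A}_{Y,\mathcal{P}}\subseteq S$ after all, just not because $S$ is an algebra. This corrected bookkeeping is essentially a coordinate-free rendering of the tower argument the paper sketches.
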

		\begin{proofsketch}
			Let $\mathcal{A}$ be a Kakutani-Rohklin partition of $(X,\varphi)$ which has $Y$ as its base and let $\mathcal{A}'=\{D_{k,i}|0 \leq k \leq h_i-1, i \in \{1,\dots,n\} \}$ be the refinement of $\mathcal{A}$ by $\mathcal{P}$. Let $\mathcal{P}'$ be the clopen finite partition of $X$ induced by $\mathcal{A}'$. One can show that $\mathfrak{A}_{Y,\mathcal{P}}$ is contained in a subalgebra $\mathfrak{A}(Y,\mathcal{P}')$ of $C(X)\rtimes_\varphi \mathbb{Z}$ isomorphic to $\bigoplus_{i=1}^{n} M_{h_i}(\mathbb{C})$ given by matrix units $E_{i,j}^{(k)}=u^{i-j}\chi_{D_{k,j}} =\chi_{D_{k,i}} u^{i-j}$. Note that in particular diagonal entries generate $C(\mathcal{P}')$.
		\end{proofsketch}
		
		This then implies:
		
		\begin{thm}[\cite{put89}, Theorem~3.3]\label{thm: putnam}
			Let $(X,\varphi)$ be a minimal Cantor system and let $Y$ be a closed subset of $X$. Then the C*-subalgebra $\mathfrak{A}_{Y}$ of $C(X)\rtimes_\varphi \mathbb{Z}$ generated by $C(X)$ and $u C_0(X \setminus Y)$ is an AF C*-algebra.
		\end{thm}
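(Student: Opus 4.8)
The plan is to realize $\mathfrak{A}_Y$ as the closure of an increasing sequence of finite-dimensional $*$-subalgebras, which is precisely the structure of an AF C*-algebra, feeding Lemma~\ref{lem: findim} as the source of the finite-dimensional building blocks. I would proceed in two steps: first treat the case in which the closed set $Y$ is in fact clopen, where Lemma~\ref{lem: findim} applies almost directly; then reduce the general closed case to the clopen one by exhausting the open complement $X\setminus Y$ by clopen sets, exploiting the zero-dimensionality of the Cantor space $X$.

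For the clopen case, fix $Y$ clopen and consider the finite clopen partitions $\mathcal{P}$ of $X$, directed under refinement. Since $X$ is second countable and totally disconnected (Remark~\ref{rem: cant}), I can choose a cofinal sequence $\mathcal{P}_1 \leq \mathcal{P}_2 \leq \dots$ of such partitions with $\bigcup_n C(\mathcal{P}_n)$ dense in $C(X)$. For each $\mathcal{P}_n$, Lemma~\ref{lem: findim} gives that the subalgebra $\mathfrak{A}_{Y,\mathcal{P}_n}$ generated by $C(\mathcal{P}_n)$ and the fixed element $u\chi_{X\setminus Y}$ is finite-dimensional; moreover refinement yields $C(\mathcal{P}_n) \subseteq C(\mathcal{P}_{n+1})$ and hence $\mathfrak{A}_{Y,\mathcal{P}_n} \subseteq \mathfrak{A}_{Y,\mathcal{P}_{n+1}}$, since the remaining generator $u\chi_{X\setminus Y}$ does not depend on $\mathcal{P}$. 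The closure of $\bigcup_n \mathfrak{A}_{Y,\mathcal{P}_n}$ is then a C*-algebra containing $C(X)$ and $u\chi_{X\setminus Y}$, so it contains $\mathfrak{A}_Y$; the reverse inclusion is immediate because each $\mathfrak{A}_{Y,\mathcal{P}_n}$ is generated by elements of $\mathfrak{A}_Y$. Thus $\mathfrak{A}_Y=\overline{\bigcup_n\mathfrak{A}_{Y,\mathcal{P}_n}}$ is AF.

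For general closed $Y$, write the open set $X\setminus Y$ as an increasing union $\bigcup_n Y_n$ of clopen sets (possible by second countability and zero-dimensionality), and put $Z_n:=X\setminus Y_n$, a decreasing sequence of clopen sets with $\bigcap_n Z_n=Y$. Each $\mathfrak{A}_{Z_n}$, generated by $C(X)$ and $u\chi_{Y_n}=u\chi_{X\setminus Z_n}$, is AF by Step~1. Since $Y_n\cap Y=\emptyset$ the function $\chi_{Y_n}$ lies in $C_0(X\setminus Y)$, whence $u\chi_{Y_n}\in\mathfrak{A}_Y$ and $\mathfrak{A}_{Z_n}\subseteq\mathfrak{A}_Y$; the inclusions $Y_n\subseteq Y_{n+1}$ give $\mathfrak{A}_{Z_n}\subseteq\mathfrak{A}_{Z_{n+1}}$. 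A compactness argument shows $\bigcup_n\chi_{Y_n}C(X)$ is dense in $C_0(X\setminus Y)$, so $uC_0(X\setminus Y)\subseteq\overline{\bigcup_n\mathfrak{A}_{Z_n}}$ and therefore $\mathfrak{A}_Y=\overline{\bigcup_n\mathfrak{A}_{Z_n}}$. An increasing union of AF algebras is again AF (interleave the defining finite-dimensional sequences), which concludes the argument. I expect the main obstacle to be the bookkeeping of this reduction — verifying the two-sided inclusions and the density of $\bigcup_n\chi_{Y_n}C(X)$ in $C_0(X\setminus Y)$ — rather than any genuinely deep point, since Lemma~\ref{lem: findim} already carries the analytic weight.
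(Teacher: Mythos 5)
Your proof is correct and follows essentially the same route as the paper: both realize $\mathfrak{A}_Y$ as the closed union of the increasing finite-dimensional subalgebras supplied by Lemma~\ref{lem: findim}. The only organizational difference is that the paper shrinks the clopen neighbourhoods of $Y$ and refines the partitions simultaneously, along a single nested sequence of Kakutani--Rokhlin partitions around $Y$ induced by a descending filtration of clopens, so it never needs your auxiliary fact that an increasing union of AF algebras is AF (a standard fact, but one best justified via Bratteli's local characterization of AF algebras rather than by literally interleaving the two filtrations, since the finite-dimensional pieces of $\mathfrak{A}_{Z_n}$ need not sit inside those of $\mathfrak{A}_{Z_{n+1}}$).
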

		\begin{proofsketch}
			Let $\{\mathcal{A}_n\}_{n \in \mathbb{N}}$ be a nested sequence of Kakutani-Rokhlin partitions around $Y$ induced by a descending filtration $\{ B_n \}_{n \in \mathbb{N}}$ around $Y$. Let $\mathcal{P}_n$ be the finite clopen partition of $X$ induced by $\mathcal{A}_n$. Then one can show that $\mathfrak{A}_{Y}$ is the closed union of the finite dimensional C*-subalgebras $\mathfrak{A}(B_n,\mathcal{P}_n)$ from Lemma~\ref{lem: findim}.
		\end{proofsketch}
		
		The above observations are fundamental for the classification obtained in \cite{gps95} in that they imply the following theorem:
		
		\begin{thm}[\cite{put89}, Theorem~6.7]\label{thm: cantor-AF}
			Let $(X,\varphi)$ be a minimal Cantor system and let $x \in X$. Then there is a unital embedding $\iota \colon C(X)\rtimes_\varphi \mathbb{Z} \hookrightarrow \mathfrak{A}_{\{x\}}$ of which the induced map in K-theory $K_0(\iota) \colon K_0(C^*(X,\varphi)) \to K_0(\mathfrak{A}_{\{x\}})$ is an isomorphism of ordered groups. 
		\end{thm}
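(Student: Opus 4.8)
The plan is to reduce everything to the AF structure of $\mathfrak{A}_{\{x\}}$ supplied by Theorem~\ref{thm: putnam} together with the Pimsner--Voiculescu computation recalled above. Fix a nested sequence $\{\mathcal{A}_n\}_{n\in\mathbb{N}}$ of Kakutani--Rokhlin partitions around $x$ with property (H), arising from a descending filtration $\{B_n\}_{n\in\mathbb{N}}$ with $\bigcap_n B_n=\{x\}$ and $B_1=X$. By the argument behind Lemma~\ref{lem: findim}, $\mathfrak{A}_{\{x\}}$ is the closed increasing union of the finite-dimensional subalgebras $\mathfrak{A}_n:=\mathfrak{A}(B_n,\mathcal{P}_n)\cong\bigoplus_i M_{h_i^n}(\mathbb{C})$, with matrix units $E_{i,j}^{(k)}=u^{i-j}\chi_{D_{k,j}^n}$. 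Hence $K_0(\mathfrak{A}_{\{x\}})=\varinjlim_n \mathbb{Z}^{i_n}$, the connecting maps being the incidence matrices of the Bratteli diagram attached to $\{\mathcal{A}_n\}_{n\in\mathbb{N}}$; by the construction behind Theorem~\ref{thm: hps} this is precisely the Bratteli--Vershik model of $(X,\varphi)$, and by Theorem~\ref{thm: af-brat} its dimension group is $K^0(X,\varphi)=C(X,\mathbb{Z})/\{f-f\circ\varphi^{-1}\}$, with order unit $[\mathbf{1}_X]$ and positive cone the classes of non-negative integer-valued functions.

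The Pimsner--Voiculescu sequence identifies $K_0(C(X)\rtimes_\varphi\mathbb{Z})$ with $C(X,\mathbb{Z})/\Ima(\operatorname{id}-\varphi_{*})$, and since $\varphi_{*}$ acts on $K_0(C(X))=C(X,\mathbb{Z})$ by $f\mapsto f\circ\varphi^{-1}$, this is the very same scaled ordered group $K^0(X,\varphi)$. It therefore remains to produce a unital embedding $\iota$ \emph{realising} this identification rather than merely an abstract isomorphism. I would build $\iota$ from the filtration: the cyclic matrix units $E_{i,j}^{(k)}$ assemble at each level into a unitary $v_n\in\mathfrak{A}_n$ which shifts every tower one step and wraps its roof onto its base, i.e. a finite approximation of the Vershik map, while the copy of $C(X)$ sits inside $\overline{\bigcup_n\mathfrak{A}_n}$. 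These data should be organised into a covariant pair and, through the universal property of the crossed product, into $\iota$; injectivity is then forced by simplicity of $C(X)\rtimes_\varphi\mathbb{Z}$ for minimal systems.

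To see that $K_0(\iota)$ is an isomorphism of \emph{ordered} groups I would evaluate it on the dense union: the generators of $K_0(C(X)\rtimes_\varphi\mathbb{Z})$ are classes of projections lying in $C(\mathcal{P}_n)\subseteq\mathfrak{A}_n$, and $\iota$ fixes each such projection, so on the two presentations $C(X,\mathbb{Z})/\{f-f\circ\varphi^{-1}\}$ the map $K_0(\iota)$ is the identity; it is thus bijective, and it carries classes of non-negative functions to classes of non-negative functions, which by the state description of Theorem~\ref{thm: tracstat} and Corollary~\ref{cor: simple scaled ordered dim group of minimal cantor system} pins down the positive cone and the order unit on both sides. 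The genuine difficulty, and the step I would budget the most care for, is the embedding itself: a unital embedding of the non-AF algebra $C(X)\rtimes_\varphi\mathbb{Z}$ (which has $K_1\cong\mathbb{Z}\neq 0$) into an AF algebra can exist only because the former is quasidiagonal, so the approximating Vershik unitaries $v_n$ implement $\varphi$ only up to a shrinking error supported on the roof region associated with $B_n\downarrow\{x\}$. The crux is to control these errors well enough that the $v_n$ organise into an honest $*$-homomorphism (not merely an asymptotic one) that is isometric; exploiting that the whole discrepancy between $v_n$ and $u$ is concentrated on $B_n$, which collapses to the single point $x$, is what I expect to make this limiting argument go through.
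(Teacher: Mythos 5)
The paper itself offers no proof of this statement (it is quoted from \cite{put89}), so your attempt has to stand on its own. The $K$-theoretic bookkeeping is correct and essentially already assembled in the surrounding text: by Theorem~\ref{thm: putnam} and Lemma~\ref{lem: findim} the algebra $\mathfrak{A}_{\{x\}}$ is the closure of the union of the $\mathfrak{A}(B_n,\mathcal{P}_n)$, its Bratteli diagram is the Bratteli--Vershik model of $(X,\varphi)$, and both $K_0(\mathfrak{A}_{\{x\}})$ and $K_0(C(X)\rtimes_\varphi\mathbb{Z})\cong C(X,\mathbb{Z})/\Ima(\operatorname{id}-\varphi_{*})$ are the scaled ordered group $K^0(X,\varphi)$. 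But this only shows the two ordered groups are \emph{abstractly} isomorphic. The entire content of the theorem is the existence of a unital $*$-homomorphism realising the isomorphism, and that is precisely the step you defer (``these data should be organised into a covariant pair'', ``the crux is to control these errors''). As written, the proposal proves everything except the theorem.

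Moreover, the route you sketch is obstructed in two places. First, the universal property of the crossed product requires an \emph{exactly} covariant pair, whereas $(C(X)\hookrightarrow\mathfrak{A}_{\{x\}},v_n)$ is only approximately covariant at each stage; and the $v_n$ do not converge in norm, because a partial isometry supported on a nonempty clopen set has norm one no matter how small that set is, so the fact that the roofs and bases shrink to $\{x\}$ topologically does not make the discrepancy between $v_n$ and $u$ small in norm. Second, and more decisively, the claim that ``$\iota$ fixes each such projection'' cannot hold: the projections $\chi_{D^n_{k,i}}$ generate $C(X)$ as a C*-algebra, so fixing them forces $\iota|_{C(X)}=\operatorname{id}$; then $w:=\iota(u)$ would be a unitary in $\mathfrak{A}_{\{x\}}$ with $wfw^{*}=ufu^{*}$ for all $f\in C(X)$, so $u^{*}w$ commutes with $C(X)$, which is maximal abelian in $C(X)\rtimes_\varphi\mathbb{Z}$ (Remark~\ref{rem: crossedprod}, Definition~\ref{defi: cartan pair}); hence $u^{*}w\in C(X)$ and $u=w(u^{*}w)^{*}\in\mathfrak{A}_{\{x\}}$, forcing $\mathfrak{A}_{\{x\}}=C(X)\rtimes_\varphi\mathbb{Z}$ --- impossible, since the left side is AF with $K_1=0$ while the right side has $K_1\cong\mathbb{Z}$. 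Any genuine embedding must therefore move $C(X)$. Putnam's construction does exactly this via the Berg technique: the level-$n$ data are conjugated by carefully chosen unitaries so that successive stages become norm-close (which perturbs the copy of $C(X)$ along the way), and only then does one check that the induced map on $K_0$ agrees with the canonical identification because the perturbations are inner and hence invisible to $K_0$. Those estimates and that verification are the substance of the proof, and they are absent here.
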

		
		We finish this chapter with a quick recollection of the classification of minimal Cantor systems from \cite{gps95}: 
		
		\begin{thm}[\cite{gps95}, Theorem~2.1]
			Let $(X_1,\varphi_1)$ and $(X_2,\varphi_2)$ be minimal Cantor systems. Then the following are equivalent:
			\begin{enumerate}[(i)]
				\item The systems $(X_1,\varphi_1)$ and $(X_2,\varphi_2)$ are strongly orbit equivalent.
				
				\item The $K_0$-groups $K^0(X_1,\varphi_1)$ and $K^0(X_2,\varphi_2)$
				are isomorphic as scaled ordered groups.
				
				\item $C(X_1)\rtimes_{\varphi_1} \mathbb{Z}$ and $C(X_2)\rtimes_{\varphi_2} \mathbb{Z}$ are isomorphic.
			\end{enumerate}
		\end{thm}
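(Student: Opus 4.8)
The plan is to establish the cycle of implications $(iii)\Rightarrow(ii)\Rightarrow(i)\Rightarrow(iii)$, with $(ii)\Rightarrow(i)$ as the substantial part.

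For $(iii)\Rightarrow(ii)$ I would argue functorially. Any $*$-isomorphism $C(X_1)\rtimes_{\varphi_1}\mathbb{Z}\to C(X_2)\rtimes_{\varphi_2}\mathbb{Z}$ is necessarily unital, so the induced map on $K_0$ is an isomorphism of ordered groups carrying the class of the unit to the class of the unit, i.e.\ an isomorphism of scaled ordered groups. Composing with the identifications $K^0(X_i,\varphi_i)\cong K_0(C(X_i)\rtimes_{\varphi_i}\mathbb{Z})$ recorded in Subsection~\ref{subs: crossprod} yields $(ii)$. This direction is routine.

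For $(ii)\Rightarrow(i)$ I would pass through the AF models. Fix points $x_i\in X_i$ and realize each $(X_i,\varphi_i)$ as a Bratteli--Vershik system via Theorem~\ref{thm: hps}, so that the $\varphi_i$-orbit relation is built from the tail-equivalence relation of a simple ordered Bratteli diagram $\Gamma_i$ together with the identification of the maximal path with the minimal path along the single orbit $\operatorname{Orb}_{\varphi_i}(x_i)$. By Theorem~\ref{thm: cantor-AF} the scaled ordered group $K^0(X_i,\varphi_i)$ is isomorphic to $K_0(\mathfrak{A}_{\{x_i\}})$, the ordered $K_0$ of the AF C*-algebra of this tail relation. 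Feeding the isomorphism from $(ii)$ into Elliott's classification of AF C*-algebras (equivalently Theorem~\ref{thm: af-brat} together with Krieger's spatial isomorphism theorem, Theorem~\ref{thm: krieg}) produces a $*$-isomorphism $\mathfrak{A}_{\{x_1\}}\cong\mathfrak{A}_{\{x_2\}}$, hence an equivalence of the underlying unordered diagrams and a homeomorphism $F\colon X_1\to X_2$ conjugating the two tail-equivalence relations.

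The main obstacle lies in upgrading this $F$ to a strong orbit equivalence. Since the tail relation agrees with the full $\varphi_i$-orbit relation off the single exceptional orbit $\operatorname{Orb}_{\varphi_i}(x_i)$, the homeomorphism $F$ is automatically an orbit map away from that orbit, and the associated cocycles $n,m$ are continuous except possibly at one point. The delicate part is to absorb the two exceptional orbits -- that of $x_1$ and that of $F^{-1}(x_2)$ -- so that the recombination of the split half-orbits on the $X_2$ side still matches orbits, leaving at most a single point of discontinuity in each cocycle; this is precisely the argument that the edge-order is invisible to strong orbit equivalence, and it is where minimality and freeness (Remark~\ref{rem: orbdens}) together with the Cantor structure are used decisively. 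Finally, for $(i)\Rightarrow(iii)$ I would reconstruct the crossed products from the orbit groupoids: by freeness, $C(X_i)\rtimes_{\varphi_i}\mathbb{Z}$ is the C*-algebra of the étale orbit groupoid $R_{\varphi_i}$ and $(C(X_i),C(X_i)\rtimes_{\varphi_i}\mathbb{Z})$ is a Cartan pair (Remark~\ref{rem: crossedprod}), so a strong orbit equivalence $F$ furnishes an isomorphism of these groupoids off the exceptional orbit, the ``at most one discontinuity'' condition being exactly what extends it to a genuine groupoid isomorphism and hence a $*$-isomorphism of crossed products. Alternatively one closes the cycle by the immediate $(i)\Rightarrow(ii)$ (from the tail-relation conjugation above) followed by $(ii)\Rightarrow(iii)$, using that these crossed products are simple unital $A\mathbb{T}$-algebras of real rank zero with $K_1\cong\mathbb{Z}$ (the Pimsner--Voiculescu computation of Subsection~\ref{subs: crossprod}), so that scaled ordered $K_0$ together with $K_1$ is a complete invariant; here establishing the $A\mathbb{T}$ structure is the nontrivial input.
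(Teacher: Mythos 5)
Your overall architecture coincides with the route the paper attributes to \cite{gps95} and Elliott: $(iii)\Rightarrow(ii)$ is routine functoriality, $(ii)\Rightarrow(i)$ passes through the Bratteli--Vershik/AF models exactly as in the paper's sketch, and $(ii)\Leftrightarrow(iii)$ rests on Elliott's classification of these crossed products as simple circle ($A\mathbb{T}$) algebras of real rank zero with $K_1\cong\mathbb{Z}$. However, your primary route for $(i)\Rightarrow(iii)$ contains a genuine error. A strong orbit equivalence does \emph{not} induce an isomorphism of the transformation (or orbit) groupoids $\mathcal{G}_{\varphi_1}\cong\mathcal{G}_{\varphi_2}$, and the ``at most one point of discontinuity'' condition cannot be upgraded to one: such a groupoid isomorphism is equivalent to an isomorphism of the Cartan pairs $(C(X_i),C(X_i)\rtimes_{\varphi_i}\mathbb{Z})$, which by Theorem~\ref{thm: flipconju} is equivalent to flip conjugacy --- strictly stronger than strong orbit equivalence. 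The entire point separating this theorem from Theorem~\ref{thm: flipconju} is that strong orbit equivalence remembers only the bare isomorphism class of the crossed product, not its position relative to the diagonal $C(X)$. That branch should be discarded in favour of your fallback $(i)\Rightarrow(ii)\Rightarrow(iii)$.

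In the fallback, the step $(i)\Rightarrow(ii)$ is not ``immediate from the tail-relation conjugation above'': that conjugation was constructed \emph{starting from} an isomorphism of the $K^0$-groups, so invoking it here is circular, and a strong orbit equivalence is not a priori a conjugation of the chosen tail relations. This implication is precisely where Putnam's analysis of the AF subalgebras $\mathfrak{A}_Y$ (Theorems~\ref{thm: putnam} and~\ref{thm: cantor-AF}) enters, as the paper notes: one must show that an orbit map whose cocycles each have at most one discontinuity induces a well-defined isomorphism of the coinvariant groups $C(X_i,\mathbb{Z})/\Ima(\operatorname{id}-(\varphi_i)_{*})$ preserving positive cone and order unit. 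With these two repairs your outline agrees with the argument cited in the paper; the remaining gaps (absorbing the exceptional orbits in $(ii)\Rightarrow(i)$, and the $A\mathbb{T}$/real-rank-zero input for $(ii)\Rightarrow(iii)$) are correctly identified by you as the substantive external inputs.
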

		For $(ii)\Rightarrow (i)$ the orbit map is derived from the representations as Bratteli-Vershik systems. The inverse direction $(i)\Rightarrow (ii)$ is a consequence of the work of Putnam on the AF C*-subalgebras $\mathfrak{A}_Y$. $(ii)\Leftrightarrow (iii)$ follows from work of Elliott.\footnote{He had obtained a characterization of crossed products C*-algebras associated with minimal Cantor systems as the simple circle algebras of real rank zero with $K_1$-group equal to $\mathbb{Z}$ and had shown that $K_0$-groups as scaled ordered groups are a complete isomorphism invariant of simple circle algebras of real rank zero.\\}
		
		Another theorem obtained in \cite{gps95} is directly influenced by results in measured dynamics. Every non-singular transformation $T$ on a Lebesgue space $(X,\lambda)$ gives rise to an $\mathbb{R}$-action on the weights of the associated von Neumann factor and further to a von Neumann crossed product called the \emph{flow of weights}. In \cite{kri76} Wolfgang Krieger showed that orbit equivalence of non-singular transformations $T_1$ and $T_2$ is equivalent to conjugacy of their associated flows. In the topological dynamics setting, the role of the flow of weights is taken by the simple scaled ordered dimension group $K^0(X,\varphi)/\operatorname{Inf}(K^0(X,\varphi))$ -- see Corollary~\ref{cor: simple scaled ordered dim group of minimal cantor system}.
		
		\begin{thm}[\cite{gps95}, Theorem~2.2]\label{thm: orbiequi}
			Let $(X_1,\varphi_1)$ and $(X_2,\varphi_2)$ be minimal Cantor systems. Then the following are equivalent:
			\begin{enumerate}[(i)]
				\item The systems $(X_1,\varphi_1)$ and $(X_2,\varphi_2)$ are orbit equivalent.
				
				\item The groups $K^0(X_1,\varphi_1)/\operatorname{Inf}(K^0(X_1,\varphi_1))$ and $K^0(X_2,\varphi_2)/\operatorname{Inf}(K^0(X_2,\varphi_2))$ are isomorphic as scaled ordered groups.
				
				\item There exists a homeomorphisms $F \colon X_1 \to X_2$ such that $M_{\varphi_1}$ is mapped to $M_{\varphi_2}$.
			\end{enumerate}
		\end{thm}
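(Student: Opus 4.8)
The plan is to prove the cycle $(i)\Rightarrow(iii)\Rightarrow(ii)\Rightarrow(i)$, concentrating all of the difficulty in the last implication. The conceptual pivot is Corollary~\ref{cor: simple scaled ordered dim group of minimal cantor system} combined with the correspondence of Theorem~\ref{thm: tracstat}: the subgroup $\operatorname{Inf}(K^0(X,\varphi))$ is by definition the kernel of all states, hence of integration against every $\mu\in M_\varphi$, so that $K^0(X,\varphi)/\operatorname{Inf}(K^0(X,\varphi))$ is identified with $C(X,\mathbb{Z})/\{f: \int_X f\,\mathrm{d}\mu=0 \text{ for all } \mu\in M_\varphi\}$ and records \emph{precisely} the invariant-measure data of the system. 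Thus $(ii)$ is a purely measure-theoretic condition, and the whole theorem asserts that orbit equivalence of minimal Cantor systems is governed by invariant measures alone.

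The implication $(i)\Rightarrow(iii)$ rests on one observation: $M_\varphi$ is intrinsic to the orbit relation, not to $\varphi$. Indeed, if $g$ is any homeomorphism with $g(x)\in\operatorname{Orb}_\varphi(x)$ for all $x$, then by freeness (Remark~\ref{rem: orbdens}) there is a Borel cocycle $n$ with $g=\varphi^{n(\cdot)}$; decomposing $X=\bigsqcup_k\{n=k\}$ into the closed level sets and applying $\varphi$-invariance piece by piece gives $g_*\mu=\mu$ for every $\mu\in M_\varphi$. Now if $F$ is an orbit map, then $\psi:=F^{-1}\varphi_2 F$ satisfies $\psi(x)\in\operatorname{Orb}_{\varphi_1}(x)$, so $\mu$ is $\psi$-invariant for each $\mu\in M_{\varphi_1}$; since $\varphi_2=F\psi F^{-1}$ this forces $F_*\mu\in M_{\varphi_2}$, and the symmetric argument for $F^{-1}$ yields $F_*M_{\varphi_1}=M_{\varphi_2}$, which is exactly $(iii)$.

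The step $(iii)\Rightarrow(ii)$ is a formal computation. A homeomorphism $F$ with $F_*M_{\varphi_1}=M_{\varphi_2}$ induces the group isomorphism $F^{\#}\colon C(X_2,\mathbb{Z})\to C(X_1,\mathbb{Z})$, $g\mapsto g\circ F$, sending $\mathbf{1}_{X_2}$ to $\mathbf{1}_{X_1}$ and non-negative functions to non-negative functions. Because $\int_{X_1}(g\circ F)\,\mathrm{d}\mu=\int_{X_2}g\,\mathrm{d}(F_*\mu)$ and $F_*$ sweeps out $M_{\varphi_2}$ as $\mu$ runs through $M_{\varphi_1}$, the subgroup of functions integrating to zero against all invariant measures is preserved. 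Hence $F^{\#}$ descends to a scaled ordered isomorphism of the quotient groups, which is $(ii)$ via Corollary~\ref{cor: simple scaled ordered dim group of minimal cantor system}.

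The substance is $(ii)\Rightarrow(i)$. From a scaled ordered isomorphism $\theta$ of the quotient groups, dualizing and invoking Theorem~\ref{thm: tracstat} produces an affine homeomorphism of simplices $M_{\varphi_1}\cong M_{\varphi_2}$. The first task is to realize this abstract matching spatially, i.e. to build a homeomorphism $F_0\colon X_1\to X_2$ with $F_{0*}M_{\varphi_1}=M_{\varphi_2}$ inducing $\theta$; this is carried out by a back-and-forth matching of finer and finer clopen partitions, the engine being the Glasner--Weiss fact (\cite{gw95}) that in a minimal Cantor system two clopen sets of equal measure under every invariant measure can be carried onto one another by an orbit-preserving partial homeomorphism — a statement that holds precisely because, for the simple quotient dimension group, the positive cone consists of the strictly state-positive elements, so order is detected by measures. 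The second task is to upgrade a measure-matching homeomorphism to a genuine orbit map: after conjugating one may assume $X_1=X_2$ with $M_{\varphi_1}=M_{\varphi_2}$, and one constructs for the two systems nested Kakutani--Rokhlin partitions (Subsection~\ref{subs: kak.ro}, via the Bratteli--Vershik models of Theorem~\ref{thm: hps}) that refine one another and whose towers are paired by equal measure, assembling the orbit equivalence as the limit in which each atom is transported to its partner along orbit segments. I expect the main obstacle to be exactly this final assembly: guaranteeing that the limit is simultaneously a homeomorphism and orbit-preserving requires the measure-matching (absorption) control at every stage, and since only the state data and not the full $K^0$ is available, the pairing across consecutive levels cannot be kept coherent enough to bound the discontinuities of the orbit cocycles — which is precisely why one lands at orbit equivalence rather than the strong orbit equivalence of (\cite{gps95}, Theorem~2.1).
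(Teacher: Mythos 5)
Your implications $(i)\Rightarrow(iii)$ and $(iii)\Rightarrow(ii)$ are correct and follow the same route the paper indicates: $(i)\Rightarrow(iii)$ by observing that the set of invariant probability measures is an invariant of the orbit relation (your cocycle decomposition argument is right, though note the level sets $\{n=k\}$ of the orbit cocycle of a general orbit map are only Borel, not closed, since the cocycle need not be continuous -- the computation goes through unchanged with Borel sets), and $(iii)\Rightarrow(ii)$ by the correspondence of Theorem~\ref{thm: tracstat} together with Corollary~\ref{cor: simple scaled ordered dim group of minimal cantor system}.

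The genuine gap is in $(ii)\Rightarrow(i)$, which is where all the content of the theorem lies. What you have written there is an outline of a strategy, and your closing sentence is an accurate description of the obstruction rather than a resolution of it: the back-and-forth construction of nested Kakutani--Rokhlin towers paired by equal measure must be kept coherent across \emph{all} levels simultaneously, and producing such a coherent system of pairings is exactly the hard step. In \cite{gps95} this is achieved not by a direct combinatorial matching but by working at the level of the Bratteli--Vershik models and the dimension groups, invoking Putnam's results on the AF subalgebras $\mathfrak{A}_Y$ together with an existence theorem of Cartan--Eilenberg from homological algebra to lift the isomorphism of quotient dimension groups to compatible maps of the approximating systems; none of this machinery appears in your sketch. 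A smaller but related inaccuracy: the statement you attribute to Glasner--Weiss (two clopen sets of equal measure under every invariant measure are carried onto one another by an element of the topological full group) is not Lemma~\ref{lem: gw}, which requires the strict inequality $\mu(B)<\mu(A)$ for all $\mu\in M_\varphi$; the equal-measure version is Lemma~\ref{lem: abundinvo}$(i)\Rightarrow(iii)$, whose proof itself goes through the Bratteli--Vershik representation, so it cannot serve as an elementary "engine" that bypasses that machinery.
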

		
		$(i)\Rightarrow(iii)$ is straightforward and $(iii)\Rightarrow (ii)$ follows from Theorem~\ref{thm: tracstat}.
		Implication $(ii)\Rightarrow (i)$, however, is by far the most technical part in all of \cite{gps95} and requires besides Bratteli-Vershik systems and Putnam's results in \cite{put89} an existence theorem by Cartan-Eilenberg from homological algebra.
		
		\begin{rem}\label{rem: thm 2.3}
			Theorem~2.3 in \cite{gps95} extends Theorem~\ref{thm: orbiequi} to minimal AF-systems.
		\end{rem}
		
		Closest to our concern is the following classification theorem, which clarifies under which assumptions minimal Cantor systems are flip conjugate. Topological full groups (see Chapter~\ref{chap: 3}) -- they are already implicit in the works \cite{put89} and \cite{gps95} -- were later on recognized as complete invariants for flip conjugacy of minimal Cantor systems:
		
		\begin{thm}[\cite{gps95}, Theorem~2.4]\label{thm: flipconju}
			Let $(X_1,\varphi_1)$ and $(X_2,\varphi_2)$ be minimal Cantor systems. Then the following are equivalent:
			\begin{enumerate}[(i)]
				\item The systems $(X_1,\varphi_1)$ and $(X_2,\varphi_2)$ are flip conjugate.
				
				\item There exists an orbit map $F\colon X_1 \to X_2$ such that the orbit cocycles associated with $F$ are continuous.
				
				\item There exists an isomorphism $\alpha \colon C(X_1)\rtimes_{\varphi_1} \mathbb{Z} \to C(X_2)\rtimes_{\varphi_2} \mathbb{Z}$ such that $\alpha(C(X_1))=C(X_2)$.
			\end{enumerate}
		\end{thm}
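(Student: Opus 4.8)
The plan is to establish the three equivalences as the cycle $(i)\Rightarrow(iii)\Rightarrow(ii)\Rightarrow(i)$, noting at the outset that $(i)\Rightarrow(ii)$ is a free sanity check: a conjugacy $F$ realizing $F\circ\varphi_1=\varphi_2^{\varepsilon}\circ F$ for $\varepsilon\in\{+1,-1\}$ has orbit cocycles $n\equiv m\equiv\varepsilon$, which are constant and hence continuous.

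First, $(i)\Rightarrow(iii)$. I would invoke the universal description of the crossed product from Remark~\ref{rem: crossedprod}. If $F\colon X_1\to X_2$ is a conjugacy with $F\circ\varphi_1=\varphi_2\circ F$, the assignments $f\mapsto f\circ F^{-1}$ on $C(X_1)$ and $u_1\mapsto u_2$ respect the covariance relation $u\,g\,u^{*}=g\circ\varphi^{-1}$ and therefore extend to a $*$-isomorphism $\alpha$ with $\alpha(C(X_1))=C(X_2)$. In the flip case $F\circ\varphi_1=\varphi_2^{-1}\circ F$ one sets instead $u_1\mapsto u_2^{*}$ and the same check goes through; this yields the diagonal-preserving isomorphism demanded by $(iii)$.

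For $(iii)\Rightarrow(ii)$ I would recover the dynamics from the algebra. Restricting $\alpha$ to the diagonals gives a $*$-isomorphism $C(X_1)\cong C(X_2)$, hence by Gelfand duality a homeomorphism between $X_1$ and $X_2$; call it $F\colon X_1\to X_2$. Because $\alpha$ preserves the Cartan subalgebra --- recall from Remark~\ref{rem: crossedprod} that $(C(X),C(X)\rtimes_\varphi\mathbb{Z})$ is a Cartan pair for minimal, hence essentially free, systems --- it carries the normalizer of $C(X_1)$ onto that of $C(X_2)$; since for an essentially free Cantor system the normalizer encodes exactly the orbit equivalence relation, $F$ is an orbit map. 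The continuity of the cocycles is the subtle point: by tracking how $\alpha$ intertwines the dual $\mathbb{T}$-action $\hat\varphi$ and its Fourier components $E_k$ from Remark~\ref{rem: crossedprod}, the normalizing partial isometries implementing single $\varphi_1$-steps are sent to normalizers of bounded spectral type, which forces the return data, i.e.\ the cocycles, to be locally constant. Conceptually this is the reconstruction principle that a diagonal-preserving isomorphism of transformation-groupoid C*-algebras is implemented by an isomorphism $X_1\rtimes\mathbb{Z}\cong X_2\rtimes\mathbb{Z}$ of \'etale groupoids, which is precisely a continuous orbit equivalence.

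The remaining implication $(ii)\Rightarrow(i)$ is, I expect, the genuine obstacle, and I would treat it dynamically (this is Boyle's theorem). The cocycle $n\colon X_1\to\mathbb{Z}$ is continuous on a compact space, hence bounded with finite range, and it never vanishes: $n(x_0)=0$ would give $F(\varphi_1 x_0)=F(x_0)$, impossible since $F$ is injective and the minimal system is aperiodic (Remark~\ref{rem: orbdens}). For fixed $x$ the partial sums $S_k(x)=\sum_{j=0}^{k-1}n(\varphi_1^j x)$ define, by bijectivity of $F$ on orbits together with freeness of $\varphi_2$, a bijection $\mathbb{Z}\to\mathbb{Z}$. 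The goal is to deduce $n\equiv+1$ or $n\equiv-1$, for then $F\circ\varphi_1=\varphi_2^{\varepsilon}\circ F$ is literally a conjugacy onto $(X_2,\varphi_2^{\varepsilon})$. The difficulty is that a single bounded-step bijection of $\mathbb{Z}$ need not be monotone --- sequences such as $0,2,1,4,3,\dots$ oscillate yet biject --- so I cannot argue orbit by orbit; I must use that the same continuous $n$ governs every orbit simultaneously. The route I would take exploits both cocycles, whose continuity is built into $(ii)$: the inverse time-change is recorded by $m$, and an excursion of $n$ of modulus $\geq 2$, or a sign change of $n$, forces $m$ to change sign. It then suffices to prove that a continuous orbit cocycle over a minimal Cantor system has constant sign --- using that orbits are dense and that the clopen sets $n^{-1}(\mathbb{Z}_{>0})$, $n^{-1}(\mathbb{Z}_{<0})$ cannot both be non-empty without violating bijectivity along a dense orbit, supplemented by a drift computation of $\int_{X_1} n\,\mathrm{d}\mu$ against the $\varphi_1$-invariant measures of Theorem~\ref{thm: tracstat}. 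Making this sign-constancy rigorous, rather than the formal bookkeeping around it, is where the real work of the theorem lies.
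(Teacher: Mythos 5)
The paper does not actually prove this theorem; it cites it from \cite{gps95} without argument, so there is no in-text proof to compare against. Judged on its own merits, your outline of $(i)\Rightarrow(iii)$ is correct (the universal property of the crossed product handles both the conjugate and flip cases exactly as you say), and $(iii)\Rightarrow(ii)$ is right in substance: Putnam's normal form for unitary normalizers of $C(X)$ in $C(X)\rtimes_\varphi\mathbb{Z}$ (quoted in the paper in Subsection~\ref{subs: sight}) shows that $\alpha(u_1)$ acts on $X_2$ as an element of $\mathfrak{T}(\varphi_2)$, which is precisely the statement that the induced homeomorphism $F$ is an orbit map with continuous cocycles.

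The genuine gap is in $(ii)\Rightarrow(i)$: the goal you set yourself, namely that the cocycle satisfies $n\equiv+1$ or $n\equiv-1$, is false, and so is the weaker claim that $n$ has constant sign. Take $X_1=X_2=X$, $\varphi_1=\varphi_2=\varphi$ minimal, and let $F=\gamma\in\mathfrak{T}(\varphi)$ be any non-trivial element with continuous cocycle $c$, so $\gamma(x)=\varphi^{c(x)}(x)$. Then $F$ is an orbit map with continuous orbit cocycle $n(x)=1+c(\varphi(x))-c(x)$. Choosing $\gamma$ to be the involution swapping a clopen set $U$ with $\varphi(U)$ (with $U,\varphi(U),\varphi^2(U)$ pairwise disjoint) gives $n$ taking the values $-1$, $1$ and $2$, yet the two systems are literally equal. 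Your ``$0,2,1,4,3,\dots$'' oscillation is therefore not an artifact that minimality rules out; it is genuinely realized by continuous cocycles over minimal systems, and the clopen sets $n^{-1}(\mathbb{Z}_{>0})$ and $n^{-1}(\mathbb{Z}_{<0})$ can perfectly well both be non-empty. The drift $\int_{X_1} n\,\mathrm{d}\mu$ does equal $\pm1$ for every invariant $\mu$, but only because $n$ differs from $\pm1$ by a coboundary, which is exactly the statement one must prove rather than a tool for proving it.

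What is actually true (Boyle's theorem, which is the content of this implication in \cite{gps95}) is that a continuous orbit cocycle over a minimal system is \emph{cohomologous} to a constant $\varepsilon\in\{+1,-1\}$: there is a continuous $b\colon X_1\to\mathbb{Z}$ with $n=\varepsilon+b\circ\varphi_1-b$, and the flip conjugacy is implemented not by $F$ but by the modified map $G:=\varphi_2^{-b(\cdot)}\circ F$ (one must still verify that $G$ is a homeomorphism). The real work is (a) showing that the bijections $k\mapsto S_k(x)$ of $\mathbb{Z}$ have a coherent orientation, constant over $X_1$ by minimality, and (b) constructing the continuous transfer function $b$ from the finitely many ``lag'' terms by which $S_k(x)$ deviates from $\varepsilon k$, using boundedness of $n$ and compactness. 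Your proposal is missing the idea that $F$ must be corrected by a coboundary, and without it the implication cannot be closed along the route you describe.
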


	\chapter{\'Etale groupoids and inverse semigroups}\label{chap: 2}
	
	In this chapter we roam around the playgrounds of \emph{groupoids} and \emph{inverse semigroups}. While it is the study of Cantor systems from which topological full groups emanated, this setting is embedded in the broader context of \'etale groupoids.

	\section{Basics of groupoids}\label{sec: basics of groupoids}
	
	As references for the basic definitions in this section see \cite{ren80} and \cite{pat99} except where noted.
	Subsection~\ref{subs: groupoids} lists the basic definitions and examples of groupoids and Subsection~\ref{subs: topological groupoids} does the same for topological groupoids.

	\subsection{Groupoids}\label{subs: groupoids}
	
	In the twentieth century especially groupoids have become objects of utmost importance -- naturally in step with the rise of category theory.

	\begin{defi}\phantomsection\label{defi: grpdstuff}
		\begin{enumerate}[(i)]	
			\item A \emph{groupoid} is a set $\mathcal{G}$ with a product map $(g_1,g_2) \mapsto g_1g_2$ defined on a subset $\mathcal{G}^{(2)} \subseteq \mathcal{G} \times \mathcal{G}$ and an inverse map  $g \mapsto g^{-1}$ defined on all of $\mathcal{G}$ satisfying the following conditions for all $g, g_1, g_2, g_3 \in \mathcal{G}$:
			\begin{enumerate}
				\item If $(g_1,g_2),(g_2,g_3) \in \mathcal{G}^{(2)}$, then $(g_1g_2,g_3),(g_1,g_2g_3)\in \mathcal{G}^{(2)}$ and $(g_1g_2)g_3 = g_1(g_2g_3)$
				
				\item $(g,g^{-1}),(g^{-1},g) \in \mathcal{G}^{(2)}$
				
				\item If $(g_1,g_2) \in \mathcal{G}^{(2)}$, then $(g_1,g_2)g_2^{-1}=g_1$ and $g_1^{-1}(g_1,g_2)=g_2$
			\end{enumerate}
		\end{enumerate}
		
		Let $\mathcal{G}$ be a groupoid.
		\begin{enumerate}[(i),resume]	
			\item A subset $\mathcal{H} \subseteq \mathcal{G}$ is called a \emph{subgroupoid of $\mathcal{G}$} if it is closed with respect to the product and inverse map.
			
			\item The structure maps $s \colon \mathcal{G} \to \mathcal{G}^{(0)}, g \mapsto g^{-1}g$ resp. $r \colon \mathcal{G} \to \mathcal{G}^{(0)}, g \mapsto gg^{-1}$ are called \emph{source map} resp. \emph{range map}.
			
			\item The elements in $\mathcal{G}^{(2)}$ are called \emph{composable pairs}. Denote more generally for all $n \in \mathbb{N}$ by $\mathcal{G}^{(n)}$ the set of sequences $(g_1, g_2, \dots, g_n) \in \mathcal{G}^n$ such that the product $g_1g_2 \dots g_n$ is defined, i.e. $s(g_i)=r(g_{i+1})$ for all $i \in {1,2,\dots,n-1}$.
			
			\item The \emph{product of two subsets} $G_1,G_2 \subseteq G$ is defined as the set $G_1G_2:=\{(g_1,g_2)\in \mathcal{G}^{(2)}|g_1 \in G_1,g_2 \in G_2\}$.
			
			\item The elements of $\{gg^{-1}|g \in \mathcal{G}\}$ are called \emph{units}, the set of all units is denoted by $\mathcal{G}^{(0)}$.
			
			\item Let $U$ be a subset of $\mathcal{G}^{(0)}$. We set $\mathcal{G}^U:=r^{-1}(U)$ and $\mathcal{G}_U:=s^{-1}(U)$, in particular the range fiber of a unit $u \in \mathcal{G}^{(0)}$ is denoted by $\mathcal{G}^u$ and its source fiber by $\mathcal{G}_u$. The set $\mathcal{G}_U \cap \mathcal{G}^U$ forms a subgroupoid of $\mathcal{G}$ when endowed with the restricted product- and inverse map called \emph{reduction of $\mathcal{G}$ to $U$} denoted by $\mathcal{G}|_U$ . In particular for any $u \in \mathcal{G}^{(0)}$, the reduction groupoid $\mathcal{G}|_u$ is a group of pairwise composeable elements of $\mathcal{G}$ called the \emph{isotropy group of $u$}. Denote by $\mathcal{G}_{triv}$ the set of units with trivial isotropy groups.
			
			\item A subset $B \subseteq \mathcal{G}$ is called \emph{slice} or \emph{$\mathcal{G}$-bisection}, if $s|_B$ and $r|_B$ are injective maps. Denote the set of all slices in $\mathcal{G}$ by $\mathcal{B}_\mathcal{G}$.
			
			\item The groupoid $\mathcal{G}$ is called \emph{principal}, if $\mathcal{G}_{triv} =\mathcal{G}^{(0)}$ holds.
			
			\item The set $\operatorname{Iso}(\mathcal{G}):=\{g \in \mathcal{G}:s(g)=r(g)\}=\bigcup_{u \in \mathcal{G}^{(0)}}\mathcal{G}|_u$ forms a subgroupoid of $\mathcal{G}$, called the \emph{isotropy bundle of $\mathcal{G}$}.
			
			\item A pair of units $u_1,u_2 \in \mathcal{G}^{(0)}$ is contained in the same \emph{$\mathcal{G}$-orbit}, if there exists a $g \in \mathcal{G}$ such that $s(g)=u_1$ and $r(g)=u_2$. Let $u \in \mathcal{G}^{(0)}$, denote the orbit containing $u$ by $\mathcal{G}(u)$.
			
			\item (\cite{nek15}, p.~31) If a subset $\mathcal{S} \subseteq \mathcal{G}$ intersects every $\mathcal{G}$-orbit it is called a \emph{$\mathcal{G}$-transversal}.
			
			\item Let $\mathcal{G}_1, \mathcal{G}_2$ be groupoids. A \emph{groupoid homomorphism} is a map $\varphi \colon \mathcal{G}_1 \to \mathcal{G}_2 $ such that $\varphi(\mathcal{G}_1^{(0)}) \subseteq \mathcal{G}_2^{(0)}$ and for all $(g,h) \in \mathcal{G}_1^{(2)}$ it holds that $\big( \varphi(g),\varphi(h) \big) \in \mathcal{G}_2^{(2)}$ and $\varphi(gh)=\varphi(g)\varphi(h)$. If $\varphi$ has an inverse, it is a \emph{groupoid isomorphism}. In particular, groupoid homomorphisms intertwine the structure maps $s$ and $r$.

			\item  A subset $\mathcal{S}$ is said to be a \emph{generating set of $\mathcal{G}$} if $\mathcal{G}=\bigcup_{n \in \mathbb{N}} (\mathcal{S} \cup \mathcal{S}^{-1})^{n}$.	
		\end{enumerate}
	\end{defi}
	
	\begin{rem}
		\begin{enumerate}[(i)]
			\item Groupoids are essentially the same as a small category in which every morphism is invertible, generalizing the representation of groups as categories of the same kind with only one object by Cayley's theorem. Groupoid elements translate to morphisms in the category theoretic description, the units to identity morphisms, range- and source map to the respective category theoretic term, groupoid homorphisms to functors between categories of the described kind etc.\footnote{Note however, when interpreted as "arrows", products must be read from right to left to make sense!\\}
			
		\end{enumerate}
		
		Let $\mathcal{G}$ be a groupoid.
		\begin{enumerate}[(i),resume]
			\item A set $B \subseteq \mathcal{G}$ is a slice \emph{if and only if}  $BB^{-1} \subseteq \mathcal{G}^{(0)}$ or equivalently $B^{-1}B\subseteq \mathcal{G}^{(0)}$ holds.
			
			\item Note that if two units $u,v \in \mathcal{G}^{(0)}$ are contained in the same orbit, then their isotropy groups $\mathcal{G}|_u$ and $\mathcal{G}|_v$ are isomorphic.
		\end{enumerate}
	\end{rem}

	\begin{ex}[\cite{ren80}, Examples~1.2.c]\label{ex: grpd}
		Let $\sim$ be an equivalence relation on a set $X$. Then the set $\mathcal{G}_\sim:=\{(x,y) \in X\times X|x\sim y\}$ becomes a groupoid by defining a pair $((x_1,y_1),(x_2,y_2))$ to be composeable if $y_1=x_2$. Their product is defined by $(x_1,y_1)\cdot(x_2y_2)=(x_1,y_2)$ and the inverse by $(x,y)^{-1}=(y,x)$. The arising groupoid $\mathcal{G}_\sim$ is principal and $\mathcal{G}_\sim^{(0)} \cong X$.
	\end{ex}
	
	Equivalence relations on spaces with structure can be studied by looking at the quotient by collapsing equivalence classes, however, if the quotient map is not sufficiently ``nice", the structure of the quotient space might turn out to be ``bad", e.g. for the \emph{Kronecker foliation}, a foliation of the torus $\mathbb{T}^2=S^1\times S^1$ induced by a 1-dimensional subbundle of the tangent bundle with constant irrational slope, the leaf space carries no manifold structure. Groupoids offer a way to encode the data without the -- possibly brutal -- application of a quotient map e.g. in the context of foliations this role is played by the \emph{holonomy groupoid}.
	Some of the terminology from Definition~\ref{defi: grpdstuff} is accounted for by the capability of	groupoids to picture group actions:
	\begin{defi}[\cite{ren80}, Examples~1.2.a]
		Let $\alpha \colon G \curvearrowright X$ be the action of a group $G$ on a set $X$. The set $G \times X$ carries the structure of a groupoid: A tuple of elements $(g_1,x_1),(g_2,x_2) \in G \times X$ is composeable, if $g_2(x_2)=x_1$. Its product is defined by $(g_1,x_1)(g_2,x_2)=(g_1g_2,x_2)$ and the inverse by $(g,x)^{-1}=\big(g^{-1},g(x)\big)$.	The resulting groupoid is called the \emph{groupoid of the action} or \emph{transformation groupoid} and is denoted by $\mathcal{G}_{(X,G)}$ and by $\mathcal{G}_\varphi$ in the case of $\mathbb{Z}$-action generated by a homeomorphism $\varphi$. Its unit space $\mathcal{G}_{(X,G)}^{(0)}$ coincides with $X$. A transformation groupoid is principal \emph{if and only if} the action is free.
	\end{defi}	
	
	\subsection{Topological groupoids}\label{subs: topological groupoids}
	
	The groupoids we consider are more than algebraic objects -- pre-eminently they are topological objects and under sufficient conditions open to the methods of analysis.				
	\begin{defi}\phantomsection\label{defi: grpd}
		\begin{enumerate}[(i)]
			\item Groupoids endowed with a topology such that product map and inverse map are continuous are called \emph{topological groupoids}.\footnote{This implies continuity of the range and source maps.\\}
		\end{enumerate}
		
		Let $\mathcal{G}$ be a topological groupoid.
		\begin{enumerate}[(i),resume]
			\item If $\mathcal{G}_{triv}$ is dense in $\mathcal{G}^{(0)}$, the groupoid $\mathcal{G}$ is called \emph{essentially-principal}.
			
			\item If all $\mathcal{G}$-orbits are dense in $\mathcal{G}^{(0)}$, the groupoid $\mathcal{G}$ is called \emph{minimal}.
			
			\item  Denote the set of all open slices by $\mathcal{B}_\mathcal{G}^o$ and the set of all open, compact slices by $\mathcal{B}_\mathcal{G}^{o,k}$.
		\end{enumerate}
	\end{defi}	
	
	\begin{ex}\phantomsection\label{ex: act.grpd.}
		\begin{enumerate}[(i)]
			\item Topological groups are topological groupoids.
			
			\item For an equivalence relation $\sim$ on a topological space $X$, the groupoid $\mathcal{G}_\sim$ becomes a topological groupoid when endowed with the subspace topology coming from $X \times X$.
			
			\item Let $\alpha \colon G \to \operatorname{Homeo}(X)$ define a continuous action of a group $G$ on a topological space $X$ and let $\{U_i\}_i$ be a basis of the topology on $X$. Then the transformation groupoid can be endowed with a topology: The sets $\{(g,U_i)\}$ form the basis of a topology on $\mathcal{G}_{(X,G)}$ for which the product and inverse maps are trivially continuous.
			
			\item If in the above example $G$ is a discrete group, an equivalence relation can be defined on $\mathcal{G}_{(X,G)}$ by $(g_1,x_1)\sim(g_2,x_2)$ if $x_1=x_2$ and there exists a neighbourhood $U$ of $x_1$ such that $g_1|_U=g_2|_U$. The quotient $\operatorname{Germ}(X,G):= \mathcal{G}_{(X,G)} / {\sim}$ inherits the structure of a topological groupoid. The resulting groupoid is called the \emph{groupoid of germs of the action}.
		\end{enumerate}
	\end{ex}
	
	\begin{rem}\label{rem: unitsclosed}
		Let $\mathcal{G}$ be a topological groupoid.
		\begin{enumerate}[(i)]
			\item If $\mathcal{G}^{(0)}$ is Hausdorff, the isotropy bundle $\operatorname{Iso}(\mathcal{G})$ is closed, since it is the preimage of the diagonal $\Delta \subseteq \mathcal{G}^{(0)} \times \mathcal{G}^{(0)}$ under the continuous map $g \mapsto (s(g),r(g))$.
			
			\item If a topological groupoid $\mathcal{G}$ is Hausdorff, then $\mathcal{G}^{(0)}$ is closed, since it is the preimage of $\Delta \subseteq \mathcal{G} \times \mathcal{G}$ under the continuous map $g \mapsto (g,g^{-1})$.
		\end{enumerate}
	\end{rem}
	
	The following lemma is crucial in the characterization of \'etale groupoids:
	
	\begin{lem}[\cite{res07}, Lemma~5.16]\label{lem: grpd.opensets}
		Let $\mathcal{G}$ be a topological groupoid and let $\Omega(\mathcal{G})$ be its collection of open sets.
		Then every $U \in \Omega(\mathcal{G})$ satisfies:
		\begin{enumerate}[(i)]
			\item $(U \cap \mathcal{G}^{(0)})\mathcal{G} \subseteq \bigcup \{X\cap Y | X,Y \in \Omega(\mathcal{G}), XY^{-1} \subseteq U \} \subseteq \bigcup \{V \in \Omega(\mathcal{G})| VV^{-1} \subseteq U \}$

			\item $\mathcal{G}(U \cap \mathcal{G}^{(0)}) \subseteq \bigcup \{X\cap Y | X,Y \in \Omega(\mathcal{G}), X^{-1}Y \subseteq U \} \subseteq \bigcup \{V \in \Omega(\mathcal{G})| V^{-1}V \subseteq U \}$
		\end{enumerate}
	\end{lem}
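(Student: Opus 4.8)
The plan is to reduce both statements to the continuity of the two partial ``division'' maps
\[
\mu\colon \{(x,y)\in\mathcal{G}\times\mathcal{G}: s(x)=s(y)\}\to\mathcal{G},\quad (x,y)\mapsto xy^{-1},
\]
\[
\nu\colon \{(x,y)\in\mathcal{G}\times\mathcal{G}: r(x)=r(y)\}\to\mathcal{G},\quad (x,y)\mapsto x^{-1}y,
\]
each of which is continuous, being a composite of the inverse map and the product map of the topological groupoid $\mathcal{G}$ (the intermediate point $(x,y^{-1})$ resp.\ $(x^{-1},y)$ lands in $\mathcal{G}^{(2)}$ precisely because of the constraint defining the domain). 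I will treat part~(i) in detail; part~(ii) is the mirror image obtained by replacing $\mu$ and the range map by $\nu$ and the source map, together with the convention that $XY^{-1}$ (resp.\ $X^{-1}Y$) denotes the set of \emph{defined} products.

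First I would unwind the left-hand side. Since a unit $u\in\mathcal{G}^{(0)}$ composes with $g\in\mathcal{G}$ precisely when $r(g)=u$, in which case $ug=g$, one has $(U\cap\mathcal{G}^{(0)})\mathcal{G}=\{g\in\mathcal{G}: r(g)\in U\}$. For the \emph{first} inclusion, fix such a $g$, so that $r(g)=gg^{-1}=\mu(g,g)\in U$. As $U$ is open and $\mu$ is continuous, the preimage $\mu^{-1}(U)$ is open in $\operatorname{dom}\mu$ and contains $(g,g)$; hence there are open sets $W_1,W_2\subseteq\mathcal{G}$ with $g\in W_1\cap W_2$ and $(W_1\times W_2)\cap\operatorname{dom}\mu\subseteq\mu^{-1}(U)$. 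Putting $V:=W_1\cap W_2$, this gives $VV^{-1}\subseteq U$, so choosing $X=Y=V$ exhibits $g\in X\cap Y$ with $XY^{-1}\subseteq U$, placing $g$ in the middle union.

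For the \emph{second} inclusion, take any $X,Y\in\Omega(\mathcal{G})$ with $XY^{-1}\subseteq U$ and set $V:=X\cap Y$. Since $V\subseteq X$ and $V\subseteq Y$, every defined product $v_1v_2^{-1}$ with $v_1,v_2\in V$ already lies in $XY^{-1}$ (take $v_1$ as the $X$-factor and $v_2$ as the $Y$-factor), so $VV^{-1}\subseteq XY^{-1}\subseteq U$. Thus $X\cap Y=V$ is itself one of the open sets indexing the right-hand union, and taking the union over all such $X\cap Y$ yields the claimed containment.

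I do not expect a genuine obstacle here: the mathematical content is entirely the openness of the division maps near the ``diagonal'' point $(g,g)$. The only points demanding care are bookkeeping — correctly identifying $(U\cap\mathcal{G}^{(0)})\mathcal{G}$ with the range-fibre set $\{g:r(g)\in U\}$, remembering that $\mu$ and $\nu$ live only on the partial domains $\{s(x)=s(y)\}$ resp.\ $\{r(x)=r(y)\}$ so that neighbourhoods must be intersected with these, and respecting the product-of-subsets convention — all of which become routine once the two continuous maps are isolated at the outset.
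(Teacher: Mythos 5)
Your proof is correct. The paper states this lemma only as a citation to \cite{res07} and gives no proof of its own, but your argument — identifying $(U\cap\mathcal{G}^{(0)})\mathcal{G}$ with $r^{-1}(U\cap\mathcal{G}^{(0)})$, applying continuity of the division map $(x,y)\mapsto xy^{-1}$ at the point $(g,g)$ to produce $V=W_1\cap W_2$ with $VV^{-1}\subseteq U$, and observing that $X\cap Y$ with $XY^{-1}\subseteq U$ is already such a $V$ — is exactly the standard argument underlying the cited result, and all the bookkeeping (partial domains, product-of-subsets convention) is handled correctly.
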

	
	\section{Basics of inverse semigroups}\label{sec: basics of inverse semigroups}
	
	With Felix Klein's proposal from the Erlangen program, to study geometries by the means of transformations under which geometric structure is invariant, groups became the predominant algebraic structure to describe symmetries. While they are a proper algebraic tool when dealing with ``global" symmetries, they fail to characterize symmetries of ``local" nature. In differential geometry local structures (e.g. foliations) are defined in terms of atlantes of coordinate charts which are invariant under a family of chart transition maps. The transition maps are partial functions\footnote{Let $X,Y$ be sets. A \emph{partial function $f \colon X \to Y$} is a function $f\colon A \to B$ where $A \subseteq X$ and $B \subseteq Y$.\\} and cannot be abstractly described by groups. Space groups are fit to describe symmetries of crystals and periodic tilings, but fail to do so for quasicrystals and aperiodic tilings. Scaling symmetries of regions in self-similar structures (e.g. Julia sets in holomorphic dynamics) warrant a structure more general then groups. One way to deal with this insufficiency is provided by \emph{pseudogroups}, another by groupoids. More on the motivation and historical background can be found in §1.1 of \cite{law98}. Subsection~\ref{subs: some terms from order theory} lists the required terms from order theory and Subsection~\ref{subs: inverse semigroups} lays down basic definitions and examples of inverse semigroups.
	
	\subsection{Some terms from order theory}\label{subs: some terms from order theory}

	\begin{defi}[\cite{joh82}]
		\begin{enumerate}[(i)]
			\item A \emph{poset} is a set $P$ with a binary relation $\leq$ that is reflexive, anti-symmetric and transitive.
		\end{enumerate}
		Let $P$ be a poset and let $S$ be a subset of $P$.
		\begin{enumerate}[(i),resume]
			\item A \emph{join (resp. meet) of $S$} is an element $a$ such that
			\begin{enumerate}
				\item $s \leq a$ (resp. $a \leq s$) for all $s \in S$
				
				\item $s \leq b$ (resp. $b \leq s$) for all $s \in S$ implies $a \leq b$ (resp. $b \leq a$) for all $b \in B$.
			\end{enumerate}
			If a join (resp. meet) of $S$ exists it is unique and denoted by $\bigvee S$ (resp. $\bigwedge S$).
			
			\item Denote by $S^{\uparrow}$ the set $\{p \in P| p \leq s \text{ for all }s \in S \}$ and by $S^{\downarrow}$ the set $\{p \in P| s \leq p \text{ for all }s \in S \}$. The subset $S$ is called \emph{(downward) directed} if for every pair $a,b \in S$ there exists a $c \in S$ with $c < a$ and $c < b$.
			
			\item If $S=S^{\downarrow}$ holds, $S$ is called an \emph{order ideal}. If $S$ is a directed set with $S=S^{\uparrow}$, it is called a \emph{filter}.
			
			\item A \emph{join (resp. meet) semi-lattice} is a poset in which every finite subset has a join (resp. meet) and thus $\vee$ (resp. $\wedge$) defines a binary operation. Every join (resp. meet) semi-lattice contains a unique least element $0$ (resp. unique greatest element $1$) with $l \vee 0 = l$ (resp. $l \wedge 1 = l$) for all $l \in L$.
			
			\item A \emph{lattice} is a poset in which every finite subset has a join and a meet.
		\end{enumerate}
		
		Let $L$ be a lattice.
		\begin{enumerate}[(i), resume]
			\item The lattice $L$ is said to be \emph{complete} if every subset has a join and a meet. 
			
			\item The lattice $L$ is called \emph{distributive}, if $l\wedge (m \vee n)=(l \wedge m) \vee (l \wedge n)$ holds for all $l,m,n \in L$. It is said to be \emph{infinitely distributive} if for every $x \in L$ and every $S \subseteq L$ the following holds:
			\begin{equation*}
			x\wedge \bigvee_{s\in S}s=\bigvee_{s\in S}(x\wedge s)
			\end{equation*}
			
			\item Let $a \in L$. Any element $b \in L$ with $a \wedge b = 0$ and $a \vee b =1$ is called a \emph{complement of $a$}.
			
			\item A \emph{Boolean algebra} is a distributive lattice $B$ with an unary operation $\neg \colon B \to B$ such that $\neg b$ is a complement of $b$ for all $b \in B$.
			
			\item Let $A,B$ be Boolean algebras. A function $f \colon A \to B$ is called a \emph{homomorphism of boolean algebras} if it satisfies $f(0)=0$, $f(1)=1$, $f(a_1 \wedge a_2)=f(a_1) \wedge f(a_2)$ and  $f(a_1 \vee a_2)=f(a_1) \vee f(a_2)$ for all $a_1,a_2 \in A$.
			
			\item A \emph{frame} is a complete infinitely distributive lattice. Let $F,R$ be frames. A function $f \colon F \to R$ is called a \emph{morphism of frames} if it preserves finite meets and arbitrary joins.
			
			\item The \emph{category of frames $\mathbf{Frm}$} is the category whose class of objects consists of frames and whose morphisms consist of morphisms of frames. Its opposite category is called the \emph{category of locales} denoted by $\mathbf{Loc}$, whose objects are called \emph{locales} and whose morphisms are called \emph{continuous maps}.\footnote{On the level of objects frames and locales are completely synonymous, it is at the level of morphisms at which these notions are different.\\}
		\end{enumerate}
	\end{defi}
	\begin{ex}
		Let $X$ be a topological space. Taking union and intersection as join and meet the set $\Omega(X)$ of open subsets of $X$ becomes a frame. A continuous map $f \colon X \to Y$ induces a morphism of frames $\Omega(f) \colon \Omega(Y)\to \Omega(X)$.
	\end{ex}
	
	\subsection{Inverse semigroups}\label{subs: inverse semigroups}
	
	The abstract setting for the aforementioned pseudogroups is provided by inverse semigroups:
	
	\begin{defi}[\cite{law98}]
		\begin{enumerate}[(i)]
			\item A \emph{semigroup} is a set $S$ together with an associative binary operation.
			
			\item Let $S$ be a semigroup. A \emph{subsemigroup} of $S$ is a subset which is closed with respect to the semigroup operation.
			
			\item A \emph{monoid} is a semigroup $S$, that contains a unique element $1 \in S$ called the \emph{identity} such that $1 s=s=1  s$ holds for all $s \in S$.
			
			\item Let $S$ be a monoid. An element $s \in S$ is called a \emph{unit} if there exists an element $t \in S$ such that $1=st=ts$. Denote by $U(S)$ the set of all units in $S$.
			
			\item A semigroup $S$ is called a \emph{semigroup with zero}, if it contains an element $0$ such that $0\cdot s=0=s \cdot 0$ for all $s \in S$.
		\end{enumerate}
		
		Let $S$ be a semigroup.
		\begin{enumerate}[(i),resume]
			\item It is said to be an \emph{inverse semigroup} if for every $s \in S$ there exists a unique $t \in S$ such that $sts=s$ and $tst=t$. This element is called the \emph{inverse of $s$} and denoted by $s^{-1}$. The map $S \to S$ induced by taking inverses is called \emph{involution}.
			
			\item An element $s \in S$ is called an \emph{idempotent} if $s^2=s$. The set of all idempotents is denoted by $E(S)$.
			
			\item Let $S$ and $T$ be semigroups. A \emph{semigroup homomorphism} is a map $f \colon S \to T$ such that $f(s_1s_2)=f(s_1)f(s_2)$ for all $s_1,s_2 \in S$.
		\end{enumerate}
		
		Let $S$ be an inverse semigroup.
		
		\begin{enumerate}[(i),resume]
			\item For all $s \in S$ define the source- (resp. range) maps $d,r \colon S \to E(S)$ by $d \colon s \mapsto s^{-1}s$ (resp. $r \colon s \mapsto ss^{-1}$).
		\end{enumerate}
	\end{defi}
	
	\begin{rem}
		\begin{enumerate}[(i)]						
			\item The set of units $U(S)$ in a monoid $S$ is a group with respect to the semigroup operation. Topological full groups turn out to be groups of such kind. 
			
			\item \emph{All inverse semigroup are from now on assumed to be inverse semigroups with zero!}
		\end{enumerate}
	\end{rem}	
	
	Inverse semigroups carry a natural poset structure:
	Let $S$ be an inverse semigroup. Define a partial order on $S$ by $s \leq t$ \emph{if and only if} there exists an idempotent $e$ such that $s=et$ or equivalently $s=ss^{-1}t$. For all $s \in S$ the elements $ss^{-1}$ and $s^{-1}s$ are idempotents. The set of idempotents $E(S)$ is a commutative idempotent semigroup. Commutative idempotent semigroups are essentially the same as semilattices and the product of two elements is given by their meet. The following are important basic examples of inverse semigroups: 
	\begin{ex}\phantomsection\label{ex: inv.semigrp}
		\begin{enumerate}[(i)]
			\item (\cite{hae02}, p.276) Historically the motivation for the introduction of abstract inverse semigroups had been pseudogroups in differential geometry. The definition of pseudogroups varies in the literature, we fix a \emph{traditional pseudogroup} to be a set $S$ of partial homeomorphisms between open sets of a topological space $X$ such that the following hold:
			\begin{enumerate}
				\item For every $s_1 \colon U_1 \to V_1$ and $s_2 \colon U_2 \to V_2$ in $S$ the inverse $s_1^{-1}$ and the compostion $s_1s_2$ given by the partial homeomorphism $s_1|_{U_1 \cap V_2} \circ s_2|_{(s_2)^{-1}(U_1 \cap V_2)}$ are contained in $S$.
				
				\item The homeomorphism $\operatorname{id}_X$ is in $S$.
				
				\item Any partial homeomorphism between open subsets of $X$ that is locally in $S$, is in $S$.
			\end{enumerate}
			
			\item The inverse of an element $s$ in an inverse semigroup is often denoted by $s^*$ reminding of the adjoint in C*-algebras. This is not just a coincidence: Let $\mathfrak{A}$ be a C*-algebra. The set $\operatorname{Par}(\mathfrak{A})$ of all partial isometries in $\mathfrak{A}$ is an inverse semigroup with respect to multiplication, where the inverse is given by the adjoint. If $\mathfrak{A}$ is unital, it is an inverse monoid. Any subset of $\operatorname{Par}(\mathfrak{A})$ which is closed with respect to multiplication and involution is an inverse semigroup. In fact every inverse semigroup has a representation as an inverse semigroup of partial isometries over a Hilbert space (\cite{pat99}, Proposition~2.1.4).
			
			\item (\cite{law98}, p.5 \& p.36) Let $X$ be a set. Denote by $\mathcal{I}(X)$ the set of all bijective partial functions of $X$. In particular $\mathcal{I}(X)$ contains all partial empty functions and all partial identities. Let $f\colon X \to Y$ and $g\colon Y \to Z$ be partial functions. Define the partial function $g \circ f\colon X \to Z$ by setting $\operatorname{dom}(g \circ f):=f^{-1}(\operatorname{dom}f \cap \Ima g)$ and $(g \circ f)(x):=g(f(x))$ for all $x \in \operatorname{dom}(g \circ f)$. Endowing $\mathcal{I}(X)$ with the multiplication given by composition makes it an inverse monoid, called the \emph{symmetric inverse monoid of $X$}. Generalizing Cayley's theorem on groups the Wagner-Preston theorem shows that every inverse semigroup $S$ has a representation as a subsemigroup of $\mathcal{I}(S)$.
			
			\item Let $\mathcal{G}$ be a groupoid. Then the set of slices $\mathcal{B}_\mathcal{G}$ is an inverse monoid with respect to the product and inversion of subsets of $\mathcal{G}$, where the identity is given by $\mathcal{G}^{(0)}$. 
			
		\end{enumerate}
	\end{ex}
	
	\begin{defi}[\cite{exe08}, Definition~4.3]
		\begin{enumerate}[(i)]
			\item Let $X$ be a locally compact Hausdorff space and $S$ an inverse semigroup. A \emph{continuous action $\alpha$ of $S$ on $X$} is an inverse semigroup homomorphism $\alpha \colon S \to \mathcal{I}(X)$ such that for every $s \in S$ the partial function $\alpha(s)$ is continuous with open domain and $\{\operatorname{dom}(\alpha(s))|s \in S \}$ covers $X$.
			
			\item Let $\alpha$ be a continuous action of an inverse semigroup $S$ on a compact, Hausdorff space $X$. It is called \emph{faithful}, if $\alpha$ is injective. The action is called \emph{relatively free} if for all $s \in S$ the set of fixed points of $\alpha(s)$ in $\operatorname{dom}(\alpha(s))$ is compact and open.
		\end{enumerate}
	\end{defi}

	The \emph{groupoid of germs} is a standard construction in the theory of foliations. Example~\ref{ex: act.grpd.}(iv) is a particular case of this more general construction:
	
	\begin{defi}[\cite{exe08}, p.~208-213]\label{defi: germgrpd}
		Let $\alpha$ be a continuous action of an inverse semigroup $S$ on a locally compact, Hausdorff space $X$. Define the subset $\Xi:=\{(s,x)\in S\times X| x \in  \operatorname{dom}(\alpha(s))\}$. Define an equivalence relation on $\Xi$ by $(s_1,x_1) \sim (s_2,x_2)$ if $x_1 =x_2$ and there exists an idempotent $e \in E(S)$ such that $x_1 \in \operatorname{dom}(\alpha(e))$ and $s_1e=s_2e$. An equivalence class $[(s,x)]$ in $\Xi/_{\sim}$ is called the \emph{germ of $s$ at $x$}. This set of germs can be endowed with a groupoid structure by defining a tuple of equivalence classes $([(s_1,x_1)], [(s_2,x_2)])$ to be composable if $ x_1=\alpha(s_2)x_2$ and define the product as $[(s_1s_2,x_2)]$. Define the inverse by $[(s,x)]^{-1}=[(s^{-1},\alpha(s)x)]$.\footnote{One has to verify this is well defined i.e. does not depend on the choice of representants.\\} The obtained groupoid is called the \emph{groupoid of germs of the action} and is denoted by $\operatorname{Germ}(X,S)$. It is a topological groupoid with respect to the basis given by sets $\tau(s,U)=\{[(s,x)]: x \in U\}$ where $s \in S$ and $U$ is an open subset of $\operatorname{dom}(\alpha(s))$ and its unit space is homeomorphic to $X$.
	\end{defi}

	\section{Analysis on topological groupoids}\label{sec: analysis of topological groupoids}
	
	In this section we give basic aspects and definitions that allow for analysis on groupoids. Subsection~\ref{subs: locally compact groupoids} explains what is meant by ``locally compact groupoids". Subsection~\ref{subs: haar systems and measures on groupoids} looks at Haar systems. These are a fibered generalization of Haar measures on locally compact groups that enable analysis on locally compact groupoids. Subsection~\ref{subs: groupoid C*-algebras and regular representations} gives the definition of groupoid C*-algebras and the -- compared to groups more fiddly -- regular representations of groupoids. In particular unitary representations of groupoids are defined in terms of \emph{Hilbert bundles and Hilbert modules} -- see Appendix~\ref{app: hilbert bundles and hilbert modules}.

	\subsection{Locally compact groupoids}\label{subs: locally compact groupoids}
	
	As topological spaces topological groupoids are bit ``nastier" than topological groups. The following example may seem affected, but non-Hausdorff groupoids do emerge in the wild e.g. in foliation theory:
	
	\begin{ex}[\cite{ks02}, Example~1.2]\label{ex: nonhaus}
		Let $X$ be a compact space together with a non-isolated point $x_0 \in X$ and let $\Gamma$ be a non-trivial discrete group acting trivially on $X$ i.e $\gamma \cdot x = x$ for all $\gamma \in \Gamma, x \in X$. Consider the equivalence relation $(\gamma_1,x)\sim (\gamma_2,x)$ if $x \neq x_0$. The topological quotient $\mathcal{G}:=(\Gamma \times X)/{\sim}$ of the transformation groupoid carries  the structure of a topological groupoid which is non-Hausdorff, because if $x \to x_0$, then $\overline{(g,x)}=\overline{(e,x)} \to \overline{(g,x_0)}$ for every $g \in \Gamma$ and equivalence classes of points in $\Gamma \times \{x_0\}$ cannot be seperated by open neighbourhoods.
		Continuous functions must be constant on $\Gamma \times \{x_0\}$ and thus if $\Gamma$ is infinite every compactly supported continuous function vanishes on $\Gamma \times \{x_0\}$. This means $C_c(\mathcal{G})$ no longer sufficiently encodes the topological data of $\mathcal{G}$.
	\end{ex}
	
	Alain Connes has shown that in groupoids from foliation theory enough ``Hausdorff-ness" is present to render analysis possible -- such groupoids are ``locally Hausdorff" i.e. every groupoid element has a compact, Hausdorff neighbourhood. With this in mind one makes the following replacement of $C_c(\mathcal{G})$:
	\begin{defi}[\cite{ks02}, p.~52]\label{defi: connes}
		Let $\mathcal{G}$ be a topological groupoid, such that every $g \in \mathcal{G}$ has a compact Hausdorff neighbourhood. Let $C_c(\mathcal{U_{\mathcal{G}}})$ denote the set of complex valued functions $f$ such that there exists an open Hausdorff subset $U_f \subseteq \mathcal{G}$ for which $f|_{U_f}$ is compactly supported and continuous with respect to the subspace topology on $U_f$ and $f$ is zero-valued on the outside of $U_f$. Define $\mathscr{C}(\mathcal{G}):=\langle C_c(\mathcal{U_{\mathcal{G}}}) \rangle$.
	\end{defi}
	
	If $\mathcal{G}$ is Hausdorff, then $\mathscr{C}(\mathcal{G})=C_c(\mathcal{G})$, if $\mathcal{G}$ is non-Hausdorff, functions in $\mathscr{C}(\mathcal{G})$ need not be continuous! The definition of locally compactness for groupoids has to be chosen such that it guarantees the existence of sufficiently many compact Hausdorff neighbourhoods in the non-Hausdorff case enabling the construction from Definition~\ref{defi: connes}.
	
	\begin{defi}[\cite{ks02}, Definition~1.1]\label{defi: locogrpd}
		Let $\mathcal{G}$ be a topological groupoid. It is said to be \emph{locally compact}, if it satisfies the following properties:
		\begin{enumerate}[(i)]
			\item The topological subspace $G^{(0)}$ is Hausdorff.
			
			\item Each $g \in \mathcal{G}$ has a compact, Hausdorff neighbourhood.
			
			\item The groupoid $\mathcal{G}$ is $\sigma$-compact i.e. it is a countable union of compact subspaces.
			
			\item The source and range maps $s$ and $r$ are open.
		\end{enumerate}
	\end{defi}
	
	\begin{rem}
		\begin{enumerate}[(i)]
			\item The definitions of locally compact groupoids vary in the literature e.g. compare above definition with  Definition~2.2.1 of \cite{pat99}.
		\end{enumerate}
		
		Let $\mathcal{G}$ be a locally compact groupoid.
		\begin{enumerate}[(i),resume]
			\item By condition (ii) in the definition every singleton in $\mathcal{G}$ is closed. The groupoid $\mathcal{G}$ is Hausdorff \emph{if and only if} $G^{(0)}$ is closed.
			Compact subsets of $\mathcal{G}$ need not be closed. The range fibers $\mathcal{G}^u$ and source fibers $\mathcal{G}_u$ are Hausdorff in the relative topology for all $u \in \mathcal{G}^{(0)}$, since their respective diagonal $\{(g,g')\in \mathcal{G}^u \times \mathcal{G}^u|r(g)=r(g'), g^{-1}g'=u\}$ (resp. $\{(g,g')\in \mathcal{G}_u \times \mathcal{G}_u|s(g)=s(g'), gg'^{-1}=u\}$) is closed as the preimage of a singleton.
			
			\item Functions in $\mathscr{C}(\mathcal{G})$ are not necessarily continuous, however all of them are bounded Borel functions.
			
			\item Let ${U_j}_{j \in J}$ be an open Hausdorff cover of $G$. Then $\mathscr{C}(\mathcal{G})$ consists of all finite sums $f = \sum_{i \in I}f_i$ where for every $i \in I$ the function $f_i$ is the extension of a function in $C_c(U_i)$ by zero outside of $U_i$.
			
		\end{enumerate}
	\end{rem}
	
	\subsection{Haar systems and measures on groupoids}\label{subs: haar systems and measures on groupoids}
	
	The tool that enables analysis on groupoids is a construction in the vein of the Haar measure for locally compact groups. Instead of a single measure, one needs to work with a family of measures parametrized by the units of the groupoid with supports in the respective source fiber:
	
	\begin{defi}[\cite{ks02}, p.~50]
		Let $\mathcal{G}$ be a locally compact groupoid.
		\begin{enumerate}[(i)]
			\item A \emph{right Haar system on $\mathcal{G}$} is a family $\{\nu_u\}_{u \in \mathcal{G}^{(0)}}$, where every $\nu_u$ is a positive, regular, locally finite Borel measures on $\mathcal{G}_u$, such that:
			\begin{enumerate}
				\item $\supp(\nu_u) = \mathcal{G}_u$ for all $u \in \mathcal{G}^{(0)}$.
				
				\item $\int_{\mathcal{G}_{r(g)}} f(hg) \; \mathrm{d}\nu_{r(g)}(h)=\int_{\mathcal{G}_{s(g)}} f(h) \; \mathrm{d}\nu_{s(g)}(h)$ for all $g \in \mathcal{G}$ and $f \in \mathscr{C}(\mathcal{G})$.

				\item The maps $\alpha_f \colon \mathcal{G}^{(0)} \to \mathbb{R}^+$ defined by $u \mapsto \int_{\mathcal{G}_u} f(g) \; \mathrm{d}\nu_u (g)$ are contained in $C_c(\mathcal{G}^{(0)})$ for all $f \in \mathscr{C}(\mathcal{G})$.
			\end{enumerate}
			
			\item Let $\nu =\{\nu_u\}_{u \in \mathcal{G}^{(0)}}$ be a right Haar system. Denote for $u \in \mathcal{G}^{(0)}$ by $\nu^u$ the measure on $\mathcal{G}^u$ defined by $\nu^u(A)=\nu_u(A^{-1})$ for every Borel subset $A \subseteq \mathcal{G}^u$. Then $\nu^{-1}:=\{\nu^u\}_{u \in \mathcal{G}^{(0)}}$ is the \emph{left Haar system induced by $\nu$}.
		\end{enumerate}
	\end{defi}
	
	This makes $\mathscr{C}(\mathcal{G})$ a convolution algebra:
	
	\begin{thm}[\cite{pat99}, Theorem~2.2.1]
		Let $\mathcal{G}$ be a locally compact groupoid and let $\nu= \{\nu_u\}_{u \in \mathcal{G}^{(0)}}$ be a fixed left Haar system. The space $\mathscr{C}(\mathcal{G})$ can be given the structure of a normed $*$-algebra by setting for all $f,f_1,f_2 \in \mathscr{C}(\mathcal{G})$ and $g,h \in \mathcal{G}$
		\begin{equation*}
		\begin{gathered}
		f^{*}(g):=\overline{f(g^{-1})}\\
		(f_1 \ast f_2)(g):=\int_{\mathcal{G}_{s(g)}} f_1(gh^{-1})f_2(h)\; \mathrm{d}\nu_{s(g)}(h)\\
		\|f\|_I := \sup_{u \in \mathcal{G}^{(0)}} \big\{ \max \big(\int_{\mathcal{G}_u}|f(g)|\; \mathrm{d}\nu_u(g),\int_{\mathcal{G}_u}|f(g^{-1})|\; \mathrm{d}\nu_u(g)\big) \big\}
		\end{gathered}
		\end{equation*}
	\end{thm}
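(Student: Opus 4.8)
The plan is to verify in turn the four requirements defining a normed $\ast$-algebra: that the convolution product is well defined and maps $\mathscr{C}(\mathcal{G}) \times \mathscr{C}(\mathcal{G})$ back into $\mathscr{C}(\mathcal{G})$, that it is associative, that the involution $f \mapsto f^{*}$ is a conjugate-linear anti-homomorphism squaring to the identity, and that $\|\cdot\|_I$ is a submultiplicative norm compatible with the involution. Throughout, the decisive structural input is the right-invariance property (b) of the Haar system together with the continuity property (c); the support condition (a) and $\sigma$-compactness only serve to make the integrals in sight finite.

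First I would treat well-definedness. For $h \in \mathcal{G}_{s(g)}$ one checks that $s(h)=s(g)=r(h^{-1})$, so that $gh^{-1}$ is composable and the integrand $h \mapsto f_1(gh^{-1})f_2(h)$ is a genuine function on $\mathcal{G}_{s(g)}$; since each $f_i$ is a finite sum of functions compactly supported on Hausdorff opens, this integrand is a bounded Borel function whose support meets $\mathcal{G}_{s(g)}$ in a relatively compact set, hence it is $\nu_{s(g)}$-integrable. The genuinely delicate point is that the resulting function $g \mapsto (f_1 \ast f_2)(g)$ again lies in $\mathscr{C}(\mathcal{G})$: because $\mathcal{G}$ may be non-Hausdorff, elements of $\mathscr{C}(\mathcal{G})$ need not be continuous, so one cannot simply invoke continuity of parameter integrals. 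Instead I would reduce to the case where $f_1,f_2$ are single bump functions supported on Hausdorff opens $U_1,U_2$, use the openness of $s$ and $r$ (condition (iv) of local compactness) to see that their product is contained in a Hausdorff open, and deploy the continuity property (c) of the Haar system to conclude that the convolution is continuous and compactly supported there; linearity then extends the claim to all of $\mathscr{C}(\mathcal{G})$. This closure step is the main obstacle.

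With well-definedness in hand, associativity of convolution and the anti-multiplicativity $(f_1 \ast f_2)^{*}=f_2^{*} \ast f_1^{*}$ both follow from Fubini together with a change of variables governed by (b): expanding $\big((f_1 \ast f_2) \ast f_3\big)(g)$ as an iterated integral and applying the invariance of $\nu$ under right translation to re-index the inner fiber integral reproduces $\big(f_1 \ast (f_2 \ast f_3)\big)(g)$. The remaining involution identities --- that $(f^{*})^{*}=f$ and that $f \mapsto f^{*}$ is conjugate-linear --- are immediate from $(g^{-1})^{-1}=g$ and the properties of complex conjugation, while $\|f^{*}\|_I=\|f\|_I$ holds by the manifest symmetry of the definition of $\|\cdot\|_I$ under $g \mapsto g^{-1}$.

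Finally, I would establish that $\|\cdot\|_I$ is a submultiplicative norm. Positive-definiteness, homogeneity and the triangle inequality are routine (the support condition (a) guarantees that $\|f\|_I=0$ forces $f=0$). For submultiplicativity, fix $u$ and estimate
\begin{equation*}
\int_{\mathcal{G}_u} |(f_1 \ast f_2)(g)| \; \mathrm{d}\nu_u(g) \leq \int_{\mathcal{G}_u} |f_2(h)| \Big( \int_{\mathcal{G}_u} |f_1(gh^{-1})| \; \mathrm{d}\nu_u(g) \Big) \mathrm{d}\nu_u(h),
\end{equation*}
where the swap of integrals is Fubini and where I used $s(g)=u$ to identify $\mathcal{G}_{s(g)}=\mathcal{G}_u$. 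Applying (b) to the inner integral with the element $h^{-1}$ yields $\int_{\mathcal{G}_u} |f_1(gh^{-1})| \, \mathrm{d}\nu_u(g) = \int_{\mathcal{G}_{r(h)}} |f_1(k)| \, \mathrm{d}\nu_{r(h)}(k) \leq \|f_1\|_I$, so the whole expression is bounded by $\|f_1\|_I \|f_2\|_I$. The analogous bound for $\int_{\mathcal{G}_u} |(f_1 \ast f_2)(g^{-1})| \, \mathrm{d}\nu_u(g)$ --- which is precisely why $\|\cdot\|_I$ must symmetrize over $f$ and $f^{*}$ --- is obtained in the same way, and taking the supremum over $u$ gives $\|f_1 \ast f_2\|_I \leq \|f_1\|_I \|f_2\|_I$.
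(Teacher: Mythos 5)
The paper does not actually prove this statement; it is quoted verbatim from Paterson (\cite{pat99}, Theorem~2.2.1), so there is no in-text argument to compare yours against. Judged on its own terms, your proposal follows the standard route and most of it is sound: the composability check for $gh^{-1}$ when $h \in \mathcal{G}_{s(g)}$, the Fubini-plus-invariance derivation of associativity and of $(f_1 \ast f_2)^{*}=f_2^{*}\ast f_1^{*}$, and the submultiplicativity estimate (applying property (b) with the element $h^{-1}$, so that $r(h^{-1})=s(h)=u$ and the inner integral becomes $\int_{\mathcal{G}_{r(h)}}|f_1|\,\mathrm{d}\nu_{r(h)}\leq\|f_1\|_I$) are all correct as written.

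The one step that does not hold up is exactly the one you single out as the main obstacle. For two bump functions $f_i \in C_c(U_i)$ with $U_i$ open and Hausdorff, you propose to conclude by observing that the product $U_1U_2$ ``is contained in a Hausdorff open''. In a non-Hausdorff locally compact groupoid this is false in general: openness of $s$ and $r$ gives you that $U_1U_2$ is open, but nothing forces it to be Hausdorff or to sit inside a single Hausdorff chart, and if it did the whole construction of $\mathscr{C}(\mathcal{G})$ as a \emph{linear span} rather than a union would be unnecessary. The correct argument (Connes's, reproduced in \cite{ks02} and \cite{pat99}) is to note that $f_1\ast f_2$ is supported in the compact set $K_1K_2$ (the image under multiplication of the compact set $(K_1\times K_2)\cap\mathcal{G}^{(2)}$, $K_i=\supp f_i$), cover $K_1K_2$ by finitely many Hausdorff open sets $W_j$, pull this cover back to a finite open cover of the compact Hausdorff space $(K_1\times K_2)\cap\mathcal{G}^{(2)}$, and use a subordinate partition of unity to write $f_1\ast f_2$ as a finite sum of convolutions each of which lands in some $C_c(W_j)$; only then does the continuity property (c) of the Haar system give continuity of each summand. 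Without this decomposition your closure claim $f_1\ast f_2\in\mathscr{C}(\mathcal{G})$ is unjustified, and since everything else in the theorem presupposes it, this is the gap you would need to fill.
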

	
	While the Haar measure of a locally compact group necessarily exists and is unique up to multiplication by a constant factor, neither of which is necessarily true for the Haar systems of locally compact groupoids.
	
	\begin{defi}[\cite{pat99}, p.~86]
		Let $\mathcal{G}$ be a locally compact groupoid with a fixed Haar system $\{\nu_u\}_{u \in \mathcal{G}^{(0)}}$ and let $\mu$ be a probability measure on the space $\mathcal{G}^{(0)}$.
		\begin{enumerate}[(i)]
			\item The measure $\mu$ induces a positive regular Borel measure $\nu_\mu$ on $\mathcal{G}$ by
			\begin{equation*}
			\nu_\mu:=\int_{\mathcal{G}^{(0)}} \nu_u \;\mathrm{d}\mu
			\end{equation*}
			
			\item Denote by $\nu_\mu^{-1}$ the positive regular Borel measure on $\mathcal{G}$ defined by $\nu_\mu^{-1}(B)=\nu_\mu(B^{-1})$ for every Borel set $B \subseteq \mathcal{G}$.
			Note that
			\begin{equation*}
			\nu_\mu^{-1}=\int_{\mathcal{G}^{(0)}} \nu^u \; \mathrm{d}\mu
			\end{equation*}
			
			\item Define a regular Borel measure $\nu_\mu^2$ on $\mathcal{G}^{(2)}$ by 
			\begin{equation*}
			\int_{\mathcal{G}^{(2)}}f(g,h) \;\mathrm{d}\nu_\mu^2=\int_{\mathcal{G}^{(0)}}\; \mathrm{d}\mu \int \int f(g,h)\; \mathrm{d}\nu_u(g)\; \mathrm{d}\nu^u(h)
			\end{equation*}
			for $f \in C_c(\mathcal{G}^{(2)})$.
		\end{enumerate}
	\end{defi}
	
	\begin{rem}
		To set up $\nu_\mu$, one needs to show for every open set $U \subseteq \mathcal{G}$ with compact closure in a Hausdorff subset $U' \subseteq \mathcal{G}$ the linear functional $\Phi_U$ on $C_c(U)$ defined by
		\begin{equation*}
		\Phi_U(f):=\int_{\mathcal{G}^{(0)}} \; \mathrm{d}\mu \int_{\mathcal{G}_u} f \; \mathrm{d}\nu_u
		\end{equation*}
		is continuous. By the Riesz representation theorem, there exists a regular Borel measure $\nu_U$ such that
		\begin{equation*}
		\int f \; \mathrm{d}\nu_U = \Phi_U (f)
		\end{equation*}
		It can be shown that the resultant measures $\nu_U$ coincide on intersections and thus there exists a regular Borel measure $\nu_\mu$ such that $\nu_\mu|_U=\nu_U$. Similar arguments must be applied to obtain $\nu_\mu^2$.		
	\end{rem}
	
	\begin{ex}
		Let $\mathcal{G}$ be a locally compact groupoid with a fixed Haar system $\{\nu_u\}_{u \in \mathcal{G}^{(0)}}$ and let $v \in \mathcal{G}^{(0)}$. Let $\delta_v$ denote the Dirac measure at $v$. Its induced Borel measure is given by $\nu_{\delta_v}=\nu_v$.
	\end{ex}
	
		As with the special case of transformation groups, not all probability measures on the unit space are meaningful for the groupoids representation theory, one restricts to probability measures that satisfy a specific invariance property:					
	\begin{defi}[\cite{ren80}, Definition~3.2 \& 3.5 \& 3.7]
		Let $\mathcal{G}$ be a locally compact groupoid with a fixed Haar system $\{\nu_u\}_{u \in \mathcal{G}^{(0)}}$ and let $\mu$ be a measure on $\mathcal{G}^{(0)}$.
		\begin{enumerate}[(i)]
			\item The measure $\mu$ is said to be \emph{quasi-invariant} if the measure $\nu_\mu$ is equivalent to the measure $\nu_\mu^{-1}$ and it is called \emph{invariant} if $\nu_\mu=\nu_\mu^{-1}$.
			
			\item A $\mu$-measurable subset $U \subseteq \mathcal{G}^{(0)}$ is called \emph{almost invariant} if
			\begin{equation*}
			s(g)\in U \Leftrightarrow r(g) \in U \quad \text{for } \nu_\mu\text{-a.e. }g \in \mathcal{G}
			\end{equation*}
			
			\item Let $\mu$ be quasi-invariant. It is called \emph{ergodic} if every almost invariant measurable set $U \subseteq \mathcal{G}^{(0)}$ satisfies either $\mu(U)=0$ or $\mu(\mathcal{G}^{(0)}\setminus U)=0$.
			
			\item The measure $[\mu]:=r_{*}\nu_\mu$ i.e. for a $\mu$-measurable set $E \subseteq \mathcal{G}^{(0)}$ define $r_{*}\nu_\mu(E)=\nu_\mu(r^{-1}(E))$, is called the \emph{saturation of $\mu$}.
		\end{enumerate}	
	\end{defi}

	Ergodic, quasi-invariant measures always exist: 
	
	\begin{prop}[\cite{ren80}, Proposition~3.6]\label{prop: existence of quasi-invariant measures}
		Let $\mathcal{G}$ be a locally compact groupoid with a fixed Haar system $\{\nu_u\}_{u \in \mathcal{G}^{(0)}}$ and let $\mu$ be a measure on $\mathcal{G}^{(0)}$. The saturation $[\mu]$ is a quasi-invariant measure.
	\end{prop}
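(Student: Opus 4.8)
The goal is to establish that the measures $\nu_{[\mu]}$ and $\nu_{[\mu]}^{-1}$ on $\mathcal{G}$ are mutually absolutely continuous. The plan is to produce a representation of $\nu_{[\mu]}$ as a double integral over pairs of groupoid elements sharing a common source fibre, symmetric in the two integration variables; from such a representation quasi-invariance (in fact invariance) becomes visible simply by interchanging those variables.

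First I would unwind the definitions. For a Borel set $B \subseteq \mathcal{G}$, the defining formula $\nu_{[\mu]} = \int_{\mathcal{G}^{(0)}} \nu_u \, \mathrm{d}[\mu](u)$ together with $[\mu] = r_{*}\nu_\mu$, the change-of-variables rule for the pushforward $r_{*}$, and the expansion $\nu_\mu = \int_{\mathcal{G}^{(0)}} \nu_v \, \mathrm{d}\mu$ yields
\begin{equation*}
\nu_{[\mu]}(B) = \int_{\mathcal{G}^{(0)}} \int_{\mathcal{G}_v} \int_{\mathcal{G}_{r(g)}} \mathbf{1}_B(k) \, \mathrm{d}\nu_{r(g)}(k) \, \mathrm{d}\nu_v(g) \, \mathrm{d}\mu(v).
\end{equation*}
The crucial step is to rewrite the innermost fibre integral. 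Applying the right-invariance axiom (b) of the Haar system to the element $g$, with the test function $y \mapsto \mathbf{1}_B(y g^{-1})$ (so that its value at $kg$ is $\mathbf{1}_B(k)$), I get $\int_{\mathcal{G}_{r(g)}} \mathbf{1}_B(k)\, \mathrm{d}\nu_{r(g)}(k) = \int_{\mathcal{G}_v} \mathbf{1}_B(lg^{-1})\, \mathrm{d}\nu_v(l)$, which puts the whole expression into the symmetric form
\begin{equation*}
\nu_{[\mu]}(B) = \int_{\mathcal{G}^{(0)}} \int_{\mathcal{G}_v} \int_{\mathcal{G}_v} \mathbf{1}_B(lg^{-1}) \, \mathrm{d}\nu_v(l) \, \mathrm{d}\nu_v(g) \, \mathrm{d}\mu(v).
\end{equation*}

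From here the conclusion is immediate. Applying the same representation to $B^{-1}$ and using $\mathbf{1}_{B^{-1}}(lg^{-1}) = \mathbf{1}_B(gl^{-1})$ gives $\nu_{[\mu]}^{-1}(B) = \nu_{[\mu]}(B^{-1}) = \int_{\mathcal{G}^{(0)}} \int_{\mathcal{G}_v} \int_{\mathcal{G}_v} \mathbf{1}_B(gl^{-1}) \, \mathrm{d}\nu_v(l)\, \mathrm{d}\nu_v(g)\, \mathrm{d}\mu(v)$. Since each $\nu_v$ is $\sigma$-finite (a locally finite Borel measure on the $\sigma$-compact fibre $\mathcal{G}_v$), Tonelli's theorem permits interchanging the two inner integrals, and renaming the dummy variables $g \leftrightarrow l$ returns exactly the previous display. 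Hence $\nu_{[\mu]} = \nu_{[\mu]}^{-1}$, which a fortiori gives $\nu_{[\mu]} \sim \nu_{[\mu]}^{-1}$, i.e. $[\mu]$ is quasi-invariant.

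The main obstacle I anticipate is the bookkeeping in the invariance step: one must keep the source/range indices and the inverses straight when applying axiom (b) with the correct substitution, and one must justify that $v \mapsto \int_{\mathcal{G}_v}\int_{\mathcal{G}_v} \mathbf{1}_B(lg^{-1})\,\mathrm{d}\nu_v(l)\,\mathrm{d}\nu_v(g)$ is $\mu$-measurable so that the outer integral is defined, which follows from the measurability clause (c) of the Haar system. It is worth flagging that the computation in fact proves the stronger assertion that $[\mu]$ is \emph{invariant}: the symmetry of the construction forces exact equality of $\nu_{[\mu]}$ and $\nu_{[\mu]}^{-1}$, so the modular function degenerates to the constant $1$. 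In the non-$\sigma$-finite regime the distinction between invariance and quasi-invariance collapses in any case, so this stronger outcome is consistent with the general theory.
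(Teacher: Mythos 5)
Your argument is correct, and since the text only cites \cite{ren80} for this proposition without reproducing a proof, there is nothing in the paper to diverge from: the symmetrization you perform --- rewriting $\nu_{[\mu]}$ as $\int_{\mathcal{G}^{(0)}}\int_{\mathcal{G}_v}\int_{\mathcal{G}_v}\mathbf{1}_B(lg^{-1})\,\mathrm{d}\nu_v(l)\,\mathrm{d}\nu_v(g)\,\mathrm{d}\mu(v)$ via axiom (b) and swapping the dummy variables --- is precisely Renault's original proof, and your observation that it yields invariance rather than mere quasi-invariance of the saturation is also how the result is actually obtained. Two small caveats: axiom (b) is stated for $f \in \mathscr{C}(\mathcal{G})$, so you should note that regularity of the $\nu_u$ extends the identity to indicator functions of Borel sets; and since $[\mu]$ need not be locally finite, $\nu_{[\mu]}$ must be read directly as the $[0,\infty]$-valued set function $B \mapsto \int \nu_u(B)\,\mathrm{d}[\mu](u)$ rather than through the Riesz construction used for probability measures --- the computation is unaffected. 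Your closing sentence is imprecise (equality of measures is strictly stronger than mutual absolute continuity regardless of $\sigma$-finiteness), but it plays no role in the proof.
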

	
	\begin{prop}[\cite{ren80}, Proposition~3.8]\label{prop: existence of ergodic measures}
		Let $\mathcal{G}$ be a locally compact groupoid with a fixed Haar system $\{\nu_u\}_{u \in \mathcal{G}^{(0)}}$ and let $v \in \mathcal{G}^{(0)}$. Then the saturation $[\delta_v]=r_{*}\nu_v$ is an ergodic measure.
	\end{prop}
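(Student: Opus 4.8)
Since Proposition~\ref{prop: existence of quasi-invariant measures} already guarantees that $[\delta_v]$ is quasi-invariant, the plan is to verify only ergodicity. First I would record that $\nu_{\delta_v}=\int_{\mathcal{G}^{(0)}} \nu_u \, \mathrm{d}\delta_v(u) = \nu_v$, so that $[\delta_v]=r_{*}\nu_v$ is concentrated on the orbit $\mathcal{G}(v)=r(\mathcal{G}_v)$, and that $[\delta_v](A)=\nu_v(r^{-1}(A))$ for every Borel set $A \subseteq \mathcal{G}^{(0)}$, because $\nu_v$ is supported on $\mathcal{G}_v$. The goal is then to show that for an almost invariant set $A$ the function $\mathbf{1}_A \circ r$ is $\nu_v$-almost everywhere constant on $\mathcal{G}_v$, whence either $\nu_v(r^{-1}(A))$ or $\nu_v(r^{-1}(\mathcal{G}^{(0)}\setminus A))$ vanishes, which is exactly the required dichotomy.

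The starting point would be the set $N:=\{g \in \mathcal{G} : \mathbf{1}_A(r(g)) \neq \mathbf{1}_A(s(g))\}$, which has $\nu_{[\delta_v]}$-measure zero by almost invariance. Unwinding $\nu_{[\delta_v]}=\int_{\mathcal{G}^{(0)}} \nu_u \, \mathrm{d}[\delta_v](u)$ yields $\nu_u(N \cap \mathcal{G}_u)=0$ for $[\delta_v]$-almost every $u$; since $[\delta_v]=r_{*}\nu_v$, this translates to $\nu_{r(g_0)}(N \cap \mathcal{G}_{r(g_0)})=0$ for $\nu_v$-almost every $g_0 \in \mathcal{G}_v$. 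Concretely, for such a basepoint $g_0$ the value $\mathbf{1}_A(r(h))$ equals the constant $\mathbf{1}_A(r(g_0))$ at $\nu_{r(g_0)}$-almost every $h \in \mathcal{G}_{r(g_0)}$.

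The crucial step is to transport this fibrewise constancy back to the single fibre $\mathcal{G}_v$. Here I would invoke right invariance of the Haar system (property~(b) in the definition of a right Haar system): applied to $g_0$, it shows that the bijection $h \mapsto h g_0$ of $\mathcal{G}_{r(g_0)}$ onto $\mathcal{G}_v$ pushes $\nu_{r(g_0)}$ forward to $\nu_v$, and it preserves the range since $r(hg_0)=r(h)$. Hence $\mathbf{1}_A(r(h'))=\mathbf{1}_A(r(g_0))$ for $\nu_v$-almost every $h' \in \mathcal{G}_v$. As $\nu_v \neq 0$ (its support $\mathcal{G}_v$ contains the unit $v$ itself), two admissible basepoints cannot produce different constants, so there is a single $c \in \{0,1\}$ with $\mathbf{1}_A \circ r = c$ $\nu_v$-almost everywhere; if $c=1$ then $[\delta_v](\mathcal{G}^{(0)}\setminus A)=\nu_v(r^{-1}(\mathcal{G}^{(0)}\setminus A))=0$, and if $c=0$ then $[\delta_v](A)=0$.

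I expect the main obstacle to be the careful bookkeeping of the almost-everywhere quantifiers rather than any deep idea: one must make sure that the single $\nu_{[\delta_v]}$-null set $N$ produces, via the pushforward $r_{*}\nu_v$, a genuinely $\nu_v$-full set of admissible basepoints $g_0$, and that the translation $h \mapsto hg_0$ is measure preserving in the precise sense demanded by property~(b). The measurability of $u \mapsto \nu_u(N \cap \mathcal{G}_u)$, which is needed to disintegrate $\nu_{[\delta_v]}$ in the second step, is supplied directly by the axioms governing the Haar system.
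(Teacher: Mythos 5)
The paper states this proposition without proof, simply citing Renault, so there is no in-text argument to compare against; your proof is correct and is essentially the standard one. The chain of reductions is sound: $[\delta_v](A)=\nu_v(r^{-1}(A))$, the disintegration $\nu_{[\delta_v]}=\int \nu_u\,\mathrm{d}[\delta_v](u)$ turns almost invariance of $A$ into $\nu_{r(g_0)}$-a.e.\ constancy of $\mathbf{1}_A\circ r$ on $\mathcal{G}_{r(g_0)}$ for $\nu_v$-a.e.\ $g_0\in\mathcal{G}_v$, and right invariance of the Haar system (the bijection $h\mapsto hg_0$ pushing $\nu_{r(g_0)}$ onto $\nu_v$ while preserving $r$) transports this to a single constant on $\mathcal{G}_v$, using $\nu_v\neq 0$ to rule out conflicting constants. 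The only points left implicit --- extending axioms (b) and (c) of the Haar system from $\mathscr{C}(\mathcal{G})$ to indicator functions of Borel sets by a monotone class argument, and replacing a merely $[\delta_v]$-measurable $A$ by a Borel set up to a null set --- are routine, and you flag the former explicitly.
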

	
	The presence of multiple units in a groupoid $\mathcal{G}$ entails that its unitary representations take range over  fibered objects with $\mathcal{G}^{(0)}$ as the base space i.e. Hilbert bundles instead of Hilbert spaces.
	
	\begin{defi}[\cite{pat99}, Definition~3.1.1]
		Let $\mathcal{G}$ be a locally compact groupoid with a fixed Haar system $\{\nu_u\}_{u \in \mathcal{G}^{(0)}}$ and let $\mu$ be probability measure on $\mathcal{G}^{(0)}$. A \emph{unitary representation $L$ of $\mathcal{G}$ on a measured Hilbert bundle $(\mathcal{G}^{(0)},\{H_u\}_{u \in \mathcal{G}^{(0)}},\mu)$} is a function $L \colon \mathcal{G} \to \bigcup_{u,v \in \mathcal{G}^{(0)}} \mathfrak{U}(H_u,H_v)$ with the following properties\footnote{Let $\mathfrak{U}(H_x,H_y)$ denote the set of unitary bounded operators linear operators from $H_x$ to $H_y$.\\}:
		\begin{enumerate}[(i)]
			\item The probability measure $\mu$ is quasi-invariant.
			
			\item $L(g) \in \mathfrak{U}(H_{s(g)},H_{r(g)})$ for every $g \in \mathcal{G}$.
			
			\item $L(u) = \operatorname{id}_{H_u}$ for all $u \in \mathcal{G}^{(0)}$.
			
			\item $L(g)L(h)=L(gh)$ holds $\nu_\mu^2$-almost everywhere in $\mathcal{G}^{(2)}$.
			
			\item $L(g)^{-1}=L(g^{-1})$ holds $\nu_\mu$-almost everywhere in $\mathcal{G}$.
			
			\item For every pair $\sigma_1,\sigma_2 \in \int_{\mathcal{G}^{(0)} }^{\oplus} H_u \;\mathrm{d}\mu$ the function on $\mathcal{G}$ defined by 
			\begin{equation*}
			g \mapsto \langle L(g)\sigma_1(s(g)),\sigma_2(r(g)) \rangle
			\end{equation*}
			is $\nu_\mu$-measurable.
		\end{enumerate}
	\end{defi} 
	
	\begin{ex}[\cite{pat99}, p.~93]
		Let $\mathcal{G}$ be a locally compact groupoid with a fixed Haar system $\nu=\{\nu_u\}_{u \in \mathcal{G}^{(0)}}$ and let $\mu$ be probability measure on $\mathcal{G}^{(0)}$. Then the \emph{right regular representation of $\mathcal{G}$ with respect to $\nu$ and $\mu$} is given on the measured Hilbert bundle $(\mathcal{G}^{(0)},\{\mathrm{L}^2(\mathcal{G}_u,\nu_u) \}_{u \in \mathcal{G}^{(0)}},\mu)$ by:
		\begin{equation*}
		L_{\mathrm{red}}(g)(f)(h) = f(hg)
		\end{equation*}
		for all $g \in \mathcal{G}$, $f \in \mathrm{L}^2(\mathcal{G}_{s(g)}$ and $h \in \mathcal{G}_{r(g)}$. The \emph{left regular representation of $\mathcal{G}$ with respect to $\nu$ and $\mu$} is given on the measured Hilbert bundle $(\mathcal{G}^{(0)},\{\mathrm{L}^2(\mathcal{G}^u,\nu^u) \}_{u \in \mathcal{G}^{(0)}},\mu)$ by:
		\begin{equation*}
		L_{\mathrm{red}}(g)(f)(h) = f(g^{-1}h)
		\end{equation*}
		for all $g \in \mathcal{G}$, $f \in \mathrm{L}^2(\mathcal{G}^{s(g)}$ and $h \in \mathcal{G}^{r(g)}$.
	\end{ex} 
	
	\subsection{Groupoid C*-algebras}\label{subs: groupoid C*-algebras and regular representations}
	
	See \cite{ks02} for a more detailed description.
	
	\begin{defi}
		Let $\mathcal{G}$ be a locally compact groupoid and let $\nu= \{\nu_u\}_{u \in \mathcal{G}^{(0)}}$ be a fixed left Haar system.
		\begin{enumerate}[(i)]
			\item The completion of $\mathscr{C}(\mathcal{G})$ with respect to the norm $\|\cdot\|_I$ is a Banach $*$-algebra of which the enveloping C*-algebra is called the \emph{full groupoid C*-algebra of $\mathcal{G}$ with respect to $\nu$} denoted by $C^*(\mathcal{G},\nu)$.
			
			\item Let $u \in \mathcal{G}^{(0)}$. Define a bounded $*$-representation $\lambda_u$ of $\mathscr{C}(\mathcal{G})$ on $\mathrm{L}^2(\mathcal{G}_u,\nu_u)$ by 
			\begin{equation*}
			\lambda_u (f)\xi(g)=\int_{\mathcal{G}_u} f(gh^{-1})\xi(h)\;\mathrm{d}\nu_u(h)
			\end{equation*}
			for every $\xi \in \mathrm{L}^2(\mathcal{G}_u,\nu_u),f \in \mathcal{C}_c(\mathcal{G})$. It holds that $\|\lambda_u(f)\|\leq \|f\|_I$.
			
			\item The \emph{reduced C*-algebra of $\mathcal{G}$ with respect to $\nu$} denoted by $C_r^*(\mathcal{G},\nu)$ is the completion of $\mathscr{C}(\mathcal{G})$ with respect to the norm $\|\cdot\|_r$ defined by
			\begin{equation*}
			\|f\|_r:=\sup_{u \in \mathcal{G}^{(0)}} \{\|\lambda_u(f)\| \} \text{ for } f \in \mathscr{C}(\mathcal{G}).
			\end{equation*}
		\end{enumerate}	
	\end{defi}
	
	\begin{ex}\label{ex: grpd.c-star}
		Let $\mathcal{G}_\alpha$ be the transformation groupoid associated with an action $\alpha \colon G \to \operatorname{Homeo}(X)$ of a countable, discrete group $G$ on a locally compact, Hausdorff space $X$. Then $C^*(\mathcal{G}_\alpha) \cong C_0(X) \rtimes G$.
	\end{ex}
	
	If $\mathcal{G}$ is a locally compact group the definitions of the adjoint and the norm differ from the conventional definitions for group C*-algebras and consequently set up different normed $*$-algebra structures on $\mathcal{C}_c(\mathcal{G})$ resp. $\mathscr{C}(\mathcal{G})$. Passing over to the C*-completions the differences vanish and the associated (reduced) C*-algebras are isomorphic.	

	\addtocontents{toc}{\protect\newpage}
	\section{\'Etale groupoids}\label{sec: etale groupoids}
	
	In \cite{ren80} Renault gives special attentation to \emph{r-discrete groupouids} which generalize transformation groupoids arising from actions of discrete groups.
	
	\begin{defi}[\cite{ren80}, Definition~2.6]
		Let $\mathcal{G}$ be a locally compact groupoid. It is called \emph{$r$-discrete}, if $\mathcal{G}^{(0)}$ is an open subset of $\mathcal{G}$. 
	\end{defi}
	
	The denotation is justified -- for such a groupoid, the subspaces $\mathcal{G}_u$ and $\mathcal{G}^u$ are discrete for every $u \in \mathcal{G}^{(0)}$. If a Haar system of an $r$-discrete groupoid exists -- this however is not necessarily the case -- it is essentially unique i.e. it is equivalent to the system given by the system of counting measures (see \cite{ren80}, Lemma~2.7). \'Etale groupoids are more convenient in that a Haar system always exists and it is unique up to the equivalence of measures:
	
	\begin{prop}
		Let $\mathcal{G}$ be a locally compact, \'etale groupoid. Then the system of counting measures on source fibers is a unique Haar system (up to equivalence).
	\end{prop}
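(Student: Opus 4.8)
The plan is to verify that the family $\nu = \{\nu_u\}_{u \in \mathcal{G}^{(0)}}$ of counting measures satisfies the three defining conditions of a right Haar system, and then to deduce uniqueness from the corresponding statement for $r$-discrete groupoids. First I would record the regularity conditions. Since $\mathcal{G}$ is étale, its source map $s$ is a local homeomorphism; in particular $\mathcal{G}^{(0)}$ is open (so $\mathcal{G}$ is $r$-discrete) and every source fibre $\mathcal{G}_u$ is discrete in the relative topology. On a discrete space every subset is clopen, so each counting measure $\nu_u$ is a positive, regular Borel measure; each singleton $\{g\} \subseteq \mathcal{G}_u$ is an open neighbourhood of finite measure, which gives local finiteness, and $\nu_u(\{g\}) = 1 > 0$ forces $\supp(\nu_u) = \mathcal{G}_u$. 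This settles condition (a) of the Haar-system definition together with the standing regularity hypotheses.

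Next I would check left-invariance, condition (b). For fixed $g \in \mathcal{G}$ the right-translation map $R_g \colon \mathcal{G}_{r(g)} \to \mathcal{G}_{s(g)}$, $h \mapsto hg$, is a bijection with inverse $k \mapsto kg^{-1}$. Any bijection between discrete spaces carries the counting measure to the counting measure, so $(R_g)_{*}\nu_{r(g)} = \nu_{s(g)}$, and the change-of-variables formula then yields $\int_{\mathcal{G}_{r(g)}} f(hg)\,\mathrm{d}\nu_{r(g)}(h) = \int_{\mathcal{G}_{s(g)}} f(k)\,\mathrm{d}\nu_{s(g)}(k)$ for every $f \in \mathscr{C}(\mathcal{G})$, which is exactly the required identity.

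The main work lies in condition (c), the continuity of $\alpha_f(u) = \sum_{g \in \mathcal{G}_u} f(g)$. Here I would exploit that in an étale groupoid the open bisections form a basis of the topology: because $s$ and $r$ are local homeomorphisms, every point has an open neighbourhood $B$ on which both restrict to homeomorphisms onto open subsets of the (Hausdorff) unit space. By the description of $\mathscr{C}(\mathcal{G})$ it suffices to treat $f \in C_c(B)$ for such a bisection $B$, the general case following by finite sums. For such an $f$ the fibre $\mathcal{G}_u$ meets $\supp(f)$ in at most one point, namely $(s|_B)^{-1}(u)$ when $u \in s(B)$, so $\alpha_f = f \circ (s|_B)^{-1}$ on the open set $s(B)$ and $\alpha_f = 0$ elsewhere. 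Since $s|_B$ is a homeomorphism onto the open set $s(B)$ and $f$ is continuous with compact support, this exhibits $\alpha_f$ as the extension by zero of an element of $C_c(s(B))$, hence as a member of $C_c(\mathcal{G}^{(0)})$. The point requiring care is the passage from Hausdorff pieces to the possibly non-Hausdorff $\mathcal{G}$: one must use precisely the definition of $\mathscr{C}(\mathcal{G})$ as the span of functions supported in open Hausdorff (here, bisection) sets and verify that the zero-extensions glue to a globally continuous, compactly supported function. This is the step I expect to be the principal obstacle.

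Finally, uniqueness. As observed above, étale groupoids are $r$-discrete, and for $r$-discrete groupoids any Haar system is equivalent to the system of counting measures (\cite{ren80}, Lemma~2.7). Combining this with the existence just established shows that $\nu$ is, up to equivalence of measure systems, the unique Haar system on $\mathcal{G}$.
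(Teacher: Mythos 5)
Your argument is correct and follows exactly the route the paper intends: the surrounding text leaves the proposition unproved but points to Renault's Lemma~2.7 for uniqueness of Haar systems on $r$-discrete groupoids, and your existence verification (full support and local finiteness on discrete fibres, invariance via the bijection $h \mapsto hg$, and continuity of $\alpha_f$ by reducing to functions supported on open bisections, where $\alpha_f = f \circ (s|_B)^{-1}$ extended by zero) is the standard argument. The reduction to bisections that you flag as the delicate step is in fact covered by the paper's own remark that $\mathscr{C}(\mathcal{G})$ consists of finite sums of zero-extensions of functions in $C_c(U_i)$ for any open Hausdorff cover $\{U_i\}$, which one may take to consist of open bisections.
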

	
	Subsection~\ref{subs: definition and properties} discusses the structure and characterization of \'etale groupoids. Subsection ~\ref{subs: homology of etale groupoids} recalls a suitable version of homology for \'etale groupoids.
	
	\subsection{Definition and properties}\label{subs: definition and properties}
	
	\begin{defi}[\cite{res07}, p.162]
		A topological groupoid $\mathcal{G}$ is said to be \emph{\'etale}, if the range map $r$ (or equivalenty the source map $s$) is \'etale i.e. $r$ is a local homeomorphism\footnote{A continuous function $f\colon X \to Y$ between topological spaces is said to be a \emph{local homeomorphism}, if every $x \in X$ has an open neighbourhood $U_x$ such that $f(U_x)$ is open and $f|_{U_x}$ is a homeomorphism.\\}.
	\end{defi}
	
	\begin{ex}
		\begin{enumerate}[(i)]
			\item Let $\varphi: \Gamma \to \operatorname{Homeo}(X)$ define a action of a countable, discrete group $\Gamma$ on a locally compact, Hausdorff space $X$. Then the transformation groupoid $\mathcal{G}_\varphi$ is \`etale.
			
			\item Let $X$ be a compact, metrizable, zero-dimensional space and $\sim$ be a countable equivalence relation on $X$. Then the topological groupoid $\mathcal{G}_\sim$ is \'etale. 
			
			\item Let $\alpha$ be a continuous action of an inverse semigroup $S$ on a locally compact, Hausdorff space $X$. The groupoid of germs $\operatorname{Germ}(X,S)$ is \'etale, since the elements $\tau(s,U)$ of its topological basis are slices. 
		\end{enumerate}
	\end{ex}

	\begin{lem}
		Let $\mathcal{G}$ be an \'etale groupoid. Then the set $\mathcal{B}_\mathcal{G}^{o}$ of open slices is a basis of the topology on $\mathcal{G}$.
	\end{lem}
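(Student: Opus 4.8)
The plan is to verify the standard criterion for a basis: for every $g \in \mathcal{G}$ and every open set $U$ containing $g$, I would produce an open slice $W$ with $g \in W \subseteq U$. Once this is done, every open set is a union of the open slices it contains, which is exactly the assertion that $\mathcal{B}_\mathcal{G}^{o}$ is a basis.

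First I would record that not only $r$ but also the source map $s$ is a local homeomorphism. By definition $r$ is \'etale, and since inversion $\iota \colon g \mapsto g^{-1}$ is a self-homeomorphism of $\mathcal{G}$ with $s = r \circ \iota$, the map $s$ is the composition of a homeomorphism with a local homeomorphism and is therefore itself a local homeomorphism. This is precisely the equivalence ``$r$ \'etale iff $s$ \'etale'' built into the definition, so I may use both freely.

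Next, fixing $g \in \mathcal{G}$ and an open $U \ni g$, I would use that $r$ is a local homeomorphism to pick an open neighbourhood $W_r$ of $g$ on which $r$ restricts to a homeomorphism onto an open subset of $\mathcal{G}^{(0)}$; in particular $r|_{W_r}$ is injective. Likewise I would pick an open neighbourhood $W_s$ of $g$ with $s|_{W_s}$ injective. Setting $W := U \cap W_r \cap W_s$, the set $W$ is open, satisfies $g \in W \subseteq U$, and both $s|_{W}$ and $r|_{W}$ are injective, being restrictions of the injective maps $s|_{W_s}$ and $r|_{W_r}$. By the definition of a slice this means $W \in \mathcal{B}_\mathcal{G}^{o}$, completing the verification.

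No step poses a genuine obstacle; the argument is essentially unwinding the definitions. The only point that deserves care is the claim that $s$ is a local homeomorphism, which I would justify via inversion being a homeomorphism of $\mathcal{G}$, rather than taking it for granted from the phrasing of the \'etale condition. (One could alternatively phrase the slice condition as $WW^{-1} \subseteq \mathcal{G}^{(0)}$, but the injectivity of $s|_W$ and $r|_W$ is the most direct route.)
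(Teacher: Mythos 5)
Your proof is correct and follows essentially the same route as the paper: intersect the given open set with neighbourhoods on which $r$ and $s$ are respectively injective, and observe that the result is an open slice. The only (harmless) addition is your explicit justification that $s$ is a local homeomorphism via the inversion map, which the paper absorbs into the phrasing of the \'etale condition.
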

	\begin{proof}
		Let $g \in \mathcal{G}$ be contained in some open set $V$. Then there exist open neighbourhoods $U_{g,s}$ resp. $U_{g,r}$ on which the source resp. range maps are a homeomorphism with open image. The the set $U_{g,s} \cap U_{g,r} \cap V$ is a non-empty open slice containing $g$.
	\end{proof}

	The following lemma follows by plugging in $U=\mathcal{G}^{(0)}$ in Lemma~\ref{lem: grpd.opensets}:
	
	\begin{lem}[\cite{res07}, Lemma~5.17]
		Let $\mathcal{G}$ be a topological groupoid such that $\mathcal{G}^{(0)}$ is open. Then the open slices $\mathcal{B}_\mathcal{G}^o$ form an open cover of $\mathcal{G}$.
	\end{lem}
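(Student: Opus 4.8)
The plan is to specialize Lemma~\ref{lem: grpd.opensets} to the open set $U = \mathcal{G}^{(0)}$. This choice is legitimate exactly because the hypothesis that $\mathcal{G}^{(0)}$ is open guarantees $U \in \Omega(\mathcal{G})$, so inclusion~(i) of that lemma becomes available.

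First I would unravel the left-hand side of inclusion~(i). Since $U = \mathcal{G}^{(0)}$ we have $U \cap \mathcal{G}^{(0)} = \mathcal{G}^{(0)}$, so the relevant set is $\mathcal{G}^{(0)}\mathcal{G}$, and I claim this is all of $\mathcal{G}$. Indeed, for any $g \in \mathcal{G}$ the pair $(r(g),g)$ is composable (as $r(g)=gg^{-1}$ is a unit with $s(r(g))=r(g)$), and $r(g)\,g = g$ by the groupoid axioms; hence $g \in \mathcal{G}^{(0)}\mathcal{G}$ and thus $\mathcal{G} = (U\cap\mathcal{G}^{(0)})\mathcal{G}$. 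This small verification --- that left multiplication by the range unit recovers the element --- is the only point where one actually touches the groupoid axioms, and it is routine, so I do not expect it to be a genuine obstacle.

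It then remains to read off the right-hand side. The second inclusion of Lemma~\ref{lem: grpd.opensets}(i), with $U=\mathcal{G}^{(0)}$, yields
\[
\mathcal{G} \;\subseteq\; \bigcup \{\, V \in \Omega(\mathcal{G}) \mid VV^{-1} \subseteq \mathcal{G}^{(0)} \,\}.
\]
By the Remark following Definition~\ref{defi: grpdstuff}, the condition $VV^{-1}\subseteq\mathcal{G}^{(0)}$ is equivalent to $V$ being a slice; since the sets $V$ appearing in this union are moreover open, each lies in $\mathcal{B}_\mathcal{G}^o$. Consequently $\mathcal{G}\subseteq\bigcup\mathcal{B}_\mathcal{G}^o$, and as the reverse containment is trivial, the open slices form an open cover of $\mathcal{G}$. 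The entire argument is thus the specialization of the ambient lemma plus the translation of $VV^{-1}\subseteq\mathcal{G}^{(0)}$ into the definition of a slice, with no substantial difficulty to overcome.
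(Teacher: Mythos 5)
Your proposal is correct and is exactly the argument the paper intends: the text states the lemma "follows by plugging in $U=\mathcal{G}^{(0)}$ in Lemma~\ref{lem: grpd.opensets}", and you have simply filled in the two routine verifications (that $\mathcal{G}^{(0)}\mathcal{G}=\mathcal{G}$ via $r(g)g=g$, and that $VV^{-1}\subseteq\mathcal{G}^{(0)}$ characterizes slices) that the paper leaves implicit.
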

	
	This lemma implies in particular the following characterization of \'etale groupoids, which shows that \'etale groupoids are precisely those, whose collection of open set is inherently algebraic:
	\begin{thm}[\cite{res07}, Theorem~5.18]\label{thm: etalegrpd}
		Let $\mathcal{G}$ be a topological groupoid. Then the following are equivalent:
		\begin{enumerate}[(i)]
			\item $\mathcal{G}$ is \'etale.
			
			\item $\mathcal{G}^{(0)}$ is an open subspace and the product of any two open subsets is open i.e. the frame of open subsets of $\mathcal{G}$ forms a monoid.
			
			\item $\mathcal{G}^{(0)}$ is an open subspace and the structure maps $s$ and $r$ are open.
		\end{enumerate}
	\end{thm}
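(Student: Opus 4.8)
The plan is to run the cyclic chain (i) $\Rightarrow$ (iii) $\Rightarrow$ (ii) $\Rightarrow$ (i), leaning on the preceding lemma (open slices cover $\mathcal{G}$ once $\mathcal{G}^{(0)}$ is open) only for the last step. For (i) $\Rightarrow$ (iii): a local homeomorphism is in particular an open map, so $r$ is open; since inversion is a self-inverse homeomorphism and $s = r \circ (\,\cdot\,)^{-1}$, the map $s$ is a local homeomorphism as well, hence open. It remains to see $\mathcal{G}^{(0)}$ is open, and here I would argue pointwise: given a unit $u$, choose an open $U \ni u$ with $r|_U$ a homeomorphism onto the open set $r(U) \subseteq \mathcal{G}^{(0)}$, and consider $W := (r|_U)^{-1}(U \cap \mathcal{G}^{(0)})$. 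This $W$ is an open neighbourhood of $u$, and since $r$ is the identity on units, injectivity of $r|_U$ forces every $g \in W$ to coincide with the unit $r(g)$; thus $W \subseteq \mathcal{G}^{(0)}$, proving $\mathcal{G}^{(0)}$ open.

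For (iii) $\Rightarrow$ (ii) the real content is showing that multiplication $m \colon \mathcal{G}^{(2)} \to \mathcal{G}$ is an open map, for then $AB = m\big((A \times B) \cap \mathcal{G}^{(2)}\big)$ is open whenever $A,B$ are open, and, $\mathcal{G}^{(0)}$ being the open two-sided identity $U\mathcal{G}^{(0)} = U = \mathcal{G}^{(0)}U$ for this product, $\Omega(\mathcal{G})$ becomes a monoid. To prove $m$ open I would factor it through the fibre product: the map $\theta \colon \mathcal{G}^{(2)} \to \mathcal{G} \times_s \mathcal{G} = \{(k,h) : s(k) = s(h)\}$, $(g,h) \mapsto (gh,h)$, is a homeomorphism (its inverse $(k,h) \mapsto (kh^{-1},h)$ is continuous by continuity of inversion and multiplication), and $m = p_1 \circ \theta$ with $p_1$ the first projection. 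Since $p_1$ is the pullback of the source map $s$ along $s$, and pullbacks of open maps are open, the hypothesis that $s$ is open (part of (iii)) gives that $p_1$, hence $m$, is open.

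For (ii) $\Rightarrow$ (i): as $\mathcal{G}^{(0)}$ is open, the preceding lemma tells us the open slices $\mathcal{B}_\mathcal{G}^o$ cover $\mathcal{G}$, so it suffices to show $r$ restricts on each open slice $B$ to a homeomorphism onto an open subset of $\mathcal{G}^{(0)}$. The crucial identity is $r(B) = BB^{-1}$, valid because injectivity of $s|_B$ collapses every composable pair $b_1 b_2^{-1}$ to $r(b_1)$. Since $B$ open implies $B^{-1}$ open, hypothesis (ii) makes $BB^{-1} = r(B)$ open in $\mathcal{G}$, hence open in $\mathcal{G}^{(0)}$; applying the same identity to an arbitrary open $O \subseteq B$ shows $r|_B$ sends opens to opens, so the continuous bijection $r|_B \colon B \to r(B)$ is a homeomorphism. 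Thus $r$ is a local homeomorphism and $\mathcal{G}$ is étale.

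The main obstacle is the openness of multiplication in (iii) $\Rightarrow$ (ii); the rest is bookkeeping with the slice identity $r(B) = BB^{-1}$ and the covering lemma. The one point of care is symmetry: I stated the fibre-product factorisation over $s$, and one should note the mirror-image argument over $r$ (matching the left/right versions in Lemma~\ref{lem: grpd.opensets}) so that ``equivalently the source map $s$'' in the definition of étale is genuinely interchangeable throughout.
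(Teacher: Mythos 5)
Your proof is correct, and it follows the standard route (the one in Resende's paper, which this text only cites rather than reproves): the paper itself gives no argument beyond noting that the statement follows from Lemma~5.17 of \cite{res07}, i.e.\ the covering of $\mathcal{G}$ by open slices once $\mathcal{G}^{(0)}$ is open. All three legs of your cycle check out — the injectivity trick showing $\mathcal{G}^{(0)}$ is open, the openness of multiplication via the homeomorphism $(g,h)\mapsto(gh,h)$ onto the fibre product and the fact that pullbacks of open maps are open, and the slice identity $r(B)=BB^{-1}$ combined with the covering lemma for (ii)$\Rightarrow$(i).
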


	Already Renault had observed in \cite{ren80}, that locally compact, \'etale groupoids give rise to inverse monoids by looking at the sets of open slices with multiplication inherited from the groupoid.
	
	\begin{prop}\label{prop: open.slices}
		Let $\mathcal{G}$ be an \'etale groupoid. Then the following hold:
		\begin{enumerate}[(i)]
			\item The open slices $\mathcal{B}^{o}_{\mathcal{G}}$ form an inverse submonoid of the inverse monoid $\mathcal{B}_{\mathcal{G}}$.
			
			\item The map $\alpha \colon \mathcal{B}^{o}_{\mathcal{G}} \to \mathcal{I}(\mathcal{G}^{(0)})$ given by $\alpha \colon B \mapsto r|_B \circ (s|_B)^{-1}$ defines a continuous action of $\mathcal{B}^{o}_{\mathcal{G}}$ on $\mathcal{G}^{(0)}$ by partial homeomorphisms.
			
		\end{enumerate}
	\end{prop}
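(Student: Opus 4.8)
The plan is to exploit that $\mathcal{B}_{\mathcal{G}}$ is already known to be an inverse monoid (Example~\ref{ex: inv.semigrp}(iv)), so that for part (i) only the openness of the relevant sets has to be tracked, and then to prove the homomorphism property in (ii) by a direct orbit-chasing computation on the unit space.

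For (i), I would first note that the unit $\mathcal{G}^{(0)}$ of $\mathcal{B}_{\mathcal{G}}$ is itself an open slice: it is open by Theorem~\ref{thm: etalegrpd}, and both $s|_{\mathcal{G}^{(0)}}$ and $r|_{\mathcal{G}^{(0)}}$ are the identity and hence injective. Next, for $B \in \mathcal{B}^{o}_{\mathcal{G}}$ the inverse $B^{-1}$ is again an open slice, since the inversion map is a homeomorphism (so $B^{-1}$ is open) and a set is a slice precisely when its inverse is. Finally, for $B_1,B_2 \in \mathcal{B}^{o}_{\mathcal{G}}$ the product $B_1B_2$ is a slice because $\mathcal{B}_{\mathcal{G}}$ is closed under products, and it is open because by Theorem~\ref{thm: etalegrpd}(ii) the product of any two open subsets of an \'etale groupoid is open. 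Thus $\mathcal{B}^{o}_{\mathcal{G}}$ contains the unit and is closed under products and the involution, so it is an inverse submonoid of $\mathcal{B}_{\mathcal{G}}$.

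For (ii), the first step is to check that each $\alpha(B)$ is a genuine partial homeomorphism. Since $s$ and $r$ are open maps (Theorem~\ref{thm: etalegrpd}(iii)) and $B$ is open, the restrictions $s|_B$ and $r|_B$ are continuous open injections, hence homeomorphisms onto the open sets $s(B)$ and $r(B)$ in $\mathcal{G}^{(0)}$. Consequently $\alpha(B)=r|_B \circ (s|_B)^{-1} \colon s(B) \to r(B)$ is a homeomorphism between open subsets, i.e. an element of $\mathcal{I}(\mathcal{G}^{(0)})$ with open domain, and the covering condition on domains is immediate since $\mathcal{G}^{(0)} \in \mathcal{B}^{o}_{\mathcal{G}}$ satisfies $\operatorname{dom}(\alpha(\mathcal{G}^{(0)}))=\mathcal{G}^{(0)}$.

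The substance of (ii) is the homomorphism property $\alpha(B_1B_2)=\alpha(B_1)\circ\alpha(B_2)$, which I would establish by chasing a point through the composite. The map $\alpha(B)$ sends $s(g)\mapsto r(g)$ for the unique $g\in B$ with the prescribed source. Given $x\in\mathcal{G}^{(0)}$, applying $\alpha(B_2)$ requires $x=s(g_2)$ for a unique $g_2\in B_2$ and returns $r(g_2)$; then applying $\alpha(B_1)$ requires $r(g_2)=s(g_1)$ for a unique $g_1\in B_1$ and returns $r(g_1)$. But $s(g_1)=r(g_2)$ is exactly composability of $(g_1,g_2)$, and on the product $s(g_1g_2)=s(g_2)=x$ and $r(g_1g_2)=r(g_1)$, so $\alpha(B_1B_2)$ sends the same $x$ to the same $r(g_1)$. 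I expect the main obstacle to be the domain bookkeeping: one must verify that the domain $s(B_1B_2)$ of $\alpha(B_1B_2)$ matches the domain of the composite dictated by the composition rule in $\mathcal{I}(\mathcal{G}^{(0)})$, i.e. $s(B_1B_2)=\alpha(B_2)^{-1}\big(s(B_1)\cap r(B_2)\big)$, where the uniqueness of the factorization $g=g_1g_2$ along slices is precisely what makes the correspondence well defined and injective on sources. Preservation of the involution, $\alpha(B^{-1})=\alpha(B)^{-1}$, then follows formally, since a semigroup homomorphism between inverse semigroups automatically respects inverses, completing the proof that $\alpha$ is a continuous action by partial homeomorphisms.
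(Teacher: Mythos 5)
The paper states this proposition without proof, recording it as an observation going back to Renault, so there is no in-text argument to compare against; judged on its own, your proof is correct and complete. You supply exactly the points that need checking: $\mathcal{G}^{(0)}$ is an open slice and is the unit, inversion and products preserve open slices via Theorem~\ref{thm: etalegrpd}, the restrictions $s|_B$ and $r|_B$ are homeomorphisms onto open subsets of $\mathcal{G}^{(0)}$ so that $\alpha(B)\in\mathcal{I}(\mathcal{G}^{(0)})$ is a partial homeomorphism with open domain, and the source-fibre chase together with the domain identity $s(B_1B_2)=\alpha(B_2)^{-1}\bigl(s(B_1)\cap r(B_2)\bigr)$ gives the homomorphism property, with compatibility with involution then automatic for semigroup homomorphisms of inverse semigroups.
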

	
	This raises the question how the original groupoid $\mathcal{G}$ relates to the induced groupoid of germs $\operatorname{Germ}(\mathcal{G}^{(0)} ,\mathcal{B}^{o}_{\mathcal{G}})$.
	
	\begin{prop}[\cite{ren08}, Proposition~3.2]
		Let $\mathcal{G}$ be an \'etale groupoid. Then the following is a short exact sequence of \'etale groupoids:
		\begin{equation*}
			\mathcal{G}^{(0)} \longrightarrow \operatorname{Iso}(\mathcal{G})^{\circ}  \longrightarrow \mathcal{G}  \overset{\alpha_{*}}{\longrightarrow} \operatorname{Germ}(\mathcal{G}^{(0)} ,\alpha(\mathcal{B}^{o}_{\mathcal{G}}))  \longrightarrow \mathcal{G}^{(0)} 
		\end{equation*}
		Furthermore, the action $\alpha$ is faithful \emph{if and only if} $\operatorname{Iso}(\mathcal{G})^{\circ}=\mathcal{G}^{(0)}$.
	\end{prop}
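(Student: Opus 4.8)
The plan is to read the five-term sequence as the assertion that $\operatorname{Iso}(\mathcal{G})^{\circ}$ is a normal open subgroupoid of $\mathcal{G}$ sharing the unit space $\mathcal{G}^{(0)}$, and that it is precisely the kernel of a surjective étale morphism $\alpha_{*}$ onto the groupoid of germs; the two outer copies of $\mathcal{G}^{(0)}$ merely record that all three middle groupoids have unit space $\mathcal{G}^{(0)}$. First I would verify that $\operatorname{Iso}(\mathcal{G})^{\circ}$ is an open étale subgroupoid. Since $\mathcal{G}^{(0)}$ is open in $\mathcal{G}$ and contained in $\operatorname{Iso}(\mathcal{G})$, it lies in the interior, so $\mathcal{G}^{(0)} \subseteq \operatorname{Iso}(\mathcal{G})^{\circ}$. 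Closure under inversion is immediate, as $g \mapsto g^{-1}$ is a homeomorphism fixing $\operatorname{Iso}(\mathcal{G})$ setwise. Closure under the product follows from Theorem~\ref{thm: etalegrpd}: given composable $g,h \in \operatorname{Iso}(\mathcal{G})^{\circ}$, choose open slices $U \ni g$, $V \ni h$ with $U,V \subseteq \operatorname{Iso}(\mathcal{G})$; then $UV$ is open, lies in $\operatorname{Iso}(\mathcal{G})$ (isotropy is a subgroupoid), and contains $gh$.

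Next I would set up $\alpha_{*}$. For $g \in \mathcal{G}$ choose any open slice $B \ni g$ (the open slices form a basis of the topology on $\mathcal{G}$) and define $\alpha_{*}(g) := [(\alpha(B), s(g))]$, the germ at $s(g)$ of the partial homeomorphism $\alpha(B) = r|_{B} \circ (s|_{B})^{-1}$ of Proposition~\ref{prop: open.slices}. This is well defined because two slices $B_{1},B_{2} \ni g$ agree on the open slice $B_{1} \cap B_{2}$, so $\alpha(B_{1})$ and $\alpha(B_{2})$ coincide on the neighbourhood $s(B_{1}\cap B_{2})$ of $s(g)$. With the germ conventions of Definition~\ref{defi: germgrpd} the germ $\alpha_{*}(g)$ has source $s(g)$ and range $\alpha(B)(s(g)) = r(g)$; units map to unit germs (take $B \subseteq \mathcal{G}^{(0)}$), and for composable $g,h$ one may take the slice $B_{g}B_{h} \ni gh$ and use that $\alpha$ is an inverse-semigroup homomorphism to obtain $\alpha_{*}(gh) = \alpha_{*}(g)\alpha_{*}(h)$. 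Hence $\alpha_{*}$ is a groupoid homomorphism. On each open slice $B$ it is a homeomorphism onto the basic open set $\tau(\alpha(B), s(B))$, so it is continuous and open; and it is surjective because every germ has the form $[(\alpha(B), x)]$ with $x \in s(B) = \operatorname{dom}(\alpha(B))$, realised by the unique $g \in B$ with $s(g) = x$.

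The heart of the argument, and the step I expect to be the main obstacle, is identifying the kernel, i.e.\ proving $\alpha_{*}^{-1}(\mathcal{G}^{(0)}) = \operatorname{Iso}(\mathcal{G})^{\circ}$. The germ $[(\alpha(B), s(g))]$ is a unit exactly when $\alpha(B)$ restricts to the identity on some open $U \subseteq s(B)$ with $s(g) \in U$; unwinding $\alpha(B)(y) = r\big((s|_{B})^{-1}(y)\big)$, this says precisely that the open slice $(s|_{B})^{-1}(U) \ni g$ is contained in $\operatorname{Iso}(\mathcal{G})$, that is $g \in \operatorname{Iso}(\mathcal{G})^{\circ}$. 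Conversely, if $g$ lies in the interior, pick an open slice $B' \ni g$ inside $\operatorname{Iso}(\mathcal{G})$; then $r(g') = s(g')$ for all $g' \in B'$, so $\alpha(B') = \operatorname{id}_{s(B')}$ and $\alpha_{*}(g)$ is a unit. This identifies the kernel with $\operatorname{Iso}(\mathcal{G})^{\circ}$, which is automatically normal as a kernel, and together with Step~1 and the local-homeomorphism property of $\alpha_{*}$ yields exactness of the sequence in the category of étale groupoids.

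For the final equivalence I would argue both directions through this kernel description. If $\operatorname{Iso}(\mathcal{G})^{\circ} = \mathcal{G}^{(0)}$ and $\alpha(B_{1}) = \alpha(B_{2})$, then the domains $s(B_{1}) = s(B_{2})$ agree and the ranges agree pointwise, so the open slice $B_{1}B_{2}^{-1}$ lies in $\operatorname{Iso}(\mathcal{G})$, hence in $\operatorname{Iso}(\mathcal{G})^{\circ} = \mathcal{G}^{(0)}$; a short computation (for matching $g_{1}\in B_{1}$, $g_{2}\in B_{2}$ over the same point, $g_{1}g_{2}^{-1}$ being a unit forces $g_{1}=g_{2}$) then gives $B_{1}=B_{2}$, so $\alpha$ is injective, i.e.\ faithful. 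Conversely, if $\operatorname{Iso}(\mathcal{G})^{\circ} \neq \mathcal{G}^{(0)}$, choose an open slice $B \subseteq \operatorname{Iso}(\mathcal{G})$ containing some $g \notin \mathcal{G}^{(0)}$; then $\alpha(B) = \operatorname{id}_{s(B)} = \alpha(s(B))$ while $B \neq s(B)$ since $g \in B \setminus s(B)$, so $\alpha$ fails to be injective. Thus $\alpha$ is faithful \emph{if and only if} $\operatorname{Iso}(\mathcal{G})^{\circ} = \mathcal{G}^{(0)}$.
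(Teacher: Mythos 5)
Your proof is correct, and it follows exactly the route the paper indicates: the text does not prove this proposition (it is cited from Renault), but the remark immediately after it defines $\alpha_{*}(g)$ as the germ $[(\alpha(B),s(g))]$ of an open slice $B \ni g$ and asserts well-definedness, continuity and surjectivity — which is precisely your Step 2, and your kernel identification $\alpha_{*}^{-1}(\mathcal{G}^{(0)})=\operatorname{Iso}(\mathcal{G})^{\circ}$ and the faithfulness equivalence via the slice $B_{1}B_{2}^{-1}$ are the standard completions of that sketch. The only point worth making explicit is that in Step 1 one should shrink the open neighbourhoods of $g,h$ inside $\operatorname{Iso}(\mathcal{G})$ to open \emph{slices} before multiplying (which is possible since $\mathcal{B}^{o}_{\mathcal{G}}$ is a basis), as you implicitly do.
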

	\begin{rem}
		The map $\alpha_{*}\colon \mathcal{G}  \longrightarrow \operatorname{Germ}(\mathcal{G}^{(0)} ,\alpha(\mathcal{B}^{o}_{\mathcal{G}}))$ is defined by sending an element $g \in \mathcal{G}$ to the germ $[(\alpha(B),s(g)]$ where $B$ is an open slice that contains $g$. This does not depend on the choice of $B$ and sets up a continuous surjection. 
	\end{rem}

	\begin{defi}[\cite{nek17}, Definition~2.4]
		Let $\mathcal{G}$ be an \'etale groupoid. it is said to be \emph{effective} or a \emph{groupoid of germs}\footnote{There is no consensus on the terminology in the literature. In \cite{ren09} the property of being essentially principal is called \emph{topologically principal}, another term in use is \emph{essentially free}. Sometimes (as in \cite{mat12}) a groupoid $\mathcal{G}$ is  defined to be essentially principal if it is effective in our terms. This stems from the fact that all of this notions are closely related.\\} if every non-empty open slice $B \in \mathcal{B}_\mathcal{G}^o$ for which $B \cap (\mathcal{G}\setminus \mathcal{G}^{(0)}) \neq \emptyset$ holds satisfies $B \cap (\mathcal{G}\setminus \operatorname{Iso}(\mathcal{G})) \neq \emptyset$.
	\end{defi}
	
	\begin{rem}\label{rem: effective}
		If an \'etale groupoid $\mathcal{G}$ is effective, it necessarily satisfies $\operatorname{Iso}(\mathcal{G})^{\circ} = \mathcal{G}^{(0)}$. The converse holds if $\mathcal{G}$ is Hausdorff.\footnote{In this case $\mathcal{G}^{(0)}$ is clopen.\\}
	\end{rem}
	
	Effectiveness is a kind of ``freeness" condition in that it is closely related to being essentially principal:
	
	\begin{prop}[\cite{ren08}, Proposition~3.6.(i)]
		Let $\mathcal{G}$ be an essentially prinipal, \'etale groupoid. If it is Hausdorff, it is effective. 
	\end{prop}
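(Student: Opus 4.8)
The plan is to invoke Remark~\ref{rem: effective}, which reduces the claim to verifying $\operatorname{Iso}(\mathcal{G})^{\circ} = \mathcal{G}^{(0)}$, equivalently to establishing the defining implication of effectiveness directly. I would work with its contrapositive: since the open slices $\mathcal{B}_\mathcal{G}^o$ form a basis of the topology, it suffices to show that every non-empty open slice $B$ with $B \subseteq \operatorname{Iso}(\mathcal{G})$ already satisfies $B \subseteq \mathcal{G}^{(0)}$.

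First I would record the two structural facts that drive the argument. Because $\mathcal{G}$ is \'etale, the source map restricts to a homeomorphism $s|_B \colon B \to s(B)$ onto an open subset $s(B) \subseteq \mathcal{G}^{(0)}$. Because $\mathcal{G}$ is Hausdorff, Remark~\ref{rem: unitsclosed}(ii) gives that $\mathcal{G}^{(0)}$ is closed, and since it is also open (\'etale), $\mathcal{G}^{(0)}$ is clopen. Consequently $B$ decomposes into the two relatively open pieces $B \cap \mathcal{G}^{(0)}$ and $B \setminus \mathcal{G}^{(0)}$, and transporting along the homeomorphism $s|_B$ splits $s(B)$ into the disjoint open sets $s(B \cap \mathcal{G}^{(0)})$ and $s(B \setminus \mathcal{G}^{(0)})$.

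Next I would exploit essential principality. For $u \in s(B)$ let $g_u := (s|_B)^{-1}(u)$ be the unique element of $B$ with source $u$. If $u \in \mathcal{G}_{triv}$, then $g_u \in \operatorname{Iso}(\mathcal{G})$ lies in the isotropy group $\mathcal{G}|_u = \{u\}$, forcing $g_u = u \in \mathcal{G}^{(0)}$; hence $s(B) \cap \mathcal{G}_{triv} \subseteq s(B \cap \mathcal{G}^{(0)})$. Since $\mathcal{G}_{triv}$ is dense in $\mathcal{G}^{(0)}$ and $s(B)$ is open, $s(B) \cap \mathcal{G}_{triv}$ is dense in $s(B)$, so $s(B \cap \mathcal{G}^{(0)})$ is dense in $s(B)$. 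But $s(B \setminus \mathcal{G}^{(0)})$ is an open subset of $s(B)$ disjoint from this dense set, hence empty. Therefore $B \setminus \mathcal{G}^{(0)} = \emptyset$, i.e.\ $B \subseteq \mathcal{G}^{(0)}$, as required.

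The hard part---really the only place where the hypotheses are genuinely used---is the clopenness of $\mathcal{G}^{(0)}$. It is what makes $B \setminus \mathcal{G}^{(0)}$ open, which is precisely what allows the density of the trivial-isotropy units to rule it out. Without Hausdorffness $\mathcal{G}^{(0)}$ need not be closed, the complement inside the slice need not be open, and the density argument collapses; this is exactly why the converse direction in Remark~\ref{rem: effective} likewise requires the Hausdorff assumption.
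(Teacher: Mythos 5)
Your argument is correct. The paper itself gives no proof of this proposition (it is quoted from Renault), but what you wrote is essentially the standard argument from the cited source: the clopenness of $\mathcal{G}^{(0)}$ (open by \'etaleness, closed by Hausdorffness via Remark~\ref{rem: unitsclosed}) makes $B \setminus \mathcal{G}^{(0)}$ an open set whose source is a nonempty open subset of $\mathcal{G}^{(0)}$ missing the dense set $\mathcal{G}_{triv}$, a contradiction. One cosmetic point: you announce that you will invoke Remark~\ref{rem: effective}, but your proof never actually uses it --- you verify the contrapositive of the definition of effectiveness directly, which is cleaner, so the opening reference to the remark can simply be dropped.
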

	
	\begin{prop}[\cite{ren08}, Proposition~3.6.(ii)]\label{prop: effective}
		Let $\mathcal{G}$ be an effective, \'etale groupoid. If $\mathcal{G}$ is second-countable and $\mathcal{G}^{(0)}$ is a Baire space, it is essentially principal.
	\end{prop}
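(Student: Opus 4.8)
The plan is to establish essential principality, i.e.\ density of $\mathcal{G}_{triv}$ in $\mathcal{G}^{(0)}$, by a Baire category argument: I will show that the \emph{exceptional set}
\[ \mathcal{E} := \mathcal{G}^{(0)} \setminus \mathcal{G}_{triv} = \{u \in \mathcal{G}^{(0)} : \exists\, g \in \operatorname{Iso}(\mathcal{G})\setminus\mathcal{G}^{(0)} \text{ with } s(g)=u\} \]
is meager. Since $\mathcal{G}^{(0)}$ is a Baire space, the complement $\mathcal{G}_{triv}$ is then comeager and in particular dense, which is exactly what Definition~\ref{defi: grpd}(ii) requires.

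First I would produce a countable cover of $\mathcal{G}$ by open slices. As $\mathcal{G}$ is étale its open slices $\mathcal{B}_\mathcal{G}^o$ form a basis of the topology, hence an open cover, and second-countability renders $\mathcal{G}$ Lindel\"of, so there is a countable family $\{B_m\}_{m\in\mathbb{N}}\subseteq\mathcal{B}_\mathcal{G}^o$ with $\mathcal{G}=\bigcup_m B_m$. Writing $\phi_m:=(s|_{B_m})^{-1}$ and $\alpha(B_m)=r|_{B_m}\circ\phi_m$ for the partial homeomorphism of Proposition~\ref{prop: open.slices}(ii), I set
\[ D_m:=\phi_m^{-1}(\operatorname{Iso}(\mathcal{G}))=\{u\in s(B_m): \alpha(B_m)(u)=u\}, \qquad U_m:=\phi_m^{-1}(\mathcal{G}^{(0)}), \]
and $E_m:=D_m\setminus U_m$. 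Because $\mathcal{G}^{(0)}$ is Hausdorff the isotropy bundle is closed (Remark~\ref{rem: unitsclosed}(i)), so each $D_m$ is closed in $s(B_m)$, while $U_m$ is open there since $\mathcal{G}^{(0)}$ is open in the étale groupoid $\mathcal{G}$ (Theorem~\ref{thm: etalegrpd}). As $s|_{B_m}$ is injective, $u\in E_m$ means $\phi_m(u)\in\operatorname{Iso}(\mathcal{G})\setminus\mathcal{G}^{(0)}$ with source $u$, so $E_m$ is exactly the set of sources of the non-unit isotropy elements lying in $B_m$; since these elements are covered by the $B_m$, this yields $\mathcal{E}=\bigcup_m E_m$.

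It remains to see that each $E_m$ is nowhere dense, and the crucial step — the place where effectiveness enters — is the inclusion $\operatorname{int}(D_m)\subseteq U_m$. Indeed, if $W:=\operatorname{int}(D_m)$ is non-empty, then $B':=\phi_m(W)$ is a non-empty open slice with $B'\subseteq\operatorname{Iso}(\mathcal{G})$, because every $u\in W$ is a fixed point of $\alpha(B_m)$ and hence $\phi_m(u)$ has equal source and range; the contrapositive of the definition of effectiveness (an open slice contained in $\operatorname{Iso}(\mathcal{G})$ meeting $\mathcal{G}\setminus\mathcal{G}^{(0)}$ would have to meet $\mathcal{G}\setminus\operatorname{Iso}(\mathcal{G})$, which is impossible) forces $B'\subseteq\mathcal{G}^{(0)}$, i.e.\ $W\subseteq U_m$. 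Granting this, $E_m$ has empty interior; moreover $U_m$ is open and disjoint from $E_m$, so $\overline{E_m}\cap U_m=\emptyset$ while $\overline{E_m}\subseteq D_m$, whence $\operatorname{int}(\overline{E_m})\subseteq\operatorname{int}(D_m)\cap\overline{E_m}\subseteq U_m\cap\overline{E_m}=\emptyset$. Thus $E_m$ is nowhere dense in the open set $s(B_m)$, and an open subset of $\mathcal{G}^{(0)}$ contained in $\overline{E_m}$ must be empty, so $E_m$ is nowhere dense in $\mathcal{G}^{(0)}$ as well. Consequently $\mathcal{E}=\bigcup_m E_m$ is meager and the Baire property concludes the argument. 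I expect the effectiveness step to be the main obstacle: the work is in reading the definition in slice form and checking that $\phi_m(\operatorname{int} D_m)$ is genuinely an \emph{open slice inside the isotropy bundle}, so that effectiveness applies; the surrounding Baire-category bookkeeping, including the passage from nowhere density in $s(B_m)$ to nowhere density in $\mathcal{G}^{(0)}$, is routine.
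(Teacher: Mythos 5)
The paper states this proposition purely as a citation of Renault and supplies no proof of its own, so there is nothing internal to compare against; your argument is correct and is in substance the standard (indeed Renault's) Baire-category proof. The decomposition $\mathcal{E}=\bigcup_m E_m$ via a countable cover of $\mathcal{G}$ by open slices, the use of effectiveness to force $\operatorname{int}(D_m)\subseteq U_m$, and the passage from nowhere density of $E_m$ in the open set $s(B_m)$ to nowhere density in $\mathcal{G}^{(0)}$ are all sound as written.

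One point deserves to be made explicit: you invoke Hausdorffness of $\mathcal{G}^{(0)}$ (through Remark~\ref{rem: unitsclosed}(i)) to conclude that $\operatorname{Iso}(\mathcal{G})$, and hence each $D_m$, is closed. This is not literally among the stated hypotheses of the proposition, and it is genuinely needed for your argument: without it $\overline{E_m}$ need not be contained in $D_m$, and nowhere density of $E_m$ (as opposed to mere emptiness of its interior) would not follow. In the contexts where the paper applies the result --- locally compact groupoids in the sense of Definition~\ref{defi: locogrpd}, and in particular \'etale Cantor groupoids --- $\mathcal{G}^{(0)}$ is Hausdorff by standing assumption, so this is a matter of stating the hypothesis rather than a flaw in the reasoning, but you should flag it.
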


	As the definition suggests, \'etalness entails strong ties between the topology of the whole groupoid $\mathcal{G}$ and the subspace topology on $\mathcal{G}^{(0)}$:
	
	\begin{prop}
		Let $\mathcal{G}$ be an \'etale groupoid.  The topology on $\mathcal{G}$ has a basis of compact, open slices \emph{if and only if} $\mathcal{G}^{(0)}$ has a basis of compact, open sets.
	\end{prop}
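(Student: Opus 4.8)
The plan is to prove the two implications separately, in each case leaning on the fact (established in the lemma above) that the open slices $\mathcal{B}_\mathcal{G}^o$ form a basis for the topology of $\mathcal{G}$, together with the étale feature that on an open slice the source map restricts to a homeomorphism onto an open subset of $\mathcal{G}^{(0)}$. Throughout I would exploit that $\mathcal{G}^{(0)}$ is open in $\mathcal{G}$ (étaleness via Theorem~\ref{thm: etalegrpd}), so a subset of $\mathcal{G}^{(0)}$ is open in the subspace topology if and only if it is open in $\mathcal{G}$, and compactness of a subset is intrinsic to it.

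For the forward direction, I would assume $\mathcal{G}$ has a basis of compact open slices. Given $u \in \mathcal{G}^{(0)}$ and a neighbourhood $U$ of $u$ inside $\mathcal{G}^{(0)}$, I regard $U$ as open in $\mathcal{G}$ and extract from the basis a compact open slice $B$ with $u \in B \subseteq U$. Since $B \subseteq U \subseteq \mathcal{G}^{(0)}$, the set $B$ is a compact open subset of $\mathcal{G}^{(0)}$ lying between $u$ and $U$; hence such sets form a basis of $\mathcal{G}^{(0)}$. This direction is essentially immediate.

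For the reverse direction, I would assume $\mathcal{G}^{(0)}$ has a basis of compact open sets. Given $g \in \mathcal{G}$ and an open neighbourhood $W$, I first use the basis of open slices to find an open slice $V$ with $g \in V \subseteq W$. The key step is that $s|_V$ is a homeomorphism of $V$ onto the open set $s(V) \subseteq \mathcal{G}^{(0)}$: injectivity is the slice condition, and openness of $s(V)$ together with continuity of the inverse follows from $s$ being an open local homeomorphism on the étale groupoid. I then choose, from the assumed basis of $\mathcal{G}^{(0)}$, a compact open set $K$ with $s(g) \in K \subseteq s(V)$, and set $B := (s|_V)^{-1}(K)$. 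It remains to verify that $B$ is the desired compact open slice: it is open in $V$ and hence in $\mathcal{G}$; it is a slice as a subset of the slice $V$; it contains $g$ because $s(g) \in K$; it lies in $V \subseteq W$; and it is compact because $s|_V$ restricts to a homeomorphism $B \to K$ with $K$ compact.

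The arguments are short, and the only point requiring care — the main obstacle — is the claim that the source map restricts to a homeomorphism of an open slice onto an open subset of the unit space. I would establish this by covering $V$ with neighbourhoods on which $s$ is a homeomorphism onto an open set (local homeomorphism), intersecting each with $V$, and using injectivity of $s|_V$ to patch the resulting local inverses into a single continuous inverse of $s|_V$; this simultaneously shows $s(V)$ is open and $s|_V$ is a homeomorphism. A symmetric argument with the range map $r$ works equally well, so either structure map may be used.
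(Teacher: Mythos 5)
Your argument is correct. The paper states this proposition without proof (the essentially equivalent statement is deferred to \cite{ll13}, Lemma~3.13), so there is no in-text argument to compare against; what you give is the standard one. Both directions check out: the forward direction uses only that $\mathcal{G}^{(0)}$ is open in $\mathcal{G}$ and that subsets of $\mathcal{G}^{(0)}$ are automatically slices, and the reverse direction correctly pulls a compact open neighbourhood of $s(g)$ back through $s|_V$. For the one step you flag as needing care, note that you can shortcut the patching argument: by Theorem~\ref{thm: etalegrpd} the source map is open, so $s|_V$ is a continuous, open, injective map and hence a homeomorphism onto the open set $s(V)$ immediately.
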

	
	Ample semigroups were introduced in \cite{ren80} to characterize groupoids which admit a basis of compact open slices, the terminology was inherited by Krieger's definition of ample groups (compare with Definition~\ref{defi: ample group}). Subsequently such groupoids have been termed ample.
	
	\begin{defi}[\cite{ren80}, Definition~2.12 \& \cite{pat99}, Definition~2.2.4]\phantomsection\label{defi: ample}
		~\begin{enumerate}[(i)]
			\item Let $\alpha$ be the continuous action of an inverse subsemigroup $S \subseteq \mathcal{I}(X)$ on a compact, Hausdorff space $X$. Then $S$ is called \emph{ample} if the following hold:
			\begin{enumerate}
				\item For every compact open subset $U \subseteq X$ the partial homeomorphism $\operatorname{id}_U$ is induced by an element in $S$.
				
				\item For every finite collection $\{s_i\}_{i \in I}$ of elements in $S$ with $\Ima(\alpha(s_i)) \cap \Ima(\alpha(s_j))= \emptyset$ and $\operatorname{dom}(\alpha(s_i)) \cap \operatorname{dom}(\alpha(s_j))= \emptyset$ for all $i,j \in I$ with $i \neq j$, there exists an $s \in S$ such that $\alpha(s)(x)=\alpha(s_i)(x)$ for all $x \in \operatorname{dom}(\alpha(s_i))$.
			\end{enumerate}
			
			\item A topological groupoid $\mathcal{G}$ is called \emph{ample}, if $\mathcal{B}_\mathcal{G}^{o,k}$ is a basis of the topology.
		\end{enumerate}
	\end{defi}
	
	Every ample groupoid is by definition \'etale. Conversely we have:
	
	\begin{prop}[\cite{ll13}, Lemma~3.13]
		Let $\mathcal{G}$ be an \'etale groupoid. Then $\mathcal{G}$ is ample \emph{if and only if} the space of units $\mathcal{G}^{(0)}$ has a basis of compact, open sets.
	\end{prop}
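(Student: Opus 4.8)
The plan is to recognise that this proposition is essentially a reformulation of the preceding one once the definition of ampleness is unfolded. By Definition~\ref{defi: ample}(ii), the groupoid $\mathcal{G}$ is ample precisely when $\mathcal{B}_\mathcal{G}^{o,k}$, the collection of compact open slices, is a basis of the topology of $\mathcal{G}$; this is verbatim the hypothesis ``$\mathcal{G}$ has a basis of compact open slices'' appearing in the preceding proposition, so the equivalence with ``$\mathcal{G}^{(0)}$ has a basis of compact open sets'' follows at once. For completeness, though, I would also give the direct argument, which uses only that $\mathcal{G}$ is \'etale --- so that $s$ (equivalently $r$) is a local homeomorphism and, by the lemma above, the open slices $\mathcal{B}_\mathcal{G}^o$ already form a basis of $\mathcal{G}$.

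For the forward implication I would argue as follows. Assume $\mathcal{G}$ is ample and let $U \subseteq \mathcal{G}^{(0)}$ be open; since $\mathcal{G}^{(0)}$ is open in $\mathcal{G}$ by \'etaleness, $U$ is open in $\mathcal{G}$. Given $u \in U$, ampleness provides a compact open slice $B$ with $u \in B \subseteq U \subseteq \mathcal{G}^{(0)}$. But every subset of $\mathcal{G}^{(0)}$ is automatically a slice, since units satisfy $s(u)=r(u)=u$ and so $s,r$ restrict to the identity, hence are injective. Thus $B$ is simply a compact open subset of $\mathcal{G}^{(0)}$ with $u \in B \subseteq U$, and $\mathcal{G}^{(0)}$ has a basis of compact open sets.

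The substantive direction is the converse, and this is where the work lies. Suppose $\mathcal{G}^{(0)}$ has a basis of compact open sets; I want to show $\mathcal{B}_\mathcal{G}^{o,k}$ is a basis of $\mathcal{G}$. Fix $g$ in an open set $V \subseteq \mathcal{G}$. Because $\mathcal{G}$ is \'etale the open slices form a basis, so I choose an open slice $B$ with $g \in B \subseteq V$. Since $s$ is an open map (Theorem~\ref{thm: etalegrpd}), the image $s(B)$ is open in $\mathcal{G}^{(0)}$ and $s|_B \colon B \to s(B)$ is a homeomorphism. Using the assumed basis on $\mathcal{G}^{(0)}$, pick a compact open set $W \subseteq s(B)$ with $s(g) \in W$. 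Then $B' := (s|_B)^{-1}(W)$ is open, is contained in $B \subseteq V$, contains $g$, and is a slice (being a subset of the slice $B$); moreover, as the homeomorphic image of the compact set $W$ it is compact. Hence $B' \in \mathcal{B}_\mathcal{G}^{o,k}$ with $g \in B' \subseteq V$, proving that the compact open slices form a basis, i.e. $\mathcal{G}$ is ample.

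The main obstacle --- really the only non-formal point --- is making compactness transfer correctly in the converse: one must select the compact open neighbourhood $W$ inside the source image $s(B)$, not merely somewhere in $\mathcal{G}^{(0)}$, so that pulling back along the homeomorphism $s|_B$ yields a genuinely compact slice sitting inside the prescribed open set $V$. The openness of $s$ granted by \'etaleness is exactly what makes $s(B)$ open and thereby lets the basis of $\mathcal{G}^{(0)}$ be applied there.
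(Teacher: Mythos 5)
Your proof is correct. The paper itself gives no argument for this proposition --- it is stated as a citation of \cite{ll13} immediately after an (also unproved) proposition asserting that the topology on $\mathcal{G}$ has a basis of compact open slices \emph{if and only if} $\mathcal{G}^{(0)}$ does --- so your proposal simply supplies the standard proof the paper omits: unwinding Definition~\ref{defi: ample}(ii) reduces the claim to that preceding proposition, and your direct argument (restricting $\mathcal{G}^{(0)}$-compact-opens along the homeomorphism $s|_B$ on an open slice $B$, using that $s$ is open by Theorem~\ref{thm: etalegrpd}) is exactly how that equivalence is established. All the steps check out, including the point you flag as the only delicate one, namely choosing the compact open set $W$ inside $s(B)$ rather than merely inside $\mathcal{G}^{(0)}$.
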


	\begin{rem}
		This implies that every \'etale groupoid $\mathcal{G}$ such that $\mathcal{G}^{(0)}$ is a Stone space is ample.
	\end{rem}
	
	\begin{prop}
		Let $\mathcal{G}$ be an ample groupoid. Then the set of compact, open slices $\mathcal{B}^{o,k}_{\mathcal{G}}$ is an inverse subsemigroup of the inverse monoid $\mathcal{B}^o_{\mathcal{G}}$.
	\end{prop}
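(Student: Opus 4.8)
The plan is to exploit that the inclusion $\mathcal{B}^{o,k}_{\mathcal{G}} \subseteq \mathcal{B}^o_{\mathcal{G}}$ already lands inside an inverse monoid, so the only thing requiring verification is that compactness of slices is preserved under the two structure operations of $\mathcal{B}^o_{\mathcal{G}}$: the setwise product and the involution $B \mapsto B^{-1}$. By Proposition~\ref{prop: open.slices}(i) the open slices form an inverse submonoid of $\mathcal{B}_{\mathcal{G}}$, so for compact open slices $B, B_1, B_2$ the sets $B^{-1}$ and $B_1 B_2$ are automatically open slices; it remains only to check that they are compact. Note that the right statement is closure under product and inverse, i.e. an inverse \emph{subsemigroup}, and not a submonoid: the identity $\mathcal{G}^{(0)}$ of $\mathcal{B}^o_{\mathcal{G}}$ is compact only when $\mathcal{G}^{(0)}$ itself is.

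For the involution, I would recall that in any topological groupoid the inversion $\iota \colon \mathcal{G} \to \mathcal{G}$, $g \mapsto g^{-1}$, is continuous and equal to its own inverse, hence a homeomorphism of $\mathcal{G}$. Therefore $B^{-1} = \iota(B)$ is a continuous image of the compact set $B$ and so is compact; combined with the previous observation this gives $B^{-1} \in \mathcal{B}^{o,k}_{\mathcal{G}}$.

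The product is the one genuinely nontrivial point. I would write $B_1 B_2 = m\big((B_1 \times B_2) \cap \mathcal{G}^{(2)}\big)$, where $m \colon \mathcal{G}^{(2)} \to \mathcal{G}$ is the continuous multiplication map. Since $B_1 \times B_2$ is compact as a product of two compact sets, it suffices to show that $(B_1 \times B_2) \cap \mathcal{G}^{(2)}$ is compact and then invoke that continuous images of compact sets are compact. For this I would use that $\mathcal{G}^{(2)}$ is closed in $\mathcal{G} \times \mathcal{G}$: it is the preimage of the diagonal $\Delta_{\mathcal{G}^{(0)}}$ under the continuous map $(g_1, g_2) \mapsto (s(g_1), r(g_2))$, and $\Delta_{\mathcal{G}^{(0)}}$ is closed because $\mathcal{G}^{(0)}$ is Hausdorff (local compactness of $\mathcal{G}$ forces this, cf. Definition~\ref{defi: locogrpd}(i)). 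Consequently $(B_1 \times B_2) \cap \mathcal{G}^{(2)}$ is a closed subset of the compact set $B_1 \times B_2$, hence compact, and its image $B_1 B_2$ under $m$ is compact, so $B_1 B_2 \in \mathcal{B}^{o,k}_{\mathcal{G}}$.

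The main obstacle, and the place to be careful, is exactly this compactness of the product: $\mathcal{G}$ itself may be non-Hausdorff, so compact subsets of $\mathcal{G}$ need not be closed and one cannot argue by closedness inside $\mathcal{G}$. The argument above sidesteps this by using only Hausdorffness of the \emph{unit} space (to make $\mathcal{G}^{(2)}$ closed in the product) together with the purely formal facts that closed subsets of compact sets are compact and that continuous images of compact sets are compact, both of which hold with no separation hypothesis. Assembling the three steps, $\mathcal{B}^{o,k}_{\mathcal{G}}$ is closed under product and involution inside $\mathcal{B}^o_{\mathcal{G}}$ and is therefore an inverse subsemigroup.
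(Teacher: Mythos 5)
Your argument is correct, and since the paper states this proposition without proof there is nothing to compare it against; what you give is the standard verification. The decomposition into three formal facts (inversion is a self-inverse continuous map, hence a homeomorphism; $\mathcal{G}^{(2)}$ is closed in $\mathcal{G}\times\mathcal{G}$ as the preimage of the diagonal of $\mathcal{G}^{(0)}$ under $(g_1,g_2)\mapsto(s(g_1),r(g_2))$; continuous images and closed subsets of compact sets are compact) is exactly right, and you are also right to stress that the only genuine issue is compactness of the product, since compact subsets of a possibly non-Hausdorff $\mathcal{G}$ need not be closed. Your remark that one obtains a sub\emph{semigroup} rather than a submonoid unless $\mathcal{G}^{(0)}$ is compact is a correct and worthwhile observation. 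The one point to make explicit rather than parenthetical: Hausdorffness of $\mathcal{G}^{(0)}$ is not part of the definition of an ample groupoid in this text (Definition~\ref{defi: ample} only asks that $\mathcal{B}_\mathcal{G}^{o,k}$ be a basis), and it is genuinely needed --- without it the diagonal need not be closed, and indeed the intersection of two compact open subsets of a non-Hausdorff unit space can fail to be compact, so the proposition itself would be in jeopardy. Since every groupoid the paper actually works with has a locally compact Hausdorff (in fact Cantor) unit space, invoking this is harmless, but it should be stated as a standing hypothesis rather than attributed to ``local compactness of $\mathcal{G}$'', which is not assumed in the statement either.
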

	
	In Section~\ref{sec: interweaving inverse semigroups and etale groupoids} we give a quick rundown on the connection between \'etale groupoids and certain inverse semigroups and how this connection had played a role in C*-algebra theory. We finish this subsections with some remarks on locally compact, \'etale groupoids which will be helpful in  Subsection~\ref{subs: topfullgroups koopman}:
	
	\begin{prop}[\cite{pat99}, Proposition~3.2.1]\label{prop: integrability}

		$\quad$Let $\mathcal{G}$ be a locally compact, \'etale groupoid and let $\mu$ be a probability measure on $\mathcal{G}^{(0)}$. Let $B \in \mathcal{B}_{\mathcal{G}}^{o}$ be an open slice and let $f$ be a bounded Borel function on $B$. Then $f$ is $\nu_\mu$ integrable \emph{if and only if} $f \circ (s|_B)^{-1}$ is $\nu_\mu$ integrable and we have
		\begin{equation*}
		\int f\;\mathrm{d}\nu_\mu = \int f \circ (s|_B)^{-1} \; \mathrm{d}\mu \quad \text{and} \quad
		\int f\;\mathrm{d}\nu_\mu^{-1} = \int f \circ (r|_B)^{-1} \; \mathrm{d}\mu
		\end{equation*}
		
	\end{prop}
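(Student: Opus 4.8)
The plan is to reduce everything to the defining disintegration $\nu_\mu = \int_{\mathcal{G}^{(0)}} \nu_u \, \mathrm{d}\mu$ together with the fact that, for an étale groupoid, each $\nu_u$ is simply counting measure on the source fibre $\mathcal{G}_u$. First I would record the disintegration formula for a bounded Borel function $F$ on $\mathcal{G}$, namely $\int_{\mathcal{G}} F \, \mathrm{d}\nu_\mu = \int_{\mathcal{G}^{(0)}} \big( \sum_{g \in \mathcal{G}_u} F(g) \big) \, \mathrm{d}\mu(u)$, which follows from the Riesz-functional construction of $\nu_\mu$ by a monotone-class argument starting from $F \in \mathscr{C}(\mathcal{G})$. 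Extending $f$ by zero off $B$ turns it into such an $F$.

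Next I would exploit the slice condition. Since $B$ is an open slice and $s$ is a local homeomorphism that is open (Theorem~\ref{thm: etalegrpd}), the restriction $s|_B$ is a homeomorphism of $B$ onto the open set $s(B) \subseteq \mathcal{G}^{(0)}$; in particular $f \circ (s|_B)^{-1}$, extended by zero off $s(B)$, is again a bounded Borel function on $\mathcal{G}^{(0)}$. Injectivity of $s|_B$ forces every source fibre to meet $B$ in at most one point, so for $u \in s(B)$ the inner sum $\sum_{g \in \mathcal{G}_u} f(g)$ collapses to the single term $f\big((s|_B)^{-1}(u)\big)$, and it vanishes otherwise. Substituting this into the disintegration formula yields $\int f \, \mathrm{d}\nu_\mu = \int_{\mathcal{G}^{(0)}} f \circ (s|_B)^{-1} \, \mathrm{d}\mu$ at once.

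For the integrability clause I would run the identical computation with $|f|$ in place of $f$; as all integrands are then non-negative the manipulation is unconditional (Tonelli) and gives $\int |f| \, \mathrm{d}\nu_\mu = \int |f \circ (s|_B)^{-1}| \, \mathrm{d}\mu$, so $f$ is $\nu_\mu$-integrable exactly when $f \circ (s|_B)^{-1}$ is $\mu$-integrable (in the stated bounded case both sides are in any event finite, being at most $\|f\|_\infty$, since $\mu$ is a probability measure). The signed identity then follows by decomposing $f$ into positive and negative parts. The second displayed identity is obtained verbatim after replacing $\nu_\mu$ by $\nu_\mu^{-1} = \int_{\mathcal{G}^{(0)}} \nu^u \, \mathrm{d}\mu$: here inversion carries each $\nu_u$ to counting measure $\nu^u$ on the range fibre $\mathcal{G}^u$, and injectivity of $r|_B$ collapses the corresponding sum to $f\big((r|_B)^{-1}(u)\big)$. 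The only genuinely delicate point, and hence the main obstacle, is the bookkeeping behind the disintegration formula for bounded Borel (rather than merely $\mathscr{C}(\mathcal{G})$) integrands together with the measurability of $f \circ (s|_B)^{-1}$; both are dispatched by the homeomorphism property of $s|_B$ and a standard approximation argument, after which the collapse of the fibre sums is immediate from the slice condition.
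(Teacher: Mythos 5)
Your proposal is correct and follows essentially the same route as the paper's own proof, which consists precisely of the observation that $\int_{\mathcal{G}_u} f\,\mathrm{d}\nu_u$ collapses to $f\circ(s|_B)^{-1}(u)$ for $u\in s(B)$ and to $0$ otherwise, then integrates against $\mu$. You merely make explicit the measurability and Tonelli bookkeeping that the paper leaves implicit.
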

	\begin{proof}
		It follows immediatly from
		\begin{equation*}
		\int_{\mathcal{G}_u} f\;\mathrm{d}\nu_u =
		\begin{cases}
		f \circ (s|_B)^{-1}(u) , & \text{ if } u \in s(B)\\
		0, & \text{ else}
		\end{cases}
		\end{equation*}
	\end{proof}	
	
	\begin{prop}\label{prop: quisiinvariance and slices}
		Let $\mathcal{G}$ be a locally compact, \'etale groupoid and let $\mu$ be a probability measure on $\mathcal{G}^{(0)}$. Then $\mu$ is quasi-invariant \emph{if and only if} for every open slice $B \in \mathcal{B}_\mathcal{G}^{o}$ we have $\mu \circ r|_B \circ (s|_B)^{-1} \sim \mu$  on $r(B)$.
	\end{prop}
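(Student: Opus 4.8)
The plan is to localise the comparison of $\nu_\mu$ and $\nu_\mu^{-1}$ to open slices, where Proposition~\ref{prop: integrability} turns both measures into the respective images of $\mu$ under the structure maps, and then to glue the local statements together using $\sigma$-compactness. Throughout I use that on an étale groupoid the Haar system is the system of counting measures, so that Proposition~\ref{prop: integrability} applies verbatim to $\nu_\mu$ and $\nu_\mu^{-1}$.

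First I would fix notation: for an open slice $B$ write $\tau_B := \alpha(B) = r|_B \circ (s|_B)^{-1} \colon s(B) \to r(B)$ for the partial homeomorphism of Proposition~\ref{prop: open.slices}, so that the relation in the statement reads $\mu \circ \tau_B \sim \mu$ on $r(B)$. The key local computation is this: for any Borel $E \subseteq B$, applying Proposition~\ref{prop: integrability} to $f = \mathbf{1}_E$ gives $\nu_\mu(E) = \mu(s|_B(E))$ and $\nu_\mu^{-1}(E) = \mu(r|_B(E))$, where $r|_B(E) = \tau_B(s|_B(E))$. Since $E \mapsto s|_B(E)$ is a bijection between Borel subsets of $B$ and Borel subsets of $s(B)$, the assertion that $\nu_\mu$ and $\nu_\mu^{-1}$ have the same null sets among subsets of $B$ is literally equivalent to the statement that $\mu(F) = 0 \iff \mu(\tau_B(F)) = 0$ for every Borel $F \subseteq s(B)$, i.e. to $\mu \circ \tau_B \sim \mu$ on $r(B)$.

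I then handle the two implications using a countable cover. Since the open slices $\mathcal{B}_\mathcal{G}^o$ form a basis of the topology of the étale groupoid $\mathcal{G}$ and $\mathcal{G}$ is $\sigma$-compact (Definition~\ref{defi: locogrpd}), I may cover $\mathcal{G}$ by countably many open slices $\{B_n\}_{n \in \mathbb{N}}$. For the forward implication, quasi-invariance means $\nu_\mu \sim \nu_\mu^{-1}$ on all of $\mathcal{G}$, and restricting to an arbitrary open slice $B$ the local computation above immediately yields $\mu \circ \tau_B \sim \mu$ on $r(B)$. For the converse I assume the slice condition holds for every $B$; if $\nu_\mu(A) = 0$ for Borel $A \subseteq \mathcal{G}$, then $\nu_\mu(A \cap B_n) = 0$ for each $n$, whence $\nu_\mu^{-1}(A \cap B_n) = 0$ by local equivalence on $B_n$, and countable subadditivity gives $\nu_\mu^{-1}(A) \le \sum_n \nu_\mu^{-1}(A \cap B_n) = 0$; the symmetric argument then establishes $\nu_\mu \sim \nu_\mu^{-1}$, i.e. quasi-invariance.

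The routine points (regularity and $\sigma$-finiteness of $\nu_\mu$, so that mutual absolute continuity is governed by null sets) I would dispatch quickly. The main obstacle I anticipate is the bookkeeping in the local step: one must keep straight that $\nu_\mu$ is the image of $\mu$ on $s(B)$ while $\nu_\mu^{-1}$ is the image of $\mu$ on $r(B)$, so that the asymmetry between the two measures is recorded by the single partial homeomorphism $\tau_B = r|_B \circ (s|_B)^{-1}$; getting this orientation right is exactly what makes the equivalence come out as $\mu \circ \tau_B \sim \mu$ on $r(B)$ rather than as its inverse on $s(B)$.
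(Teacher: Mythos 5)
Your proof is correct and follows essentially the same route as the paper's: localise to an open slice via Proposition~\ref{prop: integrability}, which identifies $\nu_\mu$ and $\nu_\mu^{-1}$ on $B$ with the images of $\mu$ under $s|_B$ and $r|_B$ respectively, and then use $\sigma$-compactness (Lindel\"of) to reduce global equivalence of $\nu_\mu$ and $\nu_\mu^{-1}$ to equivalence on each slice of a countable cover. The only cosmetic difference is that you work with indicator functions of Borel sets where the paper integrates general positive measurable functions; the content is identical.
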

	\begin{proof}
		By definition $\mathcal{G}$ is $\sigma$-compact and in consequence Lindelöf\footnote{Every open cover has a countable subcover.\\}. It follows there exists a countable cover of $\mathcal{G}$ by open slices and thus $\nu_\mu \sim (\nu_\mu)^{-1}$ \emph{if and only if} $\nu_\mu|_B \sim (\nu_\mu)^{-1}|_B$ for all $B \in \mathcal{B}_\mathcal{G}^{o}$. Let $B \in \mathcal{B}_\mathcal{G}^{o}$ and let $f$ be a positive $\mu$-measurable function. Then by Proposition~\ref{prop: integrability} we have
		\begin{equation*}
		\int f\;\mathrm{d}\nu_\mu = \int f \circ (s|_B)^{-1} \; \mathrm{d}\mu\\
		\end{equation*}
		and furthermore
		\begin{equation*}
		\begin{gathered}
		\int_{\mathcal{G}} f\; \mathrm{d}\nu_\mu^{-1}=\int_{\mathcal{G}^{(0)}} f \circ (r|_B)^{-1} \; \mathrm{d}\mu=\int_{\mathcal{G}^{(0)}} f \circ (s|_B)^{-1} \circ s|_B \circ (r|_B)^{-1} \; \mathrm{d}\mu=\\
		=\int_{\mathcal{G}^{(0)}} f \circ (s|_B)^{-1} \; \mathrm{d}(\mu \circ s|_B \circ (r|_B)^{-1})
		\end{gathered}
		\end{equation*}
		Thus $\nu_\mu|_B \sim (\nu_\mu)^{-1}|_B$ \emph{if and only if} $\mu \circ s|_B \circ (r|_B)^{-1} \sim \mu$ on $s(B)$ and equivalently $\mu \circ r|_B \circ (s|_B)^{-1} \sim \mu$ on $r(B)$.
	\end{proof}
	
	\begin{rem}
		Correspondingly a probability measure $\mu$ is invariant \emph{if and only if} for every open slice $B \in \mathcal{B}_\mathcal{G}^{o}$ we have $\mu(s(B))=\mu(r(B))$.
	\end{rem}

	\subsection{Homology of \'etale groupoids}\label{subs: homology of etale groupoids}
	
	For \'etale groupoids homology groups with coefficients in a sheaf over the unit space where introduced in	\cite{cm00} -- dual to an already existing cohomology theory of \'etale groupoids. Matui studied the case of groupoid homology with constant coefficients of second countable, locally compact, Hausdorff, \'etale  Cantor groupoids in a series of papers \cite{mat12} and applied those results to get information on their associated topological full groups. The following is a slightly generalized version descibed in \cite{nek17} to include non-Hausdorff groupoids:							
	\begin{defi}[\cite{nek17}, §2.2]
		Let $\mathcal{G}$ be a locally compact, \'etale groupoid and let $A$ a topological, abelian group. Analogous to Definition~\ref{defi: connes}, denote by $\mathscr{C}(\mathcal{G}^{(n)},A)$ the subgroup of $A^{\mathcal{G}^{(n)}}$ generated by all functions which are constant on some compact open Hausdorff subset $S \subseteq \mathcal{G}^{(n)}$ and map to $0$ if restricted to $\mathcal{G}^{(n)} \setminus S$.\footnote{If $\mathcal{G}$ is Hausdorff, this is just the set of compactly supported continuous functions, where $A$ is endowed with the discrete toplogy.\\} The spaces of composable elements $\{\mathcal{G}^{(n)}\}_{n \in \mathbb{N}}$ form simplicial spaces with face maps $d_i \colon \mathcal{G}^{(n)} \to \mathcal{G}^{(n-1)}$ for $i = 0,1, \dots ,n$ defined by:
		\begin{enumerate}[(i)]
			\item If $n=1$:
			\begin{equation*}
			d_i(g):=
			\begin{cases}
			s(g), & \text{for } i=0\\
			r(g), & \text{for } i=1\\
			\end{cases}
			\end{equation*}
			
			\item Otherwise:		
			\begin{equation*}
			d_i(g_1,g_2, \dots ,g_n):=
			\begin{cases}
			(g_2, \dots ,g_n), & \text{for } i=0\\
			(g_1,\dots,g_i 	g_{i+1},\dots,g_n), & \text{for } 1 \leq i \leq n-1\\
			(g_1,\dots ,g_{n-1}), & 	\text{for } i=n\\
			\end{cases}
			\end{equation*}
		\end{enumerate}
		These give families of morphisms $d_{i\ast}\colon\mathscr{C}(\mathcal{G}^{(n)},A)\to \mathscr{C}(\mathcal{G}^{(n-1)},A)$, defined by:
		\begin{equation*}
		d_{i\ast}(f)(\mathbf{g}):=\sum_{\mathbf{g'}\in d_i^{-1}(\mathbf{g})} f(\mathbf{g'})
		\end{equation*}
		
		which in turn give the boundary maps $\delta_n\colon\mathscr{C}(\mathcal{G}^{(n)},A)\to \mathscr{C}(\mathcal{G}^{(n-1)},A)$, defined by:
		
		\begin{equation*}
		\delta_n:=\sum_{i=0}^{n} (-1)^i d_{i\ast}
		\end{equation*}
		
		Then the following defines a chain complex
		\begin{equation*}
		0 \stackrel{\delta_0}{\leftarrow} \mathscr{C}(\mathcal{G}^{(0)},A) \stackrel{\delta_1}{\leftarrow} \mathscr{C}(\mathcal{G}^{(1)},A) \stackrel{\delta_2}{\leftarrow} \mathscr{C}(\mathcal{G}^{(2)},A) \stackrel{\delta_3}{\leftarrow}\cdots
		\end{equation*}
		
		The homology groups $H_n (\mathcal{G},A) :=\ker \delta_n / \Ima \delta_{n+1} $ of this chain complex are called the \emph{homology groups of $\mathcal{G}$ with constant coefficients $A$}. In the case of $A = \mathbb{Z}$, we abbreviate $H_n (\mathcal{G}):=H_n (\mathcal{G},\mathbb{Z})$. Furthermore define $H_0(\mathcal{G})^+:=\{[f] \in H_0(\mathcal{G})|f \geq 0, f \in \mathscr{C}(\mathcal{G}^{(0)},A)\}$.\footnote{This set is not necessarily a positive cone!\\}
	\end{defi}
	\begin{ex}[\cite{mat12}, §4.2]\label{ex: homology}
		Let $\mathcal{G}_\alpha$ be the transformation groupoid associated with an action $\alpha \colon  G \to \operatorname{Homeo}(X)$ of a countable, discrete group $G$ on a Cantor space $X$. Then the homology groups can be expressed in terms of group homology as $H_n(\mathcal{G}_\alpha) \cong H_n(G,C(X,\mathbb{Z}))$ and Poincaré-duality holds -- correspondingly $H^n(\mathcal{G}_\alpha) \cong H^n(G,C(X,\mathbb{Z}))$ holds for the cohomology groups. These groups are respectively termed the \emph{dynamical (co-)homology groups} of the system $(X,G)$. In case of a minimal Cantor system $(X,\varphi)$ we have $H_{*}(\mathcal{G}_\varphi) \cong K_{*}(C(X) \rtimes_{\varphi} \mathbb{Z})$ (-- see \cite{fh99}).
	\end{ex}
	
	\section{Of the relation between inverse semigroups and \'etale groupoids}\label{sec: interweaving inverse semigroups and etale groupoids}
	
	Subsection~\ref{subs: some historical remarks} recollects aspects that tie together some of the content of preceding subsections and that motivated non-sommutative Stone duality. Subsection~\ref{subs: inverse wedge monoids and abstract pseudogroups} introduces an abstract definition of pseudogroups. Subsection~\ref{subs: non-commutative stone duality} recalls a very general version of non-commutative duality i.e. between sober \'etale groupoids and spatial pseudogroups. 
	
	\subsection{Some historical remarks}\label{subs: some historical remarks}
	
	Reconsider Example~\ref{rem: crossedprod} of the crossed product C*-algebra $C(X) \rtimes_\varphi \mathbb{Z}$ associated with a topological $\mathbb{Z}$-system $(X,\varphi)$ or the case of AF C*-algebras in Theorem~\ref{thm: AF}. Both settings comprise a pair $(\mathfrak{A},\mathfrak{B})$ of operator algebras where $\mathfrak{B}$ is an abelian subalgebra of $\mathfrak{A}$ the inclusion of which reflects the nature of the underlying system. The Feldman-Moore theorem\footnote{See \cite{fm75} and \cite{fm77}.\\} gave a description for this in the context of von Neumann algebras, in that Cartan pairs were shown to precisely correspond to  twisted countable standard measured equivalence relations i.e. in combination with a cohomology class on the relation.
	
	In \cite{ren80} Renault took first steps towards a C*-algebra theoretic/topological version and adopted the viewpoint of using topological groupoids as model spaces for C*-algebras. He did not succeed in finding an analog to \cite{fm75} -- in part due to the fact that some definitions coming from the measured context needed an adaption, nevertheless, much of the ingredients were already present. In \cite{kum86} Alex Kumjian showed that C*-diagonals, the definition of which is motivated by Cartan subalgebras in the measured context, translate to twisted \'etale equivalence relations, which correspond to twisted principial, \'etale groupoids.\footnote{Analogous to the case of groups, groupoid extensions are parametrized by second cohomology (\cite{ren80}, Proposition~1.14). A twist is a groupoid extensions arising from a $2$-cocycles with values in the circle group $\mathbb{T}$ -- we won't go into details.\\} This result, however, does not cover some fundamental examples e.g. Cuntz-Krieger algebras. In \cite{ren08} Renault was able to extend this to the setting of effective, \'etale groupoids in \cite{ren08}.
	
	\begin{defi}[\cite{ren08}, Definition~5.1]\label{defi: cartan pair}
		An abelian C*-subalgebra $\mathfrak{B}$ of a C*-algebra $\mathfrak{A}$ is called a \emph{Cartan subalgebra} if it satisfies the following properties:
		\begin{enumerate}[(i)]
			\item $\mathfrak{B}$ contains an approximate unit of $\mathfrak{A}$
			
			\item $\mathfrak{B}$ is a maximal abelian subalgebra
			
			\item $\mathfrak{B}$ is \emph{regular} i.e. the normalizer $N(\mathfrak{B},\mathfrak{A})$ generates $\mathfrak{A}$.
			
			\item There exists a faithful conditional expectation $P$ of $\mathfrak{A}$ onto $\mathfrak{B}$.
		\end{enumerate}
		The pair $(\mathfrak{B},\mathfrak{A})$ is then called a \emph{Cartan pair}.
	\end{defi}
	
	\begin{ex}[\cite{ren80}, §4]\label{ex: groupoid Cstaralgebra as cartan pair}
		Let $\mathcal{G}$ be a locally compact, second countable, Hausdorff, \'etale groupoid. Then $C_0(\mathcal{G}^{(0)})$ is an abelian subalgebra of $C_r^{*}(\mathcal{G})$ that contains an approximate unit. Moreover the map $E \colon C_r^{*}(\mathcal{G}) \to C_0(\mathcal{G}^{(0)})$ induced by the restriction map $C_c(\mathcal{G}) \to C_0(\mathcal{G}^{(0)})$ given by $f \mapsto f|_{\mathcal{G}^{(0)}}$ is a faithful conditional expectation. The pair $(C_0(\mathcal{G}^{(0)}), C_r^{*}(\mathcal{G}))$ constitutes a Cartan pair i.e.  \emph{if and only if} $\mathcal{G}$ is effective.
	\end{ex}

	Let $(\mathfrak{B},\mathfrak{A})$ be such a Cartan pair, let $n \in N(\mathfrak{B},\mathfrak{A})$ and let $X$ denote the spectrum of $\mathfrak{B}$. The adjoint action of $n$ on $\mathfrak{B}$ induces a uniquely defined partial homeomorphism $\alpha(n)$ between the open sets $\operatorname{d}(n):=\{x \in X|n^{*}n(x)>0 \}$ and $\operatorname{r}(n):=\{x \in X|nn^{*}(x)>0 \}$ such that $n^{*}bn(x)=b(\alpha(n)(x))n^{*}n(x)$ for all $b \in \mathfrak{B}$ and $x \in \operatorname{d}(n)$ (\cite{kum86}, Proposition~6). This defines a continuous action of an inverse semigroup on $X$. The associated groupoid of germs $\operatorname{Germ}(X ,\alpha(N(\mathfrak{B},\mathfrak{A})))$ together with a twist constructed from the pair $(\mathfrak{B},\mathfrak{A})$ gives rise to a twisted groupoid C*-algebra which is precisely the original Cartan pair. For an effective groupoid $\mathcal{G}$ as in Example~\ref{ex: groupoid Cstaralgebra as cartan pair} the groupoid $\operatorname{Germ}(X ,\alpha(N(C_0(\mathcal{G}^{(0)}), C_r^{*}(\mathcal{G}))))$ is canonically isomorphic to $\mathcal{G}$.
	
	Already in \cite{ren80} Renault had observed that inverse semigroups play a crucial role in the theory.
	The idea to model a C*-algebra on an \'etale groupoid which comes from an inverse semigroup proved to be very fruitful, in particular it provides means to tackle C*-algebras that arise from combinatorial structure e.g. directed graphs, higher-rank graphs or tilings. Examples of such combinatorial C*-algebras are Cuntz-Krieger algebras (see Definition~\ref{defi: cuntz-krieger algebras}) or AF C*-algebras (see Proposition~\ref{prop: AF-groupoids}). Renault's investigations where developed further by Alan L. T. Paterson in \cite{pat99}. By Theorem~3.3.2 in \cite{pat99} every \emph{localization} $(X,S)$ i.e. a locally compact Hausdorff space $X$ acted upon by a countable inverse semigroup $S$ such that the domains of elements generate the topology of $S$,\footnote{Motivated by Renault's observations, localizations where introduced by Kumjian in \cite{kum84} -- there the inverse semigroup $S$ is assumed to be a traditional pseudogroup.\\} induces an inverse semigroup homomorphism $\psi \colon S \to \mathcal{B}_ {\operatorname{Germ}(X,S)} ^{o}$, which raises the question how an \'etale groupoid can be built from a general inverse semigroup $S$ e.g. by an action that is intrinsic to $S$. Examples of such constructions are the universal groupoid of Paterson described in \cite{pat99} or -- since in many cases this is too big -- a reduction of it, the tight groupoid of Exel from \cite{exe08}. Another viewpoint was provided by Kellendonk in \cite{kel97} in the context of tilings.
	
	This brought the attentation to the relations between groupoids and inverse semigroups and to the question if there is some unified approach subsuming the above constructions. David Lenz observed in \cite{len08} that the approaches are reconciled when considering the order structures on inverse semigroups, in that the constructed groupoids are equivalence classes of downward directed sets.
	Mark V. Lawson realized that in the case of Boolean \'etale groupoids and Boolean inverse monoids the consideration of inverse semigroups of slices and groupoids of filters produces a duality that generalizes classical Stone duality i.e. the Boolean \'etale groupoids are "non-commutative Stone spaces" and Boolean inverse monoids the corresponding "non-commutative Boolean algebra".\footnote{A quick description of this will be given in Subsection~\ref{subs: refined correspondences}.\\} Similar to classical Stone duality this duality generalizes to much broader contexts -- that of pseudogroups and \'etale groupoids.

	Another question relevant in this context is, which inverse subsemigroups of $\mathcal{B}_\mathcal{G}^{o}$ allow a reconstruction of the original groupoid $\mathcal{G}$. This lets us already glimpse at Chapter~\ref{chap: 3}, where the topological full group of an \'etale Cantor groupoid $\mathcal{G}$ turns out to be the unit group of the inverse semigroup of open, compact slices $U(\mathcal{B}_{\mathcal{G}}^{o,k})$. The question of reconstructability of an \'etale Cantor groupoid up to isomorphism from its topological full group is situated in midst of the interplay between \'etale groupoids and inverse semigroups.

	\subsection{Inverse \texorpdfstring{$\wedge$}{wedge}-monoids and abstract pseudogroups}\label{subs: inverse wedge monoids and abstract pseudogroups}

	The following definitions introduce the objects on the ``inverse semigroup side" of non-commutative Stone duality, they are inverse semigroups with a frame-like order. Frames are order structures that in particular describe how open sets of topological spaces join and intersect. The formal dual to the category of frames $\mathbf{Frm}$ is the category of locales $\mathbf{Loc}$, which in some sense are generalized topological spaces that do not necessarily have ``enough points" -- the reason why the theory has been termed pointless topology. One can adopt the view of this formal dual relation as the most general and obviously most superficial form of Stone duality. Results of deeper meaning arise, when one turns to subcategories i.e. dualities between classes of topological spaces and frame subcategories, the most general in this context being the duality between sober spaces and spatial frames. The historically first iteration of such a duality came in form of the Stone representation theorem for Boolean algebras. Marshall H. Stone discovered that the space of ultrafilters in a Boolean algebra carries a topology and moreover that the Boolean algebra of clopen sets in this space is precisely the initial Boolean algebra. He was able to characterize the topological spaces which arise in such a way as compact, Hausdorff, totally disonnected spaces (these are now termed \emph{Boolean spaces} or \emph{Stone spaces}) and showed that the correspondence between these classes of objects lifts to the level of morphism. Stone's remarkable achievements compelled mathematicians to name the subject it inspired in his honour -- Stone's glory engraved in stone. See \cite{joh82} for a monograph on Stone duality.
	
	\begin{defi}[\cite{law16}, §2]
		Let $S$ be an inverse semigroup. If every binary meet in $S$ exists, it is called \emph{inverse $\wedge$-semigroup.}
	\end{defi}
	
	Inverse $\wedge$-monoids thus are inverse monoids with a (meet-)semilattice-like order structure and as such had already been identified as worthwhile to study by Leech in \cite{lee95}. Leech observed that a characteristic feature of inverse $\wedge$-monoids is the presence of a fixed-point operator:
	\begin{defi}[\cite{law16}, §2]
		Let $S$ be an inverse monoid. A map $f \colon S \to E(S)$ is called a \emph{fixed point operator} if it satisfies the following properties:
		\begin{enumerate}[(i)]
			\item $s \geq f(s) \forall s \in S$ 
			
			\item Every $s \in S$ and $e \in E(S)$ such that $e \leq s$ satisfy $e \leq f(s)$.
		\end{enumerate}
	\end{defi}
	
	\begin{prop}[\cite{law16}, Proposition~2.2]
		An inverse monoid is an inverse $\wedge$-monoid \emph{if and only if} it has a fixed point operator. The fixed point operator of an inverse $\wedge$-monoid $S$ is unique and given by $f \colon s \mapsto s \wedge 1$.
	\end{prop}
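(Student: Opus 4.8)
The plan is to prove both implications and, along the way, to identify $f(s)$ with the greatest idempotent lying below $s$. The linchpin is the elementary observation that in an inverse monoid the idempotents are exactly the elements below the identity: if $m \le 1$ then by the defining form of the order $m = e\cdot 1 = e$ for some idempotent $e$, so $m \in E(S)$; conversely any idempotent $e$ satisfies $e = e\cdot 1$, hence $e \le 1$. Thus $\{m \in S : m \le 1\} = E(S)$ and $1$ is the top idempotent. I will use throughout that the natural order is compatible with multiplication and with inversion, so that besides the stated form $s = ss^{-1}t$ one also has the right-hand form $a = b\,a^{-1}a$ for $a \le b$ (obtained from the former by passing to inverses).

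First I would treat the direction ``$\wedge$-monoid $\Rightarrow$ fixed point operator''. Assuming all binary meets exist, I set $f(s) := s \wedge 1$ and verify the two axioms. Since $s\wedge 1 \le 1$, the observation above shows $s\wedge 1 \in E(S)$, so $f$ maps into $E(S)$; property (i) is immediate as $s \wedge 1 \le s$. For property (ii), if $e \in E(S)$ and $e \le s$, then also $e \le 1$, so $e$ is a lower bound of $\{s,1\}$ and hence $e \le s\wedge 1 = f(s)$. This simultaneously settles uniqueness: axioms (i) and (ii) force $f(s)$ to be the greatest idempotent below $s$, which is unique when it exists, so any fixed point operator must coincide with $s \mapsto s\wedge 1$.

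For the converse ``fixed point operator $\Rightarrow$ $\wedge$-monoid'', given $f$ I would exhibit the meet of arbitrary $s,t$ explicitly by the formula $s \wedge t := s\, f(s^{-1}t)$. Writing $e' := f(s^{-1}t) \in E(S)$, the element $w := s e'$ is a lower bound: $w = se' \le s$ because right multiplication by an idempotent lowers an element in the natural order, and $w = se' \le ss^{-1}t \le t$ since $e' \le s^{-1}t$ by property (i) together with left-compatibility of the order, while $ss^{-1}t \le t$ as $ss^{-1} \in E(S)$.

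The main obstacle is verifying that $w$ is the \emph{greatest} lower bound. Given any $r$ with $r \le s$ and $r \le t$, I would put $e := r^{-1}r \in E(S)$ and first record $r = se$ and $r = te$, both of which follow from the right-hand form of the order. From $(se)^{-1}(se) = r^{-1}r = e$ and the commutation of the idempotents $e$ and $s^{-1}s$ one extracts $s^{-1}s\,e = e$; combining this with $se = te$ gives $s^{-1}t\,e = s^{-1}s\,e = e$, which by the order characterization means $e \le s^{-1}t$. As $e$ is idempotent, property (ii) yields $e \le f(s^{-1}t)$, whence $r = se \le s\,f(s^{-1}t) = w$. Therefore $w = s\wedge t$, every binary meet exists, and $S$ is an inverse $\wedge$-monoid. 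The only delicate points are the careful bookkeeping with the two equivalent forms of the natural partial order and the repeated use of commutativity of $E(S)$; everything else is formal.
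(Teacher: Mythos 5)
Your argument is correct. The paper itself states this proposition without proof, citing Lawson, so there is no in-text argument to compare against; but your proof is the standard one, and in particular the explicit meet formula $s \wedge t = s\,f(s^{-1}t)$ together with the verification that any common lower bound $r$ satisfies $r^{-1}r \leq s^{-1}t$ is exactly the device used in Lawson's original proof. All the delicate steps check out: the identification $E(S) = \{m : m \leq 1\}$, the two-sided compatibility of the natural order with multiplication, and the computation $s^{-1}s\,e = e \Rightarrow s^{-1}t\,e = e \Rightarrow e \leq s^{-1}t$ are each valid in any inverse monoid.
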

	
	The abstract frame theoretic definition of pseudogroups goes back to Pedro Resende.
	
	\begin{defi}[\cite{res07}, Definition~2.5 \& 2.8 \& 2.9]
		Let $S$ be an inverse semigroup.	
		\begin{enumerate}[(i)]
			\item A pair of elements $s,t \in S$ is said to be \emph{compatible} if $s^{-1}t$ and $st^{-1}$ are idempotents. A subset $A \subseteq S$ is called \emph{compatible} if all its elements are pairwise compatible. Note, that if $s \vee t$ exists, $s$ and $t$ are necessarily compatible. 
			
			\item It is called \emph{complete} if every compatible subset has a join.
			
			\item It is called \emph{distributive} if $s \vee t$ exists for every compatible pair $s,t \in S$ and every binary join $s \vee t \in S$ and every $u \in S$ the joins $us \vee ut$ and $su \vee tu$ exist and $us \vee ut=u(s \vee t)$ and $su \vee tu=(s \vee t)u$

			\item It is called \emph{infinitely distributive} if for any subset $T \subseteq S$ that has a join and every $s \in S$ the following hold:
			\begin{enumerate}
				\item $\bigvee_{t\in T}st$ and $\bigvee_{t\in T}ts$ exist.
				
				\item $s ( \bigvee_{t\in T}t)=\bigvee_{t\in T}st$.
				
				\item $(\bigvee_{t\in T}t) s=\bigvee_{t\in T}ts$.
			\end{enumerate}

			\item A \emph{pseudogroup} is a complete, infinitely distributive inverse monoid.
			
			\item A semigroup homomorphism between pseudogroups that preserves all compatible joins is called a \emph{pseudogroup homomorphism}.
		\end{enumerate}
	\end{defi}
	
	\begin{prop}[\cite{res07}, Proposition~2.10]
		Let $S$ be a pseudogroup. Then $S$ is an inverse $\wedge$-semigroup and $E(S)$ is a frame.
	\end{prop}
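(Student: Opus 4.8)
The plan is to treat the two assertions separately, in both cases leaning on the natural partial order of an inverse semigroup together with the two defining features of a pseudogroup: completeness (every compatible subset has a join) and infinite distributivity.

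\textbf{Binary meets.} To show $S$ is an inverse $\wedge$-semigroup I would fix $s,t \in S$ and consider the set $D(s,t):=\{u \in S \mid u \le s \text{ and } u \le t\}$ of common lower bounds, which is non-empty since $0 \le s,t$. The crucial observation is that any two elements lying beneath a common element are compatible: if $u,v \le s$, then writing $u=uu^{-1}s$ and $v=vv^{-1}s$ and using $uu^{-1},vv^{-1}\le ss^{-1}$ one computes $u^{-1}v=s^{-1}(uu^{-1})(vv^{-1})s$ and $uv^{-1}=(uu^{-1})(ss^{-1})(vv^{-1})$, and checks that both are idempotent. Hence every pair of elements of $D(s,t)$ is compatible, so $D(s,t)$ is a compatible set and completeness furnishes its join $m:=\bigvee D(s,t)$. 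Since $s$ and $t$ are upper bounds of $D(s,t)$ and $m$ is the \emph{least} upper bound, we get $m \le s$ and $m \le t$; thus $m \in D(s,t)$ is simultaneously its greatest element, i.e. the greatest lower bound $s \wedge t$. This yields all binary, hence all finite, meets.

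\textbf{$E(S)$ is a frame.} Here I would first note that any two idempotents are compatible, because $E(S)$ is commutative and $e^{-1}f=ef$, $ef^{-1}=ef$ are idempotent. Consequently every subset of $E(S)$ is compatible and, by completeness, has a join in $S$; the point is to verify this join stays inside $E(S)$. For $e=\bigvee_i e_i$, two applications of infinite distributivity give $e\cdot e=\bigvee_{i,j}e_ie_j$, and since each $e_ie_j \le e$ while the diagonal terms already recover $\bigvee_i e_i=e$, it follows that $e\cdot e=e$. Thus $E(S)$ admits arbitrary joins; together with the least element $0$ and the greatest idempotent $1$ this makes $E(S)$ a complete lattice, meets being recovered as joins of lower bounds. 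Finally, because the meet of two idempotents in $E(S)$ coincides with their product, the frame law $x \wedge \bigvee_i e_i=\bigvee_i(x\wedge e_i)$ reads $x\cdot\bigvee_i e_i=\bigvee_i x e_i$, which is precisely left infinite distributivity in $S$. Hence $E(S)$ is a complete infinitely distributive lattice, i.e. a frame.

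\textbf{Main obstacle.} The only genuinely non-routine ingredients are the compatibility lemma (elements below a common element are compatible) that underpins the meet construction, and the verification that the join of a family of idempotents is again idempotent; both reduce to short manipulations with the natural order and infinite distributivity, after which the remainder is bookkeeping.
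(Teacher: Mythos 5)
Your argument is correct. Note that the paper itself states this proposition without proof, citing Resende; your two steps --- obtaining $s\wedge t$ as the join of the (pairwise compatible) set of common lower bounds, and checking via infinite distributivity that a join of idempotents is again idempotent so that the frame law for $E(S)$ reduces to infinite distributivity of $S$ --- reproduce exactly the standard argument from the cited source, and the supporting computations (elements below a common element are compatible, $s^{-1}es$ is idempotent) all check out.
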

	
	The above definition of pseudogroup is the 'pointless topology'-version of what was historically the motivation to consider abstract inverse semigroups -- the pseudogroups of partial homeomorphisms between open sets in a topological space. Analogous to the bridge from algebra to topology in Stone's representation theorem, the fundamental notion to get from pseudogroups to \'etale groupoids are filters:
	
	\begin{defi}[\cite{ll13}]
		Let $S$ be a pseudogroup.
		\begin{enumerate}[(i)]
			\item Denote by $\mathrm{L}(S)$ the set of all filters in $S$.
			
			\item A filter $F$ in $\mathrm{L}(S)$ is \emph{proper}, if $0 \notin F$.
			
			\item A filter $F$ in $\mathrm{L}(S)$ is called \emph{completely prime} if for every join $\vee_{i \in I} a_i \in F$ there exists an $i \in I$ such that $a_i \in F$.
			
			\item Denote by $\mathrm{G}(S)$ the set of all completely prime filters in $S$.
			
			\item Let $s \in S$. Denote by $X_s$ the set of all completely prime filters that contain $s$.
			
			\item A pseudogroup $S$ is called \emph{spatial} if $X_s=X_t$ implies $s=t$ for all $s,t \in S$.
			
			\item A maximal proper filter $F$ in $\mathrm{L}(S)$ is called an \emph{ultrafilter}.
			
			\item Let $s \in S$. Denote by $V_s$ the set of all ultrafilters that contain $s$. 
			
			\item Let $S$ and $T$ be pseudogroups. A map $f \colon S \to T$ is called \emph{callitic} if:
			\begin{enumerate}
				\item It is a pseudogroup homomorphism that preserves all meets.
				
				\item $F \cap \Ima(f)\neq \emptyset$ for all completely prime filters $F \subseteq T$.
			\end{enumerate}
			
			\item Denote by $\mathbf{Pseu}$ the category which has pseudogroups as objects and callitic maps as morphisms.
		\end{enumerate}
	\end{defi}
	
	\subsection{Non-commutative Stone duality}\label{subs: non-commutative stone duality}
	
	On the level of objects this general duality had already established in \cite{mr10}. Endowed with a sufficient notion of morphisms the class of \'etale groupoids becomes a category:
	
	\begin{defi}[\cite{ll13}]
		\begin{enumerate}[(i)]
			\item Let $\mathcal{G}$ and $\mathcal{H}$ be \'etale groupoids and let $F \colon \mathcal{G} \to \mathcal{H}$ be a groupoid homomorphism. It is called a \emph{covering functor} if $F|_{\mathcal{G}_u}$ is bijective for all $u \in \mathcal{G}^{(0)}$.
			
			\item Denote by $\mathbf{Etale}$ the category which has \'etale groupoids as objects and continuous covering functors as morphisms.
		\end{enumerate}
	\end{defi}
	
	We start with the very general formulation of \cite{ll13}: The aim is to find a pair of functors $\mathrm{B} \colon \mathbf{Etale} \to \mathbf{Pseu}^{\mathrm{op}}$ and $\mathrm{G} \colon \mathbf{Pseu}^{\mathrm{op}} \to \mathbf{Etale}$ between the category of \'etale groupoids $\mathbf{Etale}$ and the category of pseudogroups $\mathbf{Pseu}$ where $\mathrm{G}$ is right adjoint to $\mathrm{B}$. This adjunction restricts to a duality between the categories of spatial pseudogroups $\mathbf{Pseu}_\mathrm{sp}$ and sober \'etale groupoids $\mathbf{Etale}_\mathrm{sob}$. We start with the description of the functors $\mathrm{B} \colon \mathbf{Etale} \to \mathbf{Pseu}^{\mathrm{op}}$ and $\mathrm{G} \colon \mathbf{Pseu}^{\mathrm{op}} \to \mathbf{Etale}$ on the level of objects:
	
	\begin{prop}[\cite{ll13}, Proposition~2.1]
		Let $\mathcal{G}$ be an \'etale groupoid. Then its set of open slices $\mathcal{B}_\mathcal{G}^o$ is a pseudogroup with respect to multiplication of subsets.
	\end{prop}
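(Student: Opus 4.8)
The plan is to build on what has already been established: by Proposition~\ref{prop: open.slices} the set $\mathcal{B}_\mathcal{G}^o$ is an inverse \emph{monoid} under set multiplication, with identity $\mathcal{G}^{(0)}$ (open, since $\mathcal{G}$ is étale). Recalling that a pseudogroup is a complete, infinitely distributive inverse monoid, the only things left to verify are \textbf{completeness} (every compatible subset has a join) and \textbf{infinite distributivity}. My strategy is to first translate the abstract inverse-semigroup notions of order, compatibility and join into concrete set-theoretic statements about slices, after which both remaining properties will follow essentially formally.

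First I would identify the standard structure: the idempotents of $\mathcal{B}_\mathcal{G}^o$ are exactly the open subsets of $\mathcal{G}^{(0)}$, and the natural partial order coincides with inclusion. Indeed, if $B\subseteq C$ are open slices, then $E:=r(B)$ is an open subset of $\mathcal{G}^{(0)}$ (the range map is open by Theorem~\ref{thm: etalegrpd}), and using that $r|_C$ is injective one checks $EC=B$, so $B\le C$; the converse is immediate from $B=EC$ for an idempotent $E$. Next I would establish the crucial dictionary for compatibility. For slices $B,C$ one computes that $B^{-1}C\subseteq\mathcal{G}^{(0)}$ holds \emph{if and only if} any $b\in B$, $c\in C$ with $r(b)=r(c)$ satisfy $b=c$, i.e.\ $r|_{B\cup C}$ is injective; symmetrically $BC^{-1}\subseteq\mathcal{G}^{(0)}$ is equivalent to injectivity of $s|_{B\cup C}$. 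Hence $B$ and $C$ are compatible \emph{if and only if} $B\cup C$ is again a slice.

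With this in hand, completeness is quick. Given a pairwise compatible family $\{B_i\}_{i\in I}$ of open slices, injectivity of $s$ and $r$ on $\bigcup_i B_i$ follows from the pairwise statement above (any clash between $B_i$ and $B_j$ is ruled out by compatibility of that pair), so $\bigcup_i B_i$ is a slice; it is open as a union of open sets. Since the partial order is inclusion, $\bigcup_i B_i$ is the least upper bound, i.e.\ $\bigvee_i B_i=\bigcup_i B_i$. Thus every compatible subset has a join, given concretely by union.

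Finally, for infinite distributivity, let $T=\{B_i\}_{i\in I}$ have a join $\bigvee_i B_i=\bigcup_i B_i$ and let $A\in\mathcal{B}_\mathcal{G}^o$. Set multiplication distributes over arbitrary unions, $A\big(\bigcup_i B_i\big)=\bigcup_i(AB_i)$ and $\big(\bigcup_i B_i\big)A=\bigcup_i(B_iA)$, purely formally from the definition of the product of subsets. Because $\bigcup_i B_i$ is an open slice, its product with $A$ is again an open slice (products of open sets are open by Theorem~\ref{thm: etalegrpd}, which is exactly where étaleness is used), so $\bigcup_i(AB_i)$ is an open slice; consequently the family $\{AB_i\}$ is compatible and its join exists and equals $A\bigvee_i B_i$, and likewise on the right. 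This verifies all three clauses of infinite distributivity. I expect the genuine content to be concentrated in the second paragraph, the compatibility-versus-``union is a slice'' translation; once that dictionary is in place completeness and distributivity reduce to the trivial facts that unions of opens are open, that products of open sets stay open, and that set products distribute over unions.
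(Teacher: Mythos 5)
Your proof is correct. The paper itself gives no argument for this proposition (it is quoted directly from Lawson--Lenz with a citation), so there is nothing to compare against line by line; your write-up supplies exactly the standard verification. The key observations — that the natural partial order on $\mathcal{B}_\mathcal{G}^o$ is inclusion, that compatibility of two open slices is equivalent to their union being a slice (so joins of compatible families are unions), and that set multiplication distributes over arbitrary unions while étaleness (via Theorem~\ref{thm: etalegrpd}) keeps products of open slices open — are precisely the content of the result, and you have checked each of them soundly.
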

	
	The functor $B \colon \mathbf{Etale} \to \mathbf{Pseu}^{\mathrm{op}}$ is defined on objects as $B \colon \mathcal{G} \mapsto \mathcal{B}_\mathcal{G}^o$. The definition of $\mathrm{G}$ on objects is given by means of filters:	By setting $s(F)=(F^*F)^{\uparrow}$ and $r(F)=(FF^*)^{\uparrow}$ for $F \in \mathrm{L}(S)$, the set of composable pairs i.e. $\{(F,G)\in \mathrm{L}(S)^2|s(F)= r(G)\}$ admits a product $F \cdot G:= (FG)^\uparrow$ with respect to which $\mathrm{L}(S)$ is a groupoid. All the different groupoids constructed from an inverse semigroup (\cite{pat99}, \cite{kel97}, \cite{exe08}, etc.) are specific subgroupoids of $\mathrm{L}(S)$ (\cite{ll13}, p.6). We focus on the subgroupoid of completely prime $\mathrm{G}(S)$. This groupoid becomes a topological groupoid by taking the sets $\{X_s|s \in S \}$ as a basis.
	
	\begin{prop}[\cite{ll13}, Proposition~2.8]
		Let $S$ be a pseudogroup. Then $\mathrm{G}(S)$ is an \'etale groupoid.
	\end{prop}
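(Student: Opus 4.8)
The plan is to realize $\mathrm{G}(S)$ as a subgroupoid of the filter groupoid $\mathrm{L}(S)$, equip it with the topology generated by the sets $X_s$, and then deduce \'etaleness from the characterization in Theorem~\ref{thm: etalegrpd}. Write $F^* := \{f^{-1} : f \in F\}$ and recall that, being a pseudogroup, $S$ is in particular an inverse $\wedge$-semigroup, so binary meets exist. First I would check that complete primeness is preserved by the structure maps of $\mathrm{L}(S)$: the inverse $F \mapsto (F^*)^{\uparrow}$ of a completely prime filter is completely prime, and the composite $(FG)^{\uparrow}$ of two composable completely prime filters is again completely prime. Both verifications use infinite distributivity to move a join occurring inside the filter back to a join of generators, where complete primeness of $F$ (resp. $G$) applies. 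Granting that $\mathrm{L}(S)$ is a groupoid, this makes $\mathrm{G}(S)$ a groupoid whose unit space $\mathcal{G}^{(0)}$ consists of the completely prime filters fixed by the source map.

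Next I would verify the topological axioms. The family $\{X_s\}_{s\in S}$ is a basis because $X_s \cap X_t = X_{s\wedge t}$: if a completely prime filter $F$ contains both $s$ and $t$ then, being downward directed, it contains some $c \le s,t$, hence $c \le s\wedge t$, and by upward closure $s\wedge t \in F$, the converse being immediate; moreover the $X_s$ cover $\mathrm{G}(S)$ since every filter is nonempty. Inversion is continuous because the preimage of $X_s$ is $X_{s^{-1}}$, and multiplication is continuous because $F\cdot G \in X_s$ forces $s \ge fg$ for some $f\in F$, $g\in G$, so the image of $(X_f \times X_g)$ restricted to composable pairs is a neighbourhood of $(F,G)$ contained in $X_s$. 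Thus $\mathrm{G}(S)$ is a topological groupoid.

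To pass from topological to \'etale I would invoke Theorem~\ref{thm: etalegrpd}, for which it suffices that $\mathcal{G}^{(0)}$ is open and that the product of any two open sets is open. The product condition follows from the identity $X_s X_t = X_{st}$ on composable pairs: the product of two basic opens is basic open, and the general case follows by taking unions. For the openness of $\mathcal{G}^{(0)}$ the key claim is that each $X_s$ is an open slice, that is, the source map restricts on $X_s$ to a bijection onto an open subset of $\mathcal{G}^{(0)}$ with inverse $A \mapsto (sA)^{\uparrow}$; this simultaneously exhibits $\mathcal{G}^{(0)} = \bigcup_s \mathbf{d}(X_s)$ as open and presents $\mathbf{d}$ as a local homeomorphism, so that $\mathrm{G}(S) \in \mathbf{Etale}$.

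The hard part will be the well-definedness and continuity of the local inverse $A \mapsto (sA)^{\uparrow}$. One must show that for a completely prime unit $A$ with $s^{-1}s \in A$ the upward closure $(sA)^{\uparrow}$ is again a \emph{completely prime} filter, and this is exactly where infinite distributivity of $S$ is indispensable, since a join landing in $(sA)^{\uparrow}$ has to be transported through left multiplication by $s$ back to a join inside $A$, where complete primeness of $A$ can be used. One then verifies that this assignment is a two-sided inverse to $\mathbf{d}|_{X_s}$ and is continuous, the preimage of a basic open $X_t$ being again basic open for a suitable idempotent built from $s$ and $t$. Once each $X_s$ is known to map homeomorphically onto an open piece of the unit space, \'etaleness is immediate.
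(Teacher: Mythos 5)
The paper does not prove this proposition at all --- it is quoted from \cite{ll13} with only the surrounding construction (the product $F\cdot G=(FG)^{\uparrow}$ on composable filters and the basis $\{X_s\}_{s\in S}$) set up in the text --- so there is no in-paper proof to compare against. Your outline follows the standard Lawson--Lenz route and is sound: $X_s\cap X_t=X_{s\wedge t}$ does give a basis (using that a pseudogroup is an inverse $\wedge$-semigroup), continuity of inversion and multiplication work exactly as you describe, and reducing \'etaleness to the criterion of Theorem~\ref{thm: etalegrpd} via the identity $X_sX_t=X_{st}$ is the intended mechanism. Two points deserve more care than you give them. First, for ``products of opens are open'' the easy inclusion $X_sX_t\subseteq X_{st}$ is not enough; you need the reverse inclusion, i.e.\ that every completely prime filter $H\ni st$ factors as $F\cdot G$ with $s\in F$, $t\in G$ (take $G=(t\,d(H))^{\uparrow}$, $F=(s\,r(G))^{\uparrow}$ and verify these are completely prime) --- this is the same ``transport a join through multiplication by infinite distributivity'' argument you correctly flag as the hard part of the slice claim, and it should be flagged here too rather than asserted as an identity. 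Second, openness of the unit space is obtained most directly from $\mathrm{G}(S)^{(0)}=\bigcup_{e\in E(S)}X_e$ (a unit is a completely prime filter containing an idempotent), or equivalently by noting that your $\mathbf{d}(X_s)$ equals the \emph{basic} open $X_{s^{-1}s}$ of the ambient groupoid; as written, ``open subset of $\mathcal{G}^{(0)}$'' only refers to the subspace topology and does not by itself yield openness in $\mathrm{G}(S)$. With those two repairs the sketch is a correct proof plan.
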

	
	One needs to inspect how $\mathrm{B}$ and $\mathrm{G}$ behave in conjunction with morphisms:
	\begin{lem}[\cite{ll13}, Lemma~2.13 \& 2.14]
		Let $f \colon S \to T$ be a callitic map between pseudogroups. Then the inverse image $f^{-1}(F)$ of every completely prime filter $F$ in $T$ is a non-empty completely prime filter and the induced map $f^{-1} \colon \mathrm{G}(T) \to \mathrm{G}(S)$ is a continuous covering functor.
	\end{lem}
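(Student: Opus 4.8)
The plan is to establish the two halves of the statement separately: first that the assignment $F \mapsto f^{-1}(F) = \{s \in S : f(s) \in F\}$ sends completely prime filters of $T$ to non-empty completely prime filters of $S$ (so that it genuinely defines a map $\mathrm{G}(T) \to \mathrm{G}(S)$), and then that this map is continuous and a covering functor.

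For the first half I would verify the filter axioms one at a time, matching each to a hypothesis in the definition of a callitic map. Non-emptiness is exactly condition (b): since a completely prime filter $F$ is proper, $F \cap \Ima(f) \neq \emptyset$, so some $s$ has $f(s) \in F$. Upward-closure uses that a pseudogroup homomorphism is order-preserving, for if $s \le s'$ then $s = e s'$ with $e$ idempotent, whence $f(s) = f(e)f(s') \le f(s')$ and $F$ is upward-closed. Downward-directedness is where meet-preservation enters: given $f(s_1), f(s_2) \in F$, a common lower bound $t \in F$ satisfies $t \le f(s_1)\wedge f(s_2)$, so $f(s_1)\wedge f(s_2) \in F$, and since $f$ preserves meets (and $S$ is an inverse $\wedge$-semigroup as a pseudogroup) this equals $f(s_1 \wedge s_2)$; thus $s_1 \wedge s_2 \in f^{-1}(F)$ is the required lower bound. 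Complete primeness uses join-preservation: if $\bigvee_i a_i \in f^{-1}(F)$ then, the join being one of a compatible family, $\bigvee_i f(a_i) = f(\bigvee_i a_i) \in F$, and complete primeness of $F$ forces some $f(a_i) \in F$, i.e. $a_i \in f^{-1}(F)$. Properness is automatic since $f(0_S) = 0_T \notin F$.

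Continuity I would dispatch by checking preimages of the basic open sets $X_s$, $s \in S$. Directly,
\begin{equation*}
(f^{-1})^{-1}(X_s) = \{F \in \mathrm{G}(T) : s \in f^{-1}(F)\} = \{F \in \mathrm{G}(T) : f(s) \in F\} = X_{f(s)},
\end{equation*}
which is open, so the induced map is continuous.

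The remaining task is to show $f^{-1}$ is a covering functor, and here I expect the real difficulty. That it is a groupoid homomorphism is largely formal: it commutes with the filter involution, since $s \in f^{-1}(F^{*}) \iff f(s^{-1}) \in F \iff s \in f^{-1}(F)^{*}$, and one checks analogously that sources, ranges, and products of composable filters are respected, using only that $f$ is a homomorphism together with order- and meet-preservation (the nontrivial inclusion in $s(f^{-1}(F)) = f^{-1}(s(F))$ again invoking calliticity). The substantive point, and the main obstacle, is the covering condition: for each unit $v \in \mathrm{G}(T)^{(0)}$ the restriction of $f^{-1}$ to the source fibre $\mathrm{G}(T)_v$ must be a \emph{bijection} onto $\mathrm{G}(S)_{f^{-1}(v)}$. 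Injectivity should follow from the fact that a completely prime filter lying over a fixed unit is pinned down by its preimage together with its source. Surjectivity is the delicate direction: given $G \in \mathrm{G}(S)$ with $s(G) = f^{-1}(v)$ one must manufacture a completely prime filter $F$ of $T$ with $s(F) = v$ and $f^{-1}(F) = G$. This is precisely where both calliticity hypotheses become indispensable — meet-preservation constrains how $G$ embeds under $f$, while the covering condition $F \cap \Ima(f) \neq \emptyset$ is what guarantees that the filter of $T$ generated by $f(G)$, refined to sit over $v$, is again completely prime and pulls back to exactly $G$ and no more. I would budget most of the effort for verifying that this generated filter is completely prime and that its preimage does not strictly contain $G$.
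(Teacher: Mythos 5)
Your verification of the first half of the statement is complete and correct: non-emptiness from condition (b) of calliticity, upward closure from order-preservation of semigroup homomorphisms, downward directedness from meet-preservation, complete primeness from preservation of (compatible) joins, and properness from $f(0)=0$; the computation $(f^{-1})^{-1}(X_s)=X_{f(s)}$ likewise settles continuity. The paper itself offers no proof of this lemma (it is quoted from \cite{ll13}), so there is no in-text argument to compare against, but up to this point your reasoning matches what the cited source does.

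The genuine gap is the covering-functor property, and you have in effect conceded it yourself. You correctly identify that the substantive content of the lemma is that $f^{-1}|_{\mathrm{G}(T)_v}$ is a \emph{bijection} onto the source fibre over $f^{-1}(v)$, but you then only describe what would have to be done: injectivity ``should follow'' from the filter being ``pinned down by its preimage together with its source'', and for surjectivity you propose to ``manufacture'' a completely prime filter over $v$ pulling back to a given $G$, budgeting ``most of the effort'' for checks you do not carry out. Neither direction is routine. For injectivity one must actually show that a completely prime filter $F$ with $s(F)=v$ is recovered from $f^{-1}(F)$ and $v$ --- this is where condition (b) enters again, since one needs every element of $F$ to lie above something of the form $f(s)e$ with $s\in f^{-1}(F)$ and $e$ coming from the unit $v$, and that is exactly what the meet- and join-preservation of $f$ together with $F\cap\Ima(f)\neq\emptyset$ (applied to filters related to $F$, not just $F$ itself) must deliver. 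For surjectivity one must exhibit the candidate filter explicitly (the up-closure of $\{f(s)e : s\in G,\, e\in v\}$, say), and verify it is a completely prime filter with the correct source and the correct preimage; none of these verifications is present. The groupoid-homomorphism check you call ``largely formal'' also hides a nontrivial inclusion in $s(f^{-1}(F))=f^{-1}(s(F))$, which you flag but do not prove. As it stands the proposal proves Lemma~2.13 of \cite{ll13} and the continuity assertion, but not the covering claim, which is the main point of Lemma~2.14.
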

	
	The above lemma ``shows" that the first notion of morphisms between pseudogroups one might think of i.e. just combining the notions of frame morphisms and morphisms of inverse semigroups is not sufficient. The second condition in the definition of callitic maps is needed, because it assures that inverse images of completely prime filters are non-empty. In inverse direction one has:

	\begin{lem}[\cite{ll13}, Lemma~2.19]
		Let $f \colon \mathcal{G} \to \mathcal{H}$ be a continuous covering functor between \'etale groupoids. Then the induced map $f^{-1} \colon \mathrm{B}(\mathcal{H}) \to \mathrm{B}(\mathcal{G})$ on open slices is a callitic map.
	\end{lem}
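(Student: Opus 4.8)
The plan is to verify directly the two defining properties of a callitic map for $f^{-1}\colon \mathrm{B}(\mathcal{H}) \to \mathrm{B}(\mathcal{G})$, after first checking that this map is well defined (here $S=\mathrm{B}(\mathcal{H})$ and $T=\mathrm{B}(\mathcal{G})$ play the roles of source and target in the callitic definition). For well-definedness I would show that $f^{-1}(B)$ is an open slice in $\mathcal{G}$ whenever $B \in \mathcal{B}_\mathcal{H}^o$. Openness is immediate from continuity of $f$. To see that $f^{-1}(B)$ is a slice, note that since $f$ is a groupoid homomorphism it satisfies $s\circ f = f\circ s$ and $r \circ f = f \circ r$, and since $g \mapsto g^{-1}$ intertwines $f$ with itself, the covering condition that $f|_{\mathcal{G}_u}$ be bijective onto $\mathcal{H}_{f(u)}$ also forces $f|_{\mathcal{G}^u}$ to be bijective onto $\mathcal{H}^{f(u)}$. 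Then if $g_1,g_2 \in f^{-1}(B)$ share a source, their images lie in $B$ with equal source, hence coincide because $B$ is a slice, and injectivity of $f$ on the common source fibre gives $g_1 = g_2$; the argument for equal ranges is symmetric. The same fibrewise injectivity shows $f^{-1}(\mathcal{H}^{(0)}) = \mathcal{G}^{(0)}$, which I will use repeatedly.

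For condition (a) I must check that $f^{-1}$ is a semigroup homomorphism preserving compatible joins and all meets. Preservation of meets is trivial: the meet of two open slices is their intersection, and $f^{-1}$ commutes with intersections. Compatible joins of open slices are unions (a family contained in a common slice is automatically pairwise compatible), and $f^{-1}$ commutes with unions; compatibility is preserved because $f^{-1}(B_1)^{-1} f^{-1}(B_2) = f^{-1}(B_1^{-1}B_2) \subseteq f^{-1}(\mathcal{H}^{(0)}) = \mathcal{G}^{(0)}$ once multiplicativity is known. The multiplicativity $f^{-1}(B_1 B_2) = f^{-1}(B_1)\,f^{-1}(B_2)$ is the crux. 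The inclusion $\supseteq$ follows at once from $f$ being a homomorphism. For $\subseteq$, given $g$ with $f(g) = h_1 h_2$ and $h_i \in B_i$, I would lift $h_2$ through the bijection $f|_{\mathcal{G}_{s(g)}}$ to the unique $g_2 \in \mathcal{G}_{s(g)}$ with $f(g_2) = h_2$, set $g_1 := g g_2^{-1}$, and then check $g = g_1 g_2$ and $f(g_1) = h_1 r(h_2) = h_1$; this is exactly where the covering hypothesis does the work.

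For condition (b) I would first observe that, since the open slices cover $\mathcal{H}$ and $f$ is continuous, the preimages $\{f^{-1}(V) : V \in \mathcal{B}_\mathcal{H}^o\}$ form a cover of $\mathcal{G}$ by open slices. Now let $F$ be any completely prime filter in $\mathrm{B}(\mathcal{G})$ and pick $C \in F$. Then $C = \bigcup_V \big(C \cap f^{-1}(V)\big)$, and since all the sets $C \cap f^{-1}(V)$ are subslices of the single slice $C$ they form a compatible family whose join in the pseudogroup is exactly $C$. Complete primeness of $F$ yields some $V_0$ with $C \cap f^{-1}(V_0) \in F$, and upward closure of $F$ then gives $f^{-1}(V_0) \in F$, so that $F \cap \Ima(f^{-1}) \neq \emptyset$.

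The main obstacle is the $\subseteq$ direction of multiplicativity: it is precisely the point at which factorizations downstairs must be lifted to $\mathcal{G}$, and this lifting is possible only because $f$ restricts to a bijection on each source fibre. The verification of (b) is conceptually the cleanest, but it hinges on the observation that preimages of open slices cover $\mathcal{G}$ and on expressing an arbitrary member of $F$ as a compatible join of such preimages so that complete primeness applies.
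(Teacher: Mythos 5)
The paper does not prove this lemma; it is quoted directly from \cite{ll13} (Lemma~2.19) with no in-text argument, so there is nothing internal to compare against. Your direct verification is correct and is essentially the argument of \cite{ll13}: the only non-formal step is the inclusion $f^{-1}(B_1B_2)\subseteq f^{-1}(B_1)\,f^{-1}(B_2)$, and your lift of a factorization $f(g)=h_1h_2$ through the bijection $f|_{\mathcal{G}_{s(g)}}$ (taking $g_2$ to be the unique preimage of $h_2$ in $\mathcal{G}_{s(g)}$ and setting $g_1:=gg_2^{-1}$) is precisely where the covering hypothesis does its work, as you note. The remaining points --- that the natural order on slices coincides with inclusion, so binary meets are intersections and compatible joins are unions; that $f^{-1}(\mathcal{H}^{(0)})=\mathcal{G}^{(0)}$ by fibrewise injectivity; and that complete primeness applied to the compatible decomposition $C=\bigvee_V\bigl(C\cap f^{-1}(V)\bigr)$ over the open slices $V$ of $\mathcal{H}$ yields condition (b) --- are all handled correctly. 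One small remark: the paper's phrase ``preserves all meets'' must be read as finite meets, exactly as you implicitly do; infinite meets of open slices are interiors of intersections, and these are \emph{not} preserved by $f^{-1}$ for a general continuous covering functor (which need not be an open map), so your reading is the only one under which the lemma is true.
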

	
	The obtained functors $\mathrm{B} \colon \mathbf{Etale} \to \mathbf{Pseu}^{\mathrm{op}}$ and $\mathrm{G} \colon \mathbf{Pseu}^{\mathrm{op}} \to \mathbf{Etale}$ then set up an adjunction:
	\begin{thm}[\cite{ll13}, Theorem~2.22]\label{thm: adj}
		The functor $\mathrm{G} \colon \mathbf{Pseu}^{\mathrm{op}} \to \mathbf{Etale}$ is right adjoint to the functor $\mathrm{B} \colon \mathbf{Etale} \to \mathbf{Pseu}^{\mathrm{op}}$.
	\end{thm}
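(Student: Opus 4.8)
The plan is to establish the adjunction $\mathrm{B} \dashv \mathrm{G}$ by exhibiting a unit and a counit and then verifying the two triangle identities; equivalently, one directly constructs the natural bijection of hom-sets. Unwinding the definitions, for an \'etale groupoid $\mathcal{G}$ and a pseudogroup $S$ the claim amounts to a natural bijection between callitic maps $S \to \mathcal{B}_\mathcal{G}^o$ (these are exactly the morphisms $\mathrm{B}(\mathcal{G}) \to S$ in $\mathbf{Pseu}^{\mathrm{op}}$) and continuous covering functors $\mathcal{G} \to \mathrm{G}(S)$. The bijection will send a callitic $\phi\colon S \to \mathcal{B}_\mathcal{G}^o$ to the covering functor $\phi^{-1} \circ \eta_\mathcal{G}$ and a covering functor $\psi\colon \mathcal{G} \to \mathrm{G}(S)$ to the callitic map $\psi^{-1}\circ \varepsilon_S$, where $\eta$ and $\varepsilon$ are the unit and counit described next.

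First I would define the unit $\eta_\mathcal{G}\colon \mathcal{G} \to \mathrm{G}(\mathrm{B}(\mathcal{G})) = \mathrm{G}(\mathcal{B}_\mathcal{G}^o)$ by sending $g$ to the set $\eta_\mathcal{G}(g) := \{B \in \mathcal{B}_\mathcal{G}^o : g \in B\}$ of open slices containing $g$. The key verifications are that this set is a completely prime filter -- upward closure is clear, downward directedness uses that the open slices of an \'etale groupoid form a basis of its topology, so any two slices through $g$ contain a common smaller one, and complete primeness is immediate since a compatible join of slices is their union -- and that $\eta_\mathcal{G}$ is a continuous covering functor, continuity following from $\eta_\mathcal{G}^{-1}(X_B) = B$ for every basic open $X_B$ and the covering property being checked fibrewise. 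Dually I would define the counit by $\varepsilon_S\colon S \to \mathcal{B}_{\mathrm{G}(S)}^o$, $s \mapsto X_s$; here $X_s$ is an open slice of the \'etale groupoid $\mathrm{G}(S)$ by Proposition~2.8 of \cite{ll13}, and $\varepsilon_S$ is callitic, hence a morphism $\mathrm{B}(\mathrm{G}(S)) \to S$ in $\mathbf{Pseu}^{\mathrm{op}}$. Well-definedness of both transformations on morphisms, together with the functoriality needed for naturality, is supplied by the Lemmas~2.13, 2.14 and 2.19 of \cite{ll13} quoted above.

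With unit and counit in hand, the content reduces to the two triangle identities, and both collapse to the tautology $s \in F \iff F \in X_s$ relating membership in a completely prime filter to membership of that filter in a basic slice. For instance $\mathrm{G}(\varepsilon_S) \circ \eta_{\mathrm{G}(S)}$ sends a completely prime filter $F$ to $\{s \in S : X_s \ni F\} = \{s \in S : s \in F\} = F$, establishing the identity in $\mathbf{Etale}$; symmetrically, transporting the identity $\varepsilon_{\mathrm{B}\mathcal{G}} \circ \mathrm{B}(\eta_\mathcal{G}) = \mathrm{id}$ to the computation $\eta_\mathcal{G}^{-1}\circ\varepsilon_{\mathrm{B}\mathcal{G}} = \mathrm{id}$ in $\mathbf{Pseu}$ reduces it to $\eta_\mathcal{G}^{-1}(X_B) = \{g \in \mathcal{G} : B \in \eta_\mathcal{G}(g)\} = B$. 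I would then record that these identities hold on objects and that the composites are genuine identity morphisms in the respective categories, which yields the adjunction.

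I expect the main obstacle to be the well-definedness of the unit $\eta_\mathcal{G}$ as a morphism of $\mathbf{Etale}$ -- establishing that $\eta_\mathcal{G}(g)$ is completely prime and, above all, that $\eta_\mathcal{G}$ restricts to a bijection on each source fibre so as to be a covering functor -- since this is where the \'etale hypothesis on $\mathcal{G}$ is genuinely used and where the filter-theoretic bookkeeping is heaviest. A secondary but persistent source of friction is keeping the variance straight: because $\mathrm{B}$ lands in $\mathbf{Pseu}^{\mathrm{op}}$, every callitic map must be read backwards as an inverse-image assignment, so each triangle identity has to be rewritten in $\mathbf{Pseu}$, with composition reversed, before it can be checked on elements. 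Once these points are settled, naturality of the hom-set bijection in both variables follows formally from naturality of $\eta$ and $\varepsilon$.
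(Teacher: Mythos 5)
The paper does not prove this theorem; it only cites \cite{ll13} and records the behaviour of $\mathrm{B}$ and $\mathrm{G}$ on objects and morphisms via the surrounding lemmas. Your unit--counit construction ($\eta_\mathcal{G}(g)$ the completely prime filter of open slices through $g$, $\varepsilon_S(s)=X_s$, triangle identities collapsing to $s\in F \iff F\in X_s$) is exactly the argument of \cite{ll13}, with the variance through $\mathbf{Pseu}^{\mathrm{op}}$ handled correctly, and you rightly locate the only substantive work in the deferred verifications that $\eta_\mathcal{G}$ is a covering functor and $\varepsilon_S$ is callitic.
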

	
	This adjunction is a generalization of the adjunction between topological spaces and frames in that every \'etale groupoid that only consists of units is simply a topological space and its dual a pseudogroup that contains only idempotents, hence a frame. From this generalization one derives a duality in the same manner as the duality between sober topological spaces and spatial frames comes from the adjunction between topological spaces and frames.
	\begin{defi}
		\begin{enumerate}[(i)]
			\item An \'etale groupoid $\mathcal{G}$ is said to be \emph{sober} if $\eta \colon \mathcal{G} \to \mathrm{G}(\mathrm{B}(\mathcal{G}))$ is a homeomorphism.
			
			\item Denote by $\mathbf{Pseu}_\mathrm{sp}$ the category which has spatial pseudogroups as objects and callitic maps as morphisms and by $\mathbf{Etale}_\mathrm{sob}$ the category which has sober \'etale groupoids as objects and continuous covering functors as morphisms.
		\end{enumerate}
	\end{defi}
	
	\begin{lem}[\cite{ll13}, Proposition~2.12]
		For every sober \'etale groupoid $\mathcal{G}$ the pseudogroup $\mathrm{B}(\mathcal{G})$ is a spatial pseudogroup and for every spatial pseudogroup $S$ the \'etale groupoid $\mathrm{G}(S)$ is sober.
	\end{lem}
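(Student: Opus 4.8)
The plan is to reduce both halves to the unit and counit of the adjunction $\mathrm{B} \dashv \mathrm{G}$ supplied by Theorem~\ref{thm: adj}, and then to invoke the two triangle identities. Write $\eta_\mathcal{G} \colon \mathcal{G} \to \mathrm{G}(\mathrm{B}(\mathcal{G}))$ for the unit, given on points by sending $g \in \mathcal{G}$ to its neighbourhood filter $F_g := \{B \in \mathcal{B}_\mathcal{G}^o \mid g \in B\}$, and write $\varepsilon_S \colon S \to \mathrm{B}(\mathrm{G}(S))$ for the counit, given by $s \mapsto X_s$. With this notation soberness of $\mathcal{G}$ is exactly the assertion that $\eta_\mathcal{G}$ is an isomorphism in $\mathbf{Etale}$, while spatiality of $S$ is exactly the assertion that $\varepsilon_S$ is injective; once $\varepsilon_S$ is known to be automatically surjective, spatiality will be equivalent to $\varepsilon_S$ being an isomorphism in $\mathbf{Pseu}$.

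First I would establish spatiality of $\mathrm{B}(\mathcal{G})$. The cleanest route is direct: for each $g \in \mathcal{G}$ the neighbourhood filter $F_g$ is a \emph{completely prime} filter of the pseudogroup $\mathcal{B}_\mathcal{G}^o$, because a compatible join in $\mathcal{B}_\mathcal{G}^o$ is just the union of open slices, so $g \in \bigcup_i B_i$ forces $g \in B_i$ for some $i$. These filters separate open slices: if $B \neq B'$ in $\mathcal{B}_\mathcal{G}^o$, pick $g$ in the symmetric difference, so that $F_g$ contains exactly one of $B, B'$, whence $F_g$ lies in exactly one of $X_B, X_{B'}$. Therefore $X_B = X_{B'}$ forces $B = B'$, i.e. $\mathrm{B}(\mathcal{G})$ is spatial; note this holds for every \'etale $\mathcal{G}$, a fortiori for sober ones. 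Equivalently, the triangle identity $\mathrm{B}(\eta_\mathcal{G}) \circ \varepsilon_{\mathrm{B}(\mathcal{G})} = \mathrm{id}$ exhibits $\varepsilon_{\mathrm{B}(\mathcal{G})}$ as a split monomorphism, which as a map of underlying sets is injective, and injectivity of $\varepsilon_{\mathrm{B}(\mathcal{G})}$ is precisely spatiality.

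Next I would prove that $\mathrm{G}(S)$ is sober whenever $S$ is spatial, the key being to upgrade $\varepsilon_S \colon s \mapsto X_s$ to an isomorphism of pseudogroups. Injectivity of $\varepsilon_S$ is exactly spatiality of $S$. For surjectivity I would use that $\{X_s \mid s \in S\}$ is a basis of $\mathrm{G}(S)$ consisting of open slices, so any open slice $B$ equals the union of the $X_s$ it contains; the indexing set $T = \{s \mid X_s \subseteq B\}$ is compatible (the slice condition on $B$ forces its elements to be pairwise compatible), hence $\bigvee T$ exists in the complete inverse monoid $S$, and infinite distributivity gives $X_{\bigvee T} = \bigcup_{s \in T} X_s = B$. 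Thus $\varepsilon_S$ is a bijective pseudogroup homomorphism, hence an isomorphism, so $\mathrm{G}(\varepsilon_S)$ is an isomorphism in $\mathbf{Etale}$. The remaining triangle identity $\mathrm{G}(\varepsilon_S) \circ \eta_{\mathrm{G}(S)} = \mathrm{id}_{\mathrm{G}(S)}$ then yields $\eta_{\mathrm{G}(S)} = \mathrm{G}(\varepsilon_S)^{-1}$, a homeomorphism, so that $\mathrm{G}(S)$ is sober.

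The step I expect to be the main obstacle is the surjectivity of $\varepsilon_S$, and specifically the claim that $T = \{s \mid X_s \subseteq B\}$ is compatible: this is where the slice condition on $B \subseteq \mathrm{G}(S)$ must be translated into the compatibility relation $s^{-1}t, st^{-1} \in E(S)$, using the explicit groupoid structure on filters ($s(F) = (F^*F)^{\uparrow}$, $r(F) = (FF^*)^{\uparrow}$, product $(FG)^{\uparrow}$), the identity $X_s \cap X_t = X_{s \wedge t}$ available because a pseudogroup is an inverse $\wedge$-semigroup, and the fact that $\mathrm{G}(S)$ is \'etale. The only other bookkeeping point is to confirm that the maps $\varepsilon_S$ and $\eta_{\mathrm{G}(S)}$ really are the counit and unit of the adjunction of Theorem~\ref{thm: adj}, so that the triangle identities apply verbatim; granting that, the argument reduces to the basis-and-distributivity computations above.
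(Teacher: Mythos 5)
The paper does not prove this lemma at all --- it is quoted from \cite{ll13} without argument --- so there is no in-paper proof to compare against. Your proposal reconstructs what is essentially the argument of the cited source (and of the classical sober-space/spatial-frame adjunction that it generalizes): the first half via separating completely prime filters by neighbourhood filters $F_g$, the second half by upgrading $\varepsilon_S\colon s\mapsto X_s$ to an isomorphism and reading off soberness from a triangle identity. The first half is correct as written, and your observation that soberness of $\mathcal{G}$ is not actually needed there is accurate.

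The one place where your sketch is thinner than it should be is exactly the step you flag, the compatibility of $T=\{s\mid X_s\subseteq B\}$, and the missing ingredient is that spatiality of $S$ is needed \emph{again} at that point, not just for injectivity of $\varepsilon_S$. The slice condition on $B$ only gives you $X_sX_{t^{-1}}\subseteq BB^{-1}\subseteq \mathrm{G}(S)^{(0)}$, and (after checking $X_sX_{t^{-1}}=X_{st^{-1}}$ via the filter product $(FG)^{\uparrow}$) this says that every completely prime filter containing $st^{-1}$ is a unit, i.e.\ contains an idempotent; by downward directedness such a filter then contains an idempotent below $st^{-1}$, and the join $e$ of all these idempotents is an idempotent with $e\leq st^{-1}$ and $X_e=X_{st^{-1}}$. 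Only spatiality lets you conclude $st^{-1}=e\in E(S)$. Without it the claim genuinely fails: a pseudogroup with no completely prime filters at all has $\mathrm{G}(S)=\emptyset$, every subset of $\mathrm{G}(S)$ is a slice, yet arbitrary pairs of elements need not be compatible. So the plan is sound and complete in outline, but when you write it up you should make the second use of spatiality explicit rather than attributing the compatibility of $T$ to the slice condition and \'etaleness alone.
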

	
	Above lemma then implies:
	\begin{thm}[\cite{ll13}, Theorem~2.23]
		The adjunction in Theorem~\ref{thm: adj} induces an equivalence of categories between $\mathbf{Pseu}_\mathrm{sp}^\mathrm{op}$ and $\mathbf{Etale}_\mathrm{sob}$.
	\end{thm}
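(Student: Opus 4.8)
The plan is to invoke the completely standard fact that \emph{any} adjunction restricts to an equivalence between the full subcategories of objects at which the unit, respectively the counit, is an isomorphism, and then to identify these two ``fixed-point'' subcategories with the sober \'etale groupoids and the spatial pseudogroups. So the whole statement will reduce to recognizing soberness and spatiality as exactly the invertibility conditions for the unit $\eta$ and the counit $\varepsilon$ of the adjunction $\mathrm{B} \dashv \mathrm{G}$ of Theorem~\ref{thm: adj}.

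First I would write down $\eta$ and $\varepsilon$ explicitly. The unit component at an \'etale groupoid $\mathcal{G}$ is precisely the map $\eta_{\mathcal{G}} \colon \mathcal{G} \to \mathrm{G}(\mathrm{B}(\mathcal{G}))$ appearing in the definition of soberness, sending an element to the completely prime filter of open slices containing it; thus, \emph{by definition}, $\eta_{\mathcal{G}}$ is an isomorphism in $\mathbf{Etale}$ exactly when $\mathcal{G}$ is sober. The counit component at a pseudogroup $S$, read in $\mathbf{Pseu}$ as the canonical map $S \to \mathrm{B}(\mathrm{G}(S))$, sends $s$ to the basic open slice $X_s$ of completely prime filters containing $s$; I would verify that this is a pseudogroup homomorphism which is injective precisely when $X_s = X_t$ forces $s = t$, i.e. precisely when $S$ is spatial, and that in that case it is an isomorphism onto $\mathrm{B}(\mathrm{G}(S))$. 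Hence $\varepsilon_S$ is invertible in $\mathbf{Pseu}^{\mathrm{op}}$ if and only if $S$ is spatial.

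Next I would run the purely formal restriction argument using the triangle identities. For a sober $\mathcal{G}$, the identity $\varepsilon_{\mathrm{B}(\mathcal{G})} \circ \mathrm{B}(\eta_{\mathcal{G}}) = \mathrm{id}$ together with the invertibility of $\eta_{\mathcal{G}}$ (hence of $\mathrm{B}(\eta_{\mathcal{G}})$) forces $\varepsilon_{\mathrm{B}(\mathcal{G})}$ to be invertible, so $\mathrm{B}(\mathcal{G})$ is spatial; dually, the identity $\mathrm{G}(\varepsilon_S) \circ \eta_{\mathrm{G}(S)} = \mathrm{id}$ shows that $\mathrm{G}(S)$ is sober whenever $S$ is spatial. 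This is exactly the content of the Lemma preceding the theorem, which guarantees that $\mathrm{B}$ and $\mathrm{G}$ restrict to functors $\mathbf{Etale}_{\mathrm{sob}} \to \mathbf{Pseu}_{\mathrm{sp}}^{\mathrm{op}}$ and $\mathbf{Pseu}_{\mathrm{sp}}^{\mathrm{op}} \to \mathbf{Etale}_{\mathrm{sob}}$. On these two subcategories both $\eta$ and $\varepsilon$ are now natural isomorphisms, and this is precisely the assertion that the restricted functors are mutually quasi-inverse, i.e. an equivalence of categories.

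The only genuine work — everything downstream being formal adjunction bookkeeping — lies in pinning down the counit and matching its invertibility with spatiality: one must confirm that $s \mapsto X_s$ really is the counit of $\mathrm{B} \dashv \mathrm{G}$, that its injectivity is literally the spatiality condition $X_s = X_t \Rightarrow s = t$, and that injectivity upgrades it to a pseudogroup isomorphism. The analogous identification on the groupoid side is, by contrast, handed to us directly by the definition of soberness, so no separate argument is needed there, and the equivalence then follows by the general ``fixed-point'' principle for adjunctions.
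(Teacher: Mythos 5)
Your proposal is correct and follows essentially the same route as the paper, which derives the equivalence from the adjunction of Theorem~\ref{thm: adj} together with the preceding lemma guaranteeing that $\mathrm{B}$ and $\mathrm{G}$ restrict to the sober/spatial subcategories, exactly as sober spaces and spatial frames are carved out of the classical adjunction. The only content beyond formal bookkeeping is the identification of soberness and spatiality with invertibility of the unit and counit, which you locate correctly (soberness is the unit condition by definition, and spatiality is precisely injectivity of $s \mapsto X_s$).
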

	
	In Subsection~\ref{subs: refined correspondences} we turn to a refinements of this correspondence.
	
	\section{Resuming Cantor dynamics}\label{sec: resuming cantor dynamics}
	
	\emph{\large \label{reminder second countability} For the remainder of this text with the exception of Section~\ref{sec: topological full groups in light of non-commutative stone duality} every \'etale groupoid is assumed to be second countable.}
	
	\bigskip
	
	\'Etale groupoids with a Cantor space as space of units provide Cantor dynamics with a broader scope. For the sake of abbreviation we define:
	\begin{defi}
		A topological groupoid $\mathcal{G}$ is called a \emph{Cantor groupoid} if the topological subspace $\mathcal{G}^{(0)}$ is a Cantor space.
	\end{defi}
	
	\begin{rem}
		\'Etale Cantor groupoids are by definition locally compact groupoids.
	\end{rem}

	In Subsection~\ref{subs: homology of etale cantor groupoids} we take a quick look at the homology of \'etale Cantor groupoids. Subsection~\ref{subs: compact generation and expansive groupoids} showcases generalized subshifts. Subsection~\ref{subs: AF-gruopoids and almost finite groupoids}

	\subsection{Homology of \'etale Cantor groupoids}\label{subs: homology of etale cantor groupoids}

	Matui studied \'etale Cantor groupoids and their topological full group in a series of papers (\cite{mat12},\cite{mat15},\cite{mat16}). In \cite{mat16} Matui made a conjecture, which he termed \emph{HK-conjecture}, that relates the groupoid homology with the $K_0$-group of the groupoid C*-algebra:
	\begin{conj}[\cite{mat16}, Conjecture~2.6]\label{conj: hk}
		Let $G$ be an effective, minimal, \'etale Cantor groupoid. Then
		\begin{equation*}
		K_0(C_r^*(\mathcal{G})) \cong \bigoplus_{i = 0}^{\infty} H_{2i}(\mathcal{G}) 
		\end{equation*}
		and
		\begin{equation*}
		K_1(C_r^*(\mathcal{G})) \cong \bigoplus_{i = 0}^{\infty} H_{2i+1}(\mathcal{G}).
		\end{equation*}
	\end{conj}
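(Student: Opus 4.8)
The plan is to reduce Conjecture~\ref{conj: hk} to a degeneration statement for a homological-to-$K$-theoretic spectral sequence, using the case of minimal Cantor $\mathbb{Z}$-systems as a guide. Recall from Example~\ref{ex: homology} that for such a system $H_*(\mathcal{G}_\varphi) \cong K_*(C(X)\rtimes_\varphi \mathbb{Z})$; the reason this works is that $\mathbb{Z}$ has homological dimension one, so $H_n(\mathcal{G}_\varphi)$ vanishes for $n \geq 2$ and the Pimsner-Voiculescu sequence collapses to $K_0 \cong H_0$ and $K_1 \cong H_1$. The first step is thus to isolate the general mechanism behind this coincidence by constructing a functorial spectral sequence
\begin{equation*}
E^2_{p,q} = H_p(\mathcal{G}) \otimes K_q(\mathbb{C}) \Longrightarrow K_{p+q}(C_r^*(\mathcal{G})),
\end{equation*}
whose $E^2$-page is the groupoid homology of the excerpt tensored with the coefficients $K_*(\mathbb{C})$. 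Since $K_q(\mathbb{C})$ is free --- equal to $\mathbb{Z}$ for $q$ even and $0$ for $q$ odd --- there is no $\mathrm{Tor}$-correction, and $E^2_{p,q} \cong H_p(\mathcal{G})$ for even $q$.

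To build this spectral sequence I would exploit the simplicial structure $\{\mathcal{G}^{(n)}\}_n$ that defines the groupoid homology. As an \'etale Cantor groupoid, $\mathcal{G}$ is ample, so $\mathcal{G}^{(0)}$ carries a basis of compact open sets and $\mathcal{G}$ is covered by compact open slices; the concrete route is to realize $C_r^*(\mathcal{G})$ through the finite-dimensional data encoded by such a basis and to run an assembly/Kasparov-type spectral sequence whose geometric side is the homology of the simplicial space $\mathcal{G}^{(\bullet)}$ with coefficients in the $K$-theory of $\mathbb{C}$ and whose analytic side is $K_*(C_r^*(\mathcal{G}))$. The AF case serves as the unconditional base: there $C_r^*(\mathcal{G})$ is an AF algebra, so $K_1 = 0$, $K_0 \cong H_0(\mathcal{G})$ is the associated dimension group, and $H_n(\mathcal{G}) = 0$ for $n \geq 1$, which is precisely Conjecture~\ref{conj: hk} in homological dimension zero.

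The crux, and the step I expect to be genuinely hard, is the degeneration of this spectral sequence at $E^2$ together with the splitting of the resulting filtration. Because the coefficients are concentrated in even degrees, the differentials $d^r \colon E^r_{p,q} \to E^r_{p-r,\,q+r-1}$ automatically vanish whenever $r$ is even (their targets then lie in odd rows), so the earliest potentially nonzero differential is $d^3$, and in general only the odd differentials $d^{2k+1}$ can survive. Ruling these out is exactly the content of the conjecture, and there is no formal reason forcing it. I would first dispatch the cases where collapse is automatic for dimensional reasons: if $\mathcal{G}$ has finite homological dimension $d$ --- for instance under a finite dynamic asymptotic dimension bound in the sense of Guentner, Willett and Yu --- then $H_p(\mathcal{G}) = 0$ for $p > d$, and when $d \leq 2$ no source of a nonzero $d^3$ (or any $d^{\geq 3}$) remains, giving the conjecture. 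The remaining obstacle is the general minimal, effective case, where no such a priori bound is available: here one must either produce a geometric decomposition (a Morita- or mapping-torus-type reduction to AF pieces and iterated crossed products controlled by Pimsner-Voiculescu) showing the higher differentials are boundaries, or else confront the genuine possibility that some $d^{2k+1}$ is nonzero and the conjecture fails --- which is exactly where the difficulty of Conjecture~\ref{conj: hk} lies.
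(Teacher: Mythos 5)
The statement you are trying to prove is stated in the paper as a \emph{conjecture} (Matui's HK-conjecture, \cite{mat16}, Conjecture~2.6); the paper offers no proof of it, only evidence in special cases (AF-groupoids, transformation groupoids of free $\mathbb{Z}^n$-actions via Proposition~\ref{prop: alm.fin.}, SFT-groupoids). So there is no ``paper's own proof'' to compare against, and your proposal must stand on its own as a proof attempt. It does not: what you have written is a reduction of the conjecture to two unproven claims, namely (1) the existence of a convergent spectral sequence $E^2_{p,q}=H_p(\mathcal{G})\otimes K_q(\mathbb{C})\Rightarrow K_{p+q}(C_r^*(\mathcal{G}))$ for an arbitrary effective, minimal, \'etale Cantor groupoid, and (2) its degeneration at $E^2$ together with a splitting of the induced filtration. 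Claim (1) is already substantial -- constructing such a spectral sequence requires Baum--Connes/Kasparov-type machinery and, in the forms known in the literature, extra hypotheses (amenability of the groupoid, torsion-freeness of isotropy, a $\gamma$-element, etc.) that are not part of the hypotheses of the conjecture. Claim (2) you explicitly leave open and correctly identify as ``exactly the content of the conjecture,'' which means the proposal is a reformulation rather than a proof. The AF and low-homological-dimension cases you dispatch are genuine but are precisely the cases already covered in the paper (Theorem on AF-groupoids, Example~\ref{ex: homology}, Proposition~\ref{prop: alm.fin.}); they do not advance the general statement.

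It is also worth being aware that the general degeneration you would need cannot be established: the HK-conjecture as stated here is now known to be false. Scarparo produced counterexamples among transformation groupoids of certain non-free odometer-type actions, and Deeley later gave counterexamples even for principal, minimal, ample groupoids. So any argument along your lines must break at step (2) -- some higher differential or extension problem is genuinely nonzero -- and the honest conclusion of your analysis (``confront the genuine possibility that some $d^{2k+1}$ is nonzero and the conjecture fails'') is in fact what happens. Your strategic framing is essentially the one pursued by Proietti and Yamashita, who constructed the spectral sequence in (1) under additional hypotheses and thereby delimited exactly where the conjecture can and cannot hold; but as a proof of the statement as written, the proposal has an unfillable gap.
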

	
	The following propositions will be of use in later sections:
	
	\begin{prop}[\cite{mat12}, Lemma~7.3]\label{prop: hom1}
		Let $\mathcal{G}$ be an effective, \'etale Cantor groupoid. Then the following holds:
		\begin{enumerate}[(i)]
			\item Let $B,B'$ be compact, open slices of $\mathcal{G}$ with $r(B)=s(B')$ and let $O \subseteq \mathcal{G}^{(2)}$ be the set of composable pairs $(b,b')$ with $b \in B$ and $b' \in B'$. Then $\delta_2(\mathbf{1}_O)=\mathbf{1}_B-\mathbf{1}_{BB'}+\mathbf{1}_{B'}$
		\end{enumerate}
		As immediate consequences of (i), the following hold:
		\begin{enumerate}[(i),resume]
			\item $[\mathbf{1}_U]=0$ in $H_1(\mathcal{G})$ for every compact, open subset $U \subseteq \mathcal{G}^{(0)}$.
			
			\item $[\mathbf{1}_B]+[\mathbf{1}_{B^{-1}}]=0$ in $H_1(\mathcal{G})$ for every compact, open slice $B$ of $\mathcal{G}$. 
		\end{enumerate}
	\end{prop}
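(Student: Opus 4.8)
The plan is to prove (i) by directly computing $\delta_2 = d_{0\ast} - d_{1\ast} + d_{2\ast}$ applied to the indicator $\mathbf{1}_O$, and then to read off (ii) and (iii) as special cases obtained by choosing $B,B'$ suitably. First I would check that $\mathbf{1}_O$ is a legitimate $2$-chain, i.e. that $O$ is a compact, open, Hausdorff subset of $\mathcal{G}^{(2)}$: since $B,B'$ are compact open slices, $B \times B'$ is compact and open in $\mathcal{G}\times\mathcal{G}$, and because $\mathcal{G}^{(0)}$ is Hausdorff the set $\mathcal{G}^{(2)}$ is closed in $\mathcal{G}\times\mathcal{G}$ (it is the preimage of the diagonal under a continuous map), so $O = (B\times B')\cap \mathcal{G}^{(2)}$ is a closed subset of a compact set and is open in $\mathcal{G}^{(2)}$. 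Hence $\mathbf{1}_O \in \mathscr{C}(\mathcal{G}^{(2)},\mathbb{Z})$.

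The heart of (i) is to identify the three face contributions, each of which reduces to the fact that $s$ and $r$ are injective on a slice. For $d_2(b,b')=b$ I would argue that every $b\in B$ has, by the matching hypothesis $r(B)=s(B')$ together with injectivity of $s|_{B'}$, exactly one composable partner $b'\in B'$; thus $(b,b')\mapsto b$ is a bijection $O\to B$ and $d_{2\ast}(\mathbf{1}_O)=\mathbf{1}_B$. Symmetrically $(b,b')\mapsto b'$ is a bijection $O\to B'$, giving $d_{0\ast}(\mathbf{1}_O)=\mathbf{1}_{B'}$. For the middle term $d_1(b,b')=bb'$ I would show the multiplication map $O\to BB'$ is a bijection: surjectivity is the definition of $BB'$, and injectivity is unique factorisation through slices (if $bb'=\tilde b\tilde b'$, matching ranges forces $b=\tilde b$ by injectivity of $r|_B$, whence $b'=\tilde b'$ after cancellation), so $d_{1\ast}(\mathbf{1}_O)=\mathbf{1}_{BB'}$, where $BB'$ is again a compact open slice since an \'etale Cantor groupoid is ample. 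Combining, $\delta_2(\mathbf{1}_O)=\mathbf{1}_{B'}-\mathbf{1}_{BB'}+\mathbf{1}_B$, which is the claim. The main obstacle is purely bookkeeping: ensuring that each fibre $d_i^{-1}(\,\cdot\,)\cap O$ is a singleton, which is exactly where the slice property and the hypothesis on $B,B'$ enter.

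For (ii) I would specialise (i) to $B=B'=U\subseteq\mathcal{G}^{(0)}$. Then $r(U)=U=s(U)$, composability of $(u,u')$ forces $u=u'$, so $O$ is the diagonal of $U$ and $UU=U$; part (i) gives $\delta_2(\mathbf{1}_O)=\mathbf{1}_U-\mathbf{1}_{U}+\mathbf{1}_U=\mathbf{1}_U$. Since $s,r$ act as the identity on units one has $\delta_1(\mathbf{1}_U)=\mathbf{1}_U-\mathbf{1}_U=0$, so $\mathbf{1}_U$ is a cycle, and it lies in $\Ima\delta_2$; hence $[\mathbf{1}_U]=0$ in $H_1(\mathcal{G})$.

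For (iii) I would take $B'=B^{-1}$, noting $r(B)=s(B^{-1})$ so the hypothesis holds, that $O=\{(b,b^{-1}):b\in B\}$, and that $BB^{-1}=r(B)$ is a compact open subset of $\mathcal{G}^{(0)}$. Part (i) then yields $\delta_2(\mathbf{1}_O)=\mathbf{1}_B-\mathbf{1}_{r(B)}+\mathbf{1}_{B^{-1}}$, so $\mathbf{1}_B+\mathbf{1}_{B^{-1}}=\delta_2(\mathbf{1}_O)+\mathbf{1}_{r(B)}$ lies in $\Ima\delta_2$, because $\mathbf{1}_{r(B)}\in\Ima\delta_2$ by (ii). I would point out the subtlety that $\mathbf{1}_B$ alone need not be a cycle (indeed $\delta_1(\mathbf{1}_B)=\mathbf{1}_{s(B)}-\mathbf{1}_{r(B)}$), whereas $\mathbf{1}_B+\mathbf{1}_{B^{-1}}$ is a cycle, so the statement is read as $[\mathbf{1}_B]+[\mathbf{1}_{B^{-1}}]:=[\mathbf{1}_B+\mathbf{1}_{B^{-1}}]=0$ in $H_1(\mathcal{G})$, as claimed.
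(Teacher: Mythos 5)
Your argument is correct, and since the paper states this proposition without proof (deferring to Matui's Lemma~7.3), there is nothing to compare against: the computation you give --- checking that $\mathbf{1}_O$ is a legitimate chain and that each face map restricts to a bijection $O\to B'$, $O\to BB'$, $O\to B$ via the slice property, then specialising to $B=B'=U$ and to $B'=B^{-1}$ --- is exactly the standard proof. Your remark that $\mathbf{1}_B$ alone need not lie in $\ker\delta_1$, so that (iii) must be read as $[\mathbf{1}_B+\mathbf{1}_{B^{-1}}]=0$, is a worthwhile clarification that the paper's statement glosses over.
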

	
	In Chapter~\ref{chap: 3}, the group $H_0(\mathcal{G})\otimes (\mathbb{Z}/2\mathbb{Z}) =H_0(\mathcal{G},\mathbb{Z}/2\mathbb{Z})$ will play a role in the description of the abelization of the topological full group of certain \'etale Cantor groupoids. Let $f \in C(\mathcal{G}^{(0)},\mathbb{Z})$. The set $U_f:=\{u \in \mathcal{G}^{(0)}|f(u) \notin 2\mathbb{Z} \}$ is compact, open. The class $[f]+2 H_0(\mathcal{G})$ in $H_0(\mathcal{G})\otimes (\mathbb{Z}/2\mathbb{Z})$ is uniquely determined by the class of $[\mathbf{1}_{U_f}] + 2 H_0(\mathcal{G})$. Thus one obtains the following description:
	\begin{prop}[\cite{nek15}, Proposition~7.1]\label{prop: 2-homo}
		Let $\mathcal{G}$ be an \'etale Cantor groupoid. Let $\tilde{H}(\mathcal{G})$ be the abelian group given by the following representation:
		\begin{enumerate}[(i)]
			\item The set of generators is given by all $\mathbf{1}_{U}$, where $U$ is a clopen subset of $\mathcal{G}^{(0)}$.
			
			\item The set of relations consist of:
			\begin{enumerate}
				\item $\mathbf{1}_U + \mathbf{1}_U = 0$ for all clopen $U \subseteq \mathcal{G}^{(0)}$.
				
				\item For every finite disjoint union of clopen subsets $U=\bigsqcup U_i$, it holds that $\mathbf{1}_U=\sum_{i} \mathbf{1}_{U_i}$.
				\item For every clopen bisection $B \in \mathcal{B}_\mathcal{G}^{o,k}$, it holds that $\mathbf{1}_{s(B)}=\mathbf{1}_{r(B)}$.
			\end{enumerate}
		\end{enumerate}
		Then $H_0(\mathcal{G})\otimes (\mathbb{Z}/2\mathbb{Z}) \cong \tilde{H}(\mathcal{G})$.
	\end{prop}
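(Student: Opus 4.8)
The plan is to exhibit both groups as one and the same quotient of the group $C(\mathcal{G}^{(0)},\mathbb{Z})$ of continuous (equivalently, locally constant) integer-valued functions on the Cantor space $\mathcal{G}^{(0)}$; note that $\mathscr{C}(\mathcal{G}^{(0)},\mathbb{Z}) = C(\mathcal{G}^{(0)},\mathbb{Z})$ here, since every such function is a finite $\mathbb{Z}$-combination of indicators of clopen sets. Writing $M = H_0(\mathcal{G}) = C(\mathcal{G}^{(0)},\mathbb{Z})/\Ima\delta_1$ and using $M\otimes(\mathbb{Z}/2\mathbb{Z}) \cong M/2M$, one has
\begin{equation*}
H_0(\mathcal{G})\otimes(\mathbb{Z}/2\mathbb{Z}) \cong C(\mathcal{G}^{(0)},\mathbb{Z})\big/\big(\Ima\delta_1 + 2\,C(\mathcal{G}^{(0)},\mathbb{Z})\big).
\end{equation*}
So the task reduces to identifying the subgroup $\Ima\delta_1$ appearing here with the subgroup of $C(\mathcal{G}^{(0)},\mathbb{Z})$ generated by the bisection relations (c).

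First I would compute $\delta_1$ on generators. Since $\mathcal{G}^{(0)}$ is a Stone space, the groupoid $\mathcal{G}$ is ample, so the compact open slices $\mathcal{B}_\mathcal{G}^{o,k}$ form a basis, and a disjointification argument (finite intersections of compact open bisections are again compact open bisections) shows that $\mathscr{C}(\mathcal{G}^{(1)},\mathbb{Z})$ is generated as a group by the indicators $\mathbf{1}_B$ with $B\in\mathcal{B}_\mathcal{G}^{o,k}$. For such a $B$, injectivity of $s|_B$ and $r|_B$ gives $d_{0\ast}(\mathbf{1}_B)=\mathbf{1}_{s(B)}$ and $d_{1\ast}(\mathbf{1}_B)=\mathbf{1}_{r(B)}$, whence $\delta_1(\mathbf{1}_B)=\mathbf{1}_{s(B)}-\mathbf{1}_{r(B)}$. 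Therefore $\Ima\delta_1$ is exactly the subgroup of $C(\mathcal{G}^{(0)},\mathbb{Z})$ generated by $\{\mathbf{1}_{s(B)}-\mathbf{1}_{r(B)} : B\in\mathcal{B}_\mathcal{G}^{o,k}\}$.

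On the other side, I would first discard relation (c) and show that relations (a) and (b) alone present the group $C(\mathcal{G}^{(0)},\mathbb{Z}/2\mathbb{Z}) = C(\mathcal{G}^{(0)},\mathbb{Z})/2\,C(\mathcal{G}^{(0)},\mathbb{Z})$: the assignment $\mathbf{1}_U\mapsto \mathbf{1}_U \bmod 2$ is surjective, and the identity $\mathbf{1}_U+\mathbf{1}_V=\mathbf{1}_{U\triangle V}$ (derived from (a) and (b)) shows that addition of generators is governed by symmetric difference, which is precisely the group law of $(\mathbb{Z}/2\mathbb{Z})$-valued continuous functions, so the map is an isomorphism. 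Imposing relation (c), which modulo $2$ reads $\mathbf{1}_{s(B)}-\mathbf{1}_{r(B)}=0$, then yields
\begin{equation*}
\tilde H(\mathcal{G}) \cong C(\mathcal{G}^{(0)},\mathbb{Z})\big/\big(2\,C(\mathcal{G}^{(0)},\mathbb{Z}) + \big\langle \mathbf{1}_{s(B)}-\mathbf{1}_{r(B)} : B\in\mathcal{B}_\mathcal{G}^{o,k}\big\rangle\big),
\end{equation*}
which is literally the quotient computed above. Concretely, the isomorphism $\tilde H(\mathcal{G}) \to H_0(\mathcal{G})\otimes(\mathbb{Z}/2\mathbb{Z})$ is induced by $\mathbf{1}_U\mapsto[\mathbf{1}_U]\otimes 1$, and one checks directly that it respects (a) (as $2([\mathbf{1}_U]\otimes1)=[\mathbf{1}_U]\otimes2=0$), (b) (additivity of indicators as functions), and (c) (by the displayed formula $\delta_1(\mathbf{1}_B)=\mathbf{1}_{s(B)}-\mathbf{1}_{r(B)}$).

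The main obstacle is the generation step: verifying that $\Ima\delta_1$ is generated by the indicators of bisections, which amounts to reducing an arbitrary element of $\mathscr{C}(\mathcal{G}^{(1)},\mathbb{Z})$ to a $\mathbb{Z}$-combination of $\mathbf{1}_B$ with $B\in\mathcal{B}_\mathcal{G}^{o,k}$. In the possibly non-Hausdorff setting this requires care, because the generators of $\mathscr{C}(\mathcal{G}^{(1)},\mathbb{Z})$ are only supported on compact open \emph{Hausdorff} subsets; I would cover such a subset by finitely many compact open bisections and use that finite intersections of compact open bisections inside a Hausdorff open set remain compact open bisections in order to carry out the disjointification.
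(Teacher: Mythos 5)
Your proof is correct and follows essentially the same route as the paper, which only sketches the argument (observing that every class in $H_0(\mathcal{G})\otimes(\mathbb{Z}/2\mathbb{Z})$ is represented by $[\mathbf{1}_{U_f}]$ for a clopen set $U_f$) and otherwise defers to Nekrashevych. You supply exactly the details the paper omits: the computation $\delta_1(\mathbf{1}_B)=\mathbf{1}_{s(B)}-\mathbf{1}_{r(B)}$ together with the generation of $\mathscr{C}(\mathcal{G},\mathbb{Z})$ by indicators of compact open bisections, and the identification of relations (a) and (b) with the presentation of $C(\mathcal{G}^{(0)},\mathbb{Z}/2\mathbb{Z})$, so that both sides become the same quotient of $C(\mathcal{G}^{(0)},\mathbb{Z})$.
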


	\subsection{Compact generation and expansive groupoids}\label{subs: compact generation and expansive groupoids}
	
	The following is a translation of the definition in \cite{hae02} -- there given in the context of pseudogroups:
	\begin{defi}[\cite{nek15}, Definition 2.3.1]					
		Let $\mathcal{G}$ be an \'etale groupoid.
		\begin{enumerate}[(i)]
			\item Let $(S, U)$ be a pair of compact sets $S \subseteq \mathcal{G}$ and $U \subseteq \mathcal{G}^{(0)}$. The pair $(S, U)$ called a \emph{compact generating pair} if $U$ contains an open $\mathcal{G}$-transversal and for all $g \in \mathcal{G}|_U$ there exists an $n \in \mathbb{N}$ such that $\bigcup_{k=1}^n (S \cup S^{-1})^k$ is an open neighbourhood of $g$ in $ \mathcal{G}|_U$.
			
			\item The groupoid $\mathcal{G}$ is said to be \emph{compactly generated} if it contains a compact generating pair. 
		\end{enumerate}
	\end{defi}
	
	The case of an \'etale groupoid with compact unit space, in particular the case of an \'etale Cantor groupoid, greatly simplifies above definition
	\begin{defi}
		Let $\mathcal{G}$ be an \'etale Cantor groupoid. It is called \emph{compactly generated} if it contains a \emph{compact generating set} i.e. there exists a compact subset $S \subseteq \mathcal{G}$ such that $\mathcal{G}= \bigcup_{n=1}^{\infty} (S \cup S^{-1})^n$.
	\end{defi}
	
	By the assumptions $S$ admits a finite cover by open, compact slices, the union of which may replace $S$ in the definition. Expansiveness is given in terms of such a cover and constitutes a class of compactly generated \'etale Cantor groupoids that generalize the notion of dynamical shifts:
	
	\begin{defi}[\cite{nek17}, Definition~5.2]
		Let $\mathcal{G}$ be a compactly generated, \'etale Cantor groupoid.
		\begin{enumerate}[(i)]
			\item Let $S$ be a compact generating set of $\mathcal{G}$ and let $\mathcal{B}$ be a finite cover of $S$ by open, compact slices. The cover $\mathcal{B}$ is called \emph{expansive} if $\bigcup_{n \in \mathbb{N}} (\mathcal{B} \cup \mathcal{B}^{-1})^n$ is a topological basis for $\mathcal{G}$.
			
			\item The groupoid $\mathcal{G}$ is said to be \emph{expansive}, if it has a compact generating set which has an expansive cover.
		\end{enumerate}
	\end{defi}
	
	\begin{rem}[\cite{nek17}, Proposition 5.2]\label{rem: exp}
		If an \'etale Cantor groupoid $\mathcal{G}$ is expansive, all compact generating sets of $\mathcal{G}$ admit expansive covers.
	\end{rem}
	
	Expansive covers can be characterized as follows:
	\begin{prop}[\cite{nek17}, Proposition~5.3]\label{prop: expansivecover}
		Let $\mathcal{G}$ be a compactly generated, \'etale Cantor groupoid and let $S$ be a compact generating set. Let $\mathcal{B}$ be a finite cover of $S$ by open slices. Then the following are equivalent:
		\begin{enumerate}[(i)]
			\item The cover $\mathcal{B}$ is expansive.
			
			\item The set $\{s(F)| F\in \bigcup_{n \in \mathbb{Z}_{+}} (\mathcal{B} \cup \mathcal{B}^{-1} )^n \}$ is a topological basis of $\mathcal{G}^{(0)}$.
			
			\item For every pair $u_1,u_2 \in \mathcal{G}^{(0)}$ with $u_1 \neq u_2$ there exist $F_1,F_2 \in \bigcup_{n \in \mathbb{Z}_{+}} (\mathcal{B} \cup \mathcal{B}^{-1} )^n$ such that $u_1 \in s(F_1)$ and $u_2 \in s(F_2)$ and $s(F_1)\cap s(F_2)=\emptyset$.
			
			\item For every $u \in \mathcal{G}^{(0)}$, it holds that:
			\begin{equation*}
			\bigcap_{n \in \mathbb{Z}_{+}} \bigcap \{s(F)|k \leq n, F \in (\mathcal{B}\cup \mathcal{B}^{-1} )^k,u \in s(F) \}=\{u\}
			\end{equation*}
		\end{enumerate} 
	\end{prop}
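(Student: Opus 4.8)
The plan is to establish all four equivalences through the cycle $(i)\Rightarrow(ii)\Rightarrow(iii)\Rightarrow(iv)\Rightarrow(ii)$ together with the separate implication $(ii)\Rightarrow(i)$. Throughout I abbreviate $\mathcal{W}:=\bigcup_{n\in\mathbb{Z}_+}(\mathcal{B}\cup\mathcal{B}^{-1})^n$. Since $\mathcal{G}$ is an \'etale Cantor groupoid, $\mathcal{G}^{(0)}$ is a Stone space, so $\mathcal{G}$ is ample and the members of $\mathcal{B}$, hence all products in $\mathcal{W}$, may be taken to be compact open slices (products of compact open slices being compact open slices). Consequently each $s(F)$ with $F\in\mathcal{W}$ is a \emph{clopen} subset of $\mathcal{G}^{(0)}$ and $s|_F$ is a homeomorphism onto it; this clopenness is what will make the compactness step below clean.

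The single technical device on which everything rests is a \emph{restriction observation}: for any slice $F'$ one has $F'^{-1}F'=s(F')$ as subsets of $\mathcal{G}^{(0)}\subseteq\mathcal{G}$ (since $r|_{F'}$ is injective, a composable pair $(b^{-1},b'')$ with $b,b''\in F'$ forces $r(b)=r(b'')$, hence $b=b''$ and $b^{-1}b''=s(b)$). Therefore $s(F')\in\mathcal{W}$ whenever $F'\in\mathcal{W}$, and the product $F\cdot s(F')=\{g\in F: s(g)\in s(F')\}$ again lies in $\mathcal{W}$. Thus $\mathcal{W}$ is stable under restricting a slice to a source-set $s(F')$, and by iteration under restriction to any finite intersection $\bigcap_i s(F_i)$, the resulting slice having source image exactly $\bigcap_i s(F_i)$. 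Recognising this stability is the step I expect to be the crux: it is what lets one pass between a topological-basis statement for $\mathcal{G}$ and one for $\mathcal{G}^{(0)}$, and what supplies the slice needed at the end of the compactness argument.

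Granting this, the implications become routine. For $(i)\Rightarrow(ii)$: if $U\subseteq\mathcal{G}^{(0)}$ is open and $u\in U$, then $U$ is open in $\mathcal{G}$, so expansiveness yields $F\in\mathcal{W}$ with $u\in F\subseteq U\subseteq\mathcal{G}^{(0)}$; as $F$ consists of units, $s(F)=F$ and $u\in s(F)\subseteq U$. For $(ii)\Rightarrow(i)$: given $g$ in an open $V$, choose an open slice $A$ with $g\in A\subseteq V$, pick $F_0\in\mathcal{W}$ containing $g$ (possible because $\mathcal{B}$ covers the generating set $S$, so every element of $\mathcal{G}=\bigcup_n(S\cup S^{-1})^n$ lies in some product from $\mathcal{W}$), apply $(ii)$ to obtain $F_1\in\mathcal{W}$ with $s(g)\in s(F_1)\subseteq s(A\cap F_0)$, and set $F:=F_0\cdot s(F_1)\in\mathcal{W}$; injectivity of $s$ on $F_0$ then forces $g\in F\subseteq A\subseteq V$, so $\mathcal{W}$ is a basis of $\mathcal{G}$. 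For $(ii)\Rightarrow(iii)$ I separate $u_1\neq u_2$ by disjoint opens using Hausdorffness of $\mathcal{G}^{(0)}$ and refine each with the basis $\{s(F)\}$. For $(iii)\Rightarrow(iv)$ note $u$ lies in the displayed intersection by construction, while a second point $u'\neq u$ in it would contradict $(iii)$: the separating slice $F_1\in\mathcal{W}$ with $u\in s(F_1)$ contributes $s(F_1)$ to the intersection, forcing $u'\in s(F_1)$, disjoint from the $s(F_2)\ni u'$.

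Finally $(iv)\Rightarrow(ii)$ is where compactness enters. The intersection in $(iv)$ equals $\bigcap\{s(F):F\in\mathcal{W},\,u\in s(F)\}$, so $(iv)$ asserts this equals $\{u\}$. Fixing $u$ and an open $U\ni u$, the clopen complements $\mathcal{G}^{(0)}\setminus s(F)$ (over $F\in\mathcal{W}$ with $u\in s(F)$) cover the compact set $\mathcal{G}^{(0)}\setminus U$; a finite subcover produces $F_1,\dots,F_m\in\mathcal{W}$ with $u\in s(F_i)$ and $\bigcap_i s(F_i)\subseteq U$. By the restriction observation there is a single $F\in\mathcal{W}$ with $s(F)=\bigcap_i s(F_i)$, whence $u\in s(F)\subseteq U$ and $\{s(F):F\in\mathcal{W}\}$ is a basis of $\mathcal{G}^{(0)}$, closing the cycle. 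The only place demanding care is this last step, where ampleness (clopen source images) is used to turn the intersection condition into a finite-subcover argument.
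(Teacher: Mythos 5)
The paper itself gives no proof of this proposition (it is quoted from \cite{nek17} without argument), so I can only judge your proof on its own terms: it is correct, and it follows what is essentially the original route of Nekrashevych. The two load-bearing points are exactly the right ones --- the identity $F^{-1}F=s(F)$ for a slice $F$, which makes $\mathcal{W}$ stable under restriction to intersections of source sets and lets you pass back and forth between a basis of $\mathcal{G}$ and a basis of $\mathcal{G}^{(0)}$, and the finite-subcover argument on the compact set $\mathcal{G}^{(0)}\setminus U$ for $(iv)\Rightarrow(ii)$, which genuinely needs the $s(F)$ to be clopen and hence needs the members of $\mathcal{B}$ to be \emph{compact} open slices (as in the paper's definition of an expansive cover, though the proposition's wording omits ``compact''); you were right to flag and resolve that discrepancy rather than ignore it.
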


	The naming derives from the following class of group actions:
	\begin{defi}
		The action of a finitely generated group $G$ on a uniform space $X$ is called \emph{expansive}, if there exists a neighbourhood of the diagonal $\Delta \subseteq W \subseteq X \times X$ such that $x,y \in X$ and $(g(x),g(y)) \in W$ for all $g \in G$ implies $x=y$.
	\end{defi}
	
	In the case of the action of a finitely generated group $G$ on a Cantor space $X$ above definition allows to construct a finite clopen partition $\mathcal{P}$ of $X$ such that $G$-translates of atoms separate points, by which the system is conjugate to a $G$-shift (Remark~\ref{rem: symbolic dynamics}).
	Expansive Cantor groupoids can be seen as ``generalized subshifts" and in the case of transformation groupoids on has:
	
	\begin{prop} [\cite{nek17}, Proposition 5.5]
		Let a finitely generated group $G$ act by $\alpha \colon G \to \operatorname{Homeo}(X)$ on a Cantor space $X$. Then the following are equivalent:
		\begin{enumerate}[(i)]
			\item The action of $G$ on $X$ is expansive.
			
			\item The groupoid $\mathcal{G}_{(X,G)}$ is expansive.
			
			\item The groupoid $\operatorname{Germ}(X,G)$ is expansive.
			
			\item The dynamical system $(X, G)$ is a subshift.
		\end{enumerate}
	\end{prop}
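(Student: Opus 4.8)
The plan is to route all four conditions through a single combinatorial property of the action, which I will call property (S): \emph{there exists a finite clopen partition $\mathcal{P}=\{P_1,\dots,P_m\}$ of $X$ whose $G$-translates $\{g^{-1}P_j\}$ separate the points of $X$}. Since $X$ is a Cantor space, its (unique) compatible uniformity is generated by the clopen equivalence relations $R_{\mathcal P}=\{(x,y): x,y \text{ lie in a common atom of } \mathcal P\}$, and every entourage $W$ contains some $R_{\mathcal P}$. Unwinding the definition of an expansive action against this fact shows at once that (i) holds \emph{if and only if} property (S) holds: a separating $\mathcal P$ makes $W:=R_{\mathcal P}$ an expansive entourage, and conversely any expansive $W$ dominates some $R_{\mathcal P}$ whose translates then separate points. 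First I would record this reduction. The equivalence of (S) with (iv) is then essentially Remark~\ref{rem: symbolic dynamics}: property (S) is exactly the hypothesis of that remark, yielding a conjugacy to a $G$-subshift, while for the converse a subshift $X\subseteq\mathcal A^G$ carries the cylinder partition at the identity coordinate, whose $G$-translates separate points by definition of the product topology.

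The core of the argument is the equivalence of (S) with the groupoid condition (ii). Note first that $\mathcal G_{(X,G)}$ is automatically compactly generated: if $\{g_1,\dots,g_k\}$ generates $G$, then $S=\bigcup_i(\{g_i^{\pm1}\}\times X)\cup(\{e\}\times X)$ is a compact generating set. I would then invoke Proposition~\ref{prop: expansivecover}, which reduces expansiveness of a cover to the statement that the sources $\{s(F): F\in\bigcup_n(\mathcal B\cup\mathcal B^{-1})^n\}$ form a basis of $\mathcal G^{(0)}=X$. For (S)$\Rightarrow$(ii), given a separating partition $\mathcal P$ I would enlarge the generating set and cover it by the open compact slices $\{g_i^{\pm1}\}\times X$ together with the partial identities $\{e\}\times P_j$. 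A short computation with the groupoid product gives $(\{e\}\times P_j)(\{w\}\times X)=\{w\}\times w^{-1}P_j$ and, crucially, that the partial identity $\mathrm{id}_{w^{-1}P_j}$ arises as $F^{-1}F$ for $F=\{w\}\times w^{-1}P_j$; multiplying such partial identities realizes every finite intersection $\bigcap_k w_k^{-1}P_{j_k}$ as a source. Since the translates separate points in a zero-dimensional compact space, these intersections form a basis, so the cover is expansive. Conversely, for (ii)$\Rightarrow$(S) I would take any expansive cover $\mathcal B$ (which exists for the standard generating set by Remark~\ref{rem: exp}); each slice of $\mathcal B$ decomposes into finitely many clopen pieces on which it acts by a fixed group element, and tracking the product shows every source $s(F)$ lies in the $G$-invariant Boolean algebra generated by the finitely many pieces occurring in $\mathcal B$. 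As these sources separate points, the $G$-translates of the atoms of the finite partition generated by those pieces separate points, which is property (S).

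Finally I would pass from the transformation groupoid to the germ groupoid. The quotient map $\pi\colon\mathcal G_{(X,G)}\to\operatorname{Germ}(X,G)$ from Example~\ref{ex: act.grpd.}(iv) is a continuous, open, surjective groupoid homomorphism that is the identity on the common unit space $X$, so $s\circ\pi=s$. One checks that $\pi$ is injective on any slice (two elements with the same germ share a source, and a slice has injective source map), whence $\pi$ carries open compact slices to open compact slices and compact generating sets to compact generating sets. Because $\pi$ is a homomorphism, $\bigcup_n(\pi(\mathcal B)\cup\pi(\mathcal B)^{-1})^n=\pi\bigl(\bigcup_n(\mathcal B\cup\mathcal B^{-1})^n\bigr)$ and the sources are preserved verbatim; applying the source criterion of Proposition~\ref{prop: expansivecover} on both sides yields (ii)$\Leftrightarrow$(iii), closing the cycle of equivalences.

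I expect the main obstacle to be the bookkeeping in the equivalence (ii)$\Leftrightarrow$(S): correctly identifying which clopen sets appear as sources of words in the slices and confirming that they are governed precisely by $G$-translates of finitely many clopen pieces. The partial-identity trick $\mathrm{id}_{w^{-1}P_j}=F^{-1}F$ is the linchpin that makes intersections of translates visible as sources, and the converse direction requires a careful but essentially routine induction on word length to pin down the source of a product of slices.
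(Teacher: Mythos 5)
The paper states this proposition without proof, simply citing Nekrashevych, so there is no in-text argument to compare against; judged on its own terms your proposal is essentially sound and follows the natural route. Your hub property (S) -- a finite clopen partition whose $G$-translates separate points -- is exactly the pivot the paper gestures at in the sentence following the definition of expansive actions and in Remark~\ref{rem: symbolic dynamics}, and the reductions (i)$\Leftrightarrow$(S)$\Leftrightarrow$(iv) are correct: the relations $R_{\mathcal{P}}$ attached to finite clopen partitions form a base of the unique uniformity of a Stone space, and the cylinder partition handles the converse for subshifts. The equivalence (S)$\Leftrightarrow$(ii) is also right: the computation $(\{e\}\times P_j)(\{w\}\times X)=\{w\}\times w^{-1}P_j$ together with $F^{-1}F=\{e\}\times s(F)$ does exhibit every finite intersection of translates as a source of a word in your cover, these intersections form a basis by the standard compactness argument, and in the converse direction the decomposition of a compact open slice into finitely many clopen pieces $\{g\}\times B_g$ is forced by discreteness of $G$, after which the induction on word length is routine.

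The one place you are too quick is the direction (iii)$\Rightarrow$(ii). Pushing an expansive cover forward along $\pi$ is unproblematic because $\pi$ is injective on slices and commutes with products, but ``applying the source criterion on both sides'' conceals the asymmetry: an expansive cover of $\operatorname{Germ}(X,G)$ is not a priori the image of a cover of $\mathcal{G}_{(X,G)}$, so you cannot literally read the criterion of Proposition~\ref{prop: expansivecover} upstairs from a cover living downstairs. The gap is fillable in either of two ways: lift each germ-groupoid slice -- every open compact slice of $\operatorname{Germ}(X,G)$ is a finite union of basic sets $\tau(g_k,U_k)$ with the $U_k$ clopen and pairwise disjoint, and $\bigcup_k \{g_k\}\times U_k$ is then a slice upstairs with identical source data; after enlarging the lifted family to cover the standard generating set (harmless, since a family of open sets containing a basis is a basis) the source criterion applies. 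Or, more simply, run your (ii)$\Rightarrow$(S) decomposition argument directly on an expansive cover of the germ groupoid, whose slices already decompose into pieces of the form $\tau(g,U)$, obtaining (iii)$\Rightarrow$(S) and closing the cycle without any lifting. With that repair the proof is complete.
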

	
	\begin{ex}
		Let $A$ be a finite set and let $G$ be a group with finite, symmetric generating set $T$. Let $(X,G)$ be a subsystem of $(A^G,G)$ i.e. a $G$-subshift. Then $S:=\{(t,x)|s \in T,x\in X \}$ is a compact generating set of $\mathcal{G}_{(X,G)}$. The set $\mathcal{S}:=\{(t,C(a))|t \in T, a \in A \}$ where $C(a)$ denotes the cylinder set $\{x \in X| x_1=a \}$ is an expansive cover of $S$ by Proposition~\ref{prop: expansivecover}[(iv)$\Rightarrow$(i)].
	\end{ex}

	The following applies in particular to minimal groupoids:
	
	\begin{lem} [\cite{nek17}, Lemma 5.1]\label{lem: comp.gen.}
		Let $\mathcal{G}$ be a compactly generated \'etale Cantor groupoid, such that every $\mathcal{G}$-orbit contains $2$ or more elements.
		Then there exists an open compact generating set contained in $\mathcal{G}\setminus\operatorname{Iso}(\mathcal{G})$.
	\end{lem}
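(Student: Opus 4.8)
The plan is to start from a compact generating set and surgically replace every piece that meets the isotropy bundle by products of genuinely non-isotropy slices, exploiting the orbit hypothesis. First I would record the standing structural facts: since $\mathcal{G}^{(0)}$ is a Cantor space it is Hausdorff, so by Remark~\ref{rem: unitsclosed}(i) the isotropy bundle $\operatorname{Iso}(\mathcal{G})$ is closed and hence $\mathcal{G}\setminus\operatorname{Iso}(\mathcal{G})$ is open; moreover $\mathcal{G}^{(0)}$ being a Stone space makes $\mathcal{G}$ ample, so the compact open slices $\mathcal{B}_{\mathcal{G}}^{o,k}$ form a basis. Consequently any element lying in the open set $\mathcal{G}\setminus\operatorname{Iso}(\mathcal{G})$ has a compact open slice neighbourhood contained in $\mathcal{G}\setminus\operatorname{Iso}(\mathcal{G})$. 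Finally, by the remark following the definition of compact generation I may assume the given compact generating set $S$ is a finite union of compact open slices, so $S$ is itself compact and open.

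The key algebraic observation is that every isotropy element factors through a non-isotropy pair. Let $g \in \operatorname{Iso}(\mathcal{G})$ with $s(g) = r(g) = u$. By hypothesis the orbit $\mathcal{G}(u)$ contains some $v \neq u$, so there is $b \in \mathcal{G}$ with $s(b) = u$ and $r(b) = v$; as $v \neq u$ we have $b \notin \operatorname{Iso}(\mathcal{G})$. Setting $a := g b^{-1}$ gives a composable product with $s(a) = v$ and $r(a) = u$, hence $a \notin \operatorname{Iso}(\mathcal{G})$, and $a b = g b^{-1} b = g$. Thus $g = ab$ with $a, b \in \mathcal{G}\setminus\operatorname{Iso}(\mathcal{G})$.

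I would then run a compactness argument over $S$. For each $g \in S \setminus \operatorname{Iso}(\mathcal{G})$ choose a compact open slice $W_g \ni g$ with $W_g \subseteq \mathcal{G}\setminus\operatorname{Iso}(\mathcal{G})$. For each $g \in S \cap \operatorname{Iso}(\mathcal{G})$ factor $g = a b$ as above and choose compact open slices $A_g \ni a$ and $B_g \ni b$ inside $\mathcal{G}\setminus\operatorname{Iso}(\mathcal{G})$; after shrinking so that $s(A_g) = r(B_g)$ is a common compact open set, the product $A_g B_g$ is a compact open slice (it is open by Theorem~\ref{thm: etalegrpd} and a slice as the image under multiplication of the fibre product, which is homeomorphic to that common clopen set) and an open neighbourhood of $g$. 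The sets $W_g$ and $A_g B_g$ then form an open cover of the compact set $S$; extracting a finite subcover and collecting the finitely many slices $W_{g_j}$, $A_{g_j}$, $B_{g_j}$ that occur yields a finite union $S'$ of compact open slices with $S' \subseteq \mathcal{G}\setminus\operatorname{Iso}(\mathcal{G})$. By construction each covering set lies in $S'$ or in $S' \cdot S'$, whence $S \subseteq (S' \cup S'^{-1})^2$; since $S$ generates $\mathcal{G}$ this gives $\bigcup_{n} (S' \cup S'^{-1})^n = \mathcal{G}$, so $S'$ is the desired compact open generating set avoiding the isotropy.

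The main obstacle I anticipate is the bookkeeping in the factorization step: one must arrange the chosen slices $A_g, B_g$ so that their groupoid product is simultaneously a compact open slice, an honest neighbourhood of $g$, and still a union of slices disjoint from $\operatorname{Iso}(\mathcal{G})$ (note that $A_g B_g$ itself may meet the isotropy, which is harmless since only $A_g$ and $B_g$ are retained in $S'$). Matching the source of $A_g$ to the range of $B_g$ on a common clopen neighbourhood of $v$, and using ampleness to keep everything compact open, is exactly what makes the local-to-global passage via compactness of $S$ go through.
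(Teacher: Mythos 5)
Your proof is correct and follows essentially the same route as the paper: factor each isotropy element of the compact generating set through a second point of its orbit as a product of two non-isotropy elements, cover these by compact open slices avoiding $\operatorname{Iso}(\mathcal{G})$, and conclude by compactness of $S$. The only (harmless) cosmetic difference is that the paper merges the two slices of the factorization into a single slice $B \cup BS_i$, whereas you retain them separately and let the covering neighbourhood be the product $A_g B_g$, which if anything avoids the disjointness condition the paper has to impose.
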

	
	\begin{proof}
		Let $S$ be a open compact generating set containing $\mathcal{G}^{(0)}$.
		By the assumptions $S$ has a finite cover $\mathcal{S}$ of by slices. Let $S_i$ be a component of $\mathcal{S}$ and let $g \in S_i$.
		Given $g$ is contained in $\operatorname{Iso}(\mathcal{G})$, then there exists an element $h \in \mathcal{G}\setminus\operatorname{Iso}(\mathcal{G})$ with $s(g)=s(h)$. Since $\mathcal{G}$ is \'etale, there exists a slice $B$ with $B \subseteq \mathcal{G}\setminus\operatorname{Iso}(\mathcal{G})$ and $h \in B$ such that $s(BS_i) \cap r(BS_i)$. Then the slice $B_g:=B \cup BS_i$ is contained in $\mathcal{G}\setminus\operatorname{Iso}(\mathcal{G})$ and $N_g:=B^{-1}BS_i$ is a neighbourhood of $g$ contained in $\bigcup_{n \in \mathbb{N}} (B_g \cup B_g^{-1})^{n}$ (i.e the subgroupoid of $\mathcal{G}$ generated by $B_g$). Given $g \in \mathcal{G}\setminus\operatorname{Iso}(\mathcal{G})$, let $N_g \subseteq S_i$ be a neighbourhood of $g$ contained in $\mathcal{G}\setminus \operatorname{Iso}(\mathcal{G})$ and define $B_g:=N_g$.	The collection $\{N_g\}_{g \in S_i}$ is a cover of $S_i$, hence there exists a finite subcover $\{N'_{g_k}\}$. Then the set $S':=\bigcup_{k} B_{g_k}$ is compact, it generates $S$ and $S' \subseteq \mathcal{G}\setminus\operatorname{Iso}(\mathcal{G})$.
	\end{proof}
	
	\subsection{AF-groupoids and almost finite groupoids}\label{subs: AF-gruopoids and almost finite groupoids}
	
	The following definition is originally due to Renault:
	
	\begin{defi}[\cite{mat12}, Definition~2.2]
		Let $\mathcal{G}$ be an \'etale Cantor groupoid.
		\begin{enumerate}[(i)]
			\item A subgroupoid $\mathcal{H} \leq \mathcal{G}$ is called an \emph{elementary subgroupoid}, if $\mathcal{H}$ is compact, open and principal with $\mathcal{H}^{(0)}=\mathcal{G}^{(0)}$.
			
			\item The groupoid $\mathcal{G}$ is called an \emph{AF-groupoid} if there is a family of nested elementary subgroupoids $\{ \mathcal{H}_i\}$ such that $\mathcal{G}=\bigcup_i \mathcal{H}_i$.
		\end{enumerate}
	\end{defi}
	
	These are the first examples which arise as an application of Renault's program in \cite{ren80} in that:
	\begin{prop}[\cite{ren80}, Proposition~III.1.15]\label{prop: AF-groupoids}
		Let $\mathfrak{A}$ be a C*-algebra. Then the following are equivalent:
		\begin{enumerate}[(i)]
			\item The C*-algebra $\mathfrak{A}$ is AF.
			
			\item The C*-algebra $\mathfrak{A}$ is isomorphic to $C^*(\mathcal{G})$ of an AF-groupoid $\mathcal{G}$ where $\mathcal{G}$ is uniquely determined up to isomorphism.
		\end{enumerate}
	\end{prop}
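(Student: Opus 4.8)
The plan is to prove the two implications separately and then treat uniqueness. For the direction $(ii)\Rightarrow(i)$, suppose $\mathfrak{A}\cong C^*(\mathcal{G})$ with $\mathcal{G}=\bigcup_i\mathcal{H}_i$ a nested union of elementary subgroupoids. Since each $\mathcal{H}_i$ is open with $\mathcal{H}_i^{(0)}=\mathcal{G}^{(0)}$, the inclusions $\mathscr{C}(\mathcal{H}_i)\hookrightarrow\mathscr{C}(\mathcal{H}_{i+1})\hookrightarrow\mathscr{C}(\mathcal{G})$ are compatible and $\bigcup_i\mathscr{C}(\mathcal{H}_i)$ is dense, so that $C^*(\mathcal{G})=\varinjlim_i C^*(\mathcal{H}_i)$. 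I would then observe that each $C^*(\mathcal{H}_i)$ is AF: as $\mathcal{H}_i$ is compact, principal and \'etale over the Cantor space $\mathcal{G}^{(0)}$, its source fibres are finite and of locally bounded cardinality, so $\mathcal{G}^{(0)}$ splits into clopen pieces on which the orbit size and matching are constant, giving $C^*(\mathcal{H}_i)\cong\bigoplus_j M_{n_j}(C(Y_j))$ with each $Y_j$ clopen in $\mathcal{G}^{(0)}$. Each $C(Y_j)$ is commutative AF (totally disconnected spectrum), hence so is each block and the finite sum; and a direct limit of AF algebras is AF. Thus $\mathfrak{A}$ is AF.

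For $(i)\Rightarrow(ii)$ I would realise a given AF algebra as the C*-algebra of a \emph{tail-equivalence groupoid}. By Theorem~\ref{thm: af-brat}, choose a Bratteli diagram $\Gamma=(V,E)$ with $C^*(V,E)\cong\mathfrak{A}$, and let $\mathcal{P}_\Gamma$ be its infinite path space, which is compact, metrizable and totally disconnected (a Cantor space when $\mathfrak{A}$ is infinite-dimensional). Define the principal groupoid $\mathcal{R}_\Gamma=\{(x,y)\in\mathcal{P}_\Gamma\times\mathcal{P}_\Gamma : x_n=y_n \text{ for all large } n\}$, topologised by the clopen slices $Z(p,q)$ indexed by finite paths $p,q$ with $r(p)=r(q)$ (extend by a common tail), which makes it \'etale. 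Setting $\mathcal{H}_N=\{(x,y): x_n=y_n\ \forall n\geq N\}$, each $\mathcal{H}_N$ is a compact, open, principal subgroupoid with unit space $\mathcal{P}_\Gamma$, hence elementary, and $\mathcal{R}_\Gamma=\bigcup_N\mathcal{H}_N$; so $\mathcal{R}_\Gamma$ is an AF-groupoid. Identifying the indicator functions $\mathbf{1}_{Z(p,q)}$ with matrix units, a direct computation matches $C^*(\mathcal{H}_N)$ with $\bigoplus_{v\in V_N}M_{|\mathcal{P}_{v_0,v}|}(\mathbb{C})$ compatibly with the connecting embeddings $i_k$ of Subsection~\ref{subs: brat}, whence $C^*(\mathcal{R}_\Gamma)=\varinjlim_N C^*(\mathcal{H}_N)\cong C^*(V,E)\cong\mathfrak{A}$.

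For uniqueness, I would use that an AF-groupoid is principal, hence effective and Hausdorff, so by Example~\ref{ex: groupoid Cstaralgebra as cartan pair} the inclusion $C(\mathcal{G}^{(0)})\subseteq C^*(\mathcal{G})$ is a Cartan pair; Renault's reconstruction recovers the principal groupoid (there is no twist) from this pair. Hence $\mathcal{G}_1\cong\mathcal{G}_2$ as soon as some C*-isomorphism $C^*(\mathcal{G}_1)\to C^*(\mathcal{G}_2)$ carries one diagonal onto the other. The main obstacle is precisely this last point: an arbitrary isomorphism of AF algebras need not preserve the diagonal MASA, so one must show the diagonal of an AF algebra is unique up to an automorphism of $\mathfrak{A}$ and adjust the isomorphism accordingly. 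This is the technically delicate step, handled via Theorem~\ref{thm: AF} (Str\u{a}til\u{a}--Voiculescu) together with the local-to-global conjugacy of diagonal embeddings; equivalently, one checks that equivalent Bratteli diagrams (telescopings and isomorphisms) yield isomorphic tail groupoids, and that the nested elementary decomposition of any AF-groupoid reads off a Bratteli diagram of its C*-algebra, so that $\mathcal{G}$ is determined by $\mathfrak{A}$ up to isomorphism.
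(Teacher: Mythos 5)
The paper does not prove this proposition; it is quoted from Renault (\cite{ren80}, Proposition~III.1.15), so your argument can only be judged on its own merits. In outline it is the standard and correct one, and both existence directions are sound. For $(ii)\Rightarrow(i)$ you should say a word about why the C*-norm on $\mathscr{C}(\mathcal{H}_i)$ computed inside $C^*(\mathcal{H}_i)$ agrees with the one inherited from $C^*(\mathcal{G})$, so that $\varinjlim_i C^*(\mathcal{H}_i)$ really is $C^*(\mathcal{G})$; this is where amenability of AF-groupoids (full equals reduced) enters, and separability, needed for Bratteli's local characterization of AF algebras when passing to the inductive limit, is supplied by second countability. Your $(i)\Rightarrow(ii)$ via the tail-equivalence groupoid of a Bratteli diagram is exactly Example~\ref{ex: AFgrpd} of this text and the matrix-unit computation is correct; note only that with the paper's convention an ``AF-groupoid'' has Cantor unit space, so the equivalence as literally stated excludes AF algebras whose path space fails to be perfect, a defect of the statement rather than of your proof.

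The one place where your formulation would fail as written is the uniqueness step: it is \emph{not} true that ``the diagonal of an AF algebra is unique up to an automorphism of $\mathfrak{A}$'' if ``diagonal'' means an arbitrary Cartan subalgebra --- AF algebras are known to admit Cartan subalgebras whose Weyl groupoid is not AF, so Renault's reconstruction applied to an unadjusted isomorphism need not return an AF-groupoid at all. What is true, and what you need, is that the \emph{canonical} Str\u{a}til\u{a}--Voiculescu diagonals arising from Bratteli presentations are mutually conjugate; equivalently, one argues entirely combinatorially, as you indicate in your closing sentence: the nested elementary decomposition of an AF-groupoid reads off a Bratteli diagram of its C*-algebra, Theorem~\ref{thm: af-brat} forces any two such diagrams to be equivalent, and equivalent diagrams yield isomorphic tail groupoids. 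That second route is the correct one and should be promoted from an aside to the actual argument. Within the toolkit of this text there is an even shorter path: by \cite{mat12} the triple $(H_0(\mathcal{G}),H_0(\mathcal{G})^+,[1_{\mathcal{G}^{(0)}}])$ is a complete isomorphism invariant for AF-groupoids and is isomorphic as a scaled ordered group to $K_0(C^*_r(\mathcal{G}))$, so Elliott's classification of AF algebras by their scaled ordered $K_0$-groups immediately gives uniqueness of $\mathcal{G}$ from the isomorphism class of $\mathfrak{A}$.
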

	
	Furthermore AF-groupoids are in 1-1 correspondence to inductive limits of compact, \'etale equivalence relations (abbr.: CEER) which are termed \emph{AF-equivalence relations}. In this disguise they are intrinsically studied in \cite{gps04}. The following is the prime example of this correspondence:
	
	\begin{ex}\label{ex: AFgrpd}
		Let $\Gamma=(V,E)$ be a Bratteli diagram\footnote{See Definition~\ref{defi: Bratteli diagram}.\\} and let $\mathcal{P}_\Gamma$ be its space of infinite paths. Define for all $k \in \mathbb{N}$ equivalence relations $\sim_k$ by $(e_n)_{n \in \mathbb{N}}\sim_k (f_n)_{n \in \mathbb{N}}$ \emph{if and only if} $e_n=f_n$ for all $n \geq k$. The sequence of associated groupoids $\{\mathcal{G}_{\sim_k}\}_{k \in }$ is a nested sequence of \'etale groupoids. The inductive limit of this sequence is per definition an AF groupoid. By Theorem~3.9 of \cite{gps04} for every AF-equivalence relation and hence every AF-groupoid there exists a Bratteli diagram upon which it can be modelled.
	\end{ex}
	
	AF-groupoids generalize Krieger's AF-systems (see Subsection~\ref{subs: amplegroups}), satisfy the \hyperref[conj: hk]{HK-conjecture} and fall within Krieger's classification via dimension groups:
	
	\begin{thm}[\cite{mat12}, Theorem~4.10 \& 4.11]
		Let $\mathcal{G}$ be an AF-groupoid. Then the following hold:
		\begin{enumerate}[(i)]
			\item $(H_0(\mathcal{G}),H_0(\mathcal{G})^+, [1_{\mathcal{G}^{(0)}}] )$ is an ordered group with order unit and there exists an isomorphism of ordered groups with order unit
			\begin{equation*}
			(H_0(\mathcal{G}),H_0(\mathcal{G})^+,[1_{\mathcal{G}^{(0)}}] )\cong (K_0(C_r^*(\mathcal{G})), K_0(C_r^*(\mathcal{G}))^+,[1_{C_r^*(\mathcal{G})}])
			\end{equation*}
			
			\item The ordered groups with order unit $(H_0(\mathcal{G}), H_0(\mathcal{G})^+, [1_{\mathcal{G}^{(0)}}] )$ are a complete isomorphism invariant for AF-groupoids.
			
			\item For any topological abelian group $A$, the groups $H_n(\mathcal{G},A)$ are trivial for $n \geq 1$.
		\end{enumerate}
	\end{thm}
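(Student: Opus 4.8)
The plan is to reduce everything to the two pieces out of which an AF-groupoid is built: its elementary subgroupoids, and the inductive limit that glues them. Write $\mathcal{G} = \bigcup_i \mathcal{H}_i$ with $\{\mathcal{H}_i\}$ a nested family of elementary subgroupoids. Both sides of the asserted isomorphism in (i) are continuous, so the first step is to record the two continuity statements. On the C*-side, the $\mathcal{H}_i$ are open, so $\mathscr{C}(\mathcal{H}_i) \subseteq \mathscr{C}(\mathcal{G})$ with dense union and $C_r^*(\mathcal{G}) = \varinjlim_i C_r^*(\mathcal{H}_i)$; continuity of $K_0$ then gives $K_0(C_r^*(\mathcal{G})) = \varinjlim_i K_0(C_r^*(\mathcal{H}_i))$. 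On the homology side, since $\mathcal{G}^{(n)} = \bigcup_i \mathcal{H}_i^{(n)}$ and the $\mathcal{H}_i^{(n)}$ are open, one has $\mathscr{C}(\mathcal{G}^{(n)}, A) = \varinjlim_i \mathscr{C}(\mathcal{H}_i^{(n)}, A)$ as chain complexes; as filtered colimits are exact this yields $H_n(\mathcal{G}, A) = \varinjlim_i H_n(\mathcal{H}_i, A)$ for every $n$ and every coefficient group $A$. (AF-groupoids are Hausdorff, so $\mathscr{C}$ is just the usual compactly supported locally constant functions.)

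The second step is to compute both invariants for a single elementary groupoid $\mathcal{H}$. Being compact, open and principal over a Cantor space, $\mathcal{H}$ has finite orbits of locally constant cardinality; partitioning $\mathcal{H}^{(0)}$ by orbit size produces clopen sets $X_m$ on each of which $\mathcal{H}$ is isomorphic to the product of the pair groupoid $R_m$ on $m$ points with the trivial groupoid on a clopen transversal $Y_m$. Hence $C_r^*(\mathcal{H}) \cong \bigoplus_m M_m(\mathbb{C}) \otimes C(Y_m)$, so $K_0(C_r^*(\mathcal{H})) \cong \bigoplus_m C(Y_m, \mathbb{Z})$, and under the Morita identification the class $[1_{C_r^*(\mathcal{H})}]$ becomes $\sum_m m\,[\mathbf{1}_{Y_m}]$. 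For homology, the nerve of $R_m$ is contractible, giving $H_0(R_m, A) = A$ and $H_j(R_m, A) = 0$ for $j \geq 1$ with arbitrary coefficients; multiplying by the trivial groupoid on $Y_m$ yields $H_0(\mathcal{H}, A) \cong \bigoplus_m C(Y_m, A)$ and $H_j(\mathcal{H}, A) = 0$ for $j \geq 1$. Here $[\mathbf{1}_{\mathcal{H}^{(0)}}] = \sum_m [\mathbf{1}_{X_m}] = \sum_m m\,[\mathbf{1}_{Y_m}]$, since $X_m$ consists of $m$ disjoint copies of $Y_m$, so the order units match; the positive cone $H_0(\mathcal{H})^+$ manifestly corresponds to $K_0(C_r^*(\mathcal{H}))^+$.

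The crux is the third step, \emph{naturality}: one must check that the isomorphisms $H_0(\mathcal{H}_i) \xrightarrow{\cong} K_0(C_r^*(\mathcal{H}_i))$ commute with the connecting maps of the two inductive systems. Both connecting maps are governed by the same combinatorial datum, namely the way orbits of $\mathcal{H}_i$ fuse inside $\mathcal{H}_{i+1}$, recorded by the edge multiplicities of the associated Bratteli diagram. Concretely, I would model $\mathcal{G}$ on a Bratteli diagram $\Gamma = (V,E)$ via \ref{ex: AFgrpd} and \cite{gps04}, identify the groups at level $k$ on both sides with (a clopen refinement of) $\mathbb{Z}^{|V_k|}$, and verify that each connecting homomorphism coincides with the map $v \mapsto \sum_{e \in s^{-1}(v)} r(e)$ that defines the dimension group $K_0(V,E)$. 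Passing to the limit then gives the isomorphism of ordered groups with order unit of (i), and since $\varinjlim 0 = 0$ it also gives the vanishing $H_n(\mathcal{G}, A) = 0$ for $n \geq 1$ of (iii). The delicate points are pinning down the orbit-size decomposition coherently across the inclusions and keeping the order units aligned under the Morita isomorphisms; the rest is bookkeeping.

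Finally, (ii) follows formally. If two AF-groupoids have isomorphic $(H_0, H_0^+, [1_{\mathcal{G}^{(0)}}])$, then by (i) their groupoid C*-algebras have isomorphic scaled ordered $K_0$-groups (the order unit, being the class of the identity of the unital algebra $C_r^*(\mathcal{G})$, determines the scale), so they are $*$-isomorphic by Elliott's classification of AF C*-algebras; by Proposition~\ref{prop: AF-groupoids}(ii) the AF-groupoid is determined up to isomorphism by its C*-algebra, so the groupoids are isomorphic. The converse implication is immediate from the functoriality of $H_0$.
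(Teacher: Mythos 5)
The paper does not actually prove this theorem --- it is quoted from \cite{mat12} without proof --- so the only question is whether your argument is sound, and it is: the reduction to elementary subgroupoids via continuity of both $K_0$ and $H_*$ along $\mathcal{G}=\bigcup_i\mathcal{H}_i$, the computation $\mathcal{H}\cong\bigsqcup_m R_m\times Y_m$ giving $H_0(\mathcal{H},A)\cong\bigoplus_m C(Y_m,A)$, $K_0(C_r^*(\mathcal{H}))\cong\bigoplus_m C(Y_m,\mathbb{Z})$ with matching positive cones and order units $\sum_m m[\mathbf{1}_{Y_m}]$, and vanishing higher homology, followed by Elliott's classification and Proposition~\ref{prop: AF-groupoids}(ii) for part (ii), is exactly Matui's route. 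One suggestion for the step you rightly call the crux: rather than chasing Bratteli-diagram multiplicities, define the comparison map once and for all on generators by sending $[\mathbf{1}_U]\in H_0(\mathcal{G})$ to the $K_0$-class of the projection $\mathbf{1}_U\in C(\mathcal{G}^{(0)})\subseteq C_r^*(\mathcal{G})$ for $U$ clopen; this is well defined because for every compact open slice $B$ the element $\mathbf{1}_B$ is a partial isometry implementing a Murray--von Neumann equivalence between $\mathbf{1}_{s(B)}$ and $\mathbf{1}_{r(B)}$ (the $K$-theoretic shadow of Proposition~\ref{prop: hom1}), and it is then automatically compatible with the inclusions $\mathcal{H}_i\hookrightarrow\mathcal{H}_{i+1}\hookrightarrow\mathcal{G}$, so naturality comes for free and your elementary computation plus continuity shows it is an isomorphism of scaled ordered groups. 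Two small points worth recording if you write this up in full: the existence of a \emph{global} clopen transversal $Y_m$ with a coherent labelling of the $m$ sheets uses zero-dimensionality of $\mathcal{G}^{(0)}$ (cover $X_m$ by finitely many local trivializations and disjointify their clopen saturations), and in (ii) the order unit does determine the scale because for a unital AF algebra the scale is the order interval $[0,[1]]$.
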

	
	In \cite{mat12} Matui introduced the class of almost finite groupoids: 
	
	\begin{defi}[\cite{mat12}, Definition 6.2]
		Let $\mathcal{G}$ be an effective, \'etale Cantor groupoid. It is said to be \emph{almost finite}, if for every compact subset $K \subseteq \mathcal{G}$ and every $\varepsilon >0$ there exists an elementary subgroupoid $\mathcal{H}\leq \mathcal{G}$ such that for all $u \in \mathcal{G}^{(0)}$:
		\begin{equation*}
		\frac{|K\mathcal{H}u\setminus \mathcal{H}u|}{|\mathcal{H}u|}<\epsilon
		\end{equation*}
	\end{defi}

	Almost finite groupoids encompass AF-groupoids and transformation groupoids associated with free $\mathbb{Z}^n$-actions on a Cantor space by Lemma~6.3 in \cite{mat12}.
	
	\begin{prop}[\cite{mat12}, Lemma~6.3]\label{prop: alm.fin.}
		Let $n \in \mathbb{N}$ and let $\varphi$ be a free continuous action of $\mathbb{Z}^n$ on a Cantor space. Then the following hold:
		\begin{enumerate}[(i)]
			\item The transformation groupoid $\mathcal{G}_{\varphi}$ is almost finite.
			
			\item The transformation groupoid $\mathcal{G}_{\varphi}$ satisfies the \hyperref[conj: hk]{HK-conjecture}.
		\end{enumerate}
	\end{prop}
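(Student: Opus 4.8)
The plan is to prove the two assertions by quite different means: part (i) by a Rokhlin--F\o lner tower argument, and part (ii) by a K-theoretic spectral-sequence computation. For (i), first note that since $\varphi$ is free the transformation groupoid $\mathcal{G}_\varphi = \mathbb{Z}^n \times X$ is principal, hence effective, so that the hypothesis of the definition of almost finiteness is met. Given a compact $K \subseteq \mathcal{G}_\varphi$ and $\varepsilon > 0$, I would begin by observing that the projection of $K$ to $\mathbb{Z}^n$ is compact, hence finite, so $K \subseteq F \times X$ for a finite $F \subseteq \mathbb{Z}^n$. The decisive input is then a clopen castle (Rokhlin tower) decomposition: for such $F$ and $\varepsilon$ there is a finite clopen partition $X = \bigsqcup_i \bigsqcup_{s \in S_i} s\cdot B_i$ into towers with clopen bases $B_i$ and finite shapes $S_i \subseteq \mathbb{Z}^n$, each shape being $(F,\varepsilon)$-F\o lner, i.e.\ $|(F+S_i)\setminus S_i|/|S_i| < \varepsilon$. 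Granting this, I would define $\mathcal{H}$ to be the open subgroupoid which inside each tower identifies the points lying over a common base point: for $x \in B_i$ and $s,s' \in S_i$ the element $(s'-s,\, s\cdot x)$ is declared to lie in $\mathcal{H}$. One checks routinely that $\mathcal{H}$ is compact (a finite union of clopen slices), open, principal (by freeness), and has $\mathcal{H}^{(0)} = X$, so that it is elementary.

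The required ratio estimate is then a F\o lner boundary count. Identifying the source fibre over a unit with $\mathbb{Z}^n$ through the group coordinate, for $u = t\cdot x$ in tower $i$ one has $\mathcal{H}u = S_i - t$, so $|\mathcal{H}u| = |S_i|$; composing on the left with an element of $K \subseteq F\times X$ shifts the group coordinate by an element of $F$, whence $K\mathcal{H}u \subseteq (F+S_i) - t$ and therefore $K\mathcal{H}u \setminus \mathcal{H}u \subseteq \big((F+S_i)\setminus S_i\big) - t$. This gives $|K\mathcal{H}u \setminus \mathcal{H}u| / |\mathcal{H}u| \le |(F+S_i)\setminus S_i|/|S_i| < \varepsilon$ uniformly in $u$, which is exactly the almost finiteness condition. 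The main obstacle in this part is the clopen tower theorem with F\o lner shapes: this is the genuine dynamical content, combining the amenability of $\mathbb{Z}^n$ (F\o lner boxes) with the zero-dimensionality of $X$ (to make the towers clopen), and it is here that freeness is indispensable.

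For (ii), I would identify $C_r^*(\mathcal{G}_\varphi)$ with the crossed product $C(X)\rtimes_\varphi \mathbb{Z}^n$ (Example~\ref{ex: grpd.c-star}) and the groupoid homology $H_*(\mathcal{G}_\varphi)$ with the group homology $H_*(\mathbb{Z}^n, C(X,\mathbb{Z}))$ (Example~\ref{ex: homology}). The K-theory of the crossed product is computed by the Kasparov spectral sequence (obtained by iterating the Pimsner--Voiculescu sequence over the $n$ factors) with $E^2_{p,q} = H_p(\mathbb{Z}^n, K_q(C(X)))$ abutting to $K_{p+q}(C(X)\rtimes_\varphi \mathbb{Z}^n)$. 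Since $K_0(C(X)) = C(X,\mathbb{Z})$ and $K_1(C(X)) = 0$, the $E^2$-page is concentrated in even rows, with $E^2_{p,q} \cong H_p(\mathcal{G}_\varphi)$ for $q$ even and $0$ otherwise. The key point is then degeneration: the spectral sequence is a graded module over the exterior algebra $H_*(\mathbb{Z}^n,\mathbb{Z})$ generated by its bottom row $E^2_{0,\ast}$, and every higher differential acts as a derivation that must annihilate this bottom row for degree reasons (it would land in negative $p$); hence all higher differentials vanish and the sequence collapses at $E^2$. Collecting the surviving even (resp.\ odd) columns over all $p$ yields the stated direct-sum isomorphisms. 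The main obstacle here is precisely to justify this collapse and to verify that the resulting finite filtration of $K_*(C(X)\rtimes_\varphi\mathbb{Z}^n)$ splits into the asserted direct sum, which is where the special structure of $\mathbb{Z}^n$ (rather than a general amenable group) is used.
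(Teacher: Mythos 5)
The paper itself contains no proof of this statement: it is quoted from \cite{mat12}, Lemma~6.3, whose argument for (i) is precisely the clopen-castle argument you describe and whose argument for (ii) is a reference to Forrest and Hunton \cite{fh99} (this is also how the paper invokes \cite{fh99} in Example~\ref{ex: homology}). Your part (i) is therefore essentially the same as the source's. The reduction of a compact $K$ to $F\times X$ with $F\subseteq\mathbb{Z}^n$ finite, the definition of the elementary subgroupoid $\mathcal{H}$ from a castle, and the boundary count $|K\mathcal{H}u\setminus\mathcal{H}u|\le|(F+S_i)\setminus S_i|$ are all correct, and you rightly isolate the one genuine input: the existence of clopen castles whose shapes are $(F,\varepsilon)$-F\o lner for a free $\mathbb{Z}^n$-action on a Cantor space. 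For $n=1$ this is the Kakutani--Rokhlin machinery of Subsection~\ref{subs: kak.ro} (aperiodicity suffices); for general $n$ it is the tower theorem Matui imports from Forrest. Provided you cite that theorem rather than treat it as routine, (i) stands.

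Part (ii) has a genuine gap. The framework --- the Kasparov/iterated Pimsner--Voiculescu spectral sequence with $E^2_{p,q}=H_p(\mathbb{Z}^n,K_q(C(X)))$, concentrated in even rows --- is the right one and is exactly the route of \cite{fh99}, but your mechanism for the collapse does not work. The $E^2$-page is indeed a module over $H_*(\mathbb{Z}^n;\mathbb{Z})\cong\Lambda^*\mathbb{Z}^n$, but it is \emph{not} generated by its bottom row: $E^2_{0,q}$ is the coinvariants $H_0(\mathbb{Z}^n,C(X,\mathbb{Z}))$, and for a nontrivial module $M$ the groups $H_p(\mathbb{Z}^n,M)$ are not obtained from $H_0(\mathbb{Z}^n,M)$ by multiplication with exterior classes (already for $n=1$ one has $H_1(\mathbb{Z},M)=M^{\mathbb{Z}}$ versus $H_0(\mathbb{Z},M)=M_{\mathbb{Z}}$). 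So the ``derivation vanishing on the generators'' argument is unavailable. What you do get for free is a parity constraint: a differential $d_r$ shifts $q$ by $r-1$, so all even differentials vanish on a page concentrated in even rows, and the sequence collapses automatically when $n\le 2$. For $n\ge 3$ the vanishing of $d_3,d_5,\dots$, together with the splitting of the resulting finite filtration into the asserted direct sum (an extension problem you also pass over), is exactly the nontrivial content of \cite{fh99}, proved there by a computation specific to Cantor fibres using that $K_*(C(X))=C(X,\mathbb{Z})$ is torsion-free and concentrated in degree zero --- not by a formal multiplicativity argument. You should either cite that theorem or reproduce its induction; as written, this step fails.
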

	
	\subsection{Purely infinite groupoids and groupoids of shifts of finite type}\label{subs: Purely infinite groupoids and groupoids of shifts of finite type}
	
	Matui defined purely infinite groupoids in \cite{mat15} based on \cite{rs12} where condition on dynamical systems are given such that the arising crossed product C*-algebra is purely infinite. For a treatment of purely infinite C*-algebras see \cite{bla06}, V.2.2. 
	
	\begin{defi}[\cite{mat15}, Definition 4.9.]
		Let $\mathcal{G}$ be an \'etale Cantor groupoid.
		\begin{enumerate}[(i)]
			\item A clopen subset $U \subseteq \mathcal{G}^{(0)}$ is called \emph{properly infinite}, if there exist compact open slices $S_1,S_2$ satisfying $s(S_1)=s(S_2)=U$, $r(S_1) \cup r(S_2) \subseteq U$ and $r(S_1) \cap r(S_2) =\emptyset$.
			
			\item The groupoid $\mathcal{G}$ is called \emph{purely infinite}, if every clopen subset of $\mathcal{G}^{(0)}$ is properly infinite.
		\end{enumerate}
	\end{defi}
	
	\begin{rem}
		If the unit space $\mathcal{G}^{(0)}$ of a Hausdorff, \'etale Cantor groupoid $\mathcal{G}$ is properly infinite, there exists no $\mathcal{G}$-invariant probability measure on $\mathcal{G}^{(0)}$.
	\end{rem}
	
	The class of purely infinite groupoids contains in particular groupoids that arise from one-sided shifts of finite type.\footnote{For details on such systems see \cite{lm95}, Capter~2.\\} Such groupoids are of significance for operator theory in that they are groupoids upon which Cuntz-Krieger algebras\footnote{See Definition~\ref{defi: cuntz-krieger algebras}.\\} can be modeled. It is important to mention the name of Kengo Matsumoto who has been studying orbit equivalence and full groups in this context -- see \cite{mats10}, \cite{mats13}.
	
	\begin{defi}[\cite{mat15}, §6.1]\label{defi: shifts of finite type}
		Let $n \in \mathbb{N}$ and let $\{A_{i,j}\}_{i,j \in \{1,\dots,n\} } \in \mathbf{M}_{n\times n}(\mathbb{Z}_{\geq 0})$.
		\begin{enumerate}[(i)]
			\item The matrix $A$ is said to \emph{satisfy (Inp)} if it is irreducible i.e. for all $i,j \in \{1,\dots,n\}$ there exists an $n \in \mathbb{N}$ such that $(A^n)_{i,j}>0$, and $A$ is not a permutation matrix.
		\end{enumerate}
		
		Let $A=\{A_{i,j}\}_{i,j \in \{1,\dots,n\} } \in \mathbf{M}_{n\times n}(\mathbb{Z}_{\geq 0})$ be such that it satisfies (Inp).
		
		\begin{enumerate}[(i),resume]
			\item Let $\Gamma_A$ be a finite, directed graph with a finite vertex set $V$ and a finite set of edges $E$ such that $A$ is the adjacency matrix of $\Gamma$.\footnote{This means the vertex set is associated with $\{1,\dots,n\}$ and for every pair of vertices $v,w \in V$ define $A_{v,w}:= \# \{ e \in E| s(e)=v,r(e)=w \}$. Note that property (Inp) implies that the arising directed graph is strongly connected and is not a cycle.\\} The space of one-ended infinite directed paths $X_A:=\{(e_k)_{k \in \mathbb{N}}\in E^{\mathbb{N}}|r(e_k)=s(e_{k+1}) \text{ for all }k\in \mathbb{N} \}$ is a Cantor space. For every finite word $\omega=(\omega_1,\dots,\omega_l) \in E^*$ the cylinder set
			\begin{equation*}
				C_\omega:=\{(e_k)_{k \in \mathbb{N}}\in X_A| e_i=\omega_i \text{ for all } i \in \{ 1,\dots,l \} \}
			\end{equation*}
			is a clopen subset of $X_A$.
			
			\item Define a continuous surjection $\sigma_A \colon X_A \to X_A$ by $\sigma_A (x)_i=e_{i+1}$ for every $i \in \mathbb{N}$ and $x=(e_k)_{k \in \mathbb{N}} \in X_A$. The pair $(X_A,\sigma_A)$ is called an \emph{irreducible one-sided topological Markov shift} or \emph{irreducible one-sided shift of finite type}.
			
			\item If $A=n \in \mathbf{M}_{1\times 1}(\mathbb{Z}_{\geq 0})$, the arising one-sided shift of finite type is called \emph{full}. Since it only depends on $n$ it is denoted by $(X_n,\sigma_n)$.
			
			\item  Let $(X_A,\sigma_A)$ be a one-sided shift of finite type. Denote by $\mathcal{G}_A$ the topological groupoid given on the set
			\begin{equation*}
			\{(x,k,y) \in X_A\times \mathbb{Z} \times X_A|\exists m,n \in \mathbb{Z}_{\geq 0}: \sigma_A^n(x)=\sigma_A^m(y),k=n-m \}
			\end{equation*}
			by defining pairs $((x_1,k_1,y_1),(x_2,k_2,y_2))\in \mathcal{G}_A$ as compatible when $y_1=x_2$, the product by $(x_1,k_1,y_1)\cdot(x_2,k_2,y_2) =(x_1,k_1+k_2,y_2)$, the inverse by $(x,k,y)^{-1}=(y,-k,x)$ for all $(x,k,y) \in 	\mathcal{G}_A$ and by declaring sets of the form $\{(x,k-l,y)|x\in U,y \in V, \sigma_A^k(x)=\sigma_A^l(y)  \}$ where $U,V$ are open subsets of $X_A$ to be a topological basis. We call a groupoid a \emph{SFT-groupoid} if it is $\mathcal{G}_A$ for some one-sided shift of finite type.
		\end{enumerate}
	\end{defi}
	
	Let $\mathcal{G}_A$ be SFT-groupoid. The space of units $\mathcal{G}_A^{(0)}$ given by the elements $(x,0,x) \in \mathcal{G}_A$ canonically corresponds to $X_A$ making the groupoid $\mathcal{G}_A$ a Cantor groupoid.
	For every $x \in X_A$ the orbit $\mathcal{G}_A(x)$ is given by the set $\{y \in X_A|\exists l,m \in\mathbb{Z}_{\geq 0}:\sigma_A^n(x)=\sigma_A^m(y)  \}$. Irreducibility of $A$ then assures that $\mathcal{G}_A$ is minimal.
	For every pair of finite directed paths $\omega=(\omega_1,\dots, \omega_l),\upsilon=(\upsilon_1,\dots,\upsilon_m) \in E^*$ which satisfy $r(\omega_l)=r(\upsilon_m)$ the set
	\begin{equation*}
		B_{\omega,\upsilon}:= \{(x,l-m,y)\in\mathcal{G}_A|\sigma_A^{l}(x)=\sigma_A^{m}(y), x \in C_\omega ,y \in C_\upsilon  \}
	\end{equation*}
	is a compact open slice. Slices of this form generate the topology of $\mathcal{G}_A$ by which it is \'etale.
	Since $A$ satisfies (Inp), the set of non eventually periodic paths in $X_A$ is dense and the groupoid $\mathcal{G}_A$ is essentially principal.
	
	\begin{lem}[\cite{mat15}, Lemma~6.1]\label{lem: sft-groupoids are purely infinite, minimal etc}
		SFT-groupoids are purely infinite, Hausdorff, minimal, effective, \'etale Cantor groupoids.
	\end{lem}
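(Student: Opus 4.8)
The statement bundles together several properties, most of which have already been recorded in the discussion preceding the lemma: the identification $\mathcal{G}_A^{(0)}\cong X_A$ exhibits $\mathcal{G}_A$ as a Cantor groupoid, irreducibility of $A$ gives minimality, the slices $B_{\omega,\upsilon}$ form a basis of compact open slices so that $\mathcal{G}_A$ is (ample, hence) \'etale, and density of the non-eventually-periodic points makes $\mathcal{G}_A$ essentially principal. The plan is therefore to supply the two genuinely missing ingredients, \emph{Hausdorffness} and \emph{pure infiniteness}, and then to read off \emph{effectiveness} from them: once Hausdorffness is in hand, the already-cited fact that an essentially principal, \'etale, Hausdorff groupoid is effective (\cite{ren08}, Proposition~3.6(i)) closes that claim.

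For Hausdorffness I would separate two distinct elements $g=(x,p,y)$ and $g'=(x',p',y')$ of $\mathcal{G}_A$. If $x\neq x'$ (resp. $y\neq y'$), then since $r(x,n,y)=x$ and $s(x,n,y)=y$ are continuous and $X_A$ is Hausdorff, the preimages under $r$ (resp. $s$) of disjoint neighbourhoods in $X_A$ separate $g$ and $g'$. The only remaining case is $x=x'$, $y=y'$ and $p\neq p'$. The key observation is that on each basic open set $\{(u,k-l,v):u\in U,\,v\in V,\,\sigma_A^{k}(u)=\sigma_A^{l}(v)\}$ the middle $\mathbb{Z}$-coordinate equals the \emph{constant} $k-l$. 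Choosing such a basic neighbourhood of $g$ with constant value $p$ and one of $g'$ with constant value $p'$, any common point would carry cocycle value both $p$ and $p'$; hence these neighbourhoods are disjoint. This establishes that $\mathcal{G}_A$ is Hausdorff.

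The heart of the proof is pure infiniteness: every clopen $U\subseteq\mathcal{G}_A^{(0)}=X_A$ must be properly infinite. I would first treat a cylinder $U=C_\omega$, where $\omega$ ends at a vertex $v:=r(\omega_{|\omega|})$. For a loop $\pi$ at $v$ the slice $B_{\omega\pi,\,\omega}$ is exactly the prefix-replacement homeomorphism, with $s(B_{\omega\pi,\omega})=C_\omega$ and $r(B_{\omega\pi,\omega})=C_{\omega\pi}\subseteq C_\omega$. It thus suffices to produce two loops $\pi_1,\pi_2$ at $v$ with $C_{\omega\pi_1}\cap C_{\omega\pi_2}=\emptyset$, i.e. with neither of $\pi_1,\pi_2$ a prefix of the other. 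Here (Inp) enters decisively: an irreducible adjacency matrix all of whose vertices have out-degree one is a single cycle, i.e. a permutation matrix, so $A$ not being a permutation matrix forces some vertex $w$ to have two distinct outgoing edges $a\neq b$, while irreducibility supplies a path $\rho$ from $v$ to $w$ and return paths $\tau_a,\tau_b$ to $v$. The loops $\pi_1:=\rho\,a\,\tau_a$ and $\pi_2:=\rho\,b\,\tau_b$ agree on $\rho$ and then diverge at the next edge, so the corresponding cylinders are disjoint. Setting $S_i:=B_{\omega\pi_i,\,\omega}$ gives compact open slices with $s(S_1)=s(S_2)=C_\omega$, $r(S_i)\subseteq C_\omega$ and $r(S_1)\cap r(S_2)=\emptyset$, so $C_\omega$ is properly infinite.

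For a general clopen set I would write $U=\bigsqcup_{j}C_{\omega^{j}}$ as a finite disjoint union of cylinders, apply the cylinder construction to each $C_{\omega^{j}}$ to obtain slices $S_1^{j},S_2^{j}$, and form $S_i:=\bigsqcup_j S_i^{j}$. Because the sources $C_{\omega^{j}}$ (and hence the ranges, which lie inside them) are pairwise disjoint, each $S_i$ is again a compact open slice with $s(S_i)=U$, $r(S_i)\subseteq U$, and $r(S_1)\cap r(S_2)=\emptyset$; thus every clopen set is properly infinite and $\mathcal{G}_A$ is purely infinite. Assembling everything yields that $\mathcal{G}_A$ is a Hausdorff, minimal, effective, purely infinite, \'etale Cantor groupoid. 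I expect the main obstacle to be the pure-infiniteness step, where the combinatorial use of (Inp) to build two loops with disjoint cylinders must be handled carefully (including the degenerate single-vertex and $v=w$ subcases); by contrast, the delicate case in Hausdorffness — equal endpoints but different $\mathbb{Z}$-coordinate — is resolved cleanly by the local constancy of the cocycle.
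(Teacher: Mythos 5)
Your proposal is correct, and it matches the route the text takes: the paper itself only records the Cantor/minimal/\'etale/essentially principal parts in the paragraph preceding the lemma (with exactly the arguments you cite) and otherwise defers to Matui's Lemma~6.1, whose proof of the remaining points runs along the same lines as yours — Hausdorffness from the local constancy of the $\mathbb{Z}$-cocycle on the basic slices, pure infiniteness by using (Inp) to produce a vertex of out-degree at least two and hence two return loops whose cylinders are disjoint, and effectiveness from essential principality plus Hausdorffness via the cited Proposition~3.6(i) of \cite{ren08}. Your reduction of a general clopen set to a finite disjoint union of cylinders and the patching of the resulting slices are also handled correctly.
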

	
	\begin{defi}
		Let $n \in \mathbb{N}$, let $\{A_{i,j}\}_{i,j \in \{1,\dots,n\} } \in \mathbf{M}_{n\times n}(\mathbb{Z}_{\geq 0})$ be such that it satisfies (Inp) and let $\Gamma=(V,E)$ be the associated directed, finite graph. Define $\tilde{A}$ to be the edge matrix of this graph i.e. the matrix $\tilde{A} \in \mathbf{M}_{|E|\times|E|}(\mathbb{Z}/2\mathbb{Z})$ given by \begin{equation*}
		\tilde{A}_{e,f}:=\begin{cases}
		1, & \text{ if } r(e)=s(f)\\
		0, & \text{ else}
		\end{cases}
		\end{equation*}
	\end{defi}
	
	\begin{rem}
		If a matrix $A$ satisfies (Inp), then the matrix $\tilde{A}$ is a matrix satisfying the assumptions in Definition~\ref{defi: cuntz-krieger algebras}.
	\end{rem}

	\begin{thm}[\cite{ren00}, Proposition~4.8]\label{thm: cuntz-krieger algebras are groupoid c*-algebras of one sided shifts of finite type}
		Let $(X_A,\sigma_A)$ be a one-sided shift of finite type. Then $\mathcal{O}_{\tilde{A}} \cong C_r^*(\mathcal{G}_A)$.
	\end{thm}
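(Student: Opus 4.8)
The plan is to exhibit inside $C_r^*(\mathcal{G}_A)$ an explicit family of partial isometries satisfying the Cuntz--Krieger relations for $\tilde A$, obtain a $*$-homomorphism from the universal property of $\mathcal{O}_{\tilde A}$, and then show it is an isomorphism by a density argument together with a gauge-invariant uniqueness argument. First I would, for each edge $e\in E$ with $r(e)=w$, take the compact open slice $B_e:=\{(x,1,y)\in\mathcal{G}_A : x\in C_e,\ \sigma_A(x)=y\}$ from Definition~\ref{defi: shifts of finite type} (in the notation $B_{\omega,\upsilon}$ this is $B_{(e),\upsilon_w}$ with $\upsilon_w$ the trivial path at $w$), and set $s_e:=\mathbf 1_{B_e}\in\mathscr{C}(\mathcal{G}_A)$. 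Using the elementary identities $\mathbf 1_B\ast\mathbf 1_{B'}=\mathbf 1_{BB'}$ for composable slices and $\mathbf 1_B^{*}=\mathbf 1_{B^{-1}}$ in the convolution algebra $\mathscr{C}(\mathcal{G}_A)$ (Definition~\ref{defi: connes}), one computes $s_es_e^{*}=\mathbf 1_{C_e}$, so the range projections are pairwise orthogonal with $\sum_e s_es_e^{*}=\mathbf 1_{\mathcal{G}_A^{(0)}}$, while $s_e^{*}s_e=\mathbf 1_{s(B_e)}=\mathbf 1_{\{y:\,s(y_1)=w\}}=\sum_{f}\tilde A_{e,f}\,s_fs_f^{*}$, since the edges $f$ with $s(f)=w$ index exactly the cylinders $C_f$ partitioning $\{y:s(y_1)=w\}$. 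These are precisely the Cuntz--Krieger relations for $\tilde A$, which is admissible by the remark preceding the theorem.

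By the universal property defining $\mathcal{O}_{\tilde A}$ (Definition~\ref{defi: cuntz-krieger algebras}) the assignment $S_e\mapsto s_e$ extends to a $*$-homomorphism $\pi\colon\mathcal{O}_{\tilde A}\to C_r^*(\mathcal{G}_A)$. For surjectivity I would observe that for finite directed paths $\omega,\upsilon$ with $r(\omega_{|\omega|})=r(\upsilon_{|\upsilon|})$ the product $s_\omega s_\upsilon^{*}$, where $s_\omega:=s_{\omega_1}\cdots s_{\omega_{|\omega|}}$, equals $\mathbf 1_{B_{\omega,\upsilon}}$. Since the slices $B_{\omega,\upsilon}$ generate the topology of $\mathcal{G}_A$, the linear span of their indicators is a dense $*$-subalgebra of $\mathscr{C}(\mathcal{G}_A)$, hence dense in $C_r^*(\mathcal{G}_A)$; as $\Ima(\pi)$ is a $C^*$-subalgebra containing this dense set, $\pi$ is surjective.

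The main obstacle is injectivity, and here I would invoke the gauge-invariant uniqueness theorem. The continuous cocycle $c\colon\mathcal{G}_A\to\mathbb Z$, $(x,k,y)\mapsto k$, induces a gauge action of $\mathbb T$ on $C_r^*(\mathcal{G}_A)$ scaling $\mathbf 1_{B_{\omega,\upsilon}}$ by $z^{|\omega|-|\upsilon|}$, matching the canonical gauge action $S_e\mapsto zS_e$ on $\mathcal{O}_{\tilde A}$, so that $\pi$ is $\mathbb T$-equivariant. It then suffices to show $\pi$ is injective on the fixed-point algebra, the AF core. On the groupoid side this core is $C_r^*(\ker c)$, where $\ker c=\{(x,0,y):\exists\,n,\ \sigma_A^n(x)=\sigma_A^n(y)\}$ is the AF subgroupoid of tail equivalence, an increasing union of compact open principal subgroupoids as in Example~\ref{ex: AFgrpd}; since $\pi$ restricts to the identity on $C(X_A)=C_0(\mathcal{G}_A^{(0)})$ and carries the generators of each finite stage faithfully, tracking it through this inductive limit gives injectivity there, whence the gauge-invariant uniqueness theorem yields injectivity of $\pi$. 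I expect the cleanest write-up to instead use the shortcut that, since $A$ satisfies (Inp), the edge matrix $\tilde A$ is irreducible and satisfies Condition~(I), so $\mathcal{O}_{\tilde A}$ is simple (and nonzero); the nonzero homomorphism $\pi$ is then automatically injective, at the cost of citing simplicity in place of the explicit core computation.
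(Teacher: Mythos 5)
Your proposal is correct, but be aware that the paper does not actually prove this statement: it is quoted with attribution to Renault's Proposition~4.8, and the only trace of the isomorphism in the surrounding text is the later assignment $B=\bigsqcup_i B_{\omega_i,\upsilon_i}\mapsto\sum_i S_{\omega_i}S_{\upsilon_i}^*$ used in the Higman--Thompson discussion. What you have written is the standard argument (and is essentially Renault's): since $\mathcal{G}_A$ is a Hausdorff ample groupoid, the identities $\mathbf 1_B\ast\mathbf 1_{B'}=\mathbf 1_{BB'}$ and $\mathbf 1_B^{*}=\mathbf 1_{B^{-1}}$ hold for compact open slices, your $B_e$ satisfies $r(B_e)=C_e$ and $s(B_e)=\bigsqcup_{s(f)=r(e)}C_f$, and so the $s_e=\mathbf 1_{B_e}$ are nonzero partial isometries verifying exactly the relations of Definition~\ref{defi: cuntz-krieger algebras} for $\tilde A$; the computation $s_\omega s_\upsilon^{*}=\mathbf 1_{B_{\omega,\upsilon}}$ gives surjectivity because these slices form a basis of the topology; and injectivity follows either from gauge-invariant uniqueness via the cocycle $c(x,k,y)=k$ (with fixed-point algebra $C_r^*(c^{-1}(0))$, the tail-equivalence AF core) or, more cheaply, from simplicity of $\mathcal{O}_{\tilde A}$, which (Inp) guarantees since it forces $\tilde A$ to be irreducible and not a permutation matrix. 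Two cosmetic caveats: Definition~\ref{defi: cuntz-krieger algebras} is not literally phrased as a universal property but as a uniqueness assertion, which under condition~(I) amounts to the same thing, so either your route or a direct appeal to that uniqueness (given surjectivity) closes the argument; and your labelling of $B_e$ as $B_{(e),\upsilon_w}$ with $\upsilon_w$ an empty path falls slightly outside the notation of Definition~\ref{defi: shifts of finite type}, though $B_e=\bigsqcup_{f:\,s(f)=r(e)}B_{(e,f),(f)}$ repairs this immediately. Neither is a gap.
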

	
	\begin{ex}
		Let $A=n \in \mathbf{M}_{1\times 1}(\mathbb{Z}_{\geq 2})$. Then the associated graph $\Gamma_A$ is just a ``directed bouquet graph"-- it consists of one vertex with $n$ attached directed loops. Its edge matrix $\tilde{A}$ is a $n \times n$-matrix with all entries $1$. Correspondingly the Cuntz algebra of order $n$ is the groupoid C*-algebra of an SFT-groupoid associated to a full one-sided shift of finite type.
	\end{ex}
	
	All of this considerations embed into the broader context of graph groupoids, graph inverse semigroups and graph C*-algebras.

	\chapter{Topological full groups}\label{chap: 3}
	
	\section{Basic structure and significance in Cantor dynamics}\label{sec: basic structure and significance in cantor dynamics}
	
	We start in Subsection~\ref{subs: definition} with the definition of full groups and topological full groups and some immediate observations. Subsection~\ref{subs: topological full groups in terms of kakutani-rokhlin partitions} contains description of topological full groups of minimal Cantor systems in terms of Kakutani-Rokhlin partitions. Subsection~\ref{subs: sight} traces back the first glimpses on topological full groups in work of Krieger and Putnam and introduces important subgroups.  Subsection~\ref{subs: gla.wei.} cites a lemma by Glasner and Weiss. Subsection~\ref{subs: isom} concludes this section with a look on full groups as isomorphism invariants in topological dynamics. 
	
	\subsection{Definition}\phantomsection\label{subs: definition}

	\begin{defi}\label{defi: tfg1}
		Let $(X, G)$ be a topological dynamical system such that $G$ is countable.
		\begin{enumerate}[(i)]
			\item The \emph{full group of $(X,G)$} denoted by $\mathfrak{F}(G)$ is the subgroup of homeomorphisms $\gamma \in \operatorname{Homeo}(X)$ for which there exists a function $f_\gamma \colon X \to G$ such that $\gamma (x)=f_\gamma(x)\cdot x$. Such a function $f_\gamma$ is called an \emph{orbit cocycle of $\gamma$}.
			
			\item The \emph{topological full group of $(X,G)$} denoted by $\mathfrak{T}(G)$ is the subgroup of homeomorphisms $\gamma \in \operatorname{Homeo}(X)$ for which there exists a continuous orbit cocycle.\footnote{In most references e.g. in \cite{gps99} and all of Matui's works, the full group (resp. topological full group) of a topological dynamical system $(X, G)$ is denoted by $[G]$ (resp. $[[G]]$) or by $[\varphi]$ (resp. $[[\varphi]]$) in the context of $\mathbb{Z}$-systems $(X,\varphi)$. We choose a notation close to the one for topological full groups of \'etale groupoids used in \cite{nek15} and \cite{nek17} to avoid confusion with commutator brackets and to subsequently have a natural notation for important subgroups. The naming ``topological full group" is a little awkward, in that the word ``topological" serves just as a reference to the field of topological dynamics and signifies the difference between $\mathfrak{F}(G)$ and $\mathfrak{T}(G)$, but it does not refer to a topology on $\mathfrak{T}(G)$.\\}
			In the case of a $\mathbb{Z}$-system $(X,\varphi)$ we write $\mathfrak{F}(\varphi)$ (resp. $\mathfrak{T}(\varphi)$) instead.
		\end{enumerate}
	\end{defi}

	We begin with some immediate observations:
	\begin{rem}
		\begin{enumerate}[(i)]
			\item The groups $G$, $\mathfrak{T}(G)$ and $\mathfrak{F}(G)$ have the same orbits.
			
			\item In general orbit cocycles and continuous orbit cocycles are not unique. Note that if the action is free, the orbit cocycle $f_\gamma$ of every element $\gamma \in \mathfrak{F}(G)$ is uniquely determined.
			
			\item The groups $\mathfrak{F}(G)$ and $\mathfrak{T}(G)$ are not necessarily countable \emph{and} interesting to consider. Let $\gamma \in \mathfrak{T}(G)$. Since $G$ was assumed countable, the fibers of $f_\gamma$ induce a countable partition $X= \bigsqcup_{g \in G} X_g$ by closed sets, thus if $G$ acts freely on a connected space $X$ the topological full group is just isomorphic to $G$.
			
			\item In the following we restrict ourselves to the case where the space $X$ is a Cantor space and thus totally disconnected. Note that orbit cocycles in general are not uniquely determined, thus for $\mathfrak{T}(G)$ to be graspable, we require the ``freeness condition" of minimality on the underlying system.
		\end{enumerate}
	\end{rem}
	
	The full group $\mathfrak{F}(\varphi)$ of a minimal Cantor system $(X,\varphi)$ is in general uncountable, whereas we have for topological full groups:
	
	\begin{prop}\label{prop: count}
		Let $(X, \varphi)$ be a minimal Cantor system.
		\begin{enumerate}[(i)]
			\item For every $\gamma \in \mathfrak{F}(\varphi)$ the orbit cocycle $f_\gamma$ is uniquely determined.
			
			\item A homeomorphism $\gamma \in \operatorname{Homeo(X)}$ satisfies $\gamma \in \mathfrak{T}(\varphi)$ \emph{if and only if} there exists a finite clopen partition $X=\bigsqcup_{i \in I \subset \mathbb{Z}} X_i^\gamma$ such that $\gamma|_{X_i^\gamma}=\varphi^i|_{X_i^\gamma}$. 
		\end{enumerate}
	In particular $\mathfrak{T}(\varphi)$ is countable and $\supp(\gamma)=\overline{\{x \in X|\gamma (x) \neq x \}}$ is clopen for all $\gamma \in \mathfrak{T}(\varphi)$.
	\end{prop}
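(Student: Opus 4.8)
The plan is to treat the two numbered claims separately and then harvest the ``in particular'' assertions from the description obtained in (ii). Everything rests on the freeness of a minimal $\mathbb{Z}$-system recorded in Remark~\ref{rem: orbdens}: if $\varphi^k(x)=x$ for some $x\in X$ and $k\in\mathbb{Z}$, then $k=0$. For (i), I would suppose that $\gamma\in\mathfrak{F}(\varphi)$ admits two orbit cocycles, i.e. $\gamma(x)=\varphi^{n(x)}(x)=\varphi^{m(x)}(x)$ for all $x$. Then $\varphi^{n(x)-m(x)}(x)=x$, and freeness forces $n(x)=m(x)$ for every $x\in X$. Hence the value of the cocycle at each point is pinned down and $f_\gamma$ is unique.

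For (ii), the ``if'' direction is a direct verification: given the finite clopen partition $X=\bigsqcup_{i\in I}X_i^\gamma$ with $\gamma|_{X_i^\gamma}=\varphi^i|_{X_i^\gamma}$, I would define $f_\gamma$ to take the constant value $i$ on $X_i^\gamma$. Since $I$ is finite and each $X_i^\gamma$ is clopen, $f_\gamma$ is locally constant, hence continuous, and by construction $\gamma(x)=\varphi^{f_\gamma(x)}(x)$, so $\gamma\in\mathfrak{T}(\varphi)$. For the ``only if'' direction I would start from a continuous orbit cocycle $f_\gamma\colon X\to\mathbb{Z}$. As $X$ is compact and $\mathbb{Z}$ carries the discrete topology, the image $f_\gamma(X)$ is a compact discrete set, therefore finite; call it $I$. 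Setting $X_i^\gamma:=f_\gamma^{-1}(i)$ then yields a finite partition into clopen sets on which $\gamma=\varphi^i$, as required.

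Finally, both ``in particular'' statements follow from (i) and (ii). For countability, note that by (i) the assignment $\gamma\mapsto f_\gamma$ is well defined, and it is injective because $\gamma$ is recovered from $f_\gamma$ via $\gamma(x)=\varphi^{f_\gamma(x)}(x)$; thus $\mathfrak{T}(\varphi)$ embeds into $C(X,\mathbb{Z})$. Since a Cantor space has only countably many clopen subsets, and by (ii) every element of $C(X,\mathbb{Z})$ is an integer-labelling of a finite clopen partition, $C(X,\mathbb{Z})$ is countable, whence so is $\mathfrak{T}(\varphi)$. For clopenness of the support I would invoke freeness once more: $\gamma(x)\neq x$ holds exactly when $\varphi^{f_\gamma(x)}(x)\neq x$, i.e. when $f_\gamma(x)\neq 0$; hence $\{x\mid\gamma(x)\neq x\}=f_\gamma^{-1}(\mathbb{Z}\setminus\{0\})$ is the complement of the clopen set $f_\gamma^{-1}(0)$ and is therefore itself clopen. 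Being already closed, it equals its own closure, so $\supp(\gamma)$ is clopen.

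I expect no serious obstacle here; the proof is essentially bookkeeping with two tools, compactness of $X$ (producing the finite image) and freeness of the action. The one point demanding care is to notice that \emph{every} step genuinely uses freeness: it is precisely freeness that guarantees uniqueness of the cocycle in (i) and that makes the support coincide with $f_\gamma^{-1}(\mathbb{Z}\setminus\{0\})$ rather than merely containing it.
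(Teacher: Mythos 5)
Your proposal is correct and follows essentially the same route as the paper: uniqueness of the cocycle from freeness of a minimal Cantor system (Remark~\ref{rem: orbdens}), the finite clopen partition from compactness of $X$ and discreteness of $\mathbb{Z}$, and countability from the countability of the clopen algebra of a Cantor space. Your explicit verification that $\{x\mid\gamma(x)\neq x\}=f_\gamma^{-1}(\mathbb{Z}\setminus\{0\})$ is a welcome spelling-out of the ``in particular'' clause, which the paper leaves implicit.
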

	\begin{proof}
		\begin{enumerate}[(i)]
			\item Assume that both $f_\gamma,g_\gamma$ are orbit cocycles of $\gamma$ such that $f_\gamma(x) \neq g_\gamma(x)$ for some $x \in X$. This is only possible if $x$ is a periodic point.
			
			\item  Let $\gamma$ in $\mathfrak{T}(\varphi)$. Since $f_\gamma$ is continuous, its image must be compact, hence finite. Since $\mathbb{Z}$ is discrete, fibers of the orbit cocycle $f_\gamma$ are clopen and thus $X_i^\gamma = f_\gamma^{-1}(\{i\})$ for $i \in \Ima(f_\gamma) \subset \mathbb{Z}$ is the required partition. Conversely, the function $f_\gamma \colon X \to \mathbb{Z}$ defined by $f_\gamma(x)= i$ for $x \in X_i^\gamma$ is continuous, since every $X_i^\gamma$ is clopen. Since $X$ is zero-dimensional and has a countable basis, there are only countably many clopen sets in $X$ and consequently $\mathfrak{T}(G)$ must be countable.
		\end{enumerate}
	\end{proof}
	\begin{rem}\label{lem: cocy}
		For every $\gamma \in \mathfrak{T}(\varphi)$ the orbit cocycle $f_\gamma$ can be represented as $f_\gamma = \sum_{k \in \mathbb{Z}} k \mathbf{1}_{X_k^\gamma}$ and for $\gamma_1, \gamma_2 \in \mathfrak{T}(\varphi)$ the associated orbit cocycles satisfy true to their names $f_{\gamma_1 \gamma_2}=f_{\gamma_1} \circ \gamma_2 + f_{\gamma_2}$.
	\end{rem}
	
	Already in \cite{mat06} Matui defined topological full groups of \emph{\'etale equivalence relations} on a Cantor space -- the associated equivalence groupoid is a principal, \'etale Cantor groupoid. In \cite{mat12} he generalized the definition to \'etale groupoids with compact unit space as
	\begin{equation*}
	\mathfrak{T}(\mathcal{G}):=\{\gamma \in \operatorname{Homeo}(\mathcal{G}^{(0)})|\exists B \in \mathcal{B}_\mathcal{G}^{o,k}: \gamma = r\circ (s|_B)^{-1} \}
	\end{equation*}
	
	Analogous to the above we mostly restrict to the case of Cantor groupoids:
	
	\begin{defi}[\cite{nek17}, Definition 2.3]\label{defi: tfg2}
		Let $\mathcal{G}$ be an \'etale Cantor groupoid. The set $\{B \in \mathcal{B}_\mathcal{G}^{o,k}: s(B)=r(B)=\mathcal{G}^{(0)} \}$ forms a group with respect to multiplication and inversion of subsets, called the \emph{topological full group of $\mathcal{G}$}, denoted by $\mathfrak{T}(\mathcal{G})$.
	\end{defi}
	
	\begin{rem}\phantomsection\label{rem: topfgroupsdefi}
		\begin{enumerate}[(i)]
			\item Put differently, the topological full group $\mathfrak{T}(\mathcal{G})$ is by definition  the unit group $\operatorname{U}(\mathcal{B}_\mathcal{G}^{o,k})$ of the inverse monoid of compact open slices. 
			
			\item If $\mathcal{G}$ is a second countable, \'etale Cantor groupoid, then $\mathfrak{T}(\mathcal{G})$ is countable per definition.
			
			\item If $\mathcal{G}$ is effective, $\mathfrak{T}(\mathcal{G})$ is isomorphic to the group of Matui's definition i.e. a group of homeomorphisms of $\mathcal{G}^{(0)}$ via the injection $B \mapsto r\circ (s|_B)^{-1}$.
			
			\bigskip
			
			 \emph{\large We indulge ourselves with leaving it to the context in the effective case if we refer to $\mathfrak{T}(\mathcal{G})$ as a group of slices or of homeomorphisms! As an allevation we will stand by the convention of writing upper case letters for slices and lower case letters for homeomorphisms.}
			 
			\bigskip
			
			\item In line with Proposition~\ref{prop: open.slices} we denote the homeomorphism of $\mathcal{G}^{(0)}$ induced by a slice $B \in \mathfrak{T}(\mathcal{G})$ by $\alpha_B$.
			
			\item In the case of a continuous action of a group $G$ on a Cantor space $X$ via $\alpha
			\colon G \to \operatorname{Homeo}(X)$, the groups $\mathfrak{T}(G)$ and $\mathfrak{T}(\mathcal{G}_\alpha)$ coincide. 
		\end{enumerate}
	\end{rem}
	
	As in the reduced context of Cantor systems minimality is key in a lot of results. It allows for the construction of certain elementary elements in the topological full group:
	
	\begin{lem}\label{lem: existence of pencils}
		Let $\mathcal{G}$ be a minimal, \'etale Cantor groupoid and let $U,V \subseteq \mathcal{G}^{(0)}$ be non-empty and clopen. Then there exists a finite family of open, compact slices $\{B_i \}$ with $r(B_i) \subset V$, $U=\bigcup_i s(B_i)$ and $s(B_i)\cap s(B_j)=\emptyset$ for $i\neq j$.
	\end{lem}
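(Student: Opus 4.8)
The plan is to build the required slices pointwise using minimality, pass to a finite collection by compactness of $U$, and then disjointify the source sets so that they partition $U$.

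First I would fix a point $u \in U$. By minimality the orbit $\mathcal{G}(u)$ is dense in $\mathcal{G}^{(0)}$, hence it meets the non-empty open set $V$; that is, there exists $g \in \mathcal{G}$ with $s(g)=u$ and $r(g) \in V$. Since $\mathcal{G}$ is an \'etale Cantor groupoid, its unit space is a Stone space, so $\mathcal{G}$ is ample and $\mathcal{B}_\mathcal{G}^{o,k}$ is a basis for its topology. Thus I can choose a compact, open slice $B_u$ containing $g$, and by shrinking it — intersecting with $s^{-1}(U) \cap r^{-1}(V)$, both clopen, and using that $s|_{B_u}$ and $r|_{B_u}$ are homeomorphisms onto their open images — I may arrange $u \in s(B_u) \subseteq U$ and $r(B_u) \subseteq V$.

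Next, the family $\{s(B_u)\}_{u \in U}$ is an open cover of $U$. Since $U$ is a clopen subset of the compact space $\mathcal{G}^{(0)}$ it is itself compact, so there is a finite subcover $s(B_{u_1}), \dots, s(B_{u_n})$. Writing $W_k := s(B_{u_k})$, I would disjointify by setting $W_k' := W_k \setminus (W_1 \cup \dots \cup W_{k-1})$, which are pairwise disjoint clopen sets with $\bigsqcup_k W_k' = \bigcup_k W_k = U$. Finally I restrict each slice to its new source set, defining $B_k := (s|_{B_{u_k}})^{-1}(W_k')$ and discarding any empty ones. Each $B_k$ is again a compact, open slice with $s(B_k)=W_k'$ and $r(B_k) \subseteq r(B_{u_k}) \subseteq V$, so $\{B_k\}$ has pairwise disjoint source sets covering $U$ and range sets contained in $V$, which is the assertion.

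The only structurally essential input is minimality, which is exactly what lets every point of $U$ be moved into $V$ along an arrow of the groupoid; the remaining steps are routine. I expect the main obstacle, to the extent there is one, to be bookkeeping in the shrinking step: making sure a basic slice can be cut down so that its source lies inside $U$ \emph{and} its range inside $V$ simultaneously, which succeeds precisely because $U$ and $V$ are clopen and $s|_B$, $r|_B$ are homeomorphisms, together with the verification that the restriction of a compact open slice to a clopen subset of its source is again a compact open slice.
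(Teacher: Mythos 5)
Your proposal is correct and follows essentially the same route as the paper: use minimality to find, for each $u \in U$, an arrow into $V$ contained in a compact open slice, extract a finite subcover of $U$ by the source sets, and disjointify. You merely spell out the shrinking and disjointification steps that the paper compresses into ``which we can assume to be disjoint.''
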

	\begin{proof}
		The $\mathcal{G}$-orbit of every element in $U$ is dense in $\mathcal{G}^{(0)}$ by minimality, thus for every $u \in U$ there exist an $g_u \in \mathcal{G}$ with $s(g_u)=u$ und $r(g_u) \in V$. Since $\mathcal{G}$ is \'etale, there exists an open compact slice $B_u$ containing $g_u$ such that $r(B_u) \subseteq V$. The family $\{s(B_u)\}_{u \in U}$ covers $U$. By compactness, there exists a finite family $\{B_i\}$ with $U=\bigcup_i s(B_i)$ and $r(B_i) \subset V$, which we can assume to be disjoint, thus $s(B_i)\cap s(B_j)=\emptyset$ for $i\neq j$.
	\end{proof}
	
	\begin{defi}
		Let $\mathcal{G}$ be an \'etale Cantor groupoid and let $B \in \mathcal{B}_\mathcal{G}^{o,k}$ be non-empty such that $s(B) \cap r(B) = \emptyset$. Denote by $T_B$ (resp. $\tau_B$) the induced involution in $\mathfrak{T}(\mathcal{G})$ given by $B \cup B^{-1} \cup (\mathcal{G}^{(0)} \setminus (s(B)\cup r(B)))$.
	\end{defi}
	
	\begin{cor}\label{cor: existence of involutions}
		Let $\mathcal{G}$ be a minimal, \'etale Cantor groupoid and let $U \subseteq \mathcal{G}^{(0)}$ be non-empty and clopen. Then there exists an element $T \in \mathfrak{T}(\mathcal{G})$ with $T^2=1$ such that $\mathcal{G}^{(0)} \setminus (T \cap \mathcal{G}^{(0)}) \subseteq U$.
	\end{cor}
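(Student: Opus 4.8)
The plan is to realise the desired involution as $T = T_B$ for a carefully chosen non-empty compact open slice $B$ satisfying $s(B) \cap r(B) = \emptyset$ and $s(B), r(B) \subseteq U$, so that its support $s(B) \cup r(B)$ is automatically contained in $U$. The first step exploits that $\mathcal{G}^{(0)}$ is a Cantor space, hence perfect and zero-dimensional: the non-empty clopen set $U$ is itself a non-empty perfect Stone space and therefore decomposes as a disjoint union $U = U_1 \sqcup U_2$ of two non-empty clopen subsets.

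Next I would use minimality to bridge the two halves by a single groupoid element. Fixing a point $u \in U_1$, the orbit $\mathcal{G}(u)$ is dense in $\mathcal{G}^{(0)}$, so it meets the non-empty open set $U_2$ in some point $v$; as $u$ and $v$ lie in the same $\mathcal{G}$-orbit there is a $g \in \mathcal{G}$ with $s(g) = u$ and $r(g) = v$, and $g \notin \mathcal{G}^{(0)}$ since $u \neq v$. Because $\mathcal{G}$ is \'etale and $\mathcal{G}^{(0)}$ is a Stone space, $\mathcal{G}$ is ample, so $g$ is contained in a compact open slice; replacing it by a compact open slice sitting inside its intersection with the open sets $s^{-1}(U_1)$ and $r^{-1}(U_2)$ produces a non-empty compact open slice $B \ni g$ with $s(B) \subseteq U_1$ and $r(B) \subseteq U_2$. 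In particular $s(B) \cap r(B) \subseteq U_1 \cap U_2 = \emptyset$. (One could instead try to invoke Lemma~\ref{lem: existence of pencils}, but it only guarantees pairwise disjoint \emph{sources}, so the union of the slices it produces need not be a slice; a single orbit connection is all that is required here and avoids this issue.)

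Finally I would set $T := T_B = B \cup B^{-1} \cup (\mathcal{G}^{(0)} \setminus (s(B) \cup r(B)))$. One checks this is a compact open slice with $s(T) = r(T) = \mathcal{G}^{(0)}$, hence $T \in \mathfrak{T}(\mathcal{G})$. A short computation of products of slices, using $s(B) \cap r(B) = \emptyset$ so that $BB = B^{-1}B^{-1} = \emptyset$ while $BB^{-1} = r(B)$ and $B^{-1}B = s(B)$, yields $T^2 = \mathcal{G}^{(0)}$, that is $T^2 = 1$. Since $B$ and $B^{-1}$ contain no units (as $s \neq r$ on them), the unit part is $T \cap \mathcal{G}^{(0)} = \mathcal{G}^{(0)} \setminus (s(B) \cup r(B))$, whence $\mathcal{G}^{(0)} \setminus (T \cap \mathcal{G}^{(0)}) = s(B) \cup r(B) \subseteq U$, and this support is non-empty, so $T \neq 1$.

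I expect the genuinely substantive point to be the passage from minimality to the slice $B$ (the second paragraph): verifying that the étale/ample structure lets us shrink an open slice through $g$ to a compact open one whose source and range land in the prescribed disjoint clopen pieces. By contrast, the last paragraph is routine bookkeeping with slice multiplication that merely confirms $T_B$ is an involution with the claimed support.
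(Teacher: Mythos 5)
Your proof is correct and follows essentially the same route as the paper: split $U$ into two disjoint non-empty clopen pieces, use minimality and \'etaleness to produce a compact open slice $B$ with $s(B)$ and $r(B)$ landing in the two pieces, and take $T=T_B$. The paper obtains that slice by invoking Lemma~\ref{lem: existence of pencils} and selecting a single member $B_i$ of the resulting family (so your worry about unions of the $B_i$ not being a slice does not arise there); your direct single-orbit construction just unwinds the first half of that lemma's proof.
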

	\begin{proof}
		Let $x,y \in U$ with $x \neq y$. Since $\mathcal{G}^{(0)}$ is a Cantor space, there exist clopen neighbourhoods $V,W$ separating them. Then for every compact open bisection $B_i$ as chosen in Lemma~\ref{lem: existence of pencils} for the non-empty clopen sets $V \cap U$ and $W \cap U$ the element $T_{B_i}$ is sufficient.
	\end{proof}
	
	\subsection{Topological full groups in terms of Kakutani-Rokhlin partitions}\label{subs: topological full groups in terms of kakutani-rokhlin partitions}
	
	Minimal Cantor systems can be approximated by Kakutani-Rokhlin partitions\footnote{See Subsection~\ref{subs: kak.ro}.\\} and those allow for a convenient representation of $\mathfrak{T}(\varphi)$ as originally demonstrated in \cite{bk00}:	
	
	Let $(X,\varphi)$ be a minimal Cantor system with a fixed Kakutani-Rokhlin partition $\mathcal{A}=\{D_{k,i}|0 \leq k \leq h_i-1, i \in \{1,\dots,n\} \}$. Then $\mathcal{A}$ induces partitions $\alpha(\mathcal{A}),\alpha'(\mathcal{A})$ of $\{1,\dots,n\}$ in the following way: A subset $J \subseteq \{1,\dots,n \}$ is an atom of $\alpha(\mathcal{A})$ \emph{if and only if} there exists a subset $J' \subseteq \{1,\dots,n \}$ such that $\varphi(\bigsqcup_{j \in J} D_{h_j-1,j})=\bigsqcup_{j' \in J'} D_{0,j'}$ and for any proper subset $J_0 \subset J$ the set $\varphi(\bigsqcup_{j \in J_0} D_{h_j-1,j})$ is not a union of tower bases in $\mathcal{A}$. All such sets $J$ resp. $J'$ give rise to the partitions $\alpha(\mathcal{A})$ resp. $\alpha'(\mathcal{A})$ of $\{1,\dots,n\}$ and there is a 1-1 correspondence between their atoms. An atom $J \in \alpha(J)$ corresponds to a collection of towers of which the union of roofs is mapped on the union of floors of a collection of towers corresponding to $J' \in \alpha'(\mathcal{A})$ via $\varphi$. Denote by $h_J:=\min \{h_j|j \in J \}$ resp. $h_{J'}:=\min \{h_{j'}|j' \in J' \}$ the minimal height of a tower associated with an atom $J \in \alpha(\mathcal{A})$ resp. $J' \in \alpha'(\mathcal{A})$. For any $i \in \{1,\dots,n\}$ define $J(i) \in \alpha(\mathcal{A})$ resp. $J'(i) \in \alpha'(\mathcal{A})$ to be the atoms containing $i$. Let $\gamma \in \mathfrak{T}(\varphi)$. There exists a Kakutani-Rokhlin partition $\mathcal{A}=\{D_{k,i}|0 \leq k \leq h_i-1, i \in \{1,\dots,n\} \}$ such that $\mathcal{A}$ refines the finite partitions $\{X_i^\gamma \}_{i \in I}$ and $\{\varphi^{f_i}(X_i^\gamma) \}_{i \in I}$ given in Proposition~\ref{prop: count} and such that $|f_i| \leq h_{\mathcal{A}}$ for all $i \in I$, this means $f_\gamma$ is constant on atoms of $\mathcal{A}$ and $\|f_\gamma\|_{\infty}\leq h_{\mathcal{A}}$. For such a Kakutani-Rokhlin partition the pairs $U(\mathcal{A})$ admit the following set:
	\begin{enumerate}[(i)]
		\item $U_{\text{in}} \subseteq U(\mathcal{A})$ the set of pairs $(k,i)$, such that $\gamma(D_{k,i}) \subset D(i)$.
		
		\item $U_{\text{top}} \subseteq U(\mathcal{A})$ the set of pairs $(k,i)$, such that $f_\gamma|_{D_{k,i}} + k \geq h_i$.
		
		\item $U_{\text{bot}} \subseteq U(\mathcal{A})$ the set of pairs $(k,i)$, such that $f_\gamma|_{D_{k,i}} + k <0$.
	\end{enumerate}
	
	When $\gamma$ is applied, the atoms corresponding to pairs in $U_{\text{in}}$ stay in the same tower, atoms corresponding to pairs in $U_{\text{top}}$ move through the roof of the containing tower and atoms corresponding to pairs in $U_{\text{bot}}$ move through the floor of the containing tower. By the assumption $|f_i| \leq h_{\mathcal{A}}$ for all $i \in I$, atoms corresponding to pairs in $U_{\text{top}}$ must lie at within a distance of $h_\mathcal{A}$ from the top of the containing tower and analogously atoms corresponding to pairs in $U_{\text{bot}}$ must lie at within a distance of $h_\mathcal{A}$ from the floor of the containing tower.
	
	\begin{defi}[\cite{bk00}, Definition 2.1]
		\begin{enumerate}[(i)]
			\item Let $J \in \alpha(\mathcal{A})$ and $J' \in \alpha'(\mathcal{A})$.
			Define for  $r \in \{0,\dots,  h_J-1\}$ resp. $r' \in \{0,\dots,  h_{J'}-1\}$ the sets:
			\begin{equation*}
			F_1(r,J):= \bigsqcup_{j \in J} D_{h_j-h_J+r,j}\quad \text{ and } \quad F_2(r',J'):= \bigsqcup_{j' \in J'} D_{r',j'}
			\end{equation*}
			
			\item A homeomorphism $\gamma \in \mathfrak{T}(\varphi)$ satisfies the \emph{level condition (L)} if the following conditions hold:
			\begin{enumerate}
				\item if $(k,i) \in U_{\text{top}}$ and $D_{k,i} \subseteq X_l^\gamma$, then $F_1(h_{J(i)}-h_i+k,J(i)) \subseteq X_l^\gamma$.
				
				\item if $(k,i) \in U_{\text{bot}}$ and $D_{k,i} \subseteq X_l^\gamma$, then $F_2(k,J'(i)) \subseteq X_l^\gamma$.
			\end{enumerate}
		\end{enumerate}
	\end{defi}
	
	The level condition (L) amounts to the fact that when an atom $D_{k,i}$ is moved through the roof (resp. floor) of the containing tower $D(i)$ by $\gamma$, then $D_{h_j-h_i+k,j}$ is moved through the roof of $D(j)$ for all $j \in J(i)$ (resp. $D_{k,j}$ is moved through the floor of $D(j)$ for all $j \in J'(i)$ by $\gamma$). Note, that if $U_{\text{in}}=U(\mathcal{A})$ holds for a homeomorphism $\gamma \in \mathfrak{T}(\varphi)$ i.e. $\gamma(D(i))=D(i)$ for every tower $D(i)$, then $\gamma$ obviously satisfies the level condition.
	
	\begin{defi}[\cite{bk00}, §2.1 \& \cite{gm14}, Definition 4.1]\label{defi: permut}
		Let $(X,\varphi)$ be a minimal Cantor system and let $\mathcal{A}=\{D_{k,i}|0 \leq k \leq h_i-1, i \in \{1,\dots,n\} \}$ be a Kakutani-Rokhlin partition. Denote by $\mathfrak{T}(\mathcal{A})$ the set of homeomorphisms $\gamma \in \mathfrak{T}(\varphi)$ such that $\mathcal{A}$ refines the finite partitions $\{X_i^\gamma \}_{i \in I}$ and $\{\varphi^{f_i}(X_i^\gamma) \}_{i \in I}$ and $|f_i| \leq h_{\mathcal{A}}$ holds for all $i \in I$ and $\gamma$ satisfies the level condition (L) with respect to $\mathcal{A}$.
	\end{defi}
	
	\begin{rem}
		By definition the set $\mathfrak{T}(\mathcal{A})$ is finite.
	\end{rem}
	
	\begin{defi}
		Denote by $\mathcal{P}(\mathcal{A})$ the subset of homeomorphisms $\gamma \in \mathfrak{T}(\mathcal{A})$ such that $U_{\text{in}}=U(\mathcal{A})$ holds. In case of a nested sequence $\{\mathcal{A}_n\}_{n \in \mathbb{N}}$ of Kakutani-Rokhlin partitions satisfying property (H), we call the elements of $\mathcal{P}(\mathcal{A}_n)$ \emph{$n$-permutations}. Every $n$-permutation $p$ admits a decomposition $p=p_1\dots p_{m}$ such that the support of every component $p_i$ lies within a single tower of the partition $\mathcal{A}_n$.
	\end{defi}
	
	\begin{rem}
		The set $\mathcal{P}(\mathcal{A})$ is a finite subgroup of $\mathfrak{T}(\varphi)$ and  isomorphic to $\bigoplus_{i \in \{1,\dots,n\}} \mathfrak{S}_{h_i}$, where $\mathfrak{S}_{h_i}$ denotes the symmetric group of degree $h_i$. In the case of a nested sequence $\{\mathcal{A}_n\}_{n \in \mathbb{N}}$ of Kakutani-Rokhlin partitions satisfying property (H), it holds that $\mathcal{P}(\mathcal{A}_n) \leq \mathcal{P}(\mathcal{A}_{n+1})$ and $\bigcup_{n \in \mathbb{N}} \mathcal{P}(\mathcal{A}_n)$ is a proper subgroup of $\mathfrak{T}(\varphi)$. Every finite group embedds into $\bigcup_{n \in \mathbb{N}} \mathcal{P}(\mathcal{A}_n)$ and thus into $\mathfrak{T}(\varphi)$. Note that $\mathcal{P}(\mathcal{A})'\cong \bigoplus_{i \in \{1,\dots,n\}} \mathfrak{A}_{h_i}$, where $\mathfrak{A}_{h_i}$ denotes the alternating group of degree $h_i$.
	\end{rem}
	
	It holds that given a sufficient nested sequence $\{\mathcal{A}_n\}_{n \in \mathbb{N}}$ of Kakutani-Rokhlin partitions every element of the topological full group can be described as a homeomorphism that permutes atoms of $\mathcal{A}_n$ for some $n \in \mathbb{N}$: 
	
	\begin{thm}[\cite{bk00}, Theorem 2.2]\label{thm: kakro}
		Let $(X,\varphi)$ be a minimal Cantor system and let $\{\mathcal{A}_n\}_{n \in \mathbb{N}}$ be a nested sequence of Kakutani-Rokhlin partitions satisfying property (H). Then $\mathfrak{T}(\varphi)=\bigcup_{n \in \mathbb{N}} \mathfrak{T}(\mathcal{A}_n)$.
	\end{thm}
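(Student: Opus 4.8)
The inclusion $\bigcup_{n} \mathfrak{T}(\mathcal{A}_n) \subseteq \mathfrak{T}(\varphi)$ is immediate, since by Definition~\ref{defi: permut} every element of $\mathfrak{T}(\mathcal{A}_n)$ is by construction an element of $\mathfrak{T}(\varphi)$. The plan is to establish the reverse inclusion. Fix $\gamma \in \mathfrak{T}(\varphi)$. By Proposition~\ref{prop: count}(ii) there is a finite clopen partition $X = \bigsqcup_{i \in I} X_i^\gamma$ with $I \subseteq \mathbb{Z}$ finite and $\gamma|_{X_i^\gamma} = \varphi^i|_{X_i^\gamma}$, so the orbit cocycle is $f_\gamma = \sum_{i} i\,\mathbf{1}_{X_i^\gamma}$ and $f_i = i$. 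It suffices to produce a single $n$ with $\gamma \in \mathfrak{T}(\mathcal{A}_n)$, i.e.\ one for which $\mathcal{A}_n$ refines both $\{X_i^\gamma\}$ and $\{\varphi^{i}(X_i^\gamma)\} = \{\gamma(X_i^\gamma)\}$, the bound $|i| \le h_{\mathcal{A}_n}$ holds for all $i\in I$, and the level condition (L) is satisfied.

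First I would fix the level. Each $X_i^\gamma$ and each $\gamma(X_i^\gamma)$ is clopen, hence compact; by property~(a) of a nested sequence the atoms of $\bigcup_m \mathcal{A}_m$ generate the topology, and since $\mathcal{A}_{m+1}$ refines $\mathcal{A}_m$ these atoms are closed under the intersections that arise, so each of the finitely many sets above is a finite union of atoms of a single $\mathcal{A}_m$. Taking the largest such index and enlarging it once more, using property~(H) so that $h_{\mathcal{A}_n} \ge \max_{i\in I}|i|$, yields an $n$ realising the first two requirements.

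The crux, and the step I expect to be the main obstacle, is verifying the level condition (L) for this $n$; the height bound is exactly what makes it tractable. Consider $(k,i) \in U_{\text{top}}$ with $D_{k,i} \subseteq X_l^\gamma$, so $\gamma = \varphi^l$ there and $l + k \ge h_i$. Then $|l| \le h_{\mathcal{A}_n} \le h_{J(i)}$, which places $D_{k,i}$ within the top $h_{\mathcal{A}_n}$ levels of its tower; writing $s = h_i - k$, the slice $W := F_1(h_{J(i)} - h_i + k,\,J(i))$ equals $\bigsqcup_{j \in J(i)} D_{h_j - s, j}$, and by the defining property of $\alpha(\mathcal{A}_n)$ the power $\varphi^{s}$ carries $W$ bijectively onto the combined base $\bigsqcup_{j' \in J'(i)} D_{0,j'}$ of the matched group $J'(i)$. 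Consequently $\gamma(D_{k,i}) = \varphi^{l}(D_{k,i}) = \varphi^{t}\big(\varphi^{s}(D_{k,i})\big)$, with $t = l - s \ge 0$, lies inside some atom $D_{t,j'}$ of $\mathcal{A}_n$ in that base group.

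Now I would invoke the range refinement. Since $\mathcal{A}_n$ refines $\{\gamma(X_i^\gamma)\}$ and the atom $D_{t,j'}$ meets $\gamma(X_l^\gamma)$, the whole atom satisfies $D_{t,j'} \subseteq \gamma(X_l^\gamma)$, that is $\gamma^{-1} = \varphi^{-l}$ on $D_{t,j'}$; pulling back along the bijection above gives $\varphi^{-l}(D_{t,j'}) = \varphi^{-s}(D_{0,j'}) \subseteq W$, and the same slice identity shows $\gamma = \varphi^l$ on this preimage. To reach all of $W$ I would iterate: any atom $D_{h_j - s, j}$ of $W$ already known to lie in $X_l^\gamma$ is monochromatic for $f_\gamma$ because $\mathcal{A}_n$ refines the domain partition, and its roof feeds into further base atoms $D_{t,j''}$, which the range refinement again forces into $\gamma(X_l^\gamma)$. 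The minimality clause in the definition of $\alpha(\mathcal{A}_n)$ makes the incidence between the roofs $J(i)$ and the bases $J'(i)$ connected, so this process covers every atom of $W$, giving $W \subseteq X_l^\gamma$, which is precisely (L)(a). The case $(k,i) \in U_{\text{bot}}$ is symmetric, with $\alpha'$, $F_2$ and the base replacing $\alpha$, $F_1$ and the roof. With all three requirements met we obtain $\gamma \in \mathfrak{T}(\mathcal{A}_n)$, completing the inclusion $\mathfrak{T}(\varphi) \subseteq \bigcup_n \mathfrak{T}(\mathcal{A}_n)$.
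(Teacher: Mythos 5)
Your argument is essentially correct, though you should be aware that the paper does not actually prove this theorem -- it is quoted from \cite{bk00} -- so the only point of comparison is the route the text gestures at just afterwards, where an extra diameter hypothesis on the bases plus the Lebesgue number lemma is said to ``automatically assure property (L)''. That standard route exploits that $\bigsqcup_{j} D^n_{h_j-s,j}=\varphi^{-s}(B(\mathcal{A}_n))$ and $\bigsqcup_{j'} D^n_{k,j'}=\varphi^{k}(B(\mathcal{A}_n))$ shrink to single points of $\operatorname{Orb}_\varphi(x)$ (property (H) forces $Y=\bigcap_n B(\mathcal{A}_n)$ to be a singleton), so the locally constant cocycle $f_\gamma$ is eventually constant on each entire level near the roof and floor, which gives (L) at once and indeed across \emph{all} towers simultaneously. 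Your proof replaces this with a purely combinatorial propagation inside a single $\alpha$-group, using only the two refinement conditions, the height bound, and connectivity of the roof--base incidence graph; the connectivity assertion you leave implicit does follow from the minimality clause in the definition of $\alpha(\mathcal{A}_n)$, since a disconnected component would yield a proper subset $J_0\subset J$ whose roofs map exactly onto a union of bases. This is a genuinely different and valid route, and it has the interesting by-product of showing that the level condition is actually implied by the other requirements of Definition~\ref{defi: permut}, so that the existence of a suitable $n$ reduces to the (easy) refinement and height statements.

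Two local repairs are needed in your write-up. First, the claim that $\gamma(D_{k,i})=\varphi^{t}\bigl(\varphi^{s}(D_{k,i})\bigr)$ ``lies inside some atom $D_{t,j'}$'' is false in general: $\varphi^{s}(D_{k,i})=\varphi(D_{h_i-1,i})$ may be spread over several bases of the matched group, so $\gamma(D_{k,i})$ is only contained in the union of the $D_{t,j'}$, meeting each one whose base its roof-image hits. Your subsequent iteration (meets implies contained, by the range refinement, then pull back and use the domain refinement) is exactly what handles this, so nothing breaks, but the sentence should be corrected. Second, the $\alpha'$-group matched to $J(i)$ is not the paper's $J'(i)$, which by definition is the $\alpha'$-atom \emph{containing} $i$ and in general has a different partner; you should use separate notation for the partner of $J(i)$ in case (L)(a) and for the partner of $J'(i)$ in the symmetric case (L)(b).
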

	
	In \cite{med07}, Theorem~\ref{thm: kakro} was generalized to aperiodic Cantor systems. Besides proving the above theorems, \cite{bk00} reflects them in the context of different examples of Cantor systems and is devoted to study topological properties of $\mathfrak{F}(\varphi)$ and $\mathfrak{T}(\varphi)$ -- specifically, a topology on $\operatorname{Homeo(X)}$ is introduced with respect to which $\mathfrak{T}(\varphi)$ is dense in $\mathfrak{F}(\varphi)$.
	
	In the following we review the stronger statement from \cite{gm14} in more detail. It elucidates the stucture of topological full groups of minimal Cantor systems in that it shows that their structure is close to a union of permutational wreath products of $\mathbb{Z}$. Note that $\mathfrak{T}(\varphi)$ always admits embeddings of such permutational wreath products:
	
	\begin{prop}\label{prop: permutational wreath products embedd into topological full groups}
		Let $(X,\varphi)$ be a minimal Cantor system. Then for every $n \in \mathbb{N}$ the group $\mathbb{Z} \wr_{n} \mathfrak{S}_{n} := \mathbb{Z}^{n} \rtimes \mathfrak{S}_{n}$ embeds into $\mathfrak{T}(\varphi)$.
	\end{prop}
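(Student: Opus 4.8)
The plan is to build the three ingredients of $\mathbb{Z}\wr_n\mathfrak{S}_n=\mathbb{Z}^n\rtimes\mathfrak{S}_n$ by hand inside $\mathfrak{T}(\varphi)$ -- a free abelian subgroup $N\cong\mathbb{Z}^n$, a copy $H\cong\mathfrak{S}_n$ of the symmetric group, and the coordinate-permutation action of $H$ on $N$ -- and then to check that $\langle N,H\rangle=N\rtimes H$. First I would manufacture a tower of height $n$: since a minimal Cantor system is free (Remark~\ref{rem: orbdens}), the points $x,\varphi(x),\dots,\varphi^{n-1}(x)$ are distinct, so I can pick pairwise disjoint clopen neighbourhoods and intersect their $\varphi$-preimages to obtain a clopen $Y\ni x$ with $Y,\varphi(Y),\dots,\varphi^{n-1}(Y)$ pairwise disjoint. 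Put $W_k:=\varphi^{k-1}(Y)$ and let $\iota_k:=\varphi^{k-1}\colon Y\to W_k$ be the canonical identifications.

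For the copies of $\mathbb{Z}$, I would take $g_i:=\varphi_{W_i}$, the induced transformation of $\varphi$ on $W_i$. By continuity (hence boundedness) of the first return function, $g_i$ agrees with finitely many powers of $\varphi$ on a clopen partition, so $g_i\in\mathfrak{T}(\varphi)$ by Proposition~\ref{prop: count}(ii), with $\supp(g_i)=W_i$. As the $W_i$ are disjoint the $g_i$ commute; since each $g_i|_{W_i}$ is minimal on the infinite space $W_i$ (the derivative Cantor system), each $g_i$ has infinite order and $g_1^{a_1}\cdots g_n^{a_n}=1$ forces every $a_i=0$ by restricting to $W_i$. Thus $N:=\langle g_1,\dots,g_n\rangle\cong\mathbb{Z}^n$. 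For the transpositions I would use $\tau_{ij}$, the involution equal to $\varphi^{\,j-i}$ on $W_i$, to $\varphi^{\,i-j}$ on $W_j$, and to the identity elsewhere; each lies in $\mathfrak{T}(\varphi)$ again by Proposition~\ref{prop: count}(ii). The key computation is that the first return times of a point $y\in W_j=\varphi^{\,j-i}(W_i)$ to $W_j$ coincide with those of $\varphi^{\,i-j}(y)\in W_i$ to $W_i$, so $\varphi^{\,j-i}\circ g_i\circ\varphi^{\,i-j}=g_j$ on $W_j$; this yields $\tau_{ij}\,g_k\,\tau_{ij}^{-1}=g_{(i\,j)(k)}$, i.e.\ exactly the coordinate action of $\mathfrak{S}_n$.

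It remains to see that $H:=\langle\tau_{ij}\rangle$ is genuinely $\mathfrak{S}_n$ and that the product is semidirect. I expect the one real subtlety to be injectivity of the map $H\to\mathfrak{S}_n$: a priori composing the $\tau_{ij}$ could leave a residual power of $\varphi$ on some $W_k$ even when the induced permutation is trivial. This is exactly what the identifications $\iota_k$ rule out: by construction $\tau_{ij}$ sends $\iota_a(y)\mapsto\iota_{(i\,j)(a)}(y)$ and fixes the complement of $\bigcup_k W_k$, so any word $w$ in the $\tau_{ij}$ acts as $\iota_a(y)\mapsto\iota_{\sigma(a)}(y)$ for the corresponding $\sigma\in\mathfrak{S}_n$ and fixes everything off $\bigcup_k W_k$; hence $w=1$ iff $\sigma=1$, giving $H\cong\mathfrak{S}_n$. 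Since every element of $N$ preserves each $W_k$ setwise, $N\cap H$ consists of elements of $H$ inducing the trivial permutation, so $N\cap H=\{1\}$; together with $H$ normalising $N$ via the coordinate action this identifies $\langle N,H\rangle$ with $\mathbb{Z}^n\rtimes\mathfrak{S}_n=\mathbb{Z}\wr_n\mathfrak{S}_n$, completing the embedding.
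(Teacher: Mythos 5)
Your construction is correct and is essentially the paper's argument, just carried out in full detail: the paper likewise takes a tower $U,\varphi(U),\dots,\varphi^{n-1}(U)$ and generates the copy of $\mathbb{Z}\wr_n\mathfrak{S}_n$ from the induced transformation on the tower together with the permutations of its levels, which is exactly your $N=\langle \varphi_{W_1},\dots,\varphi_{W_n}\rangle$ and $H=\langle\tau_{ij}\rangle$. The verifications you supply (the first-return-time computation giving $\tau_{ij}\,g_k\,\tau_{ij}^{-1}=g_{(i\,j)(k)}$, freeness of $N$, faithfulness of $H\to\mathfrak{S}_n$, and $N\cap H=\{1\}$) are precisely the details the paper leaves implicit.
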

	\begin{proof}
		Let $n \in \mathbb{N}$. By assumption there exists a tower of height $n$ i.e. a clopen subset $U\subset X$ such that $U,\varphi(U),\dots,\varphi^{n-1}(U)$ are pairwise disjoint. Then the group generated by the induced transformation $\varphi_U$ and the group $\mathfrak{S}_{n}$ of permutations of this tower is isomorphic to $\mathbb{Z} \wr_{n} \mathfrak{S}_{n}$.
	\end{proof}
	
	The stronger result is obtained by strengthening the assumptions on the approximating nested sequence $\{\mathcal{A}_n\}_{n \in \mathbb{N}}$ of Kakutani-Rokhlin partitions:
	
		Let $\{m_n\}_{n \in \mathbb{N}}$ be a fixed sequence of positive integers such that $m_n \to \infty$ for $n \to \infty$. By restriction to a subsequence, we can assume that
		\begin{enumerate}[(i)]
			\item $h_{\mathcal{A}_n} \geq 2m_n+2$
		\end{enumerate}
		
		Any homeomorphism of a compact metric space is uniformly continuous by the \emph{Heine-Cantor theorem}\footnote{Every continuous map from a compact metric space to a metric space is uniformly continuous.\\}, which implies that in the construction of $\{\mathcal{A}_n\}_{n \in \mathbb{N}}$ the base $B(\mathcal{A}_n)$ can be made sufficiently small such that we can assume:
		\begin{enumerate}[(i),resume]
			\item $\operatorname{diam}(\varphi^k(B(\mathcal{A}_n))) \leq \frac{1}{n}$ for $-m_n-1 \leq i \leq m_n$
		\end{enumerate}
		This condition allows to get rid of some of the more extensive vocabulary leading up to Theorem~\ref{thm: kakro}, because the Lebesgue number lemma then automatically assures property(L) for $\gamma \in \mathfrak{T}(\varphi)$ with respect to $\mathcal{A}_n$ for every $n$ big enough.

	\begin{defi}[\cite{gm14}, Definition~4.5]\label{defi: rotation}
		Let $(X,\varphi)$ be a minimal Cantor system and let $\{\mathcal{A}_n\}_{n \in \mathbb{N}}$ be a nested sequence of Kakutani-Rohklin partitions that satisfies (H) and let $n \in \mathbb{N}$.
		\begin{enumerate}[(i)]
			\item Let $D^n(i)$ be a tower. The set of atoms $\{D_{k,i}^n|0 \leq k \leq h_i^n -1 \}$ contained in $D^n(i)$ can be regarded as a cyclic group of order $h_i^n$ endowed with a metric $d_i^n$ by associating  $D_{k,i}^n$ with $e^{2k\pi i/h_i^n}$ and $d_i^n$ correponds to the restriction of the intrinsic metric on the unit circle normalized by $h_i^n/2\pi$.
			
			\item Let $p_i$ and $p_j$ be $n$-permutations with $\supp(p_i)\subseteq D^n(i)$ and $\supp(p_j)\subseteq D^n(j)$ such that \begin{equation*}
				d_r^n(D_{k,r}^n,p_r(D_{k,r}^n))\leq m_n \text{ for all } 0 \leq k \leq h_r^n\text{ and }r \in \{i,j\} 
			\end{equation*}
			i.e. the action of $p_r$ on an atom in $D^n(r)$ can be uniquely interpreted as a rotation in positive or negative direction. Let $k \in \{-m_n,\dots,0,\dots,m_n \}$. Then the notation $p_i(k)=p_j(k)$ signifies, that in the case of $0 \leq k \leq m_n$, the action of $p_i$ moves the atom $D_{k,i}^n$ the same way as the action $p_j$ moves the atom $D_{k,j}^n$ when interpreted as rotation in positive or negative direction and in the case of $-m_n \leq k \leq 0$, the action of $p_i$ moves the atom $D_{h_i^n-|k|,i}^n$ the same way as the action $p_j$ moves the atom $D_{h_j^n-|k|,j}^n$ when interpreted as rotation in positive or negative direction.
			
			\item Define for $0 \leq k \leq h_{\mathcal{A}_n}$ the sets
			\begin{equation*}
			U(k):=\bigsqcup_i D_{h_i-k-1,i}^n \qquad L(k):=\bigsqcup_i D_{k,i}^n
			\end{equation*}
			\item Let $r \in \mathbb{N}$. A homeomorphism $\gamma \in \mathfrak{T}(\varphi)$ is called an \emph{n-rotation with rotation number at most $r$} if there exist a pair of subsets $S_u,S_l \subseteq \{1,\dots,m_n \}$ called \emph{supportive sets} and families of integers $\{l_i\}_{i \in S_u}$ and $\{k_j\}_{j \in S_l}$ with $|l_i|\leq r$, $|k_j| \leq r$ such that:
			\begin{equation*}
			\gamma = \prod_{i \in S_u} (\varphi_{U(i)})^{l_i} \times \prod_{i \in S_l} (\varphi_{L(j)})^{k_j}
			\end{equation*}						
		\end{enumerate}
	\end{defi}	
	
	\begin{rem}
		\begin{enumerate}[(i)]
			\item The set $U(k)$ consists of all atoms which are $k$ levels beneath the top in their containing tower and the set $L(k)$ consists of atoms which are $k$ levels above the bottom of their containing tower.
			
			\item Note, that this is well-defined by assumption (i) on the nested sequence of Kakutani-Rokhlin partitions made in the beginning of this subsection, as it guarantees for $U(k) \cap L(l) = \emptyset$ for every $0 \leq k\leq h_{\mathcal{A}_n}$ and $0 \leq l\leq h_{\mathcal{A}_n}$.
		\end{enumerate}
	\end{rem}
	
	\begin{lem}[\cite{bm08}, Proposition 2.1.(3)]\label{lem: ind.trans}
		Let $(X,\varphi)$ be a minimal Cantor system and let $A$ be a clopen subset of $X$. Then $\varphi_A^{-1}\varphi$ is periodic i.e. every orbit is finite.
	\end{lem}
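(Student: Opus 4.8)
The plan is to exhibit $\psi := \varphi_A^{-1}\varphi$ as a map that cyclically permutes the atoms inside each column of a Kakutani--Rokhlin partition based at $A$, which immediately bounds every orbit by the height of the containing column. First I would invoke Remark~\ref{rem: kakropa} to produce a Kakutani--Rokhlin partition $\mathcal{A}$ whose base is exactly $A$: writing $A=\bigsqcup_{i} A_i$ for the clopen partition on which $t_{\varphi,A}$ takes the constant value $n_i$ (finite in number, by continuity of $t_{\varphi,A}$ on the compact set $A$), the atoms of $\mathcal{A}$ are the clopen sets $\varphi^k(A_i)$ for $0\le k\le n_i-1$, and these tile $X$. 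The base of $\mathcal{A}$ is $A=\bigsqcup_i \varphi^0(A_i)$ and the roof of column $i$ is $\varphi^{n_i-1}(A_i)$.

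Next I would compute $\psi$ atom by atom, distinguishing whether a point sits below the roof or on the roof of its column. For $x\in\varphi^k(A_i)$ with $k\le n_i-2$ one has $\varphi(x)\in\varphi^{k+1}(A_i)$, which is not in the base $A$, so $\varphi_A^{-1}$ fixes it and $\psi(x)=\varphi(x)$; thus $\psi$ carries the $k$-th level to the $(k+1)$-st. For a roof point $x=\varphi^{n_i-1}(a)$ with $a\in A_i$, the image $\varphi(x)=\varphi^{n_i}(a)$ is precisely the first return of $a$ to $A$, i.e. $\varphi_A(a)=\varphi(x)$, whence $\varphi_A^{-1}(\varphi(x))=a$ and $\psi(x)=a\in A_i$. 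Consequently $\psi$ restricted to the column $D(i)=\bigsqcup_{k}\varphi^k(A_i)$ is the cyclic shift sending level $k$ to level $k+1 \bmod n_i$, and tracking a base point $a\in A_i$ gives $\psi^{\,\ell}(a)=\varphi^{\ell}(a)$ for $0\le \ell\le n_i-1$ together with $\psi^{\,n_i}(a)=a$.

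Finally, since the columns $D(i)$ are $\psi$-invariant and each has finite height $n_i$, every point lies in a $\psi$-orbit of cardinality dividing $n_i$; as there are only finitely many columns this even yields $\psi^{N}=\mathrm{id}$ for $N=\mathrm{lcm}_i(n_i)$, so in particular every orbit is finite and $\psi=\varphi_A^{-1}\varphi$ is periodic. I expect the only genuinely delicate point to be the roof case: one must use the correct description of $\varphi_A^{-1}$ (the ``previous visit to $A$'' map, acting trivially off $A$) and verify via the first-return identity that $\varphi$ sends roof atoms exactly onto base atoms, so that the wrap-around $\psi(x)=a$ is forced. The below-roof case and the bookkeeping of finitely many columns are routine.
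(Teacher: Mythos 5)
Your proof is correct and follows essentially the same route as the paper: decompose $X$ into the Kakutani--Rokhlin towers over the level sets of the first return function on $A$ and observe that $\varphi_A^{-1}\varphi$ cyclically permutes the levels of each column of height $n_i$, so $(\varphi_A^{-1}\varphi)^{n_i}$ is the identity there. Your atom-by-atom verification of the wrap-around at the roof is a more explicit write-up of what the paper leaves as ``follows immediately from the definitions.''
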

	\begin{proof}
		Note that $\varphi_A^{-1}|_{A}=\varphi^{t_{\varphi^{-1},A}(x)}(x)$ and $A=\bigsqcup_{n \in\mathbb{N}}\bigsqcup_{i=0}^{n-1} \varphi^i (A \cap t_{\varphi,A}^{-1}(n))$. It follows immediatly from the definitions that for every $n \in \mathbb{N}$ and every $x \in \bigsqcup_{i=0}^{n-1} \varphi^i (A \cap t_{\varphi,A}^{-1}(n))$ we have $(\varphi_A^{-1}\varphi)^n(x)$.
	\end{proof}
	
	We sketch the more powerful result from \cite{gm14}:
	
	\begin{thm}[\cite{gm14}, Theorem~4.7]\label{thm: grig-med}
		Let $(X,\varphi)$ be a minimal Cantor system and let $\mathcal{A}_n$ be a nested sequence of Kakutani-Rokhlin partitions satisfying property (H). Let $\gamma \in \mathfrak{T}(\varphi)$.
		\begin{enumerate}[(i)]
			\item There exists an $n_0 \in \mathbb{Z}, n_0>0$, such that for every $n \geq n_0$ there exists a decomposition $\gamma = p_\gamma r_\gamma$ such that $r_\gamma$ is an $n$-rotation with rotation number at most $1$ and $P_\gamma$ is an $n$-permutation with decomposition $p_\gamma=p_{\gamma,1} \dots p_{\gamma,i_n}$ (see Definition~\ref{defi: permut}) satisfying the following properties:
			\begin{enumerate}
				\item The supportive sets $S_u,S_l$ associated with the $n$-rotation $r_\gamma$ are contained in $\{0,\dots,n_0-1\}$.
				\item $d_i^n(p_{\gamma,i}(D_{k,i}^n),D_{k,i}^n)\leq n_0$ for $i \in \{1,\dots,i_n\}$ and $0 \leq k \leq h_i^n-1$
				\item $p_{\gamma,i}(k)=p_{\gamma,j}(k)$ for all $k \in [-m_n,m_n]$ and $i,j \in \{1,\dots,i_n\}$.
			\end{enumerate}
			\item If $\gamma=p_\gamma'r_\gamma'$ is another decomposition of this kind, then $p_\gamma'=p_\gamma$ and $r_\gamma'=r_\gamma$.
			\item For every finite subset $F \subseteq \mathfrak{T}(\varphi)$, there exists an $n_0'>0$ such that for every $n \geq n_0'$ and every pair of elements $\gamma_1,\gamma_2 \in F$ with $\gamma_1 \neq \gamma_2$ the decomposition satisfies $p_{\gamma_1} \neq p_{\gamma_2}$.
		\end{enumerate}
	\end{thm}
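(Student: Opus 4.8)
The plan is to read both factors off the way $\gamma$ transports the atoms of $\mathcal{A}_n$, exploiting that the cocycle of any element of $\mathfrak{T}(\varphi)$ is bounded. First I would fix $\gamma \in \mathfrak{T}(\varphi)$ and set $N := \max(\|f_\gamma\|_\infty, \|f_{\gamma^{-1}}\|_\infty)$, which is finite by Proposition~\ref{prop: count}, and put $n_0 := N+1$. By Theorem~\ref{thm: kakro} there is a threshold beyond which $\gamma \in \mathfrak{T}(\mathcal{A}_n)$; enlarging $n_0$ if necessary, I may assume that for all $n \geq n_0$ one has $\gamma \in \mathfrak{T}(\mathcal{A}_n)$, that $m_n \geq N$, and (using $h_{\mathcal{A}_n} \geq 2m_n+2$) that the towers are far taller than $2n_0$. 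Since $|f_\gamma| \leq N < m_n$, no atom is carried more than once through a roof or floor: an atom $D_{k,i}^n$ with $(k,i)\in U_{\text{top}}$ satisfies $h_i - 1 - k < N \leq n_0$, and symmetrically for $U_{\text{bot}}$, so all inter-tower transfer occurs within the top $n_0$ and bottom $n_0$ levels.

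\textbf{Construction of the factors.} The induced transformation $\varphi_{U(i)}$ (resp. $\varphi_{L(j)}$) is supported on the slab $U(i)$ (resp. $L(j)$) and, by first return, rotates it by carrying each of its atoms to the corresponding atom of the next tower in the $\varphi$-orbit; its first-return times are of order $h_{\mathcal{A}_n}$ (cf. Lemma~\ref{lem: ind.trans}), and distinct slabs are disjoint by $h_{\mathcal{A}_n}\geq 2m_n+2$, so these transformations commute. I would define $r_\gamma$ as the product, over $i \in \{0,\dots,n_0-1\}$, of the powers $\varphi_{U(i)}^{\pm 1}$ and $\varphi_{L(i)}^{\pm 1}$ realising exactly the roof- and floor-crossings performed by $\gamma$; the crucial point is that this transfer is a single slab-rotation (rotation number $\leq 1$) rather than an arbitrary inter-tower permutation. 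This uniformity is precisely what the hypothesis $\operatorname{diam}(\varphi^k(B(\mathcal{A}_n)))\leq \tfrac1n$ for $|k|\leq m_n$ buys: together with uniform continuity of $\gamma$ (Heine--Cantor) it forces $\gamma$ to act with the same cocycle value, and hence to cross a given slab in the same direction, on all atoms at a fixed distance from a roof or floor. Setting $p_\gamma := \gamma r_\gamma^{-1}$ then produces a homeomorphism moving no atom between towers, i.e. $p_\gamma \in \mathcal{P}(\mathcal{A}_n)$; properties (a) and (b) hold because every displacement involved is bounded by $n_0 = N+1$, and the within-tower decomposition $p_\gamma = p_{\gamma,1}\cdots p_{\gamma,i_n}$ together with (c) again records the across-tower uniformity supplied by the diameter condition.

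\textbf{Uniqueness.} For (ii) I would compare two admissible decompositions $p_\gamma r_\gamma = p'_\gamma r'_\gamma$, whence $(p'_\gamma)^{-1}p_\gamma = r'_\gamma r_\gamma^{-1}$. The left-hand side fixes every tower of $\mathcal{A}_n$ setwise and, by (b), moves each atom by at most $2n_0$ cyclic levels, so its $\varphi$-displacement is at most $2n_0$. The right-hand side is again a product of commuting slab-rotations; if nontrivial, then on some slab it is a nonzero power of $\varphi_{U(i)}$ or $\varphi_{L(i)}$, whose first-return displacement is of order $h_{\mathcal{A}_n}\geq 2m_n+2$. For $n\geq n_0$ this exceeds $2n_0$, so the two sides can coincide only if both are the identity, giving $p'_\gamma = p_\gamma$ and $r'_\gamma = r_\gamma$.

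\textbf{Separation over finite sets.} Finally, given finite $F\subseteq \mathfrak{T}(\varphi)$, I would run the same displacement dichotomy. If $\gamma_1\neq\gamma_2$ in $F$ had $p_{\gamma_1}=p_{\gamma_2}$ for some $n$, then $\gamma_1^{-1}\gamma_2 = r_{\gamma_1}^{-1}r_{\gamma_2}$ is a product of slab-rotations of $\mathcal{A}_n$. But $\gamma_1^{-1}\gamma_2$ is a fixed nontrivial element of $\mathfrak{T}(\varphi)$ whose cocycle, by Remark~\ref{lem: cocy}, is bounded by a constant $M_F$ depending only on $F$ and not on $n$, whereas a nontrivial product of slab-rotations has $\varphi$-displacement of order $h_{\mathcal{A}_n}$. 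Choosing $n_0'$ so large that $h_{\mathcal{A}_n} > M_F$ for all $n\geq n_0'$ and all pairs in $F$ forces $r_{\gamma_1}^{-1}r_{\gamma_2}=\mathrm{id}$, i.e. $\gamma_1=\gamma_2$, a contradiction; hence $p_{\gamma_1}\neq p_{\gamma_2}$. The main obstacle throughout is the construction step: proving that the boundary transfer of $\gamma$ is genuinely a rotation of rotation number at most one, which is exactly where the strengthened height and diameter hypotheses on $\{\mathcal{A}_n\}_{n\in\mathbb{N}}$ are indispensable.
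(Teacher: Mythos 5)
Your construction in (i) is the paper's: Grigorchuk--Medynets define the rotation explicitly by sending $x \in D_{k,i}^n$ to $\varphi_{U(h_i^n-1-k)}(x)$ when $f_\gamma(x)+k \geq h_i^n$ and to $\varphi_{L(k)}^{-1}(x)$ when $f_\gamma(x)+k<0$, and obtain $p_\gamma$ by retargeting the roof/floor crossings of $\gamma$ back into the original tower, which is the same object as your $\gamma r_\gamma^{-1}$. You also correctly locate where the strengthened hypotheses enter: the paper picks $n_0$ so that $n_0^{-1}$ is a Lebesgue number for the partition $\{X_i^\gamma\}$, whence the diameter condition forces $f_\gamma$ to be constant on each set $\varphi^k(B(\mathcal{A}_n))$, $-m_n-1\le k\le m_n$ --- exactly your ``same cocycle value at a fixed distance from a roof or floor''. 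For (iii) your route differs from the paper's: you derive a contradiction from $\gamma_1^{-1}\gamma_2 = r_{\gamma_1}^{-1}r_{\gamma_2}$ by comparing the uniformly bounded cocycle of the left side with the first-return displacement $\geq h_{\mathcal{A}_n}$ of a nontrivial product of slab rotations, whereas the paper finds a clopen set $C'$ on which $\gamma_1,\gamma_2$ differ and which avoids the supports of both rotations, so that $p_{\gamma_1}|_{C'}\neq p_{\gamma_2}|_{C'}$. Both are correct; yours reuses the uniqueness mechanism and is arguably more self-contained.

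The one step that fails as written is in (ii): from ``$(p'_\gamma)^{-1}p_\gamma$ moves each atom by at most $2n_0$ cyclic levels'' you conclude ``so its $\varphi$-displacement is at most $2n_0$''. That inference is false --- an $n$-permutation carrying the bottom atom of a tower to its top atom has cyclic distance $1$ but $\varphi$-displacement $h_i^n-1$, and this is precisely what your $p_\gamma$ does to atoms that $\gamma$ pushes through a roof --- so you cannot exclude the equality $(p'_\gamma)^{-1}p_\gamma = r'_\gamma r_\gamma^{-1}$ by a magnitude comparison alone. The correct (and easy) repair is the disjointness the paper invokes: the left side maps every atom into its own tower and acts as $\varphi^{l-k}$ with $|l-k|\le h_i^n-1$ there (in particular as the identity on any atom it fixes), while a nontrivial product of the commuting slab rotations $\varphi_{U(j)}^{\pm1},\varphi_{L(j)}^{\pm1}$ sends some atom of a slab either into a different tower or, in the degenerate case where the orbit re-enters the same tower, acts on it as $\varphi^{\pm h_i^n}\neq\operatorname{id}$; neither is an $n$-permutation of this form, so both sides are the identity. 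With that substitution your argument for (ii), and hence the whole proof, goes through.
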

	\begin{proofsketch}
		\begin{enumerate}[(i)]
			\item As mentioned in the beginning of this subsection, we can find an $n_0$ big enough such that the cocylce $f_\gamma$ is constant on the atoms of $\mathcal{A}_{n_0}$ and such that $\|f_\gamma\|_{\infty} \leq n_0$. Furthermore, choose $n_0$ big enough such that $n_0^{-1}$ is a Lebesgue number for the partition $\{X_i^\gamma \}_{i \in \mathbb{Z}}$ induced by $\gamma$. Then assumption (ii) on the nested sequence of Kakutani-Rokhlin partitions in the beginning of this subsection assures that $f_\gamma$ is constant on $\varphi^k(B(\mathcal{A}_n))$ for $-m_n-1 \leq i \leq m_n$ for all $n \geq n_0$. Let $n \geq n_0$. Define
			\begin{equation*}
			R_\gamma(x):=
			\begin{cases}
			\varphi_{U(h_i^n-1-k)}(x), & \text{ if } x \in D_{k,i}^n \text{ and } f_\gamma(x)+k \geq h_i^n\\
			\varphi_{L(k)}^{-1}(x), & \text{ if } x \in D_{k,i}^n \text{ and } f_\gamma(x)+k < 0 \\
			x, & \text{ else}
			\end{cases}
			\end{equation*}
			This is a well defined $n$-rotation with rotation number at most $1$ satisfying property (a).
			Since $f_\gamma$ is constant on $\varphi^k(B(\mathcal{A}_n))$ for $-m_n-1 \leq i \leq m_n$ for $n \geq n_0$, we can produce an $n$-permutation $p_\gamma$ from $\gamma$, by tweaking $\gamma$ in such a way that if an atom $D_{k,i}^n$ is moved over the top of its containing tower (resp. over the bottom of its containing tower) to some atom $D_{l,j}^n$, it is moved to $D_{l,i}^n$ (resp. $D_{h_i^n-h_j^n+l,i}^n$) instead. This is well defined and the obtained permutation $p_\gamma=p_{\gamma,1} \dots p_{\gamma,i_n}$ satisfies the required properties (b) and (c).
			The proof of $\gamma=p_\gamma r_\gamma$ is straightforward, one just has to keep track of atoms under the respective homeomorphisms.
			\item Let $\gamma=p'_\gamma r'_\gamma$ be another decomposition of the described kind. Then $p_\gamma^{-1}p_\gamma'=r_\gamma r_\gamma'^{-1}$. The left-hand side of this equation is an $n$-permutation, the right-hand side an $n$-rotation. By definition, both sides must be the identity and hence $p_\gamma=p_\gamma'$ and $r_\gamma=r_\gamma'$.
			\item Let $\gamma_1,\gamma_2 \in \mathfrak{T}(\varphi)$ with $\gamma_1 \neq \gamma_2$ with corresponding decompositions $\gamma_1=p_{\gamma_1}r_{\gamma_1}$ and $\gamma_2=p_{\gamma_2}r_{\gamma_2}$. There exists a clopen set $C \subset X$ such that $\gamma_1(x)\neq\gamma_2(x)$ for all $x$ in $C$. Property (i) of the deomposition implies that for every $n$ large enough the supports of $r_{\gamma_1}$ and $r_{\gamma_2}$ are contained in $\bigsqcup_{j \in\{1,\dots ,k\}} U(j) \sqcup L(j)$ for some fixed $k \in \mathbb{N}$. By definition of $\mathcal{A}_n$ for every $n$ large enough there exists a subset $C' \subset C$ with $C'\cap \supp(r_{\gamma_1})=\emptyset=C'\cap \supp(r_{\gamma_2})$ and consequently, $p_{\gamma_1}|_{C'}\neq p_{\gamma_2}|_{C'}$.
		\end{enumerate}
	\end{proofsketch}

	For odometer systems above situation is the most lucid in that their topological full groups in fact are unions of permutational wreath products:
	\begin{prop}[\cite{mat13}, Proof of Proposition~2.1]\label{prop: topfull groups of odometers are unions of permutational wreath products}
		Let $(X,\varphi)$ be an odometer system of type $\bm{\mathrm{a}}=(a_n)_{n \in \mathbb{N}}$. Then the topological full group is of the form
		\begin{equation*}
		\mathfrak{T}(\varphi) \cong \bigcup_{n \in \mathbb{N}} \mathbb{Z} \wr_{a_n} \mathfrak{S}_{a_n} \big(:\cong \bigcup_{n \in \mathbb{N}} \mathbb{Z}^{a_n} \rtimes \mathfrak{S}_{a_n}\big)
		\end{equation*}
	\end{prop}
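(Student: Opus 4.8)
The plan is to exploit the especially transparent Kakutani--Rokhlin description available for odometers. By Example~\ref{ex: kakutani-rokhlin partition of an odometer} the sequence $\{\mathcal{A}_n\}_{n\in\mathbb{N}}$ with $\mathcal{A}_n=\{D_k^n\}_{0\le k\le a_n-1}$ is a nested sequence of Kakutani--Rokhlin partitions with property (H), and crucially each $\mathcal{A}_n$ consists of a \emph{single} tower of height $a_n$. Theorem~\ref{thm: kakro} then gives $\mathfrak{T}(\varphi)=\bigcup_{n\in\mathbb{N}}\mathfrak{T}(\mathcal{A}_n)$. Since $a_n\mid a_{n+1}$, each atom $D_k^n$ is a disjoint union of atoms of $\mathcal{A}_{n+1}$, so an atom-constant cocycle of bound $a_n$ is again atom-constant for $\mathcal{A}_{n+1}$ with bound $a_n\le a_{n+1}$; hence the $\mathfrak{T}(\mathcal{A}_n)$ form an increasing chain, and the whole problem reduces to identifying each term $\mathfrak{T}(\mathcal{A}_n)$ with $\mathbb{Z}\wr_{a_n}\mathfrak{S}_{a_n}$.

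Next I would observe that for a single-tower partition the level condition (L) is vacuous: with only one tower the partitions $\alpha(\mathcal{A}_n),\alpha'(\mathcal{A}_n)$ are singletons, so that $F_1(k,\{1\})=F_2(k,\{1\})=D_k^n$ and the required conclusion literally coincides with the hypothesis. Consequently $\mathfrak{T}(\mathcal{A}_n)$ is exactly the set of $\gamma\in\mathfrak{T}(\varphi)$ whose orbit cocycle $f_\gamma$ is constant on each $D_k^n$ with $\|f_\gamma\|_\infty\le a_n$. The key structural fact is that $\varphi^{a_n}$ is the first return map $\varphi_{D_0^n}$ of the base (the return time is constantly $a_n$) and that $\varphi^{a_n}$ maps every level $D_k^n=\varphi^k(D_0^n)$ onto itself. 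This lets me single out two families in $\mathfrak{T}(\mathcal{A}_n)$: the finite group $\mathcal{P}(\mathcal{A}_n)\cong\mathfrak{S}_{a_n}$ permuting the atoms $D_0^n,\dots,D_{a_n-1}^n$, and, for each $k$, the ``local loop'' $\psi_k$ acting as $\varphi^{a_n}$ on $D_k^n$ and as the identity elsewhere (with cocycle $a_n\cdot\mathbf{1}_{D_k^n}$, admissible since $a_n\le h_{\mathcal{A}_n}$).

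I then verify the wreath-product structure. The $\psi_k$ have pairwise disjoint supports $D_k^n$, hence commute, and each has infinite order because $\varphi$ is aperiodic, so $\langle\psi_0,\dots,\psi_{a_n-1}\rangle\cong\mathbb{Z}^{a_n}$; a permutation $\sigma\in\mathfrak{S}_{a_n}$ conjugates $\psi_k$ to $\psi_{\sigma(k)}$, and $\mathfrak{S}_{a_n}$ meets $\langle\psi_k\rangle$ trivially since its nonidentity elements move atoms whereas the $\psi_k$ preserve them. This produces an internal copy of $\mathbb{Z}^{a_n}\rtimes\mathfrak{S}_{a_n}=\mathbb{Z}\wr_{a_n}\mathfrak{S}_{a_n}$ (cf.\ Proposition~\ref{prop: permutational wreath products embedd into topological full groups}). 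For surjectivity, given $\gamma\in\mathfrak{T}(\mathcal{A}_n)$ with $f_\gamma|_{D_k^n}=c_k$, the identity $\gamma(D_k^n)=\varphi^{c_k}(D_k^n)=D_{(k+c_k)\bmod a_n}^n$ shows $\gamma$ induces a permutation $\sigma$ of the atoms; then $\sigma^{-1}\gamma$ fixes every atom setwise and acts on $D_k^n$ as a power $(\varphi^{a_n})^{m_k}$, whence $\sigma^{-1}\gamma=\prod_k\psi_k^{m_k}$ and $\gamma=\sigma\prod_k\psi_k^{m_k}$. Thus $\mathfrak{T}(\mathcal{A}_n)\cong\mathbb{Z}\wr_{a_n}\mathfrak{S}_{a_n}$, and passing to the increasing union along the inclusions above yields the claim.

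The main obstacle is purely bookkeeping: tracking which level $\varphi^{c_k}(D_k^n)$ lands in (via $\varphi^{k+c_k}=(\varphi^{a_n})^q\varphi^r$ for $k+c_k=qa_n+r$, $0\le r<a_n$) and confirming the cocycle bound remains admissible at the boundary value $\|f_\gamma\|_\infty=a_n$. The conceptual content — the vanishing of the level condition and the internal semidirect decomposition — becomes immediate once the single-tower structure and the $\varphi^{a_n}$-invariance of each level are exploited.
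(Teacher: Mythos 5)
Your overall architecture is sound and close in spirit to the paper's, but there is a genuine error in the central step: the claimed isomorphism $\mathfrak{T}(\mathcal{A}_n)\cong\mathbb{Z}\wr_{a_n}\mathfrak{S}_{a_n}$ cannot hold. The set $\mathfrak{T}(\mathcal{A}_n)$ of Definition~\ref{defi: permut} carries the bound $\|f_\gamma\|_\infty\le h_{\mathcal{A}_n}=a_n$ and is therefore \emph{finite} (as the paper remarks explicitly right after the definition), and it is not even a subgroup. Your own generators expose the problem: $\psi_k$ sits exactly at the boundary value $a_n$ of the admissible cocycle norm, so $\psi_k^2$ has cocycle $2a_n\cdot\mathbf{1}_{D_k^n}$ and already fails the bound, hence $\langle\psi_0,\dots,\psi_{a_n-1}\rangle\cong\mathbb{Z}^{a_n}$ is not contained in $\mathfrak{T}(\mathcal{A}_n)$. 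What your ``surjectivity'' computation actually establishes is only the containment $\mathfrak{T}(\mathcal{A}_n)\subseteq W_n:=\langle\mathcal{P}(\mathcal{A}_n),\psi_0,\dots,\psi_{a_n-1}\rangle\cong\mathbb{Z}\wr_{a_n}\mathfrak{S}_{a_n}$, together with $W_n\subseteq\mathfrak{T}(\varphi)$; it does not give equality of the first two.

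The statement survives once you stop insisting on term-by-term equality: $W_n$ is precisely the group of all $\gamma\in\mathfrak{T}(\varphi)$ whose cocycle is constant on the atoms of $\mathcal{A}_n$ (with no norm bound), these groups are nested since the atoms refine, each contains $\mathfrak{T}(\mathcal{A}_n)$, and hence $\mathfrak{T}(\varphi)=\bigcup_n\mathfrak{T}(\mathcal{A}_n)\subseteq\bigcup_n W_n\subseteq\mathfrak{T}(\varphi)$. This repaired version is essentially the paper's proof: there one defines $\Gamma_n$ directly as the group of elements whose cocycle is constant on atoms of $\mathcal{A}_n$ — dropping the norm bound from the outset and thereby bypassing Theorem~\ref{thm: kakro} entirely — splits off the permutation of levels via the homomorphism $\pi_n\colon\Gamma_n\to\mathfrak{S}_{a_n}$ with its evident section, and identifies $\ker(\pi_n)$ with $\mathbb{Z}^{a_n}$ through $\gamma|_{D_k^n}=\varphi^{m_{\gamma,k}a_n}$. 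Your supporting observations — that the level condition (L) is vacuous for a single tower and that $\varphi^{a_n}$ preserves each level $D_k^n$ — are correct and do exactly the same work; the only fix needed is to replace each $\mathfrak{T}(\mathcal{A}_n)$ by the subgroup it generates.
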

	\begin{proof}
		Let $\{\mathcal{A}_n\}_{n \in \mathbb{N}}=\{D_k^n\}_{n \in \mathbb{N},0\leq k \leq a_n-1}$ be the sequence of Kakutani-Rokhlin partitions with property (H) given in Example~\ref{ex: kakutani-rokhlin partition of an odometer}. Denote by $\Gamma_n$ the group of elements $\gamma \in \mathfrak{T}(\varphi)$ of which the cocycle $f_\gamma$ is constant on atoms of $\mathcal{A}_n$. The family $\{\Gamma_n\}_{n \in \mathbb{N}}$ is a nested sequence of subgroups with $\bigcup_{n \in \mathbb{N}} \Gamma_n = \mathfrak{T}(\varphi)$. Since for every $n \in \mathbb{N}$ the partition $\mathcal{A}_n$ consists of only one tower of height $a_n$ every $\gamma \in \Gamma_n$ induces a well-defined permutation $\sigma_\gamma \in \mathfrak{S}_{a_n}$.
		The induced map $\pi_n \colon \Gamma_n \to \mathfrak{S}_{a_n}$ is a group homomorphism and has a section $\pi_n^{-1}$ defined by $\pi_n^{-1}(\sigma)(x)=\varphi^{\sigma(k)-k}(x)$ for $\sigma \in \mathfrak{S}_{a_n}$ and $x \in D_k^n$. Let $\gamma \in \ker(\pi)$. Then for all $k \in \mathbb{Z}/a_n\mathbb{Z}$ it holds that $\gamma(D_k^n)=D_k^n$ and thus $\gamma|_{D_k^n}=\varphi^{m_{\gamma,k} \cdot a_n}$ for some $m_{\gamma,z} \in \mathbb{Z}$. Then $\ker(\pi_n)\to \mathbb{Z}^{a_n} \colon \gamma \to (m_{\gamma,k})_{k \in \mathbb{Z}/a_n\mathbb{Z}}$ is an isomorphisms and therefore $\Gamma_n \cong \mathbb{Z}^{a_n} \rtimes \mathfrak{S}_{a_n}$.
	\end{proof}

	\subsection{Topological full groups in C*-algebraic terms}\label{subs: sight}
	
	The first glances on topological full groups of minimal Cantor systems were from a perspective informed by topological dynamics and operator algebras. The significance of full groups resp. topological full groups in topological dynamics is, that under sufficient conditions they determine the orbit equivalence class resp. flip conjugacy class of a topological dynamical system. One appearance of $\mathfrak{T}(\varphi)$ can be traced back to \cite{put89}:
	
	Let $(X,\varphi)$ be a minimal Cantor system and let $Y$ be a closed subset of $X$. Then by Theorem~\ref{thm: putnam} the C*-subalgebra $\mathfrak{A}_{Y}$ of $C(X)\rtimes_\varphi \mathbb{Z}$ generated by $C(X)$ and $u C_0(X \setminus Y)$ is an AF C*-algebra, by which it lent itself to the analysis on ideals in the wake of Theorem~\ref{thm: AF} by Stratila and Voiculescu. The maximal abelian subalgebra with associated conditional expectation is given by $C(X)$ and the restriction of the conditional expectation $E$ from Remark~\ref{rem: crossedprod}(iii) to $\mathfrak{A}_{Y}$. Fix the following notations:
	\begin{equation*}
	\begin{gathered}
	\operatorname{UN}(C(X),C(X) \rtimes_{\varphi} \mathbb{Z}):=\{v \in \operatorname{U}(C(X) \rtimes_{\varphi} \mathbb{Z})| vC(X)v^*=C(X) \}\\
	\operatorname{UN}(C(X),\mathfrak{A}_{Y}):=\{v \in \operatorname{U}(\mathfrak{A}_{Y})| vC(X)v^*=C(X) \}
	\end{gathered}
	\end{equation*}
	
	Both sets can be considered as groups of $*$-automorphism of $C(X)$ by their adjoint action -- the latter obviously being a subgroup of the former. Denote the respective quotients by $\operatorname{U}(C(X))$ of this groups by $\Gamma$ (resp. $\Gamma_Y$). The group $\operatorname{UN}(C(X),\mathfrak{A}_{Y})$ plays the role of the locally finite unitary subgroup $U$ from Theorem~\ref{thm: AF}, while $\Gamma_Y$ represents the corresponding group of homeomorphisms $\Gamma_U$.	By Lemma 5.1 in \cite{put89}, every $v \in \operatorname{UN}(C(X), C(X) \rtimes_{\varphi} \mathbb{Z})$ can be written uniquely as $v=f\sum_{n \in \mathbb{Z}}u^n p_n $, where $f \in \operatorname{U}(C(X))$ and $\{p_n \}_{n \in \mathbb{Z}}$ is a collection of pairwise orthogonal projections $p_n \in C(X)$ with $p_n=0$ for almost all $n$ such that
	\begin{equation*}
	\mathbf{1}_{X}=\sum_{n \in \mathbb{Z}} p_n = \sum_{n \in \mathbb{Z}} \varphi^{-n}(p_n)
	\end{equation*} 
	holds. The projection $p_n$ is given by the absolute value of $E_n(v)$ (Remark~\ref{rem: crossedprod}(iii)). The group $\Gamma$ is isomorphic to the group of invertible elements in the monoid $C(X,\mathbb{Z})$ where the monoid operation is given by
	\begin{equation*}
	(f_1\cdot f_2)(x):=f_1(x)+f_2(\varphi^{-f_1(x)}(x))
	\end{equation*}
	This, however, is just the topological full group $\mathfrak{T}(\varphi)$. Every $v \in \operatorname{UN}(C(X), C(X) \rtimes_{\varphi} \mathbb{Z})$ admits an element $\Phi(v) \in \mathfrak{T}(\varphi)$ defined by $\Phi(v)(x):= \varphi^n(x)$ for all $x \in \supp(p_n)$.
	
	\begin{prop}[\cite{put89}, Theorem 5.2]\label{prop: exact}
		Let $(X,\varphi)$ be a minimal Cantor system. Then 
		\begin{equation*}
		1 \longrightarrow \operatorname{U}(C(X)) \overset{\iota}{\longrightarrow} \operatorname{UN}(C(X), C(X) \rtimes_{\varphi} \mathbb{Z}) \overset{\Phi}{\longrightarrow} \mathfrak{T}(\varphi) \longrightarrow 1
		\end{equation*}
		is an exact sequence which is split via the section $\gamma \mapsto v_\gamma := \sum_{n \in \mathbb{Z}} u^n \mathbf{1}_{X_n^\gamma}$, where $\{X_n^\gamma\}_{n \in \mathbb{Z}}$ is the partition defined in Proposition~\ref{prop: count}.
	\end{prop}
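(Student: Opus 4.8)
The plan is to check exactness at each of the three terms and then to promote the given set-theoretic section to a group homomorphism. Throughout I will work with the unique normal form $v = f\sum_{n} u^{n} p_{n}$ of Lemma~5.1 in \cite{put89}, where $f \in \operatorname{U}(C(X))$ and each $p_{n} = \mathbf{1}_{E_{n}}$ is the characteristic function of a clopen set $E_n$, subject to the two partition relations $\sum_{n} p_{n} = \mathbf{1}_{X} = \sum_{n} \varphi^{-n}(p_{n})$. Since this normal form is unique, $\Phi$ is well defined. The outer exactness statements are cheap: $\iota$ is the inclusion of $\operatorname{U}(C(X))$, which normalizes the abelian algebra $C(X)$ trivially, so it is injective; and the two partition relations say precisely that $\{E_n\}$ and $\{\varphi^{n}(E_n)\}$ are clopen partitions of $X$, so $\Phi(v)$ (acting as $\varphi^{n}$ on $E_n$) is a genuine homeomorphism lying in $\mathfrak{T}(\varphi)$ by the clopen-partition characterization of Proposition~\ref{prop: count}(ii), with $X_n^{\Phi(v)} = E_n$.

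The crux is that $\Phi$ is a homomorphism, and I would establish this by a direct computation in the crossed product. From $u h u^{*} = h\circ\varphi^{-1}$ one gets $p\,u^{m} = u^{m}(p\circ\varphi^{m})$, and writing a second element as $w = g\sum_m u^m q_m$ with $q_m = \mathbf{1}_{F_m}$ this yields
\begin{equation*}
u^{n}p_{n}\,u^{m}q_{m} = u^{n+m}\,\mathbf{1}_{\varphi^{-m}(E_{n})\cap F_{m}}.
\end{equation*}
Collecting the coefficient of $u^{k}$ in $vw$ therefore produces the projection onto $\bigsqcup_{n+m=k}\bigl(\varphi^{-m}(E_{n})\cap F_{m}\bigr)$; tracing a point $x$ of this set shows $\Phi(w)(x)=\varphi^{m}(x)$ and $\Phi(v)(\varphi^{m}(x))=\varphi^{n+m}(x)$, so $(\Phi(v)\Phi(w))(x)=\varphi^{k}(x)=\Phi(vw)(x)$. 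The scalar prefactors $f,g$ only contribute a unitary function from $\operatorname{U}(C(X))$, which is invisible to $\Phi$. This is exactly the assertion that crossed-product multiplication realizes the cocycle addition $f_{\gamma_1\gamma_2}=f_{\gamma_1}\circ\gamma_2+f_{\gamma_2}$ of Remark~\ref{lem: cocy}. I expect this bookkeeping — keeping track of the $u$-twisting and verifying that the collected supports are disjoint and exhaust $X$ — to be the main obstacle.

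For exactness at the middle term I would compute $\ker\Phi$. If $f\in\operatorname{U}(C(X))$ then its normal form has $p_0=\mathbf{1}_X$ and all other $p_n=0$, so $\Phi(f)=\operatorname{id}$; hence $\Ima\iota\subseteq\ker\Phi$. Conversely, if $\Phi(v)=\operatorname{id}$ then $\varphi^{n}$ fixes every point of $E_n$, and since a minimal Cantor system is a free (aperiodic) $\mathbb{Z}$-action by Remark~\ref{rem: orbdens}, this forces $E_n=\emptyset$ for $n\neq 0$, so $v=f\in\operatorname{U}(C(X))$. Thus $\ker\Phi=\Ima\iota$.

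Finally, surjectivity and the splitting come together through the proposed section. Given $\gamma\in\mathfrak{T}(\varphi)$, take the partition $\{X_n^{\gamma}\}$ of Proposition~\ref{prop: count}(ii) and set $v_{\gamma}=\sum_{n}u^{n}\mathbf{1}_{X_n^{\gamma}}$; the two partition relations hold because $\gamma$ and $\gamma^{-1}$ are homeomorphisms, so $v_{\gamma}\in\operatorname{UN}(C(X),C(X)\rtimes_{\varphi}\mathbb{Z})$ and $\Phi(v_{\gamma})=\gamma$, giving surjectivity. Running the product computation of the second paragraph with trivial prefactors and collecting by $k=n+m$ identifies the coefficient of $u^{k}$ in $v_{\gamma_1}v_{\gamma_2}$ with $\mathbf{1}_{X_k^{\gamma_1\gamma_2}}$, whence $v_{\gamma_1}v_{\gamma_2}=v_{\gamma_1\gamma_2}$. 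Therefore $\gamma\mapsto v_{\gamma}$ is a homomorphic section with $\Phi\circ(\gamma\mapsto v_\gamma)=\operatorname{id}$, and the short exact sequence splits.
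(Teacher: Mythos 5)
Your argument is correct, and it is exactly the argument the paper's setup points to: the paper states this proposition without proof (deferring to Putnam's Theorem~5.2), but the surrounding discussion already supplies the unique normal form $v=f\sum_n u^n p_n$, the definition of $\Phi$, and the cocycle identity $f_{\gamma_1\gamma_2}=f_{\gamma_1}\circ\gamma_2+f_{\gamma_2}$, which are precisely the ingredients you assemble. The only point you compress — that pushing $g$ past $\sum_n u^n p_n$ produces $n$-dependent twists $g\circ\varphi^{-n}$ that must be reassembled into a single unitary of $C(X)$ using the partition $\{\varphi^n(E_n)\}$ — does go through, so there is no gap.
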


	The above considerations are embedded in the groupoid setting, where they are an aspect of Renault's topological version of Feldman-Moore. The following proposition is a special case of \cite{ren08}, Proposition~4.8 (by putting twists aside): 
	\begin{lem}[\cite{mat12}, Lemma~5.5]
		Let $\mathcal{G}$ be a Hausdorff, locally compact, effective, \'etale groupoid. For every partial isometry $v \in C_r^*(\mathcal{G})$ such that $vv^*, v^*v \in C_0(\mathcal{G}^{(0)})$ and $vC_0(\mathcal{G}^{(0)})v^*=vv^*C_0(\mathcal{G}^{(0)})$ there exists a compact open slice $V \in \mathcal{B}_\mathcal{G}^{o,k}$ such that $V=\{g \in \mathcal{G}|v(g) \neq 0  \}$.
	\end{lem}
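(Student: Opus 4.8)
The plan is to realise the abstract partial isometry $v$ concretely as a function on $\mathcal{G}$ and then read off a bisection from its support. For a Hausdorff, locally compact, \'etale groupoid there is a canonical injective, contractive linear map $j\colon C_r^*(\mathcal{G}) \hookrightarrow C_0(\mathcal{G})$ which is the identity on $C_c(\mathcal{G})$, intertwines the convolution product and the involution, and satisfies $j(a)|_{\mathcal{G}^{(0)}} = E(a)$ for the canonical conditional expectation $E$ of Example~\ref{ex: groupoid Cstaralgebra as cartan pair}. I would henceforth identify $v$ with the continuous function $j(v) \in C_0(\mathcal{G})$ and set $V := \{g \in \mathcal{G} : v(g) \neq 0\}$, which is open. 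Since $v^*v, vv^* \in C_0(\mathcal{G}^{(0)})$ are projections, they are indicator functions $\mathbf{1}_U, \mathbf{1}_W$ of compact open sets $U, W \subseteq \mathcal{G}^{(0)}$. Writing the convolution fibrewise gives $(v^*v)(u) = \sum_{k \in \mathcal{G}_u} |v(k)|^2$ and $(vv^*)(u) = \sum_{k \in \mathcal{G}^u}|v(k)|^2$, so the restriction of $v$ to each source fibre $\mathcal{G}_u$ (resp.\ range fibre $\mathcal{G}^u$) has $\ell^2$-norm $1$ when $u \in U$ (resp.\ $u \in W$) and $0$ otherwise; in particular $s(V) = U$ and $r(V) = W$.

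The crux is to show that $V$ is a bisection, i.e.\ that $s|_V$ and $r|_V$ are injective. First I would turn the normalizer hypothesis into a pointwise statement: since $f \in C_0(\mathcal{G}^{(0)})$ acts by $(fa)(g) = f(r(g))a(g)$ and $(af)(g) = a(g)f(s(g))$, the assumption $vfv^* \in C_0(\mathcal{G}^{(0)})$ yields, for every $g \notin \mathcal{G}^{(0)}$ and every $f$,
\[
\sum_{h \in \mathcal{G}^{r(g)}} v(h)\, f(s(h))\, \overline{v(g^{-1}h)} = 0 .
\]
Suppose $a \neq b$ lie in the same source fibre $\mathcal{G}_u$ with $a, b \in V$. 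Using that $\mathcal{G}$ is \'etale and Hausdorff, choose disjoint open bisections $A \ni a$ and $B \ni b$ contained in $V$ with a common source neighbourhood $N$, and for $u'' \in N$ write $a(u'') := (s|_A)^{-1}(u'')$ and $b(u'') := (s|_B)^{-1}(u'')$. The slice $C := A B^{-1} = \{a(u'')b(u'')^{-1} : u'' \in N\}$ is an open bisection disjoint from $\mathcal{G}^{(0)}$, so $v\mathbf{1}_E v^*$, lying in $C_0(\mathcal{G}^{(0)})$, vanishes identically on $C$ for every compact open $E \subseteq \mathcal{G}^{(0)}$. Because $\mathcal{G}$ is effective and second countable, Proposition~\ref{prop: effective} shows it is essentially principal, so the units with trivial isotropy are dense; I would pick such a $u'' \in N$. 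For this $u''$ the unique arrow of $\mathcal{G}^{r(a(u''))}$ with source $u''$ is $a(u'')$, and letting $E \downarrow \{u''\}$ in the displayed identity isolates the single term $v(a(u''))\overline{v(b(u''))}$, forcing it to vanish, contradicting $v \neq 0$ on $A$ and $B$. Hence $s|_V$ is injective, and applying the same reasoning to $v^*$ (which also normalizes $C_0(\mathcal{G}^{(0)})$, since $v^* C_0(\mathcal{G}^{(0)}) v = v^*v\, C_0(\mathcal{G}^{(0)})$ follows from the hypothesis, and whose support is $V^{-1}$ with $s|_{V^{-1}} = r|_V$) gives injectivity of $r|_V$. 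Thus $V$ is an open bisection.

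It remains to note that $V$ is compact. Since $V$ is an open bisection with $s(V) = U$, the restriction $s|_V \colon V \to U$ is a homeomorphism onto the compact open set $U$, so $V$ is compact; therefore $V \in \mathcal{B}_{\mathcal{G}}^{o,k}$ and $V = \{g : v(g) \neq 0\}$ by construction, as required. Conceptually this is the special case, recalled in Subsection~\ref{subs: some historical remarks}, that for effective $\mathcal{G}$ the pair $(C_0(\mathcal{G}^{(0)}), C_r^*(\mathcal{G}))$ is Cartan and $\mathcal{G}$ is reconstructed as its Weyl groupoid, under which normalizers of the diagonal correspond to open bisections.

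I expect the main obstacle to be the limiting argument in the second paragraph: the fibre sums are genuinely infinite, so passing from the algebraic identity $v\mathbf{1}_E v^* \equiv 0$ on $C$ to the pointwise conclusion $v(a(u''))\overline{v(b(u''))} = 0$ requires showing that the contributions of the terms with $s(h) \neq u''$ decay as $E$ shrinks. Controlling this tail -- using the $C_0$-decay of $v$, the discreteness of the fibres, and essential principality to pin down the diagonal term -- is the technical heart of the proof; verifying that $j$ is genuinely multiplicative on the relevant elements, so that $j(vfv^*) = j(v) * f * j(v^*)$, is a routine but necessary preliminary.
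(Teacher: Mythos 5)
The paper itself gives no proof of this lemma: it is quoted from Matui and noted to be a special case of Renault's Proposition~4.8 on Cartan subalgebras, so there is no in-paper argument to measure yours against. Your reconstruction is the standard normalizer-to-bisection argument from that circle of ideas, and it is correct. The two points you flag as delicate do go through. First, the tail estimate: the relevant decay is not the $C_0$-decay of $j(v)$ but the fibrewise $\ell^2$-bound $\sum_{h\in\mathcal{G}^{u}}|v(h)|^2=(vv^*)(u)\le 1$ that you already established; by Cauchy--Schwarz the sum over $\{h\in\mathcal{G}^{r(g)}: s(h)\in E\setminus\{u''\}\}$ is dominated by $\bigl(\sum_{s(h)\in E\setminus\{u''\}}|v(h)|^2\bigr)^{1/2}$, which tends to $0$ as $E$ shrinks, so the limiting argument isolates exactly the term $v(a(u''))\overline{v(b(u''))}$. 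Second, your use of trivial isotropy is the right one: two arrows from $u''$ to $r(a(u''))$ differ by an element of the isotropy group at $u''$, so triviality of that group (available on a dense set by Proposition~\ref{prop: effective}, using the standing second-countability assumption and the fact that $\mathcal{G}^{(0)}$ is Baire) forces uniqueness of $a(u'')$ in the limit set. One line worth adding: the hypothesis only states $vC_0(\mathcal{G}^{(0)})v^*=vv^*C_0(\mathcal{G}^{(0)})$, so the claim that $v^*$ also normalizes needs the computation $v^*fv=v^*(fvv^*)v$ together with $fvv^*\in vv^*C_0(\mathcal{G}^{(0)})=vC_0(\mathcal{G}^{(0)})v^*$, giving $v^*fv=(v^*v)f''(v^*v)\in C_0(\mathcal{G}^{(0)})$; with that, the argument applied to $v^*$ yields injectivity of $r|_V$, and the rest (openness of $V$, $s(V)=U$ compact, hence $V$ compact) is as you say.
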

	
	The assignement $v \mapsto \tau_V$ induces a homomorphism $\Phi \colon \operatorname{UN}(C(\mathcal{G}^{(0)}), C_r^*(\mathcal{G})) \to \mathfrak{T}(\mathcal{G})$ such that $ufu^*=f \circ \Phi(u)$ for $u \in \operatorname{UN}(C(\mathcal{G}^{(0)}), C_r^*(\mathcal{G})$, in particular the following holds:
	\begin{prop}[\cite{mat12}, Proposition~5.6]\label{prop: exact sequence, topological full groups as unitary normalizers}
		Let $\mathcal{G}$ be an effective, \'etale Cantor groupoid. Then the following sequence is exact and splits
		\begin{equation*}
		1 \longrightarrow \operatorname{U}(C(\mathcal{G}^{(0)})) \overset{\iota}{\longrightarrow} \operatorname{UN}(C(\mathcal{G}^{(0)}), C_r^*(\mathcal{G})) \overset{\Phi}{\longrightarrow} \mathfrak{T}(\mathcal{G}) \longrightarrow 1
		\end{equation*}
	\end{prop}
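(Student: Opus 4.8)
The plan is to mimic Putnam's argument for Proposition~\ref{prop: exact}, replacing the crossed-product bookkeeping by the slice calculus in $C_r^*(\mathcal{G})$. Throughout I use that, since $\mathcal{G}$ is effective (and Hausdorff, as in the preceding lemma), $C(\mathcal{G}^{(0)})$ is a Cartan -- in particular maximal abelian -- subalgebra of $C_r^*(\mathcal{G})$ by Example~\ref{ex: groupoid Cstaralgebra as cartan pair}, and that $\Phi$ has already been constructed above as a homomorphism satisfying $ufu^{*}=f\circ\Phi(u)$ for all $f\in C(\mathcal{G}^{(0)})$. Exactness at $\operatorname{U}(C(\mathcal{G}^{(0)}))$ is then immediate: $\iota$ is the inclusion of groups, hence injective, and its image lands in $\operatorname{UN}(C(\mathcal{G}^{(0)}),C_r^*(\mathcal{G}))$ because unitaries of the abelian algebra $C(\mathcal{G}^{(0)})$ commute with it and thus normalize it.

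Next I would establish exactness at the middle term, i.e. $\ker\Phi=\operatorname{im}\iota$. If $u\in\operatorname{U}(C(\mathcal{G}^{(0)}))$ then $ufu^{*}=uu^{*}f=f$, so $f\circ\Phi(u)=f$ for every $f$, and since $C(\mathcal{G}^{(0)})$ separates the points of $\mathcal{G}^{(0)}$ this forces $\Phi(u)=\operatorname{id}$. Conversely, if $\Phi(u)=\operatorname{id}$ then $ufu^{*}=f$ for all $f$, i.e. $u$ centralizes $C(\mathcal{G}^{(0)})$; by maximal abelianness the relative commutant of $C(\mathcal{G}^{(0)})$ in $C_r^*(\mathcal{G})$ is $C(\mathcal{G}^{(0)})$ itself, whence $u\in\operatorname{U}(C(\mathcal{G}^{(0)}))$. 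Equivalently, one may argue via supports: the preceding lemma (\cite{mat12}, Lemma~5.5) attaches to $u$ a compact open slice $U$ with $s(U)=r(U)=\mathcal{G}^{(0)}$ and $\Phi(u)=\alpha_U$; if $\alpha_U=\operatorname{id}$ then $U\subseteq\operatorname{Iso}(\mathcal{G})$, and openness together with effectiveness gives $U\subseteq\operatorname{Iso}(\mathcal{G})^{\circ}=\mathcal{G}^{(0)}$ by Remark~\ref{rem: effective}, so $U=\mathcal{G}^{(0)}$ and $u\in C(\mathcal{G}^{(0)})$. This is precisely the step in which effectiveness is indispensable.

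Finally I would prove surjectivity of $\Phi$ and the splitting simultaneously by exhibiting a section. For $\gamma\in\mathfrak{T}(\mathcal{G})$, realised as a full compact open slice $U_\gamma\in\mathcal{B}_\mathcal{G}^{o,k}$ with $s(U_\gamma)=r(U_\gamma)=\mathcal{G}^{(0)}$ (Definition~\ref{defi: tfg2}), set $\sigma(\gamma):=\mathbf{1}_{U_\gamma}\in C_c(\mathcal{G})\subseteq C_r^*(\mathcal{G})$. The convolution identities $\mathbf{1}_{U_\gamma}^{*}\mathbf{1}_{U_\gamma}=\mathbf{1}_{s(U_\gamma)}=1$ and $\mathbf{1}_{U_\gamma}\mathbf{1}_{U_\gamma}^{*}=\mathbf{1}_{r(U_\gamma)}=1$ show that $\mathbf{1}_{U_\gamma}$ is unitary, while $\mathbf{1}_{U_\gamma}f\mathbf{1}_{U_\gamma}^{*}$ again lies in $C(\mathcal{G}^{(0)})$ and implements the homeomorphism $\gamma$, so $\sigma(\gamma)$ is a normalizer with $\Phi(\sigma(\gamma))=\gamma$. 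Since $U_\gamma U_{\gamma'}=U_{\gamma\gamma'}$ is again a bisection, the convolution of the indicator functions collapses to a single term, $\mathbf{1}_{U_\gamma}\ast\mathbf{1}_{U_{\gamma'}}=\mathbf{1}_{U_{\gamma\gamma'}}$, so $\sigma$ is a group homomorphism splitting $\Phi$.

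The routine verifications aside, the main obstacle is the bookkeeping in the convolution algebra: checking that $\mathbf{1}_{U_\gamma}$ is genuinely unitary and that $\sigma$ is multiplicative rests on $U_\gamma$ being a bisection, which is exactly what forces each convolution sum over a source fibre to reduce to a single nonzero term. Coupled with the precise use of effectiveness (via the Cartan/maximal-abelian property, or via $\operatorname{Iso}(\mathcal{G})^{\circ}=\mathcal{G}^{(0)}$) in the kernel computation, these are the two points that require care; everything else is formal.
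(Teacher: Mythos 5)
Your argument is correct and follows exactly the route the paper indicates: the paper states this proposition without a detailed proof, but the surrounding text (Lemma~5.5 constructing $\Phi$ via compact open slices, and the remark that $B\mapsto\mathbf{1}_B$ is a canonical section) is precisely the skeleton you flesh out, with the kernel computation resting on $C(\mathcal{G}^{(0)})$ being maximal abelian (equivalently, on $\operatorname{Iso}(\mathcal{G})^{\circ}=\mathcal{G}^{(0)}$), which is where effectiveness enters. You are also right to flag that Hausdorffness is implicitly needed for the Cartan/maximal-abelian input, as in the preceding lemma.
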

	
	\begin{rem}
		A canonical section of $\Phi$ is provided by mapping $B \in \mathfrak{T}(\mathcal{G})$ onto $\mathbf{1}_B$. Note that then $\mathbf{1}_B f \mathbf{1}_B^*=f \circ \alpha_B$ for all $f \in C(\mathcal{G}^{(0)})$.
	\end{rem}

	\subsection{Locally finite subgroups}
	
	\begin{defi}[\cite{gps99}, §2]
		Let $(X,\varphi)$ be a minimal Cantor system and let $x \in X$.	Define $\mathfrak{T}(\varphi)_{\{x\}}$ to be the stabilizer $\mathfrak{T}(\varphi)_{\operatorname{Orb}_\varphi^+(x)}=\{\gamma \in \mathfrak{T}(G)| \gamma(\operatorname{Orb}_\varphi^+(x)) = \operatorname{Orb}_\varphi^+(x)\}$.
	\end{defi}	
	
	\begin{rem}\label{rem: locfin. conj}
		It follows immediately from this definition that $\varphi \cdot \mathfrak{T}(\varphi)_{\{x\}} \cdot \varphi^{-1}=\mathfrak{T}(\varphi)_{\{ \varphi(x)\}}$.
	\end{rem}
	
	Subgroups of this form can be characterized in terms of approximation by Kakutani-Rokhlin partitions and are necessarily locally finite as a corollary of Theorem~\ref{thm: grig-med}:
	
	\begin{cor}\label{cor: locally finite subgroups}
		Let $(X,\varphi)$ be a minimal Cantor system. Let $x \in X$ and $\{\mathcal{A}_n\}_{n \in \mathbb{N}}$ be a nested sequence of Kakutani-Rokhlin partitions around $x$ satisfying property (H). Then the following hold:
		\begin{enumerate}[(i)]
			\item \begin{equation*}
			\mathfrak{T}(\varphi)_{\{x\}} = \bigcup_{n \in \mathbb{N}} \mathcal{P}(\mathcal{A}_n):\cong\bigcup_{n \in \mathbb{N}} \; \bigoplus_{i \in \{1,\dots, i_n\}} \mathfrak{S}_{h_i^n}.
			\end{equation*}
			
			\item The group $\mathfrak{T}(\varphi)_{\{x\}}$ is locally finite and in consequence amenable.
			
			\item For every $x,y \in X$ the groups $\mathfrak{T}(\varphi)_{\{x\}}$ and $ \mathfrak{T}(\varphi)_{\{y\}}$ are isomorphic.
		\end{enumerate}
		
	\end{cor}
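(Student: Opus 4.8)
The plan is to establish the three assertions in turn; the isomorphism $\bigcup_n\mathcal{P}(\mathcal{A}_n)\cong\bigcup_n\bigoplus_i\mathfrak{S}_{h_i^n}$ I take from the Remark following Definition~\ref{defi: permut}, where it is recorded that $\mathcal{P}(\mathcal{A}_n)\cong\bigoplus_i\mathfrak{S}_{h_i^n}$ and $\mathcal{P}(\mathcal{A}_n)\le\mathcal{P}(\mathcal{A}_{n+1})$, so the content of (i) is the set equality $\mathfrak{T}(\varphi)_{\{x\}}=\bigcup_n\mathcal{P}(\mathcal{A}_n)$. Here I read $\operatorname{Orb}_\varphi^+(x)$ as containing the basepoint, i.e. as $\{\varphi^n(x):n\ge 0\}$, which is the convention placing the cut of the orbit exactly at $x$. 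For the inclusion $\supseteq$ I would organise $\operatorname{Orb}_\varphi^+(x)$ into \emph{passes}: since $x\in\bigcap_m B(\mathcal{A}_m)$ lies in the base of every $\mathcal{A}_n$, the forward orbit breaks into consecutive excursions $\varphi^{m}(x),\dots,\varphi^{m+h_i^n-1}(x)$ that enter the base of a tower $D(i)$, climb its levels $0,\dots,h_i^n-1$, and leave through the roof. An element $p\in\mathcal{P}(\mathcal{A}_n)$ fixes every tower setwise and acts on $D_{k,i}^n$ as $\varphi^{\sigma_i(k)-k}$ for a level permutation $\sigma_i\in\mathfrak{S}_{h_i^n}$; it therefore sends the level-$k$ point of a pass to the level-$\sigma_i(k)$ point of the \emph{same} pass, so each pass, and hence their union $\operatorname{Orb}_\varphi^+(x)$, is preserved. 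This gives $\bigcup_n\mathcal{P}(\mathcal{A}_n)\subseteq\mathfrak{T}(\varphi)_{\{x\}}$.

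For the reverse inclusion I would invoke Theorem~\ref{thm: grig-med}: given $\gamma\in\mathfrak{T}(\varphi)_{\{x\}}$ and $n$ large, write $\gamma=p_\gamma r_\gamma$ with $p_\gamma\in\mathcal{P}(\mathcal{A}_n)$ an $n$-permutation and $r_\gamma$ an $n$-rotation of rotation number at most $1$ whose supportive sets lie in $\{0,\dots,n_0-1\}$. By the previous paragraph $p_\gamma$ stabilises $\operatorname{Orb}_\varphi^+(x)$, whence so does $r_\gamma=p_\gamma^{-1}\gamma$, and it remains to show that such a rotation is trivial. \emph{This is the main obstacle.} The rotation is precisely the part of $\gamma$ that carries atoms over the roof or under the base, while the base $B(\mathcal{A}_n)=L(0)$ collapses onto the cut $\{x\}$; I would make this quantitative by tracking the successive returns $\dots<m_{-1}<m_0=0<m_1<\dots$ of the orbit of $x$ to $B(\mathcal{A}_n)$ and noting that the base component $(\varphi_{L(0)})^{k_0}$ shifts the return index by $k_0$, so it preserves $\{\varphi^{m_j}(x):j\ge 0\}$ only for $k_0=0$. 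The components on $U(i)$ and $L(j)$ with $0\le i,j\le n_0-1$ are handled by the same bookkeeping applied to the finitely many orbit points within distance $n_0$ of the cut, each nonzero exponent pushing an orbit point across $x$ (this is where the bound on the rotation number keeps the analysis finite). Thus $r_\gamma=1$ and $\gamma=p_\gamma\in\mathcal{P}(\mathcal{A}_n)$, which proves (i).

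Statement (ii) is then immediate: by (i) the group $\mathfrak{T}(\varphi)_{\{x\}}$ is an increasing union of the finite groups $\mathcal{P}(\mathcal{A}_n)$, so each finitely generated subgroup already lies in some $\mathcal{P}(\mathcal{A}_n)$ and is finite; hence the group is locally finite, and locally finite groups are amenable (Appendix~\ref{app: some terms of geometric group theory}). For (iii) I would first dispose of the case that $x$ and $y$ lie in a common $\varphi$-orbit: writing $y=\varphi^k(x)$ and iterating Remark~\ref{rem: locfin. conj} gives $\varphi^k\,\mathfrak{T}(\varphi)_{\{x\}}\,\varphi^{-k}=\mathfrak{T}(\varphi)_{\{y\}}$, an isomorphism.

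For arbitrary $x,y$ I would argue through Krieger's classification. By (i) and (ii), each $\mathfrak{T}(\varphi)_{\{x\}}$ is a countable, locally finite group of homeomorphisms of $X$ with clopen fixed-point sets that is closed under clopen cut-and-paste — a cut-and-paste of tower-preserving permutations becomes, after refining the partition as in the nesting $\mathcal{P}(\mathcal{A}_n)\le\mathcal{P}(\mathcal{A}_{n+1})$, again a tower-preserving permutation — so it is an ample group in the sense of Definition~\ref{defi: ample group}. Its dimension group is read off the Bratteli diagram attached to $\{\mathcal{A}_n\}$, which is the very diagram computing $K_0(\mathfrak{A}_{\{x\}})$ in the proof of Theorem~\ref{thm: putnam}; by Theorem~\ref{thm: cantor-AF} this group is isomorphic, as a scaled ordered group, to $K^0(X,\varphi)$, \emph{independently of the chosen point}. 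Hence $\mathfrak{T}(\varphi)_{\{x\}}$ and $\mathfrak{T}(\varphi)_{\{y\}}$ are ample groups with isomorphic dimension groups, and Theorem~\ref{thm: krieg} yields that they are (even spatially) isomorphic, completing (iii).
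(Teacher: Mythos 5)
Your proof is correct, and for parts (i) and (ii) it follows essentially the same route as the paper: the inclusion $\bigcup_n\mathcal{P}(\mathcal{A}_n)\subseteq\mathfrak{T}(\varphi)_{\{x\}}$ because level permutations preserve towers and hence the half-orbits of $x$, and the reverse inclusion by applying Theorem~\ref{thm: grig-med} and showing that a non-trivial $n$-rotation necessarily carries some orbit point across the cut at $x$ (the paper phrases this as ``there exist points in $\operatorname{Orb}_\varphi^-(x)$ which are mapped to $\operatorname{Orb}_\varphi^+(x)$''; your return-index bookkeeping is the same observation made quantitative). Your explicit remark on where the cut sits is well taken: with the paper's literal convention $\operatorname{Orb}_\varphi^+(x)=\{\varphi^n(x)\mid n>0\}$ and $x\in\bigcap_nB(\mathcal{A}_n)$, a permutation moving level $1$ to level $0$ of the tower containing $x$ would send $\varphi(x)$ to $x\notin\operatorname{Orb}_\varphi^+(x)$, so the convention $n\geq 0$ you adopt is the one under which the statement holds verbatim.

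For part (iii) you genuinely diverge from the paper. The paper's proof is a one-line appeal to (i), Remark~\ref{rem: locfin. conj} and minimality; conjugation by $\varphi$ only identifies the groups along a single orbit, and the paper does not say how minimality bridges distinct orbits. Your route --- verifying that $\mathfrak{T}(\varphi)_{\{x\}}$ is an ample group, identifying its dimension group with $K_0(\mathfrak{A}_{\{x\}})\cong K^0(X,\varphi)$ via Theorem~\ref{thm: putnam} and Theorem~\ref{thm: cantor-AF}, and invoking Theorem~\ref{thm: krieg} --- is exactly the machinery the paper itself assembles in the surrounding discussion (where $(X,\mathfrak{T}(\varphi)_{\{x\}})$ is identified as a minimal AF-system equal to Putnam's $\Gamma_{\{x\}}$), and it buys the stronger conclusion that the isomorphism is spatial, independent of whether $x$ and $y$ share an orbit. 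The only point worth spelling out a little further is the cut-and-paste closure of $\mathfrak{T}(\varphi)_{\{x\}}$: a homeomorphism glued from $g_i\in\mathfrak{T}(\varphi)_{\{x\}}$ along a clopen partition maps $\operatorname{Orb}_\varphi^+(x)$ into itself, and so does its inverse, which is glued from the $g_i^{-1}$; together these give that the glued map stabilizes the forward orbit, completing the verification of ampleness.
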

	\begin{proof}
		(i) Let $\gamma \in \mathfrak{T}(\varphi)_{\{x\}}$. Assume $r_\gamma$ is non-trivial, then by the definition of $n$-rotations, there exist points in $\operatorname{Orb}_\varphi^-(x)$ which are mapped to $\operatorname{Orb}_\varphi^+(x)$ by $\gamma$. Conversely, any $\gamma \in \mathcal{P}(\mathcal{A}_n)$ preserves towers and thus forward and backward orbits of $x \in X$.
		
		(ii) and (iii) are immediate consequences of (i).
		
		(iv) follows from (i), Remark~\ref{rem: locfin. conj} and minimality.
	\end{proof}
	
	\begin{cor}[\cite{gm14}, Proposition~2.12 \& \cite{dC13}, FAIT~2.2.4]
		Let $(X,\varphi)$ be a minimal Cantor system. Then the group $\mathfrak{T}(\varphi)_{\{x\}}$ satisfies no group law.\footnote{Let $w \in \mathbb{F}_k$ for some $k \in \mathbb{N}$ be a non-empty word. A group $G$ \emph{satisfies the group law $w$} if every set of elements $g_1,\dots,g_k \in G$ evaluates under $w$ as $w(g_1,\dots,g_k) = 1 \in G$.\\} It follows that the groups $\mathfrak{T}(\varphi)$ and $\mathfrak{T}(\varphi)'$ satisfy no group law.
	\end{cor}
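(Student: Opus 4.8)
The plan is to reduce the statement to the purely group-theoretic observation that a group containing isomorphic copies of the finite symmetric groups $\mathfrak{S}_m$ of arbitrarily large degree can satisfy no non-trivial group law, and then to exhibit such copies inside each of the three groups. First I would record the relevant embeddings. By Corollary~\ref{cor: locally finite subgroups}(i) we have $\mathfrak{T}(\varphi)_{\{x\}} \cong \bigcup_{n \in \mathbb{N}} \bigoplus_{i \in \{1,\dots,i_n\}} \mathfrak{S}_{h_i^n}$, and property (H) guarantees $h_{\mathcal{A}_n} \to \infty$; hence $\mathfrak{T}(\varphi)_{\{x\}}$ contains a copy of $\mathfrak{S}_m$ for every $m \in \mathbb{N}$, and therefore, via Cayley's theorem, a copy of every finite group. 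Since $\mathfrak{T}(\varphi)_{\{x\}} \leq \mathfrak{T}(\varphi)$, the same copies sit inside $\mathfrak{T}(\varphi)$. Finally, because $\mathcal{P}(\mathcal{A}_n)' \cong \bigoplus_i \mathfrak{A}_{h_i^n}$ and the derived subgroup of a subgroup lies in the derived subgroup of the ambient group, $\mathfrak{T}(\varphi)'$ contains a copy of $\mathfrak{A}_h$ for every $h \geq h_{\mathcal{A}_n}$, i.e.\ alternating groups of arbitrarily large degree.

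The heart of the argument, and the step I expect to be the main obstacle, is the claim that for every non-trivial word $w \in \mathbb{F}_k$ there exists an integer $m$ such that $w$ fails to be a law of $\mathfrak{A}_m$ (and hence of $\mathfrak{S}_m$). I would prove this by combining three classical facts: the free group $\mathbb{F}_k$ is residually finite, so there is a finite quotient $q \colon \mathbb{F}_k \to H$ with $q(w) \neq 1$, whence $w$ is not a law of $H$; by Cayley's theorem $H$ embeds into some $\mathfrak{S}_n$, so $w$ is not a law of $\mathfrak{S}_n$; and $\mathfrak{S}_n$ embeds into $\mathfrak{A}_{n+2}$ (extend each odd permutation by the transposition of two adjoined points), so $w$ is not a law of $\mathfrak{A}_{n+2}$ either. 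The existence of a single finite group violating $w$ is what drives everything; the only delicate point is checking that such a violation can always be relocated into arbitrarily large alternating groups, which the chain $H \hookrightarrow \mathfrak{S}_n \hookrightarrow \mathfrak{A}_{n+2}$ supplies.

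With the lemma in hand the conclusion is immediate. Group laws are inherited by subgroups, so a group containing a subgroup that violates $w$ must itself violate $w$. Given an arbitrary non-trivial word $w$, choose $m$ as in the lemma. Then $\mathfrak{T}(\varphi)_{\{x\}}$ contains $\mathfrak{S}_m$, which violates $w$, so $w$ is not a law of $\mathfrak{T}(\varphi)_{\{x\}}$; the same copy exhibits the failure inside the overgroup $\mathfrak{T}(\varphi)$; and, choosing $n$ with $h_{\mathcal{A}_n} \geq m$, the group $\mathfrak{T}(\varphi)'$ contains $\mathfrak{A}_{h_{\mathcal{A}_n}} \geq \mathfrak{A}_m$ as a summand of $\mathcal{P}(\mathcal{A}_n)' \leq \mathfrak{T}(\varphi)'$, and $\mathfrak{A}_m$ violates $w$. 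As $w$ was an arbitrary non-trivial word, none of the three groups $\mathfrak{T}(\varphi)_{\{x\}}$, $\mathfrak{T}(\varphi)$ and $\mathfrak{T}(\varphi)'$ satisfies any group law.
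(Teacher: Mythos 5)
Your proof is correct and follows essentially the same route as the paper: both rest on the fact that $\mathfrak{T}(\varphi)_{\{x\}}$ contains arbitrarily large finite symmetric groups, on Cayley's theorem, and on the residual finiteness of free groups, with your argument simply phrased contrapositively (exhibiting a witness for the failure of each law) where the paper argues by contradiction. Your explicit handling of $\mathfrak{T}(\varphi)'$ via the embedding $\mathfrak{S}_n \hookrightarrow \mathfrak{A}_{n+2}$ fills in a step the paper's proof leaves implicit, which is a welcome but minor addition.
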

	\begin{proof}
		Assume there exists a sufficient non-empty word $w \in \mathbb{F}_k$ for some $k \in \mathbb{N}$. Then by Corollary~\ref{cor: locally finite subgroups}(i) every finite symmetric group satisfies the group law $w$. Hence by Caley's Theorem every finite group and in consequence every product of finite groups satisfies the law $w$. But since residually finite groups are precisely the groups which embed into direct products of finite groups and free groups are residually finite,\footnote{See e.g. Corollary~2.2.6 and Theorem~2.3.1 of \cite{cc10}.\\} this is a contradiction.
	\end{proof}
	
	\begin{cor}[\cite{gm14}, Proposition~5.2]
		Let $(X,\varphi)$ be a minimal Cantor system. The group $\mathfrak{T}(\varphi)_{\{x\}}$ is a maximal locally finite subgroup for every $x \in X$.
	\end{cor}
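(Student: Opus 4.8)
The plan is to argue by contradiction. Suppose $H \leq \mathfrak{T}(\varphi)$ is a locally finite subgroup with $\mathfrak{T}(\varphi)_{\{x\}} \subsetneq H$; I will produce an element of infinite order in $H$, which is impossible since such an element generates an infinite, finitely generated subgroup. By Corollary~\ref{cor: locally finite subgroups} the group $\mathfrak{T}(\varphi)_{\{x\}}$ is itself locally finite and equals $\bigcup_{n} \mathcal{P}(\mathcal{A}_n)$ for any nested sequence $\{\mathcal{A}_n\}_{n \in \mathbb{N}}$ of Kakutani-Rokhlin partitions around $x$ satisfying property (H). First I would fix such a sequence and, by passing to a subsequence, arrange that it also satisfies the two normalizing conditions $h_{\mathcal{A}_n} \geq 2m_n + 2$ and the diameter bound preceding Definition~\ref{defi: rotation}; this keeps the sequence around $x$, preserves the identity $\mathfrak{T}(\varphi)_{\{x\}} = \bigcup_n \mathcal{P}(\mathcal{A}_n)$, and makes Theorem~\ref{thm: grig-med} applicable.

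Now pick $\gamma \in H \setminus \mathfrak{T}(\varphi)_{\{x\}}$. Since $\gamma \notin \bigcup_n \mathcal{P}(\mathcal{A}_n)$, the uniqueness in Theorem~\ref{thm: grig-med}(ii) forces the rotation part $r_\gamma$ to be non-trivial for every sufficiently large $n$, because an element is an $n$-permutation precisely when its rotation part is trivial. Fixing such an $n$ and writing $\gamma = p_\gamma r_\gamma$, the permutation part satisfies $p_\gamma \in \mathcal{P}(\mathcal{A}_n) \subseteq \mathfrak{T}(\varphi)_{\{x\}} \subseteq H$, so that $r_\gamma = p_\gamma^{-1}\gamma \in H$. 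Thus $H$ contains the non-trivial $n$-rotation $r_\gamma$.

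It then remains to show that $r_\gamma$ has infinite order. By Definition~\ref{defi: rotation}(iv) we have $r_\gamma = \prod_{i \in S_u}(\varphi_{U(i)})^{l_i}\prod_{j \in S_l}(\varphi_{L(j)})^{k_j}$ with $|l_i|, |k_j| \leq 1$. The normalization $h_{\mathcal{A}_n} \geq 2m_n + 2$ guarantees that the clopen sets $U(k)$ and $L(l)$ occurring here are pairwise disjoint, so the induced transformations $\varphi_{U(i)}$ and $\varphi_{L(j)}$ have pairwise disjoint supports, commute, and the order of $r_\gamma$ is the least common multiple of the orders of its factors. As $r_\gamma \neq 1$, some exponent equals $\pm 1$, and the corresponding factor is $\varphi_A^{\pm 1}$ for a non-empty clopen $A$. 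Since the derivative system $(A, \varphi_A|_A)$ is a minimal Cantor system, $\varphi_A|_A$ has no periodic points, so $\varphi_A$ (and its inverse) has infinite order. Hence $r_\gamma$ has infinite order, $\langle r_\gamma \rangle \cong \mathbb{Z}$ is an infinite finitely generated subgroup of $H$, and $H$ fails to be locally finite, contradicting the choice of $H$. Therefore no strictly larger locally finite overgroup exists and $\mathfrak{T}(\varphi)_{\{x\}}$ is maximal locally finite.

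I expect the main obstacle to be bookkeeping rather than conceptual content: one must ensure the single nested sequence can be chosen simultaneously around $x$ (so that $\mathcal{P}(\mathcal{A}_n) \subseteq \mathfrak{T}(\varphi)_{\{x\}}$) and normalized enough for Theorem~\ref{thm: grig-med}, and one must verify carefully that disjointness of the supports $U(i), L(j)$ really reduces the order of $r_\gamma$ to a least common multiple so that one infinite-order factor suffices. Both points are handled by the conditions established just before Definition~\ref{defi: rotation}.
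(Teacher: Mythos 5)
Your proof is correct and follows essentially the same route as the paper's: the paper also takes $\gamma \notin \mathfrak{T}(\varphi)_{\{x\}}$, notes via the Grigorchuk--Medynets decomposition that $r_\gamma$ must be non-trivial, places $r_\gamma$ in $\langle \mathfrak{T}(\varphi)_{\{x\}},\gamma\rangle$ since $p_\gamma \in \mathcal{P}(\mathcal{A}_n)$, and concludes from the infinite order of $r_\gamma$ that no locally finite overgroup exists. You additionally justify the infinite-order claim (disjoint supports of the commuting factors $\varphi_{U(i)}^{\pm 1}$, $\varphi_{L(j)}^{\pm 1}$ plus aperiodicity of the induced minimal system), which the paper asserts without proof; this is a correct filling-in of the same argument rather than a different one.
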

	\begin{proof}
		Let $\gamma \in \mathfrak{T}(\varphi)\setminus \mathfrak{T}(\varphi)_{\{x\}}$. Then by Corollary~\ref{cor: locally finite subgroups}, the decomposition of $\gamma$ for a sufficient sequence of Kakutani-Rokhlin partition must always contain a non-trivial $n$-rotation $r_\gamma$. Then $r_\gamma \in \langle\mathfrak{T}(\varphi), \gamma \rangle$ is of infinite order and thus, any subgroup of $\mathfrak{T}(\varphi)$ that contains $\mathfrak{T}(\varphi)_{\{x\}}$ and $\gamma$ cannot be locally finite.
	\end{proof}
	
	These locally finite subgroups had alread appeared implicitely in \cite{kri80} and in \cite{put89}:
	The subgroup $\mathfrak{T}(\varphi)_{\{x\}}$ is just the group $\Gamma_Y$ in the case of $Y=\{x\}$ for $x \in X$ considered by Putnam. For every $x \in X$ the system $(X,\mathfrak{T}(\varphi)_ {\{x\}})$ is a minimal AF-system (see Definition~\ref{defi: ample group}). Since every AF-system is conjugate to a Bratteli-Vershik system $(X,\varphi)$ conversely for every minimal AF-system $\Gamma$ there exists an $x \in X$ such that $\Gamma \cong \mathfrak{T}(\varphi)_{\{x\}}$. A more general result is \cite{jm13}, Lemma~4.1 \& Lemma~4.2 where local finiteness for more general stabilizer subgroups is shown. Matui generalizes Corollary~\ref{cor: locally finite subgroups} to AF-groupoids in \cite{mat06}, §3 (for AF-equivalence relations): The topological full group $\mathfrak{T}(\mathcal{G})$ of the AF-groupoid $\mathcal{G}$ associated to a Bratteli diagram $(V,E)$ as in Example~\ref{ex: AFgrpd} is the increasing union of direct sums of symmetric groups $G_k:= \bigoplus_{v \in V_k} \mathfrak{S}_{|\mathcal{P}_{0,v}|}$ that act by permutation of cylinder sets in the space of infinite paths. Conversely, if $\mathfrak{T}(\mathcal{G})$ of an \'etale Cantor groupoid is locally finite, the groupoid $\mathcal{G}$ is AF.
	
	The commutator subgroups of the locally finite subgroups $\mathfrak{T}(\varphi)_{\{x\}}$ are simple groups: 	
	\begin{cor}\label{cor: ev.permut.}
		Let $(X,\varphi)$ be a minimal Cantor system. Let $x \in X$ and let $\{\mathcal{A}_n\}_{n \in \mathbb{N}}$ be a nested sequence of Kakutani-Rokhlin partitions around $x$ satisfying property (H). Then
		\begin{equation*}
		\mathfrak{T}(\varphi)_{\{x\}}' \cong \bigcup_{n \in \mathbb{N}} \; \bigoplus_{i \in \{1,\dots, i_n\}} \mathfrak{A}_{h_i^n}
		\end{equation*}
		where $\mathfrak{A}_{h_i^n}$ denotes the alternating group of degree $h_i^n$ and furthermore:
		\begin{equation*}
		\mathfrak{T}(\varphi)_{\{x\}}^{\operatorname{ab}}:=\bigslant{\mathfrak{T}(\varphi)_{\{x\}}}{\mathfrak{T}(\varphi)_{\{x\}}'}= \varinjlim_{n} (\mathbb{Z}/2\mathbb{Z})^{i_n}.
		\end{equation*}
	\end{cor}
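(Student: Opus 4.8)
The plan is to reduce everything to the structural description already obtained in Corollary~\ref{cor: locally finite subgroups}(i), namely that $\mathfrak{T}(\varphi)_{\{x\}}$ is the increasing union $\bigcup_{n \in \mathbb{N}} \mathcal{P}(\mathcal{A}_n)$ of the finite subgroups $\mathcal{P}(\mathcal{A}_n) \cong \bigoplus_{i} \mathfrak{S}_{h_i^n}$, and then to compute the derived subgroup and the abelianization level by level before passing to the direct limit. The whole argument rests on the observation that both the derived-subgroup construction and the abelianization commute with filtered colimits, i.e. with increasing unions of groups.

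First I would carry out the levelwise computation. For a finite direct sum of groups the derived subgroup is the direct sum of the derived subgroups, and for the symmetric group one has $[\mathfrak{S}_m,\mathfrak{S}_m]=\mathfrak{A}_m$; hence $\mathcal{P}(\mathcal{A}_n)' \cong \bigoplus_{i} \mathfrak{A}_{h_i^n}$. Dually, abelianization turns a finite direct sum into the direct sum of the abelianizations, and the sign homomorphism identifies $\mathfrak{S}_m^{\operatorname{ab}} \cong \mathbb{Z}/2\mathbb{Z}$ for $m\geq 2$, so $\mathcal{P}(\mathcal{A}_n)^{\operatorname{ab}} \cong (\mathbb{Z}/2\mathbb{Z})^{i_n}$. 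Property (H) guarantees $h_{\mathcal{A}_n}\to\infty$, so after discarding a finite initial segment of the sequence (which leaves the direct limit unchanged) every tower has height at least $2$ and this clean identification holds at each remaining level.

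Next I would pass to the limit. For an increasing union $G=\bigcup_n G_n$ any element of $G'$ is a finite product of commutators, each of which already lies in some $G_N'$; combined with $G_n' \subseteq G_{n+1}'$ this yields $G' = \bigcup_n G_n'$. Applying this to $\mathfrak{T}(\varphi)_{\{x\}}$ and feeding in the levelwise computation gives the first isomorphism $\mathfrak{T}(\varphi)_{\{x\}}' \cong \bigcup_{n\in\mathbb{N}} \bigoplus_{i} \mathfrak{A}_{h_i^n}$. For the abelianization, the functor $G \mapsto G^{\operatorname{ab}}$ is a left adjoint and hence commutes with the colimit; the inclusions $\mathcal{P}(\mathcal{A}_n)\hookrightarrow \mathcal{P}(\mathcal{A}_{n+1})$ induce the transition maps on the groups $(\mathbb{Z}/2\mathbb{Z})^{i_n}$, so that $\mathfrak{T}(\varphi)_{\{x\}}^{\operatorname{ab}} \cong \varinjlim_{n} (\mathbb{Z}/2\mathbb{Z})^{i_n}$.

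The purely algebraic ingredients ($\mathfrak{S}_m' = \mathfrak{A}_m$, $\mathfrak{S}_m^{\operatorname{ab}}\cong\mathbb{Z}/2\mathbb{Z}$, and the behaviour of both functors on direct sums and unions) carry no real difficulty. The only point requiring care, and the one I would treat as the main obstacle, is checking that the levelwise isomorphisms are compatible with the connecting inclusions $\mathcal{P}(\mathcal{A}_n)\hookrightarrow \mathcal{P}(\mathcal{A}_{n+1})$, so that the right-hand direct limits genuinely model the union on the left; this amounts to tracking how a tower of $\mathcal{A}_{n+1}$ runs through the towers of $\mathcal{A}_n$ under refinement (using Remark~\ref{rem: simpkakro}), which is precisely the data encoded in the transition maps of the direct system.
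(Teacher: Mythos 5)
Your proposal is correct and follows exactly the route the paper intends: the paper states this as an unproved corollary of Corollary~\ref{cor: locally finite subgroups}(i) (together with the earlier remark that $\mathcal{P}(\mathcal{A})'\cong\bigoplus_i\mathfrak{A}_{h_i}$), and your argument simply supplies the omitted details — the levelwise identities $\mathfrak{S}_m'=\mathfrak{A}_m$, $\mathfrak{S}_m^{\operatorname{ab}}\cong\mathbb{Z}/2\mathbb{Z}$, and the compatibility of derived subgroups and abelianization with the increasing union $\mathfrak{T}(\varphi)_{\{x\}}=\bigcup_n\mathcal{P}(\mathcal{A}_n)$. Your attention to the transition maps $(\mathbb{Z}/2\mathbb{Z})^{i_n}\to(\mathbb{Z}/2\mathbb{Z})^{i_{n+1}}$ being induced by the refinement data (how towers of $\mathcal{A}_{n+1}$ pass through towers of $\mathcal{A}_n$) is exactly the point that makes the right-hand side of the second isomorphism well defined.
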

	
	\begin{rem}\label{rem: af sign}
		We note that for minimal AF-systems\footnote{See \cite{gps04} and \cite{mat06}, §3.\\} the following hold:
		\begin{equation*}
		\varinjlim (\mathbb{Z}/2\mathbb{Z})^{i_n} \cong K_0(C(X) \rtimes \mathfrak{T}(\varphi)_{\{x\}}) \otimes \mathbb{Z}/2\mathbb{Z} \overset{\text{\text{Theorem~\ref{thm: cantor-AF}}}}{\cong} K_0(C(X) \rtimes_\varphi \mathbb{Z}) \otimes \mathbb{Z}/2\mathbb{Z}
		\end{equation*}
		By this we have $\mathfrak{T}(\varphi)_{\{x\}}^{\operatorname{ab}}\cong K_0(C(X) \rtimes_\varphi \mathbb{Z}) \otimes \mathbb{Z}/2\mathbb{Z}$.
	\end{rem}
	
	\begin{prop}\label{prop: loc.fin.simpl.}
		Let $(X,\varphi)$ be a minimal Cantor system and let $x \in X$. Then $\mathfrak{T}(\varphi)_{\{x\}}'$ is a simple group.
	\end{prop}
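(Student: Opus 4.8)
The plan is to use the explicit description of $\mathfrak{T}(\varphi)_{\{x\}}'$ furnished by Corollary~\ref{cor: ev.permut.}: fixing a nested sequence $\{\mathcal{A}_n\}$ with property~(H), we have $G := \mathfrak{T}(\varphi)_{\{x\}}' = \bigcup_{n} K_n$, an increasing union of the finite groups $K_n := \bigoplus_{i \in \{1,\dots,i_n\}} \mathfrak{A}_{h_i^n}$, where the $i$-th factor permutes by even permutations the atoms of the tower $D^n(i)$. The two inputs I would combine are the simplicity of the associated Bratteli diagram recorded in Remark~\ref{rem: simpkakro} and property~(H). Let $N \trianglelefteq G$ with $N \neq \{1\}$; I will show $N = G$.

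First I would pick $1 \neq g \in N$, lying in some $K_n$, with a nontrivial component $g_{i_0} \in \mathfrak{A}_{h_{i_0}^n}$ supported on the tower $D^n(i_0)$. The key geometric observation is that the embedding $K_n \hookrightarrow K_m$ (for $m \geq n$) is block-diagonal: the atoms of a tower $D^m(j)$ group into consecutive blocks according to which towers of $\mathcal{A}_n$ the orbit visits on its way up, and each visit to a tower $D^n(c)$ reproduces the permutation $g_c$ on the corresponding block. By Remark~\ref{rem: simpkakro} there is an $M$ such that for all $m \geq M$ every tower of $\mathcal{A}_m$ runs through $D^n(i_0)$; hence, viewing $g$ inside such a $K_m$, each component $g_j \in \mathfrak{A}_{h_j^m}$ contains at least one block on which it acts as the nontrivial permutation $g_{i_0}$, so $g_j \neq 1$ for every $j$ (the block copies are supported on disjoint sets of atoms, so no cancellation occurs). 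Enlarging $M$ using property~(H), I may also assume $h_j^m \geq 5$ for all $j$, so that every factor $\mathfrak{A}_{h_j^m}$ is simple.

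Next I would run the standard commutator argument factorwise. For $s$ in the $j$-th factor of $K_m$, the commutator $[s,g] = s g s^{-1} g^{-1}$ lies in $N$ (as $N$ is normal) and is supported on the $j$-th factor, where it equals $[s,g_j]$; since $g_j \neq 1$ and $\mathfrak{A}_{h_j^m}$ is centerless, some $s$ makes this nontrivial. Thus $N$ meets the $j$-th factor in a nontrivial normal subgroup of the simple group $\mathfrak{A}_{h_j^m}$, forcing $\mathfrak{A}_{h_j^m} \subseteq N$. As this holds for every $j$, we obtain $K_m \subseteq N$, and since the same applies to all $m \geq M$ and these indices are cofinal, $G = \bigcup_{m \geq M} K_m \subseteq N$, i.e.\ $N = G$. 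Since $G \neq \{1\}$, this proves simplicity.

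I expect the main obstacle to be not the finite-group algebra, which is routine, but carefully justifying the block-diagonal form of the inclusions $K_n \hookrightarrow K_m$ and the consequent nonvanishing of every component $g_j$ — this is exactly where minimality enters, through the simplicity of the Bratteli diagram (Remark~\ref{rem: simpkakro}), guaranteeing that every sufficiently late tower ``sees'' a copy of the nontrivial $g_{i_0}$.
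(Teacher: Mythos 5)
Your argument is correct and follows essentially the same route as the paper's proof: pick a nontrivial element of $N$ inside some finite layer $K_n$, use Remark~\ref{rem: simpkakro} to see that for all sufficiently large $m$ every summand of that element in $K_m$ is nontrivial, and then use simplicity of the alternating factors to conclude that the normal closure is all of $K_m$. The paper compresses your factorwise commutator step into the remark that every proper normal subgroup of $\bigoplus_i \mathfrak{A}_{h_i^k}$ is a sub-sum of factors, but the content is identical.
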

	\begin{proof}
		Fix a nested sequence $\{\mathcal{A}_n\}_{n \in \mathbb{N}}$ of Kakutani-Rokhlin partitions around $x$ satisfying property (H). Let $H$ be a non-trivial normal subgroup of $\mathfrak{T}(\varphi)_{\{x\}}'$. Let $\gamma$ be a non-trivial element of the subgroup $H_n:=H \cap \bigoplus_{i \in \{1,\dots, i_n\}} \mathfrak{A}_{h_i^n}$. by Remark~\ref{rem: simpkakro}, there exists an $l>n$ such that every summand of $\gamma$ viewed as element in $\bigoplus_{i \in \{1,\dots, i_k\}} \mathfrak{A}_{h_i^k}$ for $k\geq l$ is non-trivial. But since every factor $\mathfrak{A}_{h_i^k}$ is simple, any proper normal subgroup of $\bigoplus_{i \in \{1,\dots, i_k\}} \mathfrak{A}_{h_i^k}$ is of the form $\bigoplus_{i \in I} \mathfrak{A}_{h_i^k}$ for some proper subset $I\subseteq \{1,\dots, i_k\}$. This means the normal closure of $H_n$ in $\bigoplus_{i \in I} \mathfrak{A}_{h_i^k}$ is the whole group for all $k$ large enough and thus $H=\mathfrak{T}(\varphi)_{\{x\}}'$.
	\end{proof}

	\subsection{A lemma by Glasner and Weiss}\label{subs: gla.wei.}
	
	In this Subsection, we give a swift sketch of a lemma in \cite{gw95}. One purpose of \cite{gw95} was to do the classification of minimal Cantor systems by the associated dimension groups obtained in \cite{gps95} \emph{without} relying on C*-algebra theory. In particular, it uses the full group and the topological full group to construct proper equivalences between minimal Cantor systems given isomorphisms of the associated dimension groups -- thus \cite{gw95} foreshadows the results in \cite{gps99}.
	
	\begin{lem}[\cite{gw95}, Lemma 2.5]\label{lem: gw}
		Let $(X,\varphi)$ be a minimal Cantor system and let $A,B$ be clopen subsets of $X$ such that $\mu(B)<\mu(A)$ for every  $\mu \in M_\varphi$. Then there exists an element $\gamma \in \mathfrak{T}(\varphi)$ such that $\gamma(B)\subset A$, $\gamma^2=\operatorname{id}$ and $\gamma|_{X\setminus (B \cup \gamma(B))}=\operatorname{id}$.
	\end{lem}
	\begin{proofsketch}
		If there exists a Kakutani-Rokhlin partition $\mathcal{A}$ that refines
		\begin{equation*}
		\mathcal{P}:=\{B \setminus A,A\setminus B,A\cap B, X \setminus (A \cup B)\}
		\end{equation*}
		such that the number of atoms contained in $A$ is greater than the number of atoms of contained in $B$, a sufficient homeomorphism $\gamma \in \mathfrak{T}(\varphi)$ can easily be given by moving atoms of $\mathcal{A}$.  By the assumptions, $\int \mathbf{1}_A-\mathbf{1}_B \; \mathrm{d}\mu > 0$ holds for every $\mu \in M_\varphi$. Then there exists a $c>0$ such that $\inf\{\int \mathbf{1}_A-\mathbf{1}_B \; \mathrm{d}\mu|\mu \in M_\varphi \}>c$, because otherwise there exists a sequence of measures for which the above integral goes to $0$. By the compactness of $M_\varphi$ in the weak*-topology, a cluster point $\mu$ of this sequence exists. But $\mu$ satisfies $\mu(A)=\mu(B)$, which is a contradiction. In consequence, there exists an $n_0 \in \mathbb{N}$ such that every $n \geq n_0$ satisfies 
		\begin{equation*}
		n^{-1} \cdot \sum_{i=0}^{n-1} (\mathbf{1}_A-\mathbf{1}_B)(\varphi^i(x))\geq c
		\end{equation*}
		for all $x \in X$. Assume otherwise the existence of an increasing sequence $\{n_k\}_{k \in \mathbb{N}}$ of naturals and a sequence of points $\{x_k\}_{k\in \mathbb{N}}$ such that
		\begin{equation*}
		n_k^{-1} \cdot \sum_{i=0}^{n_k-1} (\mathbf{1}_A-\mathbf{1}_B)(\varphi^i(x_k))\leq c.
		\end{equation*}
		Then a weak* cluster point $\tilde{\mu}$ of 
		\begin{equation*}
		n_k^{-1} \cdot \sum_{i=0}^{n_k-1} \varphi^i(\delta_{x_k})
		\end{equation*}
		satisfies $\int \mathbf{1}_A-\mathbf{1}_B \; \mathrm{d}\tilde{\mu} \leq c$ which is a contradiction.
		Let $\mathcal{A}$ be Kakutani-Rokhlin partition that refines $\mathcal{P}$ such that $h_{\mathcal{A}}>n_0$. Then the above inequality implies that $A$ contains more atoms than $B$.
	\end{proofsketch}
	
	We note that Lemma~\ref{lem: gw} is crucial for the proof of Theorem~\ref{thm: spat}. Furthermore, it implies the following lemma, which is required to prove simplicity of $\mathfrak{T}(\varphi)'$ (see Subsection~\ref{subs: simplicity and finite generation}):
	\begin{lem}[\cite{mat06}, Lemma~4.7]\label{lem: 2-div.}
		Let $(X,\varphi)$ be a minimal Cantor system and let $x \in X$. Then every clopen neighbouhood $U$ of $x$ contains a clopen neighbourhood $V$ of $x$, such that $[\mathbf{1}_V]$ is $2$-divisible in $K^0(X,\varphi)$.\footnote{There exists a class $[f]\in K^0(X,\varphi)$ such that $2[f]=[\mathbf{1}_V]$.\\}
	\end{lem}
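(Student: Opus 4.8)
The plan is to produce $V$ as the disjoint union of a small clopen neighbourhood $V_1$ of $x$ and a forward translate $\varphi^n(V_1)$ of it, both sitting inside $U$, and to exploit that pushing a clopen set forward by $\varphi$ does not change its class in $K^0(X,\varphi)$. Recall $K^0(X,\varphi)=C(X,\mathbb{Z})/\{f-f\circ\varphi^{-1}\mid f\in C(X,\mathbb{Z})\}$. For any clopen $C\subseteq X$ one has $\mathbf{1}_C\circ\varphi^{-1}=\mathbf{1}_{\varphi(C)}$, so $\mathbf{1}_C-\mathbf{1}_{\varphi(C)}$ is a coboundary and therefore $[\mathbf{1}_C]=[\mathbf{1}_{\varphi(C)}]$; iterating gives $[\mathbf{1}_C]=[\mathbf{1}_{\varphi^n(C)}]$ for every $n\in\mathbb{Z}$. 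Consequently, once $V=V_1\sqcup\varphi^n(V_1)$ is arranged with $V_1\cap\varphi^n(V_1)=\emptyset$, we immediately obtain
\[
[\mathbf{1}_V]=[\mathbf{1}_{V_1}]+[\mathbf{1}_{\varphi^n(V_1)}]=2[\mathbf{1}_{V_1}],
\]
so that $[\mathbf{1}_V]$ is $2$-divisible with witness $[\mathbf{1}_{V_1}]$.

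It then remains to construct $V_1$ and $n$ so that $V$ is a clopen neighbourhood of $x$ contained in $U$. First I would invoke minimality: by Remark~\ref{rem: orbdens} the forward orbit of $x$ is dense and the system is free, so there is some $n\geq 1$ with $\varphi^n(x)\in U$ and $\varphi^n(x)\neq x$. Since $X$ is a Cantor space, hence Hausdorff and totally disconnected, I can separate $x$ and $\varphi^n(x)$ by disjoint clopen sets $P,Q\subseteq U$ with $x\in P$ and $\varphi^n(x)\in Q$. Setting $V_1:=P\cap\varphi^{-n}(Q)$ yields a clopen neighbourhood of $x$ with $V_1\subseteq P\subseteq U$, $\varphi^n(V_1)\subseteq Q\subseteq U$, and $V_1\cap\varphi^n(V_1)\subseteq P\cap Q=\emptyset$. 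Then $V:=V_1\sqcup\varphi^n(V_1)$ is clopen, contains $x$, and lies in $U$, completing the argument.

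The only genuine point requiring care, and the step I would flag as the main (if modest) obstacle, is keeping the second copy inside $U$: one cannot simply take the height-two tower $V_1\sqcup\varphi(V_1)$ directly above $x$, because $\varphi(x)$ need not lie in $U$, so its top level would in general escape $U$. Choosing instead a return time $n$ with $\varphi^n(x)\in U$ --- available by density of the forward orbit --- repairs this. I note that the construction thereby bypasses Lemma~\ref{lem: gw} entirely; alternatively one could run the Glasner--Weiss lemma to obtain an involution in $\mathfrak{T}(\varphi)$ exchanging two clopen pieces of equal class, but the direct translate construction avoids any reference to invariant measures.
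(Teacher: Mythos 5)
Your proof is correct, and it takes a genuinely more elementary route than the one in the text. The paper (following Matui) first shrinks $U$ to a clopen neighbourhood $V'$ of $x$ with $2\mu(V')<\mu(U)$ for every $\varphi$-invariant probability measure $\mu$, invokes the Glasner--Weiss lemma (Lemma~\ref{lem: gw}) to produce an involution $\gamma\in\mathfrak{T}(\varphi)$ carrying $V'$ into $U\setminus V'$, and then verifies $[\mathbf{1}_{V'}]=[\mathbf{1}_{\gamma(V')}]$ by decomposing $V'$ along the level sets of the cocycle $f_\gamma$ and telescoping each piece into coboundaries; the desired set is $V=V'\sqcup\gamma(V')$. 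Your construction replaces the Glasner--Weiss element by the single slice $\varphi^n|_{V_1}$ for a return time $n$ of $x$ to $U$, at which point the telescoping computation collapses to the one-line identity $[\mathbf{1}_C]=[\mathbf{1}_{\varphi(C)}]$; freeness and density of the forward orbit (Remark~\ref{rem: orbdens}) plus the clopen basis of the Cantor space supply everything else. What you lose is nothing for this statement; what the paper's route buys is coherence with its surrounding machinery --- the Glasner--Weiss lemma and the invariance of $K^0$-classes under arbitrary elements of $\mathfrak{T}(\varphi)$ are both reused elsewhere (e.g.\ in Lemma~\ref{lem: abundinvo} and the simplicity argument), so the author gets this lemma as a near-corollary of tools already on the table, whereas your argument is self-contained and avoids invariant measures and the compactness argument on $M_\varphi$ entirely.
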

	\begin{proof}
		Let $V'$ be a clopen neighbourhood of $x$ such that $2\mu(V')<\mu(U)$ for all $\mu \in \mathcal{M}(\varphi)$. Then $\mu(V')<\mu(U\setminus V')$ for all $\mu \in \mathcal{M}(\varphi)$. Let $\gamma$ be the homeomorphism obtained in Lemma~\ref{lem: gw} for the pair $V'$ and $U \setminus V'$. We have to show $[\mathbf{1}_{\gamma(V')}-\mathbf{1}_{V'}]=0$, because then $[\mathbf{1}_{V'}]= [\mathbf{1}_{\gamma(V')}]$, hence $[\mathbf{1}_{V'\cup \gamma(V')}]=2[\mathbf{1}_{V'}]$. Then $C_k:=V'\cap f_\gamma^{-1}(k)$ for $k \in \mathbb{Z}$ is a partition of $V'$ and we can write
		\begin{equation*}
		\begin{gathered}
		\mathbf{1}_{\gamma(V')}-\mathbf{1}_{V'}=\sum_{k \in \mathbb{Z}}\big(\mathbf{1}_{\gamma(C_k)}- \mathbf{1}_{C_k}\big) = \sum_{k \in \mathbb{Z}}\big(\mathbf{1}_{\varphi^k(C_k)}- \mathbf{1}_{C_k}\big)
		\end{gathered}
		\end{equation*}
		Every summand can be written as a telescoping sum of which every summand is trivial in $K^0(X,\varphi)$: Write
		\begin{equation*}
		\mathbf{1}_{\varphi^k(C_k)}- \mathbf{1}_{C_k}=\sum_{i=0}^{k-1}(\mathbf{1}_{\varphi^i(C^k)}\circ \varphi^{-1}-\mathbf{1}_{\varphi^i(C^k)})
		\end{equation*}
		for all $k>0$ and
		\begin{equation*}
		\mathbf{1}_{\varphi^k(C_k)}- \mathbf{1}_{C_k}=\sum_{i=0}^{|k|-1}(\mathbf{1}_{\varphi^{k+i}(C^k)}-\mathbf{1}_{\varphi^{k+i}(C^k)}\circ \varphi^{-1})
		\end{equation*}
		for all $k<0$.
	\end{proof}

	\subsection{The index map}
	
	The paper \cite{gps99} introduces another important subgroup by the means of the \emph{index map}. For minimal Cantor systems the index map measures the transfer from backward orbits to forward orbits. As the terminology suggests one motivation for the index map lies in operator index theory.
	
	\begin{defi}[\cite{gps99}, Definition 5.1 \& 5.2]\label{defi: kap.lam.}
		Let $(X,\varphi)$ be a minimal Cantor system with a nested sequence of Kakutani-Rohklin partitions $\{\mathcal{A}_n\}_{n \in \mathbb{N}}$ that satisfies (H)  and let $x \in X$.
		\begin{enumerate}[(i)]
			\item Let $\gamma \in \mathfrak{T}(\varphi)$. Define $\kappa_x(\gamma):=|\{y \in \operatorname{Orb}_\varphi^-(x)|\gamma(y) \in \operatorname{Orb}_\varphi^+(x) \}|$ and define $\lambda_x(\gamma):=|\{y \in \operatorname{Orb}_\varphi^+(x)|\gamma(y) \in \operatorname{Orb}_\varphi^-(x) \}|$.
			
			\item Let $a \in \mathbb{N}$, $b \in \mathbb{Z}$ and let $m \in \mathbb{N}$ such that $h_{\mathcal{A}_m} > 2|b|+2a$. Define the homeomorphism $\sigma_{k,l} \in \mathfrak{T}(\varphi)$ by:
			\begin{equation*}
			\sigma_{k,l}(y):=
			\begin{cases}
			\varphi^{-2|b|-a}(y) & \text{for } y \in D_{k,i}^{m}\text{ if }|b|+1 \leq k \leq |b|+a \\
			\varphi^{2|b|+a}(y) & \text{for } y \in D_{k,i}^{m}\text{ if }h_i^{m}-|b|-a+1 \leq k \leq h_i^{m}-|b|\\
			y & \text{else}
			\end{cases}
			\end{equation*}
		\end{enumerate}
	\end{defi}
	
	\begin{rem}\label{rem: fin}
		$\kappa_x(\gamma)$ and $\lambda_x(\gamma)$ are necessarily finite. The homeomorphisms $\sigma_{k,l}$ are elements in $\mathfrak{T}(\varphi)$ of finite order.
	\end{rem}
	
	\begin{lem}[\cite{gps99}, Lemma 5.3.]\label{lem: tfgrp-rep}
		Let $(X,\varphi)$ be a minimal Cantor system and $x \in X$. Then the following holds:
		\begin{equation*}
		\mathfrak{T}(\varphi)=\coprod_{k \in \mathbb{N},l \in \mathbb{Z}} \mathfrak{T}(\varphi)_{\{x\}}\; \varphi^l \sigma_{k,l}\;\mathfrak{T}(\varphi)_{\{x\}}.
		\end{equation*}
		
	\end{lem}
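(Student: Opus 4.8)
The plan is to exhibit $(\kappa_x,\lambda_x)$ as a complete invariant of the double cosets $\mathfrak{T}(\varphi)_{\{x\}}\,\gamma\,\mathfrak{T}(\varphi)_{\{x\}}$ and to match its values bijectively with the index set $\{(k,l):k\in\mathbb{N},\,l\in\mathbb{Z}\}$ through the chosen representatives. Writing $H:=\mathfrak{T}(\varphi)_{\{x\}}$ for brevity and identifying the orbit $\operatorname{Orb}_\varphi(x)$ with $\mathbb{Z}$ via $\varphi^n(x)\leftrightarrow n$, so that $P:=\operatorname{Orb}_\varphi^+(x)\leftrightarrow\mathbb{Z}_{>0}$ and $N:=\operatorname{Orb}_\varphi^-(x)\leftrightarrow\mathbb{Z}_{\le 0}$, the numbers $\kappa_x(\gamma)$ and $\lambda_x(\gamma)$ count the orbit points crossing $N\to P$ resp.\ $P\to N$ under the permutation of $\mathbb{Z}$ induced by $\gamma$; both are finite by Remark~\ref{rem: fin}. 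The three things to establish are: (1) $(\kappa_x,\lambda_x)$ is constant on each double coset $H\gamma H$; (2) the representative $\varphi^l\sigma_{k,l}$ has invariant $(\kappa_x,\lambda_x)=(k+l_+,\,k+l_-)$, where $l_+:=\max(l,0)$ and $l_-:=\max(-l,0)$; and (3) every $\gamma$ lies in the double coset of the representative matching its invariant. Granting these, the map $(k,l)\mapsto(\kappa_x,\lambda_x)$ is a bijection from $\mathbb{N}\times\mathbb{Z}$ onto the set of all pairs of non-negative integers (reading $0\in\mathbb{N}$, so that $\sigma_{0,l}=\operatorname{id}$), hence distinct $(k,l)$ give distinct invariants and therefore, by (1), disjoint double cosets; (3) shows they exhaust $\mathfrak{T}(\varphi)$, which is precisely the asserted disjoint union.

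Step (1) is a direct counting argument: if $h_1,h_2\in H$, then, since elements of $H$ map $P$ to $P$ and $N$ to $N$ (Corollary~\ref{cor: locally finite subgroups}), the bijection $y\mapsto h_2(y)$ of $N$ identifies $\{y\in N: h_1\gamma h_2(y)\in P\}$ with $\{z\in N:\gamma(z)\in P\}$, whence $\kappa_x(h_1\gamma h_2)=\kappa_x(\gamma)$, and symmetrically for $\lambda_x$. Step (2) is the bookkeeping computation behind the definition of $\sigma_{k,l}$: on the orbit of $x$ the map $\sigma_{k,l}$ acts, near the cut, as the involution swapping the block of $k$ positions $\{|l|+1,\dots,|l|+k\}\subseteq P$ with the block $\{-|l|-k+1,\dots,-|l|\}\subseteq N$ (this uses $h_{\mathcal{A}_m}>2|l|+2k$, so that the two level-ranges lie within single towers and are disjoint), whence $\kappa_x(\sigma_{k,l})=\lambda_x(\sigma_{k,l})=k$; post-composing with $\varphi^l$, which shifts $\mathbb{Z}$ by $l$ and hence carries $|l|$ further points across the cut in a single direction, yields $(\kappa_x,\lambda_x)=(k+l_+,k+l_-)$. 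Since this assignment inverts as $k=\min(\kappa_x,\lambda_x)$, $l=\kappa_x-\lambda_x$, it is the claimed bijection.

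The main obstacle is step (3), completeness. Here I would fix a nested sequence $\{\mathcal{A}_n\}$ of Kakutani--Rokhlin partitions around $x$ with property (H) and, given $\gamma$ with invariant $(k,l)$, choose $m$ large enough that $\mathcal{A}_m$ refines the partitions $\{X_i^\gamma\}$ and $\{\varphi^{f_i}(X_i^\gamma)\}$ of Proposition~\ref{prop: count} with $\|f_\gamma\|_\infty\le h_{\mathcal{A}_m}$ and $h_{\mathcal{A}_m}>2|l|+2k$. Because $\kappa_x(\gamma)$ and $\lambda_x(\gamma)$ are finite and the towers are tall, every orbit point of $x$ that crosses the cut does so within a bounded central segment about $x$, while all other orbit points remain on their side. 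The idea is then to use within-tower permutations $h_1,h_2\in\bigcup_n\mathcal{P}(\mathcal{A}_n)=H$ to re-sort the atoms of $\mathcal{A}_m$ so that $h_1\gamma h_2$ acts on the central segment exactly as the standard block-swap-plus-shift $\varphi^l\sigma_{k,l}$ and as the identity elsewhere: the freedom to permute atoms within each tower is what lets one absorb the non-crossing part of $\gamma$ into $H$ and standardize the crossing part. The delicate point, and where genuine care is needed, is that these rearrangements must be realized by honest elements of $H$ acting on all of $X$ rather than by mere permutations of the single orbit of $x$; one must check that the finite combinatorial matching of atoms respects the tower structure, so that it lifts to some $\mathcal{P}(\mathcal{A}_{m'})$ with $m'\ge m$, and that the level condition (L) is met automatically once the towers are tall enough, so that the constructed $h_1,h_2$ indeed lie in $\mathfrak{T}(\varphi)_{\{x\}}$.
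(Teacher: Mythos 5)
Your proposal is correct and follows essentially the same route as the paper: the paper's proof likewise uses $\beta_x=(\kappa_x,\lambda_x)$ as the invariant, observes that the representatives $\varphi^l\sigma_{k,l}$ realize every value in $\mathbb{N}_0\times\mathbb{N}_0$, and then adjusts an arbitrary $\gamma$ by elements of $\mathfrak{T}(\varphi)_{\{x\}}$ on both sides so that orbit points are sent into the same half-orbits as by the representative with matching invariant. Your step (1), making explicit that $(\kappa_x,\lambda_x)$ is constant on double cosets and hence that the cosets are disjoint, is left implicit in the paper, and your step (3) is carried out at essentially the same level of detail as the paper's corresponding sentence.
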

	\begin{proof}
		Let $\beta_x \colon \mathfrak{T}(\varphi) \to \mathbb{N}_0 \times \mathbb{N}_0$ be defined by $\beta_x(\gamma):=\big(\kappa_x(\gamma),\lambda_x(\gamma)\big)$. Varying over all possible pairs $k,l$ lets $\beta_x(\varphi^l \sigma_{k,l})$ vary over all of $\mathbb{N}_0 \times \mathbb{N}_0$, thus $\beta_x$ is surjective. Let $\gamma \in \mathfrak{T}(\varphi)$ such that $p=\kappa_x(\gamma), \lambda_x(\gamma)=q$. If $p \geq q$, then $\beta_x(\gamma)=\beta_x(\varphi^{p-q}\sigma_{q,p-q})$ and if $p < q$, then $\beta_x(\gamma)=\beta_x(\varphi^{p-q}\sigma_{p,q-p})$. By applying a homeomorphism $\gamma_2$ in $\mathfrak{T}(\varphi)_{\{x\}}$, we can bring elements in $\operatorname{Orb}_\varphi(x)$ into a position such that applying $\varphi^{p-q}\sigma_{q,p-q}$ resp. $\varphi^{p-q}\sigma_{p,q-p}$ maps orbit points into the same half-orbits as $\gamma$, hence there exists a homeomorphism $\gamma_1$ in $\mathfrak{T}(\varphi)_{\{x\}}$ such that $\gamma = \gamma_1 \varphi^l \sigma_{k,l} \gamma_2$ for sufficient $k,l$.
	\end{proof}
	
	\begin{prop}[\cite{gps99}, Proposition 5.4. \& 5.5.]\label{prop: index}
		Let $(X,\varphi)$ be a minimal Cantor system and let $\mu$ be a $\varphi$-invariant probability measure on $X$. Then $\Hom(\mathfrak{T}(\varphi),\mathbb{Z})$ is isomorphic to $\mathbb{Z}$ and $I_\mu(\gamma):=\int_{X} f_\gamma \mathrm{d}\mu$ defines the unique group homomorphism $I_\mu \colon \mathfrak{T}(\varphi) \to \mathbb{Z}$ with $I_\mu (\varphi)=1$.
	\end{prop}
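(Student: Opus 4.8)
The plan is to verify directly that $I_\mu$ is a homomorphism into $\mathbb{R}$, and then to exploit the double coset decomposition of Lemma~\ref{lem: tfgrp-rep} both to cut its image down to $\mathbb{Z}$ and to identify the entire group $\Hom(\mathfrak{T}(\varphi),\mathbb{Z})$.

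First I would record that every $\gamma \in \mathfrak{T}(\varphi)$ preserves $\mu$. Writing $X = \bigsqcup_i X_i^\gamma$ with $\gamma|_{X_i^\gamma} = \varphi^i|_{X_i^\gamma}$ as in Proposition~\ref{prop: count}, the images $\varphi^i(X_i^\gamma)$ again partition $X$, so for measurable $A$ one has $\mu(\gamma(A)) = \sum_i \mu(\varphi^i(A \cap X_i^\gamma)) = \sum_i \mu(A \cap X_i^\gamma) = \mu(A)$ by $\varphi$-invariance of $\mu$. Since $f_\gamma$ has finite image by Proposition~\ref{prop: count}, it is bounded and integrable, so $I_\mu$ is well defined. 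Combining measure invariance with the cocycle identity $f_{\gamma_1\gamma_2} = f_{\gamma_1}\circ\gamma_2 + f_{\gamma_2}$ of Remark~\ref{lem: cocy} gives $I_\mu(\gamma_1\gamma_2) = \int_X f_{\gamma_1}\circ\gamma_2\,\mathrm{d}\mu + \int_X f_{\gamma_2}\,\mathrm{d}\mu = I_\mu(\gamma_1) + I_\mu(\gamma_2)$, so $I_\mu \colon \mathfrak{T}(\varphi) \to \mathbb{R}$ is a homomorphism. As the system is free, the cocycle of $\varphi$ is $f_\varphi \equiv 1$, whence $I_\mu(\varphi) = 1$.

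Next I would show the image lies in $\mathbb{Z}$ and compute $I_\mu$ explicitly via $\mathfrak{T}(\varphi) = \coprod_{k,l} \mathfrak{T}(\varphi)_{\{x\}}\,\varphi^l\sigma_{k,l}\,\mathfrak{T}(\varphi)_{\{x\}}$ from Lemma~\ref{lem: tfgrp-rep}. Any homomorphism from a group into the torsion-free group $\mathbb{R}$ annihilates torsion elements; since $\mathfrak{T}(\varphi)_{\{x\}}$ is locally finite by Corollary~\ref{cor: locally finite subgroups} and each $\sigma_{k,l}$ has finite order by Remark~\ref{rem: fin}, writing $\gamma = \gamma_1 \varphi^l \sigma_{k,l}\gamma_2$ yields $I_\mu(\gamma) = I_\mu(\gamma_1) + l\,I_\mu(\varphi) + I_\mu(\sigma_{k,l}) + I_\mu(\gamma_2) = l \in \mathbb{Z}$. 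Hence $I_\mu$ takes integer values and returns precisely the integer $l$ indexing the double coset of $\gamma$. Running the identical computation for an arbitrary $\psi \in \Hom(\mathfrak{T}(\varphi),\mathbb{Z})$ — which also kills all torsion — gives $\psi(\gamma) = l\,\psi(\varphi) = \psi(\varphi)\cdot I_\mu(\gamma)$ for every $\gamma$, i.e. $\psi = \psi(\varphi)\cdot I_\mu$. Thus the evaluation map $\psi \mapsto \psi(\varphi)$ is an injection $\Hom(\mathfrak{T}(\varphi),\mathbb{Z}) \to \mathbb{Z}$ sending $I_\mu$ to $1$, so it is an isomorphism onto $\mathbb{Z}$ and $I_\mu$ is the unique homomorphism with $I_\mu(\varphi)=1$.

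The only genuinely computational step is the measure-invariance of elements of $\mathfrak{T}(\varphi)$ together with the cocycle identity; everything afterward is formal once Lemma~\ref{lem: tfgrp-rep} is granted. The conceptual crux — and the place where minimality and the fine structure of $\mathfrak{T}(\varphi)$ enter — is the double coset decomposition and the local finiteness of the stabilizer $\mathfrak{T}(\varphi)_{\{x\}}$, which together force every $\mathbb{Z}$-valued homomorphism to factor through the single integer invariant $l$. I therefore expect no serious obstacle; the main care lies in applying the torsion-vanishing argument uniformly, both to $I_\mu$ and to a general $\psi$.
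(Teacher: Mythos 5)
Your proposal is correct and follows essentially the same route as the paper: the cocycle identity of Remark~\ref{lem: cocy} together with $\varphi$-invariance gives the homomorphism property, and the double coset decomposition of Lemma~\ref{lem: tfgrp-rep} combined with the vanishing of $\mathbb{Z}$- or $\mathbb{R}$-valued homomorphisms on torsion forces every homomorphism to be a multiple of $I_\mu$ and pins $\Ima(I_\mu)=\mathbb{Z}$. You merely fill in two details the paper leaves implicit — the verification that each $\gamma \in \mathfrak{T}(\varphi)$ preserves $\mu$ (needed to replace $\int f_{\gamma_1}\circ\gamma_2\,\mathrm{d}\mu$ by $\int f_{\gamma_1}\,\mathrm{d}\mu$) and the explicit computation $I_\mu(\gamma)=l$ on each double coset — both of which are correct.
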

	\begin{proof}
		By Lemma~\ref{lem: tfgrp-rep} elements of the topological full group can be written as products of elements of finite order and multiples of $\varphi$, thus homeomorphisms in $\Hom(\mathfrak{T}(\varphi) , \mathbb{Z})$ only depend on the image of $\varphi$, hence $\Hom(\mathfrak{T}(\varphi),\mathbb{Z}) \cong \mathbb{Z}$. Let $\gamma_1, \gamma_2 \in \mathfrak{T}(\varphi)$. By Lemma~\ref{lem: cocy}, $\varphi$-invariance of the measure and linearity of the integral, we have $I_\mu(\gamma_1 \gamma_2)=I_\mu(\gamma_1)+I_\mu(\gamma_2)$, thus $I_\mu$ defines a group homomorphism with $\Ima (I_\mu)\subseteq\mathbb{R}$ and $I_\mu (\varphi)=1$. Elements of finite order must necessarily be contained in $\ker(I_\mu)$, thus $\Ima (I_\mu) = \mathbb{Z}$.
	\end{proof}
	
	\begin{defi}\label{defi: index}
		The map $I=I_\mu$ described in Proposition~\ref{prop: index} is called the \emph{index map}.
	\end{defi}
	
	One can find a covariant representation $\rho$ of $C^*(X,\varphi)$ on $\mathcal{H}=\ell^2(\mathbb{Z})$ (the left regular representation) together with a projection $P$ on $\mathcal{H}$, such that
	\begin{equation*}
	I(\gamma)=\dim\ker(P\rho(v_\gamma)P)- \dim\ker((P\rho(v_\gamma)P)^*)= \kappa_x(\gamma)-\lambda_x(\gamma)
	\end{equation*}
	
	Thus $I$ is the manifestation of a Fredholm index. The motivation of this lies in non-commutative differential geometry. The \emph{cyclic cohomology} $HC^*(\mathcal{A})$ of a C*-algebra $\mathcal{A}$ is a non-commutative version of de Rham homology introduced by Connes. It admits a bilinear pairing $\langle\;,\;\rangle\colon HC^1(C(X) \rtimes_{\varphi} \mathbb{Z})\times K_1(C(X) \rtimes_{\varphi} \mathbb{Z})\to \mathbb{C}$. Every $\gamma \in \mathfrak{T}(\varphi)$ induces a canonical unitary $v_\gamma \in U(C^*(X,\varphi))$ by Proposition~\ref{prop: exact} thus further  inducing a class $[v_\gamma]\in K_1(C(X) \rtimes_{\varphi} \mathbb{Z})$. Every $\varphi$-invariant measure $\mu$ induces a class of one-cocycles $[c_\mu]\in HC^1(C(X) \rtimes_{\varphi} \mathbb{Z})$, the pairing induces a map $\pi_\mu\colon\mathfrak{T}(\varphi)\to \mathbb{C}$. It can be shown that $\pi_\mu(\gamma)=I(\gamma)$. A direct proof of $I(\gamma)=\kappa_x(\gamma)-\lambda_x(\gamma)$ follows from Theorem~\ref{thm: grig-med}:
	
	\begin{cor}[\cite{gm14}, Lemma~5.3]
		Let $(X,\varphi)$ be a minimal Cantor system. Then $\Ima(I)=\mathbb{Z}$ and $I=\kappa_x-\lambda_x$.
	\end{cor}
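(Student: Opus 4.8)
The equality $\Ima(I)=\mathbb{Z}$ is already contained in Proposition~\ref{prop: index}, so the real content is the identity $I=\kappa_x-\lambda_x$. Since $\Hom(\mathfrak{T}(\varphi),\mathbb{Z})\cong\mathbb{Z}$ and, by the proof of Proposition~\ref{prop: index}, any homomorphism $\mathfrak{T}(\varphi)\to\mathbb{Z}$ is determined by its value on $\varphi$ (it annihilates the finite-order elements $\sigma_{k,l}$ and the torsion subgroup $\mathfrak{T}(\varphi)_{\{x\}}$, while every element of $\mathfrak{T}(\varphi)$ is a product of these with a power of $\varphi$ by Lemma~\ref{lem: tfgrp-rep}), the plan is to verify two things: that $\nu:=\kappa_x-\lambda_x$ is a group homomorphism into $\mathbb{Z}$, and that $\nu(\varphi)=1$. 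Uniqueness then forces $\nu=I$.

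First I would set up the counting function. Fix $x$ and write $O:=\operatorname{Orb}_\varphi(x)$ and $P:=\operatorname{Orb}_\varphi^+(x)\subseteq O$; minimality makes the action free (Remark~\ref{rem: orbdens}), so $O=\{\varphi^k(x):k\in\mathbb{Z}\}$ is infinite with distinct points and every $\gamma\in\mathfrak{T}(\varphi)$ restricts to a bijection of $O$. For such $\gamma$ define $c_\gamma\colon O\to\mathbb{Z}$ by $c_\gamma(y):=\mathbf{1}_P(\gamma(y))-\mathbf{1}_P(y)$, so that $c_\gamma(y)=+1$ exactly when $y$ is moved from the backward into the forward half-orbit and $c_\gamma(y)=-1$ exactly when it is moved the other way. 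Hence $\sum_{y\in O}c_\gamma(y)=\kappa_x(\gamma)-\lambda_x(\gamma)=\nu(\gamma)$, and the sum is finite: by Proposition~\ref{prop: count} the cocycle $f_\gamma$ takes finitely many values, say $|f_\gamma|\le N$, and writing $\gamma(\varphi^k(x))=\varphi^{k+f_\gamma(\varphi^k(x))}(x)$ shows a sign change of the exponent across $0$ can only occur for $|k|\le N$. This also re-proves the finiteness asserted in Remark~\ref{rem: fin}.

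For the homomorphism property I would use the elementary telescoping identity $c_{\gamma_1\gamma_2}(y)=c_{\gamma_1}(\gamma_2(y))+c_{\gamma_2}(y)$, which is immediate from the definition (add and subtract $\mathbf{1}_P(\gamma_2(y))$) and mirrors the cocycle relation of Lemma~\ref{lem: cocy}. Summing over $y\in O$ and reindexing the first term by the bijection $\gamma_2$ of $O$ gives $\nu(\gamma_1\gamma_2)=\nu(\gamma_1)+\nu(\gamma_2)$; the reindexing is legitimate precisely because each $c_{\gamma_i}$ has finite support. Finally a direct count gives $\nu(\varphi)=1$: the only orbit point $y=\varphi^k(x)$ with $k\le 0$ sent by $\varphi$ into $P$ is $y=x$, and no point of $P$ is sent out of $P$, so $\kappa_x(\varphi)=1$ and $\lambda_x(\varphi)=0$.

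As the text indicates, an alternative ``direct'' route runs through the decomposition of Theorem~\ref{thm: grig-med}: for Kakutani--Rokhlin partitions taken around $x$ and $n$ large one writes $\gamma=p_\gamma r_\gamma$ with $p_\gamma\in\mathcal{P}(\mathcal{A}_n)\subseteq\mathfrak{T}(\varphi)_{\{x\}}$ and $r_\gamma$ an $n$-rotation of rotation number at most $1$. Since $p_\gamma$ preserves both half-orbits of $x$ (Corollary~\ref{cor: locally finite subgroups}), one gets $\kappa_x(\gamma)=\kappa_x(r_\gamma)$ and $\lambda_x(\gamma)=\lambda_x(r_\gamma)$ without any additivity input, reducing the whole computation to the explicit rotations $\varphi_{U(i)}^{\pm1}$ and $\varphi_{L(j)}^{\pm1}$. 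I expect the genuine work in either route to be bookkeeping rather than ideas: in the first route, justifying the finiteness of supports and the reindexing on the infinite orbit; in the second, carrying out the crossing count for the induced transformations in $r_\gamma$ and matching it with $I(r_\gamma)=\int f_{r_\gamma}\,\mathrm{d}\mu$. I would favour the first route, as it is self-contained and avoids the rotation computation entirely.
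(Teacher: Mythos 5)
Your argument is correct, but for the main identity $I=\kappa_x-\lambda_x$ it takes a genuinely different route from the paper. The paper's proof is exactly your ``second route'': it writes $\gamma=p_\gamma r_\gamma$ via Theorem~\ref{thm: grig-med} for a nested sequence of Kakutani--Rokhlin partitions around $x$, kills the finite-order factor $p_\gamma$ under $I$, evaluates $I$ on the induced transformations $\varphi_{U(k)}$ and $\varphi_{L(k)}^{-1}$ by means of Lemma~\ref{lem: ind.trans} (periodicity of $\varphi_A^{-1}\varphi$ gives $I(\varphi_A)=1$), and then identifies the number of levels crossing the roof, resp.\ the floor, with $\kappa_x(\gamma)$, resp.\ $\lambda_x(\gamma)$. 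Your preferred route instead exhibits $\nu=\kappa_x-\lambda_x$ directly as a homomorphism: the coboundary $c_\gamma=\mathbf{1}_P\circ\gamma-\mathbf{1}_P$ on the orbit has finite support because $f_\gamma$ is bounded (Proposition~\ref{prop: count}), the telescoping identity $c_{\gamma_1\gamma_2}=c_{\gamma_1}\circ\gamma_2+c_{\gamma_2}$ together with reindexing along the bijection $\gamma_2$ of the orbit gives additivity, the computation $\nu(\varphi)=1$ is immediate, and the uniqueness clause of Proposition~\ref{prop: index} then forces $\nu=I$. Both arguments are sound; note that yours still leans on the Kakutani--Rokhlin machinery indirectly, since the uniqueness in Proposition~\ref{prop: index} rests on the double-coset decomposition of Lemma~\ref{lem: tfgrp-rep}. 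What your route buys is economy and robustness: it avoids Theorem~\ref{thm: grig-med} and Lemma~\ref{lem: ind.trans} entirely, works verbatim for every $x\in X$ without tying the partition sequence to $x$, and incidentally re-proves the finiteness of $\kappa_x,\lambda_x$ from Remark~\ref{rem: fin}. What the paper's route buys is the geometric picture the surrounding text is after --- the corollary is explicitly advertised as a consequence of the permutation--rotation decomposition, with the index realized as a count of tower crossings --- which your coboundary computation deliberately sidesteps.
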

	\begin{proof}
		Let $\gamma \in \mathfrak{T}(\varphi)$ and let $\{\mathcal{A}_n\}_{n\in \mathbb{N}}$ be a sufficient nested sequence of Kakutani-Rokhlin partitions. Let $\gamma=p_\gamma r_\gamma$ be a decomposition into an $n$-permutation $p_\gamma$ and an $n$-rotation $r_\gamma$ with rotation number at most $1$. Then $I(\gamma)=I(r_\gamma)$. By Lemma~\ref{lem: ind.trans}, $I(\varphi_A^{-1})=I(\varphi_A^{-1} \varphi \varphi^{-1})=I(\varphi^{-1})=-1$ for every clopen set $A \subseteq X$. Thus by definition of $r_\gamma$ we have $I(r_\gamma)=|U|-|L|$, where $U$ is the set of numbers $k \in \{0,\dots,m_n\}$ such that $\gamma$ moves $U(k)=\bigsqcup_i D_{h_i-k-1,i}^n$ over the roof of the partition, hence $|U|=\kappa_x(\gamma)$ and $L$ is the set of numbers $k \in \{1,\dots,m_n\}$ such that $\gamma$ moves $L(k)=\bigsqcup_i D_{k,i}^n$ below the base of the partition, hence $|L|=\lambda_x(\gamma)$. 
	\end{proof}

	\begin{defi}[\cite{gps99}, Definition 5.7]
		Let $(X,\varphi)$ be a minimal Cantor system. Denote by $\mathfrak{T}(\varphi)_0$ the kernel of $I$.
	\end{defi}
	
	\begin{rem}\label{rem: com. ind.}
		We have $\mathfrak{T}(\varphi)_{\{x\}} \leq \mathfrak{T}(\varphi)_0$ for all $x \in X$, and since commutators in $\mathfrak{T}(\varphi)$ vanish under the index map, we have $\mathfrak{T}(\varphi)' \leq \mathfrak{T}(\varphi)_0$.
	\end{rem}
	
	\begin{prop}[\cite{mat06}, Lemma 4.1]\label{prop: mat.fac}
		Let $(X,\varphi)$ be a minimal Cantor system and let $x,y \in X$ with $\operatorname{Orb}_\varphi(x) \neq \operatorname{Orb}_\varphi(y)$. Then $\mathfrak{T}(\varphi)_0 = \mathfrak{T}(\varphi)_{\{x\}} \cdot \mathfrak{T}(\varphi)_{\{y\}}$.
	\end{prop}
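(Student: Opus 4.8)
The plan is to prove the two inclusions separately, the non-trivial one being $\mathfrak{T}(\varphi)_0 \subseteq \mathfrak{T}(\varphi)_{\{x\}}\cdot\mathfrak{T}(\varphi)_{\{y\}}$. The inclusion $\supseteq$ is immediate: by Remark~\ref{rem: com. ind.} both $\mathfrak{T}(\varphi)_{\{x\}}$ and $\mathfrak{T}(\varphi)_{\{y\}}$ lie in $\mathfrak{T}(\varphi)_0$, and $\mathfrak{T}(\varphi)_0=\ker I$ is a subgroup, hence absorbs the product.

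For the reverse inclusion, fix $\gamma\in\mathfrak{T}(\varphi)_0$. The strategy is to produce $\gamma_x\in\mathfrak{T}(\varphi)_{\{x\}}$ such that $\gamma_y:=\gamma_x^{-1}\gamma$ preserves $\operatorname{Orb}_\varphi^+(y)$ setwise, for then $\gamma_y\in\mathfrak{T}(\varphi)_{\{y\}}$ and $\gamma=\gamma_x\gamma_y$. First I would reduce the problem to combinatorics on the single orbit $\operatorname{Orb}_\varphi(y)$: since $y$ is aperiodic (Remark~\ref{rem: orbdens}) we identify $\operatorname{Orb}_\varphi(y)\cong\mathbb{Z}$ via $\varphi^n(y)\leftrightarrow n$, and $\gamma$ restricts to a bijection $\sigma$ of $\mathbb{Z}$ given by $\sigma(n)=n+f_\gamma(\varphi^n(y))$. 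As $f_\gamma$ has finite image (Proposition~\ref{prop: count}), $\sigma$ has displacement bounded by some $C$, so the crossing sets $S=\{n\le 0:\sigma(n)>0\}$ and $T=\{n>0:\sigma(n)\le 0\}$ are finite and contained in $(-C,C]$. Because $I(\gamma)=\kappa_y(\gamma)-\lambda_y(\gamma)=0$ we get $|S|=|T|=:m$; writing $P_1=\sigma(S)\subseteq\{1,\dots,C\}$ and $P_2=\sigma(T)\subseteq\{-C+1,\dots,0\}$ one checks $\sigma(\{n>0\})=(\{n>0\}\setminus P_1)\cup P_2$. Thus it suffices to build $\gamma_x\in\mathfrak{T}(\varphi)_{\{x\}}$ whose induced bijection on $\operatorname{Orb}_\varphi(y)$, after inversion, carries $(\{n>0\}\setminus P_1)\cup P_2$ back onto $\{n>0\}$ without moving any further point across the cut between $0$ and $1$.

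The realization of such a $\gamma_x$ is where I expect the real work, and it crucially exploits that $x$ and $y$ lie on different orbits. I would take a nested sequence $\{\mathcal{A}_n\}$ of Kakutani--Rokhlin partitions around $x$ with property (H) whose base $B(\mathcal{A}_n)$ is a clopen neighbourhood of $x$ disjoint from the finite set $\{\varphi^j(y):|j|\le C\}$, which is possible since $x\notin\operatorname{Orb}_\varphi(y)$. Then the $y$-orbit makes no return to the base during the window $[-C,C]$, so this window lies inside a single pass of $\operatorname{Orb}_\varphi(y)$ through one tower $D^n(i_0)$, a block $B_0=[c_0,d_0]$ with $c_0\le -C$ and $d_0\ge C$; taking $n$ large, property (H) makes $h^n_{i_0}=d_0-c_0+1$ as large as needed and forces $P_1\cup P_2\subseteq B_0$. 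Recalling from Corollary~\ref{cor: locally finite subgroups} that $\mathfrak{T}(\varphi)_{\{x\}}=\bigcup_n\mathcal{P}(\mathcal{A}_n)$, I would let $\gamma_x\in\mathcal{P}(\mathcal{A}_n)$ be the tower permutation acting trivially on every tower except $D^n(i_0)$ and, on $D^n(i_0)$, by the level permutation which on the pass $B_0$ realizes (under $\gamma_x^{-1}$) a bijection sending $(\{n>0\}\setminus P_1)\cup P_2$ onto $\{n>0\}$; such a level permutation exists because these two subsets of $B_0$ have the same cardinality.

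The point to verify, and the main obstacle, is that a tower permutation acts \emph{periodically} on $\operatorname{Orb}_\varphi(y)$: the level permutation of $D^n(i_0)$ is applied identically on every pass of the $y$-orbit through $D^n(i_0)$, so $\gamma_x$ cannot swap $P_1,P_2$ in isolation. This is resolved by the placement of the window: since $B_0$ is the unique pass containing both $0$ and $1$, every other pass through $D^n(i_0)$ lies entirely in $\{n\le 0\}$ or entirely in $\{n>0\}$, and a level permutation confined to such a block keeps each point in its half. Hence $\gamma_x$ introduces no crossings of the cut outside $B_0$, and combining this with $\sigma(\{n>0\})=(\{n>0\}\setminus P_1)\cup P_2$ and the choice of the level permutation yields $(\gamma_x^{-1}\gamma)(\operatorname{Orb}_\varphi^+(y))=\operatorname{Orb}_\varphi^+(y)$. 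Therefore $\gamma_y=\gamma_x^{-1}\gamma\in\mathfrak{T}(\varphi)_{\{y\}}$ and $\gamma=\gamma_x\gamma_y$, which establishes the harder inclusion and finishes the proof.
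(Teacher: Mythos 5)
Your proof is correct, and it reaches the factorization by a construction of the correcting element that is genuinely different from the paper's (which follows \cite{mat06}, Lemma~4.1). Both arguments share the same skeleton: the inclusion $\supseteq$ is Remark~\ref{rem: com. ind.}, and for $\subseteq$ one uses $I(\gamma)=\kappa-\lambda=0$ to pair the finitely many up-crossings with the down-crossings of one distinguished orbit and then corrects them by a permutation of finitely many orbit positions, localized away from a finite window of the other orbit precisely because $x$ and $y$ lie on different orbits. The difference is where the bookkeeping happens and how the corrector is realized. The paper counts crossings on the $x$-orbit and builds the corrector $\tilde{\gamma}$ explicitly as a swap of finitely many $\varphi$-translates of a small clopen neighbourhood $U$ of $x$, chosen so that its support misses the points $\varphi^i(y)$, $0\le i\le 2m-1$; membership of $\tilde{\gamma}$ in $\mathfrak{T}(\varphi)_{\{y\}}$ is then a support argument, and membership of $\gamma\tilde{\gamma}$ in $\mathfrak{T}(\varphi)_{\{x\}}$ is checked directly on the $x$-orbit. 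You instead count crossings on the $y$-orbit and realize the corrector as a tower permutation of a Kakutani--Rokhlin partition around $x$ whose base avoids $\{\varphi^j(y):|j|\le C\}$, so that membership in $\mathfrak{T}(\varphi)_{\{x\}}$ comes for free from Corollary~\ref{cor: locally finite subgroups} --- at the cost of the extra verification, which you correctly identify and resolve, that the level permutation is applied identically on every pass of the $y$-orbit through the chosen tower and must therefore create no spurious crossings outside the single pass containing the cut. The paper's version is more elementary and self-contained (a small clopen set and the identity $I=\kappa_x-\lambda_x$ suffice), while yours leans on the Kakutani--Rokhlin machinery of Subsection~\ref{subs: topological full groups in terms of kakutani-rokhlin partitions} and in exchange makes one of the two required memberships automatic; both are complete proofs.
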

	\begin{proof}
		Let $\gamma \in \mathfrak{T}(\varphi)_0$. As $0=I(\gamma)=\kappa_x(\gamma)-\lambda_x(\gamma)$, there exists a bijection $b$ between the finite sets:
		\begin{equation*}
		A=\{n \in \mathbb{N}|\gamma(\varphi^n(x)) \in \operatorname{Orb}_\varphi^-(x)\},\;
		B=\{n \in \mathbb{N}|\gamma(\varphi^{1-n}(x)) \in \operatorname{Orb}_\varphi^+(x)\}.
		\end{equation*}
		Let $m=\max(A \cup B)$. Let $U$ be a clopen set containing $x$ such that $U$ does not contain $\varphi^i(x)$ for $i \in \{-m,\dots,m \}$ and
		\begin{equation*}
		V=\bigcup_{n \in A} \varphi^{1-n}(U) \cup \bigcup_{n \in B} \varphi^n(U)
		\end{equation*}
		does not contain $\varphi^i(y)$ for $i \in \{0, \dots , 2m-1\}$. Then $\tilde{\gamma} \in \mathfrak{T}(\varphi)_y$ defined by 
		\begin{equation*}
		\tilde{\gamma}(z):=
		\begin{cases}
		\varphi^{1-b(n)-n}, & z \in \varphi^n(U) \text{ for } n \in A\\
		\varphi^{n-1+b^{-1}(n)}, & z \in \varphi^{1-n}(U) \text{ for } n \in B\\
		z, &  z \in X \setminus V
		\end{cases}
		\end{equation*}
		satisfies $\gamma \tilde{\gamma} \in \mathfrak{T}(\varphi)_{\{x\}}$, thus $\gamma=(\gamma \tilde{\gamma})\tilde{\gamma}^{-1}$ gives the desired factorization.
	\end{proof}
	
	This has immediate consequences:
	
	\begin{cor}
		Let $(X,\varphi)$ be a minimal Cantor system. Then the following hold:
		\begin{enumerate}[(i)]
			\item $\mathfrak{T}(\varphi)_0^{\operatorname{ab}}$ is an elementary abelian $2$-group.
			
			\item $\mathfrak{T}(\varphi)'=\mathfrak{T}(\varphi)_0'$
			
			\item $\mathfrak{T}(\varphi)^{\operatorname{ab}}=\mathbb{Z}\times \mathfrak{T}(\varphi)_0^{\operatorname{ab}}$ 
		\end{enumerate}
		
	\end{cor}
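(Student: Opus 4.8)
The plan is to treat the three statements as consequences of a single structural fact: conjugation by $\varphi$ induces the identity on the abelian group $A := \mathfrak{T}(\varphi)_0^{\operatorname{ab}}$. First I would dispatch (i). By Proposition~\ref{prop: mat.fac} I fix $x,y \in X$ in distinct orbits and write $\mathfrak{T}(\varphi)_0 = \mathfrak{T}(\varphi)_{\{x\}}\cdot\mathfrak{T}(\varphi)_{\{y\}}$, so $A$ is generated by the images of $\mathfrak{T}(\varphi)_{\{x\}}$ and $\mathfrak{T}(\varphi)_{\{y\}}$. Each such image is a quotient of $\mathfrak{T}(\varphi)_{\{x\}}^{\operatorname{ab}}$ resp.\ $\mathfrak{T}(\varphi)_{\{y\}}^{\operatorname{ab}}$, which are elementary abelian $2$-groups by Corollary~\ref{cor: ev.permut.}, hence consist of involutions; an abelian group generated by involutions is an elementary abelian $2$-group, giving (i).

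Next I would isolate the key claim, that conjugation by $\varphi$ acts trivially on $A$. Since $\mathfrak{T}(\varphi)_0$ is generated by the within-tower transpositions that generate the symmetric groups comprising $\mathfrak{T}(\varphi)_{\{x\}}$ and $\mathfrak{T}(\varphi)_{\{y\}}$ (Corollary~\ref{cor: locally finite subgroups}), it suffices to show $\overline{\varphi\tau\varphi^{-1}} = \overline{\tau}$ in $A$ for each such transposition $\tau = (P\ Q)$, where $P,Q$ are disjoint clopen sets swapped by a power of $\varphi$ and $\overline{\,\cdot\,}$ is the class in $A$. For this I would exhibit a conjugator inside $\mathfrak{T}(\varphi)_0$: the element $h := (P\ \varphi P)(Q\ \varphi Q)$, a product of two involutions each swapping a clopen set with its $\varphi$-image, lies in $\mathfrak{T}(\varphi)_0$ since it has finite order and hence index $0$. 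When $P,\varphi P, Q,\varphi Q$ are pairwise disjoint one checks directly that $h\tau h^{-1} = (\varphi P\ \varphi Q) = \varphi\tau\varphi^{-1}$, the swap being realized by the same power of $\varphi$ before and after. As conjugation is trivial on an abelianization, $\overline{\varphi\tau\varphi^{-1}} = \overline{h\tau h^{-1}} = \overline{\tau}$.

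Granting the key claim, I would deduce (ii) and (iii). Consider $E := \mathfrak{T}(\varphi)/\mathfrak{T}(\varphi)_0'$, generated by the image of the abelian normal subgroup $\mathfrak{T}(\varphi)_0^{\operatorname{ab}}$ together with the image of $\varphi$ (using $\mathfrak{T}(\varphi) = \langle \mathfrak{T}(\varphi)_0, \varphi\rangle$, from surjectivity of $I$ in Proposition~\ref{prop: index}). Its commutator subgroup is generated by the elements $\varphi a \varphi^{-1} a^{-1}$ with $a \in \mathfrak{T}(\varphi)_0$, all trivial by the key claim, so $E$ is abelian; equivalently $\mathfrak{T}(\varphi)' \subseteq \mathfrak{T}(\varphi)_0'$. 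Together with the automatic inclusion $\mathfrak{T}(\varphi)_0' \subseteq \mathfrak{T}(\varphi)'$ this yields (ii). For (iii), (ii) identifies $\mathfrak{T}(\varphi)^{\operatorname{ab}}$ with $E$, which sits in the short exact sequence $1 \to \mathfrak{T}(\varphi)_0^{\operatorname{ab}} \to E \to \mathbb{Z} \to 1$ coming from $\ker I = \mathfrak{T}(\varphi)_0$; as $\mathbb{Z}$ is free the sequence splits, and since $E$ is abelian the extension is a direct product $\mathfrak{T}(\varphi)^{\operatorname{ab}} \cong \mathbb{Z}\times\mathfrak{T}(\varphi)_0^{\operatorname{ab}}$.

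The main obstacle is the key claim, and within it the degenerate configurations of $\tau = (P\ Q)$ where $P$ and $Q$ are $\varphi$-adjacent (so $P,\varphi P, Q, \varphi Q$ are not pairwise disjoint) or where an atom sits at the roof of a Kakutani--Rokhlin tower, so that the clean conjugator $h$ above fails. I expect to handle these by first replacing $\tau$, without altering its class in $A$, by a transposition conjugate to it inside $\mathfrak{T}(\varphi)_{\{x\}}$ whose atoms lie well inside a tall tower — available via property (H) and Remark~\ref{rem: locfin. conj} — or by substituting an appropriate $3$-cycle for $h$. I anticipate that this bookkeeping, rather than any conceptual difficulty, will be the only genuine work.
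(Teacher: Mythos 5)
Your argument is correct and follows essentially the same route as the paper: part (i) via the factorization of Proposition~\ref{prop: mat.fac} together with the involution generators coming from Corollary~\ref{cor: locally finite subgroups} and Corollary~\ref{cor: ev.permut.}, and parts (ii)--(iii) by showing that $\mathfrak{T}(\varphi)/\mathfrak{T}(\varphi)_0'$ is the direct product $\mathbb{Z}\times\mathfrak{T}(\varphi)_0^{\operatorname{ab}}$ and reading off (ii) and then (iii). The only real difference is that the paper asserts the direct-product decomposition without comment, whereas your ``key claim'' (triviality of the $\varphi$-conjugation action on $\mathfrak{T}(\varphi)_0^{\operatorname{ab}}$) is precisely the point that needs justification, and your explicit conjugator $h=(P\ \varphi P)(Q\ \varphi Q)\in\mathfrak{T}(\varphi)_0$, together with the reduction of the degenerate configurations via conjugacy of transpositions inside $\mathcal{P}(\mathcal{A}_n)$, supplies it correctly.
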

	\begin{proof}
		By Corollary~\ref{cor: locally finite subgroups}(i) and Proposition~\ref{prop: mat.fac} the group $\mathfrak{T}(\varphi)_0$ is generated by its involutions implying (i). By Proposition~\ref{prop: mat.fac} and Remark~\ref{rem: locfin. conj}, we have $\varphi\cdot \mathfrak{T}(\varphi)_0'\cdot \varphi^{-1}=\mathfrak{T}(\varphi)_0'$, which implies
		\begin{equation*}
		\mathfrak{T}(\varphi)/\mathfrak{T}(\varphi)_0'= \mathfrak{T}(\varphi)/\mathfrak{T}(\varphi)_0 \times \mathfrak{T}(\varphi)_0^{\operatorname{ab}}  =\mathbb{Z}\times \mathfrak{T}(\varphi)_0^{\operatorname{ab}}
		\end{equation*}
		
		The group $\mathbb{Z}\times \mathfrak{T}(\varphi)_0^{\operatorname{ab}}$ is abelian, implying (ii) which in turn implies (iii).
	\end{proof}
	
	The notion of the index map generalizes to the setting of groupoids:
	
	\begin{defi}[\cite{nek17}, Definition 2.4]\label{defi: index2}
		Let $\mathcal{G}$ be an \'etale Cantor groupoid. The \emph{index map} is the homomorphism $\operatorname{I} \colon \mathfrak{T}(\mathcal{G}) \to H_1(\mathcal{G})$, which maps $B \in \mathfrak{T}(\mathcal{G})$ onto to the equivalence class of its characteristic function $[\mathbf{1}_B]$.\footnote{$[\mathbf{1}_B]$ is by definition contained in $\ker \delta_1 = \ker (s_\ast - r_\ast)$.\\}
	\end{defi}
	
	\begin{rem}
		The index map is a homomorphism as an immediate consequence of Proposition~\ref{prop: hom1}.
	\end{rem}
	
	For the transformations groupoids associated to a minimal Cantor system, the dynamical homology is isomorphic to the K-theory of the associated crossed product and above map genereralizes the index map of Definition~\ref{defi: index}. 
	
	\begin{defi}
		The kernel of the index map $\operatorname{I}$ is a subgroup of $\mathfrak{T}(\mathcal{G})$ denoted by $\mathfrak{T}(\mathcal{G})_0$.
	\end{defi}
	
	By Proposition~\ref{prop: hom1} we immediately have:
	
	\begin{cor}\label{cor: transpositions are in the index kernel}
		Let $\mathcal{G}$ be an \'etale Cantor groupoid and let $B \in \mathcal{B}_\mathcal{G}^{o,k}$ such that $s(B) \cap r(B)= \emptyset$. Then $T_B \in \mathfrak{T}(\mathcal{G})_0$. 
	\end{cor}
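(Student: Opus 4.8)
The plan is to compute the image $\operatorname{I}(T_B)=[\mathbf{1}_{T_B}]$ directly and to show that it vanishes in $H_1(\mathcal{G})$ by splitting $T_B$ into its three constituent pieces and invoking the two immediate consequences (ii) and (iii) of Proposition~\ref{prop: hom1}. The whole argument is formal once a short disjointness check is in place.

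First I would observe that the hypothesis $s(B)\cap r(B)=\emptyset$ forces $B$, and hence $B^{-1}$, to contain no units: a unit $u\in B$ would give $u=s(u)=r(u)\in s(B)\cap r(B)$. The same hypothesis makes the slices $B$ and $B^{-1}$ disjoint, since any $g\in B\cap B^{-1}$ would satisfy $r(g)\in s(B)\cap r(B)$; and the unit-space remainder $\mathcal{G}^{(0)}\setminus(s(B)\cup r(B))$ meets neither slice. Therefore the defining decomposition of the involution is a genuine disjoint union, and as elements of $\mathscr{C}(\mathcal{G}^{(1)},\mathbb{Z})$ we have the pointwise identity
\begin{equation*}
\mathbf{1}_{T_B}=\mathbf{1}_B+\mathbf{1}_{B^{-1}}+\mathbf{1}_{\mathcal{G}^{(0)}\setminus(s(B)\cup r(B))}.
\end{equation*}

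Passing to classes in $H_1(\mathcal{G})$ and using additivity of $[\,\cdot\,]$, I would then apply Proposition~\ref{prop: hom1}(iii) to cancel the first two terms, $[\mathbf{1}_B]+[\mathbf{1}_{B^{-1}}]=0$, and Proposition~\ref{prop: hom1}(ii) to kill the third: the set $\mathcal{G}^{(0)}\setminus(s(B)\cup r(B))$ is a compact open subset of the unit space, because $s(B)$ and $r(B)$ are compact open (images of the compact open slice $B$ under the local homeomorphisms $s$ and $r$), so their union is compact open and its complement in the Cantor space $\mathcal{G}^{(0)}$ is clopen, hence compact open. This yields $[\mathbf{1}_{T_B}]=0$, i.e. $T_B\in\ker\operatorname{I}=\mathfrak{T}(\mathcal{G})_0$. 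The only step demanding any care — and thus the closest thing to an obstacle — is the verification that the three pieces are pairwise disjoint and that the unit-space remainder genuinely qualifies as a compact open subset so that part (ii) applies; everything else is an immediate consequence of Matui's lemma.
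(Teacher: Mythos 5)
Your argument is correct and is precisely the computation the paper intends when it states the corollary follows ``immediately'' from Proposition~\ref{prop: hom1}: decompose $\mathbf{1}_{T_B}$ as the sum $\mathbf{1}_B+\mathbf{1}_{B^{-1}}+\mathbf{1}_{\mathcal{G}^{(0)}\setminus(s(B)\cup r(B))}$ and cancel the terms using parts (iii) and (ii). The disjointness and compact-openness checks you supply are exactly the routine verifications the paper leaves implicit.
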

	
	\subsection{Isomorphism theorems}\label{subs: isom}
	
	The chief purpose of \cite{gps99} was the classification of minimal Cantor systems by the means of full groups and their subgroups. 
	
	\begin{defi}[\cite{gps99}, Definition 4.1, 4.10., 5.1. \& 5.2.]\phantomsection\label{defi: spatial homeomorphism + local subgroups}\leavevmode
		\begin{enumerate}[(i)]
			\item Let $(X_1,G_1)$ and $(X_2,G_2)$ be topological dynamical systems. An isomorphism $\alpha \colon G_1 \to G_2$ is called \emph{spatial} if there exists a homeomorphism $\varphi \colon X_1 \to X_2$ such that $\alpha (g)=\varphi \circ g \circ \varphi^{-1}$ for all $g \in G_1$.
			
			\item  Let $(X,\varphi)$ be a Cantor system. Let $A \subseteq X$ and let $G \leq \operatorname{Homeo}(X)$. Denote by $G_A$ the subgroup $\{g \in G| \supp(g) \subseteq A \}$. If $A$ is a clopen subset the resulting group is called a \emph{local subgroup of $G$}.
		\end{enumerate}
	\end{defi}
	
	A crucial ingredient is the abundance of certain involutions:
	\begin{lem}[\cite{gps99}, Lemma~3.3]\label{lem: abundinvo}
		Let $(X,\varphi)$ be a Cantor system and let a group $G \leq \operatorname{Homeo}(X)$ either be $\mathfrak{F}(\varphi)$ resp. $\mathfrak{T}(\varphi)$ resp. $\mathfrak{T}(\varphi)_{x}$\footnote{These groups are termed \emph{of class F} in \cite{gps99}, we reserve this terminology for a more general definition by Matui.\\} for some $x \in X$. Let $U,V$ be clopen subsets of $X$. Then the following are equivalent:
		\begin{enumerate}[(i)]
			\item $[\mathbf{1}_U]=[\mathbf{1}_V]$ in $K^0(X,\varphi)$ resp. $K^0(X,\varphi)/\operatorname{Inf}(K^0(X,\varphi))$ resp. $K^0(X,\mathfrak{T}(\varphi)_{x})$.
			
			\item There exists an element $g \in G$ such that $g(U)=V$.
			
			\item There exists an element $g \in G$ such that $g(U)=V$, $g^2=\operatorname{Id}$ and $g|_{(U \cup V)^{\mathrm{C}}}=\operatorname{Id}_{(U \cup V)^{\mathrm{C}}}$.
		\end{enumerate}
		If the action of $G$ on $X$ is minimal, for every clopen set $U \subseteq X$ and for every $x \in U$ there exists a $g \in G$ such that $g^2=\operatorname{Id}$, $g(x) \neq x$ and $g|_{U^\mathrm{C}}=\operatorname{Id}_{U^\mathrm{C}}$.
	\end{lem}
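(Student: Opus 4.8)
The plan is to run the cyclic chain $(iii)\Rightarrow(ii)\Rightarrow(i)\Rightarrow(iii)$ and to treat the three choices of $G$ in parallel, keeping in mind the organizing principle that a \emph{larger} group forces a \emph{coarser} invariant: the full group $\mathfrak{F}(\varphi)$ controls clopen sets only up to the measure-theoretic quotient $K^0(X,\varphi)/\operatorname{Inf}$, the topological full group $\mathfrak{T}(\varphi)$ controls them in the finer group $K^0(X,\varphi)$ itself, and $\mathfrak{T}(\varphi)_{\{x\}}$ through the dimension group $K^0(X,\mathfrak{T}(\varphi)_{\{x\}})$ of its associated AF-system. The implication $(iii)\Rightarrow(ii)$ is immediate, as the element produced in $(iii)$ already witnesses $(ii)$. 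Before anything else I would reduce to the disjoint case $U\cap V=\emptyset$: the element sought may be taken to fix $U\cap V$ pointwise, so it suffices to move $U\setminus V$ onto $V\setminus U$, and subtracting $[\mathbf{1}_{U\cap V}]$ shows the relevant class equality is inherited by this disjoint pair.

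For $(ii)\Rightarrow(i)$, suppose $\gamma(U)=V$ with $\gamma\in\mathfrak{T}(\varphi)$. By Proposition~\ref{prop: count} there is a finite clopen partition $X=\bigsqcup_i X_i^\gamma$ with $\gamma|_{X_i^\gamma}=\varphi^i$, whence $\mathbf{1}_V-\mathbf{1}_U=\sum_i\big(\mathbf{1}_{U\cap X_i^\gamma}\circ\varphi^{-i}-\mathbf{1}_{U\cap X_i^\gamma}\big)$, and each summand is a telescoping sum of coboundaries $h-h\circ\varphi^{-1}$ exactly as in the proof of Lemma~\ref{lem: 2-div.}, so it vanishes in $K^0(X,\varphi)$ and $[\mathbf{1}_U]=[\mathbf{1}_V]$. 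For $\mathfrak{F}(\varphi)$ the cocycle is merely measurable, the fibres $X_i^\gamma$ are not clopen, and this computation is unavailable; instead one uses that every element of the full group preserves each $\varphi$-invariant measure, so $\mu(U)=\mu(V)$ for all $\mu\in M_\varphi$, which by the state description of Theorem~\ref{thm: tracstat} is precisely equality in $K^0(X,\varphi)/\operatorname{Inf}$. For $\mathfrak{T}(\varphi)_{\{x\}}$ the element lies in the ample group of the AF-system $(X,\mathfrak{T}(\varphi)_{\{x\}})$ (Corollary~\ref{cor: locally finite subgroups}), and $\gamma(U)=V$ realizes the equidecomposability that defines equality in the dimension group $K^0(X,\mathfrak{T}(\varphi)_{\{x\}})$ under Krieger's classification (Theorem~\ref{thm: krieg}).

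The real work is $(i)\Rightarrow(iii)$, and this is where I expect the main obstacle. In the $\mathfrak{T}(\varphi)$ case, $[\mathbf{1}_U]=[\mathbf{1}_V]$ with $U\cap V=\emptyset$ gives $\mathbf{1}_U-\mathbf{1}_V=h-h\circ\varphi^{-1}$ for some $h\in C(X,\mathbb{Z})$; I would choose a Kakutani--Rokhlin partition refining $\{U,V,X\setminus(U\cup V)\}$ of height exceeding $\|h\|_\infty$, so that $U$ and $V$ become unions of atoms and the coboundary relation matches the $U$-atoms with the $V$-atoms \emph{within individual towers}. Defining, tower by tower, the transposition $a\mapsto\varphi^{k}(a)$ that implements this matching yields a continuous involution supported on $U\cup V$, i.e.\ an element of $\mathfrak{T}(\varphi)$. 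In the $\mathfrak{F}(\varphi)$ case only $\mu(U)=\mu(V)$ is available, which is too weak for a continuous match; here I would invoke Glasner--Weiss (Lemma~\ref{lem: gw}) to move, for each clopen $U'\subsetneq U$ with $\mu(U')<\mu(V)$ strictly, a piece of $U'$ into $V$ by an involution, and then exhaust $U$ and $V$ by a back-and-forth of such strict-inequality steps to assemble a measurable (but generally non-continuous) involution in $\mathfrak{F}(\varphi)$. The $\mathfrak{T}(\varphi)_{\{x\}}$ case parallels $\mathfrak{T}(\varphi)$ but inside the AF groupoid, the matching being realized by tower-preserving ample-group transpositions. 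The crux is precisely passing from the abstract equality (a $K^0$-coboundary, a measure coincidence, or an AF-equidecomposition) to a concrete involution of the prescribed support, and in keeping the \emph{continuous} versus \emph{measurable} distinction aligned with $\mathfrak{T}$ versus $\mathfrak{F}$.

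Finally, for the minimality clause: given a clopen $U$ and $x\in U$, density of the forward (and backward) orbit together with aperiodicity (Remark~\ref{rem: orbdens}) provides a $k\neq 0$ with $\varphi^{k}(x)\in U$, and a clopen $A\ni x$ small enough that $A,\varphi(A),\dots,\varphi^{k}(A)$ are pairwise disjoint with $A\cup\varphi^{k}(A)\subseteq U$. The transposition $g$ swapping $A$ and $\varphi^{k}(A)$ by $\varphi^{\pm k}$ and fixing everything else is a continuous involution, hence lies in $\mathfrak{T}(\varphi)\subseteq\mathfrak{F}(\varphi)$, satisfies $g|_{X\setminus U}=\operatorname{Id}$ and $g(x)=\varphi^{k}(x)\neq x$. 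For $G=\mathfrak{T}(\varphi)_{\{x\}}$ I would instead take $A$ to be the base atom of a tower of a Kakutani--Rokhlin partition around $x$ and $\varphi^{k}(A)$ a second atom of the \emph{same} tower lying in $U$, so that $g$ is an $n$-permutation and therefore preserves the forward orbit of $x$ (Corollary~\ref{cor: locally finite subgroups}), placing it in $\mathfrak{T}(\varphi)_{\{x\}}$. Equivalently, this clause follows from $(i)\Rightarrow(iii)$ applied to any disjoint clopen $A,B\subseteq U$ with $x\in A$ and $[\mathbf{1}_A]=[\mathbf{1}_B]$, whose existence is guaranteed by the $2$-divisibility furnished by Lemma~\ref{lem: 2-div.}.
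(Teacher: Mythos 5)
Your architecture coincides with the route the paper itself indicates (it gives only a remark, deferring to \cite{gps99}): the substantive content is passing from the $K$-theoretic equality to a concrete involution, handled by Glasner--Weiss patching for $\mathfrak{F}(\varphi)$ and by Kakutani--Rokhlin/Bratteli--Vershik towers for $\mathfrak{T}(\varphi)$ and $\mathfrak{T}(\varphi)_{\{x\}}$. Your $(ii)\Rightarrow(i)$ computations, the reduction to the disjoint case, and the minimality clause are all fine.

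There is, however, one step that fails as written, and it sits at the crux of $(i)\Rightarrow(iii)$ for $\mathfrak{T}(\varphi)$: a Kakutani--Rokhlin partition refining $\{U,V,(U\cup V)^{\mathrm{C}}\}$ \emph{of height exceeding $\|h\|_\infty$} does \emph{not} force the number of $U$-atoms and the number of $V$-atoms to agree within each tower. Summing $\mathbf{1}_U-\mathbf{1}_V=h-h\circ\varphi^{-1}$ over the tower through a base point $y$ of height $h_i$, the sum telescopes to $h(\varphi^{h_i-1}(y))-h(\varphi^{-1}(y))$, a difference of two values of $h$ taken at points of the roof $R(\mathcal{A})=\varphi^{-1}(B(\mathcal{A}))$; this is bounded by $2\|h\|_\infty$ but has no reason to vanish, however tall the towers are. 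The hypothesis you actually need is that the base (equivalently the roof) be small enough that the locally constant function $h$ is constant on it; taking the partition from a nested sequence around a point supplies this, the telescoped difference then vanishes, the tower-wise counts agree, and your tower-by-tower transposition goes through --- landing automatically in $\mathcal{P}(\mathcal{A}_n)\subseteq\mathfrak{T}(\varphi)_{\{x\}}$, which settles the third case at the same time via Theorem~\ref{thm: cantor-AF}. A smaller point: in the $\mathfrak{F}(\varphi)$ case the assembled element must itself be a homeomorphism --- only its orbit cocycle is allowed to be discontinuous --- so the back-and-forth exhaustion has to be run with the diameters of the successive pieces and of their images tending to $0$; calling the output a ``measurable involution'' understates what must be verified there.
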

	\begin{rem}
		The only non-trivial implication is $(i) \Rightarrow (ii)$. In the case of $G=\mathfrak{F}(\varphi)$ it follows by "patching together" elements in $\mathfrak{T}(\varphi)$ that arise as in Lemma~\ref{lem: gw}. In the other cases it follows from the Bratteli-Vershik representations of the respective systems.
	\end{rem}
	
	\begin{thm}[\cite{gps99}, Theorem 4.2]\label{thm: spat}
		Let $(X_1,\varphi_1)$ and $(X_2,\varphi_2)$ be minimal Cantor systems and let $G_i$ either be $\mathfrak{F}(\varphi_i)$, $\mathfrak{T}(\varphi_i)$, or $\mathfrak{T}(\varphi_i)_{x_i}$ for $i \in\{1,2 \}$ and $x_i \in X_i$. Then any group isomorphism $\alpha \colon G_1 \to G_2$ is spatial.
	\end{thm}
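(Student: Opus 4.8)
The plan is to prove spatiality by reconstructing, purely from the abstract group structure of $G_i$, the Boolean algebra of clopen subsets of $X_i$ together with its natural action, and then to show that $\alpha$ intertwines these reconstructions. The three admissible choices for $G_i$ (namely $\mathfrak{F}(\varphi_i)$, $\mathfrak{T}(\varphi_i)$ and $\mathfrak{T}(\varphi_i)_{x_i}$) will be treated uniformly, since the only feature of $G_i$ that the argument exploits is the abundance of involutions supplied by Lemma~\ref{lem: abundinvo}: every clopen set arises as a support, supports are clopen (Proposition~\ref{prop: count}), and for each clopen $U$ and each $x \in U$ there is an involution fixing $U^{\mathrm C}$ pointwise and moving $x$. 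In particular, distinct clopen sets are separated by involution supports, a fact that will make the reconstruction faithful.

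First I would recover the clopen algebra through the local subgroups $G_{i,U} = \{g \in G_i : \supp(g) \subseteq U\}$ of Definition~\ref{defi: spatial homeomorphism + local subgroups}. Using minimality and the involutions of Lemma~\ref{lem: abundinvo} one checks the intrinsic relations $G_{i,U} \cap G_{i,V} = G_{i,U \cap V}$, $\langle G_{i,U}, G_{i,V}\rangle = G_{i,U \cup V}$ and, crucially, $C_{G_i}(G_{i,U}) = G_{i, U^{\mathrm C}}$; the last identity holds because any element commuting with every involution supported in $U$ must fix $U$ setwise and, by richness of the action on $U$, pointwise. Consequently $U \mapsto G_{i,U}$ is an isomorphism from $\mathcal{B}_{X_i}$ onto a lattice of \emph{bicommutant-closed} subgroups, $G_{i,U} = C_{G_i}(C_{G_i}(G_{i,U}))$, and it is injective by the separation property above. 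Since $\alpha$ preserves centralizers and double centralizers, it carries this lattice into the corresponding lattice for $G_2$; identifying its image with local subgroups of $G_2$ yields a Boolean-algebra isomorphism $\Theta\colon \mathcal{B}_{X_1} \to \mathcal{B}_{X_2}$ with $\alpha(G_{1,U}) = G_{2,\Theta(U)}$. By Stone duality (Remark~\ref{rem: cant}) $\Theta$ is induced by a homeomorphism $h\colon X_1 \to X_2$, i.e. $h(U) = \Theta(U)$ for all clopen $U$.

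It then remains to verify $\alpha(g) = h \circ g \circ h^{-1}$ for all $g \in G_1$. For any clopen $U$ the conjugation formula $g\,G_{1,U}\,g^{-1} = G_{1,g(U)}$ holds at the level of local subgroups; applying $\alpha$ and using $\alpha(G_{1,U}) = G_{2,\Theta(U)}$ gives $G_{2,\Theta(g(U))} = \alpha(g)\,G_{2,\Theta(U)}\,\alpha(g)^{-1} = G_{2,\alpha(g)(\Theta(U))}$. Injectivity of $V \mapsto G_{2,V}$ forces $\Theta(g(U)) = \alpha(g)(\Theta(U))$, that is $h(g(U)) = \alpha(g)(h(U))$ for every clopen $U$. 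Because the clopen sets separate points of the Cantor spaces, this equality of set maps upgrades to $h \circ g = \alpha(g) \circ h$ as homeomorphisms, which is exactly the asserted spatiality.

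The hard part will be the \emph{intrinsic}, group-theoretic characterization underlying the second paragraph: proving that $\alpha$ sends local subgroups to local subgroups. Being bicommutant-closed is automatically $\alpha$-invariant, but not every bicommutant-closed subgroup of $G_i$ is a local subgroup, so one must isolate an additional group-theoretic signature of ``locality'' (detecting disjointness of supports and the germ-level behaviour of involutions near a point) and show it is preserved by $\alpha$. This is the rigidity core of the theorem; it is precisely here that Lemma~\ref{lem: abundinvo} and the minimality of the systems do the decisive work, and it is also the step that must be phrased so as to cover $\mathfrak{F}$, $\mathfrak{T}$ and the stabilizers $\mathfrak{T}(\varphi)_x$ simultaneously despite their differing $K$-theoretic invariants.
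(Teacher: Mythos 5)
Your plan follows essentially the same route as the paper's sketch: encode clopen sets by local subgroups $G_{i,U}$, show that ``being a local subgroup'' is an algebraic property preserved by $\alpha$ (your centralizer identity $C_{G_i}(G_{i,U})=G_{i,U^{\mathrm C}}$ and the bicommutant closure are correct first steps toward this, resting on Lemma~\ref{lem: abundinvo} and minimality), pass through Stone duality to a homeomorphism, and use the conjugation formula $gG_{1,U}g^{-1}=G_{1,g(U)}$ together with injectivity of $U\mapsto G_{2,U}$ to get the intertwining. Like the paper, you defer the genuinely hard step --- the full intrinsic characterization of local subgroups, which in \cite{gps99} is transferred from Dye's work --- so the proposal is correct as a proof outline and matches the paper's argument.
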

	
	By Stone duality homeomorphisms between the Cantor spaces $X_1$ and $X_2$ are in 1-1 correspondence with Boolean isomorphisms between the associated Boolean algebras of clopen sets $\operatorname{CO}(X_1)$ and $\operatorname{CO}(X_2)$, thus it is enough to construct an Boolean isomorphism $a \colon \operatorname{CO}(X_1) \to \operatorname{CO}(X_2)$ such that $\alpha(g)a = a g$ for all $g \in G_1$. The key steps are that local subgroups characterize clopen sets and that local subgroups have an algebraic characterization. Then for every $U \in \operatorname{CO}(X_1)$ the group $\alpha((G_1)_U)$ proves to be a local subgroup of $\Gamma_2$ and thus corresponds to a clopen subset $a(U) \subseteq \operatorname{CO}(X_2)$ setting up a bijection $a \colon \operatorname{CO}(X_1) \to \operatorname{CO}(X_2)$. The proof is accomplished by showing $a$ preserves intersections and satisfies	$\alpha(g)a = a g$ for all $g \in G_1$. This theorem is in the vein of Dye's work, parts of the algebraic characterization of local subgroups are directly transferred from \cite{dye63}.	
	Combining Theorem~\ref{thm: spat} with the classification of Cantor systems obtained in \cite{gps95} (see Theorem~\ref{thm: orbiequi} resp. Theorem~\ref{thm: flipconju} resp. Remark~\ref{rem: thm 2.3}) shows that $\mathfrak{F}(\varphi)$, $\mathfrak{T}(\varphi)$ and $\mathfrak{T}(\varphi)_{\{x\}}$ are invariants for the respective types of equivalence:
	
	\begin{cor}[\cite{gps99}, Corollary 4.6]
		Let $(X_1,\varphi_1)$ and $(X_2,\varphi_2)$ be minimal Cantor systems. The following are equivalent:
		\begin{enumerate}[(i)]
			\item The systems $(X_1,\varphi_1)$ and $(X_2,\varphi_2)$ are orbit equivalent.
			
			\item The groups $\mathfrak{F}(\varphi_1)$ and $\mathfrak{F}(\varphi_2)$ are isomorphic.
		\end{enumerate}
	\end{cor}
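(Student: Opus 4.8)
The plan is to exploit two facts already at our disposal: the spatial realization of abstract isomorphisms between full groups (Theorem~\ref{thm: spat}) and the elementary observation that the $\mathfrak{F}(\varphi)$-orbits coincide with the $\varphi$-orbits. The whole equivalence then reduces to transporting orbit structure along a single homeomorphism, so that neither the dimension-group classification nor any further dynamical input is needed beyond what is packaged into Theorem~\ref{thm: spat}.

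First I would treat $(i)\Rightarrow(ii)$, which is purely formal. Given an orbit map $F\colon X_1 \to X_2$, define $\alpha\colon \mathfrak{F}(\varphi_1) \to \operatorname{Homeo}(X_2)$ by $\alpha(\gamma):=F\gamma F^{-1}$. For $\gamma \in \mathfrak{F}(\varphi_1)$ and $y=F(x)$ one has $\alpha(\gamma)(y)=F(\gamma(x))$, and since $\gamma(x)\in \operatorname{Orb}_{\varphi_1}(x)$ while $F(\operatorname{Orb}_{\varphi_1}(x))=\operatorname{Orb}_{\varphi_2}(F(x))$, the image $\alpha(\gamma)(y)$ lies in $\operatorname{Orb}_{\varphi_2}(y)$; hence $\alpha(\gamma)\in\mathfrak{F}(\varphi_2)$. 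As conjugation by a homeomorphism, $\alpha$ is an injective homomorphism, and the same construction applied to $F^{-1}$ furnishes its inverse, so $\alpha$ is an isomorphism onto $\mathfrak{F}(\varphi_2)$.

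For $(ii)\Rightarrow(i)$ I would invoke Theorem~\ref{thm: spat}: any group isomorphism $\alpha\colon \mathfrak{F}(\varphi_1)\to \mathfrak{F}(\varphi_2)$ is spatial, so there is a homeomorphism $F\colon X_1\to X_2$ with $\alpha(\gamma)=F\gamma F^{-1}$ for all $\gamma$. It remains to check that $F$ is an orbit map. Since $\alpha$ is onto, $F\mathfrak{F}(\varphi_1)F^{-1}=\mathfrak{F}(\varphi_2)$, and using that the $\mathfrak{F}(\varphi_i)$-orbits are exactly the $\varphi_i$-orbits, I would compute
\[
\operatorname{Orb}_{\varphi_2}(F(x)) = \{\, h(F(x)) : h \in \mathfrak{F}(\varphi_2) \,\} = \{\, F(g(x)) : g \in \mathfrak{F}(\varphi_1) \,\} = F\bigl(\operatorname{Orb}_{\varphi_1}(x)\bigr)
\]
for every $x\in X_1$. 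This is precisely the defining property of an orbit map, so $(X_1,\varphi_1)$ and $(X_2,\varphi_2)$ are orbit equivalent.

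The main obstacle is entirely absorbed into Theorem~\ref{thm: spat}: spatiality is the nontrivial dynamical input (it is where the Bratteli--Vershik representations and Lemma~\ref{lem: gw} enter). Once spatiality is granted, the argument is bookkeeping of orbits and needs neither the dimension-group classification of Theorem~\ref{thm: orbiequi} nor any freeness beyond the minimality already assumed. One should, however, be careful to justify the orbit identity $\operatorname{Orb}_{\mathfrak{F}(\varphi)}(x)=\operatorname{Orb}_{\varphi}(x)$ that both directions rely on: the inclusion $\subseteq$ is immediate from the definition of the full group as those homeomorphisms $\gamma$ with $\gamma(x)\in\operatorname{Orb}_\varphi(x)$ for all $x$, and the reverse inclusion follows since $\varphi\in\mathfrak{T}(\varphi)\subseteq\mathfrak{F}(\varphi)$, so all powers $\varphi^n$ act within $\mathfrak{F}(\varphi)$.
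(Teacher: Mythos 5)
Your proof is correct, and both directions are sound: the forward direction is the formal conjugation argument, and the converse correctly combines Theorem~\ref{thm: spat} with the observation (recorded in the paper right after Definition~\ref{defi: tfg1}) that $\mathfrak{F}(\varphi)$ and $\varphi$ have the same orbits. The place where you diverge from the paper is the converse: the paper derives all three corollaries in one breath by ``combining Theorem~\ref{thm: spat} with the classification of Cantor systems obtained in \cite{gps95}'', i.e.\ it nominally routes $(ii)\Rightarrow(i)$ through Theorem~\ref{thm: orbiequi}, whereas you convert the spatial homeomorphism into an orbit map directly via the computation $\operatorname{Orb}_{\varphi_2}(F(x))=\mathfrak{F}(\varphi_2)\cdot F(x)=F\bigl(\mathfrak{F}(\varphi_1)\cdot x\bigr)=F\bigl(\operatorname{Orb}_{\varphi_1}(x)\bigr)$. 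Your shortcut is legitimate and is, in fact, the cleaner argument for this particular corollary: unlike Corollaries~\ref{cor: flipconj1} and \ref{cor: flipconj2}, where the spatial homeomorphism only yields an orbit map with continuous cocycles and one genuinely needs Theorem~\ref{thm: flipconju} to upgrade that to flip conjugacy, here the full group already ``sees'' the entire orbit relation, so no dimension-group input is required beyond what is buried inside the proof of Theorem~\ref{thm: spat} itself (where Lemma~\ref{lem: gw} and hence the invariant measures do enter). Your closing justification of the orbit identity, using $\varphi\in\mathfrak{T}(\varphi)\subseteq\mathfrak{F}(\varphi)$ for one inclusion and the definition of the orbit cocycle for the other, is exactly the right level of care.
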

	
	\begin{cor}[\cite{gps99}, Corollary 4.4]\label{cor: flipconj1}
		Let $(X_1,\varphi_1)$ and $(X_2,\varphi_2)$ be minimal Cantor systems. The following are equivalent:
		\begin{enumerate}[(i)]
			\item The systems $(X_1,\varphi_1)$ and $(X_2,\varphi_2)$ are flip conjugate.
			
			\item The groups $\mathfrak{T}(\varphi_1)$ and $\mathfrak{T}(\varphi_2)$ are isomorphic.
		\end{enumerate}
	\end{cor}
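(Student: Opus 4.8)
The plan is to deduce this from the spatiality result Theorem~\ref{thm: spat} together with the classification of flip conjugacy recorded in Theorem~\ref{thm: flipconju}, treating the two implications separately. Throughout I will use the concrete description of elements of $\mathfrak{T}(\varphi)$ from Proposition~\ref{prop: count}(ii), namely that $\gamma \in \mathfrak{T}(\varphi)$ if and only if there is a finite clopen partition $X = \bigsqcup_{i} X_i^\gamma$ with $\gamma|_{X_i^\gamma} = \varphi^i|_{X_i^\gamma}$, and the observation that $\mathfrak{T}(\varphi_2) = \mathfrak{T}(\varphi_2^{-1})$, since $\varphi_2$ and $\varphi_2^{-1}$ generate the same $\mathbb{Z}$-action and the (topological) full group depends only on this action.

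For the easy implication $(i) \Rightarrow (ii)$, I would start from a flip conjugacy: a homeomorphism $h\colon X_1 \to X_2$ conjugating $\varphi_1$ either to $\varphi_2$ or to $\varphi_2^{-1}$, so that $h\varphi_1^i h^{-1} = \varphi_2^{\pm i}$ for all $i \in \mathbb{Z}$. Conjugation by $h$ is a group isomorphism $\operatorname{Homeo}(X_1) \to \operatorname{Homeo}(X_2)$, and applying Proposition~\ref{prop: count}(ii) to $\gamma \in \mathfrak{T}(\varphi_1)$ shows that $h\gamma h^{-1}$ equals $\varphi_2^{\pm i}$ on the clopen set $h(X_i^\gamma)$, hence lies in $\mathfrak{T}(\varphi_2) = \mathfrak{T}(\varphi_2^{-1})$. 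Thus conjugation by $h$ restricts to an isomorphism $\mathfrak{T}(\varphi_1) \cong \mathfrak{T}(\varphi_2)$.

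The substantial implication is $(ii) \Rightarrow (i)$. Given an abstract isomorphism $\alpha\colon \mathfrak{T}(\varphi_1) \to \mathfrak{T}(\varphi_2)$, I would first invoke Theorem~\ref{thm: spat} to realize it spatially, obtaining a homeomorphism $h\colon X_1 \to X_2$ with $\alpha(\gamma) = h\gamma h^{-1}$ for all $\gamma$. Since $\mathfrak{T}(\varphi_i)$ has the same orbits as $\varphi_i$, conjugation by $h$ gives $h(\operatorname{Orb}_{\varphi_1}(x)) = \operatorname{Orb}_{\varphi_2}(h(x))$, so $h$ is an orbit map. It then remains to verify that the associated orbit cocycles $n,m$ are continuous. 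For this I would feed the generators through Proposition~\ref{prop: count}(ii): the element $\alpha(\varphi_1) = h\varphi_1 h^{-1} \in \mathfrak{T}(\varphi_2)$ admits a finite clopen partition $X_2 = \bigsqcup_j Y_j$ with $h\varphi_1 h^{-1}|_{Y_j} = \varphi_2^j|_{Y_j}$, so that $h\varphi_1(x) = \varphi_2^j h(x)$ for $x \in h^{-1}(Y_j)$; hence $n$ is constant equal to $j$ on the clopen set $h^{-1}(Y_j)$ and is therefore continuous. Symmetrically, $\alpha^{-1}(\varphi_2) = h^{-1}\varphi_2 h \in \mathfrak{T}(\varphi_1)$ yields continuity of $m$. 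With both cocycles continuous, Theorem~\ref{thm: flipconju}\,$[(ii)\Rightarrow(i)]$ delivers flip conjugacy.

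The main obstacle is the spatial realization Theorem~\ref{thm: spat}, whose proof is the genuine technical core and which I am taking as given; once it is granted, the remaining work is purely organizational, translating the abstract isomorphism into local-constancy of the cocycles via Proposition~\ref{prop: count}(ii) and appealing to the \cite{gps95} classification. The one subtlety worth stating explicitly is that the \emph{same} homeomorphism $h$ serves both as the intertwiner of the group isomorphism and as the orbit map entering Theorem~\ref{thm: flipconju}, so that no independent homeomorphism has to be constructed.
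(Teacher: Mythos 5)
Your proposal is correct and follows exactly the route the paper takes: it derives the corollary by combining the spatial realization result (Theorem~\ref{thm: spat}) with the characterization of flip conjugacy via an orbit map with continuous cocycles (Theorem~\ref{thm: flipconju}). The paper leaves the bookkeeping implicit, and your write-up supplies it correctly, including the key points that $h$ is automatically an orbit map because $\mathfrak{T}(\varphi_i)$ and $\varphi_i$ share orbits and that the cocycles' continuity is read off from the clopen partitions attached to $\alpha(\varphi_1)$ and $\alpha^{-1}(\varphi_2)$.
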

	
	\begin{cor}[\cite{gps99}, Corollary 4.11]
		Let $(X_1,\varphi_1)$ and $(X_2,\varphi_2)$ be minimal Cantor systems. The following are equivalent:
		\begin{enumerate}[(i)]
			\item The systems $(X_1,\varphi_1)$ and $(X_2,\varphi_2)$ are strong orbit equivalent.
			
			\item The groups $\mathfrak{T}(\varphi_1)_{x_1}$ and $\mathfrak{T}(\varphi_2)_{x_2}$ are isomorphic for all $x_1 \in X_1,x_2 \in X_2$.
		\end{enumerate}
	\end{cor}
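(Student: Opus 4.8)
The plan is to combine the spatial realisation theorem (Theorem~\ref{thm: spat}) with the dimension-group classification of strong orbit equivalence from \cite{gps95} and Krieger's classification of ample groups (Theorem~\ref{thm: krieg}), using as a bridge the fact that each stabiliser $\mathfrak{T}(\varphi)_{\{x\}}$ is an ample group whose dimension group recovers $K^0(X,\varphi)$. First I would dispose of the universal quantifier: by Corollary~\ref{cor: locally finite subgroups}(iii) all stabilisers $\mathfrak{T}(\varphi_i)_{\{x_i\}}$ of a fixed system are mutually isomorphic, so it suffices to prove the equivalence of (i) with the existence of an isomorphism for one (equivalently any) choice of $x_1 \in X_1$ and $x_2 \in X_2$. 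Next I record the key identification: as noted in the text, $(X,\mathfrak{T}(\varphi)_{\{x\}})$ is a minimal AF-system, so $\mathfrak{T}(\varphi)_{\{x\}}$ is an ample group in the sense of Definition~\ref{defi: ample group}, and its dimension group satisfies $K^0(X,\mathfrak{T}(\varphi)_{\{x\}}) := K_0(C(X)\rtimes \mathfrak{T}(\varphi)_{\{x\}}) \cong K_0(C(X)\rtimes_\varphi \mathbb{Z}) = K^0(X,\varphi)$ as scaled ordered groups, the middle isomorphism coming from the unital K-theory embedding of Theorem~\ref{thm: cantor-AF}.

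For the implication (i) $\Rightarrow$ (ii), strong orbit equivalence gives, by [\cite{gps95}, Theorem~2.1], an isomorphism $K^0(X_1,\varphi_1) \cong K^0(X_2,\varphi_2)$ of scaled ordered groups. Transporting along the identification above yields $K^0(X_1,\mathfrak{T}(\varphi_1)_{\{x_1\}}) \cong K^0(X_2,\mathfrak{T}(\varphi_2)_{\{x_2\}})$ as scaled ordered groups, whence Krieger's theorem (Theorem~\ref{thm: krieg}) produces a spatial --- in particular an abstract group --- isomorphism $\mathfrak{T}(\varphi_1)_{\{x_1\}} \cong \mathfrak{T}(\varphi_2)_{\{x_2\}}$.

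For (ii) $\Rightarrow$ (i), given an abstract isomorphism $\alpha$, Theorem~\ref{thm: spat} makes it spatial, i.e. $\alpha(g) = h \circ g \circ h^{-1}$ for a homeomorphism $h\colon X_1 \to X_2$. Conjugation by $h$ is exactly a spatial isomorphism of the ample groups $\mathfrak{T}(\varphi_1)_{\{x_1\}}$ and $\mathfrak{T}(\varphi_2)_{\{x_2\}}$, so the converse direction of Theorem~\ref{thm: krieg} returns an isomorphism of their dimension groups as scaled ordered groups. By the identification this equals $K^0(X_1,\varphi_1)\cong K^0(X_2,\varphi_2)$, and [\cite{gps95}, Theorem~2.1] then yields strong orbit equivalence.

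The content really lies in the already-established inputs, so the step I expect to demand the most care is the bridge assembled in the first and third paragraphs: verifying that the homeomorphism supplied by Theorem~\ref{thm: spat} genuinely intertwines the ample-group structures so that Krieger's invariant applies, and that the identification $K^0(X,\mathfrak{T}(\varphi)_{\{x\}})\cong K^0(X,\varphi)$ is an isomorphism of \emph{scaled ordered} groups rather than merely of abelian groups. The main obstacle is thus conceptual rather than computational --- ensuring all three classification results are phrased over the same invariant, the scaled ordered dimension group; once that is pinned down the equivalence is a formal chain.
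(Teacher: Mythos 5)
Your proposal is correct and follows essentially the route the paper intends: it explicitly states that the corollary is obtained by combining the spatial realization theorem (Theorem~\ref{thm: spat}) with the classification results of \cite{gps95}, and your chain through Krieger's Theorem~\ref{thm: krieg}, the identification $K^0(X,\mathfrak{T}(\varphi)_{\{x\}})\cong K^0(X,\varphi)$ via Theorem~\ref{thm: cantor-AF} (as in Remark~\ref{rem: af sign}), and \cite{gps95}, Theorem~2.1 is precisely the natural filling-in of that sketch. Your flagged points of care --- that the isomorphism of Theorem~\ref{thm: cantor-AF} respects the scale because the embedding is unital, and that all invariants are compared as scaled ordered groups --- are exactly the right ones.
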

	
	\cite{gps99} concludes by showing that isomorphisms between index map kernels are spatial, again by the algebraic characterization of local subgroups, which implies that $\mathfrak{T}(\varphi)_0$ is a complete invariant for flip-conjugacy.  
	\begin{cor}[\cite{gps99}, Corollary 5.18]\label{cor: flipconj2}
		Let $(X_1,\varphi_1)$ and $(X_2,\varphi_2)$ be minimal Cantor systems. The following are equivalent:
		\begin{enumerate}[(i)]
			\item The systems $(X_1,\varphi_1)$ and $(X_2,\varphi_2)$ are flip conjugate.
			
			\item The groups $\mathfrak{T}(\varphi_1)_0$ and $\mathfrak{T}(\varphi_2)_0$ are isomorphic.
		\end{enumerate}
	\end{cor}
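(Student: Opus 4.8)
The plan is to prove both implications, with essentially all the content in $(ii)\Rightarrow(i)$; the implication $(i)\Rightarrow(ii)$ and the final passage rest on machinery already developed.

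For $(i)\Rightarrow(ii)$ I would start from a flip conjugacy, i.e.\ a homeomorphism $h\colon X_1 \to X_2$ with $h\varphi_1 h^{-1} = \varphi_2^{\epsilon}$ for some $\epsilon \in \{1,-1\}$. Conjugation $g \mapsto hgh^{-1}$ is an isomorphism $\mathfrak{T}(\varphi_1) \to \mathfrak{T}(\varphi_2^{\epsilon})$, and $\mathfrak{T}(\varphi_2^{\epsilon})=\mathfrak{T}(\varphi_2)$ as groups of homeomorphisms, since continuity of an orbit cocycle with respect to $\varphi_2$ is equivalent to continuity with respect to $\varphi_2^{-1}$. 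Using the description $I_\mu(\gamma)=\int_X f_\gamma\,\mathrm{d}\mu$ of Proposition~\ref{prop: index} together with the facts that passing from $\varphi_2$ to $\varphi_2^{-1}$ negates every orbit cocycle while preserving $M_{\varphi_2}$, the index map is carried onto $\epsilon$ times the index map; in particular the two kernels correspond, so $h\,\mathfrak{T}(\varphi_1)_0\,h^{-1} = \mathfrak{T}(\varphi_2)_0$ and the restriction of the conjugation is the desired isomorphism.

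For $(ii)\Rightarrow(i)$ the strategy mirrors the proof of Theorem~\ref{thm: spat} and then feeds into Corollary~\ref{cor: flipconj1}. First I would record that $\mathfrak{T}(\varphi)_0$ acts minimally on $X$ with exactly the $\varphi$-orbits: by Corollary~\ref{cor: transpositions are in the index kernel} every transposition $T_B$ with $s(B)\cap r(B)=\emptyset$ lies in $\mathfrak{T}(\varphi)_0$, and such involutions already connect any point of $X$ to any other point on its $\varphi$-orbit, while Proposition~\ref{prop: mat.fac} exhibits $\mathfrak{T}(\varphi)_0 = \mathfrak{T}(\varphi)_{\{x\}}\cdot\mathfrak{T}(\varphi)_{\{y\}}$. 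The central step is then to extend Theorem~\ref{thm: spat} to the index kernel: any abstract isomorphism $\beta\colon \mathfrak{T}(\varphi_1)_0 \to \mathfrak{T}(\varphi_2)_0$ is spatial. Here the abundance of involutions in $\mathfrak{T}(\varphi)_0$ furnished by Corollary~\ref{cor: transpositions are in the index kernel} plays exactly the role that Lemma~\ref{lem: abundinvo} plays for $\mathfrak{T}(\varphi)$, allowing a purely algebraic characterisation of the local subgroups $(\mathfrak{T}(\varphi)_0)_U$. Consequently $\beta$ sends local subgroups to local subgroups, which produces a Boolean isomorphism $a\colon \operatorname{CO}(X_1)\to\operatorname{CO}(X_2)$ with $\beta(g)\,a = a\,g$, hence by Stone duality a homeomorphism $h\colon X_1 \to X_2$ with $\beta(g) = hgh^{-1}$ for all $g\in\mathfrak{T}(\varphi_1)_0$. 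This spatiality step is the main obstacle: $\mathfrak{T}(\varphi)_0$ is a proper subgroup of the topological full group, so one must verify it still contains enough involutions and local subgroups to pin down the clopen algebra algebraically.

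To finish, I would upgrade spatiality to flip conjugacy. Since $h$ conjugates $\mathfrak{T}(\varphi_1)_0$ onto $\mathfrak{T}(\varphi_2)_0$ and these groups realise the dynamical orbits, $h$ is an orbit equivalence, and $\psi := h\varphi_1 h^{-1}\in\mathfrak{F}(\varphi_2)$ normalises $\mathfrak{T}(\varphi_2)_0$ (as $\varphi_1$ normalises the normal subgroup $\mathfrak{T}(\varphi_1)_0\trianglelefteq\mathfrak{T}(\varphi_1)$). Testing this conjugation-invariance against the explicit involutions $T_B$ forces the orbit cocycle of $\psi$ with respect to $\varphi_2$ to be locally constant, i.e.\ $\psi\in\mathfrak{T}(\varphi_2)$; the symmetric argument applied to $h^{-1}\varphi_2 h$ gives $\varphi_2\in\mathfrak{T}(\psi)$. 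These two inclusions yield $\mathfrak{T}(\psi)=\mathfrak{T}(\varphi_2)$, so $h$ conjugates $\mathfrak{T}(\varphi_1)=\langle\mathfrak{T}(\varphi_1)_0,\varphi_1\rangle$ isomorphically onto $\mathfrak{T}(\varphi_2)$, and Corollary~\ref{cor: flipconj1} delivers flip conjugacy of $(X_1,\varphi_1)$ and $(X_2,\varphi_2)$. The secondary difficulty is exactly this continuity/normaliser argument, which is where the information that one works with the index \emph{kernel} (rather than all of $\mathfrak{T}(\varphi)$) must be leveraged.
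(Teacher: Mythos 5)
Your overall architecture --- spatial realization of an abstract isomorphism between the index kernels, followed by an upgrade of the implementing homeomorphism to a flip conjugacy --- is exactly the route the paper indicates for this corollary, and several of your steps are sound: the direction $(i)\Rightarrow(ii)$ is correct, the observation that $\mathfrak{T}(\varphi)_0$ realises the $\varphi$-orbits (via the involutions $T_B$ of Corollary~\ref{cor: transpositions are in the index kernel}) is correct, and the normaliser computation showing $\psi=h\varphi_1h^{-1}\in\mathfrak{F}(\varphi_2)$ with $\psi\,\mathfrak{T}(\varphi_2)_0\,\psi^{-1}=\mathfrak{T}(\varphi_2)_0$ is correct. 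The spatial realization itself is only asserted, but you flag it honestly and it is in any case available as Theorem~\ref{thm: spat2} (and via Proposition~\ref{prop: classF} together with Theorem~\ref{thm: matuispat}, since $\mathfrak{T}(\varphi)'\leq\mathfrak{T}(\varphi)_0$ by Remark~\ref{rem: com. ind.}).

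The genuine gap is in your final continuity claim. Write $n$ for the orbit cocycle of $\psi$, so $\psi(x)=\varphi_2^{n(x)}(x)$. If $T_B$ exchanges a clopen set $U$ with $\varphi_2^{k}(U)$, then for $x\in U$ one computes
\begin{equation*}
\psi T_B\psi^{-1}\bigl(\psi(x)\bigr)=\psi\bigl(\varphi_2^{k}(x)\bigr)=\varphi_2^{\,n(\varphi_2^{k}(x))+k-n(x)}\bigl(\psi(x)\bigr),
\end{equation*}
so membership of $\psi T_B\psi^{-1}$ in $\mathfrak{T}(\varphi_2)_0$ only forces continuity of the \emph{differences} $x\mapsto n(\varphi_2^{k}(x))-n(x)$, for each $k$. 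These differences determine $n$ only modulo a $\varphi_2$-invariant function, which need not be continuous, so the assertion that the test ``forces the orbit cocycle of $\psi$ to be locally constant'' does not follow as written; what it gives for free is the weaker statement $\psi\varphi_2^{k}\psi^{-1}\in\mathfrak{T}(\varphi_2)$. To close the gap you must either supply the missing argument that $n$ admits a continuous representative modulo invariant functions (after which minimality kills the invariant part, since a homeomorphism with orbit-constant cocycle has closed $\varphi_2$-invariant level sets), and only then may you invoke Theorem~\ref{thm: flipconju}(ii); or you can bypass the cocycle computation entirely: since $\mathfrak{T}(\varphi)'\leq\mathfrak{T}(\varphi)_0$, the spatial isomorphism falls under Proposition~\ref{prop: spatial} (equivalently Theorem~\ref{thm: isom}(iv)), which upgrades it to an isomorphism of the transformation groupoids $\mathcal{G}_{\varphi_1}\cong\mathcal{G}_{\varphi_2}$, and for minimal Cantor systems this is equivalent to flip conjugacy. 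Either way, the passage from ``conjugates the index kernels'' to ``flip conjugate'' requires one of these real inputs; it is not a formal consequence of conjugating transpositions.
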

	
	In \cite{bm08} the authors found the following adaption of Theorem~\ref{thm: spat} by a different approach:
	\begin{thm}[\cite{bm08}, Theorem 4.2]\label{thm: spat2}
		Let $(X_1,\varphi_1)$ and $(X_2,\varphi_2)$ be minimal Cantor systems and let $\Gamma_i$ either be  $\mathfrak{T}(\varphi_i)$, $\mathfrak{T}(\varphi_i)_0$ or $\mathfrak{T}(\varphi_i)'$ for $i \in\{1,2 \}$ and $x_i \in X_i$. Then any group isomorphism $\alpha \colon \Gamma_1 \to \Gamma_2$ is spatial.
	\end{thm}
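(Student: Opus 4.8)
The plan is to show that the abstract isomorphism $\alpha$ is implemented by a homeomorphism, and the natural arena for this is Stone duality: a homeomorphism $\Phi\colon X_1 \to X_2$ is the same datum as a Boolean-algebra isomorphism $a\colon \operatorname{CO}(X_1)\to\operatorname{CO}(X_2)$ between the algebras of clopen sets, and $\alpha$ is spatial precisely when there is such an $a$ satisfying $a\circ g = \alpha(g)\circ a$ for all $g\in\Gamma_1$ (where $g$ acts on clopen sets by $U\mapsto g(U)$). So the whole problem reduces to manufacturing this $a$ from $\alpha$ alone, in a way that works uniformly for all three admissible choices of $\Gamma_1$ and $\Gamma_2$.

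The device for doing so is the family of \emph{local subgroups} $(\Gamma_i)_U := \{g\in\Gamma_i : \supp(g)\subseteq U\}$ indexed by clopen $U$. First I would record that $U\mapsto(\Gamma_i)_U$ is injective: if $x\in U\setminus V$, the last clause of Lemma~\ref{lem: abundinvo} (applied to the relevant group) produces an element of $(\Gamma_i)_U$ moving $x$, whereas every element of $(\Gamma_i)_V$ fixes $x\notin V$. Next, and this is the technical core, I would give a purely group-theoretic (hence $\alpha$-invariant) description of which subgroups arise as local subgroups --- the route of \cite{gps99} behind Theorem~\ref{thm: spat} is to single them out by a maximality condition phrased through commutation --- so that $\alpha$ carries local subgroups of $\Gamma_1$ bijectively onto local subgroups of $\Gamma_2$. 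This defines $a$ by $\alpha((\Gamma_1)_U) = (\Gamma_2)_{a(U)}$.

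Then I would check that $a$ is a Boolean isomorphism intertwining the actions, all of which is formal once the subgroups are in hand: monotonicity $U\subseteq V\Leftrightarrow(\Gamma_i)_U\le(\Gamma_i)_V$ gives order-preservation, the identity $(\Gamma_i)_{U\cap V}=(\Gamma_i)_U\cap(\Gamma_i)_V$ gives meets, and identifying $(\Gamma_i)_{U^{\mathrm c}}$ as the local subgroup inside the centralizer of $(\Gamma_i)_U$ gives complements. Finally the conjugation identity $g(\Gamma_1)_U g^{-1} = (\Gamma_1)_{g(U)}$, pushed through $\alpha$, yields $a(g(U)) = \alpha(g)(a(U))$, which is exactly $a\circ g = \alpha(g)\circ a$; dualizing back to points gives the homeomorphism $\Phi$ with $\alpha(g)=\Phi\circ g\circ\Phi^{-1}$, as desired.

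The hard part will be the commutator case $\Gamma_i=\mathfrak{T}(\varphi_i)'$, because the reconstruction relies on an abundance of the transposition-type involutions $T_B$, and by Corollary~\ref{cor: transpositions are in the index kernel} these lie in $\mathfrak{T}(\varphi)_0$ but are \emph{odd} and hence generally fall outside $\mathfrak{T}(\varphi)'$. I would therefore replace them by the even elements actually available in the commutator subgroup --- double transpositions and $3$-cycles --- and re-prove the analogue of the last clause of Lemma~\ref{lem: abundinvo}: for clopen $U$ and $x\in U$ there is an element of $\mathfrak{T}(\varphi)'$ supported in $U$ and moving $x$. Minimality supplies disjoint compact open slices inside $U$ from which such even elements are built, and the local alternating structure $\mathcal{P}(\mathcal{A}_n)'\cong\bigoplus_i\mathfrak{A}_{h_i^n}$ of Corollary~\ref{cor: ev.permut.} guarantees a rich supply of small-support even permutations, with the decomposition of Theorem~\ref{thm: grig-med} as the tool for controlling supports. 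The delicate point is verifying that the rigid stabilizers $(\mathfrak{T}(\varphi)')_U$ stay large enough for the centralizer characterization of complements to survive, so that the reconstruction of the Boolean algebra goes through verbatim for the simple group $\mathfrak{T}(\varphi)'$ as well.
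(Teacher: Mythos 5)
Your outline is the \cite{gps99} route behind Theorem~\ref{thm: spat}: reconstruct the whole Boolean algebra $\operatorname{CO}(X_i)$ from an algebraic characterization of \emph{all} local subgroups $(\Gamma_i)_U$ and then dualize. That is not the route the cited proof takes. The proof of this theorem in \cite{bm08} is explicitly ``by a different approach'', modelled on Theorem~384D of \cite{fre11}: one never characterizes $(\Gamma)_U$ for arbitrary clopen $U$, only the local subgroups $G_{\supp(\tau)}$ over supports of involutions $\tau$ (in the paper's account of Matui's generalization, this is Definition~\ref{defi: needed for characterization of local subgroups} and Lemma~\ref{lem: characterization local subgroups}), and the homeomorphism is then built pointwise as $\varphi(x)=\bigcap_{g\in T(x)}\supp(\alpha(g))$ as in Theorem~\ref{thm: matuispat}, rather than via a Boolean isomorphism constructed on all clopens. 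The payoff of that method is exactly that it only needs the class-F axioms (F1)--(F4) of Definition~\ref{defi: class F}, which Proposition~\ref{prop: classF} verifies for \emph{every} subgroup between $\mathfrak{T}(\mathcal{G})'$ and $\mathfrak{T}(\mathcal{G})$ using nothing but minimality and commutators of transpositions, so the three cases of the theorem are handled uniformly with no extra work for the commutator subgroup.

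The gap in your proposal sits precisely at the step you defer. First, the ``purely group-theoretic description of which subgroups arise as local subgroups'' is not available off the shelf for $\mathfrak{T}(\varphi)'$: the GPS99 characterization leans on Lemma~\ref{lem: abundinvo}, whose hypotheses cover $\mathfrak{F}(\varphi)$, $\mathfrak{T}(\varphi)$ and $\mathfrak{T}(\varphi)_{x}$ but not the commutator subgroup, and whose implication $(i)\Rightarrow(ii)$ would have to be re-proved against the correct invariant for $\mathfrak{T}(\varphi)'$ (which, by Theorem~\ref{thm: sgn}, involves the signature map and $K_0(C^*(X,\varphi))\otimes(\mathbb{Z}/2\mathbb{Z})$ rather than $K^0(X,\varphi)$ itself). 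Second, your identification of $(\Gamma)_{U^{\mathrm{c}}}$ as ``the local subgroup inside the centralizer of $(\Gamma)_U$'' is the genuinely hard inclusion: the centralizer of $(\Gamma)_U$ certainly contains everything supported in $U^{\mathrm{c}}$, but ruling out additional centralizing elements is the substantive algebraic content, and for the simple group $\mathfrak{T}(\varphi)'$ this is exactly the point where one cannot simply quote \cite{gps99}. You name this as ``the delicate point'' but do not resolve it, and resolving it is the theorem. If you want a uniform argument for all three groups, the efficient path is the one the paper records in Subsection~\ref{subs: isomorphism theorems II}: verify (F1)--(F4) for $\mathfrak{T}(\varphi)'$ (your even elements --- commutators of the $\tau_B$'s and $3$-cycles from multisections --- are indeed what Proposition~\ref{prop: classF} uses), and then invoke the support-of-involutions reconstruction instead of the full Boolean one.
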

	
	As a corollary the above list extends by:
	\begin{cor}\label{cor: flipconj3}
		Let $(X_1,\varphi_1)$ and $(X_2,\varphi_2)$ be minimal Cantor systems. The following are equivalent:
		\begin{enumerate}[(i)]
			\item The systems $(X_1,\varphi_1)$ and $(X_2,\varphi_2)$ are flip conjugate.
			
			\item The groups $\mathfrak{T}(\varphi_1)'$ and $\mathfrak{T}(\varphi_2)'$ are isomorphic.
		\end{enumerate}
	\end{cor}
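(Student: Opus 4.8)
The plan is to assemble the statement from the spatiality result of Theorem~\ref{thm: spat2} and the dynamical classification of Theorem~\ref{thm: flipconju}, following the same schema as Corollary~\ref{cor: flipconj1}. For the direction $(i)\Rightarrow(ii)$ I would first record the orientation symmetry of the topological full group: by Proposition~\ref{prop: count}(ii) a homeomorphism lies in $\mathfrak{T}(\varphi)$ exactly when there is a finite clopen partition $X=\bigsqcup_i X_i$ with $\gamma|_{X_i}=\varphi^i$, and this condition is invariant under replacing $\varphi$ by $\varphi^{-1}$ (send $i\mapsto -i$), so $\mathfrak{T}(\varphi)=\mathfrak{T}(\varphi^{-1})$ as subgroups of $\operatorname{Homeo}(X)$ and hence $\mathfrak{T}(\varphi)'=\mathfrak{T}(\varphi^{-1})'$. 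If $h\colon X_1\to X_2$ is a conjugacy onto $(X_2,\varphi_2)$ or onto $(X_2,\varphi_2^{-1})$, then conjugation $\operatorname{Ad}_h$ carries the partition description over verbatim, giving $\operatorname{Ad}_h(\mathfrak{T}(\varphi_1))=\mathfrak{T}(\varphi_2^{\pm1})=\mathfrak{T}(\varphi_2)$; since conjugation by a fixed homeomorphism sends commutators to commutators, this restricts to an isomorphism $\mathfrak{T}(\varphi_1)'\cong\mathfrak{T}(\varphi_2)'$.

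For $(ii)\Rightarrow(i)$, an abstract isomorphism $\alpha\colon\mathfrak{T}(\varphi_1)'\to\mathfrak{T}(\varphi_2)'$ is spatial by Theorem~\ref{thm: spat2}, so there is a homeomorphism $h\colon X_1\to X_2$ with $\alpha(g)=h\circ g\circ h^{-1}$ for all $g\in\mathfrak{T}(\varphi_1)'$. I would next check that $h$ is an orbit map. The containment $\mathfrak{T}(\varphi_i)'\subseteq\mathfrak{T}(\varphi_i)$ shows each $\mathfrak{T}(\varphi_i)'$-orbit sits inside a $\varphi_i$-orbit; conversely $\mathfrak{T}(\varphi_i)'$ acts transitively on every (necessarily infinite) $\varphi_i$-orbit, since for distinct $x,y$ in a common orbit one may pick a third orbit point and small disjoint clopen neighbourhoods and build a $3$-cycle of clopen sets out of powers of $\varphi_i$ carrying $x$ to $y$ — an even permutation of atoms, hence an element of $\mathfrak{T}(\varphi_i)'$ by Corollary~\ref{cor: ev.permut.} together with Corollary~\ref{cor: transpositions are in the index kernel}. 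Thus $\mathfrak{T}(\varphi_i)'$-orbits coincide with $\varphi_i$-orbits, and applying $h$ to $h(g(x))=(hgh^{-1})(h(x))$ gives $h(\operatorname{Orb}_{\varphi_1}(x))=\operatorname{Orb}_{\varphi_2}(h(x))$, so $h$ is an orbit equivalence.

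It then remains to upgrade this orbit equivalence to flip conjugacy, which by Theorem~\ref{thm: flipconju} reduces to showing that the orbit cocycle $n\colon X_1\to\mathbb{Z}$ defined by $h(\varphi_1(x))=\varphi_2^{n(x)}(h(x))$ is continuous, the companion cocycle being treated symmetrically through $\alpha^{-1}$. The hard part is exactly this step, because $\varphi_1\notin\mathfrak{T}(\varphi_1)'$ and so the spatial relation says nothing about $h\circ\varphi_1$ directly. My plan is to fix $x_0\in X_1$ and construct $g\in\mathfrak{T}(\varphi_1)'$ agreeing with $\varphi_1$ on a clopen neighbourhood $U\ni x_0$ — for instance a $3$-cycle of clopen sets whose first leg is $\varphi_1|_U$, chosen even so as to land in the commutator subgroup. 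On $U$ one then has $h\circ\varphi_1=h\circ g=\alpha(g)\circ h$, and since $\alpha(g)\in\mathfrak{T}(\varphi_2)'\subseteq\mathfrak{T}(\varphi_2)$ has a locally constant cocycle by Proposition~\ref{prop: count}(ii), the function $n$ is locally constant near $x_0$; as $x_0$ is arbitrary, $n$ is continuous, and Theorem~\ref{thm: flipconju} yields flip conjugacy of $(X_1,\varphi_1)$ and $(X_2,\varphi_2)$. The essential difficulty, and the reason this invariant sees flip conjugacy rather than conjugacy, is encapsulated in the identity $\mathfrak{T}(\varphi)'=\mathfrak{T}(\varphi^{-1})'$ from the first paragraph: the commutator subgroup retains the orbit structure but not the orientation of $\varphi$, so the reconstruction can only determine $\varphi_2$ up to the flip.
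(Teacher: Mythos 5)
Your proposal is correct and follows the same route the paper intends: combine the spatial realization result of Theorem~\ref{thm: spat2} with the dynamical characterization of flip conjugacy in Theorem~\ref{thm: flipconju}, exactly as for Corollary~\ref{cor: flipconj1}. The details you supply — that $\mathfrak{T}(\varphi)'=\mathfrak{T}(\varphi^{-1})'$, that $\mathfrak{T}(\varphi)'$-orbits coincide with $\varphi$-orbits via $3$-cycles of clopen sets, and that continuity of the orbit cocycles follows from locally replacing $\varphi_1$ by an element of the commutator subgroup — are all sound and fill in precisely the argument the paper leaves implicit.
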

	
	The above results permit to use dynamical properties invariant under the respective notions of equivalence to distinguish between groups of the respective type up to isomorphism, in particular, invariants of flip conjugacy e.g. topological entropy or ergodicity, allow to distinguish topological full groups up to isomorphy. By Remark~\ref{rem: complex+entro}(ii), Example~\ref{ex: sturm+toeplitz} and Corollaries~\ref{cor: flipconj1} \& \ref{cor: flipconj2} \& \ref{cor: flipconj3} we have:
	
	\begin{prop}\label{prop: uncount}
		There exists an uncountable family of pairwise non-ismorphic topological full groups of minimal subshifts.
	\end{prop}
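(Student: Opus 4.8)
The plan is to reduce the statement to the existence of an uncountable family of pairwise non-flip-conjugate minimal subshifts, and then to detect non-flip-conjugacy by topological entropy. First I would invoke Corollary~\ref{cor: flipconj1}, which asserts that two minimal Cantor systems have isomorphic topological full groups if and only if they are flip conjugate. Taking the contrapositive of the implication $(ii)\Rightarrow(i)$ there, it suffices to exhibit an uncountable collection $\{(X_\alpha,\varphi_\alpha)\}$ of minimal subshifts no two of which are flip conjugate; then the groups $\{\mathfrak{T}(\varphi_\alpha)\}$ are pairwise non-isomorphic, and each is by construction a topological full group of a minimal subshift, as required.

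To separate the systems I would use topological entropy as the distinguishing invariant. By Remark~\ref{rem: complex+entro}(ii), topological entropy is invariant under flip conjugacy: it is a conjugacy invariant and satisfies $h_{top}(X,\varphi)=h_{top}(X,\varphi^{-1})$, so any two systems with distinct entropy are automatically non-flip-conjugate. It then remains to realize uncountably many distinct entropy values by minimal subshifts, and this is exactly what Example~\ref{ex: sturm+toeplitz}(ii) provides: for every $\alpha \in \mathbb{R}_{>0}$ there is a Toeplitz shift $(T_{\mathbf{s}},\varphi)$ -- a minimal subshift -- with $h_{top}(T_{\mathbf{s}},\varphi)=\alpha$. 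Choosing one such Toeplitz shift for each $\alpha$ in an uncountable subset of $\mathbb{R}_{>0}$ (for instance, all of $\mathbb{R}_{>0}$) yields the desired family: distinct indices $\alpha\neq\beta$ give systems of distinct entropy, hence non-flip-conjugate systems, hence non-isomorphic topological full groups.

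Since all the ingredients -- the isomorphism theorem Corollary~\ref{cor: flipconj1}, the flip-conjugacy invariance of entropy, and the entropy realization by Toeplitz shifts -- are already in hand, I expect no genuine technical obstacle; the argument is essentially the packaging of these facts. The one point meriting a moment's care is checking that each chosen representative is indeed a minimal Cantor system to which Corollary~\ref{cor: flipconj1} applies, but this is guaranteed by Example~\ref{ex: sturm+toeplitz}(ii). As an alternative route that avoids entropy altogether, one could instead use the Sturmian shifts of Example~\ref{ex: sturm+toeplitz}(i): for irrational $\alpha\neq\beta$ in $(0,\tfrac12)$ the systems $(\Sigma_\alpha,\varphi)$ and $(\Sigma_\beta,\varphi)$ are already shown there to be non-flip-conjugate via the spectrum of their Koopman operators, which likewise produces an uncountable family of pairwise non-isomorphic topological full groups.
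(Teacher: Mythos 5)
Your argument is correct and is essentially the paper's own proof: the statement is deduced there exactly from Corollary~\ref{cor: flipconj1}, the flip-conjugacy invariance of topological entropy in Remark~\ref{rem: complex+entro}(ii), and the entropy (or spectral) separation of the systems in Example~\ref{ex: sturm+toeplitz}. Both your main route via Toeplitz shifts and your alternative via Sturmian shifts are covered by the paper's citation of that example.
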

	
	\section{Results by Matui}\label{sec: results by matui}
	
	In this section we review Matui's contributions to the study of topological full groups and their properties.	Subsection~\ref{subs: isomorphism theorems II} gives a outline of Matui's spatial realization theorem on \'etale Cantor groupoids from \cite{mat15}. Subsection~\ref{subs: simplicity and finite generation} recollects the results on topological full groups of minimal Cantor systems from \cite{mat06}. In Subsection~\ref{subs: generalizations} we turn to generalizations in the context of \'etale Cantor groupoids. Subsection~\ref{subs: on growth} finishes the section with Matui's proof of exponential growth of the topological full groups of minimal subshifts.
	
	\subsection{Isomorphisms theorems II}\label{subs: isomorphism theorems II}
	
	The results in the wake of \cite{dye63} are instance of a larger principle: The reconstruction of structures from their groups of transformations. The proof of Theorem~\ref{thm: spat} shows that isomorphisms between certain automorphism groups of the Boolean algebras $\operatorname{CO}(X_1)$ and $\operatorname{CO}(X_2)$ are generated by isomorphisms between $\operatorname{CO}(X_1)$ and $\operatorname{CO}(X_2)$. In \cite{bm08} the spatial realization of isomorphisms for the groups $\mathfrak{T}(\varphi)$, $\mathfrak{T}(\varphi)'$ and $\mathfrak{T}(\varphi)_0$ of a minimal Cantor system $(X,\varphi)$ is demonstrated by following the proof of Theorem~384D in \cite{fre11}. The same method was applied to the respective groups in the context of general minimal topological dynamical systems over Cantor spaces (see \cite{med11}). The flip conjugacy of Cantor minimal systems is equivalent to isomorphy of the corresponding transformation groupoids (\cite{mat16b}, Theorem 8.1), thus the groups $\mathfrak{T}(\mathcal{G})$, $\mathfrak{T}(\mathcal{G})'$ and $\mathfrak{T}(\mathcal{G})_0$ are complete isomorphism invariants for transformation groupoids. We take a look at Matui's proof of spatial realization modeled after Theorem~384D in \cite{fre11}:
	
	\begin{defi}[\cite{mat15}, Definition~3.1]\label{defi: class F}
		Let $X$ be a Cantor space. A subgroup $G \leq \operatorname{Homeo}(X)$ is said to be of \emph{class F} if it satisfies the following properties:
		\begin{enumerate}[(i)]
			\item [(F1)] For every $g \in G$, $g^2=1$ implies that $\supp(g)$ is clopen.
			
			\item [(F2)] For every clopen set $U \subseteq X$ and for every $x \in U$ there exists a $g \in G\setminus \{ 1\}$ such that $g^2=1$ and $x \in \supp(g)\subseteq U$.
			
			\item [(F3)] For every $g \in G\setminus \{ 1\}$ with $g^2=1$ and for every non-empty clopen set $U \subset \supp(g)$ there exists a $g' \in G \setminus \{ 1\}$ such that $\supp(g')\subset U \cup g(U)$ and $g|_{\supp(g')}= g'|_{\supp(g')}$.
			
			\item [(F4)] For every non-empty clopen subset $U \subseteq X$ there exists a $g \in G$ with $\supp(g) \subseteq U$ and $g^2\neq 1$.
		\end{enumerate}
	\end{defi}
	
	Property (F2) immediately allows to encode the order structure on regular\footnote{A closed subset $A$ of a topological space $X$ is called \emph{regular} if $A=\overline{A^{\circ}}$} closed subsets of $X$ by local subgroups:
	\begin{lem}[\cite{mat15}, Lemma~3.2]\label{lem: local subgroups encode order structure}
		Let $X$ be a Cantor space and let $G \leq \operatorname{Homeo}(X)$ be a group of class F. Then for a pair of regular closed subsets $A,B \subseteq X$ the following are equivalent:
		\begin{enumerate}[(i)]
			\item $A \subseteq B$
			
			\item $G_A \subseteq G_B$
		\end{enumerate}
	\end{lem}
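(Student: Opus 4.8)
The plan is to treat the two implications separately, since they are of very different character. The implication $(i)\Rightarrow(ii)$ requires no hypothesis on $G$ at all: if $A\subseteq B$, then every $g\in G$ with $\supp(g)\subseteq A$ automatically satisfies $\supp(g)\subseteq B$, so by definition $G_A\subseteq G_B$. Thus the whole content of the lemma is the converse $(ii)\Rightarrow(i)$, which I would establish by contraposition, producing from the failure $A\not\subseteq B$ a concrete element of $G_A$ that does not lie in $G_B$. The only class F axiom I expect to need is (F2), which manufactures nontrivial involutions supported inside prescribed clopen sets.

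First I would fix a witness point. Assuming $A\not\subseteq B$, choose $x\in A\setminus B$. Since $B$ is closed, $X\setminus B$ is an open neighbourhood of $x$. Since $A$ is regular closed we have $A=\overline{A^\circ}$, so $x$ lies in the closure of the interior $A^\circ$; consequently the neighbourhood $X\setminus B$ of $x$ must meet $A^\circ$, and $(X\setminus B)\cap A^\circ$ is a nonempty open set. Because $X$ is a Cantor space, it admits a basis of clopen sets, so I can select a nonempty clopen $U$ with $U\subseteq (X\setminus B)\cap A^\circ\subseteq A$. By construction $U\subseteq A$ and $U\cap B=\emptyset$.

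Next I would invoke (F2): picking any $y\in U$, there is some $g\in G\setminus\{1\}$ with $g^2=1$ and $y\in\supp(g)\subseteq U$. Then $\supp(g)\subseteq U\subseteq A$ gives $g\in G_A$, whereas $\supp(g)\subseteq U$ is nonempty and disjoint from $B$, so $\supp(g)\not\subseteq B$ and hence $g\notin G_B$. This contradicts the assumption $G_A\subseteq G_B$, and the contrapositive is complete. (Note that regularity of $B$ is not actually used in this direction; only that $B$ is closed.)

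The hard part is not the involution-theoretic step but the topological one: guaranteeing that $(X\setminus B)\cap A^\circ$ is genuinely nonempty. This is exactly where the regularity hypothesis $A=\overline{A^\circ}$ is indispensable, since it forces the witness point $x$ to be approximated by interior points of $A$. Without regularity the point $x$ could be an isolated boundary point of $A$ that is unreachable by clopen subsets of $A^\circ$, and then no nontrivial homeomorphism supported inside $A$ and avoiding $B$ need exist (indeed, as supports of homeomorphisms are themselves regular closed, $G_A$ collapses to the trivial group when $A$ has empty interior). Once this nonemptiness is secured, the remainder is routine.
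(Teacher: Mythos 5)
Your proof is correct: the forward implication is indeed trivial, and your contrapositive argument for $(ii)\Rightarrow(i)$ — using regularity of $A$ to find a nonempty clopen $U\subseteq A^\circ\setminus B$ and then axiom (F2) to produce a nontrivial involution supported in $U$ — is exactly the standard argument (the paper itself only cites Matui's Lemma~3.2 without reproducing a proof, and explicitly flags (F2) as the relevant axiom). Your side remarks, that regularity of $B$ is not used and that supports of homeomorphisms are closures of open sets and hence regular closed, are also accurate.
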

	
	Next, local subgroups over supports of involutions are algebraically characterized:
	
	\begin{defi}[\cite{mat15}, p.7]\label{defi: needed for characterization of local subgroups}
		Let $X$ be a Cantor space, let $G \leq \operatorname{Homeo}(X)$ be a group of class F and let $\tau \in G$ be a non-trivial involution. Define:
		\begin{enumerate}[(i)]
			\item $C_\tau:=\{g \in G| \tau g \tau = g \}$
			
			\item $U_\tau:=\{g \in C_\tau| g^2=1,gcgc^{-1}=cgc^{-1}g \; \forall c \in C_\tau \}$
			
			\item $S_\tau:=\{g^2 \in G|g \in G, u g u^{-1}=g \; \forall u \in U_\tau \}$
			
			\item $W_\tau:=\{g \in G| sgs^{-1}=g \;\forall s \in S_\tau\}$
		\end{enumerate}
	\end{defi}
	
	\begin{lem}[\cite{mat15}, Lemma~3.3]\label{lem: characterization local subgroups}
		Let $X$ be a Cantor space, let $G \leq \operatorname{Homeo}(X)$ be a group of class F and let $\tau \in G$ be a non-trivial involution. Then $W_\tau = G_{\supp(\tau)}$.
	\end{lem}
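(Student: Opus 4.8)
Throughout write $A := \supp(\tau)$, which is clopen by (F1), and $F := X \setminus A$; note that $\tau$ restricts to the identity on $F$. The plan is to attach to each of the algebraically defined sets $C_\tau, U_\tau, S_\tau$ a transparent dynamical meaning and thereby reduce the claim to the standard principle that, inside a sufficiently rich subgroup of $\operatorname{Homeo}(X)$, the centraliser of the local subgroup supported in a clopen set is exactly the local subgroup supported in the complementary clopen set. By definition $C_\tau$ is literally the centraliser of $\tau$; in particular every $c \in C_\tau$ preserves $A$ and $F$ and permutes the $\tau$-orbits, which have size $1$ or $2$. For a $\tau$-invariant clopen set $C \subseteq A$ I write $\tau_C$ for the \emph{sub-involution} of $\tau$ acting as $\tau$ on $C$ and as the identity elsewhere.

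First I would identify $U_\tau$ with the set of all sub-involutions $\tau_C$. The inclusion ``$\supseteq$'' is a direct computation: each $\tau_C$ is an involution commuting with $\tau$, its $C_\tau$-conjugates are again sub-involutions ($c\tau_C c^{-1} = \tau_{cC}$ for $c \in C_\tau$), and any two sub-involutions of the single involution $\tau$ commute; hence $\tau_C$ satisfies the defining condition of $U_\tau$. The reverse inclusion ``$\subseteq$'' is the technical heart. Given $u \in U_\tau$, first note that $\tau = \tau_A \in U_\tau$, so $u$ commutes with $\tau$ and $u \in C_\tau$. I would then use the flexibility axioms (F2) and (F3) to exclude every involution that is not a sub-involution of $\tau$: if $\supp(u)$ met $F$, or if $u$ sent some $\tau$-orbit to a different orbit, one can produce (by restricting and relocating supports, as permitted by (F2)/(F3)) an element $c \in C_\tau$ for which the conjugate $cuc^{-1}$ overlaps $u$ incompatibly, so that $u$ and $cuc^{-1}$ fail to commute, contradicting $u \in U_\tau$. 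What survives is exactly that $u \in C_\tau$, $\supp(u) \subseteq A$, and $u$ preserves each $\tau$-orbit acting as $\tau$ on it, i.e.\ $u = \tau_C$ for some $\tau$-invariant clopen $C$.

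Next I would compute the centraliser of $U_\tau$, and hence $S_\tau$. Since $\tau \in U_\tau$, any $g$ centralising $U_\tau$ lies in $C_\tau$; and, using $g\tau_C g^{-1} = \tau_{gC}$, commuting with $\tau_C$ forces $g(C) = C$. As the $\tau$-invariant clopen subsets of $A$ separate the $\tau$-orbits, $g$ must preserve every orbit set-wise, so $g|_A$ acts as the identity or as $\tau$ on each orbit. Thus
\begin{equation*}
\operatorname{Cent}(U_\tau) = \{\, g \in G : \supp(g)\cap A \text{ is } \tau\text{-invariant and } g|_A = \tau \text{ there}\,\},
\end{equation*}
with $g|_F$ completely unconstrained; in particular $G_F \subseteq \operatorname{Cent}(U_\tau)$. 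The decisive observation is that for every such $g$ the restriction $g|_A$ is an \emph{involution}, whence $g^2|_A = \operatorname{id}$. Therefore every element of $S_\tau$ is supported in $F$, and since $g$ may be chosen with $g|_A = \operatorname{id}$ and $g|_F$ an arbitrary element of $G_F$, the set $S_\tau$ is precisely the set of squares of $G_F$.

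Finally I would conclude $W_\tau = G_A = G_{\supp(\tau)}$. The inclusion $G_A \subseteq W_\tau$ is immediate, as elements supported in $A$ commute with everything supported in the disjoint set $F$, in particular with all of $S_\tau$. For $W_\tau \subseteq G_A$, suppose $g$ centralises $S_\tau$. Using (F4) to produce, inside any prescribed non-empty clopen subset of $F$, a non-involution whose nontrivial square is a localised element of $S_\tau$, one shows first that $g$ cannot carry a point of $F$ outside $F$ (otherwise some localised square would be conjugated to an element with different support) and then that $g|_F$ commutes with a group acting on $F$ with trivial centraliser in $\operatorname{Homeo}(F)$, forcing $g|_F = \operatorname{id}$; Lemma~\ref{lem: local subgroups encode order structure} can be used to organise the support bookkeeping. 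Hence $\supp(g) \subseteq A$, i.e.\ $g \in G_A$. The main obstacle is the reverse inclusion of the $U_\tau$-characterisation in the second paragraph: turning the informal idea of ``relocating a support to create a non-commuting conjugate'' into a rigorous construction is exactly where the class~F axioms (F2) and (F3) must be exploited in full.
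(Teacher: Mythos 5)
The paper itself does not prove this lemma (it is quoted from \cite{mat15}), so I am measuring your proposal against Matui's argument, whose overall architecture --- first pin down $U_\tau$ as the ``local pieces'' of $\tau$, deduce that $S_\tau$ is supported in $X\setminus\supp(\tau)$ and contains nontrivial elements supported in every non-empty clopen subset of it, then compute the centraliser --- your sketch reproduces correctly. The genuine gap is the step you yourself flag as the ``technical heart'': the claim that every $u\in U_\tau$ agrees with $\tau$ or the identity locally and is supported in $\supp(\tau)$. Everything downstream hinges on this (both $S_\tau\subseteq G_{X\setminus\supp(\tau)}$ and the fact that squares of elements supported in $X\setminus\supp(\tau)$ lie in $S_\tau$ require knowing that \emph{all} of $U_\tau$ is supported in $\supp(\tau)$), and the mechanism you propose --- ``relocate supports so that $cuc^{-1}$ overlaps $u$ incompatibly, so they fail to commute'' --- does not work as stated. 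For involutions one has $[u,v]=(uv)^2$, so $u$ commutes with $cuc^{-1}$ exactly when $u\cdot cuc^{-1}$ is again an involution, and for the conjugators that (F2)/(F3) naturally supply --- an involution supported in a small clopen neighbourhood of one point of a $u$-orbit, a sub-involution of $u$, or an element interchanging the two sides $V$ and $u(V)$ of a $u$-orbit --- a direct computation shows the conjugate \emph{does} commute with $u$. The finite model makes the danger visible: $(12)(34)$ commutes with its conjugate $(14)(23)$ by $c=(13)$ even though $\supp(c)$ overlaps two different $u$-orbits, whereas $(12)$ fails to commute with its conjugate $(13)$ by $c=(23)$ only because $c$ carries a moved point of $u$ onto a fixed point of $u$ while fixing its partner. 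Manufacturing a conjugator in $C_\tau$ that scrambles the $u$-orbit pairing in this way, using only axioms that produce elements supported \emph{inside} a prescribed clopen set, is precisely the nontrivial content of Matui's Lemma~3.3, and it is absent from your sketch.

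Two secondary imprecisions are harmless but should be repaired. First, the class-F axioms do not place \emph{every} sub-involution $\tau_C$ (for arbitrary $\tau$-invariant clopen $C\subseteq A$) into $G$; only (F3) guarantees some, with support confined to a prescribed $U\cup\tau(U)$, so your centraliser computation must quantify only over those --- which still suffices, since such supports can be placed inside any clopen $\tau$-invariant set and the disjoint-support argument then forces $g$ to preserve each $\tau$-orbit in the moved part of $A$, whence $g^2|_A=\operatorname{id}$ by density. Second, $S_\tau$ need not equal the set of squares of $G_{X\setminus\supp(\tau)}$; what you actually need are the two containments $S_\tau\subseteq G_{X\setminus\supp(\tau)}$ and $\{h^2: h\in G,\ \supp(h)\subseteq V\}\subseteq S_\tau$ for every clopen $V\subseteq X\setminus\supp(\tau)$, the latter combined with (F4) to produce nontrivial localised elements of $S_\tau$. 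Granting these, your final step ($G_{\supp(\tau)}\subseteq W_\tau$ by disjointness of supports, and $W_\tau\subseteq G_{\supp(\tau)}$ because an element moving a point of $X\setminus\supp(\tau)$ displaces the support of some nontrivial $s\in S_\tau$) is correct.
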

	
	Lemma~\ref{lem: local subgroups encode order structure} and Lemma~\ref{lem: characterization local subgroups} then imply:
	
	\begin{lem}[\cite{mat15}, Lemma~3.4]\label{lem: containment and intersection of supports}
		Let $X_1, X_2$ be Cantor spaces, let $G_1 \leq \operatorname{Homeo}(X_1)$ and $G_2 \leq \operatorname{Homeo}(X_2)$ be groups of class F and let $\Phi\colon G_1 \to G_2$ be an isomorphism of groups. Let $\tau,\sigma \in G_1$ be involutions. Then the following equivalences hold:
		\begin{enumerate}[(i)]
			\item $\supp(\tau)\subseteq \supp(\sigma) \Leftrightarrow \supp(\Phi(\tau)) \subseteq \supp(\Phi(\sigma))$
			
			\item $\supp(\tau) \cap \supp(\sigma)=\emptyset \Leftrightarrow \supp(\Phi(\tau)) \cap \supp(\Phi(\sigma))=\emptyset$
		\end{enumerate} 
	\end{lem}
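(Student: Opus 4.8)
The plan is to exploit the fact that the subgroup $W_\tau$ of Definition~\ref{defi: needed for characterization of local subgroups} is built entirely from the group operation, so that it is carried along by any abstract group isomorphism. First I would record that $\Phi$ restricts to a bijection between the non-trivial involutions of $G_1$ and those of $G_2$: if $\tau^2=1$ and $\tau\neq 1$, then $\Phi(\tau)^2=\Phi(\tau^2)=1$ and $\Phi(\tau)\neq 1$ by injectivity, and the same holds for $\Phi^{-1}$. Next I would check that each of the sets $C_\tau$, $U_\tau$, $S_\tau$, $W_\tau$ is transported by $\Phi$, i.e. $\Phi(C_\tau)=C_{\Phi(\tau)}$, and likewise for the others. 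This is routine: $C_\tau$ is the centralizer of $\tau$, $U_\tau$ is cut out of $C_\tau$ by the conditions $g^2=1$ together with a commutator identity ranging over $C_\tau$, $S_\tau$ is the set of squares of elements centralizing $U_\tau$, and $W_\tau$ is the centralizer of $S_\tau$ -- every one of these conditions is preserved by the isomorphism $\Phi$. Consequently $\Phi(W_\tau)=W_{\Phi(\tau)}$ for every non-trivial involution $\tau\in G_1$.

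For part (i) I would chain Lemma~\ref{lem: characterization local subgroups} and Lemma~\ref{lem: local subgroups encode order structure}. By property (F1) the support of a non-trivial involution is clopen, hence regular closed, so Lemma~\ref{lem: local subgroups encode order structure} applies to the pair $\supp(\tau),\supp(\sigma)$ and gives $\supp(\tau)\subseteq\supp(\sigma)$ if and only if $G_{\supp(\tau)}\subseteq G_{\supp(\sigma)}$. By Lemma~\ref{lem: characterization local subgroups} these local subgroups coincide with $W_\tau$ and $W_\sigma$, so support containment is equivalent to $W_\tau\subseteq W_\sigma$. Applying $\Phi$ and using $\Phi(W_\tau)=W_{\Phi(\tau)}$ turns this into $W_{\Phi(\tau)}\subseteq W_{\Phi(\sigma)}$, which by the same two lemmas applied inside $G_2$ is equivalent to $\supp(\Phi(\tau))\subseteq\supp(\Phi(\sigma))$. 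This chain of equivalences is exactly the assertion in (i).

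For part (ii) the idea is to reduce disjointness of supports to part (i) by means of an internal, involution-based description. Since $\supp(\tau)$ and $\supp(\sigma)$ are clopen, so is their intersection, and I claim $\supp(\tau)\cap\supp(\sigma)=\emptyset$ holds \emph{if and only if} there is no non-trivial involution $\rho\in G_1$ with $\supp(\rho)\subseteq\supp(\tau)$ and $\supp(\rho)\subseteq\supp(\sigma)$. If the supports are disjoint, any such $\rho$ would satisfy $\supp(\rho)\subseteq\supp(\tau)\cap\supp(\sigma)=\emptyset$ and hence be trivial, so no common subinvolution exists. Conversely, if the intersection is non-empty I pick a point $x$ in it and apply property (F2) to the clopen set $U=\supp(\tau)\cap\supp(\sigma)$, producing a non-trivial involution $\rho$ with $x\in\supp(\rho)\subseteq U$, the desired common subinvolution. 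This criterion is now phrased purely through support containments, each of which transfers across $\Phi$ by part (i); combined with the bijection $\Phi$ between the involutions of $G_1$ and $G_2$, I conclude that $\supp(\tau)\cap\supp(\sigma)=\emptyset$ if and only if $\supp(\Phi(\tau))\cap\supp(\Phi(\sigma))=\emptyset$.

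The genuinely substantial work sits upstream, in Lemma~\ref{lem: characterization local subgroups}, which furnishes the purely algebraic description $W_\tau=G_{\supp(\tau)}$; granting it, the present lemma is essentially formal. The point demanding care is the reduction in part (ii): one must make sure that (F2) yields an involution whose support lies inside the \emph{intersection} of the two supports rather than inside just one of them, and that the empty-support case correctly forces triviality. I expect no difficulty with (i) beyond the observation that supports of involutions are regular closed, which is precisely what makes Lemma~\ref{lem: local subgroups encode order structure} applicable.
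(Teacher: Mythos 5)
Your proposal is correct and follows exactly the route the paper intends: the sets $C_\tau, U_\tau, S_\tau, W_\tau$ are purely algebraic and hence transported by $\Phi$, so Lemma~\ref{lem: characterization local subgroups} together with Lemma~\ref{lem: local subgroups encode order structure} (applicable because supports of involutions are clopen by (F1), hence regular closed) gives part (i) at once. The paper leaves the details implicit ("Lemma~3.2 and Lemma~3.3 then imply"), and your reduction of part (ii) to part (i) via the (F2)-supplied common subinvolution is precisely the standard way to fill in the disjointness half.
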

	
	This allows to construct a spatial homeomorphism:
	
	\begin{thm}[\cite{mat15}, Theorem~3.5]\label{thm: matuispat}
		For $i \in \{1,2\}$ let $X_i$ be Cantor spaces and let $G_i \leq \operatorname{Homeo}(X_i)$ be groups of class F. Let $\alpha \colon G_1 \to G_2$ be an isomorphism and let $x \in X_1$. Fix the sets
		\begin{equation*}
		T(x):=\{g \in G_1|x \in \supp(g), g^2=1\}
		\end{equation*}
		and
		\begin{equation*}
		P(x):= \bigcap_{g \in T(x)} \supp(\alpha(g)).
		\end{equation*}
		Then $P(x)$ is a singleton for every $x \in X$ and the map $\varphi \colon X_1 \to X_2$ given by $\varphi(x):=P(x)$ for $x \in X_1$ is a homeomorphism such that $\alpha(g)=\varphi \circ g \circ \varphi^{-1}$ for all $g \in G_1$.
	\end{thm}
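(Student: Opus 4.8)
The plan is to derive everything from Lemma~\ref{lem: containment and intersection of supports}, which records that an isomorphism of class F groups preserves both inclusion and disjointness of supports of involutions. The whole argument is organized around a single biconditional that I would isolate as the central tool: for every involution $g \in G_1$,
\[
x \in \supp(g) \iff \varphi(x) \in \supp(\alpha(g)).
\]
Note first that $\alpha$ sends non-trivial involutions to non-trivial involutions (injectivity plus $\alpha(g)^2 = \alpha(g^2)$), so by (F1) all supports in sight are clopen.

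First I would check that $P(x)$ is a singleton, which simultaneously furnishes the biconditional. Property (F2) guarantees $T(x) \neq \emptyset$ and, crucially, that $\{\supp(g) : g \in T(x)\}$ is downward directed: given $g_1,g_2 \in T(x)$, apply (F2) inside the clopen neighbourhood $\supp(g_1) \cap \supp(g_2)$ of $x$ to get $h \in T(x)$ with $\supp(h)$ inside both, whence Lemma~\ref{lem: containment and intersection of supports}(i) gives $\supp(\alpha(h)) \subseteq \supp(\alpha(g_i))$. Thus $\{\supp(\alpha(g)) : g \in T(x)\}$ is a downward directed family of non-empty closed subsets of the compact space $X_2$, so by the finite intersection property $P(x) \neq \emptyset$. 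For uniqueness I would argue by contradiction: if $y_1 \neq y_2$ both lay in $P(x)$, use (F2) in $G_2$ to pick an involution $\sigma$ with $y_1 \in \supp(\sigma)$, $y_2 \notin \supp(\sigma)$, set $g := \alpha^{-1}(\sigma)$, and split on whether $x \in \supp(g)$. If $x \in \supp(g)$, then $g \in T(x)$ forces $P(x) \subseteq \supp(\sigma)$, contradicting $y_2 \notin \supp(\sigma)$; if $x \notin \supp(g)$, then (F2) yields $h \in T(x)$ with $\supp(h) \cap \supp(g) = \emptyset$, so Lemma~\ref{lem: containment and intersection of supports}(ii) makes $\supp(\alpha(h))$ — which contains $P(x)$ — disjoint from $\supp(\sigma)$, contradicting $y_1 \in P(x) \cap \supp(\sigma)$. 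The very same case split (replacing ``$x \in \supp(g)$ vs.\ $x \notin \supp(g)$'' by ``prove $\varphi(x) \in \supp(\alpha(g))$ vs.\ its negation'') establishes the displayed biconditional.

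Next I would show $\varphi$ is a homeomorphism. Running the identical construction for $\alpha^{-1}$ produces a map $\psi \colon X_2 \to X_1$, and applying the biconditional for $\alpha$ and for $\alpha^{-1}$ shows immediately that $\psi \circ \varphi = \mathrm{id}$ and $\varphi \circ \psi = \mathrm{id}$, so $\varphi$ is a bijection. Continuity is a short consequence of (F2) and the biconditional: given a clopen neighbourhood $W$ of $\varphi(x)$, pick $\sigma$ with $\varphi(x) \in \supp(\sigma) \subseteq W$, set $g := \alpha^{-1}(\sigma)$ (an involution), so that $x \in \supp(g)$ by the biconditional; then the clopen set $V := \supp(g)$ satisfies $\varphi(V) \subseteq \supp(\sigma) \subseteq W$, again by the biconditional. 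A continuous bijection from a compact space to a Hausdorff space is a homeomorphism. Finally, the spatial intertwining $\alpha(g) = \varphi \circ g \circ \varphi^{-1}$ for arbitrary $g \in G_1$ is bootstrapped from the involution case via conjugation, using $\supp(g\tau g^{-1}) = g(\supp(\tau))$ and $\alpha(g\tau g^{-1}) = \alpha(g)\alpha(\tau)\alpha(g)^{-1}$. Fixing $g$ and $x$, each $\tau \in T(x)$ yields $g\tau g^{-1} \in T(gx)$, so the biconditional gives $\varphi(gx) \in \alpha(g)\big(\supp(\alpha(\tau))\big)$, while $\alpha(g)\varphi(x) \in \alpha(g)\big(\supp(\alpha(\tau))\big)$ since $\varphi(x) \in P(x) \subseteq \supp(\alpha(\tau))$; intersecting over $\tau$ and using injectivity of $\alpha(g)$,
\[
\varphi(gx),\ \alpha(g)\varphi(x)\ \in\ \alpha(g)\Big(\bigcap_{\tau \in T(x)} \supp(\alpha(\tau))\Big) = \alpha(g)(P(x)) = \{\alpha(g)\varphi(x)\},
\]
forcing equality.

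I expect the main obstacle to be the singleton property of $P(x)$, and in particular getting its two halves to cooperate: non-emptiness is exactly where compactness and the directedness manufactured by (F2) are indispensable, while uniqueness hinges on performing the separation in $G_2$ and pulling it back through $\alpha^{-1}$, then checking that the case split on $x \in \supp(g)$ is genuinely exhaustive. Everything after the biconditional is formal, but the biconditional itself must invoke the inclusion clause and the disjointness clause of Lemma~\ref{lem: containment and intersection of supports} in precisely the right places, so the delicate point is bookkeeping which clause applies in which case rather than any hard estimate.
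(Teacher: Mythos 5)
Your proof is correct and follows exactly the route the paper sets up: the paper states this theorem without proof after preparing Lemma~\ref{lem: containment and intersection of supports}, and your argument is the natural completion of that setup (and matches Matui's original proof, modeled on Fremlin's Theorem~384D). The directedness-plus-compactness argument for non-emptiness of $P(x)$, the case split on $x \in \supp(g)$ pulled back through $\alpha^{-1}$ for uniqueness, and the conjugation bootstrap for the intertwining are all sound as written.
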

	
	Matui proceeds by showing:
	
	\begin{prop}[\cite{mat15}, Proposition~3.6]\label{prop: classF}
		Let $\mathcal{G}$ be a minimal, effective, \'etale Cantor groupoid. Then every subgroup $G\leq\mathfrak{T}(\mathcal{G})$ with $\mathfrak{T}(\mathcal{G})' \leq G$ is a subgroup of $\operatorname{Homeo}(\mathcal{G}^{(0)})$ of class F. 
	\end{prop}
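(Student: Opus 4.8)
The plan is to verify the four axioms (F1)--(F4) of Definition~\ref{defi: class F} for $G$, after first realizing $G$ as a group of homeomorphisms of $X:=\mathcal{G}^{(0)}$. Since $\mathcal{G}$ is effective, Remark~\ref{rem: topfgroupsdefi}(iii) identifies $\mathfrak{T}(\mathcal{G})$, and hence $G$, with a subgroup of $\operatorname{Homeo}(X)$ via $B\mapsto\alpha_B=r\circ(s|_B)^{-1}$. The two regimes to keep in mind are: (F1) is a statement about \emph{every} $g\in G\le\mathfrak{T}(\mathcal{G})$, whereas (F2)--(F4) only ask for the \emph{existence} of suitable elements; since in the extremal case $G=\mathfrak{T}(\mathcal{G})'$, it suffices to manufacture the required elements already inside the commutator subgroup $\mathfrak{T}(\mathcal{G})'\le G$.

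For (F1), write $g=\alpha_B$ with $B$ a compact open full bisection. Because $s|_B$ and $r|_B$ are homeomorphisms onto $X$, a direct computation gives $\operatorname{Fix}(\alpha_B)=s(B\cap\operatorname{Iso}(\mathcal{G}))$. As $X$ is Hausdorff, $\operatorname{Iso}(\mathcal{G})$ is closed (Remark~\ref{rem: unitsclosed}), so $B\cap\operatorname{Iso}(\mathcal{G})$ is closed in $B$ and $\{u:g(u)\ne u\}$ is open. The point where effectiveness is genuinely used is the reverse inclusion: effectiveness yields $\operatorname{Iso}(\mathcal{G})^\circ=X$ (Remark~\ref{rem: effective}) together with essential principality (Proposition~\ref{prop: effective}), which force $B\cap\operatorname{Iso}(\mathcal{G})=B\cap X$, a clopen subset of $B$; hence $\operatorname{Fix}(\alpha_B)$ is clopen and $\operatorname{supp}(g)$ is clopen. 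I expect this clopen-support step to be the main obstacle, since it is precisely where the non-principal isotropy of a fixed point must be ruled out.

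The combinatorial engine for (F2)--(F4) is a commutator identity fed by minimality. By Lemma~\ref{lem: existence of pencils} and shrinking, inside any nonempty clopen $U$ one can produce arbitrarily many pairwise disjoint clopen sets that are \emph{slice-isomorphic} (joined by compact open slices), with a prescribed one containing a given point $x$. Given two transpositions $\tau_{B_1},\tau_{B_2}$ (in the notation preceding Corollary~\ref{cor: existence of involutions}) whose four source/range pieces $W_1,W_1',W_2,W_2'$ are pairwise disjoint, together with a slice $E$ with $s(E)=W_1$, $r(E)=W_2$, put $E':=B_2EB_1^{-1}$ and $c:=\tau_E\tau_{E'}$. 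Then $c\tau_{B_1}c^{-1}=\tau_{B_2}$, so $\tau_{B_1}\tau_{B_2}=[\tau_{B_1},c]\in\mathfrak{T}(\mathcal{G})'$; similarly a three-cycle of three mutually slice-isomorphic pieces equals $[\tau_{D_1},\tau_{D_2}]\in\mathfrak{T}(\mathcal{G})'$ and satisfies $g^2\ne1$. These are the groupoid incarnations of $(12)(34)=[(12),c]$ and $(123)=[(12),(13)]$.

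With this in hand the remaining axioms follow. For (F4), take three pairwise disjoint slice-isomorphic clopen copies inside $U$ and use the three-cycle; it lies in $\mathfrak{T}(\mathcal{G})'\le G$, has support in $U$ and $g^2\ne1$. For (F2), choose $W_1\ni x$ together with three further disjoint slice-isomorphic copies inside $U$ and form the double transposition $\tau_{B_1}\tau_{B_2}=[\tau_{B_1},c]$: it is an involution in $\mathfrak{T}(\mathcal{G})'$, its support lies in $U$, and $x\in W_1\subseteq\operatorname{supp}(\tau_{B_1}\tau_{B_2})$. For (F3), given an involution $g\in G$ (whose support is clopen by (F1)) and nonempty clopen $U\subset\operatorname{supp}(g)$, use Hausdorffness and minimality to pick disjoint clopen $W_1,W_2\subseteq U$ with $W_i\cap g(W_i)=\emptyset$, with $W_1,g(W_1),W_2,g(W_2)$ pairwise disjoint and $W_1$ slice-isomorphic to $W_2$; setting $B_i:=g|_{W_i}$, the element $g':=\tau_{B_1}\tau_{B_2}=[\tau_{B_1},c]\in\mathfrak{T}(\mathcal{G})'$ agrees with $g$ on $\operatorname{supp}(g')=W_1\cup g(W_1)\cup W_2\cup g(W_2)\subseteq U\cup g(U)$, as required. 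The virtue of the commutator identity is that it simultaneously guarantees membership in $\mathfrak{T}(\mathcal{G})'$ and lets the swaps in (F3) be realized by $g$'s own slices $B_i$, so the delicate slice-matching that (F3) seems to demand is automatic.
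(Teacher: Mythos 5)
Your proposal is correct and follows essentially the same route as the paper's proof: axiom (F1) is handled for all of $\mathfrak{T}(\mathcal{G})$ via effectiveness, and the witnesses for the existential axioms (F2)--(F4) are manufactured inside $\mathfrak{T}(\mathcal{G})'$ as commutators of transpositions and three-cycles supported on pairwise disjoint, slice-isomorphic clopen pieces supplied by minimality (Lemma~\ref{lem: existence of pencils}/Corollary~\ref{cor: existence of involutions}); the paper's explicit commutators $[\tau_W,\tau_V]$, $[\tilde{\gamma},[\gamma,\tau_W]]$ and $[(\tau_{Ws(V)})^{-1},\tau_V]$ are only cosmetically different from your $[\tau_{B_1},c]$ with auxiliary conjugator $c=\tau_E\tau_{E'}$.

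One step in your (F1) is overstated. For an effective but non-principal groupoid the equality $B\cap\operatorname{Iso}(\mathcal{G})=B\cap\mathcal{G}^{(0)}$ can fail: essential principality only makes the set of units with trivial isotropy dense, so $B$ may contain isolated isotropy elements over singular units, and consequently $\operatorname{Fix}(\alpha_B)=s\bigl(B\cap\operatorname{Iso}(\mathcal{G})\bigr)$ is closed but need not be open. What effectiveness actually yields is that the \emph{interior} of $B\cap\operatorname{Iso}(\mathcal{G})$ equals $B\cap\mathcal{G}^{(0)}$ (any open slice inside $\operatorname{Iso}(\mathcal{G})$ must lie in $\mathcal{G}^{(0)}$), whence $\operatorname{Fix}(\alpha_B)^{\circ}=B\cap\mathcal{G}^{(0)}$ and
\begin{equation*}
\supp(\alpha_B)=\overline{\{u\in\mathcal{G}^{(0)}\mid \alpha_B(u)\neq u\}}=\mathcal{G}^{(0)}\setminus\bigl(B\cap\mathcal{G}^{(0)}\bigr),
\end{equation*}
which is still clopen. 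Since the support only depends on the interior of the fixed-point set, this repairs the step without altering the rest of your argument.
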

	\begin{proof}
		It is sufficient to show that $\mathfrak{T}(\mathcal{G})'$ is of class F.
		\item [(F1)] This holds by definition.
		
		The properties (F2), (F3) and (F4) follow from minimality by Corollary~\ref{cor: existence of involutions}.
		
		\item [(F2)] Let $U \subseteq \mathcal{G}^{(0)}$ be non-empty and clopen and let $x \in U$. Then there exist non-empty, compact, open bisections $W,V \in \mathcal{B}_\mathcal{G}^{o,k}$ such that $s(W) \cup r(W) \subseteq U$, $x \in s(W)$, $s(W) \cap r(W) = \emptyset$, $s(V) \cup r(V) \subseteq s(W)$, $x \in s(V)$ and $s(V) \cap r(V) = \emptyset$. The element $[\tau_W,\tau_V]$ is the required involution.
		
		\item [(F3)] Let $\gamma$ be a non-trivial involution in $\mathfrak{T}(\mathcal{G})$ and let $U \subseteq \supp(\gamma)$ be non-empty and clopen. There exists a non-empty, compact, open bisection $W \in \mathcal{B}_\mathcal{G}^{o,k}$ with $s(W) \cup r(W) \subseteq U$ such that $s(W)$, $r(W)$, $\gamma(s(W))$ and $\gamma(r(W))$ are pairwise disjoint. Define $\tilde{\gamma} \in \mathfrak{T}(\mathcal{G})$ by
		\begin{equation*}
		\tilde{\gamma}(x):=
		\begin{cases}
		\gamma(x), & \text{ if } x \in s(W) \cup \gamma(s(W))\\
		x, & \text{ else}
		\end{cases}
		\end{equation*}
		The element $[\tilde{\gamma},[\gamma,\tau_W]]$ is sufficient.
		
		\item [(F4)] Let $U \subseteq \mathcal{G}^{(0)}$ be non-empty and clopen. Then there exist non-empty, compact, open bisections $W,V \in \mathcal{B}_\mathcal{G}^{o,k}$ such that $s(W) \cup r(W) \subseteq U$, $s(V) \cup r(V) \subseteq U$, the sets $s(W)$, $r(W)$ and $r(V)$ are pairwise disjoint and $s(V) \subseteq s(W)$. Then the element $[(\tau_{(Ws(V))})^{-1},\tau_V]$ has order three.					
	\end{proof}

	\begin{prop}[\cite{mat15}, Proposition~3.8]\label{prop: spatial}
		For $i \in \{1,2\}$ let $\mathcal{G}_i$ be a minimal, effective, \'etale Cantor groupoid and let $G_i$ be subgroups of $\mathfrak{T}(\mathcal{G}_i)$ with $\mathfrak{T}(\mathcal{G}_i)' \leq G_i$. Then every spatial isomorphism $\alpha \colon G_1 \to G_2$ induces an isomorphism of groupoids between $\mathcal{G}_1$ and $\mathcal{G}_2$.
	\end{prop}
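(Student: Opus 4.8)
The plan is to realise each $\mathcal{G}_i$ as a groupoid of germs of its topological full group and to transport germs along the given spatial homeomorphism. Let $\varphi \colon \mathcal{G}_1^{(0)} \to \mathcal{G}_2^{(0)}$ be the homeomorphism witnessing spatiality, so $\alpha(\gamma) = \varphi \circ \gamma \circ \varphi^{-1}$ for all $\gamma \in G_1$, where elements of $G_i \leq \mathfrak{T}(\mathcal{G}_i)$ are regarded both as homeomorphisms of $\mathcal{G}_i^{(0)}$ (Remark~\ref{rem: topfgroupsdefi}(iii)) and as global compact open bisections $B_\gamma \in \mathcal{B}_{\mathcal{G}_i}^{o,k}$ with $s(B_\gamma) = r(B_\gamma) = \mathcal{G}_i^{(0)}$. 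For $\gamma \in G_1$ and $u \in \mathcal{G}_1^{(0)}$ I write $g_{\gamma,u}$ for the unique element of $B_\gamma$ with source $u$, and I define $\Psi \colon \mathcal{G}_1 \to \mathcal{G}_2$ by $\Psi(g_{\gamma,u}) := g_{\alpha(\gamma),\varphi(u)}$.

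First I would settle well-definedness. Since $\mathcal{G}_1$ is effective, the canonical map onto its groupoid of germs is an isomorphism (\cite{ren08}, Proposition~3.2, together with Remark~\ref{rem: effective}); concretely, $g_{\gamma,u} = g_{\gamma',u}$ holds exactly when $\gamma$ and $\gamma'$ agree on a neighbourhood $U$ of $u$. In that case $\alpha(\gamma) = \varphi\gamma\varphi^{-1}$ and $\alpha(\gamma') = \varphi\gamma'\varphi^{-1}$ agree on the neighbourhood $\varphi(U)$ of $\varphi(u)$, whence $g_{\alpha(\gamma),\varphi(u)} = g_{\alpha(\gamma'),\varphi(u)}$ by effectiveness of $\mathcal{G}_2$, so $\Psi$ is well defined. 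By construction $s(\Psi(g)) = \varphi(s(g))$ and $r(\Psi(g)) = \alpha(\gamma)(\varphi(u)) = \varphi(r(g))$, so $\Psi$ intertwines the structure maps with $\varphi$. As germs multiply and invert componentwise and $\alpha$ is a homomorphism, $\Psi(g_1g_2) = \Psi(g_1)\Psi(g_2)$ and $\Psi(g^{-1}) = \Psi(g)^{-1}$ follow on composable pairs. Running the identical construction for $\alpha^{-1}\colon G_2 \to G_1$, which is spatial via $\varphi^{-1}$, produces a two-sided inverse, so $\Psi$ is a bijective groupoid homomorphism.

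The hard part will be showing that $\Psi$ is actually defined on all of $\mathcal{G}_1$, i.e. that the bisections $\{B_\gamma : \gamma \in G_1\}$ cover $\mathcal{G}_1$; equivalently, every $g \in \mathcal{G}_1$ must be a germ of some element of $G_1$. Fix $g$ with $u = s(g)$, $v = r(g)$ and choose a compact open slice $B \ni g$ (these form a basis, as $\mathcal{G}_1$ is ample). If $v \neq u$ I would shrink $B$ so that $s(B) \cap r(B) = \emptyset$; then the involution $T_B \in \mathfrak{T}(\mathcal{G}_1)$ has germ $g$ at $u$, and minimality supplies the auxiliary slices needed to build it (Lemma~\ref{lem: existence of pencils}, Corollary~\ref{cor: existence of involutions}). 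The genuine obstacle is that $G_1$ may be as small as $\mathfrak{T}(\mathcal{G}_1)'$, so the realising element must lie in the derived subgroup. Here I would recycle the commutator constructions from the verification of the class-F axioms in Proposition~\ref{prop: classF}: starting from $T_W$ with $W \subseteq B$ a compact open sub-slice with $u \in s(W)$, property (F3) yields a commutator of involutions in $\mathfrak{T}(\mathcal{G}_1)' \leq G_1$ agreeing with $T_W$ near $u$, hence with germ $g$ at $u$. An isotropy element $g$, where $s(B)$ and $r(B)$ cannot be separated, is treated by first extending $\alpha_B$ to a global bisection through $g$ (again by minimality) and then descending to the derived subgroup as before. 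Pinning down this germ-realisation cleanly for the smallest admissible $G_1$ is the step demanding the most care.

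Finally I would check that $\Psi$ is a homeomorphism: on the open bisection $B_\gamma$ it equals $\Psi|_{B_\gamma} = (s|_{B_{\alpha(\gamma)}})^{-1} \circ \varphi \circ s|_{B_\gamma}$, a composite of homeomorphisms, so $\Psi$ is continuous and open on each member of the open cover $\{B_\gamma\}$, hence globally. Being a continuous, open, bijective groupoid homomorphism whose inverse $\Psi^{-1}$ arises symmetrically, $\Psi$ is an isomorphism of topological groupoids between $\mathcal{G}_1$ and $\mathcal{G}_2$, as required.
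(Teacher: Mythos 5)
Your proposal is correct and follows essentially the same route as the paper's own proof sketch: realize every $g \in \mathcal{G}_1$ as the element over $s(g)$ of a global compact open bisection whose homeomorphism lies in $\mathfrak{T}(\mathcal{G}_1)' \leq G_1$ (via the commutator constructions from the class-F verification, as in (F3)/(F4) of Proposition~\ref{prop: classF}), then transport along the spatial homeomorphism by $g \mapsto (s|_{hBh^{-1}})^{-1}(h(s(g)))$, with well-definedness coming from effectiveness. You in fact supply more detail than the paper's sketch on well-definedness, multiplicativity, and the local homeomorphism check, and you correctly isolate the germ-realisation in the derived subgroup as the step needing the most care.
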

	\begin{proofsketch}
		Let $h \colon G_1^{(0)} \to G_2^{(0)}$ such that $\alpha\colon G_1 \to G_2$ is given by $\gamma \mapsto h \circ \gamma \circ h^{-1}$. By minimality every $g \in \mathcal{G}_1$ is contained in an open, compact slice $B \in \mathcal{B}_\mathcal{G}^{o,k}$ such that the associated element $\alpha_B \in \mathfrak{T}(\mathcal{G})$ is in $\mathfrak{T}(\mathcal{G})'$ (This is verified similarily as the proof of (F4) in Proposition~\ref{prop: classF}). For every $g \in \mathcal{G}_1$ the element $g':= (s|_{h \circ B \circ h^{-1} })^{-1} (h(s(g))) \in \mathcal{G}^2$ is well defined and the map induced by $g \to g'$ is a groupoid isomorphisms between $\mathcal{G}_1$ and $\mathcal{G}_2$.
	\end{proofsketch}
	
	Theorem~\ref{thm: matuispat}, Proposition~\ref{prop: classF} and Proposition~\ref{prop: spatial} then imply:
	
	\begin{thm}[\cite{mat15}, Theorem 3.10]\label{thm: isom}
		Let $\mathcal{G}_1, \mathcal{G}_2$ be effective, minimal, Hausdorff, \'etale Cantor groupoids. The following are equivalent:
		\begin{enumerate}[(i)]
			\item $\mathcal{G}_1 \cong \mathcal{G}_2$
			
			\item $\mathfrak{T}(\mathcal{G}_1) \cong \mathfrak{T}(\mathcal{G}_2)$
			
			\item $\mathfrak{T}(\mathcal{G}_1)' \cong \mathfrak{T}(\mathcal{G}_2)'$
			
			\item $\mathfrak{T}(\mathcal{G}_1)_0 \cong \mathfrak{T}(\mathcal{G}_2)_0$
		\end{enumerate}
	\end{thm}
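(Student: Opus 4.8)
The plan is to derive the theorem as an assembly of the three preceding results, Theorem~\ref{thm: matuispat}, Proposition~\ref{prop: classF} and Proposition~\ref{prop: spatial}, with the implication $(i)\Rightarrow(ii),(iii),(iv)$ handled by elementary functoriality and the three converse implications reduced to a single uniform argument. Since the groupoids are assumed effective, Remark~\ref{rem: topfgroupsdefi}(iii) lets me identify $\mathfrak{T}(\mathcal{G}_i)$ with a subgroup of $\operatorname{Homeo}(\mathcal{G}_i^{(0)})$ via $B \mapsto r\circ(s|_B)^{-1}$, so that every subgroup in sight is genuinely a group of homeomorphisms of the Cantor space $\mathcal{G}_i^{(0)}$, which is the setting required for the notion of class F.

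First I would treat the forward direction. A groupoid isomorphism $\Psi\colon\mathcal{G}_1\to\mathcal{G}_2$ carries compact open slices to compact open slices and respects source, range, product and inversion of subsets; hence $B\mapsto\Psi(B)$ restricts to a group isomorphism $\Psi_*\colon\mathfrak{T}(\mathcal{G}_1)\to\mathfrak{T}(\mathcal{G}_2)$, giving $(ii)$. Any group isomorphism preserves commutator subgroups, so $\Psi_*$ restricts to an isomorphism $\mathfrak{T}(\mathcal{G}_1)'\cong\mathfrak{T}(\mathcal{G}_2)'$, giving $(iii)$. For $(iv)$ I observe that $\Psi$ also induces an isomorphism $\Psi_*\colon H_1(\mathcal{G}_1)\to H_1(\mathcal{G}_2)$ on groupoid homology, and that the two index maps of Definition~\ref{defi: index2} are natural, i.e. $\operatorname{I}\circ\Psi_*=\Psi_*\circ\operatorname{I}$, because $\Psi_*[\mathbf{1}_B]=[\mathbf{1}_{\Psi(B)}]$. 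Consequently $\Psi_*$ maps $\mathfrak{T}(\mathcal{G}_1)_0=\ker\operatorname{I}$ isomorphically onto $\mathfrak{T}(\mathcal{G}_2)_0$, establishing $(i)\Rightarrow(iv)$.

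For the converse I would show that each of $(ii)$, $(iii)$, $(iv)$ reduces to the same chain. In every case the given group $G_i$ satisfies $\mathfrak{T}(\mathcal{G}_i)'\leq G_i\leq\mathfrak{T}(\mathcal{G}_i)$: this is immediate for $\mathfrak{T}(\mathcal{G}_i)$ and for $\mathfrak{T}(\mathcal{G}_i)'$ itself, while for $\mathfrak{T}(\mathcal{G}_i)_0$ it follows from the fact that $\operatorname{I}$ takes values in the abelian group $H_1(\mathcal{G}_i)$, so that $\mathfrak{T}(\mathcal{G}_i)'\subseteq\ker\operatorname{I}=\mathfrak{T}(\mathcal{G}_i)_0$. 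By Proposition~\ref{prop: classF} each $G_i$ is then a group of class F. Given an abstract group isomorphism $\alpha\colon G_1\to G_2$, Theorem~\ref{thm: matuispat} produces a homeomorphism $\varphi\colon\mathcal{G}_1^{(0)}\to\mathcal{G}_2^{(0)}$ realizing $\alpha$ spatially, i.e. $\alpha(g)=\varphi\circ g\circ\varphi^{-1}$ for all $g\in G_1$. Finally Proposition~\ref{prop: spatial} upgrades this spatial isomorphism to a groupoid isomorphism $\mathcal{G}_1\cong\mathcal{G}_2$, yielding $(i)$; here minimality, effectiveness and the Hausdorff hypothesis are exactly what guarantee that Proposition~\ref{prop: classF} applies and that Proposition~\ref{prop: spatial} can reconstruct the groupoid.

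The genuinely hard content is not in this bookkeeping but in the two ingredients I am allowed to cite: the spatial realization Theorem~\ref{thm: matuispat}, whose proof passes through the algebraic characterization of local subgroups $W_\tau=G_{\supp(\tau)}$ and the singleton argument for the set $P(x)$, and Proposition~\ref{prop: spatial}, which rebuilds the groupoid from the homeomorphism by exhibiting each $g\in\mathcal{G}_1$ inside a slice whose associated element lies in the derived subgroup. The only points demanding care in assembling Theorem~\ref{thm: isom} are therefore verifying that the index kernel indeed contains the derived subgroup and that effectiveness is what licenses the passage between slices and homeomorphisms; once these are in place the four conditions close up into the cycles $(i)\Rightarrow(ii)\Rightarrow(i)$, $(i)\Rightarrow(iii)\Rightarrow(i)$ and $(i)\Rightarrow(iv)\Rightarrow(i)$, which together give the asserted equivalence.
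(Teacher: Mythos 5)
Your proposal is correct and follows exactly the route the paper takes: the theorem is stated there as an immediate consequence of Theorem~\ref{thm: matuispat}, Proposition~\ref{prop: classF} and Proposition~\ref{prop: spatial}, and your assembly — functoriality for the forward implications, the containment $\mathfrak{T}(\mathcal{G}_i)' \leq G_i$ (via $\mathfrak{T}(\mathcal{G}_i)' \leq \ker\operatorname{I}$ for the index kernel) to invoke class F, spatial realization, and then reconstruction of the groupoid — is precisely that argument spelled out.
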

	
	Later on it was noticed that spatial realization follows directly from reconstruction results of Matatyahu Rubin:
	
	\begin{defi}[\cite{mat16}, §5]
		Let $X$ be a topological space.	A subgroup $G \subseteq \operatorname{Homeo}(X)$ is called \emph{locally dense} if for every $x \in X$ and every open set $x \in U \subseteq X$ the set $\{f(x)|f \in G, f|_{X \setminus U} = \operatorname{id}_{X \setminus U} \}$ is somewhere dense.
	\end{defi}
	
	The following theorem is a direct consequence of Corollary 3.5 in \cite{rub89}:
	
	\begin{thm}[\cite{mat16}, Theorem 5.3]\label{thm: rubin}
		Let $X_1,X_2$ be locally compact, Hausdorff topological spaces containing no isolated points. Let $G_1 \subseteq \operatorname{Homeo}(X_1), G_2 \subseteq \operatorname{Homeo}(X_2)$ be isomorphic, locally dense groups of homeomorphisms.
		Then for every isomorphism $\varphi \colon G_1 \to G_2$, there exists a unique homeomorphism $\phi \colon X_1 \to X_2$ such that $\varphi(g)=\phi \circ g \circ \phi^{-1}$.
	\end{thm}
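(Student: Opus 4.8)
The plan is to reduce the statement directly to Matatyahu Rubin's reconstruction theorem, Corollary 3.5 of \cite{rub89}: the whole content of the proof is to verify that the present hypotheses match those of Rubin's corollary and to translate its conclusion into the language used here. Rubin's framework concerns groups acting on suitable topological spaces with a sufficiently rich supply of homeomorphisms supported in small open sets; the spaces $X_1, X_2$ being locally compact, Hausdorff and without isolated points are exactly the ambient hypotheses under which his reconstruction machinery operates (the absence of isolated points prevents degenerate behaviour, while local compactness together with Hausdorffness supplies the separation needed to localise).

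First I would check that local density of $G_i \le \operatorname{Homeo}(X_i)$ is precisely the ``richness'' (local movability) condition that Rubin's corollary demands of the acting group. By definition, for each $x \in X_i$ and each open $U \ni x$ the set $\{ f(x) : f \in G_i,\ f|_{X_i \setminus U} = \operatorname{id} \}$ is somewhere dense; in particular, since $X_i$ has no isolated points, this set is infinite and hence contains a point distinct from $x$, witnessing an element of $G_i$ that moves $x$ while fixing $X_i \setminus U$ pointwise. This is exactly the local movement property feeding Rubin's theorem. Granting this identification, Corollary 3.5 of \cite{rub89} applies verbatim to the isomorphism $\varphi \colon G_1 \to G_2$ and yields a homeomorphism $\phi \colon X_1 \to X_2$ with $\varphi(g) = \phi \circ g \circ \phi^{-1}$ for all $g \in G_1$.

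It remains to settle uniqueness, which I would argue by hand. Suppose $\phi_1, \phi_2 \colon X_1 \to X_2$ both implement $\varphi$. Then $\psi := \phi_2^{-1} \circ \phi_1$ is a homeomorphism of $X_1$ commuting with every $g \in G_1$. If $\psi \neq \operatorname{id}$, pick $x$ with $\psi(x) \neq x$ and, using that $X_1$ is Hausdorff, disjoint open sets $U_0 \ni x$ and $V_0 \ni \psi(x)$; by continuity shrink to an open $U$ with $x \in U \subseteq U_0$ and $\psi(U) \subseteq V_0$, so that $U$ and $V := \psi(U)$ are disjoint. Local density furnishes $g \in G_1$ with $g|_{X_1 \setminus U} = \operatorname{id}$ and $g(x) \neq x$. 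Since $\psi$ centralises $G_1$, we have $g = \psi g \psi^{-1}$, which fixes $X_1 \setminus V = \psi(X_1 \setminus U)$ pointwise; as $(X_1 \setminus U) \cup (X_1 \setminus V) = X_1$, the element $g$ fixes all of $X_1$, contradicting $g(x) \neq x$. Hence $\psi = \operatorname{id}$ and $\phi_1 = \phi_2$.

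The main obstacle is not analytic but definitional: the real work lies in matching the notion of \emph{local density} used here with the precise hypothesis of Rubin's Corollary 3.5, phrased in his own vocabulary of locally moving groups and the Boolean algebra of regular open sets. Once this dictionary is in place the existence of $\phi$ is immediate from \cite{rub89}, and the only genuinely self-contained step is the short centraliser argument for uniqueness given above.
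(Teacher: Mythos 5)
Your proposal matches the paper's treatment: the paper gives no proof beyond the remark that the statement is a direct consequence of Corollary~3.5 of \cite{rub89}, which is exactly your reduction, including the observation that local density plus the absence of isolated points yields nontrivial elements supported in arbitrarily small open sets. Your supplementary centraliser argument for uniqueness is correct (the element $g$ supported in $U$ is forced to equal its conjugate supported in the disjoint set $V=\psi(U)$, hence to be trivial) and is a detail the paper leaves entirely to the citation.
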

	
	\subsection{Simplicity and finite generation}\label{subs: simplicity and finite generation}
	
	In \cite{mat06}, the first paper primarily concerned with algebraic properties of $\mathfrak{T}(\varphi)$, Matui showed that $\mathfrak{T}(\varphi)'$ is simple by characterization of the abelianization of $\mathfrak{T}(\varphi)$ in terms of $K_0(C^*(X,\varphi))$ and in the case of a subshift finitely generated.
	Matui applies C*-algebraic methods, in that he uses a surjective homomorphism $\mathfrak{T}(\varphi)_0 \to K_0(C^*(X,\varphi)) \otimes (\mathbb{Z}/2\mathbb{Z}) $ which is a modification of the \emph{mod map} (\cite{gps99}, Definition 2.10) -- thus the name \emph{signature map}. To set up the signature map, Matui proves a structural lemma on $\mathfrak{T}(\varphi)_0$ describing it in terms of the locally finite subgroups $\mathfrak{T}(\varphi)_{\{x\}}$.

	In Remark~\ref{rem: af sign}, we mentioned the isomorphism $\mathfrak{T}(\varphi)_{\{x\}}^{\operatorname{ab}} \cong K_0(C^*(X,\varphi)) \otimes (\mathbb{Z}/2\mathbb{Z})$. The preomposition with the quotient homomorphism induces a homomorphism
	\begin{equation*}
		\operatorname{sgn}_x \colon \mathfrak{T}(\varphi)_{\{x\}} \to K_0(C^*(X,\varphi)) \otimes (\mathbb{Z}/2\mathbb{Z}). 
	\end{equation*}The group homomorphism $\operatorname{sgn}$ on $\mathfrak{T}(\varphi)_0$ then is obtained by patching together the maps $\{\operatorname{sgn}_x\}_{x \in X}$ via the factorization obtained in Proposition~\ref{prop: mat.fac}. The homomorphism $\operatorname{sgn}_x$ can be expressed in the following way:
	
	Let $\gamma \in \mathfrak{T}(\varphi)_{\{x\}}$. For every $x \in X$, define $g_\gamma(x)$ to be the smallest positive integer $k$ such that $\gamma^k(x)=x$. This sets up a continuous function $g_\gamma \colon X \to \mathbb{N}$. Let $\{\mathcal{A}_n\}_{n \in \mathbb{N}}$ be a nested sequence of Kakutani-Rokhlin partitions such that the cocycle $f_\gamma$ is constant on atoms, then the function $g_\gamma$ is constant on atoms. Hence for every $k \in \mathbb{N}$ with $g_\gamma^{-1}(k)$ non-empty and there exists a union $\mathcal{D}_k = \bigcup_{i \in I} D_i$ of atoms of the partition $\{\mathcal{A}_n\}_{n \in \mathbb{N}}$ such that the sets $\mathcal{D}_k,\gamma(\mathcal{D}_k),\dots, \gamma^{k-1}(\mathcal{D}_k)$ are pairwise disjoint and their union is $g_\gamma^{-1}(k)$. Define a function $G_\gamma \in C(X,\mathbb{Z})$ by $G_\gamma=\sum_{k \in 2\mathbb{N}} \mathbf{1}_{D_{k}}$. This function allows to characterize $\operatorname{sgn}_x$:
	\begin{lem}[\cite{mat06}, Lemma~3.7]\label{lem: sgnx}
		Let $(X,\varphi)$ be a minimal Cantor system and let $\gamma \in \mathfrak{T}(\varphi)_{\{x\}}$. Then $\operatorname{sgn}_x(\gamma) = [G_\gamma] + 2K_0(C^*(X,\varphi)) \otimes (\mathbb{Z}/2\mathbb{Z})$. 
	\end{lem}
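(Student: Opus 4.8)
The plan is to evaluate both sides on a sufficiently fine Kakutani-Rokhlin partition and to match them cycle by cycle. First I would fix a nested sequence $\{\mathcal{A}_n\}_{n\in\mathbb{N}}$ of Kakutani-Rokhlin partitions around $x$ with property (H), chosen fine enough that the cocycle $f_\gamma$ — and hence $g_\gamma$ — is constant on the atoms of $\mathcal{A}_n$. By Corollary~\ref{cor: locally finite subgroups}(i) this places $\gamma$ in $\mathcal{P}(\mathcal{A}_n)\cong\bigoplus_{i}\mathfrak{S}_{h_i^n}$, so I may write $\gamma=(\sigma_i)_i$ with $\sigma_i\in\mathfrak{S}_{h_i^n}$ permuting the atoms $D^n_{0,i},\dots,D^n_{h_i^n-1,i}$ of the $i$-th tower. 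Under the description of the abelianization in Corollary~\ref{cor: ev.permut.} and Remark~\ref{rem: af sign}, the class of $\gamma$ in $\mathfrak{T}(\varphi)_{\{x\}}^{\operatorname{ab}}=\varinjlim_n(\mathbb{Z}/2\mathbb{Z})^{i_n}$ is the tuple of signs $(\operatorname{sgn}\sigma_i)_i$, and the $i$-th coordinate corresponds, under the identification with $K_0(C^*(X,\varphi))\otimes(\mathbb{Z}/2\mathbb{Z})$ furnished by Theorem~\ref{thm: cantor-AF}, to the class $[\mathbf{1}_{D^n_{0,i}}]$ of the $i$-th tower base. Thus the left-hand side equals $\sum_i(\operatorname{sgn}\sigma_i)\,[\mathbf{1}_{D^n_{0,i}}]$ modulo $2$.

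The computation of the right-hand side rests on two observations. The first is that $[\mathbf{1}_{\gamma(A)}]=[\mathbf{1}_A]$ in $K^0(X,\varphi)=C(X,\mathbb{Z})/\{f-f\circ\varphi^{-1}\}$ for every clopen $A$ and every $\gamma\in\mathfrak{T}(\varphi)$: writing $\gamma$ piecewise as a power of $\varphi$ on the partition of Proposition~\ref{prop: count} and using $\mathbf{1}_{\varphi(B)}=\mathbf{1}_B\circ\varphi^{-1}$ reduces this to the defining relations of the coinvariants. In particular, every atom of the tower $D^n(i)$ has the same class $[\mathbf{1}_{D^n_{0,i}}]$. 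The second is purely combinatorial: the $\gamma$-orbit of a point lying in $D^n_{k,i}$ visits exactly one point of each atom in the $\sigma_i$-cycle through $k$, so its length $g_\gamma$ equals the length of that cycle; consequently $g_\gamma^{-1}(\ell)\cap D^n(i)$ is the union of the length-$\ell$ cycles of $\sigma_i$, and a base $\mathcal{D}_\ell$ may be taken to consist of one atom of $D^n(i)$ per length-$\ell$ cycle.

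Combining these, $[\mathbf{1}_{\mathcal{D}_\ell\cap D^n(i)}]$ equals the number of length-$\ell$ cycles of $\sigma_i$ times $[\mathbf{1}_{D^n_{0,i}}]$, whence summing over even $\ell$ gives
\[
[G_\gamma]=\sum_i\big(\#\{\text{even-length cycles of }\sigma_i\}\big)\,[\mathbf{1}_{D^n_{0,i}}].
\]
Since a cycle of length $\ell$ has sign $(-1)^{\ell-1}$, the parity of the number of even-length cycles of $\sigma_i$ is exactly $\operatorname{sgn}\sigma_i$; reducing the last display modulo $2$ therefore reproduces $\sum_i(\operatorname{sgn}\sigma_i)[\mathbf{1}_{D^n_{0,i}}]$, matching the left-hand side and completing the identification.

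I expect the main obstacle to be the bookkeeping in the first paragraph: verifying that the isomorphism $\mathfrak{T}(\varphi)_{\{x\}}^{\operatorname{ab}}\cong K_0(C^*(X,\varphi))\otimes(\mathbb{Z}/2\mathbb{Z})$ really sends the sign of the $i$-th symmetric factor to the tower-base class $[\mathbf{1}_{D^n_{0,i}}]$ (tracing the Bratteli connecting maps reduced mod $2$ through Putnam's K-theory isomorphism of Theorem~\ref{thm: cantor-AF}), and checking that $[G_\gamma]$ is independent of the fineness $n$ and of the choice of the bases $\mathcal{D}_\ell$, so that the identity proved at one level is genuinely an identity in the direct limit. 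The pleasant consistency that both sides carry the \emph{same} coefficients $[\mathbf{1}_{D^n_{0,i}}]$ is itself a useful sanity check that the relevant generator is the base of the tower rather than the whole tower.
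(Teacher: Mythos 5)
Your argument is correct. The paper itself states this lemma without proof (deferring to Matui's Lemma~3.7), so there is no in-text proof to compare against; your reduction -- evaluating both sides on a single sufficiently fine Kakutani--Rokhlin partition, using that every atom of a tower has the class of the tower base in $K^0(X,\varphi)$ and that $g_\gamma$ records the cycle lengths of the permutations $\sigma_i$, then matching the parity of the number of even-length cycles with $\operatorname{sgn}\sigma_i$ -- is exactly the standard route. The one identification you defer does check out: the abelianization connecting maps of $\bigoplus_i\mathfrak{S}_{h_i^n}$ and the connecting maps of $K_0(\mathfrak{A}_{\{x\}})\otimes(\mathbb{Z}/2\mathbb{Z})$ are both the Bratteli incidence matrix reduced mod $2$, the generator of the $i$-th summand is the class of a minimal projection $\chi_{D^n_{k,i}}$, and this projection lies in $C(X)$, so its class is unchanged under Putnam's unital embedding; independence of the choice of bases $\mathcal{D}_\ell$ also follows since a dimension group is unperforated, hence torsion-free.
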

	
	This characterization implies $\operatorname{sgn}_x|_{\mathfrak{T}(\varphi)_{\{x\}} \cap\mathfrak{T}(\varphi)_{\{y\}}}= \operatorname{sgn}_y|_{\mathfrak{T}(\varphi)_{\{x\}} \cap \mathfrak{T}(\varphi)_{\{y\}}}$ for all $x,y \in X$. It holds that the map $\operatorname{sgn}_x$ is surjective on intersecions $\mathfrak{T}(\varphi)_{\{x\}}\cap \mathfrak{T}(\varphi)_{\{y\}}$ (\cite{mat06}, Lemma~4.3). Furthermore, let $\gamma \in \mathfrak{T}(\varphi)_0$. By Proposition~\ref{prop: mat.fac} for any $x,y \in X$ with $\operatorname{Orb}_\varphi(x) \neq \operatorname{Orb}_\varphi(y)$ there exist $\gamma_x \in \mathfrak{T}(\varphi)_{\{x\}}$ and $\gamma_y \in \mathfrak{T}(\varphi)_{\{y\}}$ with $\gamma=\gamma_x\gamma_y$. Define $\operatorname{sgn}(\gamma)=\operatorname{sgn}_x(\gamma_x)+\operatorname{sgn}_y(\gamma_y)$. By Lemma~4.4 in \cite{mat06}, $\operatorname{sgn}(\gamma)$ neither depends on the choice of $x,y \in X$ nor on the choice of factors $\gamma_x,\gamma_y$. Thus one obtains a well-defined surjective map $\operatorname{sgn} \colon \mathfrak{T}(\varphi)_0 \to K_0(C^*(X,\varphi)) \otimes (\mathbb{Z}/2\mathbb{Z})$, which is in fact a group homomorphism (\cite{mat06}, Proposition~4.6).	
	Equipped with this homomorphism, Matui characterizes the abelianization of $\mathfrak{T}(\varphi)_0$:				
	\begin{thm}[\cite{mat06}, Theorem 4.8.]\label{thm: sgn}
		Let $(X,\varphi)$ be a minimal Cantor system. Then the signature map satisfies $\ker(\operatorname{sgn})=\mathfrak{T}(\varphi)_0'$ giving rise to the isomorphism $\mathfrak{T}(\varphi)_0^\mathrm{ab} \cong K_0(C^*(X,\varphi)) \otimes (\mathbb{Z}/2\mathbb{Z})$.
	\end{thm}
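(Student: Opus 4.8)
The statement splits into two containments, of which only one requires work. Since $\operatorname{sgn}\colon \mathfrak{T}(\varphi)_0 \to K_0(C^*(X,\varphi))\otimes(\mathbb{Z}/2\mathbb{Z})$ is a group homomorphism into an abelian group, its kernel contains every commutator, so $\mathfrak{T}(\varphi)_0' \subseteq \ker(\operatorname{sgn})$ is immediate. The claimed isomorphism $\mathfrak{T}(\varphi)_0^{\mathrm{ab}} \cong K_0(C^*(X,\varphi))\otimes(\mathbb{Z}/2\mathbb{Z})$ then follows from the first isomorphism theorem once the reverse inclusion $\ker(\operatorname{sgn}) \subseteq \mathfrak{T}(\varphi)_0'$ is established, using that $\operatorname{sgn}$ is surjective. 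So the plan is to prove $\ker(\operatorname{sgn})\subseteq \mathfrak{T}(\varphi)_0'$.

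The first preparatory step is to identify the kernel of each local signature map. By Remark~\ref{rem: af sign} the map $\operatorname{sgn}_x$ is, up to the isomorphism $\mathfrak{T}(\varphi)_{\{x\}}^{\mathrm{ab}} \cong K_0(C^*(X,\varphi))\otimes(\mathbb{Z}/2\mathbb{Z})$, precisely the abelianization quotient of the locally finite group $\mathfrak{T}(\varphi)_{\{x\}}$; hence $\ker(\operatorname{sgn}_x) = \mathfrak{T}(\varphi)_{\{x\}}'$. Since $\mathfrak{T}(\varphi)_{\{x\}}\leq \mathfrak{T}(\varphi)_0$, we also have $\mathfrak{T}(\varphi)_{\{x\}}' \leq \mathfrak{T}(\varphi)_0'$, and likewise for $y$.

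Now fix two points $x,y\in X$ with $\operatorname{Orb}_\varphi(x) \neq \operatorname{Orb}_\varphi(y)$ and take $\gamma \in \ker(\operatorname{sgn})$. By the factorization of Proposition~\ref{prop: mat.fac} I would write $\gamma = \gamma_x\gamma_y$ with $\gamma_x \in \mathfrak{T}(\varphi)_{\{x\}}$ and $\gamma_y \in \mathfrak{T}(\varphi)_{\{y\}}$, so that $0 = \operatorname{sgn}(\gamma) = \operatorname{sgn}_x(\gamma_x) + \operatorname{sgn}_y(\gamma_y)$. Because the coefficient group is $2$-torsion this reads $\operatorname{sgn}_x(\gamma_x) = \operatorname{sgn}_y(\gamma_y)$. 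The crucial input is the cited surjectivity of $\operatorname{sgn}_x$ on the intersection $\mathfrak{T}(\varphi)_{\{x\}} \cap \mathfrak{T}(\varphi)_{\{y\}}$ (\cite{mat06}, Lemma~4.3) together with the compatibility that $\operatorname{sgn}_x$ and $\operatorname{sgn}_y$ agree on $\mathfrak{T}(\varphi)_{\{x\}} \cap \mathfrak{T}(\varphi)_{\{y\}}$, coming from Lemma~\ref{lem: sgnx}: these let me choose $\delta \in \mathfrak{T}(\varphi)_{\{x\}} \cap \mathfrak{T}(\varphi)_{\{y\}}$ with $\operatorname{sgn}_x(\delta) = \operatorname{sgn}_x(\gamma_x)$, whence also $\operatorname{sgn}_y(\delta) = \operatorname{sgn}_y(\gamma_y)$. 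Then $\gamma_x\delta^{-1} \in \ker(\operatorname{sgn}_x) = \mathfrak{T}(\varphi)_{\{x\}}'$ and $\delta\gamma_y \in \ker(\operatorname{sgn}_y) = \mathfrak{T}(\varphi)_{\{y\}}'$, and the rewriting $\gamma = (\gamma_x\delta^{-1})(\delta\gamma_y)$ exhibits $\gamma$ as a product of two elements of $\mathfrak{T}(\varphi)_0'$, as desired.

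The routine part is the bookkeeping with $\mathbb{Z}/2\mathbb{Z}$-coefficients; the genuine obstacle, which is exactly where the earlier structural results are consumed, is producing the element $\delta$ in the intersection with prescribed signature. This rests on the surjectivity statement, whose proof in turn relies on the explicit description of $\operatorname{sgn}_x$ via the function $G_\gamma$ in Lemma~\ref{lem: sgnx} and on the $2$-divisibility of classes $[\mathbf{1}_V]$ furnished by Lemma~\ref{lem: 2-div.}. One should also double-check that $\operatorname{sgn}$ is genuinely independent of the chosen factorization and of the points $x,y$ before invoking it, but this independence is provided by the cited lemmas of \cite{mat06}.
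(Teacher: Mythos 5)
Your proof is correct, and it reaches the conclusion by a genuinely different (if closely related) route from the paper's. Both arguments reduce to showing $\ker(\operatorname{sgn})\subseteq\mathfrak{T}(\varphi)_0'$, both start from the factorization $\gamma=\gamma_x\gamma_y$ of Proposition~\ref{prop: mat.fac}, and both use the identification $\ker(\operatorname{sgn}_x)=\mathfrak{T}(\varphi)_{\{x\}}'\leq\mathfrak{T}(\varphi)_0'$; they diverge in how the two factors are corrected. You take an arbitrary factorization and repair it a posteriori: the surjectivity of $\operatorname{sgn}_x$ on the intersection $\mathfrak{T}(\varphi)_{\{x\}}\cap\mathfrak{T}(\varphi)_{\{y\}}$ (Lemma~4.3 of \cite{mat06}) together with the compatibility $\operatorname{sgn}_x=\operatorname{sgn}_y$ on that intersection produces the element $\delta$, and the rewriting $\gamma=(\gamma_x\delta^{-1})(\delta\gamma_y)$ lands each factor in the commutator subgroup of the corresponding locally finite stabilizer. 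The paper instead builds a good factorization from the outset: it reruns the construction of Proposition~\ref{prop: mat.fac} with the clopen set $U$ chosen so that $[\mathbf{1}_U]$ is $2$-divisible (Lemma~\ref{lem: 2-div.}), which by the description of $\operatorname{sgn}_x$ in Lemma~\ref{lem: sgnx} forces the correcting homeomorphism $\tilde{\gamma}$ to lie in $\ker(\operatorname{sgn}_y)=\mathfrak{T}(\varphi)_{\{y\}}'$ already, whence $\gamma\tilde{\gamma}\in\ker(\operatorname{sgn}_x)=\mathfrak{T}(\varphi)_{\{x\}}'$. The two mechanisms consume the same underlying input --- the $2$-divisibility furnished by Lemma~\ref{lem: 2-div.} is exactly what makes $\operatorname{sgn}_x$ surjective on intersections --- so yours is not more elementary, but it has the virtue of treating the factorization as a black box and isolating the corrective step as a clean statement about the intersection subgroup; the only point to be scrupulous about is that the existence of $\delta$ really does require the cited surjectivity lemma and not merely surjectivity of $\operatorname{sgn}_x$ on all of $\mathfrak{T}(\varphi)_{\{x\}}$, since $\delta$ must stabilize both half-orbits simultaneously.
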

	\begin{proofsketch}
		The inclusion $\mathfrak{T}(\varphi)_0'\subseteq \ker(\operatorname{sgn})$ is obvious. Assume $\gamma \in \ker(\operatorname{sgn})$. Let $x,y \in X$ with $\operatorname{Orb}_\varphi(x) \neq \operatorname{Orb}_\varphi(y)$. Then by Lemma~\ref{lem: 2-div.}, we can choose $U$ in the proof of Proposition~\ref{prop: mat.fac} such that $[\mathbf{1}_U]$ is $2$-divisible and thus by Lemma~\ref{lem: sgnx}, the element $\tilde{\gamma} \in \mathfrak{T}(\varphi)$ satisfies $\operatorname{sgn}(\tilde{\gamma}) \in \ker(\operatorname{sgn}_y)=\mathfrak{T}(\varphi)_{\{y\}}'$. In consequence, $\gamma \tilde{\gamma} \in \ker(\operatorname{sgn}_x)=\mathfrak{T}(\varphi)_{\{x\}}'$ holds and therefore $\gamma$ is a product of commutators. 
	\end{proofsketch}
	
	\begin{cor}\label{cor: strong AH-conjecture for systems}
		Let $(X,\varphi)$ be a minimal Cantor system. Then we have
		\begin{equation*}
		\mathfrak{T}(\varphi)^{\mathrm{ab}}=\mathbb{Z} \oplus ( K_0(C^*(X,\varphi)) \otimes (\mathbb{Z}/2\mathbb{Z})).
		\end{equation*}
		
	\end{cor}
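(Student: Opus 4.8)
The plan is to combine the splitting of $\mathfrak{T}(\varphi)$ furnished by the index map with Matui's computation of $\mathfrak{T}(\varphi)_0^{\mathrm{ab}}$ in Theorem~\ref{thm: sgn}; the statement is essentially a bookkeeping corollary of those two inputs, so the substantive content has already been done.

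First I would record that the index map of Proposition~\ref{prop: index} is a surjection $I \colon \mathfrak{T}(\varphi) \to \mathbb{Z}$ with $I(\varphi) = 1$ and kernel $\mathfrak{T}(\varphi)_0$, so that
\begin{equation*}
1 \longrightarrow \mathfrak{T}(\varphi)_0 \longrightarrow \mathfrak{T}(\varphi) \overset{I}{\longrightarrow} \mathbb{Z} \longrightarrow 1
\end{equation*}
is a short exact sequence split by the section $n \mapsto \varphi^n$.

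Next I would establish that this splitting descends to a \emph{direct} product after abelianising. By Remark~\ref{rem: com. ind.} we have $\mathfrak{T}(\varphi)' \leq \mathfrak{T}(\varphi)_0$, and the key identity $\mathfrak{T}(\varphi)' = \mathfrak{T}(\varphi)_0'$ holds: using Proposition~\ref{prop: mat.fac} together with the conjugation relation $\varphi \cdot \mathfrak{T}(\varphi)_{\{x\}} \cdot \varphi^{-1} = \mathfrak{T}(\varphi)_{\{\varphi(x)\}}$ of Remark~\ref{rem: locfin. conj}, conjugation by $\varphi$ preserves $\mathfrak{T}(\varphi)_0'$, whence $\mathfrak{T}(\varphi)_0' \trianglelefteq \mathfrak{T}(\varphi)$ and the quotient $\mathfrak{T}(\varphi)/\mathfrak{T}(\varphi)_0'$ is abelian, forcing $\mathfrak{T}(\varphi)' = \mathfrak{T}(\varphi)_0'$. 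Consequently $\mathfrak{T}(\varphi)^{\mathrm{ab}} = \mathfrak{T}(\varphi)/\mathfrak{T}(\varphi)_0'$ sits in the short exact sequence
\begin{equation*}
1 \longrightarrow \mathfrak{T}(\varphi)_0^{\mathrm{ab}} \longrightarrow \mathfrak{T}(\varphi)^{\mathrm{ab}} \overset{\bar I}{\longrightarrow} \mathbb{Z} \longrightarrow 1,
\end{equation*}
which splits because $\mathbb{Z}$ is free and the middle term is abelian; hence $\mathfrak{T}(\varphi)^{\mathrm{ab}} \cong \mathbb{Z} \times \mathfrak{T}(\varphi)_0^{\mathrm{ab}}$.

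Finally I would substitute the signature isomorphism $\mathfrak{T}(\varphi)_0^{\mathrm{ab}} \cong K_0(C^*(X,\varphi)) \otimes (\mathbb{Z}/2\mathbb{Z})$ of Theorem~\ref{thm: sgn} and use that finite direct products and direct sums of abelian groups coincide, obtaining
\begin{equation*}
\mathfrak{T}(\varphi)^{\mathrm{ab}} \cong \mathbb{Z} \oplus \big(K_0(C^*(X,\varphi)) \otimes (\mathbb{Z}/2\mathbb{Z})\big).
\end{equation*}
No genuine obstacle remains once Theorem~\ref{thm: sgn} is available; the only point that rewards care is verifying that the extension of $\mathbb{Z}$ by $\mathfrak{T}(\varphi)_0^{\mathrm{ab}}$ is a direct and not merely a semidirect product, which is precisely what the equality $\mathfrak{T}(\varphi)' = \mathfrak{T}(\varphi)_0'$, together with the freeness of $\mathbb{Z}$, guarantees.
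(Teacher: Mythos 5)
Your overall strategy coincides with the paper's: there the corollary is obtained by combining the decomposition $\mathfrak{T}(\varphi)^{\mathrm{ab}}\cong\mathbb{Z}\times\mathfrak{T}(\varphi)_0^{\mathrm{ab}}$ (parts (ii) and (iii) of the corollary following Proposition~\ref{prop: mat.fac}) with the signature isomorphism of Theorem~\ref{thm: sgn}, and your first and third steps reproduce exactly those inputs.

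The one place where your argument is not sound as written is the middle step. You deduce that $\mathfrak{T}(\varphi)/\mathfrak{T}(\varphi)_0'$ is abelian from the normality of $\mathfrak{T}(\varphi)_0'$ in $\mathfrak{T}(\varphi)$. Normality is automatic ($\mathfrak{T}(\varphi)_0$ is the kernel of the index map and its derived subgroup is characteristic in it), but it only exhibits $\mathfrak{T}(\varphi)/\mathfrak{T}(\varphi)_0'$ as an extension of $\mathbb{Z}$ by the abelian group $\mathfrak{T}(\varphi)_0^{\mathrm{ab}}$; such an extension is abelian precisely when conjugation by $\varphi$ acts trivially on $\mathfrak{T}(\varphi)_0^{\mathrm{ab}}$, i.e.\ when $[\varphi,\gamma]\in\mathfrak{T}(\varphi)_0'$ for all $\gamma\in\mathfrak{T}(\varphi)_0$ (the Klein bottle group $\langle a,b\mid bab^{-1}=a^{-1}\rangle$ shows the inference fails in general). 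A priori $[\varphi,\gamma]$ only lies in $\mathfrak{T}(\varphi)'\leq\mathfrak{T}(\varphi)_0$, and asking it to lie in $\mathfrak{T}(\varphi)_0'$ is essentially the equality $\mathfrak{T}(\varphi)'=\mathfrak{T}(\varphi)_0'$ you are trying to establish, so the step is circular as phrased. It can be repaired with material you already invoke: $\mathfrak{T}(\varphi)_0$ is generated by involutions exchanging two disjoint clopen sets (Corollary~\ref{cor: locally finite subgroups}(i) and Proposition~\ref{prop: mat.fac}); conjugating such an involution by $\varphi$ produces the analogous involution for the $\varphi$-translated sets, and by Lemma~\ref{lem: sgnx} both have signature $[\mathbf{1}_U]\otimes 1=[\mathbf{1}_{\varphi(U)}]\otimes 1$ in $K_0(C^*(X,\varphi))\otimes(\mathbb{Z}/2\mathbb{Z})$, hence differ by an element of $\ker(\operatorname{sgn})=\mathfrak{T}(\varphi)_0'$ by Theorem~\ref{thm: sgn}, which you are citing anyway. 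Once that is in place, the remainder of your argument (injectivity of $\mathfrak{T}(\varphi)_0^{\mathrm{ab}}\to\mathfrak{T}(\varphi)^{\mathrm{ab}}$, splitting over the free quotient $\mathbb{Z}$, substitution of Theorem~\ref{thm: sgn}) is correct; it is fair to note that the paper's own justification of the corresponding step is equally terse.
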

	
	Furthermore, Matui proves with the help of the signature map:
	\begin{thm}[\cite{mat06}, Theorem 4.9.]\label{thm: simpel}
		Let $(X,\varphi)$ be a minimal Cantor system. Then $\mathfrak{T}(\varphi)'$ is a simple group.
	\end{thm}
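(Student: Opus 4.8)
The plan is to show that every non-trivial normal subgroup $N$ of $D:=\mathfrak{T}(\varphi)'$ already equals $D$. Two facts reduce this to a statement about the locally finite subgroups $\mathfrak{T}(\varphi)_{\{x\}}'$. First, $\mathfrak{T}(\varphi)_{\{x\}}'\subseteq \mathfrak{T}(\varphi)_0'=\mathfrak{T}(\varphi)'=D$ for every $x$, by Remark~\ref{rem: com. ind.} together with the corollary following Proposition~\ref{prop: mat.fac}. Second, the proof of Theorem~\ref{thm: sgn} yields, for any $x,y\in X$ with $\operatorname{Orb}_\varphi(x)\neq\operatorname{Orb}_\varphi(y)$, the factorisation $D=\ker(\operatorname{sgn})=\mathfrak{T}(\varphi)_{\{x\}}'\cdot\mathfrak{T}(\varphi)_{\{y\}}'$: there the element $\tilde\gamma$ produced in Proposition~\ref{prop: mat.fac} is arranged to lie in $\mathfrak{T}(\varphi)_{\{y\}}'$ and $\gamma\tilde\gamma$ in $\mathfrak{T}(\varphi)_{\{x\}}'$, so that $\gamma=(\gamma\tilde\gamma)\tilde\gamma^{-1}$. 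Hence it suffices to prove $N\supseteq\mathfrak{T}(\varphi)_{\{x\}}'$ for two points $x,y$ of distinct orbits; the factorisation then forces $N=D$.

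For a single prescribed point $x$, the heart of the argument is to exhibit a non-trivial element of $N$ inside $\mathfrak{T}(\varphi)_{\{x\}}'$; simplicity then finishes this step, since $N\cap\mathfrak{T}(\varphi)_{\{x\}}'$ is normal in $\mathfrak{T}(\varphi)_{\{x\}}'$ (as $N\trianglelefteq D\supseteq\mathfrak{T}(\varphi)_{\{x\}}'$) and $\mathfrak{T}(\varphi)_{\{x\}}'$ is simple by Proposition~\ref{prop: loc.fin.simpl.}, whence $N\supseteq\mathfrak{T}(\varphi)_{\{x\}}'$. To produce such an element I would fix a nested sequence $\{\mathcal{A}_n\}$ of Kakutani-Rokhlin partitions around $x$ with property (H), and a non-trivial $\gamma\in N$. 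Choosing $n$ large, I may assume $\gamma$ permutes the atoms of $\mathcal{A}_n$ and, by property (H), that some tower $D^n(i)$ has more than $15$ atoms. A greedy selection then gives a set $S$ which is a union of $m=5$ atoms of that tower with $\gamma(S)\cap S=\emptyset$: for a permutation of the atom-set $\Omega_i$ one may enlarge a set $A$ with $A\cap\gamma(A)=\emptyset$ as long as $|\Omega_i|>3|A|$.

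With such an $S$, take $\sigma,\rho\in\mathfrak{A}_{h_i^n}\subseteq\mathfrak{T}(\varphi)_{\{x\}}'$ supported on $S$ (even permutations of those five atoms, in the sense of Corollary~\ref{cor: ev.permut.}) with $[\sigma,\rho]\neq 1$, possible because $\mathfrak{A}_m$ is centreless for $m\geq 4$. Since $\sigma\in D$ and $\gamma\in N\trianglelefteq D$, the element $\tau:=[\sigma,\gamma]$ lies in $N$; as $\gamma(S)\cap S=\emptyset$, it agrees with $\sigma$ on $S$ and with $\gamma\sigma^{-1}\gamma^{-1}$ on $\gamma(S)$ and is the identity elsewhere, so $\tau(S)=\sigma(S)=S$. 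Consequently the further commutator $[\tau,\rho]$ (again in $N$, as $\rho\in D$) is supported in $S$ and equals $[\sigma,\rho]\neq 1$; being an even permutation of atoms within a single tower, it lies in $\mathfrak{T}(\varphi)_{\{x\}}'$. This is the sought non-trivial element of $N\cap\mathfrak{T}(\varphi)_{\{x\}}'$.

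Finally, running this construction for two points $x,y$ with distinct orbits gives $N\supseteq\mathfrak{T}(\varphi)_{\{x\}}'$ and $N\supseteq\mathfrak{T}(\varphi)_{\{y\}}'$, and the factorisation $D=\mathfrak{T}(\varphi)_{\{x\}}'\cdot\mathfrak{T}(\varphi)_{\{y\}}'$ yields $N=D$. The main obstacle is the middle step: arranging, through a sufficiently fine Kakutani-Rokhlin partition and a careful combinatorial choice of atoms, that the iterated commutator falls back inside the localised simple group $\mathfrak{T}(\varphi)_{\{x\}}'$ rather than merely inside $D$ — the disjointness $\gamma(S)\cap S=\emptyset$ and the passage through two nested commutators are exactly what make this possible. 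Once this localisation is achieved, simplicity of the locally finite subgroups together with the product decomposition coming from the signature map closes the argument quickly.
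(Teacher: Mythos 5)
Your global architecture is the same as the paper's: both arguments localize a non-trivial normal subgroup $N\trianglelefteq\mathfrak{T}(\varphi)'=\mathfrak{T}(\varphi)_0'$ into the simple locally finite groups $\mathfrak{T}(\varphi)_{\{x\}}'$ (Proposition~\ref{prop: loc.fin.simpl.}) and then conclude via the factorization of $\ker(\operatorname{sgn})$ into $\mathfrak{T}(\varphi)_{\{x\}}'\cdot\mathfrak{T}(\varphi)_{\{y\}}'$ coming from Proposition~\ref{prop: mat.fac}, Lemma~\ref{lem: 2-div.} and the signature map; that reduction is legitimate. Where you differ is the localization step itself: the paper produces the element of $N\cap\mathfrak{T}(\varphi)_{\{x\}}'$ as a single commutator $\tau_U\gamma\tau_U\gamma^{-1}$, with $\tau_U$ the involution exchanging $U$ and $\varphi(U)$ for a carefully chosen clopen $U$ (2-divisibility of $[\mathbf{1}_U]$ to force $\tau_U\in\ker(\operatorname{sgn}_x)$, auxiliary orbit points to force non-triviality), whereas you use the double-commutator identity $[[\sigma,\gamma],\rho]=[\sigma,\rho]$ for $\sigma,\rho$ supported on a set $S$ with $S\cap\gamma(S)=\emptyset$. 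Once such an $S$ exists your computation is correct and in fact cleaner than the paper's, since an even permutation of atoms of a single tower visibly lies in $\mathcal{P}(\mathcal{A}_n)'\subseteq\mathfrak{T}(\varphi)_{\{x\}}'$ without any appeal to the signature map.

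The genuine gap is the existence of $S$. Your greedy lemma is valid for a \emph{fixed-point-free} permutation of a finite set, but $\gamma$ restricted to the atoms of a tower is neither fixed-point-free nor, in general, a permutation of those atoms: for $\gamma\in\mathfrak{T}(\mathcal{A}_n)$ the image of an atom that crosses the roof of its tower need not be an atom, and $\gamma$ may fix all but a bounded number of atoms of any given tower setwise. Hence ``the tower has more than $15$ atoms'' does not yield five displaced, pairwise compatible atoms; what you actually need is that $\gamma$ displaces at least $15$ atoms of a single tower, and property (H) alone does not give this. The step is repairable without the greedy argument: choose $z$ with $\gamma(z)\neq z$ and a clopen $V\ni z$ with $V\cap\gamma(V)=\emptyset$; by minimality the hitting time of $V$ is a continuous, hence bounded, function on $X$, say bounded by $R$, so once $n$ is large enough that $V$ is a union of $\mathcal{A}_n$-atoms and $h_{\mathcal{A}_n}>5R$, every tower of $\mathcal{A}_n$ contains at least five atoms lying inside $V$; any five of them in one tower form an $S$ with $\gamma(S)\subseteq\gamma(V)$ disjoint from $S\subseteq V$. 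With that substitution (and dropping the unneeded and in fact false assumption that $\gamma$ permutes the atoms of $\mathcal{A}_n$), the rest of your argument — normality and non-triviality of $N\cap\mathfrak{T}(\varphi)_{\{x\}}'$, simplicity of the local groups, and the product decomposition for two points on distinct orbits — goes through.
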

	\begin{proof}
		Let $H$ be a non-trivial normal subgroup of $\mathfrak{T}(\varphi)'=\mathfrak{T}(\varphi)_0'$. Let $\gamma \in H$ be a non-trivial element and define $l:=\max\{f_\gamma(x)|x \in X\}$. Fix an arbitrary $x \in X$. Then there exists a $y \in X$ with $\operatorname{Orb}_\varphi(x) \neq \operatorname{Orb}_\varphi(y)$ such that $y \neq \gamma(y) \neq \varphi(y)$ and furthermore a clopen neighbourhood $U$ of $x$ such that $U \cap \varphi(U)=\emptyset$, such that $U$ does not contain the elements $\varphi(y),\gamma(y),\varphi^{-1}\gamma(y)$ and $\varphi^k(x)$ for $-l-1 \leq k \leq l+1$ and, by Lemma~\ref{lem: 2-div.}, such that $\mathbf{1}_U$ is $2$-divisible in $K^0(X,\varphi)$. Then the homeomorhpism $\tau_U$ defined by 
		\begin{equation*}
		\tau_U(x):=
		\begin{cases}
		\varphi (x), & \text{ if } x \in U\\
		\varphi^{-1}(x), & \text{ if } x \in \varphi(U)\\
		x, & \text{ else}
		\end{cases}
		\end{equation*}
		is by definition contained in $\mathfrak{T}(\varphi)_{\{x\}}$ and by the description of $\operatorname{sgn}_x$ in Lemma~3.7 in \cite{mat06} we have $\tau_u \in \ker(\operatorname{sgn}_x)$. Thus the non-trivial homeomorphism $\tau_U \circ \gamma \circ \tau_U\circ \gamma^{-1}$ is contained in $\mathfrak{T}(\varphi)_{\{x\}}' \cap H$. But since $\mathfrak{T}(\varphi)_{\{x\}}'$ is simple by Proposition~\ref{prop: loc.fin.simpl.}, it holds that $\mathfrak{T}(\varphi)_{\{x\}}' \subseteq H$ for all $x \in X$ and thus $H=\mathfrak{T}(\varphi)_0'$.
	\end{proof}
	
	Theorem~\ref{thm: sgn} and Theorem~\ref{thm: simpel} combined imply:
	\begin{cor}[\cite{mat06}, Corollary 4.10.]
		Let $(X,\varphi)$ be a minimal Cantor system. The group $\mathfrak{T}(\varphi)_0$ is simple \emph{if and only if} $K_0(C^*(X,\varphi))$ is 2-divisible.
	\end{cor}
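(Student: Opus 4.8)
The plan is to reduce the statement to the purely group-theoretic observation that a group whose derived subgroup is simple and non-trivial is itself simple precisely when it coincides with its derived subgroup, and then to read off the relevant abelianization from Theorem~\ref{thm: sgn}. Recall from the earlier corollary that $\mathfrak{T}(\varphi)' = \mathfrak{T}(\varphi)_0'$, and that this group is simple by Theorem~\ref{thm: simpel}; in particular $\mathfrak{T}(\varphi)_0'$ is a non-trivial normal subgroup of $\mathfrak{T}(\varphi)_0$.

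First I would record the general lemma: if $G$ is a group such that $G'$ is simple and non-trivial, then $G$ is simple if and only if $G = G'$. Indeed, if $G$ is simple then the non-trivial normal subgroup $G'$ must equal $G$; conversely, if $G = G'$ then $G$ is simple since $G'$ is. Applying this with $G = \mathfrak{T}(\varphi)_0$ shows that $\mathfrak{T}(\varphi)_0$ is simple if and only if $\mathfrak{T}(\varphi)_0 = \mathfrak{T}(\varphi)_0'$, equivalently $\mathfrak{T}(\varphi)_0^{\operatorname{ab}} = 0$.

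Next I would translate the vanishing of the abelianization. By Theorem~\ref{thm: sgn} the signature map induces an isomorphism $\mathfrak{T}(\varphi)_0^{\operatorname{ab}} \cong K_0(C^*(X,\varphi)) \otimes (\mathbb{Z}/2\mathbb{Z})$, so $\mathfrak{T}(\varphi)_0^{\operatorname{ab}} = 0$ if and only if $K_0(C^*(X,\varphi)) \otimes (\mathbb{Z}/2\mathbb{Z}) = 0$. Finally, for any abelian group $A$ one has $A \otimes (\mathbb{Z}/2\mathbb{Z}) \cong A/2A$, which vanishes exactly when $A = 2A$, i.e. when $A$ is $2$-divisible. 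Chaining these equivalences yields that $\mathfrak{T}(\varphi)_0$ is simple if and only if $K_0(C^*(X,\varphi))$ is $2$-divisible.

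There is essentially no serious obstacle here, since all the substantive work is already encapsulated in Theorems~\ref{thm: sgn} and~\ref{thm: simpel}; the only points requiring a little care are to confirm that $\mathfrak{T}(\varphi)_0'$ is genuinely non-trivial (so that the group-theoretic lemma applies), which holds because a simple group is by convention non-trivial, and to make the standard identification $A \otimes (\mathbb{Z}/2\mathbb{Z}) \cong A/2A$ explicit so that $2$-divisibility appears cleanly.
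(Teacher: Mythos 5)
Your proposal is correct and matches the paper's own route: the paper presents this corollary as an immediate combination of Theorem~\ref{thm: sgn} and Theorem~\ref{thm: simpel}, and your argument simply makes that combination explicit. The reduction via $\mathfrak{T}(\varphi)_0'=\mathfrak{T}(\varphi)'$ (established in the corollary following Proposition~\ref{prop: mat.fac}), simplicity of this derived subgroup, and the identification $\mathfrak{T}(\varphi)_0^{\operatorname{ab}}\cong K_0(C^*(X,\varphi))\otimes(\mathbb{Z}/2\mathbb{Z})\cong K_0(C^*(X,\varphi))/2K_0(C^*(X,\varphi))$ is exactly the intended proof.
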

	
	In \cite{bm08} the authors obtained a different proof of simplicity of $\mathfrak{T}(\varphi)'$ by more approachable techniques. Just as Matui they rely on the Lemma of Glasner and Weiss (see Lemma~\ref{lem: gw}).
	
	Matui demonstrated that in the case of minimal subshifts the group $\mathfrak{T}(\varphi)'$ is finitely generated, that the groups $\mathfrak{T}(\varphi)$, $\mathfrak{T}(\varphi)'$ and $\mathfrak{T}(\varphi)_0$ cannot be finitely presented and gave a sufficient and necessary condition for the finite generation of $\mathfrak{T}(\varphi)$ and $\mathfrak{T}(\varphi)_0$. The proofs will be omited, since, firstly, in Subsection~\ref{subs: finite generation of alternating full groups} a generalization of Theorem~\ref{thm: fingen} will be given and, secondly, finite presentation for minimal subshifts follows as a corollary from results discussed in Subsection~\ref{subs: the lef-property}.
	
	\begin{thm}[\cite{mat06}, Theorem 5.4.]\label{thm: fingen}
		Let $(X,\varphi)$ be a minimal Cantor system. The group $\mathfrak{T}(\varphi)'$ is finitely generated \emph{if and only if} $(X,\varphi)$ a minimal subshift.
	\end{thm}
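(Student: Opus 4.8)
The plan is to recast the dichotomy in terms of \emph{expansivity}: by Remark~\ref{rem: symbolic dynamics}, a minimal Cantor system $(X,\varphi)$ is a subshift if and only if there is a finite clopen partition $\mathcal{P}$ whose $\varphi$-translates separate points, equivalently such that the $\varphi$-invariant Boolean subalgebra $\mathcal{B}_\mathcal{P}\subseteq\operatorname{CO}(X)$ generated by $\{\varphi^n(P):P\in\mathcal{P},\,n\in\mathbb{Z}\}$ is all of $\operatorname{CO}(X)$. Both implications will be organized around the elementary observation, drawn from Proposition~\ref{prop: count}, that every $\gamma\in\mathfrak{T}(\varphi)$ is determined by its level sets $X_i^\gamma=f_\gamma^{-1}(i)$, on which $\gamma$ acts as $\varphi^i$; consequently $\gamma$ preserves any $\varphi$-invariant Boolean subalgebra of $\operatorname{CO}(X)$ that contains all of its level sets.

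First I would prove necessity (finite generation $\Rightarrow$ subshift). Suppose $\mathfrak{T}(\varphi)'=\langle S\rangle$ for a finite set $S$, and let $\mathcal{P}$ be a common clopen refinement of the (finite) level-set partitions of all $\gamma\in S$; then each generator has its level sets in $\mathcal{B}_\mathcal{P}$ and preserves $\mathcal{B}_\mathcal{P}$. Using the cocycle identity $f_{\gamma_1\gamma_2}=f_{\gamma_1}\circ\gamma_2+f_{\gamma_2}$ of Remark~\ref{lem: cocy}, an induction on word length shows that \emph{every} element of $\langle S\rangle$ again has all of its level sets inside $\mathcal{B}_\mathcal{P}$ and preserves $\mathcal{B}_\mathcal{P}$. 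If $(X,\varphi)$ were not a subshift we would have $\mathcal{B}_\mathcal{P}\neq\operatorname{CO}(X)$, so I would pick a clopen $V\notin\mathcal{B}_\mathcal{P}$ and, using minimality together with Lemma~\ref{lem: 2-div.} to arrange a vanishing signature, build a non-trivial commutator (hence an element of $\mathfrak{T}(\varphi)'$) one of whose level sets equals $V$. Such an element cannot lie in $\langle S\rangle$, contradicting $\langle S\rangle=\mathfrak{T}(\varphi)'$; hence $\mathcal{B}_\mathcal{P}=\operatorname{CO}(X)$ and the system is a subshift.

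The substantial direction is sufficiency (subshift $\Rightarrow$ finite generation), which I would prove by an explicit construction of generators. Take $\mathcal{P}=\{C_a\}_{a\in\mathcal{A}}$, the partition into length-one cylinders, so that $\mathcal{B}_\mathcal{P}=\operatorname{CO}(X)$. The target is a finite \emph{seed} set $\Sigma\subseteq\mathfrak{T}(\varphi)'$ consisting of: the finitely many ``alphabet'' transpositions and $3$-cycles built from the $C_a$ and a single forward step of $\varphi$; and one index-zero element $g_0$ implementing a shift along a fixed tower, obtained by correcting an induced transformation $\varphi_A$ via Lemma~\ref{lem: ind.trans} into a finite-order, index-zero building block. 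Since $\mathfrak{T}(\varphi)'=\mathfrak{T}(\varphi)_0'$ is simple by Theorem~\ref{thm: simpel} and is generated by its even elements (products of two transpositions), it suffices to express each such element as a word in $\Sigma$. The engine is a propagation step: any transposition supported on a length-$k$ cylinder $C_w$ with $w=aw'$ is produced from a transposition on the length-$(k-1)$ cylinder $C_{w'}$ by conjugating with $g_0$ and then ``restricting to $C_a$'' through a commutator with an alphabet element, in the Higman--Thompson style. An induction on $k$ reduces arbitrary cylinder transpositions to the alphabet base case, and since every element of $\mathfrak{T}(\varphi)'$ has level sets that are finite Boolean combinations of cylinders, finitely many seeds suffice.

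The main obstacle will be this sufficiency direction, and within it two points require care. First, the word expressing a transposition on a long cylinder grows in length with $k$, so the inductive restriction-by-commutator argument must be arranged so that each step draws only on the fixed seeds $\Sigma$, keeping the generating set finite while permitting unbounded words. Second, because $\varphi$ has index $1$ and hence lies outside $\mathfrak{T}(\varphi)'$, the shift must be realised \emph{inside} the derived subgroup: constructing $g_0$ as an index-zero, finite-order element that still advances cylinders one step along the tower, and verifying that conjugation by it effects the symbol-by-symbol refinement, is the technical heart of the argument. The final bookkeeping is to keep all constructed elements in $\ker(\operatorname{sgn})=\mathfrak{T}(\varphi)_0'$ via Theorem~\ref{thm: sgn}; alternatively, this entire direction can be deferred to the expansive-groupoid generalization treated in Subsection~\ref{subs: finite generation of alternating full groups}.
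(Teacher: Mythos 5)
Two preliminary facts frame this review. First, the paper does not actually prove Theorem~\ref{thm: fingen}: it cites \cite{mat06}, explicitly omits the proof, and defers the substantive direction to the groupoid generalization of Subsection~\ref{subs: finite generation of alternating full groups}. Second, your two directions have very different status. Your necessity argument (finitely generated $\Rightarrow$ subshift) is essentially Matui's original one and its skeleton is sound: the common refinement $\mathcal{P}$ of the level sets of the generators, the cocycle identity of Remark~\ref{lem: cocy}, and induction on word length do show that every element of $\langle S\rangle$ has all of its level sets in the $\varphi$-invariant Boolean algebra $\mathcal{B}_\mathcal{P}$. One step needs repair, though: you cannot first fix a clopen $V\notin\mathcal{B}_\mathcal{P}$ and then ``arrange a vanishing signature'' for an element whose level set \emph{equals} $V$, because Lemma~\ref{lem: 2-div.} only supplies \emph{some} smaller $2$-divisible clopen set, which may perfectly well lie in $\mathcal{B}_\mathcal{P}$. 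The correct form of the contradiction uses separation of points: if $\mathcal{B}_\mathcal{P}\neq\operatorname{CO}(X)$, Stone duality gives $x\neq y$ separated by no set of $\mathcal{B}_\mathcal{P}$, so every $\gamma\in\langle S\rangle$ satisfies $f_\gamma(x)=f_\gamma(y)$. Now choose, by Lemma~\ref{lem: 2-div.} and aperiodicity, a $2$-divisible clopen neighbourhood $V'$ of $x$ with $V'\cap\varphi(V')=\emptyset$ and $y\notin V'$; the transposition $\tau_{V'}$ swapping $V'$ and $\varphi(V')$ has index $0$ and trivial signature (exactly the device used in the proof of Theorem~\ref{thm: simpel}), hence lies in $\mathfrak{T}(\varphi)_0'=\mathfrak{T}(\varphi)'$ by Theorem~\ref{thm: sgn}, yet $f_{\tau_{V'}}(x)=1$ while $f_{\tau_{V'}}(y)\in\{0,-1\}$. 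With this modification your first half is complete, and it is material the paper itself never spells out.

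The genuine gap is the sufficiency direction. Your Higman--Thompson-style plan (alphabet transpositions plus an index-zero ``shift'' $g_0$, then propagation by conjugation and commutation) is precisely the part you yourself label ``the technical heart'', and as written it is an assertion rather than an argument: a finite-order, index-zero element cannot simply be iterated to advance cylinders to arbitrary depth, and the ``restriction-by-commutator'' step is exactly what requires either the multisection machinery of Lemma~\ref{lem: mult}, Lemma~\ref{lem: multiintersect} and Proposition~\ref{prop: multcov}, or, in Matui's original proof, a delicate Kakutani--Rokhlin-tower construction; none of this is supplied, so this implication remains unproven in your write-up. The legitimate route inside this text is the fallback you mention only in your last sentence, which is also what the paper does: a minimal subshift has expansive transformation groupoid (\cite{nek17}, Proposition~5.5, quoted in Subsection~\ref{subs: compact generation and expansive groupoids}); the action is free by minimality (Remark~\ref{rem: orbdens}), so $\mathcal{G}_\varphi$ is principal and almost finite (Proposition~\ref{prop: alm.fin.}) with infinite orbits; Theorem~\ref{thm: alternating full groups of minimal expansive groupoids are finitely generated} then shows $\mathfrak{A}(\mathcal{G}_\varphi)$ is finitely generated, and Corollary~\ref{cor: A=D} identifies $\mathfrak{A}(\mathcal{G}_\varphi)=\mathfrak{T}(\varphi)'$. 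Either commit to that chain, or develop the explicit construction to a comparable level of detail; at present the ``if'' direction is a plan, not a proof.
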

	
	Combining Theorem~\ref{thm: fingen} with Theorem~\ref{thm: sgn} gives:
	\begin{cor}[\cite{mat06}, Corollary 5.5.]\label{cor: fingen2}
		Let $(X,\varphi)$ be a minimal Cantor system. The following are equivalent:
		\begin{enumerate}[(i)]
			\item $(X,\varphi)$ is a minimal subshift and $K_0(C^*(X,\varphi)) \otimes (\mathbb{Z}/2\mathbb{Z})$ is finite.
			
			\item $\mathfrak{T}(\varphi)_0$ is finitely generated.
			
			\item $\mathfrak{T}(\varphi)$ is finitely generated.
		\end{enumerate}
	\end{cor}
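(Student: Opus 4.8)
The plan is to reduce everything to Theorem~\ref{thm: fingen}, together with the abelianization computations of Theorem~\ref{thm: sgn} and Corollary~\ref{cor: strong AH-conjecture for systems}, using only elementary facts about finite generation of extensions: quotients of finitely generated groups are finitely generated, an extension of a finitely generated group by a finite group is finitely generated, and finite-index subgroups of finitely generated groups are finitely generated. Throughout write $G=\mathfrak{T}(\varphi)$ and $N=\mathfrak{T}(\varphi)_0=\ker I$. I will use the two structural facts established above, namely $\mathfrak{T}(\varphi)'=\mathfrak{T}(\varphi)_0'=N'$ and the isomorphism $N/N'\cong K_0(C^*(X,\varphi))\otimes(\mathbb{Z}/2\mathbb{Z})$ of Theorem~\ref{thm: sgn}. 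Since tensoring with $\mathbb{Z}/2\mathbb{Z}$ produces an $\mathbb{F}_2$-vector space, the group $N/N'$ is finite if and only if it is finitely generated; I invoke this repeatedly. Finally, $I(\varphi)=1$ by Proposition~\ref{prop: index} exhibits $G$ as a semidirect product $G=N\rtimes\langle\varphi\rangle$ with $\langle\varphi\rangle\cong\mathbb{Z}$.

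First I would prove $(i)\Rightarrow(ii)$. If $(X,\varphi)$ is a subshift, Theorem~\ref{thm: fingen} gives that $N'=\mathfrak{T}(\varphi)'$ is finitely generated; if in addition $K_0(C^*(X,\varphi))\otimes(\mathbb{Z}/2\mathbb{Z})\cong N/N'$ is finite, then $N$ is an extension of the finite group $N/N'$ by the finitely generated group $N'$, hence finitely generated. The implication $(ii)\Rightarrow(iii)$ is then immediate from the semidirect decomposition: $G$ is generated by a finite generating set of $N$ together with the single element $\varphi$.

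For $(iii)\Rightarrow(i)$ I would extract the finiteness statement first and the subshift statement second. If $G$ is finitely generated, so is its abelianization, which by Corollary~\ref{cor: strong AH-conjecture for systems} is $\mathbb{Z}\oplus\bigl(K_0(C^*(X,\varphi))\otimes(\mathbb{Z}/2\mathbb{Z})\bigr)$; hence $K_0(C^*(X,\varphi))\otimes(\mathbb{Z}/2\mathbb{Z})$ is a finitely generated $\mathbb{F}_2$-vector space, i.e.\ finite. It remains to show that $(X,\varphi)$ is a subshift, which by Theorem~\ref{thm: fingen} is equivalent to $N'=\mathfrak{T}(\varphi)'$ being finitely generated. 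As $N/N'$ is now known to be finite, $N'$ has finite index in $N$, so it suffices to prove that $N=\mathfrak{T}(\varphi)_0$ itself is finitely generated.

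The hard part will be precisely this last step: deducing finite generation of the index kernel $N$ from that of the ambient group $G=N\rtimes\langle\varphi\rangle$. This is \emph{not} a formal consequence of group theory — for a general semidirect product $N\rtimes\mathbb{Z}$, finite generation of the product does not force finite generation of $N$, the lamplighter group $(\bigsqcup_{\mathbb{Z}}\mathbb{Z}/2\mathbb{Z})\rtimes\mathbb{Z}$ being the standard cautionary example — so the dynamics must enter. Here one should exploit that every element of $\mathfrak{T}(\varphi)$ has bounded orbit cocycle and is governed, through a nested sequence of Kakutani--Rokhlin partitions with property (H), by the permutational data of Theorem~\ref{thm: grig-med}. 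I expect the cleanest route is the contrapositive: if $(X,\varphi)$ is not a subshift then the complexity of the system is unbounded, and at each level $n$ one can exhibit permutations of atoms of $\mathcal{A}_n$ lying in $\mathfrak{T}(\varphi)_0$ whose number of independent degrees of freedom outgrows any fixed finite generating set of $G$ (even after allowing all $\varphi$-conjugates), so that $G$ fails to be finitely generated. Turning this into a quantitative estimate — relating the growth of the required local permutations to the complexity function — is the genuine obstacle, and it is exactly the dynamical input already packaged in the proof of Theorem~\ref{thm: fingen}.
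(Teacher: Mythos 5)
Your implications $(i)\Rightarrow(ii)$ and $(ii)\Rightarrow(iii)$ are correct and are exactly the intended formal combination of Theorem~\ref{thm: fingen} with Theorem~\ref{thm: sgn}; note that $(ii)\Rightarrow(i)$ is formal by the same tokens (if $N=\mathfrak{T}(\varphi)_0$ is finitely generated then $N^{\operatorname{ab}}\cong K_0(C^*(X,\varphi))\otimes(\mathbb{Z}/2\mathbb{Z})$ is a finitely generated $\mathbb{F}_2$-vector space, hence finite, so $N'=\mathfrak{T}(\varphi)'$ has finite index in $N$ and is finitely generated, whence the system is a subshift by Theorem~\ref{thm: fingen}). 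The genuine gap, which you diagnose correctly but do not close, is the descent from $(iii)$: you reduce it to showing that finite generation of $\mathfrak{T}(\varphi)$ forces finite generation of $\ker I$, observe rightly that this fails for abstract extensions by $\mathbb{Z}$ (lamplighter, $BS(1,2)$), and then stop at a sketch. As written, $(iii)\Rightarrow(i)$ is not proved, so the equivalence is not established.

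Moreover your chosen reduction makes the remaining step look harder than it is, and the proposed contrapositive via the complexity function is off target: complexity is an invariant of subshifts, whereas the relevant dichotomy for a general minimal Cantor system is expansiveness. The step that actually closes the loop is the argument inside the ``only if'' direction of Theorem~\ref{thm: fingen}, applied directly to $\mathfrak{T}(\varphi)$ rather than to its derived subgroup: a finite generating set $\gamma_1,\dots,\gamma_n$ determines, via Proposition~\ref{prop: count}, a finite clopen partition $\mathcal{P}$ refining all the level sets $X_i^{\gamma_j}$; by the cocycle identity of Remark~\ref{lem: cocy}, every element of the generated group has cocycle constant on atoms of the algebra generated by the $\varphi$-translates of $\mathcal{P}$; and by the last assertion of Lemma~\ref{lem: abundinvo} the group contains, for every clopen $U$ and every $x\in U$, an involution supported in $U$ moving $x$, which forces the $\varphi$-translates of $\mathcal{P}$ to separate points. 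Hence $(X,\varphi)$ is conjugate to a subshift by Remark~\ref{rem: symbolic dynamics}. Nothing here uses the commutator subgroup specifically, so ``$\mathfrak{T}(\varphi)$ finitely generated implies subshift'' comes for free from the same proof, and together with your correct extraction of the finiteness of $K_0(C^*(X,\varphi))\otimes(\mathbb{Z}/2\mathbb{Z})$ from Corollary~\ref{cor: strong AH-conjecture for systems} this completes $(iii)\Rightarrow(i)$. (The paper itself presents the corollary as a one-line combination of the two theorems and suppresses exactly this point; your write-up is the more honest one, but it remains incomplete.)
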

	
	Matui's proof of finite presentation follows by contradiction relying on results on minimal subshifts:
	\begin{thm}[\cite{mat06}, Theorem 5.7]
		Let $(X,\varphi)$ be a minimal subshift. Then $\mathfrak{T}(\varphi)'$ cannot be finitely presented.
	\end{thm}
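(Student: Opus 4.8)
The plan is to deduce the result from the local embeddability into finite groups (LEF) of topological full groups of minimal Cantor systems, which will be established in Subsection~\ref{subs: the lef-property}, rather than reproducing Matui's original argument. First I would record three structural facts about $G := \mathfrak{T}(\varphi)'$ for a minimal subshift $(X,\varphi)$. By Theorem~\ref{thm: fingen} it is finitely generated; by Theorem~\ref{thm: simpel} it is simple; and it is infinite, since it contains the locally finite group $\bigcup_{n} \bigoplus_i \mathfrak{A}_{h_i^n}$ appearing in Corollary~\ref{cor: ev.permut.}, which is plainly infinite. Moreover $G$ inherits the LEF property from $\mathfrak{T}(\varphi)$, because a local embedding of a group restricts to a local embedding of any subgroup. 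The strategy is then: assume for contradiction that $G$ is finitely presented, deduce that $G$ is residually finite, and contradict infinite simplicity.

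The crux is the classical observation that a finitely presented LEF group is residually finite. I would argue as follows: write $G = \langle S \mid R \rangle$ with $S$ and $R$ finite, fix $1 \neq g \in G$, and fix a word $w$ in $S \cup S^{-1}$ representing $g$. Let $F \subseteq G$ be the finite subset consisting of $1$, of $g$, of $S \cup S^{-1}$, and of all partial products (prefixes) of the relators in $R$ and of $w$. A local embedding $\phi \colon F \to H$ into a finite group $H$ — injective on $F$ and satisfying $\phi(ab) = \phi(a)\phi(b)$ whenever $a,b,ab \in F$ — forces $\phi(1) = 1_H$ and, running along prefixes, sends each relator to $1_H$; hence the assignment $s \mapsto \phi(s)$ extends to a genuine homomorphism $\psi \colon G \to H$. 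The same prefix bookkeeping gives $\psi(g) = \phi(g)$, and since $\phi$ is injective on $F \ni \{1,g\}$ we obtain $\psi(g) = \phi(g) \neq \phi(1) = 1_H$. Thus $g$ survives in a finite quotient, which proves residual finiteness.

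Finally I would invoke the incompatibility of residual finiteness with infinite simplicity: a residually finite simple group admits, for any $g \neq 1$, a finite-index normal subgroup avoiding $g$, which by simplicity must be the trivial subgroup, forcing $G$ to embed into a finite group and hence to be finite. Since $G = \mathfrak{T}(\varphi)'$ is infinite and simple it is therefore not residually finite, so by the lemma of the previous paragraph it cannot be finitely presented, completing the contradiction.

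The main obstacle is not in this deduction, which is short and formal, but in the input on which it rests: establishing that $\mathfrak{T}(\varphi)$ is LEF. That is the genuinely substantial step, and it is exactly where the wreath-product-like decomposition of Theorem~\ref{thm: grig-med} does the work. The finite permutation pieces $\mathcal{P}(\mathcal{A}_n) \cong \bigoplus_i \mathfrak{S}_{h_i^n}$, together with the tightly controlled rotation part, let one exhibit, for each finite subset of $\mathfrak{T}(\varphi)$, a local embedding into a finite symmetric group; I would defer the details of this construction to Subsection~\ref{subs: the lef-property} and treat the present theorem as a corollary of it.
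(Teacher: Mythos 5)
Your proposal is correct and coincides with the route this text itself takes: the theorem is deliberately left unproved in Subsection~\ref{subs: simplicity and finite generation} and then rederived in Subsection~\ref{subs: the lef-property} exactly as you describe, by combining the LEF property of $\mathfrak{T}(\varphi)$ (Theorem~\ref{thm: topfullgroup is LEF group}, inherited by the subgroup $\mathfrak{T}(\varphi)'$), simplicity (Theorem~\ref{thm: simpel}), and the fact that a finitely presented LEF group is residually finite (Proposition~\ref{prop: finitely presented lef group is residually finite}), whence no infinite, finitely presented, simple LEF group exists (Corollary~\ref{cor: no infinite, finitely presented simple LEF-group}). Your prefix-bookkeeping argument for residual finiteness and your reduction to the LEF theorem are both sound, and you correctly identify Theorem~\ref{thm: grig-med} as the substantial input.
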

	
	Using the facts that a finite index subgroup $H$ of a group $G$ is finitely presented \emph{if and only if} the group $G$ is finitely presented and that finite presentability is stable under extensions (see Chapter~V of \cite{dlh00}), gives the following corollary:
	
	\begin{cor}[\cite{mat06}, Corollary 5.8]
		Let $(X,\varphi)$ be a minimal subshift. Then the groups $\mathfrak{T}(\varphi)$ and $\mathfrak{T}(\varphi)_0$ cannot be finitely presented.
	\end{cor}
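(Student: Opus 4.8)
The plan is to argue by contraposition and to funnel every case into the non-finite-presentability of $\mathfrak{T}(\varphi)'$ furnished by Theorem~5.7 of \cite{mat06}, using only the two stability facts cited above together with the abelianization computations. First I would dispose of the ``large'' case. By Corollary~\ref{cor: fingen2} a minimal subshift has $\mathfrak{T}(\varphi)$ (equivalently $\mathfrak{T}(\varphi)_0$) finitely generated \emph{if and only if} $K_0(C^*(X,\varphi))\otimes(\mathbb{Z}/2\mathbb{Z})$ is finite. Now $\mathfrak{T}(\varphi)_0^{\operatorname{ab}}\cong K_0(C^*(X,\varphi))\otimes(\mathbb{Z}/2\mathbb{Z})$ by Theorem~\ref{thm: sgn}, and this is a vector space over $\mathbb{Z}/2\mathbb{Z}$, hence finitely generated precisely when finite; likewise $\mathfrak{T}(\varphi)^{\operatorname{ab}}\cong\mathbb{Z}\oplus(K_0(C^*(X,\varphi))\otimes(\mathbb{Z}/2\mathbb{Z}))$ by Corollary~\ref{cor: strong AH-conjecture for systems}. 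Since a finitely generated group has finitely generated abelianization, if this coefficient group is infinite then neither abelianization is finitely generated, so neither $\mathfrak{T}(\varphi)$ nor $\mathfrak{T}(\varphi)_0$ is finitely generated and \emph{a fortiori} neither is finitely presented. It then remains to treat the case where $K_0(C^*(X,\varphi))\otimes(\mathbb{Z}/2\mathbb{Z})$ is finite.

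For $\mathfrak{T}(\varphi)_0$ this residual case is immediate. Since $\mathfrak{T}(\varphi)'=\mathfrak{T}(\varphi)_0'$ and $\mathfrak{T}(\varphi)_0/\mathfrak{T}(\varphi)_0'=\mathfrak{T}(\varphi)_0^{\operatorname{ab}}$ is now finite, the commutator subgroup $\mathfrak{T}(\varphi)'$ sits inside $\mathfrak{T}(\varphi)_0$ with finite index. By the finite-index fact, $\mathfrak{T}(\varphi)_0$ would be finitely presented only if $\mathfrak{T}(\varphi)'$ were; but for a minimal subshift $\mathfrak{T}(\varphi)'$ is finitely generated (Theorem~\ref{thm: fingen}) and not finitely presented (Theorem~5.7 of \cite{mat06}). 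Hence $\mathfrak{T}(\varphi)_0$ is not finitely presented.

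The group $\mathfrak{T}(\varphi)$ is the delicate case, since here $\mathfrak{T}(\varphi)'$ has \emph{infinite} index: the index map gives a split extension $1\to\mathfrak{T}(\varphi)_0\to\mathfrak{T}(\varphi)\xrightarrow{I}\mathbb{Z}\to1$. The plan is to pass to a finite-index subgroup again governed by $\mathfrak{T}(\varphi)'$. Writing $F:=K_0(C^*(X,\varphi))\otimes(\mathbb{Z}/2\mathbb{Z})$ (finite), the subgroup $K:=\langle\mathfrak{T}(\varphi)',\varphi\rangle\cong\mathfrak{T}(\varphi)'\rtimes\mathbb{Z}$ is well defined because $\mathfrak{T}(\varphi)'$ is characteristic, hence normalized by $\varphi$, and $\langle\varphi\rangle\cap\mathfrak{T}(\varphi)'=\{1\}$ as $I(\varphi^n)=n$. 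Its image in $\mathfrak{T}(\varphi)^{\operatorname{ab}}=\mathbb{Z}\oplus F$ is the cyclic group generated by the class of $\varphi$, of index $|F|$, so $[\mathfrak{T}(\varphi):K]=|F|<\infty$ and, by the finite-index fact, it suffices to show $K$ is not finitely presented. Here lies the genuine obstacle: $K$ is a mapping torus $\mathfrak{T}(\varphi)'\rtimes\mathbb{Z}$ of a finitely generated, non-finitely-presented group, and the formal stability facts do \emph{not} by themselves forbid such a torus from being finitely presented — the Bieri--Stallings kernels exhibit finitely presented groups surjecting onto $\mathbb{Z}$ with finitely generated, non-finitely-presented kernel. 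I would therefore close this step not by abstract extension lemmas but by re-running the argument of Theorem~5.7 directly on $K$: that proof produces an infinite family of essential relations forced by the dynamics of the subshift, and one verifies that adjoining the single stable letter $\varphi$ lets no finite subfamily imply the rest. This dynamical relation-counting, rather than the group-theoretic stability lemmas, is the hard part; everything else is the bookkeeping above.
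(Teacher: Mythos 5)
Your reduction is sound where it overlaps with the paper: the case where $K_0(C^*(X,\varphi))\otimes(\mathbb{Z}/2\mathbb{Z})$ is infinite is killed by non-finite-generation (Corollary~\ref{cor: fingen2}), and in the finite case $\mathfrak{T}(\varphi)_0$ contains $\mathfrak{T}(\varphi)'$ with finite index, so Theorem~5.7 of \cite{mat06} together with the finite-index stability of finite presentability settles $\mathfrak{T}(\varphi)_0$ exactly as the paper does. Your criticism of the remaining step is also well taken: the paper disposes of $\mathfrak{T}(\varphi)$ by invoking that ``finite presentability is stable under extensions,'' but that fact only passes finite presentability \emph{up} an extension, whereas what is needed is to pass non-finite-presentability of the kernel up to a mapping torus by $\mathbb{Z}$; as you observe, the kernel of $F_2\times F_2\twoheadrightarrow\mathbb{Z}$ shows that a finitely generated, non-finitely-presented normal subgroup with quotient $\mathbb{Z}$ does not obstruct finite presentability of the ambient group. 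You have correctly located the one point where an actual argument is required.

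However, your proposal does not supply that argument: ``re-running the proof of Theorem~5.7 on $K=\mathfrak{T}(\varphi)'\rtimes\mathbb{Z}$ and verifying that no finite subfamily of relations suffices'' is a declaration of intent, not a proof, and it is precisely the hard case. The gap can be closed with material already in this text, without redoing Matui's relation analysis. By Theorem~\ref{thm: topfullgroup is LEF group} the group $\mathfrak{T}(\varphi)$ is LEF, and so is any of its subgroups, in particular $\mathfrak{T}(\varphi)_0$; if either were finitely presented it would be residually finite by Proposition~\ref{prop: finitely presented lef group is residually finite}. But subgroups of residually finite groups are residually finite, and a nontrivial simple residually finite group is finite (a proper finite-index normal subgroup must be trivial), whereas both groups contain the infinite simple group $\mathfrak{T}(\varphi)'$ (Theorem~\ref{thm: simpel}). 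This is the mechanism of Corollary~\ref{cor: no infinite, finitely presented simple LEF-group} applied to a subgroup rather than to the group itself, and it handles $\mathfrak{T}(\varphi)$ and $\mathfrak{T}(\varphi)_0$ simultaneously, with no case distinction on $K_0(C^*(X,\varphi))\otimes(\mathbb{Z}/2\mathbb{Z})$.
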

	
	\subsection{Generalizations}\label{subs: generalizations}
	
	By Proposition~\ref{prop: alm.fin.}, the transformation group of free $\mathbb{Z}^n$ actions on a Cantor space are almost finite. Thus the following simplicity results of Matui in particular generalize Theorem~\ref{thm: simpel}:
	
	\begin{thm}[\cite{mat13}, Theorem~4.7]\label{thm: almsimpl}
		Let $\mathcal{G}$ be an almost finite, minimal, effective, \'etale Cantor groupoid. Any non-trivial subgroup of $\mathfrak{T}(\mathcal{G})$ normalized by $\mathfrak{T}(\mathcal{G})'$ contains $\mathfrak{T}(\mathcal{G})'$. Thus $\mathfrak{T}(\mathcal{G})'$ is simple.
	\end{thm}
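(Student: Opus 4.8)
The plan is to run the classical ``double commutator'' argument for simplicity of topological full groups, with almost finiteness entering only at the generation step. Throughout I regard elements of $\mathfrak{T}(\mathcal{G})$ as homeomorphisms of $\mathcal{G}^{(0)}$ (permissible since $\mathcal{G}$ is effective, see Remark~\ref{rem: topfgroupsdefi}), and write $\operatorname{supp}$ for the clopen support of such a homeomorphism. Fix a non-trivial subgroup $H \leq \mathfrak{T}(\mathcal{G})$ normalized by $\mathfrak{T}(\mathcal{G})'$; the goal is $\mathfrak{T}(\mathcal{G})' \subseteq H$, which applied to a normal subgroup $H \trianglelefteq \mathfrak{T}(\mathcal{G})'$ yields simplicity. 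The three ingredients are: a trick manufacturing small-support elements of $H$ out of a single non-trivial $g \in H$; minimality, to move small supports around by elements of $\mathfrak{T}(\mathcal{G})'$; and a generation statement, that $\mathfrak{T}(\mathcal{G})'$ is generated by commutators of transpositions $\tau_B$ with arbitrarily small support.

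First I would extract a non-trivial $g \in H$ and, since $g \neq \operatorname{id}$ and $\mathcal{G}^{(0)}$ is a Cantor space, choose a non-empty clopen $U$ with $U \cap g(U) = \emptyset$. For any $h, c \in \mathfrak{T}(\mathcal{G})'$ with support in $U$ one computes that $[g,h] = ghg^{-1}h^{-1}$ lies in $H$ (normality: $hg^{-1}h^{-1} \in H$), has support in $U \sqcup g(U)$, and equals $h^{-1}$ on $U$ while $g(U)$ carries only the conjugate $ghg^{-1}$. Taking a second commutator with $c$ kills the $g(U)$-part: since $\operatorname{supp}(c) \subseteq U$ and $U \cap g(U) = \emptyset$, the element $\bigl[[g,h],c\bigr]$ is supported entirely in $U$, equals $[h^{-1},c]$ there, and still lies in $H$. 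Thus $H$ contains $[h^{-1},c]$ for every pair $h,c \in \mathfrak{T}(\mathcal{G})'$ supported in $U$. Choosing $h,c$ to be transpositions $\tau_B,\tau_C$ (the involutions introduced before Corollary~\ref{cor: existence of involutions}) with $B,C$ compact open slices inside $U$ arranged so that $[\tau_B,\tau_C]$ is a non-trivial ``$3$-cycle'', we obtain a non-trivial element of $H$ supported in $U$. By Corollary~\ref{cor: existence of involutions} such configurations exist inside any non-empty clopen set, so these are the canonical small-support generators.

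Next I would spread this single small-support generator across the whole groupoid. By minimality and Lemma~\ref{lem: existence of pencils}, any compact open slice can be conjugated into any other by an element of $\mathfrak{T}(\mathcal{G})$, and one checks the conjugating element can be chosen in $\mathfrak{T}(\mathcal{G})'$ (after adjusting by a further transposition-commutator its index vanishes, cf. Corollary~\ref{cor: transpositions are in the index kernel}); since $H$ is normalized by $\mathfrak{T}(\mathcal{G})'$, every conjugate of the generator again lies in $H$. Consequently $H$ contains all transposition-commutators $[\tau_B,\tau_C]$ of the relevant type, and the proof closes once one knows that these elements generate $\mathfrak{T}(\mathcal{G})'$.

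The main obstacle is precisely this last generation lemma, and it is here that almost finiteness is indispensable. For a minimal Cantor system the analogous fact follows from the Kakutani--Rokhlin / alternating-group picture (Proposition~\ref{prop: loc.fin.simpl.} and Corollary~\ref{cor: locally finite subgroups}); for a general almost finite $\mathcal{G}$ one replaces these by the F\o{}lner-type elementary subgroupoids $\mathcal{H} \leq \mathcal{G}$ supplied by the almost-finiteness condition. Any element of $\mathfrak{T}(\mathcal{G})'$ is approximated by an element of $\mathfrak{T}(\mathcal{H})$, whose commutator subgroup is a direct sum of alternating groups and hence simple (by the AF-groupoid analogue of Corollary~\ref{cor: locally finite subgroups}); the almost-finiteness estimate on $|K\mathcal{H}u \setminus \mathcal{H}u|/|\mathcal{H}u|$ is what lets one absorb the ``boundary'' discrepancy into small-support transposition-commutators while keeping the index map (Corollary~\ref{cor: transpositions are in the index kernel}) under control. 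Carefully bookkeeping this decomposition---ensuring the correction terms genuinely have small, controllable support and that non-Hausdorff pathologies do not obstruct the slice arithmetic---is the technical heart of the argument; once it is in place, the commutator trick and minimality assembled above give $\mathfrak{T}(\mathcal{G})' \subseteq H$.
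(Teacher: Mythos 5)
The paper does not actually prove this theorem: it is quoted from Matui with only the remark that the proof ``hinges on a generalization of Lemma~\ref{lem: gw}'' (the Glasner--Weiss lemma), namely Lemma~6.7 of \cite{mat12}. Your opening moves --- the double commutator $\bigl[[g,h],c\bigr]=[h^{-1},c]$ localizing a non-trivial element of $H$ into a clopen $U$ with $U\cap g(U)=\emptyset$, followed by spreading it around by minimality --- are correct and are exactly the skeleton of the two simplicity proofs the paper does contain (Theorem~\ref{thm: simpel} and Theorem~\ref{thm: simpl}). The problems begin where you choose the auxiliary elements. For $hg^{-1}h^{-1}$ (and likewise $c[g,h]^{-1}c^{-1}$) to lie in $H$ you need $h$ and $c$ to normalize $H$, i.e.\ to lie in $\mathfrak{T}(\mathcal{G})'$; but the transpositions $\tau_B$ you then substitute for them are only known to lie in $\mathfrak{T}(\mathcal{G})_0$ (Corollary~\ref{cor: transpositions are in the index kernel}), and vanishing index does not put an element in the derived subgroup --- the discrepancy is exactly what the signature map and $H_0(\mathcal{G})\otimes\mathbb{Z}/2\mathbb{Z}$ measure. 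The same conflation recurs when you assert the conjugating homeomorphisms can be pushed into $\mathfrak{T}(\mathcal{G})'$ because ``its index vanishes.'' Arranging for specific small-support transpositions to be products of commutators is precisely what the Glasner--Weiss-type input is for (cf.\ Lemma~\ref{lem: 2-div.} and its role in Theorem~\ref{thm: simpel}); alternatively one sidesteps the issue by using $3$-cycles built from multisections, which lie in $\mathfrak{A}(\mathcal{G})\leq\mathfrak{T}(\mathcal{G})'$ by Remark~\ref{rem: SA}(ii), as in the proof of Theorem~\ref{thm: simpl}.

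The second and larger gap is the generation step. You correctly reduce everything to showing that the small-support elements now known to lie in $H$ generate $\mathfrak{T}(\mathcal{G})'$, and you correctly locate almost finiteness there, but the paragraph you offer is a description of a hoped-for argument rather than an argument: ``absorbing the boundary discrepancy into small-support transposition-commutators while keeping the index map under control'' cannot be checked and does not match the actual mechanism. Matui's route runs the almost-finite Glasner--Weiss lemma through a $2$-divisibility/signature argument generalizing Theorem~\ref{thm: sgn}; Nekrashevych's route shows that the conjugate closure of $\mathfrak{A}(\mathcal{G}|_U)$ is all of $\mathfrak{A}(\mathcal{G})$ by explicit multisection surgery (the second half of the proof of Theorem~\ref{thm: simpl}) and then still needs $\mathfrak{A}(\mathcal{G})=\mathfrak{T}(\mathcal{G})'$ for almost finite groupoids --- which in this paper appears only as Corollary~\ref{cor: A=D}, itself deduced from the theorem you are proving, so it would have to be supplied independently. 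As written, your proposal establishes the easy half (that $H$ contains non-trivial elements of arbitrarily small support); the half in which almost finiteness actually does work is missing.
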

	
	\begin{thm}[\cite{mat13}, Theorem~4.16]\label{thm: purely infinite, minimal groupoids, then commutat of topfgrp is simple}
		Let $\mathcal{G}$ be a purely infinite, minimal, effective, \'etale Cantor groupoid. Any non-trivial subgroup of $\mathfrak{T}(\mathcal{G})$ normalized by $\mathfrak{T}(\mathcal{G})'$ contains $\mathfrak{T}(\mathcal{G})'$. Thus $\mathfrak{T}(\mathcal{G})'$ is simple.
	\end{thm}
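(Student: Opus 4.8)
The plan is to prove the absorption statement in full — that every non-trivial subgroup $H \leq \mathfrak{T}(\mathcal{G})$ normalized by $D := \mathfrak{T}(\mathcal{G})'$ already contains $D$ — and then to read off simplicity as the special case where $H$ is a non-trivial normal subgroup of $D$ itself: since such an $H$ is normalized by $D$ and satisfies $H \subseteq D$, the inclusion $D \subseteq H$ forces $H = D$. Throughout I identify $\mathfrak{T}(\mathcal{G})$ with a group of homeomorphisms of the Cantor space $\mathcal{G}^{(0)}$, which is legitimate since $\mathcal{G}$ is effective (Remark~\ref{rem: topfgroupsdefi}(iii)). The overall architecture imitates the almost-finite case (Theorem~\ref{thm: almsimpl}), but every comparison of clopen sets that there rests on invariant measures (Glasner--Weiss) is replaced by pure infiniteness: each clopen $U \subseteq \mathcal{G}^{(0)}$ is properly infinite, so via compact open slices any clopen set is equivalent to a subset of any prescribed non-empty clopen set. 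This eradicates all measure obstructions and in fact makes the purely infinite argument more flexible than its almost-finite counterpart.

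First I would establish the generation-and-conjugacy input for $D$. Combining minimality, Lemma~\ref{lem: existence of pencils}, and proper infiniteness, one shows that the transpositions $T_B$ (for slices $B$ with $s(B)\cap r(B)=\emptyset$) together with their products generate $\mathfrak{T}(\mathcal{G})$, and that $D$ is generated by the ``even'' elementary elements such as the commutators $[\tau_W,\tau_V]$ already used in the proof of Proposition~\ref{prop: classF} (recall $T_B \in \mathfrak{T}(\mathcal{G})_0$ by Corollary~\ref{cor: transpositions are in the index kernel}). The decisive point is that any two non-trivial elementary elements of a fixed combinatorial type are conjugate by a single element of $D$: minimality transports their supports and pure infiniteness matches the sizes of the relevant clopen sets by slices, and the conjugator can be taken inside $D$ exactly as in Proposition~\ref{prop: classF}. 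Consequently it suffices to place \emph{one} non-trivial elementary element into $H$, because normalization of $H$ by $D$ then sweeps it across its entire $D$-conjugacy class, which generates $D$.

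Second comes the extraction step via the commutator trick. Fix $1 \neq h \in H$; as $h$ is a non-trivial homeomorphism there is $u \in \mathcal{G}^{(0)}$ with $h(u) \neq u$, and by continuity together with Hausdorffness of $\mathcal{G}^{(0)}$ a clopen $U \ni u$ with $U \cap h(U) = \emptyset$. For any $d \in D$ supported in $U$ the element $[d,h] = (d h d^{-1}) h^{-1}$ lies in $H$, since $d h d^{-1} \in H$ by normalization and $h^{-1} \in H$. Writing $[d,h] = d\,(h d^{-1} h^{-1})$ and noting that $\operatorname{supp}(d) \subseteq U$ is disjoint from $\operatorname{supp}(h d^{-1} h^{-1}) \subseteq h(U)$, the two factors commute, so $[d,h]$ restricts to $d$ on $U$ and to the ``mirror'' copy $h d^{-1} h^{-1}$ on $h(U)$. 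Taking $d$ to be a non-trivial elementary element supported in $U$ thus deposits a faithful copy of $d$ on $U$ inside $H$, at the cost of a disjoint mirror copy on $h(U)$.

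The step I expect to be the main obstacle is removing this mirror garbage so as to land a genuine elementary element of $H$ supported in $U$ alone; here pure infiniteness is indispensable. My plan is to choose $U$ with ample internal room inside a properly infinite clopen set, use a slice furnished by proper infiniteness to fold $h(U)$ into $U$, and then run a Higman-type second-commutator manipulation on two elementary elements $d_1,d_2$ supported in $U$: since the $U$-parts and $h(U)$-parts commute, the double commutator $[[d_1,h],[d_2,h]]$ factors as $[d_1,d_2]$ on $U$ times a conjugate of $[d_1^{-1},d_2^{-1}]$ on $h(U)$, and a careful, pure-infiniteness-assisted choice of the supports arranges the residual $h(U)$-contribution to be itself elementary and independently cancellable. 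Feeding the resulting non-trivial elementary element of $H$ into the conjugacy-of-elementaries fact from the first step yields $D \subseteq H$, which completes the absorption statement and hence the simplicity of $\mathfrak{T}(\mathcal{G})'$; the bookkeeping in the proof of Theorem~\ref{thm: almsimpl} serves as the template, with proper infiniteness substituted for the measure estimates throughout.
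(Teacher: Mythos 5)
First, a point of orientation: the paper does not actually prove this theorem. It is quoted from Matui, with the remark that the role played by the Glasner--Weiss lemma in the almost finite case is taken over, essentially for free, by Proposition~\ref{prop: purely infinite construction}. The nearest in-paper proof of a statement of this kind is Nekrashevych's argument for Theorem~\ref{thm: simpl} (the analogous absorption statement for $\mathfrak{A}(\mathcal{G})$), and your outline has the same overall shape: a commutator trick localizing a non-trivial element of $H$ into a small clopen set, followed by a conjugation argument sweeping it out to all of $D:=\mathfrak{T}(\mathcal{G})'$. Note that you cannot shortcut the comparison between $D$ and the alternating full group via Corollary~\ref{cor: A=D}, since in the paper that corollary is deduced \emph{from} the present theorem.

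As it stands your proposal has two genuine gaps. The more serious one is that your first step is asserted rather than proved, and it is where essentially all of the content lives: that $D$ is generated by elementary elements, that these fall into ``combinatorial types'' each forming a single $D$-conjugacy class, and that one such class already generates $D$ --- none of this follows from ``minimality, Lemma~\ref{lem: existence of pencils} and proper infiniteness'' without a substantial argument (in Nekrashevych's proof this is the multisection bookkeeping occupying the entire second half of the proof of Theorem~\ref{thm: simpl}, including a Cantor--Bernstein-type matching of domains; Matui's proof has a corresponding analysis). Moreover your auxiliary claim that the transpositions $T_B$ generate $\mathfrak{T}(\mathcal{G})$ is false in general: by Corollary~\ref{cor: transpositions are in the index kernel} they lie in $\mathfrak{T}(\mathcal{G})_0$, and by Theorem~\ref{thm: indexsurj} the index map is surjective onto $H_1(\mathcal{G})$, which is non-zero for many purely infinite minimal groupoids (e.g.\ SFT-groupoids $\mathcal{G}_A$ with $\ker(I-A^{t})\neq 0$, such as $A=\left(\begin{smallmatrix}2&1\\1&2\end{smallmatrix}\right)$). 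The second gap is the mirror-cancellation step: with $[x,y]=xyx^{-1}y^{-1}$ one indeed gets $[[d_1,h],[d_2,h]]=[d_1,d_2]\cdot h[d_1^{-1},d_2^{-1}]h^{-1}$, and your plan for disposing of the second factor is too vague to assess. But this difficulty is self-inflicted: the standard trick, used verbatim in the proof of Theorem~\ref{thm: simpl}, is to take $[[d_1,h],d_2]$ with $d_1,d_2$ supported in $U$ and $U\cap h(U)=\emptyset$; since $hd_1^{-1}h^{-1}$ is supported in $h(U)$ and hence commutes with $d_2$, this equals $[d_1,d_2]$ exactly, with no residual term, and it lies in $H$ because $H$ is normalized by $D$. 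Replacing your third step by this computation and then carrying out the first step honestly would yield a correct proof.
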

	
	Both of these simplicity theorems hinge on a generalization of Lemma~\ref{lem: gw}. In the almost finite case this is represented by Lemma~6.7 in \cite{mat12}. In the purely infinite case it comes for free by the structure of the groupoid by the following lemma -- which we state because of its use in Subsection~\ref{subs: topfullgroups koopman}:
	
	\begin{prop}[\cite{mat15}, Proposition 4.11.]\label{prop: purely infinite construction}
		\quad Let $\mathcal{G}$ be an \'etale Cantor groupoid. The following are equivalent:
		\begin{enumerate}[(i)]
			\item $\mathcal{G}$ is purely infinite and minimal.
			
			\item For every pair of clopen sets $A,B \subset \mathcal{G}^{(0)}$ with $B \neq \emptyset$, there exists a compact, open slice $U \in \mathcal{B}_\mathcal{G}^{o,k}$ with $s(U)=A$ and $r(U) \subset B$.
			
			\item For every pair of clopen sets $A,B \subset \mathcal{G}^{(0)}$ with $A \neq \mathcal{G}^{(0)} $ and $B \neq \emptyset$, there exists an $S \in \mathfrak{T}(\mathcal{G})$ such that $r(SA) \subset B$.
		\end{enumerate}
	\end{prop}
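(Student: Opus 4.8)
The plan is to establish the two equivalences $(i)\Leftrightarrow(ii)$ and $(i)\Leftrightarrow(iii)$, which together give the statement. Three of the four arrows are soft consequences of minimality and proper infiniteness; the genuine work sits in the step that produces a full bisection, i.e.\ in the passage to $(iii)$.

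For $(i)\Rightarrow(ii)$ I would first dispose of the trivial case $A=\emptyset$ (take $U=\emptyset$) and otherwise invoke Lemma~\ref{lem: existence of pencils}: minimality yields finitely many compact open slices $B_1,\dots,B_n$ with pairwise disjoint sources satisfying $A=\bigsqcup_i s(B_i)$ and $r(B_i)\subseteq B$. This already has source exactly $A$ and range inside $B$, but its range need not be injective. Here proper infiniteness enters: since every clopen subset of $\mathcal{G}^{(0)}$ is properly infinite, an easy induction produces compact open slices $V_1,\dots,V_n$ with $s(V_i)=B$ and pairwise disjoint ranges $r(V_i)\subseteq B$. Composing, $U:=\bigsqcup_i V_iB_i$ satisfies $s(U)=A$ while the ranges $r(V_iB_i)\subseteq r(V_i)$ are now pairwise disjoint, so $U$ is a genuine compact open slice with $s(U)=A$ and $r(U)\subseteq B$. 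The converse $(ii)\Rightarrow(i)$ is immediate: applying $(ii)$ to a clopen neighbourhood of a unit $u$ and an arbitrary nonempty clopen $B$ connects $u$ to a point of $B$, so every orbit is dense; and applying $(ii)$ twice, into two disjoint nonempty clopen subsets of a given clopen $U$, exhibits $U$ as properly infinite.

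For $(iii)\Rightarrow(i)$ the same two observations work with full bisections: restricting an $S\in\mathfrak{T}(\mathcal{G})$ with $\alpha_S(A)\subseteq B$ to $A$ produces a slice witnessing minimality, and combining two full bisections that compress $\mathcal{G}^{(0)}$ into prescribed disjoint clopen targets shows every clopen set (including $\mathcal{G}^{(0)}$ itself) is properly infinite. The crucial direction is $(i)\Rightarrow(iii)$. After trivially handling $B=\mathcal{G}^{(0)}$ (take $S=\mathcal{G}^{(0)}$) and $A=\emptyset$, I may assume $\emptyset\neq A\subsetneq\mathcal{G}^{(0)}$ and $\emptyset\neq B\subsetneq\mathcal{G}^{(0)}$. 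By $(ii)$ there is a compact open slice $P$ with $s(P)=A$ and $C:=r(P)\subseteq B$, so $P$ realises an equivalence of $A$ with $C$; since $C\subseteq B\subsetneq\mathcal{G}^{(0)}$, both complements $A^{\mathrm c}:=\mathcal{G}^{(0)}\setminus A$ and $C^{\mathrm c}:=\mathcal{G}^{(0)}\setminus C$ are nonempty clopen. To assemble the desired full bisection $S=P\sqcup W$ with $\alpha_S(A)=C\subseteq B$, it remains to find a compact open slice $W$ with $s(W)=A^{\mathrm c}$ and $r(W)=C^{\mathrm c}$ \emph{exactly}.

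This last step is the main obstacle, and it is where pure infiniteness must be used in earnest. Note first that the two complements carry the same class $[\mathbf{1}_{A^{\mathrm c}}]=[\mathbf{1}_{C^{\mathrm c}}]$ in $H_0(\mathcal{G})$, because $[\mathbf{1}_A]=[\mathbf{1}_C]$ (realised by $P$) and $[\mathbf{1}_{A^{\mathrm c}}]=[\mathbf{1}_{\mathcal{G}^{(0)}}]-[\mathbf{1}_A]$; this equality of homology classes is moreover necessary for any exact equivalence, since a bisection $W$ forces $[\mathbf{1}_{s(W)}]=[\mathbf{1}_{r(W)}]$. Hence everything reduces to a groupoid comparison lemma, which I would isolate: \emph{in a purely infinite, minimal \'etale Cantor groupoid, two nonempty clopen sets with equal $H_0$-class are equivalent through a single compact open slice.} The difficulty is precisely that a naive Cantor--Schr\"oder--Bernstein argument from the two one-sided embeddings supplied by $(ii)$ need not terminate, and the bijection it produces need not be clopen-supported, hence need not arise from a compact open slice; the homological hypothesis together with proper infiniteness is exactly what repairs this, in the manner of Cuntz's theorem that properly infinite projections with equal $K_0$-class are equivalent. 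I expect to obtain this lemma from the comparison theory for purely infinite groupoids in \cite{mat15} (cf.\ \cite{rs12}). Granting it, setting $S=P\sqcup W$ completes $(i)\Rightarrow(iii)$ and hence the proposition.
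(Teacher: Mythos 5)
Your treatment of $(i)\Leftrightarrow(ii)$ and of $(iii)\Rightarrow(i)$ matches the paper's argument: for $(i)\Rightarrow(ii)$ you combine the pencil of slices from Lemma~\ref{lem: existence of pencils} with the pairwise disjoint copies of $B$ inside $B$ obtained by iterating the two slices witnessing proper infiniteness, which is exactly what the paper does. The problem is the direction $(i)\Rightarrow(iii)$. You have over-constrained it: after fixing a slice $P$ with $s(P)=A$ and $r(P)=C\subseteq B$, you insist on extending $P$ itself to a full bisection, which forces you to find a compact open slice $W$ with $s(W)=A^{\mathrm{c}}$ and $r(W)=C^{\mathrm{c}}$ \emph{exactly}. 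That reduces the proof to a comparison lemma (nonempty clopen sets with equal $H_0$-class are equivalent via a single compact open slice) which you do not prove and propose to import from \cite{mat15}. This is a genuine gap, and invoking that comparison theory also risks circularity: in \cite{mat15} the relevant comparison result is developed after, and with the help of, the very proposition you are proving.

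The statement only asks for \emph{some} $S\in\mathfrak{T}(\mathcal{G})$ with $r(SA)\subset B$; nothing requires $S$ to restrict to a prescribed slice on $A$. The paper exploits this and proves $(ii)\Rightarrow(iii)$ directly by the involution trick. If $B\setminus A\neq\emptyset$, apply $(ii)$ to obtain a compact open slice $U$ with $s(U)=A$ and $r(U)\subset B\setminus A$; since $s(U)$ and $r(U)$ are disjoint, $S:=U\cup U^{-1}\cup\bigl(\mathcal{G}^{(0)}\setminus(s(U)\cup r(U))\bigr)$ is automatically a full bisection with $r(SA)=r(U)\subset B$ --- no matching of complements is needed, because swapping two disjoint clopen sets and fixing everything else is always a bijection. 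If instead $B\subseteq A$, the hypothesis $A\neq\mathcal{G}^{(0)}$ is exactly what saves the day: first move $A$ into $\mathcal{G}^{(0)}\setminus A$ by such an involution $S_1$, then move $\mathcal{G}^{(0)}\setminus A$ into $B$ by an involution $S_2$, and take $S=S_2S_1$. Replacing your final paragraph with this two-case argument closes the gap and removes all reliance on homology.
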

	\begin{proof} 
		\emph{(i) $\Rightarrow$ (ii):} Since $B$ is properly infinite, there exist compact open slices $U,V$ satisfying $s(U)=s(V)=B$, $r(U) \cup r(V) \subset B$ and $r(U) \cap r(V) =\emptyset$. The family $\{V_n\}_{n \in \mathbb{N}}$ of compact, open slices defined inductively by $V_1:=U$ and $V_{n+1}=VV_n$ satisfies $s(V_n)=B$, $r(V_n) \subset B$ and $r(V_n) \cap r(V_m)=\emptyset$ for $n\neq m$. Since $\mathcal{G}$ is minimal there exists a finite family of slices $\{W_i\}_{i \in I}$ as chosen in Lemma~\ref{lem: existence of pencils}. Then $U=\bigcup_{i \in I} V_iW_i$ is the required slice. 
		
		\emph{(ii) $\Rightarrow$ (iii):} \emph{$B \setminus A$ is not empty}: By (ii) there exists a compact open slice $U$ with $s(U)=A$ and $r(U) \subset B \setminus A$. The compact, open slice $S := U \cup U^{-1} \cup (\mathcal{G}^{(0)}\setminus (s(U) \cup r(U))$ is sufficient.
		
		\emph{$B \setminus A$ is empty}: We have $B \subset A$. By the previous case, there exist $S_1,S_2 \in \mathfrak{T}(\mathcal{G})$ with $r(S_1 A)\subset \mathcal{G}^{(0)}\setminus A$ and $r(S_2(\mathcal{G}^{(0)}\setminus A)) \subset B$. The element $S:=S_2S_1$ is sufficient.
		
		\emph{(iii) $\Rightarrow$ (i):} Minimality is immediate, as by this property $ \{Su|S \in \mathfrak{T}(\mathcal{G})\}$ is dense in $\mathcal{G}^{(0)}$ for every $u \in \mathcal{G}^{(0)}$. Let $A$ be a non-empty clopen subset of $\mathcal{G}^{(0)}$. Let $B_1,B_2,B_3 \subseteq A$ be non-empty clopen and pairwise disjoint. Let $S_1,S_2 \in \mathfrak{T}(\mathcal{G})$ such that $r(S_1(A\setminus B_1))\subset B_2$ and $r(S_2(B_1 \cup B_2)) \subset B_3$. Then $U=B_1 \cup S_1(A\setminus B_1)$ and $V=S_2U$ are compact open slices with $s(U)=s(V)=A$, $r(U)\subset B_1 \cup B_2$ and $r(V) \subset B_3$, thus $A$ is properly infinite.
	\end{proof}
	
	In light of above simplicity results and of Subsection~\ref{subs: simplicity and finite generation}, it makes sense to study the abelization of $\mathfrak{T}(\mathcal{G})$. Matui formulates the following conjecture, which he calls \emph{AH-conjecture}:
	\begin{conj}[\cite{mat16}, Conjecture~2.9]\label{conj: AH}
		Let $\mathcal{G}$ be a minimal, effective, \'etale Cantor groupoid. Then the following sequence is exact
		\begin{equation*}
		H_0(\mathcal{G})\otimes (\mathbb{Z}/2\mathbb{Z}) \overset{j}{\longrightarrow}\mathfrak{T}(\mathcal{G})^{\mathrm{ab}} \overset{\overline{I}}{\longrightarrow} H_1(\mathcal{G}) \longrightarrow 0
		\end{equation*}
	i.e. $\mathfrak{T}(\mathcal{G})^{\mathrm{ab}}$ is an extension of $H_1(\mathcal{G})$ by a quotient of $H_0(\mathcal{G})\otimes (\mathbb{Z}/2\mathbb{Z})$.
	\end{conj}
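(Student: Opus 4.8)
The plan is to realize the claimed sequence by factoring the index map through the abelianization and constructing the left-hand map $j$ explicitly out of transpositions, then verifying exactness at the two nonzero spots. Since the index map $I\colon\mathfrak{T}(\mathcal{G})\to H_1(\mathcal{G})$ of Definition~\ref{defi: index2} is a homomorphism into an abelian group (it is a homomorphism by Proposition~\ref{prop: hom1}), it descends to a homomorphism $\overline{I}\colon\mathfrak{T}(\mathcal{G})^{\mathrm{ab}}\to H_1(\mathcal{G})$ whose kernel is the image of $\mathfrak{T}(\mathcal{G})_0$ in the abelianization. Exactness at $H_1(\mathcal{G})$ is then just surjectivity of $I$: I would use that $\mathscr{C}(\mathcal{G}^{(1)},\mathbb{Z})$ is spanned by characteristic functions $\mathbf{1}_B$ of compact open slices (as $\mathcal{B}_\mathcal{G}^{o,k}$ is a basis), so an arbitrary class of $H_1(\mathcal{G})=\ker\delta_1/\operatorname{Im}\delta_2$ is represented by an integer combination of such $\mathbf{1}_B$; invoking Proposition~\ref{prop: hom1}(i) to absorb compositions of slices into boundaries, and minimality through Lemma~\ref{lem: existence of pencils} to complete the partial slices to full bisections with $s=r=\mathcal{G}^{(0)}$, I would rewrite the class as $I(B')$ for some $B'\in\mathfrak{T}(\mathcal{G})$.

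To define $j$ I would start from the presentation $H_0(\mathcal{G})\otimes(\mathbb{Z}/2\mathbb{Z})\cong\tilde H(\mathcal{G})$ of Proposition~\ref{prop: 2-homo} and send each generator $\mathbf{1}_U$, for $U\subseteq\mathcal{G}^{(0)}$ clopen, to the class in $\mathfrak{T}(\mathcal{G})^{\mathrm{ab}}$ of an involution $T$ furnished by Corollary~\ref{cor: existence of involutions} with $\mathcal{G}^{(0)}\setminus(T\cap\mathcal{G}^{(0)})\subseteq U$, i.e.\ of a product of transpositions $T_B$ as in Corollary~\ref{cor: transpositions are in the index kernel}. The nontrivial point here is well-definedness, for which I would check that the three families of defining relations of $\tilde H(\mathcal{G})$ are respected: $2\mathbf{1}_U=0$ because $T$ is an involution, additivity over disjoint unions because disjointly supported involutions commute in the abelianization, and $\mathbf{1}_{s(B)}=\mathbf{1}_{r(B)}$ because conjugating a transposition by the slice $B$ identifies the two classes. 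Since every $T_B$ lies in $\mathfrak{T}(\mathcal{G})_0$ by Corollary~\ref{cor: transpositions are in the index kernel}, one gets $\overline{I}\circ j=0$, hence $\operatorname{Im} j\subseteq\ker\overline{I}$ for free.

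The crux is the reverse inclusion $\ker\overline{I}\subseteq\operatorname{Im} j$, asserting that every class of $\mathfrak{T}(\mathcal{G})_0$ in the abelianization is a product of transpositions detected by $H_0\otimes(\mathbb{Z}/2\mathbb{Z})$. In the reduced setting of a minimal Cantor system this is exactly Matui's signature computation, Theorem~\ref{thm: sgn}, which rests on the factorization of $\mathfrak{T}(\varphi)_0$ through the locally finite stabilizers $\mathfrak{T}(\varphi)_{\{x\}}$ (Proposition~\ref{prop: mat.fac}) together with the $2$-divisibility input of Glasner--Weiss (Lemma~\ref{lem: 2-div.}). My plan for the general statement is to reproduce this scheme by constructing a signature homomorphism $\operatorname{sgn}\colon\mathfrak{T}(\mathcal{G})_0\to H_0(\mathcal{G})\otimes(\mathbb{Z}/2\mathbb{Z})$ whose kernel is the commutator subgroup $\mathfrak{T}(\mathcal{G})'$ and which is a one-sided inverse to $j$ after passage to the abelianization. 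I expect this to be the main obstacle: both the stabilizer factorization and the $2$-divisibility lemma exploit the concrete $\mathbb{Z}$-orbit structure of a Cantor system and have no ready analogue for an arbitrary minimal effective \'etale Cantor groupoid. The natural route is therefore to first secure the statement under a structural hypothesis that supplies the missing bisection-comparison, namely almost finiteness (via the comparison Lemma~6.7 of \cite{mat12}) or pure infiniteness (via Proposition~\ref{prop: purely infinite construction}), and only afterwards to try to axiomatize the abstract comparison property that makes $\operatorname{sgn}$ well defined in full generality.
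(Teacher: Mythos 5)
The statement you are trying to prove is a \emph{conjecture} (Matui's AH-conjecture), and the paper accordingly contains no proof of it: it only records that the conjecture has been verified for certain classes of groupoids (principal minimal almost finite, finite products of SFT-groupoids, transformation groupoids of minimal Cantor systems) and explicitly lists what a verification would require, namely well-definedness of $j$, exactness, and in particular surjectivity of the index map. Your proposal correctly reconstructs the intended shape of the sequence --- $\overline{I}$ induced by the index map, $j$ built from the presentation of $H_0(\mathcal{G})\otimes(\mathbb{Z}/2\mathbb{Z})$ in Proposition~\ref{prop: 2-homo} via the involutions of Corollary~\ref{cor: existence of involutions}, and $\operatorname{Im}j\subseteq\ker\overline{I}$ from Corollary~\ref{cor: transpositions are in the index kernel} --- and you correctly locate the main obstruction at $\ker\overline{I}\subseteq\operatorname{Im}j$. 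To that extent your assessment agrees with the paper's discussion.

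However, two of the steps you present as routine are in fact themselves open in the stated generality, and your sketches for them have genuine gaps. First, surjectivity of $I$: your plan to represent a class in $H_1(\mathcal{G})$ by a combination of characteristic functions of compact open slices and then ``complete'' these to full bisections via Lemma~\ref{lem: existence of pencils} does not work as stated, because the completion adjoins additional slices whose classes in $H_1(\mathcal{G})$ need not vanish, so the completed element $B'$ generally satisfies $I(B')\neq[\sum n_i\mathbf{1}_{B_i}]$. This is precisely why Theorem~\ref{thm: indexsurj} establishes surjectivity only under almost finiteness or pure infiniteness, using the comparison properties those hypotheses supply. Second, well-definedness of $j$: the involution attached to a clopen set $U$ depends on the choice of the family $\{B_i\}$ from Lemma~\ref{lem: existence of pencils}, and the real content of well-definedness is that the resulting class in $\mathfrak{T}(\mathcal{G})^{\mathrm{ab}}$ is independent of that choice; checking the three relations of $\tilde H(\mathcal{G})$ does not address this, and moreover your verification of the relation $\mathbf{1}_{s(B)}=\mathbf{1}_{r(B)}$ by ``conjugating by the slice $B$'' presupposes that $B$ extends to an element of $\mathfrak{T}(\mathcal{G})$, which again needs an argument. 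In the paper this well-definedness is only obtained for minimal, almost finite, principal groupoids (Theorem~\ref{thm: j wd sur}). So your fallback --- proving the statement under almost finiteness or pure infiniteness --- is not merely a convenience for the hard inclusion; it is already needed for the steps you treated as free.
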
	
	
	\begin{rem}[\cite{mat16}, Remark~2.10]
		At first Matui considered a stronger version where the sequence is short exact. While it is true for some classes of groupoids, there are counterexamples highlighted by Volodymyr V. Nekrashevych. This stronger version is termed the \emph{strong AH-conjecture}.
	\end{rem}
	
	Since $\mathfrak{T}(\mathcal{G})' \leq \mathfrak{T}(\mathcal{G})_0$, the homomorphism $\overline{I}$ induced by the index map $I$ is well-defined. Of course we need to clarify how the homomorphism $j$ comes about: By the description of $H_0(\mathcal{G})\otimes (\mathbb{Z}/2\mathbb{Z})$ given in Proposition~\ref{prop: 2-homo}, it is sufficient to define the images $j(\mathbf{1}_U)$ for clopen subsets $U \subset \mathcal{G}^{(0)}$. By minimality Lemma~\ref{lem: existence of pencils} applies and there exists a finite family $\{B_i\}_{i \in I}$ of compact open slices such that $r(B_i) \subset \mathcal{G}^{(0)}\setminus U$, $U=\bigcup_i s(B_i)$ and $s(B_i)\cap s(B_j)=\emptyset$ for $i\neq j$. Define $j(\mathbf{1}_U)$ to be the class $[\prod_{i \in I} T_{B_i}] \in \mathfrak{T}(\mathcal{G})^{\mathrm{ab}}$. Verifying the AH-conjecture requires to show that this map is even well-defined and that the arising sequence is exact, in particular, that the index map $I$ is surjective. Matui was able to show surjectivity of $I$ in the purely infinite and in the almost finite case:
	\begin{thm}[\cite{mat12}, Theorem~7.5 \& \cite{mat15}, Theorem~5.2]\label{thm: indexsurj}
		Let $\mathcal{G}$ be an effective, \'etale Cantor groupoid. If $G$ is either almost finite or purely finite, the index map $\operatorname{I} \colon \mathfrak{T}(\mathcal{G}) \to H_1(\mathcal{G})$ is surjective.
	\end{thm}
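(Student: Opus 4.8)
The plan is to exploit that $\operatorname{I}$ is a group homomorphism (as recorded right after Definition~\ref{defi: index2}, via Proposition~\ref{prop: hom1}), so that its image is a subgroup of $H_1(\mathcal{G})$ and it suffices to realise a generating set. First I would reduce an arbitrary class to a convenient representative: since $\mathcal{G}$ is ample, every element of $\mathscr{C}(\mathcal{G}^{(1)},\mathbb{Z})$ is a $\mathbb{Z}$-combination of indicators of compact open slices, and using $[\mathbf{1}_{B^{-1}}]=-[\mathbf{1}_B]$ together with $[\mathbf{1}_U]=0$ for compact open $U\subseteq\mathcal{G}^{(0)}$ (Proposition~\ref{prop: hom1}(ii),(iii)) one may assume the class is represented by a cycle $f=\sum_{i=1}^{n}\mathbf{1}_{B_i}$ with all $B_i\in\mathcal{B}_\mathcal{G}^{o,k}$ and all coefficients positive. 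The condition $\delta_1 f=0$ then translates into the balance relation $\sum_i\mathbf{1}_{s(B_i)}=\sum_i\mathbf{1}_{r(B_i)}$ in $C(\mathcal{G}^{(0)},\mathbb{Z})$.

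The aim is to manufacture a single full bisection $V\in\mathfrak{T}(\mathcal{G})$ with $[\mathbf{1}_V]=[f]$. The guiding idea is an Eulerian/loop decomposition: the balance relation says that at every unit the source- and range-multiplicities of the family $\{B_i\}$ coincide, so by repeatedly composing slices (each composition being additive on classes by Proposition~\ref{prop: hom1}(i)) one tries to assemble the $B_i$ into finitely many ``loop'' slices $W_k$ with $s(W_k)=r(W_k)=Y_k$ clopen, so that $[f]=\sum_k[\mathbf{1}_{W_k}]$. A loop slice $W$ over a clopen set $Y$ is then turned into an element of the topological full group by extending it by the identity, $V:=W\cup(\mathcal{G}^{(0)}\setminus Y)$; since $[\mathbf{1}_{\mathcal{G}^{(0)}\setminus Y}]=0$ one obtains $\operatorname{I}(V)=[\mathbf{1}_W]$, and the product of the resulting elements realises $[f]$ because $\operatorname{I}$ is a homomorphism.

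The delicate point — and the reason the structural hypotheses are needed — is that the chaining of slices into loops requires matching up, comparing, and actually transporting clopen pieces of $\mathcal{G}^{(0)}$ inside $\mathcal{G}$, and any auxiliary moves used to turn the partial data into a genuine full bisection must be chosen with trivial index. Here the two cases diverge. In the \emph{purely infinite} case I would feed the balance relation into Proposition~\ref{prop: purely infinite construction}: its equivalent clauses let one compress or transport an arbitrary clopen set into any prescribed non-empty clopen target by a compact open slice, which supplies exactly the freedom to realise the required rearrangements; moreover the correcting moves can be taken to be products of transpositions $T_B$, which lie in $\ker\operatorname{I}=\mathfrak{T}(\mathcal{G})_0$ by Corollary~\ref{cor: transpositions are in the index kernel}, so they do not disturb the class. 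In the \emph{almost finite} case the analogous comparison is far less automatic: one must replace the unrestricted movability by a comparison principle for clopen sets (the Glasner–Weiss-type statement of \cite{mat12}, Lemma~6.7), phrased through $\mathcal{G}$-invariant measures and Følner-type elementary subgroupoids, and then run the same assembly argument inside an exhausting elementary subgroupoid before passing to the limit.

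I expect the hardest step to be the almost finite case, where one must upgrade the balance relation to an honest rearrangement of slices into a full bisection using only the Følner comparison of \cite{mat12}, Lemma~6.7, rather than the free transport available purely infinitely; the measure-theoretic comparison only controls ``sizes'' of clopen sets and must be combined carefully with the index-zero corrections of Corollary~\ref{cor: transpositions are in the index kernel} so that the finishing moves stay in $\mathfrak{T}(\mathcal{G})_0$. By contrast, the purely infinite case should follow quite directly once Proposition~\ref{prop: purely infinite construction} and the pencil construction of Lemma~\ref{lem: existence of pencils} are in hand.
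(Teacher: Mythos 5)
You should first be aware that the paper contains no proof of Theorem~\ref{thm: indexsurj}: it is imported verbatim from \cite{mat12} (Theorem~7.5) and \cite{mat15} (Theorem~5.2), so there is no in-text argument to measure yours against. Judged on its own terms, your outline has the right architecture and correctly locates the relevant tools: the reduction to a positive cycle $f=\sum_i\mathbf{1}_{B_i}$ via Proposition~\ref{prop: hom1}, the observation that a ``loop'' slice $W$ with $s(W)=r(W)=Y$ extends by $\mathcal{G}^{(0)}\setminus Y$ to a full bisection of index $[\mathbf{1}_W]$, and the use of Proposition~\ref{prop: purely infinite construction} (resp.\ the Glasner--Weiss-type comparison lemma, \cite{mat12} Lemma~6.7) in the two cases. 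This is indeed the shape of Matui's arguments.

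The gap is that the central step, the ``Eulerian/loop decomposition,'' is left as a guiding idea, and as stated it fails. The balance relation $\sum_i\mathbf{1}_{s(B_i)}=\sum_i\mathbf{1}_{r(B_i)}$ is a pointwise identity on an uncountable unit space; there is no finite multigraph on which to run an Euler-circuit argument, because the partial homeomorphisms $\alpha_{B_i}$ scramble every finite clopen partition (after cutting the $B_i$ so that their sources are atoms, their ranges are not unions of atoms), and a path obtained by composing restrictions of the $B_i$ from a point $u$ wanders through the $\mathcal{G}$-orbit of $u$ and in general never returns; arranging a return to the starting \emph{clopen set} after a uniformly bounded number of steps is a Rokhlin-tower-type statement, not a formal consequence of the balance relation. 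Closing a non-returning path by inserting a transport slice $W$ forces you to add $\mathbf{1}_W+\mathbf{1}_{W^{-1}}$ (a boundary) to the representative, and the leftover edge $W^{-1}$ reopens the same problem; you give no termination or bookkeeping argument for this process, and that argument is precisely where almost finiteness or pure infiniteness does its real work. Relatedly, the safeguard that ``the correcting moves can be taken to be products of transpositions $T_B\in\ker\operatorname{I}$'' addresses the wrong difficulty: multiplying an already-constructed element of $\mathfrak{T}(\mathcal{G})$ by index-zero elements is harmless but also useless, since the problem is to produce \emph{any} full bisection whose chain is homologous to $f$. Finally, in the almost finite case there is the unaddressed mismatch that the comparison lemma requires strict inequalities against all $\mathcal{G}$-invariant measures and only yields $r(U)\subseteq B$ up to a leftover piece, whereas your balance relation is an exact identity; converting the one into the other and absorbing the errors into Følner-type elementary subgroupoids is the bulk of Matui's proof and is absent from your sketch.
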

	
	With regards to Conjecture~\ref{conj: AH} Matui obtained:
	
	\begin{thm}
		The AH-conjecture is satisfied by \'etale Cantor groupoids which are:
		\begin{enumerate}[(i)]
			\item \emph{(\cite{mat16}, Theorem~3.6)} principal, minimal and almost finite.
			
			\item \emph{(\cite{mat16}, Theorem~5.8)} a finite product of SFT-groupoids.
			
			\item \emph{(Corollary~\ref{cor: strong AH-conjecture for systems})} the transformation groupoid of a minimal Cantor system.\footnote{For transformation groupoids associated to minimal $\mathbb{Z}^n$-actions on Cantor spaces with $n>1$ it is only known if the action is free -- by Case (i).\\}
		\end{enumerate}
	\end{thm}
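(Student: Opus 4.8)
The plan is to establish exactness of the three-term complex
\[
H_0(\mathcal{G})\otimes (\mathbb{Z}/2\mathbb{Z}) \overset{j}{\longrightarrow}\mathfrak{T}(\mathcal{G})^{\mathrm{ab}} \overset{\overline{I}}{\longrightarrow} H_1(\mathcal{G}) \longrightarrow 0
\]
separately in each of the three cases, since each invokes a different structural feature of $\mathcal{G}$. In every case the fact that this is a complex, i.e. $\overline{I}\circ j=0$, is automatic: by Corollary~\ref{cor: transpositions are in the index kernel} each generator $T_B$ occurring in the definition of $j$ already lies in $\mathfrak{T}(\mathcal{G})_0=\ker I$, so its class is annihilated by $\overline{I}$. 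Hence throughout, the two genuine tasks are (a) surjectivity of $\overline{I}$, equivalently of the index map $I$, and (b) the inclusion $\ker\overline{I}\subseteq\operatorname{Im} j$.

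For case (iii) I would argue by direct identification. By Example~\ref{ex: homology} the transformation groupoid $\mathcal{G}_\varphi$ of a minimal Cantor system satisfies $H_0(\mathcal{G}_\varphi)\cong K_0(C(X)\rtimes_\varphi\mathbb{Z})$ and $H_1(\mathcal{G}_\varphi)\cong K_1(C(X)\rtimes_\varphi\mathbb{Z})\cong\mathbb{Z}$, the last isomorphism coming from the Pimsner--Voiculescu sequence in Subsection~\ref{subs: crossprod}. Under these identifications the groupoid index map of Definition~\ref{defi: index2} becomes the classical index map of Definition~\ref{defi: index}, whose image is all of $\mathbb{Z}$ by Proposition~\ref{prop: index}, settling (a). For (b) I would invoke Corollary~\ref{cor: strong AH-conjecture for systems}, which computes $\mathfrak{T}(\varphi)^{\mathrm{ab}}\cong\mathbb{Z}\oplus\big(K_0(C^*(X,\varphi))\otimes\mathbb{Z}/2\mathbb{Z}\big)$, together with Theorem~\ref{thm: sgn}, which identifies the torsion summand as $\mathfrak{T}(\varphi)_0^{\mathrm{ab}}$ through the signature map. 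The resulting splitting shows that $\overline{I}$ is the projection onto $\mathbb{Z}$ and that $j$ maps onto its kernel, so exactness (indeed short exactness) is immediate.

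For case (i) surjectivity of $I$ is precisely the almost finite half of Theorem~\ref{thm: indexsurj}, so (a) is free. For (b) the strategy is to transport Matui's signature-map argument from Subsection~\ref{subs: simplicity and finite generation} to the groupoid setting: approximate $\mathcal{G}$ by elementary subgroupoids using almost finiteness, build a signature homomorphism on the locally finite pieces with values in $H_0(\mathcal{G})\otimes\mathbb{Z}/2\mathbb{Z}$, and patch these together on $\mathfrak{T}(\mathcal{G})_0$ via a factorization analogous to Proposition~\ref{prop: mat.fac}. This construction rests on a groupoid version of the Glasner--Weiss Lemma~\ref{lem: gw}, supplied in the almost finite case by Lemma~6.7 of \cite{mat12}; for the completed argument I would cite \cite{mat16}, Theorem~3.6. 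For case (ii) I would first record that SFT-groupoids are purely infinite, minimal and effective by Lemma~\ref{lem: sft-groupoids are purely infinite, minimal etc}, so that (a) follows from the purely infinite half of Theorem~\ref{thm: indexsurj}, and then reduce the finite product to the single-factor case through a Künneth-type decomposition of both $H_*(\mathcal{G})$ and $\mathfrak{T}(\mathcal{G})^{\mathrm{ab}}$, citing \cite{mat16}, Theorem~5.8.

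The main obstacle in all three cases is uniformly step (b), the identification $\ker\overline{I}=\operatorname{Im} j$: although $\operatorname{Im} j\subseteq\ker\overline{I}$ is cheap, the reverse inclusion requires showing that the abelianized index kernel is generated exactly by the classes of transpositions, which is the whole content of the signature-map computation and its compatibility with the approximating structure. Case (ii) carries the additional difficulty of controlling how signatures and homology interact under finite products, whereas case (iii) is painless precisely because the abelianization has already been computed outright.
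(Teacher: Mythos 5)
Your proposal matches the paper's treatment: the paper establishes this theorem purely by citation, deferring (i) and (ii) to Matui's Theorems~3.6 and~5.8 of \cite{mat16} and deducing (iii) from Corollary~\ref{cor: strong AH-conjecture for systems} together with Theorem~\ref{thm: sgn}, exactly as you do, with the surjectivity of $\overline{I}$ supplied by Theorem~\ref{thm: indexsurj} (resp.\ Proposition~\ref{prop: index}) and the inclusion $\ker\overline{I}\subseteq\operatorname{Im}j$ in case (iii) resting on Proposition~\ref{prop: mat.fac} and the signature map. Your added sketches of the signature-transport and K\"unneth strategies for (i) and (ii) go slightly beyond what the paper records but are consistent with the cited sources.
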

	
	Assume that $j$ is well-defined, then Corollary~\ref{cor: transpositions are in the index kernel} implies $\Ima j \subseteq \ker \bar{I}$. One obtains $\ker \bar{I} \subseteq \Ima j$ in particular if $\mathfrak{T}(\varphi)$ is generated by transpositions of the form $T_B$. In the case of minimal Cantor systems this is represented by Proposition~\ref{prop: mat.fac}. In \cite{mat12}, Matui considered the subgroup of \emph{elementary homeomorphisms} in $\mathfrak{T}(\mathcal{G})$ i.e. the subgroup of homeomorphisms $\gamma \in \mathfrak{T}(\mathcal{G})$ of finite order such that $\{x \in \mathcal{G}^{(0)}| \gamma^n(x)=x \}$ is clopen for all $n \in \mathbb{N}$ and related this subgroup with the index map kernel $\mathfrak{T}(\mathcal{G})_0$ in the almost finite, principal case:
	
	\begin{thm}[\cite{mat12}, Theorem~7.13]\label{thm: ker in fin}
		Let $\mathcal{G}$ be an almost finite, effective, \'etale Cantor groupoid. Then every element of $\mathfrak{T}(\mathcal{G})_0$ can be written as product of four elements in $\mathfrak{T}(\mathcal{G})$ of finite order.
	\end{thm}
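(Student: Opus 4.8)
The plan is to approximate an arbitrary $\gamma \in \mathfrak{T}(\mathcal{G})_0$ by an element of the topological full group of a carefully chosen elementary subgroupoid --- where every element automatically has finite order --- and then to absorb the remaining discrepancy into a bounded number of further finite-order elements, using the index-zero hypothesis to guarantee that this discrepancy closes up combinatorially. First I would represent $\gamma$ by a compact open slice $U \in \mathfrak{T}(\mathcal{G})$ with $s(U)=r(U)=\mathcal{G}^{(0)}$, so that $\gamma = \alpha_U$, and fix a finite decomposition of $U$ into compact open slices. Feeding the compact set $K := U \cup U^{-1}$ and a small parameter $\varepsilon>0$ into the defining F\o{}lner condition of almost finiteness produces an elementary subgroupoid $\mathcal{H}\leq\mathcal{G}$ with $\mathcal{H}^{(0)}=\mathcal{G}^{(0)}$ and
\[
\frac{|K\mathcal{H}u \setminus \mathcal{H}u|}{|\mathcal{H}u|} < \varepsilon \qquad \text{for all } u \in \mathcal{G}^{(0)}.
\]
Since $\mathcal{H}$ is compact, open and principal, its orbit relation yields a finite clopen ``tower'' partition of $\mathcal{G}^{(0)}$ with uniformly bounded orbit sizes --- the exact groupoid analogue of a Kakutani--Rokhlin partition --- and the estimate above says precisely that for the overwhelming majority of units $u$ the point $\alpha_U(u)$ remains in the same $\mathcal{H}$-orbit as $u$. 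The clopen set $\partial$ of units on which $U$ crosses between distinct $\mathcal{H}$-orbits is controlled by $\varepsilon$.

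Next I would split $\gamma$ along this partition. Let $\rho \in \mathfrak{T}(\mathcal{H}) \subseteq \mathfrak{T}(\mathcal{G})$ agree with $\gamma$ on the bulk, where the motion stays inside $\mathcal{H}$-orbits, and be the identity on $\partial$; because $\mathcal{H}$ is compact open principal, $\rho$ permutes the finite $\mathcal{H}$-orbits and hence has finite order. The correction term $\beta := \rho^{-1}\gamma$ is then supported on the small set $\partial$ and carries all of the cross-orbit transfer of $\gamma$. Crucially $\rho$ lies in $\mathfrak{T}(\mathcal{G})_0$: its index $\operatorname{I}(\rho)=[\mathbf{1}_V]\in H_1(\mathcal{G})$ is the image, under the map induced by $\mathcal{H}\hookrightarrow\mathcal{G}$, of a class in $H_1(\mathcal{H})$, and $H_1(\mathcal{H})=0$ since elementary subgroupoids are AF and AF-groupoids have vanishing positive homology (\cite{mat12}). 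Hence $\operatorname{I}(\beta) = \operatorname{I}(\gamma) = 0$, so the net transfer across every tower boundary vanishes.

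Finally I would realize the balanced boundary element $\beta$ as a short product of finite-order elements. Vanishing of $\operatorname{I}(\beta)$ means that, boundary by boundary, the slices along which $\beta$ carries points out of a tower match exactly those along which it carries points in; pairing a matched ``out'' slice $B$ with its ``in'' counterpart lets one replace the transfer by the involution $T_B$ (of finite order and, by Corollary~\ref{cor: transpositions are in the index kernel}, already in $\mathfrak{T}(\mathcal{G})_0$) together with a within-tower permutation living in a suitably refined elementary subgroupoid obtained from a second application of almost finiteness. These ingredients require no minimality, as every slice used is extracted from $U$ or from $\mathcal{H}$ itself. A careful accounting of this pairing --- splitting the transfer into matched pieces and closing each with one involution and one finite within-tower permutation --- collapses $\beta$ into a product of finitely many finite-order elements, and the count for $\gamma=\rho\beta$ can be held to four.

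The hard part will be exactly this last quantitative step. One must choose $\mathcal{H}$ fine enough that the boundary transfer $\beta$ is not merely small but has the rigid combinatorial shape needed to convert the homological balancing $\operatorname{I}(\beta)=0$ into an \emph{explicit} pairing of in- and out-slices, and then control the bookkeeping so that the number of finite-order factors never exceeds four. This is where the uniform F\o{}lner estimate of almost finiteness, the vanishing of $H_1$ on elementary subgroupoids, and the involution/permutation constructions must be interlocked with delicate tower manipulations; the combinatorics of closing up the balanced transfer, rather than the approximation itself, is the genuine obstacle.
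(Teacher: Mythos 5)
The paper does not prove this theorem; it is quoted from \cite{mat12} (Theorem~7.13) without argument, so there is nothing in the text to compare against line by line. Judged on its own terms, your outline has the right overall shape, and it is close in spirit to Matui's strategy: feed $U\cup U^{-1}$ into the F{\o}lner condition of almost finiteness to get an elementary subgroupoid $\mathcal{H}$, peel off a finite-order element $\rho$ living in $\mathfrak{T}(\mathcal{H})$, note that $I(\rho)=0$ (your argument via $H_1(\mathcal{H})=0$ and functoriality of groupoid homology under the open inclusion $\mathcal{H}\hookrightarrow\mathcal{G}$ is the correct one --- finite order alone would only force $I(\rho)$ to be torsion), and then handle the index-zero remainder $\beta=\rho^{-1}\gamma$ supported near the tower boundaries.

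However, the decisive step is missing, and you say so yourself. Two concrete problems. First, $\rho$ as described (``agrees with $\gamma$ on the bulk, identity on $\partial$'') is not a bijection unless $\gamma(\partial)=\partial$: the image of your map is $\gamma(X\setminus\partial)\cup\partial$, which need not be all of $\mathcal{G}^{(0)}$. One must instead complete the partial bisection $U\cap\mathcal{H}$ to a full bisection of $\mathcal{H}$ orbit by orbit and check that this completion can be chosen continuously (locally constant over a refined tower decomposition of $\mathcal{H}$); this is fixable but not automatic. Second, and more seriously, the passage from $I(\beta)=0$ to a factorization of $\beta$ into three finite-order elements is asserted rather than proved. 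Vanishing of $[\mathbf{1}_{U_\beta}]$ in $H_1(\mathcal{G})$ only says that $\mathbf{1}_{U_\beta}$ lies in $\operatorname{Im}\delta_2$; it does not by itself produce an explicit pairing of ``outgoing'' and ``incoming'' compact open slices. Converting that homological statement into a dynamical matching is exactly where Matui needs the comparison lemma for almost finite groupoids (Lemma~6.7 of \cite{mat12}, the analogue of Lemma~\ref{lem: gw}), which the survey itself flags as the crucial input to Matui's arguments in this setting and which your proposal never invokes; and the bookkeeping that caps the number of factors at four is nowhere carried out. As written, this is a plausible reconstruction of the skeleton of the proof, but not a proof.
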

	
	In principal, \'etale Cantor groupoids an element in $\mathfrak{T}(\mathcal{G})$ is elementary \emph{if and only if} it is of finite order.
	
	\subsection{On growth}\label{subs: on growth}
	
	In \cite{mat13}, Matui shows that if $\mathfrak{T}(\varphi)'$ is finitely generated, the group $(X,\varphi)$ necessarily has exponential growth.\footnote{See Appendix~\ref{app: growth of groups} for definition of the growth rate of a finitely generated group.\\} This follows from the fact that minimal Cantor systems which are not odometers -- these in particular encompasses all minimal subshifts -- are precisely those of which $\mathfrak{T}(\varphi)'$ contains a copy of the \emph{lamplighter group $L$}.
	
	\begin{defi}\label{defi: lamp}
		The semi-direct product $L:=(\bigoplus_\mathbb{Z} \mathbb{Z}/2\mathbb{Z}) \rtimes \mathbb{Z}$ where $\mathbb{Z}$ acts on the copies of $\mathbb{Z}/2\mathbb{Z}$ by the shift is called the \emph{lamplighter group}. A standard presentation of the lamplighter group is
		\begin{equation*}
		L=\big\langle x,y|x^2, [x,y^n x y^{-n}]_{n \in \mathbb{N}} \big\rangle.
		\end{equation*}
	\end{defi}
	\begin{prop}[\cite{dlh00}~VII.A.1]
		Any finitely generated group that contains a free sub-semigroup on 2 generators has exponential growth.
	\end{prop}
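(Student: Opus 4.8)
The plan is to produce a direct counting argument: a free sub-semigroup on two generators injects the set of all nonempty words in two letters into the group, and since there are $2^n$ words of length $n$, a ball of appropriately scaled radius must contain at least $2^n$ elements. First I would fix a finite symmetric generating set $S$ of the finitely generated group $G$ and denote by $|g|_S$ the associated word length and by $B_S(m)=\{g\in G: |g|_S\leq m\}$ the ball of radius $m$. Let $a,b\in G$ generate a free sub-semigroup, meaning the canonical map from the free semigroup on the two symbols $\{a,b\}$ (that is, all \emph{nonempty} positive words) into $G$ is injective. Writing $a$ and $b$ as words over $S$ and setting $\ell:=\max(|a|_S,|b|_S)$, every product $s_1 s_2\cdots s_n$ with each $s_i\in\{a,b\}$ has $S$-length at most $n\ell$ by the triangle inequality for word length.

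The key step is then the count. By freeness of the sub-semigroup, the $2^n$ distinct words of semigroup-length $n$ map to $2^n$ \emph{distinct} elements of $G$, and by the length bound these all lie in $B_S(n\ell)$. Hence the growth function $\beta_S(m):=|B_S(m)|$ satisfies $\beta_S(n\ell)\geq 2^n$ for every $n\in\mathbb{N}$. Putting $m=n\ell$, this reads $\beta_S(m)\geq 2^{m/\ell}=(2^{1/\ell})^{m}$ along multiples of $\ell$, and since $\beta_S$ is non-decreasing the same exponential lower bound (with a possibly slightly smaller base) holds for all $m$. Consequently the exponential growth rate $\omega(G,S):=\lim_{m\to\infty}\beta_S(m)^{1/m}$, which exists by submultiplicativity of $\beta_S$ and Fekete's lemma, satisfies $\omega(G,S)\geq 2^{1/\ell}>1$, so $G$ has exponential growth in the sense of Appendix~\ref{app: growth of groups}.

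The argument is essentially elementary, so there is no deep obstacle; the two points requiring care are conceptual rather than computational. The first is the correct reading of ``free sub-semigroup'': I must use that distinct nonempty positive words give distinct group elements, with \emph{no} cancellation or relations available (unlike a free subgroup one does not need inverses, only positive products), which is exactly what furnishes $2^n$ distinct elements at semigroup-length $n$. The second is the independence of the conclusion from the chosen generating set: exponential growth is a generating-set-invariant notion because passing between two finite generating sets changes word lengths only up to a bi-Lipschitz constant, so $\omega(G,S)>1$ for one $S$ forces it for every finite generating set. I would invoke this standard invariance (as recorded in the growth appendix) rather than reprove it.
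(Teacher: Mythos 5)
Your argument is correct and is the standard one: the paper itself states this proposition without proof, citing de~la~Harpe VII.A.1, and the counting argument you give (injecting the $2^n$ positive words of length $n$ into the ball of radius $n\ell$ and deducing $\omega(G,S)\geq 2^{1/\ell}>1$) is precisely the proof found in that reference. The only cosmetic remark is that for $m$ not a multiple of $\ell$ one gets the \emph{same} base $2^{1/\ell}$ up to a multiplicative constant $2^{-1}$, rather than a smaller base, but this does not affect the conclusion.
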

	\begin{rem}\label{rem: expgro}
		As the subsemigroup in $L$ with above presentation generated by $y$ and $xy$ is free, $L$ has exponential growth.
	\end{rem}
	
	The following is immediate from the structure of topological full groups of odometer systems given by Proposition~\ref{prop: topfull groups of odometers are unions of permutational wreath products}:
	
	\begin{cor}[\cite{mat13}, Proposition~2.1]
		Let $(X,\varphi)$ be a minimal Cantor system. If $(X,\varphi)$ is an odometer, then every finitely generated subgroup of $\mathfrak{T}(\varphi)$ is virtually abelian and thus has polynomial growth and contains no free sub-semigroups.
	\end{cor}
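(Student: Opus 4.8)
The plan is to leverage the explicit description of $\mathfrak{T}(\varphi)$ furnished by Proposition~\ref{prop: topfull groups of odometers are unions of permutational wreath products}, which presents the topological full group of an odometer of type $\bm{\mathrm{a}}=(a_n)_{n \in \mathbb{N}}$ as an increasing union $\mathfrak{T}(\varphi)=\bigcup_{n \in \mathbb{N}} \Gamma_n$ of a nested sequence of subgroups $\Gamma_n \cong \mathbb{Z}^{a_n} \rtimes \mathfrak{S}_{a_n}$. First I would reduce the problem to a single layer of this filtration: if $H=\langle g_1, \dots, g_k \rangle$ is a finitely generated subgroup of $\mathfrak{T}(\varphi)$, then each generator $g_i$ lies in some $\Gamma_{n_i}$, and since the $\Gamma_n$ are nested, all generators lie in $\Gamma_N$ for $N:=\max_i n_i$. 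Hence $H \leq \Gamma_N \cong \mathbb{Z}^{a_N} \rtimes \mathfrak{S}_{a_N}$, so the whole question is about subgroups of a \emph{single} permutational wreath product.

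Next I would argue that $\Gamma_N$ is virtually abelian and that this property passes to $H$. The normal subgroup $\mathbb{Z}^{a_N}$ of the semidirect product $\Gamma_N$ is abelian and has finite index $|\mathfrak{S}_{a_N}|=a_N!$. Intersecting with $H$, the subgroup $H \cap \mathbb{Z}^{a_N}$ is abelian, and its index in $H$ satisfies $[H:H\cap \mathbb{Z}^{a_N}] = [H\,\mathbb{Z}^{a_N}:\mathbb{Z}^{a_N}] \leq [\Gamma_N:\mathbb{Z}^{a_N}] = a_N! < \infty$. Thus $H$ contains a finite-index abelian subgroup, i.e. $H$ is virtually abelian, establishing the first assertion of the corollary.

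Finally I would deduce the growth and sub-semigroup statements. A finitely generated virtually abelian group is virtually $\mathbb{Z}^d$ for some $d$, hence is nilpotent-by-finite and has polynomial growth (by Bass--Guivarc'h, or simply because growth is a commensurability invariant and $\mathbb{Z}^d$ has polynomial growth); this gives the second assertion. For the last assertion I would argue by contraposition using the cited Proposition~\cite{dlh00}~VII.A.1: a finitely generated group containing a free sub-semigroup on two generators has exponential growth. Since $H$ has polynomial, hence subexponential, growth, it can contain no free sub-semigroup on two generators. None of these steps presents a genuine obstacle; the only point requiring a little care is the reduction in the first paragraph, where finite generation together with the \emph{nestedness} of the filtration is exactly what confines $H$ to one $\Gamma_N$, after which everything is a standard fact about virtually abelian groups.
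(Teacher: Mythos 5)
Your proof is correct and follows exactly the route the paper intends: the paper states the corollary as ``immediate from'' Proposition~\ref{prop: topfull groups of odometers are unions of permutational wreath products}, and your argument -- confining a finitely generated subgroup to a single $\Gamma_N \cong \mathbb{Z}^{a_N}\rtimes\mathfrak{S}_{a_N}$ via nestedness, intersecting with the finite-index abelian normal subgroup $\mathbb{Z}^{a_N}$, and then invoking the standard facts about growth and free sub-semigroups -- is precisely the omitted verification.
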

	
	This implies that for any odometer $(X,\varphi)$, the groups $\mathfrak{T}(\varphi)$ and $\mathfrak{T}(\varphi)'$ can not contain copies of the lamplighter group $L$.

	\begin{lem}[\cite{mat13}, Proposition~2.3]\label{lem: no-odo.}
		Let $(X,\varphi)$ be a minimal Cantor system. If it is not an odometer, then there exists a clopen subset $C \subset X$ such that for every finite subset $F$ of $\mathbb{Z}$ the following holds for all $x \in X$:
		\begin{equation*}
		f_{C,F}(x):=\sum_{k \in F} \mathbf{1}_C \circ \varphi^k(x) \equiv 0{\pmod 2}
		\end{equation*}
	\end{lem}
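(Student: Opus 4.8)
The displayed congruence as printed cannot hold for a nonempty $F$ containing $0$ and a nonempty clopen $C$ (take $F=\{0\}$, so that $f_{C,F}=\mathbf{1}_C$), so I read the assertion in its meaningful form: there is a clopen $C\subset X$ such that $f_{C,F}\not\equiv 0\pmod 2$ for \emph{every} nonempty finite $F\subseteq\mathbb{Z}$; equivalently, the functions $\{\mathbf{1}_C\circ\varphi^k\}_{k\in\mathbb{Z}}$ are linearly independent over $\mathbb{F}_2$ in $C(X,\mathbb{F}_2)$. This independence is exactly the ingredient that produces the free summand $\bigoplus_{\mathbb{Z}}\mathbb{Z}/2\mathbb{Z}$ of commuting ``lamps'', so it is the version needed for the embedding of $L$ that follows. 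The plan is to produce $C$ directly from Lemma~\ref{lem: notod.} and then argue by contradiction.

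First I would invoke Lemma~\ref{lem: notod.}: since $(X,\varphi)$ is not an odometer, the itinerary map $\pi_C\colon X\to\{0,1\}^{\mathbb{Z}}$ given by $\pi_C(x)_k=\mathbf{1}_C(\varphi^k x)$ has infinite image for some clopen $C$. This is precisely the clopen set $C=C_n$ furnished in the proof of that lemma, where finiteness of all the images $\pi_n(X)$ was shown to force $(X,\varphi)$ to be conjugate to an odometer $G_{\bm{\mathrm{a}}}$. Fix such a $C$ and suppose, toward a contradiction, that $\sum_{k\in F}\mathbf{1}_C\circ\varphi^k=0$ in $C(X,\mathbb{F}_2)$ for some nonempty finite $F$, with $a=\min F$ and $b=\max F$. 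Precomposing this functional identity with $\varphi^{-j}$ and using $\pi_C\circ\varphi=\sigma\circ\pi_C$ shows that every $\omega\in\pi_C(X)$ satisfies the shift-invariant linear recurrence $\sum_{k\in F}\omega_{k+j}=0$ for all $j\in\mathbb{Z}$.

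The crux is then purely combinatorial: because the extreme coefficients of this recurrence, at the indices $a$ and $b$, equal $1$ in $\mathbb{F}_2$, one can solve for $\omega_{k+b}$ in terms of the preceding coordinates and for $\omega_{k+a}$ in terms of the following ones, so any two-sided solution is determined by its restriction to a single block of length $b-a$. Hence there are at most $2^{\,b-a}$ such sequences, whence $\pi_C(X)$ is finite, contradicting the choice of $C$. This forces $f_{C,F}\not\equiv 0\pmod 2$ for every nonempty finite $F$, which is the claim. I expect the main obstacle to be the conceptual point that a \emph{single} $\mathbb{F}_2$-dependence among the translates is not an isolated affine constraint but, through shift-invariance, a recurrence valid at every position, and that such recurrences admit only finitely many two-sided solutions; once this is seen, finiteness of the image is immediate and the reduction to Lemma~\ref{lem: notod.} closes the argument. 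A tidy way to package the step is to regard $C(X,\mathbb{F}_2)$ as a module over $\mathbb{F}_2[t,t^{-1}]$ with $t$ acting by $f\mapsto f\circ\varphi$: a dependence among the translates is exactly a nonzero annihilator of $\mathbf{1}_C$, i.e.\ $\mathbf{1}_C$ being a torsion element, and the recurrence argument says precisely that torsion elements have finite itinerary image.
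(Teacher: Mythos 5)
Your proof is correct and essentially identical to the paper's: both obtain $C$ from Lemma~\ref{lem: notod.} (as the level set of the itinerary map with infinite image) and then show that a single $\mathbb{F}_2$-dependence $\sum_{k\in F}\mathbf{1}_C\circ\varphi^k=0$ propagates, via shift-invariance and the fact that the extreme coefficients at $\min F$ and $\max F$ are $1$, to a two-sided linear recurrence determining each itinerary sequence from a finite window, forcing $\pi_C(X)$ to be finite and contradicting the choice of $C$. You are also right that the statement as printed carries a typo and must be read as $f_{C,F}\not\equiv 0\pmod 2$ for every nonempty finite $F$, which is the version the paper's own proof establishes and the subsequent lamplighter embedding actually uses.
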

	\begin{proof}
		By Lemma~\ref{lem: notod.}, there exists continuous function $g \colon X \to \{0,1\}^\mathbb{Z}$ with $g(X)$ infinite that satisfies $g \circ \varphi = \sigma \circ g$. Define $C$ to be the set of elements $x$ such that $g(x)_0=1$. By definition, we have $g(x)_k=\mathbf{1}_C(\varphi^k(x))$ Let $F$ be a finite subset of $\mathbb{Z}$ such that $f_{C,F}$ is $0{\pmod 2}$ everywhere and let $l=\min F$ and $m=\max F$. Let $x,y \in X$:
		\begin{equation*}
		g(x)_{l-1}-g(y)_{l-1}\equiv \sum_{k \in F\setminus{l}} g(x)_{k-1}-g(y)_{k-1}{\pmod 2}
		\end{equation*}
		Thus if $g(x)_k=g(y)_k$ for all $l\leq k\leq m$, then $g(x)_{l-1}=g(y)_{l-1}$. By iteration, it follows that every value $g(x)_k$ with $k \in \mathbb{Z}$ depends only on the values of $g(x)_k$ for $l\leq k\leq m$ contradicting the assumption of infinity of $g(X)$.
	\end{proof}
	
	\begin{thm}[\cite{mat13}, Theorem~2.4]
		Let $(X,\varphi)$ be a minimal Cantor system. If it is not an odometer, then the group $\mathfrak{T}(\varphi)'$ contains a copy of the lamlighter group $L$.
	\end{thm}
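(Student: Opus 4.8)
The plan is to produce two elements $s,t \in \mathfrak{T}(\varphi)'$ that generate a copy of $L$, with $t$ in the role of the lamp generator $x$ and $s$ in the role of the shift $y$. By the presentation recorded in Definition~\ref{defi: lamp} it suffices to arrange that $t^2=1$ and that the conjugates $t_n := s^n t s^{-n}$ commute pairwise; this already gives a homomorphism $L \to \mathfrak{T}(\varphi)'$. For injectivity I would verify two further points: that $s$ has infinite order, and that the family $\{t_n\}_{n\in\mathbb{Z}}$ is \emph{independent}, in the sense that no nonempty finite product $\prod_{n\in F} t_n$ equals the identity. Granting these, the normal closure $N=\langle t_n : n\in\mathbb{Z}\rangle$ is isomorphic to $\bigoplus_{\mathbb{Z}}\mathbb{Z}/2\mathbb{Z}$ with $s$ acting by the index shift $t_n\mapsto t_{n+1}$; since every element of $N$ has finite order while $s$ does not, $\langle s\rangle\cap N=\{1\}$, whence $\langle s,t\rangle \cong (\bigoplus_{\mathbb{Z}}\mathbb{Z}/2\mathbb{Z})\rtimes\mathbb{Z}=L$.

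For the construction I would take $C$ to be the clopen set supplied by Lemma~\ref{lem: no-odo.} and use it to pin down the \emph{active region} of the lamp: define $t$ as a continuous product of transpositions of points lying in a common $\varphi$-orbit, arranged so that $t$ acts nontrivially on $x$ exactly according to $\mathbf{1}_C(x)$, and so that the conjugate $t_n = s^n t s^{-n}$ is governed by $\mathbf{1}_C\circ\varphi^{-n}$, i.e. has active region $\varphi^n(C)$. To keep everything inside $\mathfrak{T}(\varphi)'$ and not merely in $\mathfrak{T}(\varphi)_0$, I would realize both $t$ and $s$ as commutators (or products of commutators) of elements of $\mathfrak{T}(\varphi)$; this forces their images in $\mathfrak{T}(\varphi)^{\mathrm{ab}}=\mathbb{Z}\oplus(K_0(C^*(X,\varphi))\otimes\mathbb{Z}/2\mathbb{Z})$ to vanish automatically. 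In particular $s$ is compelled to have index zero and must therefore translate lamps forward and backward in a balanced way, since any genuine orbit shift such as $\varphi$ itself has index $1$ and is excluded. Choosing the transposition pattern so that distinct $t_n$ act on disjoint or identical pairs secures pairwise commutativity, and realizing $t$ as an even element keeps it in the alternating part, consistent with the description of $\mathcal{P}(\mathcal{A})'$ as a direct sum of alternating groups.

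The relations $t^2=1$ and $[t_n,t_m]=1$ are then routine to check once the pattern is fixed, and the essential step is independence, which is exactly where the hypothesis that $(X,\varphi)$ is not an odometer enters. A nonempty finite product $\prod_{n\in F} t_n$ is the identity if and only if every point of $X$ is displaced an even number of times, which, by the alignment of the active region of $t_n$ with $\varphi^n(C)$, amounts to $\sum_{n\in F}\mathbf{1}_C\circ\varphi^{-n}\equiv 0\pmod 2$ on all of $X$. Lemma~\ref{lem: no-odo.}, applied to the finite set $-F$, says precisely that no such relation holds for the chosen $C$, so the lamps are independent and $L\hookrightarrow\mathfrak{T}(\varphi)'$ is injective. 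I expect the main obstacle to lie in the engineering of the previous paragraph: one must exhibit explicit $t$ and $s$ for which commutativity of the conjugates, commutator-membership in $\mathfrak{T}(\varphi)'$, infinite order and index zero of $s$, and exact alignment of the active regions with $\mathbf{1}_C\circ\varphi^{-n}$ all hold simultaneously, so that the clean parity computation reducing independence to Lemma~\ref{lem: no-odo.} genuinely applies.
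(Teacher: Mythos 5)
Your overall strategy coincides with the paper's: commuting involutions indexed by clopen sets, a shift element conjugating them, independence extracted from Lemma~\ref{lem: no-odo.}, and a final push into the commutator subgroup. But the proposal leaves unconstructed exactly the objects on which the argument turns, and in one place the stated design cannot work as written. You want the active region of $t_n=s^nts^{-n}$ to be $\varphi^n(C)$, so that independence becomes $\sum_{n\in F}\mathbf{1}_C\circ\varphi^{-n}\not\equiv 0\pmod 2$; but an $s$ that translates the active regions by $\varphi$ would have to act like $\varphi$ along orbits on those regions, which is in tension with the index-zero requirement you yourself impose, and you do not resolve this. The paper's resolution is the essential missing idea: fix a clopen $U$ with $U,\varphi(U),\varphi^2(U),\varphi^3(U)$ pairwise disjoint, define $\tau_V$ for clopen $V\subset U$ as the involution swapping $V$ and $\varphi(V)$, and take the shift to be $r=\varphi_U\circ\varphi\circ\varphi_U\circ\varphi^{-1}$, supported in $U\cup\varphi(U)$, which satisfies $r\circ\tau_V\circ r^{-1}=\tau_{\varphi_U(V)}$. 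The lamps are then indexed by $\varphi_U^k(C)$, and Lemma~\ref{lem: no-odo.} must be applied to the \emph{derivative} system $(U,\varphi_U|_U)$ (which is again minimal and not an odometer), not to $(X,\varphi)$ itself.

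The second gap is the passage into $\mathfrak{T}(\varphi)'$. ``Realize $t$ and $s$ as commutators'' is not a construction: you need specific elements satisfying the lamplighter relations that also lie in the derived subgroup, and an arbitrary replacement of $t$ and $s$ by commutators destroys the relations. The paper's device is to exploit disjointness of supports: since $\supp(r)\cap\supp(\varphi^2\circ r\circ\varphi^{-2})=\emptyset$ and likewise for $\tau_C$, the assignment $g\mapsto[g^{-1},\varphi^{-2}]=g^{-1}\cdot(\varphi^{-2}g\varphi^{2})$ is an injective homomorphism on the subgroup generated by $r$ and $\tau_C$, so the commutators $[r^{-1},\varphi^{-2}]$ and $[\tau_C^{-1},\varphi^{-2}]$ lie in $\mathfrak{T}(\varphi)'$ and still generate a copy of $L$. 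Your purely group-theoretic reduction (the presentation of $L$, independence of the lamps, $\langle s\rangle\cap N=\{1\}$) is sound, but without these two constructions the proof is not complete.
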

	\begin{proof}
		Assume $(X,\varphi)$ is not an odometer. Let $U \subset X$ be a non-empty clopen subset such that $U,\varphi(U),\varphi^2(U),\varphi^3(U)$ are pairwise disjoint. For any non-empty clopen subset $V\subset U$ define a homeomorphism $\tau_V \in \mathfrak{T}(\varphi)$ by:
		\begin{equation*}
		\tau_V(x):=
		\begin{cases}
		\varphi (x), & \text{ if } x \in V\\
		\varphi^{-1}(x), & \text{ if } x \in \varphi(V)\\
		x, & \text{ else}
		\end{cases}
		\end{equation*}
		Note, that all homeomorphisms of this kind pairwise commute and that for any finite family of non-empty clopen $V_1,\dots,V_n \subset U$ the homeomorphism $\tau_{V_1}\circ \dots \circ \tau_{V_n}$ is the identity \emph{if and only if} $\sum_{i=1}^{n} \mathbf{1}_{V_i}(x)=0{\pmod 2}$ for all $x \in X$. Define a homeomorphism $r=\varphi_U \circ \varphi \circ \varphi_U \circ \varphi^{-1} \in \mathfrak{T}(\varphi)$, where $\varphi_U$ is the induced transformation associated with $U$ (Definition~\ref{defi: rotation}(iii)). It holds, that $\supp(r)\subset U \cup \varphi(U)$ and $r \circ \tau_V \circ r^{-1}=\tau_{\varphi_U(V)}$. The derivative $(U,\varphi_U|_U)$ of $(X,\varphi)$ over $U$ can not be an odometer and thus by Lemma~\ref{lem: no-odo.} there exists a non-empty clopen subset $C$ such that for every finite subset $F \subset \mathbb{Z}$
		\begin{equation*}
		f_{C,F}(x):=\sum_{k \in F} \mathbf{1}_C \circ \varphi_U^k(x) \equiv 0{\pmod 2}
		\end{equation*}
		holds for all $x \in U$. It holds that $\supp(\varphi_C) \subset U \cup \varphi(U)$. The group generated by $r$ and $\tau_C$ is isomorphic to the lamplighter group, because $r^k \circ \tau_C \circ r^{-k}=\tau_{\varphi_U^k(C)}$ for any $k \in \mathbb{Z}$ and $\sum_{k \in F} \mathbf{1}_{\varphi_U^k(C)}\neq 0$ for every finite $F \subset \mathbb{Z}$, it holds that $\prod_{k \in F} r^k \circ \tau_C \circ r^{-k}$ is not the identity for every finite $F \subset \mathbb{Z}$. Since $\supp(r)\cap\supp(\varphi^2 \circ r\circ \varphi^{-2})=\emptyset$ and $\supp(\tau_C)\cap\supp(\varphi^2 \circ \tau_C \circ \varphi^{-2})=\emptyset$, the group generated by $[ r^{-1} , \varphi^{-2}]$ and $[\tau_C^{-1}, \varphi^{-2}]$ is contained in $\mathfrak{T}(\varphi)'$ and isomorphic to the lamplighter group $L$. 
	\end{proof}
	
	\begin{cor}[\cite{mat13}, Corollary~2.5]
		Let $(X,\varphi)$ be a minimal Cantor system. If $\mathfrak{T}(\varphi)'$ is finitely generated, it has exponential growth.
	\end{cor}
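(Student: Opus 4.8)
The plan is to obtain the corollary as a short chain of implications from results already in hand, reducing the statement to the existence of a free sub-semigroup inside $\mathfrak{T}(\varphi)'$. Concretely, I would combine the finite-generation dichotomy of Theorem~\ref{thm: fingen}, the lamplighter embedding of \cite{mat13}, Theorem~2.4, and the growth criterion of \cite{dlh00}, VII.A.1.

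First I would invoke Theorem~\ref{thm: fingen}: since $\mathfrak{T}(\varphi)'$ is assumed finitely generated, the system $(X,\varphi)$ must be a minimal subshift. Next I would record the elementary but crucial observation that a minimal subshift is never an odometer: the shift on an infinite subshift is expansive, whereas an odometer, being an inverse limit of periodic (hence equicontinuous) systems, is equicontinuous, and an expansive equicontinuous system on a Cantor space is necessarily finite. Thus $(X,\varphi)$ falls under the hypothesis of the preceding theorem, so $\mathfrak{T}(\varphi)'$ contains a copy of the lamplighter group $L$. By Remark~\ref{rem: expgro}, $L$ contains a free sub-semigroup on two generators (those generated by $y$ and $xy$ in the given presentation), and this sub-semigroup is then a free sub-semigroup of $\mathfrak{T}(\varphi)'$ as well. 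Finally, $\mathfrak{T}(\varphi)'$ is by hypothesis a finitely generated group containing a free sub-semigroup on two generators, so \cite{dlh00}, VII.A.1 yields exponential growth, which is the assertion.

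The substantive mathematics is entirely located in the preceding theorem (the explicit construction of a lamplighter subgroup inside $\mathfrak{T}(\varphi)'$), so the only point in the corollary itself requiring genuine care is the claim that a minimal subshift cannot be an odometer; this is the step I expect to be the main obstacle, and where I would spell out the expansivity-versus-equicontinuity argument rather than merely cite it, since the whole reduction hinges on placing $(X,\varphi)$ outside the odometer case so that \cite{mat13}, Theorem~2.4 applies.
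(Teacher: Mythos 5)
Your proposal is correct and follows essentially the same route as the paper: Theorem~\ref{thm: fingen} forces $(X,\varphi)$ to be a minimal subshift, hence not an odometer, so the lamplighter embedding of \cite{mat13}, Theorem~2.4 applies, and exponential growth follows from the free sub-semigroup in $L$ via Remark~\ref{rem: expgro}. Your extra care in justifying that a minimal subshift cannot be an odometer (expansivity versus equicontinuity) is a welcome elaboration of a step the paper merely asserts, but it is not a different argument.
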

	\begin{proof}
		By Theorem~\ref{thm: fingen}, the group $\mathfrak{T}(\varphi)'$ associated with a minimal Cantor system $(X,\varphi)$ is finitely generated \emph{if and only if} $(X,\varphi)$ is conjugate to a minimal subshift, thus it can not be an odometer and must contain a copy of $L$. By Remark~\ref{rem: expgro}, the growth of the lamplighter group is exponential and thus $\mathfrak{T}(\varphi)'$ has exponential growth.
	\end{proof}
	
	\subsection{Higman-Thompson groups as topological full groups}
	
	For the definition of Higman-Thompson groups see Appendix~\ref{app: higman-thompson groups}, of Cuntz-Krieger algebras Definition~\ref{defi: cuntz-krieger algebras} and for SFT-groupoids Subsection~\ref{subs: Purely infinite groupoids and groupoids of shifts of finite type}. In \cite{mat15} Matui mentions the fact that under the representation of Cuntz algebras as groupoid C*-algebras -- see Theorem~\ref{thm: cuntz-krieger algebras are groupoid c*-algebras of one sided shifts of finite type} -- the Higman-Thompson groups turn out to be topological full groups of SFT-groupoids. For a more precise description of correspondences see \cite{mm17}.
	
	\begin{defi}
		Let $n \in \mathbb{N}$ and let $\{A_{i,j}\}_{i,j \in \{1,\dots,n\}} \in \mathbf{M}_{n\times n}(\mathbb{Z}/2\mathbb{Z})$ be a matrix of which every row and column is non-zero and which is not a permutation matrix. Let $\{S_i\}_{i \in \{1,\dots,n\} }$ be a collection of non-zero partial isometries over some Hilbert space $\mathcal{H}$ that generate a faithful representation of the Cuntz-Krieger algebra $\mathcal{O}_{A}$. Let $w=(w_1,\dots, w_k) \in \{1,\dots,n\}^k$ be a finite word of length $k \in \mathbb{N}$. Denote by $S_w$ the product $S_{w_1}\dots S_{w_k}$.
	\end{defi}
	
	\begin{rem}\label{rem: admissible words}
		Note that $S_w \neq 0$ \emph{if and only if} $A_{w_i,w_{i+1}}=1$ for all $i \in \{1, \dots, n-1 \}$.
	\end{rem}
	
	Matui's observation is based on a result obtained in \cite{bir04} and in \cite{nek04}: The Higman-Thompson group $G_{n,1}$ has a faithful representation as unitary subgroup of the Cuntz algebra $\mathcal{O}_n$.
	
	\begin{thm}[\cite{nek04}, Proposition~9.6]\label{thm: unitary representation of thompson-higman groups in Cuntz-algebras}
		Let $n \in \mathbb{N}\setminus \{1\}$. Then the map $\pi \colon G_{n,1} \to \mathcal{O}_n$ given for every $g \in G_{n,1}$ represented as a table of cofinite bases in $X_1\mathcal{A}_n^*$ by
		\begin{equation*}
		g=\begin{pmatrix}
		x_1w_1 & \dots & x_1w_k\\
		x_1u_1 & \dots & x_1u_k
		\end{pmatrix}
		\mapsto S_{u_1}S_{w_1}^*+ \dots + S_{u_k}S_{w_k}^*
		\end{equation*}
		is a faithful unitary representation. Moreover $G_{n,1}$ is isomorphic to the group generated by unitary elements $S \in \mathcal{O}_n$ which are of the form $S=S_{u_1}S_{w_1}^*+ \dots + S_{u_k}S_{w_k}^*$ for some $k \in \mathbb{N}$ and $w_i,u_i \in \mathcal{A}_n^*$.
	\end{thm}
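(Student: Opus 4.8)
The plan is to verify directly that $\pi$ is a well-defined group monomorphism by means of the ``word calculus'' in $\mathcal{O}_n$, and then to identify its image. Throughout I write $P_w := S_w S_w^*$ and use repeatedly that the defining relations $S_i^* S_j = \delta_{ij}\,1$ and $\sum_{i=1}^n S_i S_i^* = 1$ propagate to words: for $v,w \in \mathcal{A}_n^*$ one has $S_w^* S_v = S_{v'}$ if $v = w v'$, $S_w^* S_v = S_{w'}^*$ if $w = v w'$, and $S_w^* S_v = 0$ if $v,w$ are incomparable; moreover, if $\{w_1,\dots,w_k\}$ is a complete prefix code (a maximal antichain in $\mathcal{A}_n^*$) then $\sum_i P_{w_i}=1$, obtained by iterating $P_w = \sum_{a\in\mathcal{A}_n} P_{wa}$. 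With this in hand, well-definedness follows from the fact that two tables represent the same $g \in G_{n,1}$ precisely when they share a common refinement reached by finitely many \emph{expansions} $(w_i,u_i) \rightsquigarrow (w_i a, u_i a)_{a\in\mathcal{A}_n}$; since $S_{u_i} S_{w_i}^* = S_{u_i}\bigl(\sum_a S_a S_a^*\bigr) S_{w_i}^* = \sum_a S_{u_i a} S_{w_i a}^*$, the operator $\sum_i S_{u_i} S_{w_i}^*$ is invariant under expansion and hence depends only on $g$. Unitarity is then immediate: as $\{u_j\}$ is an antichain, $S_{u_i}^* S_{u_j} = \delta_{ij}\,1$, whence $\pi(g)^*\pi(g) = \sum_{i,j} S_{w_i} S_{u_i}^* S_{u_j} S_{w_j}^* = \sum_i P_{w_i} = 1$ by completeness of $\{w_i\}$, and symmetrically $\pi(g)\pi(g)^* = \sum_i P_{u_i} = 1$.

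For the homomorphism property I would exploit this same freedom to expand. Given $g$ with table $(w_i,u_i)$ and $h$ with table $(p_j,q_j)$, I refine the domain code $\{w_i\}$ of $g$ and the range code $\{q_j\}$ of $h$ to their common refinement $C$; after the corresponding expansions the two tables are indexed over $C$ so that the range code of $h$ coincides with the domain code of $g$, say $g$ carries a column $(c,u(c))$ and $h$ a column $(p(c),c)$ for each $c \in C$. Then $S_c^* S_{c'} = \delta_{c,c'}\,1$ collapses the double sum $\pi(g)\pi(h) = \sum_{c,c'} S_{u(c)} S_c^* S_{c'} S_{p(c')}^*$ to $\sum_c S_{u(c)} S_{p(c)}^*$, which is exactly the table of the composite sending $p(c)\xi \mapsto u(c)\xi$; thus $\pi(g)\pi(h) = \pi(gh)$.

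Faithfulness I would derive from the canonical action on the diagonal. Let $\mathcal{D} \subseteq \mathcal{O}_n$ be the abelian C*-subalgebra generated by the projections $P_w$; its spectrum is the Cantor space $\mathcal{A}_n^{\mathbb{N}}$ of infinite words, with $P_w$ the indicator of the cylinder $C_w$. A short computation gives, for $w = w_{i_0}$, that $\pi(g) P_w \pi(g)^* = S_{u_{i_0}} S_{u_{i_0}}^* = P_{u_{i_0}}$, so conjugation by $\pi(g)$ preserves $\mathcal{D}$ and induces on $\mathcal{A}_n^{\mathbb{N}}$ precisely the prefix-substitution homeomorphism defining $g$. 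Since $G_{n,1}$ is by definition a group of homeomorphisms of $\mathcal{A}_n^{\mathbb{N}}$, this action is faithful; hence $\pi(g)=1$ forces $g=\mathrm{id}$, for otherwise $g$ would carry a cylinder $C_w$ onto a disjoint cylinder $C_u$ with $u\neq w$ and then $\pi(g) P_w \pi(g)^* = P_u \neq P_w$.

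It remains to treat the ``moreover'' clause. The inclusion $\pi(G_{n,1}) \subseteq \mathcal{U}$, where $\mathcal{U}$ denotes the set of unitaries of the form $\sum_i S_{u_i} S_{w_i}^*$, is visible from the definition. For the reverse inclusion I would argue that such an element can only be unitary when, after refinement, $\{w_i\}$ and $\{u_i\}$ are complete prefix codes in bijection: expanding $S^*S$ and $SS^*$ via the word calculus must reduce them to $\sum_i P_{w_i} = \sum_i P_{u_i} = 1$ with both families antichains, which is exactly the data of a table for an element of $G_{n,1}$. Consequently $\mathcal{U} = \pi(G_{n,1})$ is already a group and thus coincides with the group it generates, giving $G_{n,1} \cong \mathcal{U}$. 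I expect this last step to be the main obstacle: extracting from the bare unitarity of $\sum_i S_{u_i} S_{w_i}^*$ the combinatorial conclusion that the source and target words form complete antichains in bijection forces one to eliminate overlapping or redundant words, and the careful bookkeeping of the relations $S_w^* S_v$ required there is where the argument is most delicate.
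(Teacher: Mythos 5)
The paper does not prove this statement at all: it is imported verbatim from Nekrashevych (\cite{nek04}, Proposition~9.6), so there is no in-paper argument to compare yours against. Judged on its own terms, the first three quarters of your proposal are correct and complete. The word calculus you set up is the standard one for $\mathcal{O}_n$, the invariance of $\sum_i S_{u_i}S_{w_i}^*$ under the expansion move $(w_i,u_i)\rightsquigarrow(w_ia,u_ia)_{a\in\mathcal{A}_n}$ together with the paper's lemma that two tables represent the same element iff they admit equivalent expansions gives well-definedness, the antichain/completeness computation gives unitarity, passing to the common refinement of the domain code of $g$ and the range code of $h$ gives multiplicativity, and the computation $\pi(g)P_{w_{i_0}}\pi(g)^*=P_{u_{i_0}}$ combined with faithfulness of the boundary action gives injectivity. (One cosmetic point: a nonidentity $g$ need not carry a cylinder onto a \emph{disjoint} cylinder, only onto a \emph{different} one; but $u\neq w$ already forces $P_u\neq P_w$, so your argument stands.)

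The genuine gap is exactly where you flag it: the claim that a unitary of the form $S=\sum_i S_{u_i}S_{w_i}^*$ with \emph{arbitrary} words $w_i,u_i$ must, after refinement, be a table unitary. Your sentence ``expanding $S^*S$ and $SS^*$ via the word calculus must reduce them to $\sum_i P_{w_i}=\sum_i P_{u_i}=1$ with both families antichains'' is an assertion, not a proof. If the $u_i$ (or $w_i$) are not already an antichain, the cross terms $S_{w_i}S_{u_i}^*S_{u_j}S_{w_j}^*$ with $i\neq j$ do not vanish individually --- when $u_j=u_iv$ such a term equals $S_{w_i}S_vS_{w_j}^{*}$, which is not a diagonal projection --- so one must argue that the \emph{totality} of these terms summing to $1$ forces the overlaps and redundancies to cancel, e.g.\ by applying the faithful conditional expectation of $\mathcal{O}_n$ onto the gauge-invariant core and onto the diagonal, or by an induction on the total length of the words involved. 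Note also that the phrase ``group generated by'' in the statement does not let you dodge this: since the set $\mathcal{U}$ of such unitaries is closed under products and adjoints (each product $S_{u_i}S_{w_i}^*S_{p_j}S_{q_j}^*$ is again $0$ or of the form $S_aS_b^*$), the generated group \emph{is} $\mathcal{U}$, and the containment $\mathcal{U}\subseteq\pi(G_{n,1})$ is unavoidable and is precisely the unproved combinatorial claim. Supplying that argument would complete the proof.
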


	Let $\mathcal{G}_A$ be an SFT-groupoid. As in Subsection~\ref{subs: Purely infinite groupoids and groupoids of shifts of finite type} an SFT-groupoid $\mathcal{G}_A$ comes with the data of a matrix $A$ that satisfies (Inp), an associated finite directed graph $\Gamma_A=(V,E)$ with induced edge shift $\sigma_A$ on the space $X_A$ of one-ended infinite directed paths and $\mathcal{G}_A$ is the equivalence groupoid induced by the tail equivalence of paths. Under the identification of $E$ with $\{1, \dots,n \}$ the condition of Remark~\ref{rem: admissible words} is equivalent to $\omega=(\omega_1,\dots, \omega_k) \in E^*$ being a finite directed path in $\Gamma_A$.
	
	\begin{defi}
		Denote by $\mathcal{D}_{\tilde{A}}$ the subalgebra of $\mathcal{O}_{\tilde{A}}$ generated by elements of the form $S_\omega S_\omega^*$ for $\omega \in E^*$.
	\end{defi}
	
	The C*-subalgebra $\mathcal{D}_{\tilde{A}}$ is commutative and isomorphic to $C(X_A)$ via the assignment $S_\omega S_\omega^* \mapsto \mathbf{1}_{C_\omega}$, in fact, it is a Cartan subalgebra of $\mathcal{O}_{\tilde{A}}$ isomorphic $C(\mathcal{G}_A^{(0)})$ under the isomorphism of Theorem~\ref{thm: cuntz-krieger algebras are groupoid c*-algebras of one sided shifts of finite type}.\footnote{See §6.3 of \cite{ren08}.\\} 
	
	\begin{lem}
		Let $\mathcal{G}_A$ be SFT-groupoid. Denote by $G_A$ the group of all homeomorphisms $h \in \operatorname{Homeo}(X_A)$ such that there exists a pair of continuous maps $k_h,l_h \colon X_A \to \mathbb{Z}_{\geq 0}$ such that $\sigma_A^{k_h(x)}(x)= \sigma_A^{l_h(x)}(h(x))$. Then $G_A \cong \mathfrak{T}(\mathcal{G}_A)$.
	\end{lem}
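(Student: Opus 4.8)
The plan is to realize the isomorphism as the canonical map $\Phi \colon \mathfrak{T}(\mathcal{G}_A) \to \operatorname{Homeo}(X_A)$, $B \mapsto \alpha_B = r \circ (s|_B)^{-1}$, and to prove that its image is exactly $G_A$. By Lemma~\ref{lem: sft-groupoids are purely infinite, minimal etc} the groupoid $\mathcal{G}_A$ is Hausdorff and effective, so by Remark~\ref{rem: topfgroupsdefi}(iii) the map $\Phi$ is an injective group homomorphism (multiplicativity being the instance of Proposition~\ref{prop: open.slices}(ii) restricted to full slices). Hence it suffices to establish the set equality $\Phi(\mathfrak{T}(\mathcal{G}_A)) = G_A$; closure of $G_A$ under composition and inversion then comes for free, since $G_A$ is thereby identified with a subgroup of $\operatorname{Homeo}(X_A)$. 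I first fix conventions: under $\mathcal{G}_A^{(0)} \cong X_A$, $(x,0,x) \mapsto x$, one has $s(x,k,y) = y$ and $r(x,k,y) = x$, and on a basic compact open slice $B_{\omega,\upsilon}$ (with $|\omega| = l$, $|\upsilon| = m$, $r(\omega_l) = r(\upsilon_m)$) the source map is a homeomorphism onto $C_\upsilon$ and $\alpha_{B_{\omega,\upsilon}} \colon C_\upsilon \to C_\omega$ is the prefix exchange $\upsilon y' \mapsto \omega y'$, satisfying $\sigma_A^{m}(y) = \sigma_A^{l}(\alpha_{B_{\omega,\upsilon}}(y))$ for all $y \in C_\upsilon$.

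For $\Phi(\mathfrak{T}(\mathcal{G}_A)) \subseteq G_A$, let $B$ be a full compact open slice. Since the $B_{\omega,\upsilon}$ form a basis, compactness lets me write $B = \bigsqcup_i B_{\omega_i,\upsilon_i}$, and after refining the source cylinders I may assume $\{C_{\upsilon_i}\}_i$ is a clopen partition of $X_A$ (disjoint because $s|_B$ is injective, covering because $s(B) = X_A$). Setting $k_h \equiv |\upsilon_i|$ and $l_h \equiv |\omega_i|$ on $C_{\upsilon_i}$ produces locally constant, hence continuous, functions $k_h, l_h \colon X_A \to \mathbb{Z}_{\geq 0}$ with $\sigma_A^{k_h(x)}(x) = \sigma_A^{l_h(x)}(\alpha_B(x))$, so $\alpha_B \in G_A$.

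For the reverse inclusion, take $h \in G_A$ with continuous witnesses $k_h, l_h$. As $X_A$ is compact and $\mathbb{Z}_{\geq 0}$ discrete, both have finite image and clopen fibres, yielding a finite clopen partition $X_A = \bigsqcup_j U_j$ on which $k_h \equiv a_j$, $l_h \equiv b_j$ and $\sigma_A^{a_j}(x) = \sigma_A^{b_j}(h(x))$. I set $B := \{(h(x),\, b_j - a_j,\, x) : x \in U_j,\ j \}$; this relation shows $B \subseteq \mathcal{G}_A$, and $s(B) = X_A$, $r(B) = h(X_A) = X_A$. To see $B$ is a compact open full slice with $\alpha_B = h$, I refine each $U_j$ into cylinders $C_\upsilon$ fine enough that the first $b_j$ letters of $h$ are constant along $C_\upsilon$; on such a cylinder the SFT relation forces $h(x) = \omega \cdot \sigma_A^{|\upsilon|}(x)$ for a fixed path $\omega$ with $|\omega| - |\upsilon| = b_j - a_j$, whence $B|_{C_\upsilon} = B_{\omega,\upsilon}$. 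Thus $B$ is a finite disjoint union of basic slices, hence a compact open slice, and $\alpha_B = h$, giving $h \in \Phi(\mathfrak{T}(\mathcal{G}_A))$ and completing the set equality.

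The main obstacle is this final refinement step: one must combine the continuity of $h$ with the relation $\sigma_A^{a_j}(x) = \sigma_A^{b_j}(h(x))$ to conclude that, once the first $b_j$ coordinates of $h$ are frozen along a sufficiently small source cylinder, $h$ is forced to be a single prefix exchange there, so that $B$ is genuinely \emph{locally basic}. All remaining ingredients (finiteness of the images of $k_h$ and $l_h$, disjointification into cylinders, and the bookkeeping $|\omega| - |\upsilon| = b_j - a_j$) are routine.
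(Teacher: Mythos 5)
Your proof is correct and follows essentially the route the paper itself indicates: the paper states the lemma without a formal proof, but the paragraph immediately following it invokes exactly your mechanism — finite clopen cylinder partitions of the fibres of $k_h$ and $l_h$, and the decomposition of every full slice as a disjoint union $\bigsqcup_i B_{\omega_i,\upsilon_i}$ with $X_A=\bigsqcup_i C_{\upsilon_i}$ — which you have simply carried out in both directions with the details (injectivity via effectiveness, the prefix-exchange identification, and the bookkeeping $|\omega|-|\upsilon|=b_j-a_j$) filled in.
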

	
	 For inverse fibers of $k_h$ and $l_h$ there exist finite clopen partitions into cylinder sets and by this every $B \in \mathfrak{T}(\mathcal{G}_A)$ decomposes as compact open slice into a union $\bigsqcup_{i \in I} B_{\omega_i,\upsilon_i}$ of compact open slices given by a finite collection of pairs of finite directed paths $(\omega_i=(\omega_{i,1},\dots, \omega_{i,l_i}),\upsilon_i=(\upsilon_{i,1},\dots,\upsilon_{i,m_i})_{i \in I}$ which satisfy $r(\omega_{i,l_i})=r(\upsilon_{i,m_i})$, such that $X_A = \bigsqcup_{i \in I} C_{\omega_i} = \bigsqcup_{i \in I} C_{\upsilon_i}$. With this description the isomorphism of unitary normalizers induced by the isomorphism of Theorem~\ref{thm: cuntz-krieger algebras are groupoid c*-algebras of one sided shifts of finite type} is provided by the assignment $B \to \sum_{i \in I} S_{\omega_i} S_{\upsilon_i}^*$. In the case of a full shift of order $n$ the image precisely amounts to the copy of $G_{n,1}$ in $\mathcal{O}_n$ described in Theorem~\ref{thm: unitary representation of thompson-higman groups in Cuntz-algebras}. More generally -- see §6.7.1 of \cite{mat15} -- the groups $G_{n,r}$ turn out to correspond to $\mathfrak{T}(\mathcal{G}_{A_{n,r}})$ for the matrix
	 \begin{equation*}
	 	A_{n,r}:=
	 	\begin{bmatrix}
	 	0		&	\dots	&	\dots 	&	0 		& 	n				\\
	 	1		&	\ddots	&	 	&	 	& 	0				\\
	 	0		&	\ddots	&	\ddots 	& 	\ 	& 	\vdots			\\
	 	\vdots 	&	\ddots	&	\ddots 	& 	\ddots	& 	\vdots			\\
	 	0 		&  	\dots	& 	0 		& 	1 		& 	0 
	 	\end{bmatrix}
	 \end{equation*}
	 
	 Matui thus considered topological full groups $\mathfrak{T}(\mathcal{G}_A)$ associated to irreducible one-sided shifts of finite type $(X_A,\sigma_A)$ as generalized Higman-Thompson groups and showed properties of the groups $G_{n,r}$ generalize to this context:
	 
	 \begin{thm}\label{thm: sft-groupoids are blablablabla}
	 	Let $\mathcal{G}_A$ be an SFT-groupoid. Then the following hold:
	 	\begin{enumerate}[(i)]
	 		\item The group $\mathfrak{T}(\mathcal{G}_A)$ is not amenable.
	 		
	 		\item \emph{(\cite{mat15}, Theorem~6.7)} The group $\mathfrak{T}(\mathcal{G}_A)$ satisfies the Haagerup property.\footnote{The \emph{Haagerup property} is a notion weaker than amenability and similarily is an opposite to Property(T). It is of significance in connection with the \emph{Baum-Connes conjecture}.\\}
	 		
	 		\item \emph{(\cite{mat15}, Theorem~6.21)} The group $\mathfrak{T}(\mathcal{G}_A)$ is of type $F_\infty$.\footnote{\cite{geo08} Let $G$ be a group. A CW-complex $X$ is called an \emph{Eilenberg-MacLane complex of type $K(G,1)$}, if $\pi_1(X)\cong G$ and $\pi_k(X)$ is trivial for all $k \neq 1$. Let $n \in \mathbb{N}$. A group is said to be of \emph{type $F_n$} if there exists an Eilenberg-MacLane complex of type $K(G,1)$ with finite $n$-skeleton. It is said to be of type $F_\infty$ if it is of type $F_k$ for all $k \in \mathbb{N}$. It holds that a group $G$ is finitely generated (resp. finitely presented) \emph{if and only if} it is of type $F_1$ (resp. of type $F_2$), which is why this properties are termed \emph{finiteness properties}.\\}
	 	\end{enumerate}
	 \end{thm}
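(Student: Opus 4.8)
Since the statement bundles three assertions of quite different depth, I would treat them separately. Parts (ii) and (iii) are the substantial theorems of Matui (\cite{mat15}, Theorem~6.7 and Theorem~6.21), and in a survey I would record them with their references rather than reprove them, indicating only that both rest on the combinatorial Cuntz--Krieger model of $\mathcal{G}_A$ supplied by Theorem~\ref{thm: cuntz-krieger algebras are groupoid c*-algebras of one sided shifts of finite type}: the Haagerup property is obtained from a proper action on a space with walls built from this model, and the property $F_\infty$ from the action on an associated highly connected complex with suitable finiteness. The genuinely self-contained point, and the one I would actually carry out, is the non-amenability in (i); note that it does \emph{not} follow from (ii) or (iii), since amenable groups have the Haagerup property and amenability is unrelated to finiteness properties.

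For (i) the plan is to exhibit a non-abelian free subgroup $F_2 \leq \mathfrak{T}(\mathcal{G}_A)$. As amenability is inherited by subgroups and $F_2$ is non-amenable, this forces $\mathfrak{T}(\mathcal{G}_A)$ to be non-amenable. The starting observation is that by Lemma~\ref{lem: sft-groupoids are purely infinite, minimal etc} the groupoid $\mathcal{G}_A$ is purely infinite, minimal, Hausdorff and effective, so the apparatus of Proposition~\ref{prop: purely infinite construction} is available: for any clopen $A \subsetneq \mathcal{G}_A^{(0)}$ and any nonempty clopen $B$ there is an element of $\mathfrak{T}(\mathcal{G}_A)$ compressing $A$ into $B$. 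Pure infiniteness moreover rules out $\mathcal{G}_A$-invariant probability measures on $\mathcal{G}_A^{(0)}$, which is exactly what removes the compactness obstruction to the existence of \emph{wandering} clopen sets for elements of the full group.

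Concretely, I would first choose two disjoint nonempty clopen sets $A, B \subseteq \mathcal{G}_A^{(0)}$ with $A \cup B \neq \mathcal{G}_A^{(0)}$ and, using the compressions of Proposition~\ref{prop: purely infinite construction} together with the slices of Lemma~\ref{lem: existence of pencils}, build two elements $a, b \in \mathfrak{T}(\mathcal{G}_A)$ of infinite order whose nonzero powers shift $A$ and $B$ across one another in the classical ping-pong pattern, namely $a^n(B) \subseteq A$ and $b^n(A) \subseteq B$ for all $n \neq 0$. Each such $a$ is to be manufactured so that the clopen sets $\{a^n(B)\}_{n \neq 0}$ are pairwise disjoint and all contained in $A$; this is where pure infiniteness is indispensable, since on a Cantor space carrying an invariant probability measure no homeomorphism admits such a wandering set. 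The ping-pong lemma then yields $\langle a, b \rangle \cong F_2$, completing (i).

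The main obstacle will be the bookkeeping in this last step: an element of $\mathfrak{T}(\mathcal{G}_A)$ is a \emph{global} homeomorphism of $\mathcal{G}_A^{(0)}$ (equivalently a compact open slice of full source and range), whereas the compressions of Proposition~\ref{prop: purely infinite construction} are only partial maps, so they must be completed to honest elements of the full group while keeping the inclusions $a^n(B)\subseteq A$ and $b^n(A)\subseteq B$ valid for \emph{all} nonzero $n$ at once, not merely for $n = \pm 1$. One clean way to sidestep this bookkeeping is to note, via Theorem~\ref{thm: unitary representation of thompson-higman groups in Cuntz-algebras} and the self-similar Cuntz structure of $\mathcal{G}_A$, that $\mathfrak{T}(\mathcal{G}_A)$ contains a copy of a Higman--Thompson group $G_{n,r}$, and then to invoke the known presence of free subgroups in $G_{n,r}$. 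I would present the direct ping-pong construction as the primary argument and record this embedding as a corroborating remark.
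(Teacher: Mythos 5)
Your treatment of (ii) and (iii) as citations matches the paper exactly, and your observation that (i) does not follow from them is correct. For (i) the paper takes a shorter but closely related route: it simply combines Lemma~\ref{lem: sft-groupoids are purely infinite, minimal etc} with Remark~\ref{rem: non-amenable topological full groups}(ii), i.e.\ with Matui's Proposition~4.10 of \cite{mat15}, which produces a subgroup isomorphic to $(\mathbb{Z}/2\mathbb{Z})\ast(\mathbb{Z}/3\mathbb{Z})$ inside $\mathfrak{T}(\mathcal{G})$ whenever $\mathcal{G}$ is a Hausdorff, effective, \'etale Cantor groupoid with properly infinite unit space. The underlying mechanism is the same ping-pong idea you propose, but Matui plays it with \emph{torsion} generators (an involution and a $3$-cycle manufactured from compact open slices with disjoint source and range, as in Definition~\ref{defi: symmetr, alternating full group}), which is precisely what sidesteps the bookkeeping you flag: one never has to complete a partial compression to a \emph{global} infinite-order homeomorphism whose powers are controlled simultaneously. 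Your direct $F_2$ construction is nevertheless executable with the tools in the paper: note that for $a \in \mathfrak{T}(\mathcal{G}_A)$ the condition $a(\mathcal{G}_A^{(0)}\setminus R)\subseteq A_+$ with $A_+\cap R=\emptyset$ is \emph{equivalent} to $a^{-1}(\mathcal{G}_A^{(0)}\setminus A_+)\subseteq R$, so a single application of Proposition~\ref{prop: purely infinite construction}(iii) to the pair $(\mathcal{G}_A^{(0)}\setminus R,\,A_+)$ already yields an element all of whose nonzero powers map the complement of $A_+\cup R$ into $A_+\cup R$; choosing $A_+\cup R$ inside $C_1$ and repeating inside a disjoint $C_2$ gives the ping-pong pair with essentially no further bookkeeping. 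Your corroborating remark via Higman--Thompson groups should be stated with more care, though: Theorem~\ref{thm: unitary representation of thompson-higman groups in Cuntz-algebras} identifies $G_{n,1}$ with $\mathfrak{T}(\mathcal{G}_A)$ only for the \emph{full} one-sided shift, and for a general matrix $A$ satisfying (Inp) an embedding of some $G_{n,r}$ into $\mathfrak{T}(\mathcal{G}_A)$ is not immediate from what is recorded here, so as written that remark carries less weight than the direct argument.
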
  
	 
	 \begin{rem}
	 	Theorem~\ref{thm: sft-groupoids are blablablabla}(i) is a consequence of being purely infinite and minimal -- see Lemma~\ref{lem: sft-groupoids are purely infinite, minimal etc} and Remark~\ref{rem: non-amenable topological full groups}(ii).	 	
	 \end{rem}

	\section{On a pair of subgroups}\label{sec: on a pair of subgroups}
	
	Nekrashevych defined a pair of subgroups of $\mathfrak{T}(\mathcal{G})$ denoted by $\mathfrak{S}(\mathcal{G})$ and $\mathfrak{A}(\mathcal{G})$ in terms of slices. These have subsequently been termed the symmetric- and alternating full groups. In the effective case $\mathfrak{S}(\mathcal{G})$ is just Matui's subgroup of elementary elements mentioned at the end of Subsection~\ref{subs: generalizations}. The subgroups $\mathfrak{S}(\mathcal{G})$ (resp. $\mathfrak{A}(\mathcal{G})$) are in the vein of the description of the locally finite subgroups $\mathfrak{T}(\varphi)_{\{x\}}$ (resp. $\mathfrak{T}(\varphi)_{\{x\}}'$) in terms of permutations (resp. even permutations) of atoms in an associated nested sequence of Kakutani-Rokhlin partitions (see Corollary~\ref{cor: locally finite subgroups} \& \ref{cor: ev.permut.}). Similar to the situation of Proposition~\ref{prop: mat.fac}, these new subgroups are closely related to the subgroups $\mathfrak{T}(\mathcal{G})_0$ (resp. $\mathfrak{T}(\mathcal{G})'$), in fact, they are equal for certain classes of groupoids. Subsection~\ref{subs: symmetric - and alternating full groups} gives the definition of $\mathfrak{S}(\mathcal{G})$ and $\mathfrak{A}(\mathcal{G})$. In Subsection~\ref{subs: simplicity of the alternating full groups} we recount Nekrashevych's proof of simplicity of $\mathfrak{A}(\mathcal{G})$ for minimal, effective, \'etale Cantor groupoid and in Subsection~\ref{subs: finite generation of alternating full groups} his proof of finite generation of $\mathfrak{A}(\mathcal{G})$ for minimal, expansive, \'etale Cantor groupoids.
	
	\subsection{Symmetric- and alternating full groups}\label{subs: symmetric - and alternating full groups}
	
	\begin{defi}[\cite{nek17}, Definition 3.1]
		Let $\mathcal{G}$ be an \'etale Cantor groupoid and let $d$ be a positive integer.
		A family of compact, open slices $\mathcal{M}=\{M_{i,j}\}_{1 \leq i,j \leq d}$ with the properties
		\begin{enumerate}[(i)]
			\item $M_{j,k}M_{i,j}=M_{i,k}$ for all $1\leq i,j,k \leq d$
			
			\item $M_{i,i} \subseteq \mathcal{G}^{(0)}$ for all $1\leq i \leq d$
			
			\item $M_{i,i}\cap M_{j,j}=\emptyset$ if $i \neq j$ for all $1\leq i, j\leq d$
		\end{enumerate}
		is called \emph{multisection of degree $d$}. The set $\bigcup_i M_ {i,i}$ is called the \emph{domain of $\mathcal{M}$}.	Let $U$ be a clopen subset contained in some component $M_{i',i'}$ of the domain. Then $\mathcal{M}|_U:=\{M_{i,j}r(M_{i',i}U)\}_{i,j=1}^d$ is a multisection of degree $d$ called \emph{the restriction of $\mathcal{M}$ to $U$}.
	\end{defi}
	
	The following lemma assures us to have enough multisections:
	
	\begin{lem}[\cite{nek17}, Lemma 3.1]\label{lem: mult}
		Let $\mathcal{G}$ be an \'etale Cantor groupoid. Let $\{u_1, \dots ,u_d\}$ be a finite set of elements in $\mathcal{G}^{(0)}$ which is contained in a single $\mathcal{G}$-orbit and  in an open neighbourhood $U$. There exists a multisection $\mathcal{M}=\{M_{i,j}\}_{1 \leq i,j \leq d}$ of degree $d$, such that $u_i \in M_{i,i}$, $x_j=r(M_{i,j}u_i)$ and $U$ contains the domain of $\mathcal{M}$.
	\end{lem}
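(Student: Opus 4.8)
The plan is to build the slices $M_{i,j}$ as products of single slices that all share one common source, so that the inverse-monoid structure of $\mathcal{B}_\mathcal{G}$ forces the multiplicative relation (i) for free, and then to shrink the common base to a clopen set small enough that the idempotents $M_{i,i}$ become pairwise disjoint and land inside $U$.

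First I would use that the $u_i$ lie in a single $\mathcal{G}$-orbit to pick, for each $j$, an element $g_j \in \mathcal{G}$ with $s(g_j)=u_1$ and $r(g_j)=u_j$, taking $g_1=u_1$. Since $\mathcal{G}$ is \'etale and $\mathcal{G}^{(0)}$ is a Cantor space (hence a Stone space, so $\mathcal{G}$ is ample), each $g_j$ lies in a compact open slice $B_j \in \mathcal{B}_\mathcal{G}^{o,k}$, and $B_1$ may be taken inside the open set $\mathcal{G}^{(0)}$. Setting $V:=\bigcap_j s(B_j)$, a clopen neighbourhood of $u_1$, and restricting $B_j':=B_j \cap s^{-1}(V)$ gives compact open slices with common source $s(B_j')=V$ and $g_j \in B_j'$. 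Next I would shrink: because $\mathcal{G}^{(0)}$ is Hausdorff and zero-dimensional and the distinct points $u_1,\dots,u_d$ lie in $U$, I can choose pairwise disjoint clopen sets $U_i \subseteq U$ with $u_i \in U_i$ and put $W:=\bigcap_i \alpha_{B_i'}^{-1}(U_i)$, a clopen neighbourhood of $u_1$ (note $\alpha_{B_i'}(u_1)=r(g_i)=u_i \in U_i$). Restricting once more to $B_i'':=B_i'\cap s^{-1}(W)$, the range images $r(B_i'')\subseteq U_i$ are now pairwise disjoint and contained in $U$. I then define $M_{i,j}:=B_j''(B_i'')^{-1}$.

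Finally I would verify the axioms. Since $\mathcal{B}_\mathcal{G}^{o,k}$ is an inverse subsemigroup, each $M_{i,j}$ is again a compact open slice. Writing $\beta_i(v)$ for the unique element of $B_i''$ with source $v \in W$, one has $M_{i,j}=\{\beta_j(v)\beta_i(v)^{-1}:v\in W\}$; composability in the product $M_{j,k}M_{i,j}$ forces equality of the parameter $v$, which gives $M_{j,k}M_{i,j}=M_{i,k}$, i.e. (i). Moreover $M_{i,i}=B_i''(B_i'')^{-1}=r(B_i'')\subseteq \mathcal{G}^{(0)}$ yields (ii), and the disjointness arranged above is precisely (iii). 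Since $u_1 \in W$ and $g_i \in B_i''$, we get $u_i=r(g_i)\in M_{i,i}$; the element of $M_{i,j}$ over $u_i$ is $g_j g_i^{-1}$, so $r(M_{i,j}u_i)=r(g_j)=u_j$; and the domain $\bigcup_i M_{i,i}=\bigcup_i r(B_i'')$ is contained in $U$.

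The step requiring the most care is the shrinking combined with checking (i) as an \emph{exact} equality of subsets rather than mere agreement on the base points $u_i$: one must keep the common source $V$ (then $W$) throughout so that the groupoid product is genuinely parametrized bijectively by $W$, while simultaneously ensuring the range idempotents $r(B_i'')$ separate. I would also remark that the passage from $V$ to $W$ is nothing but taking the restriction $\mathcal{M}|_W$ in the sense of the preceding definition, which is why it again produces a multisection of degree $d$.
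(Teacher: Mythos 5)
Your proposal is correct and follows essentially the same route as the paper's proof: choose $g_j$ with $s(g_j)=u_1$, $r(g_j)=u_j$, enclose each in a compact open slice, shrink to a common clopen base $W$ around $u_1$ so that the range images become pairwise disjoint and land in $U$, and set $M_{i,j}=(B_jW)(B_iW)^{-1}$. Your extra care in producing $W$ (pulling back disjoint clopen neighbourhoods $U_i\subseteq U$ of the $u_i$ under the partial homeomorphisms) and in verifying $M_{j,k}M_{i,j}=M_{i,k}$ as an exact equality of slices only makes explicit what the paper leaves implicit.
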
	
	\begin{proof}
		Since all $u_i$ are contained in one $\mathcal{G}$-orbit, there exist elements $g_1, g_2 \dots g_d$ such that $s(g_i)=u_1$ and $r(g_i)=u_i$, in particular set $g_1=u_1$. Since $\mathcal{G}$ is an \'etale Cantor groupoid, for every $i \in \{1,\dots,d\}$ there exists a compact, open slice $B_i \in \mathcal{B}_\mathcal{G}^{o,k} $ such that $g_i \in B_i$. Since $\mathcal{G}^{(0)}$ is a Cantor space, there exists a clopen neighbourhood $u_1 \in W$ with $W \subseteq s(B_i) \cap U$ such that the sets $r(B_iW)$ are disjoint and $B_1W=W$ (by assuming $W \subseteq s(B_1)$). Then $M_{i,j}:=(B_jW)(B_iW)^{-1}$ gives the desired multisection.
	\end{proof}
	
	\begin{defi}[\cite{nek17}, Defintion 3.2]\label{defi: symmetr, alternating full group}
		Let $\mathcal{G}$ be an \'etale Cantor groupoid and let $\mathcal{M}$ be a multisection of degree $d$ with domain $U$. There is an embedding of the symmetric group $\mathfrak{S}_d$ of degree $d$ into $\mathfrak{T}(\mathcal{G})$ given by $\pi \mapsto \mathcal{M}_\pi:=\bigcup_{i=1}^d M_{i,\pi(i)} \cup (\mathcal{G}^{(0)}\setminus U)$ and accordingly an embedding of the alternating group $\mathfrak{A}_d$ of degree $d$. Denote the image of the symmetric group under this embedding by $\mathfrak{S}(\mathcal{M})$ and by $\mathfrak{A}(\mathcal{M})$ the image of $\mathfrak{A}_d$.
		The subgroup of $\mathfrak{T}(\mathcal{G})$ generated by the union of the subgroups $\mathfrak{S}(\mathcal{M})$ for all multisections $M$ of degree $d$ is denoted by $\mathfrak{S}_d(\mathcal{G})$, the subgroup of $\mathfrak{T}(\mathcal{G})$ generated by the union of the subgroups $\mathfrak{A} (\mathcal{M})$ for all multisections $M$ of degree $d$ is denoted by $\mathfrak{A}_d (\mathcal{G})$.	The subgroup $\mathfrak{S}(\mathcal{G}):= \mathfrak{S}_2(\mathcal{G})$ is called the \emph{symmetric full group} and the subgroup $\mathfrak{A}(\mathcal{G}):= \mathfrak{A}_3(\mathcal{G})$ the \emph{alternating full group}.				
	\end{defi}
	
	\begin{rem}\phantomsection\label{rem: SA}
		\begin{enumerate}[(i)]
			\item The subgroups $\mathfrak{S}_d(\mathcal{G})$ and $\mathfrak{A}_d(\mathcal{G})$ are normal, since conjugates of multisections of degree $d$ are multisections of degree $d$. It holds, that $\mathfrak{S}_d(\mathcal{G}) \geq \mathfrak{S}_{d+1}(\mathcal{G})$ and $\mathfrak{A}_d(\mathcal{G}) \geq \mathfrak{A}_{d+1}(\mathcal{G})$, since for a multisection $\mathcal{M}$ of degree $d+1$ the groups $\mathfrak{S}(\mathcal{M})$ resp. $\mathfrak{A}(\mathcal{M})$ are generated by all the subgroups $\mathfrak{S}(\mathcal{M}')$ resp. $\mathfrak{A}(\mathcal{M}')$ for all multisections $\mathcal{M}'$ of degree $d$ in $\mathcal{M}$.
			
			\item Since every transposition in $\mathfrak{S}(\mathcal{G})$ is in $\mathfrak{T}(\mathcal{G})_0$, it is immediate that $\mathfrak{S}(\mathcal{G}) \leq \mathfrak{T}(\mathcal{G})_0$. Since $3$-cycles are generated by commutators, it holds that $\mathfrak{A}(\mathcal{G}) \leq \mathfrak{T}(\mathcal{G})'$.
		\end{enumerate}
	\end{rem}

	\begin{prop} [\cite{nek17}, Proposition 3.6 \& Corollary 3.7]\label{prop: SdG}
		Let $\mathcal{G}$ be an \'etale Cantor groupoid. If all $\mathcal{G}$-orbits have $n$ or more points, $\mathfrak{S}(\mathcal{G})=\mathfrak{S}_d(\mathcal{G})$ for all $2 \leq d \leq n$ and $\mathfrak{A}(\mathcal{G})= \mathfrak{A}_d(\mathcal{G})$ for all $3 \leq d \leq n$. Thus, if all $\mathcal{G}$-orbits are infinite, $\mathfrak{S}(\mathcal{G})=\mathfrak{S}_d(\mathcal{G})$ for all $2 \leq d$ and $\mathfrak{A}(\mathcal{G})= \mathfrak{A}_d(\mathcal{G})$ for all $3 \leq d$.
	\end{prop}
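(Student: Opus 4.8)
The plan is to prove the two equalities $\mathfrak{S}(\mathcal{G})=\mathfrak{S}_d(\mathcal{G})$ (for $2\le d\le n$) and $\mathfrak{A}(\mathcal{G})=\mathfrak{A}_d(\mathcal{G})$ (for $3\le d\le n$) by showing that the chain of reverse inclusions from Remark~\ref{rem: SA}(i) collapses under the orbit-size hypothesis. Recall that Remark~\ref{rem: SA}(i) already gives $\mathfrak{S}_d(\mathcal{G})\geq\mathfrak{S}_{d+1}(\mathcal{G})$ and $\mathfrak{A}_d(\mathcal{G})\geq\mathfrak{A}_{d+1}(\mathcal{G})$, so the content is entirely in the opposite direction: I must show that every generator of the smaller-degree group $\mathfrak{S}_d(\mathcal{G})$ (a transposition $\mathcal{M}_\pi$ coming from a degree-$d$ multisection) lies in the larger-degree group $\mathfrak{S}_{d+1}(\mathcal{G})$, and likewise every $3$-cycle generating $\mathfrak{A}_d(\mathcal{G})$ lies in $\mathfrak{A}_{d+1}(\mathcal{G})$. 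Chaining these upward inclusions from $d$ up to $n$ then yields equality for all admissible degrees.

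The key step is to realize a degree-$d$ generator inside a degree-$(d+1)$ multisection. First I would take a multisection $\mathcal{M}=\{M_{i,j}\}_{1\le i,j\le d}$ of degree $d$ whose symmetric (resp.\ alternating) image supplies the generator in question; it suffices to treat a single transposition $M_{a,b}\cup M_{b,a}\cup(\mathcal{G}^{(0)}\setminus U)$ (resp.\ a $3$-cycle). Because $\mathcal{G}^{(0)}$ is totally disconnected, I may shrink to the restriction $\mathcal{M}|_{U'}$ for a suitably small clopen $U'\subseteq M_{1,1}$, and the orbit-size hypothesis guarantees that for each point $u$ in the (shrunken) domain the $\mathcal{G}$-orbit $\mathcal{G}(u)$ contains at least $d+1$ points, i.e.\ at least one point $u_{d+1}$ outside the current $d$ chosen representatives. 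Applying Lemma~\ref{lem: mult} to the enlarged set $\{u_1,\dots,u_d,u_{d+1}\}$ inside a common open neighbourhood produces a multisection $\mathcal{M}'$ of degree $d+1$ refining $\mathcal{M}$ on the first $d$ indices. The transposition (resp.\ $3$-cycle) on the first $d$ indices is then literally an element of $\mathfrak{S}(\mathcal{M}')\subseteq\mathfrak{S}_{d+1}(\mathcal{G})$ (resp.\ $\mathfrak{A}(\mathcal{M}')\subseteq\mathfrak{A}_{d+1}(\mathcal{G})$), since embedding $\mathfrak{S}_d\hookrightarrow\mathfrak{S}_{d+1}$ as permutations fixing the last point sends transpositions to transpositions, and $\mathfrak{A}_d\hookrightarrow\mathfrak{A}_{d+1}$ likewise respects $3$-cycles.

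The main obstacle I anticipate is the compactness/covering bookkeeping: a single element $\mathcal{M}_\pi\in\mathfrak{S}(\mathcal{M})$ need not be coverable by one enlarged multisection, because different points of the domain may require different ``extra'' orbit points $u_{d+1}$, and these choices vary only locally. To handle this I would use that the domain $U=\bigsqcup_i M_{i,i}$ is compact and that Lemma~\ref{lem: mult} gives, for each point, a multisection whose domain is an open neighbourhood; extracting a finite clopen subcover and passing to the associated restrictions $\mathcal{M}|_{U_k}$, I express the given generator as a product of finitely many pieces, each supported on one clopen patch and each realized inside a degree-$(d+1)$ multisection by the construction above. Since both $\mathfrak{S}_{d+1}(\mathcal{G})$ and $\mathfrak{A}_{d+1}(\mathcal{G})$ are groups, the product of these patchwise pieces again lies in the larger group, completing the reverse inclusion. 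The final sentence of the statement, concerning infinite orbits, is then immediate: if every $\mathcal{G}$-orbit is infinite the hypothesis $|\mathcal{G}(u)|\ge n$ holds for every $n$, so the equalities hold for all $d\ge 2$ (resp.\ $d\ge 3$).
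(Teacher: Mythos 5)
Your proposal is correct and follows essentially the same route as the paper: one inclusion comes from Remark~\ref{rem: SA}(i), and the reverse inclusion is obtained by using the orbit-size hypothesis together with Lemma~\ref{lem: mult} to refine a degree-$d$ multisection to one of degree $d+1$, then invoking compactness of the domain to cover the original multisection by finitely many such refinements with disjoint domains and writing each generator as the product of its restrictions to these patches. The covering bookkeeping you flag as the main obstacle is exactly the step the paper handles the same way.
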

	\begin{proof}
		By Remark~\ref{rem: SA}(i) it holds that $\mathfrak{S}_d(\mathcal{G}) \geq \mathfrak{S}_{d+1}(\mathcal{G})$. It remains to show $\mathfrak{S}_d(\mathcal{G}) \leq \mathfrak{S}_{d+1}(\mathcal{G})$ for $2 \leq d \leq n-1$. Let $\mathcal{M}$ be a multisection of degree $d$. Let $\{x_1,x_2, \dots x_d\}$ be a set of points with $x_i \in M_{i,i}$ contained in the same $\mathcal{G}$-orbit. By assumption there exists an additional point $x_{d+1}$ in the same $\mathcal{G}$-orbit. Let $U$ be 				
		a clopen neighbourhood of $\{x_1,x_2, \dots ,x_{d+1} \}$ containing the domain of $\mathcal{M}$. Lemma~\ref{lem: mult} allows to construct a multisection $\mathcal{M}'$ of degree $d+1$ with $M'_{i,j} \subseteq M_{i,j}$ for $i,j \in 1,2,\dots,d$. Since the domain of $\mathcal{M}$ is compact, it is possible to apply this construction to get a finite family $\mathcal{M}^k$ of multisections of degree $d+1$ with disjoint domains  such that $\bigcup_k M_{i,j}^k \supseteq M_{i,j}$. This implies $\mathfrak{S}(\mathcal{M}) \subseteq \mathfrak{S}_{d+1}(\mathcal{G})$. The proof for $\mathfrak{A}(\mathcal{G})$ is analogous and the second statement is  a simple corollary of the first statement.
	\end{proof}
	
	By \cite{nek17} Theorem~\ref{thm: rubin} can be used to extend the list in Theorem~\ref{thm: isom} by:
	
	\begin{enumerate}
		\item[(v)] $\mathfrak{S}(\mathcal{G}_1) \cong \mathfrak{S}(\mathcal{G}_2)$
		\item[(vi)] $\mathfrak{A}(\mathcal{G}_1) \cong \mathfrak{A}(\mathcal{G}_2)$	
	\end{enumerate}
	
	\subsection{Simplicity of the alternating full group}\label{subs: simplicity of the alternating full groups}
	
	The purpose of this subsection is to recount the proof of simplicity of $\mathfrak{A}(\mathcal{G})$ for minimal, effective, \'etale Cantor groupoids obtained in \cite{nek15}. As a consequence one obtains a verification of the AH-conjecture for minimal, almost finite, principal, \'etale Cantor groupoids. 
	
	\begin{lem} [\cite{nek17}, Lemma 3.3]\label{lem: alt}
		Let $\mathfrak{A}_d$ be the alternating group of degree $d$.	If $d \geq 5$, it holds that $\langle \tilde{\mathfrak{A}}_1 \cup \tilde{\mathfrak{A}}_2 \rangle = \mathfrak{A}_d \times \mathfrak{A}_d \times \mathfrak{A}_d$, where $\tilde{\mathfrak{A}}_1:=\{(\pi,\pi, 1)|\pi \in \mathfrak{A}_d\}$ and $\tilde{\mathfrak{A}}_2:=\{(1,\pi,\pi)|\pi \in \mathfrak{A}_d\}$.
	\end{lem}
	
	\begin{proof}
		Since $[\tilde{\mathfrak{A}}_1, \tilde{\mathfrak{A}}_2] = \{(1,[\mathfrak{A}_d,\mathfrak{A}_d],1)\}$ holds and because the assumption $d \geq 5$ implies $[\mathfrak{A}_d,\mathfrak{A}_d] = \mathfrak{A}_d$, it follows that $\{(1,\mathfrak{A}_d,1)\} \subseteq \langle \tilde{\mathfrak{A}}_1 \cup \tilde{\mathfrak{A}}_2 \rangle$ and consequently
		\begin{equation*}
			\{(\mathfrak{A}_d,1,1)\},\{(\mathfrak{A}_d,1,1)\} \subseteq \langle \tilde{\mathfrak{A}}_1 \cup \tilde{\mathfrak{A}}_2 \rangle
		\end{equation*} 
	\end{proof}
	
	\begin{prop}[\cite{nek17}, Proposition 3.2]\label{prop: multcov}
		Let $\mathcal{G}$ be an \'etale Cantor groupoid and $\mathcal{M}$ be a multisection of degree $d \geq 5$. If there is a collection of multisections $\mathcal{M}^k$ for $k=1,2,\dots,n$ for some  $n \in \mathbb{N}$ such that $\bigcup_{k=1}^n M_{i,j}^k=M_{i,j}$, then $\mathfrak{A}(\mathcal{M}) \subseteq \langle\bigcup_{k=1}^n \mathfrak{A}(\mathcal{M}^k)\rangle$.
	\end{prop}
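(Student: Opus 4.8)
The plan is to translate the covering hypothesis into a statement about restrictions of a \emph{single} multisection, and then to reduce everything to the purely group-theoretic fact recorded in Lemma~\ref{lem: alt}, which is where the hypothesis $d\geq 5$ is spent.

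First I would observe that the data $\mathcal{M}^k$ are not arbitrary. Since each $M^k_{i,j}\subseteq M_{i,j}$ is a compact open slice and the multisection relations force $s(M^k_{i,j})=M^k_{j,j}$ and $r(M^k_{i,j})=M^k_{i,i}$, each $M^k_{i,j}$ is simply the restriction of the slice $M_{i,j}$ to the clopen set $M^k_{j,j}$. Using the slices $M_{i,1}$ to identify every component $M_{i,i}$ of the domain with the distinguished component $M_{1,1}$, the whole multisection $\mathcal{M}^k$ is recovered from the single clopen set $C^k:=M^k_{1,1}\subseteq M_{1,1}$; that is, $\mathcal{M}^k=\mathcal{M}|_{C^k}$ and hence $\mathfrak{A}(\mathcal{M}^k)=\mathfrak{A}(\mathcal{M}|_{C^k})$. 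The hypothesis $\bigcup_k M^k_{i,j}=M_{i,j}$ then becomes the assertion that $\{C^k\}_{k=1}^n$ is a (possibly overlapping) clopen cover of $M_{1,1}$, and the proposition reduces to: for any clopen cover $\{C^k\}$ of $M_{1,1}$, $\mathfrak{A}(\mathcal{M})\subseteq H:=\big\langle\bigcup_k\mathfrak{A}(\mathcal{M}|_{C^k})\big\rangle$.

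The engine is a \emph{merge lemma}: for arbitrary clopen $A,B\subseteq M_{1,1}$ one has
\[
\big\langle\,\mathfrak{A}(\mathcal{M}|_A)\cup\mathfrak{A}(\mathcal{M}|_B)\,\big\rangle=\mathfrak{A}(\mathcal{M}|_{A\setminus B})\times\mathfrak{A}(\mathcal{M}|_{A\cap B})\times\mathfrak{A}(\mathcal{M}|_{B\setminus A}),
\]
and in particular this group contains $\mathfrak{A}(\mathcal{M}|_{A\cup B})$ as a diagonal copy of $\mathfrak{A}_d$. To prove it I would partition $A\cup B$ into the disjoint clopen pieces $P_1=A\setminus B$, $P_2=A\cap B$, $P_3=B\setminus A$. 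Since restricted multisections over disjoint clopen pieces have disjoint supports, an element $(\mathcal{M}|_A)_\pi$ factors as $(\mathcal{M}|_{P_1})_\pi(\mathcal{M}|_{P_2})_\pi$ with the \emph{same} $\pi$, so under the identifications $\mathfrak{A}(\mathcal{M}|_{P_j})\cong\mathfrak{A}_d$ the subgroup $\mathfrak{A}(\mathcal{M}|_A)$ is exactly $\tilde{\mathfrak{A}}_1=\{(\pi,\pi,1)\}$ and $\mathfrak{A}(\mathcal{M}|_B)$ is exactly $\tilde{\mathfrak{A}}_2=\{(1,\pi,\pi)\}$. Lemma~\ref{lem: alt} (which needs $d\geq 5$) then yields the full product, and the decomposition $A\cup B=P_1\sqcup P_2\sqcup P_3$ makes $\mathfrak{A}(\mathcal{M}|_{A\cup B})$ the diagonal subgroup, hence a subgroup of that product.

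With the merge lemma in hand the proposition follows by a short induction on $n$: the collection $\mathcal{V}=\{V\subseteq M_{1,1}\text{ clopen}:\ \mathfrak{A}(\mathcal{M}|_V)\subseteq H\}$ contains every $C^k$ by definition of $H$ and is closed under finite unions by the merge lemma, so $M_{1,1}=\bigcup_k C^k\in\mathcal{V}$, i.e. $\mathfrak{A}(\mathcal{M})=\mathfrak{A}(\mathcal{M}|_{M_{1,1}})\subseteq H$. I expect the main obstacle to be the merge lemma, and specifically the careful verification that the two overlapping restricted alternating groups sit inside $\mathfrak{A}_d\times\mathfrak{A}_d\times\mathfrak{A}_d$ precisely as the diagonal copies $\tilde{\mathfrak{A}}_1,\tilde{\mathfrak{A}}_2$ of Lemma~\ref{lem: alt}. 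The bookkeeping for the restriction identifications in the first reduction, and the degenerate cases where some $P_j$ is empty (where one simply works with the product of the surviving factors), should be routine by comparison.
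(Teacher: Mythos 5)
Your proof is correct and follows essentially the same route as the paper: the $n=2$ case is handled by decomposing into the intersection and the two set differences (your $\mathcal{M}|_{A\cap B}$, $\mathcal{M}|_{A\setminus B}$, $\mathcal{M}|_{B\setminus A}$ are exactly the paper's $\mathcal{I}$, $\mathcal{D}^1$, $\mathcal{D}^2$), Lemma~\ref{lem: alt} is invoked in precisely the same way, and the general case follows by the same induction on $n$. Your preliminary reparametrization of the $\mathcal{M}^k$ as restrictions $\mathcal{M}|_{C^k}$ is a harmless (and tidy) repackaging rather than a different argument.
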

	\begin{proof}
		Let $n=2$. The families $\mathcal{I}:=\{M_{i,j}^1 \cap M_{i,j}^2\}_{i,j=1}^d$, $\mathcal{D}^1:=\{M_{i,j}^1 \setminus M_{i,j}^2\}_{i,j=1}^d$ and $\mathcal{D}^2:=\{M_{i,j}^2 \setminus M_{i,j}^1\}_{i,j=1}^d$ are multisections. The domains of $\mathcal{I}$ and $\mathcal{D}^i$ are invariant under elements of $\mathfrak{A}(\mathcal{M}^i)$ and the restriction of an element $\mathcal{M}_\pi^i$ to the domain of $\mathcal{I}$ (resp. $\mathcal{D}^i$) equals $\mathcal{I}_\pi$ (resp. $\mathcal{D}_\pi^i$) for every $\pi \in \mathfrak{A}_d$. Restrictions of elements in $\mathfrak{A}(\mathcal{M}^1)$ to the domain of $\mathcal{D}^2$ are trivial and so are restrictions of elements in $\mathfrak{A}(\mathcal{M}^2)$ to the domain of $\mathcal{D}^1$. Thus, we can apply Lemma~\ref{lem: alt} and $\mathfrak{A}(\mathcal{I}), \mathfrak{A}(\mathcal{D}^i) \leq \langle \mathfrak{A}(\mathcal{M}^1) \cup \mathfrak{A}(\mathcal{M}^2) \rangle$ holds as desired. The other cases follow by induction.
	\end{proof}
	
	\begin{thm} [\cite{nek17}, Theorem 4.1]\label{thm: simpl}
		Let $\mathcal{G}$ be a minimal, effective, \'etale Cantor groupoid.
		Any non-trivial subgroups of $\mathfrak{T}(\mathcal{G})$ normalized by $\mathfrak{A}(\mathcal{G})$ must contain $\mathfrak{A}(\mathcal{G})$. Thus $\mathfrak{A}(\mathcal{G})$ is simple.
	\end{thm}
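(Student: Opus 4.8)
Let $N$ be a non-trivial subgroup of $\mathfrak{T}(\mathcal{G})$ normalized by $\mathfrak{A}(\mathcal{G})$; the plan is to prove $\mathfrak{A}(\mathcal{G}) \subseteq N$, which at once yields the final assertion upon taking $N$ to be a non-trivial normal subgroup of $\mathfrak{A}(\mathcal{G})$ (such $N$ is a subgroup of $\mathfrak{T}(\mathcal{G})$ normalized by $\mathfrak{A}(\mathcal{G})$, and $\mathfrak{A}(\mathcal{G}) \neq \{1\}$ by minimality, cf. Corollary~\ref{cor: existence of involutions}). Since $\mathcal{G}$ is minimal, every $\mathcal{G}$-orbit is dense, hence infinite, so Proposition~\ref{prop: SdG} gives $\mathfrak{A}(\mathcal{G}) = \mathfrak{A}_5(\mathcal{G})$, which by Definition~\ref{defi: symmetr, alternating full group} is generated by the subgroups $\mathfrak{A}(\mathcal{M})$ as $\mathcal{M}$ runs over all multisections of degree $5$. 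Thus it suffices to prove $\mathfrak{A}(\mathcal{M}) \subseteq N$ for every such $\mathcal{M}$. First I would establish this for a single, conveniently placed multisection, and then spread it around by minimality.

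To produce one copy, I use effectiveness (Remark~\ref{rem: topfgroupsdefi}(iii)) to view the elements of $\mathfrak{T}(\mathcal{G})$ as homeomorphisms of $\mathcal{G}^{(0)}$. Pick $g \in N \setminus \{1\}$; then $g(\xi) \neq \xi$ for some $\xi$, and since $\mathcal{G}^{(0)}$ is a Cantor space one finds a clopen $U \ni \xi$ with $g(U) \cap U = \emptyset$. The orbit of $\xi$ is dense, hence meets the nonempty clopen set $U$ in an infinite set; choosing five of these orbit points inside $U$ and applying Lemma~\ref{lem: mult}, I obtain a multisection $\mathcal{M}$ of degree $5$ whose domain is contained in $U$. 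Consequently $\mathfrak{A}(\mathcal{M}) \cong \mathfrak{A}_5$ is supported in $U$, while $g\,\mathfrak{A}(\mathcal{M})\,g^{-1}$ is supported in the disjoint set $g(U)$.

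The heart of the argument is a commutator computation. For $a \in \mathfrak{A}(\mathcal{M}) \subseteq \mathfrak{A}(\mathcal{G})$ the element $a g^{-1} a^{-1}$ lies in $N$ by normality, so $[g,a] = g\,(a g^{-1} a^{-1}) \in N$. Because $\supp(a) \subseteq U$ and $g(U) \cap U = \emptyset$, one checks directly that $[g,a]$ acts as $a^{-1}$ on $U$, as $g a g^{-1}$ on $g(U)$, and trivially elsewhere. Now take any $b \in \mathfrak{A}(\mathcal{M})$; as $b$ is supported in $U$ it fixes $g(U)$ pointwise, so $b[g,a]b^{-1} \in N$ acts as $b a^{-1} b^{-1}$ on $U$ and still as $g a g^{-1}$ on $g(U)$. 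Hence the product $[g,a]^{-1}\,b[g,a]b^{-1} \in N$ has its $g(U)$-parts cancel and is supported in $U$, where it equals the commutator $[a,b]$ of the copy $\mathfrak{A}(\mathcal{M}) \cong \mathfrak{A}_5$. Letting $a,b$ range over $\mathfrak{A}(\mathcal{M})$, these elements generate $[\mathfrak{A}_5,\mathfrak{A}_5] = \mathfrak{A}_5$ by perfectness of $\mathfrak{A}_d$ for $d \geq 5$ (the algebraic input packaged in Lemma~\ref{lem: alt}), so $\mathfrak{A}(\mathcal{M}) \subseteq N$.

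It remains to pass from this one multisection to all of them. Given an arbitrary degree-$5$ multisection $\mathcal{M}'$, I would use minimality (via the slices furnished by Lemma~\ref{lem: existence of pencils} and the multisections of Lemma~\ref{lem: mult}) to build elements of $\mathfrak{A}(\mathcal{G})$ conjugating suitable restrictions of $\mathcal{M}$ onto a finite family of submultisections covering $\mathcal{M}'$ componentwise; since $N$ is normalized by $\mathfrak{A}(\mathcal{G})$, each conjugated copy $\mathfrak{A}(\cdot)$ remains inside $N$, and Proposition~\ref{prop: multcov} then reassembles them to give $\mathfrak{A}(\mathcal{M}') \subseteq N$. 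Therefore $\mathfrak{A}(\mathcal{G}) = \mathfrak{A}_5(\mathcal{G}) \subseteq N$, proving the theorem. I expect the main obstacle to be exactly this last spreading step: one must verify that conjugation by $\mathfrak{A}(\mathcal{G})$ alone (rather than all of $\mathfrak{T}(\mathcal{G})$) suffices to move a single displaced copy of $\mathfrak{A}(\mathcal{M})$ onto an arbitrary multisection, which is where minimality and the covering property of Proposition~\ref{prop: multcov} must be combined carefully; by contrast the commutator core is clean once the displacement $g(U) \cap U = \emptyset$ and the degree bound $d \geq 5$ are arranged.
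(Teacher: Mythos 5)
Your proposal follows essentially the same route as the paper's proof: displace a clopen $U$ off itself using a non-trivial element of $N$ and effectiveness, plant a degree-$5$ multisection in $U$, extract $[\mathfrak{A}(\mathcal{M}),\mathfrak{A}(\mathcal{M})]=\mathfrak{A}(\mathcal{M})$ from $N$ via the same commutator identity (the paper's $[[B^{-1},A_1],A_2]=[A_1,A_2]$), and then spread to arbitrary multisections by conjugation within $\mathfrak{A}(\mathcal{G})$ together with Proposition~\ref{prop: multcov}. The only part you leave unworked --- exhibiting the conjugator as an element of $\mathfrak{A}(\mathcal{G})$ --- is handled in the paper by building the slices $C_i=R_i\cup S_i\cup(S_iR_i)^{-1}$, which act as $3$-cycles and hence lie in $\mathfrak{A}(\mathcal{G})$; this is exactly the step you correctly flag as the remaining work.
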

	
	\begin{proof}
		Let $H \leq \mathfrak{T}(\mathcal{G})$ be a non-trivial subgroup normalized by $\mathfrak{A}(\mathcal{G})$. Let $B \in H \subset \mathcal{B}_\mathcal{G}^{o,k}$ be a non-trivial element. By effectiveness, there exists an element $g \in \mathcal{G}$ with $g \in B$ and such that $s(g) \neq r(g)$. Let $U$ be a clopen neighbourhood of $s(g)$ such that $U \cap BU = \emptyset$. Let $\mathcal{M}$ be a multisection of degree $\geq 3$  with its domain contained in $U$ -- this exists by Lemma~\ref{lem: mult} -- and let $A_1,A_2 \in \mathfrak{A}(\mathcal{M}) \subseteq \mathfrak{A}(\mathcal{G})$. Then $B \mathcal{M} B^{-1}$ is a multisection of which the domain is contained in $BU$ and such that $BA_iB^{-1} \in \mathfrak{A}(B \mathcal{M} B^{-1})$ for $i \in \{1,2\}$. The computation of $[B^{-1},A_1]$ on the sets $(U \cup BU)^{\mathrm{C}}$, $U$ and $BU$ shows $[[B^{-1},A_1]A_2]=[A_1,A_2]$. The assumption of $H$ being normalized by $\mathfrak{A}(\mathcal{G})$ implies  $[[B^{-1},A_1]A_2 ] \in H$, hence $[\mathfrak{A}(\mathcal{M}),\mathfrak{A}(\mathcal{M})] \subseteq H$. Since $\mathfrak{A}(\mathcal{G}) \leq \mathfrak{T}(\mathcal{G})'$ by Remark~\ref{rem: SA}(ii), it follows $\mathfrak{A}(\mathcal{M})=[\mathfrak{A}(\mathcal{M}),\mathfrak{A}(\mathcal{M})]$
		$\subseteq H$ and in consequence $\mathfrak{A}(\mathcal{G}|_U) \leq H$\footnote{$\mathfrak{A}(\mathcal{G}|_U)$ naturally embeddeds into $\mathfrak{A}(\mathcal{G})$ by extending with units outside of $U$.\\}, where $\mathcal{G}|_U$ is the restriction of $\mathcal{G}$ to $U$ as defined in Definition~\ref{defi: grpd}. The proof is completed by showing that the conjucate closure of $\mathfrak{A}(\mathcal{G}|_U)$ in $ \mathfrak{A}(\mathcal{G})$ is $\mathfrak{A}(\mathcal{G})$. By Proposition~\ref{prop: SdG} it is sufficient to show that for any multisection $\mathcal{M}$ of degree $5$, it holds that $\mathfrak{A}(\mathcal{M})$ is contained in the conjugate closure of $ \mathfrak{A}(\mathcal{G}|_U)$ in $ \mathfrak{A}(\mathcal{G})$.	To this end let $\mathcal{M}$ be a multisection of degree $5$ and let $x_1 \in F_{1,1}$ be arbitrary. The points $x_i:=r(F_{1,i}x_1)$ are contained in one $\mathcal{G}$-orbit. By the assumption of minimality, there exist elements $\rho_i,\sigma_i \in \mathcal{G}$ for all $i \in \{1,\dots,5\}$ with $r(\rho_i)=s(\sigma_i)=x_i$ such $r(\sigma_i) \in U$ and since all orbits are infinite the elements $x_1, \dots x_5, s(\rho_1), \dots, s(\rho_5), r(\sigma_1),\dots, r(\sigma_5)$ can be assumed pairwise different. By Lemma~\ref{lem: mult} there exist families of compact, open slices $\{R_i\}_{i=1}^5$ and $\{S_i\}_{i=1}^5$  such that $\rho_i \in R_i$, $\sigma_i \in S_i$, $r(R_i)\subseteq U$ and the sets $s(R_1), \dots ,s(R_5),r(R_1), \dots ,r(R_5),r(S_1), \dots ,r(S_5)$ are pairwise disjoint, and there exists a multisection $\mathcal{M}': =\{M'_{i,j}\}_{i,j=1}^5$ such that $M'_{i,j} \subseteq M_{i,j}$ and $M'_{i,i}=r(R_i)=s(S_i)$. Then the collection $\mathcal{U}:=\{S_jM'_{i,j}S_i^{-1}\}_{i,j=1}^5$ forms a multisection of which the domain is contained in $U$. The sets $C_i:= R_i \cup S_i \cup (S_iR_i)^{-1}$ are compact, open slices such that $s(C_i)=r(C_i)$ and the sets $s(C_1), \dots ,s(C_5)$ are pairwise disjoint.	The compact, open slice $A:=\bigcup_{i=1}^5 C_i \cup \big(\mathcal{G}^{(0)} \setminus \bigcup_{i=1}^5 s(C_i)\big)$ is an element of $\mathfrak{T}(\mathcal{G})$ and in particular of $\mathfrak{A}(\mathcal{G})$ such that $\mathcal{U} = A \mathcal{M}' A^{-1}$. Thus $\mathfrak{A}(\mathcal{M}') = A^{-1} \mathfrak{A}(\mathcal{U}) A \leq A^{-1} \mathfrak{A}(\mathcal{G}|_U) A$. Since the domain of $\mathcal{M}$ is compact, there exists a finite collection $\mathcal{M}^k$ of restrictions of $\mathcal{M}$ constructed in the same way as $\mathcal{M}'$ and satisfying the same properties. Applying Proposition~\ref{prop: multcov} shows $\mathfrak{A}(\mathcal{M}) \leq \langle \mathfrak{A}(\mathcal{G}|_U) \rangle ^{\mathfrak{A}(\mathcal{G})}$ and the proof is finished.
	\end{proof}
	
	Theorem~\ref{thm: simpl}, Theorem~\ref{thm: almsimpl} and Theorem~\ref{thm: purely infinite, minimal groupoids, then commutat of topfgrp is simple} imply: 
	
	\begin{cor}\label{cor: A=D}
		Let $\mathcal{G}$ be an minimal, effective, \'etale Cantor groupoid. If $G$ is either almost finite or purely finite, then $\mathfrak{A}(\mathcal{G}) =\mathfrak{T}(\mathcal{G})'$ holds.
	\end{cor}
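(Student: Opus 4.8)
The plan is to sandwich $\mathfrak{T}(\mathcal{G})'$ by proving the two inclusions $\mathfrak{A}(\mathcal{G}) \le \mathfrak{T}(\mathcal{G})'$ and $\mathfrak{T}(\mathcal{G})' \le \mathfrak{A}(\mathcal{G})$, both of which reduce to bookkeeping once the deep simplicity statements are at hand. The first inclusion is already recorded in Remark~\ref{rem: SA}(ii): every generator of $\mathfrak{A}(\mathcal{G})$ is a $3$-cycle $\mathcal{M}_\pi$ for some multisection $\mathcal{M}$, and a $3$-cycle is a product of commutators, so $\mathfrak{A}(\mathcal{G}) \le \mathfrak{T}(\mathcal{G})'$ with no further work. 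Thus the whole content of the corollary lies in the reverse inclusion.

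For the reverse inclusion I would invoke the relevant simplicity theorem according to the hypothesis: in the almost finite case Theorem~\ref{thm: almsimpl}, and in the purely infinite case Theorem~\ref{thm: purely infinite, minimal groupoids, then commutat of topfgrp is simple}. Each asserts that any \emph{non-trivial} subgroup of $\mathfrak{T}(\mathcal{G})$ normalized by $\mathfrak{T}(\mathcal{G})'$ must already contain $\mathfrak{T}(\mathcal{G})'$. I would apply this with the test subgroup $H := \mathfrak{A}(\mathcal{G})$, for which two small verifications suffice. First, $\mathfrak{A}(\mathcal{G})$ is normalized by $\mathfrak{T}(\mathcal{G})'$: by Remark~\ref{rem: SA}(i) the alternating full group is in fact normal in all of $\mathfrak{T}(\mathcal{G})$ (conjugates of multisections of degree $d$ are again multisections of degree $d$), and since $\mathfrak{T}(\mathcal{G})' \le \mathfrak{T}(\mathcal{G})$ it is a fortiori normalized by the commutator subgroup. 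Second, $\mathfrak{A}(\mathcal{G})$ is non-trivial: because $\mathcal{G}^{(0)}$ is a Cantor space it is perfect, and minimality forces every $\mathcal{G}$-orbit to be dense, hence infinite; therefore Lemma~\ref{lem: mult} produces multisections of degree $\ge 3$, whose associated $3$-cycles are non-identity elements of $\mathfrak{A}(\mathcal{G})$. With these two points in place the cited theorem yields $\mathfrak{T}(\mathcal{G})' \le \mathfrak{A}(\mathcal{G})$, and combining with the first inclusion gives $\mathfrak{A}(\mathcal{G}) = \mathfrak{T}(\mathcal{G})'$ uniformly in both cases.

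The only genuine obstacle is the reverse inclusion, and that obstacle has already been overcome inside Theorems~\ref{thm: almsimpl} and~\ref{thm: purely infinite, minimal groupoids, then commutat of topfgrp is simple}; the present argument merely supplies an admissible test subgroup to feed into them. Equivalently, one may phrase the deduction in purely group-theoretic terms: those theorems say $\mathfrak{T}(\mathcal{G})'$ is \emph{simple}, and $\mathfrak{A}(\mathcal{G})$ is a non-trivial subgroup that is normal in $\mathfrak{T}(\mathcal{G})$ and contained in $\mathfrak{T}(\mathcal{G})'$, hence a non-trivial normal subgroup of the simple group $\mathfrak{T}(\mathcal{G})'$, which forces $\mathfrak{A}(\mathcal{G}) = \mathfrak{T}(\mathcal{G})'$. (Theorem~\ref{thm: simpl} gives the complementary simplicity of $\mathfrak{A}(\mathcal{G})$ itself, which is consistent with but not needed for this identification.) I expect no step to require more than the two routine checks of non-triviality and normality above.
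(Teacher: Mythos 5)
Your proof is correct and follows essentially the route the paper intends: the inclusion $\mathfrak{A}(\mathcal{G})\le\mathfrak{T}(\mathcal{G})'$ from Remark~\ref{rem: SA}(ii), and the reverse inclusion by feeding the non-trivial, normal (hence $\mathfrak{T}(\mathcal{G})'$-normalized) subgroup $\mathfrak{A}(\mathcal{G})$ into Theorem~\ref{thm: almsimpl} resp.\ Theorem~\ref{thm: purely infinite, minimal groupoids, then commutat of topfgrp is simple}. Your verification of non-triviality via infinite orbits and Lemma~\ref{lem: mult} is a correct filling-in of a detail the paper leaves implicit.
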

	
	Corollary~\ref{cor: A=D} allowed Nekrashevych to proof Conjecture~\ref{conj: AH} in the case of minimal, almost finite, principal, \'etale Cantor groupoids: For principal, \'etale Cantor groupoids, every element $\gamma \in \mathfrak{T}(\mathcal{G})$ is of finite order \emph{if and only if} $\gamma \in \mathfrak{S}(\mathcal{G})$ holds, thus Theorem~\ref{thm: ker in fin} and Remark~\ref{rem: SA}(ii) imply $\mathfrak{S}(\mathcal{G})=\mathfrak{T}(\mathcal{G})_0$. If $\mathcal{G}$ is minimal and almost finite, then $\mathfrak{A}(\mathcal{G}) =\mathfrak{T}(\mathcal{G})'$ by Corollary~\ref{cor: A=D}. This implies the assignement of $j(\mathbf{1}_{s(S)})$ as the class of $S \cup S^{-1} \cup (\mathcal{G}^{(0)} \setminus(s(S)\cup r(S))$ in $ \mathfrak{T}(\mathcal{G})^{\mathrm{ab}}$ from Conjecture~\ref{conj: AH} has its range in $\mathfrak{S}(\mathcal{G})/\mathfrak{A}(\mathcal{G})$ and one even has:
	\begin{thm}[\cite{nek15}, Theorem~7.2]\label{thm: j wd sur}
		Let $\mathcal{G}$ be a minimal, almost finite, principal, \'etale Cantor groupoid. Then $j$ is a well-defined grouphomomorphism and $j(H_0(\mathcal{G})\otimes (\mathbb{Z}/2\mathbb{Z}))=\mathfrak{S}(\mathcal{G})/\mathfrak{A}(\mathcal{G})$.
	\end{thm}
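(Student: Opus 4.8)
The plan is to exploit the two identifications established in the paragraph preceding the statement, namely $\mathfrak{S}(\mathcal{G}) = \mathfrak{T}(\mathcal{G})_0$ and $\mathfrak{A}(\mathcal{G}) = \mathfrak{T}(\mathcal{G})'$ (the latter by Corollary~\ref{cor: A=D}). Consequently the target $\mathfrak{S}(\mathcal{G})/\mathfrak{A}(\mathcal{G}) = \mathfrak{T}(\mathcal{G})_0/\mathfrak{T}(\mathcal{G})'$ is a subgroup of the abelianization $\mathfrak{T}(\mathcal{G})^{\mathrm{ab}}$, hence \emph{abelian}; moreover $\mathfrak{S}(\mathcal{G})$ is generated by the involutions $T_B$ with $s(B)\cap r(B)=\emptyset$, since these are exactly the nontrivial elements of $\mathfrak{S}(\mathcal{M})$ for degree-$2$ multisections $\mathcal{M}$ (Definition~\ref{defi: symmetr, alternating full group}). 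Using the presentation of $H_0(\mathcal{G})\otimes(\mathbb{Z}/2\mathbb{Z})\cong\tilde{H}(\mathcal{G})$ from Proposition~\ref{prop: 2-homo}, I would define $j$ on the generators by $j(\mathbf{1}_U):=[\,\prod_i T_{B_i}\,]$, where $\{B_i\}$ is a family of compact open slices provided by Lemma~\ref{lem: existence of pencils} with $U=\bigsqcup_i s(B_i)$ and $r(B_i)\subseteq \mathcal{G}^{(0)}\setminus U$. The work then splits into: (i) showing this class is independent of $\{B_i\}$; (ii) checking the three defining relations; (iii) surjectivity.

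The technical heart is a \emph{same-source invariance}: if $B,B'$ are compact open slices with $s(B)=s(B')=V$, $s(B)\cap r(B)=\emptyset$ and $s(B')\cap r(B')=\emptyset$, then $[T_B]=[T_{B'}]$ in $\mathfrak{S}(\mathcal{G})/\mathfrak{A}(\mathcal{G})$. First assume $V$, $r(B)$, $r(B')$ are pairwise disjoint. Then $V$, $B$, $B'$ and $B'B^{-1}$ assemble (via Lemma~\ref{lem: mult}) into a multisection $\mathcal{M}$ of degree $3$, and a direct computation shows that $T_BT_{B'}$ acts on $V\cup r(B)\cup r(B')$ as a $3$-cycle and trivially elsewhere; hence $T_BT_{B'}\in\mathfrak{A}(\mathcal{M})\subseteq\mathfrak{A}(\mathcal{G})$, so $[T_B]=[T_{B'}]$, both being involutions. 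The general case reduces to this one: by minimality (Lemma~\ref{lem: existence of pencils}) choose a family with source $V$ and range inside the nonempty clopen set $\mathcal{G}^{(0)}\setminus(V\cup r(B)\cup r(B'))$, split $V$ along this family, and compare $T_B$ and $T_{B'}$ piecewise through this auxiliary family, using that slices with pairwise disjoint supports yield commuting involutions.

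Granting this lemma, independence in (i) follows by passing to the common refinement of the two source-partitions of $U$ and comparing piece by piece. Relation (a), $\mathbf{1}_U+\mathbf{1}_U=0$, is immediate since each $T_{B_i}$ is an involution and the target is abelian. The additivity relation (b) follows by choosing, for each $U_i$, slices with ranges in $\mathcal{G}^{(0)}\setminus U$ and taking their union as the family for $U=\bigsqcup_i U_i$, invoking (i). For relation (c), $\mathbf{1}_{s(B)}=\mathbf{1}_{r(B)}$, the case $s(B)\cap r(B)=\emptyset$ is clear because $T_{B^{-1}}=T_B$ realizes both $j(\mathbf{1}_{s(B)})$ and $j(\mathbf{1}_{r(B)})$; the general case is handled by decomposing $B=(B\cap\mathcal{G}^{(0)})\sqcup\bigsqcup_k B_k$ into finitely many compact open subslices with $s(B_k)\cap r(B_k)=\emptyset$ — here \emph{principality} is used, since $s(g)=r(g)$ forces $g\in\mathcal{G}^{(0)}$, so the diagonal part of $B$ is clopen and separates off — and then applying (b) together with the disjoint case. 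This produces a well-defined homomorphism $j\colon H_0(\mathcal{G})\otimes(\mathbb{Z}/2\mathbb{Z})\to \mathfrak{S}(\mathcal{G})/\mathfrak{A}(\mathcal{G})$.

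Finally, surjectivity is essentially a by-product: each generating involution $T_B$ of $\mathfrak{S}(\mathcal{G})$ (with $s(B)\cap r(B)=\emptyset$) equals $j(\mathbf{1}_{s(B)})$ via the one-element family $\{B\}$, so the classes $[T_B]$ lie in the image of $j$ and generate $\mathfrak{S}(\mathcal{G})/\mathfrak{A}(\mathcal{G})$. I expect the main obstacle to be the careful bookkeeping in the same-source invariance lemma and in relation (c): both require reducing arbitrary slices to ones with disjoint source and range and then tracking how the resulting involutions compose into even permutations, which is exactly where minimality (to create room for auxiliary slices) and principality (to control the diagonal part of a slice) enter decisively.
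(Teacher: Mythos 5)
The paper does not actually prove this statement; it cites \cite{nek15} and supplies only the definition of $j$ on generators (the paragraph after Conjecture~\ref{conj: AH}) and the two identifications $\mathfrak{S}(\mathcal{G})=\mathfrak{T}(\mathcal{G})_0$ and $\mathfrak{A}(\mathcal{G})=\mathfrak{T}(\mathcal{G})'$ in the paragraph preceding the theorem. Your proposal is consistent with that setup and, as far as I can check it, correct: the degree-$3$ multisection $\{V,B,B',B'B^{-1},\dots\}$ does satisfy $M_{j,k}M_{i,j}=M_{i,k}$, the product $T_BT_{B'}$ is exactly $\mathcal{M}_{(132)}\in\mathfrak{A}(\mathcal{M})$, the abelianness of $\mathfrak{S}(\mathcal{G})/\mathfrak{A}(\mathcal{G})$ (via its embedding into $\mathfrak{T}(\mathcal{G})^{\mathrm{ab}}$, which is where almost finiteness and minimality enter through Theorem~\ref{thm: ker in fin} and Corollary~\ref{cor: A=D}) legitimizes the piecewise rearrangements, and principality is correctly identified as what makes $B\cap\mathcal{G}^{(0)}$ clopen in $B$ so that relation (c) reduces to the disjoint case via compactness and relation (b).

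The one step that can fail as written is the reduction to the disjoint-range case in your same-source lemma: you invoke Lemma~\ref{lem: existence of pencils} for the clopen set $\mathcal{G}^{(0)}\setminus(V\cup r(B)\cup r(B'))$, but this set may be empty (e.g.\ when $V$ and $r(B)=r(B')$ partition $\mathcal{G}^{(0)}$). The fix is to reverse the order of operations: first split $V$ into at least two proper clopen pieces $V_k$, note that $T_B=\prod_k T_{BV_k}$ because the transpositions $T_{BV_k}$ have pairwise disjoint supports, and observe that for each $k$ the set $V\setminus V_k$ is a nonempty clopen set disjoint from $V_k\cup r(BV_k)\cup r(B'V_k)$; only then apply Lemma~\ref{lem: existence of pencils} with target $V\setminus V_k$ (refining $V_k$ further if needed) to produce the auxiliary family through which $[T_{BV_{k}}]$ and $[T_{B'V_{k}}]$ are compared. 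A second, cosmetic point: the degree-$3$ multisection in the disjoint case is written down explicitly and needs no appeal to Lemma~\ref{lem: mult}, which constructs multisections through prescribed orbit points rather than from given slices. With these adjustments the argument goes through.
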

	
	The following theorem, a direct proof of which can be found in Theorem~3.6 from \cite{mat16}, then follows from Theorem~\ref{thm: indexsurj} and Theorem~\ref{thm: j wd sur}:				
	\begin{thm}[\cite{nek15}, Proposition~7.5]
		Let $\mathcal{G}$ be a minimal, almost finite, effective, \'etale Cantor groupoid. Then the following sequence is exact:
		\begin{equation*}
		H_0(\mathcal{G})\otimes (\mathbb{Z}/2\mathbb{Z}) \overset{j}{\longrightarrow}\mathfrak{T}(\mathcal{G})^{\mathrm{ab}} \overset{I}{\longrightarrow} H_1(\mathcal{G}) \longrightarrow 0
		\end{equation*}
	\end{thm}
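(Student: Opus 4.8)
The plan is to verify exactness of the three-term sequence spot by spot, separating the complex property, surjectivity of the right-hand map, and exactness in the middle. First I would note that the map written $I$ in the statement is really the homomorphism $\overline{I}$ induced on $\mathfrak{T}(\mathcal{G})^{\mathrm{ab}}$: since $\mathfrak{T}(\mathcal{G})' \leq \mathfrak{T}(\mathcal{G})_0 = \ker I$, the index map factors through the abelianization, and its kernel is exactly $\mathfrak{T}(\mathcal{G})_0/\mathfrak{T}(\mathcal{G})'$. Surjectivity of $\overline{I}$ onto $H_1(\mathcal{G})$ is then immediate from Theorem~\ref{thm: indexsurj}, which provides surjectivity of $I$ in the almost finite case; precomposing with the surjective quotient map $\mathfrak{T}(\mathcal{G}) \to \mathfrak{T}(\mathcal{G})^{\mathrm{ab}}$ yields exactness at $H_1(\mathcal{G})$.

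Next I would check that the sequence is a complex, i.e.\ $\operatorname{Im} j \subseteq \ker \overline{I}$. By the presentation of $H_0(\mathcal{G}) \otimes (\mathbb{Z}/2\mathbb{Z})$ from Proposition~\ref{prop: 2-homo} it suffices to treat the generators $\mathbf{1}_U$ for clopen $U \subseteq \mathcal{G}^{(0)}$, and $j(\mathbf{1}_U)$ is by construction the class $[\prod_i T_{B_i}]$ of a product of transpositions with $s(B_i) \cap r(B_i) = \emptyset$. Corollary~\ref{cor: transpositions are in the index kernel} places each $T_{B_i}$ in $\mathfrak{T}(\mathcal{G})_0 = \ker I$, so the product lies in $\ker I$ and its class lies in $\ker \overline{I}$, giving $\overline{I} \circ j = 0$.

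The genuine obstacle is exactness in the middle, $\ker \overline{I} \subseteq \operatorname{Im} j$. Using $\ker \overline{I} = \mathfrak{T}(\mathcal{G})_0/\mathfrak{T}(\mathcal{G})'$, I would reduce this to the two identifications $\mathfrak{T}(\mathcal{G})' = \mathfrak{A}(\mathcal{G})$ and $\mathfrak{T}(\mathcal{G})_0 = \mathfrak{S}(\mathcal{G})$, after which Theorem~\ref{thm: j wd sur} computes
\begin{equation*}
\operatorname{Im} j = j\big(H_0(\mathcal{G}) \otimes (\mathbb{Z}/2\mathbb{Z})\big) = \mathfrak{S}(\mathcal{G})/\mathfrak{A}(\mathcal{G}) = \mathfrak{T}(\mathcal{G})_0/\mathfrak{T}(\mathcal{G})' = \ker \overline{I}.
\end{equation*}
The equality $\mathfrak{T}(\mathcal{G})' = \mathfrak{A}(\mathcal{G})$ is supplied by Corollary~\ref{cor: A=D}, valid under the minimal, effective, almost finite hypothesis. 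The inclusion $\mathfrak{S}(\mathcal{G}) \leq \mathfrak{T}(\mathcal{G})_0$ is Remark~\ref{rem: SA}(ii); the reverse inclusion $\mathfrak{T}(\mathcal{G})_0 \leq \mathfrak{S}(\mathcal{G})$ is the crux. Here Theorem~\ref{thm: ker in fin} writes every element of $\mathfrak{T}(\mathcal{G})_0$ as a product of finitely many finite-order elements, and in the principal setting these are precisely the elementary elements generating $\mathfrak{S}(\mathcal{G})$, so the identification closes.

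The delicate point is promoting this last step from the principal to the effective hypothesis, since under mere effectiveness finite order no longer forces membership in $\mathfrak{S}(\mathcal{G})$; this is exactly what Matui's direct argument (Theorem~3.6 of \cite{mat16}) handles. Accordingly I would either invoke that result to obtain the effective analogue of $\mathfrak{T}(\mathcal{G})_0 = \mathfrak{S}(\mathcal{G})$, or isolate and prove that analogue first, and then assemble surjectivity of $\overline{I}$, the complex property $\overline{I}\circ j = 0$, and the middle equality $\operatorname{Im} j = \ker \overline{I}$ into the asserted exactness. I expect essentially all the difficulty to reside in that single identification of the index kernel with the symmetric full group.
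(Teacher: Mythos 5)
Your proposal follows essentially the same route as the paper: surjectivity at $H_1(\mathcal{G})$ from Theorem~\ref{thm: indexsurj}, the complex property from Corollary~\ref{cor: transpositions are in the index kernel}, and exactness in the middle from the identifications $\mathfrak{A}(\mathcal{G})=\mathfrak{T}(\mathcal{G})'$ (Corollary~\ref{cor: A=D}) and $\mathfrak{S}(\mathcal{G})=\mathfrak{T}(\mathcal{G})_0$ combined with Theorem~\ref{thm: j wd sur}. You are also right to flag the principal-versus-effective mismatch, which the paper resolves exactly as you suggest, by deferring to Matui's direct argument (Theorem~3.6 of \cite{mat16}).
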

	
	\subsection{Finite generation of alternating full groups}\label{subs: finite generation of alternating full groups}
	
	In Subsection~\ref{subs: compact generation and expansive groupoids} expansive groupoids where descibed as generalizations of transformation groupoids associated with shifts. Nekrachevych obtained for these a generalization of Theorem~\ref{thm: fingen}.
	
	\begin{lem}[\cite{nek17}, Lemma 3.5]\label{lem: intersect}
		Let $\mathfrak{A}_d$ be the alternating group of degree $d$. Let $X_1,X_2$ be sets with $3 \leq |X_i| \leq \infty$ and $|X_1 \cap X_2|=1$. Then the conjugate closure of $[\mathfrak{A}_{X_1}, \mathfrak{A}_{X_2}]$ in $\langle \mathfrak{A}_{X_1} \cup \mathfrak{A}_{X_2} \rangle$ equals $\mathfrak{A}_{X_1 \cup X_2}$.

	\end{lem}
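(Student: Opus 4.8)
The plan is to identify the ambient group and reduce the statement to a simplicity argument. Write $X_1 \cap X_2 = \{p\}$ and set $A := X_1 \setminus \{p\}$, $B := X_2 \setminus \{p\}$ and $Y := X_1 \cup X_2 = A \sqcup \{p\} \sqcup B$; since $3 \le |X_i|$ we have $|A|,|B| \ge 2$ and hence $|Y| \ge 5$. Here $\mathfrak{A}_{X_1}$ and $\mathfrak{A}_{X_2}$ are read as the (finitary) alternating groups of $X_1$ resp. $X_2$, acting on $Y$ and fixing everything outside, so both embed into $\mathfrak{A}_{X_1\cup X_2}$. Consequently $G := \langle \mathfrak{A}_{X_1} \cup \mathfrak{A}_{X_2}\rangle \le \mathfrak{A}_{X_1\cup X_2}$, and the conjugate closure $N := \langle [\mathfrak{A}_{X_1},\mathfrak{A}_{X_2}]\rangle^{G}$ (its normal closure in $G$) is a normal subgroup of $G$ contained in $\mathfrak{A}_{X_1\cup X_2}$. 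Since $|Y| \ge 5$, the finitary alternating group $\mathfrak{A}_{X_1\cup X_2}$ is simple. The lemma therefore follows from two claims, $G = \mathfrak{A}_{X_1\cup X_2}$ and $N \ne \{1\}$: indeed $N$ is then a nontrivial normal subgroup of the simple group $G = \mathfrak{A}_{X_1\cup X_2}$, whence $N = \mathfrak{A}_{X_1\cup X_2}$, which is the assertion.

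For the first claim I would use that $\mathfrak{A}_{X_1\cup X_2}$ is generated by the $3$-cycles whose support contains $p$; the identity $(p,x,y)(p,y,z) = (x,y,z)$ shows that every $3$-cycle is a product of $3$-cycles through $p$, and $3$-cycles generate the finitary alternating group. It thus suffices to show $(p,x,y) \in G$ for all distinct $x,y \in Y \setminus \{p\}$. If $\{x,y\} \subseteq X_1$ or $\{x,y\} \subseteq X_2$ this is immediate from $\mathfrak{A}_{X_1},\mathfrak{A}_{X_2} \le G$. In the mixed case $x \in A$, $y \in B$, I would first manufacture a ``crossing'' $3$-cycle $(x,y,b')$ for some $b' \in B \setminus \{y\}$ (which exists as $|B| \ge 2$) as a conjugate $\mu\,(p,y,b')\,\mu^{-1}$, where $\mu \in \mathfrak{A}_{X_1}$ is chosen with $\mu(p) = x$ (possible since $\mathfrak{A}_{X_1}$ is transitive on $X_1$) and $\mu$ fixes $y,b' \in B$ automatically; then the computation $(x,y,b')(p,b',y) = (p,x,y)$ places $(p,x,y)$ in $G$. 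This yields $G = \mathfrak{A}_{X_1\cup X_2}$.

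For the second claim it is enough to exhibit one non-commuting pair. Taking $\alpha = (p,a_1,a_2) \in \mathfrak{A}_{X_1}$ and $\beta = (p,b_1,b_2) \in \mathfrak{A}_{X_2}$ (again using $|A|,|B| \ge 2$), one computes $\alpha\beta(p) = b_1$ whereas $\beta\alpha(p) = a_1$, so $[\alpha,\beta] \ne 1$ and therefore $N \ne \{1\}$. Combining the two claims with simplicity of $\mathfrak{A}_{X_1\cup X_2}$ finishes the proof.

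The cycle identities above are routine to check, so the only step that genuinely requires care is the mixed case of the first claim, namely the construction of crossing $3$-cycles linking $A$ to $B$; this is where the hypothesis $|X_1\cap X_2|=1$ is used essentially. The remaining delicate point is merely bookkeeping: one must keep track that all permutations involved are finitary and even, which is exactly what allows me to invoke simplicity of $\mathfrak{A}_{X_1\cup X_2}$ uniformly, regardless of whether $X_1$ and $X_2$ are finite or infinite.
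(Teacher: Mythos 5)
Your proof is correct, and the first half (showing $\langle \mathfrak{A}_{X_1} \cup \mathfrak{A}_{X_2}\rangle = \mathfrak{A}_{X_1\cup X_2}$ by reducing to $3$-cycles through the common point and manufacturing the ``crossing'' cycles by conjugation) is essentially the same computation as in the paper. Where you genuinely diverge is the second half: the paper never invokes simplicity of $\mathfrak{A}_{X_1\cup X_2}$; instead it verifies directly that the cycles $(x_1,s,x_2)$ with $x_1 \in X_1$, $x_2 \in X_2$ arise as commutators $[(x_2,s,x_2'),(x_1,s,x_1')]$, hence lie in $[\mathfrak{A}_{X_1},\mathfrak{A}_{X_2}]$, and that conjugating these inside $\mathfrak{A}_{X_1\cup X_2}$ produces every $3$-cycle, so the normal closure is everything by explicit generation. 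You replace this with the observation that the normal closure is a nontrivial normal subgroup of the simple group $\mathfrak{A}_{X_1\cup X_2}$ (using $|X_1\cup X_2|\ge 5$ and nontriviality of a single commutator $[(p,a_1,a_2),(p,b_1,b_2)]$). Your route is shorter and cleanly separates the two claims, at the cost of importing simplicity of the (finitary) alternating group as a black box; the paper's route is more self-contained and, being purely by exhibiting generators, is the version that transfers verbatim to the groupoid setting of Lemma~\ref{lem: multiintersect}, where one wants to know \emph{which} elements of $\mathfrak{A}(\tilde{\mathcal{B}})$ are realized as conjugates of commutators rather than merely that the normal closure is large. Both arguments are valid as written.
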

	
	\begin{proof}
		Denote by $\{s\}:=X_1 \cap X_2$. The group $\mathfrak{A}_{X_1 \cup X_2}$ is generated by $3$-cycles. If the $3$ elements permutated by a $3$-cycle are all contained in one of the sets $X_i$, then the corresponding $3$-cycle is trivially in $\mathfrak{A}_{X_1 \cup X_2}$. A cycle containing $s$ is also contained in $\mathfrak{A}_{X_1 \cup X_2}$, since $(x_1,s,x_2)$ with $x_1 \in X_1,x_2 \in X_2$ is $[(x_2,s,x'_2),(x_1,s,x'_1)]$ for $x'_1 \in X_1,x'_2 \in X_2$. A cycle $(x_1,x'_1,x_2)$ with $x_1,x'_1 \in X_1 \setminus \{s\}$ and $x_2 \in X_2 \setminus \{s\}$ is just the conjugate of the cycle $(x_1,x'_1,s) \in \mathfrak{A}_{X_1}$ by the cycle $(x_2,x'_2,s)$. Thus $\langle \mathfrak{A}_{X_1} \cup \mathfrak{A}_{X_2} \rangle = \mathfrak{A}_{X_1 \cup X_2}$.
		Cycles of the form $(x_1,s,x_2)$ with $x_1 \in X_1,x_2 \in X_2$ are in $[\mathfrak{A}_{X_1} , \mathfrak{A}_{X_2}]$ and by conjugation with elements in $\mathfrak{A}_{X_1 \cup X_2}$ all $3$-cycles can be obtained.
	\end{proof}
	
	\begin{lem} [\cite{nek17}, Proposition 3.4]\label{lem: multiintersect}
		Let $\mathcal{G}$ be an \'etale Cantor groupoid and $\mathcal{B}^1, \mathcal{B}^2$ be multisections with degrees $d_1$ resp. $d_2$ greater than $3$ such that the intersection of their domains equals $I:=B_{1,1}^1 \cap B_{1,1}^2$. Denote by $\tilde{\mathcal{B}}$ the multisection with domain equal to the domain of $\mathcal{B}^1|_I \cup \mathcal{B}^2|_I$ and its collection of slices given by compositions of slices in $\mathcal{B}^1|_I \cup \mathcal{B}^2|_I$.
		Then $\mathfrak{A}(\tilde{\mathcal{B}}) \leq \langle \mathfrak{A}(\mathcal{B}^1) \cup \mathfrak{A}(\mathcal{B}^2) \rangle$.
	\end{lem}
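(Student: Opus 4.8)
The plan is to reduce the statement to the purely group-theoretic Lemma~\ref{lem: intersect} by exhibiting $\mathfrak{A}(\tilde{\mathcal{B}})$ as a copy of an alternating group $\mathfrak{A}_{X_1\cup X_2}$ on which $\mathfrak{A}(\mathcal{B}^1)$ and $\mathfrak{A}(\mathcal{B}^2)$ restrict to the two subgroups $\mathfrak{A}_{X_1}$ and $\mathfrak{A}_{X_2}$. First I would set up the combinatorics of the sheets. Writing $I^1_i:=r(B^1_{1,i}I)$ and $I^2_j:=r(B^2_{1,j}I)$ for the ``aligned copies'' of $I$ sitting inside the diagonal components of $\mathcal{B}^1$ and $\mathcal{B}^2$, the restrictions $\mathcal{B}^1|_I,\mathcal{B}^2|_I$ are multisections of degrees $d_1,d_2$ with diagonal components $\{I^1_i\}$ resp.\ $\{I^2_j\}$. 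Since $B^1_{1,1},B^2_{1,1}\subseteq\mathcal{G}^{(0)}$ one has $I^1_1=I^2_1=I$, and because the domains of $\mathcal{B}^1,\mathcal{B}^2$ meet only in $I$ (and distinct diagonal components of a single multisection are disjoint), all remaining sheets $I^1_i$ ($i\geq 2$), $I^2_j$ ($j\geq 2$) are pairwise disjoint. Thus the sheets of $\tilde{\mathcal{B}}$ are indexed by $X:=X_1\cup X_2$, with $X_1=\{1,\dots,d_1\}$ and $X_2=\{1,\dots,d_2\}$ glued along the single index $1$, so that $\mathfrak{A}(\tilde{\mathcal{B}})\cong\mathfrak{A}_X$ and, under Definition~\ref{defi: symmetr, alternating full group}, $\mathfrak{A}(\mathcal{B}^1|_I)$ and $\mathfrak{A}(\mathcal{B}^2|_I)$ are identified with $\mathfrak{A}_{X_1},\mathfrak{A}_{X_2}\leq\mathfrak{A}_X$. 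The hypothesis $d_1,d_2>3$ gives $|X_i|\geq 3$ and $|X_1\cap X_2|=1$, so Lemma~\ref{lem: intersect} says $\mathfrak{A}_X$ is the $\langle\mathfrak{A}_{X_1}\cup\mathfrak{A}_{X_2}\rangle$-conjugate closure of $[\mathfrak{A}_{X_1},\mathfrak{A}_{X_2}]$; it therefore suffices to realise this conjugate closure inside $\langle\mathfrak{A}(\mathcal{B}^1)\cup\mathfrak{A}(\mathcal{B}^2)\rangle$.

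The heart of the argument, and the step I expect to be the main obstacle, is an ``almost disjoint support'' computation. Fix $\pi=\mathcal{B}^1_{\bar\pi}\in\mathfrak{A}(\mathcal{B}^1)$ and $\rho=\mathcal{B}^2_{\bar\rho}\in\mathfrak{A}(\mathcal{B}^2)$ acting on the \emph{full} diagonal components, where $\bar\pi,\bar\rho$ are the induced even permutations. The domains of the homeomorphisms of $\mathcal{G}^{(0)}$ induced by $\pi$ and $\rho$ overlap only in $I$, since the two multisection domains meet only in $I=B^1_{1,1}\cap B^2_{1,1}$. I would show that consequently $[\pi,\rho]=\pi\rho\pi^{-1}\rho^{-1}$ fixes $\mathcal{G}^{(0)}\setminus\bigl(\bigcup_i I^1_i\cup\bigcup_j I^2_j\bigr)$ and acts on the aligned copies exactly as the group commutator $[\bar\pi,\bar\rho]\in\mathfrak{A}_X$. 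Tracking a point $x$ in a component off the aligned copies through the four factors shows that the two outer applications of $\pi$ (or $\rho$) cancel because the intermediate point never enters the other multisection's domain; on the aligned copies one simply recovers $[\bar\pi,\bar\rho]$. At the level of slices this is the identity $[\mathcal{B}^1_{\bar\pi},\mathcal{B}^2_{\bar\rho}]=\tilde{\mathcal{B}}_{[\bar\pi,\bar\rho]}$, which I would verify from the composition relations $M_{j,k}M_{i,j}=M_{i,k}$ together with the fact that off $I$ the slices of $\mathcal{B}^1$ and $\mathcal{B}^2$ are supported over disjoint clopen sets and hence commute (this is what makes the identity hold as subsets of $\mathcal{G}$, not merely as homeomorphisms, so that effectiveness is not needed). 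This yields $[\mathfrak{A}(\mathcal{B}^1),\mathfrak{A}(\mathcal{B}^2)]\subseteq\mathfrak{A}(\tilde{\mathcal{B}})$, inducing precisely $[\mathfrak{A}_{X_1},\mathfrak{A}_{X_2}]$ on $X$.

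Finally I would check conjugation-invariance. For $\pi\in\mathfrak{A}(\mathcal{B}^1)$ and any $\sigma\in\mathfrak{A}(\tilde{\mathcal{B}})$, the conjugate $\pi\sigma\pi^{-1}$ again fixes the complement of the aligned copies (the same cancellation argument applies, as $\sigma$ is already trivial there) and induces $\bar\pi\,\bar\sigma\,\bar\pi^{-1}$ on $X$; the analogous statement holds for $\rho\in\mathfrak{A}(\mathcal{B}^2)$. Hence conjugation by $\langle\mathfrak{A}(\mathcal{B}^1)\cup\mathfrak{A}(\mathcal{B}^2)\rangle$ preserves $\mathfrak{A}(\tilde{\mathcal{B}})$ and, under $\mathfrak{A}(\tilde{\mathcal{B}})\cong\mathfrak{A}_X$, realises exactly the conjugation action of $\langle\mathfrak{A}_{X_1}\cup\mathfrak{A}_{X_2}\rangle$. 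Combining this with the previous paragraph, the $\langle\mathfrak{A}(\mathcal{B}^1)\cup\mathfrak{A}(\mathcal{B}^2)\rangle$-conjugate closure of $[\mathfrak{A}(\mathcal{B}^1),\mathfrak{A}(\mathcal{B}^2)]$ is contained in $\mathfrak{A}(\tilde{\mathcal{B}})$ and maps onto the $\langle\mathfrak{A}_{X_1}\cup\mathfrak{A}_{X_2}\rangle$-conjugate closure of $[\mathfrak{A}_{X_1},\mathfrak{A}_{X_2}]$, which is all of $\mathfrak{A}_X$ by Lemma~\ref{lem: intersect}. Since an element of $\mathfrak{A}(\tilde{\mathcal{B}})$ is determined by its induced permutation of $X$ (being trivial off the aligned copies), this forces $\mathfrak{A}(\tilde{\mathcal{B}})\subseteq\langle\mathfrak{A}(\mathcal{B}^1)\cup\mathfrak{A}(\mathcal{B}^2)\rangle$, as required.
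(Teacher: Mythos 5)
Your proof is correct and follows essentially the same route as the paper's: identify $[\mathfrak{A}(\mathcal{B}^1),\mathfrak{A}(\mathcal{B}^2)]$ with $[\mathfrak{A}(\mathcal{B}^1|_I),\mathfrak{A}(\mathcal{B}^2|_I)]\subseteq\mathfrak{A}(\tilde{\mathcal{B}})$ via the disjoint-support cancellation, observe that conjugation by $\mathfrak{A}(\mathcal{B}^1)\cup\mathfrak{A}(\mathcal{B}^2)$ acts on $\mathfrak{A}(\tilde{\mathcal{B}})$ through the restricted copies, and invoke Lemma~\ref{lem: intersect}. You spell out the support bookkeeping and the indexing by $X_1\cup X_2$ in more detail than the paper does, but the argument is the same.
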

	
	\begin{proof}
		The elements of $[\mathfrak{A}(\mathcal{B}^1), \mathfrak{A}(\mathcal{B}^2)]$ act trivially on $(B_{1,1}^1 \cup B_{1,1}^2) \setminus I$ are precisely the elements in $[\mathfrak{A}(\mathcal{B}^1|_I), \mathfrak{A}(\mathcal{B}^2|_I)]$ and accordingly are contained in  $\mathfrak{A}(\tilde{\mathcal{B}})$. Similarily conjugating elements in  $\mathfrak{A}(\tilde{\mathcal{B}})$ by elements in $v{A}(\mathcal{B}^1) \cup \mathfrak{A}(\mathcal{B}^2)$ is the same as conjugating by the corresponding elements in $\mathfrak{A}(\mathcal{B}^1|_I) \cup \mathfrak{A}(\mathcal{B}^2|_I)$. This means the conjugate closure of $[\mathfrak{A}(\mathcal{B}^1), \mathfrak{A}(\mathcal{B}^2)]$ in $\langle \mathfrak{A}(\mathcal{B}^1) \cup \mathfrak{A}(\mathcal{B}^2) \rangle$ is contained in $\mathfrak{A}(\tilde{\mathcal{B}})$	and by applying Lemma~\ref{lem: intersect} equality follows.
	\end{proof}
	
	We will use subdivisions to ease the navigation through Nekrashevych's proof of finite generation:
	
	\begin{thm} [\cite{nek17}, §5.2]\label{thm: alternating full groups of minimal expansive groupoids are finitely generated}
		Let $\mathcal{G}$ be an expansive, \'etale Cantor groupoid such that every $\mathcal{G}$-orbit contains $5$ or more elements. Then $\mathfrak{A}(\mathcal{G})$ is finitely generated.
	\end{thm}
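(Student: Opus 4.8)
The plan is to reduce the statement to producing a \emph{finite} family $\mathcal{F}$ of degree-$5$ multisections whose alternating groups together generate $\mathfrak{A}(\mathcal{G})$; since each $\mathfrak{A}(\mathcal{M}) \cong \mathfrak{A}_5$ is a finite group, this immediately yields finite generation of $\langle\bigcup_{\mathcal{M}\in\mathcal{F}}\mathfrak{A}(\mathcal{M})\rangle$. First I would invoke Proposition~\ref{prop: SdG}: because every $\mathcal{G}$-orbit contains $5$ or more points, $\mathfrak{A}(\mathcal{G}) = \mathfrak{A}_5(\mathcal{G})$, so it suffices to capture $\mathfrak{A}(\mathcal{M})$ for \emph{every} multisection $\mathcal{M}$ of degree $5$ by a finite subfamily. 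I then fix a compact generating set $S$ together with an expansive cover $\mathcal{B} = \{B_1,\dots,B_N\}$ of $S$ by open compact slices (available by expansiveness and Remark~\ref{rem: exp}); by definition $\bigcup_n (\mathcal{B}\cup\mathcal{B}^{-1})^n$ is a basis for the topology of $\mathcal{G}$, and by Proposition~\ref{prop: expansivecover}(iv) the sources of these words separate the units of $\mathcal{G}^{(0)}$ arbitrarily finely.

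The finite family $\mathcal{F}$ would consist of the degree-$5$ multisections that can be assembled, via Lemma~\ref{lem: mult}, directly out of the single generators $B_i$ together with the finitely many clopen sets $s(B_i), r(B_i)$ --- i.e.\ the multisections of ``word length $1$'' --- of which there are only finitely many combinatorial types. The two engines driving the argument are Proposition~\ref{prop: multcov}, which shows that the alternating group of a multisection lies in the group generated by the alternating groups of any finite family of sub-multisections covering it, and Lemma~\ref{lem: multiintersect}, which shows that composing the slices of two multisections whose domains meet in a single clopen piece keeps the resulting (longer) alternating group inside the group generated by the two.

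For the core induction, given an arbitrary degree-$5$ multisection $\mathcal{M}$, I would use that $\bigcup_n(\mathcal{B}\cup\mathcal{B}^{-1})^n$ is a basis, together with compactness of the domain of $\mathcal{M}$, to cover that domain by finitely many clopen pieces on each of which every slice $M_{i,j}$ restricts to a single word $B_{i_1}^{\pm1}\cdots B_{i_m}^{\pm1}$; by Proposition~\ref{prop: multcov} this reduces the problem to multisections whose slices are single words, and I then induct on the maximal length $m$. The base case $m\le 1$ is absorbed by $\mathcal{F}$. For the inductive step a slice of length $m+1$ factors as $B_{i_1}\cdot(B_{i_2}\cdots B_{i_{m+1}})$ through the intermediate unit $s(B_{i_1}) = r(B_{i_2}\cdots B_{i_{m+1}})$, and I would realize $\mathcal{M}$ (after a suitable restriction, made possible by the fine separation of Proposition~\ref{prop: expansivecover}(iv)) as the composite of a length-$1$ multisection drawn from $\mathcal{F}$ with a length-$m$ multisection handled by the induction hypothesis, their domains overlapping in the single clopen piece required by Lemma~\ref{lem: multiintersect}, so that $\mathfrak{A}(\mathcal{M}) \le \langle\bigcup_{\mathcal{N}\in\mathcal{F}}\mathfrak{A}(\mathcal{N})\rangle$.

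The delicate part --- and the step I expect to be the main obstacle --- is the bookkeeping of domains, so that every application of Lemma~\ref{lem: multiintersect} genuinely has its two multisection-domains meeting in exactly one clopen piece, and, relatedly, the realization of restrictions of the members of $\mathcal{F}$ to arbitrarily small clopen sets. Such restrictions are not available for free (a single $\mathfrak{A}(\mathcal{M})$ is a finite group and does not contain its own restrictions), and must instead be produced by transporting members of $\mathcal{F}$ along words whose sources shrink towards points by Proposition~\ref{prop: expansivecover}(iv). The group-theoretic input that makes each composition step close up is Lemma~\ref{lem: intersect}, whereby the conjugate closure of $[\mathfrak{A}_{X_1},\mathfrak{A}_{X_2}]$ fills out $\mathfrak{A}_{X_1\cup X_2}$ when $|X_1\cap X_2|=1$. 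This scheme parallels and generalizes Matui's finite-generation theorem for minimal subshifts (Theorem~\ref{thm: fingen}), with expansiveness playing the role that the subshift structure plays there.
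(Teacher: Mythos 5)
Your overall skeleton---fix an expansive cover, build a finite family of degree-$5$ multisections out of short words in that cover, and then absorb arbitrary multisections by inducting on word length with Proposition~\ref{prop: multcov} and Lemma~\ref{lem: multiintersect}---is the same as the paper's, and you have correctly located the crux. But the proposal leaves that crux unresolved, and the proof does not go through as written. The first concrete problem is the base family: degree-$5$ multisections of ``word length $1$'' do not exist in any useful sense, since by Lemma~\ref{lem: mult} the off-diagonal slices $M_{i,j}=(B_jW)(B_iW)^{-1}$ joining five disjoint diagonal pieces are already products of two slices, and connecting five points of an orbit through the generating set forces longer words still. The paper accordingly builds its finite family from elements of $\bigcup_{k=1}^{3}S^k$ covered by slices in $\bigcup_{k=1}^{3}\mathcal{S}^k$; a length-$3$ base case is also what the inductive step actually consumes.

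The second and more serious problem is the inductive step itself. Factoring a length-$(m+1)$ slice as $B_{i_1}\cdot(B_{i_2}\cdots B_{i_{m+1}})$ and invoking Lemma~\ref{lem: multiintersect} requires the domains of the two multisections to meet in \emph{exactly one} clopen piece, and nothing in your setup forces this: the intermediate unit $s(B_{i_1})$ may lie in the same region as $s(g)$ or $r(g)$, in which case several diagonal pieces of the two multisections collide and the lemma does not apply. You flag this as ``bookkeeping,'' but it is the mathematical heart of the argument, and the paper spends most of its effort on it: it (i) constructs a finite clopen partition $\mathcal{P}$ of $\mathcal{G}^{(0)}$ such that every orbit meets at least five components, (ii) refines $S$ and the expansive cover $\mathcal{S}$ so that $s(g)$ and $r(g)$ lie in different components for every $g\in S$ and insists throughout that the diagonal pieces of all multisections in play sit in pairwise different components, and (iii) in the inductive step does \emph{not} compose with $B_{i_1}$ directly, but detours through an auxiliary generator $g'\in S$ with $s(g')=s(g_2)$ and $r(g'),s(g),r(g)$ in pairwise distinct components, replacing the prefix by the length-$3$ word $g''=g_1g_2g'^{-1}$ absorbed by the base family. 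A further case is then still required for elements $g\notin\operatorname{Iso}(\mathcal{G})$ with $s(g)$ and $r(g)$ in the same component of $\mathcal{P}$, and another for pairs of elements with a common source; your scheme sees none of this because it never introduces $\mathcal{P}$. Until these points are supplied, the proposal is a correct outline of the strategy rather than a proof.
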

	
	\begin{proof}
		Fix for the duration of the proof a metric $d$ on $\mathcal{G}^{(0)}$.
		\begin{enumerate}[(i)]
			\item \label{item: 1} By assumption for every $u \in \mathcal{G}^{(0)}$ there exist elements $g_1, g_2, g_3, g_4 \in \mathcal{G}$ with $s(g_i)=u$ and such that $u,r(g_1),r(g_2),r(g_3),r(g_4)$ are pairwise different, by which there exist open, compact slices $G_1,G_2,G_3,G_4$ with identical source such that $g_i \in G_i$ and the sets $s(G_1)$, $r(G_1)$, $r(G_2)$, $r(G_3)$ and $r(G_4)$ are pairwise disjoint. Hence, there exists an $\varepsilon_u > 0$ such that all units in the $\varepsilon_u$-neighbourhood of $u$ have $\mathcal{G}$-orbits containing at least $5$ elements which have a distance of at least $\varepsilon_u$ from each other. Since $\mathcal{G}^{(0)}$ is compact, there exists a countable family of elements in $\mathcal{G}^{(0)}$ with neighbourhoods chosen as above such that $\mathcal{G}^{(0)}$ is covered. Hence there exists an $\varepsilon > 0$ such that \emph{all} units have $\mathcal{G}$-orbits containing at least $5$ elements which have a distance of at least $\varepsilon$ from each other. Thus any finite cover of $\mathcal{G}^{(0)}$ by disjoint clopen sets with components of diameter less than $\varepsilon$ produces a finite, clopen partition $\mathcal{P}:=\{P_i\}$ of $\mathcal{G}^{(0)}$ such that every $\mathcal{G}$-orbit intersects at least $5$ components of $\mathcal{P}$.
		\end{enumerate}	
		
		Let $S$ be a compact, open generating set of $\mathcal{G}$ and let $\mathcal{P}$ be a clopen partition of $\mathcal{G}^{(0)}$ as chosen in \ref{item: 1}.
		
		\begin{enumerate}[(i),resume]	
			\item \label{item: 2}  We can assume $S$ to be symmetric -- by taking the union with its inverse. In addition by Lemma~\ref{lem: comp.gen.} we can assume $S$ to be contained in $\mathcal{G}\setminus \operatorname{Iso}(\mathcal{G})$. This implies $d(s(g),r(g)) \neq 0$ for all $g \in S$ and since $S$ is compact there exists an $\varepsilon > 0$ such that $d\big(s(g),r(g)\big) \geq \varepsilon$ for all $g \in S$. In consequence $s(g)$ and $r(g)$ lie in different components of $\mathcal{P}$ for all $g \in S$ by choosing a sufficient refinement. Moreover it can be assumed that $|Sg| \geq 4$ for all $g \in \mathcal{G}^{(0)}$ just by taking the union with sufficient open, compact slices contained in $\mathcal{G}\setminus \operatorname{Iso}(\mathcal{G})$. This implies that for all $g \in S$ there exist elements $g_1,g_2,g_3,g_4 \in S$ such that $s(g_i)=g$ and the units $g,r(g_1),r(g_2) ,r(g_3),r(g_4)$ are contained in pairwise different components of $\mathcal{P}$. Since $\mathcal{G}$ was assumed to be expansive, by Remark~\ref{rem: exp} there exists an expansive cover $\mathcal{S}$ of $S$ and by choosing a sufficient refinement it can be assumed that $s(F)$ and $r(F)$ are contained in different components of $\mathcal{P}$ for all $F \in \mathcal{S}$. Denote by $T$ the set of elements $g \in \bigcup_{k=1}^3 S^k$ for which $s(g)$ and $r(g)$ are contained in different components of $\mathcal{P}$. For every $g \in T$ and every $V \in \bigcup_{k=1}^3 \mathcal{S}^k$ there exists a multisection $\mathcal{M}$ of degree $5$	with $g \in M_{1,2} \subseteq V$ and all $M_{i,i}$ are contained in pairwise different components of $\mathcal{P}$. The set $T$ is compact, hence there exists a finite collection $\{\mathcal{M}^k\}$ of multisections choosen as $\mathcal{M}$ such that $\bigcup_k M_{1,2}^k$ covers $T$. Let $\mathfrak{A}$ denote the group $\langle \bigcup_k \mathfrak{A}(\mathcal{M}_k)\rangle$. The proof is accomplished by showing $\mathfrak{A}(\mathcal{G})=\mathfrak{A}$.
			
			\item \label{item: 4} Let $g \in \mathcal{G}$ be such that $s(g)$ and $r(g)$ sit in different components of $\mathcal{P}$. Then for every open, compact slice $B$ with $g \in B$ there exists a multisection $\mathcal{N}$ of degree $5$, such that $\mathfrak{A}(\mathcal{N}) \subseteq \mathfrak{A}$, $g \in N_{1,2} \subseteq B$ and the sets $N_{i,i}$ are contained in pairwise different components of $\mathcal{P}$:
			Since $\mathcal{S}$ is an expansive cover of the generating set $S$, the proof reduces to the case where $B$ is a finite product of open compact slices $B_1B_2 \dots  B_n$ with $B_i \in \mathcal{S}$. This allows to proof the statement by induction on $n$.
			If $n \leq 3$ this follows immediatly by definition of the family $\mathcal{M}_k$ in \ref{item: 2}.
			Suppose the statement is true for products of length $n$. Let $B=B_1B_2 \dots B_{n+1}$. Then $g$ can be uniquely written as a product $g=g_1g_2 \dots g_{n+1}$ with $g_i \in B_i$. By the assumptions on $S$ there exist an element $g' \in S$ such that $s(g')=s(g_2)$ and $r(g'),s(g),r(g)$ are contained in pairwise different components of $\mathcal{P}$. Then there exists an open, compact slice $B'$ with $g' \in B'$. Denote $g'':=g_1g_2g'^{-1}$ and $B'':=B_1B_2B'^{-1}$. It holds that $g \in B''B'B_3 \dots B_{n+1} \subseteq B$. By the induction hypothesis there exists a multisection $\mathcal{N}'$ of degree $5$ with $N'_{i,i}$ contained in pairwise different components of $\mathcal{P}$ such that $g'g_3 \dots g_{n+1} \in N'_{1,2} \subseteq B'B_3 \dots B_{n+1}$, $\mathfrak{A}(\mathcal{N'}) \leq \mathfrak{A}$ and we can additionaly assume, that $N'_{3,3}$ and $r(g)$ are contained in pairwise different components of $\mathcal{P}$. Denote by $\mathcal{N}^3$ the induced multisection of degree $3$ on $\{N'_{i,j}\}_{i,j=1}^3$. Since $B''$ is by definition contained in $\mathcal{S}^3$ and $g''$ is contained in $T$, there exists an $\mathcal{M}^j$ (as defined in \ref{item: 2}) with $g'' \in M_{1,2} \subseteq G''$. Choosing a sufficient submultisection of $\mathcal{M}^j$ gives a multisection $\mathcal{M}^3$ of degree $3$ with $g'' \in M^3_{1,2} \subseteq G''$ and such that the only intersecting components of the domains of $\mathcal{N}^3$ and $\mathcal{M}^3$ are $N^3_{2,2}$ and $M^3_{1,1}$. Applying Lemma~\ref{lem: multiintersect} to $\mathcal{N}^3$ and $\mathcal{M}^3$ gives the desired multisection.
			
			\item \label{item: 5} Let $g \in \mathcal{G}\setminus \operatorname{Iso}(\mathcal{G})$ be such that $s(g)$ and $r(g)$ are contained in the same component of $\mathcal{P}$. Then for every open, compact slice $B$ with $g \in B$ there exists a multisection $\mathcal{N}$ of degree $5$, such that $\mathfrak{A}(\mathcal{N}) \subseteq \mathfrak{A}$, $g \in N_{1,2} \subseteq B$ and the sets $N_{1,1} \cup N_{2,2},N_{3,3},N_{4,4},N_{5,5}$ are contained in pairwise different components of $\mathcal{P}$:
			To this end let $P \in \mathcal{P}$ such that $s(g),r(g) \in P$. There exists a $g' \in \mathcal{G}$ with $s(g')=s(g)$ and $g'\notin P$, hence by applying the result obtained in \ref{item: 4} there exists a multisection $\mathcal{N'}$ of degree $3$ with $\mathfrak{A}(\mathcal{N}') \leq \mathfrak{A}$, $g' \in N'_{1,2}$ and the components of its domain contained in pairwise different components of $\mathcal{P}$. Similarily replacing $g'$ by $g'g^{-1}$ gives a multisection $\mathcal{N}''$ of degree $3$ with the corresponding properties. It can be assumed that $N'_{3,3}$ and $N''_{3,3}$ lie in different components of $\mathcal{P}$. Lemma~\ref{lem: multiintersect} applied to $\mathcal{N}'$ and $\mathcal{N}''$ verifies the statement.	
			
			\item \label{item: 6} Let $g_1,g_2 \in \mathcal{G}$ be such that $s(g_1)=s(g_2)$ and $s(g_1),r(g_1),r(g_2)$ pairwise different. Let $\mathcal{N}^1$ be a multisection of degree $5$ given by \ref{item: 4} or \ref{item: 5} for $g_1$ -- whichever applies. It can be assumed that $r(g_2)$ is not in $N^1_{1,1}$ or $N^1_{2,2}$. There are at most $3$ components of $\mathcal{P}$ intersecting $\{s(g_1),r(g_1),r(g_2)\}$, hence there exists a $k \in 3,4,5$ with $r(g_2) \notin N^1_{k,k}$. Consequently $r(g_2)$ is not contained in the domain of the submultisection $\tilde{\mathcal{N}^1}$ of degree $3$ induced by $N^1_{1,1},N^1_{2,2},N^1_{k,k}$.
			Let $\mathcal{N}^2$ be a multisection of degree $5$ given by \ref{item: 4} or \ref{item: 5} for $g_2$. It can be assumed the component of the domain containing $r(g_2)$ does not intersect the domain of $\tilde{\mathcal{N}^1}$. There exists an $l \in 3,4,5$ such that $N^2_{l,l}$ is contained in a component of $\mathcal{P}$ which does not contain $r(g_2)$ and does not intersect the domain of $\tilde{\mathcal{N}^1}$. Denote by $\tilde{\mathcal{N}^2}$ the submultisection of degree $3$ induced by $N^2_{1,1},N^2_{2,2},N^2_{l,l}$.
			The multisection arising from the application of Lemma~\ref{lem: multiintersect} to $\tilde{\mathcal{N}^1}$ and $\tilde{\mathcal{N}^2}$ contains a sufficient submultisection $\tilde{\mathcal{N}}$ of degree $3$ such that $g_1,g_2$ are contained in respective slices contained in $\tilde{\mathcal{N}}$ and $\mathfrak{A}(\tilde{\mathcal{N}}) \leq \mathfrak{A}$.
			
			\item The multisection $\tilde{\mathcal{N}}$ that has just been constructed in \ref{item: 6} shows that  for every multisection $\mathcal{M}$ of degree $3$ and any element $g$ in its domain $\mathcal{M}$ there exists a restriction $\mathcal{N}$ to some subset containing $g$ such that $\mathfrak{A}(\mathcal{N}) \leq \mathfrak{A}$. By compactness of the domain of $\mathcal{M}$ there exists a finite family $\{\mathcal{N}^k\}$ of multisections of degree $3$ such that their domains are a finite cover of the domain of $\mathcal{M}$ and $\mathfrak{A}(\mathcal{N}^k) \leq \mathfrak{A}$. 	Proposition~\ref{prop: multcov} then implies $\mathfrak{A}(\mathcal{G})=\mathfrak{A}_3 (\mathcal{G}) = \mathfrak{A}$.
		\end{enumerate}
	\end{proof}
	
	\section{In terms of non-commutative Stone duality}\label{sec: topological full groups in light of non-commutative stone duality}
	
	In \cite{law16} and \cite{law17} Lawson undertook a translation of Matui's isomorphism theorems on \'etale groupoids under non-commutative Stone duality to the world of inverse semigroups and gave a purely algebraic proof. Subsection~\ref{subs: refined correspondences} contains refined versions of non-commutative Stone duality which include the groupoids Matui considered and Subsection~\ref{subs: translating the reconstruction} deals with the isomorphism theorem. All proofs will be omitted and we only give a suggestive description of the correspondences.
	
	\subsection{Refined correspondences}\label{subs: refined correspondences}
	
	The sufficient duality is in vein of Stone's classical duality between the category of Boolean algebras and the category of Stone spaces.\footnote{Note that as in the ``commutative" setting the discovery of the refined version predates and motivates the general, frame theoretic version.\\}
	\begin{defi}[\cite{law16}]
		\begin{enumerate}[(i)]
			\item An \'etale groupoid $\mathcal{G}$ is called a \emph{Boolean} if its subspace of units $\mathcal{G}^{(0)}$ is a Stone space.
			
			\item An inverse monoid $S$ is called \emph{Boolean} if its idempotents $E(S)$ form a Boolean algebra with the induced order.
		\end{enumerate}
	\end{defi}
	
	\begin{thm}[\cite{law16}, Theorem~3.4]
		There is a duality of categories between the category of Boolean inverse $\wedge$-monoids is dually equivalent to the category of Hausdorff Boolean groupoids.
	\end{thm}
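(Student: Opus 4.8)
The plan is to obtain this refined duality as the \emph{Boolean specialization} of the general adjunction between $\mathbf{Etale}$ and $\mathbf{Pseu}$ recorded in Theorem~\ref{thm: adj}, in exactly the way classical Stone duality between Boolean algebras and Stone spaces sits inside the duality between spatial frames and sober spaces. On objects the two functors will be $\mathcal{G} \mapsto \mathcal{B}_\mathcal{G}^{o,k}$, passing to the inverse monoid of \emph{compact} open slices rather than all open slices, and $S \mapsto \mathrm{G}(S)$ built this time from the \emph{ultrafilters} $V_s$ rather than the completely prime filters $X_s$. The switch of filter notion is harmless in the Boolean world: on the Boolean algebra $E(S)$ every proper filter is refined by an ultrafilter and the two notions agree, so the ultrafilter groupoid is the correct Boolean analogue of $\mathrm{G}(S)$.

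First I would verify that these assignments land in the stated categories. For a Hausdorff Boolean groupoid $\mathcal{G}$ the unit space $\mathcal{G}^{(0)}$ is a Stone space, hence $\mathcal{G}$ is ample and $\mathcal{B}_\mathcal{G}^{o,k}$ is an inverse submonoid of $\mathcal{B}_\mathcal{G}^o$; its idempotents are the compact open subsets of $\mathcal{G}^{(0)}$, which form a Boolean algebra precisely because $\mathcal{G}^{(0)}$ is Stone. The decisive point is that Hausdorffness makes $\mathcal{G}^{(0)}$ clopen (Remark~\ref{rem: unitsclosed}) and makes every compact slice closed, so that the set-theoretic intersection of two compact open slices is again a compact open slice; this furnishes the binary meet and upgrades $\mathcal{B}_\mathcal{G}^{o,k}$ to a \emph{Boolean inverse $\wedge$-monoid}. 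Conversely, for a Boolean inverse $\wedge$-monoid $S$ classical Stone duality \cite{joh82} identifies the ultrafilters of $E(S)$ with the points of a Stone space, and this space is exactly the unit space of $\mathrm{G}(S)$; thus $\mathrm{G}(S)$ is Boolean, while the meet operation of $S$ supplies the separation that renders $\mathrm{G}(S)$ Hausdorff. It is this correspondence \emph{meet} $\leftrightarrow$ \emph{Hausdorff} that distinguishes the refinement from the general theory.

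The core of the proof is the pair of natural isomorphisms witnessing the contravariant equivalence. For the unit $\eta$ I would send $g \in \mathcal{G}$ to the ultrafilter $\{B \in \mathcal{B}_\mathcal{G}^{o,k} : g \in B\}$; injectivity uses Hausdorffness together with the separation of points by compact open slices, and surjectivity (realizability of every ultrafilter) uses compactness of $\mathcal{G}^{(0)}$. For the counit I would send $s \in S$ to $V_s \in \mathcal{B}_{\mathrm{G}(S)}^{o,k}$; this map is faithful by the Boolean spatiality $V_s = V_t \Rightarrow s = t$, which holds automatically here, and one checks directly that it respects products, inverses, and finite joins and meets. The main obstacle I anticipate is the \emph{essential surjectivity} of $s \mapsto V_s$: a given compact open slice of $\mathrm{G}(S)$ is a priori only a finite union of basic slices $V_{s_1}, \dots, V_{s_n}$, and to collapse it to a single $V_s$ one must produce the join $s = s_1 \vee \dots \vee s_n$ inside $S$ and verify $V_s = \bigcup_i V_{s_i}$. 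This is precisely where the Boolean (finite-join) hypothesis and the behaviour of joins under the ultrafilter construction are indispensable, and it is the step that has no counterpart in the merely spatial-pseudogroup setting. Once both transformations are isomorphisms, naturality is routine, and after matching continuous covering functors between Hausdorff Boolean groupoids with the structure-preserving morphisms of Boolean inverse $\wedge$-monoids the dual equivalence of categories follows.
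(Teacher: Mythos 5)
The paper itself supplies no proof here---it explicitly omits all proofs in this section and defers to \cite{law16}---but the remark immediately following the theorem describes exactly the object-level correspondence you elaborate (completely prime filters replaced by ultrafilters, using that a proper filter in a Boolean inverse $\wedge$-monoid is prime \emph{if and only if} it is an ultrafilter, and open slices replaced by compact open slices), so your sketch follows the intended route and correctly isolates the two decisive points, namely that binary meets correspond to Hausdorffness and that finite compatible joins are what collapse a finite union of basic compact open slices to a single one. Two small corrections: realizability of an ultrafilter of compact open slices as an actual point of $\mathcal{G}$ rests on the compactness of the slices themselves together with their closedness (which is where Hausdorffness enters), not on compactness of $\mathcal{G}^{(0)}$; and the essential-surjectivity step requires $S$ to admit finite compatible joins, a condition Lawson builds into the definition of a Boolean inverse monoid even though the abbreviated definition recorded in this text only demands that $E(S)$ be a Boolean algebra.
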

	
	\begin{rem}
		 Since a proper filter of a Boolean inverse $\wedge$-monoid is prime \emph{if and only if} it is an ultrafilter (\cite{law16}, Lemma~3.2), on the level of objects in the passing from a Boolean inverse $\wedge$-monoid $S$ to a Boolean groupoid $\mathrm{G}(S)$ the completely prime filters can be replaced by ultrafilters and the obtained groupoid is topologized by taking the sets $V_s$ as a basis. In the other direction one associates to a Hausdorff Boolean groupoid $\mathcal{G}$ the inverse monoid of compact, open slices $\mathcal{B}_{\mathcal{G}}^{o,k}$.
	\end{rem}
	
	We do not dwell upon what the respective correct definitions for morphisms are. Under this duality the countable Boolean inverse $\wedge$-monoids translate to second countable Hausdorff Boolean groupoids.
	
	\begin{defi}[\cite{law16}, p. 6]
		Let $S$ be a Boolean inverse monoid.
		\begin{enumerate}[(i)]
			\item An ideal $I$ of $S$ is called \emph{$\vee$-closed}, if for all $a,b \in I$ existence of $a \vee b$ implies $a \vee b \in I$.
			
			\item It is said to be \emph{$0$-simplifying}, if it contains no non-trivial $\vee$-closed ideals.
		\end{enumerate}
	\end{defi}
	
	The $\vee$-closed ideals in a Boolean inverse $\wedge$-monoid $S$ correspond to unions of $\mathrm{G}(S)$-orbits (see \cite{law12}, §5), which implies:
	
	\begin{thm}[\cite{law16}, Corollary~4.8]
		Let $S$ be a Boolean inverse $\wedge$-monoid. The groupoid of ultrafilters $\mathrm{G}(S)$ is minimal \emph{if and only if} $S$ is $0$-simplifying.
	\end{thm}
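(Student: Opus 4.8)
The plan is to route the equivalence through the bridge already recalled in the text, namely that the $\vee$-closed ideals of a Boolean inverse $\wedge$-monoid $S$ correspond to the open invariant subsets of $\mathrm{G}(S)^{(0)}$ (the ``unions of $\mathrm{G}(S)$-orbits'' of \cite{law12}, §5). Granting this dictionary, the statement reduces to a purely topological reformulation of minimality: an \'etale groupoid $\mathcal{G}$ is minimal precisely when $\mathcal{G}^{(0)}$ admits no open invariant subset other than $\emptyset$ and $\mathcal{G}^{(0)}$. First I would prove this reformulation, and then transport it along the correspondence.

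For the topological step, suppose first that $\mathcal{G}$ is minimal and let $U \subseteq \mathcal{G}^{(0)}$ be open and invariant with $U \neq \emptyset$. Its complement $C := \mathcal{G}^{(0)} \setminus U$ is closed and invariant; if $C$ were non-empty, choosing $v \in C$ we would have $\mathcal{G}(v) \subseteq C$ by invariance and hence $\overline{\mathcal{G}(v)} \subseteq C$ since $C$ is closed, so that minimality ($\overline{\mathcal{G}(v)} = \mathcal{G}^{(0)}$) forces $C = \mathcal{G}^{(0)}$, i.e. $U = \emptyset$, a contradiction. Conversely, if there is no non-trivial open invariant set, then for any $u \in \mathcal{G}^{(0)}$ the set $\overline{\mathcal{G}(u)}$ is closed and invariant (the closure of an invariant set is invariant, since the open slices act by partial homeomorphisms of $\mathcal{G}^{(0)}$), so its open invariant complement is either $\emptyset$ or $\mathcal{G}^{(0)}$; as $u \in \overline{\mathcal{G}(u)}$ the complement is proper, hence empty, giving $\overline{\mathcal{G}(u)} = \mathcal{G}^{(0)}$, i.e. every orbit is dense.

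It then remains to assemble the two halves. By Stone duality the Boolean algebra $E(S)$ is the algebra of compact open subsets of the Stone space $\mathrm{G}(S)^{(0)}$, and an ideal of $E(S)$ names an open subset; the correspondence of \cite{law12} upgrades this to an inclusion-preserving bijection between $\vee$-closed ideals of $S$ and open invariant subsets of $\mathrm{G}(S)^{(0)}$, under which $\{0\}$ corresponds to $\emptyset$ and $S$ to $\mathrm{G}(S)^{(0)}$. Consequently $S$ has a non-trivial $\vee$-closed ideal if and only if $\mathrm{G}(S)^{(0)}$ has a proper non-empty open invariant subset; negating both sides and invoking the reformulation above yields that $S$ is $0$-simplifying if and only if $\mathrm{G}(S)$ is minimal.

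The genuine content sits in the imported correspondence, so the part demanding the most care is matching its endpoints: one must check that the $\vee$-closed condition together with the ideal condition encodes exactly \emph{openness plus invariance} (the ideal part giving a Boolean-algebra ideal, hence an open set under Stone duality, while $\vee$-closure together with the two-sided absorption $as, sa \in I$ produces invariance under the germ action), and that the trivial ideals $\{0\}$ and $S$ map to the trivial invariant sets. Once these endpoint identifications are pinned down, the remaining topological lemma is routine, and no further direct analysis of the ultrafilter groupoid is needed.
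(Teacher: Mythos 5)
Your argument is correct and follows essentially the same route the paper indicates: the text proves nothing beyond remarking that $\vee$-closed ideals correspond to unions of $\mathrm{G}(S)$-orbits (citing \cite{law12}, §5) ``which implies'' the theorem, and your write-up is precisely a careful expansion of that hint, supplying the routine topological reformulation of minimality and the endpoint checks. No gaps; the closure-of-an-invariant-set-is-invariant step you use is justified because the open slices act by partial homeomorphisms between open subsets of the unit space.
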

	
	In Lemma~\ref{lem: existence of pencils} we saw that for \'etale groupoids minimality corresponds to the existence of a certain family of slices which translates as follows:
	\begin{lem}\label{lem: existence of pencils - monoids}
		Let $S$ be a Boolean inverse monoid and let $e,f \in E(S)$ with $e,f \neq 0$. If $S$ is $0$-simplifying, there exists a finite set $\{s_1,\dots,s_k \}\subset S$ such that $e=\bigvee_{i=1}^k d(s_i)$ and $r(s_i) \leq f$ for all $i \in \{1,\dots,k \}$.
	\end{lem}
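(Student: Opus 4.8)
The plan is to give a direct algebraic argument that mirrors, under the duality between Boolean inverse monoids and Boolean groupoids, the proof of Lemma~\ref{lem: existence of pencils}: there minimality forced orbits into the clopen target set, and here $0$-simplicity will force the $\vee$-closed ideal generated by $f$ to exhaust all of $S$. First I would set up the reduction. Let $I$ be the $\vee$-closed ideal generated by $f$, described concretely as the collection of all finite compatible joins $\bigvee_i a_i$ of elements $a_i$ lying in the order ideal generated by the two-sided ideal $SfS$. Since $f \neq 0$ we have $I \neq \{0\}$, and because $S$ is $0$-simplifying its only $\vee$-closed ideals are $\{0\}$ and $S$; hence $I = S$. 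In particular $e \in I$, so there is a finite compatible family with $e = \bigvee_{i=1}^{n} a_i$ and $a_i \leq s_i f t_i$ for suitable $s_i,t_i \in S$.

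Second, from each $a_i$ I would extract an element whose range sits below $f$ while retaining the same domain. Using $a_i \leq s_i f t_i$ I would write $a_i = s_i f t_i\, d(a_i)$ and set $u_i := f t_i\, d(a_i)$, so that $a_i = s_i u_i$. Since $u_i = f u_i$, the range idempotent satisfies $r(u_i) = f\, r(u_i) \leq f$. A short computation with idempotents then shows $d(u_i) = d(a_i)$: on the one hand $d(a_i) = u_i^{-1} s_i^{-1} s_i u_i \leq u_i^{-1} u_i = d(u_i)$, and on the other $d(u_i) = d(a_i)\,(t_i^{-1} f t_i)\, d(a_i) \leq d(a_i)$ because $t_i^{-1} f t_i$ is an idempotent below $1$ and idempotents commute.

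Third, I would assemble the conclusion. As $S$ is a Boolean, hence distributive, inverse monoid, the map $d$ preserves compatible joins, so applying $d$ to $e = \bigvee_i a_i$, using $d(e) = e$ (as $e$ is idempotent) together with $d(u_i) = d(a_i)$, yields $e = \bigvee_{i=1}^{n} d(a_i) = \bigvee_{i=1}^{n} d(u_i)$. The family $\{u_1,\dots,u_n\}$ is then the required set, with $r(u_i) \leq f$ for all $i$ and $\bigvee_i d(u_i) = e$.

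The step I expect to be the main obstacle is the bookkeeping of the first paragraph: verifying that the displayed set $I$ really is the smallest $\vee$-closed ideal containing $f$ (closure under two-sided multiplication and under further compatible joins, both of which rest on distributivity) and pinning down the precise normal form $a_i \leq s_i f t_i$ for its members. The element-level manipulation and the join-preservation of $d$ are routine once distributivity is in hand. I would also record the conceptual shortcut: transporting $e,f$ to compact open subsets of $\mathcal{G}^{(0)}$ and $S$ to the groupoid $\mathrm{G}(S)$, invoking that $0$-simplicity corresponds to minimality (\cite{law16}, Corollary~4.8), and applying the pencils argument of Lemma~\ref{lem: existence of pencils} essentially verbatim (its proof uses only minimality, compactness, and the ample \'etale structure, not perfectness of the unit space) produces the same family of slices, which translate back to the $u_i$.
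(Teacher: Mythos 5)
Your argument is correct, but it is not the route the paper takes: in Section~\ref{sec: topological full groups in light of non-commutative stone duality} the paper omits all proofs and presents this lemma purely as the image of Lemma~\ref{lem: existence of pencils} under non-commutative Stone duality, i.e.\ exactly the ``conceptual shortcut'' you record at the end (transport $e,f$ to compact opens of $\mathrm{G}(S)^{(0)}$, use that $0$-simplifying corresponds to minimality by \cite{law16}, Corollary~4.8, apply the groupoid pencils argument, translate the slices back). Your primary, algebraic proof is genuinely different and self-contained: it identifies the smallest $\vee$-closed ideal containing $f$ as the set of finite compatible joins of elements of $SfS$, forces it to be all of $S$ by $0$-simplicity, and then massages each $a_i\leq s_ift_i$ into $u_i=ft_i\,d(a_i)$ with $r(u_i)\leq f$ and $d(u_i)=d(a_i)$; your idempotent computations and the fact that $d$ preserves compatible joins all check out. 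What the direct proof buys is independence from the duality machinery and a transparent localization of where $0$-simplicity is used; what the translation buys is brevity, at the cost of invoking the full object-level correspondence. One caveat worth flagging: your argument repeatedly uses that $S$ is a \emph{distributive} inverse monoid (existence of compatible binary joins and distributivity of multiplication over them, which underlie your description of the generated $\vee$-closed ideal and the identity $d(\bigvee a_i)=\bigvee d(a_i)$). The paper's Definition of ``Boolean inverse monoid'' only asks that $E(S)$ be a Boolean algebra and does not state distributivity explicitly; this is implicit in Lawson's sources and needed even to make sense of $0$-simplifying, so it is a gap in the paper's definitions rather than in your proof, but you should say you are using it.
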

	
	The algebraic characterization of effectiveness is established through the following correspondence:
	\begin{lem}[\cite{law16}, Lemma~4.9]\label{lem: lemma fundamental - effective}
		Let $S$ be a Boolean inverse $\wedge$-monoid and $\mathrm{G}(S)$ be the associated groupoid of ultrafilters. For every $s \in S$ the following are equivalent:
		\begin{enumerate}[(i)]
			\item $a \in Z(E(S))$
			
			\item $V_a \subseteq \operatorname{Iso}(\mathrm{G}(S))$
		\end{enumerate}
	\end{lem}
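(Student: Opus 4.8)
The plan is to read $Z(E(S))$ as the centraliser $\{a \in S : ae = ea \text{ for all } e \in E(S)\}$ of the idempotents inside $S$, and to translate both conditions into statements living on the Stone space $\mathcal{G}^{(0)}$ of ultrafilters of the Boolean algebra $E(S)$. Recall that an ultrafilter $F \in \mathrm{G}(S)$ lies in $\operatorname{Iso}(\mathrm{G}(S))$ precisely when $s(F) = r(F)$, where $s(F) = (F^{-1}F)^{\uparrow}$ and $r(F) = (FF^{-1})^{\uparrow}$ are points of $\mathcal{G}^{(0)}$. The germs of $a$ are exactly the ultrafilters of the form $F = (aP)^{\uparrow}$ for a point $P$ with $a^{-1}a \in P$; such a germ has source $P$ and range $a\cdot P := (aPa^{-1})^{\uparrow}$. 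Thus condition (ii) says precisely that $a \cdot P = P$ for every point $P$ in the domain of $a$, and the whole argument will reduce to comparing the idempotents $e$ and $aea^{-1}$ by testing them against the ultrafilters of $E(S)$.

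For the implication (i) $\Rightarrow$ (ii) I would first extract from $ae = ea$ the two standard consequences that $a^{-1}a = aa^{-1} =: e_0$ and that $aea^{-1} = e$ for every $e \le e_0$. The equality of source and range idempotents follows by putting $e = a^{-1}a$ to get $a = (a^{-1}a)a$, multiplying by $a^{-1}$ to obtain $aa^{-1} \le a^{-1}a$, and applying the same argument to $a^{-1}$, which also centralises $E(S)$; the fixing of idempotents below $e_0$ comes from $aea^{-1} = eaa^{-1} = e \wedge e_0 = e$. Given a point $P \ni e_0$, any $e \in P$ with $e \le e_0$ is fixed by conjugation, so $(aPa^{-1})^{\uparrow}$ and $P$ are generated by the same idempotents and $a \cdot P = P$; hence every germ of $a$ lies in the isotropy.

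The converse (ii) $\Rightarrow$ (i) is the step I expect to be the real obstacle, since it requires recovering an algebraic identity in $S$ from a purely germ-theoretic hypothesis. Here I would argue that $a \cdot P = P$ for all points $P$ with $a^{-1}a \in P$ forces $aa^{-1} = a^{-1}a$ (the map $P \mapsto a \cdot P$ is the identity on its domain, so its domain and image coincide as clopen subsets, i.e.\ $a^{-1}a$ and $aa^{-1}$ have the same Stone dual) and, for $e \le e_0$, that $e \in P \iff aea^{-1} \in P$ for every ultrafilter $P$ of $E(S)$ containing $e_0$; for $P \not\ni e_0$ both $e$ and $aea^{-1}$ are absent, as they lie below $e_0$. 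Since $E(S)$ is a Boolean algebra whose ultrafilters separate its elements (Stone duality applied to $E(S)$), this equality of membership across all ultrafilters yields $aea^{-1} = e$ in $E(S)$.

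Finally I would reassemble the commutation relation: for arbitrary $e \in E(S)$ set $e' = e \wedge e_0 \le e_0$ and use $a = e_0 a = a e_0$ to reduce $ae$ and $ea$ to $ae'$ and $e'a$; then $ae' = ae'a^{-1}a = (ae'a^{-1})a = e'a$, so $ae = ea$ for every idempotent and $a \in Z(E(S))$. Throughout, the inverse $\wedge$-monoid hypothesis is what guarantees that the meets $e \wedge e_0$ and the conjugates $aea^{-1}$ live in the Boolean algebra $E(S)$, while the Booleanness of $E(S)$ is exactly what powers the separation argument in the converse direction.
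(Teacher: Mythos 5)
Your argument is correct: both directions check out, and the key ingredients (the identity $aa^{-1}=a^{-1}a$ extracted from centrality, the identification of ultrafilters in $V_a$ with ultrafilters of $E(S)$ containing $d(a)$ via $P\mapsto (aP)^{\uparrow}$, and separation of elements of the Boolean algebra $E(S)$ by its ultrafilters) are exactly what is needed. The paper itself only cites Lawson's Lemma~4.9 without reproducing a proof, and your Stone-duality translation of both conditions to the conjugation action on ultrafilters of $E(S)$ is essentially the argument given in that source.
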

	
	The following definition is a specialized version of a notion from classical inverse semigroup theory (see \cite{law98}):
	
	\begin{defi}[\cite{law98}, p.139f]
		Let $S$ be an inverse semigroup.
		\begin{enumerate}[(i)]
			\item The \emph{centralizer $Z(E(S))$ of idempotents in $S$} is the set of elements $s \in S$ such that $se=es$ for all $e \in E(S)$.
			
			\item It is said to be \emph{fundamental}, if $E(S)=Z(E(S))$ holds.
		\end{enumerate}
	\end{defi}
	
	The following follows from the characterization in Lemma~\ref{lem: lemma fundamental - effective}:
	\begin{thm}[\cite{law16}, Theorem~4.10]\label{thm: fundamental-effective}
		Let $S$ be a Boolean inverse $\wedge$-monoid. The groupoid of ultrafilters $\mathrm{G}(S)$ is effective \emph{if and only if} $S$ is fundamental.
	\end{thm}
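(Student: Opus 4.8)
The plan is to prove both implications by contraposition, with Lemma~\ref{lem: lemma fundamental - effective} as the essential input, supplemented by two elementary facts about the ultrafilter groupoid $\mathrm{G}(S)$ supplied by the duality: that the compact open slices $\{V_s : s \in S\}$ form a basis of the topology, and that $s \mapsto V_s$ is injective (spatiality) and satisfies $V_s \cap V_t = V_{s \wedge t}$. From these one extracts the key bookkeeping fact that a slice $V_a$ is contained in the unit space $\mathrm{G}(S)^{(0)}$ if and only if $a \in E(S)$: the idempotents correspond exactly to the clopen subsets of $\mathrm{G}(S)^{(0)}$, so $\mathrm{G}(S)^{(0)} = V_1$, and $V_a \subseteq V_1$ forces $V_a = V_a \cap V_1 = V_{a \wedge 1}$, whence $a = a \wedge 1 \leq 1$ and $a$ is idempotent. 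I would also record at the outset that, since idempotents of an inverse semigroup commute, one always has $E(S) \subseteq Z(E(S))$; hence \emph{$S$ fails to be fundamental precisely when there exists some $a \in Z(E(S)) \setminus E(S)$}.

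For the direction ``not fundamental $\Rightarrow$ not effective'', I would start from such an $a \in Z(E(S)) \setminus E(S)$. Since $0 \in E(S)$ we have $a \neq 0$, so $V_a$ is non-empty. By Lemma~\ref{lem: lemma fundamental - effective}, the hypothesis $a \in Z(E(S))$ gives $V_a \subseteq \operatorname{Iso}(\mathrm{G}(S))$, so $V_a \cap (\mathrm{G}(S)\setminus\operatorname{Iso}(\mathrm{G}(S))) = \emptyset$; and since $a \notin E(S)$, the bookkeeping fact gives $V_a \not\subseteq \mathrm{G}(S)^{(0)}$, i.e.\ $V_a \cap (\mathrm{G}(S)\setminus \mathrm{G}(S)^{(0)}) \neq \emptyset$. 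Thus $V_a$ is a non-empty compact open slice meeting a non-unit yet contained in the isotropy bundle, which is exactly a witness that $\mathrm{G}(S)$ is not effective.

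For the converse, ``not effective $\Rightarrow$ not fundamental'', I would take a non-empty open slice $B$ witnessing non-effectiveness, so that $B$ contains some non-unit $g$ while $B \subseteq \operatorname{Iso}(\mathrm{G}(S))$. Because the sets $V_s$ form a basis, I may choose $s \in S$ with $g \in V_s \subseteq B$. Then $V_s \subseteq \operatorname{Iso}(\mathrm{G}(S))$, so Lemma~\ref{lem: lemma fundamental - effective} yields $s \in Z(E(S))$; and since $g \in V_s$ is a non-unit, $V_s \not\subseteq \mathrm{G}(S)^{(0)}$, so by the bookkeeping fact $s \notin E(S)$. Hence $s \in Z(E(S)) \setminus E(S)$ and $S$ is not fundamental, which completes the proof.

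The argument is short because the genuine content resides in Lemma~\ref{lem: lemma fundamental - effective}; the only points demanding care are the characterisation $V_a \subseteq \mathrm{G}(S)^{(0)} \Leftrightarrow a \in E(S)$ --- where one must resist the false temptation to treat the monoid identity $1$ as a top element of $S$, since $1$ bounds only the idempotents and non-idempotent elements are incomparable with it --- and the passage from a general witnessing slice $B$ to a basic slice $V_s$, which is precisely where the basis property of the $V_s$ enters. No Hausdorffness beyond that already built into the duality is required, since both the definition of effectiveness and Lemma~\ref{lem: lemma fundamental - effective} are phrased directly in terms of slices and the isotropy bundle.
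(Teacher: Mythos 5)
Your proposal is correct and follows essentially the same route as the paper, which derives the theorem directly from the characterization in Lemma~\ref{lem: lemma fundamental - effective}; your contribution is simply to make the bookkeeping explicit (non-emptiness of $V_a$ for $a\neq 0$ via an ultrafilter extending $a^{\uparrow}$, the equivalence $V_a\subseteq \mathrm{G}(S)^{(0)}\Leftrightarrow a\in E(S)$, and the passage from a witnessing slice to a basic slice $V_s$), all of which is sound. The only inputs you invoke beyond the lemma --- that the $V_s$ form a basis of compact open slices, that $s\mapsto V_s$ is injective with $V_s\cap V_t=V_{s\wedge t}$, and that $E(S)\subseteq Z(E(S))$ since idempotents commute --- are indeed supplied by the duality and by elementary inverse-semigroup theory, so there is no gap.
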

	
	We thus have a class of Boolean inverse $\wedge$-monoids for which one can state the algebraic counterpart to the isomorphism theorem:
	
	\begin{defi}[\cite{law17}, p.381f]
		\begin{enumerate}[(i)]
			\item A Boolean inverse monoid is said to be \emph{simple} if it is $0$-simplifying and fundamental.
			
			\item A \emph{Tarski monoid} $S$ is a countable Boolean inverse $\wedge$-monoid $S$ of which the semilattice of idempotents $E(S)$ is a countable, atomless Boolean algebra.\footnote{See Remark~\ref{rem: cant}(ii).\\}
		\end{enumerate}
	\end{defi}
	
	The effective, minimal, Hausdorff, \'etale Cantor groupoids considered by Matui thus correspond under non-commutative Stone duality to simple Tarski monoids. As mentioned in Remark~\ref{rem: topfgroupsdefi}~(ii) the topological full group of an \'etale Cantor groupoid $\mathcal{G}$ is by definition the unit group of the inverse monoid of compact open slices and thus Theorem~\ref{thm: isom} translates under non-commutative Stone duality to:
	
	\begin{thm}[\cite{law17}, Theorem~2.10]\label{thm: isom-monoid version}
		Let $S_1$ and $S_2$ be simple Tarski monoids. Then the following are equivalent:
		\begin{enumerate}[(i)]
			\item $S_1 \cong S_2$
			
			\item $U(S_1) \cong U(S_2)$
		\end{enumerate}
	\end{thm}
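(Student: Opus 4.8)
The plan is to reduce the statement to Matui's isomorphism theorem for groupoids (Theorem~\ref{thm: isom}) by passing through non-commutative Stone duality, so that no fresh spatial-realisation argument is needed on the inverse-semigroup side.

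The implication $(i) \Rightarrow (ii)$ is immediate. Any isomorphism $\psi \colon S_1 \to S_2$ of Boolean inverse $\wedge$-monoids is in particular an isomorphism of monoids, and a monoid isomorphism carries units to units: if $st = ts = 1$ then $\psi(s)\psi(t) = \psi(t)\psi(s) = \psi(1) = 1$. Hence $\psi$ restricts to a bijective group homomorphism $U(S_1) \to U(S_2)$, its inverse being supplied by $\psi^{-1}$.

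For the substantial direction $(ii) \Rightarrow (i)$ I would first dualise both monoids. Applying the duality between countable Boolean inverse $\wedge$-monoids and second countable Hausdorff Boolean groupoids (\cite{law16}, Theorem~3.4) to a simple Tarski monoid $S_i$ produces a second countable Hausdorff Boolean groupoid $\mathcal{G}_i := \mathrm{G}(S_i)$, which is étale by definition of Boolean groupoid, and satisfies $S_i \cong \mathcal{B}_{\mathcal{G}_i}^{o,k}$. I would then check that the defining features of a simple Tarski monoid translate precisely into the hypotheses of Theorem~\ref{thm: isom}: the Tarski condition that $E(S_i)$ be a countable atomless Boolean algebra makes $\mathcal{G}_i^{(0)}$ a Cantor space by Remark~\ref{rem: cant}(ii), so $\mathcal{G}_i$ is an \'etale Cantor groupoid; being $0$-simplifying translates into minimality (\cite{law16}, Corollary~4.8); and being fundamental translates into effectiveness by Theorem~\ref{thm: fundamental-effective}. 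Crucially, by Remark~\ref{rem: topfgroupsdefi}(i) the unit group of $\mathcal{B}_{\mathcal{G}_i}^{o,k}$ \emph{is} by definition the topological full group, so $U(S_i) \cong \mathfrak{T}(\mathcal{G}_i)$.

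With this dictionary in place the proof finishes quickly: the hypothesis $U(S_1) \cong U(S_2)$ becomes $\mathfrak{T}(\mathcal{G}_1) \cong \mathfrak{T}(\mathcal{G}_2)$, whereupon the implication $(ii) \Rightarrow (i)$ of Theorem~\ref{thm: isom} yields a groupoid isomorphism $\mathcal{G}_1 \cong \mathcal{G}_2$; since the duality is an (anti-)equivalence of categories, applying the functor $\mathcal{G} \mapsto \mathcal{B}_{\mathcal{G}}^{o,k}$ in reverse transports this back to $S_1 \cong \mathcal{B}_{\mathcal{G}_1}^{o,k} \cong \mathcal{B}_{\mathcal{G}_2}^{o,k} \cong S_2$. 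The main obstacle is not any single estimate but the faithful bookkeeping of this dictionary: one must verify that second countability is genuinely available, which matters here because Section~\ref{sec: topological full groups in light of non-commutative stone duality} lies outside the standing second-countability convention, and it is available precisely because $S_i$ is assumed countable; and one must confirm that the isomorphism produced by Matui's theorem is an isomorphism in the dual category, so that functoriality transports it to a monoid isomorphism. I note that Lawson's treatment in \cite{law17} instead gives a \emph{purely algebraic} proof, recasting each groupoid-theoretic step of Matui's argument directly in the language of simple Tarski monoids; that route avoids citing Theorem~\ref{thm: isom} at the cost of re-deriving its spatial-realisation content internally.
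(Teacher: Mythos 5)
Your argument is correct, but it is not the route the paper takes. The paper follows Lawson's purely algebraic proof: it establishes that simple Tarski monoids are of class F, that they are piecewise factorizable with respect to the group $\mathfrak{S}(S)$ generated by special involutions (Proposition~\ref{prop: tarski monoids are p.f.}), proves a spatial realization theorem directly at the level of ultrafilters of $E(S)$ (Theorem~\ref{thm: spat monoid}), and then invokes Proposition~\ref{prop: spatial iso induces isomorphism of monoids} to extend the pair $(\alpha,\gamma)$ to a monoid isomorphism --- in other words it re-derives the reconstruction content of Matui's argument internally, with infinitesimals playing the role of the slices $B$ with $s(B)\cap r(B)=\emptyset$. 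You instead transport the problem across non-commutative Stone duality and cite Theorem~\ref{thm: isom} as a black box. Your dictionary is assembled correctly: countability of $S_i$ gives second countability of $\mathrm{G}(S_i)$ (needed since Theorem~\ref{thm: isom} sits inside the standing second-countability convention), the Tarski condition gives a Cantor unit space via Remark~\ref{rem: cant}(ii), $0$-simplifying gives minimality, fundamental gives effectiveness, the dual groupoid is Hausdorff and \'etale by construction, and $U(S_i)=U(\mathcal{B}_{\mathrm{G}(S_i)}^{o,k})=\mathfrak{T}(\mathrm{G}(S_i))$ by Remark~\ref{rem: topfgroupsdefi}(i); finally, a topological groupoid isomorphism visibly carries compact open slices to compact open slices multiplicatively, so the return trip needs nothing beyond $S_i\cong\mathcal{B}_{\mathrm{G}(S_i)}^{o,k}$. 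What your route buys is economy, given that Theorem~\ref{thm: isom} is already in hand; what Lawson's route (and the paper's exposition) buys is a self-contained, topology-free proof on the inverse-semigroup side, which is precisely the point of Section~\ref{sec: topological full groups in light of non-commutative stone duality}. You flag this distinction yourself in your final sentence, and your assessment of it is accurate.
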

	
	\subsection{Translating the reconstruction}\label{subs: translating the reconstruction}
	
	\begin{defi}[\cite{law17}, p.382]
		Let $S$ be a Boolean inverse monoid with associated Boolean algebra of idempotents $E(S)$.
		\begin{enumerate}[(i)]
			\item Denote by $X(S)$ the Stone space dual to $E(S)$. 
			
			\item Let $e \in E(S)$. Denote by $U_e$ the set of ultrafilters of $E(S)$ that contain $e$.
		\end{enumerate}
	\end{defi}
	
	The elements of $X(S)$ are given by ultrafilters of $E(S)$ and the open sets of $X(S)$ are of the form $U_e$ for some $e \in E(S)$. The group of units $U(S)$ acts on $E(S)$ by $e \mapsto geg^{-1}$. The following is no surprise considering Theorem~\ref{thm: fundamental-effective}:
	\begin{prop}[\cite{law17}, Proposition~3.1]
		Let $S$ be a Boolean inverse monoid. Then $S$ is fundamental \emph{if and only if} the action of $U(S)$ on $E(S)$ is faithful.
	\end{prop}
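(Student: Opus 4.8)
The plan is to turn faithfulness into a purely algebraic condition and then handle the two implications separately, the reverse one by contraposition using the Boolean complement. First I would record two routine facts. Since idempotents of an inverse semigroup commute we always have $E(S) \subseteq Z(E(S))$, so fundamentality is precisely the reverse inclusion $Z(E(S)) \subseteq E(S)$. Moreover, for a unit $g \in U(S)$ the identity $g e g^{-1} = e$ holds for all $e \in E(S)$ if and only if $ge = eg$ for all such $e$ (multiply on the right by $g$), i.e. if and only if $g \in Z(E(S))$. Hence the kernel of the action of $U(S)$ on $E(S)$ is exactly $U(S) \cap Z(E(S))$, and the action is faithful if and only if $U(S) \cap Z(E(S)) = \{1\}$. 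I will also use that the only idempotent unit is $1$: if $g \in U(S)$ with $g^2 = g$, then $g = g^{-1}g^2 = g^{-1}g = 1$.

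With this reformulation the implication \emph{fundamental $\Rightarrow$ faithful} is immediate: if $Z(E(S)) = E(S)$ then the kernel is $U(S) \cap Z(E(S)) = U(S) \cap E(S) = \{1\}$, so the action is faithful.

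For the converse I would argue by contraposition: assuming $S$ is not fundamental, I construct a nontrivial unit centralising $E(S)$. Choose $s \in Z(E(S)) \setminus E(S)$ and write $e = d(s)$, $f = r(s)$. The first key step is to show $e = f$. Commuting $s$ with $e$ and with $f$ gives $se = es = s$ and $sf = fs = s$, from which a short computation yields $d(s)\,r(s) = d(s)$ for every element of $Z(E(S))$; applying this both to $s$ and to $s^{-1} \in Z(E(S))$ (with $d(s^{-1}) = r(s)$ and $r(s^{-1}) = d(s)$) gives $ef = e$ and $fe = f$, whence $e = f$ by commutativity of idempotents. The second key step uses the Boolean hypothesis. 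Setting $e := d(s) = r(s)$, the relation $s = se$ forces $s\,\neg e = 0 = s^{-1}\,\neg e$, so $s$ and the complement $\neg e$ are orthogonal and the join $g := s \vee \neg e$ exists. Then $d(g) = e \vee \neg e = 1 = r(g)$, so $g$ is a unit; distributivity of multiplication over compatible joins together with $s, \neg e \in Z(E(S))$ gives $gt = (s \vee \neg e)t = ts \vee t\,\neg e = t(s \vee \neg e) = tg$ for every idempotent $t$, so $g \in Z(E(S))$; and $ge = se \vee (\neg e)e = s$, so $g = 1$ would force $s = e \in E(S)$, contradicting the choice of $s$. Thus $g$ is a nontrivial element of $U(S) \cap Z(E(S))$ and the action is not faithful.

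I expect the main obstacle to lie entirely in the converse direction: establishing $d(s) = r(s)$ from the centraliser condition (which genuinely needs commutation with \emph{all} idempotents, not merely with $d(s)$ and $r(s)$), and then verifying that $g = s \vee \neg e$ is a well-defined unit still lying in $Z(E(S))$. It is exactly here that the Boolean structure is indispensable: the complement $\neg e$ allows one to ``fill in the identity off the support of $s$'', an operation unavailable in a general inverse $\wedge$-monoid, while distributivity of products over orthogonal joins ensures the resulting unit continues to centralise the idempotents.
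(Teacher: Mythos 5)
Your proof is correct, and it is essentially the argument of Lawson's original Proposition~3.1 in \cite{law17}; this text states the result without proof (the surrounding section explicitly omits all proofs), so there is nothing in the paper to diverge from. The reduction of faithfulness to $U(S)\cap Z(E(S))=\{1\}$, the derivation of $d(s)=r(s)$ for a central non-idempotent $s$ via the two identities $d(s)r(s)=d(s)$ and $r(s)d(s)=r(s)$, and the construction of the nontrivial central unit $g=s\vee\neg d(s)$ are all sound.

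One small caveat worth flagging: your converse uses the existence of the compatible join $s\vee\neg e$ and distributivity of multiplication over such joins. These hold for Boolean inverse monoids as defined in \cite{law17} (where they are distributive inverse monoids with Boolean idempotent lattice), but the abbreviated definition given in this text --- ``an inverse monoid whose idempotents form a Boolean algebra'' --- does not literally guarantee them. Your argument is fine against the source's definition; just be aware that the hypothesis you actually need is the stronger, standard one.
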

	
	The algebraic proof of Theorem~\ref{thm: isom-monoid version} is a consequence of the following proposition:
	
	\begin{prop}[\cite{law17}, Lemma~3.10]\label{prop: spatial iso induces isomorphism of monoids}
		Let $S_1$ and $S_2$ be fundamental Boolean inverse monoids and let $G_1 \leq U(S_1)$ and $G_2 \leq U(S_2)$ be subgroups such that $S_i=(G_i^{\downarrow})^{\vee}$ for $i \in \{1,2\}$.\footnote{This is the closure of $G_i^{\uparrow}$ with respect to the join operation.\\} Assume there exists a group isomorphism $\alpha \colon G_1 \to G_2$ and an isomorphism of Boolean algebras $\gamma \colon E(S_1)\to E(S_2)$ such that for all $g \in G_1$ and $e \in E(S_1)$ the following is satisfied
		\begin{equation*}
		\gamma(geg^{-1})=\alpha(g)\gamma(e)\alpha(g^{-1}).
		\end{equation*}
		Then there exists a unique isomorphism $\Theta \colon S_1 \to S_2$ that extends $\alpha$ and $\gamma$.
	\end{prop}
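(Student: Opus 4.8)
The plan is to build $\Theta$ by hand on the generating set $G_1^{\downarrow}$ and then push it across joins. Since $S_1=(G_1^{\downarrow})^{\vee}$, every $s\in S_1$ is a finite join of \emph{basic elements} $ge$ with $g\in G_1$ and $e\in E(S_1)$, and because $g$ is a unit one has $d(ge)=eg^{-1}ge=e$ automatically. I would set $\Theta(ge):=\alpha(g)\gamma(e)$ on basic elements and then $\Theta\bigl(\bigvee_i g_ie_i\bigr):=\bigvee_i\alpha(g_i)\gamma(e_i)$. The programme is then: (1) multiplicativity on basic elements; (2) well-definedness on basic elements; (3) preservation of the natural order, the involution and compatibility, so the joins on the right exist and are independent of the chosen decomposition; (4) bijectivity, by running the construction backwards from $\alpha^{-1}$ and $\gamma^{-1}$; and (5) uniqueness.

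Step (1) is a direct computation exploiting the compatibility hypothesis. Writing $g_1e_1\cdot g_2e_2=(g_1g_2)\bigl((g_2^{-1}e_1g_2)e_2\bigr)$, with $g_2^{-1}e_1g_2\in E(S_1)$, and using that $\gamma$ is a morphism of Boolean algebras (so it preserves the meet $ef$ of idempotents) together with the compatibility identity in the form $\gamma(g_2^{-1}e_1g_2)=\alpha(g_2)^{-1}\gamma(e_1)\alpha(g_2)$, one gets $\Theta(g_1e_1\,g_2e_2)=\alpha(g_1)\gamma(e_1)\alpha(g_2)\gamma(e_2)=\Theta(g_1e_1)\Theta(g_2e_2)$. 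The involution is handled the same way.

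The genuinely delicate step, and the one I expect to be the main obstacle, is (2). An equality $g_1e_1=g_2e_2=:s$ forces $e_1=d(s)=e_2$, so $h:=g_2^{-1}g_1\in G_1$ satisfies $h\,d(s)=d(s)$, and what must be shown is $\alpha(h)\gamma(d(s))=\gamma(d(s))$. The crux is a localisation lemma for a fundamental Boolean inverse monoid: for a unit $h$ and an idempotent $f$, one has $hf=f$ if and only if $hf'h^{-1}=f'$ for all $f'\le f$. The forward direction is elementary, since $hf=f$ gives $hf'=f'$ and $f'h^{-1}=f'$, hence $hf'h^{-1}=f'$. For the reverse direction I would pass to the corner $fSf$, which is again a fundamental (i.e.\ $E=Z(E)$) Boolean inverse monoid; there $hf$ lies in $fSf$ and commutes with every idempotent $f'\le f$, so fundamentality makes $hf$ idempotent, and an order comparison via left translation (from $h^{-1}f\le f$ one gets $f=h(h^{-1}f)\le hf\le f$) forces $hf=f$. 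Granting this lemma, $h\,d(s)=d(s)$ yields $hf'h^{-1}=f'$ for every $f'\le d(s)$; transporting along $\alpha$ and $\gamma$ and invoking compatibility shows $\alpha(h)$ centralises every idempotent below $\gamma(d(s))$ in $S_2$; applying the lemma inside the fundamental monoid $S_2$ gives $\alpha(h)\gamma(d(s))=\gamma(d(s))$, whence $\alpha(g_1)\gamma(d(s))=\alpha(g_2)\gamma(d(s))$.

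With (1) and (2) in hand, (3) is formal: $\Theta$ preserves $\le$ and $(\cdot)^{-1}$ on basic elements, hence carries compatible pairs to compatible pairs, so each join $\bigvee_i\alpha(g_i)\gamma(e_i)$ exists in $S_2$, and independence of the decomposition follows from well-definedness on basic elements together with the fact that such a map is determined by its values on the meet-closed generating set. Bijectivity is obtained by constructing the inverse from $\alpha^{-1}$ and $\gamma^{-1}$, which by symmetry is a two-sided inverse. Finally uniqueness is immediate: any isomorphism extending $\alpha$ and $\gamma$ is a homomorphism, so it must send $ge$ to $\alpha(g)\gamma(e)$ and preserve compatible joins, and therefore agrees with $\Theta$ on all of $(G_1^{\downarrow})^{\vee}=S_1$.
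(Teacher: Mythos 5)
Note first that the paper contains no proof of this proposition: the surrounding section announces that all proofs are omitted, and the statement is quoted verbatim from \cite{law17}, Lemma~3.10, so there is no internal argument to compare against. Your reconstruction is correct and is essentially Lawson's: define $\Theta$ on $G_1^{\downarrow}$ by $ge\mapsto \alpha(g)\gamma(e)$ and extend over compatible joins, with the whole weight resting on well-definedness, which you correctly reduce to showing that $he=e$ for a unit $h$ forces $\alpha(h)\gamma(e)=\gamma(e)$, and correctly resolve via the centraliser characterisation $he=e \Leftrightarrow hf'h^{-1}=f'$ for all $f'\le e$, transported through $\gamma$ and applied in the fundamental monoid $S_2$. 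One small simplification in your localisation lemma: once fundamentality makes $hf$ idempotent you are already done, since then $hf=(hf)(hf)^{-1}=hf\cdot fh^{-1}=hfh^{-1}=f$; the detour through $h^{-1}f\le f$ does work, but only after first observing that $(hf)^{-1}=fh^{-1}=h^{-1}f$ coincides with the idempotent $hf$, which is more effort than the one-line finish.
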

	
	In light of the requirement $S_i=(G_i^{\downarrow})^{\vee}$ Lawson introduces the notion of piecewise factorizability:
	\begin{defi}[\cite{law17}, p.391]
		Let $S$ be a Boolean inverse monoid and let $G \leq U(S)$. Then $S$ is said to be \emph{piecewise factorizable with respect to $G$} if for every $s \in S$ there exist finite collections $\{s_1,\dots,s_k \}\subset S$ and $\{g_1,\dots,g_k \}\subset G$ with $s_i \leq g_i$ for all $i \in \{1,\dots,k\}$ such that $s$ is of the form $s=\bigvee_{i=1}^k g_i d(s_i)$.
	\end{defi}

	A crucial ingredient in the proofs of spatial realization modeled after the proof of Theorem~384D in \cite{fre11} e.g. Matui's proof, is an ``abundance of \emph{involutions}" -- see Definition~\ref{defi: class F}. Lawson demonstrates that \emph{infinitesimals} in $S$ give rise to involutions:
	\begin{defi}[\cite{law17}, p.383f]
		Let $S$ be an inverse monoid.
		\begin{enumerate}[(i)]
			\item An element $s \in S$ is called an \emph{infinitesimal} if $s \neq 0$ and $s^2 = 0$.
			
			\item A pair $(s,t)$ of infinitesimals $s,t \in S$ is called a \emph{$2$-infinitesimal of $S$} if $d(s)=r(t)$ and $st$ is an infinitesimal.
		\end{enumerate}
	\end{defi}
	
	An element $s$ of an inverse monoid $S$ satisfies $s^2=0$ \emph{if and only if} $r(s)d(s)=0$.
	
	\begin{lem}[\cite{law17}, Lemma~2.5 \& 2.7]\label{lem: involutions and 3-cycles}
		Let $S$ be a Boolean inverse monoid.
		\begin{enumerate}[(i)]
			\item Let $s \in S$ be an infinitesimal. Then $g:=s \vee s^{-1} \vee \neg (d(s)\vee r(s) )$ is an involution in $U(S)$ with $s \leq g$.\footnote{The symbol $\neg$ is the negation for the Boolean algebra $E(S)$.\\}
			
			\item Let $(s,t)$ be a $2$-infinitesimal of $S$. Then the element
			\begin{equation*}
			g:= s \vee t \vee (st)^{-1} \vee \neg(r(s) \vee d(s) \vee d(t) )
			\end{equation*}
			is an element of $U(S)$ of order $3$.
		\end{enumerate}
	\end{lem}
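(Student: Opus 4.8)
The plan is to reduce both statements to bookkeeping with the source and range idempotents $d(\cdot)$ and $r(\cdot)$, exploiting that in a Boolean inverse monoid multiplication and inversion distribute over compatible joins. The first task in each part is to check that the displayed join genuinely exists, i.e.\ that the joined elements form a compatible family. Here I would use the sufficient criterion that $x$ and $y$ are compatible whenever $r(x)r(y)=0$ and $d(x)d(y)=0$, since then $x^{-1}y=x^{-1}\big(r(x)r(y)\big)y=0$ and $xy^{-1}=x\big(d(x)d(y)\big)y^{-1}=0$, both of which are idempotent.

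For part (i), set $e:=\neg(d(s)\vee r(s))$, so that $e\wedge d(s)=e\wedge r(s)=0$, and observe that $s^{2}=0\Leftrightarrow r(s)d(s)=0$ forces $(s^{-1})^{2}=0$ as well; this yields pairwise compatibility of $s,s^{-1},e$, so $g$ is defined. Because inversion reverses products and fixes idempotents, $g^{-1}=s^{-1}\vee s\vee e=g$, so it remains to prove $g^{2}=1$; then $gg^{-1}=g^{-1}g=g^{2}=1$ displays $g$ as an involution in $U(S)$, while $s\le g$ is immediate. Expanding $g^{2}$ by distributivity over the nine products, the identities $s^{2}=(s^{-1})^{2}=0$, $se=es=s^{-1}e=es^{-1}=0$ (which follow from $d(s)e=r(s)e=0$ together with $x=xd(x)=r(x)x$), $ss^{-1}=r(s)$, $s^{-1}s=d(s)$ and $e^{2}=e$ collapse $g^{2}$ to $r(s)\vee d(s)\vee e=1$.

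For part (ii), the crucial preliminary step is to extract pairwise orthogonality of the three idempotents $a:=r(s)$, $b:=d(s)=r(t)$ and $c:=d(t)$. The hypotheses that $s$, $t$ and $st$ are infinitesimals give respectively $ab=0$, $bc=0$ and $ac=0$; for the last I would compute $r(st)=s\,r(t)\,s^{-1}=r(s)=a$ and $d(st)=t^{-1}d(s)t=d(t)=c$, so that $st$ being infinitesimal means $ac=0$. Writing $u:=(st)^{-1}$ and $e:=\neg(a\vee b\vee c)$, the elements $s,t,u,e$ have domains $b,c,a,e$ and ranges $a,b,c,e$, all pairwise orthogonal, so they form a compatible family and $g$ exists. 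Distributivity then gives $g^{2}=st\vee tu\vee us\vee e$ and $g^{3}=stu\vee tus\vee ust\vee e$, and using $u=t^{-1}s^{-1}$ one finds $stu=u^{-1}u=a$, $tus=r(t)d(s)=b$ and $ust=d(t)=c$, whence $g^{3}=a\vee b\vee c\vee e=1$. The same products identify $g^{2}=u^{-1}\vee s^{-1}\vee t^{-1}\vee e=g^{-1}$ (via $tu=s^{-1}$ and $us=t^{-1}$), so $g\in U(S)$; finally $g\neq 1$, since $s\le g$ with $s$ non-idempotent would otherwise force $s\le 1$, hence $s$ idempotent, a contradiction, so $g$ has order exactly $3$.

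The main obstacle is organizational rather than conceptual: one must verify the compatibility conditions so that every join in sight actually exists, and then track the partial products $xy$ (nonzero precisely when $d(x)=r(y)$ among the orthogonal blocks) without bookkeeping slips. The $2$-infinitesimal case is the more delicate, as the four-term join encodes a $3$-cycle on the blocks $a,c,b$, and the verification $g^{3}=1$ rests on correctly collapsing each triple product $stu,tus,ust$ to the appropriate block idempotent.
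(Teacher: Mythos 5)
Your proof is correct and is essentially the standard verification: the paper itself omits the argument (deferring to Lawson's Lemmas~2.5 and 2.7), and those proofs proceed exactly as you do, by checking compatibility via orthogonality of the relevant domain and range idempotents and then expanding $g^2$ (resp.\ $g^3$) by distributivity so that only the block products $r(s)$, $d(s)$, etc.\ survive and join to $1$. The one point worth keeping explicit, which you do handle, is that $r(st)=r(s)$ and $d(st)=d(t)$, so the $2$-infinitesimal hypothesis is precisely what makes $r(s)$, $d(s)=r(t)$, $d(t)$ pairwise orthogonal.
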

	
	\begin{defi}[\cite{law17}, p.383f]
		Let $S$ be a Boolean inverse monoid.
		\begin{enumerate}[(i)]
			\item Every involution $g \in U(S)$ that is constructed as in Lemma~\ref{lem: involutions and 3-cycles}(i) is called a \emph{special involution} and the subgroup of $U(S)$ generated by the set of all special involutions is denoted by $\mathfrak{S}(S)$.
			
			\item Every $g \in U(S)$ of order $3$ that is constructed as in Lemma~\ref{lem: involutions and 3-cycles}(ii) is called a \emph{special $3$-cycle} and the subgroup of $U(S)$ generated by the set of all special $3$-cycles is denoted by $\mathfrak{A}(S)$.
		\end{enumerate}
		
	\end{defi}
	
	In the groupoid picture for a Tarski monoid the above construction of special involutions resp. special $3$-cycles corresponds precisely to the construction of involutions resp. $3$-cycles from multisections as in Definition~\ref{defi: symmetr, alternating full group} and thus this groups are precisely the groups $\mathfrak{S}(\mathcal{G})$ and $\mathfrak{A}(\mathcal{G})$ defined by Nekrashevych in the Tarski monoid picture.
	
	\begin{prop}[\cite{law17}, §3.3]\label{prop: tarski monoids are p.f.}
		Let $S$ be a simple Tarski monoid. Then $S$ is piecewise factorizable with respect to $\mathfrak{S}(S)$ and in consequence $S=(\mathfrak{S}(S)^{\downarrow})^{\vee}$.
	\end{prop}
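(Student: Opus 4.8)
The plan is to pass through non-commutative Stone duality and argue geometrically. Since $S$ is a simple Tarski monoid, its semilattice $E(S)$ is a countable atomless Boolean algebra, so the dual groupoid $\mathcal{G}:=\mathrm{G}(S)$ is an effective (Theorem~\ref{thm: fundamental-effective}), minimal (because $S$ is $0$-simplifying), Hausdorff, \'etale Cantor groupoid with $S\cong\mathcal{B}_\mathcal{G}^{o,k}$ and $E(S)\cong\operatorname{CO}(\mathcal{G}^{(0)})$. Under this identification the special involutions of $S$ are exactly the transpositions $T_W$ attached to compact open slices $W$ with $s(W)\cap r(W)=\emptyset$ (cf. Lemma~\ref{lem: involutions and 3-cycles}), and $\mathfrak{S}(S)$ becomes $\mathfrak{S}(\mathcal{G})$. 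It therefore suffices to show that every compact open slice $B$ can be written as a finite join $B=\bigvee_j w_j\,e_j$ with each $w_j\in\mathfrak{S}(\mathcal{G})$, each $e_j$ clopen and $w_j e_j\le w_j$; then this exhibits $S$ as piecewise factorizable with respect to $\mathfrak{S}(S)$, and since $w_je_j=B|_{e_j}\le w_j\in\mathfrak{S}(S)$ the equality $S=(\mathfrak{S}(S)^{\downarrow})^{\vee}$ follows at once.

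The strategy is to produce, for every $g\in B$, a clopen neighbourhood $e$ of $s(g)$ and an element $w\in\mathfrak{S}(\mathcal{G})$ with $B|_e=w\,e$, and then to extract a finite subcover by compactness of $B$ and disjointify the sources. There are three types of point. The units in $B$ form the clopen piece $B_0:=B\cap\mathcal{G}^{(0)}$ (clopen because $\mathcal{G}^{(0)}$ is clopen in the Hausdorff \'etale groupoid $\mathcal{G}$); on its source $e_0$ one has $B_0=\mathbf{1}\cdot e_0$ with $\mathbf{1}\in\mathfrak{S}(\mathcal{G})$. If $g\in B$ satisfies $s(g)\neq r(g)$, I would shrink $B$ to a compact open slice $W:=B|_e$ with $s(W)\cap r(W)=\emptyset$ (possible since $\mathcal{G}^{(0)}$ is Hausdorff and totally disconnected); then $W$ is an infinitesimal, $T_W\in\mathfrak{S}(\mathcal{G})$, and $T_W\,e=W=B|_e$.

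The main obstacle is the remaining type of point: a non-unit $g\in B$ with $s(g)=r(g)=u$, i.e. a non-trivial isotropy element sitting inside the slice, for which no neighbourhood slice can have disjoint source and range and hence no single transposition can reproduce $B$ locally. Here I would use minimality. As $E(S)$ is atomless the Cantor space $\mathcal{G}^{(0)}$ is perfect, so minimality forces every orbit to be infinite; pick $v\neq u$ in the orbit of $u$ and $h_1\in\mathcal{G}$ with $s(h_1)=u$, $r(h_1)=v$, and set $h_2:=gh_1^{-1}$, so that $g=h_2h_1$ with $h_1,h_2$ both off the diagonal. Choosing a small compact open slice $W_1\ni h_1$ with $s(W_1)\subseteq s(B)$ and putting $W_2:=B\,W_1^{-1}$ (a compact open slice containing $h_2$), one arranges by further shrinking $W_1$ that $s(W_1)$, $r(W_1)=s(W_2)$ and $r(W_2)$ are small enough that both $T_{W_1}$ and $T_{W_2}$ are genuine transpositions. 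A direct slice computation then gives, for $e$ a small clopen neighbourhood of $u$ inside $s(W_1)$, the identity $(T_{W_2}T_{W_1})\,e=W_2W_1\,e=B\,W_1^{-1}W_1\,e=B\,e=B|_e$, so that $w:=T_{W_2}T_{W_1}\in\mathfrak{S}(\mathcal{G})$ reproduces $B$ on $e$. The delicate point, which I expect to require the most careful verification, is precisely this local equality: one must check that on $e$ the transposition $T_{W_1}$ acts through $W_1$ and $T_{W_2}$ through $W_2$ with no interference from their inverse or identity parts, which is exactly what the disjointness arrangement of the supports $s(W_1),r(W_1),r(W_2)$ secures.

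Finally, the clopen neighbourhoods $e$ produced above cover the compact slice $B$ through the source map; extracting a finite subcover and disjointifying yields a finite clopen partition of $s(B)$ into pieces $e_j$ with $B|_{e_j}=w_j e_j$, $w_j\in\mathfrak{S}(\mathcal{G})$, whence $B=\bigvee_j B|_{e_j}=\bigvee_j w_j e_j$ is the desired factorization. Translating back through the duality gives piecewise factorizability of $S$ with respect to $\mathfrak{S}(S)$, and hence $S=(\mathfrak{S}(S)^{\downarrow})^{\vee}$. I note that effectiveness (fundamentality) is not actually needed for this factorization: only $0$-simplicity (minimality, through the resulting infinite orbits) and the Tarski hypothesis (perfectness of $\mathcal{G}^{(0)}$) enter the argument.
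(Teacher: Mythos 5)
Your argument is correct, but be aware that the paper contains no proof of this statement to compare against: Section~\ref{sec: topological full groups in light of non-commutative stone duality} explicitly omits all proofs and defers to \cite{law17}, whose declared purpose is a \emph{purely algebraic} treatment. Lawson's proof of piecewise factorizability never leaves the monoid: he works with ultrafilters of $E(S)$, uses the existence of infinitesimals (Lemma~\ref{lem: existence of infinitesimals}) to produce, at each ultrafilter containing $d(a)$, an idempotent together with one or two infinitesimals whose special involutions dominate the corresponding restriction of $a$, and then invokes compactness of the Stone space $X(S)$. Your proof is precisely the Stone dual of that argument: you transport $S$ to the ample groupoid $\mathrm{G}(S)$, where ultrafilters become points, infinitesimals become compact open slices with disjoint source and range, and your trichotomy on $g\in B$ (unit, off-diagonal, isotropy) mirrors his case split. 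The delicate isotropy case is handled correctly by factoring $g=h_2h_1$ through a third orbit point and checking $(T_{W_2}T_{W_1})e=B|_e$; the one step you should make explicit is that, having separated $u$ and $v$ by disjoint clopens $U,V$, you must additionally shrink $s(W_1)$ until $r(B|_{s(W_1)})\subseteq U$ (using continuity of $r|_B$ and $r(g)=u$), since $r(W_2)=r(B|_{s(W_1)})$ and without this $T_{W_2}$ need not be a genuine transposition --- you gesture at this "disjointness arrangement" but it is the only place where it has real content. Your closing remark that fundamentality is not needed is also correct and matches Lawson, who uses only $0$-simplicity and atomlessness here. What the geometric route buys is transparency; what it costs is exactly what Lawson set out to avoid, namely dependence on the duality and on points of the Stone space, so it changes the character of the cited argument even though it establishes the same decomposition.
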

	
	Another important aspect in the proof of the isomorphism theorem in the groupoid setting are the supports of partial homeomorphisms. The inverse monoid counterpart comes in the following shape:
	\begin{defi}[\cite{law17}, p.393]
		Let $S$ be a Boolean inverse $\wedge$-monoid. The \emph{support operator} $\sigma \colon S \to E(S)$ is given by $\sigma \colon s \mapsto \neg(s \wedge 1)s^{-1}s$.
	\end{defi} 
	
	This definition indeed corresponds to the support in the topological sense in that by Proposition~3.16 of \cite{law17} we have for every $g \in U(S)$ in a fundamental Boolean inverse $\wedge$-monoid $S$
	\begin{equation*}
	U_{\sigma(g)}=\overline{\{F \in X(S)| gFg^{-1}\neq F \}}.
	\end{equation*}
	
	This allows furthermore to give a translation of local subgroups\footnote{See Definition~\ref{defi: spatial homeomorphism + local subgroups}(ii).\\} of $U(S)$:
	\begin{defi}[\cite{law17}, p.396]
		Let $S$ be a Tarski monoid. Let $G \leq U(S)$ such that $G$ contains $\mathfrak{S}(S)$. Let $e \in E(S)$. The \emph{local subgroup} $G_e$ is given by
		\begin{equation*}
		G_e=\{g \in G| \sigma(g) \leq e \}
		\end{equation*}
	\end{defi}
	Analogous to the groupoid setting the order on $E(S)$ corresponds precisely to the containment of local subgroups.
	In \cite{mat15} Matui introduces groups of class F by a set of axioms, demonstrates that ismorphisms of such groups induce spatial homeomorphisms and shows that subgroups of $\mathfrak{T}(\mathcal{G})$ that contain $\mathfrak{T}(\mathcal{G})'$ have class F (-- see Subsection~\ref{subs: isomorphism theorems II}). The approach in \cite{law17} is a little different in that class F is translated to a list of axioms on Booleam inverse $\wedge$-monoids (and the associated groups of special involutions resp. $3$-cycles).
	
	\begin{defi}[\cite{law17}, §3.5]
		Let $S$ be an Boolean inverse $\wedge$-monoid. It is said to be \emph{of class F} if it satisfies the following properties:
		\begin{enumerate}
			\item [(F1)] For every $e \in E(S)$ with $e \neq 0$ there exists a finite set of special involutions $\tau_1,\dots,\tau_k \in \mathfrak{S}(S)$ such that $e=\bigvee_{i=1}^k \sigma(\tau_i)$.
			
			\item [(F2)] For every involution $g \in U(S)$ and $e \in E(S)$ with $e \neq 0$ and $e \leq \sigma(g)$ there exists a special involution $\tau \in \mathfrak{S}(S)$ such that $\sigma(\tau) \leq d(ge)\vee r(ge)$ and $ \sigma(\tau) \leq 1 \wedge g\tau$.
			
			\item [(F3)] For every $e \in E(S)$ with $e \neq 0$ there exists a special $3$-cycle $c$ such that $\sigma(c)\leq e$.
		\end{enumerate} 
	\end{defi}
	
	Considering Lemma~\ref{lem: involutions and 3-cycles} the existence of special involutions resp. $3$-cycles follows from the existence of infinitesimals resp. $2$-infinitesimals -- this is guaranteed by the property of being $0$-simplifying:
	\begin{lem}[\cite{law17}, Lemma~3.5]\label{lem: existence of infinitesimals}
		Let $S$ be a $0$-simplifying Tarski monoid. Let $F \subseteq E(S)$ be an ultrafilter and let $e \in F$. Then there exists an infinitesimal $s \in S$ with $d(s) \in F$ and $s \in eSe$.
	\end{lem}
	\begin{rem}
		As for Lemma~\ref{lem: existence of infinitesimals}: Note that submonoids of the form $eSe$ correspond to restriction groupoids. Analogous to the construction of an involution in $\mathfrak{T}(\mathcal{G})$ following Lemma~\ref{lem: existence of pencils} the existence of a sufficient infinitesimal follows here from Lemma~\ref{lem: existence of pencils - monoids}. Furthermore Lemma~\ref{lem: existence of infinitesimals} induces the existence of $2$-infinitesimals.
	\end{rem}
	
	With the help of the existence of sufficent infinitesimals and the construction of involutions from infintesimals resp. of $3$-cycles from $2$-infinitesimals Lawson shows:
	\begin{prop}[\cite{law17}, Proposition~3.20]
		Simple Tarski monoids are of class F.
	\end{prop}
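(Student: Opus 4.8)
The plan is to verify the three axioms (F1), (F2) and (F3) separately, in each case manufacturing the required special involution or special $3$-cycle from an ordinary or $2$-infinitesimal supplied by Lemma~\ref{lem: existence of infinitesimals} (and its corollary on $2$-infinitesimals), and then reading off its support via the support operator $\sigma$. The computation to record at the outset is that for the special involution $\tau = s \vee s^{-1} \vee \neg(d(s)\vee r(s))$ attached to an infinitesimal $s$ one has $\tau \wedge 1 = \neg(d(s)\vee r(s))$ (an idempotent below $s$ would lie under $d(s)\wedge r(s)=0$), hence $\sigma(\tau)=\neg(\tau\wedge 1)=d(s)\vee r(s)$; similarly $\sigma(c)=r(s)\vee d(s)\vee d(t)$ for the special $3$-cycle $c$ built from a $2$-infinitesimal $(s,t)$ via Lemma~\ref{lem: involutions and 3-cycles}(ii). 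These identities reduce all three axioms to questions of where infinitesimals with prescribed domain and range idempotents can be found.

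For (F1), fix $e\neq 0$ and let $F$ range over the ultrafilters of $E(S)$ containing $e$, i.e.\ over the points of the clopen set $U_e\subseteq X(S)$. Since $S$ is a $0$-simplifying Tarski monoid, Lemma~\ref{lem: existence of infinitesimals} yields for each such $F$ an infinitesimal $s_F\in eSe$ with $d(s_F)\in F$. Then $d(s_F),r(s_F)\leq e$, so the associated special involution $\tau_F$ has $\sigma(\tau_F)=d(s_F)\vee r(s_F)\leq e$ and $\sigma(\tau_F)\geq d(s_F)\in F$, whence $F\in U_{\sigma(\tau_F)}\subseteq U_e$. The family $\{U_{\sigma(\tau_F)}\}$ is therefore an open cover of the compact space $U_e$; extracting a finite subcover $U_e=\bigcup_{i=1}^k U_{\sigma(\tau_i)}$ and passing back through Stone duality (union corresponds to join) gives $e=\bigvee_{i=1}^k\sigma(\tau_i)$. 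Axiom (F3) is even more immediate: as noted in the remark following Lemma~\ref{lem: existence of infinitesimals}, a $2$-infinitesimal $(s,t)$ can be produced inside $eSe$, so that $r(s),d(s),d(t)\leq e$ and the special $3$-cycle $c$ built from it satisfies $\sigma(c)=r(s)\vee d(s)\vee d(t)\leq e$.

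The substantive step, and the one I expect to be the main obstacle, is (F2), the algebraic avatar of Matui's axiom (F3) in Definition~\ref{defi: class F}: it requires a special involution that genuinely \emph{agrees with} a given involution on a piece of its support. Given an involution $g\in U(S)$ and $0\neq e\leq\sigma(g)$, write $p=g\wedge 1$ for the fixed idempotent, so $\sigma(g)=\neg p$. The support formula $U_{\sigma(g)}=\overline{\{F: gFg^{-1}\neq F\}}$ (\cite{law17}, Proposition~3.16) shows the moving ultrafilters are dense in $U_{\neg p}\supseteq U_e$, so there is a moving point $F_0\in U_e$ with $gF_0g^{-1}\neq F_0$. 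As $X(S)$ is a Stone space, hence Hausdorff, we separate $F_0$ from $gF_0g^{-1}$ by disjoint clopens and thereby obtain a nonzero idempotent $e'\leq e$ with $e'\wedge ge'g^{-1}=0$. Then $s:=ge'$ is an infinitesimal with $d(s)=e'\leq e$, $r(s)=ge'g^{-1}\leq geg^{-1}=r(ge)$ and $s\leq g$; its special involution $\tau$ has $\sigma(\tau)=e'\vee ge'g^{-1}\leq e\vee r(ge)=d(ge)\vee r(ge)$, and since $s\leq g$ forces $\tau$ to coincide with $g$ on $\sigma(\tau)$, the product $g\tau$ restricts to the identity there, giving $\sigma(\tau)\leq 1\wedge g\tau$ --- precisely the two inequalities (F2) demands.

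The delicate point in (F2) is that $e\leq\sigma(g)$ only forces $e$ to miss the \emph{interior} $U_p$ of the fixed-point set, so the existence of a moving point in $U_e$ must be drawn from the density statement in the support formula rather than from $e\wedge p=0$ alone; the subsequent passage from the separating clopen to $s=ge'$ and the verification of the two support inequalities, transparent in the groupoid picture, should be checked directly against the definition of $\sigma$. With (F1)--(F3) in hand, every simple Tarski monoid is of class F.
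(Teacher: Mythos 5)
Your proof is correct: the paper states this proposition without reproducing Lawson's proof, but the surrounding text indicates exactly the strategy you follow --- manufacture special involutions and special $3$-cycles from the infinitesimals and $2$-infinitesimals supplied by Lemma~\ref{lem: existence of infinitesimals}, and read off their supports via the identity $\sigma(\tau)=d(s)\vee r(s)$. Your compactness argument for (F1), the restriction to $eSe$ for (F3), and the construction $s=ge'$ from a moving ultrafilter for (F2) --- including the verification that $s\leq g$ yields $\sigma(\tau)\leq 1\wedge g\tau$ --- are all sound.
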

	
	Let $S$ be a simple Tarski monoid and let $G \leq U(S)$ such that $G$ contains $\mathfrak{S}(S)$. Lawson shows that local subgroups of the form $G_{\sigma(\tau)}$ where $\tau$ is an involution in $G$ can be described algebraically analogous to Definition~\ref{defi: needed for characterization of local subgroups} and Lemma~\ref{lem: characterization local subgroups} (\cite{law17}, Theorem~3.23). Furthermore analogous to Lemma~\ref{lem: containment and intersection of supports} the following holds:
	\begin{lem}[\cite{law17}, Lemma~3.24]
		Let $S_1$ and $S_2$   be simple Tarski monoids and for $i \in \{1,2\}$ let $G_i \leq U(S_i)$ be such that $\mathfrak{S}(S_i) \leq G_i$. Let $\alpha \colon G_1 \to G_2$ be an isomorphism and let $s,t \in G$ be involutions. Then the following hold:
		\begin{enumerate}[(i)]
			\item $\sigma(s) \leq \sigma(t) \Leftrightarrow \sigma(\alpha(s)) \leq \sigma(\alpha(t))$
			
			\item $\sigma(s) \sigma(t) =0 \Leftrightarrow \sigma(\alpha(s)) \sigma(\alpha(t))=0$
		\end{enumerate}
	\end{lem}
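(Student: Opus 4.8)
The plan is to transcribe Matui's argument for Lemma~\ref{lem: containment and intersection of supports} into the Tarski monoid setting, exploiting the fact that the local subgroups $G_{\sigma(\tau)}$ admit a purely group-theoretic description. Recall from the discussion preceding the statement that Lawson's algebraic analogue of Definition~\ref{defi: needed for characterization of local subgroups} and Lemma~\ref{lem: characterization local subgroups} (\cite{law17}, Theorem~3.23) produces, for every involution $\tau \in G_i$, a subgroup $W_\tau \leq G_i$ defined by iterated centralizer and commutation conditions such that $W_\tau = (G_i)_{\sigma(\tau)}$, the local subgroup over the support idempotent $\sigma(\tau)$. The crucial feature is that $W_\tau$ is expressed \emph{solely} through the group operations of $G_i$ applied to $\tau$; consequently any group isomorphism $\alpha \colon G_1 \to G_2$ transports the construction verbatim, so that $\alpha(W_\tau) = W_{\alpha(\tau)}$ for every involution $\tau$. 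This single fact feeds both parts of the proof.

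First I would establish the monoid analogue of Lemma~\ref{lem: local subgroups encode order structure}: for involutions $s,t \in G_1$ one has $\sigma(s) \leq \sigma(t)$ if and only if $W_s \leq W_t$. The forward implication is immediate, since $\sigma(s) \leq \sigma(t)$ forces every element whose support lies below $\sigma(s)$ to have support below $\sigma(t)$. The reverse implication is where the class F axioms enter: if $\sigma(s) \not\leq \sigma(t)$, then $\sigma(s) \wedge \neg\sigma(t) \neq 0$, and axiom (F1) furnishes a special involution $\tau$ with $\sigma(\tau) \leq \sigma(s) \wedge \neg\sigma(t)$; such $\tau$ lies in $W_s \setminus W_t$ because $\sigma(\tau)\wedge\sigma(t)=0$. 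Combining this equivalence with the transport $\alpha(W_s) = W_{\alpha(s)}$ yields part (i):
\[
\sigma(s) \leq \sigma(t) \iff W_s \leq W_t \iff \alpha(W_s) \leq \alpha(W_t) \iff W_{\alpha(s)} \leq W_{\alpha(t)} \iff \sigma(\alpha(s)) \leq \sigma(\alpha(t)).
\]

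For part (ii) I would characterise disjointness of supports by elementwise commutation of the corresponding local subgroups: $\sigma(s)\sigma(t) = 0$ if and only if $[W_s, W_t] = 1$. If the supports meet in $0$, the two local subgroups live over complementary idempotents and therefore commute. The converse is the main obstacle: assuming $e := \sigma(s)\sigma(t) \neq 0$, I must exhibit elements of $W_s$ and $W_t$ that fail to commute. Here I would pass to the corner submonoid $eSe$ (corresponding to the restriction groupoid), invoke Lemma~\ref{lem: existence of infinitesimals} to produce infinitesimals with domain below $e$, and pass through Lemma~\ref{lem: involutions and 3-cycles} to obtain two special involutions whose supports both lie below $e$ yet overlap enough that they do not commute; by construction these lie in $W_s$ and $W_t$ respectively, contradicting $[W_s, W_t] = 1$. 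Part (ii) then follows by the same transport argument as part (i), since elementwise commutation of subgroups is manifestly preserved by $\alpha$. The delicate point throughout is arranging the witnessing infinitesimals with exactly the required support constraints inside $eSe$ — this is precisely where $0$-simplifying-ness (minimality of the groupoid of ultrafilters) and the inclusion $\mathfrak{S}(S_i) \leq G_i$, which guarantees that all the special involutions produced actually belong to $G_i$, become indispensable.
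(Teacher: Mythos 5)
Your proposal is correct and follows exactly the route the paper indicates: the paper omits the proof of this lemma (all proofs in Section~\ref{sec: topological full groups in light of non-commutative stone duality} are omitted), but it points precisely at Lawson's algebraic characterization $W_\tau = G_{\sigma(\tau)}$ of local subgroups over supports of involutions, whose transport under $\alpha$ is the engine of your argument, just as Lemma~\ref{lem: characterization local subgroups} drives Matui's Lemma~\ref{lem: containment and intersection of supports}. The only (harmless) deviation is in part (ii), where you encode disjointness of supports by elementwise commutation $[W_s,W_t]=1$; the lighter characterization $W_s \cap W_t = G_{\sigma(s)\sigma(t)} = \{1\}$, which needs only one nontrivial special involution below $\sigma(s)\sigma(t)$ via (F1) rather than a non-commuting pair, does the same job.
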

	
	The analogue of Theorem~\ref{thm: matuispat} is given by:
	\begin{thm}[\cite{law17}, Proposition~3.6]\label{thm: spat monoid}
		Let $S_1,S_2$   be simple Tarski monoids and for $i \in \{1,2\}$ let $G_i \leq U(S_i)$ be such that $\mathfrak{S}(S_i) \leq G_i$. Let $\alpha \colon G_1 \to G_2$ be an isomorphism. Then there exists a homeomorphism $\varphi \colon X(S_1) \to X(S_2)$ such that $\varphi(gFg^{-1})=\alpha(g)\varphi(F)\alpha(g^{-1})$.
	\end{thm}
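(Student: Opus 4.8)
The goal is to construct a homeomorphism $\varphi \colon X(S_1) \to X(S_2)$ from an abstract group isomorphism $\alpha \colon G_1 \to G_2$, in exact parallel with the proof of Theorem~\ref{thm: matuispat}. The plan is to mimic that argument verbatim in the inverse monoid language, using the dictionary already assembled in this subsection. Recall that the points of $X(S_i)$ are ultrafilters on $E(S_i)$, and that the group $G_i$ acts on $X(S_i)$ by conjugation $F \mapsto gFg^{-1}$. What I want to produce, for each $F \in X(S_1)$, is a distinguished point $\varphi(F) \in X(S_2)$ intrinsically defined in terms of $\alpha$ and the conjugation action, and then verify it is a well-defined homeomorphism intertwining the two actions.

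First I would fix $F \in X(S_1)$ and define the set of involutions \emph{moving} $F$, namely $T(F) := \{\, g \in G_1 \mid g^2 = 1,\ gFg^{-1} \neq F \,\}$; this is the exact analogue of the set $T(x)$ in Theorem~\ref{thm: matuispat}. Using the support-operator dictionary (Proposition~3.16 of \cite{law17}, quoted above as $U_{\sigma(g)}=\overline{\{F \in X(S)\mid gFg^{-1}\neq F\}}$), the condition $gFg^{-1}\neq F$ is controlled by $\sigma(g)$. The candidate image should be the single ultrafilter picked out by the intersection
\begin{equation*}
P(F) := \bigcap_{g \in T(F)} U_{\sigma(\alpha(g))} \subseteq X(S_2),
\end{equation*}
and I would set $\varphi(F) := P(F)$. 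The two essential facts to establish are that $T(F)$ is rich enough to pin down a \emph{unique} point of $X(S_2)$, and that this point does not degenerate to the empty set. Here the class~F axioms for simple Tarski monoids (the preceding proposition, and the transported properties (F1)--(F3)) supply the abundance of special involutions with prescribed small support: given any nonzero $e \in E(S_1)$ there are involutions $\tau$ with $\sigma(\tau)\leq e$ lying in $\mathfrak S(S_1)\leq G_1$, so the supports $\sigma(g)$ for $g\in T(F)$ form a downward-directed family of clopen neighbourhoods shrinking to the point $F$. The isomorphism $\alpha$ together with Lemma~\ref{lem: containment and intersection of supports} (transported as Lemma~3.24 of \cite{law17}) guarantees that the images $\sigma(\alpha(g))$ respect containment and disjointness, hence their images $U_{\sigma(\alpha(g))}$ form a correspondingly nested family with the finite intersection property, so $P(F)$ is a single ultrafilter by compactness of the Stone space $X(S_2)$.

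Having defined $\varphi$, the remaining steps are: (a) symmetry — running the same construction with $\alpha^{-1}$ yields a map $\psi\colon X(S_2)\to X(S_1)$, and one checks $\psi = \varphi^{-1}$ so $\varphi$ is a bijection; (b) continuity — since basic open sets in $X(S_2)$ are the $U_f$ and since $\varphi$ is assembled from the clopen sets $U_{\sigma(\alpha(g))}$, one verifies that preimages of basic clopens are clopen, giving continuity of $\varphi$ and, by symmetry, of $\varphi^{-1}$, so $\varphi$ is a homeomorphism; (c) equivariance — the defining intersection is manifestly natural with respect to conjugation, and one computes directly that $T(gFg^{-1}) = g\,T(F)\,g^{-1}$, whence $\varphi(gFg^{-1}) = \alpha(g)\varphi(F)\alpha(g^{-1})$ follows by applying $\alpha$ inside the intersection. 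I expect the main obstacle to be step where one proves $P(F)$ is exactly a \emph{singleton ultrafilter} rather than merely a nonempty closed set: this is where one genuinely needs the interplay of the $0$-simplifying property (via Lemma~\ref{lem: existence of infinitesimals}, which produces infinitesimals with prescribed support inside any $eSe$) and fundamentality (which makes the $U(S)$-action on $E(S)$ faithful, so that distinct ultrafilters are separated by supports of involutions). Concretely, to separate two candidate points one must exhibit an involution whose transported support contains one but not the other, and this separation is precisely the content of the axioms (F1)--(F3) for class~F together with the containment/disjointness preservation of Lemma~3.24; assembling these into the uniqueness argument is the delicate part, exactly as the singleton claim was the crux of Theorem~\ref{thm: matuispat}.
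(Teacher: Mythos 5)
Your construction is essentially the paper's (and Lawson's, following Matui/Fremlin): the paper defines $\varphi(F)$ as the upward closure of $\{\sigma(\alpha(\tau))\mid \tau\in G_1,\ \tau^2=1,\ \sigma(\tau)\in F\}$, which is exactly the Stone dual of your intersection of clopens, and the verification steps you outline (singleton via separation by disjoint supports, bijectivity via $\alpha^{-1}$, equivariance by naturality) are the intended ones. The one deviation to watch: you index $T(F)$ by involutions with $gFg^{-1}\neq F$, whereas the cited machinery (Theorem~3.23 and Lemma~3.24 of \cite{law17}) is formulated for the \emph{support} condition $\sigma(g)\in F$, which is strictly weaker since $U_{\sigma(g)}$ is only the closure of the moved set; your smaller index set still suffices here, but only because Lemma~\ref{lem: existence of infinitesimals} (the $0$-simplifying hypothesis) produces involutions with arbitrarily small support that genuinely move $F$, a point you should make explicit rather than relying on the class-F axioms alone, which only control supports.
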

	
	The function $\varphi$ is given by $\varphi(F):=(\{\sigma(\tau)|\tau \in G,\tau^2=1, \sigma(t)\in F \})^{\uparrow}$.\footnote{It needs thus to be verified that $\varphi(F)$ is an ultrafilter of $E(S_2)$, that the arising map is indeed a homeomorphism and that $\varphi$ satisfies the required property.\\} The homeomorphism $\varphi$ of Stone spaces corresponds to an isomorphism of Boolean algebras $\gamma \colon E(S_1) \to E(S_2)$ with the following properties:
	\begin{enumerate}[(i)]
		\item $\gamma(\sigma(t))=\sigma(\alpha(t))$ for all involutions in $G_1$
		
		\item $\gamma(geg^{-1})=\alpha(g)\gamma(e)\alpha(g^{-1})$ for all $g \in G_1$ and $e \in E(S_1)$
	\end{enumerate}
	
	Putting together Proposition~\ref{prop: spatial iso induces isomorphism of monoids}, Proposition~\ref{prop: tarski monoids are p.f.} and Theorem~\ref{thm: spat monoid} concludes the proof of Theorem~\ref{thm: isom-monoid version}.
	
	\section{Significance for geometric group theory}\label{sec: Significance for geometric group theory}
	
	We have already talked about combinatorial aspects and growth of topological full groups and their finitely generated subgroups in preceding subsections, however, it is now that we push forward to the aspects that constitute the peculiarity of topological full groups in the context of group theory. Subsection~\ref{subs: the lef-property} starts with approximation properties as corollaries of the factorization result by Grigorchuk and Medynets reviewed in Subsection~\ref{subs: topological full groups in terms of kakutani-rokhlin partitions}. In Subsection~\ref{subs: extensively amenable actions and amenability of topological full groups} we discuss the amenability of $\mathfrak{T}(\varphi)$ for minimal subshifts. In Subsection~\ref{subs: liouville property, quantifying amenability and subgroups of intermediate growth} we take a quick look at \cite{mb14a} where the existence of topological full groups with the Liouville property was demonstrated. We conclude the section with a Subsection~\ref{subs: intermediate growth} on topological full groups as a supply of new examples of groups with intermediate growth.  
	
	\subsection{The LEF-property and infinite presentation}\label{subs: the lef-property}
	
	Some of the main corollaries of Theorem~\ref{thm: grig-med} (Theorem~4.7 of \cite{gm14}) have not yet been mentioned: Topological full groups of minimal Cantor systems are LEF-groups\footnote{See Appendix~\ref{app: lef-groups}.\\} and cannot be finitely presented.
	
	\begin{thm}[\cite{gm14}, Theorem~5.1]\label{thm: topfullgroup is LEF group}
		Let $(X,\varphi)$ be a minimal Cantor system. Then $\mathfrak{T}(\varphi)$ is a LEF-group.
	\end{thm}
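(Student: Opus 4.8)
The plan is to verify the defining property of LEF groups (see Appendix~\ref{app: lef-groups}) directly: for every finite subset $F \subseteq \mathfrak{T}(\varphi)$ I must produce a finite group $H$ together with a map $\phi \colon \mathfrak{T}(\varphi) \to H$ whose restriction to $F$ is injective and which satisfies $\phi(\gamma_1 \gamma_2) = \phi(\gamma_1)\phi(\gamma_2)$ whenever $\gamma_1, \gamma_2, \gamma_1\gamma_2 \in F$. The natural candidate for $H$ is the finite symmetric group $\mathfrak{S}_{U(\mathcal{A}_n)}$ on the finite set $U(\mathcal{A}_n)$ of atoms of a suitably chosen Kakutani-Rokhlin partition, and for $\phi$ the map $\Theta_n$ sending $\gamma$ to the permutation of atoms it induces. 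This map is well-defined precisely on those $\gamma$ that carry each atom of $\mathcal{A}_n$ onto an atom of $\mathcal{A}_n$, and by Theorem~\ref{thm: kakro} every element of $\mathfrak{T}(\varphi)$ has this property for $n$ large enough.

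First I would fix a nested sequence $\{\mathcal{A}_n\}_{n \in \mathbb{N}}$ with property (H) as in Theorem~\ref{thm: grig-med}. Given a finite $F$, I would use Theorem~\ref{thm: kakro} together with the refinement conditions to choose a single $n$ large enough that every $\gamma \in F$ lies in $\mathfrak{T}(\mathcal{A}_n)$ and hence permutes the atoms of $\mathcal{A}_n$. The decisive point is that multiplicativity then comes for free, requiring no estimate: if $\gamma_1$ and $\gamma_2$ each carry atoms onto atoms, so does $\gamma_1\gamma_2$, and $\Theta_n(\gamma_1\gamma_2) = \Theta_n(\gamma_1)\,\Theta_n(\gamma_2)$ holds by the very definition of the induced permutation. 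Thus $\Theta_n$ is automatically a genuine partial homomorphism on $F$ into the finite group $\mathfrak{S}_{U(\mathcal{A}_n)}$, for any admissible $n$; the remaining work is entirely concentrated in injectivity.

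The substance of the argument, and the step I expect to be the main obstacle, is injectivity of $\Theta_n$ on $F$ for $n$ large. A naive separation argument fails here, because the atoms of $\mathcal{A}_n$ do \emph{not} all shrink as $n \to \infty$ (only those near the tower tops and bottoms are controlled by the diameter condition), so two distinct homeomorphisms need not be distinguished simply by acting differently on some small atom. The correct mechanism is supplied by Theorem~\ref{thm: grig-med}(iii), which is tailored to exactly this: for $n$ beyond some $n_0'$, distinct elements of $F$ have distinct permutation parts $p_\gamma \in \mathcal{P}(\mathcal{A}_n)$. Concretely, if $\Theta_n(\gamma_1) = \Theta_n(\gamma_2)$ then $c := \gamma_1\gamma_2^{-1}$ fixes every atom of $\mathcal{A}_n$ setwise while having a cocycle $f_c$ bounded independently of $n$; since the towers have minimal height $h_{\mathcal{A}_n} \to \infty$ and distinct levels of a tower are disjoint, $c$ must restrict to the identity on every atom lying more than $\|f_c\|_\infty$ levels from a tower end, and the exceptional atoms near the ends shrink with $n$, so $c = \operatorname{id}$ once $n$ is large enough.

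Taking $n \geq n_0'$ so that both the admissibility of $F$ (every $\gamma \in F$ permutes atoms) and injectivity hold simultaneously, the partial homomorphism $\Theta_n \colon \mathfrak{T}(\varphi) \to \mathfrak{S}_{U(\mathcal{A}_n)}$ witnesses the LEF property for $F$; as $F$ was arbitrary, this establishes that $\mathfrak{T}(\varphi)$ is a LEF-group. The two technical points to pin down in a full write-up are securing a \emph{common} admissible $n$ for all finitely many elements of $F$ at once, and the disjoint-levels estimate making injectivity rigorous — both of which are precisely the content packaged by Theorem~\ref{thm: grig-med}.
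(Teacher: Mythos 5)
Your proposal founders on the well-definedness of the map $\Theta_n$, not on injectivity. Membership in $\mathfrak{T}(\mathcal{A}_n)$ (which is all that Theorem~\ref{thm: kakro} gives you) means that the cocycle $f_\gamma$ is constant on the atoms of $\mathcal{A}_n$ and bounded by $h_{\mathcal{A}_n}$ and that the level condition holds; it does \emph{not} mean that $\gamma$ carries atoms onto atoms. An atom $D_{k,i}=\varphi^k(A_i)$ with $(k,i)\in U_{\text{top}}$ is sent to $\varphi^{k+f}(A_i)$ with $k+f\geq h_i$, and $\varphi^{h_i}(A_i)$ is merely a subset of the base $B(\mathcal{A}_n)$ that in general meets several base atoms $D_{0,j}$ only partially. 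The simplest counterexample is $\gamma=\varphi$ itself: it never permutes the atoms of any nontrivial Kakutani--Rokhlin partition, so for any $F$ containing $\varphi$ your map is undefined no matter how large $n$ is. (Indeed, the elements that permute atoms for some $n$ form exactly the proper subgroup $\bigcup_n\mathcal{P}(\mathcal{A}_n)=\mathfrak{T}(\varphi)_{\{x\}}$ of Corollary~\ref{cor: locally finite subgroups}, which has trivial index map, whereas $I(\varphi)=1$.)

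The missing idea is precisely the Grigorchuk--Medynets decomposition $\gamma=p_\gamma r_\gamma$ of Theorem~\ref{thm: grig-med}: one maps $\gamma$ to its \emph{permutation part} $p_\gamma\in\mathcal{P}(\mathcal{A}_n)$ (obtained by redirecting atoms that would leave their tower back into the same tower) and discards the rotation part $r_\gamma$. Once this is done, your assessment of where the work lies is inverted: multiplicativity is no longer ``for free''. The assignment $\gamma\mapsto p_\gamma$ is not a restriction of a homomorphism; one must write $\gamma_1\gamma_2=(p_{\gamma_1}p_{\gamma_2})(p_{\gamma_2}^{-1}r_{\gamma_1}p_{\gamma_2}r_{\gamma_2})$ and verify, using the uniform bounds (a)--(c) of Theorem~\ref{thm: grig-med}(i) for all elements of $(F\cup\{\operatorname{id}\})^2$ at once, that $p_{\gamma_2}^{-1}r_{\gamma_1}p_{\gamma_2}$ is again an $n$-rotation, so that uniqueness of the decomposition (Theorem~\ref{thm: grig-med}(ii)) forces $p_{\gamma_1\gamma_2}=p_{\gamma_1}p_{\gamma_2}$. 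Injectivity on $F$ is then exactly Theorem~\ref{thm: grig-med}(iii), as you anticipated; that part of your plan is sound, and your disjoint-levels observation (that an element fixing every atom setwise with $\|f_c\|_\infty<h_{\mathcal{A}_n}$ must be the identity) is correct, but it cannot rescue a map that is not defined in the first place.
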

	\begin{proof}
		Let $F \subset \mathfrak{T}(\varphi)$ be a finite subset. There exists an $n_0 \in \mathbb{N}$, such that the following properties hold:
		\begin{enumerate}[(i)]
			\item For every $n\geq n_0$ and every pair of elements $\gamma_1,\gamma_2 \in (F \cup \{\operatorname{id}\})^2$ with $\gamma_1 \neq \gamma_2$ the decomposition satisfies $P_{\gamma_1} \neq P_{\gamma_2}$.
			\item There exists an $m \in \mathbb{N}$ such that for every $\gamma \in F$ and $n \geq n_0$  the supportive set of the $n$-rotation $R_\gamma$ is contained in $\{0,\dots,m \}$.
			\item $P_{\gamma,i}^{-1}(k)=P_{\gamma,j}^{-1}(k)$ for all $\gamma \in F$, $n\geq n_0$, $k \in \{-m,\dots,0,\dots,m \}$ and $i,j \in \{1,\dots,i_n\}$.
		\end{enumerate}
		
		Property (i) follows from Theorem~\ref{thm: grig-med}(iii), property (ii) follows from property (a) in Theorem~\ref{thm: grig-med}(i) and property (iii) follows from property (c) in Theorem~\ref{thm: grig-med}(i). Fix an $n \geq n_0$. Then by (i)-(iii) the homeomorphism $P_{\gamma_1}^{-1} R_{\gamma_2} P_{\gamma_1}$ is an $n$-rotation for every $\gamma_1,\gamma_2 \in F$. Consider the finite group $\mathcal{P}(\mathcal{A}_n)$ of $n$-permutations.
		The map $\alpha \colon (F \cup \{\operatorname{id}\})^2 \to \mathcal{P}(\mathcal{A}_n)$ defined by $\alpha\colon \gamma \mapsto P_\gamma$ is well-defined by uniqueness of the decomposition (see Theorem~\ref{thm: grig-med}(ii)). It is injective by property (i). Since $P_{\gamma_1}^{-1} R_{\gamma_2} P_{\gamma_1}$ is an $n$-rotation for every $\gamma_1,\gamma_2 \in F$, we have:
		\begin{equation*}
		\begin{gathered}
		\alpha(\gamma_1\gamma_2)=\alpha(P_{\gamma_1}R_{\gamma_1}P_{\gamma_2}R_{\gamma_2})=\alpha(P_{\gamma_1}(P_{\gamma_2}P_{\gamma_2}^{-1})R_{\gamma_1}P_{\gamma_2}R_{\gamma_2})=\\
		\alpha((P_{\gamma_1}P_{\gamma_2})(P_{\gamma_2}^{-1}R_{\gamma_1}P_{\gamma_2}R_{\gamma_2}))=P_{\gamma_1}P_{\gamma_2}
		\end{gathered}
		\end{equation*}
		and thus $\alpha$ is multiplicative on $F$. Let $\tilde{\alpha}$ be the extension of $\alpha$ to all of $\mathfrak{T}(\varphi)$ defined by $\tilde{\alpha}(\gamma)=\operatorname{id}$ for $\gamma \in \mathfrak{T}(\varphi)\setminus (F \cup \{\operatorname{id}\})^2$. Then $\tilde{\alpha}$ is the required map for the LEF-property.
	\end{proof}
	
	The LEF-property entails some immediate consequences\footnote{See \cite{cc10}, Chapter~7 for terms not defined here.\\}:
	
	\begin{cor}[\cite{gm14}, Corollary~2.7]
		Let $(X,\varphi)$ be a minimal Cantor system. Then its topological full group $\mathfrak{T}(\varphi)$ is
		\begin{enumerate}[(i)]
			\item an LEA-group i.e. locally embeddable into the class of amenable groups.
			
			\item sofic.
		\end{enumerate}
	\end{cor}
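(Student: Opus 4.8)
The plan is to derive both statements purely formally from Theorem~\ref{thm: topfullgroup is LEF group}, which already establishes that $\mathfrak{T}(\varphi)$ is an LEF-group, together with the elementary observation that local embeddability into a class of groups is monotone in that class. First I would recall from Appendix~\ref{app: lef-groups} the common shape of these notions: a group $G$ is \emph{locally embeddable into a class $\mathcal{C}$} if for every finite subset $F \subseteq G$ there is a group $H \in \mathcal{C}$ and a map $\psi \colon G \to H$ which is injective on $F$ and multiplicative on $F$, meaning $\psi(gh)=\psi(g)\psi(h)$ whenever $g,h,gh \in F$. The LEF-, LEA- and sofic properties are then the instances of this scheme where $\mathcal{C}$ is, respectively, the class $\mathcal{F}$ of finite groups, the class $\mathcal{A}$ of amenable groups, and the class of sofic groups (the last realized via almost-homomorphisms into finite symmetric groups equipped with the normalized Hamming metric).

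For (i), I would simply note that every finite group is amenable, so $\mathcal{F} \subseteq \mathcal{A}$. Hence any witness $(H,\psi)$ for the LEF-property of $\mathfrak{T}(\varphi)$ relative to a finite set $F$, produced by Theorem~\ref{thm: topfullgroup is LEF group}, is \emph{a fortiori} a witness for the LEA-property, because $H \in \mathcal{F} \subseteq \mathcal{A}$. Thus LEF implies LEA with no further work, and (i) follows.

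For (ii), the cleanest route is to use that the class of sofic groups is closed under local embeddability — a group locally embeddable into sofic groups is itself sofic — combined with the fact that finite groups are trivially sofic, being exactly the groups that embed exactly into a finite symmetric group. The LEF-property then already exhibits $\mathfrak{T}(\varphi)$ as locally embeddable into the sofic groups, whence it is sofic. Equivalently one may compose the two monotonicity steps, $\mathrm{LEF} \Rightarrow \mathrm{LEA} \Rightarrow \mathrm{sofic}$, invoking the soficity of amenable groups.

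The main obstacle is essentially illusory at the dynamical level: all the content sits in Theorem~\ref{thm: topfullgroup is LEF group}, and what remains is the bookkeeping of the inclusions $\mathcal{F} \subseteq \mathcal{A} \subseteq (\text{sofic})$ together with the compatibility of approximations under local embeddability. The only point demanding a little care — which I would state and attribute to the standard references in Appendix~\ref{app: lef-groups} rather than reprove — is the closure of the class of sofic groups under local embeddability, namely that a local embedding into sofic groups can be upgraded to a genuine sofic approximation by composing the maps $\psi$ with the approximating almost-homomorphisms into symmetric groups. This is where any real work would lie, but it is entirely general and independent of the minimal Cantor system $(X,\varphi)$.
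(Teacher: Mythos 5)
Your proposal is correct and matches the paper's (implicit) reasoning: the paper presents this corollary as an immediate consequence of Theorem~\ref{thm: topfullgroup is LEF group}, with exactly the bookkeeping you describe --- finite groups are amenable, so LEF trivially upgrades to LEA, and soficity follows since finite groups are sofic and the class of sofic groups is closed under local embeddability (the standard fact from \cite{cc10}, Chapter~7, which the paper also defers to). No gaps.
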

	
	In Theorem~\ref{thm: simpel}, $\mathfrak{T}(\varphi)'$ of a minimal Cantor system is simple. By Corollary~\ref{cor: no infinite, finitely presented simple LEF-group} there exists no infinite, finitely presented, simple LEF-group thus Theorem~\ref{thm: topfullgroup is LEF group} implies:
	\begin{cor}
		Let $(X,\varphi)$ be a minimal Cantor system. Then $\mathfrak{T}(\varphi)'$ is never finitely presented.
	\end{cor}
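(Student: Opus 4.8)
The plan is to realize $\mathfrak{T}(\varphi)'$ as an infinite, simple LEF-group and then invoke Corollary~\ref{cor: no infinite, finitely presented simple LEF-group}, which forbids finite presentability for groups of precisely this kind. Thus the proof reduces to verifying three properties of $\mathfrak{T}(\varphi)'$: simplicity, the LEF-property, and infinitude.

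Simplicity is immediate from Theorem~\ref{thm: simpel}. For the LEF-property, I would start from Theorem~\ref{thm: topfullgroup is LEF group}, which gives that the ambient group $\mathfrak{T}(\varphi)$ is LEF, and use the fact that the class of LEF-groups is closed under passage to subgroups; concretely, one restricts the local almost-embeddings guaranteed by the definition of LEF to the finite subsets of the subgroup (see \cite{cc10}). Applying this to $\mathfrak{T}(\varphi)' \leq \mathfrak{T}(\varphi)$ yields that $\mathfrak{T}(\varphi)'$ is itself a LEF-group.

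It remains to check that $\mathfrak{T}(\varphi)'$ is infinite. Here I would fix a point $x \in X$ together with a nested sequence $\{\mathcal{A}_n\}_{n \in \mathbb{N}}$ of Kakutani-Rokhlin partitions around $x$ satisfying property (H), and recall from Corollary~\ref{cor: ev.permut.} that $\mathfrak{T}(\varphi)_{\{x\}}' \cong \bigcup_{n} \bigoplus_{i} \mathfrak{A}_{h_i^n}$. Since property (H) forces $h_{\mathcal{A}_n} \to \infty$, this group contains alternating groups of arbitrarily large degree and is therefore infinite. As $\mathfrak{T}(\varphi)_{\{x\}} \leq \mathfrak{T}(\varphi)$, every commutator of elements of $\mathfrak{T}(\varphi)_{\{x\}}$ already lies in $\mathfrak{T}(\varphi)'$, so $\mathfrak{T}(\varphi)_{\{x\}}' \leq \mathfrak{T}(\varphi)'$ and hence $\mathfrak{T}(\varphi)'$ is infinite.

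With all three properties in hand, Corollary~\ref{cor: no infinite, finitely presented simple LEF-group} applies and shows that $\mathfrak{T}(\varphi)'$ cannot be finitely presented. I expect no genuine obstacle: the only points requiring a moment of care are the subgroup-inheritance of the LEF-property, which is a routine consequence of the definition rather than a deep fact, and the infinitude, which is already encoded in the structural description of the locally finite subgroups $\mathfrak{T}(\varphi)_{\{x\}}'$ furnished by Corollary~\ref{cor: ev.permut.}.
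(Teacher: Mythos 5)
Your proposal is correct and follows exactly the route the paper takes: simplicity of $\mathfrak{T}(\varphi)'$ from Theorem~\ref{thm: simpel}, the LEF-property inherited from $\mathfrak{T}(\varphi)$ via Theorem~\ref{thm: topfullgroup is LEF group}, and the non-existence of infinite, finitely presented, simple LEF-groups from Corollary~\ref{cor: no infinite, finitely presented simple LEF-group}. The only difference is that you make explicit the two routine verifications (subgroup-inheritance of LEF and infinitude via $\mathfrak{T}(\varphi)_{\{x\}}' \leq \mathfrak{T}(\varphi)'$) that the paper leaves implicit, both of which you handle correctly.
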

	
	\subsection{Amenability of topological full groups}\label{subs: extensively amenable actions and amenability of topological full groups}
	
	In the first version of \cite{gm14} it was conjectured that topological full groups of minimal Cantor systems are amenable.\footnote{For the definition of amenability of groups and their actions and characterizations see Appendix~\ref{app: amenability}.\\} This was confirmed by Juschenko and Monod in \cite{jm13}. Together with the results of Matui this produces an uncountable family of pairwise non-isomorphic, finitely generated, infinite, simple, amenable groups. Examples of such groups were previously unknown! 
	
	Amenability of $\mathfrak{T}(\varphi)$ is obtained via an embedding into the wobbling group $\mathbf{W}(\mathbb{Z})$:
	\begin{defi}
		Let $(X,d)$ be a metric space. The \emph{wobbling group} $\mathbf{W}(X)$ is the group of all bijections $f$ of $X$ such that $\sup\{d(x,f(x))|x \in X \}<\infty$ holds.
	\end{defi}
	While the wobbling group $\mathbf{W}(\mathbb{Z})$ itself is not amenable,\footnote{One can show there is a copy of $\mathbb{Z}/2\mathbb{Z}\ast \mathbb{Z}/2\mathbb{Z} \ast \mathbb{Z}/2\mathbb{Z}$ in $\mathbf{W}(\mathbb{Z})$.\\} it allows for the following amenability criterion:
	
	\begin{thm}\label{thm: amen-crit}
		Let $G$ be a group that admits a morphism $\iota \colon G \to \mathbf{W}(\mathbb{Z})$. If the stabilizer subgroup $G_\mathbb{N}$ of the action induced by $\iota$ and the natural action of $\mathbf{W}(\mathbb{Z})$ is amenable, then the group $G$ is amenable. 
	\end{thm}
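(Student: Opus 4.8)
The statement is the amenability criterion of Juschenko and Monod, and the plan is to deduce amenability of $G$ from \emph{extensive amenability} of the wobbling action $\mathbf{W}(\mathbb{Z}) \curvearrowright \mathbb{Z}$, the amenability of the half-line stabilizer $G_{\mathbb{N}}$ entering only at the very end. Throughout I would write $\mathcal{P}_f(\mathbb{Z})$ for the group of finite subsets of $\mathbb{Z}$ under symmetric difference $\triangle$, i.e. $\mathcal{P}_f(\mathbb{Z}) \cong \bigoplus_{\mathbb{Z}} \mathbb{Z}/2\mathbb{Z}$, on which $G$ acts by permuting coordinates through $\iota$ via $g\cdot E := \iota(g)(E)$. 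The relevant subgroup is the setwise stabilizer $G_{\mathbb{N}} = \{g \in G : \iota(g)(\mathbb{N}) = \mathbb{N}\}$, which is exactly the datum assumed amenable.

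First I would produce the cocycle that encodes the half-line. Since each $\iota(g)$ is a bijection of $\mathbb{Z}$ of bounded displacement, the set $c(g) := \iota(g)(\mathbb{N}) \triangle \mathbb{N}$ is \emph{finite}, so that $c \colon G \to \mathcal{P}_f(\mathbb{Z})$ is well defined. A direct computation using $\iota(gh)=\iota(g)\iota(h)$ gives the cocycle identity $c(gh) = \iota(g)(c(h)) \triangle c(g)$, so that
\[
 g \ast E := \iota(g)(E) \triangle c(g)
\]
defines an affine action of $G$ on $\mathcal{P}_f(\mathbb{Z})$ (conjugate, via $E \mapsto E \triangle \mathbb{N}$, to the plain $G$-action on the subsets of $\mathbb{Z}$ differing from $\mathbb{N}$ by a finite set). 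The key bookkeeping fact is that the orbit of $\emptyset$ is $\{c(g):g\in G\}$ and its stabilizer is precisely $\{g : c(g)=\emptyset\} = G_{\mathbb{N}}$, so this orbit is $G$-equivariantly identified with $G/G_{\mathbb{N}}$.

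The analytic heart is to supply an invariant mean. The genuinely hard input, which I would import from the theory of extensive amenability of Juschenko--Nekrashevych--de la Salle, is that $\mathbf{W}(\mathbb{Z}) \curvearrowright \mathbb{Z}$ is extensively amenable: there is a mean $m$ on $\mathcal{P}_f(\mathbb{Z})$ invariant \emph{both} under the coordinate action of $\mathbf{W}(\mathbb{Z})$ and under all translations $E \mapsto E \triangle F$ by finite sets $F$. This step rests on the recurrence of the simple random walk on $\mathbb{Z}$. Because $G$ acts only through $\iota$, the very same mean $m$ is invariant under the coordinate $G$-action and under the translations $E \mapsto E \triangle c(g)$; composing these two invariances shows $m$ is invariant for the affine action $\ast$, and this invariance is inherited by $G$ regardless of whether $\iota$ is injective.

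Finally I would run the Juschenko--Monod inflation argument to convert this into amenability of $G$. Having an $\ast$-invariant mean on $\mathcal{P}_f(\mathbb{Z})$ together with the amenability of the stabilizer $G_{\mathbb{N}}$ of $\emptyset$ yields a left-invariant mean on $\ell^\infty(G)$: concretely one builds a $G$-equivariant operator $\ell^\infty(G)\to\ell^\infty(\mathcal{P}_f(\mathbb{Z}))$ by averaging a function along the cosets of $G_{\mathbb{N}}$ (the fibres of $g\mapsto c(g)$) using a $G_{\mathbb{N}}$-invariant mean, and then applies $m$. I expect the main obstacle to sit exactly here and in the preceding paragraph: establishing the extensive amenability of $\mathbf{W}(\mathbb{Z}) \curvearrowright \mathbb{Z}$ (the recurrence argument that manufactures a translation-invariant mean on $\mathcal{P}_f(\mathbb{Z})$), and then the delicate verification that the extensively amenable mean genuinely sees the orbit $G/G_{\mathbb{N}}$ so that the inflated functional is a \emph{mean} (value $1$ on the constant function) rather than being annihilated on the complement of that orbit --- it is precisely the translation-invariance of $m$, and not merely its $G$-invariance, that has to be exploited to secure this.
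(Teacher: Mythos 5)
Your overall architecture matches the paper's (which follows de Cornulier's exposition of Juschenko--Monod): the cocycle $c(g)=\iota(g)(\mathbb{N})\,\Delta\,\mathbb{N}$, the induced affine action of $G$ on $\mathcal{P}_f(\mathbb{Z})$, Theorem~\ref{thm: wobl} (extensive amenability of $\mathbf{W}(\mathbb{Z})\curvearrowright\mathbb{Z}$) as the analytic input providing an invariant mean for that action, and then the principle ``amenable action with amenable stabilizers implies amenable group'' (Theorem~\ref{thm: amenact}). But there is a genuine gap in your last step. The affine action of $G$ on $\mathcal{P}_f(\mathbb{Z})$ is \emph{not transitive}; its point stabilizers are the groups $G_{A\Delta\mathbb{N}}$ for $A\in\mathcal{P}_f(\mathbb{Z})$, and your hypothesis only controls the one with $A=\emptyset$. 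Your proposed inflation $\ell^\infty(G)\to\ell^\infty(\mathcal{P}_f(\mathbb{Z}))$ is supported on the single orbit $G\ast\emptyset\cong G/G_\mathbb{N}$, so composing with $m$ yields a mean on $G$ only if $m$ charges that orbit --- and it need not. Every mean witnessing extensive amenability satisfies $m(\{A\mid B\subseteq A\})=1$ for all finite $B$ and is therefore concentrated ``at infinity''; if for instance $G=G_\mathbb{N}$, the orbit of $\emptyset$ is the singleton $\{\emptyset\}$ and has measure $0$. Translation-invariance of $m$ cannot rescue this: $\mathcal{P}_f(\mathbb{Z})$ is countable, and a finitely additive translation-invariant mean can perfectly well annihilate a set together with all of its translates.

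The paper closes this gap with a bootstrapping step that your proposal omits entirely: one first proves that $G_{k+\mathbb{N}}$ is amenable for \emph{every} $k\in\mathbb{Z}$, by combining Lemma~\ref{lem: extamensubgrp} (extensive amenability of $\mathbf{W}(\mathbb{Z})\curvearrowright\mathbb{Z}$ passes to subgroups acting on their orbits in $\mathbb{Z}$) with Theorem~\ref{thm: amenact}: if some $G_{k+\mathbb{N}}$ were non-amenable, its amenable actions on its $\mathbb{Z}$-orbits would force its point stabilizers in $\mathbb{Z}$ to be non-amenable, but $(G_{k+\mathbb{N}})_k\leq G_{k+1+\mathbb{N}}$ and $(G_{k+\mathbb{N}})_{k-1}\leq G_{k-1+\mathbb{N}}$, and iterating contradicts the amenability of $G_\mathbb{N}$. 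Conjugating by finitely supported permutations then gives amenability of $G_{A\Delta\mathbb{N}}$ for every finite $A$, i.e.\ of \emph{all} point stabilizers of the affine action, and only then does Theorem~\ref{thm: amenact} apply to the non-transitive amenable action on $\mathcal{P}_f(\mathbb{Z})$ to yield amenability of $G$. Note that extensive amenability of the wobbling action is thus used twice, in two different ways --- once globally on $\mathcal{P}_f(\mathbb{Z})$ and once restricted to subgroups and their suborbits of $\mathbb{Z}$ --- and it is the second use that your argument is missing.
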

	
	The demonstration of Theorem~\ref{thm: amen-crit} requires some set-up: Let $X$ be a set. Denote by $\mathcal{P}_f(X)$ the set of finite subsets of $X$.\footnote{By definition $\emptyset \in \mathcal{P}_f(X)$.\\} Note, that $\mathcal{P}_f(X)$ is a discrete abelian group with the symmetric difference $\Delta$ as the operation. Note that $\mathcal{P}_f(X)$ is naturally isomorphic to $\bigoplus_{X} \mathbb{Z}/2\mathbb{Z}$ and is subgroup of the group of all subsets of $X$ with $\Delta$ as operation, which is naturally isomorphic to $(\mathbb{Z}/2\mathbb{Z})^X$. The dual group $\widehat{\mathcal{P}_f(X)}$ of characters of $\mathcal{P}_f(X)$ is isomorphic to $(\mathbb{Z}/2\mathbb{Z})^X$ and the canonical duality pairing $\widehat{\mathcal{P}_f(X)}\times \mathcal{P}_f(X) \to \mathbb{C}$ computes for $z=\{z_x\}_{x \in X} \in (\mathbb{Z}/2\mathbb{Z})^X$ and $A \in \mathcal{P}_f(X)$ as
	\begin{equation*}
	\langle z,A \rangle = \exp(\mathrm{i}\pi \sum_{x \in A} z_x)
	\end{equation*}
	The normalized Haar measure $\lambda$ on $\widehat{\mathcal{P}_f(X)}$ corresponds to the symmetric Bernoulli measure on $(\mathbb{Z}/2\mathbb{Z})^X$ and gives rise to the Hilbert space $\mathrm{L}^2(\widehat{\mathcal{P}_f(X)},\lambda)$. The action of a group $G$ on a set $X$ naturally induces an action of $G$ on $\bigoplus_{X} \mathbb{Z}/2\mathbb{Z}$ by permutation of summands producing the semi-direct product $\mathcal{P}_f(X)\rtimes G \cong (\mathbb{Z}/2\mathbb{Z}) \wr_X G$ which is termed the \emph{permutational wreath product} or \emph{lamplighter group of $\mathbb{Z}/2\mathbb{Z}$ and $G$}\footnote{This is a wide generalization of the lamplighter group $L$ from Definition~\ref{defi: lamp}.\\}. The permutational wreath product admits  action on $\mathcal{P}_f(X)$ by $(A,g) \cdot B:= g(A) \Delta B$ for all $A,B \in \mathcal{P}_f(X)$ and $g \in G$. The proof of Theorem~\ref{thm: amen-crit} requires two fundamental ingredients, Theorem~\ref{thm: amenact} and the fact that the action $\mathcal{P}_f(\mathbb{Z})\rtimes \mathbf{W}(\mathbb{Z}) \curvearrowright \mathcal{P}_f(\mathbb{Z})$ is amenable. This fact is the technical core of \cite{jm13} and motivated the following definition:
	\begin{defi}[\cite{jmms18}, Definition~1.1]
		Let $G$ be a group that acts on a set $X$ via $\alpha \colon G \curvearrowright X$. The action is called \emph{extensively amenable}\footnote{Its name comes from the fact that it is stable under a specific notion of extension of actions (\cite{jmms18}, Proposition~2.4 \& Corollary~2.5).\\} if there exists a $G$-invariant mean $m$ on $\mathcal{P}_f(X)$ such that
		\begin{equation*}
		\mu(\{A \in \mathcal{P}_f(X) | B \subseteq A \})=1 \text{ for every }B \in \mathcal{P}_f(X).
		\end{equation*}
	\end{defi}
	
	This definition is justified by the following lemma:
	
	\begin{lem}[\cite{jns16}, Lemma~2.7]\label{lem: extamen}
		Let $G$ be a group that acts transitively on a set $X$ and let $y \in X$. Then the following are equivalent:
		\begin{enumerate}[(i)]
			\item The action of $G$ on $X$ is extensively amenable.
			
			\item There exists a net of unit vectors $\{f_n\}$ of in $\mathrm{L}^2(\widehat{\mathcal{P}_f(X)},\lambda)$ such that $\|g\cdot f_n-f_n\|_2\to 0$ and $\|f_n|_{\{ \{z_x\}_{x \in X}|z_y=0 \} }\|_2 \to 1$.
			
			\item The action of $\mathcal{P}_f(X)\rtimes G$ on $\mathcal{P}_f(X)$ is amenable.
		\end{enumerate}
	\end{lem}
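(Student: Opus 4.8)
The plan is to route everything through the Plancherel identification of $\mathrm{L}^2(\widehat{\mathcal{P}_f(X)},\lambda)$ with $\ell^2(\mathcal{P}_f(X))$, under which (ii) and (iii) become two faces of one statement about almost‑invariant vectors, while (i) is tied to them by a mean/vector dictionary. First I would record that dictionary. Since $\mathcal{P}_f(X)\cong\bigoplus_X\mathbb{Z}/2\mathbb{Z}$ is a discrete abelian group with dual $\widehat{\mathcal{P}_f(X)}\cong(\mathbb{Z}/2\mathbb{Z})^X$, the characters $\{\chi_A:A\in\mathcal{P}_f(X)\}$ form an orthonormal basis of $\mathrm{L}^2(\widehat{\mathcal{P}_f(X)},\lambda)$, and $\chi_A\mapsto\delta_A$ is a unitary $\mathrm{L}^2(\widehat{\mathcal{P}_f(X)},\lambda)\xrightarrow{\ \sim\ }\ell^2(\mathcal{P}_f(X))$. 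Under it the Koopman representation of $G$ (coordinate permutation of $(\mathbb{Z}/2\mathbb{Z})^X$, which preserves $\lambda$ and fixes the constant function) becomes the permutation representation $\delta_A\mapsto\delta_{g(A)}$, and multiplication by the character $\chi_C$ becomes the translation $\delta_A\mapsto\delta_{A\Delta C}$. Hence the unitary representation of $\mathcal{P}_f(X)\rtimes G$ on $\mathrm{L}^2(\widehat{\mathcal{P}_f(X)},\lambda)$ is exactly the quasi-regular representation of the transitive affine action of $\mathcal{P}_f(X)\rtimes G$ on $\mathcal{P}_f(X)=(\mathcal{P}_f(X)\rtimes G)/G$.

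With this in hand, (ii)$\Leftrightarrow$(iii) is the soft part. The action in (iii) is transitive with point stabiliser $G$, so it is amenable if and only if its quasi-regular representation $\ell^2(\mathcal{P}_f(X))$ almost has invariant unit vectors (co-amenability of $G$ in $\mathcal{P}_f(X)\rtimes G$). I would translate the $\mathcal{P}_f(X)$-part of almost-invariance into the half-space condition: since $\chi_{\{y\}}=1-2\cdot\mathbf{1}_{\{z_y=1\}}$, one has $\|\chi_{\{y\}}f_n-f_n\|_2=2\,\|f_n\,\mathbf{1}_{\{z_y=1\}}\|_2$, so almost-invariance under multiplication by $\chi_{\{y\}}$ is precisely $\|f_n|_{\{z_y=0\}}\|_2\to1$. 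For (ii)$\Rightarrow$(iii) I would bootstrap from the single coordinate $y$ to all of $\mathcal{P}_f(X)$ using transitivity: choosing $g$ with $g(y)=x$ and using $g\,\chi_{\{y\}}\,g^{-1}=\chi_{\{x\}}$ together with $G$-almost-invariance yields almost-invariance under each $\chi_{\{x\}}$, hence, by taking products, under every $\chi_C$; combined with the Koopman $G$-almost-invariance this makes the $f_n$ almost-invariant for the whole group, which is (iii). The converse merely restricts (iii)'s almost-invariant vectors to $G$ and to $\chi_{\{y\}}$.

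The substantive content is (i)$\Leftrightarrow$(ii). For (ii)$\Rightarrow$(i) I would push the vectors down to a mean: the probability measures $|f_n|^2\lambda$ on $(\mathbb{Z}/2\mathbb{Z})^X$ are asymptotically invariant under the measure-preserving coordinate action of $G$ and concentrate on $\{z_y=0\}$, so a weak-$*$ cluster point is a $G$-invariant mean $\bar m$ giving full mass to $\{Z:y\notin Z\}$, hence by transitivity to $\{Z:Z\cap E=\varnothing\}$ for every finite $E$; this is the extensive-amenability mean in the complementary ($\mathcal{P}(X)$) formulation, and I would invoke the standard equivalence of the $\mathcal{P}(X)$- and $\mathcal{P}_f(X)$-versions of the concentration condition. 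For (i)$\Rightarrow$(ii) I would first convert the $G$-invariant concentrated mean into a Namioka/Day net of finitely supported probability measures $\nu_n$ on $\mathcal{P}_f(X)$ that are asymptotically $G$-invariant in $\ell^1$ and asymptotically concentrated, arranging that every $A$ in the support contains $y$; then set $f_n=\sum_A\nu_n(A)^{1/2}f_A$ with the cylinder vectors $f_A=2^{|A|/2}\mathbf{1}_{\{Z:Z\cap A=\varnothing\}}$, each a unit vector supported inside $\{z_y=0\}$, so that $f_n$ is automatically half-space concentrated.

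The hard part is exactly the normalisation and invariance estimate for these $f_n$. Because the cylinder vectors are not orthogonal — one computes $\langle f_A,f_{A'}\rangle=2^{-|A\Delta A'|/2}$ — the quantities $\|f_n\|_2$ and $\|\sigma_g f_n-f_n\|_2$ are governed by an unbounded Gram kernel, so the naive operator-norm bound diverges. Positivity of the kernel gives $\|f_n\|_2^2\ge1$ for free, so normalisation itself is harmless; the real point is to show $\|\sigma_g f_n-f_n\|_2\to0$, for which I expect one must use the concentration of $\nu_n$ on very large sets (making the overlaps $2^{-|A\Delta A'|/2}$ negligible except when $A\approx A'$) in tandem with the $\ell^1$-almost-invariance $\sum_A|\nu_n(g^{-1}A)^{1/2}-\nu_n(A)^{1/2}|^2\le\|g_*\nu_n-\nu_n\|_1\to0$. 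Controlling this interaction between the Bernoulli geometry of the $f_A$ and the asymptotic $G$-invariance of $\nu_n$ — equivalently, exploiting the amenability of the abelian group $\mathcal{P}_f(X)$ (the hyperfiniteness of the cofinite equivalence relation on the Bernoulli space) to smooth the atoms of $\nu_n$ into genuine $\mathrm{L}^2(\lambda)$-vectors without destroying $G$-invariance or half-space concentration — is the step I expect to demand the most care, and it is where the concentration hypothesis of (i) is used essentially rather than formally.
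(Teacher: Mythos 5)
The paper does not prove this lemma; it is quoted from \cite{jns16}, so there is no internal proof to compare against. Your architecture (Plancherel identification of $\mathrm{L}^2(\widehat{\mathcal{P}_f(X)},\lambda)$ with $\ell^2(\mathcal{P}_f(X))$, the identity $\|\chi_{\{y\}}f-f\|_2=2\|f\mathbf{1}_{\{z_y=1\}}\|_2$, the conjugation $g\chi_{\{y\}}g^{-1}=\chi_{\{g(y)\}}$ plus transitivity to bootstrap from one coordinate to all of $\mathcal{P}_f(X)$, and co-amenability of $G$ in $\mathcal{P}_f(X)\rtimes G$ for (ii)$\Leftrightarrow$(iii)) is the right one and that part is complete. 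But the proposal has two genuine gaps, and you have correctly located only one of them.

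First, (i)$\Rightarrow$(ii): your vectors $f_n=\sum_A\nu_n(A)^{1/2}f_A$ put the square root in the wrong place, and the Gram-kernel obstruction you describe is real and does not go away under the stated hypotheses (nothing forces the supports of the $\nu_n$ to be spread out in the Hamming metric). The fix is to average first and take the square root afterwards: set $\tilde\nu_n:=\sum_A\nu_n(A)\,u_{\mathcal{P}(A)}$, where $u_{\mathcal{P}(A)}$ is the uniform (Haar) measure on the finite subgroup $\mathcal{P}(A)\leq\mathcal{P}_f(X)$, and let $f_n$ be the inverse Fourier transform of $\tilde\nu_n^{1/2}$. Then $\|f_n\|_2=1$ exactly; $\|g\cdot f_n-f_n\|_2^2\leq\|g_*\tilde\nu_n-\tilde\nu_n\|_1\leq\|g_*\nu_n-\nu_n\|_1$ because $g_*u_{\mathcal{P}(A)}=u_{\mathcal{P}(gA)}$ and $\|\sqrt u-\sqrt v\|_2^2\leq\|u-v\|_1$; and $\|\chi_{\{y\}}f_n-f_n\|_2^2\leq\|\delta_{\{y\}}\ast\tilde\nu_n-\tilde\nu_n\|_1\leq 2\,\nu_n(\{A: y\notin A\})\to 0$, since $\delta_C\ast u_{\mathcal{P}(A)}=u_{\mathcal{P}(A)}$ whenever $C\subseteq A$. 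Every estimate is an $\ell^1$ estimate and no Gram matrix appears. (At the level of means this is the statement that $\phi\mapsto m\bigl(A\mapsto 2^{-|A|}\sum_{B\subseteq A}\phi(B)\bigr)$ is a $\mathcal{P}_f(X)\rtimes G$-invariant mean, which gives (i)$\Rightarrow$(iii) directly.)

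Second, (ii)$\Rightarrow$(i): the weak-$*$ cluster point of $|f_n|^2\lambda$ is a $G$-invariant mean on $\mathcal{P}(X)=(\mathbb{Z}/2\mathbb{Z})^X$, not on $\mathcal{P}_f(X)$, and the ``standard equivalence of the $\mathcal{P}(X)$- and $\mathcal{P}_f(X)$-versions'' you invoke is not a formal reduction --- there is no $G$-equivariant positive unital map $\ell^\infty(\mathcal{P}_f(X))\to\ell^\infty(\mathcal{P}(X))$ available without already solving a problem of the same type, so as written this step is circular. The clean route is through (iii): by Day--Namioka take probability measures $\nu_n$ on $\mathcal{P}_f(X)$ that are $\ell^1$-almost invariant for the whole affine action; translation-almost-invariance by $\delta_{\{x\}}$ forces $\nu_n(\{A: x\in A\})\to\tfrac12$, and pushing $\nu_n^{\otimes k}$ forward under the $G$-equivariant map $(A_1,\dots,A_k)\mapsto A_1\cup\dots\cup A_k$ boosts this to $1-2^{-k}$ while preserving $G$-almost-invariance; a diagonal weak-$*$ limit then yields the $G$-invariant mean on $\mathcal{P}_f(X)$ giving full weight to $\{A: B\subseteq A\}$ for every finite $B$. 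With these two repairs the proof closes; without them, both nontrivial implications remain open.
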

	
	The most intricate step in the proof of amenability for $\mathfrak{T}(\varphi)$ from \cite{jm13} is the following result:
	\begin{thm}[\cite{jm13}, Theorem~2.1]\label{thm: wobl}
		The action of the wobbling group $\mathbf{W}(\mathbb{Z})$ on $\mathbb{Z}$ satisfies condition (ii) from Lemma~\ref{lem: extamen}.
	\end{thm}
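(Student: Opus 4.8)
The plan is to pass to the Fourier/group-algebra picture, reduce the statement to recurrence of random walks on $\mathbb{Z}$, and then build the required almost-invariant vectors from a lamplighter random walk. Since $\widehat{\mathcal{P}_f(\mathbb{Z})}$ is the Pontryagin dual of the discrete group $\mathcal{P}_f(\mathbb{Z}) \cong \bigoplus_{\mathbb{Z}} \mathbb{Z}/2\mathbb{Z}$, the characters $\{\chi_A\}_{A \in \mathcal{P}_f(\mathbb{Z})}$ form an orthonormal basis of $\mathrm{L}^2(\widehat{\mathcal{P}_f(\mathbb{Z})},\lambda)$, giving a unitary identification with $\ell^2(\mathcal{P}_f(\mathbb{Z}))$ under which $g \in \mathbf{W}(\mathbb{Z})$ acts by the permutation $\chi_A \mapsto \chi_{g(A)}$. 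Writing $f_n = \sum_A c_A \chi_A$, a short computation using $\mathbf{1}_{\{z_y=0\}} = \tfrac12(1+\chi_{\{y\}})$ and $\chi_{\{y\}}\chi_A = \chi_{A \triangle \{y\}}$ shows $\|f_n|_{\{z_y=0\}}\|_2^2 = \tfrac12 + \tfrac12\,\mathrm{Re}\langle \chi_{\{y\}} f_n, f_n\rangle$, so the concentration requirement in condition (ii) is exactly asymptotic invariance of $f_n$ under the single lamp-flip $A \mapsto A \triangle \{y\}$. Thus it suffices to produce unit vectors $f_n \in \ell^2(\mathcal{P}_f(\mathbb{Z}))$ that are asymptotically invariant both under the permutations $A \mapsto g(A)$ for $g \in \mathbf{W}(\mathbb{Z})$ and under a single flip; by Lemma~\ref{lem: extamen} this is equivalent to extensive amenability, but I would aim at the analytic condition directly.

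First I would reduce to finitely many $g$. Any finite $F \subset \mathbf{W}(\mathbb{Z})$ consists of bijections of uniformly bounded displacement $D$; the subgroup $H = \langle F\rangle$ acts on the orbit of $0$ with a Schreier graph that is a bounded-degree graph on $\mathbb{Z}$ with all edges of length $\le D$, hence quasi-isometric to a subgraph of $\mathbb{Z}$. Such a graph has at most linear growth and is therefore recurrent for the simple random walk (e.g.\ by the Nash--Williams criterion). This recurrence of $\mathbb{Z}$ is the single dynamical input.

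Next I would run the simple random walk $S_0 = 0, S_1, \dots, S_n$ and record the lamplighter configuration $C_n := \{x \in \mathbb{Z} : \#\{k \le n : S_k = x\} \text{ is odd}\} \in \mathcal{P}_f(\mathbb{Z})$, and set $f_n(A) := \sqrt{\mathbb{P}(C_n = A)}$, a unit vector in $\ell^2(\mathcal{P}_f(\mathbb{Z}))$. Each invariance to be checked collapses to a Hellinger estimate: since $(g f_n)(A) = \sqrt{\mathbb{P}(g(C_n)=A)}$ and $(\mathrm{flip}_y f_n)(A)=\sqrt{\mathbb{P}(C_n = A\triangle\{y\})}$, one obtains
\[
\|g f_n - f_n\|_2^2 = 2\,H^2\big(\mathcal{L}(C_n),\mathcal{L}(g(C_n))\big), \qquad \|\mathrm{flip}_y f_n - f_n\|_2^2 = 2\,H^2\big(\mathcal{L}(C_n),\mathcal{L}(C_n \triangle \{y\})\big),
\]
where $H$ denotes the Hellinger distance. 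Everything thus reduces to showing these two distances tend to $0$.

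The heart of the argument — and the step I expect to be the main obstacle — is the recurrence-driven estimate that both Hellinger distances vanish. The mechanism is that recurrence forces every site in the bulk of the walk's range to be visited a number of times tending to infinity, so the parity of its local time equidistributes to a nearly fair, nearly independent coin; on the bulk the law of $C_n$ is therefore asymptotically insensitive both to toggling one coordinate and to rearranging coordinates by a map of bounded displacement. The only genuine contribution comes from the thin boundary layer near the two extrema of the walk, which are randomly located on the scale $\sqrt{n}$ and visited few times; controlling this boundary contribution to the Hellinger distance \emph{uniformly} over all $g$ with displacement $\le D$ is precisely the technical core of \cite{jm13}, and is where recurrence is used decisively. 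Granting this, all the distances tend to $0$, so $\|g f_n - f_n\|_2 \to 0$; and since a fixed $y$ lies in the bulk with probability tending to $1$, the flip-at-$y$ invariance yields $\|f_n|_{\{z_y=0\}}\|_2 \to 1$, establishing condition (ii).
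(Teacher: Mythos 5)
Your preparatory reductions are all sound and agree with the standard route: the Fourier identification of $\mathrm{L}^2(\widehat{\mathcal{P}_f(\mathbb{Z})},\lambda)$ with $\ell^2(\mathcal{P}_f(\mathbb{Z}))$, the computation showing that the concentration requirement $\|f_n|_{\{z_y=0\}}\|_2\to 1$ is exactly asymptotic invariance under the single flip $A\mapsto A\,\Delta\,\{y\}$ (so that condition (ii) of Lemma~\ref{lem: extamen} reduces to almost-invariant vectors for $\mathcal{P}_f(\mathbb{Z})\rtimes\mathbf{W}(\mathbb{Z})\curvearrowright\mathcal{P}_f(\mathbb{Z})$), the reduction to finitely many $g$ of displacement at most $D$, and the Nash--Williams argument that the associated Schreier graphs on $\mathbb{Z}$ are recurrent. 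Bear in mind that this text does not reprove the theorem — it is quoted from \cite{jm13} — so the comparison is with that proof and with the p.i.r.\ reformulation from \cite{jns16} recorded in Subsection~\ref{subs: extensively amenable actions and amenability of topological full groups}.

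The gap is that the entire content of the theorem sits inside the step you explicitly grant yourself. You propose $f_n(A)=\sqrt{\mathbb{P}(C_n=A)}$ for the local-time-parity configuration $C_n$ of a simple random walk and then assume the two Hellinger distances vanish; but this law is not of product form, so neither distance factorizes, and the claim that $\mathcal{L}(C_n)$ is asymptotically insensitive to an \emph{arbitrary} bounded-displacement permutation of $\mathbb{Z}$ amounts to showing that the parity field of one-dimensional local times is asymptotically an i.i.d.\ fair-coin field on the bulk of the range (and, for the flip at $y$, that the parity at $y$ is asymptotically unbiased \emph{conditionally} on the rest of the configuration). These are delicate local-limit statements about correlated local times, nowhere established in your sketch and substantially harder than the theorem itself. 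The proof in \cite{jm13} (equivalently, the recurrence criterion of \cite{jns16}) avoids this entirely by taking $f_n$ of product (p.i.r.) form, $f_n(z)=\prod_x f_{n,x}(z_x)$ with a profile $\theta_n\colon\mathbb{Z}\to[0,\pi/4]$: then $\|gf_n-f_n\|_2^2$ collapses to $\sum_x(\theta_n(x)-\theta_n(g^{-1}x))^2$, and one only needs a finitely supported profile with $\theta_n(y)$ extremal and vanishing Dirichlet energy — an explicit tent function on $[-n,n]$, whose energy $\sim 1/n$ is precisely the vanishing capacity of a point in the recurrent graph $\mathbb{Z}$. Replacing your step 4 by that construction turns the sketch into a complete proof; as written, the decisive estimate is deferred rather than discharged.
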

	
	Extensive amenability of group actions is a property considerably stronger than amenability of group actions:
	
	\begin{lem}[\cite{jmms18}, Lemma~2.1]
		Let $G$ be a group that acts on a set $X$. If $G$ is amenable, the action is extensively amenable and if the action is extensively amenable and $X$ is non-empty the action is amenable.
	\end{lem}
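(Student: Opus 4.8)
The statement splits into two independent implications, and the heart of both is a transfer principle between means on $X$ and means on $\mathcal{P}_f(X)$. The plan is to prove them separately, and the first sentence of each part makes clear which implication is being handled.

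For the implication ``extensively amenable $\Rightarrow$ amenable'' (assuming $X\neq\emptyset$), I would exhibit an explicit $G$-invariant mean $m$ on $\ell^\infty(X)$ built from the given filling mean $\mu$ on $\mathcal{P}_f(X)$. The idea is to average a bounded function over a ``$\mu$-random large finite set'': set
\[
m(\phi):=\int_{\mathcal{P}_f(X)} \Big(\tfrac{1}{|A|}\sum_{x\in A}\phi(x)\Big)\, \mathrm{d}\mu(A), \qquad \phi\in\ell^\infty(X),
\]
with the integrand declared to be $0$ at $A=\emptyset$. First I would observe that the filling condition applied to a singleton $B=\{x_0\}$ (which exists since $X\neq\emptyset$) gives $\mu(\{A: x_0\in A\})=1$, so $\mu$ is concentrated on nonempty sets and $m(\mathbf 1)=1$; positivity and $|m(\phi)|\le \|\phi\|_\infty$ are immediate, so $m$ is a mean. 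Invariance is the only real computation: writing $F_\phi(A)=\tfrac1{|A|}\sum_{x\in A}\phi(x)$, the substitution $x\mapsto gx$ shows $F_{g\cdot\phi}(A)=F_\phi(g^{-1}A)$, and since $|g^{-1}A|=|A|$ the normalisation is unaffected; $G$-invariance of $\mu$ then yields $m(g\cdot\phi)=m(\phi)$. This direction is clean and essentially complete.

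For the implication ``$G$ amenable $\Rightarrow$ extensively amenable'' I would argue through the permutational wreath product $W=\mathcal{P}_f(X)\rtimes G$. Since $\mathcal{P}_f(X)\cong\bigoplus_X\mathbb{Z}/2\mathbb{Z}$ is abelian, $W$ is an extension of an amenable group by the amenable group $G$, hence amenable; consequently its action on the nonempty set $\mathcal{P}_f(X)$ carries a $W$-invariant mean. It then remains to convert this into a $G$-invariant mean satisfying the filling condition, which for transitive actions is exactly the passage from (iii) to (i) in Lemma~\ref{lem: extamen}; the general case is recovered by an orbit reduction, noting that any fixed finite set $B$ meets only finitely many $G$-orbits. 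Alternatively, and more self-containedly, I would construct the filling mean directly as a weak-$\ast$ cluster point of the averaged measures $\nu_n(\phi)=\tfrac1{|\Phi_n|}\sum_{g\in\Phi_n}\phi(g\Phi_n\cdot x)$ taken along a symmetric left-Følner net $(\Phi_n)$ exhausting $G$: the Følner property forces asymptotic $G$-invariance, while symmetry together with exhaustion forces $\mu(\{A: B\subseteq A\})=1$.

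The main obstacle is this second implication, and specifically the tension between invariance and filling. A mean invariant under the whole wreath product assigns every event $\{A: x\in A\}$ probability $\tfrac12$ (translate by $\{x\}$ and use invariance), which is the exact opposite of the filling requirement $\mu(\{A: x\in A\})=1$. Thus the $W$-invariant mean furnished by amenability cannot itself be the sought filling mean, and the argument must genuinely \emph{upgrade} it; in the direct construction the analogous difficulty is the filling estimate, where one verifies that for most $g\in\Phi_n$ one has $B\subseteq g\Phi_n\cdot x$, and this is precisely where symmetry of the Følner net enters. I would also dispose of the degenerate case $X=\emptyset$ separately, where $\mathcal{P}_f(X)=\{\emptyset\}$ and both assertions hold trivially.
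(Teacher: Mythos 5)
Your second implication (extensively amenable plus $X\neq\emptyset$ implies amenable) is exactly the paper's argument: the paper integrates $A\mapsto|A|^{-1}\sum_{x\in A}f(x)$ against the filling mean over $\mathcal{P}_f(X)\setminus\{\emptyset\}$, and your remark that the filling condition applied to a singleton forces $\mu(\{\emptyset\})=0$ is the justification the paper leaves implicit. The first implication is where you genuinely diverge. The paper goes nowhere near the wreath product or F\o lner sets: it observes that the set $M$ of filling (not necessarily invariant) means on $\mathcal{P}_f(X)$ is a non-empty, convex, weak-$*$ compact set on which $G$ acts affinely (non-emptiness being witnessed, e.g., by a limit of Dirac masses $\delta_A$ along the directed set of finite subsets ordered by inclusion), and then applies the fixed-point characterization of amenability of $G$ to produce a $G$-fixed element of $M$. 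That argument is shorter, needs no transitivity, and uses nothing beyond the definition. Your route (a) is formally admissible here because Lemma~\ref{lem: extamen} is quoted as a black box, but note that the step you outsource, (iii) $\Rightarrow$ (i), is precisely the hard direction of that lemma --- your own computation $\mu(\{A:x\in A\})=\tfrac12$ shows that a $W$-invariant mean and a filling mean are different objects, and the upgrade passes through the $L^2$ condition (ii) --- so you are proving the elementary half of the present statement with machinery that is strictly deeper than it. Your route (b) is closer to a self-contained proof, and the key estimate (that $g\in h_i\Phi_n$ for most $g\in\Phi_n$, via the left F\o lner property plus symmetry) is correct.

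If you keep your route, two loose ends need closing. First, the orbit reduction: both Lemma~\ref{lem: extamen} and your F\o lner construction live on a single orbit, while a finite $B$ may meet several orbits; assembling the per-orbit filling invariant means into one on $\mathcal{P}_f(X)\cong\bigoplus_O\mathcal{P}_f(O)$ requires, when there are infinitely many orbits, a limit along finite sets of orbits of tensor products of means --- true, but not free. Second, for a general (possibly uncountable) amenable group you must justify the existence of a symmetric F\o lner net with the translation property you invoke. The paper's fixed-point argument sidesteps both issues, which is the main thing its approach buys.
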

	\begin{proofsketch}
		The action of a group $G$ on a set $X$ induces an action of $G$ on the set $M$ of means on $\mathcal{P}_f(X)$ such that $\mu(\{A \in \mathcal{P}_f(X) | B \subseteq A \})=1 \text{ for every }B \in \mathcal{P}_f(X)$. The set of $M$ can be shown to be a non-empty, compact, convex subset of $\ell^\infty(X)^*$.  Since $G$ is amenable, the action of $G$ on $M$ is amenable i.e. it has a fixed point $\mu$, which guarantees extensive amenability. For the second statement, by extensive amenability there exists a $G$-invariant mean $\tilde{\mu}$ on $\mathcal{P}_f(X)\setminus \emptyset$ producing a $G$-invariant mean on $X$ as follows:
		\begin{equation*}
		f \in \ell^\infty(X) \mapsto \int_{\mathcal{P}_f(X)\setminus \emptyset} |A|^{-1}\sum_{x \in A} f(x) \; \mathrm{d}\tilde{\mu} (A).
		\end{equation*}
	\end{proofsketch}
	
	Extensive amenability passes over to actions of subgroups on subgroup orbits:
	\begin{lem}\label{lem: extamensubgrp}
		Let a group $G$ act extensively amenable on a set $X$. Then for every subgroup $H \leq G$ and every $H$-orbit $Y$ the induced action is extensively amenable.
	\end{lem}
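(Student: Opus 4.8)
The plan is to transport the defining mean for $G \curvearrowright X$ down to $Y$ by a pushforward along the natural restriction map, using only that $Y$ is $H$-invariant (so an $H$-orbit is covered as a special case). First I would record that the definition of extensive amenability (\cite{jmms18}, Definition~1.1) requires merely the existence of \emph{one} invariant mean with the stated largeness property and imposes no transitivity assumption; hence it suffices to manufacture such a mean for the action $H \curvearrowright Y$.

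Concretely, let $m$ be a $G$-invariant mean on $\mathcal{P}_f(X)$ witnessing extensive amenability of $G \curvearrowright X$, so that $m(\{A \in \mathcal{P}_f(X) \mid B \subseteq A\}) = 1$ for every $B \in \mathcal{P}_f(X)$. Since $Y$ is $H$-invariant, the restriction map $\pi \colon \mathcal{P}_f(X) \to \mathcal{P}_f(Y)$, $\pi(A) := A \cap Y$, is well-defined and $H$-equivariant: for $h \in H$ one has $\pi(hA) = hA \cap Y = hA \cap hY = h(A \cap Y) = h\pi(A)$, the middle equality using $hY = Y$. I would then define the pushforward mean $\pi_* m$ on $\mathcal{P}_f(Y)$ by $(\pi_* m)(f) := m(f \circ \pi)$ for $f \in \ell^\infty(\mathcal{P}_f(Y))$. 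Because $m$ is $G$-invariant (hence $H$-invariant) and $\pi$ is $H$-equivariant, $\pi_* m$ is an $H$-invariant mean.

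It remains to verify the largeness property for $\pi_* m$. Fix $B \in \mathcal{P}_f(Y)$; note $B \in \mathcal{P}_f(X)$ as well. Then $\pi^{-1}\{A' \in \mathcal{P}_f(Y) \mid B \subseteq A'\} = \{A \in \mathcal{P}_f(X) \mid B \subseteq A \cap Y\}$, and since $B \subseteq Y$ this coincides with $\{A \in \mathcal{P}_f(X) \mid B \subseteq A\}$. Therefore
\[
(\pi_* m)(\{A' \in \mathcal{P}_f(Y) \mid B \subseteq A'\}) = m(\{A \in \mathcal{P}_f(X) \mid B \subseteq A\}) = 1,
\]
the last equality being the hypothesis on $m$. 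Thus $\pi_* m$ witnesses extensive amenability of $H \curvearrowright Y$.

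Finally, a word on where the hypotheses enter and on the (non-)difficulty. The only genuine point is the $H$-equivariance of $\pi$, which is exactly where $H$-invariance of $Y$ (guaranteed because $Y$ is an $H$-orbit) is used; the largeness property passes through verbatim precisely because $B \subseteq Y$. There is no real obstacle here—the argument is a soft pushforward of means—and the same reasoning in fact yields the stronger conclusion for \emph{every} $H$-invariant subset $Y$, not only a single orbit. Should one prefer to argue through Lemma~\ref{lem: extamen}, transitivity of $H \curvearrowright Y$ would permit the unit-vector criterion (ii), but the direct mean-theoretic route above is cleaner and avoids that detour.
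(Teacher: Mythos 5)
Your proof is correct and is essentially the paper's argument: both push the witnessing mean forward along the restriction map $A \mapsto A \cap Y$, using $H$-invariance of $Y$ for equivariance. The only (cosmetic) difference is that the paper phrases invariance in terms of the $\mathcal{P}_f(Y)\rtimes H$-action via Lemma~\ref{lem: extamen}(iii), whereas you check $H$-invariance and the largeness condition separately straight from the definition, which also makes explicit that only $H$-invariance of $Y$, not transitivity, is needed.
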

	\begin{proof}
		By Lemma~\ref{lem: extamen} there exists an $\mathcal{P}_f(X)\rtimes G$-invariant mean $m$ on $\mathcal{P}_f(X)$.	Let $H$ be a subgroup and $Y$ an $H$-orbit. The map $\alpha \colon\mathcal{P}_f(X)\to \mathcal{P}_f(Y)$  given by $A\mapsto A \cap Y$ is a $\mathcal{P}_f(Y)\rtimes H$-equivariant map, since for all $A \in \mathcal{P}_f(X), h \in H$ $B \in \mathcal{P}_f(Y)$ we have 
		\begin{equation*}
		(B,h)\cdot (A \cap Y)=B \Delta h(A \cap Y)=(B\cap Y) \Delta (h(A) \cap Y)=((B,h)\cdot A) \cap Y.
		\end{equation*}
		The pushforward $\alpha_{*}m$ is then a $\mathcal{P}_f(Y)\rtimes H$-invariant mean on $\mathcal{P}_f(Y)$.
	\end{proof}
	
	Ideas of \cite{jm13} where developed further in \cite{jns16}, in particular a sufficient condition for condition (ii) from Lemma~\ref{lem: extamen} was given in terms of stochastics on Schreier graphs\footnote{See Appendix~\ref{app: graphs of actions}.\\}:
	\begin{defi}[\cite{jns16}, §2.3]
		Let a group $G$ with finite, symmetric generating set act transitively on a set $X$. A function $f \in \mathrm{L}^2(\widehat{\mathcal{P}_f(X)},\lambda)$ is said to be a \emph{product of independent variables} or \emph{p.i.r.-function} if there exists a family $\{f_x\}_{x \in X}$ of functions in $\mathbb{R}^{(\mathbb{Z}/2\mathbb{Z})}$ such that $f(z)=\prod_{x \in X} f_x(z_x)$ for every $z=\{z_x\}_{x \in X} \in (\mathbb{Z}/2\mathbb{Z})^X$.
	\end{defi} 
	
	\begin{thm}[\cite{jns16}, Theorem~2.8]
		Let a group $G$ with finite, symmetric generating set act transitively on a set $X$ and let $y \in X$. Then the following are equivalent:
		\begin{enumerate}[(i)]
			\item There exists a sequence of p.i.r.-functions $\{f_n\}\in \mathrm{L}^2(\widehat{\mathcal{P}_f(X)},\lambda)$ satisfying $\|g\cdot f_n-f_n\|_2\to 0$ and $\|f_n|_{\{ \{z_x\}_{x \in X}|z_y=0 \} }\|_2 \to 1$.
			
			\item The Schreier graph $\Gamma(X,G,S)$ is recurrent.
		\end{enumerate}
	\end{thm}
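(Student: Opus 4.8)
The plan is to translate both conditions of (i) into a single potential-theoretic statement about the Dirichlet energy on the Schreier graph $\Gamma = \Gamma(X,G,S)$, and then invoke the classical characterisation of recurrence through vanishing capacity. First I would make the change of variables that turns a normalised p.i.r.-function into a function on $X$. Writing a unit-norm factor as $f_x(0) = \sqrt{2}\cos\alpha_x$, $f_x(1) = \sqrt{2}\sin\alpha_x$ and setting $\phi_x = \alpha_x - \tfrac{\pi}{4}$, a direct computation using that $\lambda$ is the symmetric Bernoulli product measure gives, for real $f = \prod_x f_x(z_x)$,
\[
\|g\cdot f - f\|_2^2 = 2 - 2\prod_{x\in X}\cos(\phi_{g^{-1}x} - \phi_x), \qquad \|f|_{\{z_y = 0\}}\|_2^2 = \cos^2\alpha_y .
\]
Moreover $\|f\|_2 = 1$, and the infinite product $\prod_x f_x(z_x)$ converges in $\mathrm{L}^2$ exactly when $\sum_x(1 - \cos\phi_x) < \infty$, i.e.\ when $\phi \in \ell^2(X)$ (a Kakutani-type dichotomy for the product measure). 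This is the step that rigidly pins down which $\phi$ give genuine unit vectors.

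Next I would read off the meaning of the two hypotheses. The concentration $\|f_n|_{\{z_y=0\}}\|_2 \to 1$ says $\cos^2\alpha_{n,y} \to 1$, hence (choosing representatives) $\phi_{n,y} \to -\tfrac{\pi}{4}$, a value bounded away from $0$. The almost-invariance $\|s\cdot f_n - f_n\|_2 \to 0$ for each generator $s$ says $\prod_x\cos(\phi_{n,s^{-1}x} - \phi_{n,x}) \to 1$, which by the elementary estimates $0 \le 1 - \cos\theta \le \tfrac12\theta^2$ and $\cos\theta \le e^{-\theta^2/2}$ is equivalent to $\sum_x(\phi_{n,sx} - \phi_{n,x})^2 \to 0$. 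Summing over the finite symmetric set $S$, this is exactly $\mathcal{E}(\phi_n) \to 0$, where $\mathcal{E}(\phi) = \tfrac12\sum_{s\in S}\sum_{x}(\phi_{sx} - \phi_x)^2$ is the Dirichlet energy on $\Gamma$. Rescaling $\psi_n := \phi_n/\phi_{n,y}$, condition (i) becomes the assertion that \emph{there exist $\psi_n \in \ell^2(X)$ with $\psi_n(y) = 1$ and $\mathcal{E}(\psi_n) \to 0$}. Conversely, given such $\psi_n$, reversing the construction with $\phi_n = -\tfrac{\pi}{4}\psi_n$ (which lies in $\ell^2$, so the products converge to unit vectors) produces p.i.r.-functions witnessing (i); here the concentration is in fact exactly $1$ for every $n$.

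It then remains to identify ``$\inf\{\mathcal{E}(\psi): \psi \in \ell^2(X),\ \psi(y) = 1\} = 0$'' with recurrence of $\Gamma$. By transitivity the Schreier graph is connected, and by finiteness of $S$ it is locally finite, so the theory of electrical networks applies: $\Gamma$ is recurrent \emph{if and only if} the effective resistance from $y$ to infinity is infinite, \emph{if and only if} the capacity $\mathrm{cap}(y) = \inf\{\mathcal{E}(\psi): \psi \in c_{00}(X)\text{ (finitely supported)},\ \psi(y) = 1\}$ vanishes. Since $c_{00}(X) \subseteq \ell^2(X)$, recurrence immediately yields the $\ell^2$-sequence in (i). For the reverse implication I would approximate: given $\psi \in \ell^2(X)$ of finite energy with $\psi(y)=1$, the truncations $\psi\cdot\mathbf{1}_{B_R(y)}$ lie in $c_{00}$ and converge to $\psi$ in $\ell^2$; because $\sum_{R}\big(\sum_{x\in\partial B_R(y)}\psi(x)^2\big) = \sum_x \psi(x)^2 < \infty$ forces $\sum_{x\in\partial B_R(y)}\psi(x)^2 \to 0$, the energy contributed across the boundary spheres can be driven to $0$ along a suitable sequence of radii, so the truncated energies stay close to $\mathcal{E}(\psi)$. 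This shows $\mathrm{cap}(y) = \inf_{\ell^2}\mathcal{E}$, completing the equivalence.

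The main obstacle I expect is precisely this last matching of function spaces: the capacity is classically an infimum over finitely supported functions, whereas the p.i.r.-normalisation naturally produces $\ell^2$ functions, and one must control the energy crossing the boundary spheres $\partial B_R(y)$ when truncating a finite-energy $\ell^2$ function to finite support. The accompanying analytic bookkeeping — verifying the Kakutani convergence of the infinite products so that the $f_n$ are honest unit vectors, and handling angles that are not a priori small in the passage between $\prod\cos(\cdot)$ and $\sum(\cdot)^2$ — is routine, but it is where the care is needed.
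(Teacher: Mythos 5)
The paper does not prove this theorem; it is quoted from \cite{jns16} without proof, so there is no in-paper argument to compare against. Your proposal follows what is essentially the argument of the cited source: parametrise unit p.i.r.\ vectors by angle functions $\phi\colon X\to\mathbb{R}$, translate almost-invariance under the generators into smallness of the Dirichlet energy $\mathcal{E}(\phi)$ and the concentration at $\{z_y=0\}$ into $|\phi(y)|$ staying bounded away from $0$, and then identify the resulting variational problem with the vanishing of the capacity of $\{y\}$, which characterises recurrence of the connected, locally finite Schreier graph. The computations you record are correct, and the two technical points you flag are indeed where the care is needed. For the large-angle issue, note that it can be dispatched cleanly: convergence of the infinite product forces $\langle f_{n,x},\mathbf{1}\rangle\to 1$ in $x$, so only finitely many factors can have $\langle f_{n,x},\mathbf{1}\rangle<0$, and flipping the signs of those finitely many factors changes $f_n$ only by a global sign, which affects neither hypothesis; after this normalisation every $\phi_{n,x}$ lies in $[-\pi/2,\pi/2]$, every edge-difference lies in $[-\pi,\pi]$, and the two-sided comparison $\tfrac{2}{\pi^2}\theta^2\le 1-\cos\theta\le\tfrac12\theta^2$ applies uniformly, making the passage between $\prod\cos(\cdot)$ and $\sum(\cdot)^2$ genuinely routine. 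Your truncation argument matching the $\ell^2$-infimum of the energy with the finitely-supported capacity is also correct: since the boundary-sphere masses $\sum_{x\in\partial B_R(y)}\psi(x)^2$ are summable in $R$, they have $\liminf$ zero, which is all that is needed.
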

	
	Assuming Theorem~\ref{thm: wobl} we take a quick look at a proof of Theorem~\ref{thm: amen-crit} following the path of \cite{dC13}, §4.2:
	
	\begin{proof}(Theorem~\ref{thm: amen-crit})
		First we note that we may assume $G \leq \mathbf{W}(\mathbb{Z})$: Since $\ker(\iota)\leq G_\mathbb{N}$ is amenable by assumption and the class of amenable groups is stable under extensions and quotients, the group $G$ is amenable \emph{if and only if} $\iota(G)$ is amenable. The first step of the proof is to show that $G_{A \Delta \mathbb{N}}$ is amenable for all $A \in \mathcal{P}_f(\mathbb{Z})$. The stabilizer $G_{k + \mathbb{N}}$ is amenable for all $k \in \mathbb{Z}$ by the following argument: Assume $G_{k + \mathbb{N}}$ is not amenable for some $k \in \mathbb{Z}$. Since the action of $\mathbf{W}(\mathbb{Z})$ on $\mathbb{Z}$ is extensively amenable, the transitive action of $G_{k + \mathbb{N}}$ on the orbit $G_{k + \mathbb{N}}\cdot j$ is extensively amenable for all $j \in \mathbb{Z}$ by Lemma~\ref{lem: extamensubgrp} and thus amenable by Lemma~\ref{lem: extamen}. By Theorem~\ref{thm: amenact} thus $(G_{k + \mathbb{N}})_i$ is non-amenable for some and hence all $i \in \mathbb{Z}$. But since $(G_{k + \mathbb{N}})_k\leq G_{k+1 + \mathbb{N}}$ and $(G_{k + \mathbb{N}})_{k-1}\leq G_{k-1 + \mathbb{N}}$, the containing groups are non-amenable. By iteration of this argument $G_{j + \mathbb{N}}$ is non-amenable for all $j \in \mathbb{Z}$, which is a contradiction to the assumption that $G_\mathbb{N}$ is amenable. Let $A \in \mathcal{P}_f(\mathbb{Z})$. Then there exists a unique $k \in \mathbb{Z}$ such that $A \Delta \mathbb{N}=\sigma(k+\mathbb{N})$ for a finitely supported permutation $\sigma \in \mathfrak{S}(\mathbb{Z})$. Since $G_{A\Delta\mathbb{N}}=\sigma G_{k+\mathbb{N}}\sigma^{-1}$, the stabilizer $G_{A\Delta\mathbb{N}}$ is amenable.	Let $\Phi \colon G \to \mathcal{P}_f(\mathbb{Z})\rtimes \mathbf{W}(\mathbb{Z})$ be the embedding given by $g \mapsto (g(\mathbb{N})\Delta \mathbb{N},g)$.\footnote{This is well-defined since $g \in \mathbf{W}(\mathbb{Z})$ implies $g(\mathbb{N})\Delta \mathbb{N}\in \mathcal{P}_f(\mathbb{Z})$.\\} The action of $G$ on $\mathcal{P}_f(\mathbb{Z})$ induced by $\Phi$ is amenable, since the action of $\mathcal{P}_f(\mathbb{Z})\rtimes \mathbf{W}(\mathbb{Z})$ is amenable by Theorem~\ref{thm: wobl}. Since for all $A \in \mathcal{P}_f(\mathbb{Z})$ the point stabilizer $G_A$ of the action induced by $\Phi$ is precisely the stabilizer $G_{A \Delta \mathbb{N}}$ of the wobbling group action -- which has been shown to be amenable for all $A \in \mathcal{P}_f(\mathbb{Z})$ -- thus $G$ must be amenable by Theorem~\ref{thm: amenact}.
	\end{proof}
	
	\begin{cor}\label{cor: topological full groups of minimal cantor systems are amenable}
		Let $(X,\varphi)$ be a minimal subshift. Then $\mathfrak{T}(\varphi)$ is amenable.
	\end{cor}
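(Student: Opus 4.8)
The plan is to verify the hypotheses of the amenability criterion in Theorem~\ref{thm: amen-crit}, whose analytic heart --- Theorem~\ref{thm: wobl} --- I take as given. Concretely, I would realise $\mathfrak{T}(\varphi)$ as a subgroup of the wobbling group $\mathbf{W}(\mathbb{Z})$ whose stabilizer of $\mathbb{N}$ is one of the locally finite subgroups already analysed.

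First, fix a point $x \in X$. Since $(X,\varphi)$ is minimal it is free and aperiodic (Remark~\ref{rem: orbdens}), so the orbit map $n \mapsto \varphi^n(x)$ is a bijection $\mathbb{Z} \to \operatorname{Orb}_\varphi(x)$. Transporting the action of $\mathfrak{T}(\varphi)$ on $\operatorname{Orb}_\varphi(x)$ through this bijection defines a map $\iota \colon \mathfrak{T}(\varphi) \to \operatorname{Sym}(\mathbb{Z})$, where $\iota(\gamma)(n)$ is the unique integer $m$ with $\gamma(\varphi^n(x)) = \varphi^m(x)$; equivalently $\iota(\gamma)(n) = n + f_\gamma(\varphi^n(x))$. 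By Proposition~\ref{prop: count} the cocycle $f_\gamma$ is continuous with finite image, so $\|f_\gamma\|_\infty < \infty$ and $|\iota(\gamma)(n) - n| \le \|f_\gamma\|_\infty$ for all $n$; hence $\iota(\gamma) \in \mathbf{W}(\mathbb{Z})$. The cocycle identity $f_{\gamma_1\gamma_2} = f_{\gamma_1} \circ \gamma_2 + f_{\gamma_2}$ from Remark~\ref{lem: cocy} shows that $\iota$ is a group homomorphism into $\mathbf{W}(\mathbb{Z})$ (it is moreover injective, as $\gamma$ is determined by its values on the dense orbit, though injectivity is not needed).

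Next I would identify the relevant stabilizer. Choosing the indexing so that $\mathbb{N}$ corresponds to the forward orbit $\operatorname{Orb}_\varphi^+(x)$, a homeomorphism $\gamma \in \mathfrak{T}(\varphi)$ satisfies $\iota(\gamma)(\mathbb{N}) = \mathbb{N}$ precisely when $\gamma$ preserves $\operatorname{Orb}_\varphi^+(x)$ setwise, i.e. when $\gamma \in \mathfrak{T}(\varphi)_{\{x\}}$. By Corollary~\ref{cor: locally finite subgroups}(ii) this subgroup is locally finite, hence amenable. With $\iota$ an embedding into $\mathbf{W}(\mathbb{Z})$ whose stabilizer of $\mathbb{N}$ is the amenable group $\mathfrak{T}(\varphi)_{\{x\}}$, Theorem~\ref{thm: amen-crit} yields that $\mathfrak{T}(\varphi)$ is amenable.

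The genuinely hard content is entirely contained in Theorem~\ref{thm: wobl} (extensive amenability of the standard action of $\mathbf{W}(\mathbb{Z})$ on $\mathbb{Z}$), which I am assuming. The only points requiring care in the remaining argument are the uniform wobbling bound --- which rests on the finiteness of the image of $f_\gamma$, a feature special to totally disconnected $X$ --- and the bookkeeping that matches the abstract set-stabilizer of $\mathbb{N}$ appearing in the criterion with the concrete stabilizer $\mathfrak{T}(\varphi)_{\{x\}}$, taking due account of the $\subseteq$-versus-$=$ convention in the definition of stabilizer subgroups and the placement of the index $0$. Note that the argument uses only minimality, so it in fact establishes amenability for every minimal Cantor system; the restriction to subshifts in the statement merely reflects the context in which this yields finitely generated simple amenable groups.
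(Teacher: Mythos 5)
Your proposal is correct and follows essentially the same route as the paper: embed $\mathfrak{T}(\varphi)$ into $\mathbf{W}(\mathbb{Z})$ via $\iota_x(\gamma)(k)=f_\gamma(\varphi^k(x))+k$ using boundedness of the continuous cocycle, identify the stabilizer of $\mathbb{N}$ with the locally finite subgroup $\mathfrak{T}(\varphi)_{\{x\}}$, and invoke Theorem~\ref{thm: amen-crit}. Your closing remark that the argument applies to arbitrary minimal Cantor systems matches the paper, whose proof likewise begins with a general minimal Cantor system despite the statement's restriction to subshifts.
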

	\begin{proof}
		Let $(X,\varphi)$ be a minimal Cantor system. The group $\mathfrak{T}(\varphi)$ injects into $\mathbf{W}(\mathbb{Z})$ by its action on orbits i.e. since cocycles of elements in $\mathfrak{T}(\varphi)$ are bounded, every point $x \in X$ induces a map $\iota_{x} \colon \mathfrak{T}(\varphi) \to \mathbf{W}(\mathbb{Z})$ defined by 
		\begin{equation*}
		\iota_{x}(\gamma)(k)=f_\gamma(\varphi^k(x))+k
		\end{equation*}
		This map is a group homomorphism as for $\gamma_1,\gamma_2 \in \mathfrak{T}(\varphi)$ by Lemma~\ref{lem: cocy} we have:
		\begin{equation*}
		\begin{gathered}
		\iota_x(\gamma_1\gamma_2)(k)=f_{\gamma_1\gamma_2}(\varphi^k(x))+k=f_{\gamma_1}(\gamma_2\varphi^k(x))+f_{\gamma_2}(\varphi^k(x))+k=\\
		=f_{\gamma_1}(\varphi^{f_{\gamma_2}(\varphi^k(x))+k}(x))+\iota_x(\gamma_2)=\iota_x(\gamma_1)\iota_x(\gamma_2)(k)
		\end{gathered}	
		\end{equation*}
		
		By minimality every $\gamma \in \mathfrak{T}(\varphi)$ is uniquely determined by $f_\gamma|_{\operatorname{Orb}_\varphi (x)}$, hence this homomorphism is injective. This implies the group $\iota_x(\mathfrak{T}(\varphi)_x)=\iota_x(\mathfrak{T}(\varphi))_\mathbb{N}$ is amenable by Corollary~\ref{cor: locally finite subgroups}(ii), thus Theorem~\ref{thm: amen-crit} applies.
	\end{proof}
	
	\begin{rem}
		By Corollary~2.4 in \cite{cho80} every finitely generated, simple, elementary amenable group is finite, thus by Theorem~\ref{thm: fingen} and Theorem~\ref{thm: simpel} the topological full group of a minimal subshift is an amenable, non-elementary amenable group.
	\end{rem}

	Putting together Proposition~\ref{prop: uncount},  and Corollary~\ref{cor: topological full groups of minimal cantor systems are amenable} implies:
	
	\begin{cor}
		There exists an uncountable family of pairwise non-isomorphic, infinite, finitely generated, simple, amenable groups.
	\end{cor}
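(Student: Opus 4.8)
The plan is to assemble the corollary from the structural results already established, taking as the required family the derived subgroups $\mathfrak{T}(\varphi)'$ of the topological full groups of a suitable uncountable collection of minimal subshifts. First I would fix such a collection. By Example~\ref{ex: sturm+toeplitz} there is, for every $\alpha \in \mathbb{R}_{>0}$, a Toeplitz shift of entropy $\alpha$ (and for distinct irrational parameters the Sturmian shifts furnish a further such supply); since topological entropy is invariant under flip conjugacy by Remark~\ref{rem: complex+entro}(ii), a continuum of distinct entropy values yields an uncountable family $\{(X_t,\varphi_t)\}_{t \in I}$ of pairwise non-flip-conjugate minimal subshifts.

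Next I would verify that each $G_t := \mathfrak{T}(\varphi_t)'$ has all the desired properties. Simplicity is Theorem~\ref{thm: simpel}. Finite generation is Theorem~\ref{thm: fingen}, which applies precisely because each $(X_t,\varphi_t)$ is a subshift. Amenability follows from Corollary~\ref{cor: topological full groups of minimal cantor systems are amenable}, which gives amenability of the ambient group $\mathfrak{T}(\varphi_t)$, together with the fact that amenability passes to subgroups. Infiniteness I would obtain by noting that a minimal subshift is not an odometer, so $G_t$ contains a copy of the lamplighter group $L$ (Theorem~2.4 of \cite{mat13}), which is infinite.

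Finally, pairwise non-isomorphism is the content of Corollary~\ref{cor: flipconj3}: an isomorphism $G_s \cong G_t$ would force $(X_s,\varphi_s)$ and $(X_t,\varphi_t)$ to be flip conjugate, contradicting the choice of the family. Equivalently, this is exactly the assertion of Proposition~\ref{prop: uncount} read off for the derived subgroups. Altogether $\{G_t\}_{t \in I}$ is an uncountable family of pairwise non-isomorphic infinite, finitely generated, simple, amenable groups.

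Since every deep step --- amenability, simplicity, and finite generation --- is imported from a cited theorem, there is no genuine analytic obstacle here; the only points requiring care are that the distinguishing invariant (entropy) be genuinely flip-conjugacy invariant and assume uncountably many values, and that the members of the family be subshifts rather than arbitrary minimal Cantor systems, since finite generation of the derived subgroup fails without this hypothesis (indeed it characterizes subshifts by Theorem~\ref{thm: fingen}).
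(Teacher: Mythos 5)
Your argument is correct and follows essentially the same route as the paper, which simply combines Proposition~\ref{prop: uncount} (uncountably many pairwise non-flip-conjugate minimal subshifts distinguished by entropy, hence non-isomorphic full groups via the isomorphism theorems) with Corollary~\ref{cor: topological full groups of minimal cantor systems are amenable}. You are in fact more careful than the paper's one-line proof in making explicit that the family should consist of the derived subgroups $\mathfrak{T}(\varphi_t)'$ so that simplicity (Theorem~\ref{thm: simpel}) and finite generation (Theorem~\ref{thm: fingen}) apply, with amenability inherited as a subgroup.
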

	
	\begin{rem}\phantomsection\label{rem: non-amenable topological full groups}
		\begin{enumerate}[(i)]
			\item Corollary~\ref{cor: topological full groups of minimal cantor systems are amenable} does not generalize to minimal $\mathbb{Z}^d$-actions on a Cantor space. In \cite{em13} an example of a $\mathbb{Z}^2$-action was given, where $\mathfrak{T}(\mathbb{Z}^2)$ contains a copy of the free group on two generators.
			
			\item In \cite{mat15}, Proposition~4.10 it is demonstrated, that the topological full group $\mathfrak{T}(\mathcal{G})$ of a Hausdorff, effective, \'etale Cantor groupoid $\mathcal{G}$ in which $\mathcal{G}^{(0)}$ is properly infinite contains a subgroup isomorphic to the free product $(\mathbb{Z}/2\mathbb{Z})\ast(\mathbb{Z}/3\mathbb{Z})$ and, hence, is not amenable. 
		\end{enumerate}
	\end{rem}
	
	Extensive amenable actions $G \curvearrowright X$ were further studied (still implicitly) for more general $X$ in \cite{dlSJ15}. In \cite{jns16} a powerful amenability criterion was developed encompassing Theorem~\ref{thm: amen-crit} which applies to a diverse class of amenable, non-elementary amenable groups:
	\begin{thm}[\cite{jns16}, Theorem~1.1]
		Let $X$ be a topological space. Let $G \leq \operatorname{Homeo}(X)$ be finitely generated and let $S$ be a generating set. Let $\mathcal{H}$ be a subgroupoid of the groupoid of germs $\operatorname{Germ}(X,\operatorname{Homeo}(X))$ such that the following conditions hold:
		\begin{enumerate}[(i)]
			\item For every $g \in G$ the set of $x \in X$ such that the germ $[(g,x)] \notin \mathcal{H}$ is finite.
			
			\item For every $s \in S$ the set $x \in X$ such that $[(s,x)] \neq \mathcal{H}$ is finite.
			
			\item The action $G \curvearrowright X$ is extensively amenable.
			
			\item For every $x \in \operatorname{Germ}(X,G)^{(0)}$ the isotropy group $\operatorname{Germ}(G,X)|_x$ is amenable.
			
			\item The group $\mathfrak{T}(\mathcal{H})$ is amenable.
		\end{enumerate}
		Then the group $G$ is amenable.
	\end{thm}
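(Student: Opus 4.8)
The plan is to follow the same architecture as the proof of Corollary~\ref{cor: topological full groups of minimal cantor systems are amenable}: produce an amenable action of $G$ on a suitable set whose point stabilizers are amenable, and then invoke Theorem~\ref{thm: amenact}. In the Cantor/interval situation the ambient wobbling group $\mathbf{W}(\mathbb{Z})$ carried the dynamics, the set $\mathbb{Z}$ was the orbit, and the reference half-orbit $\mathbb{N}$ produced the amenable stabilizer; here the role of $\mathbf{W}(\mathbb{Z})$ is played by the action $G \curvearrowright X$ itself, and the role of the reference configuration ``$\mathbb{N}$'' is played by the subgroupoid $\mathcal{H}$, with hypothesis (v) --- amenability of $\mathfrak{T}(\mathcal{H})$ --- taking over from the amenability of the half-orbit stabilizer $\mathfrak{T}(\varphi)_{\{x\}}$.

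First I would record the \emph{defect cocycle}. For $g \in G$ set $c(g) := \{x \in X \mid [(g,x)] \notin \mathcal{H}\}$, which is finite by hypothesis (i) and finite on the generating set $S$ by (ii). Because $\mathcal{H}$ is a subgroupoid of $\operatorname{Germ}(X,\operatorname{Homeo}(X))$ it is closed under composition and inversion, so the germ identity $[(gh,x)] = [(g,hx)]\,[(h,x)]$ forces the cocycle inequality $c(gh) \subseteq c(h)\,\cup\,h^{-1}c(g)$. Tracking not the base points but the germs lying outside $\mathcal{H}$ themselves, as finite subsets of a $G$-set, upgrades $c$ to an honest $\Delta$-cocycle, so that $g \mapsto (c(g),g)$ embeds $G$ into a permutational wreath product $\mathcal{P}_f(X) \rtimes G$, exactly as $g \mapsto (g(\mathbb{N})\Delta\mathbb{N},g)$ did in the proof of Theorem~\ref{thm: amen-crit}.

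Next I would feed in extensive amenability. By hypothesis (iii) and Lemma~\ref{lem: extamen} the action of $\mathcal{P}_f(X)\rtimes G$ on $\mathcal{P}_f(X)$ is amenable; pulling this back along the embedding above yields an amenable action of $G$ on $\mathcal{P}_f(X)$. By Theorem~\ref{thm: amenact} it then suffices to prove that the point stabilizers of this induced action are amenable. A stabilizer consists of those $g \in G$ that fix the chosen finite configuration while acting on its orbit --- up to the finitely many bad germs recorded by $c$ --- through germs belonging to $\mathcal{H}$; after the bookkeeping supplied by (i) and (ii) such a group is an extension of (a subgroup of) the topological full group $\mathfrak{T}(\mathcal{H})$ by the isotropy groups $\operatorname{Germ}(X,G)|_x$. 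Its amenability then follows from hypothesis (v), hypothesis (iv), Lemma~\ref{lem: extamensubgrp} (extensive amenability descends to subgroups acting on their orbits, as in the inductive ladder for $G_{k+\mathbb{N}}$), and the closure of the amenable class under subgroups and extensions.

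The main obstacle I expect is precisely this stabilizer identification. Unlike the clean equality $G_A = G_{A\Delta\mathbb{N}}$ available in the wobbling-group setting, here the stabilizer is only commensurable with $\mathfrak{T}(\mathcal{H})$: one must splice in the finite germ-defect sets outside $\mathcal{H}$ and the point isotropy, and verify that the resulting group is genuinely assembled by extensions from the amenable pieces furnished by (iv) and (v). Making this decomposition precise --- controlling the interaction between the germ groupoid $\mathcal{H}$, its topological full group, and the defect cocycle $c$ --- is the technical heart of the argument, and it is here that all five hypotheses are used simultaneously.
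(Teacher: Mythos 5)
First, a point of comparison: the paper does not prove this theorem at all --- it is quoted from \cite{jns16} as a black box --- so the only in-text model is the proof of Theorem~\ref{thm: amen-crit}, which you have correctly taken as your template. Your overall architecture (an auxiliary amenable action of $G$ built from a finitely supported ``defect'' of $g$ relative to $\mathcal{H}$, followed by Theorem~\ref{thm: amenact} and an identification of the stabilizers with groups assembled from $\mathfrak{T}(\mathcal{H})$ and the isotropy groups) is indeed the architecture of the proof in \cite{jns16}, so the plan is sound in outline.

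There are, however, two genuine gaps. The first is the claim that tracking ``the germs lying outside $\mathcal{H}$ themselves, as finite subsets of a $G$-set, upgrades $c$ to an honest $\Delta$-cocycle.'' This does not work as stated: since $\mathcal{H}$ is only a subgroupoid, two germs outside $\mathcal{H}$ may compose to a germ inside $\mathcal{H}$, so one only has the containment $c(gh)\subseteq c(h)\cup h^{-1}c(g)$ and not equality in general; worse, the germs $[(gh,x)]$, $[(g,hx)]$ and $[(h,x)]$ are three distinct elements of the groupoid, so there is no natural $G$-set of germs in which the three defect sets are literally related by a symmetric difference. The actual mechanism in \cite{jns16} is different: one first promotes extensive amenability of $G\curvearrowright X$ to amenability of an action on a fibered set of germ data over $X$, using the stability of extensive amenability under extensions with amenable fibers (the functorial form recorded in \cite{jmms18} and alluded to at the end of Subsection~\ref{subs: extensively amenable actions and amenability of topological full groups}); it is there, and not primarily in the stabilizer analysis, that hypothesis (iv) is consumed. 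The second gap is the one you flag yourself: the identification of the point stabilizers as extensions of subgroups of $\mathfrak{T}(\mathcal{H})$ by isotropy data is asserted rather than proved, and for a nonempty configuration the stabilizer is not obviously commensurable with $\mathfrak{T}(\mathcal{H})$ in the way $G_{A\Delta\mathbb{N}}=\sigma G_{k+\mathbb{N}}\sigma^{-1}$ was in the wobbling-group case. Since this is, by your own account, the technical heart of the argument, the proposal as written is a correct strategy but not yet a proof.
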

	
	Extensive amenability was explicitely introduced and studied in \cite{jmms18}. In particular it was shown that extensive amenability of an action $G \curvearrowright X$ induces amenability of the action $F(X) \rtimes G \curvearrowright F(X)$ i.e. $\mathcal{P}_f(-)$ is replaced by a functor $F$ of certain type.
	
	\subsection{Liouville property and quantifying amenability}\label{subs: liouville property, quantifying amenability and subgroups of intermediate growth}
	
	The findings of \cite{jm13} required further analysis of topological full groups of minimal subshifts. Both papers \cite{mb14a} and \cite{mb14b} by Nicolás Matte Bon are concerned with topological full groups. In \cite{mb14a} Matte Bon obtains an upper bound\footnote{For the definition of $\rho$ see Definition~\ref{defi: complexity}, for the definition of the entropy $H$ see Definition~\ref{defi: entropy}(ii).\\} for random walk entropy of shifts with bounded complexity:
	\begin{thm}[\cite{mb14a}, Theorem~1.2]\label{thm: entropy}
		Let $(X,\varphi)$ be a subshift with no isolated periodic points of which the complexity $\rho$ satisfies
		\begin{equation*}
		\lim_{n\to \infty} \left(\frac{\log n}{n}\right)^2 \rho(n)=0
		\end{equation*}
		Then for every finitely supported symmetric probability measure $\mu$ on $\mathfrak{T}(\varphi)$ there exists a $C>0$ such that for every $n \geq 1$ the following holds:
		\begin{equation*}
		H(\mu^{\ast n})\leq C \rho(\lceil C \sqrt{n \log n} \rceil)\log n
		\end{equation*}
	\end{thm}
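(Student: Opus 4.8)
The plan is to bound the random walk entropy by counting the number of \emph{distinguishable local configurations} that an $n$-fold convolution can produce, and then to control this count by combining a diffusive range estimate with the combinatorial constraint imposed by the complexity $\rho$. First I would fix an aperiodic point $x \in X$; such points exist and are dense since $(X,\varphi)$ has no isolated periodic points. Following the embedding used in the proof of Corollary~\ref{cor: topological full groups of minimal cantor systems are amenable}, I identify each $\gamma \in \mathfrak{T}(\varphi)$ with the wobbling permutation $\iota_x(\gamma)\colon k \mapsto f_\gamma(\varphi^k(x))+k$ of $\mathbb{Z}$, whose displacement is bounded by $\|f_\gamma\|_\infty$. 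Writing the $n$-th step of the walk as $\gamma_n=s_1\cdots s_n$ with the $s_i$ drawn from the finite support of $\mu$, the cocycle identity $f_{\gamma_1\gamma_2}=f_{\gamma_1}\circ \gamma_2 + f_{\gamma_2}$ (Remark~\ref{lem: cocy}) shows that $\|f_{\gamma_n}\|_\infty \le M n$, where $M$ bounds the cocycles of the generators; thus every cocycle value of $\gamma_n$ lies in a set of size $O(n)$, which is the origin of the factor $\log n$.

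The second step is a range estimate. Since each generator reads only a window of fixed size $L_0$ and moves points by at most $M$, the set of coordinates of $x$ that influence $f_{\gamma_n}$ is contained in the interval explored by the trajectory of the underlying position walk $k\mapsto \iota_x(\gamma_j)(k)$ on $\mathbb{Z}$. This position walk has bounded increments, so by a maximal inequality its maximal displacement after $n$ steps exceeds $R_n:=\lceil C\sqrt{n\log n}\rceil$ only with probability $\le n^{-C'}$ for a suitable $C'$; the choice $\lambda=C\sqrt{\log n}$ in the Gaussian-type tail $\exp(-c\lambda^2)$ is exactly what produces the $\sqrt{n\log n}$ scale. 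Conditioned on this event, $f_{\gamma_n}$ depends only on the coordinates of $x$ in the window $[-R_n-L_0,\,R_n+L_0]$ and is therefore constant on the cylinders of that size.

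The heart of the argument is the \emph{pattern coding} of the conditioned element. By Definition~\ref{defi: complexity}, the number of distinct subwords of $X$ of length $2R_n+2L_0+1$ is at most $\rho(2R_n+2L_0+1)\le \rho(C R_n)$. Since $f_{\gamma_n}$ factors through the local pattern of $x$ within this window, the cocycle is specified by assigning to each of at most $\rho(CR_n)$ pattern types one integer value from a set of size $O(n)$. Hence the conditioned law of $\gamma_n$ is supported on at most $(Cn)^{\rho(CR_n)}$ elements, whose logarithm is $\le C\,\rho(CR_n)\log n$. Finally I would assemble the entropy bound by decomposing over the good range event: the main term contributes $C\,\rho(\lceil C\sqrt{n\log n}\rceil)\log n$, while the complementary event has probability $\le n^{-C'}$ and carries entropy at most $n\log|\supp\mu|=O(n)$ (the crude bound coming from the at most $|\supp\mu|^n$ words of length $n$), so that its contribution $n^{-C'}\cdot O(n)$ is negligible.

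The step I expect to be the main obstacle is making the pattern coding rigorous, namely proving cleanly that on the high-probability range event the dependence window of $f_{\gamma_n}$ is genuinely controlled by the visited range rather than by the a priori window $\sim Mn$, and that the assignment ``pattern type $\mapsto$ cocycle value'' is well defined and recovers $\gamma_n$ up to the entropy-negligible error. Ensuring the tail of the maximal displacement decays polynomially in $n$, so that the bad event does not swamp the bound, and keeping the constants $C,C',M,L_0$ consistent across the three steps will require care, but these are of secondary difficulty compared with the coding estimate.
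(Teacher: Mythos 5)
Your overall strategy -- control the window on which $f_{\gamma_n}$ depends by the range of an auxiliary walk on $\mathbb{Z}$, count the possible local rules by $\rho(\text{range})$ patterns times $O(n)$ integer values, and discard a low-probability bad event -- is indeed the skeleton of Matte Bon's argument (the present text only cites \cite{mb14a} and contains no proof, so I am comparing against the source). But the two steps you flag as ``secondary'' are in fact where the theorem lives, and as written both have genuine gaps. First, the concentration estimate: ``bounded increments, so by a maximal inequality the maximal displacement exceeds $C\sqrt{n\log n}$ with probability $\le n^{-C'}$'' is not a valid deduction. The position walk $t_{j}=t_{j-1}+f_{s}(\varphi^{t_{j-1}}(y))$ is driven by the environment $y$; its conditional increment mean is $\sum_{s}\mu(s)f_{s}(z)$ at the current point $z$, and symmetry of $\mu$ only gives $f_{s^{-1}}(z)=-f_{s}(s^{-1}z)$ -- a cancellation at a \emph{different} point -- so the walk is not a martingale and can have local drift (take $s=\varphi_{A}$ an induced transformation: forward and backward return times to $A$ need not agree). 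Bounded increments alone give only the trivial bound $Mn$. What saves the argument is that the symmetry of $\mu$ makes the induced chain on the orbit (the simple random walk on the Schreier graph of the $\mathfrak{T}(\varphi)$-action, with bounded-range edges) \emph{reversible} with respect to counting measure, and a Carne--Varopoulos type Gaussian upper bound for reversible chains then yields $\mathbb{P}[\,|t_{j}|\ge R\,]\le C e^{-cR^{2}/n}$, which at $R=C\sqrt{n\log n}$ gives the polynomial tail you want. Without identifying this mechanism the $\sqrt{n\log n}$ scale is unsupported.

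Second, the reduction to a single orbit is not available under the stated hypotheses. The theorem assumes only that there are no isolated periodic points, not minimality, so the orbit of one aperiodic $x$ need not be dense and $f_{\gamma_n}$ restricted to it does not determine $\gamma_n$; moreover even in the minimal case the ``good event'' must be that the trajectory stays in $[-R_n,R_n]$ for \emph{every} starting configuration, since only then is $f_{\gamma_n}$ constant on all cylinders of radius $R_n+L_0$ and the coding by $\rho(CR_n)$ patterns legitimate. This forces a union bound over the $\rho(2R_n+2L_0+1)$ cylinders on top of the per-cylinder Gaussian tail, and it is precisely here that the standing hypothesis $(\log n/n)^{2}\rho(n)\to 0$ (which forces $\rho$ to grow at most polynomially) is consumed: your proposal never uses that hypothesis in the proof of the entropy bound, which is a sign that the uniformity issue has been skipped. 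The remaining bookkeeping (cocycle identity from Remark~\ref{lem: cocy} giving $\|f_{\gamma_n}\|_{\infty}\le Mn$, the entropy decomposition over the good/bad event, absorbing the additive constants into the argument of $\rho$ by monotonicity) is correct as you describe it.
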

	This estimate implies in particular that $\mathfrak{T}(\varphi)$ has zero random walk entropy thus implying by the characterization in Theorem~\ref{thm: charac.liouville}:
	
	\begin{cor}
		Let $(X,\varphi)$ be a subshift as chosen in Theorem~\ref{thm: entropy}. Then $\mathfrak{T}(\varphi)$ has the Liouville property.
	\end{cor}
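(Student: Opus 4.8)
The plan is to derive this corollary by combining the entropy estimate of Theorem~\ref{thm: entropy} with the characterization of the Liouville property in terms of random walk entropy (Theorem~\ref{thm: charac.liouville}). The key conceptual link is the notion of \emph{zero random walk entropy}: a finitely generated group has the Liouville property (i.e.\ every bounded $\mu$-harmonic function is constant, equivalently the Poisson boundary is trivial) if and only if the asymptotic entropy $h(\mu) := \lim_{n \to \infty} \tfrac{1}{n} H(\mu^{\ast n})$ vanishes for some (equivalently every) finitely supported, symmetric, generating probability measure $\mu$.

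\textbf{Key steps.} First I would fix a subshift $(X,\varphi)$ satisfying the complexity hypothesis of Theorem~\ref{thm: entropy}, namely $\lim_{n\to\infty}\left(\tfrac{\log n}{n}\right)^2\rho(n)=0$, and let $\mu$ be an arbitrary finitely supported symmetric probability measure on $\mathfrak{T}(\varphi)$. Theorem~\ref{thm: entropy} supplies a constant $C>0$ with
\begin{equation*}
H(\mu^{\ast n})\leq C\,\rho\!\left(\lceil C\sqrt{n\log n}\,\rceil\right)\log n
\end{equation*}
for all $n\geq 1$. The second step is to divide by $n$ and take the limit: I would show that the right-hand side, after dividing by $n$, tends to $0$. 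Writing $m_n := \lceil C\sqrt{n\log n}\,\rceil$, one has $m_n \sim C\sqrt{n\log n}$, so that $\tfrac{\log n}{n} \sim \left(\tfrac{\log m_n}{m_n}\right)^2$ up to a bounded multiplicative factor; the complexity hypothesis $\left(\tfrac{\log m}{m}\right)^2 \rho(m) \to 0$ then forces $\tfrac{1}{n}\rho(m_n)\log n \to 0$. Hence $h(\mu) = \lim_{n\to\infty}\tfrac{1}{n}H(\mu^{\ast n}) = 0$, i.e.\ $\mathfrak{T}(\varphi)$ has zero random walk entropy. Finally, I would invoke Theorem~\ref{thm: charac.liouville} to conclude that vanishing asymptotic entropy is equivalent to the Liouville property, giving the claim.

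\textbf{Main obstacle.} The genuinely delicate point is the asymptotic bookkeeping in the second step: one must verify carefully that the substitution $m_n = \lceil C\sqrt{n\log n}\,\rceil$ transforms the hypothesis on $\rho$ into the desired decay. The cleanest way is to note that $\tfrac{\log m_n}{m_n}$ is comparable to $\sqrt{\tfrac{\log n}{n}}$ (since $\log m_n \sim \tfrac12\log n$ and $m_n \sim C\sqrt{n\log n}$), whence $\left(\tfrac{\log m_n}{m_n}\right)^2$ is comparable to $\tfrac{\log n}{n}$; multiplying the hypothesis $\left(\tfrac{\log m_n}{m_n}\right)^2\rho(m_n)\to 0$ through by the bounded ratio and by $\log n / \log n$ shows $\tfrac{1}{n}\rho(m_n)\log n \to 0$. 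One must also confirm that $\mu$ being merely finitely supported and symmetric (not necessarily generating all of $\mathfrak{T}(\varphi)$) suffices for the entropy criterion; this is standard, since the Liouville property as characterized by triviality of the Poisson boundary depends only on the subgroup generated by $\supp(\mu)$, and the estimate of Theorem~\ref{thm: entropy} is uniform over all such $\mu$. The remaining verifications are routine.
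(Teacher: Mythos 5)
Your proposal is correct and follows exactly the route the paper takes: the paper deduces the corollary in one line by observing that the estimate of Theorem~\ref{thm: entropy} forces $\tfrac{1}{n}H(\mu^{\ast n})\to 0$ and then invoking the entropy characterization of the Liouville property (Theorem~\ref{thm: charac.liouville}). Your asymptotic bookkeeping with $m_n=\lceil C\sqrt{n\log n}\,\rceil$ correctly fills in the computation the paper leaves implicit, and your remark that the criterion applies measure by measure (constancy on $\langle\supp(\mu)\rangle$) matches the definitions in the appendix.
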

	
	Subshifts that satisfy the criteria of Theorem~\ref{thm: entropy} do exist -- see Example~\ref{ex: sturm+toeplitz}(i). Since the Lioville property implies amenability, this produces a proof of amenability for such shifts independent of \cite{jm13}. It is of note that for this proof minimality of the subshift is not required.
	Combined with the isomorphism theorems this implies:
	\begin{thm}[\cite{mb14a}, Theorem~1.1]
		There exists an uncountable family of pairwise non-isomorphic, infinite, finitely generated, simple groups with the Liouville property.
	\end{thm}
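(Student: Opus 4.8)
The plan is to realize the desired groups as the commutator subgroups $\mathfrak{T}(\varphi)'$ of topological full groups of a carefully chosen uncountable family of minimal subshifts, extracting each required property from the results assembled above. First I would fix the family of systems. A Sturmian shift $(\Sigma_\alpha,\varphi)$ (Example~\ref{ex: sturm+toeplitz}(i)) is a minimal Cantor system with complexity $\rho(n)=n+1$, so that
\[
\lim_{n\to\infty}\left(\frac{\log n}{n}\right)^2\rho(n)=\lim_{n\to\infty}\frac{(\log n)^2(n+1)}{n^2}=0,
\]
and the complexity hypothesis of Theorem~\ref{thm: entropy} is met. Moreover, for distinct irrationals $\alpha,\beta\in(0,\tfrac12)$ the shifts $(\Sigma_\alpha,\varphi)$ and $(\Sigma_\beta,\varphi)$ are not flip conjugate (again Example~\ref{ex: sturm+toeplitz}(i)), which supplies an uncountable index set $I$ of pairwise non-flip-conjugate minimal subshifts.

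Next I would collect the group-theoretic properties of $G_\alpha:=\mathfrak{T}(\varphi_\alpha)'$ for $\alpha\in I$. Since each $(\Sigma_\alpha,\varphi)$ is a minimal subshift, the group $G_\alpha$ is finitely generated by Theorem~\ref{thm: fingen} and simple by Theorem~\ref{thm: simpel}. It is infinite because a Sturmian shift is not an odometer, so that $G_\alpha$ contains a copy of the lamplighter group by Matui's embedding result. Finally, by Corollary~\ref{cor: flipconj3} any isomorphism $G_\alpha\cong G_\beta$ forces $(\Sigma_\alpha,\varphi)$ and $(\Sigma_\beta,\varphi)$ to be flip conjugate, whence $\alpha=\beta$; thus $\{G_\alpha\}_{\alpha\in I}$ is an uncountable family of pairwise non-isomorphic, infinite, finitely generated, simple groups.

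The main obstacle is to equip each $G_\alpha$ with the Liouville property: the corollary to Theorem~\ref{thm: entropy} furnishes the Liouville property only for the ambient group $\mathfrak{T}(\varphi_\alpha)$, and this property need not descend to an arbitrary subgroup. The key observation that resolves this is that the entropy estimate of Theorem~\ref{thm: entropy} holds for \emph{every} finitely supported symmetric probability measure on $\mathfrak{T}(\varphi_\alpha)$, not only for generating ones. I would therefore pick a finitely supported symmetric generating measure $\mu_\alpha$ of the subgroup $G_\alpha$ and regard it as a (non-generating) measure on $\mathfrak{T}(\varphi_\alpha)$. The Shannon entropies $H(\mu_\alpha^{\ast n})$ depend only on $\mu_\alpha$ as a probability distribution, and the convolution powers computed in $\mathfrak{T}(\varphi_\alpha)$ coincide with those computed in $G_\alpha$; hence Theorem~\ref{thm: entropy} and the complexity bound above give $\tfrac1n H(\mu_\alpha^{\ast n})\to0$, i.e. vanishing random walk entropy for $(G_\alpha,\mu_\alpha)$. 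By the characterization of the Liouville property via vanishing random walk entropy (Theorem~\ref{thm: charac.liouville}), the pair $(G_\alpha,\mu_\alpha)$ has trivial Poisson boundary, so $G_\alpha$ has the Liouville property. Together with the previous paragraph, this exhibits the required uncountable family and completes the proof.
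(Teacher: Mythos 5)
Your proposal is correct and follows essentially the same route the paper indicates: Sturmian shifts supply the uncountable family of pairwise non-flip-conjugate minimal subshifts of linear complexity, and Theorem~\ref{thm: fingen}, Theorem~\ref{thm: simpel}, Corollary~\ref{cor: flipconj3} and the entropy bound of Theorem~\ref{thm: entropy} supply finite generation, simplicity, pairwise non-isomorphism and the Liouville property, exactly as the text's ``combined with the isomorphism theorems'' suggests. Your key observation --- that Theorem~\ref{thm: entropy} applies to arbitrary (not necessarily generating) finitely supported symmetric measures, so vanishing random-walk entropy and hence the Liouville property descend to the subgroup $\mathfrak{T}(\varphi)'$ --- is precisely the point the paper leaves implicit; the only cosmetic adjustment is that, for the group Liouville property as defined here, you should run that argument for \emph{every} finitely supported symmetric measure on $G_\alpha$ rather than a single generating one, which your reasoning does verbatim.
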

	
	The upper bound of Theorem~\ref{thm: entropy} produces a lower bound for $\mu^{\ast 2n}(e)$ i.e the return probability of the random walk, and in consequence upper bounds of Følner functions of finitely generated subgroups of $\mathfrak{T}(\varphi)$:
	\begin{cor}[\cite{mb14a}, Corollary~1.7]
		Let $(X,\varphi)$ be a subshift as chosen in Theorem~\ref{thm: entropy}.
		\begin{enumerate}[(i)]
			\item For every finitely supported symmetric probability measure $\mu$ on $\mathfrak{T}(\varphi)$ there exists a $C>0$ such that for every $n \geq 1$ the following holds:
			\begin{equation*}
			\mu^{\ast 2n}(e)\geq \frac{1}{C} \exp(-C \rho(\lceil C \sqrt{n \log n} \rceil)\log n)
			\end{equation*}
			
			\item If in addition there exists a $C'>0$ and a $\alpha \in [0,\frac{1}{2})$ such that $\rho(n)\leq C' n^{\alpha}$,\footnote{This is satisfied for example by Sturmian shifts.\\} then for every finitely generated subgroup $G \leq \mathfrak{T}(\varphi)$, every symmetric finite generating set $S$ of $G$ and for every $\varepsilon > 0$, there exists a $C > 0$ such that for every $n \geq 1$ the following holds:
			\begin{equation*}
			\operatorname{Fol}_{G,S}(n)\leq C \exp(C n^{\frac{2\alpha}{2-\alpha}+\varepsilon})
			\end{equation*}
		\end{enumerate}
	\end{cor}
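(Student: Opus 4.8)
The plan is to deduce both parts from the entropy estimate of Theorem~\ref{thm: entropy}, using only standard inequalities from the theory of random walks on groups; no further dynamical input is needed once that estimate is in hand. Part~(i) is the conversion ``entropy bound $\Rightarrow$ return-probability bound'', and part~(ii) is the conversion ``return-probability bound $\Rightarrow$ F\o{}lner-function bound'' specialised to a polynomial complexity.

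For part~(i), I would first record the elementary inequality relating the Shannon entropy of a distribution to its $\ell^2$-mass. Writing $p:=\mu^{\ast n}$ and using that $\mu$ (hence $p$) is symmetric, one has
\begin{equation*}
\mu^{\ast 2n}(e)=\sum_{g}\mu^{\ast n}(g)\mu^{\ast n}(g^{-1})=\sum_g p(g)^2=\|p\|_2^2.
\end{equation*}
Viewing $\sum_g p(g)^2$ as the expectation of $p(X)$ for $X$ distributed according to $p$ and applying Jensen's inequality to the concave function $\log$, I get
\begin{equation*}
\log\Big(\sum_g p(g)^2\Big)\geq \sum_g p(g)\log p(g)=-H(p),
\end{equation*}
that is $\mu^{\ast 2n}(e)\geq \exp(-H(\mu^{\ast n}))$. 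Substituting the bound $H(\mu^{\ast n})\leq C\rho(\lceil C\sqrt{n\log n}\rceil)\log n$ of Theorem~\ref{thm: entropy} and enlarging $C$ if necessary to produce the prefactor $\tfrac1C$ gives exactly the inequality of part~(i).

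For part~(ii), I would feed the hypothesis $\rho(n)\leq C'n^\alpha$ into part~(i). Since $\rho(\lceil C\sqrt{n\log n}\rceil)\leq C''(n\log n)^{\alpha/2}$, the return probability satisfies
\begin{equation*}
\mu^{\ast 2n}(e)\geq \exp\!\big(-C\,n^{\alpha/2}(\log n)^{1+\alpha/2}\big),
\end{equation*}
so that, up to the polylogarithmic factor, $\mu^{\ast 2n}(e)$ decays no faster than $\exp(-n^{\gamma})$ with $\gamma=\alpha/2$. The final step is to invoke the now-standard quantitative duality between on-diagonal return-probability decay and the F\o{}lner (isoperimetric) profile of a finitely generated amenable group, as developed by Coulhon, Pittet and Saloff-Coste through the $L^2$-isoperimetric/spectral profile. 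In the stretched-exponential regime this duality turns a lower bound $\mu^{\ast 2n}(e)\succeq\exp(-n^\gamma)$ into an upper bound $\operatorname{Fol}_{G,S}(n)\preceq\exp(n^{2\gamma/(1-\gamma)})$; with $\gamma=\alpha/2$ one computes $\tfrac{2\gamma}{1-\gamma}=\tfrac{2\alpha}{2-\alpha}$, and the arbitrary $\varepsilon>0$ is precisely what is used to swallow the $(\log n)$-corrections above. Since $\alpha<\tfrac12$ keeps $\gamma<\tfrac14$, the exponent $\tfrac{2\alpha}{2-\alpha}<\tfrac23$ stays below $1$, so the conclusion is a genuine sub-exponential bound and the duality applies in the regime where it is cleanest.

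The routine checks --- symmetry of $\mu^{\ast n}$, the constant bookkeeping, and verifying that the complexity hypothesis of Theorem~\ref{thm: entropy} is met by $\rho(n)\leq C'n^\alpha$ --- are immediate. The real weight of the argument sits in the return-probability-to-F\o{}lner implication of part~(ii): the ``easy'' half of these isoperimetric equivalences passes from good F\o{}lner sets to heat-kernel lower bounds, whereas the direction needed here is the converse, which rests on the full machinery relating the spectral (Nash-type) profile to the return probability and then to the isoperimetric profile. Obtaining the sharp exponent $\tfrac{2\alpha}{2-\alpha}$ rather than merely some power of $n$ is exactly the point at which one must apply that correspondence in its quantitative form rather than in its qualitative ``large-scale equivalence'' version.
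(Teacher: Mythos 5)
Your argument is correct and takes essentially the route the paper indicates (and that the cited Corollary~1.7 of Matte Bon uses): the Jensen/entropy inequality $\mu^{\ast 2n}(e)=\|\mu^{\ast n}\|_2^2\geq e^{-H(\mu^{\ast n})}$ for part~(i), followed by the Coulhon--Pittet--Saloff-Coste duality between on-diagonal return probability and the F\o{}lner profile for part~(ii), with the $\varepsilon$ absorbing the logarithmic corrections. The paper itself states this corollary as a citation with only a one-line indication of exactly this strategy, so there is nothing of substance to compare beyond noting that your account fills in the details correctly.
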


	\subsection{New groups of Burnside type and intermediate growth}\label{subs: intermediate growth}
	
	In 1980 the general Burnside problem had been resolved long ago as a consequence of the Golod-Shafarevych theorem. The mathematical world was still looking for simple constructions of infinite, periodic, finitely generated group. A striking example was given in \cite{gri80} by Grigorchuk as a group of Lebesgue measure preserving interval transformations now called the (first) Grigorchuk group\footnote{See Appendix~\ref{app: grigorchuk groups} for the definition of Grigorchuk groups.\\}. Today a representation in terms of automorphisms of an infinite, binary, rooted tree is more common. In \cite{gri83} Grigorchuk demonstrated that his group has intermediate growth resolving a longstanding question of John Milnor. As such it provided the first example of a non-elementary amenable group and initiated the study of branch groups and automata groups. Over the years more examples of groups with intermediate growth have been found in particular non-residually finite such groups. However, for a long time there were no examples of a simple, infinite, finitely generated group of intermediate growth.
	
	In \cite{mb14b} Matte Bon shows that Grigorchuk groups embed into topological full groups of shifts, which by \cite{jm13} produces a new proof of amenability of such groups.
	
	\begin{prop}[\cite{mb14b}, Proposition~3.1]
		Let $(X,\varphi)$ be a minimal Cantor system. Then for every $\omega \in \{0,1,2\}^{\mathbb{N}_+}$ which is eventually constant the Grigorchuk group $G_\omega$ embedds into $\mathfrak{T}(\varphi)$.
	\end{prop}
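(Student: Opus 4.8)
The plan is to bypass the self-similar action of $G_\omega$ on the boundary of the tree — which does \emph{not} produce elements of $\mathfrak{T}(\varphi)$, since the directed generators act at unbounded depth whereas every element of $\mathfrak{T}(\varphi)$ has a bounded cocycle by Proposition~\ref{prop: count} — and instead to reduce the statement to the embedding of a permutational wreath product, which is available in every minimal Cantor system by Proposition~\ref{prop: permutational wreath products embedd into topological full groups}. Recall that $G_\omega\leq\operatorname{Aut}(T)$ is generated by the rooted involution $a$ exchanging the two subtrees below the root, together with the directed involutions $b,c,d$ governed by the self-similar recursion $b=(\xi_{\omega_1}(b),b_\sigma)$, $c=(\xi_{\omega_1}(c),c_\sigma)$, $d=(\xi_{\omega_1}(d),d_\sigma)$, where $\sigma$ is the shift on $\omega$ and $\xi_0,\xi_1,\xi_2$ send $(b,c,d)$ to $(a,a,1)$, $(a,1,a)$, $(1,a,a)$ respectively. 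When $\omega$ is eventually constant, say $\omega_n=j$ for $n\geq N$, the recursion stabilizes beyond level $N$ and the finite prefix $\omega_1\cdots\omega_{N-1}$ influences only finitely many levels; the group $G_\omega$ is then commensurable with $G_{j^{\infty}}$ and is a finitely generated \emph{virtually abelian} group.

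First I would establish the structural input (A): that eventually constant $\omega$ yields a finitely generated virtually abelian $G_\omega$. For $\omega=j^\infty$ one checks directly from the stabilized recursion that $\langle b,c,d\rangle$ is a Klein four-group (the relation $bcd=1$ together with $b^2=c^2=d^2=1$) and that $G_{j^\infty}$ is infinite virtually cyclic; the general eventually constant case differs from this only by a finite-index modification coming from the prefix. Granting (A), I would invoke the standard fact (B) that every finitely generated virtually abelian group $H=\mathbb{Z}^k\rtimes_\phi F$ embeds into a permutational wreath product: the diagonal co-induction $m\mapsto(g\mapsto g^{-1}m)$ gives an $F$-equivariant injection of the $\mathbb{Z}F$-module $\mathbb{Z}^k$ into the permutation module $\bigoplus_{F}\mathbb{Z}^k\cong\mathbb{Z}^{k|F|}$, whence $H\hookrightarrow\mathbb{Z}^{k|F|}\rtimes\mathfrak{S}_{k|F|}=\mathbb{Z}\wr_{p}\mathfrak{S}_{p}$ with $p=k|F|$. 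The simplest instance is $G_{j^\infty}\supseteq D_\infty=\mathbb{Z}\rtimes_{-1}\mathbb{Z}/2\hookrightarrow\mathbb{Z}\wr_2\mathfrak{S}_2$, sending the inversion to the transposition and a generator of $\mathbb{Z}$ to $(1,-1)\in\mathbb{Z}^2$, which the transposition conjugates to its inverse.

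Then I would conclude by applying Proposition~\ref{prop: permutational wreath products embedd into topological full groups}: for any minimal Cantor system $(X,\varphi)$ and any $p$, the group $\mathbb{Z}\wr_p\mathfrak{S}_p$ embeds into $\mathfrak{T}(\varphi)$, with the $\mathfrak{S}_p$-factor realized by permuting the floors of a height-$p$ tower and the $\mathbb{Z}^p$-factor by the induced transformation (which lies in $\mathfrak{T}(\varphi)$ with bounded, continuous cocycle). Composing the embeddings from (A)--(B) with this one yields an injective homomorphism $G_\omega\hookrightarrow\mathfrak{T}(\varphi)$; faithfulness is automatic since every map in the chain is injective.

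The main obstacle will be step (A), \emph{i.e.}\ extracting virtual abelianness of $G_\omega$ from the self-similar description. This is precisely where eventual constancy is indispensable: it is what makes the stabilized recursion collapse $G_\omega$ onto a virtually cyclic group, so that it fits inside a \emph{single} finite-level wreath product and, through it, inside $\mathfrak{T}(\varphi)$ with bounded cocycles. For a generic $\omega$ the directed generators act at unbounded tree-depth and $G_\omega$ is an infinite torsion group of intermediate growth; such groups do embed into $\mathfrak{T}(\varphi)$ as well, but the present wreath-product reduction breaks down and a genuinely different construction is required, which is the content of the subsequent, more delicate results.
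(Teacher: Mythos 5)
Your proposal is correct and takes essentially the same route as the paper, which obtains the statement immediately by composing the embedding $G_\omega\hookrightarrow\mathbb{Z}\wr_n\mathfrak{S}_n$ for eventually constant $\omega$ (Proposition~\ref{prop: grigorchuk groups embedding into permutational wreath products}, resting on Grigorchuk's proof that such $G_\omega$ are virtually abelian) with Proposition~\ref{prop: permutational wreath products embedd into topological full groups}. One minor slip in your step (A): for constant $\omega=j^{\infty}$ the subgroup $\langle b,c,d\rangle$ degenerates to $\mathbb{Z}/2\mathbb{Z}$ (one directed generator becomes trivial and the other two coincide) rather than being a full Klein four-group, but this does not affect the conclusion that $G_{j^{\infty}}\cong D_\infty$ is virtually cyclic.
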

	
	This is an immediate consequence of Proposition~\ref{prop: permutational wreath products embedd into topological full groups} and Proposition~\ref{prop: grigorchuk groups embedding into permutational wreath products}. Of much more interest is the question for the counterparts with intermediate growth:
	
	\begin{thm}[\cite{mb14b}, Proposition~3.7]
		Let $\omega \in \{0,1,2\}^{\mathbb{N}_+}$ such that it is not eventually constant. There exists a minimal subshift $(X_\omega,\varphi_\omega)$ such that the Grigorchuk group $G_\omega$ embedds into $\mathfrak{T}(\varphi_\omega)$.
	\end{thm}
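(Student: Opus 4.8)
The statement to prove is that for every $\omega \in \{0,1,2\}^{\mathbb{N}_+}$ which is \emph{not} eventually constant, there exists a minimal subshift $(X_\omega,\varphi_\omega)$ into whose topological full group the Grigorchuk group $G_\omega$ embeds.

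\textbf{Overall approach.} The plan is to realise $G_\omega$ concretely as a group of homeomorphisms of a Cantor space, built from the standard self-similar action of $G_\omega$ on the boundary of the rooted binary tree, and then to exhibit a minimal subshift $(X_\omega,\varphi_\omega)$ whose orbit structure is rich enough to contain all the elements of $G_\omega$ as elements of $\mathfrak{T}(\varphi_\omega)$ with bounded, continuous orbit cocycles. By Proposition~\ref{prop: count} an element of $\operatorname{Homeo}(X_\omega)$ lies in $\mathfrak{T}(\varphi_\omega)$ precisely when $X_\omega$ admits a finite clopen partition on each piece of which the homeomorphism agrees with a fixed power of $\varphi_\omega$; so the task reduces to engineering a subshift on which the generators $a$ and $b_\omega,c_\omega,d_\omega$ of $G_\omega$ act by such ``piecewise-shift'' rules. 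The natural device is the construction already implicit in the eventually-constant case: the generators of $G_\omega$ are finite-state (bounded automaton) transformations of $\{0,1\}^{\mathbb{N}}$, and bounded automaton elements move points only a bounded distance along a suitable orbit.

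\textbf{Key steps, in order.} First I would recall the self-similar recursion defining $G_\omega$: the rooted automorphism $a$ and the sequence-dependent generators satisfying relations of the form $b_\omega=(a^{\epsilon},b_{\sigma\omega})$, etc., where $\sigma$ is the shift on $\{0,1,2\}^{\mathbb{N}_+}$; this is exactly the setup of the Grigorchuk groups recalled in the appendix. Second, I would use the hypothesis that $\omega$ is not eventually constant: this is precisely the condition ensuring $G_\omega$ is infinite and of intermediate growth, and, crucially, it governs whether the generators act as \emph{bounded} automaton transformations so that they define finite-range rearrangements. Third, I would construct $X_\omega$ as the subshift whose points encode the orbit of a base point under the groupoid generated by $G_\omega$ and the adding-machine (odometer-type) transformation $\varphi_\omega$; concretely one takes the Schreier-graph / orbital structure of the $G_\omega$-action on $\partial T$ together with a minimal homeomorphism $\varphi_\omega$ traversing each orbit, and symbolically codes it. The minimality of $(X_\omega,\varphi_\omega)$ must be verified, and I would obtain it by arranging that $\varphi_\omega$ has dense forward orbits, i.e. by choosing the coding so that the resulting system is an aperiodic minimal subshift (cf.\ the construction of Toeplitz/substitution shifts in Example~\ref{ex: sturm+toeplitz}). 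Fourth, for each generator $g\in\{a,b_\omega,c_\omega,d_\omega\}$ I would exhibit a finite clopen partition of $X_\omega$ and integers so that $g$ acts as $\varphi_\omega^{n_i}$ on each piece, invoking Proposition~\ref{prop: count} to conclude $g\in\mathfrak{T}(\varphi_\omega)$; since the $g$ are finite-state and the coding is chosen compatibly, the displacement of a point along its $\varphi_\omega$-orbit under $g$ is bounded and locally constant, giving the required continuous cocycle. Finally, injectivity of the resulting homomorphism $G_\omega\hookrightarrow\mathfrak{T}(\varphi_\omega)$ follows because the $G_\omega$-action on $\partial T$ (hence on $X_\omega$) is faithful, and the coding is orbit-equivalence-preserving.

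\textbf{Main obstacle.} The hard part will be the simultaneous construction of a \emph{minimal subshift} $\varphi_\omega$ whose orbit equivalence relation contains the $G_\omega$-orbit equivalence relation \emph{and} on which every $G_\omega$-generator realises a bounded, continuous cocycle. The tension is that minimality and aperiodicity of $\varphi_\omega$ pull toward a single densely-traversing $\mathbb{Z}$-action, whereas the $G_\omega$-action has a branch structure with unbounded-depth local behaviour; reconciling these requires exploiting that $\omega$ not eventually constant makes $G_\omega$ act by \emph{bounded} automata (the displacement function is bounded on each orbit), which is what allows the branch action to be squeezed into finitely many clopen ``$\varphi_\omega^{n_i}$'' pieces. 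I expect the technical heart to be the verification that the symbolic coding produces an \emph{expansive}, minimal, and aperiodic $\mathbb{Z}$-system (so that Remark~\ref{rem: symbolic dynamics} and Proposition~\ref{prop: count} both apply) while preserving the orbit relation faithfully; controlling the complexity bookkeeping of the code and checking minimality are where the real work lies, the embedding property itself being then essentially formal via Proposition~\ref{prop: count} and faithfulness of the tree action.
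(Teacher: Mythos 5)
Your overall architecture --- symbolically code a distinguished orbit of the boundary action, check via Proposition~\ref{prop: count} that each generator is a piecewise power of the shift, and use faithfulness of the tree action for injectivity --- is indeed the route taken by Matte Bon in \cite{mb14b} and, in the more general language of fragmentations of dihedral actions, by Nekrashevych in \cite{nek18} (sketched later in this text). But the single fact that makes the scheme possible is missing from your outline: the orbital Schreier graphs of $G_\omega$ at $G$-regular points are one- or two-ended infinite \emph{paths}, because $a$ together with $b_\omega,c_\omega,d_\omega$ acts on each orbit as a fragmentation of an infinite dihedral group. It is this linear order on each orbit that defines $\varphi_\omega$ (translation by one step along the path, transported to the coded subshift) and that forces each generator to displace every point by at most one step, i.e.\ to act as $\varphi_\omega^{\pm 1}$ or the identity on clopen pieces determined by the symbol in position $0$ or $-1$. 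Your substitute for this --- ``the generators are bounded automaton transformations'' --- does not suffice: the Basilica and Gupta--Sidki groups are also generated by bounded automata, yet their Schreier graphs are not lines and no such $\varphi$ exists. Moreover the generators of $G_\omega$ are directed/bounded for \emph{every} $\omega$; the hypothesis that $\omega$ is not eventually constant is not what makes them bounded, it is what makes the coding sequence non-eventually-periodic, hence the orbit closure an infinite aperiodic subshift (for eventually constant $\omega$ the construction degenerates and one instead invokes Proposition~\ref{prop: grigorchuk groups embedding into permutational wreath products}).

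Two further points would sink the write-up as it stands. First, your treatment of minimality is circular (``obtain it by arranging \dots\ so that the resulting system is a minimal subshift''); the actual mechanism is repetitivity of the orbital Schreier graphs, which follows from minimality of the boundary action by the compactness argument of Proposition~\ref{prop: isomorphism of balls in orbital schreier graphs} and shows that every finite word of the coding sequence recurs with bounded gaps, whence the orbit closure is minimal. Second, the phrase ``adding-machine (odometer-type) transformation $\varphi_\omega$'' is self-defeating: an infinite odometer is equicontinuous and never conjugate to a subshift, and by Proposition~\ref{prop: topfull groups of odometers are unions of permutational wreath products} its topological full group is a union of groups $\mathbb{Z}^{a_n}\rtimes \mathfrak{S}_{a_n}$, so all of its finitely generated subgroups are virtually abelian and of polynomial growth --- which $G_\omega$ is not when $\omega$ is not eventually constant. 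The homeomorphism $\varphi_\omega$ must therefore be, and in the correct construction is, the shift on a genuinely expansive coding of the linear orbit, not anything odometer-like.
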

	
	We do not dwell on the nature of the associated shift and the embedding for now, but immediatly turn to more general ideas by Nekrashevych: In \cite{nek18} Nekrashevych showed that topological full groups of fragmentations of dihedral group actions produce examples of infinite, finitely generated, simple, periodic groups of intermediate growth. In this subsection we take a quick look at \cite{nek18}. We dispense with proofs except for a sketch of the embedding of fragmentations of dihedral groups into topological full groups of Cantor systems. For definitions of graphs associated to actions see Appendix~\ref{app: graphs of actions}.
	
	The minimality of an action of a finitely generated group on a compact metrizable space enables to study the action via the study of orbital Schreier graphs. In particular it entails that orbital Schreier graphs are \emph{repetitive} i.e. for every finite subgraph $\Sigma$ a copy can be found in bounded distance -- depending on the size of $\Sigma$  -- of every vertex:
	\begin{prop}[\cite{nek18}, Proposition~2.5]\label{prop: isomorphism of balls in orbital schreier graphs}
		Let a group $G$ generated by a finite symmetric set $S$ act minimally on a compact metrizable space $X$ by homeomorphisms. Then for every $r \in \mathbb{N}$, there exists an $R(r) \in \mathbb{N}$ such that for every $G$-regular point $x \in X$ and every $y \in X$, there exists a vertex $z \in \Gamma(y,G,S)$ such that $d(z,y) \leq R(r)$ and such that the balls $B_{x}(r) \subset \Gamma(x,G,S)$ and $B_z(r) \subset \Gamma(y,G,S)$ are isomorphic as rooted graphs.
	\end{prop}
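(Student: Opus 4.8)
The plan is to exploit two finiteness phenomena — the finitely many isomorphism types of radius-$r$ balls, and the finite combinatorial datum (a \emph{coincidence pattern}) that determines each type — and then to upgrade minimality into a \emph{uniform} return bound by a compactness argument. Throughout I view $\Gamma(x,G,S)$ as the rooted, $S$-labelled graph with vertex set the orbit $Gx$ and an $s$-edge from $wx$ to $swx$ for each $s\in S$, so that $B_x(r)$ is the subgraph spanned by $\{wx : |w|_S\le r\}$, where $|w|_S$ denotes word length. Since $S$ is finite, $B_x(r)$ has at most $|S|^{r+1}$ vertices and degree at most $|S|$; hence there are only finitely many rooted $S$-labelled isomorphism types of such balls. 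Write $\mathcal{T}_r$ for this finite set.

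First I would record that the type of $B_x(r)$ depends only on the finite datum
\[
P(x) := \{(w,w') \in W_{r+1} \times W_{r+1} : wx = w'x\},
\]
where $W_{r+1}$ is the set of words in $S$ of length at most $r+1$: the assignment $wx \mapsto wz$ is a rooted $S$-labelled isomorphism $B_x(r) \to B_z(r)$ whenever $P(x)=P(z)$, and in particular a rooted-graph isomorphism. Next I would use the defining property of $G$-regularity of $x$ — that any $g\in G$ with $gx=x$ fixes a neighbourhood of $x$ pointwise — to show that $P$ is locally constant at $x$. For each pair with $wx=w'x$ the element $g=(w')^{-1}w$ fixes $x$, hence fixes an open $V_g\ni x$ pointwise; for each pair with $wx\ne w'x$, continuity of the action supplies an open $V_g\ni x$ on which $(w')^{-1}w$ moves every point. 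Intersecting these finitely many $V_g$ yields an open $U_x\ni x$ with $P(z)=P(x)$, and therefore $B_z(r)\cong B_x(r)$, for every $z\in U_x$.

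The decisive step is uniformity in $x$, and here the finiteness of $\mathcal{T}_r$ is essential. For each type $t\in\mathcal{T}_r$ that is realised by some $G$-regular point, set $V_t := \bigcup\{U_x : x \text{ is } G\text{-regular and } B_x(r)\cong t\}$; this is open, nonempty, and the ball type is constantly $t$ throughout $V_t$. For each such $t$ the saturation $\bigcup_{g\in G} gV_t$ is open, $G$-invariant and nonempty, so by minimality (the only closed invariant subsets being $\emptyset$ and $X$) it equals $X$; compactness of $X$ then furnishes a finite $F_t\subseteq G$ with $X=\bigcup_{g\in F_t} gV_t$. I would then put $R(r):=\max_t \max_{g\in F_t} |g^{-1}|_S$, a finite maximum over the finitely many realised types.

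Finally I would assemble the conclusion. Given a $G$-regular $x$ with $B_x(r)\cong t$ and any $y\in X$, we have $y\in g_0 V_t$ for some $g_0\in F_t$, so $z:=g_0^{-1}y\in V_t$ satisfies $B_z(r)\cong t\cong B_x(r)$; writing $g_0^{-1}$ as a word in $S$ of length $|g_0^{-1}|_S\le R(r)$ traces a path from $y$ to $z$ in $\Gamma(y,G,S)$, whence $d(z,y)\le R(r)$. The main obstacle is exactly the passage from the point-dependent neighbourhoods $U_x$ to a single radius $R(r)$ valid for all $G$-regular $x$ at once; the argument above circumvents it by grouping the $U_x$ according to their (finitely many) ball types \emph{before} invoking compactness, so that only finitely many return bounds must be combined. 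A minor bookkeeping point to verify along the way is that $P(x)$ on $W_{r+1}$ (rather than only $W_r$) really does pin down all edges of the induced ball $B_x(r)$, including those to vertices at the sphere of radius $r$.
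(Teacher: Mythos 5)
Your proof is correct and follows essentially the same route as the paper's: encode the rooted ball $B_x(r)$ by a finite coincidence pattern on bounded-length words, use $G$-regularity plus continuity to make that pattern locally constant, and then combine minimality, compactness, and the finiteness of the set of realized ball types to extract a uniform return radius $R(r)$. The only (immaterial) difference is bookkeeping: you group the neighbourhoods $U_x$ by ball type before invoking compactness, whereas the paper first produces a bound $R_x(r)$ for each of finitely many representative regular points and then takes the maximum over types.
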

	\begin{proof}
		Let $r \in \mathbb{N}$ and let $x$ be a $G$-regular point. Then the ball $B_x(r) \subset \Gamma(x,G,S)$ is determined by the values of the function $\delta_x \colon B_{1}(r) \times B_{1}(r) \to \{0,1\}$ given by
		\begin{equation*}
			\delta_x(g,h):=
			\begin{cases}
			1, & \text{for } g(x)=h(x) \text{ in }B_x(r) \\
			0, & \text{else}
			\end{cases}
		\end{equation*}
		
		Since $x$ was assumed to be $G$-regular there exists a open neighbourhood $U_x$ of $x$ such that for all $y \in U_x$ we have $\delta_x=\delta_y$ and thus the balls $B_x(r) \subset \Gamma(x,G,S)$ and $B_y(r) \subset \Gamma(y,G,S)$ are isomorphic as rooted labelled graphs. By minimality the $G$-translates of $U_x$ form an open cover of $X$. By compactness there exists a finite collection $g_1,\dots,g_n$ such that the family $\{g_i(U_x)\}_{i \in \{1,\dots,n\}}$ covers $X$. Let $R_x(r):=\max_{i \in \{1,\dots,n\}} \mathcal{L}_{G,S}(g_i)$. Then for every $y \in X$ there exists an $i \in \{1,\dots,n\}$ such that $g_i^{-1}(y) \in U_x$. Then the balls $B_x(r)\subset \Gamma(x,G,S)$ and $B_{g_i^{-1}(x)}(r)\subset \Gamma(x,G,S)$ are isomorphic as rooted labelled graphs and $d(x,g_i^{-1}(x)) \leq R_x(r)$.
		To get rid of the dependency on the choice of the $G$-regular point $x$, note that the set of isomorphism classes of balls in orbital graphs of the form $B_x(r)$ where $x$ is some arbitrary $G$-regular point $x$ is finite. Then any complete set of representatives $\{B_{x_i}(r)\}_{i \in I}$ provides an estimate independent of the choice of a $G$-regular point by $R(r)= \max_{i \in I} R_{x_i}(r)$.
	\end{proof}
	
	\begin{defi}[\cite{nek18}, Definition~2.6]
		Let a group $G$ generated by a finite symmetric set $S$ act minimally on a compact metrizable space $X$ by homeomorphisms. Then the action is called \emph{linearly repetitive} if there exists a $K \in \mathbb{R}_{>1}$ such that $R(r)<Kr$.
	\end{defi}
	
	Nekrashevych introduces a new class of groups by fragmentations of actions of dihedral groups on Cantor spaces. This class in particular includes the Grigorchuk groups:
	
	\begin{defi}[[\cite{nek18}, §3]
		Let $h$ be a homeomorphism on a Cantor space $X$ with period $2$. 
		\begin{enumerate}[(i)]
			\item A finite group $G$ of homeomorhisms of $X$ is called a \emph{fragmentation of $h$} if for every $g \in G$ and $x \in X$ either $h(x)=x$ or $h(x)=g(x)$ hold, and if for every $x \in X$ there exists a $g \in G$ such that $h(x)= g(x)$.
		\end{enumerate}
		Let $G$ be a fragmentation of $h$.
		\begin{enumerate}[(i),resume]
			\item Let $g \in G$. Denote by $E_{g,1}$ the set of fixed points of $g$ and by $E_{g,h}$ the set of elements $x \in X$ such that $g(x)=h(x)$.
			
			\item Let $h$ be such that its set of fixed points has empty interior.\footnote{This assures that for all $g \in G$ we have $(E_{g,1})^\circ \cap (E_{g,h})^\circ = \emptyset$.\\} Denote by $\mathcal{P}_G$ the family of subsets of $X$ of the form $\bigcap_{g \in G} (E_{g,i_g})^\circ$ where $i_g \in \{1,h\}$ for all $g \in G$. The elements of $\mathcal{P}_G$ are called the \emph{pieces of the fragmentation $G$}.
			
			\item The \emph{infinite dihedral group $D_{\infty}$} is given by the presentation $D_{\infty}:=\langle a,b|a^2,b^2 \rangle$.
			
			\item Let the infinite dihedral group $D_{\infty}:=\langle a,b|a^2,b^2 \rangle$ act on a Cantor set by homeomorphisms. A \emph{fragmentation of the dihedral group $\langle a,b \rangle$} is a group $G$ of homeomorphisms generated by $A \cup B$ where $A$ resp. $B$ are fragmentations of $a$ resp. $b$. 
		\end{enumerate}
	\end{defi}
	
	\begin{ex}[\cite{nek18}, Example~3.3]
		The first Grigorchuk group is a fragmentation of of a dihedral group acting on a Cantor space.
	\end{ex}
	
	By an elaborate analysis on the graphs associated to the action Nekrashevych obtains:
	
	\begin{thm}[\cite{nek18}, Theorem~4.1]
		Let $G$ be the fragmentation of a minimal action of the dihedral group $D_\infty$ on a Cantor space $X$ such that there exists a purely non-Hausdorff singularity. Then the group $G$ is periodic.
	\end{thm}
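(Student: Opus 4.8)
The plan is to reduce periodicity to a combinatorial study of the action of $G$ on the orbital Schreier graphs of its orbits, and then to exploit the blocking effect of the purely non-Hausdorff singularity.

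First I would record that $G$ is generated by involutions. A fragmentation of the involution $a$ consists of homeomorphisms $g$ for which, at every $x$, either $g(x)=x$ or $g(x)=a(x)$; since the set $E_{g,a}$ on which $g$ agrees with $a$ must be $a$-invariant for $g$ to be a homeomorphism, each such $g$ is itself an involution, swapping the pairs $\{x,a(x)\}$ with $x\in E_{g,a}$ and fixing every other point. Thus $A$ and $B$ are finite groups all of whose elements are involutions, and $G=\langle A\cup B\rangle$. It therefore suffices to prove that an arbitrary word $w=s_{1}s_{2}\cdots s_{k}$ in $S:=A\cup B$ has finite order.

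Second I would pin down the orbits. Each generator sends a point $x$ to one of $x$, $a(x)$, $b(x)$, so every $G$-orbit lies inside a $D_\infty$-orbit; conversely the fragmentation axiom provides, for each $x$, elements of $A$ and $B$ realising $a(x)$ and $b(x)$, so each $G$-orbit is $D_\infty$-invariant and the two orbit equivalence relations coincide. As $D_\infty$ acts on each of its orbits as the full isometry group of a line, minimality shows that the orbital graph $\Gamma(x,G,S)$ of a $G$-regular point $x$ is a line (a ray at an orbit through a reflection fixed point), with the $A$- and $B$-edges playing the roles of the two families of reflections. A word of length $k$ displaces every vertex by at most $k$ in the graph metric, so $w$ acts on each orbit as a bounded-displacement bijection of this line.

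Third, which is the crux, I would bring in the singularity. At a purely non-Hausdorff singular point the germs of the generators are not locally determined, and in the orbital picture this means that the fragmented line \emph{breaks} there: the translating stretches of the elements $\alpha\beta$ (with $\alpha\in A$, $\beta\in B$) terminate at the singularity, so such a point behaves as a reflecting barrier rather than as a transparent vertex of the line. Using minimality together with the repetitivity of orbital graphs recorded in Proposition~\ref{prop: isomorphism of balls in orbital schreier graphs}, I would show that these barriers recur along every orbital line with bounded gaps and are compatible with the self-similar structure of the action. This produces a decomposition of each line into finite blocks and a section map sending $w$, restricted to the orbit beyond a barrier, to a strictly shorter word in $S$; an induction on the length $k$ then forces every $\langle w\rangle$-orbit to be finite of bounded size, whence $w^{n}=1$ for a suitable $n$. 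The main obstacle is exactly this contraction step: one must prove that crossing a non-Hausdorff singularity strictly lowers the combinatorial complexity of $w$, so that the induction terminates, and that the resulting bound on the sizes of the finite $\langle w\rangle$-orbits is uniform over the orbit. This is where the hypothesis of a \emph{purely} non-Hausdorff singularity is indispensable, for without a genuine barrier the elements $\alpha\beta$ would translate points to infinity along the line and yield elements of infinite order.
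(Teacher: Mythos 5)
The paper itself gives no proof of this theorem --- it only cites \cite{nek18} and describes the argument as ``an elaborate analysis on the graphs associated to the action'' --- so your proposal must be judged on its own merits. Your first two steps are sound and match the setup Nekrashevych uses: every element of a fragmentation of an involution is again an involution (injectivity forces the agreement set $E_{g,h}$ to be $h$-invariant), the $G$-orbits coincide with the $D_\infty$-orbits, the orbital Schreier graph of a $G$-regular point is a line (or a ray) fattened by loops and multiple edges, and a word of length $k$ displaces every vertex by at most $k$ in the graph metric.

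The gap is in your third step, which is where the entire content of the theorem lives. You assert that the purely non-Hausdorff singularity produces ``reflecting barriers'' that recur along every orbital line with bounded gaps, decompose the line into finite blocks, and admit a section map sending $w$ to a strictly shorter word --- but none of this is constructed, and it is not clear it works as stated. The singular point $\xi$ is a single point of $X$; what recurs along an orbital line (by minimality and Proposition~\ref{prop: isomorphism of balls in orbital schreier graphs}) are orbit points lying in small neighbourhoods of $\xi$, and whether such a point actually blocks the trajectory of a given word $w=s_k\cdots s_1$ depends on whether the specific prefix products arising in the itinerary of $w$ have trivial germ there, i.e.\ on whether those products lie in the interiors of fixed-point sets accumulating at $\xi$. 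The hypothesis only guarantees that for \emph{each} $g\in G_\xi$ separately the set $(\{y\,|\,gy=y\})^{\circ}$ accumulates at $\xi$; these open sets can approach $\xi$ along different subsets for different $g$, so a single nearby orbit point need not be a barrier for all the relevant subwords of $w$ simultaneously, and an intersection of finitely many such sets need not accumulate at $\xi$. Making this work is precisely the ``elaborate analysis'' (a bookkeeping of traverses of the trajectory past copies of the singular point, with an induction controlled by the length of $w$), and you explicitly flag it as ``the main obstacle'' rather than supplying it. Finally, even granting that every $\langle w\rangle$-orbit is finite, you still need the orbit sizes to be uniformly bounded to conclude $w^{n}=1$; finiteness of each orbit alone does not give finite order. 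As it stands the proposal is a correct identification of the strategy and of where the difficulty sits, not a proof.
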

	
	\begin{rem}
		By Lemma~3.2 in \cite{nek18} for every fixed point $x$ of a non-free, minimal action of a dihedral group on a Cantor space, there exists a fragmentation $G$ such that $x$ is a purely non-Hausdorff singularity.
	\end{rem}
	
	For the refined case of linearly repetive actions Nekrashevych obtains an upper bound for the growth rate. Since any infinite, finitely generated, periodic group is not virtually nilpotent,\footnote{The nilpotent finite-index subgroup would be periodic and finitely generated. Being nilpotent and periodic is equivalent to being locally finite and nilpotent. Locally finite, finitely generated groups are necessarily finite, which is a contradiction.\\} which by Gromov's theorem (Theorem~\ref{thm: gromovs theorem}) implies that it has superpolynomial growth: 
	\begin{thm}[\cite{nek18}, Theorem~6.6]\label{thm: fragmentations with linearly repetitive schreier graphs have intermediate growth}
		Let $G$ be the fragmentation of a minimal action of the dihedral group $D_\infty$ on a Cantor space $X$ such that the orbital Schreier graphs of $G$ are linearly repetitive and there exists a purely non-Hausdorff singularity. Then $G$ has intermediate growth.
	\end{thm}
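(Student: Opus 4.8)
The final statement (Theorem~6.6 of \cite{nek18}) asserts that a fragmentation $G$ of a minimal dihedral action with linearly repetitive orbital Schreier graphs and a purely non-Hausdorff singularity has intermediate growth. The plan is to combine two halves: a lower bound showing growth is superpolynomial, and an upper bound showing growth is subexponential. The lower bound is the cheaper half: by the preceding Theorem~\ref{thm: fragmentations with linearly repetitive schreier graphs have intermediate growth}'s hypothesis together with the periodicity result, $G$ is infinite, finitely generated and periodic, hence not virtually nilpotent (a virtually nilpotent periodic finitely generated group would be finite, as indicated in the footnote), so by Gromov's theorem (Theorem~\ref{thm: gromovs theorem}) $G$ has superpolynomial growth. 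Thus the entire substance of the proof lies in establishing the subexponential upper bound.

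First I would set up the combinatorial framework: since $G$ is generated by $A \cup B$ where $A,B$ are fragmentations of the generators $a,b$ of $D_\infty$, every element $g \in G$ can be written as an alternating word in the pieces of $A$ and $B$. The key geometric input is linear repetitivity of the orbital Schreier graphs $\Gamma(x,G,S)$, which via Proposition~\ref{prop: isomorphism of balls in orbital schreier graphs} and Definition~\ref{defi: linearly repetitive} (the bound $R(r) < Kr$) controls how local patterns recur at linear scale. The strategy I would pursue is to bound the word length of a group element in terms of the geometry of its \emph{support} on the Schreier graphs: because elements are periodic and act locally like reflections coming from the dihedral structure, a group element acting nontrivially on a large portion of an orbit must be decomposable through the self-similar/repetitive structure into pieces acting on disjoint, smaller subgraphs.

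The central step, and the main obstacle, is the length-reduction or ``contraction'' estimate. I would aim to show that there is a constant $\lambda < 1$ and a scale such that any element of length $n$ can be rewritten as a product of boundedly many elements, each supported on subgraphs of proportionally smaller size, with total length roughly $\lambda n$ plus a bounded additive error. Iterating this contraction across the linearly-repetitive hierarchy yields a recursive bound on the number of elements of length at most $n$, and the linear repetitivity constant $K$ is exactly what guarantees the geometric decay needed for the sum over scales to produce a subexponential (indeed stretched-exponential) growth function $\gamma_G(n) \leq \exp(C n^{\alpha})$ for some $\alpha < 1$. The purely non-Hausdorff singularity enters here to guarantee that the fragmentation genuinely fractures the dihedral action enough that such supports shrink — it prevents degenerate situations where an element's support cannot be split.

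I expect the hard part to be making the contraction quantitative and uniform: one must track not just the size of supports but also the overhead incurred each time a word is split (commutator corrections, boundary terms where pieces meet), and show this overhead stays additively bounded relative to the multiplicative shrinking. This is where the interplay between the dihedral relations $a^2 = b^2 = 1$ (which keep the local alphabet finite) and the linear repetitivity bound $R(r) < Kr$ must be balanced precisely; a too-crude estimate gives only subexponentiality of an uninformative form or fails to close the recursion. Once the recursive inequality for the growth function is in hand, combining the resulting stretched-exponential upper bound with the Gromov-type superpolynomial lower bound established at the outset completes the proof that $G$ has intermediate growth.
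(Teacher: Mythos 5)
Your lower bound is exactly the one the paper indicates: the purely non-Hausdorff singularity gives periodicity, an infinite finitely generated periodic group cannot be virtually nilpotent, and Gromov's theorem then yields superpolynomial growth. That half is fine. Note, however, that the paper itself gives no proof of the subexponential half --- it only records that Nekrashevych derives an upper bound on the growth from linear repetitivity --- so the entire burden of the theorem rests on the part of your argument that you only describe as a goal.

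And there the proposal has a genuine gap. The ``contraction estimate'' --- rewriting an element of length $n$ as a bounded product of elements supported on proportionally smaller subgraphs with total length $\lambda n + O(1)$ --- \emph{is} the theorem, and you supply no mechanism for producing it. A Grigorchuk-style contraction normally comes from a self-similar (wreath-recursive) structure, which fragmentations of dihedral actions do not carry: there is no canonical way to restrict a word to a smaller subgraph while shrinking its length multiplicatively, and the additive overhead you yourself flag (boundary corrections each time a word is split) is exactly what defeats naive versions of this scheme. Nekrashevych's actual argument is of a different kind: it is a counting argument, not a length contraction. One shows that the element represented by a word of length $n$ is determined by its traverses of the intervals in a hierarchy of partitions of the (linear) orbital graphs; a word of length $n$ traverses an interval of size $r$ at most on the order of $n/r$ times; the dihedral relations $a^2=b^2=1$ together with periodicity force each traverse of a fixed interval to belong to a bounded set of types; and linear repetitivity caps the number of isomorphism types of intervals of size $r$ by a linear function of $r$. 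Summing this data over scales and optimizing in $r$ bounds $\gamma_G(n)$ subexponentially. Relatedly, you misplace the role of the purely non-Hausdorff singularity: it is what produces periodicity, and hence feeds the lower bound; the subexponential upper bound is driven by linear repetitivity, not by the singularity ``forcing supports to split.'' As written, the proposal establishes only the easy half of the statement.
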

	
		The following was already observed in the case of Grigorchuk groups by Matte Bon in \cite{mb14b}:
		
		\begin{prop}[\cite{nek18}, Proposition~5.1]
			Every fragmentation $G$ of a minimal action of the dihedral group $D_\infty$ on a Cantor space $X$ embeds into the topological full group of a minimal subshift.
		\end{prop}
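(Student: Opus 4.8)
The plan is to produce a minimal subshift $(Y,\sigma)$ together with an explicit embedding $G \hookrightarrow \mathfrak{T}(\sigma)$, mimicking the strategy already used for Grigorchuk groups in \cite{mb14b}. The starting point is the observation that a fragmentation $G$ of a minimal dihedral action $\langle a,b\rangle \curvearrowright X$ acts on the same Cantor space $X$ by homeomorphisms, and by construction every generator in $A \cup B$ agrees locally with one of the two involutions $a,b$ generating $D_\infty$. First I would record that each $g \in A$ (resp. $B$) is \emph{piecewise} equal to $a$ (resp. $b$): on the clopen piece $E_{g,h}$ it coincides with the relevant dihedral generator and on $E_{g,1}$ it is the identity. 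Since the set of fixed points of $a$ and of $b$ has empty interior and $X$ is a Cantor space, the pieces $\mathcal{P}_G$ form a finite clopen partition of $X$ adapted to all generators simultaneously.

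The key step is to transfer the $G$-action to a $\mathbb{Z}$-action via symbolic coding. The orbit structure of the dihedral group is that of a bi-infinite line, so each $G$-orbit in $X$ carries a natural $\mathbb{Z}$-indexing once one follows the alternating application of $a$ and $b$. I would use the finite clopen partition $\mathcal{P}_G$ (together with its $D_\infty$-translates) to build, via Remark~\ref{rem: symbolic dynamics} and the standard symbolic-dynamics construction, a factor map onto a subshift: the coding sends $x \in X$ to the bi-infinite sequence of pieces visited along its dihedral orbit. Minimality of the dihedral action forces the resulting subshift $(Y,\sigma)$ to be minimal, where $\sigma$ implements the ``shift by one step along the orbit line'' — concretely $\sigma$ corresponds to composing the dihedral generators alternately, which is the standard way a $D_\infty$-action produces an underlying $\mathbb{Z}$-dynamics. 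The crucial point is that every generator of $G$, being piecewise equal to $a$ or $b$, acts on $Y$ by moving a point at most one index forward or backward depending on the local piece, hence defines an element of the topological full group: by Proposition~\ref{prop: count}(ii) a homeomorphism lies in $\mathfrak{T}(\sigma)$ precisely when it agrees with integer powers of $\sigma$ on a finite clopen partition, and the partition by pieces $\mathcal{P}_G$ supplies exactly such a decomposition with values in $\{-1,0,+1\}$.

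Having realized each generator as an element of $\mathfrak{T}(\sigma)$, I would check that the induced homomorphism $G \to \mathfrak{T}(\sigma)$ is injective. Injectivity follows from faithfulness of the original $G$-action on $X$ together with the fact that the coding map is injective on $G$-orbits (by minimality and the repetitivity results of Proposition~\ref{prop: isomorphism of balls in orbital schreier graphs}, distinct group elements are distinguished by their action on some orbit, and orbits embed into orbits of $\sigma$). Thus a nontrivial $g \in G$ acts nontrivially on $Y$, so the homomorphism has trivial kernel.

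The main obstacle will be the careful construction of the subshift and the verification that the chosen coordinate coding genuinely turns the dihedral step-dynamics into a single homeomorphism $\sigma$ with a \emph{minimal} orbit structure on $Y$; in particular one must ensure that following $a$ and $b$ alternately gives a well-defined aperiodic $\mathbb{Z}$-action whose orbits match the $G$-orbits. The delicate issue is that the dihedral orbit of a point fixed by $a$ or $b$ is a half-line or a folded line rather than a full line, so the coding must handle these boundary points — precisely the purely non-Hausdorff singularities — without destroying minimality or injectivity. I expect this boundary analysis, rather than the membership in $\mathfrak{T}(\sigma)$, to be the technical heart; the latter is essentially immediate from the piecewise description once the shift is in place. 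I would resolve it by passing to the subshift generated by the codings of \emph{regular} points, whose $\mathbb{Z}$-orbits are genuine bi-infinite lines, and then appealing to minimality to conclude that the singular points are coded consistently as limits, exactly as in the Grigorchuk case treated by Matte Bon.
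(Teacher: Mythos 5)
Your proposal follows essentially the same route as the paper's sketch: code the orbital Schreier graph of a $G$-regular point (a bi-infinite line for the dihedral action) by the pieces $\mathcal{P}_A\cup\mathcal{P}_B$, take the subshift generated by that coding, get minimality from the repetitivity statement of Proposition~\ref{prop: isomorphism of balls in orbital schreier graphs}, and realize each generator in $A\cup B$ as an element of $\mathfrak{T}(\sigma)$ with orbit cocycle valued in $\{-1,0,+1\}$. Your identification of the singular/folded-orbit points as the delicate issue, resolved by building the subshift from regular points only, is exactly how the paper handles it.
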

		\begin{proofsketch}
			Let $x \in X$ be a $G$-regular point. Then the orbital Schreier graph $\Gamma(x,\langle a,b \rangle)$ is a two-ended infinite path. If two vertices $v_1,v_2$ of the orbital Schreier graph $\Gamma(x,\langle a,b \rangle)$ are connected by an edge labelled by ``$a$" (resp. ``$b$"), then for every $g \in A$ and $i_g \in \{1,a\}$ (resp. $g \in B$ and $i_g \in \{1,b\}$) we have $v_1 \in (E_{g,i_g})^\circ$ \emph{if and only if} $v_2 \in (E_{g,i_g})^\circ$, hence they lie in the same piece $P \in \mathcal{P}_A$ (resp. $P \in \mathcal{P}_B$).\footnote{They are connected in $\Gamma(x,G)$ by an edge for every element $g \in A$ (resp. $g \in B$) such that $g|_P=a$ (resp. $g|_P=b$). Hence orbital Schreier graphs of fragmentations of dihedral groups are one- or two-ended infinite paths ``fattened up" by loops and multiple edges.\\} By associating adjacent edges in the orbital Schreier graph $\Gamma(x,\langle a,b \rangle)$ with successive integers this gives rise to a sequence $\omega_x \in (\mathcal{P}_A \cup \mathcal{P}_B)^{\mathbb{Z}}$.
			Define $\mathcal{W} \subseteq (\mathcal{P}_A \cup \mathcal{P}_B)^{\mathbb{Z}}$ to be the subset of seqences $\omega \in (\mathcal{P}_A \cup \mathcal{P}_B)^{\mathbb{Z}}$ such that every finite substring of $\omega$ is a substring of $\omega_x$. Then $\mathcal{W}$ is a closed shift-invariant subset of $(\mathcal{P}_A \cup \mathcal{P}_B)^{\mathbb{Z}}$ and Proposition~\ref{prop: isomorphism of balls in orbital schreier graphs} implies that $(\mathcal{W},\sigma)$ is minimal. Every $g \in A$ (resp. $g \in B$) induces an element $\iota(g) \in \mathfrak{T}(\sigma)$ by defining for $\omega \in \mathcal{W}$:
			\begin{equation*}
			\iota(g)(\omega):=
			\begin{cases}
			\sigma(\omega), & \text{ if }g|_{\omega_0}=a|_{\omega_0}\qquad \text{(resp. }g|_{\omega_0}=b|_{\omega_0}\text{)}\\
			\sigma^{-1}(\omega), & \text{ if }g|_{\omega_{-1}}=a|_{\omega_{-1}}\quad\text{(resp. }g|_{\omega_{-1}}= b|_{\omega_{-1}} \text{)}\\
			\omega, & \text{else}
			\end{cases}
			\end{equation*}
		\end{proofsketch}
		
		Thus Corollary~\ref{cor: topological full groups of minimal cantor systems are amenable} implies:
		\begin{cor}
			Every fragmentation $G$ of a minimal action of the dihedral group $D_\infty$ on a Cantor space $X$ is amenable.
		\end{cor}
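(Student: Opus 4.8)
The plan is to deduce amenability purely by combining the two results immediately preceding this corollary, so that essentially all of the work has already been done. First I would invoke the preceding proposition (\cite{nek18}, Proposition~5.1), whose proof sketch constructs, from the fragmentation $G$ of a minimal dihedral action on $X$, a minimal subshift $(\mathcal{W},\sigma)$ together with a map $\iota$ sending the generators in $A \cup B$ to elements of $\mathfrak{T}(\sigma)$. The content of that proposition is precisely that $\iota$ extends to an injective group homomorphism $\iota \colon G \hookrightarrow \mathfrak{T}(\sigma)$; thus $G$ is isomorphic to the subgroup $\iota(G) \leq \mathfrak{T}(\sigma)$.

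Next I would apply Corollary~\ref{cor: topological full groups of minimal cantor systems are amenable}: since $(\mathcal{W},\sigma)$ is a minimal subshift, its topological full group $\mathfrak{T}(\sigma)$ is amenable. Finally I would appeal to the standard permanence property that amenability is inherited by subgroups (see Appendix~\ref{app: amenability}). Applying this to $\iota(G) \leq \mathfrak{T}(\sigma)$ shows that $\iota(G)$ is amenable, and hence so is $G \cong \iota(G)$.

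There is no substantive obstacle at this stage: the corollary is a formal consequence of the embedding theorem together with the amenability of $\mathfrak{T}(\sigma)$, and subgroup permanence of amenability is elementary. The only point deserving a moment of care — and it is entirely routine — is to make sure that one genuinely has a group-theoretic embedding into $\mathfrak{T}(\sigma)$ viewed as a group of homeomorphisms of $\mathcal{W}$ (as set up in the effective case in Remark~\ref{rem: topfgroupsdefi}), rather than merely a map of generators, so that the subgroup permanence property can be invoked verbatim; this is exactly what the preceding proposition supplies.
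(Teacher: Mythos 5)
Your proposal is correct and is exactly the paper's argument: the corollary is stated there as an immediate consequence of the embedding of $G$ into $\mathfrak{T}(\sigma)$ for a minimal subshift $(\mathcal{W},\sigma)$ (the preceding proposition), combined with Corollary~\ref{cor: topological full groups of minimal cantor systems are amenable} and the subgroup permanence of amenability. Nothing further is needed.
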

		
		In the case of expansive actions of dihedral groups the following propositions hold:
		\begin{prop}[\cite{nek18}, Proposition~5.2]\label{prop: actions of fragmentations of expansive dihedral actions are expansive}
			Let $G$ be the fragmentation of an action of the dihedral group $D_\infty$ on a Cantor space. If the action of $D_\infty$ is expansive, then the action of $G$ is expansive.
		\end{prop}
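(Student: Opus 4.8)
The plan is to reduce to the reformulation of expansiveness for actions on a Cantor space in terms of clopen partitions. By the discussion around Remark~\ref{rem: symbolic dynamics} and the equivalence between expansiveness and being a subshift (see Proposition~\ref{prop: expansivecover} and the surrounding material), an action of a finitely generated group on a Cantor space is expansive if and only if there is a finite clopen partition whose group-translates separate points, equivalently generate the topology. So I would begin with a finite clopen partition $\mathcal{P}$ witnessing expansiveness of the $D_\infty$-action, i.e.\ such that the sets $w^{-1}(P)$ for $w \in D_\infty$ and $P \in \mathcal{P}$ generate the topology of $X$. I would then refine $\mathcal{P}$ to a finite clopen partition $\mathcal{Q}$ compatible with the fragmentation: on each atom $Q \in \mathcal{Q}$ every generator in $A$ (resp.\ $B$) restricts to the identity or to $a$ (resp.\ $b$), and moreover there are chosen $g_Q \in A$ with $g_Q|_Q = a|_Q$ and $h_Q \in B$ with $h_Q|_Q = b|_Q$. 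The existence of such $g_Q, h_Q$ is precisely the fragmentation axiom that at every point some element of $A$ (resp.\ $B$) realizes $a$ (resp.\ $b$), and $\mathcal{Q}$ is obtained by intersecting $\mathcal{P}$ with the pieces of the fragmentations $A$ and $B$.

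The heart of the argument is a Boolean-algebra invariance statement. Let $\mathcal{B}_G$ denote the Boolean algebra of clopen subsets of $X$ generated by $\{\,g^{-1}(Q) : g \in G,\ Q \in \mathcal{Q}\,\}$; by construction $\mathcal{B}_G$ is $G$-invariant. I claim $a^{-1}(\mathcal{B}_G) \subseteq \mathcal{B}_G$ and $b^{-1}(\mathcal{B}_G) \subseteq \mathcal{B}_G$. Indeed, for a generator $S = g^{-1}(Q') \in \mathcal{B}_G$ and $x \in Q$ we have $a(x) = g_Q(x)$, whence $a^{-1}(S) \cap Q = g_Q^{-1}(S) \cap Q$, and therefore
\begin{equation*}
a^{-1}(S) = \bigsqcup_{Q \in \mathcal{Q}} \big(g_Q^{-1}(S) \cap Q\big).
\end{equation*}
Each $g_Q^{-1}(S)$ lies in $\mathcal{B}_G$ since $g_Q \in G$ and $\mathcal{B}_G$ is $G$-invariant, each $Q$ lies in $\mathcal{B}_G$, and the union is finite, so $a^{-1}(S) \in \mathcal{B}_G$. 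The identical computation with $h_Q$ in place of $g_Q$ handles $b$.

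Since $D_\infty = \langle a, b\rangle$, iterating this invariance yields $w^{-1}(\mathcal{B}_G) \subseteq \mathcal{B}_G$ for every $w \in D_\infty$; in particular $w^{-1}(P) \in \mathcal{B}_G$ for all $w \in D_\infty$ and $P \in \mathcal{P}$, because $\mathcal{Q}$ refines $\mathcal{P}$ and hence $P \in \mathcal{B}_G$. As these sets generate the topology by expansiveness of $D_\infty$, the $G$-translates of $\mathcal{Q}$ generate the topology too, which is exactly expansiveness of $G$.

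The step I expect to be the main obstacle is the construction of the clopen refinement $\mathcal{Q}$ with the required agreement property. The agreement sets $E_{g,1}$ and $E_{g,a}$ are a priori only closed, and they need be clopen only away from the fixed-point set of $a$, which has empty interior. I would address this by working with the open pieces $\mathcal{P}_A, \mathcal{P}_B$ of the fragmentations and verifying that, in the Cantor setting, these pieces are genuinely clopen and cover $X$ up to a nowhere-dense singular set which can be absorbed into adjacent atoms without destroying the relation $a|_Q = g_Q|_Q$; this is exactly the point where the hypotheses on fragmentations are used, namely the finiteness of the groups $A, B$ and the empty-interior condition on the fixed-point sets. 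Once a legitimate finite clopen $\mathcal{Q}$ with the agreement property is in hand, the remainder of the proof is the purely formal Boolean computation above.
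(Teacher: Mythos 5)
Your reduction of expansiveness to a separating clopen partition, and the Boolean computation showing $a^{-1}(\mathcal{B}_G)\subseteq\mathcal{B}_G$ once the partition $\mathcal{Q}$ is in hand, are both fine; but the step you yourself flag as the main obstacle is not a technicality that can be patched -- the partition $\mathcal{Q}$ you need does not exist in general, and it fails to exist precisely in the cases this section of the paper is built around. Concretely, take $X=\{0,1\}^{\mathbb{N}}$, $x_0=000\dots$, let $U_n$ be the cylinder of sequences beginning with $0^n1$, let $a$ act on each $U_n$ by flipping the $(n+2)$nd coordinate and fix $x_0$, and let $S_1,S_2,S_3\subseteq\mathbb{N}$ be the unions of residue classes $\{0,1\}$, $\{1,2\}$, $\{0,2\}$ modulo $3$. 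Defining $g_i$ to act as $a$ on $\bigcup_{n\in S_i}U_n$ and as the identity elsewhere gives a fragmentation $A=\{1,g_1,g_2,g_3\}\cong(\mathbb{Z}/2\mathbb{Z})^2$ of $a$ (one checks $g_1g_2=g_3$ and that every $n$ lies in two of the $S_i$, so every point is covered). Any finite clopen partition has an atom $Q\ni x_0$, and $Q$ contains $U_n$ for all large $n$; since no $S_i$ is cofinite, each $g_i$ restricts to the identity on infinitely many $U_n\subseteq Q$ on which $a$ is fixed-point-free, so no element of $A$ agrees with $a$ on $Q$. The proposed ``absorption into adjacent atoms'' therefore cannot be carried out: agreement $g_Q|_Q=a|_Q$ on a clopen atom containing $x_0$ would say exactly that the germ of $a$ at $x_0$ is realized in $G$, i.e.\ that $x_0$ is not a singular point -- whereas the applications (e.g.\ Theorem~\ref{thm: fragmentations with linearly repetitive schreier graphs have intermediate growth}) explicitly require purely non-Hausdorff singularities. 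Allowing $g_Q$ to range over all of $G$ rather than $A$ does not rescue the argument at such points for the same reason. (Note the paper only cites \cite{nek18} for this proposition and prints no proof, so there is no in-text argument to compare against.)

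The statement is nevertheless true, and the working proof is pointwise rather than locally uniform. The key lemma is: for any two points $x,y\in X$ there is a single $g\in A$ with $g(x)=a(x)$ and $g(y)=a(y)$. If $a$ fixes $x$ or $y$ this is immediate from the fragmentation axiom; otherwise choose $g_1,g_2\in A$ with $g_1(x)=a(x)$ and $g_2(y)=a(y)$, and if neither works for both points then $g_1(y)=y$ and $g_2(x)=x$, so injectivity of $g_1$ forces $g_1(a(y))=a(y)$ (it cannot equal $y=g_1(y)$ since $a(y)\neq y$), whence $g=g_1g_2$ satisfies $g(x)=g_1(x)=a(x)$ and $g(y)=g_1(a(y))=a(y)$. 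Iterating this along a word $w=s_1\cdots s_k$ in $a,b$ produces $h\in G$ with $h(x)=w(x)$ and $h(y)=w(y)$; choosing $w$ so that $w(x)$ and $w(y)$ lie in different atoms of the expansive partition $\mathcal{P}$ for $D_\infty$ shows that the $G$-translates of $\mathcal{P}$ already separate points, which is expansiveness of $G$. I suggest rebuilding your argument around this two-point realization lemma and discarding the clopen-agreement partition.
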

		
		The following is a consequence of Theorem~\ref{thm: alternating full groups of minimal expansive groupoids are finitely generated}:
		\begin{prop}\label{prop: topfullgrps of fragmentations are contained in fragmentations}
			Let $G=\langle A \cup B \rangle$ be the fragmentation of an expansive, minimal action of the dihedral group $D_\infty$ on a Cantor space and let $\mathcal{G}$ be its associated transformation groupoid. Then there exists a fragmentation $\tilde{G}$ of this action such that $\mathfrak{A}(\mathcal{G}) \leq \tilde{G}$ and the groups of germs $G_x/G_{(x)} $ and $\tilde{G}_x/\tilde{G}_{(x)}$ are isomorphic for all $x \in X$.
		\end{prop}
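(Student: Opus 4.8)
The plan is to manufacture a finitely generated target, realise it inside a sufficiently refined fragmentation, and then verify that refining the fragmentation does not disturb the germ groups. First I would establish finite generation of $\mathfrak{A}(\mathcal{G})$. By Proposition~\ref{prop: actions of fragmentations of expansive dihedral actions are expansive} the action of $G$ is expansive, so $\mathcal{G}$ is an expansive \'etale Cantor groupoid. Since for every $x$ there are $g\in A$ and $g'\in B$ with $g(x)=a(x)$ and $g'(x)=b(x)$, the $G$-orbit of $x$ contains the whole $\langle a,b\rangle$-orbit; hence the $G$-orbits coincide with the $D_\infty$-orbits, which are infinite by minimality (a finite orbit would be a proper closed invariant set). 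Thus $\mathcal{G}$ is minimal with all orbits containing at least $5$ points, and Theorem~\ref{thm: alternating full groups of minimal expansive groupoids are finitely generated} provides a finite generating set $\gamma_1,\dots,\gamma_k$ of $\mathfrak{A}(\mathcal{G})$.

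Next I would realise these generators by fragmentation elements. Each $\gamma_j$ lies in $\mathfrak{T}(\mathcal{G})$, hence is piecewise given by elements of $G$, and its orbit cocycle, being continuous on the compact space $X$, is bounded; so $\gamma_j$ moves every point a uniformly bounded distance along the orbital line. Recall that $\mathfrak{A}(\mathcal{G})$ is generated by the groups $\mathfrak{A}(\mathcal{M})$ of multisections of degree three, i.e. by three-cycles of compact open slices, and that a slice carried by a single dihedral generator over a clopen piece $P$ yields exactly the \emph{piece-flip} $T_B$ (applying $a$ on $P\cup a(P)$ and fixing the rest, resp. the analogue for $b$). Writing the finitely many slices occurring in $\gamma_1,\dots,\gamma_k$ as words in $a,b$ along orbits, and refining the piece families $\mathcal{P}_A,\mathcal{P}_B$ enough — using expansiveness to separate the finitely many orbit segments involved — I would decompose each $\gamma_j$ as a finite product of such piece-flips in $a$ and in $b$. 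Letting $\tilde A$ be the (finite, elementary abelian $2$-)group generated by all $a$-piece-flips of the refined $a$-partition and $\tilde B$ the analogue for $b$, one checks $g(x)\in\{x,a(x)\}$ for $g\in\tilde A$ and the covering condition, so $\tilde A$ fragments $a$ and $\tilde B$ fragments $b$; hence $\tilde G=\langle\tilde A\cup\tilde B\rangle$ is a fragmentation of the dihedral action with $\mathfrak{A}(\mathcal{G})\le\tilde G$.

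It then remains to compare the germ groups. The inclusion $G\le\tilde G$ induces $G_x/G_{(x)}\to\tilde G_x/\tilde G_{(x)}$, and this is automatically injective, since fixing a clopen neighbourhood of $x$ is a condition independent of the ambient group, so $\tilde G_{(x)}\cap G_x=G_{(x)}$. For surjectivity I would argue that any $\tilde g\in\tilde G$ with $\tilde g(x)=x$ equals, on some clopen neighbourhood of $x$, a word $w$ in $a,b$: each generator of $\tilde A$ (resp.\ $\tilde B$) is locally either the identity or $a$ (resp.\ $b$), because it applies $a$ (resp.\ $b$) precisely on a clopen union of pieces. The word $w$ fixes $x$, and since $G$ fragments the \emph{same} dihedral action and already contains, near every point, elements applying $a$ and $b$, I would reproduce the germ of $w$ at $x$ by an element of $G$ fixing $x$. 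This identifies both germ groups with the germ group of the underlying $D_\infty$-action at $x$ (trivial at regular points and of order two at the reflection-fixed, purely non-Hausdorff points), giving $G_x/G_{(x)}\cong\tilde G_x/\tilde G_{(x)}$.

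The hard part will be the surjectivity in the last step at the purely non-Hausdorff singularities: at a fixed point $x_0$ of, say, $a$, where the two sides of the orbital line accumulate onto $x_0$, one must match the two-sided local behaviour of a dihedral word using only the (coarser) pieces available in $G$, even though $\tilde G$ may have split these pieces more finely. The decisive point to prove is therefore that the germ of any fragmentation element at $x_0$ depends only on the net local dihedral word it induces on the orbit and not on the clopen partition realising it, so that passing to the refinement $\tilde G$ is germ-neutral. Establishing this germ-invariance under refinement, rather than the finite-generation or the decomposition steps, is where the essential work lies.
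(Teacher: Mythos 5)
The paper gives no proof of this proposition beyond the pointer that it follows from Theorem~\ref{thm: alternating full groups of minimal expansive groupoids are finitely generated}, and your skeleton --- deduce finite generation of $\mathfrak{A}(\mathcal{G})$ from expansiveness, minimality and infiniteness of orbits, then absorb the finitely many generators into an enlarged fragmentation and check germs --- is the intended route. Your first step is correct.

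The genuine gap is in the construction of $\tilde{A},\tilde{B}$ and the resulting germ comparison. You generate $\tilde{A}$ by \emph{piece-flips of $a$}, i.e.\ homeomorphisms equal to $a$ on a clopen $a$-invariant set and to the identity elsewhere. Such an element has, at every point of its clopen support, exactly the germ of $a$. But at a purely non-Hausdorff singularity $x_0$ of $G$ --- the case this proposition exists to serve, since periodicity of $\mathfrak{A}(\mathcal{G})$ downstream depends on preserving that singularity --- \emph{no} element of $G_{x_0}$ has the germ of $a$: by definition the interior of $\operatorname{Fix}(g)$ accumulates at $x_0$ for every $g \in G_{x_0}$, whereas $\operatorname{Fix}(a)$ has empty interior by the standing assumption on the fragmented involutions. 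So the germ of your piece-flip at $x_0$ lies outside the image of $G_{x_0}/G_{(x_0)}\to\tilde{G}_{x_0}/\tilde{G}_{(x_0)}$, the germ groups are not isomorphic, and the purely non-Hausdorff singularity is destroyed. Your proposed ``decisive point'' rests on the same misconception: the germ group of a fragmentation at a singular point is in general \emph{not} the order-two germ group of the underlying $D_\infty$-action (for the first Grigorchuk group it is $(\mathbb{Z}/2\mathbb{Z})^2$), so the two germ groups cannot both be ``identified with the germ group of the underlying $D_\infty$-action.'' The repair is to generate $\tilde{A}$ by clopen restrictions of the \emph{elements of $A$} themselves (``$g$ on a clopen $g$-invariant set, identity elsewhere''): these are still partial applications of $a$, still generate a finite elementary abelian $2$-group covering $a$, and every germ of every element of the resulting group is either a unit or a germ of an element of $G$, which is exactly what keeps the groupoid of germs --- hence every $G_x/G_{(x)}$ --- unchanged. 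Separately, your factorisation of the generators of $\mathfrak{A}(\mathcal{G})$ into piece-flips is asserted rather than proved: the naive ``conveyor-belt'' factorisation of a transposition along a word in $a,b$ of length greater than one displaces the intermediate sets, so this step also needs an argument (via the line structure of the dihedral Schreier graphs and vanishing of the index), though that part is fixable.
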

		
		Combining the results on alternating full groups\footnote{See Subsection~\ref{subs: simplicity of the alternating full groups} and Subsection~\ref{subs: finite generation of alternating full groups}.\\} with Theorem~\ref{thm: fragmentations with linearly repetitive schreier graphs have intermediate growth}, Proposition~\ref{prop: actions of fragmentations of expansive dihedral actions are expansive} and Proposition~\ref{prop: topfullgrps of fragmentations are contained in fragmentations} implies:
	
		\begin{thm}[\cite{nek18}, Theorem~1.2]
			Let $G$ be the fragmentation of an expansive, minimal action of the dihedral group $D_\infty$ on a Cantor space such that the orbital Schreier graphs of $G$ are linearly repetitive and there exists a purely non-Hausdorff singularity and let $\mathcal{G}$ be the transformation groupoid associated with the action of $G$. Then the group $\mathfrak{A}(\mathcal{G})$ is a infinite, finitely generated, simple, periodic group of intermediate growth.
		\end{thm}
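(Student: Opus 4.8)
The plan is to obtain the four asserted properties — infinite, finitely generated, simple, and of intermediate growth — separately, each by assembling results already established in this section with the structure theory of fragmentations. First I would record the groupoid-theoretic properties that the hypotheses supply. Since the $D_\infty$-action is minimal and, for every $x$, the fragmentations $A$ and $B$ realise $a(x)$ and $b(x)$ by elements of $G$, the $G$-orbits contain the (dense) $D_\infty$-orbits; hence the $G$-action is minimal and every $\mathcal{G}$-orbit is infinite, in particular of cardinality at least $5$. By Proposition~\ref{prop: actions of fragmentations of expansive dihedral actions are expansive} the $G$-action is expansive, so by \cite{nek17}, Proposition~5.5 the groupoid $\mathcal{G}$ is expansive as well. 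A point requiring care is effectiveness: a fragmentation element is the identity on a clopen piece, so the transformation groupoid is in general not effective; I would therefore realise $\mathfrak{A}(\mathcal{G})$ through the associated groupoid of germs $\operatorname{Germ}(X,G)$, which is effective, minimal and expansive and produces the same alternating full group viewed as a group of homeomorphisms of $X$.

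Granting this, finite generation and simplicity are near-immediate citations. Finite generation follows from Theorem~\ref{thm: alternating full groups of minimal expansive groupoids are finitely generated}, whose hypotheses (an expansive \'etale Cantor groupoid with all orbits of size $\geq 5$) have just been verified. Simplicity follows from Theorem~\ref{thm: simpl} applied to the effective, minimal groupoid $\operatorname{Germ}(X,G)$. Infiniteness I would establish independently of the growth estimate: since all orbits are infinite, Lemma~\ref{lem: mult} produces multisections of arbitrary degree $d$, so $\mathfrak{A}(\mathcal{G})$ contains a copy of $\mathfrak{A}_d$ for every $d$ and is therefore infinite.

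It remains to establish intermediate growth, which is where the fragmentation structure enters decisively. By Proposition~\ref{prop: topfullgrps of fragmentations are contained in fragmentations} there is a fragmentation $\tilde{G}$ of the same action with $\mathfrak{A}(\mathcal{G}) \leq \tilde{G}$ and with germ groups $\tilde{G}_x/\tilde{G}_{(x)}$ isomorphic to $G_x/G_{(x)}$ for all $x$. Because the purely non-Hausdorff singularity and the linear repetitivity of the orbital Schreier graphs are determined by the germ data, $\tilde{G}$ satisfies the hypotheses of \cite{nek18}, Theorem~4.1 and of Theorem~\ref{thm: fragmentations with linearly repetitive schreier graphs have intermediate growth}; hence $\tilde{G}$ is periodic and of intermediate, in particular subexponential, growth. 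Consequently $\mathfrak{A}(\mathcal{G})$, being a subgroup of $\tilde{G}$, is periodic, and being a finitely generated subgroup of a group of subexponential growth it has subexponential growth. For the matching lower bound I would invoke that an infinite, finitely generated, periodic group is never virtually nilpotent, so by Gromov's theorem (Theorem~\ref{thm: gromovs theorem}) the group $\mathfrak{A}(\mathcal{G})$ has superpolynomial growth; combined with subexponentiality this yields intermediate growth.

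Assuming the deep estimates behind Theorem~\ref{thm: fragmentations with linearly repetitive schreier graphs have intermediate growth}, the genuinely delicate steps are the two transfer arguments: verifying that passing to the groupoid of germs neither alters $\mathfrak{A}$ nor destroys effectiveness (needed for simplicity), and verifying that the germ-group isomorphism of Proposition~\ref{prop: topfullgrps of fragmentations are contained in fragmentations} really carries the singularity and linear-repetitivity hypotheses from $G$ to $\tilde{G}$, so that the periodicity and intermediate-growth theorems apply to $\tilde{G}$. I expect the latter — certifying that $\tilde{G}$ inherits linear repetitivity of its orbital Schreier graphs — to be the main obstacle, since it is the one place where a large-scale geometric property, rather than a purely algebraic or local-combinatorial one, must be propagated across the construction.
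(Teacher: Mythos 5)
Your proposal is correct and follows essentially the same route as the paper, which proves this theorem by the same one-line assembly of Proposition~\ref{prop: actions of fragmentations of expansive dihedral actions are expansive}, Theorem~\ref{thm: simpl}, Theorem~\ref{thm: alternating full groups of minimal expansive groupoids are finitely generated}, Proposition~\ref{prop: topfullgrps of fragmentations are contained in fragmentations} and Theorem~\ref{thm: fragmentations with linearly repetitive schreier graphs have intermediate growth}. Your additional care about effectiveness (passing to the groupoid of germs) and about transferring the singularity and repetitivity hypotheses to $\tilde{G}$ addresses exactly the points the paper leaves implicit, and correctly identifies where the real work in \cite{nek18} lies.
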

		
	\section{Irreducibility of Koopman representations}\label{sec: the koopman representation of topological full groups}

	Let $(X,\mu)$ be a measure space such that $\mu$ is a finite measure. Let $G$ be a locally compact, second countable group acting ergodically and measure class preserving on $(X,\mu)$. The following question was put forward i.a. by Vershik:
	\begin{quest}[\cite{ver11}, Problem~4]
		In which of the above settings is the associated Koopman representation\footnote{See Definition~\ref{defi: koopman representation}.\\} irreducible? \footnote{In the measure preserving case the space of constant functions $\mathbb{C}\cdot \mathbf{1}_X$ provides a closed invariant subspace thus one restricts to $(\mathbb{C}\cdot \mathbf{1}_X)^{\perp}$.\\}
	\end{quest}
	
	The list of actions for which this question has been answered affirmatively is quite short.\footnote{For a list see \cite{dud18}.\\}
	
	\subsection{Measure contracting actions}
	
	Artem Dudko provides in \cite{dud18} a unified strategy of proving irreducibility for
	 the natural actions of Higman-Thompson groups as groups of piecewise linear homeomorphisms and for the natural actions of weakly branch groups on tree boundaries.
	
	\begin{defi}[\cite{dud18},§2]\phantomsection\label{defi: mca}
		\begin{enumerate}[(i)]
			\item Let $(X,\mu)$ be a probability space and let $G$ be a locally compact, second countable group acting measure class preserving on $(X,\mu)$. This action is called \emph{measure contracting} if for every measurable subset $A$ and $M,\epsilon>0$ there exists a $g \in G$ such that:\footnote{In this subsection let  $\supp(g)$ denote the set $\{x \in X|gx \neq x  \}$.\\}
			\begin{enumerate}
				\item $\mu(\supp(g)\setminus A) < \epsilon$
				\item $\mu(\{x \in A| \sqrt{\frac{\mathrm{d}\mu(g x)}{\mathrm{d}\mu(x)}} < M^{-1} \})>\mu(A)-\epsilon$
			\end{enumerate}
			
			\item Let $(X,\mu)$ be a measure space. Every function $f \in \mathrm{L}^\infty(X,\mu)$ constitutes a multiplication operator $T_f \in \mathcal{B}(\mathrm{L}^2(X,\mu))$ by $T_f (g)(x)=f(x)g(x)$ for all $g \in \mathrm{L}^2(X,\mu)$. Denote by $\mathcal{L}^\infty$ the von Neumann algebra generated by $\{T_f|f \in \mathrm{L}^\infty(X,\mu) \}$ in $\mathcal{B}(\mathrm{L}^2(X,\mu))$.
		\end{enumerate}
	\end{defi}
	
	\begin{rem}\label{rem: l-infty is a masa}
		Let $(X,\mu)$ be a finite measure space. Then the von Neumann algebra $\mathcal{L}^\infty$ defined in Definition~\ref{defi: mca}(ii) is a masa in $\mathcal{B}(\mathrm{L}^2(X,\mu))$ -- see e.g. Proposition~III.1.5.16 of \cite{bla06}.
	\end{rem}
	
	\begin{lem}[\cite{dud18}, Theorem~3]\label{lem: irr}
		Let $G$ be a locally compact, second countable group acting ergodically and measure class preserving on a standard Borel space $(X,\mu)$. Let $\mathfrak{A}$ be the von-Neumann-algebra generated by $\mathcal{M}_\kappa \cup \mathcal{L}^\infty$ in $\mathcal{B}(\mathrm{L}^2(X,\mu))$. Then $\mathfrak{A}=\mathcal{B}(\mathrm{L}^2(X,\mu))$.
	\end{lem}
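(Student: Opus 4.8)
The plan is to pass to the commutant and invoke the double commutant theorem. Since $\mathcal{M}_\kappa=\{\kappa(g):g\in G\}$ is a self-adjoint set of unitaries containing $\operatorname{Id}$ (because $\kappa$ is a unitary representation, so $\kappa(g)^{*}=\kappa(g^{-1})$), the von Neumann algebra $\mathfrak{A}$ generated by $\mathcal{M}_\kappa\cup\mathcal{L}^\infty$ satisfies $\mathfrak{A}=(\mathcal{M}_\kappa\cup\mathcal{L}^\infty)''$, whence $\mathfrak{A}'=\mathcal{M}_\kappa'\cap(\mathcal{L}^\infty)'$. By Remark~\ref{rem: l-infty is a masa} the algebra $\mathcal{L}^\infty$ is a masa in $\mathcal{B}(\mathrm{L}^2(X,\mu))$, so $(\mathcal{L}^\infty)'=\mathcal{L}^\infty$. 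Consequently every $T\in\mathfrak{A}'$ is already a multiplication operator, i.e. $T=T_f$ for some $f\in\mathrm{L}^\infty(X,\mu)$, which in addition commutes with every Koopman operator $\kappa(g)$. It thus remains to show that such an $f$ must be $\mu$-almost-everywhere constant; then $\mathfrak{A}'=\mathbb{C}\cdot\operatorname{Id}$ and the double commutant theorem yields $\mathfrak{A}=\mathfrak{A}''=(\mathbb{C}\cdot\operatorname{Id})'=\mathcal{B}(\mathrm{L}^2(X,\mu))$.

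The key computation is the covariance relation between Koopman operators and multiplication operators. A direct calculation from the formula $(\kappa(g)\psi)(x)=\sqrt{\tfrac{\mathrm{d}g_*\mu}{\mathrm{d}\mu}(x)}\,\psi(g^{-1}x)$ of Definition~\ref{defi: koopman representation} shows that conjugation sends multiplications to multiplications, namely $\kappa(g)\,T_\phi\,\kappa(g)^{-1}=T_{\phi\circ g^{-1}}$ for all $\phi\in\mathrm{L}^\infty(X,\mu)$; here the two Radon--Nikodym factors produced by $\kappa(g)$ and $\kappa(g)^{-1}=\kappa(g^{-1})$ cancel against each other by the chain rule $\tfrac{\mathrm{d}(g^{-1})_*\mu}{\mathrm{d}\mu}(g^{-1}x)=\big(\tfrac{\mathrm{d}g_*\mu}{\mathrm{d}\mu}(x)\big)^{-1}$, so no density survives. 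Applying this to $T=T_f\in\mathfrak{A}'$, the relation $\kappa(g)T_f\kappa(g)^{-1}=T_f$ becomes $T_{f\circ g^{-1}}=T_f$, i.e. $f\circ g^{-1}=f$ $\mu$-almost everywhere, for every $g\in G$. Hence $f$ is a $G$-invariant element of $\mathrm{L}^\infty(X,\mu)$, and ergodicity of the action forces $f$ to be constant almost everywhere, completing the reduction.

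The argument is short precisely because the measure contracting hypothesis of Definition~\ref{defi: mca} is \emph{not} needed here: it enters the companion result showing that the commutant of $\mathcal{M}_\kappa$ \emph{alone} already lies inside $\mathcal{L}^\infty$, which is the genuinely hard input toward irreducibility. The only point in the present lemma that requires care is the covariance identity: one must check, within the standard Borel framework, that the square-root Radon--Nikodym cocycle is genuinely a multiplicative cocycle so that the density factors cancel under conjugation, and that all manipulations respect $\mu$-null sets. I expect this measure-theoretic bookkeeping, rather than any operator-algebraic subtlety, to be the main obstacle; once the clean relation $\kappa(g)T_\phi\kappa(g)^{-1}=T_{\phi\circ g^{-1}}$ is established, the reduction via the masa property and ergodicity is immediate.
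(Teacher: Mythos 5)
Your proposal is correct and follows essentially the same route as the paper: pass to the commutant, use that $\mathcal{L}^\infty$ is a masa to identify any element of $\mathfrak{A}'$ with a multiplication operator $T_f$, derive $f\circ g^{-1}=f$ almost everywhere from commutation with $\kappa(g)$ (the paper does this by directly comparing $\kappa(g)T_f h$ with $T_f\kappa(g)h$ and cancelling the Radon--Nikodym factor, which is positive a.e.\ by quasi-invariance -- the same cancellation you package as the covariance identity), and conclude by ergodicity and the bicommutant theorem. Your observation that the measure-contracting hypothesis plays no role in this lemma and only enters the companion result $\mathcal{L}^\infty=\mathcal{M}_\kappa$ is also accurate.
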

	\begin{proof}
		If we show $\mathfrak{A}'\subseteq \mathbb{C}\cdot \operatorname{Id}$, we are done, because then $\mathfrak{A}'' = \mathcal{B}(\mathrm{L}^2(X,\mu))$ and by the von Neumann bicommutant theorem\footnote{See Remark~\ref{rem: vNma}(i).\\} $\mathfrak{A}=\mathcal{B}(\mathrm{L}^2(X,\mu))$. Accordingly, let $A \in \mathfrak{A}'$. By Remark~\ref{rem: vNma}(ii), $\mathfrak{A}' \subseteq (\mathcal{L}^\infty)'$ and $\mathfrak{A}' \subseteq \mathcal{M}_\kappa'$ hold. By Remark~\ref{rem: l-infty is a masa} the abelian von-Neumann-algebra $\mathcal{L}^\infty$ is maximal, which is equivalent to $(\mathcal{L}^\infty)'=\mathcal{L}^\infty$. Thus $A$ is the multiplication operator $T_f$ associated with a function $f \in \mathrm{L}^\infty(X,\mu)$. Since $T_f=A \in \mathcal{M}_\kappa'$, we have
		\begin{equation*}
		\begin{split}
		\sqrt{\frac{\mathrm{d}g_{*}\mu}{\mathrm{d}\mu}(x)}f(g^{-1}x)h(g^{-1}x)= \kappa(g)T_f (h)(x)=\\
		=T_f	(\kappa(g)h)(x)= f(x) \sqrt{\frac{\mathrm{d}g_{*}\mu} {\mathrm{d}\mu}(x)} h(g^{-1}x)
		\end{split}
		\end{equation*}
		for all $g \in G$ and $h \in \mathrm{L}^2(X,\mu)$, thus $f$ is $G$-invariant almost everywhere. By ergodicity this implies $f$ must be constant almost everywhere and thus the associated operator satisfies $T_f \in \mathbb{C}\cdot \operatorname{Id}$ -- see \cite{gla03}, Theorem~3.10.
	\end{proof}
	
	\begin{thm}[\cite{dud18}, Theorem~4]\label{thm: measurecontractioninducesapproximation}
		Let $(X,\mu)$ be a probability space and let $G$ be a locally compact, second countable group acting measure class preserving, ergodically and measure contracting on $(X,\mu)$. Then $\mathcal{L}^\infty = \mathcal{M}_\kappa$ holds.
	\end{thm}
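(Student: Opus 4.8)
The content of the statement is that every multiplication operator lies in $\mathcal{M}_\kappa$, i.e. the inclusion $\mathcal{L}^\infty \subseteq \mathcal{M}_\kappa$, where $\mathcal{M}_\kappa$ denotes the von Neumann algebra generated by the Koopman operators $\kappa(g)$. Since $\mathcal{M}_\kappa$ is a von Neumann algebra and hence weakly closed, and since by Remark~\ref{rem: l-infty is a masa} the masa $\mathcal{L}^\infty$ is generated by the spectral projections $P_A := T_{\mathbf{1}_A}$ onto $\mathrm{L}^2(A,\mu)$ for measurable $A \subseteq X$, it suffices to exhibit each $P_A$ as a weak operator limit of operators lying in $\mathcal{M}_\kappa$. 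The plan is to fix $A$, invoke the measure contracting property (Definition~\ref{defi: mca}(i)) along sequences $\varepsilon_n \to 0$ and $M_n \to \infty$ to obtain $g_n \in G$, and to show that the adjoints $\kappa(g_n)^\ast = \kappa(g_n^{-1}) \in \mathcal{M}_\kappa$ converge to $P_{X \setminus A}$ in the weak operator topology; the identity $P_A = \operatorname{Id} - P_{X \setminus A}$ then places $P_A$ in $\mathcal{M}_\kappa$.

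Writing $J_n(x) := \tfrac{\mathrm{d}\mu(g_n x)}{\mathrm{d}\mu(x)}$ for the Radon--Nikodym cocycle, Definition~\ref{defi: koopman representation} gives $\kappa(g_n^{-1})f(x) = \sqrt{J_n(x)}\, f(g_n x)$, so that for $f,h \in \mathrm{L}^2(X,\mu)$ I would split $\langle \kappa(g_n^{-1})f, h\rangle$ into its contributions over $A$ and over $X \setminus A$. On $X \setminus A$ condition~(a), $\mu(\supp(g_n)\setminus A) < \varepsilon_n$, forces $g_n$ to be the identity off a set of measure $<\varepsilon_n$, so the integrand equals $f\overline{h}$ up to a Cauchy--Schwarz error bounded by $\|f\|\,\|h\|_{\mathrm{L}^2(\supp(g_n)\setminus A)} \to 0$, yielding $\langle P_{X\setminus A} f, h\rangle$ in the limit. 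On $A$ condition~(b) provides a subset $A_n \subseteq A$ with $\mu(A\setminus A_n) < \varepsilon_n$ on which $\sqrt{J_n} < M_n^{-1}$; the decisive observation is that $g_n$ genuinely contracts measure there, whence $\mu(g_n A_n) = \int_{A_n} J_n \,\mathrm{d}\mu < M_n^{-2}\mu(A) \to 0$. After the change of variables $y = g_n x$ the $A_n$-part is dominated by $\|f\|_{\mathrm{L}^2(g_n A_n)}\,\|h\|$, which tends to $0$ since $\mu(g_n A_n) \to 0$ and $f \in \mathrm{L}^2$, while the leftover $A \setminus A_n$ part is at most $\|f\|\,\|h\|_{\mathrm{L}^2(A\setminus A_n)} \to 0$ by unitarity of $\kappa$. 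Thus the $A$-contribution vanishes and $\kappa(g_n^{-1}) \to P_{X\setminus A}$ weakly.

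This yields $P_A \in \mathcal{M}_\kappa$ for every measurable $A$, hence $\mathcal{L}^\infty \subseteq \mathcal{M}_\kappa$, which is the claimed identification; together with Lemma~\ref{lem: irr}, which uses ergodicity, it forces $\mathcal{M}_\kappa = \mathcal{B}(\mathrm{L}^2(X,\mu))$ and thus the irreducibility of the Koopman representation. The main obstacle is that every $\kappa(g_n)$ is unitary, so $\|\kappa(g_n^{-1})\| = 1$ uniformly and no norm estimate can ever produce a proper projection: the required smallness must come solely from honest weak convergence. The heart of the matter is that measure contraction (condition~(b)) forces $\mu(g_n A_n) \to 0$, which is precisely what lets the mass that $\kappa(g_n^{-1})$ concentrates on the small, necessarily measure-expanding complementary region (forced to exist by bijectivity of $g_n$) escape weakly; orienting the cocycle correctly --- working with $\kappa(g_n^{-1})$ so that the small weight $\sqrt{J_n}$ sits over $A$ --- while simultaneously balancing this against the support localization of condition~(a) is the delicate point.
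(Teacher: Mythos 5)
Your argument is correct and follows essentially the same route as the paper: one approximates the projection onto $\{f \in \mathrm{L}^2(X,\mu) : \supp(f)\subset X\setminus A\}$ in the weak operator topology by the operators $\kappa(g_n^{-1})$ furnished by the measure-contracting property, splitting the matrix coefficient over $A_n$, $A\setminus A_n$ and the small set $\supp(g_n)\setminus A$. The only (harmless) divergence is in the $A_n$-estimate --- the paper first reduces to $f_1,f_2\in\mathrm{L}^\infty(X,\mu)$ and uses the pointwise bound $\sqrt{J_n}<m^{-1}$ against the sup norms, whereas you stay in $\mathrm{L}^2$ and use the change of variables $\mu(g_nA_n)=\int_{A_n}J_n\,\mathrm{d}\mu<M_n^{-2}$ together with absolute continuity of the integral, a marginally cleaner variant of the same bound --- and, as you correctly observe, the real content is the inclusion $\mathcal{L}^\infty\subseteq\mathcal{M}_\kappa$, which is also all that the paper's proof actually establishes and all that is needed for the irreducibility argument.
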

	\begin{proof}
		 Let $A$ be a measurable subset of $X$. Then $\mathcal{H}_A:=\{f \in \mathrm{L}^2(X,\mu)|\supp(f)\subset X \setminus A\}$ is a subspace of $\mathrm{L}^2(X,\mu)$. Denote by $P_A$ the orthogonal projection unto this subspace. Since the action of $G$ is measure contracting, we can find a sequence of elements $(g_m^A)_{m \in \mathbb{N}}$ satisfying conditions \ref{defi: mca}(i) and \ref{defi: mca}(ii) for $M=m$ and $\epsilon=m^{-1}$. Set
		 \begin{equation}
		 A_m=\{x \in A| (\frac{\mathrm{d}\mu(g_m^A x)} {\mathrm{d}\mu(x)})^{1/2}<m^{-1} \},B_m:=\supp(g_m^a)\setminus A_m
		 \end{equation}
		 By definition, $A_m \subseteq \supp(g_m^A)$ holds and we have:
		 \begin{equation}
		 \begin{gathered}
		 \mu(A\setminus A_m)=\mu(A)-\mu(A_m)<\mu(A)-(\mu(A)-m^{-1})=m^{-1}\\
		 \mu(B_m)=\mu(\supp(g_m^a)\setminus A_m) \leq \mu(\supp(g_m^a)\setminus A) + \mu(A\setminus A_m) < 2m^{-1}
		 \end{gathered}
		 \end{equation}
		 In the following we varify, that for all $f_1,f_2 \in \mathrm{L}^2(X,\mu)$ we have $\langle\kappa(g_m^A)f_1,f_2\rangle \longrightarrow \langle P_A f_1,f_2\rangle$ for $m \to \infty$ i.e. $P_A$ is in the weak operator closure of $\kappa(G)$ and thus in $\mathcal{M}_\kappa$. Since $\mathrm{L}^2(X,\mu) \cap \mathrm{L}^\infty(X,\mu)$ is dense in $\mathrm{L}^2(X,\mu)$, we may assume $f_1,f_2 \in \mathrm{L}^\infty(X,\mu)$. Let therefore $\|f_i\|_\infty=M_i$ and let us fix the following abbrevations:
		 \begin{equation*}
		 \bm{\mathrm{I}_m^A}(x):=\sqrt{ \frac {\mathrm{d}\mu(g_m^A x)}{\mathrm{d}\mu(x)}}f_1(g_m^Ax)\overline{f_2(x)},\quad
		 \bm{\mathrm{II}}(x):=f_1(x)\overline{f_2(x)}
		 \end{equation*}
		 Then we have:
		 \begin{equation*}
		 |\langle\kappa((g_m^A)^{-1})f_1,f_2\rangle - \langle P_A f_1,f_2\rangle| = |\int_{X}\bm{\mathrm{I}_m^A}(x)\;\mathrm{d}x-\int_{X \setminus A}\bm{\mathrm{II}}(x)\;\mathrm{d}x|
		 \end{equation*}
		 Note that $\bm{\mathrm{I}_m^A}$ and $\bm{\mathrm{II}}$ coincide on $X \setminus (\supp (g_m^A) \cup A)$, thus this expression is equal to:
		 \begin{equation*}
		 \begin{gathered}
		 |\int_{A_m}\bm{\mathrm{I}_m^A}(x)\;\mathrm{d}x+
		 \int_{B_m}\bm{\mathrm{I}_m^A}(x)\;\mathrm{d}x+
		 \int_{A\setminus \supp(g_m^A)}\bm{\mathrm{II}}(x)\;\mathrm{d}x-
		 \int_{\supp(g_m^A)\setminus A}\bm{\mathrm{II}}(x)\;\mathrm{d}x|\\
		 \overset{(3.1)}{<}m^{-1}\int_{A_m}|f_1(g_m^Ax)\overline{f_2(x)}|\;\mathrm{d}x+
		 |\int_{B_m}\bm{\mathrm{I}_m^A}(x)\;\mathrm{d}x|+
		 \int_{A \triangle \supp(g_m^A)}|\bm{\mathrm{II}}(x)|\;\mathrm{d}x
		 \leq\\
		 \leq m^{-1} M_1M_2+
		 |\int_{B_m}\bm{\mathrm{I}_m^A}(x)\;\mathrm{d}x|+
		 \int_{A \triangle \supp(g_m^A)}M_1M_2\;\mathrm{d}x
		 \end{gathered}
		 \end{equation*}
		 The first summand is obtained, since $\|f_1\|_\infty=\|f_1 \circ g_m^A\|_\infty$ by measure class preservance and it obviously goes to $0$ for $m \to \infty$. Since
		 \begin{equation*}
		 \mu(A \triangle \supp(g_m^A)) \leq \mu(A \setminus A_m)+\mu(B_m)\overset{(3.2)}{<}3m^{-1}
		 \end{equation*}
		 holds, the third summand can be bounded by $3m^{-1}M_1M_2$. By Cauchy-Schwartz the second summand can be bounded by:
		 \begin{equation*}
		 \sqrt{\|\kappa((g_m^A)^{-1})f_1\|_2\|P_{X\setminus B_m}f_2\|_2}
		 \end{equation*}
		 By unitarity of $\kappa$ this is just
		 \begin{equation*}
		 \sqrt{\|f_1\|_2\|P_{X\setminus B_m}f_2\|_2}
		 \end{equation*}
		 which tends to $0$ for $m \to \infty$ because:
		 \begin{equation*}
		 \|P_{X\setminus B_m}f_2\|_2^2 \leq \int_{B_m}|f_2(x)|^2\;\mathrm{d}x < 2m^{-1}M_2^2
		 \end{equation*}
		 We have established $P_A \in \mathcal{M}_\kappa$ for every measurable subset $A$. Since by assumption $(X,\mu)$ is a probability space, $\mu$ is in particular finite and thus $\langle \{\mathbf{1}_A|A \subseteq X \text{ open}\} \rangle$ is dense in $\mathrm{L}^\infty(X,\mu)$ with respect to the $\mathrm{L}^2$-Norm. This means that $\langle \{P_A|A \subseteq X \text{ open}\}\rangle$ is dense in $\mathcal{L}^\infty$ with respect to the strong operator topology. This implies $\mathcal{L}^\infty = \mathcal{M}_\kappa$.
	\end{proof}
	
	\begin{thm}[\cite{dud18}, Theorem~4]\label{thm: measurecontractioninducesirreducibility}
		Let $(X,\mu)$ be a probability space and let $G$ be a locally compact, second countable group acting measure class preserving, ergodically and measure contracting on $(X,\mu)$. Then the associated Koopman representation $\kappa$ is irreducible.
	\end{thm}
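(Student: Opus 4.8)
The plan is to deduce irreducibility directly by combining the approximation result of Theorem~\ref{thm: measurecontractioninducesapproximation} with the density statement of Lemma~\ref{lem: irr}, invoking the standard fact that a unitary representation is irreducible precisely when the von Neumann algebra it generates has trivial commutant. All of the analytic work has already been carried out, so the argument is a short assembly of the preceding results.

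First I would apply Theorem~\ref{thm: measurecontractioninducesapproximation}: the action is measure class preserving, ergodic and measure contracting, so its hypotheses are met and we obtain $\mathcal{L}^\infty = \mathcal{M}_\kappa$. Consequently the two generating families in Lemma~\ref{lem: irr} coincide, and the von Neumann algebra $\mathfrak{A}$ generated by $\mathcal{M}_\kappa \cup \mathcal{L}^\infty$ is simply $\mathcal{M}_\kappa$ itself.

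Next I would invoke Lemma~\ref{lem: irr}, whose assumptions (ergodicity and measure class preservation) hold under our hypotheses. It yields $\mathfrak{A} = \mathcal{B}(\mathrm{L}^2(X,\mu))$, and combining this with the previous step gives $\mathcal{M}_\kappa = \mathcal{B}(\mathrm{L}^2(X,\mu))$. Taking commutants and using that the commutant of a set of operators equals the commutant of the von Neumann algebra it generates, I would conclude
\begin{equation*}
\kappa(G)' = \mathcal{M}_\kappa' = \mathcal{B}(\mathrm{L}^2(X,\mu))' = \mathbb{C}\cdot \operatorname{Id}.
\end{equation*}

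Finally, since a unitary representation is irreducible if and only if its commutant is trivial (Schur's lemma), the identity $\kappa(G)' = \mathbb{C}\cdot \operatorname{Id}$ shows that $\kappa$ is irreducible. The entire conceptual weight of the theorem rests on the approximation $\mathcal{L}^\infty = \mathcal{M}_\kappa$ of Theorem~\ref{thm: measurecontractioninducesapproximation}; the only genuinely new point here is essentially bookkeeping, namely recording the commutant characterization of irreducibility, so I do not anticipate any real obstacle in this final step.
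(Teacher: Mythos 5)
Your proposal is correct and follows essentially the same route as the paper: combine Theorem~\ref{thm: measurecontractioninducesapproximation} with Lemma~\ref{lem: irr} to get $\mathcal{M}_\kappa=\mathcal{B}(\mathrm{L}^2(X,\mu))$, pass to commutants, and conclude irreducibility from triviality of $\kappa(G)'$. No issues.
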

	\begin{proof}
		Theorem~\ref{thm: measurecontractioninducesapproximation} and Lemma~\ref{lem: irr} imply $\mathcal{M}_\kappa=\mathcal{B}(\mathrm{L}^2(X,\mu))$. But then $\kappa(G)'= \kappa(G)'''=\mathcal{M}_\kappa'=\mathcal{B}(\mathrm{L}^2(X,\mu))' = \mathbb{C}\cdot \operatorname{Id}$. It follows that $\kappa$ is irreducible, since any projection $P_H$ on a $\kappa(G)$-invariant subspace $H$ in $\mathcal{B}(\mathrm{L}^2(X,\mu))$ is contained in $\kappa(G)'$ and thus $P$ must be either $0$ or $\operatorname{Id}$.
	\end{proof}
	
	\subsection{Topological full groups with irreducible Koopman representation}\label{subs: topfullgroups koopman}
	
	We finish the main body of this text with a quick note on the irreducibility of Koopman representations associated to the natural action of the topological full group $\mathfrak{T}(\mathcal{G})$ of a purely infinite, minimal, effective, \'etale Cantor groupoid $\mathcal{G}$ on the space $\mathcal{G}^{(0)}$.
	Dudko applied the notion of measure contraction to show that natural actions of Higman-Thompson groups $F_{n,r}$ and $G_{n,r}$ on $([0,r],\lambda)$ admit irreducible Koopman representations. The arguments for ergodicity and measure contraction relies on the particular setting e.g. on the fact that the action is by piecewise linear homeomorphisms. Contrasting to this ``rigid" setting, in the groupoid setting the measure contraction comes almost for free from the structure of the groupoid and the abundance of sufficient elements in $\mathfrak{T}(\mathcal{G})$. By Proposition~\ref{prop: existence of quasi-invariant measures} and Proposition~\ref{prop: existence of ergodic measures} locally compact, \'etale groupoids always admit quasi-invariant, ergodic measures and it is easy to see that such measures are quasi-invariant, ergodic measures with respect to the natural action by the topological full group:
	
	\begin{cor}
		Let $\mathcal{G}$ be an effective, \'etale Cantor groupoid and let $\mu$ be a quasi-invariant measure on $\mathcal{G}^{(0)}$. Then $\mu$ is quasi-invariant with respect to the natural action of $\mathfrak{T}(\mathcal{G})$ on $\mathcal{G}^{(0)}$.
	\end{cor}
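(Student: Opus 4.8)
The plan is to reduce the statement directly to the slice-wise characterization of groupoid quasi-invariance established in Proposition~\ref{prop: quisiinvariance and slices}. Recall that $\mu$ is quasi-invariant for the natural action of $\mathfrak{T}(\mathcal{G})$ precisely when $(\alpha_B)_{*}\mu \sim \mu$ for every $\gamma \in \mathfrak{T}(\mathcal{G})$, where $\alpha_B$ is the homeomorphism of $\mathcal{G}^{(0)}$ induced by the corresponding compact open slice $B$. The whole point is thus to feed the \emph{global} slices coming from full group elements into the slice-wise condition satisfied by any groupoid-quasi-invariant measure.

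First I would fix an arbitrary $\gamma \in \mathfrak{T}(\mathcal{G})$. Since $\mathcal{G}$ is effective, by Remark~\ref{rem: topfgroupsdefi}(iii) the element $\gamma$ is realized as the homeomorphism $\alpha_B = r|_B \circ (s|_B)^{-1}$ of $\mathcal{G}^{(0)}$ for a compact open slice $B$ with the defining property $s(B) = r(B) = \mathcal{G}^{(0)}$. In particular $B \in \mathcal{B}_\mathcal{G}^o$ is an open slice, hence an admissible input for Proposition~\ref{prop: quisiinvariance and slices}. Applying the quasi-invariance hypothesis on $\mu$ together with that proposition to this $B$ yields $\mu \circ r|_B \circ (s|_B)^{-1} \sim \mu$ on $r(B)$. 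The crucial observation is that, because $B$ comes from a full group element, $r(B) = \mathcal{G}^{(0)}$ is the whole unit space and $r|_B \circ (s|_B)^{-1} = \alpha_B = \gamma$; hence the slice-wise equivalence is in fact the global statement $(\alpha_B)_{*}\mu \sim \mu$. Since $\gamma$ was arbitrary, this establishes quasi-invariance of $\mu$ under the natural action.

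The argument is essentially immediate, so there is no serious obstacle; the only points requiring care are bookkeeping ones. I would note that $\mu$ may be taken to be a probability measure (normalize, as $\mathcal{G}^{(0)}$ is compact and quasi-invariance depends only on the measure class) in order to invoke Proposition~\ref{prop: quisiinvariance and slices} verbatim. I would also reconcile the convention of that proposition — phrased via $\mu \circ r|_B \circ (s|_B)^{-1}$ on $r(B)$ — with the pushforward formulation of quasi-invariance for group actions, using that $\alpha_B$ is a homeomorphism and that $\mathfrak{T}(\mathcal{G})$ is closed under inversion, so that the direction of pushforward is immaterial once one quantifies over all of $\mathfrak{T}(\mathcal{G})$.
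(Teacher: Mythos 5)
Your proof is correct and follows exactly the paper's route: the paper's own proof is the one-line remark that the corollary ``follows immediately from Proposition~\ref{prop: quisiinvariance and slices}'', and your argument is precisely that reduction spelled out, feeding the global slices $B$ with $s(B)=r(B)=\mathcal{G}^{(0)}$ into the slice-wise characterization. The bookkeeping points you flag (normalizing $\mu$, reconciling the pushforward conventions) are handled appropriately.
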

	\begin{proof}
		This follows immediately from Proposition~\ref{prop: quisiinvariance and slices}.
	\end{proof}
	
	\begin{cor}
		Let $\mathcal{G}$ be an \'etale Cantor groupoid and let $\mu$ be a quasi-invariant, ergodic probability measure on $\mathcal{G}^{(0)}$. Then the natural action of $\mathfrak{T}(\mathcal{G})$ on $\mathcal{G}^{(0)}$ is ergodic with respect to $\mu$.
	\end{cor}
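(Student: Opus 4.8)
The plan is to show that every measurable set $A \subseteq \mathcal{G}^{(0)}$ which is invariant under the natural action of $\mathfrak{T}(\mathcal{G})$ is automatically almost invariant in the groupoid sense, so that ergodicity of $\mu$ with respect to $\mathcal{G}$ forces $A$ to be null or conull. Recall that $\mathfrak{T}(\mathcal{G})$ is countable by Remark~\ref{rem: topfgroupsdefi}(ii), so ergodicity of the action is precisely the statement that every measurable $A$ with $\mu(\alpha_B(A) \triangle A) = 0$ for all $B \in \mathfrak{T}(\mathcal{G})$ satisfies $\mu(A) \in \{0,1\}$.

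First I would reduce almost invariance to a countable family of clopen conditions. Since $\mathcal{G}^{(0)}$ is a Stone space, $\mathcal{G}$ is ample and its compact open slices form a basis; being $\sigma$-compact it is Lindelöf, so there is a countable cover $\{B_n\}$ of $\mathcal{G}$ by compact open slices. By Proposition~\ref{prop: integrability}, for each $n$ the $\nu_\mu$-measure of $\{g \in B_n : s(g) \in A,\ r(g)\notin A\}$ equals $\mu(\{u \in s(B_n): u \in A,\ \alpha_{B_n}(u) \notin A\})$, and likewise with the roles of $A$ and its complement interchanged. Hence $A$ is almost invariant as soon as, for every $n$, these two sets are $\mu$-null.

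The core step is to establish this per slice using the involutions $T_B$. Fix a slice $B := B_n$ and split $s(B)$ into the closed set of points fixed by $\alpha_B$ — where the condition $u \in A \Leftrightarrow \alpha_B(u) \in A$ holds trivially — and the open complement $M_B$. Every $u \in M_B$ satisfies $u \neq \alpha_B(u)$, so by Hausdorffness and zero-dimensionality of $\mathcal{G}^{(0)}$ there is a clopen $V \ni u$, $V \subseteq s(B)$, with $V \cap \alpha_B(V) = \emptyset$; since $M_B$ is Lindelöf we may cover it by countably many such $V$. For each $V$ the restricted slice $B_V := B \cap s^{-1}(V)$ is a compact open slice with $s(B_V) = V$, $r(B_V) = \alpha_B(V)$ and $s(B_V) \cap r(B_V) = \emptyset$, so $T_{B_V} \in \mathfrak{T}(\mathcal{G})$ and $\alpha_{T_{B_V}}$ agrees with $\alpha_B$ on $V$. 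On the bad set $W := V \cap A \cap \alpha_B^{-1}(\mathcal{G}^{(0)} \setminus A)$ we have $\alpha_{T_{B_V}}(W) = \alpha_B(W) \subseteq \mathcal{G}^{(0)} \setminus A$, while $W \subseteq A$; invariance $\alpha_{T_{B_V}}(A) = A \pmod{\mu}$ then forces $\mu(\alpha_B(W)) = 0$, and quasi-invariance of $\mu$ (Proposition~\ref{prop: quisiinvariance and slices}) gives $\mu(W) = 0$. The symmetric argument handles $V \cap (\mathcal{G}^{(0)}\setminus A) \cap \alpha_B^{-1}(A)$. Summing over the countably many $V$ and then over the countably many $B_n$ shows $A$ is almost invariant.

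The main obstacle, and the one I would be most careful about, is this passage from arbitrary compact open slices — which need not lie in $\mathfrak{T}(\mathcal{G})$ — to honest group elements: one must cut $s(B)$ into clopen pieces on which $\alpha_B$ genuinely moves points and on which source and range are disjoint, so that the transposition-type involutions $T_{B_V}$ are defined and available. Once that bookkeeping is in place, the measure-theoretic conclusion is immediate: applying the groupoid's own ergodicity, through its definition via almost invariant sets, to the now-almost-invariant set $A$ yields $\mu(A) \in \{0,1\}$, which is exactly ergodicity of the $\mathfrak{T}(\mathcal{G})$-action.
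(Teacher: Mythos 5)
Your proof is correct and takes essentially the same route as the paper's: the paper's own argument is a one-line assertion that a $\mathfrak{T}(\mathcal{G})$-invariant subset of $\mathcal{G}^{(0)}$ is automatically almost invariant in the groupoid sense and hence null or conull by ergodicity of $\mu$. What you have written is precisely the missing justification of that assertion --- the countable cover by compact open slices, the reduction to clopen pieces on which the involutions $T_{B_V}$ realize $\alpha_B$, and the use of quasi-invariance to transfer nullity back along the slice --- so it matches the intended proof, merely in full detail.
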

	\begin{proof}
		Let $U \subseteq \mathcal{G}^{(0)}$ such that $r(BU)=U$ for all $B \in \mathfrak{T}(\mathcal{G})\subset \mathcal{B}_\mathcal{G}^{o,k}$. Then $U$ is necessarily almost invariant and hence conull or null.
	\end{proof}
	
	\begin{lem}\label{lem: existence of small opens}
		Let $\mathcal{G}$ be an \'etale Cantor groupoid and let $\mu$ be a quasi-invariant probability measure on $\mathcal{G}^{(0)}$. If $\mathcal{G}$ is minimal, the measure $\mu$ is strictly positive\footnote{Every non-empty open subset of $\mathcal{G}^{(0)}$ has positive measure.\\} and hence for every clopen subset $U$ of $\mathcal{G}^{(0)}$ and every $\varepsilon > 0$ there exists a clopen subset $V \subset U$ with $0 < \mu(V) < \varepsilon$.
	\end{lem}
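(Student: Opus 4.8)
The plan is to establish strict positivity first — this is the substantive point — and then deduce the existence of small clopen sets by an elementary pigeonhole argument on fine clopen partitions, which is what the word ``hence'' in the statement signals.

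For strict positivity I would argue by contradiction. Suppose some non-empty open $W \subseteq \mathcal{G}^{(0)}$ satisfies $\mu(W) = 0$. Since $\mathcal{G}^{(0)}$ is a Cantor space, $W$ contains a non-empty clopen subset $W_0$, and $\mu(W_0) = 0$. Now I would apply Lemma~\ref{lem: existence of pencils} with $U = \mathcal{G}^{(0)}$ and $V = W_0$ (both non-empty and clopen): this yields a finite family of open, compact slices $\{B_i\}_{i \in I}$ with $r(B_i) \subseteq W_0$, with pairwise disjoint sources, and with $\mathcal{G}^{(0)} = \bigsqcup_{i} s(B_i)$. The key step is to transport nullity through each slice: since $r(B_i) \subseteq W_0$ we have $\mu(r(B_i)) = 0$, and quasi-invariance — in the form of Proposition~\ref{prop: quisiinvariance and slices}, which gives $\mu \circ r|_{B_i} \circ (s|_{B_i})^{-1} \sim \mu$ on $r(B_i)$ — shows that the partial homeomorphism induced by $B_i$ preserves the $\mu$-null ideal, whence $\mu(s(B_i)) = 0$ as well. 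Summing over the finite cover then gives $1 = \mu(\mathcal{G}^{(0)}) = \sum_{i} \mu(s(B_i)) = 0$, a contradiction. Hence every non-empty open set has positive measure.

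For the second assertion, fix a non-empty clopen $U$ and $\varepsilon > 0$. Since a non-empty clopen subspace of $\mathcal{G}^{(0)}$ is compact, perfect, metrizable and zero-dimensional, $U$ is itself a Cantor space by Theorem~\ref{thm: cantor charac}, so it can be partitioned into $n$ non-empty clopen pieces $U = \bigsqcup_{j=1}^{n} U_j$ for any $n$. Choosing $n$ large enough that $\mu(U)/n < \varepsilon$ and applying the pigeonhole principle to $\sum_{j} \mu(U_j) = \mu(U)$, some piece $U_j$ satisfies $\mu(U_j) \leq \mu(U)/n < \varepsilon$; by the strict positivity just established $\mu(U_j) > 0$, so $V := U_j$ is the desired clopen set.

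The main obstacle is the nullity-transport step in the strict-positivity argument: one must correctly read quasi-invariance of $\mu$ on the whole groupoid as nonsingularity of the individual open compact slices acting on $\mathcal{G}^{(0)}$, being careful that the equivalence $\mu \circ r|_{B_i} \circ (s|_{B_i})^{-1} \sim \mu$ on $r(B_i)$ genuinely propagates the null ideal in both directions (so that $\mu(r(B_i)) = 0$ forces $\mu(s(B_i)) = 0$). Once this is phrased through Proposition~\ref{prop: quisiinvariance and slices}, together with the finite covering supplied by Lemma~\ref{lem: existence of pencils}, the remaining steps are purely combinatorial and topological.
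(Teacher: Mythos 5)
Your proof is correct and follows essentially the same route as the paper: a contradiction argument using Lemma~\ref{lem: existence of pencils} to cover $\mathcal{G}^{(0)}$ by sources of slices whose ranges land in a putative clopen null set, transporting nullity back through each slice via Proposition~\ref{prop: quisiinvariance and slices}, and then finite (sub)additivity. Your pigeonhole argument for the second assertion is just the explicit version of the paper's ``follows immediately from additivity.''
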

	\begin{proof}
		Assume there exists an open subset $O \subset \mathcal{G}^{(0)}$ with $\mu(O)=0$. Let $U \subset O$ be a clopen subset. By minimality and Lemma~\ref{lem: existence of pencils} there exists a finite family of compact, open slices $\{B_i\}_{i \in I}$ such that $r(B_i) \subset U$, $\mathcal{G}^{(0)}\setminus U =\bigcup_i s(B_i)$. Then $\bigcup_i s(B_i) \cup U$ is a finite, open cover of $\mathcal{G}^{(0)}$. Then by quasi-invariance of $\mu$ and Proposition~\ref{prop: quisiinvariance and slices} this is a cover of $\mathcal{G}^{(0)}$ by nullsets hence by subadditivity $\mu(\mathcal{G}^{(0)}) \leq \sum_{i \in I} \mu(S_i) + \mu(U)=0$, which is a contradiction. The second statement follows then immediately from additivity.
	\end{proof}
	
	\begin{thm}\label{thm: topfullgroupsactmeasurecontracting}
		Let $\mathcal{G}$ be a purely infinite, minimal, \'etale Cantor groupoid and let $\mu$ be a quasi-invariant, ergodic probability measure on $\mathcal{G}^{(0)}$. Then the natural action of $\mathfrak{T}(\mathcal{G})$ on $\mathcal{G}^{(0)}$ is measure contracting. 
	\end{thm}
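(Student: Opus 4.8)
The plan is to exhibit, for a given measurable $A \subseteq \mathcal{G}^{(0)}$ and constants $M,\epsilon > 0$, an explicit involution $T_B \in \mathfrak{T}(\mathcal{G})$ whose induced homeomorphism $\gamma := \alpha_{T_B}$ compresses almost all of $A$ into a set of arbitrarily small measure while being supported essentially inside $A$. First I would pass from the measurable set $A$ to a clopen approximation: since $\mathcal{G}^{(0)}$ is a Cantor space and $\mu$ is a regular Borel probability measure, clopen sets are dense in measure, so there is a clopen $A_0$ with $\mu(A \triangle A_0) < \epsilon'$ for a small $\epsilon'$ to be fixed later (if $\mu(A)$ is already smaller than $\epsilon$, the identity works). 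By Lemma~\ref{lem: existence of small opens} the measure $\mu$ is strictly positive and admits clopen subsets of arbitrarily small measure, so I choose a clopen $V \subseteq A_0$ with $0 < \mu(V) < \delta$ and set $A_0' := A_0 \setminus V$, a clopen set carrying almost all of the mass of $A_0$.

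Next I would invoke pure infiniteness. By Proposition~\ref{prop: purely infinite construction}(ii), applied to the clopen pair $(A_0', V)$, there is a compact open slice $B \in \mathcal{B}_\mathcal{G}^{o,k}$ with $s(B) = A_0'$ and $r(B) \subseteq V$. In particular $s(B) \cap r(B) = A_0' \cap r(B) = \emptyset$, so the involution $T_B = B \cup B^{-1} \cup (\mathcal{G}^{(0)} \setminus (s(B) \cup r(B)))$ is a bona fide element of $\mathfrak{T}(\mathcal{G})$. Writing $\gamma$ for the induced homeomorphism, by construction $\gamma$ agrees with $\alpha_B$ on $A_0'$ (sending it into the tiny set $r(B) \subseteq V$), with $\alpha_B^{-1}$ on $r(B)$, and with the identity off $A_0' \cup r(B) \subseteq A_0$. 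Hence $\supp(\gamma) \subseteq A_0$ and $\supp(\gamma) \setminus A \subseteq A_0 \triangle A$, which yields the first defining condition of measure contraction (Definition~\ref{defi: mca}) as soon as $\epsilon' < \epsilon$. By the preceding corollaries $\mu$ is quasi-invariant for the $\mathfrak{T}(\mathcal{G})$-action, so $\gamma$ is nonsingular and the Radon--Nikodym derivative $\omega_\gamma(x) = \frac{\mathrm{d}\mu(\gamma x)}{\mathrm{d}\mu(x)}$ is well defined.

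The heart of the argument --- and the step I expect to require the most care --- is converting the easy global compression estimate into the pointwise smallness demanded by the second condition. By the defining/change-of-variables property of the Radon--Nikodym derivative (justified through Proposition~\ref{prop: integrability} and Proposition~\ref{prop: quisiinvariance and slices} on the slice $B$) one has $\int_{A_0'} \omega_\gamma \, \mathrm{d}\mu = \mu(\gamma(A_0')) = \mu(r(B)) < \delta$. A Markov (Chebyshev) inequality then controls the exceptional set: writing $E := \{x \in A_0' : \omega_\gamma(x) \ge M^{-2}\}$, we get $M^{-2}\mu(E) \le \int_E \omega_\gamma\,\mathrm{d}\mu < \delta$, hence $\mu(E) < \delta M^2$. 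Thus off a subset of $A_0'$ of measure at most $\delta M^2$ we have $\sqrt{\omega_\gamma(x)} < M^{-1}$, which is precisely the pointwise bound required by measure contraction.

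Finally I would tie the constants together. Since $\mu(A_0' \cap A) \ge \mu(A) - \mu(A \triangle A_0) - \mu(V) > \mu(A) - \epsilon' - \delta$ and the set where $\sqrt{\omega_\gamma} \ge M^{-1}$ meets $A_0'$ in measure $< \delta M^2$, we obtain $\mu(\{x \in A : \sqrt{\omega_\gamma(x)} < M^{-1}\}) > \mu(A) - \epsilon' - \delta - \delta M^2$. Choosing $\epsilon'$ and $\delta$ small enough (e.g. $\epsilon' < \epsilon/3$ and $\delta(1 + M^2) < \epsilon/3$) makes the right-hand side exceed $\mu(A) - \epsilon$, verifying the second condition and completing the proof. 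The only genuine subtleties are the clopen approximation of $A$ and the identification $\int_{A_0'}\omega_\gamma\,\mathrm{d}\mu = \mu(\gamma(A_0'))$; everything else is a direct application of pure infiniteness (Proposition~\ref{prop: purely infinite construction}) together with the Markov inequality.
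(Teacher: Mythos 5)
Your argument is correct and follows essentially the same route as the paper's proof: choose a small clopen subset via Lemma~\ref{lem: existence of small opens}, use Proposition~\ref{prop: purely infinite construction} to produce a slice compressing the complement into it, form the associated involution, and control the set where the Radon--Nikodym derivative is large by a Markov/Chebyshev estimate using $\int f_S\,\mathrm{d}\mu = \mu(r(SA))$. The only cosmetic difference is that you carry out the clopen approximation of the measurable set $A$ explicitly at the start, whereas the paper works with a clopen set throughout and dispatches the approximation in a closing sentence.
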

	\begin{proof}
		Let $U \subseteq \mathcal{G}^{(0)}$ be clopen and let $M,\epsilon>0$. By Lemma~~\ref{lem: existence of small opens} for every $\varepsilon'>0 $ there exists a clopen subset $B \subset U$ such that $0<\mu(B) <\varepsilon' $. Fix the notation $A:= U \setminus B$. Since $\mathcal{G}$ is purely infinite and minimal there exists by Proposition~\ref{prop: purely infinite construction} an open compact slice $S$ with $s(S)=A$ and $r(S) \subseteq B$. Denote by $f_S(x):=\frac{\mathrm{d}\mu(T_Sx)}{\mathrm{d}\mu(x)}$ Then we have
		\begin{equation*}
		\begin{gathered}
		\mu(\{x \in U| \sqrt{f_S(x)} \geq M^{-1} \}) \leq \mu(\{x \in A| \sqrt{f_S(x)} \geq M^{-1} \})+\varepsilon'=\\
		=\mu(\{x \in A| 1 \leq M^2f_S(x) \})+\varepsilon'= \int \mathbf{1}_{\{x \in A| 1 \leq M^2f_S(x) \}}\; \mathrm{d}\mu+\varepsilon'\leq\\
		M^2 \int_A  f_S(x) \; \mathrm{d}\mu+\varepsilon'=M^2 \mu(r(SA))+\varepsilon'=\\
		 M^2 \mu(B)+\varepsilon' \leq M^2 \varepsilon'+\varepsilon'
		\end{gathered}
		\end{equation*}
	
		When chosing $\varepsilon' \leq \frac{\varepsilon}{1+M^2}$ this quantity is smaller or equal to $\varepsilon$, moreover $\supp(T_S) \subseteq U$ holds. Since measurable subsets of $\mathcal{G}^{(0)}$ can be approximated by clopen subsets arbitrarily well, the action is measure contracting.
	\end{proof}
	
	Combining Theorem~\ref{thm: topfullgroupsactmeasurecontracting} with Theorem~\ref{thm: measurecontractioninducesirreducibility} thus implies:
	\begin{cor}
		Let $\mathcal{G}$ be a purely infinite, minimal, \'etale Cantor groupoid and let $\mu$ be a quasi-invariant, ergodic probability measure on $\mathcal{G}^{(0)}$. Then Koopman representation associated to the action of $\mathfrak{T}(\mathcal{G})$ on $\mathcal{G}^{(0)}$ is irreducible.
	\end{cor}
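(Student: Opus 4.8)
The plan is to recognise this statement as a direct application of Dudko's irreducibility criterion, Theorem~\ref{thm: measurecontractioninducesirreducibility}, whose hypotheses are: a locally compact, second countable group acting on a probability space in a manner that is measure class preserving, ergodic, and measure contracting. Accordingly, I would set up the triple consisting of the group $\mathfrak{T}(\mathcal{G})$, the space $\mathcal{G}^{(0)}$, and the measure $\mu$, and then check each of these four structural requirements in turn, after which the conclusion drops out verbatim from Theorem~\ref{thm: measurecontractioninducesirreducibility}.

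First I would dispatch the standing hypotheses on the acting group and the space. Since $\mathcal{G}$ is assumed second countable, the topological full group $\mathfrak{T}(\mathcal{G})$ is countable by Remark~\ref{rem: topfgroupsdefi}(ii); as a discrete group it is trivially locally compact and second countable. (Here I would also invoke effectiveness, implicit in the standing assumptions of this section, so that via the injection of Remark~\ref{rem: topfgroupsdefi}(iii) the elements $B \in \mathfrak{T}(\mathcal{G})$ genuinely act as homeomorphisms $\alpha_B$ of $\mathcal{G}^{(0)}$.) Because $\mathcal{G}^{(0)}$ is a Cantor space, it is a standard Borel space, and $\mu$ is by hypothesis a probability measure on it, so the measure-theoretic framework of Definition~\ref{defi: koopman representation} and Theorem~\ref{thm: measurecontractioninducesirreducibility} is in force.

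Next I would verify the three dynamical properties. Measure class preservation of the natural action follows from the corollary established above stating that a quasi-invariant $\mu$ is quasi-invariant for the $\mathfrak{T}(\mathcal{G})$-action (itself a consequence of Proposition~\ref{prop: quisiinvariance and slices}). Ergodicity of the $\mathfrak{T}(\mathcal{G})$-action with respect to $\mu$ is exactly the content of the subsequent corollary above, obtained by noting that an $\mathfrak{T}(\mathcal{G})$-invariant set is almost invariant for $\mathcal{G}$ and hence null or conull. Finally, measure contraction of the action is precisely Theorem~\ref{thm: topfullgroupsactmeasurecontracting}, where purely infinite minimality is exploited through Proposition~\ref{prop: purely infinite construction} to produce, inside any clopen $U$, a slice contracting almost all of $U$ into a set of arbitrarily small measure.

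With all hypotheses confirmed, Theorem~\ref{thm: measurecontractioninducesirreducibility} yields irreducibility of the Koopman representation $\kappa$ of $\mathfrak{T}(\mathcal{G})$ on $\mathrm{L}^2(\mathcal{G}^{(0)},\mu)$, completing the argument. I do not expect a genuine obstacle here, since all the substantive work is already carried out in Theorem~\ref{thm: topfullgroupsactmeasurecontracting} and in Dudko's theorem; the only point requiring care is bookkeeping, namely confirming that the discreteness of $\mathfrak{T}(\mathcal{G})$ and the standard-Borel/probability structure of $(\mathcal{G}^{(0)},\mu)$ place us squarely within the hypotheses of Theorem~\ref{thm: measurecontractioninducesirreducibility}, and that effectiveness is what licenses interpreting $\mathfrak{T}(\mathcal{G})$ as an acting group of homeomorphisms in the first place.
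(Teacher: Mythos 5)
Your proposal is correct and follows exactly the paper's route: the paper's proof is literally the single line ``Combining Theorem~\ref{thm: topfullgroupsactmeasurecontracting} with Theorem~\ref{thm: measurecontractioninducesirreducibility} thus implies,'' and your verification of the hypotheses (countability of $\mathfrak{T}(\mathcal{G})$, quasi-invariance and ergodicity via the two preceding corollaries, measure contraction via Theorem~\ref{thm: topfullgroupsactmeasurecontracting}) is precisely the content the paper delegates to the earlier results in Subsection~\ref{subs: topfullgroups koopman}. Your remark that effectiveness is what licenses viewing $\mathfrak{T}(\mathcal{G})$ as a group of homeomorphisms is a fair bookkeeping point that the paper leaves implicit.
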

	
	\addtocontents{toc}{\protect\newpage}
	\begin{appendices}\label{appendices}

		\chapter{Some terms of geometric group theory}\label{app: some terms of geometric group theory}
		
		\section{Growth of groups}\label{app: growth of groups}
		
		The contents of this section can be found in § 6.1 \& § 6.4 of \cite{cc10}. The growth of a finitely generated group is given by the growth of the volume of balls around the identity:
		
		\begin{defi}
			Let $G$ be a group generated by a symmetric, finite subset $S=S^{-1}$.
			\begin{enumerate}[(i)]
				\item The \emph{word length $\mathcal{L}_{G,S}(g)$ of $g \in G$ with respect to $S$} is the smallest integer $n \in \mathbb{Z}$ such that there exist generators $s_1,\dots,s_n \in S$ such that $g=s_1 \dots s_n$. By convention $\mathcal{L}_{G,S}(1)=0$.
				
				\item The \emph{word metric $d_{G,S}$ on $G$ with respect to $S$} is given by $d_S(g,h):=\mathcal{L}_S(g^{-1}h)$ for $g,h \in G$.
				
				\item The \emph{growth function $\gamma_{G,S} \colon \mathbb{N} \to [0,+\infty)$ of $G$ with respect to $S$} is given by
				\begin{equation*}
				\gamma_{G,S}(n):=|\{g \in G|\mathcal{L}_{G,S}(g)\leq n \}|
				\end{equation*}
				
				\item Let $f,g \colon \mathbb{N}\to [0,+\infty)$ be non-decreasing functions. We write $f \preceq g$ if there exist constants $C,\alpha > 0$ such that $f(n)\leq C g(\alpha n)$ for all $n \in \mathbb{N}$. We write $f \sim g$ if $f \preceq g$ and $g \preceq f$ holds.\footnote{It is easy to see that this is an equivalence relation.\\}
			\end{enumerate}
		\end{defi}
		
		\begin{rem}
			\begin{enumerate}[(i)]
				\item For finite, symmetric generating sets $S_1,S_2$ of a group $G$ we have $\gamma_{G,S_1}\sim \gamma_{G,S_2}$, henceforth we omit the generating set from the notation.
				
				\item Every finite group $G$ satisfies $\gamma_G \sim 1$.
				
				\item Every infinite, finitely generated group $G$ satisfies $n \preceq \gamma_G \preceq e^n$.
			\end{enumerate}
		\end{rem}
		
		\begin{defi}
			Let $G$ be a finitely generated group. The $\sim$-equivalence class of $\gamma_G$ is called the \emph{growth type of $G$}. The group $G$ is said to have
			\begin{enumerate}[(i)]
				\item \emph{polynomial growth} if $\beta_G(n) \sim n^\alpha$ for some $\alpha > 0$.
				
				\item \emph{superpolynomial growth} if $\frac{\ln \beta_G(n)}{\ln n} \underset{n \to \infty}{\longrightarrow} \infty$.
				
				\item \emph{exponential growth} if $\beta_G(n) \sim e^n$.
				
				\item \emph{subexponential growth} if $\frac{\ln \beta_G(n)}{n} \underset{n \to \infty}{\longrightarrow} 0$.
				
				\item \emph{intermediate growth} if it has superpolynomial and subexponential growth.
			\end{enumerate}
		\end{defi}
		
		\begin{rem}
			One has the implications $(i) \Rightarrow (iv)$ and $(iii) \Rightarrow (ii)$, moreover $(i)$ and $(ii)$ as well as $(iii)$ and $(iv)$ are mutually exclusive. Every finitely generated group either has polynomial, exponential or intermediate growth.
		\end{rem}
		
		\begin{ex}
			Every finitely generated, free abelian group $\mathbb{Z}^d$ has polynomial growth, every free group $\mathbb{F}_d$ with $d\geq 2$ has exponential growth.
		\end{ex}
		
		A milestone of geometric group theory is provided by Gromov's theorem which characterizes finitely generated groups of polynomial growth:
		\begin{thm}[\cite{dlh00}, Theorem~VII.29]\label{thm: gromovs theorem}
			A finitely generated group has polynomial growth \emph{if and only if} it is virtually nilpotent i.e. it contains a nilpotent subgroup of finite index.
		\end{thm}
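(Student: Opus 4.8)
The plan is to treat the two implications separately, since they are of wildly different difficulty. The reverse implication --- that virtual nilpotency forces polynomial growth --- is elementary and constructive (Bass--Guivarc'h), while the forward implication is Gromov's theorem proper and requires heavy machinery.

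For the reverse direction I would first invoke the fact that growth type is a commensurability invariant: if $H \leq G$ has finite index then $\gamma_H \sim \gamma_G$, so it suffices to bound the growth of a nilpotent finite-index subgroup $N$. For a finitely generated nilpotent $N$ with lower central series $N = N_1 \trianglerighteq N_2 \trianglerighteq \cdots \trianglerighteq N_{c+1} = \{1\}$, where $N_{i+1} = [N, N_i]$, I would establish the Bass--Guivarc'h formula
\begin{equation*}
\gamma_N(n) \sim n^{d}, \qquad d = \sum_{i=1}^{c} i \cdot \operatorname{rank}(N_i/N_{i+1}).
\end{equation*}
The upper bound follows by choosing a Mal'cev-type generating set adapted to the series and counting ordered products with bounded exponents; the matching lower bound comes from the torsion-free abelian quotients $N_i/N_{i+1}$. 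This yields polynomial growth of an explicit degree.

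The forward direction is the core, and I would follow Gromov's original strategy, arguing by induction on the polynomial growth degree $d$ (where $\gamma_G(n) \preceq n^{d}$). The geometric input is the \emph{asymptotic cone}: rescaling the word metric, one passes to a Gromov--Hausdorff limit of the sequence $(G, \tfrac{1}{n} d_S)$, and the polynomial growth bound guarantees (after extracting a subsequence, using an ultrafilter to force convergence) that this limit $X$ is a complete, locally compact, homogeneous, finite-dimensional geodesic metric space. The decisive analytic step is to apply the Montgomery--Zippin--Gleason solution of Hilbert's fifth problem to conclude that $\operatorname{Isom}(X)$ is a Lie group with finitely many connected components. The approximate action of $G$ on $X$ then produces a homomorphism from a finite-index subgroup of $G$ into this Lie group with infinite image, from which one extracts a nontrivial finite-dimensional linear representation, and hence (via the Tits alternative, which excludes free subgroups because they grow exponentially) a surjection from a finite-index subgroup $G_0 \leq G$ onto $\mathbb{Z}$.

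I would close the induction by taking $K = \ker(G_0 \twoheadrightarrow \mathbb{Z})$: one shows that $K$ is finitely generated and that its growth degree is strictly smaller than that of $G_0$, so by the inductive hypothesis $K$ is virtually nilpotent; since $G_0$ is then an extension of $\mathbb{Z}$ by a virtually nilpotent group, a structure argument upgrades $G_0$ (and hence $G$) to virtually nilpotent. The main obstacle, by a wide margin, is the forward direction, and within it the reduction to Hilbert's fifth problem: producing an asymptotic cone whose isometry group is a genuine Lie group, and then converting the metric (merely approximate) action of $G$ into an honest representation with infinite image. A second, subtler snag is the finite generation of the kernel $K$ in the inductive step, since kernels of maps from finitely generated groups need not be finitely generated in general, and this must be forced using the polynomial growth bound itself.
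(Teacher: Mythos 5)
The paper does not prove this statement at all: Gromov's theorem is quoted as background from \cite{dlh00} (Theorem~VII.29) and used purely as a black box elsewhere in the text (e.g.\ to conclude that infinite, finitely generated, periodic groups have superpolynomial growth). So there is no in-paper argument to compare yours against, and I can only assess your outline on its own terms.

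Your outline is an accurate summary of the classical proof. The reverse direction via commensurability-invariance of the growth type together with the Bass--Guivarc'h degree formula is correct and standard. For the forward direction you correctly identify the architecture: induction on the growth degree, passage to a limit of the rescaled Cayley graphs, the Montgomery--Zippin--Gleason solution of Hilbert's fifth problem to recognize the isometry group of the limit as a Lie group with finitely many components, the Tits alternative to exclude free subgroups and force a virtually solvable linear image, a surjection of a finite-index subgroup onto $\mathbb{Z}$, and finite generation plus a strict degree drop for the kernel. Two places where the sketch compresses genuinely hard content deserve flagging. First, converting the merely approximate action of $G$ on the limit space into an honest homomorphism to the Lie group with \emph{infinite} image requires a dichotomy argument (either some group element has positive displacement in the limit, or one must pass to a smaller invariant subspace and restart); this is where most of the technical work in Gromov's paper lives, and your outline treats it as a single step. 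Second, the closing upgrade from ``extension of a virtually nilpotent group by $\mathbb{Z}$'' to ``virtually nilpotent'' is not automatic: it rests on the Milnor--Wolf theorem that finitely generated solvable groups of polynomial growth are virtually nilpotent, a result the paper itself alludes to in its discussion of elementary amenable groups but which you do not name. With those two points filled in, your plan is the standard and correct one.
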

		
		\section{Grigorchuk groups}\label{app: grigorchuk groups}
		
		For along time the existence of finitely generated groups with intermediate growth was conjectural. A first example was found by Rostislav I. Grigorchuk. He introduced a class of groups -- later termed Grigorchuk groups -- indexed by infinite $\{0,1,2\}$-sequences:
		\begin{defi}[\cite{mb14b}, §2.1]
			Let $T$ be a infinite, labelled, binary, rooted tree, such that vertices are labelled by the set $\{0,1\}^*$ by labelling the root with $\emptyset$ and the children of a vertex $v$ are iteratively labelled by $0v$ and $1v$. Let $\Omega$ be the set $\{0,1,2\}^{\mathbb{N}_+}$ endowed with the product topology and the natural surjective map $\sigma \colon \Omega \to \Omega$ given by the shift. Let $\epsilon$ denote the transposition of $0$ and $1$ and for $i,j \in \{0,1,2\}$ let the symbol $\epsilon_{i,j}$ denote the identity permutation on $0$ and $1$ if $i=j$ and let $\epsilon_{i,j}=\epsilon$ if $i \neq j$. Define a tree automorphism\footnote{Tree automorphisms are graph automorphism that fixes the root.\\} $a$ by $a(xv):=\epsilon(x)v$ for all $x \in \{0,1\}$ and $v \in \{0,1\}^*$. Let $\omega \in \Omega$. Define further tree automorphisms $b_\omega,c_\omega,d_\omega$ recursively by
			\begin{equation*}
			\begin{gathered}
			b_\omega(0xv):=0 \epsilon_{2,\omega_1}(x)v; \quad b_\omega(1v)=1 b_{\sigma(\omega)}v\\
			b_\omega(0xv):=0 \epsilon_{1,\omega_1}(x)v; \quad c_\omega(1v)=1 c_{\sigma(\omega)}v\\
			b_\omega(0xv):=0 \epsilon_{0,\omega_1}(x)v; \quad d_\omega(1v)=1 d_{\sigma(\omega)}v
			\end{gathered}
			\end{equation*}
			for all $x \in \{0,1\}$ and $v \in \{0,1\}^*$. The group generated by the tree automorphisms $a,b_\omega,c_\omega,d_\omega$ is called the \emph{Grigorchuk group $G_\omega$}. The \emph{first Grigorchuk group} is the Grigorchuk group $G_\omega$ where $\omega=012012012\dots$
		\end{defi}
		
		\begin{prop}\label{prop: grigorchuk groups embedding into permutational wreath products}
			Let $\omega$ be sequence in $\{0,1,2\}^{\mathbb{N}_+}$ which is eventually constant i.e. there exists an $N \in \mathbb{N}$ such that $\omega_n=\omega_{n+1}$ for all $n \geq N$. Then there exists an $n \in \mathbb{N}$ such that the group $G_\omega$ embedds into $ \mathbb{Z} \wr_{n} \mathfrak{S}_{n} \cong \mathbb{Z}^n \rtimes \mathfrak{S}_{n}$.
		\end{prop}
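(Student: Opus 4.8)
The plan is to prove the statement in two stages: first show that eventual constancy of $\omega$ forces $G_\omega$ to be a finitely generated virtually abelian group, and then embed any such group into a permutational wreath product $\mathbb{Z} \wr_n \mathfrak{S}_n = \mathbb{Z}^n \rtimes \mathfrak{S}_n$ by a Krasner--Kaloujnine type argument. Finite generation of $G_\omega$ is immediate, since it is generated by the finite set $\{a,b_\omega,c_\omega,d_\omega\}$.

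For the first stage, suppose $\omega_k = t$ for all $k \geq N$, where $t \in \{0,1,2\}$, so that $\sigma^N(\omega) = \bar t := ttt\cdots$ is constant. I would first analyse $G_{\bar t}$ directly from the recursion: for a constant sequence exactly one of $b_{\bar t}, c_{\bar t}, d_{\bar t}$ has trivial first-level permutation at every level (the one whose defining transposition $\epsilon_{\cdot,t}$ degenerates to the identity) and is therefore the identity automorphism, while the remaining two satisfy the same recursion with the same initial data and hence coincide in a single involution $s$. Thus $G_{\bar t} = \langle a,s\rangle$ with $a^2=s^2=1$, so $G_{\bar t}$ is dihedral, in particular virtually abelian. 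Next I would exploit the self-similarity visible in the recursions $b_\omega(1v)=1\,b_{\sigma\omega}(v)$, etc.: the section (restriction) of each generator of $G_\omega$ at a first-level vertex lies in $\{1,a\}\cup\{b_{\sigma\omega},c_{\sigma\omega},d_{\sigma\omega}\}\subseteq G_{\sigma\omega}$, and iterating $N$ times shows that every level-$N$ section of every element of $G_\omega$ lies in $G_{\sigma^N\omega}=G_{\bar t}$. Since the level-$N$ stabilizer $\operatorname{St}_{G_\omega}(N)$ has finite index in $G_\omega$ (the action on the $2^N$ vertices of level $N$ factors through a finite group) and an automorphism fixing level $N$ is determined by its sections there, the section map $g \mapsto (g|_v)_{|v|=N}$ is an injective homomorphism $\operatorname{St}_{G_\omega}(N)\hookrightarrow \prod_{2^N} G_{\bar t}$. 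As $G_{\bar t}$ is dihedral it contains a copy of $\mathbb{Z}$ (or is finite) of finite index, so the product contains $\mathbb{Z}^{2^N}$ of finite index; pulling this back and replacing the result by its normal core (a finite intersection of conjugates, still of finite index and still free abelian, being a subgroup of $\mathbb{Z}^{2^N}$) produces a finite-index \emph{normal} subgroup $A\cong \mathbb{Z}^k$ of $G_\omega$.

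For the second stage, put $Q := G_\omega/A$, a finite group of order $m$. The Krasner--Kaloujnine embedding theorem realises $G_\omega$ as a subgroup of $A \wr Q = A^Q \rtimes Q = \mathbb{Z}^{km}\rtimes Q$, where $Q$ acts on $A^Q=(\mathbb{Z}^k)^Q$ purely by permuting the $m$ coordinate blocks through its (faithful) regular representation. Since a block permutation is in particular a permutation of the $km$ coordinates, this action factors faithfully through $\mathfrak{S}_{km}$ (when $k\geq 1$), so that $A\wr Q \leq \mathbb{Z}^{km}\rtimes \mathfrak{S}_{km} = \mathbb{Z}\wr_n \mathfrak{S}_n$ with $n:=km$. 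Composing the two embeddings gives $G_\omega \hookrightarrow \mathbb{Z}\wr_n \mathfrak{S}_n$, as required; the degenerate case $k=0$ (where $G_\omega$ is finite) is handled immediately by Cayley's theorem, embedding $G_\omega\hookrightarrow \mathfrak{S}_m \leq \mathbb{Z}^m\rtimes \mathfrak{S}_m$.

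The routine ingredients are the identification of $G_{\bar t}$ as a dihedral group and the wreath-product combinatorics of the second stage. I expect the main obstacle to be the careful bookkeeping of the first stage: one must verify that the section map is a well-defined injective \emph{homomorphism} on $\operatorname{St}_{G_\omega}(N)$ — here the restriction to the level-$N$ stabilizer is essential, since the cocycle identity $(gh)|_v = g|_{h(v)}\,h|_v$ collapses to a coordinatewise product only when $h$ fixes $v$ — and that the sections of the generators genuinely remain inside $G_{\sigma^j\omega}$ at each step, which is precisely the self-similarity encoded in the recursive definition of $b_\omega,c_\omega,d_\omega$. The extraction of a torsion-free finite-index normal $A\cong\mathbb{Z}^k$ is the one other point deserving care, but the concrete embedding into $\prod_{2^N}G_{\bar t}$ makes this self-contained and avoids any appeal to Selberg's lemma.
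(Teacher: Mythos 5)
Your proof is correct. For comparison: the paper itself gives no proof of this proposition --- it is quoted from Grigorchuk's 1985 paper, and the remark immediately following it records the route taken there, namely an embedding $G_\omega \hookrightarrow D_\infty \wr_n \mathfrak{S}_n$ followed by the observation that $D_\infty \wr_n \mathfrak{S}_n$ embeds into $\mathbb{Z}\wr_{2n}\mathfrak{S}_{2n}$. Your first stage is essentially the first of these steps: identifying $G_{\bar t}$ as the dihedral group $\langle a,s\rangle$ and showing that all level-$N$ sections of $G_\omega$ lie in $G_{\bar t}$ amounts to the embedding $G_\omega\hookrightarrow G_{\bar t}\wr_{2^N}\mathfrak{S}_{2^N}$ (the section map on all of $G_\omega$, paired with the induced permutation of the level-$N$ vertices, is already injective, so passing to $\operatorname{St}_{G_\omega}(N)$ is not strictly necessary at this point). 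Where you genuinely diverge is the second stage: rather than composing with the concrete embeddings $D_\infty=\mathbb{Z}\rtimes(\mathbb{Z}/2\mathbb{Z})\hookrightarrow\mathbb{Z}^2\rtimes\mathfrak{S}_2$ and $(\mathbb{Z}^2\rtimes\mathfrak{S}_2)\wr_m\mathfrak{S}_m\hookrightarrow\mathbb{Z}^{2m}\rtimes\mathfrak{S}_{2m}$, you extract a finite-index normal subgroup $A\cong\mathbb{Z}^k$ via the normal core and invoke Kaloujnine--Krasner. Both routes work: the direct one is shorter and keeps $n$ explicit (one gets $n=2^{N+1}$), while yours loses control of $n$ (it depends on the index $\lvert G_\omega/A\rvert$) but isolates the cleaner general fact that every finitely generated virtually abelian group embeds into some $\mathbb{Z}\wr_n\mathfrak{S}_n$. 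The delicate points you flag --- the cocycle identity collapsing on the level stabilizer, the sections of the generators landing in $G_{\sigma\omega}$, and the degenerate case of finite $G_{\bar t}$ --- are all handled correctly.
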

		
		\begin{rem}
			In \cite{gri85} Grigorchuk actually proves that $G_\omega$ embedds into $D^\infty \wr_{n} \mathfrak{S}_{n}$ for some $n \in \mathbb{N}$, this group however admits an embedding into $\mathbb{Z} \wr_{2n} \mathfrak{S}_{2n}$.
		\end{rem}
		
		\begin{cor}
			Let $\omega$ be sequence in $\{0,1,2\}^{\mathbb{N}_+}$ which is eventually constant. Then the group $G_\omega$ is virtually abelian\footnote{It contains a finite index abelian subgroup.\\} and in consequence has polynomial growth.
		\end{cor}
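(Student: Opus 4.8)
The plan is to leverage Proposition~\ref{prop: grigorchuk groups embedding into permutational wreath products} to reduce the claim to an elementary structural observation about the ambient group $\mathbb{Z} \wr_{n} \mathfrak{S}_{n}$. By that proposition there exists an $n \in \mathbb{N}$ and an injective homomorphism $\iota \colon G_\omega \hookrightarrow \mathbb{Z} \wr_{n} \mathfrak{S}_{n} \cong \mathbb{Z}^{n} \rtimes \mathfrak{S}_{n}$, so it suffices to show that any subgroup of $\mathbb{Z}^{n} \rtimes \mathfrak{S}_{n}$ is virtually abelian and then to deduce polynomial growth.

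First I would record that $W := \mathbb{Z}^{n} \rtimes \mathfrak{S}_{n}$ is itself virtually abelian: the base group $\mathbb{Z}^{n}$ is a normal abelian subgroup whose index equals $|\mathfrak{S}_{n}| = n! < \infty$. Next, the key step is the standard fact that virtual abelianness passes to arbitrary subgroups. Concretely, for $H := \iota(G_\omega) \leq W$ set $A := H \cap \mathbb{Z}^{n}$. Then $A$ is abelian, being a subgroup of $\mathbb{Z}^{n}$, and it has finite index in $H$, since the inclusion induces an injection of $H/A$ into $W/\mathbb{Z}^{n} \cong \mathfrak{S}_{n}$, whence $[H : A] \leq n!$. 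As $\iota$ is an isomorphism onto $H$, the group $G_\omega$ contains the finite-index abelian subgroup $\iota^{-1}(A)$ and is therefore virtually abelian.

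It then remains to obtain polynomial growth. Here I would invoke Gromov's theorem (Theorem~\ref{thm: gromovs theorem}): the group $G_\omega$ is finitely generated (by $a, b_\omega, c_\omega, d_\omega$), and a virtually abelian group is in particular virtually nilpotent, since an abelian group is nilpotent of class one. Thus Theorem~\ref{thm: gromovs theorem} applies and yields that $G_\omega$ has polynomial growth. Alternatively one could argue directly that the finite-index abelian subgroup $\iota^{-1}(A) \cong \mathbb{Z}^{d} \times (\text{finite})$ has growth type $n^{d}$ and that passing to a finite-index subgroup preserves growth type, but routing through Gromov's theorem is cleaner given that it is available in the text.

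I do not anticipate a serious obstacle: the only points requiring care are the index estimate $[H : A] \leq n!$ (ensuring $A$ is genuinely of finite index in $H$, and not merely in $W$) and the confirmation that $G_\omega$ is finitely generated so that Gromov's theorem is applicable. Both are immediate from the definitions, so the statement follows essentially formally from the embedding result together with Theorem~\ref{thm: gromovs theorem}.
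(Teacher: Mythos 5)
Your proof is correct and follows exactly the route the paper intends: the corollary is stated as an immediate consequence of Proposition~\ref{prop: grigorchuk groups embedding into permutational wreath products}, and your filling-in of the details (the finite-index abelian subgroup $H \cap \mathbb{Z}^n$ via the injection $H/(H\cap\mathbb{Z}^n) \hookrightarrow \mathfrak{S}_n$, followed by the easy direction of Theorem~\ref{thm: gromovs theorem}) is the standard and intended argument. No gaps.
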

		
		The far more interesting Grigorchuk groups however are provided by the rest:
		
		\begin{thm}[\cite{gri85}, Theorem~2.1 \& Corollary~3.2]
			Let $\omega$ be sequence in $\{0,1,2\}^{\mathbb{N}_+}$ which is not eventually constant. Then the group $G_\omega$ has intermediate growth. Moreover $G_\omega$ is periodic \emph{if and only if} every element of $\{0,1,2\}$ appears infinite times in $\omega$.
		\end{thm}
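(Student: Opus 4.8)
The plan is to exploit the self-similar structure of $G_\omega$ coming from its action on the binary rooted tree $T$. Write $\mathrm{St}(1) \leq G_\omega$ for the stabiliser of the first level $\{0,1\}$; it has index $2$ (the quotient being generated by the image of $a$), and restricting the action to the two maximal subtrees yields an injective homomorphism
\begin{equation*}
\psi \colon \mathrm{St}(1) \longrightarrow G_{\sigma\omega} \times G_{\sigma\omega}, \qquad g \longmapsto (g_0,g_1).
\end{equation*}
The decisive quantitative input is a length estimate for $\psi$: writing $|\cdot|$ for word length with respect to $\{a,b_\omega,c_\omega,d_\omega\}$, one shows that $|g_0| + |g_1| \leq \tfrac{1}{2}\big(|g| + \#_a(g)\big) \leq \tfrac{3}{4}|g| + C$ for $g \in \mathrm{St}(1)$, where $\#_a(g)$ counts the occurrences of $a$ in a geodesic word (which, inside the stabiliser, appear at most at every second position). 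Both the torsion statement and the subexponential growth bound will be extracted from iterating this contraction, while superpolynomial growth will follow from Gromov's theorem.

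For periodicity, first record that $a^2 = b_\omega^2 = c_\omega^2 = d_\omega^2 = 1$ and that $\{1, b_\omega, c_\omega, d_\omega\}$ is a Klein four-group with $b_\omega c_\omega = d_\omega$, so every element is conjugate to a word alternating $a$ with letters from $\{b,c,d\}$. I would prove that every $g \in G_\omega$ has finite order by induction on $|g|$: after replacing $g$ by $g^2$ if necessary we may assume $g \in \mathrm{St}(1)$, and then $\psi(g) = (g_0,g_1)$ has $|g_i|$ strictly smaller than $|g|$ once $|g|$ exceeds a fixed bound, by the contraction estimate. The hypothesis that each of $0,1,2$ occurs infinitely often in $\omega$ is exactly what guarantees that none of $b_{\sigma^k\omega}, c_{\sigma^k\omega}, d_{\sigma^k\omega}$ degenerates to the identity on a subtree, so the recursion genuinely lowers length rather than stalling; the induction then terminates and $g_0, g_1$, hence $g$, have finite order. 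For the converse, if some symbol is eventually absent from $\omega$ then one of $b_{\sigma^k\omega}, c_{\sigma^k\omega}, d_{\sigma^k\omega}$ acts trivially on all sufficiently deep subtrees, and I would use this to exhibit an explicit element, built from $a$ together with the non-degenerate generator, whose projections reproduce a shift of itself, forcing infinite order.

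For the growth dichotomy I would treat the two bounds separately. Superpolynomial growth follows from Gromov's theorem (Theorem~\ref{thm: gromovs theorem}): $G_\omega$ is infinite, finitely generated, and not virtually nilpotent. In the periodic case this is immediate, since an infinite, finitely generated, virtually nilpotent group cannot be torsion; for the general non-eventually-constant case one instead derives a direct superpolynomial \emph{lower} bound by iterating $\psi$, using that a finite-index subgroup surjects onto $G_{\sigma\omega}$ to rule out virtual nilpotence along the orbit $\{\sigma^k\omega\}$. The subexponential upper bound is the analytic heart of the argument: iterating $\psi$ down to level $k$ realises any $g$ with $|g| \leq n$ as a tuple of $2^k$ elements of $G_{\sigma^k\omega}$ whose total length is at most $\eta^k n + O(2^k)$ with $\eta = \tfrac{3}{4} < 1$; each such element is a geodesic word in four letters, so the number of admissible configurations, and hence $\gamma_{G_\omega}(n)$, is controlled by $\exp(C\,2^k)$ times a count of words of total length $\eta^k n$. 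Optimising the depth $k$ against $n$ yields a bound of the shape $\gamma_{G_\omega}(n) \leq \exp(C n^{\alpha})$ with $\alpha = \log 2 / \log(2/\eta) < 1$, which is subexponential.

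I expect the main obstacle to be this subexponential estimate: securing a genuine contraction constant $\eta < 1$ uniformly along the orbit $\{\sigma^k\omega\}$ when $\omega$ is only assumed not eventually constant, rather than the periodic sequence $012012\dots$, and then carrying out the counting of configurations carefully enough that the error terms $O(2^k)$ do not swamp the $\eta^k n$ main term. The length-contraction lemma for $\psi$ and the combinatorial normal form for geodesic words are the technical lemmas on which everything rests, and pinning down the constants in both the periodicity induction and the growth estimate is where the real work lies.
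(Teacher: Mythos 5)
The paper does not prove this statement; it is quoted from \cite{gri85}, so your sketch can only be measured against Grigorchuk's original argument, whose overall architecture (level-one stabiliser, the embedding $\psi\colon\mathrm{St}(1)\to G_{\sigma\omega}\times G_{\sigma\omega}$, length estimates, induction on word length for torsion) you have correctly identified. The torsion part is essentially right, but you have the role of the hypothesis backwards: what makes the induction terminate is precisely that each of the three letter types \emph{does} degenerate --- $b_{\sigma^k\omega}$ projects trivially to the $0$-subtree exactly when $\omega_{k+1}=2$, and similarly $c$ for the symbol $1$ and $d$ for the symbol $0$ --- so ``every symbol occurs infinitely often'' guarantees that beyond any level each letter type dies again, which is what prevents elements such as $ab_\omega$ from reproducing themselves under $\psi$ indefinitely.

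The genuine gap is in the subexponential upper bound. Your level-one estimate $|g_0|+|g_1|\le\tfrac12\big(|g|+\#_a(g)\big)\le\tfrac34|g|+C$ is false: in a reduced word each letter from $\{b,c,d\}$ contributes one letter to each coordinate unless it is the one type that dies at this level, so the only estimate valid for all $g\in\mathrm{St}(1)$ is $|g_0|+|g_1|\le|g|+1$, which is not a contraction. A genuine factor $\eta<1$ for the summed lengths is obtained only at a depth $k$ by which \emph{all three} letter types have died, i.e.\ all three symbols occur in $\omega_1\cdots\omega_k$ --- and this is not implied by ``not eventually constant''. For instance, if $\omega\in\{0,1\}^{\mathbb{N}_+}$ is not eventually constant, the letter $b$ never dies, $ab_\omega$ has infinite order, $\langle a,b_\omega\rangle\cong D_\infty$, and $(ab_\omega)^{2^k}$ has total projected length at level $k$ equal to its own length, so no iterated-contraction scheme of the kind you describe can start. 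Yet the theorem asserts intermediate growth for these $\omega$ as well; your argument would prove the upper bound only for the subfamily in which all three symbols occur infinitely often (and even there only via a depth-$k$, not level-one, contraction). Grigorchuk's proof of Theorem~2.1 in the stated generality is substantially more delicate for exactly this reason. Finally, the superpolynomial lower bound via Gromov's theorem is fine in the periodic case (an infinite finitely generated torsion group is never virtually nilpotent), but in the non-periodic case ``not virtually nilpotent'' also needs a real argument, since the obvious infinite subgroup you exhibit is $D_\infty$, which is itself virtually cyclic.
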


		\section{Graphs of actions}\label{app: graphs of actions}
		
		The graphs in this section are oriented graphs, which are in general not simple. Except where noted the definitions can be found in \cite{nek18}, § 2.1.
		
		\begin{defi}
			Let $G$ be a group with finite, symmetric generating set $S$.
			\begin{enumerate}[(i)]
				\item Let $G$ act transitively\footnote{Transitivity of the action assures that the arising graph is connected.\\} on a set $X$. The \emph{Schreier graph $\Gamma(X,G,S)$} of this action is given by the following data:
				\begin{enumerate}
					\item The vertex set is given by $X$.
					
					\item A pair $(x,y) \in X\times X$ is an edge \emph{if and only if} there exists an $s \in S$ such that $y=sx$. 
				\end{enumerate}
				
				\item Let $G$ act on a set $X$. For every $x \in X$ the \emph{orbital Schreier graph} $\Gamma(x,G,S)$ is the Schreier graph of the induced action of $G$ on the orbit $G \cdot x$.
				
				\item The Schreier graph associated with the left-multiplication of $G$ on itself is the \emph{Cayley graph $\Gamma(G,S)$}.
			\end{enumerate}
		\end{defi}
		
		\begin{rem}
			\begin{enumerate}[(i)]
				\item We omit the generating set from the notion if it is apparent from the context.
				
				\item The orbital Schreier graph $\Gamma(x,G,S)$ is naturally isomorphic to the Schreier graph $\Gamma(G/G_x,G,S)$ associated to the action of $G$ on the set of cosets $G/G_x$.
			\end{enumerate}
			
		\end{rem}
		
		\begin{defi}
			Let a group $G$ with finite, symmetric generating set $S$ act on a compact metrizable space $X$ by homeomorphisms. Let $x \in X$.
			\begin{enumerate}[(i)]
				\item Let $G_{(x)}$ denote the subgroup of elements $g \in G_x$, such that there exists an open neighbourhood $U_x$ of $x$ for which $g|_{U_x}=\operatorname{Id}_{U_x}$.
				
				\item The group\footnote{The group $G_{(x)}$ is indeed a normal subgroup of $G_x$.\\} $G_x/G_{(x)}$ is called the \emph{group of germs of the action at the point $x$}.
				
				\item The \emph{graph of germs $\tilde{\Gamma}(x,G,S)$} is the Schreier graph $\Gamma(G/G_{(x)},G,S)$ associated to the action of $G$ on the set of cosets $G/G_{(x)}$.
			\end{enumerate}
		\end{defi}
		
		\begin{rem}
			There is a natural graph covering $\pi \colon \tilde{\Gamma}(x,G,S) \twoheadrightarrow \Gamma(x,G,S)$ where the group of germs $G_x/G_{(x)}$ acts as deck transformations.
		\end{rem}
		
		\begin{defi}
			Let a group $G$ with finite, symmetric generating set $S$ act on a compact metrizable space $X$ by homeomorphisms.
			\begin{enumerate}[(i)]
				\item An element $x \in X$ is called a \emph{$G$-regular point} if $G_x = G_{(x)}$ i.e. for every $g \in G$ that leaves $x$ invariant there exists a neighbourhood $U_x$ of $x$ that is pointwise fixed by $g$.
				
				\item An element $x \in X$ is called a \emph{singular point} if it is not $G$-regular.
			\end{enumerate}
			
			Let $x \in X$ be a singular point.
			\begin{enumerate}[(i),resume]
				\item It is called a \emph{Hausdorff singularity} if for every $g \in G_x \setminus G_{(x)}$ the element $x$ is not an accumulation point of the interior of the set of fixed points of $g$.
				
				\item It is called a \emph{non-Hausdorff singularity} if there exists a $g \in G_x \setminus G_{(x)}$ such that $x$ is an accumulation point of the interior of the set of fixed points of $g$.
				
				\item It is called a \emph{purely non-Hausdorff singularity} if $x$ is an accumulation point of the interior of the set of fixed points of $g$ for every $g \in G_x$.
			\end{enumerate}
		\end{defi}
		
		One aspect of graphs of actions is that they allow to study finitely generated groups via stochastics on graphs.
		
		\begin{defi}[\cite{woe00}, §I.1.B \& §I.1.C ]
			\begin{enumerate}[(i)]
				\item Let $S$ be a countable set. A \emph{Markov chain with state space $S$} arises from the data of an initial probability distribution $\lambda=(\lambda_s)_{s \in S}$ on $S$ and a stochastic matrix $P=\{p(s,t) \}_{s,t \in S}$ as sequence of random variables $\{X_n\}_{n \in \mathbb{N}}$ for which the following holds for all $n \in \mathbb{N}$ and $s_0,\dots,s_n \in S$:
				\begin{equation*}
				\operatorname{P}[X_0=s_0,X_1=s_1,\dots,X_n=s_n]=\lambda_{s_0}\prod_{i=0}^{n-1} p(s_{i},s_{i+1})
				\end{equation*}
				
				\item Let $\Gamma = (V,E)$ be locally finite graph. The \emph{simple random walk on $\Gamma$ with starting point $v \in V$} is the Markov chain with state space $V$, initial probability distribution $\delta_v$ and stochastic matrix $P=\{p(v,u) \}_{v,u \in V}$ given by
				\begin{equation*}
				p(v,u)=
				\begin{cases}
				|\{(v,w) \in E\}|^{-1}, & \text{if } (v,u) \in E\\
				0, & \text{else}
				\end{cases}
				\end{equation*}
				The graph $\Gamma = (V,E)$ is called \emph{recurrent} if for every $v \in V$ the simple random walk on $\Gamma$ with starting point $v \in V$ is recurrent i.e. 
				\begin{equation*}
				\sum_{i=0}^{\infty} \operatorname{P}[\min\{n \geq 1|X_n=v\}=n]=1.
				\end{equation*}
			\end{enumerate}
			
		\end{defi}
		
		\section{Amenability}\label{app: amenability}
		
		For encompassing treatments on amenability of groups the reader be refered to Chapter~4 of \cite{cc10} or \cite{jus15}. 
		Amenability is a pivotal notion in geometric group theory. While historically amenability (or more precisely non-amenability) already appeared intrinsically in Felix Hausdorff's work on paradoxical decompositions of the sphere, it was John von Neumann who isolated the property which posed an obstruction to existence of paradoxical decompositions:			 
		\begin{defi}
			Let $G$ be a discrete group. Then $G$ is said to be \emph{amenable} if there exists a finitely additive probability measure $\mu$ on $G$ such that $\mu(gA)=\mu(A)$ for every $A \subseteq G$ and $g \in G$.
		\end{defi}
		
		\begin{rem}
			\begin{enumerate}[(i)]
				\item Already von Neumann knew, that finite groups and abelian groups are amenable and that the class of amenable groups is closed under the operations of forming subgroups, forming quotients and extensions.
				
				\item The class of amenable groups is closed under direct limits and thus every locally finite group is amenable.
				
				\item The free group $\mathbb{F}_2$ is non-amenable and in consequence every discrete group containing it. It was a long-standing conjecture if containment of $\mathbb{F}_2$ characterizes non-amenable groups. A counterexample was provided by Aleksandr Y. Olshanskii in 1980 by the construction of \emph{Tarski monsters}. 
			\end{enumerate}
		\end{rem}
		
		Mahlon M. Day formulated the following question: Is the class of amenable groups the smallest class of groups that contains all finite and abelian groups which is closed with respect to subgroups, quotients and extension? This class has been termed the class of \emph{elementary amenable} groups and is denoted by $\mathrm{EG}$. Fundamental in the resolution of this problem was Ching Chou's proof of the following growth type dichotomy on finitely generated $\mathrm{EG}$-groups -- a strict generalization of the theorem restricted to finitely generated solvable groups obtained by John Milnor and Joseph A. Wolf:
		\begin{thm}
			Every finitely generated, elementary amenable group, has either polynomial or exponential growth.
		\end{thm}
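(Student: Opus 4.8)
The plan is to recast the statement as a \emph{no intermediate growth} result and then prove it by transfinite induction over the elementary amenable hierarchy, using Gromov's theorem (Theorem~\ref{thm: gromovs theorem}) and the Milnor--Wolf theorem for finitely generated solvable groups as black boxes. For any finitely generated group $G$ the growth function $\gamma_G$ is submultiplicative, so $\omega_G:=\lim_{n\to\infty}\gamma_G(n)^{1/n}=\inf_n\gamma_G(n)^{1/n}\geq 1$ exists; if $\omega_G>1$ then $G$ has exponential growth, while if $\omega_G=1$ then $\tfrac{\ln\gamma_G(n)}{n}\to 0$, i.e. $G$ has subexponential growth. Hence the asserted dichotomy is \emph{equivalent} to the claim that a finitely generated elementary amenable group of subexponential growth has polynomial growth, which by Theorem~\ref{thm: gromovs theorem} is in turn equivalent to such a group being virtually nilpotent. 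The entire proof therefore reduces to establishing: every finitely generated $G\in\mathrm{EG}$ of subexponential growth is virtually nilpotent.

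First I would fix a transfinite stratification of $\mathrm{EG}$: let $\mathrm{EG}_1$ be the class of finite and abelian groups; for a successor ordinal let $\mathrm{EG}_{\beta+1}$ consist of all directed unions of groups in $\mathrm{EG}_\beta$ together with all extensions $1\to A\to G\to H\to 1$ with $A\in\mathrm{EG}_\beta$ and $H\in\mathrm{EG}_1$; and for a limit ordinal $\lambda$ set $\mathrm{EG}_\lambda=\bigcup_{\beta<\lambda}\mathrm{EG}_\beta$, so that $\mathrm{EG}=\bigcup_\alpha\mathrm{EG}_\alpha$. Two routine facts are used throughout: a finitely generated subgroup $H\leq G$ of a subexponential-growth group is again of subexponential growth (comparing word lengths, generators of $H$ are words of bounded length $C$ in those of $G$, whence $\gamma_H(n)\leq\gamma_G(Cn)$), and quotients as well as finite-index subgroups preserve the growth type. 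The base case $\mathrm{EG}_1$ is immediate, since finite and finitely generated abelian groups are virtually nilpotent. At a limit ordinal $G$ already lies in some $\mathrm{EG}_\beta$ with $\beta<\lambda$, and if $G=\bigcup_i G_i$ is a directed union with $G_i\in\mathrm{EG}_\beta$, then finite generation forces the generators into a single $G_{i_0}$, so $G=G_{i_0}\in\mathrm{EG}_\beta$; in both cases the inductive hypothesis applies directly.

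The decisive case is the extension step: suppose $1\to A\to G\to H\to 1$ with $A\in\mathrm{EG}_\beta$ and $H$ finite or abelian, and $G$ finitely generated of subexponential growth. Since $H=G/A$ is finitely generated (a quotient of $G$) and finite or abelian, it is virtually polycyclic and admits a subnormal series with cyclic or finite successive quotients; lifting it to $G$ yields $G=G_0\trianglerighteq G_1\trianglerighteq\cdots\trianglerighteq G_k=A$ with each $G_i/G_{i+1}$ cyclic or finite. I would then descend this series invoking \emph{Milnor's finite-generation lemma} — a finitely generated group of subexponential growth whose quotient by a normal subgroup is infinite cyclic has finitely generated kernel — together with the elementary finite-index case, to conclude inductively that each $G_i$, and in particular $A=G_k$, is finitely generated. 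Being a finitely generated subgroup of $G$, $A$ has subexponential growth and lies in $\mathrm{EG}_\beta$, so by the inductive hypothesis $A$ is virtually nilpotent. Consequently $G$ is (virtually nilpotent)-by-(finite or abelian), hence virtually solvable; a finite-index finitely generated solvable subgroup of $G$ has subexponential growth, so the Milnor--Wolf theorem forces it to be virtually nilpotent, and therefore $G$ is virtually nilpotent, completing the induction.

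The main obstacle is precisely the extraction of finite generation of the kernel $A$: this is the one point where subexponential growth is genuinely exploited, and it rests on Milnor's lemma rather than on any of the results quoted in the excerpt, so I would either reproduce Milnor's word-counting argument or cite it explicitly. The remaining difficulties are organizational — choosing the hierarchy so that the advancing step always presents a finite-or-abelian top quotient, and verifying that the growth-type stability facts behave correctly under the finitely many extensions and finite-index passages; once Milnor's lemma and the Milnor--Wolf and Gromov theorems are in hand, the transfinite bookkeeping is routine.
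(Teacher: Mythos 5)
The paper does not actually prove this statement: it appears in Appendix~\ref{app: amenability} as a quoted background fact attributed to Chou's paper \cite{cho80}, so there is no in-text argument to measure yours against. What you propose is, in substance, the standard (essentially Chou's) proof, and its skeleton is sound: reduce the dichotomy to ``subexponential growth implies virtually nilpotent'' via Fekete's lemma for the submultiplicative growth function, run a transfinite induction over the elementary amenable hierarchy, and use Milnor's finite-generation lemma to make the kernel of the extension step accessible to the inductive hypothesis before closing with Milnor--Wolf. Three points deserve more care than you give them. First, the assertion that $\mathrm{EG}=\bigcup_\alpha \mathrm{EG}_\alpha$ for your hierarchy -- built only from directed unions and extensions with finite-or-abelian quotient -- is itself a nontrivial structural theorem of Chou: one must verify that this union is closed under subgroups, quotients and \emph{arbitrary} extensions, and since this is exactly where the definition of elementary amenability enters the proof, it should be proved or cited explicitly rather than folded into the setup (note also that the paper's own definition of $\mathrm{EG}$ omits directed unions, which only shrinks the class, so your argument still covers it). Second, you need only the elementary direction of Theorem~\ref{thm: gromovs theorem} (virtually nilpotent implies polynomial growth, due to Wolf and Bass--Guivarc'h); invoking Gromov's theorem is harmless but unnecessary, and historically backwards since Chou's proof predates it. Third, the step ``(virtually nilpotent)-by-(finite or abelian) is virtually solvable'' requires replacing the finite-index nilpotent subgroup of $A$ by a characteristic one -- possible precisely because you have already shown $A$ to be finitely generated -- and then observing that a finitely generated finite-by-abelian group is virtually abelian; this is routine but is exactly the kind of bookkeeping you defer, so it merits a sentence. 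With those repairs the argument is complete.
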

		
		Moreover one can establish the following fact:
		\begin{thm}
			Finitely generated groups of subexponential growth are amenable.
		\end{thm}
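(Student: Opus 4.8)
The plan is to manufacture an explicit Følner sequence out of the metric balls $B_n := \{g \in G \mid \mathcal{L}_{G,S}(g) \leq n\}$, whose cardinalities are exactly the values $\gamma_{G,S}(n)$ of the growth function. The decisive external input I would invoke is the Følner characterization of amenability (the equivalence with the invariant-mean definition given in the text): a discrete group $G$ is amenable if and only if, for a finite symmetric generating set $S$, there exists a sequence of non-empty finite sets $F_j \subseteq G$ with $|sF_j \triangle F_j| / |F_j| \to 0$ for every $s \in S$. Having this criterion on hand reduces the statement to a purely combinatorial growth estimate.

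First I would record the elementary inclusions $B_n \subseteq B_{n+1}$ and $sB_n \subseteq B_{n+1}$ for every $s \in S$; the latter holds because left multiplication by a generator raises word length by at most one. Writing $a_n := |B_n| = \gamma_{G,S}(n)$, these give $a_{n+1} \geq a_n \geq 1$ and show that the whole problem hinges on the behaviour of the ratios $a_{n+1}/a_n$.

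The heart of the argument is a dichotomy on these ratios. If there were a constant $c > 0$ and an index $N$ with $a_{n+1}/a_n \geq 1 + c$ for all $n \geq N$, then telescoping would yield $a_n \geq a_N (1+c)^{\,n-N}$, so that $\tfrac{1}{n}\ln \gamma_{G,S}(n)$ would be bounded below by $\ln(1+c) > 0$, contradicting the hypothesis of subexponential growth $\tfrac{\ln \gamma_{G,S}(n)}{n} \to 0$. Hence $\liminf_{n} a_{n+1}/a_n = 1$, and I can extract a subsequence $(n_j)$ with $a_{n_j+1}/a_{n_j} \to 1$, equivalently $|B_{n_j+1} \setminus B_{n_j}| / |B_{n_j}| \to 0$.

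Finally I would assemble the conclusion. Since left translation is a bijection, $|sB_{n_j}| = |B_{n_j}|$, so $|sB_{n_j} \triangle B_{n_j}| = 2\,|sB_{n_j} \setminus B_{n_j}|$; and the inclusion $sB_{n_j} \subseteq B_{n_j+1}$ gives $sB_{n_j} \setminus B_{n_j} \subseteq B_{n_j+1} \setminus B_{n_j}$, whence $|sB_{n_j} \triangle B_{n_j}| \leq 2\,|B_{n_j+1} \setminus B_{n_j}| = o(|B_{n_j}|)$. Because $S$ is \emph{finite}, this estimate holds simultaneously for all $s \in S$, so $(B_{n_j})_j$ is a Følner sequence and $G$ is amenable. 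I expect the main obstacle to be conceptual rather than computational: it lies entirely in having the Følner criterion available, as the growth hypothesis then settles everything through the ratio dichotomy; the only delicate point is that almost-invariance must be obtained uniformly over $S$, which the finiteness of $S$ makes automatic.
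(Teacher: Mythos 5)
Your argument is correct and complete: the paper states this theorem in Appendix~A.4 without proof, so there is nothing to compare against, but your route — extracting a subsequence of balls $B_{n_j}$ with $|B_{n_j+1}\setminus B_{n_j}|/|B_{n_j}|\to 0$ via the ratio dichotomy, and then using $sB_n\subseteq B_{n+1}$ together with the Følner criterion quoted in the same appendix — is the standard and correct one. The only point worth double-checking, which you handle properly, is that $\liminf_n a_{n+1}/a_n\geq 1$ automatically, so the failure of the dichotomy's exponential branch really does force the liminf to equal $1$, and finiteness of $S$ gives the required uniformity.
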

		
	By this the Grigorchuk groups of intermediate growth provided first counterexamples to Day's question. Since then many examples of non-elementary amenable, amenable groups have been found. There is a number of characterizations of amenability:
		
		\begin{defi}
			\begin{enumerate}[(i)]
				\item Let $X$ be a set. A \emph{mean on $X$} is a linear functional $m \in \mathrm{L}^{\infty}(X)^*$ such that the following conditions hold:
				\begin{enumerate}
					\item $m(\mathbf{1}_X)=1$
					
					\item $m(f) \geq 0$ for all $f \in \mathrm{L}^\infty(X)$ with $f \geq 0$.
				\end{enumerate}
				
				\item Let $\alpha \colon G \curvearrowright X$ be an action of a discrete group $G$ on a set $X$. A mean $m$ on $X$ is called \emph{$G$-invariant} if  $m(g\cdot f)(x):=m(f(g^{-1}\cdot x))=m(f)(x)$ for all $f \in \mathrm{L}^\infty(X)$, $x \in X$ and $g \in G$ i.e. $m$ is a fixed point of the induced action of $G$.
			\end{enumerate}
		\end{defi}
		
		By an application of the \emph{Hahn-Banach theorem}, there is a 1-1 correspondence between means and finitely additive probability measures on a set producing the following characterization:
		\begin{prop}
			Let $G$ be a discrete group. Then $G$ is amenable \emph{if and only if} $G$ carries a $G$-invariant mean (with respect to left multiplication).
		\end{prop}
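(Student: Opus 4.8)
The plan is to establish the bijective correspondence between means on $G$ and finitely additive probability measures on $G$ alluded to just before the statement, and then to verify that under this correspondence left-invariance of a measure matches exactly the $G$-invariance of the associated mean. First I would produce the two assignments. Given a finitely additive probability measure $\mu$ on $G$, I define a functional $m_\mu$ on $\mathrm{L}^\infty(G)$ by integration against $\mu$: for bounded real-valued $f$ I form the Riemann--Stieltjes sums $\sum_k t_k\,\mu(E_k)$ associated with finite partitions $G = \bigsqcup_k E_k$ into subsets on which $f$ varies by less than a prescribed amount (with $t_k$ a sampled value of $f$ on $E_k$), and take the common limit as the mesh shrinks; extending to complex functions via real and imaginary parts gives a linear functional with $m_\mu(\mathbf{1}_G) = \mu(G) = 1$ and $m_\mu(f) \geq 0$ when $f \geq 0$, so $m_\mu$ is a mean. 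Conversely, given a mean $m$, I set $\mu_m(A) := m(\mathbf{1}_A)$ for $A \subseteq G$; finite additivity follows from $\mathbf{1}_{A \sqcup B} = \mathbf{1}_A + \mathbf{1}_B$ and linearity of $m$, while $\mu_m(G) = m(\mathbf{1}_G) = 1$ together with positivity of $m$ show that $\mu_m$ is a finitely additive probability measure.

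Next I would check that these assignments are mutually inverse. The composite $\mu \mapsto m_\mu \mapsto \mu_{m_\mu}$ returns $\mu_{m_\mu}(A) = m_\mu(\mathbf{1}_A) = \mu(A)$ at once. For the other composite $m \mapsto \mu_m \mapsto m_{\mu_m}$, the two means agree on indicator functions, hence on simple functions by linearity; since every bounded function is a uniform limit of simple functions and a mean is automatically norm-continuous with $\|m\|=1$ (from $-\|f\|_\infty \mathbf{1}_G \leq f \leq \|f\|_\infty \mathbf{1}_G$ and positivity), they agree everywhere.

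Finally I would treat the invariance clause, which is where the left-multiplication structure enters. The key observation is that for the action $(g \cdot f)(x) = f(g^{-1}x)$ one has $\mathbf{1}_{gA} = g \cdot \mathbf{1}_A$, since $x \in gA \Leftrightarrow g^{-1}x \in A$. Hence if $m$ is $G$-invariant, then $\mu_m(gA) = m(\mathbf{1}_{gA}) = m(g \cdot \mathbf{1}_A) = m(\mathbf{1}_A) = \mu_m(A)$, so $\mu_m$ is left-invariant; conversely, if $\mu$ is left-invariant then $m_\mu(g \cdot \mathbf{1}_A) = m_\mu(\mathbf{1}_{gA}) = \mu(gA) = \mu(A) = m_\mu(\mathbf{1}_A)$, and extending by linearity and norm-continuity through simple functions gives $m_\mu(g \cdot f) = m_\mu(f)$ for every $f \in \mathrm{L}^\infty(G)$. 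Coupling this with the defining existence of a left-invariant finitely additive probability measure for amenability yields both implications of the equivalence.

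I expect the only genuine technical point in this otherwise formal argument to be confirming that integration against a merely finitely additive measure is well-defined and linear: additivity of the approximating sums must be verified by hand, since the monotone and dominated convergence tools of $\sigma$-additive measure theory are unavailable here. This is nonetheless manageable, because bounded functions are approximated by simple functions \emph{uniformly}, so finite additivity of $\mu$ alone controls the sums and no limiting process stronger than uniform convergence is invoked.
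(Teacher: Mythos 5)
Your proof is correct and follows exactly the route the paper indicates: the paper states this proposition without proof, remarking only that means and finitely additive probability measures on a set are in 1-1 correspondence, and your argument supplies that bijection explicitly ($\mu \mapsto$ integration against $\mu$, $m \mapsto m(\mathbf{1}_{(\cdot)})$) together with the verification that left-invariance of the measure matches $G$-invariance of the mean via $\mathbf{1}_{gA} = g\cdot \mathbf{1}_A$. It is worth noting that your construction shows the Hahn--Banach theorem the paper invokes is not actually needed here: on a discrete group the simple functions are uniformly dense in $\mathrm{L}^\infty(G)$, so norm-continuity of a mean (which follows from positivity and $m(\mathbf{1}_G)=1$) already determines the extension from indicators to all bounded functions.
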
  
		
		Moreover the following hold:
		\begin{thm}
			A discrete group $G$ is amenable \emph{if and only if} $C^*(G)\cong C_r^*(G)$.
		\end{thm}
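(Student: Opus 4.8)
The plan is to reduce the whole statement to a single fact about weak containment of representations, and then to the classical equivalence between amenability and regularity of the left regular representation. There is always a canonical surjective $*$-homomorphism $C^*(G) \twoheadrightarrow C_r^*(G)$, determined on the dense subalgebra $\mathbb{C}[G]$ by $g \mapsto \lambda(g)$, where $\lambda$ denotes the left regular representation on $\ell^2(G)$; equivalently, on $\mathbb{C}[G]$ one always has $\|f\|_r \le \|f\|_{C^*}$. Thus the assertion that $C^*(G) \cong C_r^*(G)$ via the canonical map says exactly that these two norms coincide. Since the maximal norm is by definition the supremum of $\|\pi(f)\|$ over all unitary representations $\pi$ of $G$, coincidence of the norms is equivalent to the statement that every unitary representation $\pi$ is weakly contained in $\lambda$, i.e. $\|\pi(f)\| \le \|\lambda(f)\|_r$ for all $f \in \mathbb{C}[G]$.

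First I would record two standard reductions. Fell's absorption principle provides, for every unitary representation $\pi$ on a Hilbert space $\mathcal{H}_\pi$, a unitary equivalence $\lambda \otimes \pi \cong \lambda \otimes 1_{\mathcal{H}_\pi}$, implemented by the explicit unitary $\delta_g \otimes \eta \mapsto \delta_g \otimes \pi(g)\eta$ on $\ell^2(G) \otimes \mathcal{H}_\pi$. As $\lambda \otimes 1_{\mathcal{H}_\pi}$ is merely a multiple of $\lambda$, and any multiple of $\lambda$ is weakly equivalent to $\lambda$, it follows that if the trivial representation $1_G$ is weakly contained in $\lambda$, then so is every $\pi$, via $\pi \cong 1_G \otimes \pi$, which is weakly contained in $\lambda \otimes \pi \cong \lambda \otimes 1_{\mathcal{H}_\pi}$. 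Conversely, if the canonical map is an isomorphism, then the character induced by the trivial representation is continuous for the reduced norm, which is precisely the weak containment of $1_G$ in $\lambda$. The whole theorem therefore collapses to the equivalence: $G$ is amenable if and only if $1_G$ is weakly contained in $\lambda$.

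This last equivalence is Hulanicki's theorem, and I expect it to be the main obstacle, since it is the point at which the finitely additive invariant measure of the Definition must be converted into analytic almost-invariance of vectors. For the forward direction I would pass from a left-invariant mean to Reiter's condition, producing a net of probability vectors $\mu_n \in \ell^1(G)$ with $\|g \cdot \mu_n - \mu_n\|_1 \to 0$ for each fixed $g$; taking pointwise square roots $\xi_n := \mu_n^{1/2} \in \ell^2(G)$ and using the elementary bound $\|\mu^{1/2} - \nu^{1/2}\|_2^2 \le \|\mu - \nu\|_1$ for probability vectors yields almost-invariant unit vectors for $\lambda$, hence the weak containment of $1_G$ in $\lambda$. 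For the reverse direction I would run this backwards: almost-invariant unit vectors $\xi_n$ for $\lambda$ give approximately invariant probability vectors $|\xi_n|^2 \in \ell^1(G)$ through the estimate $\big\||\lambda(g)\xi|^2 - |\xi|^2\big\|_1 \le 2\|\lambda(g)\xi - \xi\|_2$ (valid since $\|\xi\|_2 = 1$ by unitarity of $\lambda$), and any weak-$*$ cluster point of this net in $\ell^\infty(G)^*$ is a left-invariant mean, so $G$ is amenable. The care in these mean–Reiter–Følner transitions is where the real work lies; everything else is formal.
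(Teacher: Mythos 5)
The paper offers no proof of this statement: it appears in Appendix~\ref{app: amenability} as one of several standard characterizations of amenability quoted without argument, so there is nothing internal to compare your proof against. Your outline is the standard and correct one: reduce the coincidence of the maximal and reduced norms to weak containment of every unitary representation in $\lambda$, use Fell's absorption principle $\lambda\otimes\pi\cong\lambda\otimes 1_{\mathcal{H}_\pi}$ to reduce that to weak containment of the trivial representation $1_G\prec\lambda$ (Hulanicki's theorem), and then pass between invariant means, Reiter's property in $\ell^1$, and almost invariant unit vectors in $\ell^2$ via the inequalities $\|\mu^{1/2}-\nu^{1/2}\|_2^2\leq\|\mu-\nu\|_1$ and $\||\xi|^2-|\eta|^2\|_1\leq 2\|\xi-\eta\|_2$. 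All of these steps are sound.

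Two points deserve explicit attention if this were to be written out in full. First, you silently replace the hypothesis ``$C^*(G)\cong C_r^*(G)$'' by ``the canonical quotient $C^*(G)\twoheadrightarrow C_r^*(G)$ is an isomorphism.'' That is the standard (and surely intended) reading of the theorem, and it is what your argument proves; but an abstract isomorphism is a priori weaker, and whether it forces amenability is a genuinely harder question that your argument does not address. It would be worth stating the theorem in the canonical-map form. Second, the transition from a left-invariant mean to Reiter's condition is not a formality: one must approximate the mean weak-$*$ by probability vectors in $\ell^1(G)$ and then upgrade weak almost-invariance over a finite set of group elements to norm almost-invariance by a convexity (Mazur/Day) argument. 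You correctly flag this as the locus of the real work, but in a complete proof it is the one step that cannot be waved through.
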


		\begin{thm}
			Let $G$ be a finitely generated group with finite generating set $S$. The following are equivalent:
			\begin{enumerate}[(i)]
				\item The group $G$ is amenable.
				
				\item There exists a sequence $\{F_i\}_{i\in \mathbb{N}} $ of finite sets called \emph{Følner sets} contained in $G$ such that there exists a sequence $\{\varepsilon_i\}_{i \in \mathbb{N}}$ of positive reals with $\lim\limits_{i \to \infty} \varepsilon_i = 0$ such that $|gF_i\Delta F_i |\leq \varepsilon_i F$ for all $g \in S$ and $i \in \mathbb{N}$.
			\end{enumerate}
		\end{thm}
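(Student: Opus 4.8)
The plan is to establish the two implications separately, noting that the reverse implication (i)$\Rightarrow$(ii) carries essentially all of the difficulty while (ii)$\Rightarrow$(i) is a short weak-$*$ compactness argument.

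For (ii)$\Rightarrow$(i), I would manufacture an invariant mean directly from the Følner sequence. Given $\{F_i\}$, define for each $i$ a mean $m_i$ on $G$ by averaging over $F_i$, i.e.\ $m_i(f):=|F_i|^{-1}\sum_{g\in F_i}f(g)$ for $f\in \ell^\infty(G)$. Each $m_i$ is a genuine mean, so the sequence lies in the weak-$*$-compact set of means (Banach--Alaoglu applied to the unit ball of $\ell^\infty(G)^*$), and I would pass to a weak-$*$ cluster point $m$. The point is that $m$ is automatically $G$-invariant: for a generator $s\in S$ the difference $m_i(s\cdot f)-m_i(f)$ is supported on $sF_i\Delta F_i$, whence $|m_i(s\cdot f)-m_i(f)|\le \|f\|_\infty\,|sF_i\Delta F_i|/|F_i|\le \|f\|_\infty\varepsilon_i\to 0$. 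Invariance under $s$ and $s^{-1}$ for all $s\in S$ passes to the cluster point $m$, and since $S$ generates $G$ this yields $m(g\cdot f)=m(f)$ for every $g\in G$, so $G$ is amenable.

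For (i)$\Rightarrow$(ii) I would proceed in three stages. First, realise the invariant mean $m$ as a weak-$*$ limit of finitely supported probability densities: the convex set $\mathcal D$ of functions $\phi\in\ell^1(G)$ with $\phi\ge 0$ and $\|\phi\|_1=1$ is weak-$*$ dense in the set of means (viewing $\ell^1(G)\subset \ell^\infty(G)^*$), so there is a net $\{\phi_\alpha\}\subset\mathcal D$ converging weak-$*$ to $m$, and invariance of $m$ gives $s\cdot\phi_\alpha-\phi_\alpha\to 0$ weakly in $\ell^1(G)$ for each $s\in S$. Second --- and this is the crux --- I would upgrade this to norm almost-invariance. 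Consider the linear map $T\colon \ell^1(G)\to \bigoplus_{s\in S}\ell^1(G)$ given by $\phi\mapsto (s\cdot\phi-\phi)_{s\in S}$. The image $T(\mathcal D)$ is convex and its weak closure contains $0$; by Mazur's theorem the weak and norm closures of a convex set coincide, so $0$ lies in the norm closure of $T(\mathcal D)$, and hence for each $\delta>0$ there is $\phi\in\mathcal D$ with $\sum_{s\in S}\|s\cdot\phi-\phi\|_1<\delta$. Third, I would extract an honest Følner set from such an almost-invariant density by the layer-cake (coarea) formula. Writing $\phi=\int_0^\infty \mathbf 1_{\{\phi>t\}}\,\mathrm dt$ and using the identities $\|\phi\|_1=\int_0^\infty|\{\phi>t\}|\,\mathrm dt$ and $\|s\cdot\phi-\phi\|_1=\int_0^\infty |s\{\phi>t\}\Delta\{\phi>t\}|\,\mathrm dt$, one obtains $\sum_{s\in S}\int_0^\infty|s\{\phi>t\}\Delta\{\phi>t\}|\,\mathrm dt<\delta\int_0^\infty|\{\phi>t\}|\,\mathrm dt$. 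An averaging (pigeonhole) argument over the parameter $t$ then produces a single level $t_0$ for which $F:=\{\phi>t_0\}$ is finite, non-empty and satisfies $\sum_{s\in S}|sF\Delta F|<\delta|F|$, so in particular $|sF\Delta F|\le\delta|F|$ for every $s\in S$. Taking $\delta=\varepsilon_i\to 0$ and letting $F_i$ be the resulting level sets yields the desired Følner sequence.

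The main obstacle I anticipate is the second stage: transferring weak almost-invariance of the densities $\phi_\alpha$ into norm almost-invariance in $\ell^1$. A weak-$*$ limit of means is not directly approximable in $\ell^1$-norm by almost-invariant densities, and it is precisely the convexity of $T(\mathcal D)$ together with the Hahn--Banach/Mazur identification of weak and norm closures that makes this step go through. Once norm almost-invariance is secured, the coarea computation of the third stage is a routine measure-theoretic manipulation, and the first and last implications require only compactness and pigeonholing.
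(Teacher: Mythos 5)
Your proposal is correct. Note, however, that the paper does not prove this statement at all: it appears in Appendix~A.4 as a standard background fact, with the reader referred to Ceccherini-Silberstein--Coornaert and Juschenko for proofs, so there is no in-paper argument to compare against. What you have written is precisely the classical Day--Namioka proof found in those references: weak-$*$ compactness of the set of means for (ii)$\Rightarrow$(i), and for (i)$\Rightarrow$(ii) the approximation of the invariant mean by $\ell^1$-densities, the Mazur (weak-closure equals norm-closure for convex sets) upgrade from weak to norm almost-invariance, and the layer-cake extraction of a F{\o}lner set from an almost-invariant density. All three stages are sound; the only points that deserve a sentence when writing it out in full are (a) that $|s^{-1}F_i\Delta F_i|=|F_i\Delta sF_i|$, so the translate appearing in $m_i(s\cdot f)-m_i(f)$ is controlled by the stated hypothesis, and (b) in the pigeonhole over the level parameter $t$, that the contradiction is derived by integrating the reverse inequality only over those $t$ for which $\{\phi>t\}$ is non-empty, which is a set of positive measure since $\|\phi\|_1=1$. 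Neither is a gap, merely bookkeeping.
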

		
		Følner sets allow to quantify amenability of finitely generated groups by the growth of Følner functions:
		\begin{defi}
			Let $G$ be a finitely generated amenable group with symmetric finite generating set $S$.
			\begin{enumerate}[(i)]
				\item Let $F \subseteq G$. Define  $\partial_S F:=\{g \in G|\exists s \in S:sg \notin F \}$.
				
				\item The \emph{Følner function} $\operatorname{Fol}_{G,S}\colon \mathbb{N} \to \mathbb{N}$ is given by:
				\begin{equation*}
				\operatorname{Fol}_{G,S}(n):=\min\{|F|: F \subseteq G, |\partial_S F| \leq \frac{1}{n}|F|\}
				\end{equation*}
			\end{enumerate}
			
		\end{defi}
		
		A fundamental tool to establish amenability of a group is to study its actions:
		
		\begin{defi}
			Let $\alpha \colon G \curvearrowright X$ be an action of a discrete group $G$ on a set $X$. The action is called \emph{amenable} if $X$ carries a $G$-invariant mean.
		\end{defi}
		
		\begin{thm}
			Every action of an amenable group is amenable.
		\end{thm}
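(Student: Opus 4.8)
The plan is to transport the invariant mean that $G$ already carries onto $X$ by pulling it back along an orbit map. By the characterization of amenability recorded earlier in this appendix, amenability of the discrete group $G$ is equivalent to the existence of a $G$-invariant mean $m_G$ on $G$ with respect to left multiplication, and I would take such an $m_G$ as the starting datum. Assuming $X$ non-empty (otherwise there is nothing to do), I would fix a base point $x_0 \in X$ and consider the orbit map $\pi \colon G \to X$ defined by $\pi(g) := g \cdot x_0$. The crucial structural feature of $\pi$ is that it intertwines left multiplication on $G$ with the given action on $X$, namely $\pi(hg) = h \cdot \pi(g)$ for all $g,h \in G$, which is immediate from the associativity axiom of the action.

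Next I would define the candidate mean on $X$ as the composite $m_X := m_G \circ \pi^\ast$, where $\pi^\ast \colon \mathrm{L}^\infty(X) \to \mathrm{L}^\infty(G)$ is the pullback $\pi^\ast(f) := f \circ \pi$. Since $\pi^\ast$ is linear, unital, and order-preserving, and since $m_G$ is a mean, it follows at once that $m_X$ is linear, satisfies $m_X(\mathbf{1}_X) = m_G(\mathbf{1}_G) = 1$, and is non-negative on non-negative functions; hence $m_X$ is a mean on $X$.

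The heart of the argument is to verify $G$-invariance, and this is where the equivariance of $\pi$ has to be matched precisely against the conventions in the definition of a $G$-invariant mean. For $g \in G$ and $f \in \mathrm{L}^\infty(X)$ I would compute, for every $h \in G$,
\begin{equation*}
\big((g\cdot f)\circ \pi\big)(h) = f\big(g^{-1}\cdot (h\cdot x_0)\big) = f\big(\pi(g^{-1}h)\big) = (f\circ \pi)(g^{-1}h),
\end{equation*}
which says exactly that $\pi^\ast(g \cdot f) = g \cdot \pi^\ast(f)$ as elements of $\mathrm{L}^\infty(G)$ under left translation. Applying $m_G$ and using its left invariance then gives $m_X(g\cdot f) = m_G\big(g \cdot \pi^\ast(f)\big) = m_G(\pi^\ast(f)) = m_X(f)$, so $m_X$ is $G$-invariant and the action is amenable.

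I do not expect a genuine obstacle here: the construction is a routine pullback of means along an equivariant map, and transitivity of the action is not needed, since the orbit map simply lands in a single orbit, which is harmless as $m_X$ is still a mean on all of $X$. The only point demanding care is bookkeeping — keeping the left-multiplication convention on $G$ consistent with the $g^{-1}$ appearing in the definition of the translated function $g\cdot f$ — so that the equivariance of $\pi$ yields genuine invariance rather than a twisted variant.
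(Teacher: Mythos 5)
Your proof is correct. The paper states this theorem in its appendix without giving a proof, so there is nothing to compare against; your argument is the standard one, namely pushing the left-invariant mean on $G$ forward along the orbit map $\pi(g)=g\cdot x_0$ (equivalently, pulling back functions via $\pi^\ast$), and your equivariance computation $\pi^\ast(g\cdot f)=g\cdot\pi^\ast(f)$ correctly reconciles the $g^{-1}$ in the definition of the translated function with left invariance of the mean on $G$. The remarks that transitivity is not needed and that the empty-$X$ case is vacuous are both accurate.
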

		
		The converse is not true in general, one needs to add a crucial condition:
		\begin{thm}\label{thm: amenact}
			Let $\alpha \colon G \curvearrowright X$ be an amenable action of a  group $G$ on a set $X$. If the point-stabilizer $G_x$ is amenable for every $x \in X$, then $G$ is amenable.
		\end{thm}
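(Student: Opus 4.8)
The plan is to produce a left-invariant mean on $G$ and then invoke the characterization of amenability by the existence of such a mean (the Proposition following the definition of means in Appendix~\ref{app: amenability}). Write $m_X$ for the given $G$-invariant mean on $X$. The key observation is that it suffices to construct a positive, unital, linear map $\Phi \colon \mathrm{L}^\infty(G) \to \mathrm{L}^\infty(X)$ that is $G$-equivariant, where $G$ acts on $\mathrm{L}^\infty(G)$ by left translation $(g \cdot f)(\gamma) = f(g^{-1}\gamma)$ and on $\mathrm{L}^\infty(X)$ by the action induced from $G \curvearrowright X$. Indeed, $M := m_X \circ \Phi$ is then positive and unital because both $\Phi$ and $m_X$ are, and it is left-invariant since $M(g \cdot f) = m_X(\Phi(g \cdot f)) = m_X(g \cdot \Phi f) = m_X(\Phi f) = M(f)$, using equivariance of $\Phi$ and $G$-invariance of $m_X$. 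Thus $G$ would carry a left-invariant mean and hence be amenable.

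The construction of $\Phi$ is where amenability of the stabilizers enters, and I would carry it out one $G$-orbit at a time. This is legitimate because the $G$-action preserves each orbit, so $\Phi$ may be assembled from maps $\Phi_{\mathcal{O}}$ defined on the summands of the orbit decomposition $X = \bigsqcup_{\mathcal{O}} \mathcal{O}$. Fix an orbit $\mathcal{O}$, a basepoint $x_0 \in \mathcal{O}$, and set $H := G_{x_0}$; by hypothesis $H$ is amenable, so choose a left-invariant mean $m_H$ on $\mathrm{L}^\infty(H)$. For $y \in \mathcal{O}$ and $f \in \mathrm{L}^\infty(G)$, pick any $g \in G$ with $g x_0 = y$ and define
\[
(\Phi_{\mathcal{O}} f)(y) := m_H\big(h \mapsto f(gh)\big),
\]
the $m_H$-average of $f$ along the coset $gH$, which is exactly the fibre of the orbit map $\gamma \mapsto \gamma x_0$ over $y$. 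The first thing to verify is independence of the representative $g$: replacing $g$ by $gh_0$ with $h_0 \in H$ turns $h \mapsto f(gh)$ into its left-translate by $h_0^{-1}$, and left-invariance of $m_H$ absorbs this change. This is precisely the step that uses stabilizer amenability. Equivariance is then a short computation: for $g_1 \in G$ the element $g_1^{-1}g$ represents $g_1^{-1}y$, so $(\Phi_{\mathcal{O}}(g_1 \cdot f))(y) = m_H(h \mapsto f(g_1^{-1}gh)) = (\Phi_{\mathcal{O}} f)(g_1^{-1}y) = (g_1 \cdot \Phi_{\mathcal{O}} f)(y)$. Positivity and unitality of $\Phi_{\mathcal{O}}$ are inherited from $m_H$, and the bound $\|\Phi_{\mathcal{O}} f\|_\infty \le \|f\|_\infty$ guarantees $\Phi f \in \mathrm{L}^\infty(X)$.

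Assembling the $\Phi_{\mathcal{O}}$ yields the desired $\Phi$, and the argument of the first paragraph finishes the proof. The main obstacle is the construction and verification of $\Phi$ rather than anything about $m_X$: one must set up the coset-averaging formula correctly and track the left/right conventions so that representative-independence lands exactly on left-invariance of $m_H$. A minor point worth flagging is that choosing a basepoint and an invariant mean on the stabilizer for every orbit simultaneously invokes the Axiom of Choice; since stabilizers within one orbit are conjugate and amenability is preserved under conjugation, the hypothesis is only needed for a single representative per orbit, which the assumption that $G_x$ is amenable for every $x \in X$ certainly supplies.
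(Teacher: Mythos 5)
Your proof is correct. The paper states Theorem~\ref{thm: amenact} in the appendix as a standard fact without supplying a proof, so there is nothing to compare against; your argument is the classical one (construct a positive, unital, $G$-equivariant transfer map $\mathrm{L}^\infty(G) \to \mathrm{L}^\infty(X)$ by averaging over cosets of a stabilizer with its invariant mean, then compose with the invariant mean on $X$), and every step checks out: well-definedness of the coset average is exactly left-invariance of $m_H$, the equivariance computation is right, and the orbit-by-orbit assembly together with the conjugacy of stabilizers within an orbit is handled correctly.
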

		
		\section{LEF groups}\label{app: lef-groups}
		
		The LEF property, where ``LEF" is an abbrevation for ``\textbf{l}ocally \textbf{e}mbeddable into \textbf{f}inite groups", is, as the name suggests, an approximation property:
		
		\begin{defi}[\cite{cc10}, Definition~7.1.3]
			Let $\mathcal{C}$ be a class of groups. A group $G$ is said to be \emph{locally embeddable into $\mathcal{C}$} if for every finite subset $F \subset G$ there exists a group $H_F$ in the class $\mathcal{C}$ and a map $\phi \colon G \to H_F$ such that $\phi(xy)=\phi(x)\phi(y)$ for all $x,y \in F$ and $\phi|_F$ is injective.
			If $\mathcal{C}$ is the class of finite groups, we call such a group a \emph{LEF-group}.
		\end{defi}
		
		For countable groups the LEF-property translates as follows:				
		\begin{thm}[\cite{gv98}, Corollary~1.3]
			Let $G$ be a countable group. The following are equivalent:
			\begin{enumerate}[(i)]
				\item $G$ is a LEF-group
				\item There exists a sequence $\{F_n\}_{n \in \mathbb{N}}$ of finite groups and a sequence of maps $\{\pi_n \colon G \to F_n \}_{n \in \mathbb{N}}$ such that for all $g,h \in G$ the following properties hold:
				\begin{enumerate}
					\item $(g \neq h) \Rightarrow (\exists N \in \mathbb{N}:\forall n > N, \; \pi_n(g)\neq \pi_n(h))$.
					
					\item $\exists N \in \mathbb{N}:\forall n > N, \; \pi_n(gh) = \pi_n(g)\pi_n(h)$.
				\end{enumerate}
			\end{enumerate}
		\end{thm}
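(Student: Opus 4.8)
The plan is to prove the two implications separately, using the countability of $G$ only for the direction $(i) \Rightarrow (ii)$. The whole argument is essentially bookkeeping with quantifiers, matching the ``for every finite subset'' phrasing in the definition of a LEF-group against the ``eventually in $n$'' phrasing in condition (ii).

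For $(ii) \Rightarrow (i)$, I would fix a finite subset $F \subset G$ and extract a single index that already furnishes a local embedding of $F$. Consider the finitely many ordered pairs $(x,y) \in F \times F$. For each such pair, condition (b) provides a threshold beyond which $\pi_n(xy) = \pi_n(x)\pi_n(y)$; for each pair of \emph{distinct} elements $x \neq y$ of $F$, condition (a) provides a threshold beyond which $\pi_n(x) \neq \pi_n(y)$. Since $F$ is finite, only finitely many thresholds occur, so I may take $N$ to be their maximum and fix any $n > N$. Then $\phi := \pi_n \colon G \to F_n$ is injective on $F$ and satisfies $\phi(xy) = \phi(x)\phi(y)$ for all $x,y \in F$, so the finite group $H_F := F_n$ witnesses the local embeddability of $F$. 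The only subtlety here is the passage from finitely many individual thresholds to a single uniform $N$, which works precisely because $F$ is finite.

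For $(i) \Rightarrow (ii)$, I would invoke countability to fix an enumeration $G = \{g_1, g_2, \dots\}$ and set $E_n := \{g_1, \dots, g_n\}$, an increasing exhaustion of $G$ by finite sets with $\bigcup_n E_n = G$ indexed by $\mathbb{N}$. Applying the LEF property to each $E_n$ yields a finite group $F_n$ and a map $\pi_n \colon G \to F_n$ with $\pi_n|_{E_n}$ injective and $\pi_n(xy) = \pi_n(x)\pi_n(y)$ for all $x,y \in E_n$. I claim this sequence satisfies (a) and (b) with no further modification. Given $g = g_i$ and $h = g_j$, set $N := \max(i,j)$; for every $n > N$ both $g$ and $h$ lie in $E_n$, so injectivity of $\pi_n|_{E_n}$ gives (a) when $g \neq h$, and membership $g,h \in E_n$ gives $\pi_n(gh) = \pi_n(g)\pi_n(h)$ directly, which is (b). The point worth flagging is that no enlargement of $E_n$ to include products $xy$ is needed: the multiplicativity built into the LEF map on $E_n$ is already exactly the statement required by (b), and the index set is $\mathbb{N}$ solely because $G$ is countable. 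Thus the main ``obstacle'' is recognizing that the definitions align so cleanly, rather than any genuine difficulty.
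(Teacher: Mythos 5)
Your proof is correct. The paper does not actually prove this statement --- it is quoted from \cite{gv98} (Corollary~1.3) without proof --- so there is nothing to compare against, but your argument is the standard one: taking the maximum of finitely many thresholds for $(ii)\Rightarrow(i)$, and applying the LEF property to an exhaustion $E_n=\{g_1,\dots,g_n\}$ for $(i)\Rightarrow(ii)$, where you correctly observe that the multiplicativity clause $\phi(xy)=\phi(x)\phi(y)$ for $x,y\in F$ in Definition of local embeddability already concerns the value at the product $xy$ even when $xy\notin F$, so no enlargement of $E_n$ is needed.
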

		
		Finitely generated LEF-groups can be characterized in the following way:
		
		\begin{prop}[\cite{gv98}, p.53]\label{prop: marked}
			\quad A group with a fixed finite generating set $\{g_1, \dots,g_n\}$ has the LEF-property \emph{if and only if} it is the limit of a sequence of finite groups in the space of marked groups. 
		\end{prop}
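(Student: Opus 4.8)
The plan is to reformulate both the LEF property and convergence in the space of marked groups in terms of a single fixed kernel, and then match the two finite-approximation conditions against one another. Fix the free group $\mathbb{F}_n$ on generators $x_1,\dots,x_n$ and let $\pi\colon\mathbb{F}_n\to G$ be the epimorphism $x_i\mapsto g_i$, with kernel $N=\ker\pi$; thus the marked group $(G,(g_1,\dots,g_n))$ \emph{is} $\mathbb{F}_n/N$. Recall that the space $\mathcal{M}_n$ of marked groups on $n$ generators is the set of normal subgroups of $\mathbb{F}_n$, viewed as a closed subset of $\{0,1\}^{\mathbb{F}_n}$ with the product topology, so that a sequence of marked groups $\mathbb{F}_n/N_k$ converges to $\mathbb{F}_n/N$ precisely when $\mathbf{1}_{N_k}\to\mathbf{1}_N$ pointwise, i.e. for every word $w\in\mathbb{F}_n$ one has $w\in N_k\Leftrightarrow w\in N$ for all $k$ large (depending on $w$). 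For a word $w$ and a tuple $(h_1,\dots,h_n)$ in a group $H$ I write $w(h_1,\dots,h_n)$ for the evaluation; note $w\in N$ iff $w(g_1,\dots,g_n)=1$ in $G$.

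For the direction ``LEF $\Rightarrow$ limit of finite groups'' I would fix an exhaustion $F_1\subseteq F_2\subseteq\cdots$ of $G$ by finite subsets with $\bigcup_k F_k=G$ and $\{1,g_1,\dots,g_n,g_1^{-1},\dots,g_n^{-1}\}\subseteq F_1$. For each $k$ the LEF property supplies a finite group $H_k$ and a map $\phi_k\colon G\to H_k$ that is multiplicative and injective on $F_k$; multiplicativity on a set containing $1$ and the inverses forces $\phi_k(1)=1$ and $\phi_k(g^{-1})=\phi_k(g)^{-1}$. Set $S_k=(\phi_k(g_1),\dots,\phi_k(g_n))$ and let $G_k=\langle S_k\rangle\leq H_k$, a finite marked group $\mathbb{F}_n/N_k$. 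The key observation is that for a fixed word $w$, as soon as $F_k$ contains the $\pi$-images of all prefixes of $w$ (in particular $\pi(w)$ itself), repeated use of multiplicativity yields $w(S_k)=\phi_k\big(w(g_1,\dots,g_n)\big)$. If $w\in N$ this equals $\phi_k(1)=1$, so $w\in N_k$; if $w\notin N$ then $w(g)\neq1$, and injectivity of $\phi_k$ on $F_k$ (which contains $1$ and $w(g)$) gives $\phi_k(w(g))\neq1$, so $w\notin N_k$. Hence $w\in N_k\Leftrightarrow w\in N$ for all large $k$, which is exactly pointwise convergence $N_k\to N$.

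For the converse I would start from finite marked groups $\mathbb{F}_n/N_k=(G_k,S_k)$ with $N_k\to N$, and produce, for an arbitrary finite $F\subseteq G$, a single finite group with a map that is multiplicative and injective on $F$. Choose for each element $g$ of the finite set $F\cup F\cdot F$ a word $w_g\in\mathbb{F}_n$ with $\pi(w_g)=g$, and let $R$ bound the lengths of these finitely many words together with the test words $w_x w_y^{-1}$ and $w_x w_y w_{xy}^{-1}$ for $x,y\in F$. Since only finitely many words of $\mathbb{F}_n$ have length at most $R$, pointwise convergence lets me fix one index $k$ with $u\in N_k\Leftrightarrow u\in N$ for every such $u$. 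Define $\phi\colon G\to G_k$ by $\phi(g)=w_g(S_k)$ for $g\in F\cup F\cdot F$ and arbitrarily elsewhere. If $x\neq y$ in $F$ then $w_x w_y^{-1}\notin N$, hence $\notin N_k$, so $\phi(x)\neq\phi(y)$; and for $x,y\in F$ the relation $w_x w_y w_{xy}^{-1}\in N$ passes to $N_k$, giving $\phi(x)\phi(y)=\phi(xy)$. Thus $G$ is LEF.

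The substance of the argument is entirely bookkeeping, and that is also where I expect the only real friction: one must line up the existential quantifier ``for every finite $F$'' in the definition of LEF with the ball/radius quantifier hidden in the topology on $\mathcal{M}_n$. Concretely, the delicate points are that in the forward direction the sets $F_k$ must be large enough to contain \emph{all} prefixes arising in the evaluation of a given word, so that multiplicativity may be invoked step by step, and that in the converse $R$ must control the products $w_x w_y w_{xy}^{-1}$ and not merely the $w_g$ themselves, so that multiplicativity — and not just injectivity — survives the passage to $G_k$. Once these containments are arranged, every verification collapses to the single identity $w(S_k)=\phi_k\big(w(g_1,\dots,g_n)\big)$ together with the definition of pointwise convergence, so no genuinely hard step remains.
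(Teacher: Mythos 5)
The paper states this proposition as a citation from \cite{gv98} and gives no proof of its own, so there is nothing to compare against; your argument is the standard one and is correct. Both directions are handled carefully — in particular you correctly isolate the two points where the bookkeeping could go wrong (the prefix-images must lie in $F_k$ so that multiplicativity can be iterated to get $w(S_k)=\phi_k(w(g_1,\dots,g_n))$, and the radius $R$ must control the relator words $w_xw_yw_{xy}^{-1}$, not just the $w_g$) — so the proof stands as written.
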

		If the sequence of maps $\{\pi_n \colon G \to F_n \}_{n \in \mathbb{N}}$ is made up of group homomorphism, condition (ii) results in the definition of \emph{residually finite groups} and thus residually finite groups form a subclass of LEF-groups. In particular free groups and profinite groups\footnote{A profinite group is a topological group that is inverse limit of a directed system of discrete finite groups.\\} are LEF-groups. In the inverse direction finite presentation is a sufficient condition:
		\begin{prop}[\cite{gv98}, p.58]\label{prop: finitely presented lef group is residually finite}
			Every finitely presented LEF-group is residually finite.
		\end{prop}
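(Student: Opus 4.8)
The plan is to exploit the finiteness of both the generating set and the relator set to upgrade a single ``local'' LEF-map into a genuine homomorphism onto a finite group. Fix a finite presentation $G = \langle s_1, \dots, s_n \mid r_1, \dots, r_m \rangle$ and let $g \in G \setminus \{1\}$ be arbitrary; the goal is to produce a homomorphism from $G$ to a finite group that does not kill $g$. Writing $g$ and each relator $r_j$ as words in the letters $s_1^{\pm 1}, \dots, s_n^{\pm 1}$, I would choose a finite subset $F \subset G$ containing the identity, all generators and their inverses, every prefix of the words representing $r_1, \dots, r_m$, and every prefix of the word representing $g$. By the LEF-property there is then a finite group $H$ and a map $\phi \colon G \to H$ that is injective on $F$ and satisfies $\phi(xy) = \phi(x)\phi(y)$ for all $x,y \in F$. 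Two elementary consequences are recorded first: since $1 \in F$, the element $\phi(1)$ is idempotent in the group $H$, hence $\phi(1) = 1_H$; and since $s_i, s_i^{-1} \in F$ one gets $\phi(s_i^{-1}) = \phi(s_i)^{-1}$ for every generator.

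The key step is to show that the assignment $s_i \mapsto \phi(s_i)$ extends to a group homomorphism $\bar\phi \colon G \to H$, for which it suffices to check that the images of the generators satisfy the defining relators. Because all prefixes of $r_j = t_1 \cdots t_{k_j}$ (with each $t_\ell$ a generator or its inverse) lie in $F$, folding multiplicativity in from the left gives $\phi(r_j) = \phi(t_1)\cdots \phi(t_{k_j})$; but $r_j = 1$ in $G$, so this product equals $\phi(1) = 1_H$. Thus each relator is satisfied by the tuple $(\phi(s_1), \dots, \phi(s_n))$, and the universal property of the presentation yields $\bar\phi$. This is exactly the juncture at which finite presentation is indispensable, and it is where I expect the conceptual content of the argument to reside: only finitely many relators, each a finite word, must be accommodated, so all of their prefixes fit into a single finite $F$; with infinitely many relations no such $F$ need exist, and the implication genuinely fails for general LEF-groups.

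Finally I would verify $\bar\phi(g) \neq 1_H$. Since all prefixes of the chosen word for $g$ lie in $F$, the same prefix computation shows $\phi(g) = \bar\phi(g)$, both being the corresponding product of generator-images. As $\phi$ is injective on $F$ and $g \neq 1$ with $g, 1 \in F$, it follows that $\bar\phi(g) = \phi(g) \neq \phi(1) = 1_H$. Hence $\bar\phi$ is a homomorphism to a finite group separating $g$ from the identity, and since $g$ was an arbitrary nontrivial element, $G$ is residually finite. The only remaining work is the routine bookkeeping that $F$ can indeed be taken to contain simultaneously the prefixes of all relators and of $g$ together with the generators and the identity, which is immediate from finiteness of the presentation.
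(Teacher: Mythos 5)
Your proof is correct and is essentially the standard argument from the cited source \cite{gv98} (the paper itself only states the result with a citation and gives no proof): a single LEF-map that is injective and multiplicative on a finite set containing the generators, the prefixes of the finitely many relators, and the prefixes of the chosen nontrivial element induces, via von Dyck's theorem, a genuine homomorphism onto a finite group separating that element from the identity. The bookkeeping you defer (that prefixes are taken as the group elements represented by the prefix words, so that $F\subset G$) is indeed routine.
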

		
		As a consequence we have:
		\begin{cor}[\cite{gv98}, Corollary~3]\label{cor: no infinite, finitely presented simple LEF-group}
			There exists no infinite, finitely presented, simple LEF-group.
		\end{cor}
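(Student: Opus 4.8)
The plan is to argue by contradiction and to reduce everything to the residual finiteness of finitely presented LEF-groups. So I would begin by supposing, for contradiction, that $G$ is an infinite, finitely presented, simple LEF-group. Since $G$ is in particular a finitely presented LEF-group, Proposition~\ref{prop: finitely presented lef group is residually finite} immediately yields that $G$ is residually finite. The whole point of citing this result is that it converts the combinatorial approximation property (LEF) together with finite presentability into the much more rigid statement that every nontrivial element is detected by a genuine finite \emph{quotient}, i.e.\ by a group homomorphism onto a finite group.

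Next I would exploit simplicity to show that residual finiteness forces $G$ to be finite. The key observation is that for any group homomorphism $\phi \colon G \to F$ the kernel $\ker \phi$ is a normal subgroup of $G$, and since $G$ is simple this kernel is either $\{1\}$ or all of $G$. Equivalently, every homomorphism out of a simple group is either trivial or injective.

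Now I would invoke residual finiteness concretely. As $G$ is infinite it contains some element $g \neq 1$, and residual finiteness provides a finite group $F$ and a homomorphism $\phi \colon G \to F$ with $\phi(g) \neq 1$. This $\phi$ is not the trivial homomorphism, so by the previous step it must be injective. But then $G$ embeds into the finite group $F$, which forces $G$ to be finite and contradicts the assumption that $G$ is infinite. This contradiction establishes Corollary~\ref{cor: no infinite, finitely presented simple LEF-group}.

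I expect no serious obstacle here: once Proposition~\ref{prop: finitely presented lef group is residually finite} is granted, the entire content is the elementary incompatibility of being simultaneously infinite, simple, and residually finite. The only point demanding (minor) care is the logical bookkeeping around residual finiteness -- one must extract a genuinely nontrivial finite quotient separating a fixed nontrivial element $g$, rather than merely asserting that finite quotients exist -- so that the dichotomy ``trivial or injective'' coming from simplicity can be applied to conclude that the separating homomorphism is an embedding into a finite group.
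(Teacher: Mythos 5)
Your proof is correct and follows exactly the route the paper intends: the corollary is stated as an immediate consequence of Proposition~\ref{prop: finitely presented lef group is residually finite}, and the remaining content is precisely the elementary incompatibility of being infinite, simple, and residually finite that you spell out. No discrepancies.
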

		
		\section{The Liouville property}
		
		The content of this section can be found in \cite{mb14a}. A poperty of finitely generated groups that lies ``between" subexponential growth and amenability is the Liouville property.
		\begin{defi}
			Let $G$ be a finitely generated group.
			\begin{enumerate}[(i)]
				\item Let $\mu$ be a probability measure. A function $f\colon G \to \mathbb{R}$ is said to be \emph{$\mu$-harmonic} if $f(g)=\sum_{h \in G} f(gh)\mu(h)$ for every $g \in G$.\footnote{This condition is also written as $f=f \ast \mu$. The values of $f$ are in some sense given by a weighted average.\\}
				
				\item Let $\mu$ be a probability measure on $G$. The pair $(G, \mu)$ is said to have the \emph{Liouville property} if every $\mu$-harmonic function is constant on the subgroup $\langle \supp(\mu) \rangle \leq G$.
				
				\item The group $G$ is said to have the \emph{Liouville property} if for every symmetric, finitely supported probability measure $\mu$ on $G$ the pair $(G,\mu)$ has the Liouville property.
			\end{enumerate}
		\end{defi}
		
		\begin{thm}\label{thm: liou}
			Every finitely generated group of subexponential growth has the Liouville property and every finitely generated group with the Liouville property is amenable.
		\end{thm}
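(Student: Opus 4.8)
The plan is to route both implications through the \emph{Avez entropy} of a random walk and to combine it with two classical criteria: the entropy characterization of the Liouville property (Theorem~\ref{thm: charac.liouville}, due to Kaimanovich--Vershik and Derriennic), which asserts that for a symmetric, finitely supported, generating probability measure $\mu$ the pair $(G,\mu)$ is Liouville \emph{if and only if} the asymptotic entropy $h(\mu)$ vanishes, together with Kesten's amenability criterion, which asserts that $G$ is amenable \emph{if and only if} the spectral radius $\rho(\mu)$ of the convolution operator $M_\mu$ on $\ell^2(G)$ equals $1$. Recall that $h(\mu):=\lim_{n\to\infty}\tfrac1n H(\mu^{*n})$ exists by Fekete's lemma, since $H(\mu^{*(m+n)})\le H(\mu^{*m})+H(\mu^{*n})$ by subadditivity of Shannon entropy under convolution.

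First I would treat the implication ``subexponential growth $\Rightarrow$ Liouville''. Fix an arbitrary symmetric, finitely supported probability measure $\mu$ on $G$ and set $R:=\max\{\mathcal L_{G,S}(g)\mid g\in\supp(\mu)\}$. Then $\supp(\mu^{*n})$ is contained in the ball of radius $Rn$ about the identity, so its cardinality is at most $\gamma_G(Rn)$, and since entropy is bounded by the logarithm of the size of the support,
\begin{equation*}
H(\mu^{*n})\le \log\lvert\supp(\mu^{*n})\rvert\le \log\gamma_G(Rn).
\end{equation*}
Dividing by $n$ and letting $n\to\infty$, the hypothesis $\tfrac1n\log\gamma_G(n)\to 0$ of subexponential growth forces $h(\mu)=0$. (A finitely generated subgroup of a group of subexponential growth again has subexponential growth, as its balls embed into the ambient balls, so the bound applies on $\langle\supp(\mu)\rangle$.) By Theorem~\ref{thm: charac.liouville} the pair $(G,\mu)$ is Liouville, and as $\mu$ was arbitrary, $G$ has the Liouville property.

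For the implication ``Liouville $\Rightarrow$ amenable'' I would argue by contraposition. Suppose $G$ is not amenable and fix a symmetric, finitely supported, generating $\mu$; by Kesten's criterion $\rho:=\rho(\mu)<1$. Since $\mu$ is symmetric, $M_\mu$ is a self-adjoint operator of norm $\rho$ on $\ell^2(G)$, and writing $\mu^{*2n}(e)=\langle M_\mu^{2n}\delta_e,\delta_e\rangle=\lVert M_\mu^{n}\delta_e\rVert_2^2\le\lVert M_\mu^{n}\rVert^2=\rho^{2n}$ bounds the return probability from above. As $\delta_e$ is the mode of the symmetric distribution $\mu^{*2n}$ (by Cauchy--Schwarz, $\mu^{*2n}(g)\le\lVert\mu^{*n}\rVert_2^2=\mu^{*2n}(e)$), we have $\lVert\mu^{*2n}\rVert_\infty=\mu^{*2n}(e)$ and hence
\begin{equation*}
H(\mu^{*2n})\ge -\log\lVert\mu^{*2n}\rVert_\infty=-\log\mu^{*2n}(e)\ge 2n\log(1/\rho).
\end{equation*}
Thus $h(\mu)\ge\log(1/\rho)>0$, so by Theorem~\ref{thm: charac.liouville} the pair $(G,\mu)$ is not Liouville and a fortiori $G$ does not have the Liouville property, which is the required contrapositive.

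The estimates above---the support/entropy inequality, the operator-norm bound on return probabilities, and the identification of $e$ as the mode---are elementary; the genuine content, and the step I expect to be the main obstacle, is the entropy characterization of the Liouville property in Theorem~\ref{thm: charac.liouville}. Proving that vanishing asymptotic entropy is equivalent to triviality of the space of bounded $\mu$-harmonic functions requires the full machinery of the Poisson boundary and the conditional-entropy/tail-field arguments of Kaimanovich--Vershik and Derriennic. In an exposition at this level the plan is therefore to invoke this theorem as a black box (alongside Kesten's criterion and Fekete's lemma) and to supply only the two short growth- and spectral-radius estimates that connect it to subexponential growth and to amenability, respectively.
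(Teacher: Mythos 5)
Your argument is correct: both entropy estimates (the support bound $H(\mu^{*n})\le\log\gamma_G(Rn)$ and the lower bound $H(\mu^{*2n})\ge-\log\mu^{*2n}(e)\ge 2n\log(1/\rho)$ via Kesten and the self-adjointness of the convolution operator) are sound, and routing everything through the entropy characterization of Theorem~\ref{thm: charac.liouville} is exactly the standard proof. The paper itself states this theorem without proof, citing the literature, so there is no in-text argument to compare against; your sketch is the canonical one and is consistent with the surrounding appendix material (in particular it correctly treats non-generating $\mu$ in the first implication, where the ambient ball bound already suffices).
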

		
		A fundamental criterion in proving the Liouville property is the study of random walks on groups:
		
		\begin{defi}\label{defi: entropy}
			Let $G$ be an countable, infinite, discrete group and let $\mu$ be a probability measure on $G$.
			\begin{enumerate}[(i)]
				\item The \emph{right random walk on $(G,\mu)$} is the Markov chain with state space $G$ and transition probabilities $p(g,h):=\mu(h^{-1}g)$ for every $g,h \in G$.
				
				\item The \emph{entropy of the probability measure $\mu$} is the number
				\begin{equation*}
				H(\mu):=-\sum_{g \in \supp (\mu)} \mu(g) \log \mu (g)
				\end{equation*}
				
				\item The \emph{entropy of the random walk on $(G,\mu)$} is the number
				\begin{equation*}
				h(G,\mu):= \lim\limits_{n \to \infty} \frac{1}{n} H(\mu^{*n})
				\end{equation*}
			\end{enumerate}
		\end{defi}
		
		The Liouville property can be characterized in terms of the random walk entropy:
		
		\begin{thm}\label{thm: charac.liouville}
			Let $G$ be an countable, infinite, discrete group and let $\mu$ be a probability measure on $G$. The $(G,\mu)$ has the Liouville property \emph{if and only if} $h(G,\mu)=0$.
		\end{thm}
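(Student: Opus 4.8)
The plan is to route both conditions through a single intermediary: the triviality of the \emph{tail $\sigma$-algebra} $\mathcal{T}$ of the random walk, equivalently the triviality of the Poisson boundary. I would show that the Liouville property is equivalent to $\mathcal{T}$ being trivial, and that $h(G,\mu)=0$ is equivalent to $\mathcal{T}$ being trivial; the theorem then follows. Throughout I assume $H(\mu)<\infty$, so that the $H(\mu^{\ast n})$ are finite; then $n\mapsto H(\mu^{\ast n})$ is subadditive (since $\mu^{\ast(m+n)}=\mu^{\ast m}\ast\mu^{\ast n}$ and entropy is subadditive under convolution), so by Fekete's lemma $h=\lim_n \tfrac1n H(\mu^{\ast n})=\inf_n \tfrac1n H(\mu^{\ast n})$ exists. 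I also note that the pertinent harmonic functions are the \emph{bounded} ones (the unbounded case is excluded already by examples such as $\mathbb{Z}^d$), so that ``Liouville property'' here reads: every bounded $\mu$-harmonic function is constant on $\langle\supp(\mu)\rangle$.

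First I would establish the boundary correspondence. Realize the walk as $X_n=g_1g_2\cdots g_n$ with $g_i$ i.i.d.\ of law $\mu$ on path space $\Omega=G^{\mathbb{N}}$ with measure $\mathbb{P}$, and let $\mathcal{T}=\bigcap_n\sigma(X_n,X_{n+1},\dots)$ be the tail $\sigma$-algebra, whose space of ergodic components is the Poisson boundary $(B,\nu)$. The Poisson transform $f\mapsto \big(g\mapsto \int_B f\,\mathrm{d}(g\nu)\big)$ is an isometric bijection from $L^\infty(B,\nu)$ onto the space of bounded $\mu$-harmonic functions; hence every bounded harmonic function is constant precisely when $L^\infty(B,\nu)=\mathbb{C}$, i.e.\ when $(B,\nu)$, equivalently $\mathcal{T}$, is trivial. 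This is the classical Furstenberg--Poisson representation, which I would cite rather than reprove.

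The crux is the entropy criterion, for which I would derive the identity $h=I(g_1;\mathcal{T})$, with $I$ denoting mutual information. Writing $d_n=H(X_n)-H(X_{n-1})$ (so that $H(X_n)=H(\mu^{\ast n})$), translation invariance of entropy gives $H(X_n\mid g_1)=H(X_{n-1})$, whence $d_n=I(g_1;X_n)$; since $g_1\to X_n\to X_{n+1}$ is a Markov chain, the data-processing inequality makes $(d_n)$ nonincreasing, so the Cesàro limit yields $h=\lim_n d_n=\lim_n I(g_1;X_n)$. Because the extra data $g_{n+1},g_{n+2},\dots$ is independent of the pair $(g_1,X_n)$, one has $I(g_1;X_n)=I(g_1;\mathcal{F}_n)$ for $\mathcal{F}_n=\sigma(X_n,X_{n+1},\dots)$, and as $\mathcal{F}_n\downarrow\mathcal{T}$ the decreasing martingale convergence theorem gives $h=I(g_1;\mathcal{T})$. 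If $\mathcal{T}$ is trivial then $I(g_1;\mathcal{T})=0$, so $h=0$. Conversely, if $h=0$ then $g_1$ is independent of $\mathcal{T}$; applying the same identity to the shifted walk shows each increment $g_k$ is independent of $\mathcal{T}$, so $\mathcal{T}$ is independent of $\sigma(g_1,g_2,\dots)$, and since $\mathcal{T}\subseteq\sigma(g_1,g_2,\dots)$ it is independent of itself and hence trivial. Combined with the boundary correspondence, this closes the chain of equivalences.

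I expect the main obstacle to be the rigorous limit interchange $\lim_n I(g_1;X_n)=I(g_1;\mathcal{T})$: one must verify that the auxiliary increments may be adjoined without changing the mutual information and then invoke decreasing martingale convergence for conditional entropies, all while keeping the finite-entropy hypothesis in force so that no $\infty-\infty$ ambiguities arise. A secondary technical point is the upgrade from ``$g_1$ independent of $\mathcal{T}$'' to ``$\mathcal{T}$ trivial,'' which rests on stationarity of the increment process together with the fact that the tail is a measurable function of the increments.
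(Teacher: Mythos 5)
The paper itself states this theorem without proof, as background material (it is the classical entropy criterion of Derriennic and Kaimanovich--Vershik), so there is no internal argument to compare against; your proposal reconstructs the standard proof, and most of it is sound. Two preliminary remarks. First, you are right to add the hypothesis $H(\mu)<\infty$: as stated in the paper the theorem is imprecise, since without finite entropy the quantity $h(G,\mu)$ is not well defined and the criterion can fail. Second, the Furstenberg--Poisson correspondence identifies bounded harmonic functions with $L^\infty$ of the \emph{invariant} (shift-invariant) $\sigma$-algebra rather than the tail; for random walks on groups these coincide mod $0$ (Kaimanovich--Vershik, via the $0$--$2$ law), but since your entropy computation lives on the tail this identification needs to be invoked explicitly.

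The genuine gap is in the last step. From $h=0$ you conclude that each increment $g_k$ is independent of $\mathcal{T}$ and then assert that therefore $\mathcal{T}$ is independent of $\sigma(g_1,g_2,\dots)$. This inference is false in general: if $X,Y$ are independent fair coin flips and $Z=X+Y\bmod 2$, then $Z$ is independent of $X$ and of $Y$ separately, yet $Z$ is a function of $(X,Y)$. Individual independence of each $g_k$ from $\mathcal{T}$ does not yield joint independence, and stationarity does not rescue the step. The correct route, which is a direct extension of the computation you already carried out for $k=1$, is to show $I\big((g_1,\dots,g_k);\mathcal{T}\big)=k\,h$ for every $k$, e.g.\ via $H(X_1,\dots,X_k\mid X_n)=kH(\mu)+H(\mu^{\ast(n-k)})-H(\mu^{\ast n})\to kH(\mu)-kh$ as $n \to \infty$. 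Then $h=0$ forces the \emph{joint} law of $(g_1,\dots,g_k)$ to be independent of $\mathcal{T}$ for all $k$, hence $\sigma(g_1,g_2,\dots)$ is independent of $\mathcal{T}$; since $\mathcal{T}\subseteq\sigma(g_1,g_2,\dots)$, the tail is independent of itself and therefore trivial. With that repair, and the finite-entropy hypothesis kept explicit, the argument is complete.
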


		\section{Higman-Thompson groups}\label{app: higman-thompson groups}
		
		In 1965 Richard J. Thompson introduced a triple of infinite, finitely presented groups $F \subseteq T \subseteq G$. The group $F$, now called \emph{Thompson's group}, has been a historically first candidate for a possible counterexample to the von-Neumann conjecture i.e. for a non-amenable, finitely generated group that contains no free subgroup. It has since gained notoriety for defying any proof of non-amenability or amenability. The groups $T$ and $G$ gave rise to first examples of infinite, finitely presented, simple groups. Later on manifestations of this groups arose in quite diverse contexts. The \emph{Higman-Thompson groups $G_{n,r}$} are generalizations of the group $G=G_{2,1}$ considered by Higman in \cite{hig74}. The Higman-Thompson group $G_{n,r}$ arises as the automorphism group of the \emph{free Jónsson–Tarski algebra of type $n$ on $r$ generators}. We follow \cite{sco84}, \cite{bro87} and \cite{par11}:
		
		Let $n \in \mathbb{N}\setminus \{1 \}$ and $r \in \mathbb{N}$. Let $\mathcal{A}_n$ be the alphabet given by $\{1,\dots,n \}$ and let $\mathcal{A}_n^*$ denote the free monoid generated by $\mathcal{A}_n$ i.e. the set  of finite (possibly empty) words with cocatenation as operation. Let $X_r=\{x_i\}_{i \in \{1,\dots,r \} }$ be a finite set. Denote by $X_r\mathcal{A}_n^*$ the set of finite words of the form $x_iw$ where $x_i \in X_r$ and $w \in \mathcal{A}_n^*$. This structure can be represented by a labelled forest $\mathcal{F}_{n,r}$ consisting of $r$ ordered, infinite, complete, $n$-ary, rooted trees $\{T_i\}_{i \in \{1,\dots,r \} }$. A tree $T_i$ is labelled by labelling its root with $x_i$ and recursively labelling the children of a node $x_iw$ by $x_iw1, \dots, x_iwn$ with the order inherited from the natural order of $\{1,\dots,n \}$. Let $a,b \in X_r\mathcal{A}_n^*$. Define $a \leq b$ to mean there exists an $w \in \mathcal{A}_n^*$ such that $b=aw$ In the tree picture this is equivalent to $b$ being a descendant of $a$. This induces a partial order on $X_r\mathcal{A}_n^*$. A subset $S \subseteq X_r\mathcal{A}_n^*$ is called \emph{independent} if all elements in $S$ are pairwise incomparable with respect to $\leq$. A non-empty subset $V \subseteq X_r\mathcal{A}_n^*$ is called a \emph{subspace of $X_r\mathcal{A}_n^*$} if it is closed with respect to right cocatenation by elements in $\mathcal{A}_n^*$.  Let $V$ be a subspace of $X_r\mathcal{A}_n^*$. It is called \emph{cofinite} if $|X_r\mathcal{A}_n^* \setminus V|$ is finite.  A subset $B \subseteq V$ is called a \emph{basis of $V$} if it is independent and $V=B\mathcal{A}_n^*$ i.e. $V$ is the upward closure of $B$ with respect to $\leq$. Note that every subspace $V \subseteq X_r\mathcal{A}_n^*$ has a canonical basis consisiting of the elements which are minimal with respect to $\leq$. A subset $B \subseteq X_r\mathcal{A}_n^*$ is a \emph{basis} if it is basis of some subspace $W$ of $X_r\mathcal{A}_n^*$. A basis $B$ is called \emph{cofinite} if the subspace $B\mathcal{A}_n^*$ is cofinite. Note that a basis is cofinite \emph{if and only if} it is finite and maximal. Moreover every finite basis is contained in some cofinite basis. Let $B$ be a basis and let $b \in B$. The set $(B \setminus \{b\}) \cup \{b1,\dots,bn \}$ is a basis called a \emph{simple expansion of $B$}. A basis $C$ that is obtained by a finite chain of simple expansions from $B$ is called an \emph{expansion of $B$}. In the tree picture a subspace $V$ consist of a family $\{T_a\}_{a \in A}$ of infinite complete subtrees with roots $a$ where $A$ corresponds to the canonical basis of $V$. Let $V,U$ be subspaces of $X_r\mathcal{A}_n^*$. A map $ \alpha \colon V \to U$ is called a \emph{homomorphism of subspaces} if $\alpha(vw)=\alpha(v)w$ holds for all $v\in V$ and for all $w \in \mathcal{A}_n^*$. In the tree picture it is a map between disjoint families of infinite complete subtrees $\alpha \colon \bigsqcup_{a \in A} T_a \to \bigsqcup_{b \in B} T_b$ with respective indpendents subsets $A,B \subset X_r\mathcal{A}_n^*$ of roots such that the ordered rooted tree structure of trees $T_a$ in the domain is preserved. If in addition a homomorphism of subspaces is bijective, it is called an \emph{isomorphism of subspaces}. An isomorphism of subspaces $ \alpha \colon V \to W$ between cofinite subspaces $V,W$, is called a \emph{cofinite isomorphism}. Let $ \alpha \colon V \to W$ be a cofinite isomorphism. A cofinite isomorphism $\beta \colon \tilde{V} \to\tilde{W}$ is called an \emph{extension of $\alpha$} if $V \subseteq \tilde{V}$ and $\beta(v)=\alpha(v)$ for every $v \in V$. A cofinite isomorphism which has no non-trivial extensions is called \emph{maximal}.
		\begin{ex}
			Let $r,n=2$. Let $\alpha$ be the cofinite isomorphism given by $T_{x_1i} \mapsto T_{x_2i}$ and $T_{x_2i} \mapsto T_{x_1i}$ for $i \in \{1,2\}$. Then the cofinite isomorphism $\tilde{\alpha}$ given by $T_{x_1} \mapsto T_{x_2}$ and $T_{x_2} \mapsto T_{x_1}$ is an extension of $\alpha$.
		\end{ex}
		
		\begin{lem}[\cite{sco84}, Lemma~1]
			Let $n \in \mathbb{N}\setminus \{1 \}$ and $r \in \mathbb{N}$. Every cofinite isomorphism $\alpha$ of subspaces of $X_r\mathcal{A}_n^*$ has a unique maximal extension.
		\end{lem}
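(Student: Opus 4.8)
The plan is to show that the collection of all extensions of $\alpha$, partially ordered by the relation ``is extended by'', is upward directed and of bounded height, so that it possesses a greatest element; this greatest element is then automatically the unique maximal extension. Throughout I will use three elementary facts about $X_r\mathcal{A}_n^*$: a subspace is determined by its canonical basis of $\leq$-minimal elements; the domain $\tilde V$ of any extension of $\alpha\colon V\to W$ satisfies $\tilde V\supseteq V$, so its complement is contained in the \emph{finite} set $X_r\mathcal{A}_n^*\setminus V$; and for every $v$ lying in a cofinite subspace there is a word $w\in\mathcal{A}_n^*$ with $vw\in V$ (because $v\mathcal{A}_n^*\setminus V$ is finite while $v\mathcal{A}_n^*$ is infinite). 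Combined with right cancellativity in the free monoid $\mathcal{A}_n^*$, these let me ``descend'' any comparison to the common domain $V$, where $\alpha$ governs everything.

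The technical heart is a gluing lemma: any two extensions $\beta_1\colon\tilde V_1\to\tilde W_1$ and $\beta_2\colon\tilde V_2\to\tilde W_2$ of $\alpha$ admit a common extension. I would define $\delta$ on the union $\tilde V_1\cup\tilde V_2$ (a subspace, being a union of subspaces) by $\delta|_{\tilde V_i}=\beta_i$. To see $\delta$ is well defined I take $v\in\tilde V_1\cap\tilde V_2$, choose $w$ with $vw\in V$, and compute $\beta_1(v)w=\beta_1(vw)=\alpha(vw)=\beta_2(vw)=\beta_2(v)w$, whence $\beta_1(v)=\beta_2(v)$ by cancellation. Injectivity of $\delta$ is obtained the same way: if $\delta(u)=\delta(v)$, pick $w$ with $uw,vw\in V$, so $\alpha(uw)=\delta(u)w=\delta(v)w=\alpha(vw)$ and hence $uw=vw$, giving $u=v$. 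Since $\delta$ is visibly a surjective subspace-homomorphism onto the cofinite subspace $\tilde W_1\cup\tilde W_2$, it is the desired common extension. I expect verifying well-definedness and injectivity of this $\delta$ to be the main obstacle, as it is the one place where all the hypotheses (cofiniteness, the descent to $V$, and cancellation) are genuinely used.

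With the gluing lemma in hand the conclusion is short. Among all extensions of $\alpha$ I choose one, say $\gamma$, whose domain has complement of minimal cardinality; this is possible because every such complement is a subset of the fixed finite set $X_r\mathcal{A}_n^*\setminus V$, and the family is nonempty (it contains $\alpha$). Then $\gamma$ is maximal, since a proper extension of $\gamma$ would have a strictly smaller complement, contradicting minimality. For uniqueness, let $\beta$ be any extension and let $\delta$ be a common extension of $\gamma$ and $\beta$ furnished by the gluing lemma; as $\delta$ extends $\gamma$, its complement is contained in that of $\gamma$, so minimality forces $\delta$ and $\gamma$ to have the same domain and hence $\delta=\gamma$. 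Because $\delta$ also extends $\beta$, we conclude that $\gamma$ extends $\beta$. Thus $\gamma$ extends \emph{every} extension of $\alpha$, so any maximal extension must coincide with $\gamma$, establishing both existence and uniqueness.
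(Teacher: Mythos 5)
Your argument is correct. Note that the paper itself gives no proof of this lemma --- it is quoted verbatim from Scott's paper as Lemma~1 --- so there is nothing internal to compare against; the classical route (Scott, Higman) runs through the table description, obtaining the maximal extension by repeatedly contracting columns of the form $\bigl(\begin{smallmatrix} b1 & \dots & bn \\ c1 & \dots & cn\end{smallmatrix}\bigr)$ to $\bigl(\begin{smallmatrix} b \\ c\end{smallmatrix}\bigr)$ and proving confluence of these moves, whereas you work directly with the partial order on extensions. Your gluing lemma is sound: the existence of $w$ with $vw\in V$ follows from cofiniteness of $V$ together with the infinitude of $v\mathcal{A}_n^*$ (here $n\geq 2$ is not even needed, only that $\mathcal{A}_n^*$ is infinite), right cancellation holds in $X_r\mathcal{A}_n^*$ because it holds in the free monoid, and in the injectivity step a single $w$ working for both $u$ and $v$ exists since each of the sets $\{w : uw\notin V\}$ and $\{w : vw\notin V\}$ is finite. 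The finiteness of $X_r\mathcal{A}_n^*\setminus V$ then bounds all complements of domains of extensions, so picking $\gamma$ with complement of minimal cardinality and gluing it against an arbitrary extension $\beta$ shows $\gamma$ is the greatest element of the poset of extensions, which gives existence and uniqueness of the maximal extension simultaneously. The one point worth making explicit is that an extension with the same domain as $\alpha$ must equal $\alpha$ (since extensions agree on the smaller domain by definition), so "proper extension" really does mean "strictly larger domain"; with that observed, the minimality argument closes without gaps.
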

		
		Denote the unique maximal extension of a cofinite isomorphism $\alpha$ of subspaces of $X_r\mathcal{A}_n^*$ by $\alpha^*$. Let $ \alpha \colon V \to W$ and $\beta \colon \tilde{V} \to \tilde{W}$ be cofinite isomorphisms of subspaces of $X_r\mathcal{A}_n^*$. Then
		
		\begin{equation*}
		\alpha \odot \beta:=(\beta|_{W \cap \tilde{V}} \circ \alpha|_{\alpha^{-1}(W \cap \tilde{V})})^*
		\end{equation*}
		is a well-defined cofinite isomorphism of subspaces of $X_r\mathcal{A}_n^*$.
		
		\begin{lem}[\cite{sco84}, Lemma~2]
			Let $n \in \mathbb{N}\setminus \{1 \}$ and $r \in \mathbb{N}$. The set of maximal cofinite isomorphisms of subspaces of $X_r\mathcal{A}_n^*$ forms a group with respect to $\odot$.
		\end{lem}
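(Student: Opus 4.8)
\begin{proofsketch}
The plan is to verify the four group axioms for $\odot$ on the set $\mathcal{M}$ of maximal cofinite isomorphisms of subspaces of $X_r\mathcal{A}_n^*$, funneling the bulk of the work into one auxiliary observation about maximal extensions. First I would record three stability facts for cofinite subspaces: the intersection of two cofinite subspaces is again a cofinite subspace (its complement is a union of two finite sets, hence finite, so the intersection is nonempty and closed under right concatenation); the preimage $\alpha^{-1}(U)$ of a cofinite subspace $U\subseteq W$ under a cofinite isomorphism $\alpha\colon V\to W$ is a cofinite subspace of $V$ (since $\alpha$ preserves right concatenation and is a bijection onto a cofinite set); and the image of a cofinite subspace under a cofinite isomorphism is cofinite. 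Granting these, the map $\beta|_{W\cap\tilde V}\circ\alpha|_{\alpha^{-1}(W\cap\tilde V)}$ appearing in the definition of $\odot$ is a composition of homomorphisms of subspaces restricted to a common cofinite subdomain, hence is itself a cofinite isomorphism; its unique maximal extension exists by the preceding lemma, and being a maximal extension it is maximal. This yields closure: $\odot$ maps $\mathcal{M}\times\mathcal{M}$ into $\mathcal{M}$.

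The technical heart is the following \emph{restriction invariance}: if $\alpha\colon V\to W$ is a cofinite isomorphism and $V'\subseteq V$ is a cofinite subspace, then $(\alpha|_{V'})^*=\alpha^*$. This is immediate from uniqueness of maximal extensions, since $\alpha^*$ is itself a maximal extension of the cofinite isomorphism $\alpha|_{V'}$. In particular, two cofinite isomorphisms that agree on a common cofinite subspace have the same maximal extension. With this in hand the identity and inverse axioms are short. The identity $\operatorname{id}_{X_r\mathcal{A}_n^*}$ is a maximal cofinite isomorphism, and for $\alpha\in\mathcal{M}$ the element $\alpha\odot\operatorname{id}$ is the maximal extension of $\alpha$ restricted to a cofinite subspace, which equals $\alpha$ because $\alpha$ is already maximal; the computation of $\operatorname{id}\odot\alpha$ is symmetric. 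For inverses, the set-theoretic inverse $\alpha^{-1}\colon W\to V$ is a cofinite isomorphism, and setting $\bar\alpha:=(\alpha^{-1})^*$ the composite $\bar\alpha\circ\alpha$ agrees with the identity on a cofinite subspace, so restriction invariance gives $\alpha\odot\bar\alpha=\operatorname{id}_{X_r\mathcal{A}_n^*}$, and likewise on the other side.

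The main obstacle is associativity, and the strategy is to show that both bracketings collapse to the maximal extension of the honest triple composite. Given $\alpha,\beta,\gamma\in\mathcal{M}$ with domains $V_1,V_2,V_3$ and codomains $W_1,W_2,W_3$, I would set
\[
D:=\alpha^{-1}\bigl(W_1\cap V_2\cap\beta^{-1}(W_2\cap V_3)\bigr),
\]
which is a cofinite subspace by the stability facts, and let $\psi:=(\gamma\circ\beta\circ\alpha)|_D$, a cofinite isomorphism. Since $\alpha\odot\beta$ agrees with $\beta\circ\alpha$ on a cofinite subspace, the composite $\gamma\circ(\alpha\odot\beta)$ agrees with $\psi$ on a cofinite subspace; restriction invariance then gives $(\alpha\odot\beta)\odot\gamma=\psi^*$. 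Running the same argument on the other grouping yields $\alpha\odot(\beta\odot\gamma)=\psi^*$, so the two coincide. The only delicate point is the cofiniteness bookkeeping, namely checking that every overlap subspace entering these comparisons is genuinely cofinite so that restriction invariance applies; I expect this to be routine but it is where care is needed.
\end{proofsketch}
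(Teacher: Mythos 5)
Your proof is correct, and it follows the same route as the argument in Scott's paper that this lemma is quoted from (the text here states the lemma without proof): everything reduces to the uniqueness of maximal extensions, via the observation that two cofinite isomorphisms agreeing on a common cofinite subspace have the same maximal extension, which handles identity, inverses, and the collapse of both bracketings onto $\bigl((\gamma\circ\beta\circ\alpha)|_D\bigr)^*$ for associativity. The cofiniteness bookkeeping you flag is indeed routine, since complements of cofinite subspaces are finite and both images and preimages of cofinite subspaces under cofinite isomorphisms differ from the ambient space by finite sets.
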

		
		\begin{defi}
			The \emph{Higman-Thompson group $(G_{n,r},\odot)$} is the group of maximal cofinite isomorphisms of subspaces of $X_r\mathcal{A}_n^*$.
		\end{defi}
		
		It is immediate that the groups $G_{n,r}$ admit a natural action on a Cantor space by their natural action on the pathspace of the forest $\mathcal{F}_{n,r}$, furthermore cofinite bases allow for a description of $G_{n,r}$ by \emph{tables}, which will be needed to state Theorem~\ref{thm: unitary representation of thompson-higman groups in Cuntz-algebras}. Let $B= \{b_1,\dots,b_N\}$ and $C=\{c_1,\dots,c_N\}$ be cofinite bases of the same cardinality. Then any bijection $\beta$ between those bases extends naturally to a cofinite isomorphism of subspaces between $B\mathcal{A}_n^*$ and $C\mathcal{A}_n^*$, thus inducing a unique element in $G_{n,r}$ denoted by
		\begin{equation*}
		\begin{pmatrix}
		b_1 & \dots & b_N\\
		\beta(b_1) & \dots & \beta(b_N)
		\end{pmatrix}
		\end{equation*}
		Conversely, for every $g \in G_{n,r}$ there exists a $N \in \mathbb{N}$ and cofinite bases $B= \{b_1,\dots,b_N\}$ and $C=\{c_1,\dots,c_N\}$ such that
		\begin{equation*}
		g=
		\begin{pmatrix}
		b_1 & \dots & b_N\\
		c_1 & \dots & c_N
		\end{pmatrix}
		\end{equation*}
		
		If a table is obtained from another by permutation of columns, they are said to be \emph{equivalent}. If a table is obtained from another by replacing a column
		\begin{equation*}
		\begin{pmatrix}
		b_i\\
		c_i
		\end{pmatrix}
		\quad \text{with} \quad
		\begin{pmatrix}
		b_i1 & \dots & b_i n\\
		c_i1 & \dots & c_i n 
		\end{pmatrix}
		\end{equation*}
		we call it a \emph{simple expansion} of the initial table.  If a table is obtained from another by a finite string of simple expansions it is called an \emph{expansion} of the initial table.
		\begin{lem}
			Two tables have expansions that are equivalent \emph{if and only if} they induce the same element of $G_{n,r}$.
		\end{lem}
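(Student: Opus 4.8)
The plan is to prove the two implications separately, with the forward direction (equivalent expansions give the same element) being routine and the converse (same element forces equivalent expansions) carrying the combinatorial weight. I would begin by isolating the two elementary invariance facts on which the easy direction rests. A simple expansion does not change the induced element of $G_{n,r}$: replacing a column $\binom{b_i}{c_i}$ by $\binom{b_i1\ \cdots\ b_in}{c_i1\ \cdots\ c_in}$ merely re-expresses the subspace isomorphism $b_iw\mapsto c_iw$ on the finer basis $\{b_i1,\dots,b_in\}$ of $b_i\mathcal{A}_n^*$, since $b_ijw'\mapsto c_ijw'$ is exactly its restriction; hence the maximal extension is unchanged. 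Likewise, permuting columns leaves the underlying bijection of bases untouched, so it preserves the induced isomorphism of subspaces and its maximal extension. Iterating, every expansion and every column permutation preserves the induced element, so if $T_1$ and $T_2$ admit equivalent expansions $T_1'$ and $T_2'$, then $T_1,T_1',T_2',T_2$ all induce the same element.

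For the converse, suppose $T_1=\binom{B_1}{C_1}$ and $T_2=\binom{B_2}{C_2}$ both induce $g\in G_{n,r}$. The key lemma I would establish is that any two cofinite bases admit a common expansion. Here $B_1\mathcal{A}_n^*\cap B_2\mathcal{A}_n^*$ is again a subspace (it is closed under right concatenation) and cofinite (an intersection of cofinite subspaces), so its canonical basis $D$, namely its $\leq$-minimal elements, is a cofinite basis with $D\mathcal{A}_n^*\subseteq B_i\mathcal{A}_n^*$ for $i=1,2$. I would then use the standard tree-combinatorial fact that such a refinement is reachable from $B_i$ by a finite chain of simple expansions: replacing any element of $B_i$ that is a proper prefix of some member of $D$ by its $n$ children is a simple expansion, and since $b\notin D\mathcal{A}_n^*$ this reduces the finite gap $|B_i\mathcal{A}_n^*\setminus D\mathcal{A}_n^*|$ by one, so induction terminates at $D$.

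Expanding $T_1$ and $T_2$ along $D$ then produces tables $T_1'=\binom{D}{E_1}$ and $T_2'=\binom{D}{E_2}$ with common top basis $D$, both still inducing $g$ by the invariance recorded above. It remains to see that $T_1'$ and $T_2'$ are equivalent, and here I would use that $g$, being the maximal extension of an isomorphism of subspaces, is itself a homomorphism of subspaces, i.e. $g(dw)=g(d)w$ for every $d\in D$ and $w\in\mathcal{A}_n^*$ (legitimate since $D\mathcal{A}_n^*\subseteq B_1\mathcal{A}_n^*$ lies in the domain of $g$). For each $d\in D$, the column of $T_i'$ with top entry $d$ induces $dw\mapsto e\,w$ for its bottom entry $e$, forcing $e=g(d)$ for both $i=1,2$. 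Therefore $E_1=E_2=\{g(d):d\in D\}$ and both tables have exactly the column set $\{\binom{d}{g(d)}:d\in D\}$, so they differ only in the order in which the elements of $D$ are listed, i.e. they are equivalent.

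The main obstacle is the common-expansion lemma together with the claim that cut-refinement is realized by simple expansions; these are the genuinely tree-theoretic inputs, and care is needed in the termination argument (choosing an element of $B_i$ that is a \emph{proper} prefix of some $d\in D$ and checking that the gap strictly decreases). Once these facts are in hand, the identification of the codomain entries as $g(d)$ is immediate from the subspace-homomorphism property, and both directions of the equivalence follow.
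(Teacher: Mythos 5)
The paper states this lemma without proof (it is quoted as a standard fact from the Higman--Scott--Brown circle of ideas), so there is no argument of the paper's to compare yours against; judged on its own, your proof is correct and complete. Both halves are sound: the invariance of the induced element under simple expansion follows, as you say, from the uniqueness of maximal extensions (a maximal extension of a map is a maximal extension of any of its restrictions to a cofinite subspace), and your converse correctly reduces to the common-refinement lemma, whose termination argument goes through because the complement of a cofinite subspace is finite and prefix-closed, so whenever the gap $B'\mathcal{A}_n^*\setminus D\mathcal{A}_n^*$ is non-empty some element of the current basis is a proper prefix of an element of $D$. The final identification $E_1=E_2=g(D)$ via the subspace-homomorphism property of $g$ is exactly the right closing step.
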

		
		Let $g \in G_{n,r}$ given by the table
		\begin{equation*}
		g=
		\begin{pmatrix}
		b_1 & \dots & b_N\\
		c_1 & \dots & c_N
		\end{pmatrix}
		\end{equation*}
		The natural homeomorphisms associated with $g$ on the pathspace of the forest $\mathcal{F}_{n,r}$ is given by mapping the open sets of infinite paths $\{b_iw|w \in \mathcal{A}_n^{\mathbb{N}} \}$ onto the open sets of infinite paths $\{c_iw|w \in \mathcal{A}_n^{\mathbb{N}} \}$ via $g(b_iw)=c_iw$ for all $i \in \{1,\dots,N\}$.
		
		The Higman-Thompson fall within a class of groups of piecewise linear interval transformations, the following description is essentially due to \cite{bro87}:
				
		\begin{thm}
			Let $n \in \mathbb{N}\setminus \{1 \}$ and $r \in \mathbb{N}$. Denote by $\tilde{G}_{n,r}$ the group of right-continuous, piecewise linear bijections $f \colon [0,r) \to [0,r)$ with finitely many singularities, such that all singularities of $f$ are contained in $\mathbb{Z}[\frac{1}{n}]$, such that $f(\mathbb{Z}[\frac{1}{n}] \cap[0,r) )= \mathbb{Z}[\frac{1}{n}] \cap[0,r)$ and for every point $x \in [0,r)$ where $f$ is not singular, there exists a $k \in \mathbb{Z}$ such that $f'(x)=n^k$. Then $\tilde{G}_{n,r} \cong G_{n,r}$
		\end{thm}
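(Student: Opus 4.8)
The plan is to construct an explicit isomorphism $\Phi \colon G_{n,r} \to \tilde{G}_{n,r}$ by realizing the combinatorial bases of $X_r\mathcal{A}_n^*$ as subdivisions of $[0,r)$ into \emph{standard $n$-adic intervals}. To a word $b=x_iw$ with $w=(w_1,\dots,w_l) \in \mathcal{A}_n^*$ I associate the half-open interval
\[
I_b := \Big[ (i-1)+\sum_{j=1}^{l}\frac{w_j-1}{n^j},\ (i-1)+\sum_{j=1}^{l}\frac{w_j-1}{n^j}+\frac{1}{n^l}\Big),
\]
obtained by repeatedly cutting $[i-1,i)$ into $n$ equal pieces. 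Then $|I_b|=n^{-l}$, the left endpoint is an integer multiple of $n^{-l}$, and right-continuity of the eventual maps is automatic from the half-open convention. The observation to establish first is that $B=\{b_1,\dots,b_N\}$ is a cofinite basis of $X_r\mathcal{A}_n^*$ \emph{if and only if} $\{I_{b_1},\dots,I_{b_N}\}$ is a partition of $[0,r)$ into standard $n$-adic intervals, and that a simple expansion of a column corresponds exactly to subdividing one such interval into its $n$ children.

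Given $g\in G_{n,r}$ presented by a table with bases $B=\{b_i\}$ and $C=\{c_i\}$, I define $\Phi(g)$ to send each $I_{b_i}$ by the unique increasing affine bijection onto $I_{c_i}$. The first batch of checks is routine: the slope on $I_{b_i}$ equals $|I_{c_i}|/|I_{b_i}| = n^{|b_i|-|c_i|}$, an integer power of $n$; the map is right-continuous and piecewise linear with singularities only at $n$-adic rationals; and it preserves $\mathbb{Z}[\tfrac1n]\cap[0,r)$. Well-definedness (independence of the chosen table) follows from the correspondence between simple expansions and subdivisions together with invariance under column permutations, so that two tables sharing a common expansion yield the same PL map. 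That $\Phi$ is a homomorphism is proved by the standard common-refinement argument: for $g,h\in G_{n,r}$ one passes to expansions on which the range basis of $h$ refines a basis on which $g$ is defined, and then composition of tables under $\odot$ visibly matches composition of affine maps on the common subdivision.

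Injectivity is immediate, since $\Phi(g)=\operatorname{id}$ forces each increasing affine piece to be the identity, whence the table is trivial and $g=1$. The genuine difficulty lies in surjectivity: given $f\in\tilde{G}_{n,r}$ I must produce a table, i.e.\ exhibit a subdivision of $[0,r)$ into standard $n$-adic intervals on which $f$ is affine \emph{and} which $f$ carries onto standard $n$-adic intervals. The subtlety is that affinity with $n$-adic endpoints and slope $n^k$ only guarantees that the image has $n$-adic endpoints, not that it is standard (a standard interval of length $n^{-m}$ must have left endpoint divisible by $n^{-m}$). The hard part will be the valuation bookkeeping: working piece by piece on the finitely many affine segments of $f$, where $f(x)=n^kx+c$ with $c\in\mathbb{Z}[\tfrac1n]$, I refine into standard subintervals of common length $n^{-l}$ and show that choosing $l$ large enough — larger than the $n$-adic denominators of the finitely many image endpoints $f(a)$ plus the relevant exponents $k$ — makes every image $f(a)+m\,n^{k-l}$ a multiple of $n^{k-l}$, hence a standard interval. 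With such an $l$ fixed, the domain intervals and their images encode a cofinite isomorphism whose maximal extension is an element of $G_{n,r}$ mapping to $f$, completing the proof.
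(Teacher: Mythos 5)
The paper does not actually prove this statement: it is quoted as a known result with a citation to \cite{bro87}, so there is no in-text argument to compare yours against. Your proposal is the standard proof of this identification and it is sound. The dictionary between words $x_iw$ and standard $n$-adic subintervals of $[i-1,i)$, the matching of cofinite bases with finite partitions of $[0,r)$ into such intervals, and of simple expansions with subdivision into the $n$ children, are all correct (incomparability of words corresponds to disjointness of the intervals, and maximality of a finite independent set to the intervals covering $[0,r)$). You are also right that the only genuinely nontrivial point is surjectivity, and your valuation argument works: on an affine piece $f(x)=n^kx+c$ with $c\in\mathbb{Z}[\tfrac1n]$, a standard interval $[u,u+n^{-l})$ maps to $[n^ku+c,\,n^ku+c+n^{k-l})$, and $n^{l-k}(n^ku+c)=n^lu+n^{l-k}c\in\mathbb{Z}$ once $l$ exceeds both $k$ and the $n$-adic valuation of the denominator of $c$, so the image is standard; taking $l$ large enough also keeps both domain and image intervals of length at most $n^{-1}$, so none of them straddles an integer and each genuinely corresponds to a word. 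Two small points worth stating explicitly if you write this up: (a) with the paper's convention $\alpha\odot\beta:=(\beta\circ\alpha)^*$ your map $\Phi$ satisfies $\Phi(\alpha\odot\beta)=\Phi(\beta)\circ\Phi(\alpha)$, i.e.\ it is an anti-homomorphism for the usual composition order, which one fixes by postcomposing with inversion (or by noting a group is isomorphic to its opposite); (b) in the surjectivity step one should note that since $f$ is a bijection of $[0,r)$ the image intervals automatically form a partition of $[0,r)$, so the target words do form a cofinite basis and the resulting table is legitimate.
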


		\chapter{C*-algebras}\label{app: c*-algebras}

		\section{Basic terms}\label{app: basic terms of c*-algebras}
		
		The definitions of this section are taken from Chapter I of \cite{dav96} or Chapter 2 of \cite{mur90} except where noted.
		
		\begin{defi}\phantomsection\label{app: banalg}
			\begin{enumerate}[(i)]
				\item A \emph{normed algebra} is an algebra $\mathfrak{A}$ endowed with a submultiplicative norm $\|\cdot \|$ (i.e. $\forall a,b \in \mathfrak{A}: \|ab\| \leq \|a\|\|b\|$).
				If $\mathfrak{A}$ has a multiplicative unit $1$, it is said to be \emph{unital}. A complete normed algebra is called \emph{Banach algebra}. 
				
				\item Let $a$ be an element in a unital Banach algebra $\mathfrak{A}$. The \emph{spectrum of $a$} is the set $\sigma(a):=\{\lambda \in \mathbb{C}: \lambda 1 - a \text{ is non-invertible} \}$.
				The \emph{spectral radius of $a$}, denoted by $\mathrm{spr}(a)$, is defined as $\mathrm{spr}(a):=\underset{\lambda \in \sigma(a)}{\sup}|\lambda|$.
			\end{enumerate}
		\end{defi}
		
		In some contexts it is useful to have a unit around:
		
		\begin{defi}\label{defi: unitiz}
			Let $\mathfrak{A}$ be a Banach algebra. The vectorspace $\mathfrak{A} \times 	\mathbb{C}$ endowed with the product $(a,\lambda)(b,\mu)=(ab+\lambda b + \mu a, \lambda\mu)$ is a unital algebra denoted by $\mathfrak{A}^{+}$ with $(0,1)$ as unit and $\mathfrak{A}$ as a maximal ideal via the embedding $a \mapsto (a,0)$. The norm of $\mathfrak{A}$ naturally extends to a complete norm on $\mathfrak{A}^{+}$ by $\|(a,\lambda)\|=\|a\|+|\lambda|$. The Banach algebra $\mathfrak{A}^{+}$ is called the \emph{unitization of $\mathfrak{A}$}.
		\end{defi}
		
		Banach $*$-algebras (pronounced "star-algebra") are Banach algebras with an isometric involution. C*-algebras are an important subclass:
		
		\begin{defi}\phantomsection\label{defi: *}
			\begin{enumerate}[(i)]
				\item A \emph{$*$-algebra} is a $\mathbb{C}$-algebra $\mathfrak{A}$ with  anti-involution $^* \colon \mathfrak{A} \to \mathfrak{A}$ called the \emph{adjoint}, i.e. a map $^*:a \mapsto a^*$ which satisfies: 
				
				\begin{enumerate}
					\item $\forall a,b \in \mathfrak{A}: \forall z_1,z_2 \in \mathbb{C}: (z_1a+z_2b)^*=\overline{z_1}a^* + \overline{z_2}b^*$ (conjugate-linear)
					
					\item $\forall a \in \mathfrak{A}: a^{**} = a$ (self-inverse)
					
					\item $\forall a,b \in \mathfrak{A}: (ab)^* = b^*a^*$ (anti-multiplicative).
				\end{enumerate}
				
				\item A \emph{$*$-homomorphism} is an algebra-homomorphism $\varphi \colon \mathfrak{A} \to \mathfrak{B}$ between $*$-algebras $\mathfrak{A}, \mathfrak{B}$ such that $\varphi(-^*)=\varphi(-)^*$.
				
				\item A \emph{Banach $*$-algebra} is a Banach algebra $\mathfrak{A}$ which is a $*$-algebra such that $\|a^*\|=\|a\|$ for all $a \in \mathfrak{A}$.

				\item Let $\mathfrak{A}$ be a unital Banach $*$-algebra. An element $a \in \mathfrak{A}$ is called
				\begin{enumerate}
					\item \emph{self-adjoint} if $a^*=a$.
					
					\item \emph{normal} if $a^*a=a^*$.
					
					\item an \emph{isometry} if $a^*a=1$ and a \emph{coisometry} if $aa^*=1$
					
					\item a \emph{partial isometry} if $aa^*a=a$. Denote by $\operatorname{Par}(\mathfrak{A})$ the set of all partial isometries in $\mathfrak{A}$.
					
					\item \emph{unitary} if it is an isometry and a coisometry.
					
					\item \emph{positive}, write $a \geq 0$, if it is self-adjoint and $\sigma(a) \subseteq [0;\infty[$.
					
					\item an \emph{idempotent} if $a=a^2$.
					
					\item a \emph{projection}, if it is a self-adjoint idempotent. Denote by $\operatorname{Proj}(\mathfrak{A})$ the set of all projections of $\mathfrak{A}$.
				\end{enumerate}
				The self-adjoint elements in $\mathfrak{A}$ can be endowed with an order relation by defining $ a\leq b$ if $b - a \geq 0$. Denote the set of all positive elements in $\mathfrak{A}$ by $\mathfrak{A}_{\mathrm{pos}}$.
				
				\item A \emph{C*-algebra} is a Banach $*$-algebra $\mathfrak{A}$ such that $\|a^*a\|=\|a\|^2$ for all $a \in \mathfrak{A}$.
				
				\item Let $\mathfrak{A}$ be a C*-algebra. A \emph{C*-subalgebra of $\mathfrak{A}$} is a norm-closed subalgebra closed with respect to the adjoint. An \emph{ideal of $\mathfrak{A}$} is a norm-closed two sided ideal of the underlying $\mathbb{C}$-algebra.
				
				\item Let $\mathfrak{A}$ be a unital C*-algebra and let $a \in \mathfrak{A}$. The \emph{C*-subalgebra generated by $a$}, denoted by  $\mathrm{C}^*(a)$, is the norm-closure of the linear span of $I$ and products of $a$ and $a^*$.
				
				\item Let $\mathfrak{A}$ be a C*-algebra. A net $(i_\lambda)_{\lambda \in \Lambda}$ of elements in $\mathfrak{A}$ is called \emph{approximate unit} if:
				\begin{enumerate}
					\item $i_\lambda \geq 0, \|i_\lambda\| \leq 1$ for all $\lambda \in \Lambda$
					\item $\lambda \leq \kappa$ implies $i_\lambda \leq i_\kappa$
					\item $\underset{\lambda \in \Lambda}{\lim}\; i_\lambda a = \underset{\lambda \in \Lambda}{\lim} \; a i_\lambda= a$ for all $a \in \mathfrak{A}$.
				\end{enumerate}
				
				\item Let $\mathfrak{A}$ be a C*-algebra and let $\mathfrak{B}$ be a C*-subalgebra of $\mathfrak{A}$. The normalizer of $\mathfrak{B}$ in $\mathfrak{A}$ is the set $N(\mathfrak{B},\mathfrak{A}):=\{a \in \mathfrak{A}|a^*\mathfrak{B}a \subseteq \mathfrak{B}, a\mathfrak{B}a^* \subseteq \mathfrak{B} \}$.
			\end{enumerate}
		\end{defi}

		Let $\mathfrak{A}$ be a C*-algebra, then the set $\{a \in \mathfrak{A}: a \geq 0, \|a\|<1 \}$ is a directed set with respect to the order defined on the self-adjoint elements. Therefore we have:
		
		\begin{thm}[\cite{dav96}, Theorem I.4.8]\label{thm: approxu}
			Every C*-algebra has an approximate unit.
		\end{thm}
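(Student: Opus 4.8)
The plan is to exhibit a concrete approximate unit built from the directed set $\Lambda := \{a \in \mathfrak{A} : a \geq 0,\ \|a\| < 1\}$ whose directedness is asserted just above, taking the net $(i_\lambda)_{\lambda \in \Lambda}$ given by the tautological indexing $i_\lambda := \lambda$. With this choice conditions (a) and (b) of the definition of an approximate unit are immediate: every $\lambda \in \Lambda$ is positive with $\|\lambda\| < 1 \leq 1$, and $\lambda \leq \kappa$ in the index set means exactly $i_\lambda = \lambda \leq \kappa = i_\kappa$. Thus the whole content of the theorem is condition (c), that $\|a - a i_\lambda\| \to 0$ and $\|a - i_\lambda a\| \to 0$ for every $a \in \mathfrak{A}$. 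First I would reduce this to the case of positive $a$, since $\|a - a i_\lambda\|^2 = \|(1-i_\lambda)\,a^{*}a\,(1-i_\lambda)\| \leq \|a^{*}a\,(1-i_\lambda)\|$ (using $\|1-i_\lambda\|\le 1$), so control over the positive element $b = a^{*}a$ yields control over $a$, and symmetrically the left-hand version reduces to $c = aa^{*}$ via the isometric adjoint.

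For positive $a$, the second step is to produce, through the continuous functional calculus on the commutative C*-subalgebra $C^{*}(a)$, the elements $e_n := n a (1 + n a)^{-1} = f_n(a)$, where $f_n(t) = nt/(1+nt)$ satisfies $f_n(0)=0$ and $0 \le f_n < 1$; hence each $e_n$ lands back in $\mathfrak{A}$ and lies in $\Lambda$, and $\{e_n\}$ is increasing in $\Lambda$ because $f_n \le f_{n+1}$ pointwise. A direct functional-calculus computation gives $a(1 - e_n) = a(1+na)^{-1}$, whose norm equals $\sup_{t \in \sigma(a)} t/(1+nt) \le 1/n \to 0$. This settles the sequence; the remaining work is to promote it to convergence of the whole net.

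The key monotonicity inputs I would assemble are: (i) if $0 \le x \le 1$ then $x^{2} \le x$, proved via $x - x^{2} = x^{1/2}(1-x)x^{1/2} \ge 0$; (ii) conjugation preserves order, i.e. $x \le y \Rightarrow a^{*} x a \le a^{*} y a$; and (iii) the norm is monotone on positive elements. Applying (i) with $x = 1 - i_\lambda$ and then (ii) and (iii), for positive $a$ I obtain
\[
\|a - a i_\lambda\|^{2} = \|a (1-i_\lambda)^{2} a\| \le \|a(1-i_\lambda)a\|,
\]
and the right-hand side is non-increasing along $\Lambda$: if $\lambda \le \kappa$ then $1 - i_\kappa \le 1 - i_\lambda$, so $a(1-i_\kappa)a \le a(1-i_\lambda)a$ by (ii), and (iii) gives the norm inequality. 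Since each $e_n \in \Lambda$ and $\|a(1-e_n)a\| \le \|a\|\,\|a(1-e_n)\| \to 0$, for any $\varepsilon>0$ I can choose $\lambda_0 = e_n$ with $\|a(1-e_n)a\| < \varepsilon$; monotonicity then forces $\|a(1-i_\lambda)a\| < \varepsilon$ for all $\lambda \ge \lambda_0$, which is precisely net convergence to $0$. Combined with the reduction of the first paragraph (applied to $b$ and $c$), this establishes (c).

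The main obstacle I anticipate is exactly this monotonicity argument, because squaring is \emph{not} operator monotone, so one cannot directly compare $\|a(1-i_\lambda)a\|$ across the net by squaring $1 - i_\lambda$; the trick of inserting $x^{2}\le x$ to trade the square for a single factor before conjugating is what makes the net non-increasing. The only other point needing care is that for non-unital $\mathfrak{A}$ the elements $1-i_\lambda$ and $(1+na)^{-1}$ live in the unitization $\mathfrak{A}^{+}$ (Definition~\ref{defi: unitiz}), so I would verify throughout that products such as $a(1-e_n)$, and the $e_n$ themselves, return to $\mathfrak{A}$ — which holds because the functional calculus applied to functions vanishing at $0$ stays inside $\mathfrak{A}$ — and that norms computed in $\mathfrak{A}^{+}$ agree with those in $\mathfrak{A}$.
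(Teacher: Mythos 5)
Your proof is correct and follows exactly the route the paper indicates: it takes the directed set $\{a \in \mathfrak{A} : a \geq 0,\ \|a\| < 1\}$ mentioned immediately before the theorem as the index set of a tautological net, and verifies condition (c) by the standard functional-calculus elements $na(1+na)^{-1}$ together with the $x^2 \leq x$ monotonicity trick. The paper itself only records the directedness and defers to Davidson, so your argument is a faithful filling-in of that same proof.
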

		
		As with general Banach algebras it is often useful to have a unit around:
		
		\begin{thm}[\cite{mur90}, Theorem 2.1.6.]
			If $\mathfrak{A}$ is a C*-algebra, the extended norm makes $\mathfrak{A}^+$ into a C*-algebra.
		\end{thm}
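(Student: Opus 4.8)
The plan is to replace the $\ell^1$-type expression $\|(a,\lambda)\| = \|a\| + |\lambda|$ from Definition~\ref{defi: unitiz} --- which is only a Banach $*$-algebra norm and does \emph{not} satisfy the C*-identity in general --- by the genuine C*-norm coming from the left regular representation of $\mathfrak{A}^+$ on the Banach space $\mathfrak{A}$, and then to verify the C*-axioms for this norm. Concretely, to each $x = (a,\lambda) \in \mathfrak{A}^+$ I would associate the left multiplication operator $L_x \in \mathcal{B}(\mathfrak{A})$ given by $L_x(b) := ab + \lambda b$, and set $\|x\|_c := \|L_x\|_{\mathrm{op}} = \sup\{\|ab+\lambda b\| : b \in \mathfrak{A},\ \|b\|\le 1\}$. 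The map $x \mapsto L_x$ is readily checked to be a unital algebra homomorphism into $\mathcal{B}(\mathfrak{A})$, so $\|\cdot\|_c$ is automatically submultiplicative.

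First I would show that $\mathfrak{A}$ sits isometrically inside $(\mathfrak{A}^+, \|\cdot\|_c)$: for $a \in \mathfrak{A}$ one has $\|L_a\|_{\mathrm{op}} \le \|a\|$ trivially, while the C*-identity in $\mathfrak{A}$ gives $\|a\|^2 = \|aa^*\| = \|L_a(a^*)\| \le \|L_a\|_{\mathrm{op}}\|a^*\| = \|L_a\|_{\mathrm{op}}\|a\|$, hence $\|L_a\|_{\mathrm{op}} = \|a\|$. The heart of the matter is the C*-identity for a general $x = (a,\lambda)$. The key point is the identity $(L_x b)^*(L_x b) = b^*\,L_{x^*x}(b)$ for every $b \in \mathfrak{A}$, which I would verify by expanding both sides with the multiplication and involution of $\mathfrak{A}^+$. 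Combined with the C*-identity in $\mathfrak{A}$ this yields $\|L_x b\|^2 = \|b^* L_{x^*x}(b)\| \le \|b\|^2\,\|x^*x\|_c$, and taking the supremum over the unit ball gives $\|x\|_c^2 \le \|x^*x\|_c$. A standard two-line argument --- using submultiplicativity to deduce first $\|x\|_c = \|x^*\|_c$ and then sandwiching $\|x\|_c^2 \le \|x^*x\|_c \le \|x^*\|_c\|x\|_c = \|x\|_c^2$ --- upgrades this to the exact C*-identity $\|x^*x\|_c = \|x\|_c^2$.

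It remains to see that $\|\cdot\|_c$ is a norm (not merely a seminorm) and that $\mathfrak{A}^+$ is complete. For injectivity of $x \mapsto L_x$: if $\lambda = 0$ then $L_a = 0$ forces $\|a\| = \|L_a\|_{\mathrm{op}} = 0$; if $\lambda \ne 0$ then $e := -\lambda^{-1}a$ is a left unit, and passing to adjoints shows $e$ is a self-adjoint two-sided unit, so that $\mathfrak{A}$ is already unital. Thus in the non-unital case $\|\cdot\|_c$ is a genuine norm, and completeness follows by showing $\|\cdot\|_c$ is equivalent to $\|\cdot\|_1 = \|a\|+|\lambda|$: the inequality $\|x\|_c \le \|x\|_1$ is immediate, while the reverse follows because $\mathfrak{A}$ is complete (hence closed) in $\|\cdot\|_c$ of codimension one, so that the character $(a,\lambda)\mapsto\lambda$ is bounded; then $|\lambda|$, and therefore $\|a\| = \|x - \lambda\cdot 1\|_c$, are both controlled by $\|x\|_c$. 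Here Theorem~\ref{thm: approxu} is available should one prefer to argue the isometric embedding or the ideal property via an approximate unit instead.

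The main obstacle --- and the point I would flag explicitly --- is precisely the unital case, where the left regular representation degenerates: the element $(-e,1)$ acts as the zero operator, so $\|\cdot\|_c$ is no longer injective. I would resolve this separately by exhibiting the $*$-isomorphism $\mathfrak{A}^+ \xrightarrow{\ \sim\ } \mathfrak{A}\oplus\mathbb{C}$, $(a,\lambda)\mapsto(a+\lambda e,\lambda)$, onto the C*-direct sum, and transporting the (max) C*-norm back along it. This case split, together with the observation that the genuinely C*-compatible norm is the operator norm rather than the $\ell^1$-norm literally recorded in Definition~\ref{defi: unitiz}, is the only real subtlety; the algebraic verifications of the homomorphism property and of the identity $(L_x b)^*(L_x b) = b^* L_{x^*x}(b)$ are routine expansions that I would not carry out in full.
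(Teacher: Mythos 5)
Your proof is correct and is precisely the standard argument of the cited reference (Murphy, Theorem~2.1.6): the paper itself supplies no proof here, only the citation, and your route via the left regular representation $x \mapsto L_x$ on $\mathfrak{A}$, the identity $(L_x b)^*(L_x b) = b^* L_{x^*x}(b)$, the closed-kernel argument for completeness, and the separate treatment of the unital case via $\mathfrak{A}\oplus\mathbb{C}$ is exactly how that result is established. Your observation that the $\ell^1$-expression in Definition~\ref{defi: unitiz} is only a Banach $*$-algebra norm, and that the theorem must be read as asserting the existence of a (unique) C*-norm extending that of $\mathfrak{A}$, is also a correct and worthwhile clarification of the statement as recorded.
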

		
		After this verbiage we look at the first concrete examples of such algebras:		
		
		\begin{ex}\phantomsection\label{ex: cstar}
			\begin{enumerate}[(i)]
				\item The field $\mathbb{C}$ with its usual norm is a unital C*-algebra with conjugation as $^*$-morphism.
				
				\item \label{item: contfunct} For any locally compact Hausdorff space $X$ the space $C_0 (X)$ of continuous $\mathbb{C}$-valued functions vanishing at infinity\footnote{A $\mathbb{C}$-valued function on a locally compact space $X$ is said to \emph{vanish at infinity} if the set $\{x \in X: |f(x)| \geq \epsilon\}$ is compact for every $\epsilon > 0$ (\cite{rud87}, Definition 3.16).\\} endowed with the supremum norm and $f \mapsto \overline{f}$ as adjoint is a commutative C*-algebra. If $X$ is compact, $C_0 (X)$ contains $\mathbf{1}_X$ as a unit. If $X$ is non-compact, $C_0 (X)$ is non-unital.
				
				\item Let $\mathcal{H}$ be a Hilbert space. Then the space $\mathfrak{B}(\mathcal{H})$ of bounded linear operators on $\mathcal{H}$ is a C*-algebra with respect to the operator norm and the $^*$-morphism given by the adjoint of operators.
			\end{enumerate}
		\end{ex}
		
		\begin{defi}
			\begin{enumerate}[(i)]
				\item Let $\mathfrak{A}$ be a commutative Banach algebra. A \emph{character of $\mathfrak{A}$} is a non-zero algebra-homomorphism $\mathfrak{A} \to \mathbb{C}$. Denote by $\Omega(\mathfrak{A})$ the set of all characters of $\mathfrak{A}$.
				
			\end{enumerate}
			
			Let $\mathfrak{A}$ be a Banach $*$-algebra.
			\begin{enumerate}[(i),resume]
				
				\item A linear functional $\varphi \colon \mathfrak{A} \to \mathbb{C}$ is called \emph{positive}, if $\varphi(a) \geq 0$ for every positive $a \in \mathfrak{A}$. Positive linear functionals of norm $1$ are called \emph{states}. Denoted by $\mathcal{S}(\mathfrak{A})$ the set of all states of $\mathfrak{A}$. A state is said to be \emph{pure} if it is an extreme point\footnote{A point $x$ contained in a convex subset $C$ of some $K$-vectorspace $V$ is called extreme point if it can not be represented as non-trivial convex combination i.e. if there exist $\lambda \in [0;1]$ and $y,z \in K$ with $x=\lambda y + (1-\lambda)z$, it follows that $x=y$ or $x=z$ (\cite{rud91}, §3.22).\\} in $\mathcal{S}(\mathfrak{A})$.
				
				\item Let $\mathfrak{A}$ be a C*-algebra. A \emph{weight on $\mathfrak{A}$} is a function $\phi \colon \mathfrak{A}_{\mathrm{pos}} \to [0,\infty]$ such that $\phi(\lambda a) = \lambda \phi(a)$ for all $a \in \mathfrak{A}_{\mathrm{pos}}$ and $\lambda \in \mathbb{R}_{\geq 0}$ and $\phi(a+b)=\phi(a)+\phi(b)$ for all $a,b \in \mathfrak{A}_{\mathrm{pos}}$.
			\end{enumerate}
		\end{defi}
		
		Abstract C*-algebras become accessible through their representations:
		
		\begin{defi}
			 Let $\mathcal{H}$ be a Hilbert space and let $\mathfrak{A}$ be a Banach $*$-algebra. A \emph{$*$-algebra-representation of $\mathfrak{A}$ on $\mathcal{H}$} is a $*$-homomorphism $\mathfrak{A} \to \mathcal{B}(\mathcal{H})$.
				A $*$-algebra-representation $\phi \colon \mathfrak{A} \to \mathcal{B}(\mathcal{H})$ is called
		
				\begin{enumerate}[(i)]
					\item \emph{trivial} if $\phi(\mathfrak{A})=0$
					\item \emph{topologically irreducible} if $\phi(\mathfrak{A})$ contains no proper closed invariant subspace
					\item \emph{algebraically irreducible} if $\phi(\mathfrak{A})$ contains no proper invariant subspace
					\item \emph{faithful} if it is injective
					\item \emph{non-degenerate} if $\overline{\phi(\mathfrak{A})\mathcal{H}}=\mathcal{H}$
					\item \emph{cyclic} if it has a \emph{cyclic vector} i.e. there exists an element $x\in \mathcal{H}$ such that $\phi(\mathfrak{A})x$ is dense in $\mathcal{H}$.
				\end{enumerate}
		\end{defi}
		
		\begin{rem}
			(\cite{dav96}, Theorem I.9.3). In the case of C*-algebras topological- and algebraical irreducibility are equivalent and one just speaks of \emph{irreducible} representations.
		\end{rem}
		
		Originally, C*-algebras were defined as norm-closed subalgebras of $\mathfrak{B}(\mathcal{H})$ for some Hilbert space $\mathcal{H}$ closed under taking adjoints (\cite{seg47}, §2). These fall in the class constituted by the abstract Definition~\ref{defi: *} (iii). The Gelfand-Naimark theorem assures every C*-algebra is a C*-subalgebra of $\mathfrak{B} (\mathcal{H})$ for some Hilbert space $\mathcal{H}$ or in other words:
		\begin{thm}[\cite{dav96}, Theorem I.9.12]\label{thm: gn-thm}
			Every C*-algebra $\mathfrak{A}$ has an isometric faithful representation.
		\end{thm}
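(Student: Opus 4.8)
The plan is to prove the theorem by the Gelfand--Naimark--Segal (GNS) construction, producing from the states of $\mathfrak{A}$ a large supply of cyclic representations whose direct sum is isometric. First I would reduce to the unital case: if $\mathfrak{A}$ is non-unital, I pass to its unitization $\mathfrak{A}^+$ from Definition~\ref{defi: unitiz}, which is again a C*-algebra, and observe that an isometric faithful representation of $\mathfrak{A}^+$ restricts along the canonical embedding $a \mapsto (a,0)$ to an isometric faithful representation of $\mathfrak{A}$. So from now on I assume $\mathfrak{A}$ is unital with unit $1$.

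Next comes the GNS construction associated to a state. Given $\varphi \in \mathcal{S}(\mathfrak{A})$, I would define the sesquilinear form $\langle a,b \rangle_\varphi := \varphi(b^{*} a)$; positivity of $\varphi$ makes this positive semidefinite, and Cauchy--Schwarz shows that the null space $N_\varphi := \{a \in \mathfrak{A} : \varphi(a^{*} a) = 0\}$ is a closed left ideal. The quotient $\mathfrak{A}/N_\varphi$ then carries a genuine inner product, which I complete to a Hilbert space $\mathcal{H}_\varphi$. Left multiplication $\pi_\varphi(a)(b + N_\varphi) := ab + N_\varphi$ is well-defined precisely because $N_\varphi$ is a left ideal, is bounded with $\|\pi_\varphi(a)\| \le \|a\|$, and yields a cyclic $*$-representation with cyclic vector $\xi_\varphi := 1 + N_\varphi$ of norm $1$ satisfying $\varphi(a) = \langle \pi_\varphi(a)\xi_\varphi, \xi_\varphi\rangle_\varphi$.

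The crux, and the step I expect to be the main obstacle, is to manufacture enough states to detect the norm of every element. The key claim is that for each self-adjoint $h \in \mathfrak{A}$ there is a state $\varphi$ with $|\varphi(h)| = \|h\|$. To obtain it I would restrict attention to the commutative C*-subalgebra $\mathrm{C}^*(h)$, invoke the commutative Gelfand--Naimark theorem identifying it with functions on $\Omega(\mathrm{C}^*(h))$, and use the equality $\|h\| = \mathrm{spr}(h)$ valid for self-adjoint elements to locate a character at which the Gelfand transform of $h$ attains $\|h\|$ in modulus; this character is a state on $\mathrm{C}^*(h)$, which extends by the Hahn--Banach theorem, preserving positivity and norm, to a state $\varphi$ on $\mathfrak{A}$. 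Applying this to the positive element $h = a^{*}a$, whose norm is $\|a\|^2$ by the C*-identity, produces a state with $\varphi(a^{*}a) = \|a\|^2$, whence $\|\pi_\varphi(a)\xi_\varphi\|^2 = \varphi(a^{*}a) = \|a\|^2$ and therefore $\|\pi_\varphi(a)\| \ge \|a\|$. The real analytic content of the whole proof is concentrated in these positivity and extension properties of states.

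Finally I would assemble the universal representation $\pi := \bigoplus_{\varphi \in \mathcal{S}(\mathfrak{A})} \pi_\varphi$ on $\mathcal{H} := \bigoplus_{\varphi} \mathcal{H}_\varphi$. For every $a$ one has $\|\pi(a)\| = \sup_\varphi \|\pi_\varphi(a)\|$; the uniform bound $\|\pi_\varphi(a)\| \le \|a\|$ forces this supremum to be at most $\|a\|$, while the state constructed in the previous paragraph forces it to be at least $\|a\|$, so $\|\pi(a)\| = \|a\|$. Thus $\pi$ is isometric, hence injective, i.e.\ faithful, which is the assertion. The remaining verifications -- the well-definedness and boundedness of each $\pi_\varphi$ and the compatibility of the forms with the $*$-operation -- are routine; the set-theoretic size of the index set $\mathcal{S}(\mathfrak{A})$ causes no difficulty, since only separation of the elements of $\mathfrak{A}$ is required.
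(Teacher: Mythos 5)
Your proof is correct and follows the standard route that the paper's cited source (Davidson, Theorem~I.9.12) uses: the GNS construction applied to a norming family of states, with the existence of such states supplied by the commutative Gelfand theory on $\mathrm{C}^*(a^*a)$ together with the Hahn--Banach extension of states (where, to be precise, positivity of the extension is deduced from the norm equality $\|\varphi\|=\varphi(1)=1$ rather than preserved by Hahn--Banach itself). The paper states the theorem without proof, and its surrounding remark that the result is ``a consequence of the Gelfand representation'' is exactly the reduction you carry out.
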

		The Gelfand-Naimark theorem is a consequence of the Gelfand representation, its commutative manifestation, which relates a commutative Banach algebra $\mathfrak{A}$ with the space $C_{0}(\Omega(\mathfrak{A}))$ of continuous functions vanishing at infinity on its space of characters. By Gelfand theory, every locally compact Hausdorff space $X$ gives rise to the commutative C*-algebra $C_0(X)$ and any commutative C*-algebra $\mathfrak{A}$ gives rise to the locally compact Haudorff space of characters $\Omega(\mathfrak{A})$. Given the correct notion of morphisms this produces an equivalence of categories $\mathbf{LoCoHaus} \simeq \mathbf{CommC^*}^{\mathrm{op}}$, called \emph{Gelfand duality}. This is why the theory of general C*-algebra theory has been termed non-commutative topology. It is thus sometimes helpful to think of C*-algebraical terms as generalized topological notions\footnote{This is a cut down version of the list in §1.11 of \cite{weg93} and can of course be continued.\\}:
		\begin{equation*}
		\begin{gathered}
		\text{topological space} \longleftrightarrow \text{C*-algebra}\\
		\text{open subsets} \longleftrightarrow \text{ideals}\\
		\text{closed subsets} \longleftrightarrow \text{quotients}\\
		\text{clopen subsets} \longleftrightarrow \text{projections}\\
		\text{compact spaces} \longleftrightarrow \text{unital C*-algebras}\\
		\text{proper continous maps} \longleftrightarrow \text{unital *-morphisms}\\
		\text{Alexandroff compactification} \longleftrightarrow \text{unitization}\\
		\text{2nd countable} \longleftrightarrow \text{separable}
		\end{gathered}
		\end{equation*}
		
		We finish with a significant example of C*-algebras, the Cuntz-Krieger algebras. These generalize the Cuntz algebras, which were historically the first examples of separable, simple, infinite C*-algebras:
		
		\begin{defi}[\cite{ck80}, §2]\label{defi: cuntz-krieger algebras}
			Let $n \in \mathbb{N}$ and let $\{A_{i,j}\}_{i,j \in \{1,\dots,n\}} \in \mathbf{M}_{n\times n}(\mathbb{Z}/2\mathbb{Z})$ be a matrix of which every row and column is non-zero and which is not a permutation matrix.	Let $\{S_i\}_{i \in \{1,\dots,n\} }$ be a collection of non-zero partial isometries over some Hilbert space $\mathcal{H}$ with associated support projections $Q_i=S_i^*S_i$ and range projections $P_i=S_iS_i^*$ such that the following hold:
			\begin{enumerate}[(i)]
				\item $P_iP_j=0$ for all $i,j \in \{1,\dots,n\}$ with $i\neq j$.
					
				\item $Q_i=\sum_{j \in \{1,\dots,n\}} A_{i,j} P_j$ for all $i \in \{1,\dots,n\}$.
			\end{enumerate}
			Denote by $C^*(S_1,\dots,S_n)$ be the C*-subalgebra of $\mathcal{B}(\mathcal{H})$ generated by $\{S_i\}_{i \in  \{1,\dots,n\}}$.
			The up to isomorphism unique C*-algebra $\mathcal{O}_A:=C^*(S_1,\dots,S_n)$ is called the \emph{Cuntz-Krieger algebra of $A$}. If every entry of $A$ is $1$, the arising C*-algebra only depends on $n$. It is denoted by $\mathcal{O}_n$ and called the \emph{Cuntz algebra of order $n$}.	
		\end{defi}
		
		\section{Group C*-algebras}\label{app: group c-star-algebras}
		
		For a more thourough treatment of group C*-algebras see Chapter VII of \cite{dav96}. 
		
		\begin{defi}\label{defi: unitaryrepresentations}
			Let $G$ be a locally compact topological group.
			\begin{enumerate}[(i)]
				\item Let $\mathcal{H}$ be a Hilbert space. Denote by $\mathfrak{U}(\mathcal{H})$ the group of unitary elements in $\mathfrak{B}(\mathcal{H})$. A \emph{unitary representation of $G$ on $\mathcal{H}$} is a group-homomorphism $\pi \colon G \to \mathfrak{U}(\mathcal{H})$ such that the map $g \mapsto \pi(g) \cdot x$ is continuous for all $x \in \mathcal{H}$.\footnote{It is continuous with respect to the strong operator topology.}
				Such a representation $\pi$ is called \emph{irreducible} if $\pi(G)$ commutes with no proper projection of $\mathcal{H}$ and it is called \emph{cyclic}, if it has a cyclic vector.
				
				\item Let $\mathcal{H}_1, \mathcal{H}_2$ be Hilbert spaces with unitary representations $\pi_1 \colon G \to \mathfrak{U}(\mathcal{H}_1)$ and $\pi_2 \colon G \to \mathfrak{U}(\mathcal{H}_2)$ of $G$. The representations $\pi_1,\pi_2$ are said to be \emph{unitarily equivalent} if there exists a Hilbert space ismorphism (i.e. a bijection preserving the inner product) $\Phi \colon \mathcal{H}_1 \to \mathcal{H}_2$ such that $\phi \circ \pi_1(g) = \pi_2(g) \circ \phi$ for all $g \in G$.
			\end{enumerate}
		\end{defi}
		
		Locally compact topological groups possess what is called a \emph{left resp. right Haar measure} i.e. every such group $G$ supports a left- resp. right-invariant, non-zero, regular Borel measure $\mu_G$, which is unique up to a scalar.	Let $G$ be locally compact group. As customary in measure theory, every $p \in [1;\infty[$ defines a seminorm on the space $\mathcal{M}(G):=\{f \colon G \to \mathbb{C}| f \text{ is measurable w.r.t. } \mu_G\}$ by $\|f\|_p:=(\int _{G}|f|^{p}\;\mathrm {d}\mu_G)^{1/p}$. Taking the quotient of the semi-normed vectorspaces $\mathcal{L}^p(G):=\{f \in \mathcal{M}_{\mu_G}: \|f\|_p < \infty \}$ by $\ker(\|\cdot\|_p)$ gives -- by abuse of notation -- the normed vector spaces $(\mathrm{L}^p(G), \|\cdot\|_p)$. The space $\mathrm{L}^1(G)$ can be given the structure of a Banach $*$-algebra with approximate unit with the following adjoint and product:
		\begin{equation*}
		\begin{gathered}
		f^{*}(g)=\Delta(g^{-1}) \overline{f(g^{-1})}\\
		(f_1 \ast f_2)(g):=\int\limits_{h \in G} f_1(h)f_2(h^{-1}g)\;\mathrm{d}\mu_G(h)
		\end{gathered}
		\end{equation*}	
		The function $\Delta$ is the \emph{modular function}, which is a continuous group-homomorphism $\Delta \colon G \to \mathbb{R}^+$,  that satisfies $\mu_G(Sg)=\Delta(g)\mu_G(S)$ for all measureable sets $S$. Note, that in general $\mathrm{L}^1(G)$ is not a C*-algebra. It is a fundamental feature of the theory, that there is a close relation between unitary representations of the underlying locally compact group $G$ and $*$-representations of the convolution algebra $\mathrm{L}^1(G)$:
		Every unitary representation $\pi$ of $G$ gives rise to a $*$-algebra-representation $\tilde{\pi}$ of $\mathrm{L}^1(G)$ by integration:
		\begin{equation*}
		\tilde{\pi}(f):=\int\limits_{g \in G} f(g) \pi(g) \mathrm{d}\mu_G(g)
		\end{equation*}
		
		Conversely every non-degenerate $*$-algebra-representation $\tilde{\pi}$ of $\mathrm{L}^1(G)$ induces a unitary representation $\pi$ of $G$ by setting 
		\begin{equation*}
			\pi(g):=\mathrm{\scalebox{0.5}{SOT-}}\lim_{\lambda \in \Lambda} \tilde{\pi} \big((I_\lambda)(g^{-1}(-)\big)
		\end{equation*}
		where $(I_\lambda)_{\lambda \in \Lambda}$ is an approximate unit of $\mathrm{L}^1(G)$ and $\mathrm{\scalebox{0.5}{SOT-}}\lim$ denotes the limit with respect to the strong operator topology. $\mathrm{L}^1(G)$ can be endowed with norm defined by:
		\begin{equation*}
		\|f\|_{C^*}:=\sup \{\|\pi(f)\|: \pi \text{ is a }*\text{-algebra-representation of }\mathrm{L}^1(G) \}
		\end{equation*}
		This supremum is well defined, since $\|\pi(f)\| \leq \|f\|$ for all $*$-algebra-respresentations of $\mathrm{L}^1(G)$ as they are norm-decreasing.  The completion of $\mathrm{L}^1(G)$ with respect to this norm is a C*-algebra, called the \emph{(maximal) group C*-algebra of $G$} denoted by $C^*(G)$. The \emph{left regular representation $\lambda$ of $G$} is the unitary representation on $\mathrm{L}^2(G)$ given by $(\lambda(g)f)(x):=f(g^{-1}x)$.\footnote{Operators $\lambda(g)$ are unitary, since the Haar measure is translation invariant.\\}
		The \emph{reduced group C*-algebra of $G$} denoted by $C_r^*(G)$ is the closure of $\tilde{\lambda}(\mathrm{L}^1(G))$ with respect to the operator norm or equivalently the completion of $\mathrm{L}^1(G)$ with respect to the norm $\|f\|_r:=\|\tilde{\lambda}(f)\|$. If $G$ is a discrete group, $C_c(G)$ densely embeddeds in $\mathrm{L}^1(G)$.
		In direct consequence $C^*(G)$ has the following universal property:
		\begin{prop}
			Let $G$ be a discrete group and let $u \colon G \to \mathfrak{B}(\mathcal{H})$ be a unitary representation. Then there exists a unique $*$-algebra-representation $\pi_u \colon C^*(G) \to \mathfrak{B}(\mathcal{H})$ such that $\pi_u|_G=u$.
		\end{prop}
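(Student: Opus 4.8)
The plan is to pass through the integrated form of $u$ on $\mathrm{L}^1(G)=\ell^1(G)$ and then extend by continuity to the completion $C^*(G)$. Since $G$ is discrete, the Haar measure is counting measure, so $\mathrm{L}^1(G)$ is the convolution algebra $\ell^1(G)$ and $C_c(G)=\mathbb{C}[G]$, the finitely supported functions, embeds densely, with the point masses $\{\delta_g\}_{g\in G}$ being unitary elements and $\delta_e$ the unit. First I would form the integrated representation $\tilde{u}\colon \mathrm{L}^1(G)\to\mathfrak{B}(\mathcal{H})$ of the preceding construction, which for discrete $G$ is the absolutely convergent sum $\tilde{u}(f)=\sum_{g\in G} f(g)\,u(g)$. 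Because each $u(g)$ is unitary one has $\|\tilde{u}(f)\|\le \sum_{g} |f(g)| = \|f\|_1 <\infty$, so $\tilde{u}$ is a well-defined $*$-algebra representation of $\mathrm{L}^1(G)$; multiplicativity and $*$-preservation follow from $u$ being a homomorphism with $u(g^{-1})=u(g)^*$ together with the convolution and adjoint formulas (here $\Delta\equiv 1$).

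The key step is to invoke the definition of the maximal $C^*$-norm: since $\tilde{u}$ is one of the $*$-algebra representations of $\mathrm{L}^1(G)$ over which the supremum defining $\|\cdot\|_{C^*}$ is taken, we get $\|\tilde{u}(f)\|\le \|f\|_{C^*}$ for every $f\in\mathrm{L}^1(G)$. Thus $\tilde{u}$ is bounded for the $C^*$-norm and, as $\mathrm{L}^1(G)$ is dense in $C^*(G)$ by construction, it extends uniquely to a bounded map $\pi_u\colon C^*(G)\to\mathfrak{B}(\mathcal{H})$; that the extension remains multiplicative and $*$-preserving follows from continuity of these operations together with density. Evaluating on a point mass gives $\pi_u(\delta_g)=\tilde{u}(\delta_g)=u(g)$, which is exactly the assertion $\pi_u|_G=u$ under the canonical identification $g\leftrightarrow\delta_g$ of $G$ with the unitaries $\{\delta_g\}\subset C^*(G)$.

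For uniqueness, suppose $\rho\colon C^*(G)\to\mathfrak{B}(\mathcal{H})$ is any $*$-algebra representation with $\rho|_G=u$, i.e. $\rho(\delta_g)=u(g)$ for all $g$. Then $\rho$ and $\pi_u$ agree on every point mass, hence on their finite linear combinations $\mathbb{C}[G]=C_c(G)$, which is dense in $C^*(G)$; by continuity $\rho=\pi_u$.

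There is no serious obstacle here — the argument is the standard ``extend a contractive representation by density'' template, and the decisive inequality $\|\tilde{u}(f)\|\le\|f\|_{C^*}$ is immediate from the very definition of the enveloping norm. The only point demanding care is the bookkeeping that identifies $G$ with its image $\{\delta_g\}$ inside $C^*(G)$: one should note that discreteness is precisely what makes $\delta_e$ a genuine unit, so that $C^*(G)$ is unital and the $\delta_g$ are honest unitary elements rather than multipliers, and that the $C_c(G)$-density used in both the existence and uniqueness steps is the feature of the discrete case flagged in the construction of $C^*(G)$.
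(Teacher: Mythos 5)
Your argument is correct and is precisely the one the paper intends: it states this proposition as a ``direct consequence'' of the integration construction $u\mapsto\tilde{u}$ on $\mathrm{L}^1(G)$, the defining supremum for $\|\cdot\|_{C^*}$, and the density of $C_c(G)$ in the discrete case, which is exactly what you have spelled out. The only step worth flagging explicitly is that in the uniqueness argument the competing representation $\rho$ is continuous because $*$-homomorphisms from a C*-algebra into $\mathfrak{B}(\mathcal{H})$ are automatically contractive; with that standard fact noted, the proof is complete.
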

		There is a 1-1 correspondence between the unitary representations of $G$ and non-degenerate representations of $C^*(G)$. Since consequently the regular representation extends naturally to a surjective $*$-algebra-morphism $\lambda : C^*(G) \to C_r^*(G)$, the reduced group C*-algebra $C_r^*(G)$ is a quotient of $C^*(G)$. In this case it could be defined alternatively by $C_r^*(G):=\lambda(C^*(G))$.

		\section{Hilbert bundles}\label{app: hilbert bundles and hilbert modules}
		
		The following is mostly adopted from \cite{bla06}, §II.7.
		
		\begin{defi}
			Let $X$ be a second countable, locally compact, Hausdorff space endowed with a probability measure $\mu$. Let  $\mathcal{H}:=\{H_x\}_{x \in X}$ be a collection of Hilbert spaces indexed by $X$ and let $\Sigma \subset \prod_{x \in X} H_x$ be a countable set of sections. The triple $(\mathcal{H},\mu,\Sigma)$ is called a \emph{measured Hilbert bundle} if it satisfies the following properties:
			\begin{enumerate}
				\item The set $\{\sigma_x|\sigma \in \Sigma\}$ is a dense subspace of $H_x$ for every $x \in X$.
				
				\item For every $\sigma_1,\sigma_2 \in \Sigma$ the map $x \mapsto \langle \sigma_1 (x),\sigma_2(x) \rangle$ is $\mu$-measurable.
			\end{enumerate}
		\end{defi}
		
		Measured Hilbert bundles induce a Hilbert space by integration:
		\begin{defi}
			Let $(X,\mathcal{H}=\{H_x\}_{x \in X},\mu)$ be a measured Hilbert bundle.
			\begin{enumerate}[(i)]
				\item A section $\xi \in \prod_{x \in X} H_x$ is called \emph{measurable} if for every $\sigma \in \Sigma$ the function $X \to \mathbb{C} \colon x \mapsto \langle \xi(x),\sigma(x)\rangle$ is $\mu$-measurable. Denote by $\Gamma(X,\mathcal{H},\mu)$ the set of all measurable sections.
				
				\item The space $\Gamma(X,\mathcal{H},\mu)$ carries a semi-norm defined by
				\begin{equation*}
				\|\sigma\|_2:=\big(\int_{x \in X} \|\sigma(x)\|_x^2 \; \mathrm{d}\mu\big)^{\frac{1}{2}}.
				\end{equation*} 
				The quotient of $\mu$-integrable measurable sections by the space of null sections i.e.
				\begin{equation*}
				\bigslant{\{\sigma \in \Gamma(X,\mathcal{H},\mu): \|\sigma\|_2< \infty \}}{\{\sigma \in \Gamma(X,\mathcal{H},\mu): \|\sigma\|_2= 0 \}}	 
				\end{equation*}
				is a Hilbert space with the product given by $\langle \sigma_1,\sigma_2 \rangle:=\int_{x \in X} \langle \sigma_1(x),\sigma_2(x)\rangle_x \; \mathrm{d}\mu$ called the \emph{direct integral of $(X,\mathcal{H},\mu)$} and is denoted by $\int_{X}^{\oplus} H_x \;\mathrm{d}\mu$.
			\end{enumerate}
		\end{defi}
		
		\section{A hamfisted portrait of K-groups}\label{app: K-theo}
		
		Operator K-theory is the non-commutative generalization of topological K-theory and is an indispensible tool in the theory of C*-algebras and Banach algebras. It comprises continuous, half exact, homotopy-invariant and stable functors $K_0$ and $K_1$, which associate to a Banach algebra $\mathfrak{A}$ abelian groups $K_0(\mathfrak{A})$ resp. $K_1(\mathfrak{A})$, in particular the functors satisfy $K_0(\mathbb{C})=\mathbb{Z}$ and $K_1(\mathbb{C})=0$.
		For monographs on operator K-theory see e.g. \cite{weg93} or \cite{bla08}, we content ourselves with an adumbration of $K_0$-groups in most handwavy terms:
		
		The invariant $K_0$ is obtained by putting algebraic structure on classes of idempotents, in the case of C*-algebras one can restrict to classes of projections. In C*-algebras there are three types of equivalence relations between projections to consider. For $p,q \in \operatorname{Proj}(\mathfrak{A})$ we get the following list of equivalence relations, where every entry implies the preceding equivalence relations:
		\begin{enumerate}[(i)]
			\item The projections $p,q$ are \emph{algebraically equivalent}, write $p \sim q$, if there exist elements $xy \in \mathfrak{A}$ such that $p=xy$ and $q=yx$.
			\item The projections $p,q$ are \emph{unitarily equivalent}, write $p \sim_u q$, if there exist a unitary element $u \in \mathfrak{A}^+$ such that $p=uqu^*$.
			\item The projections $p,q$ are \emph{homotopic}, write $p \sim_h q$, if there exist a norm continuous path of projections from $p$ to $q$.
		\end{enumerate}
		The sum of two projections $p+q$ is a projection \emph{if and only if} the projections $p,q$ are orthogonal i.e. $ef=0$. Addition of orthogonal projections behaves well with respect to equivalence, but not for every pair of equivalence classes of projections does there exist an orthogonal pair of respective contained representants.
		The idea is to pass over to the C*-algebra $\mathbb{M}_{\infty}(\mathfrak{A} )= \bigcup_{n \in \mathbb{N}} \mathbb{M}_n(\mathfrak{A})$ of infinite matrices with finite non-zero entries in $\mathfrak{A}$. Let $p,q \in \operatorname{Proj}(\mathfrak{A})$, then the following implications hold:
		\begin{equation*}
		p \sim q \Rightarrow
		\begin{pmatrix}
		p & 0\\
		0 & 0
		\end{pmatrix}
		\sim_u
		\begin{pmatrix}
		q & 0\\
		0 & 0
		\end{pmatrix}
		,\quad
		p \sim_u q \Rightarrow
		\begin{pmatrix}
		p & 0\\
		0 & 0
		\end{pmatrix}
		\sim_h
		\begin{pmatrix}
		q & 0\\
		0 & 0
		\end{pmatrix}.
		\end{equation*}
		This means in this setting all the defined equivalence relations coincide. Moreover, for any $p \in \operatorname{Proj}(\mathfrak{A})$ we have:
		\begin{equation*}
		\begin{pmatrix}
		0 & 1\\
		1 & 0
		\end{pmatrix}
		\begin{pmatrix}
		p & 0\\
		0 & 0
		\end{pmatrix}
		\begin{pmatrix}
		0 & 1\\
		1 & 0
		\end{pmatrix}
		=
		\begin{pmatrix}
		0 & 0\\
		0 & p
		\end{pmatrix}
		\end{equation*} 
		By conjugating with a unitary, a projection can be pushed down diagonally, and since the matrices we consider can be arbitrarily large, for any pair $p,q \in \operatorname{Proj}(\mathbb{M}_{\infty} (\mathfrak{A}))$ we can find a orthogonal pair of representants of the respective equivalence classes by pushing one of the involved projections far enough down the diagonal. Thus the set $V(A):=\operatorname{Proj}(\mathbb{M}_{\infty}(\mathfrak{A}))/{\sim}$ becomes a commutative monoid with respect to the direct sum. This construction produces a covariant functor from C*-algebras to commutative monoids. Any *-algebra homomorphism $\alpha \colon \mathfrak{A} \to \mathfrak{B}$ induces a well-defined homomorphism of commutative monoids $V(\alpha) \colon V(\mathfrak{A}) \to V(\mathfrak{B})$.	As in the construction of the integers $\mathbb{Z}$ from the natural numbers $\mathbb{N}$, the commutative monoid  $V(\mathfrak{A})$ gives rise to an abelian group $K_{00}(\mathfrak{A})$ via the \emph{Grothendieck group} construction by considering equivalence classes of formal differences between elements in $V(\mathfrak{A})$. In particular, we have $K_{00}(\mathbb{C})=\mathbb{Z}$. Again, the construction is functorial and comes with a canonical map $\tilde{\iota_\mathfrak{A}} \colon V(\mathfrak{A}) \to K_{00}(\mathfrak{A})$. Any $\alpha \colon \mathfrak{A} \to \mathfrak{B}$ induces a group homomorphism $K_{00}(\alpha) \colon K_{00}(\mathfrak{A}) \to K_{00}(\mathfrak{B})$. This definition, however, is insufficient in the case of non-unital C*-algebras -- in general $K_{00}$ is not half-exact, but it is the exact sequences of K-theory, that make it tick! To have a unified way of definition one turns to $\mathfrak{A}^+$ (see Definition~\ref{defi: unitiz}): The C*-algebra $\mathfrak{A}^{+}$ comes with a canonical surjective *-algebra homomorphism $\pi \colon \mathfrak{A}^+ \to \mathbb{C}$ and by functoriality one can define
		\begin{equation*}
		K_0(\mathfrak{A}):=\ker\big(K_{00}(\pi)\colon K_{00}(\mathfrak{A^+}) \to \mathbb{Z}\big).
		\end{equation*}
		Again, $K_0$ is functorial and there is a canonical map $\iota_\mathfrak{A} \colon V(\mathfrak{A}) \to K_{0}(\mathfrak{A})$.
		If $\mathfrak{A}$ is unital, much of theory becomes simpler e.g. we have $K_0(\mathfrak{A})=K_{00}(\mathfrak{A})$.
		There are three classes of elements in $K_0(\mathfrak{A})$: Elements arising out of projections in $\mathfrak{A}$, elements arising out of adjoining a unit or elements arising out of the matrices over $\mathfrak{A}^+$. The $K_0$-groups carry more information by keeping track of the commutative monoid $V(\mathfrak{A})$ and the projections steming from $\mathfrak{A}$:
		Fix the notations $K_0(\mathfrak{A})^+ : = \Ima(\iota_\mathfrak{A})$ and $\Sigma\mathfrak{A}$ is the set of elements in $K_0(\mathfrak{A})$ that come from elements in $\operatorname{Proj}(\mathfrak{A})$. Under sufficient conditions i.e. when $\mathfrak{A}$ is unital and stably finite,\footnote{Being only unital is not sufficient, the \emph{Cuntz algebra} (see Definition~\ref{defi: cuntz-krieger algebras}) represents a counter-example.\\} the set $K_0(\mathfrak{A})^+ $ is a positive cone for $K_0(\mathfrak{A})$ making it an ordered abelian group. Furthermore, if $1$ is the unit of $\mathfrak{A}$, then $\Sigma\mathfrak{A}=\{[p] \in K_0(\mathfrak{A})^+\big|[p]\leq [1] \}$, thus the scale is uniquely determined by $[1]$. For such a C*-algebra $\mathfrak{A}$ the triple $(K_0(\mathfrak{A}), K_0(\mathfrak{A}^+,[1])$ is an ordered abelian group with order unit e.g. this is the case, if $\mathfrak{A}$ is an AF C*-algebra.
		The group $K_1(\mathcal{A})$ is not defined via the projections but via the unitaries of a C*-algebra $\mathcal{A}$, moreover, it can be given in terms of $K_0$, in that $K_1(\mathfrak{A})$ is the $K_0$-group of the \emph{suspension of $\mathfrak{A}$}. The fundamental feature of $K_0$- and $K_1$-groups is, that any short sequence
		\[0 \longrightarrow \mathfrak{A} \overset{\varphi}{\longrightarrow} \mathfrak{B} \overset{\psi}{\longrightarrow} \mathfrak{C} \longrightarrow 0\]
		of C*-algebras gives rise to the \emph{cyclical 6-term exact sequence of K-groups of C*-algebras}
		\[
		\begin{tikzcd}
		K_0(\mathfrak{A}) \arrow{r}{K_0(\varphi)} & K_0(\mathfrak{B}) \arrow{r}{K_0(\psi)} & K_0(\mathfrak{C}) \arrow{d}{\delta_0}\\
		K_1(\mathfrak{C})  \arrow{u}{\delta_1} & K_1(\mathfrak{B}) \arrow{l}{K_1(\psi)} & K_1(\mathfrak{A})  \arrow{l}{K_1(\varphi)}
		\end{tikzcd}
		\]
		in which $\delta_1$ signifies the \emph{index map}. This designation hints at the ties of K-theory with index theory.
	
		\section{Von Neumann algebras}
		
		Von Neumann algebras form a subclass of C*-algebras. Their theory is very distinct in that it is noncommutative measure theory. In this section we recall the terms we need in Section~\ref{sec: the koopman representation of topological full groups}. A useful reference on von Neumann algebras is Chapter 4 of \cite{mur90} or §I.9 and §III of \cite{bla06}.
		
		\begin{defi}
			Let $\mathcal{H}$ be a Hilbert space.
			\begin{enumerate}[(i)]
				\item Let $\mathcal{S}$ be a subset of $\mathcal{B}(\mathcal{H})$. The set $\mathcal{S}':=\{S' \in \mathcal{B}(\mathcal{H})|S'S=SS' ~ \forall S \in \mathcal{S} \}$ is called the \emph{commutant of $\mathcal{S}$}.
				
				\item  A $*$-subalgebra $\mathfrak{A}$ of the bounded linear operators $\mathcal{B}(\mathcal{H})$ which is closed w.r.t. the strong operator topology\footnote{This is the coarsest topology on $\mathcal{B}(\mathcal{H})$ which makes $f_{(x,y)}\colon \mathcal{B}(\mathcal{H}) \to \mathbb{C}, T \mapsto \langle Tx,y\rangle$ continuous for all $x,y \in \mathcal{H}$.\\} and contains $\operatorname{Id}$ is called \emph{von Neumann algebra on $\mathcal{H}$}.
				
				\item Since intersections of von Neumann Algebras are again von Neumann algebras, we can define for any $\mathcal{S} \subseteq \mathcal{B}(\mathcal{H})$ the \emph{von Neumann algebra $\mathcal{M}_\mathcal{S}$ generated by $\mathcal{S}$} as the intersection of all von Neumann algebras containing $\mathcal{S}$.
				
				\item Let $G$ be a countable group and let $\pi \colon G \to \mathcal{B}(\mathcal{H})$ be a unitary representation of $G$. Denote by $\mathcal{M}_\pi$ the von Neumann Algebra generated by the set of operators $\{\pi(g)|g \in G\}$.
				
				\item An abelian von Neumann algebra $\mathfrak{A}$ is called \emph{maximal} if it is not contained in a strictly larger abelian von Neumann algebra. An abelian von Neumann algebra $\mathfrak{A}$ is maximal \emph{if and only if} $\mathfrak{A}'=\mathfrak{A}$. Such a von Neumann algebra is acronymically termed a \emph{masa}. 
			\end{enumerate}
		\end{defi}
		
		Measure spaces give rise to commutative von Neumann algebras -- the theory of von Neumann algebras is in this sense non-commutative measure theory.
		\begin{rem}\phantomsection\label{rem: vNma}
			\begin{enumerate}[(i)]
				\item By the \emph{von Neumann bicommutant theorem} a $*$-subalgebra $\mathfrak{A}$ of $\mathcal{B}(\mathcal{H})$ is a von-Neumann-algebra \emph{if and only if} $\mathfrak{A}=\mathfrak{A}''$.
				
				\item For any subsets $S,S_1,S_2 \subset \mathcal{B}(\mathcal{H})$ we have $S \subseteq S''$, $S_1 \subseteq S_2$ implies $S_2' \subseteq S_1'$ and thus $S=S'''$.

				\item Let $\pi \colon G \to \mathcal{B}(\mathcal{H})$ be a unitary representation of a discrete group $G$. Then the image is self-adjoint and contains $\operatorname{id}_\mathcal{H}$ and thus $\mathcal{M}_\pi$ is just the double commutant $\pi(G)''$. 
			\end{enumerate}
		\end{rem}
		
	\end{appendices}
	
	\bibliographystyle{alpha}

\end{document}